\newtheorem{theorem}{Theorem}[subsection]
\newtheorem{lemma}[theorem]{Lemma}
\newtheorem{cor}[theorem]{Corollary}
\newtheorem{conj}[theorem]{Conjecture}
\newtheorem{prop}[theorem]{Proposition}
\theoremstyle{definition}
\newtheorem{defn}[theorem]{Definition}
\newtheorem{hypothesis}[theorem]{Hypothesis}
\newtheorem{example}[theorem]{Example}
\newtheorem{remark}[theorem]{Remark}
\newtheorem{convention}[theorem]{Convention}
\numberwithin{equation}{theorem}
\newcommand{\CC}{\mathbb{C}}
\newcommand{\FF}{\mathbb{F}}
\newcommand{\Fp}{\mathbb{F}_p}
\newcommand{\GG}{\mathbb{G}}
\newcommand{\LL}{\mathbb{L}}
\newcommand{\PP}{\mathbb{P}}
\newcommand{\Qp}{\mathbb{Q}_p}
\newcommand{\QQ}{\mathbb{Q}}
\newcommand{\RR}{\mathbb{R}}
\newcommand{\Zp}{\mathbb{Z}_p}
\newcommand{\ZZ}{\mathbb{Z}}
\newcommand{\bA}{\mathbf{A}}
\newcommand{\bB}{\mathbf{B}}
\newcommand{\bC}{\mathbf{C}}
\newcommand{\be}{\mathbf{e}}
\newcommand{\bv}{\mathbf{v}}
\newcommand{\bw}{\mathbf{w}}
\newcommand{\calB}{\mathcal{B}}
\newcommand{\calC}{\mathcal{C}}
\newcommand{\calE}{\mathcal{E}}
\newcommand{\calF}{\mathcal{F}}
\newcommand{\calG}{\mathcal{G}}
\newcommand{\calH}{\mathcal{H}}
\newcommand{\calI}{\mathcal{I}}
\newcommand{\calJ}{\mathcal{J}}
\newcommand{\calL}{\mathcal{L}}
\newcommand{\calM}{\mathcal{M}}
\newcommand{\calO}{\mathcal{O}}
\newcommand{\calP}{\mathcal{P}}
\newcommand{\calR}{\mathcal{R}}
\newcommand{\gothm}{\mathfrak{m}}
\newcommand{\gotho}{\mathfrak{o}}
\newcommand{\gothp}{\mathfrak{p}}
\newcommand{\gothU}{\mathfrak{U}}
\newcommand{\gothV}{\mathfrak{V}}
\newcommand{\dual}{\vee}
\DeclareMathOperator{\an}{an}
\DeclareMathOperator{\ann}{ann}
\DeclareMathOperator{\Aut}{Aut}
\DeclareMathOperator{\bd}{bd}
\DeclareMathOperator{\Cont}{Cont}
\DeclareMathOperator{\cont}{cont}
\DeclareMathOperator{\coker}{coker}
\DeclareMathOperator{\CPhi}{\mathcal{C}\Phi}
\DeclareMathOperator{\Disc}{Disc}
\DeclareMathOperator{\dR}{dR}
\DeclareMathOperator{\et}{\acute{e}t}
\DeclareMathOperator{\Ext}{Ext}
\DeclareMathOperator{\faith}{v}
\DeclareMathOperator{\fet}{f\acute{e}t}
\DeclareMathOperator{\FEt}{\mathbf{F\acute{E}t}}
\DeclareMathOperator{\FFC}{FF}
\DeclareMathOperator{\Fitt}{Fitt}
\DeclareMathOperator{\Frac}{Frac}
\DeclareMathOperator{\Gal}{Gal}
\DeclareMathOperator{\GL}{GL}
\DeclareMathOperator{\Gr}{Gr}
\DeclareMathOperator{\Hom}{Hom}
\DeclareMathOperator{\HT}{HT}
\DeclareMathOperator{\image}{image}
\DeclareMathOperator{\Ind}{Ind}
\DeclareMathOperator{\inte}{int}
\DeclareMathOperator{\loc}{loc}
\DeclareMathOperator{\Maxspec}{Maxspec}
\DeclareMathOperator{\perf}{perf}
\DeclareMathOperator{\Pic}{Pic}
\DeclareMathOperator{\proet}{pro\acute{e}t}
\DeclareMathOperator{\Proj}{Proj}
\DeclareMathOperator{\rank}{rank}
\DeclareMathOperator{\red}{red}
\DeclareMathOperator{\Res}{Res}
\DeclareMathOperator{\Spa}{Spa}
\DeclareMathOperator{\Spec}{Spec}
\DeclareMathOperator{\spect}{sp}
\DeclareMathOperator{\Spra}{Spra}
\DeclareMathOperator{\Trace}{Trace}
\DeclareMathOperator{\Tor}{Tor}
\title{Relative $p$-adic Hodge theory, II: Imperfect period rings}
\author{Kiran S. Kedlaya and Ruochuan Liu}
\date{October 21, 2019}
\begin{document}

\maketitle

\begin{abstract}
In a previous paper, we constructed a category of $(\varphi, \Gamma)$-modules associated to any adic space over $\Qp$ with the property that the \'etale $(\varphi, \Gamma)$-modules correspond to \'etale $\Qp$-local systems; these involve sheaves of period rings for Scholze's pro-\'etale topology. 
In this paper, we first extend Kiehl's theory of coherent sheaves on rigid analytic spaces to a theory of \emph{pseudocoherent sheaves} on adic spaces, then construct a corresponding theory of pseudocoherent $(\varphi, \Gamma)$-modules. 
We then relate these objects to a more explicit construction in case the space comes equipped with a suitable infinite \'etale cover; in this case, one can decomplete the period sheaves and establish an analogue of the theorem of Cherbonnier-Colmez on the overconvergence of $p$-adic Galois representations.
As an application, we show that relative $(\varphi, \Gamma)$-modules in our sense coincide with the relative $(\varphi, \Gamma)$-modules constructed by Andreatta and Brinon in the geometric setting where the latter can be constructed.
As another application, we establish that the category of pseudocoherent $(\varphi,\Gamma)$-modules on an arbitrary rigid analytic space over a mixed-characteristic nonarchimedean field is abelian, satisfies the ascending chain condition, and is stable under various natural derived functors (including $\Hom$, $\otimes$, and pullback). Applications to the \'etale cohomology of pro-\'etale local systems will be given in a subsequent paper.
\end{abstract}

\tableofcontents

\setcounter{section}{-1}

\section{Introduction}

This paper is a continuation of \cite{part1}, in which foundations were developed for a form of \emph{relative $p$-adic Hodge theory} suitable for describing and analyzing $p$-adic \'etale local systems on $p$-adic analytic spaces. In particular, to each adic space over $\QQ_p$ is associated a category of \emph{relative $(\varphi, \Gamma)$-modules} which describes the \'etale $\QQ_p$-local systems on that space.

This paper serves two primary purposes. One of these is to consolidate the foundations laid in \cite{part1}, and in particular to enlarge the category of $(\varphi, \Gamma)$-modules associated to an adic space to a larger category of \emph{pseudocoherent
$(\varphi, \Gamma)$-modules} with better homological properties. The other is to give a 
more refined description of $(\varphi, \Gamma)$-modules, and by extension \'etale $\QQ_p$-local systems, on certain spaces admitting suitable infinite \'etale covering towers.

In the remainder of this introduction, we
recall some of the constructions in ordinary $p$-adic Hodge theory which serve as the starting points for our work, 
indicate the extent to which these are and are not generalized in \cite{part1}, then indicate what further work is carried out in this paper.
(We refer back to the introduction to \cite{part1} for additional background, including
discussion of and contrast between two different possible relativizations of $p$-adic Hodge theory.)

\subsection{Synopsis of \texorpdfstring{$p$}{p}-adic Hodge theory}
\label{subsec:synopsis}

We first give a summary of the main aspects of $p$-adic Hodge theory which are generalized in \cite{part1}.
Let $K$ be a complete discretely valued field of characteristic $0$ whose residue field $k$ is perfect of characteristic $p$. 
Let $K_\infty$ be the field obtained from $K$ by adjoining the $p$-power roots of unity within some algebraic closure. The Galois group
$\Gamma = \Gal(K_\infty/K)$ is then an open subgroup of $\ZZ_p^\times$.
By applying the \emph{field of norms} construction of Fontaine-Wintenberger
\cite{fontaine-wintenberger} to $K_\infty$, one produces a new field $F$ isomorphic to a the completed perfect closure of a formal power series field over a finite extension of $k$, with the property that the Galois groups of $K_\infty$ and $F$ are isomorphic. More precisely, this isomorphism is induced  by an explicit equivalence of the categories of finite \'etale algebras over $K_\infty$ and $F$.

The discrete representations of the
absolute Galois group $G_F$ of $F$ on finite dimensional $\Fp$-vector
spaces form a category equivalent to the category of
\emph{$\varphi$-modules} over $F$, i.e., finite-dimensional $F$-vector spaces
equipped with isomorphisms with their $\varphi$-pullbacks.
This amounts to
a nonabelian
generalization of the Artin-Schreier description of $(\ZZ/p\ZZ)$-extensions
of fields of characteristic $p$ \cite[Expos\'e~XXII, Proposition~1.1]{sga7-2}.
Similarly, the continuous representations of $G_F$ on finite free $\Zp$-modules
correspond to $\varphi$-modules over the ring $W(F)$ of Witt vectors over $F$.

One may similarly describe representations of $G_K$ by considering their restrictions to $G_{K_\infty} \cong G_F$ plus descent data for the action of 
$\Gamma_K = \Gal(K_\infty/K)$. In this way, one arrives at the construction of \emph{$(\varphi, \Gamma)$-modules} over $W(F)$. A further nuance is that in order to execute some important constructions in $p$-adic Hodge theory (e.g., the construction of Fontaine's functor $D_{\mathrm{crys}}$), one must descend $(\varphi, \Gamma)$-modules over $W(F)$ to a subring of \emph{overconvergent Witt vectors} (characterized by growth conditions on the Witt vector coefficients) and then extend scalars to a certain Fr\'echet completion called the \emph{extended Robba ring}. The $(\varphi, \Gamma)$-modules over the extended Robba ring resulting from this process are precisely those which are semistable of degree 0; the whole category of $(\varphi, \Gamma)$-modules over the Robba ring can also be described in terms of vector bundles on a certain one-dimensional scheme described by Fargues and Fontaine \cite{fargues-fontaine}.

\subsection{Relativization via perfect period sheaves}

We next describe how the previous constructions are generalized in \cite{part1}. One begins by considering certain Banach algebras over $\Qp$ called \emph{perfectoid algebras}, of which the completion of $K_\infty$ used above is an example. For such algebras, one has a \emph{perfectoid correspondence} converting these algebras into perfect Banach algebras over $\Fp$; the field of norms equivalence is then a special case of the compatibility of the perfectoid correspondence with finite \'etale extensions. 
One new feature of the perfectoid correspondence, not apparent in the field of norms constructions, is compatibility with rational localizations; this enables the globalization of the construction which we discuss below.
(The same correspondence was described independently by Scholze in \cite{scholze1}, from which we have adopted the term \emph{perfectoid}.)

For $R$ a perfectoid algebra corresponding to the perfect ring $S$ in characteristic $p$, the categories of \'etale $\Zp$-local systems over $\Spec(R)$ and $\Spec(S)$ are equivalent to each other by the perfectoid correspondence, and (by the result of Katz mentioned above) to the category of $\varphi$-modules over $W(S)$. One may again replace $W(S)$ by its subring of overconvergent Witt vectors to get an equivalent category of $\varphi$-modules, then extend the base ring to an extended Robba ring made out of Witt vectors.

Now consider either a strictly $p$-adic analytic space in the sense of Berkovich or more generally an adic space over $\Qp$ in the sense of Huber. To such a space one may associated a \emph{pro-\'etale topology} in the manner of Scholze \cite{scholze2}; this topology admits a neighborhood basis consisting of spaces associated to perfectoid Banach algebras. 
For this topology, such constructions as the Witt vectors, the overconvergent Witt vectors, and the extended Robba ring define sheaves of rings equipped with bijective Frobenius actions, which we refer to collectively as \emph{perfect period sheaves}.
One can then consider $\varphi$-modules over the various period sheaves;
the $\varphi$-modules over the sheaf of Witt vectors (or equivalently overconvergent Witt vectors) then correspond to \'etale $\Zp$-local systems on the original space, while the $\varphi$-modules over the sheaf of extended Robba rings which are pointwise semistable (an open condition) correspond to \'etale $\Qp$-local systems. There is also an analogue of the Fargues-Fontaine construction describing arbitrary $\varphi$-modules over the sheaf of extended Robba rings in terms of vector bundles on a certain analytic space,
although this construction remains somewhat mysterious: the resulting space fibers over the original space in the category of topological spaces, but not in the category of adic spaces.

\subsection{\texorpdfstring{$(\varphi, \Gamma)$}{(phi, Gamma)}-modules revisited}

Let us now return to the traditional setting of $p$-adic Hodge theory
and see to what extent the preceding constructions do and do not reproduce the original theory of $(\varphi, \Gamma)$-modules. This expands upon the discussion given in \cite[\S 9.5]{part1}. 

Take notation again as in \S\ref{subsec:synopsis}.
The Fontaine-Wintenberger construction produces not just the perfect field $F$,
but also a complete discretely ramified subfield $L$ of $F$. One also obtains a subring $\gotho_{\calE}$ of $W(F)$ stable under both $\varphi$ and $\Gamma$ which is a Cohen ring for $L$; the elements of $\gotho_{\calE}$ can be viewed as formal Laurent series in a suitable variable. Fontaine's original $(\varphi, \Gamma)$-modules are defined not over $W(F)$ but over this subring $\gotho_{\calE}$, but one shows without much trouble that the resulting categories are equivalent to each other (and hence to the category of representations of $G_K$ on finite free $\Zp$-modules).

One can further intersect $\gotho_{\calE}$ with the ring of overconvergent Witt vectors; the resulting subring $\gotho_{\calE^\dagger}$ can be characterized as the subring of $\gotho_{\calE}$ consisting of Laurent series which are not merely formal, but actually converge in some open annulus within the open unit disc.
This ring is no longer complete for the $p$-adic topology, but it is still henselian. It turns out that $(\varphi, \Gamma)$-modules over the overconvergent subring of $W(F)$ descend uniquely to $\gotho_{\calE^\dagger}$, but this is far from straightforward because the corresponding statement for $\varphi$-modules is false. Instead, the original proof of this result by Cherbonnier and Colmez \cite{cherbonnier-colmez} uses the action of $\Gamma$ in a manner closely related to the Sen-Tate decompletion of Galois cohomology of local fields
\cite{sen-ht}. One can streamline this argument  by establishing (as in \cite{kedlaya-new-phigamma}) that $\Gamma$-modules over the overconvergent subring of $W(F)$ descend uniquely to $\gotho_{\calE^\dagger}$.

The Fr\'echet completion of $\gotho_{\calE^\dagger}$ inside the extended Robba ring gives the \emph{Robba ring} $\calR$, the rings of germs of analytic functions on open annuli with outer radius 1 (over a certain $p$-adic field determined by $K$). Given the Cherbonnnier-Colmez theorem, one obtains a functor from continuous representations of $G_K$ on finite-dimensional $\Qp$-vector spaces to $(\varphi, \Gamma)$-modules over $\calR$; it was shown by Berger
\cite{berger-cst} that the standard functors of Fontaine on Galois representations (e.g., the functors of de Rham, crystalline, and semistable periods) can be computed on the Robba ring side.

The obvious difference between the results we have just described and the corresponding results from \cite{part1} is that we are now considering rings on which the action of Frobenius is injective but not surjective. For many purposes, this is not such a serious distinction; for instance, the interpretation of the Fontaine functors in terms of the Robba ring can be adapted easily to the extended Robba ring. In situations where the perfect period rings suffice, the setup of \cite{part1} provides a highly functorial adaptation of $p$-adic Hodge theory to the relative setting.

However, there are several examples of arguments for which the imperfect period rings matter; here is a representative (but not exhaustive) sample.
\begin{itemize}
\item
Berger's original proof in \cite{berger-cst} of Fontaine's conjecture that de Rham representations are potentially semistable involves a reduction to a monodromy theorem for $p$-adic differential equations over the Robba ring (see \cite{andre-monodromy, kedlaya-monodromy, mebkhout-monodromy}). Passing to the extended Robba ring destroys the action of the Lie algebra of $\Gamma$, precluding an argument of this form.

\item
On the imperfect period rings, one may define an operator $\psi$ as the reduced trace of Frobenius; this extends to $(\varphi, \Gamma)$-modules. As originally observed by Fontaine, the kernel and cokernel of this operator can be interpreted in terms of \emph{Iwasawa cohomology}.
This has important consequences for $p$-adic $L$-functions via the construction of certain explicit reciprocity laws, by work of Perrin-Riou originally, and more recently Pottharst and Nakamura \cite{pottharst, nakamura}.
One can also use $\psi$ to help establish the basic properties of cohomology for arithmetic families of $(\varphi, \Gamma)$-modules; see \cite{kpx}.

\item
The $p$-adic Langlands correspondence for $\GL_2(\Qp)$ constructed by Colmez \cite{colmez-langlands} makes essential use of imperfect period rings, again because the action of $\Gamma$ on these rings is better-behaved from an analytic point of view.

\end{itemize}

\subsection{Imperfect relative period rings}

We now arrive at one of the primary purposes of the present paper: to provide a framework for extending the construction of imperfect period rings to the relative setting. Note that while we discuss it first, it is sited in the second half of the paper; the first half of the paper is devoted to more foundational issues so as to amplify the reach of these constructions (see later in this introduction).

By analogy with the fact that the formation of Cohen rings is functorial for perfect fields but not for arbitrary fields of characteristic $p$, one cannot hope to emulate the level of functoriality of \cite{part1} by defining imperfect period sheaves on the pro-\'etale topology of an arbitrary adic space. Instead, one is forced to consider special towers of finite \'etale covers; in order to transfer information obtained from such towers, one must revert back to the perfect period sheaves.

In a general axiomatic framework (\S\ref{sec:axiomatic}), we construct imperfect period rings and establish an analogue of the Cherbonnier-Colmez theorem descending $(\varphi, \Gamma)$-modules. This bears a passing resemblance to the Tate-Sen formalism used by Berger and Colmez \cite{berger-colmez} to give a new proof of Cherbonnier-Colmez compatible with arithmetic families; however, the Tate-Sen formalism seems to be rather difficult to work with in our context of relative $p$-adic Hodge theory. Namely, it seems to require a deep understanding of higher ramification theory, perhaps in the form developed by Abbes and Saito \cite{abbes-saito1, abbes-saito2}, in order to feed into the machinery of relative $(\varphi, \Gamma)$-modules proposed by Scholl \cite{scholl}. It is entirely possible that such an understanding could be obtained, but we have opted instead to work around this issue by following the approach to Cherbonnier-Colmez given in \cite{kedlaya-new-phigamma}. The cost of this choice is that it is not entirely straightforward to check that a given tower of covers fits into our framework.

By analogy with Sen's theorem on ramification in $p$-adic Lie extensions \cite{sen-lie},
we expect that this formalism can be applied to a wide class of $p$-adic Lie towers.
In \S\ref{sec:towers fields}, we consider various cases of towers over fields.
We establish that the formalism applies to the standard cyclotomic tower, thus recovering the approach to the Cherbonnier-Colmez theorem discussed in \cite{kedlaya-new-phigamma};
we also identify some other towers for which we conjecture that the formalism can be applied, such as the Kummer towers considered by Caruso \cite{caruso}.

In \S\ref{sec:toric}, we consider towers over more general affinoid algebras. At this stage, we restrict attention to towers derived from the standard cover of a torus via the multiplication-by-$p$ maps; these include
the towers for which the analogue of Cherbonniez-Colmez was established by Andreatta and Brinon \cite{andreatta-brinon}. At the level of generality we work, one obtains such covers locally on any smooth analytic space over a $p$-adic field or a perfectoid field;
one also has a logarithmic variant suitable for dealing with normal crossings divisors on smooth spaces. (Since the logarithmic construction involves compactifying a torus, it can in principle be carried out in the presence of toroidal singularities; however, we omit this level of generality here.) These constructions should make it possible to extend
numerous results based on \cite{andreatta-brinon}; as a sample, we record a comparison of cohomology used by Colmez and Nizio\l\  \cite{colmez-niziol} in their work on syntomic cohomology and semistable comparison isomorphisms, for which \cite{andreatta-brinon} is not sufficient.

Although we only consider toric towers here, we expect that the formalism we have introduced will apply to other interesting classes of towers. One intriguing special case is the class of Lubin-Tate towers associated to the universal deformations of formal groups in characteristic $p$. This includes the case of the cyclotomic extension of a $p$-adic field, but the more general case requires some new ideas, especially the interplay between formal groups and Witt vectors as exploited in the recent paper of Cais and Davis \cite{cais-davis}. (The recent thesis of Carter \cite{carter}, which establishes Scholl's criterion for these towers, is also likely to play a pivotal role.)
Just as the usual imperfect period rings appear in Colmez's interpretation of the local Langlands correspondence for $\GL_2(\Qp)$, one may expect the Lubin-Tate towers to play an analogous role for $\GL_n(\Qp)$. We plan to discuss these points in another subsequent paper.

\subsection{Pseudocoherent modules}
\label{subsec:introduction pseudocoherent}

The other primary purpose of this paper is to enrich the foundations of the theory of relative $(\varphi, \Gamma)$-modules with the aim of making calculations in homological algebra. This in turn requires enriching the foundations of the general theory of adic spaces to obtain something resembling a theory of coherent sheaves.
Under suitable noetherian hypothesis, one can emulate Kiehl's theory of coherent sheaves on rigid analytic spaces fairly directly, although surprisingly we were unable to find a prior reference for this (see \S\ref{subsec:coherence noetherian} for discussion).
However, since we need to work with perfectoid spaces, we are forced to develop a theory that works in the absence of noetherian hypotheses.

Our model for this is the theory of \emph{pseudocoherent modules} over rings,
i.e., modules admitting infinite projective resolutions by finite projective modules.
This class of modules was studied extensively in SGA6 \cite[Chapter 2]{sga6}
and later used extensively in the work of Thomason and Trobaugh \cite[\S 2]{thomason-trobaugh}. They do not form an abelian category in general, but they do have the ``two out of three'' property: any module that sits in a short exact sequence with two pseudocoherent modules is itself pseudocoherent. (See \S\ref{subsec:pseudocoherent}
for more basic discussion.) Using this property, in \S\ref{sec:foundations adic}
we construct a class of \emph{pseudocoherent sheaves} on an arbitrary adic space, which coincides with the coherent sheaves in the noetherian case and enjoys analogous properties in general (e.g., Tate acyclicity and Kiehl glueing). This is somewhat more subtle than either of the cases of schemes or locally noetherian adic spaces,
because it is unclear whether rational localizations give rise to flat ring homomorphisms;
instead, we establish some weaker flatness properties sufficient for dealing with pseudocoherent sheaves.

In \S\ref{sec:perfectoid supplemental}, we limit attention to perfectoid space and establish corresponding results with the analytic and \'etale topologies replaced by the pro-\'etale topology, and even some more exotic topologies such as the \emph{v-topology} recently introduced by Scholze \cite{scholze-berkeley2} by analogy with the h-topology for schemes. (We also make some more cosmetic refinements to the foundations of perfectoid spaces described in \cite{part1}; for instance, we allow Fontaine's mixed-characteristic perfectoid spaces as in \cite{fontaine-bourbaki}, and we allow \emph{reified adic spaces} as in \cite{kedlaya-reified}.)
In \S\ref{sec:perfect period sheaves}, we apply the preceding discussion to the perfect period sheaves, so as to obtain a theory of \emph{pseudocoherent $\varphi$-modules} on any adic space enlarging the category of $\varphi$-modules (retronymically called
\emph{projective $\varphi$-modules} when it is necessary to emphasize the distinction).
For example, in the context of classical $p$-adic Hodge theory, the pseudocoherent $\varphi$-modules coincide with the \emph{generalized $(\varphi, \Gamma)$-modules}
considered in \cite{liu-herr} and elsewhere.
We expect that the theory can be further extended from adic spaces to certain analytic stacks, such as the \emph{diamonds} of \cite{scholze-berkeley2} (see also \cite{scholze-etale}); however, we postpone this discussion to a subsequent paper.

Note that the treatment of imperfect period rings, which was discussed earlier in this introducton, appears in the body of the paper only after the preceding topics have been treated. This juxtaposition enables us to carry out the imperfect constructions uniformly,
including pseudocoherent $(\varphi, \Gamma)$-modules on a (mostly) equal footing
with their projective counterparts; this is crucial for applications. (One reason is that we do not know how to construct projective resolutions of pseudocoherent $(\varphi, \Gamma)$-modules by projective $(\varphi, \Gamma)$-modules; this makes certain reductions from pseudocoherent to projective less than straightforward.)

\subsection{Applications and coming attractions}

Our intended application of the results of this paper is to the cohomology of 
relative $(\varphi, \Gamma)$-modules, and by extension pro-\'etale local systems,
on rigid analytic spaces. The use in this study of imperfect period rings derived from toric towers echoes the use of such towers in 
the work of Faltings on the $p$-adic comparison isomorphism
\cite{faltings-purity1} and in Scholze's more recent construction of  the comparison isomorphism \cite{scholze2}. 

We will give some results along these lines in a subsequent paper. In preparation for this,
in this paper (\S\ref{sec:applications}) we establish some structural properties of the category of pseudocoherent $(\varphi, \Gamma)$-modules over a rigid analytic space (over an arbitrary base field). For one, we verify that a space is determined by its pro-\'etale topology exactly up to \emph{seminormalization} in the sense of Traverso, Swan, et al.
(see \S\ref{subsec:seminormal rings} and \S\ref{subsec:seminormality adic}); this incorporates the Ax-Sen-Tate theorem on Galois invariants of completed algebraic closures of analytic fields \cite{ax}. (While it would be quite natural to give a new proof of the latter theorem in the language of perfectoid fields, we do not do so here.)

We then verify that the category of pseudocoherent modules over the completed structure sheaf $\widehat{\calO}$ on the pro-\'etale site of a rigid analytic space has various convenient properties analogous to the category of modules over a noetherian ring
(despite the fact that its construction involves nonnoetherian rings);
for instance, it is an abelian category, satisfies the ascending chain condition, and is stable under the internal $\Hom$, tensor product, and pullback functors (and their associated derived functors). The key to the analysis is to first show that the ideals of the complete structure sheaf all arise from the base space; one can then define \emph{Fitting ideals} associated to general pseudocoherent sheaves, which behave as if we are working over a noetherian ring. While Fitting ideals constitute a fairly coarse invariant of a module over a ring, they do retain a great deal of information in codimension 1; we may thus use them to make inductive arguments to control pseudocoherence, using a glueing argument for pseudocoherent modules in the spirit of the Beauville-Laszlo theorem \cite{beauville-laszlo}.

We finally show (\S\ref{subsec:final applications}) that the category of pseudocoherent $(\varphi, \Gamma)$-modules over a rigid analytic space has similar properties to the ones described for pseudocoherent $\widehat{\calO}$-modules. The key point is to show that the failure of such objects to be projective is in a sense controlled by $\widehat{\calO}$-modules, so the previous results may be applied. Over a $p$-adic field, this is true in a quite literal sense; over a general field, some extra arguments are required to show
that the failure of projectivity can be isolated into two separate contributions: one coming from
$\widehat{\calO}$-modules, which can be controlled using the preceding discussion; and another coming from
replacing one perfectoid field with another field with the same tilt. The second contribution turns out to be essentially negligible; this is a manifestation of the fact that the perfectoid correspondence does not descend to a meaningful correspondence between rigid analytic spaces over corresponding perfectoid fields.
By contrast, if one considers somewhat less rigid information, then at least for smooth spaces some sort of correspondence does exist; for instance, there is a relationship at the level of motives described by Vezzani \cite{vezzani}.

\subsection*{Acknowledgments}
Thanks to Fabrizio Andreatta, Bhargav Bhatt, Olivier Brinon, Pierre Colmez, Brian Conrad, Laurent Fargues, Jean-Marc Fontaine, 
David Hansen,
S\'andor Kov\'acs, 
Wies\l awa Nizio\l, Arthur Ogus, Sam Payne,
David Rydh, Peter Scholze,
Karl Schwede,
Kazuma Shimomoto,
Jean-Pierre Serre,
David Speyer,
Michael Temkin, Jared Weinstein, Liang Xiao, Sarah Zerbes,
and Xinwen Zhu for helpful discussions.
Kedlaya was supported by NSF grants DMS-0545904 (CAREER), DMS-1101343, DMS-1501214, DMS-1802161;
DARPA grant HR0011-09-1-0048; MIT (NEC Fund,
Cecil and Ida Green Career Development Professorship); UC San Diego
(Stefan E. Warschawski Professorship); 
IAS (visiting professorship 2018--2019;
and a Guggenheim Fellowship during fall 2015.
Liu was partially supported by IAS under NSF grant DMS-0635607 and NSFC-11571017.
In addition, both authors thank MSRI for its hospitality during fall 2014, as supported by NSF grant DMS-0932078, and Kedlaya thanks ICERM for its hospitality during fall 2015.

\section{Algebraic preliminaries}

As in \cite{part1}, we begin with some preliminary constructions and arguments not specific to adic or perfectoid spaces.

\subsection{Pseudocoherent and fpd modules}
\label{subsec:pseudocoherent}

In order to conduct homological algebra calculations over nonnoetherian rings, we will need to be careful about finite generation properties. We will consider two closely related concepts: the concept of a \emph{pseudocoherent module} from
\cite[D\'efinition~2.1]{sga6} (see also \cite[\S 2.2]{thomason-trobaugh}
and \cite[\S 1.3]{abbes}),
and the more familiar concept of a \emph{module of finite projective dimension}.
(Beware that the term \emph{pseudocoherent module} carries a different meaning in \cite[Exercise I.2.11]{bourbaki-ac}.)

\begin{defn}
Let $R$ be a (commutative) ring. 
For $m \in \ZZ \cup \{\infty\}$, an $R$-module $M$ is \emph{$m$-pseudocoherent} if there exists a projective resolution 
\[
\cdots \to P_{1} \to P_0 \to M \to 0
\]
with $P_i$ finitely generated for all $i \leq m$. 
In particular, $M$ is 0-pseudocoherent (resp.\ $1$-pseudocoherent) if and only if it is finitely generated (resp.\ finitely presented). We write \emph{pseudocoherent} as shorthand for \emph{$\infty$-pseudocoherent}.

We say that $M$ is \emph{$m$-fpd} if there exists a finite projective resolution for which $P_i = 0$ for all $i>m$.
An equivalent condition is that $\Ext^i_R(M,N) = 0$ for all $R$-modules $N$ and all $i \geq m+1$, or even just for $i=m+1$ \cite[Tag~065R]{stacks-project}.
We say $M$ \emph{has finite projective dimension}, or for short is \emph{fpd}, if it is $m$-fpd for some $m$; the minimum value of $m$ is called the \emph{projective dimension} of $M$.
By convention, we take the projective dimension of the zero module to be $-\infty$.
\end{defn}

\begin{remark}
An $R$-module $M$ is fpd if and only if the singleton complex $0 \to M \to 0$ is \emph{perfect}, i.e., isomorphic in the derived category to a bounded complex of finite projective $R$-modules. It is thus tempting to refer to fpd modules as \emph{perfect} modules, as in
\cite[Tag 0656]{stacks-project},
but due to other uses of the word \emph{perfect} in our work we have resisted this temptation.
\end{remark}

\begin{remark} \label{R:pseudocoherent tensor product}
Given two projective resolutions
\[
\cdots \to P_{1} \to P_0 \to M \to 0, \qquad
\cdots \to P'_{1} \to P'_0 \to M' \to 0,
\]
we may construct a projective resolution of $M \otimes_R M'$ by taking the totalization of the double complex $P_i \otimes_R P'_j$. This observation has the following consequences.
\begin{itemize}
\item The tensor product of two $m$-pseudocoherent modules is again $m$-pseudocoherent
(compare \cite[Corollaire~2.16.1(b)]{sga6}).
\item The tensor product of an $m$-fpd module with an $n$-fpd module is $(m+n)$-fpd.
\end{itemize}
\end{remark}

\begin{remark} \label{R:no internal Homs}
The category of pseudocoherent (resp.\ fpd) modules over a ring $R$ is not guaranteed to admit internal Homs.
\end{remark}

\begin{lemma} \label{L:pseudocoherent 2 of 3}
Let 
\[
0 \to M_1 \to M \to M_2 \to 0 
\]
be a short exact sequence of $R$-modules and let $m$ be a nonnegative integer.
\begin{enumerate}
\item[(a)]
If $M_1$ and $M_2$ are $m$-pseudocoherent, then so is $M$.
\item[(b)]
If $M_1$ is $(m-1)$-pseudocoherent and $M$ is $m$-pseudocoherent, then $M_2$ is $m$-pseudocoherent.
\item[(c)]
If $M$ is $m$-pseudocoherent and $M_2$ is $(m+1)$-pseudocoherent, then $M_1$ is $m$-pseudocoherent.
\item[(d)]
If $M_1$ and $M_2$ are $m$-fpd, then so is $M$.
\item[(e)]
If $M_1$ is $m$-fpd and $M$ is $(m+1)$-fpd, then $M_2$ is $(m+1)$-fpd.
\item[(f)]
If $M$ is $m$-fpd and $M_2$ is $(m+1)$-fpd, then $M_1$ is $m$-fpd.
\end{enumerate}
In particular, if any two of $M, M_1, M_2$ are pseudocoherent (resp.\ fpd), then so is the third; that is, the pseudocoherent (resp.\ fpd) $R$-modules form an exact tensor category.
\end{lemma}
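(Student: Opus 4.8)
The plan is to prove the six numbered items by a standard argument using the "horseshoe" construction for projective resolutions, then deduce the final "two out of three" consequences formally. The underlying engine in both the pseudocoherent and fpd cases is the same: given a short exact sequence $0 \to M_1 \to M \to M_2 \to 0$ and projective resolutions $P_\bullet \to M_1$ and $Q_\bullet \to M_2$, the horseshoe lemma produces a projective resolution $P_\bullet \oplus Q_\bullet \to M$ (with the $P_i \oplus Q_i$ in each degree, and a suitably twisted differential). This immediately gives (a) and (d): if the $P_i$ are finitely generated for $i \le m$ and likewise the $Q_i$, then the $P_i \oplus Q_i$ are finitely generated for $i \le m$; and if both resolutions can be taken to vanish above degree $m$, so can the direct sum.

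For the remaining parts I would argue by a syzygy/dimension-shift induction on $m$. For (b), (c), (e), (f), the base cases are the elementary facts about finite generation and finite presentation: a quotient of a finitely generated module by a finitely generated submodule is finitely generated; if $M$ is finitely generated and $M_2$ is finitely presented then $M_1$ is finitely generated (the classical Schanuel-type argument comparing a surjection from a finite free module onto $M$ with a finite presentation of $M_2$); and the analogous statements for fpd modules in low degree, which reduce to the $\Ext$-vanishing criterion recorded in the definition. For the inductive step, choose a finite free module $F$ surjecting onto $M$ (possible since $M$ is finitely generated, which follows from the hypotheses in each case), let $M'$ be the kernel, and similarly build the kernel of a compatible surjection onto the other modules; then the snake lemma produces a new short exact sequence among the syzygies $0 \to M_1' \to M' \to M_2' \to 0$ (after arranging $F \twoheadrightarrow M$ to be compatible with $F \twoheadrightarrow M_2$, so that $F' \to M_2$ has kernel sitting in such a sequence), and one applies the inductive hypothesis with $m$ replaced by $m-1$, using that $N$ is $k$-pseudocoherent iff its first syzygy (against a finite free module) is $(k-1)$-pseudocoherent, and similarly for fpd. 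One has to be slightly careful to choose the surjection onto $M$ so that it maps onto a chosen surjection onto $M_2$; this is arranged by first picking $F \twoheadrightarrow M_2$ and lifting, then enlarging $F$ by a finite free cover of $M_1$ so that the composite $F \to M$ is surjective.

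Finally, the concluding sentence — that the pseudocoherent (resp.\ fpd) modules form an exact tensor category — follows by assembling the pieces: closure under the "two out of three" property is exactly (a)+(b)+(c) (resp.\ (d)+(e)+(f)) specialized to $m = \infty$, which makes the full subcategory of pseudocoherent (resp.\ fpd) modules an exact subcategory of $R\text{-Mod}$ in the sense of Quillen (it is closed under extensions and under the relevant kernels and cokernels of admissible maps); and the tensor structure is inherited from Remark~\ref{R:pseudocoherent tensor product}, which shows the tensor product of two pseudocoherent (resp.\ fpd) modules is again pseudocoherent (resp.\ fpd), with the tensor product of the resolutions providing the needed resolution. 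I expect the main obstacle to be purely bookkeeping: making the syzygy sequences genuinely compatible at each stage of the induction (the compatible choice of surjections and the resulting application of the snake lemma), together with tracking the index shifts $m \mapsto m-1$ correctly through the four one-sided statements, since a sign error in the shift would break the induction. None of the individual steps is deep; the care is entirely in organizing the induction so that each of (b), (c), (e), (f) feeds into the next case cleanly.
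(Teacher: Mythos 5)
Your proposal is correct and is essentially the argument the paper has in mind: the paper gives no written-out proof, declaring the lemma ``straightforward'' and citing SGA6 and the Stacks Project, whose proofs are exactly your horseshoe-plus-dimension-shifting scheme (with the fpd parts equally available from the $\Ext$-vanishing characterization recorded in the definition, which you already invoke for the base cases and which sidesteps the fact that fpd modules need not be finitely generated). The index bookkeeping you flag does work out as you describe, so there is no gap to repair.
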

\begin{proof}
Straightforward, or see \cite[Proposition~2.5(b)]{sga6} or \cite[Tag~064R]{stacks-project} and \cite[Tag~065S]{stacks-project}.
\end{proof}

\begin{remark} \label{R:transfer pseudocoherence to quotient}
Let $R$ be a ring and suppose $f \in R$ is not a zero-divisor. Then $R/fR$ admits the projective resolution
\[
\cdots \to 0 \to R \stackrel{\times f}{\to} R \to 0
\]
and hence is a 1-fpd $R$-module. 

Now let $M$ be an $m$-pseudocoherent (resp.\ $m$-fpd) $R$-module. 
By Remark~\ref{R:pseudocoherent tensor product}, $M/fM = M \otimes_R (R/fR)$ is $m$-pseudocoherent (resp.\ $(m+1)$-fpd);
by Lemma~\ref{L:pseudocoherent 2 of 3}, $fM = \ker(M \to M/fM)$ is $(m-1)$-pseudocoherent (resp.\ $m$-fpd), and $M[f] = \{\bv \in M: f\bv = 0\} = \ker(\times f: M \to fM)$ is $(m-2)$-pseudocoherent (resp.\ $(m+1)$-fpd).

With the same notation, put $T = \bigcup_{n=1}^\infty M[f^n]$. If $M$ is $m$-pseudocoherent, then $T$ is not guaranteed to be $(m-2)$-pseudocoherent, but it is at least \emph{ind-$(m-2)$-pseudocoherent}.
\end{remark}

\begin{lemma} \label{L:stable sequence}
Let $R$ be a ring and choose $f \in R$.
Let $M_0 \to M_1 \to \cdots$ be a sequence of surjective morphisms of pseudocoherent $R$-modules such that for each $i$, the quotient of $M_i$ by its $f$-power-torsion submodule $T_i$ is pseudocoherent. If the sequences $M_{0,f} \to M_{1,f} \to \cdots$
and $M_0/fM_0 \to M_1/fM_1 \to \cdots$ both stabilize,
then so does the original sequence.
\end{lemma}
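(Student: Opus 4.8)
Write $T_i \subseteq M_i$ for the $f$-power-torsion submodule and $\overline{M}_i = M_i/T_i$, which by hypothesis is pseudocoherent; note $\overline{M}_i$ is $f$-torsion-free. The surjections $M_i \to M_{i+1}$ carry $T_i$ into $T_{i+1}$ (torsion maps to torsion), so they induce surjections $\overline{M}_i \to \overline{M}_{i+1}$ and $T_i \to $ (image in $M_{i+1}$), though the latter need not be onto $T_{i+1}$. We are given that $(M_{i,f})_i$ and $(M_i/fM_i)_i$ stabilize, and we must deduce that $(M_i)_i$ stabilizes.

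**Plan.** The strategy is to separate the torsion and torsion-free parts and control each using one of the two stabilizing hypotheses, then reassemble via the snake lemma. First I would analyze the torsion-free quotients $\overline{M}_i$: since each is $f$-torsion-free and pseudocoherent, localization at $f$ is injective on it, so $\overline{M}_i \hookrightarrow \overline{M}_{i,f} = M_{i,f}$ (using that inverting $f$ kills $T_i$). The stabilization of $(M_{i,f})_i$ then forces the surjective system $(\overline{M}_i)_i$ to stabilize: once $M_{i,f} \to M_{i+1,f}$ is an isomorphism, the surjection $\overline{M}_i \to \overline{M}_{i+1}$ sits inside an isomorphism after inverting $f$, and a surjection of $f$-torsion-free modules that becomes an isomorphism after inverting $f$ and fits in the relevant diagram must already be an isomorphism (its kernel is $f$-power-torsion inside a torsion-free module, hence zero). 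Second, I would bring in the $(M_i/fM_i)_i$ hypothesis to control the torsion submodules $T_i$: from the exact sequences $0 \to T_i \to M_i \to \overline{M}_i \to 0$, reducing mod $f$ and using the snake lemma against multiplication by $f$ gives a relationship between $T_i/fT_i$, $M_i/fM_i$, $\overline{M}_i/f\overline{M}_i$, and the $f$-torsion of $\overline{M}_i$ (which vanishes); once $\overline{M}_i$ has stabilized and $M_i/fM_i$ has stabilized, this pins down the stabilization of $T_i/fT_i$, and since each $T_i$ is $f$-power-torsion, $T_i/fT_i = 0$ would force $T_i = 0$ by a Nakayama-type argument — but we cannot assume $T_i$ is finitely generated, so instead I would argue that the surjection $T_i \to T_{i+1}'$ (the image) together with the stabilization mod $f$ and the torsion-free part gives, via the five lemma applied to the two short exact sequences, that $M_i \to M_{i+1}$ is an isomorphism for $i$ large.

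**Execution of the reassembly.** Concretely: choose $N$ so large that for $i \ge N$ both $M_{i,f} \to M_{i+1,f}$ and $M_i/fM_i \to M_{i+1,f}/fM_{i+1,f}$ are isomorphisms and (by the first step) $\overline{M}_i \to \overline{M}_{i+1}$ is an isomorphism. Fix $i \ge N$ and let $K = \ker(M_i \to M_{i+1})$. Since $\overline{M}_i \xrightarrow{\sim} \overline{M}_{i+1}$, the snake lemma applied to the surjection of short exact sequences $0 \to T_i \to M_i \to \overline{M}_i \to 0$ onto $0 \to T_{i+1} \to M_{i+1} \to \overline{M}_{i+1} \to 0$ shows $K$ maps isomorphically onto $\ker(T_i \to T_{i+1})$; in particular $K$ is $f$-power-torsion, so $K_f = 0$ (consistent with $M_{i,f} \xrightarrow{\sim} M_{i+1,f}$, which gives no info since $K$ was already torsion) but more usefully $K = K[f^n]$ for some local exponent. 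Now reduce $0 \to K \to M_i \to M_{i+1} \to 0$ modulo $f$: the hypothesis that $M_i/fM_i \to M_{i+1}/fM_{i+1}$ is an isomorphism, combined with the six-term exact sequence $\Tor_1^R(M_{i+1},R/fR) \to K/fK \to M_i/fM_i \to M_{i+1}/fM_{i+1} \to 0$, shows $K/fK$ is a quotient of $\Tor_1^R(M_{i+1}, R/fR) = M_{i+1}[f]$. Chasing the identification $K \cong \ker(T_i \to T_{i+1})$ and the surjectivity of $T_i \to T_{i+1}$ (hence $M_{i+1}[f] = T_{i+1}[f]$ is hit from $T_i[f]$), one gets that $K = fK$; since $K$ is $f$-power-torsion, $K = fK = f^2 K = \cdots = f^n K = 0$ for the exponent $n$ bounding the torsion locally. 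Hence $M_i \to M_{i+1}$ is injective, and being surjective by hypothesis, it is an isomorphism for all $i \ge N$, which is the claim.

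**Main obstacle.** The delicate point is that pseudocoherent modules need not be finitely generated, so one cannot invoke Nakayama's lemma to deduce $K = 0$ from $K = fK$ cheaply; the argument must instead extract from $K \cong \ker(T_i \to T_{i+1})$ the fact that $K$ is annihilated by a fixed power of $f$ (which follows because $T_i$, as the $f$-power-torsion submodule of a \emph{finitely generated} module over... wait — $T_i$ need not be finitely generated either). The genuinely careful step is therefore to show $K$ is annihilated by a \emph{single} power of $f$: this I would obtain not from $T_i$ directly but from the pseudocoherence of $\overline{M}_i$ and the observation that $K$, being simultaneously an $f$-power-torsion module and (via the snake lemma twice) a subquotient controlled by the stabilized data in degrees $\le 2$, is finitely generated — at which point Nakayama over the $f$-adic filtration applies. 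Pinning down this finiteness, using Lemma~\ref{L:pseudocoherent 2 of 3}(c) and the hypotheses that $\overline{M}_i$ and $M_i$ are pseudocoherent to see that $K$ (hence $T_i$, or at least the relevant kernel) is $1$-pseudocoherent, is where the real work lies.
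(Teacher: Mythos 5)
Your overall route is the paper's route: split off the $f$-power-torsion submodule $T_i$, control the torsion-free quotients by the localization hypothesis, and then try to kill the kernel using the mod-$f$ hypothesis and a bound on the torsion exponent. Two remarks on the parts you got right or worried about unnecessarily. First, the torsion-free step and the snake-lemma identification $K=\ker(M_i\to M_{i+1})\cong\ker(T_i\to T_{i+1})$, with $T_i\to T_{i+1}$ surjective, are fine. Second, the finiteness you defer to the end as ``where the real work lies'' is in fact a one-line step and is exactly how the paper opens its proof: since $M_i$ and $M_i/T_i$ are both pseudocoherent, Lemma~\ref{L:pseudocoherent 2 of 3} shows $T_i$ is pseudocoherent, in particular finitely generated, hence killed by a single power of $f$ (and these exponents are non-increasing along the surjections $T_i\to T_{i+1}$). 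So bounded torsion is not the difficulty.

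The genuine gap is the step producing $K=fK$. You infer it from the claim that ``$M_{i+1}[f]=T_{i+1}[f]$ is hit from $T_i[f]$'' because $T_i\to T_{i+1}$ is surjective; but a surjection of $f$-power-torsion modules need not be surjective on $f$-torsion submodules ($\ZZ/p^2\to\ZZ/p$ is onto yet zero on the $p$-torsion submodule $p\ZZ/p^2$), and in addition the identification $\Tor_1^R(M_{i+1},R/fR)=M_{i+1}[f]$ requires $f$ to be a non-zero-divisor, which the lemma does not assume. Worse, no chase confined to the single map $M_i\to M_{i+1}$ can yield $K=0$: take $R=\ZZ$, $f=p$, $M_i=\ZZ\oplus\ZZ/p^2\to M_{i+1}=\ZZ\oplus\ZZ/p$ (identity on the first factor, reduction on the second, identity maps afterwards). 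Every input available to your argument at index $i$ holds there --- a surjection of pseudocoherent modules which is an isomorphism both after inverting $p$ and modulo $p$, with $\overline M_i\cong\overline M_{i+1}$, $T_i\to T_{i+1}$ onto, $K\subseteq T_i$ killed by $p^2$, and $K\subseteq pM_i$ --- yet $K=p\ZZ/p^2\neq 0$ and $K\neq pK$ (the connecting map you need to vanish is nonzero). This example does not contradict the lemma's conclusion (the sequence stabilizes one step later), but it does refute your assertion that $M_i\to M_{i+1}$ is injective for every $i\geq N$, so any repair must genuinely exploit the infinite sequence (for instance the uniform torsion bound together with the chain of surjections $T_i\to T_{i+1}$) rather than the two isomorphisms at a single index. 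For what it is worth, the same example shows that the terse ``infinitely $f$-divisible'' claim in the paper's own proof cannot be read off from the data at one index either, so this is precisely the point that demands the most care.
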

\begin{proof}
By omitting initial terms, we may assume that the sequences $M_{0,f} \to M_{1,f} \to \cdots$
and $M_0/fM_0 \to M_1/fM_1 \to \cdots$ are both constant; in this case, we prove that the original sequene is also constant.
By Lemma~~\ref{L:pseudocoherent 2 of 3}, $T_i$ is finitely generated, and hence annihilated by $f^m$ for some positive integer $m$. Since $M_{i,f} \cong M_{i+1,f}$,
we have  $\ker(M_i \to M_{i+1}) \subseteq T_i$;
on the other hand, $M_i/fM_i \cong M_{i+1}/fM_{i+1}$, so any element of $\ker(M_i \to M_{i+1})$ is infinitely $f$-divisible. It follows that the kernel vanishes, proving the claim.
\end{proof}

\begin{remark} \label{R:noetherian pseudoflat}
By Lemma~\ref{L:pseudocoherent 2 of 3}, the following conditions are equivalent.
\begin{enumerate}
\item[(a)]
The ring $R$ is coherent, i.e., any finitely generated ideal is finitely presented. For instance, this holds if $R$ is noetherian.
\item[(b)]
Any finitely presented $R$-module is pseudocoherent.
\item[(c)]
The pseudocoherent $R$-modules form an abelian category.
\end{enumerate}
On the other hand, even when $R$ is noetherian, 
the only case where every pseudocoherent $R$-module $M$ is guaranteed to be fpd is when $R$ is regular of finite dimension $m$, in which case
$M$ is guaranteed to be $m$-fpd
 \cite[Corollary~19.6]{eisenbud}.
\end{remark}

\begin{remark} \label{R:coherent conditions}
There are numerous ways to characterize coherent rings; see for instance \cite[Theorem~2.3.2]{graz}.
A consequence of one of these characterizations is that if $\{R_i\}$ is a directed system of coherent rings with flat transition maps, then $\varinjlim_i R_i$ is again coherent \cite[Theorem~2.3.3]{graz}. For example, a polynomial ring in possibly infinitely many variables over a field is coherent, as is the perfect closure of such a polynomial ring
when the base field is of characteristic $p$.
\end{remark}

\begin{remark}
If the ring $R$ is separated and complete with respect to a non-zero divisor $f$ and $R/fR$ is noetherian, then so is $R$. Namely, the associated graded ring $\Gr R = \bigoplus_{n=0}^\infty f^n R /f^{n+1} R$ is isomorphic to $R = (R/fR) \llbracket T \rrbracket$ (via the map taking the class of $f^n R$ in $f^n R /f^{n+1} R$ to $T^n$), which is a completion of the noetherian ring $(R/fR)[T]$ and is thus noetherian \cite[Theorem~7.1]{eisenbud}. 
Given an ideal $I \subseteq R$, let $J \subseteq \Gr R$ be the ideal of leading terms, i.e., the ideal generated by the class in $f^n R/f^{n+1} R$ of each element of $I \cap f^n R$ for each $n \geq 0$. One then checks easily that any set of elements of $I$ lifting a finite set of generators of $J$ constitutes a finite generating set for $I$.

By contrast, there is no corresponding statement for coherent rings. This is because
there exists a coherent ring $R$ such that $R \llbracket T \rrbracket$ is not coherent (see \cite[\S 8.1]{graz}).
\end{remark}

\begin{remark} \label{R:Bezout torsion}
Let $R$ be a Pr\"ufer domain (an integral domain in which every finitely generated ideal is projective). Then $R$ is coherent by Remark~\ref{R:coherent conditions}, so by Remark~\ref{R:noetherian pseudoflat} every finitely presented $R$-module is pseudocoherent. In fact, one can make some more precise statements.
\begin{enumerate}
\item[(a)]
Any finitely generated torsion-free $R$-module is projective;
see for example \cite{kaplansky-prufer}.
\item[(b)]
Let $M$ be a finitely generated (resp.\ finitely presented) $R$-module. Then the torsion submodule $T$ of $M$ is finitely generated (resp.\ finitely presented) and the quotient $M/T$ is finite projective (by (a)).
\item[(c)]
Any finitely presented $R$-module is 1-fpd (by (b)).
\end{enumerate}
\end{remark}

\begin{remark} \label{R:Fitting pseudocoherent}
We do not know if the Fitting ideals of a pseudocoherent module 
(as discussed in \cite[Definition~1.1.2]{part1})
are themselves pseudocoherent. One complication is that the formation of Fitting ideals is compatible with extension of scalars, but not with restriction of scalars along a quotient: if $I$ is an ideal of a ring $R$ and $M$ is a finitely generated $R$-module annihilated by $I$, the Fitting ideals of $M$ need not contain $I$. In particular, they are not determined by their images in $R/I$, which are the Fitting ideals of $M$ as an $R/I$-module.
\end{remark}

We next consider the effect of base extension on the pseudocoherent and fpd conditions.
\begin{remark} \label{R:need flatness}
For $R \to S$ a ring homomorphism, it is not true in general that the base extension of a pseudocoherent (resp.\ fpd) $R$-module is a pseudocoherent (resp.\ fpd) $S$-module.
However, the following statements do hold.
\begin{itemize}
\item
For $m \leq 1$, the base extension of an $m$-pseudocoherent $R$-module is an $m$-pseudocoherent $S$-module.
However, this fails in general for $m \geq 2$.
\item
If $S$ is flat, 
then base extension takes $m$-pseudocoherent (resp.\ $m$-fpd) $R$-modules to $m$-pseudocoherent (resp.\ $m$-fpd) $S$-modules,
and the base extension functor on pseudocoherent $R$-modules is exact.
\item
If $S$ is coherent, then base extension takes pseudocoherent $R$-modules to pseudocoherent $S$-modules by Remark~\ref{R:noetherian pseudoflat}. 
\item
If $S$ is noetherian and regular, then base extension takes fpd $R$-modules to fpd $S$-modules by Remark~\ref{R:noetherian pseudoflat}.
\item
If $S$ is pseudocoherent as an $R$-module, then base extension takes pseudocoherent $R$-modules to pseudocoherent $S$-modules
\cite[Tag~064Z]{stacks-project},
and an $S$-module is pseudocoherent if and only if it is pseudocoherent as an $R$-module.
Additionally, if $M$ is a pseudocoherent $R$-module, then
$\Tor_i^R(M, S)$ is a pseudocoherent $S$-module for all $i \geq 0$.
\item
If $S$ is fpd as an $R$-module, then base extension need not take fpd $R$-modules to fpd $S$-modules. More precisely, if $M$ is an fpd $R$-module, then $M \otimes_R S$ is fpd as an $R$-module by Remark~\ref{R:pseudocoherent tensor product}, but need not be an fpd $S$-module even if $S$ is noetherian (see Example~\ref{exa:not fpd}).
\end{itemize}
\end{remark}

\begin{example} \label{exa:not fpd}
Let $K$ be a field and put $R = K[x,y]$ and $S = R/(xy)$. By 
Remark~\ref{R:noetherian pseudoflat}, every finitely generated $R$-module is fpd.
In particular, $S$ and the ideal $I = (x,y)$ of $S$ are fpd as $R$-modules, but the latter is fpd not as an $S$-module: from the projective resolution
\[
\cdots\, S^2 \stackrel{(a,b) \to (by, ax)}{\longrightarrow} S^2 \stackrel{(a,b) \to (by, ax)}{\longrightarrow} S^2 \stackrel{(a,b) \mapsto ax+by}{\longrightarrow} I \to 0
\]
we see that $\Tor_i^S(I, S/I) = (S/I)^2 \neq 0$ for all $i>0$.
\end{example}

\begin{remark} \label{R:pseudoflat}
Let $R$ be a ring and let $S$ be an $R$-module; then $S$ is flat over $R$ if and only if 
$\Tor_1^R(P, S) = 0$ for every 1-pseudocoherent $R$-module $P$. For $m>1$, to ensure that base extension from $R$ to $S$ defines an exact functor from $m$-pseudocoherent $R$-modules to $m$-pseudocoherent $S$-modules, as in Remark~\ref{R:need flatness}, it would be sufficient to require that $\Tor_1^R(P,S) = 0$ for every $m$-pseudocoherent $R$-module $P$. However, this would not imply the vanishing of $\Tor_i^R(P,S)$ for $i>1$ for such $P$.
\end{remark}

\subsection{Modules over open mapping rings}

We record some general definitions and statements about topological rings and modules, particularly those for which the Banach open mapping theorem holds.

\begin{defn}
A topological abelian group is \emph{metrizable} if it is Hausdorff and admits a countable fundamental system of open neighborhoods of 0. It is easily verified that this is equivalent to the condition that the topology is induced by some (not necessarily nonarchimedean) absolute value.
\end{defn}

\begin{defn} \label{D:natural topology}
Let $R$ be a topological ring.
By the \emph{natural topology} on a finitely generated $R$-module $M$, we will mean the quotient topology induced by some surjection $R^n \to M$ of $R$-modules for some nonnegative integer $n$ (for the product topology on $R^n$). By emulating the proof of \cite[Lemma~2.2.6]{part1}, one sees that this topology does not depend on the choice of the surjection $R^n \to M$. 
Beware that this topology is not guaranteed to be Hausdorff; however, if it is Hausdorff, then it is metrizable as long as the topology on $R$ is metrizable.
\end{defn}

\begin{remark} \label{R:finitely generated strict}
Let $R$ be a complete metrizable topological ring. Let $M$ be a finitely generated $R$-module equipped with its natural topology. For any exact sequence $0 \to K \to F \to M \to 0$ of $R$-modules with $F$ finite free, the map $F/K \to M$ is a homeomorphism by the definition of the natural topology.
If $M$ is Hausdorff, then $K$ is closed,
so $M$ is also complete; in general, $M$ surjects onto its completion.
\end{remark}

\begin{remark}
For $R,M$ as in Remark~\ref{R:finitely generated strict}, we are unaware of any direct relationship between $M$ being complete for the natural topology and the Fitting ideals of $M$ being closed in $R$, outside of the trivial fact that finite projective $R$-modules are always complete for the natural topology.
\end{remark}

\begin{defn} \label{D:power series module}
For $M$ an abelian group, let $M \llbracket T \rrbracket$ 
(resp.\ $M \llbracket T,T^{-1} \rrbracket$) be the group of formal sums 
$\sum_{n=0}^\infty m_n T^n$ (resp.\ $\sum_{n \in \ZZ} m_n T^n$) with $m_n \in M$.
Let $M[T]$ (resp.\ $M[T,T^{-1}]$) be the subgroup of $M \llbracket T \rrbracket$
(resp.\ $M \llbracket T,T^{-1} \rrbracket$) consisting of formal sums with all but finitely many coefficients equal to 0.

Now suppose $M$ is a topological abelian group. 
Let $M\{T\}$ (resp.\ $M\{T,T^{-1}\}$) be the subgroup of $M \llbracket T \rrbracket$
(resp.\ $M \llbracket T,T^{-1} \rrbracket$) consisting of formal sums 
$\sum_{n=0}^\infty m_n T^n$ (resp.\ $\sum_{n \in \ZZ} m_n T^n$) such that the sequence
$\{m_n\}_{n=0}^\infty$ (resp.\ the sequences $\{m_n\}_{n=0}^\infty$ and
$\{m_{-n}\}_{n=0}^\infty$) are null sequences in $M$. If $M$ is complete and metrizable, then
$M\{T\}$ (resp.\ $M\{T,T^{-1}\}$) may be identified with the completion
of $M[T]$ (resp.\ $M[T,T^{-1}]$)
with respect to the topology of uniform convergence.
\end{defn}

\begin{lemma} \label{L:complete descent}
Let $R \to S$ be a morphism of complete metrizable topological rings which splits in the category of complete topological $R$-modules. Then $R \to S$ is an effective descent morphism for finite projective modules.
\end{lemma}
\begin{proof}
The splitting condition ensures that $R \to S$ is universally injective: for every complete $R$-module $M$, the map $M \to M \widehat{\otimes}_R S$ is injective. We may thus apply the general descent theorem of Joyal--Tierney \cite{joyal-tierney}, or even just emulate the corresponding argument for ordinary modules over a ring \cite[Tag~08WE]{stacks-project}.
\end{proof}

\begin{defn} \label{D:open mapping}
An \emph{open mapping ring} is a topological ring $R$ with the following additional properties.
\begin{enumerate}
\item[(a)]
The ring $R$ admits a null sequence composed of units.
\item[(b)]
The topology on $R$ is metrizable.
\item[(c)]
The topology on $R$ is complete (by which we will always mean Hausdorff and complete). By (b), to check completeness we need only check the convergence of Cauchy sequences, rather than more general nets.
\end{enumerate}
For example, any Banach ring (under our running conventions; see Convention~\ref{conv:Banach}) is an open mapping ring.
\end{defn}

\begin{remark} \label{R:series multiplication}
In Definition~\ref{D:power series module}, if $M$ is a topological module over an open mapping ring $R$, then for any $f \in R$, multiplication by $1-fT$ is bijective on $M \llbracket T \rrbracket$ and hence injective on $M\{T\}$.

By contrast, it is less obvious that $T-f$ is injective on $M\{T\}$. However, this does hold if $M$ is $f$-torsion-free, or more generally if $M[f^n] = M[f^{n+1}]$ for some $n$. 
\end{remark}

For open mapping rings, we have the following version of the open mapping theorem.
\begin{theorem}[Open mapping theorem] \label{T:open mapping}
Let $R$ be an open mapping ring.
Let $M$ be a complete metrizable topological $R$-module.
Let $N$ be a topological $R$-module.
Let $f: M \to N$ be a homomorphism of $R$-modules.
\begin{enumerate}
\item[(a)]
If $M$ is finitely generated as an $R$-module, then $f$ is continuous.
\item[(b)]
If $N$ is complete and metrizable and $f$ is continuous and surjective, then $f$ is strict.
\end{enumerate}
\end{theorem}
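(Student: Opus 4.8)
The plan is to prove part (b) first, by the classical Baire-category-plus-successive-approximation proof of the open mapping theorem, and then to obtain (a) as a formal consequence. The only place where the general setting of an open mapping ring behaves differently from the case of Banach spaces over a field is that there is no ground field with which to dilate a neighborhood and exhaust $M$; the substitute is precisely axiom (a), the existence of a null sequence $\{u_n\}$ of units, which will be used to write $M = \bigcup_n u_n^{-1}V$ for any neighborhood $V$ of $0$. For (b), I would fix a neighborhood $U$ of $0$ in $M$ and reduce to showing $f(U)$ is a neighborhood of $0$ in $N$: since $f$ is additive and surjective, this makes the continuous bijection $M/\ker f \to N$ open, hence a homeomorphism, which is the assertion that $f$ is strict. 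Using metrizability of $M$, I would choose a decreasing countable neighborhood basis $\{V_k\}$ of $0$ consisting of symmetric sets with $V_1+V_1+V_1 \subseteq U$ and $V_{k+1}+V_{k+1}\subseteq V_k$ for all $k$; then $\overline{V_k}\subseteq V_k+V_k$ and $\sum_{k=L+1}^K V_k \subseteq V_L$ for $K>L$.

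The first main step is the Baire argument. For any neighborhood $V$ of $0$ in $M$ and any $m\in M$, continuity of scalar multiplication forces $u_n m \to 0$, so $m\in u_n^{-1}V$ for $n$ large; hence $M=\bigcup_n u_n^{-1}V$ and, applying the $R$-linear surjection $f$, $N=\bigcup_n u_n^{-1}f(V)$. Since $N$ is complete and metrizable it is a Baire space, and multiplication by a unit is a self-homeomorphism of $N$, so $\overline{f(V)}$ has nonempty interior; then $\overline{f(V)}-\overline{f(V)}\subseteq \overline{f(V+V)}$ is a neighborhood of $0$. Applying this with $V=V_{k+1}$ shows $\overline{f(V_k)}$ is a neighborhood of $0$ in $N$ for every $k\geq 1$.

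The second main step removes the closures, using completeness of $M$. I would pick a countable neighborhood basis $\{Z_k\}$ of $0$ in $N$, set $W_k:=\overline{f(V_k)}\cap Z_k$ (a neighborhood of $0$ with $W_k\subseteq Z_k$), and, given $y\in\inte(W_1)$, build inductively $x_k\in V_k$ with $y-f(x_1+\cdots+x_k)\in W_{k+1}$: at each stage the residual lies in $\overline{f(V_k)}$, so a suitable translate of the neighborhood $W_{k+1}$ of $0$ meets $f(V_k)$. The partial sums $s_K=x_1+\cdots+x_K$ are Cauchy because $s_K-s_L\in\sum_{k=L+1}^K V_k\subseteq V_L$, hence converge in the complete module $M$ to some $x$; the tail estimate $x-x_1\in\overline{V_1}\subseteq V_1+V_1$ gives $x\in V_1+V_1+V_1\subseteq U$, and continuity of $f$ together with the residuals tending to $0$ in $N$ (since $W_{K+1}\subseteq Z_{K+1}$) gives $f(x)=y$. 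Thus $f(U)\supseteq\inte(W_1)$, proving (b). For (a), given generators $m_1,\dots,m_n$ of $M$ I would consider the $R$-linear surjection $g\colon R^n\to M$, $(r_i)\mapsto\sum_i r_i m_i$, which is continuous because $M$ is a topological $R$-module and whose source $R^n$ is complete and metrizable because $R$ is; part (b) applies to $g$ and shows $g$ is open. Since $f\circ g\colon R^n\to N$ is likewise continuous and $g$ is surjective, $f^{-1}(V)=g\big((f\circ g)^{-1}(V)\big)$ is open for every open $V\subseteq N$, so $f$ is continuous.

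The delicate point is the bookkeeping in the second step of (b): one must choose the neighborhood bases in $M$ and in $N$ so that the approximating series simultaneously converges \emph{inside} the prescribed neighborhood $U$ and has residuals that genuinely tend to $0$, the latter being exactly what makes the limit an exact preimage of $y$ rather than merely an approximate one. Everything else is routine once one identifies where axiom (a) enters, namely in the exhaustion $M=\bigcup_n u_n^{-1}V$ that feeds the Baire argument; the deduction of (a) from (b) and the verification that $R^n$ inherits completeness and metrizability from $R$ are formalities.
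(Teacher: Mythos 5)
Your argument is correct, and it is essentially the argument the paper has in mind: the paper itself gives no proof but simply cites Henkel (Theorems 1.6 and 1.17), whose point is exactly that the null sequence of units substitutes for scalar dilation in the classical Baire-category/successive-approximation proof, which is what you reconstruct. Two small points of bookkeeping deserve to be made explicit. First, in the approximation step you need the neighborhood basis $\{Z_k\}$ of $0$ in $N$ to be decreasing (replace $Z_k$ by $Z_1\cap\cdots\cap Z_k$), since $y-f(s_K)\in Z_{K+1}$ alone does not force the residuals to tend to $0$ if the basis is not nested; you flag this yourself, and it is harmless. Second, in the deduction of (a) the word ``likewise'' should be unpacked: $f$ is not yet known to be continuous, so the continuity of $f\circ g$ cannot come from composing; it comes from the explicit formula $(r_1,\dots,r_n)\mapsto \sum_i r_i\,f(m_i)$, which is continuous purely by the topological module axioms on $N$ with the fixed elements $f(m_i)$. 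With that reading, the identity $f^{-1}(V)=g\bigl((f\circ g)^{-1}(V)\bigr)$ (valid because $g$ is surjective) together with the openness of $g$ from part (b) does give continuity of $f$, and the reduction of (a) to (b) is a clean formal step that matches the structure of the cited source.
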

\begin{proof}
See \cite[Theorem~1.6, Theorem 1.17]{henkel}.
\end{proof}

\begin{remark}
Theorem~\ref{T:open mapping} suffices for our purposes because by convention, we require every Banach ring to contain a topologically nilpotent unit. However, Theorem~\ref{T:open mapping} would remain true (with some small modifications to the proof) if we only assumed that the topologically nilpotent elements of $R$ generate the unit ideal; this would correspond to considering Banach rings which are \emph{free of trivial spectrum} as in 
\cite[Remark~2.3.9]{part1} (or equivalently, considering adic Banach rings whose adic spectra are \emph{analytic} in the sense of Huber). We will not pursue this point further here, but see \cite{kedlaya-aws} for some discussion.
\end{remark}

For Banach rings, we have the following corollary of the open mapping theorem.
\begin{cor} \label{C:closure finitely generated}
Let $R$ be a Banach ring. Let $M$ be a finitely generated $R$-module which is complete for its natural topology. Let $N$ be an $R$-submodule of $M$ with completion $\widehat{N}$. If $\widehat{N}$ is finitely generated, then $N = \widehat{N}$ is closed in $M$ and complete for the natural topology.
\end{cor}
\begin{proof}
By Theorem~\ref{T:open mapping}, the subspace topology on $\widehat{N}$ coincides with the natural topology; we may thus reduce to the case $M = \widehat{N}$. Choose module generators $\bv_1,\dots,\bv_n$ of $M$; we can then find $\bw_1,\dots,\bw_n \in N$ such that $\bw_j = \sum_i A_{ij} \bv_i$ for some matrix $A$ over $R$ such that
$A-1$ is topologically nilpotent (here we use the fact that $R$ is a Banach ring rather than a general open mapping ring). But then the matrix $A$ is invertible, so $M \subseteq N$ and the claim follows.
\end{proof}

This has the following consequence which we have used previously \cite[Remark~2.2.11]{part1}.
\begin{cor} \label{C:noetherian complete}
Let $R$ be a Banach ring whose underlying ring is noetherian. Then every finitely generated $R$-module is complete for its natural topology.
\end{cor}
\begin{proof}
Let $P$ be a finitely generated $R$-module. Form an exact sequence
\[
0 \to M \to N \to P \to 0
\]
of $R$-modules in which $N$ is finite free (and hence complete for its natural topology). Since $R$ is noetherian, both $M$ and its completion for the natural topology of $N$ are finitely generated $R$-modules. We may thus apply Corollary~\ref{C:closure finitely generated} to deduce that $M$ is complete within $N$. Conseqently, $P$ is complete for the quotient topology induced from $N$, which by definition is its natural topology.
\end{proof}

\begin{defn}
Let $R$ be an open mapping ring. An $R$-module $M$ is \emph{strictly $m$-pseudocoherent} (resp.\ \emph{strictly $m$-fpd}) if it is $m$-pseudocoherent (resp.\ $m$-fpd) and complete for the natural topology. 
The completeness is a genuine condition even for $m = \infty$: for example, one can find examples where $R$ is a Banach ring, $f \in R$ is not a zero-divisor, and the principal ideal $fR$ is not closed in $R$ (see \cite[Proposition~3.14]{mihara} for an explicit construction). In this case, $R/fR$ is 1-fpd but not complete for the natural topology.
\end{defn}

\subsection{Analytic group actions}
\label{subsec:group actions}

It will be helpful later to distinguish between \emph{continuous} and \emph{analytic} actions of a $p$-adic Lie group on a topological module, to introduce a comparison of cohomology due to Lazard \cite{lazard-groups}, and to introduce a vanishing criterion for cohomology groups taken from \cite{kedlaya-hs}. In particular, this condition will be needed
to check some of the conditions on towers of adic spaces considered in
\S\ref{sec:axiomatic}. (It will also be useful for the case of Lubin-Tate towers, to be treated in a subsequent paper.)

\begin{hypothesis}
Throughout \S\ref{subsec:group actions},
let $\Gamma$ be a $p$-adic Lie group, or more generally a \emph{profinite $p$-analytic group} in the sense of \cite[III.3.2.2]{lazard-groups}.
\end{hypothesis}

\begin{remark}
If the group $\Gamma$ acts compatibly on a ring $R$ and an $R$-module $M$, we have the ``Leibniz rule'' identity
\begin{equation} \label{eq:Leibniz rule}
(\gamma-1)(r \bv)
= (\gamma-1)(r) \bv + \gamma(r)
(\gamma-1)(\bv) \qquad (r \in R, \bv \in M).
\end{equation}
We will use this identity frequently in order to reduce statements about the action of $\Gamma$ on $M$ to statements about the actions on $R$ and on module generators of $M$.
\end{remark}

\begin{defn}
Let $A$ be the completion of the group ring $\Zp[\Gamma]$ with respect to the $p$-augmentation ideal $\ker(\Zp[\Gamma] \to \Fp)$. Put $I = \ker(A \to \Fp)$;
we view $A$ as a filtered ring using the $I$-adic filtration.
We also define the associated valuation:
for $x \in A$, let $w(A;x)$ be the supremum of those nonnegative integers $i$ for which $x \in I^i$.
\end{defn}

\begin{defn} \label{D:analytic group action}
An \emph{analytic $\Gamma$-module} is a left $A$-module $M$ complete with respect to a valuation $w(M; \bullet)$ for which there exist $a>0, c \in \RR$ such that
\[
w(M; xy) \geq a w(A; x) + w(M; y) + c \qquad (x \in A, y \in M).
\]
Equivalently, there exist 
an open subgroup $\Gamma_0$ of $\Gamma$ and a constant $c > 0$ such that
\begin{equation} \label{eq:analytic condition}
w(M; (\gamma-1)y) \geq w(M; y) + c \qquad (\gamma \in \Gamma_0, y \in M);
\end{equation}
it then follows formally that for any $c>0$, there exists an open subgroup $\Gamma_0$ of $\Gamma$ for which
\eqref{eq:analytic condition} holds.
\end{defn}

\begin{example} \label{exa:lazard finite}
Let $M$ be a free $\Zp$-module of finite rank on which $\Gamma$ acts continuously. Then $M$ is an analytic $A$-module for the valuation defined by any basis; see \cite[Proposition~V.2.3.6.1]{lazard-groups}.
\end{example}

\begin{prop} \label{P:extend analyticity along affinoid}
Let $R \to S$ be an affinoid homomorphism of Banach rings
equipped with continuous $\Gamma$-actions.
If the action of $\Gamma$ on $R$ is analytic for the function $w(R; \bullet) = -\log \left| \bullet \right|$, then the same holds for $S$.
\end{prop}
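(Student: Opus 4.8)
The plan is to deduce this from the defining presentation of an affinoid homomorphism together with the Leibniz rule, propagating the analyticity estimate from $R$ to $S$ monomial by monomial. First I would recall that $R \to S$ being affinoid means $S$ is obtained as a strict quotient of a weighted Tate algebra $R\{T_1/\rho_1,\dots,T_n/\rho_n\}$ ($\rho_1,\dots,\rho_n > 0$), carrying the residue norm; write $t_i$ for the image of $T_i$, so $|t_i| \le \rho_i$, and note that every $s \in S$ admits, for each $\epsilon > 0$, a representation $s = \sum_{I \in \ZZ_{\ge 0}^n} r_I t^I$ with $r_I \in R$, $|r_I|\rho^I \to 0$, and $\sup_I |r_I|\rho^I \le (1+\epsilon)|s|$, where $t^I = t_1^{i_1}\cdots t_n^{i_n}$ and $\rho^I = \rho_1^{i_1}\cdots\rho_n^{i_n}$.

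Next I would fix the analyticity data: the hypothesis on $R$ provides $\lambda \in (0,1)$ and an open subgroup $\Gamma_0 \le \Gamma$ with $|(\gamma-1)r| \le \lambda|r|$ for all $r \in R$ and $\gamma \in \Gamma_0$, while continuity of the $\Gamma$-action on $S$ lets me shrink $\Gamma_0$ so that also $|(\gamma-1)t_i| \le \lambda\rho_i$ for $i = 1,\dots,n$. Using the Leibniz rule \eqref{eq:Leibniz rule} to reduce the action on $S$ to the actions on $R$ and on the generators $t_i$ (as indicated in the remark following that identity, and iterating it for products within $S$), one gets for $\gamma \in \Gamma_0$ first $|\gamma(t_i)| = |(\gamma-1)t_i + t_i| \le \rho_i$, then $|(\gamma-1)(t^I)| \le \lambda\rho^I$ for every monomial, and finally $|(\gamma-1)(r_I t^I)| \le |r_I|\cdot\lambda\rho^I$ (using in addition $|\gamma(r_I)| \le \max(\lambda|r_I|,|r_I|) = |r_I|$). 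Summing over $I$ — the series converges in the complete ring $S$ because its terms are dominated by $\lambda|r_I|\rho^I \to 0$, and $\gamma-1$ is continuous so the sum is $(\gamma-1)s$ — and invoking the ultrametric inequality gives $|(\gamma-1)s| \le \lambda\sup_I|r_I|\rho^I \le \lambda(1+\epsilon)|s|$; letting $\epsilon \to 0$ yields $|(\gamma-1)s| \le \lambda|s|$ for all $s \in S$ and $\gamma \in \Gamma_0$, which is condition \eqref{eq:analytic condition} for $M = S$ with $w(S;\bullet) = -\log|\bullet|$.

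I expect the argument to be essentially routine; the one point that needs care is that the $\Gamma$-action on $S$ is only semilinear over $R$, so one cannot estimate $(\gamma-1)$ on $\sum_I r_I t^I$ by acting coefficientwise — the Leibniz rule is precisely the device that lets the given estimate on $R$ be combined with the purely topological, weight-relative estimate on the generators $t_i$, and it forces the bookkeeping with the twisted terms $\gamma(t_i)$ and $\gamma(r_I)$. Two lesser points to get right: the residue norm is an infimum over representations and need not be attained, which is why one argues up to the factor $1+\epsilon$; and the whole argument rests on having the presentation $S = R\{T_1/\rho_1,\dots,T_n/\rho_n\}/J$ with its residue norm, so one should confirm that this is exactly what the definition of ``affinoid homomorphism'' supplies. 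No property of $\Gamma$ beyond continuity of the action is used.
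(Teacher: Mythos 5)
Your proof is correct and takes essentially the same route as the paper: present $S$ via a strict surjection from a (weighted) Tate algebra over $R$, use continuity of the action to control the generators $t_i$ on a smaller open subgroup, and propagate the estimate to all of $S$ with the Leibniz rule \eqref{eq:Leibniz rule}. The only adjustment is the point you flagged yourself: ``affinoid'' supplies a \emph{strict} surjection, so the norm on $S$ is merely equivalent to the residue norm, which costs a multiplicative constant in your final bound; this is harmless since \eqref{eq:analytic condition} only requires some $c>0$ (one may also first enlarge $c$ on $R$, as in the definition), exactly as in the paper's phrase ``for a possibly different but still positive value of $c$.''
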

\begin{proof}
By definition, there exists a strict surjection $R\{T_1,\dots,T_n\}\to S$ for some $n$.
By hypothesis, for any $c>0$,  there exists an open subgroup $\Gamma_0$ of $\Gamma$ for which \eqref{eq:analytic condition} holds for all $y \in R$. Since the action of $\Gamma$ on $S$ is continuous, we can also choose $\Gamma_0$ so that \eqref{eq:analytic condition} holds when $y$ is equal to the image of any of $T_1,\dots,T_n$ in $S$.
For $c$ suitably large, we may use \eqref{eq:Leibniz rule} and the strictness of $R\{T_1,\dots,T_n\}\to S$ to deduce \eqref{eq:analytic condition}  (for a possibly different but still positive value of $c$) for all $y \in S$.
\end{proof}

\begin{defn}
Let $M$ be an analytic $\Gamma$-module. A cochain $\Gamma^i \to M$ is \emph{analytic} if
for every homeomorphism between an open subspace $U$ of $\Gamma^i$ and an open subspace $V$ of $\Zp^n$ for some nonnegative integer $n$, the induced function $V \to M$ is locally analytic (i.e., locally represented by a convergent power series expansion).
By the proof of \cite[Proposition~V.2.3.6.3]{lazard-groups}, the analytic cochains form a subcomplex $C^{\bullet}_{\an}(\Gamma,M)$ of $C^{\bullet}_{\cont}(\Gamma,M)$; we thus obtain \emph{analytic cohomology} groups $H^i_{\an}(\Gamma,M)$ and natural homomorphisms $H^i_{\an}(\Gamma,M) \to H^i_{\cont}(\Gamma,M)$.
\end{defn}

\begin{theorem}[Lazard] \label{T:lazard analytic}
If $M$ is an analytic $\Gamma$-module, then the inclusion $C^{\bullet}_{\an}(\Gamma,M) \to C^{\bullet}_{\cont}(\Gamma,M)$ is a quasi-isomorphism. That is, the continuous cohomology of $M$ can be computed using analytic cochains.
\end{theorem}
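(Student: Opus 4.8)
The plan is to reduce to Lazard's comparison theorem for uniform pro-$p$ groups by a Hochschild--Serre argument, treating the uniform case itself as the one genuinely analytic input rather than reproving it from scratch. Throughout, the key leverage is the closing remark of Definition~\ref{D:analytic group action}: an analytic $\Gamma$-module remains analytic, with an arbitrarily large constant $c$ in \eqref{eq:analytic condition}, after restriction to any open subgroup, so there is no loss in shrinking $\Gamma$.

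\emph{Reduction to a uniform open subgroup.} First I would choose an open normal subgroup $\Gamma_1 \trianglelefteq \Gamma$ which is uniform pro-$p$ (equivalently, $p$-valuable of finite rank), with $d = \dim \Gamma_1 = \dim \Gamma$; restriction of scalars along $\Zp[\Gamma_1] \to \Zp[\Gamma]$ makes $M$ an analytic $\Gamma_1$-module. Both $C^\bullet_{\cont}(\Gamma,-)$ and $C^\bullet_{\an}(\Gamma,-)$ arise from Hochschild--Serre double complexes $C^\bullet_{?}(\Gamma/\Gamma_1, C^\bullet_{?}(\Gamma_1, M))$; the point is that $\Gamma/\Gamma_1$ is a finite discrete group, hence a $0$-dimensional $p$-adic manifold, so its analytic and continuous cochains literally coincide and $C^q_{\an}(\Gamma_1,M)$ is automatically a (complete) module over it. The inclusion $C^\bullet_{\an} \hookrightarrow C^\bullet_{\cont}$ is a morphism of these double complexes, hence induces a morphism of first-quadrant cohomological spectral sequences whose $E_2$-level map is
\[
H^i\bigl(\Gamma/\Gamma_1, H^j_{\an}(\Gamma_1,M)\bigr) \longrightarrow H^i\bigl(\Gamma/\Gamma_1, H^j_{\cont}(\Gamma_1,M)\bigr).
\]
If the theorem holds for $\Gamma_1$, the coefficient map is an isomorphism, hence so is the displayed map, and by the comparison theorem for first-quadrant spectral sequences (the filtrations on the abutments being finite in each degree) the map $H^\bullet_{\an}(\Gamma,M) \to H^\bullet_{\cont}(\Gamma,M)$ is an isomorphism. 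We are thus reduced to $\Gamma$ uniform.

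\emph{The uniform case.} Here I would fix a minimal ordered system $\gamma_1,\dots,\gamma_d$ of topological generators of the uniform group $\Gamma$; the associated coordinates of the second kind give a homeomorphism $\Zp^d \xrightarrow{\sim} \Gamma$ and present $A$ as a complete filtered algebra topologically generated by $T_i = \gamma_i - 1$ whose associated graded (for the augmentation filtration) is commutative. Over $A$ one has an explicit length-$d$ free resolution of the trivial module, the noncommutative analogue of the Chevalley--Eilenberg (Koszul) complex built from the operators $\gamma_i - 1$; computing continuous cohomology against it realizes $H^\bullet_{\cont}(\Gamma,M)$ as the cohomology of an explicit complex $K^\bullet(M)$ whose terms are finite direct sums of copies of $M$ and whose differentials are polynomials in the $\gamma_i - 1$. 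On the analytic side, Taylor-expanding cochains at the identity identifies $H^\bullet_{\an}(\Gamma,M)$ with the cohomology of the \emph{same} complex $K^\bullet(M)$, and the comparison map $H^\bullet_{\an} \to H^\bullet_{\cont}$ is compatible with both identifications; hence it is an isomorphism. This is the content of \cite[\S V.2.2--2.3]{lazard-groups}, to which I would appeal for the construction of $K^\bullet(M)$ and the two quasi-isomorphisms; the analyticity estimate \eqref{eq:analytic condition} enters precisely as the convergence condition making these constructions well defined.

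\emph{Main obstacle.} All the substance sits in the uniform case, and within it the delicate point is the convergence of the homotopies comparing the $\Gamma$-cochain complex with the Koszul complex $K^\bullet(M)$: on the associated graded for the augmentation filtration the operators $\gamma_i - 1$ commute and $K^\bullet(M)$ degenerates to an ordinary Koszul complex over a polynomial ring, for which the standard contracting homotopy is available, but lifting this back to $A$ requires summing series whose convergence is exactly what \eqref{eq:analytic condition} provides. A secondary, purely bookkeeping obstacle is to confirm that an analytic $\Gamma$-module in the sense of Definition~\ref{D:analytic group action} meets Lazard's running hypotheses (a $p$-valued module over a $p$-valuable group of finite rank); this follows by fixing a $p$-valuation on an open uniform subgroup and unwinding \eqref{eq:analytic condition}, with no new idea required.
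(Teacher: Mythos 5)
Your core idea coincides with the one the paper relies on: the only genuinely analytic input is Lazard's comparison, and the estimate \eqref{eq:analytic condition} is precisely what makes his constructions (the quasi-minimal complex of Remark~\ref{R:finite Gamma subcomplex} and the homotopies comparing it with the cochain complex) converge. The paper's proof is shorter and more direct: it observes that the proof of \cite[Th\'eor\`eme~V.2.3.10]{lazard-groups} uses the hypotheses on $M$ only through the estimate established in the proof of \cite[Proposition~V.2.3.6.1]{lazard-groups}, which has been built into Definition~\ref{D:analytic group action}, and concludes that Lazard's argument carries over unchanged at the level of $\Gamma$ itself; no reduction to a uniform open subgroup is interposed.

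Two points in your write-up need repair. First, your ``secondary bookkeeping obstacle'' is misstated: a general analytic $\Gamma$-module in the sense of Definition~\ref{D:analytic group action} (for instance a Banach space over an analytic field of characteristic $p$, as in Theorem~\ref{T:kill analytic cohomology}) does \emph{not} fall under the running hypotheses of \cite[Th\'eor\`eme~V.2.3.10]{lazard-groups}, whose stated context is that of Example~\ref{exa:lazard finite} (torsion-free $\Zp$-modules of finite rank). So one cannot simply ``confirm that the hypotheses are met''; one must inspect Lazard's proof and verify that finiteness enters only via the estimate --- which is exactly what your ``main obstacle'' paragraph does, so the substance is present but the framing is wrong, and as written it would invite a citation of a statement that does not apply. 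Second, the Hochschild--Serre step is glib: the double complex $C^p(\Gamma/\Gamma_1, C^q(\Gamma_1,M))$ is not literally defined, since $\Gamma/\Gamma_1$ does not act on cochains of $\Gamma_1$ (only $\Gamma$ does, and the action descends to the quotient only on cohomology). One must instead use the Hochschild--Serre filtration on $C^\bullet_{\cont}(\Gamma,M)$ (cf.\ Lemma~\ref{L:hochschild-serre} and the explicit construction it cites), check that it restricts to $C^\bullet_{\an}(\Gamma,M)$, and identify the analytic $E_2$-term as $H^p(\Gamma/\Gamma_1, H^q_{\an}(\Gamma_1,M))$. This is plausible because $\Gamma/\Gamma_1$ is finite and discrete, so analytic and continuous cochains agree in the quotient variables, but it is a genuine verification rather than a formality --- note in particular that averaging over $\Gamma/\Gamma_1$ is unavailable, since $M$ may be killed by $p$. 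The paper's route avoids this extra layer entirely, which is what its direct appeal to Lazard's proof buys.
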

\begin{proof}
In the context of Example~\ref{exa:lazard finite}, this is the statement of 
\cite[Th\'eor\`eme~V.2.3.10]{lazard-groups}. However, the proof of this statement only uses the stronger hypothesis in the proof of \cite[Proposition~V.2.3.6.1]{lazard-groups}, which we have built into the definition of an analytic $\Gamma$-module. The remainder of the proof of \cite[Th\'eor\`eme~V.2.3.10]{lazard-groups} thus carries over unchanged.
\end{proof}

\begin{remark} \label{R:finite Gamma subcomplex}
In considering Theorem~\ref{T:lazard analytic}, it may help to consider the first the case of 1-cocycles: every 1-cocycle is cohomologous to a crossed homomorphism, which is analytic because of how it is determined by its action on topological generators. More generally, given a system of elements of $\Gamma$ which form a basis of its Lie algebra via the logarithm map, Lazard constructs a split quasi-isomorphic inclusion $C^{\bullet}_{\mathrm{qm}}(\Gamma,M) \subset C^{\bullet}_{\cont}(\Gamma,M)$ of complexes in which $C^i_{\mathrm{qm}}(\Gamma,M)$ is a finite direct sum of copies of $M$. For example, in the case where $\gamma \in \Gamma$ is a pro-$p$ topological generator, 
then $C^{\bullet}_{\mathrm{qm}}(\Gamma,M)$ is the complex
\[
0 \to M \stackrel{\gamma-1}{\to} M \to 0.
\]
In general, $C^i_{\mathrm{qm}}(\Gamma,M)$ consists of the homomorphisms into $M$ from
the corresponding term of 
Lazard's \emph{quasi-minimal complex} associated to the chosen system of elements of $\Gamma$;
see \cite[\S 2.2.1]{lazard-groups} for the construction.
\end{remark}

We now collect some criteria for vanishing of cohomology groups,
starting with a form of the Hochschild-Serre spectral sequence.

\begin{lemma} \label{L:hochschild-serre}
For any closed normal subgroup $\Gamma'$ of $\Gamma$ and any topological $\Gamma$-module $M$, there is a spectral sequence
\[
E_2^{p,q} = H^p_{\cont}(\Gamma/\Gamma', H^q_{\cont}(\Gamma', 
M))
\Longrightarrow H^{p+q}_{\cont}(\Gamma, M).
\]
\end{lemma}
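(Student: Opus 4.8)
The final statement is the Hochschild--Serre spectral sequence for continuous cohomology (Lemma~\ref{L:hochschild-serre}). Here is my plan.

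\medskip

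The plan is to deduce this from the general machinery of Grothendieck spectral sequences applied to the composite functor $M \mapsto (M^{\Gamma'})^{\Gamma/\Gamma'} = M^{\Gamma}$ on the category of topological $\Gamma$-modules, but since continuous cohomology of a profinite (or $p$-adic Lie) group acting on topological modules need not have enough injectives in the naive sense, the cleanest route is to work directly with the standard inhomogeneous cochain complexes and filter them. First I would recall that $H^{\bullet}_{\cont}(\Gamma, M)$ is computed by the complex $C^{\bullet}_{\cont}(\Gamma, M)$ of continuous (inhomogeneous) cochains $\Gamma^i \to M$, and likewise for $\Gamma'$ and $\Gamma/\Gamma'$. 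The key object is the double complex $C^{p}_{\cont}(\Gamma/\Gamma', C^q_{\cont}(\Gamma', M))$, where $C^q_{\cont}(\Gamma', M)$ carries the $\Gamma/\Gamma'$-action induced by conjugation and translation (this uses that $\Gamma'$ is normal, so $\Gamma/\Gamma'$ acts on $\Gamma'$, hence on cochains on $\Gamma'$). I would check that this action is continuous for the natural (compact-open or sup) topology on $C^q_{\cont}(\Gamma',M)$, so that the outer cohomology makes sense.

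\medskip

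The main step is then to show that the total complex of this double complex computes $H^{\bullet}_{\cont}(\Gamma, M)$; both spectral sequences of the double complex then give the result, with the displayed one coming from taking $q$-cohomology first. Concretely, one constructs a quasi-isomorphism between $\Tot C^{\bullet,\bullet}_{\cont}(\Gamma/\Gamma', C^{\bullet}_{\cont}(\Gamma', M))$ and $C^{\bullet}_{\cont}(\Gamma, M)$ by the usual ``restriction-inflation'' shuffle: a continuous cochain on $\Gamma^n$ is reorganized, via a continuous section-type argument, into a cochain on $(\Gamma/\Gamma')^p \times (\Gamma')^q$ with $p+q=n$. The subtlety compared to the discrete case is that $\Gamma \to \Gamma/\Gamma'$ need not admit a continuous group-theoretic section; what one uses instead is only a continuous set-theoretic section, which exists because $\Gamma \to \Gamma/\Gamma'$ is a surjection of profinite (or locally compact, second countable) spaces, hence admits a continuous section over each member of a suitable covering, and for profinite groups one even has a global continuous section. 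Using such a section, one builds the comparison map and checks it is a filtered quasi-isomorphism by a spectral-sequence or direct homotopy argument, exactly as in the classical treatment (e.g.\ as in Neukirch--Schmidt--Wingberg for profinite groups), the only modifications being the insertion of the word ``continuous'' and a check that all the maps constructed are continuous for the relevant topologies.

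\medskip

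The hard part will be the topological bookkeeping: verifying that the induced $\Gamma/\Gamma'$-action on $C^q_{\cont}(\Gamma',M)$ is continuous and that the section-based reorganization of cochains is continuous in both directions, so that the purely algebraic homotopy equivalence upgrades to one of complexes of topological modules. Since $\Gamma$ is a $p$-adic Lie group (or profinite $p$-analytic group) and the modules in our applications are metrizable, these continuity checks are routine but must be stated; I would package them by noting that $\Gamma$ is second countable and the relevant cochain spaces carry the topology of uniform convergence on the compact spaces $\Gamma^i$, for which composition with continuous maps and evaluation are continuous. Once the comparison of complexes is in hand, the spectral sequence is simply the first spectral sequence of the double complex, with $E_2^{p,q} = H^p_{\cont}(\Gamma/\Gamma', H^q_{\cont}(\Gamma', M))$ abutting to $H^{p+q}_{\cont}(\Gamma,M)$, and one is done.
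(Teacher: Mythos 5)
There is a genuine gap at the very first step of your construction: the double complex $C^p_{\cont}(\Gamma/\Gamma', C^q_{\cont}(\Gamma', M))$ is not defined, because $C^q_{\cont}(\Gamma', M)$ is not a $\Gamma/\Gamma'$-module. Normality of $\Gamma'$ gives an action of $\Gamma$ on continuous cochains on $\Gamma'$ (by conjugation in the arguments and translation on values), but this action does not factor through $\Gamma/\Gamma'$ at the cochain level: an element of $\Gamma'$ acts through an inner automorphism of $\Gamma'$, which is trivial on $H^q_{\cont}(\Gamma',M)$ only up to chain homotopy, not on $C^q_{\cont}(\Gamma',M)$ itself. (Your parenthetical ``$\Gamma/\Gamma'$ acts on $\Gamma'$, hence on cochains'' is exactly where this breaks: conjugation by a coset is not a well-defined automorphism of $\Gamma'$.) Relatedly, the comparison of the total complex with $C^\bullet_{\cont}(\Gamma,M)$ by ``reorganizing'' a cochain on $\Gamma^n$ into cochains on $(\Gamma/\Gamma')^p\times(\Gamma')^q$ via a continuous set-theoretic section is not merely topological bookkeeping; even for abstract discrete groups this is where the real content of the Lyndon--Hochschild--Serre construction lies, and the naive shuffle does not commute with the differentials. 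So the proposed argument fails already in the algebraic (discrete) setting, before any continuity issues arise.

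Two standard repairs are available. The route the paper takes is to bypass double complexes entirely and use the original explicit construction of Hochschild--Serre: one filters the single complex $C^\bullet_{\cont}(\Gamma,M)$ by the subgroups of cochains whose values depend on an initial block of variables only through their images in $\Gamma/\Gamma'$ (equivalently, vanish under suitable specializations into $\Gamma'$); these conditions are stated purely in terms of continuous maps and the continuous projection $\Gamma\to\Gamma/\Gamma'$, so the identification of $E_1$ and $E_2$ carries over verbatim to continuous cohomology, with no section and no quotient action on cochains needed (this is the content of the citation to Hochschild--Serre \S 2 and Lazard V.3.2). If you prefer a double complex, the correct inner complex is not $C^q_{\cont}(\Gamma',M)$ but the complex of continuous $\Gamma'$-equivariant homogeneous cochains on $\Gamma$, i.e.\ $\Hom_{\cont,\Gamma'}(\Gamma^{q+1},M)$: because these are $\Gamma'$-invariants of cochains on the big group, they do carry an honest $\Gamma/\Gamma'$-action, and the continuous section of $\Gamma\to\Gamma/\Gamma'$ (which does exist here, $\Gamma$ being profinite or a compact $p$-adic analytic group) is used only to show that this complex computes $H^q_{\cont}(\Gamma',M)$. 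Either repair yields the lemma; as written, your proposal does not.
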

\begin{proof}
The explicit construction of the spectral sequence for finite groups given in \cite[\S 2]{hochschild-serre} carries over without change.
For further discussion, see \cite[\S V.3.2]{lazard-groups}.
\end{proof}

\begin{cor} \label{C:hochschild-serre}
Let $\Gamma'$ be a closed subgroup of $\Gamma$ which is \emph{subnormal}, i.e., there exists a sequence $\Gamma' = \Gamma_0 \subseteq \cdots \subseteq \Gamma_n = \Gamma$
of closed inclusions in which $\Gamma_i$ is normal in $\Gamma_{i+1}$ for $i=0,\dots,n-1$. Then for any topological $\Gamma$-module $M$, if $H^i_{\cont}(\Gamma',M) = 0$ for all $i \geq 0$,
then also $H^i_{\cont}(\Gamma,M) = 0$ for all $i \geq 0$.
\end{cor}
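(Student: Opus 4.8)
The plan is to induct on the length $n$ of the subnormal chain $\Gamma' = \Gamma_0 \subseteq \cdots \subseteq \Gamma_n = \Gamma$. The base case $n=0$ is trivial since $\Gamma = \Gamma'$. For the inductive step, it suffices to treat the case where $\Gamma'$ is normal in $\Gamma$ and then apply the induction hypothesis to the chain $\Gamma' = \Gamma_0 \subseteq \cdots \subseteq \Gamma_{n-1}$ inside $\Gamma_{n-1}$, using the normality of $\Gamma_{n-1}$ in $\Gamma_n = \Gamma$ at the top. More precisely, first I would establish the following single-step claim: if $\Gamma'$ is a closed normal subgroup of $\Gamma$ and $H^i_{\cont}(\Gamma', M) = 0$ for all $i \geq 0$, then $H^i_{\cont}(\Gamma, M) = 0$ for all $i \geq 0$.

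For the single-step claim, I would invoke the Hochschild--Serre spectral sequence of Lemma~\ref{L:hochschild-serre}:
\[
E_2^{p,q} = H^p_{\cont}(\Gamma/\Gamma', H^q_{\cont}(\Gamma', M)) \Longrightarrow H^{p+q}_{\cont}(\Gamma, M).
\]
By hypothesis $H^q_{\cont}(\Gamma', M) = 0$ for every $q \geq 0$, so every $E_2$-term vanishes, hence every $E_\infty$-term vanishes, and therefore the abutment $H^{p+q}_{\cont}(\Gamma, M)$ vanishes for all $p+q \geq 0$, i.e.\ $H^i_{\cont}(\Gamma, M) = 0$ for all $i \geq 0$. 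This is the crux of the argument and is essentially immediate once the spectral sequence is in hand.

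To run the induction cleanly, I would phrase it so that the inductive hypothesis is applied to $\Gamma_{n-1}$ with its shorter subnormal chain down to $\Gamma'$: this gives $H^i_{\cont}(\Gamma_{n-1}, M) = 0$ for all $i \geq 0$, where $M$ is regarded as a $\Gamma_{n-1}$-module by restriction. Then the single-step claim applied to the normal inclusion $\Gamma_{n-1} \trianglelefteq \Gamma$ yields $H^i_{\cont}(\Gamma, M) = 0$ for all $i \geq 0$, completing the induction. One small point to note is that the spectral sequence of Lemma~\ref{L:hochschild-serre} is stated for closed normal subgroups and arbitrary topological $\Gamma$-modules, so no additional hypotheses on $M$ (such as analyticity or completeness) are needed here; the only mild subtlety is the bookkeeping of which group plays the role of ``$\Gamma$'' at each stage, which the chosen induction order handles. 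I do not expect any real obstacle: the entire content is the collapse of the spectral sequence when one of the two ``coordinate'' cohomologies vanishes identically, together with a routine induction on chain length.
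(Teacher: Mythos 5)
Your proof is correct and follows exactly the route the paper intends: the corollary is stated as an immediate consequence of Lemma~\ref{L:hochschild-serre}, obtained by collapsing the Hochschild--Serre spectral sequence at each normal step and inducting along the subnormal chain. No issues.
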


\begin{remark} \label{R:not subnormal}
If $\Gamma$ is a finite $p$-group, then $\Gamma'$ is automatically subnormal in $\Gamma$ and so Corollary~\ref{C:hochschild-serre} applies. By contrast, the conclusion of Corollary~\ref{C:hochschild-serre} does not hold for arbitrary $\Gamma'$ even in the category of finite groups; see \cite{kedlaya-hs} for some simple examples.
\end{remark} 

In spite of Remark~\ref{R:not subnormal},
in the case of analytic group actions, we have the following result.
\begin{theorem} \label{T:kill analytic cohomology}
Let $M$ be a Banach space over an analytic field of characteristic $p$.
Equip $M$ with a continuous action of $\Gamma$
which is analytic for the function $w(M; \bullet) = -\log \left| \bullet \right|$.
Let $\Gamma'$ be a pro-$p$ procyclic subgroup of $\Gamma$. 
If $H^i_{\cont}(\Gamma', M) = 0$ for all $i \geq 0$, then $H^i_{\cont}(\Gamma, M) = 0$ for all $i \geq 0$.
\end{theorem}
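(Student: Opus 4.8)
The plan is to convert the hypothesis into a statement about one operator, use the Hochschild--Serre spectral sequence to reduce to the case of a uniform pro-$p$ group in which $\gamma'$ is part of a minimal generating set, and then show Lazard's quasi-minimal complex is acyclic by exhibiting a Koszul contracting homotopy and correcting it by a convergent Neumann series.

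\medskip

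\emph{Reformulating the hypothesis.} Since $\Gamma'$ is pro-$p$ procyclic it is either finite or isomorphic to $\ZZ_p$; a nontrivial finite $p$-group has nonzero invariants on any nonzero characteristic-$p$ representation, so (assuming $M \neq 0$) $\Gamma'\cong\ZZ_p$ with a topological generator $\gamma'$, and $\Gamma'$ has cohomological dimension $1$. Thus $H^i_{\cont}(\Gamma',M)=0$ for all $i$ simply asserts that $M^{\Gamma'}=0$ and $(\gamma'-1)M=M$, i.e.\ that $\gamma'-1$ is a continuous bijection of the Banach module $M$, hence by Theorem~\ref{T:open mapping} a topological automorphism with bounded inverse. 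I would also record the identity $(\gamma'-1)^{p^k}=(\gamma')^{p^k}-1$ for operators on a characteristic-$p$ module, which lets me pass freely between $\gamma'$ and its $p$-power roots and powers without losing invertibility. (For $i=0$ there is nothing to do: $M^\Gamma\subseteq M^{\Gamma'}=0$.)

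\medskip

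\emph{Reductions.} Choosing an open normal uniform pro-$p$ subgroup $\Gamma_0\triangleleft\Gamma$ (we may assume $\Gamma$ compact, and then use a congruence filtration on a faithful representation), I note that $\Gamma_0\cap\Gamma'$ is open in $\Gamma'$, so generated by some $(\gamma')^{p^k}$, still with $(\gamma')^{p^k}-1$ invertible; that the $\Gamma_0$-action on $M$ is again continuous and analytic for $-\log|\bullet|$ (Definition~\ref{D:analytic group action}); and that by Lemma~\ref{L:hochschild-serre} for $\Gamma_0\triangleleft\Gamma$, vanishing of $H^\bullet_{\cont}(\Gamma_0,M)$ forces vanishing of $H^\bullet_{\cont}(\Gamma,M)$. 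So I may assume $\Gamma$ is uniform. Then, since the Frattini subgroup of a uniform group is its set of $p$-th powers, after replacing $\gamma'$ by a suitable $p$-power root I may assume $\gamma'$ lies outside the Frattini subgroup, hence is a member of a minimal topological generating set $\gamma_1=\gamma',\gamma_2,\dots,\gamma_d$ of $\Gamma$.

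\medskip

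\emph{Acyclicity of the quasi-minimal complex.} This is where the work is. Since $M$ is an analytic $\Gamma$-module, Theorem~\ref{T:lazard analytic} with Remark~\ref{R:finite Gamma subcomplex} identifies $H^\bullet_{\cont}(\Gamma,M)$ with the cohomology of $C^\bullet_{\mathrm{qm}}(\Gamma,M)$, a complex of Banach modules whose terms are finite direct sums of copies of $M$ and whose differential is, by Lazard's construction, the Koszul differential in the operators $\gamma_1-1,\dots,\gamma_d-1$ modified by corrections built from iterated commutators $[\gamma_i,\gamma_j]$, which lie far down the augmentation filtration of $\ZZ_p[[\Gamma]]$ because $\Gamma$ is uniform. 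I would then replace $\Gamma$ by $\Gamma^{p^m}$ and each $\gamma_i$ by $\gamma_i^{p^m}$ (legitimate by the previous step, and absorbed by one more Hochschild--Serre reduction for $\Gamma^{p^m}\triangleleft\Gamma$). A standard estimate for uniform groups together with the identity of the first step gives $\|[\gamma_i^{p^m},\gamma_j^{p^m}]-1\|\le\epsilon^{p^{2m}}$ for a fixed $\epsilon<1$, so the correction term $\delta:=d_{\mathrm{qm}}-d_{\mathrm{Kos}}$ has operator norm $O(\epsilon^{p^{2m}})$, whereas the honest Koszul contracting homotopy $h$ for the variable $\gamma_1^{p^m}$ (divide the $e_1$-component by $(\gamma_1^{p^m}-1)=(\gamma_1-1)^{p^m}$) is bounded with $\|h\|\le C^{p^m}$ where $C=\|(\gamma_1-1)^{-1}\|$. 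Hence $d_{\mathrm{qm}}h+h\,d_{\mathrm{qm}}=\mathrm{id}+u$ with $u=\delta h+h\delta$ and $\|u\|=O((C\epsilon^{p^m})^{p^m})<1$ for $m$ large; as $u$ commutes with $d_{\mathrm{qm}}$, the Neumann series $\sum_{n\ge0}(-u)^n$ converges to a bounded operator inverting $\mathrm{id}+u$, so $h(\mathrm{id}+u)^{-1}$ is a genuine contracting homotopy and $C^\bullet_{\mathrm{qm}}(\Gamma^{p^m},M)$ is acyclic. Unwinding the Hochschild--Serre reductions gives $H^i_{\cont}(\Gamma,M)=0$ for all $i$.

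\medskip

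The main obstacle is this last step: one must extract from Lazard's construction enough of the explicit shape of the quasi-minimal differential to see that its deviation from an honest Koszul differential is governed by commutators, and then check the quantitative point that the double-exponential decay of those commutator corrections outpaces the single-exponential growth of $\|(\gamma_1^{p^m}-1)^{-1}\|$ as $m\to\infty$. The rest is bookkeeping.
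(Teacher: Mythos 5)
You should first note that the paper does not actually prove this theorem: its ``proof'' is the citation to \cite{kedlaya-hs}, so the only meaningful comparison is with that reference. Your outline is in the same family as the argument there: you convert the hypothesis into invertibility of $\gamma'-1$ (using characteristic $p$ so that $(\gamma'-1)^{p^k}=\gamma'^{p^k}-1$, and the local-ring argument $\Fp[G]$ for the finite case), reduce via Lemma~\ref{L:hochschild-serre} to an open normal uniform subgroup with $\gamma'$ replaced by its unique $p$-power root adapted to a minimal generating set (a legitimate maneuver: invertibility passes to roots because $(\delta-1)^{p^k}=\gamma'-1$, and $p$-th roots are unique in a uniform group), and then exploit Lazard's comparison (Theorem~\ref{T:lazard analytic}, Remark~\ref{R:finite Gamma subcomplex}) together with the observation that raising to $p^m$-th powers makes commutators doubly-exponentially small while $\|(\gamma_1^{p^m}-1)^{-1}\|\leq C^{p^m}$ grows only singly-exponentially. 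That double-versus-single exponential comparison is indeed the right key idea, and the reductions in your first two steps are sound.

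The step you yourself flag is, however, a genuine gap rather than bookkeeping, and the numerics show why it cannot be waved through. Everything hinges on knowing that $\delta=d_{\mathrm{qm}}-d_{\mathrm{Kos}}$ is bounded at the \emph{commutator} scale. What Lazard's construction (and Remark~\ref{R:finite Gamma subcomplex}) gives you is only that $C^\bullet_{\mathrm{qm}}$ computes the cohomology and that quasi-minimality makes the differential agree with the Koszul differential to leading order in the augmentation filtration, i.e.\ that the corrections lie in filtration $\geq 2$. A filtration-$\geq2$ bound yields only $\|\delta\|\leq\epsilon^{2p^m}$, whence $\|\delta\|\,\|h\|\leq(C\epsilon^2)^{p^m}$, which need not tend to $0$: $C$ is pinned to the fixed element $\gamma_1$ and cannot be shrunk independently of $\epsilon$ (improving $\epsilon$ means shrinking the subgroup, which replaces $\gamma_1$ by a $p$-power and raises $C$ to the corresponding $p$-power). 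Your inequality works only if every correction entry carries a factor $([x,y]-1)$ with $x,y\in\Gamma^{p^m}$, times group-ring elements of norm $\leq1$, giving $\epsilon^{p^{2m+1}}$; that structural statement about the quasi-minimal differential in \emph{all} degrees is precisely what is not established in your proposal, and it is the substance of the cited reference. A secondary inexactness: the operators $\gamma_i^{p^m}-1$ do not commute on $M$, so the identity $d_{\mathrm{Kos}}h+h\,d_{\mathrm{Kos}}=\mathrm{id}$ you use is itself only valid up to errors involving $[(\gamma_1^{p^m}-1)^{-1},\,\gamma_j^{p^m}-1]$, of norm at most $C^{2p^m}\epsilon^{p^{2m+1}}$; these are harmless for the same double-exponential reason, but they must be folded into $u$, and your $u=\delta h+h\delta$ as written omits them.
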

\begin{proof}
See \cite{kedlaya-hs}.
\end{proof}

\subsection{Seminormal rings and universal homeomorphisms}
\label{subsec:seminormal rings}
 
In order to state the Ax-Sen-Tate theorem for rigid analytic spaces, we
need an intermediate condition between reduced and normal rings. 
The condition we use was introduced by Swan \cite{swan}, as a modification of an earlier definition of Traverso \cite{traverso} which had been further studied by Greco--Traverso \cite{greco-traverso}.

\begin{defn} \label{D:seminormal}
For $R$ a ring, consider the map
\begin{equation} \label{eq:seminormal}
R \to \{(y,z) \in R \times R: y^3 = z^2\}, \qquad x \mapsto (x^2, x^3).
\end{equation}
If \eqref{eq:seminormal} is injective, then $(0,0)$ has the unique preimage $0$ and so $R$ cannot have any nonzero elements which square to $0$. Conversely, if $w \neq x \in R$ have the same image, then
\[
(w-x)^3 = 4(w^3-x^3) + 3(x-w)(w^2-x^2) = 0
\]
and so $w-x \in R$ is nonzero and nilpotent. In other words, \eqref{eq:seminormal} is injective if and only if $R$ is reduced.

We say that the ring $R$ is \emph{seminormal} if the map \eqref{eq:seminormal} is bijective. This property is Zariski local, i.e., it passes to localizations and may be checked on a Zariski covering (see also Lemma~\ref{L:seminormal maximal}); consequently, we may unambiguously define seminormality as a property also of schemes.
By \cite[Theorem~4.1]{swan}, the forgetful functor from seminormal schemes to arbitrary schemes has a right adjoint, called \emph{seminormalization}; the seminormalization is integral over the original scheme, so the normalization factors through it.
\end{defn}

\begin{remark} \label{R:perfect seminormal}
Any normal reduced ring is seminormal.
Also, if $R$ is perfect of characteristic $p$, then $R$ is seminormal: any $(y,z)$ in the image of \eqref{eq:seminormal} is the image of
\[
x = (y^{1/p})^a (z^{1/p})^b, \qquad (a,b) = \begin{cases} (1,0) & p=2 \\
(\frac{p-3}{2}, 1) & p>2. \end{cases}
\]
\end{remark}

\begin{lemma} \label{L:seminormal maximal}
Let $R$ be a ring. If $R_\gothm$ is seminormal for every maximal ideal $\gothm$ of $R$, then $R$ is seminormal.
\end{lemma}
\begin{proof}
See \cite[Proposition~3.7]{swan}.
\end{proof}

\begin{defn}
A morphism $Y \to X$ of schemes is \emph{subintegral} if it induces an isomorphism of seminormalizations. The justification for this terminology will be given by
Lemma~\ref{L:universal homeomorphism1} below.
A morphism of rings $R \to S$ is \emph{subintegral} if the morphism
$\Spec(S) \to \Spec(R)$ is subintegral.

A morphism $Y \to X$ of schemes is a \emph{universal homeomorphism} if for any scheme $Z$, the induced map $\left|Y \times_X Z\right| \to \left|Z\right|$ of topological spaces is a homeomorphism.
For example, for any scheme $Y$ with seminormalization $Y'$, the adjunction map $Y' \to Y$ is a universal homeomorphism (see Lemma \ref{L:universal homeomorphism1} below).
\end{defn}

\begin{lemma} \label{L:universal homeomorphism1}
Let $f: Y \to X$ be a morphism of schemes. Then $f$ is subintegral (resp.\ is a universal homeomorphism) if and only it satisfies the following conditions.
\begin{enumerate}
\item[(a)]
The map $f$ is integral.
\item[(b)]
The map $\left| f \right|: \left| Y \right| \to \left| X \right|$ is a bijection.
\item[(c)]
For each $y \in Y$ mapping to $x \in X$, $\kappa(y)$ is isomorphic to (resp.\ is a purely inseparable extension) of $\kappa(x)$.
\end{enumerate}
\end{lemma}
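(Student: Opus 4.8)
The plan is to treat the two equivalences together, reducing the ``subintegral'' case to the ``universal homeomorphism'' case, which I would dispose of by citation.

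For the universal homeomorphism assertion, recall the classical characterization that a morphism of schemes is a universal homeomorphism if and only if it is integral, surjective, and universally injective (\cite[Tag~04DF]{stacks-project}, equivalently EGA~IV, 18.12.11), and that a morphism is universally injective ($=$ radicial) precisely when it is injective on underlying topological spaces and induces purely inseparable residue field extensions (\cite[Tag~01S4]{stacks-project}). Since ``injective and surjective'' is ``bijective'', this is exactly (a), (b), and (c) in the purely inseparable form; no further work is needed here.

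For the subintegral assertion, the organizing statement I would prove is: $f$ is subintegral if and only if it is a universal homeomorphism inducing \emph{isomorphisms} (not merely purely inseparable extensions) on all residue fields, which by the previous paragraph is precisely (a), (b), (c) in the stated form. The input from \cite{swan} (see also \cite{greco-traverso}) is that for any scheme $Z$ the canonical map $\nu_Z \colon Z^{\mathrm{sn}} \to Z$ is itself integral, bijective on points, and an isomorphism on every residue field, that $Z^{\mathrm{sn}}$ depends only on $Z_{\mathrm{red}}$, and that for reduced affine $Z$ the seminormalization is the maximal subextension of the integral closure in the total quotient ring over which $Z$ is subintegral. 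Given $f \colon Y \to X$, functoriality supplies $f^{\mathrm{sn}} \colon Y^{\mathrm{sn}} \to X^{\mathrm{sn}}$ with $\nu_X \circ f^{\mathrm{sn}} = f \circ \nu_Y$. If $f^{\mathrm{sn}}$ is an isomorphism, then $f \circ \nu_Y = \nu_X \circ f^{\mathrm{sn}}$ is integral, bijective, and an isomorphism on residue fields; since $\nu_Y$ is in addition surjective, chasing points and residue fields around the square shows $f$ bijective with isomorphisms on residue fields, and cancelling $\nu_Y$ shows $f$ integral. Conversely, if $f$ satisfies (a), (b), (c), then since $f$ is affine, seminormalization is affine-local on the base, and nilpotents are irrelevant, I may assume $X = \Spec R$, $Y = \Spec S$ with $R, S$ reduced; surjectivity of $\Spec S \to \Spec R$ and reducedness of $R$ force $R \hookrightarrow S$, and the displayed conditions make $R \hookrightarrow S$ a subintegral ring extension, so Swan's description of the seminormalization forces $R^{\mathrm{sn}} = S^{\mathrm{sn}}$ inside the common ambient ring, i.e., $f^{\mathrm{sn}}$ is an isomorphism.

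The step I expect to be the main obstacle is showing that $f$ is integral in the ``$f^{\mathrm{sn}}$ an isomorphism $\Rightarrow$ $f$ integral'' direction: one must cancel the surjective integral morphism $\nu_Y$ from the integral composite $f \circ \nu_Y$, which requires descent of affineness along a surjective integral morphism (Chevalley's theorem in the noetherian case, but a limit argument with mild quasi-compactness hypotheses in general), together with the elementary observation that if $R \to B \to C$ with $C$ integral over $R$ and $\Spec C \to \Spec B$ surjective, then $B$ is integral over $R$ --- because a monic relation over $R$ satisfied by the image of $b \in B$ lies in the kernel of $B \to \image(B \to C)$, which is contained in the nilradical, so a power of it vanishes and $b$ is integral over $R$. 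Everything else is bookkeeping with the commutative square and direct appeals to \cite{swan, greco-traverso}.
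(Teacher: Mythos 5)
Your proposal is correct in substance, but it takes a noticeably different (and longer) route than the paper for the subintegral half. For the universal-homeomorphism assertion you and the paper do exactly the same thing: cite \cite[Tag~04DF]{stacks-project} and observe that universal injectivity on residue fields means purely inseparable, so there is nothing to compare there. For the subintegral assertion the paper simply cites \cite[Theorem~2.5]{swan}, which is precisely the ring-level characterization of subintegral extensions as integral, bijective on spectra, and inducing residue-field isomorphisms; you instead re-derive the equivalence from the formal properties of the seminormalization functor, which forces you to supply two extra ingredients: a Chevalley-type descent of affineness along an integral surjection (to cancel $\nu_Y$ and conclude that $f$ itself is integral in the forward direction), and the identification of the seminormalizations of $R$ and $S$ for a classical subintegral inclusion $R \hookrightarrow S$ in the converse. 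What your route buys is that it makes explicit the bridge between the paper's scheme-level definition (``induces an isomorphism of seminormalizations'') and Swan's ring-level theorem, a bridge the paper leaves implicit in its one-line citation; what the paper's route buys is brevity and the avoidance of the affineness-descent lemma altogether. One caution on your converse: describing the seminormalization of a reduced affine scheme as the maximal subintegral subextension of the integral closure in the total quotient ring is Traverso's construction \cite{traverso}, which is not valid without finiteness-type hypotheses (this is exactly why Swan works abstractly), and one would also have to check that $S$ embeds into that ambient ring at all. The repair is easy and stays within the facts you already invoke: $R \to S \to S^{\mathrm{sn}}$ is a composite of extensions that are integral, bijective on spectra, and residue-field isomorphisms, with seminormal target, so Swan's universal property/characterization of the seminormalization identifies $R^{\mathrm{sn}}$ with $S^{\mathrm{sn}}$ directly, with no reference to total quotient rings.
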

\begin{proof}
For the first case, see \cite[Theorem~2.5]{swan}.
For the second case, note that by \cite[Tag~04DF]{stacks-project}, $f$ is a universal homeomorphism if and only if it is
integral, universally injective, and surjective. Moreover, if $K \to L$ is a integral morphism of fields, then $\Spec(L) \to \Spec(K)$ is universally injective if and only if $L/K$ is purely inseparable. This yields the claim.
\end{proof}
\begin{cor} \label{C:subintegral base extension}
The subintegrality property of a morphism of schemes
is stable under arbitrary base extension.
\end{cor}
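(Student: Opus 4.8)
The plan is to deduce this from the criterion of Lemma~\ref{L:universal homeomorphism1} by a fiberwise analysis. Let $f \colon Y \to X$ be subintegral and let $g \colon X' \to X$ be arbitrary; put $f' \colon Y' \mathrel{:=} Y \times_X X' \to X'$. Since a subintegral morphism satisfies conditions (a)--(c) of Lemma~\ref{L:universal homeomorphism1} with (c) in its stronger ``isomorphic'' form, it is in particular a universal homeomorphism; as universal homeomorphisms are stable under base change by definition, $f'$ is one as well, so by Lemma~\ref{L:universal homeomorphism1} it is integral, $|f'|$ is a bijection, and the residue field extensions of $f'$ are purely inseparable. It therefore remains only to upgrade ``purely inseparable'' to ``isomorphic'', i.e., to verify condition (c) of Lemma~\ref{L:universal homeomorphism1} for $f'$, after which the lemma gives that $f'$ is subintegral.

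To check (c) I would argue fiber by fiber. Fix $x' \in X'$ lying over $x \in X$; then the scheme-theoretic fiber $(f')^{-1}(x')$ is the base change along $\kappa(x) \hookrightarrow \kappa(x')$ of the fiber $Y_x = f^{-1}(x)$. Applying Lemma~\ref{L:universal homeomorphism1} to $f$ itself, $Y_x$ is affine, say $Y_x = \Spec A$ with $A$ integral over $\kappa(x)$, $\Spec A$ a single point, and residue field $\kappa(x)$; hence $A$ is local with maximal ideal equal to its nilradical $N$, and since $\kappa(x) \to A \to A/N = \kappa(x)$ is the identity we get a splitting $A = \kappa(x) \oplus N$ of $\kappa(x)$-modules. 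The crux is then the purely algebraic claim that for every field extension $\kappa(x) \to \kappa(x')$ the ring $A' \mathrel{:=} A \otimes_{\kappa(x)} \kappa(x') = \kappa(x') \oplus (N \otimes_{\kappa(x)} \kappa(x'))$ is again local with residue field $\kappa(x')$, equivalently that $N \otimes_{\kappa(x)} \kappa(x')$ consists of nilpotents (so that it is the nilradical, hence the unique prime). Granting this, every residue field of $(f')^{-1}(x')$ equals $\kappa(x')$, which is exactly (c).

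The one genuinely technical point, and the step I expect to be the main obstacle, is the nilpotence claim when $N$ is not finitely generated over $\kappa(x)$, so that there is no uniform bound on nilpotence degrees in $N$. I would handle it by reducing to finitely generated pieces: any element of $N \otimes_{\kappa(x)} \kappa(x')$ lies in the image of $\mathfrak{m}_B \otimes_{\kappa(x)} \kappa(x')$ for some finitely generated $\kappa(x)$-subalgebra $B \subseteq A$, where $\mathfrak{m}_B = B \cap N$. Such a $B$ is a finite-dimensional $\kappa(x)$-algebra (finitely generated plus integral), and it is local with maximal ideal $\mathfrak{m}_B$ because, $A$ being integral over $B$, every prime of $B$ is the contraction of the unique prime $N$ of $A$; being Artinian local, $\mathfrak{m}_B$ is nilpotent, so $\mathfrak{m}_B \otimes_{\kappa(x)} \kappa(x')$ is a nilpotent ideal of $B \otimes_{\kappa(x)} \kappa(x')$ and its image in $A'$ consists of nilpotents. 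This establishes the claim and completes the argument.
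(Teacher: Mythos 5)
Your argument is correct and follows essentially the same route as the paper: both reduce to the criterion of Lemma~\ref{L:universal homeomorphism1} and obtain the point-set bijection (b) from the base-change stability of universal homeomorphisms, the only difference being that the paper declares conditions (a) and (c) manifestly stable while you verify (c) by the fiberwise computation. Note only that the step you single out as the main obstacle is vacuous: an element of $N \otimes_{\kappa(x)} \kappa(x')$ maps in $A'$ to a finite sum of multiples of the nilpotent elements $n_i \otimes 1$, hence is nilpotent since the nilradical of a commutative ring is an ideal, so the detour through finitely generated Artinian subalgebras (though correct) is not needed.
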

\begin{proof}
Of the conditions of Lemma~\ref{L:universal homeomorphism1}, (a) and (c) are manifestly stable under base extension. As for (b), note that if $f: Y \to X$ induces an isomorphism of seminormalizations, then it is also a universal homeomorphism; since that property is by design stable under base extension, (b) remains true under any base extension.
\end{proof}

\begin{cor}
Let $f: Y \to X$ be a morphism of schemes over $\QQ$. Apply the seminormalization functor to obtain another morphism $f': Y' \to X'$. Then $f$ is a universal homeomorphism if and only if $f'$ is an isomorphism.
\end{cor}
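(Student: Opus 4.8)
The plan is to deduce this directly from Lemma~\ref{L:universal homeomorphism1}, using only the elementary fact that every residue field of a $\QQ$-scheme has characteristic $0$. Recall that the lemma provides two parallel characterizations sharing their first two clauses: a morphism $g\colon Y \to X$ of schemes is subintegral if and only if $g$ is integral, $\left|g\right|$ is a bijection, and $\kappa(y)$ is isomorphic to $\kappa(x)$ for every $y \in Y$ with image $x$; while $g$ is a universal homeomorphism if and only if $g$ is integral, $\left|g\right|$ is a bijection, and $\kappa(y)$ is a purely inseparable extension of $\kappa(x)$ for every such $y$. So the only possible difference between the two conditions lies in the residue-field clause.

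Next I would observe that since $X$ is a scheme over $\QQ$, every residue field $\kappa(x)$ receives a ring homomorphism from the field $\QQ$, hence has characteristic $0$, hence is perfect. A perfect field admits no nontrivial purely inseparable extension, so for a morphism $f$ of $\QQ$-schemes the clause ``$\kappa(y)$ is purely inseparable over $\kappa(x)$ for all $y \mapsto x$'' is equivalent to the clause ``$\kappa(y) \cong \kappa(x)$ for all $y \mapsto x$''. Combined with the previous paragraph, this shows that $f$ is a universal homeomorphism if and only if $f$ is subintegral.

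Finally, subintegrality of $f$ means by definition that $f$ induces an isomorphism of seminormalizations, that is, that $f'\colon Y' \to X'$ is an isomorphism. Chaining the two equivalences gives the statement. I expect essentially no obstacle here: the substance is carried entirely by Lemma~\ref{L:universal homeomorphism1}, and the only point requiring any attention is the elementary observation that residue fields in characteristic $0$ are perfect, which collapses the two residue-field conditions into a single one.
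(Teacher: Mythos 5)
Your argument is correct and is exactly the intended deduction: the paper states this corollary without proof, leaving it as the immediate consequence of Lemma~\ref{L:universal homeomorphism1} that you spell out, namely that over $\QQ$ all residue fields have characteristic $0$, so the purely inseparable condition collapses to the isomorphism condition and ``universal homeomorphism'' coincides with ``subintegral,'' i.e.\ with $f'$ being an isomorphism. Nothing is missing.
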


\begin{cor}
If $f: Y \to X$ is a universal homeomorphism of schemes, then the induced map $(Y \times_X Y)_{\red} \to Y_{\red}$ is an isomorphism.
\end{cor}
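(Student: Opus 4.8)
The plan is to reduce the statement to the fact that the diagonal $\Delta = \Delta_{Y/X}\colon Y \to Y\times_X Y$ becomes an isomorphism after passing to underlying reduced schemes. Since $\Delta$ is a section of each of the two projections $p_1, p_2\colon Y\times_X Y \to Y$ (that is, $p_i\circ\Delta = \mathrm{id}_Y$), functoriality of $(-)_{\mathrm{red}}$ will then force each $(p_i)_{\mathrm{red}}$ to be an isomorphism, with inverse $\Delta_{\mathrm{red}}$; this is the asserted map.

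First I would note that a universal homeomorphism $f$ is, by the characterization recalled in the proof of Lemma~\ref{L:universal homeomorphism1}, integral, universally injective, and surjective. Integrality makes $f$ affine, hence separated, so the diagonal $\Delta$ is a closed immersion (alternatively: $p_1$ is the base change of $f$ along $f$, hence integral, and $\Delta$ is a section of the affine morphism $p_1$, so is automatically a closed immersion). Universal injectivity of $f$ is equivalent to surjectivity of $\Delta$ as a morphism of topological spaces. Hence $\Delta$ realizes $Y$ as a closed subscheme of $Y\times_X Y$ whose underlying space is all of $|Y\times_X Y|$.

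Such a closed subscheme is cut out by a quasi-coherent ideal sheaf $\calI$ whose vanishing locus is everything, so $\calI$ is contained in the ideal sheaf of nilpotents; consequently the map on reduced schemes induced by $\Delta$ is an isomorphism $Y_{\mathrm{red}} \xrightarrow{\ \sim\ } (Y\times_X Y)_{\mathrm{red}}$. Combined with $(p_i)_{\mathrm{red}}\circ\Delta_{\mathrm{red}} = \mathrm{id}_{Y_{\mathrm{red}}}$, this yields the claim.

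I do not expect any real obstacle: the argument is formal once one has the standard inputs (a section of a separated morphism is a closed immersion; a morphism is universally injective exactly when its diagonal is surjective; a surjective closed immersion is an isomorphism after reduction), all of which are lighter than the machinery already in place. For a reader who prefers an explicit computation, one may first reduce to the affine case $X = \Spec A$, $Y = \Spec B$ --- legitimate since $f$ is affine and the assertion is local on $X$ --- where $Y\times_X Y = \Spec(B\otimes_A B)$, $p_1$ corresponds to $b\mapsto b\otimes 1$, and $\Delta$ to multiplication. Universal injectivity of $f$ forces $b\otimes 1 - 1\otimes b$ to be nilpotent in $B\otimes_A B$ for every $b\in B$; chasing this through shows that $b\mapsto b\otimes 1$ induces a surjection $B \to (B\otimes_A B)_{\mathrm{red}}$ whose kernel is exactly the nilradical of $B$ (the inclusion of the nilradical into the kernel is clear, and the reverse inclusion uses that $b\mapsto b\otimes 1$ is split by the multiplication map $B\otimes_A B\to B$), whence $B_{\mathrm{red}} \xrightarrow{\ \sim\ } (B\otimes_A B)_{\mathrm{red}}$.
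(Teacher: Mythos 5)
Your proof is correct and follows essentially the same route as the paper: both show the diagonal $\Delta\colon Y \to Y \times_X Y$ is a closed immersion (via integrality, hence affineness/separatedness of $f$) which is surjective on points, conclude that $\Delta_{\red}$ is an isomorphism, and then invert it against the projection. The only cosmetic difference is that you obtain surjectivity of $\Delta$ directly from universal injectivity of $f$, whereas the paper notes that the projection $Y \times_X Y \to Y$ is a universal homeomorphism and hence so is its section $\Delta$.
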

\begin{proof}
By Lemma~\ref{L:universal homeomorphism1}, $f$ is integral, hence affine, hence separated.
Hence the diagonal $\Delta: Y \to Y \times_X Y$ is a closed immersion; since
$Y \times_X Y \to Y$ is a universal homeomorphism, so then is $\Delta$.
The map $\Delta_{\red}: Y_{\red} \to (Y \times_X Y)_{\red}$ is a closed immersion of reduced schemes which is a bijection on points, hence an isomorphism. It follows that 
$(Y \times_X Y)_{\red} \to Y_{\red}$ is an isomorphism.
\end{proof}

\begin{remark} \label{R:seminormal etale}
For $f: Y \to X$ a universal homeomorphism of schemes, it can be shown that $f_{\et}: Y_{\et} \to X_{\et}$ is also a homeomorphism. See for example \cite{conrad-mathoverflow}.
\end{remark}

\begin{remark}
Lemma~\ref{L:universal homeomorphism1} implies that the definition of seminormality
given above agrees with the one used in \cite{greco-traverso, traverso}, modulo the running hypothesis imposed therein: only noetherian rings are considered, and a seminormal ring is required to have finite normalization. 
Due to the presence of these noetherian hypotheses, we will be extremely careful about citing results from \cite{greco-traverso, traverso}.

One may formally promote some results by eliminating noetherian hypotheses by the usual approach: since seminormality is manifestly preserved by formation of direct limits, 
we may study it by writing a general ring as a direct limit of finitely generated $\ZZ$-subalgebras. See Lemma~\ref{L:seminormal finite approximation} below.
\end{remark}

\begin{lemma} \label{L:seminormal finite approximation}
Let $k$ be an excellent ring.
Any seminormal $k$-algebra is the (filtered) direct limit of its subalgebras which are
finitely generated over $k$ and seminormal. In particular, such subalgebras are noetherian and even excellent.
\end{lemma}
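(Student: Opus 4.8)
The plan is to reduce the statement to the standard limit presentation of an arbitrary $k$-algebra as a filtered direct limit of finitely generated $k$-subalgebras, and then to show that this filtered system can be cofinally refined to one consisting of \emph{seminormal} finitely generated subalgebras. First I would write $R = \varinjlim_i R_i$, where $R_i$ ranges over the finitely generated $k$-subalgebras of $R$; this is a filtered system with injective transition maps, and since $k$ is excellent each $R_i$ is a finitely generated algebra over an excellent ring, hence noetherian and excellent. The issue is that the $R_i$ need not be seminormal even though their colimit is.

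The key step is the following: given a finitely generated $k$-subalgebra $R_i \subseteq R$, let $R_i^{+}$ denote its seminormalization \emph{computed inside $R$}, i.e.\ the subring of elements $x \in R$ whose image lies in the seminormalization of $R_i$ in $\Frac(R_i)$ (or more carefully, the seminormalization of $R_i$ in its total ring of fractions, which maps to $R$ because $R$ is reduced — note $R$ is seminormal, hence reduced). Since $k$ is excellent and $R_i$ is a finitely generated $k$-algebra, the seminormalization of $R_i$ is finite over $R_i$ (this is where excellence is used: finiteness of normalization for excellent rings passes to seminormalization, as in \cite{greco-traverso, traverso}), so $R_i^{+}$ is again a finitely generated $k$-subalgebra of $R$, and it is noetherian and excellent. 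Crucially $R_i^{+}$ is seminormal: its seminormalization agrees with that of $R_i$, and seminormalization is idempotent, so $R_i^{+}$ equals its own seminormalization inside $\Frac(R_i^{+}) = \Frac(R_i)$; one must check that being seminormal as an abstract ring is equivalent to equalling the seminormalization computed inside the total fraction ring, which holds for reduced noetherian rings with finite normalization by Lemma~\ref{L:universal homeomorphism1} (the map \eqref{eq:seminormal} being bijective is exactly the seminormality condition, and for such rings this is the Greco--Traverso notion).

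It then remains to observe that the subalgebras $R_i^{+}$ form a cofinal subsystem of the directed system of all finitely generated $k$-subalgebras: each $R_i$ is contained in $R_i^{+}$, so the $R_i^{+}$ are cofinal, hence $R = \varinjlim_i R_i^{+}$, exhibiting $R$ as a filtered colimit of finitely generated seminormal $k$-subalgebras. (One should also note the system of the $R_i^{+}$ is directed: given $R_i^{+}$ and $R_j^{+}$, pick any finitely generated $R_\ell \supseteq R_i^{+} \cup R_j^{+}$ and pass to $R_\ell^{+}$.) The final sentence of the statement — that such subalgebras are noetherian and excellent — is immediate since they are finitely generated over the excellent ring $k$.

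The main obstacle I anticipate is the bookkeeping around \emph{which} seminormalization one is using: the abstract categorical seminormalization of \cite{swan} (characterized by bijectivity of \eqref{eq:seminormal}) versus the classical Greco--Traverso seminormalization inside the total ring of fractions, which requires finite normalization to be well behaved. Reconciling these — and in particular checking that $R_i^{+}$, which I want to build as a concrete subring of $R$, is seminormal in the sense of Definition~\ref{D:seminormal} — is the one place where excellence of $k$ genuinely enters and where care is needed; everything else is a routine cofinality argument. A secondary technical point is ensuring that the seminormalization of $R_i$ embeds in $R$ rather than merely mapping to it, which follows because $R$ is reduced and the seminormalization of $R_i$ is contained in the integral closure of $R_i$ in $\Frac(R_i)$, which injects into $\Frac(R) \supseteq R$.
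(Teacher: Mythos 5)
Your overall strategy is the paper's: exhaust $R$ by its finitely generated $k$-subalgebras, replace each one by a seminormalization, use excellence to see that the normalization (hence the seminormalization) is module-finite, so the replacements are still finitely generated, and conclude by cofinality. The genuine gap is the step where you place the seminormalization of $R_i$ inside $R$. Your justification --- that the seminormalization is contained in the integral closure of $R_i$ in $\Frac(R_i)$, ``which injects into $\Frac(R) \supseteq R$'' --- fails on two counts. First, there is in general no map $\Frac(R_i) \to \Frac(R)$ at all: a non-zero-divisor of $R_i$ can become a zero-divisor in $R$ (take $R = k[x,y]/(xy)$, which is seminormal, and $R_i = k[x]$; then $x$ cannot be inverted compatibly in $R$). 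Second, even where such a map exists, landing in $\Frac(R)$ is not landing in $R$: the normalization of $R_i$ is typically not contained in $R$ (take $R = R_i$ the coordinate ring of a nodal curve, which is seminormal but not normal). So as written your $R_i^{+}$ is only a subring of $\Frac(R_i)$, not a subalgebra of $R$, and the cofinality argument does not go through. (Your first formulation of $R_i^{+}$, as ``elements $x \in R$ whose image lies in'' the seminormalization inside $\Frac(R_i)$, also does not define anything, since there is no map from $R$ to $\Frac(R_i)$.)

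The paper avoids this entirely by taking the seminormalization of $R_{S,0}$ \emph{in} $R$, i.e.\ the smallest seminormal subring of $R$ containing $R_{S,0}$ (equivalently the maximal subintegral extension of $R_{S,0}$ inside $R$, cf.\ \cite[Lemma~2.2]{swan}); this is a subring of $R$ by construction, and the fraction-ring considerations enter only in the finiteness step: being subintegral over $R_{S,0}$, it has the same minimal primes and residue fields there, hence embeds into $\Frac(R_{S,0})$ and therefore into the normalization of $R_{S,0}$, which is finite by excellence, so it is finitely generated over $k$. Alternatively, if you want to keep the absolute (Traverso--Swan) seminormalization ${}^{+}R_i \subseteq \Frac(R_i)$, the map to $R$ should come not from fraction rings but from the universal property of seminormalization (\cite[Theorem~4.1]{swan}, quoted in Definition~\ref{D:seminormal}), using that $R$ is seminormal; its injectivity then needs a short direct argument: if $x$ lies in the kernel, write $x = a/s$ with $a \in R_i$ and $s \in R_i$ a non-zero-divisor, so $a = sx$ lies in the kernel intersected with $R_i$, which is zero because the factorization restricts to the inclusion $R_i \hookrightarrow R$; hence $a = 0$ and $x = 0$. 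With either repair your argument becomes essentially the paper's proof; the remaining points (excellence giving finite normalization, idempotency of seminormalization, directedness of the refined system) are fine as you state them.
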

\begin{proof}
Let $R$ be a seminormal $k$-algebra. For each finite subset $S$ of $R$,
let $R_{S,0}$ be the subring of $R$ generated by $S$; this is a finitely
generated $k$-algebra which is reduced but not necessarily seminormal. Let $R_{S,1}$ be the seminormalization of $R_{S,0}$ in $R$, i.e., the smallest seminormal subring of $R$ containing $R_{S,0}$.
Since $R_{S,0}$ is reduced and excellent, the normalization $R_{S,2}$ of $R_S$ is a finite 
$R_S$-algebra and the induced map $R_{S,1} \to R_{S,2}$ is injective. It follows that $R_{S,1}$ is also a finite $R_S$-algebra, and hence is finitely generated over $k$.
This proves the claim.
\end{proof}

\begin{defn}
Let $R \subset S$ be an inclusion of rings.
There then exists a maximal $R$-subalgebra of $S$ which is subintegral over $R$
\cite[Lemma~2.2]{swan}; it is called the \emph{seminormalization} of $R$ in $S$ and denoted $\prescript{S}{+}R$.
\end{defn}

\begin{defn}
For $R \subset S$ an inclusion of rings, the annihilator $\ann_R(S/R)$ of $S/R$ as an $R$-module is called the \emph{conductor} of $R$ in $S$. Note that while it is defined as an ideal of $R$, it is also an ideal of $S$; in fact, it is the largest such ideal.
\end{defn}

\begin{lemma} \label{L:seminormal descent}
Let $R \to S$ be a faithfully flat morphism of rings. If $S$ is seminormal, then $R$ is seminormal.
\end{lemma}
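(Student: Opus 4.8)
The plan is to use the criterion of seminormality in terms of the map \eqref{eq:seminormal} together with faithfully flat descent, reducing the statement to a statement about the map $R \to \{(y,z) : y^3 = z^2\}$ becoming an isomorphism after base change to $S$. First I would observe that seminormality of $R$ is equivalent to bijectivity of
\[
R \to C_R := \{(y,z) \in R \times R : y^3 = z^2\}, \qquad x \mapsto (x^2, x^3),
\]
and that $C_R$ is functorial in $R$ in the sense that $C_R \otimes_R S \to C_S$ is an isomorphism. The key point is that $C_R = R \times R \times_{R \times R} R$ is a finite limit (an equalizer / fiber product of finitely presented data: the two maps $R[y,z] \to R$ extracting $y^3$ and $z^2$), so the formation of $C_R$ commutes with the flat base change $R \to S$; concretely, $C_R$ is cut out in $R[y,z]/(y^3-z^2)$-style data, and tensoring the defining exact sequence with the flat module $S$ preserves exactness.

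Next I would set up descent. Since $R \to S$ is faithfully flat, the sequence
\[
0 \to R \to S \rightrightarrows S \otimes_R S
\]
is exact, and more generally for any $R$-module $M$ one has $M$ injecting into $M \otimes_R S$ with image the equalizer of the two maps to $M \otimes_R S \otimes_R S$. Apply this to the map $\sigma_R : R \to C_R$: the square relating $\sigma_R$, $\sigma_S$, and $\sigma_{S \otimes_R S}$ commutes, and by the previous paragraph the vertical maps $C_R \to C_S$, $C_R \to C_{S \otimes_R S}$ become, after the identifications $C_S = C_R \otimes_R S$ etc., simply $- \otimes_R S$ and $- \otimes_R S \otimes_R S$. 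Now $S$ seminormal means $\sigma_S$ is bijective; I want to conclude $\sigma_R$ is bijective. Injectivity of $\sigma_R$ is immediate: $R \to C_R \to C_S$ is injective (the composite is $R \to C_R \otimes_R S = C_S$, which factors the injection $R \to R \otimes_R S \to C_S$... more simply, $R \to S$ is injective and $\sigma_S \circ (R \to S) = (C_R \to C_S) \circ \sigma_R$ with the left side injective). For surjectivity: given $(y,z) \in C_R$, its image in $C_S$ is $\sigma_S(s)$ for a unique $s \in S$; one checks $s$ is fixed by the two maps $S \to S \otimes_R S$ (because $(y,z)$ comes from $R$ and $\sigma$ is natural, both pullbacks of $s$ map to the same element $(y,z) \otimes 1 = 1 \otimes (y,z)$ in $C_{S\otimes_R S}$, and $\sigma_{S \otimes_R S}$ is injective since $S \otimes_R S$ is reduced — being flat over the seminormal hence reduced ring, or at least $\sigma$ is injective because seminormal implies reduced and reducedness descends... actually one only needs $\sigma_{S\otimes_R S}$ injective, which holds because $S \otimes_R S$ is reduced, as it embeds in $S \otimes_R S$-with-its-own-seminormality after noting $S$ reduced and flat); hence $s \in R$ by faithfully flat descent, and $\sigma_R(s) = (y,z)$ since $\sigma_S$ is injective. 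Therefore $\sigma_R$ is bijective and $R$ is seminormal.

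The main obstacle, and the step deserving the most care, is verifying that the formation of $C_R$ genuinely commutes with the base change $R \to S$ — i.e. that $C_R \otimes_R S \xrightarrow{\sim} C_S$. This is where flatness is used in an essential way: $C_R$ is the kernel of the map $R \times R \to R \times R$ sending $(y,z)$ to $(y^3 - z^2, 0)$... no, it is not linear, so one must instead present $C_R$ as the equalizer of two ring maps, or better, note that $C_R$ with its two projections sits in a cartesian square of $R$-modules that remains cartesian after $\otimes_R S$ because $S$ is flat (flat base change preserves kernels). One must be slightly careful that the relevant diagram is a diagram of $R$-modules (forgetting the ring structure) so that flatness applies; this is fine because $C_R$ as a set, with its $R$-module structure, is literally $\{(y,z) \in R \oplus R : y^3 = z^2\}$, and the condition $y^3 = z^2$ is a ($R$-module-theoretically nonlinear but still describable as an equalizer of two additive-after-fixing... ) — the cleanest route is: $C_R = (R[u,v]/(u^3 - v^2)) \otimes_{R[u,v]} ?$; rather than belabor this, I would simply cite that $C_R$ represents a functor given by a finite-limit (equalizer) construction and that flat base change commutes with finite limits of modules, which is standard. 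Everything else is a routine application of faithfully flat descent exactly as in \cite[Tag~08WE]{stacks-project} or the argument already used in Lemma~\ref{L:complete descent}.
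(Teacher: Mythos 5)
Your reduction to faithfully flat descent has a genuine gap at the surjectivity step, and it is exactly the delicate point of this lemma. Given $(y,z) \in R \times R$ with $y^3 = z^2$, you produce the unique $s \in S$ with $s^2 = y$, $s^3 = z$, and you need $s \otimes 1 = 1 \otimes s$ in $S \otimes_R S$ so that descent places $s$ in $R$. To get this you invoke injectivity of the map \eqref{eq:seminormal} for $S \otimes_R S$, which by the discussion in Definition~\ref{D:seminormal} is equivalent to $S \otimes_R S$ being \emph{reduced}. But reducedness is not a consequence of $S$ being reduced and $S \otimes_R S$ being flat over it (flat, even free, algebras over reduced rings can have nilpotents, e.g.\ $k[x]/(x^2)$ over a field), and it genuinely fails in relevant situations: for $R = \FF_p(t) \to S = \FF_p(t^{1/p})$, a faithfully flat map of fields (hence of seminormal rings), the element $t^{1/p} \otimes 1 - 1 \otimes t^{1/p}$ is a nonzero nilpotent of $S \otimes_R S$. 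All that the relations $a^2=b^2$, $a^3=b^3$ give you for $a = s \otimes 1$, $b = 1 \otimes s$ is $(a-b)^3 = 0$ (the computation recorded in Definition~\ref{D:seminormal}), which in a non-reduced ring does not force $a=b$; and the equalizer description of $R$ inside $S$ requires equality on the nose, not modulo nilpotents. So the key step is unproved, and it cannot be patched by a purely formal flat-base-change argument.

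A secondary issue: the claim that $C_R \otimes_R S \to C_S$ is an isomorphism because $C_R$ is a finite limit of modules is not correct as stated, since $C_R = \{(y,z): y^3 = z^2\}$ is not an $R$-submodule of $R \times R$ (the defining condition is not additive); it is the set of $R$-points of $\Spec \ZZ[u,v]/(u^3 - v^2)$, and such functors do not commute with base change in the tensor sense. That particular claim is not what your descent skeleton really uses, but it signals that the ``routine'' framing hides the actual difficulty, which is the reducedness of $S \otimes_R S$. The paper's proof is structured precisely to avoid this: for given $y,z$ it forms the subintegral extension $R' = (R[x]/(x^2-y, x^3-z))_{\red}$, base-changes to $S' = R' \otimes_R S$, and compares conductor ideals, using faithful flatness to identify the conductor of $S$ in $S'$ with $IS$ and Traverso's result that this conductor is radical when $S$ is seminormal, from which $R' = R$ follows. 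The remark immediately after the lemma, noting that the authors could not follow the shorter arguments in the literature, is a further indication that no quick descent argument of the kind you propose is known to work.
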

\begin{proof}
Since $S$ is reduced, so then is $R$.
Choose $y,z \in R$ with $y^3 = z^2$ and let $R'$
be the reduced quotient of $R[x]/(x^2-y,x^3-z)$;
then $R \to R'$ is injective and $\Spec(R') \to \Spec(R)$ is subintegral.
Put $S' = R' \otimes_R S$; by Corollary~\ref{C:subintegral base extension},
$\Spec(S') \to \Spec(S)$ is also subintegral.

Let $I$ be the conductor of $R$ in $R'$.
By faithful flatness, $IS=IS'$ is the conductor of $S$ in $S'$.
Since $S$ is seminormal, by \cite[Lemma~1.3]{traverso}, $IS$ is a radical ideal in $S'$ and hence also in $S$. In particular, $IS$ is the intersection of prime ideals in $S'$ so $I = IS' \cap R' = IS \cap R'$ is the intersection of prime ideals in $R'$; that is, $I$ is a radical ideal in both $R$ and $R'$.

However, it is obvious that $y,z \in I$ because $x^ny,x^nz \in R$ for all $n\geq1$.
Consequently, the reduced ring $R'/I$ contains $R/I$ but is a quotient
of $(R/I)[T]/(T^2, T^3)$, whose reduced quotient is again $R/I$.
We deduce that $R' = R$; since this was true for arbitrary $y,z$, it follows that $R$ is seminormal.
\end{proof}

\begin{remark}
In the case where $R$ and $S$ are noetherian and $S$ has finite normalization,
Lemma~\ref{L:seminormal descent} is asserted in \cite[Corollary~1.7]{greco-traverso}. However, this ultimately depends on \cite[Lemma~1.7, Corollary 1.8]{traverso},
whose proofs we were unable to follow. An alternate approach is suggested in
\cite[Remark, p.\ 215]{swan}, but we were unable to follow this argument either
(it was unclear to us why the application of \cite[Theorem~2.8]{swan} has its stated consequence).
\end{remark}

\begin{lemma} \label{L:seminormal etale2}
Let $X$ be a scheme and let $f: Y \to X$ be a finite \'etale surjective morphism.
Then $X$ is seminormal if and only if $Y$ is.
\end{lemma}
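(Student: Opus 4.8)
The plan is to reduce the statement to a Zariski-local question on $X$, then use faithfully flat descent together with the local structure of finite \'etale morphisms. Since seminormality is a Zariski-local property that can even be tested at local rings of maximal ideals (Definition~\ref{D:seminormal}), and since the formation of seminormalization commutes with localization, I may assume $X = \Spec R$ is affine and work with the ring map $R \to S$, where $Y = \Spec S$ and $S$ is a finite \'etale $R$-algebra. In particular $R \to S$ is faithfully flat (it is flat and, being finite, the image is closed; surjectivity of $\Spec S \to \Spec R$ would need to be arranged, so I should instead reduce to the case where $Y \to X$ is surjective — e.g. by replacing $X$ with a connected component or by noting that both directions of the equivalence localize, so it suffices to treat each point of $X$ separately and at a point one may pass to a surjective situation by a further finite \'etale base change that is faithfully flat).

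For the direction ``$Y$ seminormal $\Rightarrow$ $X$ seminormal'': once $R \to S$ is faithfully flat, this is immediate from Lemma~\ref{L:seminormal descent}. So the substance is the converse: assuming $R$ is seminormal, show $S$ is seminormal. First, $S$ is reduced: $R$ reduced plus $R \to S$ \'etale (hence formally smooth, hence the geometric fibers are reduced and $S$ is a product of domains over the normalization etc.) — more simply, \'etale morphisms preserve reducedness because they are flat with reduced fibers. The key step is to check that the square diagram~\eqref{eq:seminormal} for $S$ is bijective. Given $(y,z) \in S \times S$ with $y^3 = z^2$, I want to produce $x \in S$ with $x^2 = y$, $x^3 = z$, and such an $x$ is automatically unique since $S$ is reduced.

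To produce $x$, I would argue as follows. Let $R' = R[t]/(t^2 - ?, \dots)$ be unavailable directly since $y,z$ live in $S$ not $R$; instead, consider the $R$-subalgebra $R_0 \subseteq S$ generated over $R$ by the (finitely many) coefficients needed, or more robustly work with the seminormalization $S'$ of $S$ in some larger ring and the subintegral extension $S \hookrightarrow S'$ obtained exactly as in the proof of Lemma~\ref{L:seminormal descent}: let $S'$ be the reduced quotient of $S[x]/(x^2 - y, x^3 - z)$, so that $S \to S'$ is injective with $\Spec S' \to \Spec S$ subintegral, and I must show $S' = S$. Now $S' = S \otimes_R R'$ is \emph{not} quite right because $y,z$ need not come from $R$; instead I invoke that $R \to S$ is \'etale to transfer the subintegral cover: by Remark~\ref{R:seminormal etale} (or rather its scheme-theoretic content — finite \'etale morphisms and universal homeomorphisms ``commute'' up to the appropriate fiber product, e.g.\ \cite{conrad-mathoverflow}), the subintegral morphism $\Spec S' \to \Spec S$ descends, after the \'etale base change $X \to X$... — this is the delicate point.

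\textbf{The main obstacle} will be precisely the descent of the subintegral cover $\Spec S' \to \Spec S$ along the \'etale map $Y \to X$: $y$ and $z$ are elements of $S$, not of $R$, so one cannot literally imitate the proof of Lemma~\ref{L:seminormal descent} (which had $y,z \in R$). The clean way around it is to use that $Y \to X$ is a \emph{universal homeomorphism after seminormalization} behaves well: apply seminormalization to $f \colon Y \to X$ to get $f' \colon Y' \to X'$; since $X$ is seminormal, $X' = X$; one then shows $f'$ is again finite \'etale (\'etale morphisms are stable under the relevant operations, and the seminormalization of the target being trivial forces $Y' \to Y$ to be finite \'etale-locally an isomorphism) and that $Y'$ is seminormal by construction, whence $Y' \to Y$ is a subintegral morphism of a seminormal scheme onto a reduced scheme, forcing $Y' = Y$, i.e.\ $Y$ is seminormal. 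Making rigorous the claim ``$f'$ is finite \'etale and $Y \to X$ seminormal forces $Y' = Y$'' — equivalently, that seminormalization commutes with finite \'etale base change — is where the real work lies, and I expect it to follow from the local structure theorem for \'etale morphisms (standard \'etale presentations $S = (R[t]/g)_h$ with $g'$ a unit) combined with Lemma~\ref{L:universal homeomorphism1}'s characterization of subintegral morphisms via conductors as in Lemma~\ref{L:seminormal descent}, applied now with faithfully flat descent along $R \to S$ used in the \emph{other} direction to pull the conductor computation back down to $R$.
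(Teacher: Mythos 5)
Your ``if'' direction is essentially the paper's: reduce to the affine case and apply faithfully flat descent via Lemma~\ref{L:seminormal descent} (the surjectivity caveat you raise is real but harmless, since the image of a finite \'etale map is open and closed and the question is local on $X$). The problem is the ``only if'' direction, where the decisive step is never actually proved. You reduce everything to the claim that the seminormalization $Y' \to Y$ is an isomorphism --- equivalently, that seminormalization commutes with finite \'etale base change --- and you explicitly leave this as something you ``expect'' to follow from standard \'etale presentations and conductor computations; but that claim \emph{is} the lemma, so nothing has been established. Worse, the intermediate deduction you offer (``$Y'$ is seminormal, $Y' \to Y$ is subintegral, $Y$ is reduced, forcing $Y' = Y$'') is false as a general principle: the normalization of the cuspidal cubic is subintegral in the sense of Lemma~\ref{L:universal homeomorphism1}, its source is smooth (hence seminormal) and its target is reduced, yet it is not an isomorphism. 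To conclude $Y' = Y$ you would need either to know already that $Y$ is seminormal (circular), or to know that $Y' \to Y$ is \'etale, which requires knowing that $Y' \to X$ is finite \'etale --- precisely the descent statement you have not supplied. Your earlier, more direct attempt (forming the reduced quotient $S'$ of $S[x]/(x^2-y,x^3-z)$) stalls for the reason you yourself identify: $y,z$ live in $S$, not $R$, so the conductor argument of Lemma~\ref{L:seminormal descent} does not transplant.

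The paper closes this gap by a different mechanism, which is the real content of the proof: since $Y' \to Y$ is a universal homeomorphism, the finite \'etale sites of $Y$ and $Y'$ agree (Remark~\ref{R:seminormal etale}), so the finite \'etale equivalence relation $Y \times_X Y \rightrightarrows Y$ coming from $f$ transfers to a finite \'etale equivalence relation on $Y'$; by effectivity of finite \'etale equivalence relations \cite[Tag~07S5]{stacks-project} one obtains a finite \'etale morphism $f' \colon Y' \to X'$ and a map $X' \to X$ fitting into a cartesian square. Lemma~\ref{L:universal homeomorphism1} then shows $X' \to X$ is subintegral, hence an isomorphism because $X$ is seminormal; pulling back, $Y' \to Y$ is an isomorphism and $Y$ is seminormal. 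If you prefer your route via ``seminormalization commutes with \'etale base change,'' you must actually prove that statement (for instance by an honest descent argument of the above kind, or by the noetherian results of Greco--Traverso combined with a limit argument as in Lemma~\ref{L:seminormal finite approximation}), rather than assert it.
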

\begin{proof}
We may assume $X = \Spec(A)$ is affine.
The ``if'' direction follows from Lemma~\ref{L:seminormal descent}, so we focus on the ``only if'' direction.

Suppose that $X$ is seminormal.
Let $Y' \to Y$ be the seminormalization of $Y$; by Remark~\ref{R:seminormal etale}, the finite \'etale equivalence relation on $Y$ coming from $f$ transfers to a finite \'etale equivalence relation on $Y'$. By \cite[Tag~07S5]{stacks-project}, we obtain a finite \'etale morphism $f': Y' \to X'$ and a morphism $X' \to X$ such that the diagram
\[
\xymatrix{
Y' \ar[r] \ar^{f'}[d] & Y \ar^{f}[d] \\
X' \ar[r] & X
}
\]
is cartesian. By Lemma~\ref{L:universal homeomorphism1}, $X' \to X$ defines an isomorphism of seminormalizations, and hence is itself an isomorphism because $X$ is seminormal.
Hence $Y' \to Y$ is an isomorphism, so $Y$ is seminormal.
\end{proof}

\begin{lemma} \label{L:seminormal field extension}
Let $X$ be a scheme over a field $k$. Let $\ell/k$ be a field extension
such that $\ell \otimes_k \ell$ is a reduced ring. (If $\ell/k$ is an algebraic extension, this means precisely that $\ell$ is separable.)
Then $X$ is seminormal if and only if $X \times_{\Spec(k)} \Spec(\ell)$ is seminormal.
\end{lemma}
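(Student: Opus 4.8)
The plan is to prove the two implications separately. The forward (``if'') implication is immediate: $\Spec(\ell)\to\Spec(k)$ is affine and faithfully flat, hence so is the base change $X_\ell := X\times_{\Spec(k)}\Spec(\ell)\to X$; since seminormality is Zariski-local (Definition~\ref{D:seminormal}) we may restrict to an affine open $\Spec(A)\subseteq X$, over which $X_\ell$ is $\Spec(A\otimes_k\ell)$ with $A\to A\otimes_k\ell$ faithfully flat, and Lemma~\ref{L:seminormal descent} then gives that $A$ is seminormal once $A\otimes_k\ell$ is. (The case $A=0$ is trivial, so we may assume $k\hookrightarrow A$.)

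For the reverse direction, assume $X$ is seminormal; again we may take $X=\Spec(A)$ affine with $k\hookrightarrow A$, and we must show $A\otimes_k\ell$ is seminormal. The first reduction is to the case where $A$ is of finite type over $k$: by Lemma~\ref{L:seminormal finite approximation} (valid since a field is excellent), $A$ is the filtered colimit of its finitely generated seminormal $k$-subalgebras $A_i$; since $A\otimes_k\ell=\varinjlim_i(A_i\otimes_k\ell)$ and seminormality is preserved by filtered colimits, it suffices to treat each $A_i$. The second reduction is on the field: the hypothesis that $\ell\otimes_k\ell$ is reduced forces $\ell/k$ to be separable (the stated parenthetical being the algebraic case), so writing $\ell=\varinjlim_j\ell_j$ as the filtered colimit of its finitely generated subextensions---each again separable over $k$, hence separably generated---and using $A\otimes_k\ell=\varinjlim_j(A\otimes_k\ell_j)$ together with stability under filtered colimits once more, we reduce to the case where $\ell/k$ is finitely generated and separably generated.

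Now choose a separating transcendence basis $t_1,\dots,t_d$ of $\ell/k$, so that $\ell$ is finite separable over $F:=k(t_1,\dots,t_d)$, and factor $\Spec(\ell)\to\Spec(k)$ through $\Spec(F)$. For the map $\Spec(F)\to\Spec(k)$: the ring $A\otimes_kF$ is the localization of the polynomial ring $A[t_1,\dots,t_d]$ at the multiplicative set $k[t_1,\dots,t_d]\setminus\{0\}$, so it is seminormal provided $A[t_1,\dots,t_d]$ is, since seminormality passes to localizations. For the map $\Spec(\ell)\to\Spec(F)$: as $\ell/F$ is a finite separable extension of fields, $\Spec(\ell)\to\Spec(F)$ is finite \'etale, hence so is its base change $\Spec(A\otimes_k\ell)\to\Spec(A\otimes_kF)$, and Lemma~\ref{L:seminormal etale2} then shows $A\otimes_k\ell=(A\otimes_kF)\otimes_F\ell$ is seminormal. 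This completes the argument modulo one point.

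That point---the main obstacle---is the claim that a polynomial ring over a (now noetherian) seminormal ring is again seminormal; reducing $A$ to finite type over $k$ was needed precisely because this statement fails for general rings. I would handle it either by citing the theory of seminormal schemes in the noetherian case (e.g.\ \cite{greco-traverso}, whose noetherian hypotheses are harmless here), or by a direct argument: by induction it suffices to treat $A[t]$; passing to the finite normalization $\widetilde A$ of the reduced ring $A$, one checks that $\widetilde A[t]$ is normal and is the normalization of $A[t]$ and that the conductor of $A[t]$ in $\widetilde A[t]$ is extended from the conductor of $A$ in $\widetilde A$, and then verifies the defining condition of seminormality for $A[t]$ inside $\widetilde A[t]$ by induction on degrees, using seminormality of $A$ to control leading coefficients. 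Making this last verification precise when $A$ is reduced but not a domain is the most delicate part.
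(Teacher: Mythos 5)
Your overall route is the paper's route: the ``if'' direction by faithfully flat descent (Lemma~\ref{L:seminormal descent}), reduction of $A$ to a finitely generated seminormal $k$-algebra via Lemma~\ref{L:seminormal finite approximation} and stability of seminormality under filtered colimits, and then a two-step treatment of the field extension --- a purely transcendental part reduced to seminormality of polynomial rings, and a finite separable part handled by Lemma~\ref{L:seminormal etale2}. The ``main obstacle'' you single out (a polynomial ring over a noetherian seminormal ring is seminormal) is exactly what the paper invokes from the literature: it cites \cite[Proposition~5.2(a)]{greco-traverso} after the same noetherian reduction, and the passage to the localization $A\otimes_k F$ of $A[t_1,\dots,t_d]$ is covered by the fact (recorded in Definition~\ref{D:seminormal}) that seminormality passes to localizations. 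So your proposed citation is the right one and your direct sketch, while plausible, is not needed.

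There is, however, one step that is false as stated: the claim that ``$\ell\otimes_k\ell$ reduced forces $\ell/k$ to be separable.'' This equivalence holds only for algebraic extensions, which is precisely why the parenthetical in the lemma is restricted to that case. For transcendental extensions it fails: take $k=\FF_p(a,b)$, let $t$ be transcendental over $k$, set $\theta=a^{1/p}+b^{1/p}t$ and $\ell=k(t,\theta)$. Then $\ell$ is not linearly disjoint from $k^{1/p}$ (the relation $\theta=a^{1/p}+b^{1/p}t$ is an $\ell$-linear dependence among $1,a^{1/p},b^{1/p}$), so $\ell/k$ is not separable; yet $\ell\otimes_k\ell\cong\bigl(\ell\otimes_k k(s)\bigr)[Y]/(Y^p-a-bs^p)$ is a domain, because $\ell\otimes_k k(s)$ is a localization of $\ell[s]$ and $a+bs^p$ is not a $p$-th power in $\ell(s)$ (as $b^{1/p}\notin\ell$). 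Consequently your reduction to a separating transcendence basis, and hence your whole ``only if'' argument, covers only separable $\ell/k$. To be fair, the paper's own transfinite-induction reduction to purely transcendental and finite separable steps likewise only produces separable extensions, so your argument is no weaker than the printed one; and in the paper's application (Proposition~\ref{P:affinoid seminormal}(b), characteristic $0$) the distinction is vacuous. But you should either weaken your claim to the algebraic case and say explicitly that you are treating separable $\ell/k$, or supply a genuinely different argument for extensions like the one above.
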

\begin{proof}
We may assume from the outset that $X = \Spec(R)$ is affine.
Again, the ``if'' direction follows from Lemma~\ref{L:seminormal descent}, so we focus on the ``only if'' direction.

Suppose that $R$ is seminormal. To check that $R \otimes_k \ell$ is seminormal, we
use Lemma~\ref{L:seminormal finite approximation} and transfinite induction
to reduce to the case where
$R$ is a finitely generated $k$-algebra and $\ell$ is either a finite separable extension of $k$ or a purely transcendental extension. In the former case,
we may apply Lemma~\ref{L:seminormal etale2} to deduce that $R \otimes_k \ell$ is seminormal; in the latter case, it is enough to check that $R \otimes_k k[x] = R[x]$ is seminormal, for which we may use \cite[Proposition~5.2(a)]{greco-traverso}.
\end{proof}

\begin{remark}
Using Lemma~\ref{L:seminormal finite approximation}, one can extend the results of  \cite[\S 5]{greco-traverso} to establish the stability of seminormality under various other types of base extensions.
\end{remark}

\begin{lemma} \label{L:excellent seminormal}
Let $R$ be a noetherian local ring with completion $\widehat{R}$.
\begin{enumerate}
\item[(a)]
If $\widehat{R}$ is seminormal, then $R$ is seminormal.
\item[(b)]
If $R$ is excellent and seminormal, then $\widehat{R}$ is seminormal.
\end{enumerate}
\end{lemma}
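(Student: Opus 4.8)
The plan is as follows. Part~(a) is immediate from descent: the completion homomorphism $R \to \widehat R$ of a noetherian local ring is faithfully flat, so if $\widehat R$ is seminormal then so is $R$ by Lemma~\ref{L:seminormal descent}.

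For part~(b) I would reduce to the conductor description of seminormality. Since $R$ is seminormal it is reduced, and since $R$ is excellent its formal fibres are geometrically reduced, so $\widehat R$ is again reduced; likewise, if $R$ is normal then $\widehat R$ is normal, hence seminormal by Remark~\ref{R:perfect seminormal}, and this will serve as the base case of an induction. Write $\widetilde R$ for the normalization of $R$; excellence makes $\widetilde R$ finite over $R$, the module $\widetilde R/R$ vanishes at the minimal primes of $R$, so the conductor $\mathfrak c = \ann_R(\widetilde R/R)$ meets no minimal prime and $\dim(R/\mathfrak c) < \dim R$ unless $\mathfrak c = R$. Flatness of $R \to \widehat R$ identifies $\widehat R \otimes_R \widetilde R$ with the completion of $\widetilde R$, which is normal (again by excellence of $\widetilde R$) and integral over $\widehat R$ with torsion cokernel, hence is the normalization of $\widehat R$; since $\widetilde R/R$ is finitely presented, $\mathfrak c\widehat R$ is the conductor of $\widehat R$ in that normalization, and $\widehat R \otimes_R (\widetilde R/\mathfrak c)$ and $\widehat R \otimes_R (R/\mathfrak c)$ are the completions of $\widetilde R/\mathfrak c$ and $R/\mathfrak c$ respectively.

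I would then invoke the conductor criterion for seminormality of reduced noetherian rings with finite normalization (Traverso~\cite{traverso}, cf.\ Greco--Traverso~\cite{greco-traverso}): $R$ is seminormal if and only if $\widetilde R/\mathfrak c$ is reduced and $R/\mathfrak c$ is seminormal. Arguing by induction on $\dim R$, the case $\mathfrak c = R$ is the base case above; otherwise the criterion applied to $R$ shows that $\widetilde R/\mathfrak c$ is reduced and $R/\mathfrak c$ is seminormal, and since $R/\mathfrak c$ is excellent of strictly smaller dimension, the induction hypothesis makes $\widehat{R/\mathfrak c} = \widehat R/\mathfrak c\widehat R$ seminormal. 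Similarly $\widetilde R/\mathfrak c$ is reduced and excellent, so its completion $\widehat{\widetilde R}/\mathfrak c\widehat R$ is reduced. Feeding the identifications of $\widehat{\widetilde R}$ as the normalization of $\widehat R$ and of $\mathfrak c\widehat R$ as the corresponding conductor back into the criterion, now applied to $\widehat R$, yields that $\widehat R$ is seminormal.

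The hard part will be marshalling the excellence inputs with care --- that completion preserves reducedness and normality and commutes with normalization --- and, more seriously, pinning down the precise form of the conductor criterion: in the literature it is stated only under noetherian-with-finite-normalization hypotheses, which is exactly what excellence supplies here, and one must verify that the two conditions appearing in it (reducedness of $\widetilde R/\mathfrak c$, seminormality of $R/\mathfrak c$) propagate under flat base change, the latter only through the inductive hypothesis. It is worth recording that excellence is genuinely needed here: for a non-excellent noetherian local seminormal ring the completion $\widehat R$ can fail even to be reduced, so no such statement holds in general.
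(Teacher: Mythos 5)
Part (a) is fine: the completion map of a noetherian local ring is faithfully flat, so Lemma~\ref{L:seminormal descent} gives the descent immediately. Note that the paper proves nothing here itself---its entire proof is the citation \cite[Corollaries~1.8 and~5.3]{greco-traverso}---so for (a) your argument is a genuine self-contained replacement for the reference.

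For (b), however, there is a real gap: the conductor criterion you invoke is not true in the form you state it. What Traverso and Greco--Traverso prove is a \emph{relative} criterion: with $\mathfrak{c}$ the conductor of $R$ in $\widetilde{R}$, the ring $R$ is seminormal if and only if $R/\mathfrak{c}$ is seminormal \emph{in} $\widetilde{R}/\mathfrak{c}$, i.e.\ any $u\in\widetilde{R}/\mathfrak{c}$ with $u^2,u^3\in R/\mathfrak{c}$ already lies in $R/\mathfrak{c}$. The implication you need to launch your induction---$R$ seminormal $\Rightarrow$ $R/\mathfrak{c}$ absolutely seminormal---is false. For a counterexample, glue two planes along a cuspidal curve: let
\[
R_0=\{(f,g)\in k[x,y]\times k[x,y]\colon f\equiv g \pmod{y^2-x^3}\}.
\]
Its normalization is $k[x,y]\times k[x,y]$, the conductor is $\mathfrak{c}=(y^2-x^3)\times(y^2-x^3)$, which is radical, and $R_0$ is seminormal: given $Y=(Y_1,Y_2)$, $Z=(Z_1,Z_2)$ in $R_0$ with $Y^3=Z^2$, solve componentwise in the normal ring $k[x,y]$ to get $w_i$ with $w_i^2=Y_i$, $w_i^3=Z_i$; since $\bar{w}_1^2=\bar{w}_2^2$ and $\bar{w}_1^3=\bar{w}_2^3$ in the domain $k[x,y]/(y^2-x^3)$, one gets $\bar{w}_1=\bar{w}_2$, so $(w_1,w_2)\in R_0$. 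Yet $R_0/\mathfrak{c}\cong k[x,y]/(y^2-x^3)$ is the cusp, which is not seminormal; localizing at the maximal ideal over the origin gives an excellent seminormal noetherian local ring whose conductor quotient is the local ring of the cusp, still not seminormal. So even under the hypotheses of the lemma you cannot feed $R/\mathfrak{c}$ into your inductive hypothesis. (The direction you use for $\widehat{R}$---reducedness of the completed $\widetilde{R}/\mathfrak{c}$ together with seminormality of the completed $R/\mathfrak{c}$ implies seminormality of $\widehat{R}$---is correct, but its input is exactly what is missing.) A repair would be to run the induction on the relative statement, namely that seminormality of the pair $(R,\widetilde{R})$ is preserved by completion; that is essentially the content of \cite[Corollary~5.3]{greco-traverso}, which is precisely what the paper cites for (b).
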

\begin{proof}
See \cite[Corollary~1.8, Corollary~5.3]{greco-traverso}.
\end{proof}

\begin{remark}
A theorem of Swan \cite[Theorem~1]{swan} implies that a reduced ring $R$ is seminormal if and only if the map $\Pic(R) \to \Pic(R[X])$ is an isomorphism. For a simple proof
of this theorem, see \cite{coquand}.
\end{remark}

\begin{defn} \label{D:h-topology}
Following Voevodsky \cite{voevodsky}, we define the \emph{$h$-topology} on the category of schemes as the Grothendieck topology generated by finite families $\{p_i: U_i \to X\}$ of morphisms of finite type for which $\sqcup p_i: \sqcup U_i \to X$ is a universal topological epimorphism. For example, any morphism of finite type which induces an isomorphism of seminormalizations is a covering for the $h$-topology.
\end{defn}

\begin{prop} \label{P:h-topology seminormal}
An excellent $\QQ$-scheme $X$ is seminormal if and only if the pushforward of the structure sheaf from the $h$-topology on $X$ to the Zariski topology is isomorphic to the structure sheaf on $X$.
\end{prop}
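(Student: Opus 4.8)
The plan is to reduce to the affine case and to identify the $h$-sheafification $\calO_h$ of the structure presheaf with the seminormalization. Since seminormality, the formation of $\calO_h$, and the unit $\calO_X \to \pi_*\calO_h$ are all Zariski-local, it suffices to prove: for $R$ an excellent $\QQ$-algebra, $\calO_h(\Spec R) = R$ if and only if $R$ is seminormal; the global statement then follows by testing on affine opens. A convenient first step is that $\calO$ and its presheaf quotient $\calO/\mathrm{nil}$ (with $(\calO/\mathrm{nil})(U) = \calO(U_\red)$) have the same $h$-sheafification — any nilpotent section dies on the $h$-cover $U_\red \to U$ — while $\calO/\mathrm{nil}$ is separated for the $h$-topology, since a surjective $h$-cover detects vanishing of a section over a reduced base. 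Thus a single $+$-construction computes everything:
\[
\calO_h(\Spec R) = \varinjlim_{Y\to\Spec R}\bigl\{\, t\in\calO(Y_\red) : p_1^*t = p_2^*t \text{ in }\calO((Y\times_{\Spec R}Y)_\red)\,\bigr\},
\]
the colimit over $h$-covers $Y\to\Spec R$; in particular $R\to\calO_h(\Spec R)$ is injective when $R$ is reduced and always kills the nilradical.

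For the "only if'' direction (the easy one) I would argue the contrapositive by writing down an $h$-cover explicitly. If $R$ is not reduced, the displayed map kills the nonzero nilradical, so it is not an isomorphism. If $R$ is reduced but not seminormal, choose $y,z\in R$ with $y^3 = z^2$ admitting no $x\in R$ with $x^2 = y$, $x^3 = z$, and set $R'' = (R[x]/(x^2-y,\,x^3-z))_\red$. Then $R''$ is module-finite over $R$, the map $R\hookrightarrow R''$ is strict, and one checks that $\Spec R''\to\Spec R$ is bijective on points with all residue field extensions trivial (here characteristic $0$ enters), hence subintegral by Lemma~\ref{L:universal homeomorphism1}, and in particular an $h$-cover; its universal injectivity says exactly that $\bar x\otimes 1 - 1\otimes\bar x$ is nilpotent in $R''\otimes_R R''$. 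Hence the image of $x$ in $R''$ satisfies the gluing condition for this cover and defines an element of $\calO_h(\Spec R)$ not coming from $R$.

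The "if'' direction is the substantive one, and I would first do the normal case as an engine. For $X=\Spec R$ normal: by the structure theory of the $h$-topology (Voevodsky, \cite{voevodsky}) every $h$-cover is refined by a Zariski cover of a proper surjective morphism of finite type, so by Zariski descent the terms of the colimit are governed by the covers $X'_\red\to X$ with $X'\to X$ proper surjective of finite type; replacing $X'$ by $X'_\red$ and applying Stein factorization $X'\to Z\to X$ (so $Z\to X$ is finite surjective, $X$ being noetherian) pushes the given section down to $t\in\calO(Z) = S$, with $S$ a reduced finite $R$-algebra, and the gluing condition becomes $t\otimes 1 - 1\otimes t\in\mathrm{nil}(S\otimes_R S)$ (using surjectivity of $X'\to Z$ and of $X'\times_X X'\to Z\times_X Z$, plus reducedness of the reduced target). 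To conclude $t\in R$ I pass to the total fraction ring: for each minimal prime $\gothp$ of $R$ the ring $S\otimes_R\kappa(\gothp)$ is a localization of the reduced ring $S$, hence a finite product of finite separable extensions of $\kappa(\gothp)$, on each factor of which $t\otimes 1 = 1\otimes t$ forces $t$ into the diagonal $\kappa(\gothp)$; so $t\in\Frac(R)$, $t$ is integral over $R$, and normality gives $t\in R$. For a general seminormal $R$ one bootstraps: the engine applied to $\bar X=\Spec\bar R$ gives $\calO_h(\bar X)=\bar R$, and since $\Spec\bar R\to\Spec R$ is an $h$-cover and $\calO\hookrightarrow\calO_h$ on the reduced scheme $(\bar R\otimes_R\bar R)_\red$, the sheaf condition identifies $\calO_h(\Spec R)$ with $\{t\in\bar R : t\otimes 1 - 1\otimes t\in\mathrm{nil}(\bar R\otimes_R\bar R)\}$; by Swan's description of the seminormalization of $R$ inside $\bar R$ (valid because excellence makes $\bar R$ module-finite), this ring is exactly $\prescript{\bar R}{+}R$, which equals $R$ precisely because $R$ is seminormal.

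The hard part will be this last, "if'', direction: one must thread the structure theory of the $h$-topology through Stein factorization in order to turn an arbitrary $h$-local gluing datum into the purely algebraic statement $t\otimes 1-1\otimes t\in\mathrm{nil}(S\otimes_R S)$ over a finite extension, and then match the resulting "nilpotent-diagonal'' subring of $\bar R$ with the seminormalization. The latter matching is where the characteristic-$0$ hypothesis and the theory of (semi)normalization of Swan \cite{swan} (and, where noetherian, Greco--Traverso) are genuinely used, and it should be invoked with the same care about noetherian hypotheses exercised elsewhere in this section, excellence being what licenses the reductions. By contrast, the "only if'' direction is entirely explicit.
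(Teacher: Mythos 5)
Your ``only if'' direction is correct and is essentially the paper's argument: the paper uses the seminormalization $Y \to X$ itself as an $h$-cover and notes $f_*\calO_Y \neq \calO_X$, while you use the explicit elementary subintegral extension $R'' = (R[x]/(x^2-y,x^3-z))_{\red}$; these are the same idea. Your ``if'' direction is genuinely different from the paper's, which simply quotes Voevodsky \cite[Proposition~3.2.10]{voevodsky} via \cite[Proposition~4.5]{huber-jorder} and observes that the hypotheses there relax to excellent $\QQ$-schemes; you instead attempt a self-contained proof. Most of your reductions check out: separatedness of $U \mapsto \calO(U_{\red})$ for the $h$-topology, the passage to covers of the form $X'_{\red} \to X$ with $X' \to X$ proper surjective, Stein factorization, the descent of nilpotence of $t\otimes 1 - 1\otimes t$ through the surjection $X' \times_X X' \to Z \times_X Z$, and the equalizer bootstrap along $\Spec \bar R \to \Spec R$. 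But one step of the normal-case engine fails as written.

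The gap is the sentence ``so $t\in\Frac(R)$, $t$ is integral over $R$, and normality gives $t\in R$.'' Your computation lives in $S \otimes_R \Frac(R)$, and $S \to S \otimes_R \Frac(R)$ is not injective in general: $S = \Gamma(X'_{\red},\calO)$ can have irreducible components that do not dominate $\Spec R$ (the refinements supplied by the structure theorem pass through a finite surjective scheme over a blowup, and you do not get to choose the cover), and on such components $S$ has $R$-torsion. So your argument only identifies the image of $t$ in $S \otimes_R \Frac(R)$ with some $q \in R$; it says nothing about $t - q$ on the non-dominating components, and ``$t \in \Frac(R)$'' does not literally make sense. The statement is still true and repairable in two ways. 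Either refine once more by discarding the components of $X'_{\red}$ not dominating a component of $\Spec R$ (the remaining union is still proper and surjective, and then $S$ is $R$-torsion-free, so your argument goes through); or argue residue-field-wise: for primes $\mathfrak{q},\mathfrak{q}'$ of $S$ over $\gothp \subset R$, nilpotence of $t\otimes 1 - 1\otimes t$ in the reduced (characteristic-$0$) ring $\kappa(\mathfrak{q}) \otimes_{\kappa(\gothp)} \kappa(\mathfrak{q}')$ shows $t(\mathfrak{q}) \in \kappa(\gothp)$ independent of $\mathfrak{q}$, and since a dominating component of $\Spec S$, being finite over $R$, surjects onto $\Spec R$, the element $s = t - q$ vanishes at some prime over every $\gothp$, hence at all of them, hence $s = 0$. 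Note that this same residue-field argument is also what you need at the very end: the identification of $\{t \in \bar R : t\otimes 1 - 1\otimes t \in \mathrm{nil}(\bar R \otimes_R \bar R)\}$ with $\prescript{\bar R}{+}R$ is not a statement you can literally cite from \cite{swan}, and it is false in characteristic $p$ (one gets the weak normalization instead), so it should be proved, with excellence supplying module-finiteness of $\bar R$ and characteristic $0$ supplying reducedness of the residue-field tensor products.
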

\begin{proof}
Let $X_h$ be the $h$-topology and let $\nu: X_h \to X$ be the canonical morphism.
Note first that $X$ is reduced if and only if $\calO_X \to \nu_* \nu^* \calO_X$ is injective;
we may thus assume hereafter that this is the case.

Suppose first that $X$ is not seminormal, and let $f: Y \to X$ be the seminormalization.
By Lemma~\ref{L:universal homeomorphism1}, $f$ is affine, so $f_* \calO_Y \neq \calO_X$;
and $f$ is a cover for the $h$-topology, so $\calO_X \to \nu_* \nu^* \calO_X$ is not surjective.

Conversely, suppose that $X$ is seminormal. The fact that $\calO_X \to \nu_* \nu^* \calO_X$ is surjective follows from \cite[Proposition~3.2.10]{voevodsky} as explained in
\cite[Proposition~4.5]{huber-jorder}. (The latter result includes a stronger running hypothesis, namely that $X$ is separated of finite type over a field of characteristic $0$, but the reduction to \cite[Proposition~3.2.10]{voevodsky} is formal and that result
applies to any excellent $\QQ$-scheme.)
\end{proof}

\begin{cor} \label{C:h-topology seminormal}
Let $X$ be a seminormal excellent $\QQ$-scheme, let $X_h$ be its $h$-topology, and let $\nu: X_h \to X$ be the canonical morphism.
Then for any locally finite free $\calO_X$-module $\calF$, the adjunction morphism
$\calF \to \nu_* \nu^* \calF$ is an isomorphism.
\end{cor}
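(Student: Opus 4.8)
The plan is to reduce immediately to the case where $\calF$ is free of finite constant rank and then invoke Proposition~\ref{P:h-topology seminormal}. Note first that both $\calF$ and $\nu_* \nu^* \calF$ are sheaves on the (small) Zariski site of $X$, and the adjunction morphism is a morphism of such sheaves; hence it suffices to check that it is an isomorphism Zariski-locally on $X$. For $j \colon U \hookrightarrow X$ an open immersion with $\nu_U \colon U_h \to U$ the canonical morphism, one has a canonical identification $j^* \nu_* \nu^* \calF \cong \nu_{U,*} \nu_U^* (j^* \calF)$ compatible with the adjunction units; moreover $U$ is again an excellent $\QQ$-scheme, and it is seminormal because seminormality is a Zariski-local property (Definition~\ref{D:seminormal}). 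Choosing $U$ small enough that $\calF|_U$ is free of some finite rank $n$, we are reduced to proving the claim for $\calF \cong \calO_X^{\oplus n}$.

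In that case, since $\nu^*$ is a left adjoint it commutes with finite direct sums, so $\nu^* \calF \cong (\nu^* \calO_X)^{\oplus n}$; and since $\nu_*$ is a right adjoint it commutes with finite products, which for sheaves of modules agree with finite direct sums, so $\nu_* \nu^* \calF \cong (\nu_* \nu^* \calO_X)^{\oplus n}$. By naturality of the adjunction unit in $\calF$, the morphism $\calF \to \nu_* \nu^* \calF$ is thereby identified with the $n$-fold direct sum of the unit $\calO_X \to \nu_* \nu^* \calO_X$. The latter is an isomorphism exactly by Proposition~\ref{P:h-topology seminormal}, using that $X$ is seminormal, excellent, and a $\QQ$-scheme (here $\nu^* \calO_X$ is the structure sheaf of $X_h$, so $\nu_* \nu^* \calO_X$ is the pushforward of the structure sheaf from the $h$-topology to the Zariski topology); and a finite direct sum of isomorphisms is an isomorphism. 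This completes the argument.

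The only step that is not purely formal is the compatibility of the functor $\calG \mapsto \nu_* \nu^* \calG$ with restriction along Zariski open immersions, i.e.\ that $h$-sheafification followed by pushforward to the Zariski topology localizes correctly. I expect this to be the main (minor) obstacle: it follows from the fact that the $h$-covers of an open subscheme $U \subseteq X$ are the same whether formed intrinsically in $U$ or restricted from covers of $X$, combined with the usual interaction of $j^*$, $j_*$ with $\nu_*$ and $\nu^*$ for the open immersion $j \colon U \hookrightarrow X$. Granting this, everything else is a formal consequence of adjunction together with Proposition~\ref{P:h-topology seminormal}.
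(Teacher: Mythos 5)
Your proof is correct and matches the paper's intent: the paper gives no separate argument for this corollary, treating it as the immediate local consequence of Proposition~\ref{P:h-topology seminormal} that you spell out (Zariski-localize to make $\calF$ free, then use additivity of $\nu^*$ and $\nu_*$). The localization compatibility you flag is indeed the only non-formal ingredient, and it holds for the standard reason you give, so there is no gap.
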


\begin{remark}
It is reasonable to expect Proposition~\ref{P:h-topology seminormal} to remain true without the restriction that $X$ is a $\QQ$-scheme. For example,
one key step in the argument would be to show that for $f: Y \to X$ a projective birational morphism between regular excellent schemes, we have $R^i f_* \calO_Y = 0$ for all $i>0$, and indeed this has been recently established by 
Chatzistamatiou and R\"ulling \cite[Theorem~1.1]{chatzistamatiou-rulling}.
\end{remark}

\subsection{Pseudocoherence and Beauville-Laszlo glueing}
\label{subsec:BL glueing}

The Beauville-Laszlo glueing theorem 
(discussed previously in this series in \cite[Proposition~1.3.6]{part1})
asserts that if $R$ is a ring and $f \in R$ is not a zero-divisor, then
$f$-torsion-free $R$-modules can be recovered from their base extensions to the localization $R_f$ and the $f$-adic completion $\widehat{R}$; moreover, one can detect whether a module is finitely generated, finitely presented, or finite projective from the base extensions.
Here, we do some related commutative algebra which will help establish pseudocoherence of various modules.

\begin{hypothesis} \label{H:BL glueing}
Throughout \S\ref{subsec:BL glueing}, let $R$ be a ring and suppose that $f \in R$ is not a zero-divisor. 
Let $\widehat{R}$ denote the $f$-adic completion of $R$.
For $M$ an $R$-module, write $M[f^n]$ for the $f^n$-torsion submodule and $M[f^\infty]$ for $\bigcup_{n=1}^\infty M[f^n]$.
\end{hypothesis}

\begin{remark} \label{R:torsion bound}
Let $M$ be an $f$-power-torsion $R$-module, so that 
$M = M[f^\infty]$.
Suppose that $M/fM$ is finitely generated. Absent any further hypothesis, it can be the case that $M$ is not finitely generated; e.g., if
$M = R_f/R$ then $M/fM = 0$.
This will necessitate conditions on $f$-torsion in some of our subsequent results.
However, if we assume also that $M$ is killed by some power of $f$, then $M$ is finitely generated.
\end{remark}

\begin{remark} \label{R:no divisible map}
A related (albeit trivial) observation is that if $R$ is $f$-adically separated (that is, $\bigcap_{n=1}^\infty f^n R = 0$), then for any finite projective $R$-module $P$, we have
$\Hom_R(P_f, R) = 0$.
\end{remark}

\begin{remark} \label{R:subfinitely generated}
Let $\calC_R$ be the category of $R$-modules admitting injections into finitely generated $R$-modules. We gather some observations about $\calC_R$, by way of comparison with the category of finitely generated $R$-modules.
\begin{itemize}
\item
It is obvious that $\calC_R$ is closed under formation of subobjects. It is also closed under quotients: if $M \to N$ is injective, $N$ is finitely generated, and $P$ is a submodule of $M$, then $M/P \to N/P$ is injective and $N/P$ is finitely generated.
\item
The category $\calC_R$ is not in general closed under extensions.
However, if $0 \to M_1 \to M \to M_2 \to 0$ is an exact sequence of $R$-modules
such that $M_1 \in \calC_R$ and $M_2$ is itself finitely generated, then $M \in \calC_R$.
To wit, by pulling back the exact sequence
we may reduce to the case where $M_2$ is finite free; then any injection of $M_1$ into a finitely generated $R$-module $N_1$ extends to a map $M \to N_1$, and combining this map with the projection $M \to M_2$ yields an injective morphism
$M \to N_1 \oplus M_2$.
\item
If $M$ is an $R/fR$-module and $M \in \calC_R$, it does not follow that $M \in \calC_{R/fR}$.
\end{itemize}
\end{remark}

\begin{lemma} \label{L:splitting to finite}
Let $M$ be an $R$-module such that $M/fM$ is finitely generated.
Suppose that there exists a homomorphism $M \to N$ of $R$-modules such that
$N$ is finitely generated and $M_f \to N_f$ is a split inclusion. Then 
$M/M[f^\infty]$ is finitely generated; in particular, if $M[f^\infty]$ is finitely generated, then so is $M$. (This remains true even if $f$ is a zero-divisor in $R$.)
\end{lemma}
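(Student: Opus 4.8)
The plan is to reduce to the case where $M$ is $f$-torsion-free, use the splitting over $R_f$ to manufacture an honest $R$-linear map $N \to M$ whose composite with the given map $h\colon M \to N$ is multiplication by a power of $f$, and then deduce finite generation of $M$ from that of $M/fM$ by a Nakayama-style approximation.

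First I would replace $M$ by $M/M[f^\infty]$ and $N$ by $N/N[f^\infty]$. This changes neither $M_f$, $N_f$, nor the localized map between them, so the hypothesis that $M_f \to N_f$ is a split inclusion survives; the map $h$ descends to the quotients because it carries $M[f^\infty]$ into $N[f^\infty]$; $N/N[f^\infty]$ is still finitely generated; and $(M/M[f^\infty])/f(M/M[f^\infty])$ is a quotient of $M/fM$, hence finitely generated. Since the assertion ``$M/M[f^\infty]$ is finitely generated'' is unaffected by this replacement, and the final ``in particular'' clause follows from it via the short exact sequence $0 \to M[f^\infty] \to M \to M/M[f^\infty] \to 0$ (an extension of finitely generated modules), I may assume henceforth that $M$ is $f$-torsion-free, hence embeds into $M_f$.

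Next I would choose a left inverse $r\colon N_f \to M_f$ of $h_f$. Since $N$ is finitely generated over $R$, its image in $N_f$ is generated over $R_f$ by finitely many elements, whose $r$-images all lie in $f^{-a}M$ for a common integer $a \geq 0$; thus $f^a r$ maps $N$ into $M$, defining an $R$-linear $\rho\colon N \to M$. Localizing the composite $\rho \circ h\colon M \to M$ and using $r \circ h_f = \mathrm{id}$ shows that $\rho \circ h$ is multiplication by $f^a$, so with $M' := \rho(N)$, a finitely generated submodule of $M$, we get $f^a M \subseteq M'$. Finally I would choose $x_1,\dots,x_s \in M$ whose images generate $M/fM$, set $M'' = M' + \sum_{i} R x_i$, note that $M = M'' + fM$, and iterate (absorbing $f^k M'' \subseteq M''$) to get $M = M'' + f^k M$ for all $k \geq 1$; taking $k = a$ and invoking $f^a M \subseteq M' \subseteq M''$ yields $M = M''$, which is finitely generated.

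The argument is mostly routine. The step requiring the most care is the opening reduction: one must check that passing to $f$-torsion-free quotients genuinely preserves every hypothesis, especially that the descended map on localizations is still a split inclusion. The one genuinely load-bearing idea is the clearing of denominators in the second step that turns a splitting over $R_f$ into an $R$-linear $\rho$ with $\rho \circ h$ equal to multiplication by $f^a$; granting that, the finite generation of $M/fM$ does the rest.
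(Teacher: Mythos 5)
Your proof is correct and follows essentially the same route as the paper: reduce to the $f$-torsion-free case, clear denominators in the splitting over $R_f$ to land $f^a M$ inside a finitely generated submodule (the paper uses $s(f^k \be_i)$ for generators $\be_i$ of $N$, which is your $\rho(N)$), and then combine with generators of $M/fM$ via the same iteration. The only difference is presentational: you spell out the reduction (passing to $N/N[f^\infty]$ so the map descends) that the paper leaves implicit.
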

\begin{proof}
We may reduce at once to the case where $M[f^\infty] = N[f^\infty] = 0$; in this case, $M \to M_f$ is itself injective, and we may identify $M$ with its image in $M_f$.
Choose a generating set $\be_1,\dots,\be_n$ of $N$ and 
a splitting $s: N_f \to M_f$ of the inclusion $M_f \to N_f$. We can then find a nonnegative integer $k$ such that $s(f^k \be_1),\dots,s(f^k \be_n) \in M$.
Choose $\be'_1,\dots,\be'_m \in M$ which generate $M/fM$; they then also generate $M/f^k M$.

Given $\bv \in M$, in $N$ we have $\bv = \sum_{i=1}^n r_i \be_i$ for some $r_i \in R$.
Multiplying by $f^k$ and then applying $s$, we obtain $f^k \bv = \sum_{i=1}^n r_i s(f^k \be_i)$. This means that in $M$, the $R$-span of the $s(f^k \be_i)$ contains $f^k M$, so
\[
\be'_1,\dots,\be'_m, s(f^k \be_1), \dots, s(f^k \be_n)
\]
form a finite generating set of $M$.
\end{proof}

In order to apply Lemma~\ref{L:splitting to finite}, the following observations will be helpful. These may be thought of as providing an analogue of the construction of Fitting ideals \cite[Definition~1.1.2]{part1}, but instead of quantifying obstructions to projectivity of finitely generated modules, we are quantifying obstructions to finite generation of modules which are contained in finitely generated modules.

\begin{lemma} \label{L:splitting ideal}
Let $\iota: M \to N$ be a homomorphism of $R$-modules.
\begin{enumerate}
\item[(a)]
Let $I$ be the set of $g \in R$ for which $M_g \to N_g$ is a split inclusion.
Then $I$ is a radical ideal of $R$.
\item[(b)]
Suppose that $R$ is reduced, $\iota$ is injective, and $N$ is finitely generated.
Then $I$ is not contained in any minimal prime ideal of $R$.
\end{enumerate}
\end{lemma}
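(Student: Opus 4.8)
The plan is to prove (a) by checking that $I$ has the closure properties of a radical ideal, and then to obtain (b) by specializing at a minimal prime.

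For (a), the easy parts come first: $0 \in I$ because $M_0 = N_0 = 0$; and for $r \in R$ the ring $R_{rg}$ is a localization of $R_g$, so a retraction of $M_g \to N_g$ localizes to one of $M_{rg} \to N_{rg}$, giving $I\cdot R \subseteq I$. Since $R_{g^n}$ and $R_g$ are literally the same localization, $I$ is radical as soon as it is an ideal, so the whole content is closure under addition. Given $g,h \in I$, I would base change to $A = R_{g+h}$: there the images of $g$ and $h$ generate the unit ideal, so $\Spec A$ is covered by $D(g)$ and $D(h)$, and the given retractions localize to retractions $r_g,r_h$ over $A_g,A_h$. The task is then to correct $r_g$ and $r_h$ so that they agree over $A_{gh}$ and glue to a retraction over $A$, which says $g+h \in I$. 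Using the adjunction $\Hom_{A_g}(N_g,M_g) \cong \Hom_A(N,M_g)$ I regard $r_g,r_h$ as $A$-linear maps $N \to M_g$, $N \to M_h$; their difference over $A_{gh}$ is an $A$-linear map $N \to M_{gh}$ killing the image of $M$, hence factoring through $Q := \coker(M \to N)$, and glueing amounts to writing this map as a difference of maps $Q \to M_g$ and $Q \to M_h$. The obstruction lies in $\Ext^1_A(Q,M)$, and I would kill it using the exact Čech sequence $0 \to M \to M_g \oplus M_h \to M_{gh} \to 0$ (exact because $g,h$ generate the unit ideal): this sequence splits after inverting $g$ (resp.\ $h$), which forces suitable multiples of the obstruction by powers of $g$ and $h$ to vanish, and since such powers generate the unit ideal the obstruction itself is zero.

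For (b), assume $R$ reduced, $\iota$ injective, $N$ finitely generated, and let $\mathfrak p$ be a minimal prime; I want to produce some $g \notin \mathfrak p$ with $g \in I$. Since $R$ is reduced and $\mathfrak p$ is minimal, $R_{\mathfrak p}$ is a field; localization being exact, $\iota_{\mathfrak p}$ is an injection of $R_{\mathfrak p}$-vector spaces, finite-dimensional because $N$ is finitely generated, hence split. The final step is to spread this splitting out: picking generators of $N$, finitely many elements of $M$ whose images span $M_{\mathfrak p}$, and a retraction of $\iota_{\mathfrak p}$, one checks that after inverting a suitable $g \notin \mathfrak p$ the retraction descends to an $R_g$-linear retraction of $\iota_g$, so $g \in I$ and $I \not\subseteq \mathfrak p$.

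I expect the glueing step in (a) — equivalently the vanishing of the $\Ext^1$ obstruction — to be the main obstacle, together with the closely related spreading-out step in (b): in the absence of Noetherian or coherence hypotheses, localization does not obviously commute with $\Hom$ and $\Ext$, so one must argue with bounded denominators and with finitely presented approximations (replacing $N$ by a quotient of a finite free module by finitely many relations, and $M$ by the finitely generated submodule generated by lifts of a basis of $M_{\mathfrak p}$), keeping track of which quotients stay finitely presented so that the relevant $\Hom$'s and $\Ext^1$'s do behave well under localization. The remaining ingredients — exactness of the two-term Čech sequence, the Hom--tensor adjunction, and the fact that minimal primes of a reduced ring have residue field equal to the local ring — are routine.
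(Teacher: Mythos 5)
Your strategy coincides with the paper's: in (a) you reduce to the case where the two elements generate the unit ideal and try to correct the two local retractions by homomorphisms out of $Q=N/M$, using the two-term \v{C}ech sequence $0\to M\to M_g\oplus M_h\to M_{gh}\to 0$; in (b) you split over the field $R_{\mathfrak{p}}$ and spread out, exactly as in the paper. However, the two steps you yourself flag as ``the main obstacle'' are genuine gaps, not bookkeeping. In (a), the assertion that splitness of the localized \v{C}ech sequence ``forces suitable multiples of the obstruction by powers of $g$ and $h$ to vanish'' amounts to saying that the kernel of $\Ext^1_A(Q,M)\to \Ext^1_A(Q,M_g)$ is $g$-power torsion, i.e.\ that $\Ext^1(Q,-)$ (equivalently, in the paper's formulation, $\Hom(Q,-)$) commutes with localization; this requires $Q$ to be finitely presented (resp.\ $2$-pseudocoherent), and no finiteness on $M,N$ is assumed. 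In (b), the identity $\rho_g\circ\iota_g=\mathrm{id}_{M_g}$ is a condition on all of $M_g$, while $M$ is not assumed finitely generated; verifying it on finitely many lifts of a spanning set of $M_{\mathfrak{p}}$ only gives the identity on the submodule they generate, and no single further localization closes the gap.

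Moreover, these gaps cannot be filled in the stated generality, because without some finiteness the statement itself fails. For (a): take $R=\ZZ$, $M=\ZZ$, $N=\ZZ[1/2]\oplus\ZZ[1/3]$ with the diagonal embedding; then $2\in I$ and $3\in I$ (retract by projecting onto the first, resp.\ second, factor), but $5=2+3\notin I$ because $\Hom(\ZZ[1/10],\ZZ[1/5])=\Hom(\ZZ[1/15],\ZZ[1/5])=0$, so $I$ is not even closed under addition (here $N/M\cong\ZZ[1/6]$ is not finitely presented). For (b): take a field $k$, $R=N=\prod_{i\in\mathbb{N}}k$, $M=\bigoplus_{i}k$, and $\mathfrak{p}$ the minimal prime attached to a nonprincipal ultrafilter; for every $g\notin\mathfrak{p}$ one has $R_g\cong\prod_{i\in S}k$ with $S$ infinite and $M_g\cong\bigoplus_{i\in S}k$, which is not a direct summand, so $I=\bigoplus_i k\subseteq\mathfrak{p}$. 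In fairness, the paper's own proof passes over exactly the same two points (the displayed identification $\Hom_{R_{g_1g_2}}((N/M)_{g_1g_2},M_{g_1g_2})=\Hom_R(N/M,M)_{g_1g_2}$, which needs $N/M$ finitely presented, and the final inference that a projector $N_g\to M_g\to N_g$ makes $N_g\to M_g$ a retraction of $M_g\to N_g$), so you have correctly located where the real content lies; but neither your sketch nor the argument as written closes it. Both arguments do go through verbatim under added hypotheses --- e.g.\ $N/M$ finitely presented in (a), $M$ finitely generated in (b) --- and in the applications some such finiteness, or a different argument, has to be supplied.
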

\begin{proof}
To prove (a), note that clearly $I$ is closed under multiplication by $R$ and extraction of roots, so we need only check closure under addition. That is, we must check that if $g = g_1 + g_2$ with $g_1, g_2 \in I$, then $g \in I$; this formally reduces to the case $g=1$,
in which case $\Spec(R)$ is covered by the open subspaces $\Spec(R_{g_1})$ and $\Spec(R_{g_2})$.
Choose splittings of the inclusions $M_{g_1} \to N_{g_1}, M_{g_2} \to N_{g_2}$; their base-extensions to $R_{g_1 g_2}$ then differ by an element of 
\[
\Hom_{R_{g_1 g_2}}((N/M)_{g_1 g_2}, M_{g_1 g_2}) =
\Hom_R(N/M, M)_{g_1 g_2}.
\]
Since the quasicoherent sheaf associated to $\Hom_R(N/M, M)$ on the affine scheme
$\Spec(R)$ is acyclic, we may modify the splittings so that they glue to a splitting of $M \to N$.

To prove (b), let $\gothp$ be a minimal prime ideal of $R$; since $R$ is reduced, $R_\gothp$ is a field and $N_\gothp$ is a finite-dimensional $R_\gothp$-vector space. Choose a basis of $N_\gothp$; its elements can all be realized in $N_g$ for some $g \in R \setminus \gothp$. By Nakayama's lemma, by replacing $g$ with a suitable multiple, we can ensure that the chosen basis is also a basis of $N_g$ over $R_g$. (This can also be arranged using Fitting ideals.) Since $M_\gothp$ injects into $N_\gothp$, it is also a finite-dimensional $R_\gothp$-vector space and the injection $M_\gothp \to N_\gothp$ admits a splitting; by possibly replacing $g$ again, we can ensure that this splitting is induced by a morphism $N_g \to M_g$. By possibly replacing $g$ once more, we can ensure that the composition $N_g \to M_g \to N_g$ is a projector, at which point $N_g \to M_g$ is a splitting of the inclusion $M_g \to N_g$; hence $g \in I \setminus \gothp$. This yields (b).
\end{proof}

\begin{cor} \label{C:splitting ideal}
Let $M$ be an $R$-module.
\begin{enumerate}
\item[(a)]
Let $I$ be the set of $g \in R$ for which there exists a homomorphism $M \to N$ with $N$ finitely generated and $M_g \to N_g$ a split inclusion.
Then $I$ is a radical ideal of $R$.
\item[(b)]
If $R$ is reduced and $M$ injects into some finitely generated $R$-module, then $I$ is not contained in any minimal prime ideal of $R$.
\end{enumerate}
\end{cor}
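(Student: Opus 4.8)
The plan is to deduce Corollary~\ref{C:splitting ideal} from Lemma~\ref{L:splitting ideal} with only a small amount of bookkeeping: the substantive work has already been done in the lemma, and the point here is to pass from ``there exists a finitely generated target $N$'' to a statement about one fixed homomorphism.

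For part (a), I would first dispose of the easy closure properties: $0 \in I$ (take $N = 0$); $I$ is closed under multiplication by $R$, since further localizing a split inclusion $M_g \to N_g$ at any $r$ keeps it a split inclusion; and $I$ is closed under extraction of roots, since $R_{g^n} = R_g$, so a witness for $g^n$ is already a witness for $g$. The one real step is closure under addition. Given $g = g_1 + g_2$ with $g_1, g_2 \in I$, choose homomorphisms $\iota_j: M \to N_j$ witnessing $g_j \in I$ and form the single map $\iota = (\iota_1,\iota_2): M \to N_1 \oplus N_2$. The key observation is that $\iota$ becomes a split inclusion after localization at each of $g_1$ and $g_2$: injectivity of $\iota_{g_j}$ holds because its composition with the projection onto $(N_j)_{g_j}$ equals $(\iota_j)_{g_j}$, which is injective, and a splitting of $\iota_{g_j}$ is obtained by composing a splitting of $(\iota_j)_{g_j}$ with that same projection. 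Now Lemma~\ref{L:splitting ideal}(a), applied to the \emph{fixed} map $\iota$, says the set of $h \in R$ for which $\iota_h$ is a split inclusion is a radical ideal; it contains $g_1$ and $g_2$, hence $g$, and since $N_1 \oplus N_2$ is finitely generated this gives $g \in I$.

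For part (b), I would take the hypothesized injection $\iota: M \hookrightarrow N$ with $N$ finitely generated, observe that the set $\{g \in R : \iota_g \text{ is a split inclusion}\}$ is contained in $I$, and invoke Lemma~\ref{L:splitting ideal}(b), which says this smaller set is not contained in any minimal prime ideal of $R$; hence neither is $I$.

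The ``main obstacle'' is modest: it is simply checking that the direct-sum construction really yields a map that splits after localizing at each $g_j$, and that finite generation is preserved under the direct sum, so that Lemma~\ref{L:splitting ideal} can be applied verbatim rather than re-proved. Everything else is formal.
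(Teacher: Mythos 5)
Your proof is correct and follows essentially the same route as the paper: for (a) you reduce closure under addition to Lemma~\ref{L:splitting ideal}(a) applied to the single map $M \to N_1 \oplus N_2$ (splitting after localization at each $g_i$ via the projection onto $(N_i)_{g_i}$), and (b) is the same direct invocation of Lemma~\ref{L:splitting ideal}(b); your extra checks of closure under $R$-multiples and roots are fine, just bookkeeping the paper leaves implicit.
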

\begin{proof}
To check (a), suppose that $g_1 + g_2 = g$ and $g_1, g_2 \in I$.
For $i=1,2$, choose a homomorphism $M \to N_i$ with $N_i$ finitely generated 
and $M_{g_i} \to (N_i)_{g_i}$ a split inclusion. Then $M \to N_1 \oplus N_2$ is also a homomorphism with $N_1 \oplus N_2$ finitely generated; we may split the inclusion $M_{g_i} \to (N_1 \oplus N_2)_{g_i}$ by first projecting onto $(N_i)_{g_i}$. By Lemma~\ref{L:splitting ideal}(a), we see that $M_g \to (N_1 \oplus N_2)_g$ is also a split inclusion. This yields (a), given which (b)
is immediate from Lemma~\ref{L:splitting ideal}(b).
\end{proof}

We now return to pseudocoherent modules and give a glueing statement.
\begin{lemma} \label{L:pseudocoherent glueing}
Let $M$ be an $R$-module satisfying one of the following conditions.
(As per Hypothesis~\ref{H:BL glueing}, $f$ is not a zero-divisor in $R$.)
\begin{enumerate}
\item[(a)]
The module $M$ is finitely generated over $R$, the module $M_f$ is finite projective over $R_f$, and the modules $M/fM$ and $M[f]$ are pseudocoherent over $R/fR$.
\item[(b)]
The module $M/fM$ is pseudocoherent over $R$, and the module $M$ injects
into a finite free $R$-module $F$ in such a way that $M_f$ is a direct summand of $F_f$.
\end{enumerate}
Then $M$ is pseudocoherent.
\end{lemma}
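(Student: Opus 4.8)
The plan is to build a resolution of $M$ by finite free $R$-modules one step at a time, showing at each stage that the next syzygy is finitely generated and again satisfies a hypothesis of the same shape as the one we started from. Two tools drive the induction. First, since $f$ is not a zero-divisor, $R/fR$ is $1$-fpd over $R$ (Remark~\ref{R:transfer pseudocoherence to quotient}), so for any $R$-module $X$ one has $\Tor_0^R(X, R/fR) = X/fX$, $\Tor_1^R(X, R/fR) = X[f]$, and $\Tor_i^R(X, R/fR) = 0$ for $i \geq 2$; moreover, by Remark~\ref{R:need flatness}, an $R/fR$-module is pseudocoherent over $R/fR$ if and only if it is pseudocoherent over $R$, so the distinction between ``over $R$'' and ``over $R/fR$'' pseudocoherence of $M/fM$ or $M[f]$ is immaterial. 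Second, Lemma~\ref{L:splitting to finite} lets us deduce finite generation of a module from finite generation of its reduction mod $f$ together with a splitting after inverting $f$.

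I would treat case (b) first. Since $M \hookrightarrow F$ with $F$ finite free, $M$ is $f$-torsion-free (so $M[f^\infty] = 0$), and since $M_f$ is a direct summand of $F_f$ the inclusion $M_f \hookrightarrow F_f$ splits; as $M/fM$ is pseudocoherent, hence finitely generated, Lemma~\ref{L:splitting to finite} shows $M$ is finitely generated. Now choose a surjection $\pi\colon R^n \to M$ with kernel $N$. Then $N \hookrightarrow R^n$ is $f$-torsion-free; since $M_f$ is finite projective over $R_f$, $\pi_f$ splits, so $N_f$ is a direct summand of $R_f^n$ (in particular finite projective over $R_f$); and tensoring $0 \to N \to R^n \to M \to 0$ with $R/fR$ gives, using $M[f] = 0$, a short exact sequence $0 \to N/fN \to (R/fR)^n \to M/fM \to 0$, so $N/fN$ is pseudocoherent by Lemma~\ref{L:pseudocoherent 2 of 3}(c) (working over $R/fR$, where $(R/fR)^n$ is free). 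In particular $N/fN$ is finitely generated, so a second application of Lemma~\ref{L:splitting to finite} (to $N \hookrightarrow R^n$) shows $N$ is finitely generated. Thus $N$ satisfies the hypotheses of (b) with $F$ replaced by $R^n$, and iterating produces the desired resolution $\cdots \to R^{n_1} \to R^{n_0} \to M \to 0$.

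For case (a) I would run the same induction, modifying only the first step to allow $f$-torsion in $M$. As $M$ is already assumed finitely generated, pick $\pi\colon R^{n_0} \to M$ with kernel $N_0$; again $N_0 \hookrightarrow R^{n_0}$ is $f$-torsion-free and $(N_0)_f$ is a finite projective direct summand of $R_f^{n_0}$. Tensoring with $R/fR$ now gives $0 \to M[f] \to N_0/fN_0 \to (R/fR)^{n_0} \to M/fM \to 0$; splitting this into two short exact sequences and applying Lemma~\ref{L:pseudocoherent 2 of 3}(c) and then (a), using that $M/fM$ and $M[f]$ are pseudocoherent over $R/fR$, shows $N_0/fN_0$ is pseudocoherent, hence finitely generated, hence $N_0$ is finitely generated by Lemma~\ref{L:splitting to finite}. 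Since $N_0$ is $f$-torsion-free, $N_0[f] = 0$ is trivially pseudocoherent, so $N_0$ satisfies the hypotheses of (a), and the induction continues exactly as in case (b).

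The main obstacle is the bookkeeping that makes the induction self-perpetuating: one must check that each syzygy inherits finite generation (which is genuinely at issue for modules assembled from $f$-power torsion, cf.\ Remark~\ref{R:torsion bound}), that it inherits the splitting property after inverting $f$, and that its reduction mod $f$ inherits pseudocoherence. Once these are verified, the rest is a routine d\'evissage, so I expect the proof to be short.
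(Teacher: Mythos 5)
Your proof is correct and follows essentially the same d\'evissage as the paper: the paper shows (b) implies (a) via Lemma~\ref{L:splitting to finite}, then in case (a) tensors a presentation $0 \to N \to F' \to M \to 0$ with $R/fR$ to get the four-term sequence involving $M[f]$, applies Lemma~\ref{L:pseudocoherent 2 of 3} and the splitting of the localized sequence to see that $N$ satisfies (b), and iterates. Your version merely enters the same loop at the other end (making (b) self-perpetuating under taking syzygies, with (a) reducing to (b) after one modified step), using exactly the same ingredients, so no further comment is needed.
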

\begin{proof}
We first verify that (b) implies (a). It is obvious that $M_f$ is finite projective over $R_f$ and that $M[f] = 0$ is pseudocoherent over $R/fR$, so we need only check that $M$ is finitely generated over $R$; this holds by Lemma~\ref{L:splitting to finite}.

We may now assume that (a) holds.
Since $f$ is not a zero-divisor, we may tensor the exact sequence
\[
0 \to R \stackrel{\times f}{\to} R \to R/fR \to 0
\]
with $M$ to deduce that $\Tor^R_1(M, R/fR) = M[f]$.
By (a), we may construct an exact sequence $0 \to N \to F' \to M \to 0$ of $R$-modules with $F'$ finite free; by tensoring over $R$ with $R/fR$, we obtain an exact sequence
\[
0 \to \Tor^R_1(M, R/fR) = M[f] \to N/fN \to F'/fF' \to M/fM \to 0.
\]
Using Lemma~\ref{L:pseudocoherent 2 of 3}, we see that $N/fN$ is pseudocoherent over $R/fR$. On the other hand, since $R_f$ is flat over $R$, we have an exact sequence
\[
0 \to N_f \to F_f \to M_f \to 0
\]
which splits because $M_f$ is projective, so $N_f$ is a direct summand of $F_f$.
Hence $N$ satisfies (b), so we may iterate the argument to see that $M$ is pseudocoherent.
\end{proof}

\begin{remark} \label{R:no pseudoflat completion}
It is reasonable to ask whether  Lemma~\ref{L:pseudocoherent glueing}
remains true if condition (a) is weakened to assert that $M_f$ is pseudocoherent, rather than finite projective.
One difficulty is that if $R$ is not noetherian, then $\widehat{R}$ can fail to be a flat $R$-module, making it less than clear whether $M \otimes_R \widehat{R}_f$ is pseudocoherent over $\widehat{R}_f$.
For example, in \cite{beauville-laszlo} one finds an example in which $\varphi \in R$ is not a zero-divisor (so $R/\varphi R$ is fpd of global dimension 1) and
$\Tor_1^R(R/\varphi R, \widehat{R}) \neq 0$; a similar discussion follows
\cite[Theorem~10.17]{atiyah-macdonald}.
See \cite[Tag~0BNU]{stacks-project} for additional examples,
and \cite[Tag~0BNI]{stacks-project} for more discussion around the Beauville-Laszlo theorem.
\end{remark}

One can partially answer Remark~\ref{R:no pseudoflat completion} by restricting the torsion of the modules in question.

\begin{lemma} \label{L:tor completion}
Let $M$ be a finitely generated (resp.\ finitely presented and $f$-torsion-free) $R$-module. Then the map $M \otimes_R \widehat{R} \to \widehat{M} = \varprojlim_n M/f^n M$ is surjective (resp.\ bijective).
\end{lemma}
\begin{proof}
If $M$ is finitely generated, we may choose an exact sequence $0 \to N \to F \to M \to 0$ with $F$ finite free; we then have a commutative diagram
\[
\xymatrix{
 & N \otimes_R \widehat{R} \ar[r] \ar[d] & F \otimes_R \widehat{R} \ar[r] \ar[d] & M \otimes_R \widehat{R} \ar[r] \ar[d] & 0 \\
0 \ar@{-->}[r] & \widehat{N} \ar[r] & \widehat{F} \ar[r] & \widehat{M} \ar[r] & 0
}
\]
with exact rows (excluding the dashed arrow for the moment). The middle vertical arrow is surjective; hence the right vertical arrow is surjective. 

If $M$ is also finitely presented, then the same logic applied to $N$ shows that the left vertical arrow is surjective; if $M$ is also $f$-torsion-free, then we may add the dashed arrow to the diagram while preserving exactness. Hence the right vertical arrow is bijective.
\end{proof}
\begin{lemma} \label{L:pseudocoherent tor vanishing}
Let $M$ be a pseudocoherent $R$-module with finitely generated $f$-torsion. Then  $\Tor_1^R(M, \widehat{R}) = 0$, and $M\otimes_R\widehat{R}$ has finitely generated $f$-torsion.
\end{lemma}
\begin{proof}
Suppose first that $M$ is $f$-torsion-free.
Choose an exact sequence $0 \to N \to F \to M \to 0$ with $F$ finite free. By Lemma~\ref{L:pseudocoherent 2 of 3}, $N$ is again pseudocoherent and $f$-torsion-free; we thus have a commutative diagram
\[
\xymatrix{
0 \ar@{-->}[r] & N \otimes_R \widehat{R} \ar[r] \ar[d] & F \otimes_R \widehat{R} \ar[r] \ar[d] & M \otimes_R \widehat{R} \ar[r] \ar[d] & 0 \\
0 \ar[r] & \widehat{N} \ar[r] & \widehat{F} \ar[r] & \widehat{M} \ar[r] & 0
}
\]
in which the vertical arrows are isomorphisms by Lemma~\ref{L:tor completion} while the bottom row is exact because $M$ is $f$-torsion-free; this yields that $M\otimes_R\widehat{R}\cong\widehat{M}$ is $f$-torsion-free. We may thus add the dashed arrow to the top row while preserving exactness, yielding that $\Tor_1^R(M, \widehat{R}) = 0$.

Suppose next that $M = R/f^n R$ for some positive integer $n$. Since $f$ is not a zero-divisor in $R$, we have an exact sequence 
\[
0 \to R \stackrel{\times f^n}{\to} R \to M \to 0.
\]
It is straightforward to see that  $f$ is not a zero-divisor in $\widehat{R}$, thus this sequence remains exact upon tensoring over $R$ with $\widehat{R}$. Hence $\Tor_1^R(M, \widehat{R}) = 0$ and $M \otimes_R \widehat{R} \cong R/f^n R$ has finitely generated $f$-torsion.

Suppose next that $M$ is $f$-torsion. Choose a positive integer $n$ such that $f^n M = 0$. 
Choose an exact sequence $0 \to N \to F \to M \to 0$ of $R/f^n R$-modules with $F$ finite free; 
this sequence remains exact upon tensoring over $R$ with $\widehat{R}$ (this being the same as tensoring over
$R/f^n R$ with $\widehat{R}/f^n \widehat{R}$, which is isomorphic to $R/f^n R$). Since $\Tor_1^R(F, \widehat{R}) = 0$ by the previous paragraph, we deduce that $\Tor_1^R(M, \widehat{R}) = 0$ and $M \otimes_R \widehat{R} \cong M$ has finitely generated $f$-torsion.

Consider finally the general case. Let $T$ be the $f$-power-torsion submodule of $M$; by 
Remark~\ref{R:transfer pseudocoherence to quotient}, $T$ and $M/T$ are pseudocoherent. We may thus deduce from above that $\Tor_1^R(T, \widehat{R}) = \Tor_1^R(M/T, \widehat{R}) = 0$, $T\otimes_R\widehat{R}$ has finitely generated $f$-torsion and $(M/T)\otimes_R\widehat{R}$ is $f$-torsion-free, concluding the lemma. 
\end{proof}

\begin{lemma} \label{L:glueable module}
Let $M$ be an $R$-module with finitely generated $f$-torsion. Then $M \to M \otimes_R (R_f \oplus \widehat{R})$ is injective.
\end{lemma}
\begin{proof}
If $M$ is $f$-torsion-free, this is obvious because $M \to M \otimes_R R_f$ is injective. If $M$ is finitely generated and $f$-torsion, this is also clear because for any positive integer $n$ for which $f^n M = 0$, we have
\[
M \otimes_R \widehat{R} = M \otimes_{R/f^n R} \widehat{R}/f^n \widehat{R}
= M \otimes_{R/f^n R} R/f^n R = M.
\]
The general case follows at once.
\end{proof}

\begin{prop} \label{P:Beauville-Laszlo with limited torsion}
Base extension defines an equivalence of exact tensor categories between pseudocoherent $R$-modules with finitely generated $f$-torsion and descent data for pseudocoherent modules with finitely generated $f$-torsion on the diagram $\widehat{R} \to \widehat{R}_f \leftarrow R_f$.
\end{prop}
\begin{proof}
By Lemma~\ref{L:pseudocoherent tor vanishing}, the base extension functor is exact; it thus takes pseudocoherent modules with finite $f$-torsion to descent data for pseudocoherent modules with finite $f$-torsion. 
By Lemma~\ref{L:glueable module}, every module of the source category is a \emph{glueable module}
in the sense of the generalized Beauville-Laszlo theorem \cite[Tag~0BNI]{stacks-project}; hence the base extension functor is fully faithful.

To prove essential surjectivity, it suffices to check that if we apply the generalized Beauville-Laszlo theorem to a descent datum, then the resulting module $M$ is pseudocoherent with finitely generated $f$-torsion. We may apply \cite[Tag~0BNN]{stacks-project} to see that $M$ is finitely generated; by covering $M$ with a finite free module and iterating the construction (using exactness from the previous paragraph), we may deduce that $M$ is pseudocoherent. We also know from Beauville-Laszlo that the sequence
\[
0 \to M \to M \otimes_R (R_f \oplus \widehat{R}) \to M \otimes_R \widehat{R}_f \to 0
\]
is exact; hence $M$ and $M \otimes_R \widehat{R}$ have the same $f$-power torsion,
and so $M$ has finitely generated $f$-torsion.
\end{proof}

\begin{remark}
In the Beauville-Laszlo theorem, it is possible to slightly weaken the hypothesis that $f$ is not a zero-divisor in $R$; see \cite[Tag~0BNI]{stacks-project}. We will not attempt 
to emulate this generalization here.
\end{remark}

\section{Supplemental foundations of adic spaces}
\label{sec:foundations adic}

We continue with some complements to \cite{part1} concerning foundations of the theory of adic spaces. The main goal is to extend the theory of vector bundles on adic spaces
to a more comprehensive theory of \emph{pseudocoherent sheaves}. See \cite[Lecture~1]{kedlaya-aws} for an alternate presentation.

\setcounter{theorem}{0}
\begin{convention} \label{conv:Banach}
We retain all conventions about Banach rings, adic Banach rings, and (pre)adic spaces from \cite{part1}. In particular, all Banach rings we consider contain a topologically nilpotent unit, and hence are open mapping rings.
We define the \emph{pro-\'etale topology} associated to a preadic space as in
\cite[\S 9.1]{part1}.
\end{convention}

\begin{remark} \label{R:reduce to countable}
We make explicit here a formal reduction argument which appears several times without detailed discussion in \cite{part1}. 
When considering statements in algebraic geometry, one often considers only finitely many elements at a time; for such statements, one may typically check the statement over finitely generated $\ZZ$-algebras (which are noetherian and even excellent) and then view an arbitrary ring as a direct limit of such rings. Analogously, when considering algebraic statements about modules over Banach rings, one generally only considers at most countably many elements at a time. For example, when considering objects in the pro-\'etale site over an adic space $X$, it is generally sufficient to make calculations on diagrams containing only countably many morphisms; by taking fibred products, we may then further reduce to linear towers, i.e., to sequences of morphisms $\cdots \to Y_1 \to Y_0 = X$.
\end{remark}

\subsection{An aside on reified adic spaces}
\label{subsec:reified}

One can make a parallel development of relative $p$-adic Hodge theory in the language of \emph{reified adic spaces}, as described in \cite{kedlaya-reified}. This theory is the analogue of Huber's theory of adic spaces where the base rings are equipped with a distinguished norm, rather than a norm topology; this provides better compatibility with Berkovich's theory of general analytic (rather than strictly analytic) spaces, especially in the case where one works over a trivially valued base field, while still providing the flexibility to deal with the nonnoetherian rings arising in the theory of perfectoid spaces (as indicated in \cite[\S 11]{kedlaya-reified}).

Much of the following discussion admits a 
parallel treatment in this framework, but we prefer not to weigh down the exposition with a full second track. Instead, we provide a few basic definitions here and continue the discussion through a thread of brief remarks
(Remarks~\ref{R:really strongly noetherian}, \ref{R:really strongly noetheran sheafy},
\ref{R:reified pseudoflat}, \ref{R:reified pseudocoherent}).

\begin{defn} \label{D:associated graded}
Let $A$ be a Banach ring. Following Temkin \cite{temkin-local2},
we define the associated graded ring
\[
\Gr A = \bigoplus_{r>0} \Gr^r A, \qquad \Gr^r A = A^{\circ, r}/A^{\circ \circ, r}
\]
with $A^{\circ,r}$ (resp. $A^{\circ \circ, r}$) being the set of $a \in A$
for which the sequence $\{r^{-n} \left| a^n \right|\}_{n=1}^\infty$ is bounded (resp.\ null). This construction depends on the norm up to equivalence (not just up to its norm topology).

A \emph{graded adic Banach ring} is a pair $(A,A^{\Gr})$ in which $A$ is a Banach ring
and $A^{\Gr} = \bigoplus_{r>0} A^{\Gr,r}$ is an integrally closed graded subring of $\Gr A$. Given such a pair,
for $r>0$, let $A^{+,r}$ be the inverse image of $A^{\Gr,r}$ in $A^{\circ, r}$;
by taking $A^+ = A^{+,1}$, we get an associated adic Banach ring $(A,A^+)$.
\end{defn}

\begin{defn}
For $A$ a Banach ring, a \emph{reified semivaluation} on $A$ is a semivaluation 
$v: A \to \Gamma_v$ whose value group $\Gamma_v$ is equipped with a distinguished subgroup identified with $\RR_{>0}$. Two reified semivaluations $v_1, v_2$ on $A$ are declared to be equivalent if for all $a \in A, r \geq 0$, 
\[
v_1(a) \leq r \Longleftrightarrow v_2(a) \leq r,
\qquad
v_1(a) \geq r \Longleftrightarrow v_2(a) \geq r.
\]
A reified semivaluation $v$ on $A$ is \emph{commensurable} if for all $\gamma \in \Gamma_v$, we have $\gamma \geq r$ for some $r>0$.
\end{defn}

\begin{defn}
Let $(A,A^{\Gr})$ be a graded adic Banach ring. The \emph{reified adic spectrum}
$\Spra(A,A^{\Gr})$ consists of equivalence classes of commensurable reified semivaluations $v$ on
$A$ such that $v(a) \leq r$ for all $r>0$, $a \in A^{+,r}$.
A \emph{rational subspace} of $\Spra(A,A^{\Gr})$ is a subspace of the form
\begin{equation} \label{eq:general rational subspace}
\{v \in \Spra(A,A^{\Gr}): v(f_1) \leq q_1 v(g), \dots, v(f_n) \leq q_n v(g) \}
\end{equation}
where $f_1,\dots,f_n,g \in A$ generate the unit ideal and $q_1,\dots,q_n$ are nonnegative real numbers. For the topology on $\Spra(A,A^{\Gr})$ generated by rational subspaces,
$\Spra(A,A^{\Gr})$ is a spectral space and every rational subspace is quasicompact
\cite[Theorem~6.3]{kedlaya-reified}.
\end{defn}

\begin{defn}
Let $(A,A^{\Gr})$ be a graded adic Banach ring. 
For $U$ a rational subspace of $\Spra(A,A^{\Gr})$ as in \eqref{eq:general rational subspace}, put
\[
B = A\{T_1/q_1,\dots,T_n/q_n\}/(gT_1 - f_1, \dots, gT_n - f_n)
\]
(where $A\{T_1/q_1,\dots,T_n/q_n\}$ is a weighted Tate algebra as in \cite[Definition~2.2.15]{part1})
and let $B^{\Gr}$ be the integral closure of the image of $A[T_1,\dots,T_n]$ 
in $\Gr B$ (placing $T_i$ in degree $q_i$).
Then $(A,A^{\Gr}) \to (B,B^{\Gr})$ is initial among morphisms
$(A,A^{\Gr}) \to (C,C^{\Gr})$ of graded adic Banach rings for which
$\Spra(C,C^{\Gr})$ maps into $U$; moreover, the map $\Spra(B,B^{\Gr}) \to U$ is a homeomorphism \cite[Lemma~7.4]{kedlaya-reified}.
In particular, $(B,B^{\Gr})$ is functorially determined by $U$, so we may define a \emph{structure presheaf} $\calO$ on $\Spra(A,A^{\Gr})$ evaluating to $B$ on $U$.
(We also obtain a presheaf $\calO^{\Gr}$ evaluating to $B^{\Gr}$ on $U$.)

We may formally glue together reified adic spectra to define \emph{reified preadic spaces} as in \cite[Definition~8.2.3]{part1}. We say that $(A,A^{\Gr})$ is \emph{sheafy} if the structure presheaf on $\Spra(A,A^{\Gr})$ is a sheaf; as in \cite[Definition~7.18]{kedlaya-reified}, we may glue together the reified adic spectra of sheafy graded adic Banach rings to obtain certain locally ringed spaces (with extra structure) called \emph{reified adic spaces}.
\end{defn}

\begin{remark} \label{R:reified}
Note that by requiring the first member of a graded adic Banach ring to be a Banach ring, we are conventionally requiring the presence of a topologically nilpotent unit.
In the theory of reified adic spaces, the role of topologically nilpotent units
is sometimes transferred to the real normalizations; notably, the reified analogue of Tate's reduction process \cite[Lemma~7.10]{kedlaya-reified} does not require a topologically nilpotent unit. However, there are several contexts where such units remain necessary, including the open mapping theorem (Theorem~\ref{T:open mapping}); see Remark~\ref{R:reified pseudoflat} for another example.
\end{remark}

\subsection{Flatness of rational localizations}

We begin by assembling some statements about the flatness of rational localizations (and \'etale morphisms) of strongly noetherian adic Banach rings. Since flatness and pseudoflatness are the same thing for morphisms out of a noetherian ring, these results will be subsumed by \S\ref{subsec:pseudoflat rational}; however, we prefer to state these known results first.

\begin{lemma} \label{L:flat plus faithful}
Let $A \to B$ be a flat ring homomorphism. Then the following conditions are equivalent.
\begin{enumerate}
\item[(a)]
The morphism $A \to B$ is faithfully flat; that is,
for any homomorphism $M \to N$ of $A$-modules, if $M \otimes_A B \to N \otimes_A B$ is injective then $M \to N$ is injective.
\item[(b)]
The map $\Spec(B) \to \Spec(A)$ is surjective.
\item[(c)]
The image of $\Spec(B) \to \Spec(A)$ contains $\Maxspec(A)$.
\end{enumerate}
\end{lemma}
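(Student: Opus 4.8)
The plan is to prove the three conditions equivalent by running the cycle $\text{(a)} \Rightarrow \text{(b)} \Rightarrow \text{(c)} \Rightarrow \text{(a)}$. All of this is classical (see \cite[Ch.~I, \S 3]{bourbaki-ac}), so I would keep the argument terse, and the one genuinely useful preliminary is the standard reformulation of faithful flatness: since $A \to B$ is already flat, condition (a) is equivalent to the assertion that $M \otimes_A B = 0$ forces $M = 0$ for every $A$-module $M$. Indeed, (a) applied to the map $M \to 0$ gives this; conversely, given a map $M \to N$ with $M \otimes_A B \to N \otimes_A B$ injective, flatness identifies $\ker(M \to N) \otimes_A B$ with $\ker(M \otimes_A B \to N \otimes_A B) = 0$, so if the vanishing criterion holds then $\ker(M \to N) = 0$. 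I would record this equivalence at the outset and use it for both nontrivial implications.

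For $\text{(a)} \Rightarrow \text{(b)}$ I would note that for $\gothp \in \Spec(A)$ the residue field $\kappa(\gothp)$ is a nonzero $A$-module, so $\kappa(\gothp) \otimes_A B \neq 0$; since this ring is exactly the scheme-theoretic fiber of $\Spec(B) \to \Spec(A)$ over $\gothp$, the point $\gothp$ lies in the image. The implication $\text{(b)} \Rightarrow \text{(c)}$ is immediate from $\Maxspec(A) \subseteq \Spec(A)$.

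For $\text{(c)} \Rightarrow \text{(a)}$ I would argue by contradiction via the vanishing reformulation: suppose $M \otimes_A B = 0$ but $M \neq 0$, choose $0 \neq x \in M$, and set $I = \ann_A(x)$, a proper ideal of $A$. Flatness gives an injection $B/IB = (A/I) \otimes_A B = (Ax) \otimes_A B \hookrightarrow M \otimes_A B = 0$, so $IB = B$. Now pick a maximal ideal $\gothm \supseteq I$ and, invoking (c), a prime $\gothq$ of $B$ with $\gothq \cap A = \gothm$; then $IB \subseteq \gothm B \subseteq \gothq \subsetneq B$, contradicting $IB = B$. Hence $M = 0$, which is (a). I do not expect any real obstacle here: the argument is purely formal once the faithful-flatness reformulation is in place, and in particular it needs nothing beyond the trivial observation that (c) prevents $\gothm B = B$ for a maximal ideal $\gothm \supseteq I$ — no appeal to going-down or to any finer property of flat morphisms is required.
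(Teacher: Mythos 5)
Your proof is correct, and it is the standard argument: the paper itself gives no argument at all, simply citing \cite[Tag~00HQ]{stacks-project}, whose proof proceeds exactly as you do (reformulate faithful flatness as the vanishing criterion $M \otimes_A B = 0 \Rightarrow M = 0$, get surjectivity on spectra from nonvanishing of fibers, and recover (a) from (c) via the annihilator of a nonzero element and a maximal ideal containing it). Nothing further is needed.
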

\begin{proof}
See \cite[Tag 00HQ]{stacks-project}.
\end{proof}

\begin{lemma} \label{L:faithful to flat}
Let $A \to B, B \to C$ be ring homomorphisms such that the composition $A \to B \to C$ is flat and $B \to C$ is faithfully flat. Then $A \to B$ is flat.
\end{lemma}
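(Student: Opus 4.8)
The plan is to deduce flatness of $A \to B$ from the flatness of $M \mapsto M \otimes_A C$ by exploiting the faithful flatness of $B \to C$ to "descend" exactness from $C$ to $B$. Concretely, I would test flatness of $A \to B$ on an arbitrary injection $M \to N$ of $A$-modules: I want to show $M \otimes_A B \to N \otimes_A B$ is injective. First I would apply $- \otimes_B C$ to this map; by associativity of tensor product this is the map $M \otimes_A C \to N \otimes_A C$, which is injective because $A \to C$ is flat. Then, since $B \to C$ is faithfully flat, by the characterization recalled in Lemma~\ref{L:flat plus faithful}(a) (applied with the base ring $B$ in place of $A$), injectivity of $M \otimes_A B \to N \otimes_A B$ after base change along $B \to C$ forces injectivity of $M \otimes_A B \to N \otimes_A B$ itself. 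Hence $A \to B$ is flat.

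In slightly more detail: write $P = M \otimes_A B$ and $Q = N \otimes_A B$, so that $\varphi\colon P \to Q$ is a morphism of $B$-modules. The canonical isomorphisms $P \otimes_B C \cong M \otimes_A C$ and $Q \otimes_B C \cong N \otimes_A C$ identify $\varphi \otimes_B \mathrm{id}_C$ with $M \otimes_A C \to N \otimes_A C$. Since $A \to C$ is flat and $M \to N$ is injective, the latter is injective, so $\varphi \otimes_B \mathrm{id}_C$ is injective. Faithful flatness of $B \to C$ then gives that $\varphi$ is injective, as needed.

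The only subtle point — and the reason to state it carefully rather than wave it through — is the bookkeeping with the tensor product identifications, namely that for an $A$-module $M$ one has a natural isomorphism $(M \otimes_A B) \otimes_B C \cong M \otimes_A C$ compatible with morphisms in $M$. This is the standard base-change/associativity isomorphism and is entirely routine; I do not anticipate any genuine obstacle. One should also note that this argument uses only the formal consequence of faithful flatness recorded in Lemma~\ref{L:flat plus faithful}(a) and does not need the spectral criteria (b) or (c), so no Zariski-spectrum input is required.
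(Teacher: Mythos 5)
Your proof is correct and is the standard argument: base-change the injection along $A\to B$, identify $(\bullet\otimes_A B)\otimes_B C\cong \bullet\otimes_A C$, use flatness of $A\to C$, and descend injectivity along the faithfully flat map $B\to C$ via the property recorded in Lemma~\ref{L:flat plus faithful}(a). The paper itself only cites \cite[Tag 0584]{stacks-project}, whose proof is essentially the same as yours, so there is nothing to add.
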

\begin{proof}
See \cite[Tag 0584]{stacks-project}.
\end{proof}

\begin{defn}
Let $(A,A^+)$ be an adic Banach ring (not necessarily sheafy).
By an \emph{\'etale covering} of $(A,A^+)$, we will mean a finite family of morphisms
 $\{(A,A^+) \to (B_i, B_i^+)\}_i$ which form a covering of
$\widetilde{\Spa}(A,A^+)$ for the \'etale topology (e.g., a rational covering).
\end{defn}

\begin{lemma} \label{L:covering faithful}
Let $(A,A^+)$ be an adic Banach ring.
For any \'etale covering $\{(A,A^+) \to (B_i, B_i^+)\}_i$,
the image of $\Spec(\bigoplus_i B_i) \to \Spec(A)$ contains $\Maxspec(A)$.
\end{lemma}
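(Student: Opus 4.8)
The plan is to reduce the statement to a fact about supports of points in the adic spectrum, and then, for each maximal ideal of $A$, to produce a point of $\widetilde{\Spa}(A,A^+)$ with that support. Since the family $\{(A,A^+) \to (B_i,B_i^+)\}_i$ is finite, $\bigoplus_i B_i = \prod_i B_i$ and $\Spec(\bigoplus_i B_i) = \coprod_i \Spec(B_i)$; so it suffices to fix $\mathfrak{m} \in \Maxspec(A)$ and exhibit, for some $i$, a prime of $B_i$ contracting to $\mathfrak{m}$ along $A \to B_i$. To that end I would first construct a point $x \in \widetilde{\Spa}(A,A^+)$ with $\operatorname{supp}(x) = \mathfrak{m}$. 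Because $\{(A,A^+) \to (B_i,B_i^+)\}_i$ is an \'etale covering, the maps $\widetilde{\Spa}(B_i,B_i^+) \to \widetilde{\Spa}(A,A^+)$ have images covering $\widetilde{\Spa}(A,A^+)$, so $x$ lifts to some $y \in \widetilde{\Spa}(B_i,B_i^+)$; since this map is restriction of valuations along $A \to B_i$, the prime $\operatorname{supp}(y)$ of $B_i$ contracts to $\operatorname{supp}(x) = \mathfrak{m}$, which is exactly what is needed.

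It remains to construct $x$. Since $A$ is a Banach ring its units form an open subset (any $a$ with $\left| 1-a \right| < 1$ is a unit via the geometric series $\sum_{n \geq 0}(1-a)^n$), so the closure of $\mathfrak{m}$ is again a proper ideal and hence $\mathfrak{m}$ is closed. Thus $K := A/\mathfrak{m}$, with its quotient norm, is a Banach field; it contains the image of a topologically nilpotent unit of $A$, so it is a Banach ring in our sense. By nonemptiness of the Berkovich spectrum of a nonzero Banach ring (as recalled in \cite{part1}), $K$ admits a bounded multiplicative seminorm, which, being nonzero and multiplicative on a field, is in fact a multiplicative norm $v_K \leq C \left| \cdot \right|$ for some $C > 0$.

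Pulling $v_K$ back along $A \to K$ gives a bounded, hence continuous, multiplicative seminorm $v$ on $A$ with $\operatorname{supp}(v) = \ker(A \to K) = \mathfrak{m}$. Every element $a$ of $A^+$ is power-bounded, so $v(a)^n = v(a^n) \leq C \left| a^n \right|$ stays bounded as $n \to \infty$, forcing $v(a) \leq 1$; thus $v$ satisfies the integrality condition and defines a point $x \in \widetilde{\Spa}(A,A^+)$ with $\operatorname{supp}(x) = \mathfrak{m}$, completing the argument. The only genuine obstacle is this bridge between $\Spec$ and $\widetilde{\Spa}$ — making a maximal ideal of the underlying ring visible to the adic spectrum; once one knows each maximal ideal is a support, the covering hypothesis finishes things purely formally. (If \cite{part1} already records that every maximal ideal of $A$ arises as $\operatorname{supp}(x)$ for some $x \in \widetilde{\Spa}(A,A^+)$, one could cite that directly in place of the last two paragraphs.)
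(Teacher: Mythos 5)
Your proof is correct, and its core is the same as the paper's: the key input is that every maximal ideal $\mathfrak{m}$ of a Banach ring $A$ is the support (kernel) of a point of the Gelfand spectrum $\calM(A)$ — which you reprove via the closedness of $\mathfrak{m}$, the Banach field $A/\mathfrak{m}$, and nonemptiness of its spectrum; this is exactly \cite[Corollary~2.3.5]{part1}, which the paper simply cites — and then one lifts that point along the covering and takes supports. The only place where your write-up is lighter than it should be is the sentence ``because the family is an \'etale covering, the images of the maps on $\widetilde{\Spa}$ cover $\widetilde{\Spa}(A,A^+)$.'' In \cite{part1} the \'etale topology is generated by rational coverings and faithfully finite \'etale surjections rather than being defined outright as the collection of jointly surjective families, so this surjectivity is a (small) statement to be proved, not a tautology: either argue that an arbitrary covering admits a refinement by a composition of generating coverings, each of which is surjective on points (rational coverings by construction, faithfully finite \'etale morphisms because valuations extend along finite \'etale extensions), and that surjectivity passes up through refinements; or do as the paper does and run the property ``the images contain $\Maxspec(A)$'' (equivalently, your lifting argument for rational coverings) through the formal criteria of \cite[Proposition~8.2.21]{part1}, where the finite \'etale case is exactly the trivially-true criterion (d). With that one step made explicit, your argument matches the paper's, with the bonus that you also verify $v(A^+)\leq 1$ directly so that your rank-one point genuinely lies in $\Spa(A,A^+)$; note also that the contraction-of-supports step works equally well if the lifted point has higher rank, so you do not need the lift to be rank one.
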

\begin{proof}
We first check the case of a rational covering.
For each $\gothm \in \Maxspec(A)$, apply \cite[Corollary~2.3.5]{part1} to construct some $\alpha \in \calM(A)$ for which $\gothm = \ker(A \to \calH(\alpha))$. For some $i$, $\alpha$ extends to $\beta \in \calM(B_i)$, and $\ker(\beta)$ is a prime ideal of $B_i$ lifting $\gothm$. Then we may apply the natural map $\calM(B_i)\to\Spa(B_i, B_i^+) $ (see \cite[Definition~2.4.6]{part1}) to conclude. 

To verify the 
general case, we check the criteria of \cite[Proposition~8.2.21]{part1}:
criterion (a) and (b) are both formal, criterion (c) is checked above, and criterion (d) is trivially true.
\end{proof}

\begin{defn}
A Banach ring $A$ is \emph{strongly noetherian} if for every nonnegative integer $n$,
the ring $A\{T_1,\dots,T_n\}$ is noetherian.
An adic Banach ring $(A,A^+)$ is strongly noetherian if $A$ is strongly noetherian;
in this case, $(A,A^+)$ is sheafy (by \cite[Proposition~2.4.16]{part1}) and every rational localization of $(A,A^+)$ is again strongly noetherian 
(by \cite[Lemma~2.4.13]{part1}).
\end{defn}

\begin{lemma} \label{L:rational localization is flat}
Let $(A,A^+) \to (B,B^+)$ be an \'etale morphism of strongly noetherian adic Banach rings. Then the homomorphism $A \to B$ is flat.
\end{lemma}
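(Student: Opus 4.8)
The plan is to reduce the assertion to the case where $(A,A^+)\to(B,B^+)$ is a rational localization, and then to handle that case by a direct computation exploiting the strong noetherian hypothesis.

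For the reduction, I would invoke the local structure of \'etale morphisms of adic spaces from \cite{part1}: after passing to the members of a suitable finite rational covering $\{(B,B^+)\to(B_j,B_j^+)\}_j$ of $\Spa(B,B^+)$, each composite $(A,A^+)\to(B_j,B_j^+)$ is a finite composition of rational localizations and finite \'etale morphisms. Finite \'etale morphisms are flat, the target being a finite projective module over the source; rational localizations of strongly noetherian adic Banach rings are flat by the case treated below; and both operations preserve strong noetherianity (by \cite[Lemma~2.4.13]{part1} for rational localizations, and because a module finite over a noetherian ring is noetherian for finite \'etale morphisms), so the rational localizations occurring in these factorizations have strongly noetherian sources and are therefore flat. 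Hence $A\to B_j$ is flat for every $j$. Setting $C=\bigoplus_j B_j$, the map $A\to C$ is then flat, while $B\to C$ is faithfully flat: it is flat because $B$ is strongly noetherian by hypothesis and each $B\to B_j$ is a rational localization of $B$, and it is faithfully flat by Lemma~\ref{L:flat plus faithful} together with Lemma~\ref{L:covering faithful}, which ensures that the image of $\Spec C\to\Spec B$ contains $\Maxspec B$. Applying Lemma~\ref{L:faithful to flat} to $A\to B\to C$ then shows that $A\to B$ is flat.

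For the rational localization case, write $B$ as the quotient of the Tate algebra $P=A\{T_1,\dots,T_n\}$ by the closure of the ideal $I$ generated by $gT_1-f_1,\dots,gT_n-f_n$, where $f_1,\dots,f_n,g\in A$ generate an open ideal — hence, as $A$ contains a topologically nilpotent unit, the unit ideal; the general weighted case is identical. First I would observe that the closure is superfluous: as $A$ is strongly noetherian, $P$ is noetherian, so $P/I$ is a finitely generated module over the open mapping ring $P$, hence pseudocoherent by Remark~\ref{R:noetherian pseudoflat} and therefore complete, in particular Hausdorff, for its natural topology by Corollary~\ref{C:topologically 3-pseudocoherent}; thus $I$ is closed and $B=P/I=P\otimes_{A[T_1,\dots,T_n]}\bigl(A[T_1,\dots,T_n]/(gT_1-f_1,\dots,gT_n-f_n)\bigr)$. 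Next, writing $1=c_0g+\sum_i c_if_i$ with $c_i\in A$, one sees that $g$ becomes a unit modulo the relations $gT_i=f_i$, with inverse $c_0+\sum_i c_iT_i$, so that $A[T_1,\dots,T_n]/(gT_1-f_1,\dots,gT_n-f_n)\cong A[g^{-1}]$ is a localization of $A$ and in particular flat over $A$. Finally, using the flatness of the inclusion $A[T_1,\dots,T_n]\to P$, it follows by base change that $B$ is flat over $A[g^{-1}]$, and hence over $A$.

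The hard part is this last input, the flatness of the Tate algebra inclusion $A[T_1,\dots,T_n]\to A\{T_1,\dots,T_n\}$ for strongly noetherian $A$: it is a standard fact, provable by choosing a noetherian ring of definition of $A$ and reducing to the flatness of ideal-adic completions of noetherian rings, but it is exactly where strong noetherianity is indispensable, since (as emphasized elsewhere in the paper) completions of nonnoetherian rings routinely fail to be even pseudoflat. Everything else is formal, granting the local description of \'etale morphisms from \cite{part1} and the bookkeeping lemmas on flat and faithfully flat ring homomorphisms recorded above.
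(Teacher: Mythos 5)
Your reduction steps are fine, and they essentially unwind what the paper disposes of in one line by citing \cite[Lemma~1.7.6]{huber}: the local factorization of an \'etale morphism into rational localizations and finite \'etale maps, the bookkeeping via Lemmas~\ref{L:flat plus faithful}, \ref{L:covering faithful} and \ref{L:faithful to flat}, the observation that $(gT_1-f_1,\dots,gT_n-f_n)$ is already closed in $A\{T_1,\dots,T_n\}$ in the noetherian setting, the identification $A[T_1,\dots,T_n]/(gT_1-f_1,\dots,gT_n-f_n)\cong A[g^{-1}]$, and the base-change step are all correct.

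The genuine gap is exactly at the step you call the hard part. The flatness of $A[T_1,\dots,T_n]\to A\{T_1,\dots,T_n\}$ for a strongly noetherian Banach ring $A$ cannot be obtained by ``choosing a noetherian ring of definition of $A$ and reducing to the flatness of ideal-adic completions of noetherian rings,'' because a strongly noetherian Banach ring need not admit any noetherian ring of definition. Already $A = \CC_p$ (or any non-discretely valued analytic field) is strongly noetherian, yet any ring of definition $A_0$ satisfies $\varpi \gotho_{\CC_p} \subseteq A_0 \subseteq \gotho_{\CC_p}$ for some topologically nilpotent unit $\varpi$; if $A_0$ were noetherian, then the ideal $A_0 \cap \varpi\gotho_{\CC_p} = \varpi\gotho_{\CC_p}$ would be finitely generated over $A_0$, making $\gotho_{\CC_p}$ a finite $A_0$-module and hence a noetherian ring, which it is not (its maximal ideal is not finitely generated, the value group being dense). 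The same failure occurs for precisely the rings to which this lemma is applied later in the paper: affinoid algebras over non-discretely-valued fields and the rings $\tilde{\calR}^{[s,r]}_L$ of Theorem~\ref{T:curve noetherian}. So the case your sketch actually covers is the classical one, where the whole lemma is standard rigid geometry, while the general case --- the entire point of phrasing the hypothesis as ``strongly noetherian'' rather than ``has a noetherian ring of definition'' --- is left unproved; note also that the classical local-ring comparison arguments are not freely available here, since even the contraction of maximal ideals of $A\{T_1,\dots,T_n\}$ to maximal ideals of $A$ requires separate proof in this generality (compare Theorem~\ref{T:curve noetherian}(c)). The flatness of $A[T_1,\dots,T_n]\to A\{T_1,\dots,T_n\}$ in this generality is essentially the content of Huber's Lemma~1.7.6, which is what the paper's proof consists of citing; either quote it for this key input (after which your remaining steps give a correct, if longer, derivation), or supply an argument for it that does not pass through a noetherian ring of definition.
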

\begin{proof}
Apply \cite[Lemma~1.7.6]{huber}.
\end{proof}

\begin{cor} \label{C:covering faithfully flat}
Let $(A,A^+)$ be a strongly noetherian adic Banach ring.
For any \'etale covering $\{(A,A^+) \to (B_i, B_i^+)\}_i$,
the morphism $A \to \bigoplus_i B_i$ is faithfully flat.
\end{cor}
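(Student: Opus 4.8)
The plan is to combine the three general lemmas about flatness with the two lemmas about \'etale coverings that immediately precede the statement, so that the argument reduces to a bookkeeping exercise. First I would use Lemma~\ref{L:rational localization is flat}: since each $(A,A^+) \to (B_i, B_i^+)$ is an \'etale morphism of strongly noetherian adic Banach rings, the underlying ring map $A \to B_i$ is flat. A finite direct sum of flat modules is flat, so $A \to \bigoplus_i B_i$ is flat as well. (Here it is essential that the covering family is finite, as built into the definition of an \'etale covering; for an infinite family one would instead take a finite subcover, which is legitimate because $\widetilde{\Spa}(A,A^+)$ is quasicompact.)

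Next I would invoke Lemma~\ref{L:covering faithful}, which applies verbatim to the given \'etale covering and tells us that the image of $\Spec(\bigoplus_i B_i) \to \Spec(A)$ contains $\Maxspec(A)$. This is exactly condition (c) in Lemma~\ref{L:flat plus faithful}, and since we have already established flatness of $A \to \bigoplus_i B_i$, that lemma upgrades (c) to condition (a), namely faithful flatness. That completes the proof.

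I do not expect any genuine obstacle here: every ingredient is quoted directly from the preceding lemmas, and the only thing to check is that the hypotheses line up (strong noetherianity of each $B_i$, finiteness of the covering family, and that an \'etale covering hits all of $\Maxspec(A)$), all of which are already recorded. If anything deserves a sentence of care, it is the remark that the conclusion ``faithfully flat'' should be read in the sense of Lemma~\ref{L:flat plus faithful}(a) --- injectivity of $M \otimes_A B \to N \otimes_A B$ implies injectivity of $M \to N$ --- rather than merely flat-plus-surjective-on-spectra; but the equivalence of those formulations is precisely the content of Lemma~\ref{L:flat plus faithful}, so no extra work is needed. (One could alternatively bypass Lemma~\ref{L:rational localization is flat} and deduce flatness of $A \to \bigoplus_i B_i$ from Lemma~\ref{L:faithful to flat} applied to a refinement of the covering, but the direct route above is cleaner.)
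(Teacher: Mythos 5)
Your proof is correct and is essentially the paper's own argument: the paper proves the corollary by exactly the same combination of Lemma~\ref{L:rational localization is flat} (flatness of each $A \to B_i$, hence of $A \to \bigoplus_i B_i$), Lemma~\ref{L:covering faithful} (image contains $\Maxspec(A)$), and Lemma~\ref{L:flat plus faithful} (flat plus hitting $\Maxspec(A)$ implies faithfully flat). Your added remarks on finiteness of the covering and on the formulation of faithful flatness are fine but not needed beyond what these lemmas already provide.
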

\begin{proof}
Combine Lemma~\ref{L:flat plus faithful}, Lemma~\ref{L:covering faithful},
and Lemma~\ref{L:rational localization is flat}.
\end{proof}

\begin{remark}
We do not know whether Lemma~\ref{L:rational localization is flat} remains true if one drops the strongly noetherian hypothesis. We will establish some weaker statements in this direction later (\S\ref{subsec:pseudoflat rational}).
\end{remark}

\begin{remark}
Since every finite module over a noetherian Banach ring is complete
(Corollary~\ref{C:noetherian complete}), Lemma~\ref{L:rational localization is flat} can be interpreted as saying that base extension (via the completed tensor product) along a rational localization defines an exact functor on categories of finite Banach modules. This does not remain true for the full categories of Banach modules, in part because the completed tensor product involves composing a left-exact functor with a right-exact functor. See Example~\ref{exa:no topological flatness}.
\end{remark}

The following example is based on a discussion with Peter Scholze. 
\begin{example} \label{exa:no topological flatness}
Put 
\begin{align*}
(A,A^+) &= (\Qp\{T\}, \Zp\{T\}), \\
(B,B^+) &= (\Qp\{T/p^{-1}\}, \Zp\{T/p^{-1}\}), \\
(C,C^+) &= (\Qp\{(T-1)/p^{-1}\}, \Zp\{(T-1)/p^{-1}\}).
\end{align*}
Then $(A,A^+) \to (B,B^+), (A,A^+) \to (C,C^+)$ are rational localizations corresponding to disjoint subsets of $\Spa(A,A^+)$, so $B \widehat{\otimes}_A C = 0$.
That is, the functor $\bullet \widehat{\otimes}_A B$ on the category of Banach modules over $A$ is not left-exact, because it maps the injection $A \to C$ to the map
$B \to 0$. In fact, it is not right-exact either: if $0 \to M \to N \to P \to 0$
is a short exact sequence of Banach modules over $A$, then the sequence
\[
M \widehat{\otimes}_A B \to N \widehat{\otimes}_A B \to P \widehat{\otimes}_A B \to 0
\]
is exact at $P \widehat{\otimes}_A B$ by Theorem~\ref{T:open mapping}, but not necessarily at $N \widehat{\otimes}_A B$. This can be seen using an example of the form
\[
A \stackrel{\times f}{\to} A \to A/fA \to 0
\]
such as in Example~\ref{exa:not stably pseudocoherent}.
 
\end{example}

\begin{remark} \label{R:really strongly noetherian}
For reified adic spaces, the analogues of Lemma~\ref{L:rational localization is flat}
and Corollary~\ref{C:covering faithfully flat} are given in \cite[Corollary~8.11]{kedlaya-reified}. However, this requires a slightly stronger noetherian condition: one must start with a Banach ring $A$ which is \emph{really strongly noetherian}, meaning that all weighted Tate algebras $A\{T_1/q_1,\dots,T_n/q_n\}$ are noetherian (not just when $q_1 = \cdots = q_n = 1$).
\end{remark}

\subsection{Coherence in the noetherian setting}
\label{subsec:coherence noetherian}

We continue by discussing coherent sheaves on adic spectra of strongly noetherian Banach rings, starting with an analogue of Tate's acyclicity theorem for coherent sheaves.
This will be subsumed by our results on pseudocoherent sheaves (\S\ref{subsec:pseudocoherent sheaves}), but may be of independent interest for readers only interested in the noetherian case.

\begin{theorem} \label{T:strongly noetherian Tate}
Let $(A,A^+)$ be a strongly noetherian adic Banach ring.
Let $M$ be a finitely generated $A$-module and let $\tilde{M} = M \otimes_A \calO_X$ be the associated sheaf on $X = \Spa(A,A^+)$.
Then for every finitely generated $A$-module $M$, we have
\[
H^i(X, \tilde{M}) = \begin{cases} M & i=0 \\ 0 & i>0.
\end{cases}
\]
\end{theorem}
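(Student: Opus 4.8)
The plan is to reduce the statement to a known Tate-acyclicity result by reducing the covering to a standard Laurent covering and then establishing the vanishing via a \v{C}ech-to-derived-functor argument combined with the usual completeness properties of finite modules over noetherian Banach rings.

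First I would recall that since $(A,A^+)$ is strongly noetherian, it is sheafy by \cite[Proposition~2.4.16]{part1}, so $\calO_X$ is a genuine sheaf; moreover every rational localization $(A,A^+) \to (B,B^+)$ is again strongly noetherian, and by Lemma~\ref{L:rational localization is flat} the map $A \to B$ is flat. Consequently, for a finitely generated $A$-module $M$ and a rational subspace $U = \Spa(B,B^+)$, the sheaf $\tilde M$ satisfies $\tilde M(U) = M \otimes_A B$ (no completion is needed, as $M \otimes_A B$ is already a finite, hence complete, $B$-module by \cite[Remark~2.2.11]{part1}); in particular $\tilde M$ restricted to $U$ is the sheaf associated to the finite $B$-module $M \otimes_A B$. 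The case $i=0$ is then just the sheaf axiom, which holds because $\calO_X$ is a sheaf and the sections functor is exact on flat base extensions along a rational covering with $A \to \bigoplus_i B_i$ faithfully flat (Corollary~\ref{C:covering faithfully flat}); concretely, the \v{C}ech complex of $\tilde M$ for any rational covering is obtained from that of $\calO_X$ by applying the exact functor $M \otimes_A -$, so $\tilde M$ is a sheaf and $H^0(X,\tilde M) = M$.

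For the higher cohomology, I would argue as follows. By \cite[Corollary~2.4.21]{part1} (or the corresponding statement in \cite{huber}), the structure sheaf $\calO_X$ has vanishing higher \v{C}ech cohomology on rational coverings — this is Tate's acyclicity theorem in the strongly noetherian setting — and it suffices to check this on a cofinal system of coverings, for which one may use simple Laurent coverings $\{|f| \le |g|\}, \{|g| \le |f|\}$ by a standard refinement/induction argument (as in Huber or in \cite[\S 2.4]{part1}). The key point is that for such a covering, the (augmented, alternating) \v{C}ech complex of $\calO_X$ is not merely exact but \emph{strict exact} as a complex of Banach modules: this is the content of Tate's lemma, where one exhibits explicit contracting homotopies via the partial fraction decomposition. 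Tensoring this strict exact complex of finite flat $A$-modules (well, of Banach $A$-algebras, flat over $A$) with the finitely generated module $M$ over the noetherian ring $A$ preserves exactness since $\Tor^A_j(M,\calO_X(U)) = 0$ for $j>0$ by flatness; hence the \v{C}ech complex of $\tilde M$ for the Laurent covering is exact, giving vanishing of \v{C}ech cohomology in positive degrees. One then passes from \v{C}ech cohomology to sheaf cohomology via the Cartan–Leray/\v{C}ech-to-derived spectral sequence, using that a cofinal family of coverings has vanishing higher \v{C}ech cohomology for $\tilde M$ (the spectral sequence degenerates because the refinement coverings are again rational, handled by the same argument inductively on the number of parameters $f_1,\dots,f_n,g$ cutting out the subspaces).

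**The main obstacle** I anticipate is the careful bookkeeping in reducing an arbitrary rational covering to a sequence of Laurent coverings while keeping track of strictness — i.e., the purely topological-algebra content of Tate's acyclicity theorem — but in the strongly noetherian case this is already available in \cite{huber} and in \cite[\S 2.4]{part1}, so I would cite it rather than reprove it; the only genuinely new ingredient here is the interchange of $M \otimes_A -$ with the \v{C}ech complex, which is immediate from flatness of rational localizations (Lemma~\ref{L:rational localization is flat}) and the noetherian hypothesis ensuring finite modules are complete. Thus the proof is short: invoke sheafiness and flatness, reduce to Laurent coverings, tensor the strict-exact \v{C}ech complex with $M$, and conclude.
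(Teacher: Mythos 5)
Your proposal is correct, and it matches the paper's route: the paper proves this theorem by a one-line citation of \cite[Theorem~2.5]{huber2}, and what you write out is essentially the standard proof of that result (Tate reduction to simple Laurent coverings, tensoring the bounded exact \v{C}ech complex of flat rational localizations with $M$, completeness of finite modules over noetherian Banach rings, and the \v{C}ech-to-derived-functor comparison). One small wording caveat: $M \otimes_A -$ is of course not an exact functor, but your subsequent Tor-vanishing argument --- exactness is preserved because the (finite, hence bounded) \v{C}ech complex has flat terms, so in particular the short exact sequence for a simple Laurent covering stays exact since $\Tor_1^A(M,B_{12})=0$ --- is the correct justification, so the slip is harmless.
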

\begin{proof}
Apply \cite[Theorem~2.5]{huber2}.
\end{proof}

In the strongly noetherian case, one can also establish an analogue of Kiehl's glueing theorem for coherent sheaves, which for some reason does not appear in the prior literature. We thus give a proof which is a fairly straightforward consequence of some glueing arguments introduced in \cite{part1}. However, for the benefit of the reader who may have arrived here via an external reference and may not otherwise be familiar with \cite{part1}, we spell out the argument in somewhat greater detail than might otherwise be warranted.

\begin{lemma} \label{L:strongly noetherian finitely generated}
Let $(A,A^+)$ be a strongly noetherian adic Banach ring.
Then for any coherent sheaf $\calF$ on $X = \Spa(A,A^+)$, the $A$-module $M$ of global sections of $\calF$ is finitely generated and the natural map $\tilde{M} \to \calF$ is surjective.
\end{lemma}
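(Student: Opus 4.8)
The plan is to emulate the proof of Kiehl's glueing theorem, taking Huber's acyclicity theorem (Theorem~\ref{T:strongly noetherian Tate}) as the analytic input available in the strongly noetherian setting and using the open mapping theorem (Theorem~\ref{T:open mapping}) to control convergence; this is the argument referred to in the text as ``a fairly straightforward consequence of some glueing arguments introduced in \cite{part1}.'' First I would reduce to a binary simple Laurent covering. By definition $\calF$ admits a finite covering of $X$ by rational subspaces $U_i$ with $\calF|_{U_i} \cong \tilde M_i$ for finitely generated $\calO_X(U_i)$-modules $M_i$; since each $\calO_X(U_i)$ is noetherian, ``finitely generated'' here coincides with ``finitely presented''. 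Because rational localizations of strongly noetherian Banach rings are flat (Lemma~\ref{L:rational localization is flat}) and their finitely generated modules are complete (\cite[Remark~2.2.11]{part1}), the restriction of $\tilde M_i$ to a further rational subspace $V$ is $\widetilde{M_i \otimes_{\calO_X(U_i)} \calO_X(V)}$, so the restriction of a coherent sheaf to a rational subspace is again coherent and coherence is stable under refinement. By the standard fact that every finite rational covering of $X$ is refined by a Laurent covering generated by finitely many $f_1,\dots,f_m \in A$, and that such a covering is the iterated product of the simple Laurent coverings $\{X(f_k),X(1/f_k)\}$ (as in \cite{part1}), an induction on $m$ reduces both assertions to the case $X = U_1 \cup U_2$ with $\{U_1,U_2\} = \{X(f),X(1/f)\}$. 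The gain is that the \v{C}ech complex of $\calO_X$ for such a covering is $0 \to A \to \calO_X(U_1)\oplus\calO_X(U_2) \to \calO_X(U_{12}) \to 0$, which is strict exact by Theorem~\ref{T:strongly noetherian Tate} (with $M = A$) and the open mapping theorem; in particular the difference map $A_1\oplus A_2 \to A_{12}$ is surjective and admits a bounded set-theoretic section, obtained concretely by separating Laurent tails in $f$.

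The heart of the matter is to show that $\calF$ is generated by finitely many global sections; this is Kiehl's successive-approximation lemma, and it does not presuppose that $\calF$ is globally of the form $\tilde M$. Choosing finite generating sets of $M_1$ over $A_1$ and of $M_2$ over $A_2$ --- which are genuine sections of $\calF$ over $U_1$ and $U_2$ by Theorem~\ref{T:strongly noetherian Tate} --- one compares them over $U_{12}$ via the glueing isomorphism, writing each family in terms of the other over $A_{12}$, and then uses the bounded splitting $A_{12} = A_1 + A_2$ to correct the local generators by successively smaller elements. The corrections converge because all modules in play are finite and complete for their natural topologies and the ``spillover'' coefficients at each stage are controlled by the norm of the splitting and of the comparison matrices; this forces one to run the approximation at the level of finite free presentations of $M_1$ and $M_2$, where there are no uncontrolled relations. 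The limiting sections are global and restrict on $U_1$ (resp.\ $U_2$) to elements as close as desired to the chosen generators of $M_1$ (resp.\ $M_2$); since a finite subset of $M_1$ generating $M_1$ does so strictly (open mapping theorem, using completeness of $M_1$), a small enough perturbation still generates, so the limiting global sections generate $\calF$ on $U_1$ and $U_2$, hence on all of $X$. As these finitely many sections lie in $M = H^0(X,\calF)$, we conclude in particular that $\tilde M \to \calF$ is surjective, which is the second assertion.

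It remains to show that $M$ is finitely generated over $A$, and here a short bootstrap suffices. From a surjection $\calO_X^N \twoheadrightarrow \calF$ of sheaves as produced above, let $\calK$ be the kernel; it is coherent, since over each $U_i$ it equals $\widetilde{\ker(A_i^N \to M_i)}$ with the kernel finitely generated over the noetherian ring $A_i$ (using flatness of rational localizations). Applying the previous paragraph to $\calK$, it too is generated by finitely many global sections. Now fix $m \in M$: over each $U_i$ it lifts to some $v_i \in A_i^N$ because $H^1(U_i,\calK|_{U_i}) = 0$ by Theorem~\ref{T:strongly noetherian Tate}, and the differences $v_i - v_j \in \calK(U_{ij})$ form a \v{C}ech $1$-cocycle for $\calK$. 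Feeding the global generators of $\calK$ and the surjectivity of $A_1 \oplus A_2 \to A_{12}$ into the same computation as before shows $\check{H}^1(\{U_1,U_2\},\calK) = 0$, so the $v_i$ may be modified by sections of $\calK$ to glue to an element $v \in H^0(X,\calO_X^N) = A^N$ lifting $m$. Hence $A^N \to M$ is surjective and $M$ is finitely generated, completing the proof.

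I expect the successive-approximation step to be the main obstacle: making the convergence precise requires carefully marrying the strictness supplied by the open mapping theorem with the acyclicity supplied by Theorem~\ref{T:strongly noetherian Tate}, and organizing the iteration at the level of finite free presentations so that the error terms genuinely shrink. This is exactly the estimate that \cite{part1} packages into its glueing formalism, so in practice one could instead quote that formalism directly rather than redo the bookkeeping.
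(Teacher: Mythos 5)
Your argument is correct and, in its main step, is the same as the paper's: both reduce via Tate's reduction process \cite[Proposition~2.4.20]{part1} to a simple Laurent covering and then invoke the Kiehl-style successive approximation, which the paper does not redo by hand but imports through the glueing-square formalism of \cite[\S 2.7]{part1} (the strict exactness of $0 \to A \to A_1 \oplus A_2 \to A_{12} \to 0$ from Theorem~\ref{T:strongly noetherian Tate} plus density of $A_2$ in $A_{12}$ are exactly the glueing-square hypotheses, and \cite[Lemma~2.7.4]{part1} is the packaged approximation estimate you anticipate; do note that density of the image of $A_2$ in $A_{12}$, not only the bounded additive splitting, is part of what that formalism needs). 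Where you genuinely diverge is the finishing step. The paper takes $M = \ker(M_1 \oplus M_2 \to M_{12})$, gets surjectivity of $M \otimes_A A_i \to M_i$ directly from \cite[Lemma~2.7.4]{part1}, chooses a finitely generated submodule $M_0 \subseteq M$ still surjecting onto each $M_i$, and then a Nakayama-type argument \cite[Lemma~2.3.12]{part1} (using that maximal ideals of $A$ lift to the covering, as in Lemma~\ref{L:covering faithful}) shows $M_0 = M$, which yields both assertions at once. You instead first produce a sheaf surjection $\calO_X^N \to \calF$ from approximate global generators, then bootstrap through the coherent kernel $\calK$: a second application of the generation result plus the surjectivity of $A_1 \oplus A_2 \to A_{12}$ gives $\check{H}^1(\{U_1,U_2\},\calK) = 0$, so sections of $\calF$ lift to $A^N$ and $M$ is finitely generated. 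This is Kiehl's classical simultaneous Theorem A/B route; it is valid here (coherence of $\calK$ uses Lemma~\ref{L:rational localization is flat} and Theorem~\ref{T:strongly noetherian Tate}, and the small-perturbation-of-generators step uses completeness plus Theorem~\ref{T:open mapping}, exactly as you say), but it costs a second pass through the approximation argument, whereas the paper's Nakayama finish extracts finite generation of $M$ in one stroke from the already-established surjectivity after base change.
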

\begin{proof}
By definition, $X$ admits a rational covering $\{U_i\}_{i \in I}$ such that the restriction of $\calF$ to each $U_i$ is the cokernel of a morphism $\calG_{i,1} \to \calG_{i,2}$ of finite free $\calO$-modules. Put $G_{i,j} = \calG_{i,j}(U_i)$ for $j=1,2$ and put
$N_i = \ker(G_{i,1} \to G_{i,2})$.
By Lemma~\ref{L:rational localization is flat}, the sequence
\[
0 \to \tilde{N}_i \to \calG_{i,1} \to \calG_{i,2} \to \left. \calF \right|_{U_i} \to 0
\]
is exact; consequently, by Theorem~\ref{T:strongly noetherian Tate}, we may identify
$\left.\calF \right|_{U_i}$ with $\tilde{M}_i$ for $M_i = \calF(U_i)$.

Let us say that a rational covering $\{U_i\}_{i \in I}$ has property $\calP$ if for any coherent sheaf $\calF$ on $X$
whose restriction to each $U_i$ is the cokernel of a morphism of finite free $\calO$-modules, the $A$-module $M$ of global sections of $\calF$ is finitely generated and the natural map $\tilde{M} \to \calF$ is surjective.
It suffices to check that every covering has property $\calP$ by verifying the three axioms of Tate's reduction process \cite[Proposition~2.4.20]{part1}.
\begin{enumerate}
\item[(a)]
The property $\calP$ can be checked after passage to a more refined covering: this is obvious.
\item[(b)]
The property $\calP$ holds for a composition of two coverings if it holds for each covering individually: this is also obvious.
\item[(c)]
The property $\calP$ holds for any simple Laurent covering: to verify this, choose any $f \in A$,
let $U_1, U_2, U_{12}$ be the rational subsets
\[
\{v \in X: v(f) \leq 1\},
\{v \in X: v(f) \geq 1\},
\{v \in X: v(f) = 1\}
\]
of $X$ and let
\[
(A, A^+) \to (A_1, A_1^+),
(A, A^+) \to (A_2, A_2^+),
(A, A^+) \to (A_{12}, A_{12}^+)
\]
be the corresponding rational localizations.
By Theorem~\ref{T:strongly noetherian Tate}, the sequence
\begin{equation} \label{eq:exact sequence at ring level}
0 \to A \to A_1 \oplus A_2 \to A_{12} \to 0,
\end{equation}
in which the map to $A_{12}$ is the difference between the canonical homomorphisms, is a strict exact sequence of $A$-modules. In addition, $A_2$ has dense image in $A_{12}$ (since the map $A[f^{-1}] \to A_{12}$ has dense image)
and the map $\calM(A_1 \oplus A_2) \to \calM(A)$ is surjective; the rings $A, A_1, A_2, A_{12}$ therefore form a \emph{glueing square} in the sense of \cite[Definition~2.7.3]{part1}. This allows us to follow an axiomatization of the proof  of Kiehl's original theorem presented in \cite[\S 2.7]{part1}; this approach in turn depends on some glueing arguments for coherent sheaves on bare rings described in \cite[\S 1.3]{part1}, which also recover the Beauville-Laszlo theorem on glueing of vector bundles \cite[Remark~2.7.9]{part1}.

By Theorem~\ref{T:strongly noetherian Tate}, for $i=1,2$,
restriction of sections defines a map $\psi_i: M_i \to M_{12}$ for which the induced map
$M_i \otimes_{A_i} A_{12} \to M_{12}$ is an isomorphism.
Note that $M$ is the kernel of the map $\psi_1 - \psi_2: M_1 \oplus M_2 \to M_{12}$.
By a short explicit calculation (see \cite[Lemma~2.7.4]{part1}),
one sees that the natural maps $M \otimes_A A_i \to M_i$
are surjective. We may thus find a finitely generated $A$-submodule $M_0$ of $M$
such that $M \otimes_A A_i \to M_i$ is surjective for $i=1,2$.
For any $s \in M$, we may use Nakayama's lemma to show that $M_0 \to M_0 + As$ is surjective (see \cite[Lemma~2.3.12]{part1} for details); consequently, $M = M_0$ is itself a finitely generated $A$-module and $\tilde{M} \to \calF$ is surjective.
\end{enumerate}
As noted above, 
\cite[Proposition~2.4.20]{part1} now yields the desired result.
\end{proof}

\begin{theorem} \label{T:stably noetherian Kiehl}
Let $(A,A^+)$ be a strongly noetherian adic Banach ring.
Then the categories of coherent sheaves on $X = \Spa(A,A^+)$ and finitely generated $A$-modules are equivalent via the global sections functor.
\end{theorem}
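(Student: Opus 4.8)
The plan is to establish the equivalence by exhibiting mutually quasi-inverse functors: global sections $\calF \mapsto M := \calF(X)$ in one direction, and $M \mapsto \tilde{M}$ in the other. The content to check is that both composites are naturally isomorphic to the identity. One direction is essentially Theorem~\ref{T:strongly noetherian Tate}: for a finitely generated $A$-module $M$, the sheaf $\tilde{M}$ has $H^0(X,\tilde{M}) = M$, so the global sections of $\tilde{M}$ recover $M$. For the other direction, I need to show that for a coherent sheaf $\calF$ with module of global sections $M$, the natural map $\tilde{M} \to \calF$ is an isomorphism.

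First I would invoke Lemma~\ref{L:strongly noetherian finitely generated}: it tells us that $M$ is finitely generated and that $\tilde{M} \to \calF$ is surjective. So it remains to check injectivity, i.e., that the kernel sheaf $\calK$ of $\tilde{M} \to \calF$ vanishes. The kernel is itself a coherent sheaf (since on each member $U_i$ of a suitable rational covering both sheaves are associated to finitely generated modules over the strongly noetherian ring $\calO(U_i)$, and kernels of maps of such are again finitely generated by noetherianity, hence give coherent sheaves; here I use that rational localizations of strongly noetherian rings are again strongly noetherian and that the localization maps are flat by Lemma~\ref{L:rational localization is flat}, so $\tilde{M}|_{U_i}$ really is $\widetilde{M \otimes_A \calO(U_i)}$). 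Then I apply Lemma~\ref{L:strongly noetherian finitely generated} again to $\calK$: its module $K$ of global sections is finitely generated and $\tilde{K} \to \calK$ is surjective. But $K = \ker(M \to \calF(X)) = \ker(M \to M) = 0$ by the definition of $M$ as the global sections and the fact that $\tilde{M} \to \calF$ induces an isomorphism on $H^0$; hence $\calK = 0$.

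To nail the claim $H^0(X, \tilde{M}) \to H^0(X,\calF)$ is an isomorphism — which is what drives $K=0$ — I would argue as follows. Surjectivity of $\tilde{M} \to \calF$ gives an exact sequence $0 \to \calK \to \tilde{M} \to \calF \to 0$ of coherent sheaves; taking cohomology and using $H^1(X,\calK) = H^1(X,\widetilde{H^0(X,\calK)})$-type control via Theorem~\ref{T:strongly noetherian Tate} on the module $K$ of sections of $\calK$, one gets that $H^0(X,\tilde{M}) \to H^0(X,\calF)$ is surjective with kernel exactly $K = H^0(X,\calK)$; but by construction $H^0(X,\tilde{M}) = M \to H^0(X,\calF) = M$ is the identity (this is how $M$ was defined), forcing $K = 0$. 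One has to set this up carefully to avoid circularity — the cleanest route is: let $M = \calF(X)$; Lemma~\ref{L:strongly noetherian finitely generated} makes $\tilde{M} \to \calF$ surjective; let $\calK$ be its kernel and $K = \calK(X)$; left-exactness of global sections plus the identification $H^0(\tilde{M}) = M$ forces $K = \ker(M \xrightarrow{\mathrm{id}} M) = 0$; then Lemma~\ref{L:strongly noetherian finitely generated} applied to $\calK$ gives $\tilde{K} \to \calK$ surjective with $K = 0$, so $\calK = 0$.

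Finally I would check naturality of both isomorphisms (routine: the maps $\tilde{M} \to \calF$ and $M \to \calF(X)$ are the canonical adjunction maps, visibly functorial) and note that the two functors are additive and exact where appropriate. The main obstacle is the middle step — verifying $\tilde{M} \xrightarrow{\sim} \calF$, specifically injectivity — and the subtlety there is entirely about not assuming what we are proving: everything hinges on applying Lemma~\ref{L:strongly noetherian finitely generated} a second time to the kernel sheaf, which is legitimate precisely because in the strongly noetherian setting kernels of maps between coherent sheaves are again coherent (using flatness of rational localizations from Lemma~\ref{L:rational localization is flat} to see the kernel is locally associated to a finitely generated module). Once that is in hand, the rest is formal.
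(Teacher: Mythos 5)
Your proof is correct and follows essentially the same route as the paper: Theorem~\ref{T:strongly noetherian Tate} shows that global sections composed with $M \mapsto \tilde{M}$ is the identity on finitely generated modules, and Lemma~\ref{L:strongly noetherian finitely generated} gives finite generation of $M = \calF(X)$ together with surjectivity of $\tilde{M} \to \calF$. The only difference is that where the paper simply declares the map $\tilde{M} \to \calF$ to be ``clearly injective,'' you supply a complete justification — coherence of the kernel sheaf (via flatness of rational localizations from Lemma~\ref{L:rational localization is flat} and noetherianity), vanishing of its global sections by left-exactness, and a second application of Lemma~\ref{L:strongly noetherian finitely generated} — which is a harmless and indeed welcome elaboration of the same argument.
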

\begin{proof}
Note first that Lemma~\ref{L:strongly noetherian finitely generated} implies that the global sections functor on coherent sheaves does indeed take values in finitely generated $A$-modules. Note next that by Theorem~\ref{T:strongly noetherian Tate},
composing the functor $M \mapsto \tilde{M}$ from finitely generated $A$-modules to coherent sheaves on $X$ with the global sections functor yields an equivalence.
It thus suffices to check that the composition in the opposite direction is an
equivalence, by showing that for any coherent sheaf $\calF$ with $\calF(X) = M$, the natural map
$\tilde{M} \to \calF$ is an isomorphism.
However, this map is clearly injective, and it is surjective by
Lemma~\ref{L:strongly noetherian finitely generated}, so the proof is complete.
\end{proof}

\begin{remark} \label{R:really strongly noetheran sheafy}
In the setting of reified adic spaces, 
if one starts with a really strongly noetherian Banach ring, the resulting space is again sheafy \cite[Theorem~8.15]{kedlaya-reified},
the structure presheaf is again an acyclic sheaf on rational subspaces
\cite[Theorem~7.14]{kedlaya-reified},
and coherent sheaves correspond to finite modules \cite[Theorem~8.16]{kedlaya-reified}.
\end{remark}

\subsection{Pseudoflatness for rational localizations}
\label{subsec:pseudoflat rational}

It is not known in general whether rational localizations of arbitrary Banach rings are flat.
To make up for this, we introduce a weaker notion.

\begin{defn}
Let $(A,A^+)$ be an adic Banach ring. 
For $m \geq 0$, a topological $A$-module $S$ is \emph{stably $m$-pseudocoherent} 
(resp.\ \emph{stably $m$-fpd}) if 
$S$ is $m$-pseudocoherent (resp.\ $m$-fpd) and for every rational localization $(A,A^+) \to (B,B^+)$, the $B$-module $S \otimes_A B$ is complete for the natural topology. As usual, we omit $m$ when it equals $\infty$.
In case $A$ is strongly noetherian, Corollary~\ref{C:noetherian complete} implies that any finite $A$-module is
stably pseudocoherent, but not necessarily fpd.
\end{defn}

\begin{example} \label{exa:not stably pseudocoherent}
It is possible to produce examples in which $(A,A^+)$ is an adic Banach ring, $(A,A^+) \to (B,B^+)$ is a rational localization, $f \in A$ is not a zero-divisor of either $A$ or $B$,
and $f$ generates a closed ideal of $A$ but not of $B$. In this case, $A/fA$ is strictly pseudocoherent but not stably pseudocoherent.

The following explicit construction was suggested by Scholze.
Choose an integer $N \geq 3$ coprime to $p$. Let $U_0 = \Spa(A_0,A_0^+)$ be a strict neighborhood of the ordinary locus in the modular curve $X(N)_K$ over a perfectoid field $K$.
Let $X(N p^\infty)_K$ be the modular curve of infinite $p$-level and tame level $N$,
as a perfectoid space \cite[Theorem~3.1.2]{scholze-torsion}. 
Let $\psi: X(N p^\infty)_K \to \PP^{1,\an}_K$ be the Hodge-Tate period map, let $D \subseteq \PP^{1,\an}_K$ be the closed unit disc, and let $U$ be the intersection in $X(N p^\infty)_K$ 
of the inverse images of $U_0$ and $D$. 
Then $U = \Spa(A,A^+)$ is an affinoid perfectoid space and the map $\psi$ corresponds to an element $f \in A$. The inverse image in $U$ of the ordinary locus $V_0$ of $X(N)_K$ is a rational subspace
$V = \Spa(B,B^+)$.

Since $V_0$ is one-dimensional, its Shilov boundary coincides with the topological boundary 
of its maximal Hausdorff quotient (see \cite[\S 3.4]{bpr}); 
consequently, the Shilov boundary of a finite cover of $V_0$ is contained in the inverse image of the Shilov boundary of $V_0$. It follows that 
the Shilov boundary $Z$ of $U$ is contained in the inverse image of the Shilov boundary of $U_0$. 
Since $\alpha(f) \neq 0$ for each $\alpha \in Z$, by compactness these values are bounded away from $0$. This implies that $fA$ is a closed ideal of $A$ (compare \cite[Theorem~1.7]{escassut}).

By contrast, $f$ maps $V$ surjectively onto the pro-discrete subset $\ZZ_p$ of $D$.
Again, the Shilov boundary $Z'$ of $V$ is the inverse image of the Shilov boundary of $V_0$, but now there do exist points $\alpha \in Z'$ for which $\alpha(f) = 0$. By \cite[Theorem~1.4, Theorem~1.7]{escassut}, 
it follows that either $f$ is a zero-divisor in $B$ or $fB$ is not a closed ideal of $B$.
To rule out the former, suppose that $g \in B$ satisfies $fg=0$ but $g \neq 0$;
then $g$ must vanish identically on the complement of $\psi^{-1}(0)$. 
Let $V_n = \Spa(B_n, B_n^+)$ be the inverse image of $V$ in $X(Np^n)_K$;
we can then write $g$ as the limit of some sequence $\{g_n\}$ with $g_n \in B_n$.
Let $c>0$ be the spectral norm of $g$; for $n$ sufficiently large, $g-g_n$ has spectral norm at most $c/2$,
and so the restriction of $g_n$ to $\psi^{-1}(0)$ has spectral norm $c$.
This implies the same for the restriction of $g_n$ to $\psi^{-1}(p^n \lambda)$ for each $\lambda \in \ZZ_p$
due to the existence of a commutative diagram
\[
\xymatrix{
V \ar[r] \ar[d] & \ZZ_p \ar[d] \\
V_n \ar[r] & \ZZ/p^n \ZZ.
}
\]
(That is, the image in $V_n$ of a point in $V$ determines the Hodge-Tate period map modulo $p^n$.)
However, this contradicts the fact that for $n$ sufficiently large, $g_n - g_{n+1}$ has spectral norm at most $c/2$.
From this contradiction, we deduce that $f$ is not a zero-divisor in $B$, and so $fB$ is not a closed ideal of $B$.

As an aside, we point out that it would be of great interest to construct a similar example in which $f$ is indeed a zero-divisor in $B$, as this would give an example of a rational localization which is not flat.
\end{example}

One important source of stably pseudocoherent modules is the following observation.
\begin{remark} \label{R:pseudocoherent from sheafy}
Let $(A,A^+)$ be a sheafy adic Banach ring, let $I$ be a strictly pseudocoherent ideal of $A$,
and let $(A/I)^+$ be the integral closure of the image of $A^+$ in $A/I$.
If $(A/I, (A/I)^+)$ is again sheafy, then for any rational localization $(A, A^+) \to (B,B^+)$,
the map $A/I \to B/IB$ is again a rational localization; consequently, $A/I$ (and hence $I$) is stably pseudocoherent \cite[Theorem~1.2.7]{kedlaya-aws}.
In fact, the converse is also true: if $A/I$ is stably pseudocoherent,
then $(A/I, (A/I)^+)$ is again sheafy. As we will not need this result, we defer to \cite[Theorem~1.4.20]{kedlaya-aws} for its proof.
\end{remark}

\begin{defn} \label{D:topologically flat}
Let $(A,A^+)$ be an adic Banach ring. 
For $m \geq 0$, a topological $A$-module $B$ is \emph{$m$-pseudoflat} if 
for every stably $m$-pseudocoherent $A$-module $P$, we have $\Tor_1^A(P, B) = 0$; as in Remark~\ref{R:pseudoflat}, this does not imply the vanishing of $\Tor_i^A(P,B)$ for $i>0$. We say that $B$ is \emph{pseudoflat} if it is $\infty$-pseudoflat; in particular, if $B$ is $m$-pseudoflat for some finite $m$, then $B$ is pseudoflat.
\end{defn}

\begin{remark} \label{R:topologically flat reduction}
With notation as in Definition~\ref{D:topologically flat}, the following conditions are equivalent.
\begin{enumerate}
\item[(a)]
For any strictly $m$-pseudocoherent $A$-module $M$, $\Tor^A_1(M,B) = 0$. This implies that $B$ is $m$-pseudoflat.)
\item[(b)]
For every short exact sequence
\[
0 \to M \to N \to P \to 0
\]
in which $P$ is strictly $m$-pseudocoherent, the map $M \otimes_A B \to N \otimes_A B$ is injective.
\item[(c)]
The conclusion of (b) holds in all cases where $N$ is finite projective.
By Lemma~\ref{L:pseudocoherent 2 of 3} and Corollary~\ref{C:closure finitely generated}, in such cases $M$ is $(m-1)$-pseudocoherent.
\end{enumerate}
However, if $(A,A^+) \to (B,B^+)$ is a rational localization and the above conditions hold, 
it is not immediately clear that for any stably $m$-pseudocoherent $A$-module $P$, the $B$-module $P \otimes_A B$ is stably $m$-pseudocoherent: it is clear that $(P \otimes_A B) \otimes_B C \cong P \otimes_A C$ is complete for every rational localization $(B,B^+) \to (C,C^+)$, but not that $P \otimes_A B$ is $m$-pseudocoherent. 
(For a short exact sequence as in (b), the sequence remains exact but we do not know that $M \otimes_A B$ is $(m-1)$-pseudocoherent.)
However, this does hold if $m=2$ (because $1$-pseudocoherence is preserved by arbitrary base extension) or $m=\infty$.
In particular, for these values of $m$, the composition of an $m$-pseudoflat rational localization followed by any $m$-pseudoflat morphism is again $m$-pseudoflat.
\end{remark}

A ready supply of pseudoflat modules is given by the following construction.
\begin{defn} \label{D:pro-projective}
Let $(A,A^+)$ be an adic Banach ring. A complete metrizable
topological $A$-module $B$ is \emph{pro-projective}
if there exists a filtered family of continuous $A$-linear projectors $\pi_1, \pi_2, \ldots: B \to B$ whose images are finite projective $A$-modules such that for each $\bv \in B$, the sequence $\pi_1(\bv), \pi_2(\bv), \dots$ converges to $\bv$; this definition is stable under arbitrary base change (of Banach modules).
For example, $A\{T\}$ and $A\{T,T^{-1}\}$ are pro-projective.
\end{defn}

\begin{lemma} \label{L:pro-projective topologically flat}
Let $(A,A^+)$ be an adic Banach ring. 
Let $B$ be a pro-projective $A$-module.
For any strictly $2$-pseudocoherent $A$-module $M$,
$M \otimes_A B$ is complete for the natural topology and $\Tor^A_1(M,B) = 0$.
\end{lemma}
\begin{proof}
Let 
\[
0 \to N \to A^n \to M \to 0
\]
be an exact sequence of $A$-modules.
Consider the commutative diagram
\[
\xymatrix{
& N \otimes_A B \ar[r] \ar[d] & A^n \otimes_A B \ar[r] \ar[d] & M \otimes_A B \ar[r] \ar[d] & 0 \\
0 \ar[r] & N \widehat{\otimes}_A B \ar[r] &A^n  \widehat{\otimes}_A B \ar[r] & M  \widehat{\otimes}_A B \ar[r] & 0 
}
\]
with the completions in the second row being taken for the natural topology.
In this diagram,
the first row is obviously exact; the second row is exact because $B$ is pro-projective; the middle vertical arrow is bijective; and the left vertical arrow is surjective because $N$ is finitely generated
(see Remark~\ref{R:finitely generated strict}). It follows that the right vertical arrow is an isomorphism.
This shows that $M \otimes_A B$ is complete for the natural topology.

Since $M$ is 2-pseudocoherent, $N$ is finitely presented, so we may repeat the argument (using a different diagram) to see that the left vertical arrow is an isomorphism. This shows that $\Tor_1^A(M, B) = 0$, completing the proof.
\end{proof}

\begin{cor} \label{C:pro-projective topologically flat} 
Any pro-projective $A$-module is $2$-pseudoflat.
\end{cor}
\begin{proof}
This follows from Lemma~\ref{L:pro-projective topologically flat}.
\end{proof}

\begin{cor} \label{C:module power series}
Let $(A,A^+)$ be an adic Banach ring. Let $M$ be a finitely generated (resp.\ finitely presented) $A$-module which is complete with respect to its natural topology. Then 
the natural morphisms
\begin{gather}
\label{eq:module power series1}
M \otimes_A A\{T\} \to M\{T\},  \\
\label{eq:module power series2}
M \otimes_A A\{T,T^{-1}\} \to M\{T,T^{-1}\}
\end{gather}
are surjective (resp.\ bijective).
\end{cor}
\begin{proof}
This is immediate from (the proof of) Lemma~\ref{L:pro-projective topologically flat}.
\end{proof}

We now establish pseudoflatness for rational localizations of sheafy
adic Banach rings. This subsumes the cases of strongly noetherian rings,
by \cite[Theorem~2.4.5]{part1}, and of stably uniform rings, by
\cite[Theorem~2.8.10]{part1}.

\begin{lemma} \label{L:weak flatness0}
Let $(A,A^+)$ be a sheafy adic Banach ring. 
Choose $f \in A$, define the simple Laurent covering
\[
U_1 = \{v \in \Spa(A,A^+): v(f) \leq 1\}, \qquad
U_2 = \{v \in \Spa(A,A^+): v(f) \geq 1\},
\]
and let $(A, A^+) \to (B_1, B_1^+)$, $(A, A^+) \to (B_2, B_2^+)$,
$(A, A^+) \to (B_{12}, B_{12}^+)$
 be the rational localizations corresponding to $U_1, U_2, U_1 \cap U_2$.
Then the maps
\[
A\{T\} \stackrel{\times T-f}{\longrightarrow} A\{T\},
\quad
A\{T\} \stackrel{\times 1-fT}{\longrightarrow} A\{T\},
\quad
A\{T^{\pm}\} \stackrel{\times T-f}{\longrightarrow} A\{T^{\pm}\},
\quad
A\{T^{\pm}\} \stackrel{\times 1-fT}{\longrightarrow} A\{T^{\pm}\}
\]
are strict inclusions.
\end{lemma}
\begin{proof}
Since $A\{T\} \to A\{T^{\pm}\}$ is a strict inclusion, it suffices to check the assertions involving $A\{T^{\pm}\}$. We treat only the case of $1-fT$, the case $T-f$ being similar in light of the identity $T-f = T(1-fT^{-1})$.

By the sheafy hypothesis (plus \cite[Theorem~2.7.4]{part1} and the open mapping theorem), the map 
\[
A\{T^{\pm}\} \to B_1\{T^{\pm}\} \oplus B_2\{T^{\pm}\}
\]
is a strict inclusion. It thus suffices to check that $A\{T^{\pm}\} \stackrel{\times 1-fT}{\to} A\{T^{\pm}\}$ is a strict inclusion for $A =B_1$ and $A=B_2$. We treat only the case $A = B_1$,
the case $A = B_2$ being similar in light of the identity $1-fT = fT(1-f^{-1}T^{-1})$.

Let $R$ be the set of formal sums $\sum_{n \in \ZZ} c_n T^n$ with $c_n \in B_1$ such that
$c_n$ remains bounded as $n \to \infty$ and converges to 0 as $n \to -\infty$; this is a Banach ring for the Gauss norm and contains $B_1\{T^{\pm}\}$ as a closed subring. In $R$, $1-fT$ admits the multiplicative inverse $1 + fT + f^2T^2 + \cdots$; multiplication by this element defines a homeomorphism from $(1-fT)B_1\{T^{\pm}\}$ to $B_1\{T^{\pm}\}$.
\end{proof}

\begin{remark} \label{R:exact sequence from flatness}
From the proof of \cite[Theorem~2.8.10]{part1}, one reads off a converse to Lemma~\ref{L:weak flatness0}: if the conclusion holds as written, then the sequence
\[
0 \to A \to B_1 \oplus B_2 \to B_{12} \to 0
\]
is exact. If the same holds with $A$ replaced by any rational localization, then
it follows from \cite[Proposition~2.4.20]{part1} that $(A,A^+)$ is sheafy.
This gives a criterion for sheafiness that can be thought of as giving rise to the negative examples of
\cite{buzzard-verberkmoes} and \cite{mihara}.
\end{remark}

\begin{lemma} \label{L:weak flatness1}
With notation as in Lemma~\ref{L:weak flatness0},
the morphism $A \to B_2$ is $2$-pseudoflat.
\end{lemma}
\begin{proof}
Set notation as in Remark~\ref{R:topologically flat reduction}(c) with $m=2$. By Remark~\ref{R:series multiplication}, Corollary~\ref{C:module power series}, and Lemma~\ref{L:weak flatness0}, we have a commutative diagram
\begin{equation} \label{eq:weak flatness}
\xymatrix{
 &  & 0 \ar[d] & 0 \ar[d] & \\
0 \ar[r] & M \otimes_A A\{T\} \ar[r] \ar^{\times (1-fT)}[d]&  N \otimes_A A\{T\} \ar[r] \ar^{\times (1-fT)}[d] & P \otimes_A A\{T\} \ar[r] \ar^{\times (1-fT)}[d] & 0 \\
0 \ar[r] & M \otimes_A A\{T\} \ar[r] \ar[d] & N \otimes_A A\{T\} \ar[r] \ar[d] & P \otimes_A A\{T\} \ar[r] \ar[d] & 0 \\
0 \ar@{-->}[r] & M \otimes_A B_2 \ar[r] \ar[d]&  N \otimes_A B_2 \ar[r] \ar[d] & P \otimes_A B_2 \ar[r] \ar[d] & 0 \\
& 0 & 0 & 0 &
}
\end{equation}
in which excluding the dashed arrow, all rows and columns are exact. However, this implies exactness including the dashed arrow by the snake lemma.
\end{proof}

In the setting of Remark~\ref{R:series multiplication}, it is not clear to us how to make an analogous argument with $1-fT$ replaced by $T-f$ when $f$ is not a unit in $A$. However,
the severity of this issue is somewhat diminished by the following argument.

\begin{lemma} \label{L:weak flatness2}
With notation as in Lemma~\ref{L:weak flatness0}, the morphisms
$A \to B_1$, $A \to B_{12}$ are $2$-pseudoflat.
\end{lemma}
\begin{proof}
By Lemma~\ref{L:weak flatness1}, the morphism $A \to B_2$
is $2$-pseudoflat. By Lemma~\ref{L:weak flatness1} applied with $f$ replaced by its inverse in $B_2$,
$B_2 \to B_{12}$ is $2$-pseudoflat. Using Remark~\ref{R:topologically flat reduction}, it follows that $A \to B_2 \to B_{12}$ is $2$-pseudoflat.

Set notation as in Remark~\ref{R:topologically flat reduction}(c) with $m=2$ and suppose that $P$ is stably $2$-pseudocoherent over $A$.
By the previous paragraph, we obtain a commutative diagram
\begin{equation} \label{eq:weak flatness2}
\xymatrix{
 &  & 0 \ar[d] & 0 \ar[d] & \\
0 \ar[r] & M \ar[r]\ar[d]  &  N \ar[r] \ar[d] & P \ar[r] \ar[d] & 0 \\
0 \ar@{-->}[r] & M \otimes_A (B_1 \oplus B_2) \ar[r] \ar[d] & N \otimes_A (B_1 \oplus B_2) \ar[r] \ar[d] & P \otimes_A (B_1 \oplus B_2) \ar[r] \ar[d] & 0 \\
0 \ar[r] & M \otimes_A B_{12} \ar[r] \ar[d]&  N \otimes_A B_{12} \ar[r] \ar[d] & P \otimes_A B_{12} \ar[r] \ar[d] & 0 \\
& 0 & 0 & 0 &
}
\end{equation}
in which excluding the dashed arrow, the rows and columns are exact. By diagram chasing, we obtain exactness of the dashed arrow; this shows that $A \to B_1$ is $2$-pseudoflat.
 \end{proof}
 
\begin{remark}
In \eqref{eq:weak flatness}, the tensor products are already complete 
by Corollary~\ref{C:module power series}. However, this does not mean that one gets an analogous diagram for an arbitrary strict exact sequence $0 \to M \to N \to P \to 0$ of Banach modules, because of the poor behavior of the completed tensor product.
Indeed, even for $A$ noetherian, the existence of such a diagram in general would ultimately lead to a contradiction against Example~\ref{exa:no topological flatness}.
\end{remark}

\begin{theorem} \label{T:weak flatness}
Let $(A,A^+)$ be a sheafy adic Banach ring.
For any rational localization $(A,A^+) \to (B,B^+)$, $A \to B$ is $2$-pseudoflat.
(By Remark~\ref{R:topologically flat reduction}, this implies that if $M$ is a stably $m$-pseudocoherent $A$-module for some $m \in\{2,\infty\}$, then $M \otimes_A B$ is a stably $m$-pseudocoherent $B$-module.)
\end{theorem}
\begin{proof}

We apply \cite[Proposition~2.4.24]{part1}:
criterion (a) follows from Remark~\ref{R:topologically flat reduction}, while criterion (b) follows from 
Lemma~\ref{L:weak flatness1} and Lemma~\ref{L:weak flatness2}.

\end{proof}

\begin{remark}\label{R:reified pseudoflat}
The previous arguments carry over directly to reified adic Banach rings. For example, Lemma~\ref{L:weak flatness0} must be stated for a cover of the form
\[
U_1 = \{v \in \Spra(A,A^+): v(f) \leq q\}, \qquad
U_2 = \{v \in \Spra(A,A^+): v(f) \geq q\}
\]
for some $q>0$, but the proof does not change in any essential way.
\end{remark}

\subsection{Pseudocoherent sheaves}
\label{subsec:pseudocoherent sheaves}

Using pseudoflatness, we may develop analogues of the theorems of Tate and Kiehl 
for pseudocoherent sheaves. One may further follow \cite{sga6} to define a concept of \emph{pseudocoherent complexes}, but this is far more subtle than working with pseudocoherent complexes of modules over a ring (as discussed in detail in \cite[\S 2]{thomason-trobaugh}) due to the limited nature of pseudoflatness; we will not pursue it here.

\begin{theorem} \label{T:pseudocoherent acyclicity}
Let $(A,A^+)$ be a sheafy adic Banach ring.
For $M$ a stably pseudocoherent $A$-module,
let $\tilde{M}$ be the presheaf for which $\tilde{M}(U)$ is the inverse limit of $M \otimes_A B$ over all rational subspaces $\Spa(B,B^+)$ of $\Spa(A,A^+)$ contained in $U$.
Then for any rational subdomain $U = \Spa(B,B^+)$ of $X$ and any rational covering $\gothV$ of $U$, we have
\[
H^i(U, \tilde{M}) = \check{H}^i(U,\tilde{M}; \gothV) = \begin{cases} M \otimes_A B & i=0 \\ 0 & i>0. \end{cases}
\]
In particular, $\tilde{M}$ is a sheaf.
\end{theorem}
\begin{proof}
By Theorem~\ref{T:weak flatness}, $M \otimes_A B$ is a stably pseudocoherent $B$-module.
Consequently, by \cite[Proposition~2.4.20, Proposition~2.4.21]{part1},
this reduces immediately to Lemma~\ref{L:weak flatness2}.
\end{proof}

\begin{cor} \label{C:locally fpd}
Let $(A,A^+)$ be a sheafy adic Banach ring.
Let $M$ be a stably pseudocoherent $A$-module.
Suppose that there exist a nonnegative integer $m$ and a 
rational covering $\gothV$ of $\Spa(A,A^+)$ such that
$M \otimes_A \calO(V)$ is an $m$-fpd $\calO(V)$-module for each $V \in \gothV$. Then $M$ is $m$-fpd.
\end{cor}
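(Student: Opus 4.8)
The plan is to reduce the statement to the finite projectivity of a single syzygy module, check that this module behaves well under the rational localizations involved, and then recognize it as the module of global sections of a vector bundle.

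First I would use pseudocoherence of $M$ to fix a resolution $\cdots \to P_1 \to P_0 \to M \to 0$ by finite projective $A$-modules. If $m=0$ the assertion is just that $M$ is finite projective, so assume $m \geq 1$ and let $N = \ker(P_{m-1} \to P_{m-2})$ be the $(m-1)$-st syzygy, with the convention $P_{-1} = M$. Applying Lemma~\ref{L:pseudocoherent 2 of 3} repeatedly to the short exact sequences $0 \to \ker(P_i \to P_{i-1}) \to P_i \to \ker(P_{i-1}\to P_{i-2}) \to 0$ shows that $N$, like every syzygy, is pseudocoherent; and the usual syzygy characterization of projective dimension (via the $\Ext$ criterion recalled after the definition of $m$-fpd, cf.\ \cite[Tag~065R]{stacks-project}) shows that $M$ is $m$-fpd if and only if $N$ is finite projective over $A$. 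So it suffices to prove the latter.

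Next I would pass to the covering using pseudoflatness of rational localizations. For $V \in \gothV$, write $\calO(V) = B_V$; the map $A \to B_V$ is pseudoflat by Theorem~\ref{T:weak flatness}(b), so by Remark~\ref{R:pseudoflat reduction} base extension along it is exact on pseudocoherent modules. Applied to the short exact sequences above (all of whose terms are pseudocoherent), this shows $N \otimes_A B_V = \ker(P_{m-1}\otimes_A B_V \to P_{m-2}\otimes_A B_V)$ is the $(m-1)$-st syzygy of the finite projective resolution $\cdots \to P_0 \otimes_A B_V \to M \otimes_A B_V \to 0$. Since $M \otimes_A B_V$ is $m$-fpd, the same syzygy criterion forces $N \otimes_A B_V$ to be finite projective over $B_V$. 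Then I would glue: since $N$ is pseudocoherent, Theorem~\ref{T:pseudocoherent acyclicity} identifies $\tilde N = N \otimes_A \calO$ on $X = \Spa(A,A^+)$, giving $\tilde N(V) = N \otimes_A \calO(V)$ for all rational $V$ and in particular $\tilde N(X) = N$; by the previous step $\tilde N$ restricts to a vector bundle on each member of $\gothV$, so by the glueing theorem for vector bundles over a sheafy adic Banach ring (which $(A,A^+)$ is, being stably uniform) from \cite{part1}, $\tilde N$ is a vector bundle on $X$, whence $N = \tilde N(X)$ is finite projective over $A$.

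There is no genuinely hard point once the machinery of \S\ref{subsec:pseudoflat rational} and the vector-bundle formalism of \cite{part1} are in place; the two places needing care are that the formation of the syzygy commutes with the base extensions $A \to \calO(V)$ — this is exactly where pseudoflatness, rather than honest flatness (which is not known in this generality), suffices, and where it is crucial that every module appearing is pseudocoherent — and that the passage from ``locally a vector bundle on a rational covering'' to ``$N$ is finite projective'' is legitimate, which relies on Theorem~\ref{T:pseudocoherent acyclicity} to recover $N$ as $\tilde N(X)$ together with Kiehl-style glueing of vector bundles.
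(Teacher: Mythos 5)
Your proof is correct and follows essentially the same route as the paper: pseudoflatness of rational localizations (Theorem~\ref{T:weak flatness}) to make syzygy formation commute with base extension, the criterion of \cite[Tag~00O5]{stacks-project} to get projectivity of the relevant syzygy over each $\calO(V)$, and Theorem~\ref{T:pseudocoherent acyclicity} together with Kiehl-style glueing of vector bundles from \cite{part1} to descend finite projectivity to $A$. The only cosmetic difference is that the paper first proves the $m=0$ case (by exactly your final glueing paragraph applied to $M$ itself) and then applies it to the syzygies $M_i$ for $i \geq m$, whereas you isolate the single $(m-1)$-st syzygy $N$ and set $m=0$ aside, although your glueing argument handles that case verbatim with $N$ replaced by $M$.
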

\begin{proof}
We first treat the case $m=0$. If $M \otimes_A \calO(V)$ is projective (i.e., $0$-fpd) for each $V \in \gothV$, then $\tilde{M}$ is a locally finite free sheaf of $\calO$-modules on $\Spa(A,A^+)$. By \cite[Theorem~2.7.7]{part1}, $H^0(\Spa(A,A^+), \tilde{M})$ is a finite projective $A$-module; since this module coincides with $M$ by 
Theorem~\ref{T:pseudocoherent acyclicity}, it follows that $M$ is projective.

We next treat the case $m>0$.
Choose a projective resolution $\cdots \to P_1 \to P_0 \to M \to 0$ of $M$ by finite projective $A$-modules. For $i \geq 0$, put $M_i = \image(P_{i+1} \to P_i)$,
so that $M_0 = M$ and $M_i = \ker(P_i \to P_{i-1})$ for $i>0$.
By Lemma~\ref{L:pseudocoherent 2 of 3} and induction on $i$,
each $M_i$ is a pseudocoherent $A$-module.

Choose $V \in \gothV$ and let $(A,A^+) \to (B,B^+)$ be the corresponding rational localization. 
By Theorem~\ref{T:weak flatness}, the sequence 
\[
\cdots \to P_1 \otimes_A B \to P_0 \otimes_A B \to M \otimes_A B \to 0
\]
is again a projective resolution of $M \otimes_A B$,
and the map $M_i \otimes_A B \to \ker(P_i \otimes_A B \to P_{i-1} \otimes_A B)$ is an isomorphism for all $i>0$.
Since $M \otimes_A B$ is $m$-fpd, we may apply \cite[Tag~00O5]{stacks-project} to see that $M_i \otimes_A B$ is projective for all $i \geq m$. By the $m=0$ case, we see that
$M_i$ is projective for all $i \geq m$, and hence $M$ is $m$-fpd.
\end{proof}

\begin{defn}
Let $X$ be an adic space. We say that a sheaf $\calF$ of $\calO_X$-modules is \emph{pseudocoherent} (resp.\ \emph{fpd}) if it is locally the sheaf associated to a stably pseudocoherent (resp.\ stably fpd) module. By Theorem~\ref{T:weak flatness}, this remains true upon restriction from $X$ to an open subspace.
\end{defn}

\begin{lemma} \label{L:refined Kiehl}
With notation as in Lemma~\ref{L:weak flatness0},
the morphism $A \to B_1 \oplus B_2$ is an effective descent morphism for stably pseudocoherent modules over Banach rings and for stably fpd modules over Banach rings.
\end{lemma}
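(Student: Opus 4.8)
The plan is to realize the quadruple $A$, $B_1$, $B_2$, $B_{12} := B_1 \widehat\otimes_A B_2$ as a glueing square and then run the Kiehl glueing argument, tracking pseudocoherence of the descent datum through an inductively built free resolution. First I would record the glueing-square structure exactly as in the proof of Lemma~\ref{L:strongly noetherian finitely generated}(c): since $(A,A^+)$ is stably uniform, the sequence $0 \to A \to B_1 \oplus B_2 \to B_{12} \to 0$ is strict exact by \cite[Theorem~2.8.10]{part1}; moreover $B_2$ has dense image in $B_{12}$, and $\calM(B_1) \sqcup \calM(B_2) \to \calM(A)$ is surjective. Effective descent for finite projective modules over such a square is part of the machinery of \cite[\S 2.7]{part1}, whose crucial input \cite[Lemma~2.7.4]{part1} --- that for a descent datum $(M_1,M_2)$ with the $M_i$ merely finitely generated, the base changes of $M := M_1 \times_{M_{12}} M_2$ surject onto $M_1$ and $M_2$ --- is exactly what lets us perform the induction below.

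Next I would collect the flatness input: by Lemma~\ref{L:weak flatness} applied to $A$ and to the uniform rings $B_1, B_2$, both $B_1 \to B_{12}$ and $B_2 \to B_{12}$ are topologically flat, hence pseudoflat; and since the syzygies of a pseudocoherent module are again pseudocoherent (hence $2$-pseudocoherent), a dimension shift upgrades this to $\Tor^{B_i}_j(N, B_{12}) = 0$ for all $j \ge 1$ and all pseudocoherent $N$. Now, given a descent datum $(M_1, M_2, M_{12})$ of pseudocoherent modules with the usual isomorphisms, choose by \cite[Lemma~2.7.4]{part1} a finite free $A$-module $P_0$ with a map $P_0 \to M_1 \times_{M_{12}} M_2$ whose base changes $P_0 \otimes_A B_i \to M_i$ are surjective. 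The kernels $K_{0,i} := \ker(P_0 \otimes_A B_i \to M_i)$ and $K_{0,12} := \ker(P_0 \otimes_A B_{12} \to M_{12})$ are pseudocoherent (Lemma~\ref{L:pseudocoherent 2 of 3}), the vanishing of $\Tor_1$ gives $K_{0,i} \otimes_{B_i} B_{12} \cong K_{0,12}$ compatibly, and one checks that $K_{0,1} \times_{K_{0,12}} K_{0,2}$ is canonically a submodule of $P_0$; so we again have a descent datum of pseudocoherent modules equipped with a map back into $P_0$. Iterating yields a complex $\cdots \to P_1 \to P_0$ of finite free $A$-modules whose base change along $A \to B_i$ (resp. $A \to B_{12}$) is a resolution of $M_i$ (resp. $M_{12}$). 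Tensoring $0 \to A \to B_1 \oplus B_2 \to B_{12} \to 0$ with each $P_j$ and passing to the long exact homology sequence of the resulting short exact sequence of complexes then shows that $P_\bullet$ itself resolves $H_0(P_\bullet) = M_1 \times_{M_{12}} M_2$, so this module is pseudocoherent over $A$; right-exactness of $\otimes_A B_i$ identifies its base changes with $M_1$, $M_2$ compatibly with the gluing. Full faithfulness of base change is the easy direction: for pseudocoherent $A$-modules $M, M'$, a finite presentation of $M$ reduces the claim to exactness of $0 \to M' \to (M' \otimes_A B_1) \oplus (M' \otimes_A B_2) \to M' \otimes_A B_{12}$, which is Corollary~\ref{C:weak flatness2}, pseudocoherent $A$-modules being strictly finitely presented by Corollaries~\ref{C:topologically 3-pseudocoherent} and~\ref{C:strictly finitely presented}. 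Finally, for fpd modules: a descent datum of $m$-fpd modules is in particular one of pseudocoherent modules, hence effective with descended module $M$ pseudocoherent and $M \otimes_A \calO(U_i) = M_i$ of projective dimension $\le m$; since $\{U_1, U_2\}$ is a rational covering of $\Spa(A,A^+)$, Corollary~\ref{C:locally fpd} forces $M$ to be $m$-fpd.

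I expect the main obstacle to be the absence of genuine flatness of $A \to B_1 \oplus B_2$: one cannot invoke faithfully flat descent directly, and the resolution-by-hand argument requires care on two points. First, one must know that pseudoflatness, which formally only kills $\Tor_1$, kills all higher $\Tor$ against pseudocoherent modules --- this is precisely what makes the successive syzygies of the descent datum into descent data themselves. Second, one must keep the many gluing isomorphisms produced along the iteration mutually compatible, so that $P_\bullet$ is a genuine complex of $A$-modules rather than merely a compatible system over $B_1, B_2, B_{12}$; this compatibility is built in by always choosing the lift $P_{j+1} \to K_{j,1} \times_{K_{j,12}} K_{j,2} \subseteq P_j$ through the canonical submodule. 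The geometric content --- the density and spectrum-surjectivity conditions entering \cite[Lemma~2.7.4]{part1} --- is consumed at every stage of the induction.
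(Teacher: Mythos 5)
Your argument is correct and is essentially the paper's own proof: both rest on \cite[Lemma~2.7.4]{part1} applied to the glueing square $(A,B_1,B_2,B_{12})$, pseudoflatness of the localizations $B_i \to B_{12}$ (Lemma~\ref{L:weak flatness}, Theorem~\ref{T:weak flatness}) to make each successive kernel again a descent datum of pseudocoherent modules, the exact sequence $0 \to A \to B_1 \oplus B_2 \to B_{12} \to 0$, and Corollary~\ref{C:locally fpd} to reduce the fpd case to the pseudocoherent one. The only difference is packaging: you assemble the entire finite free resolution over $A$ and conclude with one long exact sequence plus right-exactness of $\otimes_A B_i$, whereas the paper runs the same d\'evissage sheaf-theoretically (one finite free surjection at a time, induction on $m$, then a five-lemma to identify $\calF(X) \otimes_A B_i$ with $\calF(U_i)$).
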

\begin{proof}
By Corollary~\ref{C:locally fpd}, it suffices to check the case of pseudocoherent modules.
Put $X = \Spa(A,A^+)$ and let $\gothV$ be the cover $\{U_1, U_2\}$ of $X$ as in 
Lemma~\ref{L:weak flatness0}.
Given a descent datum for stably pseudocoherent modules, we may associate to it a pseudocoherent sheaf $\calF$ on $X$.
By the proof of \cite[Lemma~2.7.4]{part1}, the sheaf $\calF$ is finitely generated
and $\check{H}^i(X, \calF; \gothV) = 0$ for all $i>0$.
By Theorem~\ref{T:pseudocoherent acyclicity}, $H^i(U_1, \calF) = H^i(U_2, \calF) = 0$
for $i>0$, so (by \cite[Tag 01EW]{stacks-project} again)
we see that $H^i(X, \calF) = 0$ for all $i>0$.

Since $\calF$ is finitely generated, we may construct a surjection $\calE \to \calF$ for $\calE$ a finite free $\calO$-module. By Theorem~\ref{T:weak flatness}, the kernel $\calG$ of $\calE \to \calF$ is again the pseudocoherent sheaf associated to a descent datum, so again $H^1(X, \calG) = 0$; consequently,
\begin{equation} \label{eq:sequence of psc sheaves}
0 \to \calG(X) \to \calE(X) \to \calF(X) \to 0
\end{equation}
is exact. By induction on $m$ (uniformly in $\calF$) and Lemma~\ref{L:pseudocoherent 2 of 3}, we see that $\calF(X)$ is $m$-pseudocoherent for all $m$; it is moreover complete by virtue of being closed in $\calF(U_1) \oplus \calF(U_2)$.

In the commutative diagram
\[
\xymatrix{
0 \ar[r] & \calG(X) \otimes_A B_i \ar[r] \ar[d] & \calE(X) \otimes_A B_i \ar[r] \ar[d] & \calF(X) \otimes_A B_i \ar[r] \ar[d] & 0 \\
0 \ar[r] & \calG(U_i) \ar[r] & \calE(U_i) \ar[r] & \calF(U_i) \ar[r] & 0
}
\]
the first row is exact by Theorem~\ref{T:weak flatness}, while the second row is exact by
Theorem~\ref{T:pseudocoherent acyclicity}.
The middle vertical arrow is bijective by Theorem~\ref{T:pseudocoherent acyclicity} (for the trivial module), while the left and right vertical arrows are surjective by  \cite[Lemma~2.7.4]{part1} again. By the five lemma, the right vertical arrow is also injective; it follows that $\calF \cong \tilde{M}$ for $M = \calF(X)$, so in particular $M$ realizes the original descent datum. 

To complete the proof, it remains to check that $M$ is stably pseudocoherent. To check this, let $(A,A^+) \to (C,C^+)$ be a rational localization and put $Y = \Spa(C,C^+)$; by the previous argument, $\calF(Y)$ is a strictly pseudocoherent $C$-module, so it is enough to check that the natural map $M \otimes_A C \to \calF(Y)$ is an isomorphism. To this end, note that in the diagram
\[
\xymatrix{
 & \calG(X) \otimes_A C \ar[r] \ar[d] & \calE(X) \otimes_A C \ar[r]\ar[d] & \calF(X) \otimes_A C \ar[r] \ar[d] & 0 \\
0 \ar[r]& \calG(Y) \ar[r] & \calE(Y) \ar[r] & \calF(Y) \ar[r] & 0
}
\]
the top row is obtained from the exact sequence \eqref{eq:sequence of psc sheaves} by tensoring with $C$,
while the bottom row is exact by applying \eqref{eq:sequence of psc sheaves} with $X$ replaced by $Y$.
Since the middle column is an isomorphism, the right column is surjective; repeating the argument with a different diagram shows that the left column is also surjective, so the right column is an isomorphism.
\end{proof}

\begin{theorem} \label{T:refined Kiehl}
For $(A,A^+)$ a sheafy adic Banach ring, the global sections functor defines an exact equivalence of categories between pseudocoherent (resp.\ fpd) sheaves of $\calO$-modules 
on $\Spa(A,A^+)$ and stably pseudocoherent (resp.\ stably fpd) $A$-modules.
\end{theorem}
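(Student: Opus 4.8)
The plan is to follow the template of Theorem~\ref{T:stably noetherian Kiehl}, replacing finitely generated $A$-modules by pseudocoherent $A$-modules, classical Kiehl glueing by Lemma~\ref{L:refined Kiehl}, and genuine flatness of rational localizations by the pseudoflatness established in Theorem~\ref{T:weak flatness}. One half of the equivalence is essentially free. For $M$ a pseudocoherent $A$-module, $\tilde{M}$ is a pseudocoherent sheaf by definition, and Theorem~\ref{T:pseudocoherent acyclicity} (with $U = X$) gives $\tilde{M}(X) = M$; in fact that theorem shows that on every rational subdomain $U = \Spa(B,B^+)$ the presheaf $U \mapsto M \otimes_A B$ is already a sheaf, so $\tilde{M}$ restricted to rational subdomains is literally $U \mapsto M \otimes_A B$. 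Hence the composite $M \mapsto \tilde{M} \mapsto \tilde{M}(X)$ is naturally isomorphic to the identity. This also yields full faithfulness of $M \mapsto \tilde{M}$: a morphism $\varphi\colon \tilde{M} \to \tilde{N}$ of $\calO$-modules restricts on $X$ to an $A$-linear map $f\colon M \to N$, and on each rational $U$ the map $\varphi(U)$ is $B$-linear and agrees with $f \otimes \mathrm{id}_B$ on the image of $M$, which generates $M \otimes_A B$ over $B$; hence $\varphi = \tilde{f}$. It therefore remains to prove essential surjectivity: every pseudocoherent sheaf $\calF$ on $X$ is isomorphic to $\widetilde{\calF(X)}$ with $\calF(X)$ a pseudocoherent $A$-module.

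For this I would run Tate's reduction process \cite[Proposition~2.4.20]{part1}, exactly as in the proof of Lemma~\ref{L:strongly noetherian finitely generated}. Say that a rational covering $\gothV$ of $X$ has property $\calP$ if for every sheaf $\calF$ of $\calO$-modules on $X$ whose restriction to each $V \in \gothV$ is associated to a pseudocoherent $\calO(V)$-module, the $A$-module $\calF(X)$ is pseudocoherent and the natural map $\widetilde{\calF(X)} \to \calF$ is an isomorphism. I would then check the three axioms. First, $\calP$ is inherited from a refinement $\mathfrak{W}$ of $\gothV$: if $W \subseteq V$ is a rational localization and $M_V$ is pseudocoherent over $\calO(V)$, then $M_V \otimes_{\calO(V)} \calO(W)$ is pseudocoherent over $\calO(W)$, because every rational localization of the stably uniform ring $(A,A^+)$ is again stably uniform, so $\calO(V) \to \calO(W)$ is pseudoflat by Theorem~\ref{T:weak flatness}(b), and pseudoflat base change preserves pseudocoherence by Remark~\ref{R:pseudoflat reduction}; thus any $\calF$ trivialized by $\gothV$ is also trivialized by $\mathfrak{W}$, and $\calP$ for $\mathfrak{W}$ gives $\calP$ for $\gothV$. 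Second, $\calP$ is stable under composition of coverings, which is formal. Third, $\calP$ holds for every simple Laurent covering $\{U_1, U_2\}$: such an $\calF$ determines a descent datum for pseudocoherent modules over Banach rings (the modules $\calF(U_1), \calF(U_2)$ together with the glueing isomorphism of global sections over $U_1 \cap U_2$, the cocycle condition being vacuous for a two-element cover), and the proof of Lemma~\ref{L:refined Kiehl} produces a pseudocoherent $A$-module $M = \calF(X)$ with $\tilde{M} \cong \calF$. Applying \cite[Proposition~2.4.20]{part1}, $\calP$ holds for all rational coverings; since a pseudocoherent sheaf is by definition trivialized by some rational covering, essential surjectivity follows, completing the pseudocoherent case.

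For the fpd variant I would re-run the same argument, then invoke Corollary~\ref{C:locally fpd}: if $\calF$ is locally the sheaf associated to an $m$-fpd module, then $\calF(X)$ is pseudocoherent and locally $m$-fpd, hence $m$-fpd; conversely the sheaf associated to an fpd module is fpd by definition, and base change along rational localizations preserves $m$-fpd-ness by Theorem~\ref{T:weak flatness}, so the global sections functor restricts to an equivalence between fpd sheaves and fpd modules.

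The only genuinely non-formal ingredient is Lemma~\ref{L:refined Kiehl}, which in turn rests on the Beauville--Laszlo-style glueing of \cite[\S1.3]{part1} for the glueing square of a simple Laurent covering together with the acyclicity in Theorem~\ref{T:pseudocoherent acyclicity}; relative to that, the main point to be careful about here is that we must remain inside the class of pseudocoherent sheaves at every stage, since rational localizations are known only to be pseudoflat and not flat. This is precisely why the property $\calP$ must bundle together the statements ``$\calF(X)$ is pseudocoherent'' and ``$\widetilde{\calF(X)} \cong \calF$'' rather than establishing them one after the other, and why the refinement-stability step is the place where the argument could go wrong if one had less than Theorem~\ref{T:weak flatness}.
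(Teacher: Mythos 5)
Your proof is correct and follows essentially the same route as the paper: Tate's reduction process \cite[Proposition~2.4.20]{part1} with Theorem~\ref{T:pseudocoherent acyclicity} supplying acyclicity, Theorem~\ref{T:weak flatness} supplying pseudoflatness for refinements, and Lemma~\ref{L:refined Kiehl} handling simple Laurent coverings (and the fpd case via Corollary~\ref{C:locally fpd}). The only difference is cosmetic: the paper phrases the property fed into the reduction process as ``the covering is an effective descent morphism for pseudocoherent modules,'' whereas you phrase it in terms of sheaves trivialized by the covering, which amounts to the same thing.
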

\begin{proof}
We apply \cite[Proposition~2.4.20]{part1} to the property that for 
$\{(B,B^+) \to (C_i, C_i^+)\}$ a rational covering of a rational subdomain of $\Spa(A,A^+)$, the map $B \to \bigoplus_i C_i$ is an effective descent morphism
for stably pseudocoherent (resp.\ stably fpd) modules over Banach rings and the descent functor is exact.
To check the criteria of \cite[Proposition~2.4.20]{part1},
note that 
criterion (a) follows from Theorem~\ref{T:pseudocoherent acyclicity},
criterion (b) is formal, and criterion (c) follows from Lemma~\ref{L:refined Kiehl}.(Alternatively, one may deduce the equivalence in this manner and then deduce exactness from Theorem~\ref{T:pseudocoherent acyclicity}).
\end{proof}

\begin{cor} \label{C:base extension of psc}
For $(A,A^+) \to (B,B^+)$ a morphism  of sheafy adic Banach rings inducing an open immersion $\Spa(B,B^+) \to \Spa(A,A^+)$ (e.g., a rational localization), base extension defines an exact functor from
stably pseudocoherent (resp. stably fpd) $A$-modules to stably pseudocoherent (resp.\ stably fpd)
$B$-modules. In particular, $A \to B$ is pseudo\-flat.
\end{cor}
\begin{proof}
Let $0 \to M \to N \to P \to 0$ be an exact sequence of $A$-modules in which $N$ is finite projective and $P$ is stably pseudocoherent; by Lemma~\ref{L:pseudocoherent 2 of 3} and Theorem~\ref{T:weak flatness}, $M$ is stably pseudocoherent. By Theorem~\ref{T:refined Kiehl}, the sequence $0 \to \tilde{M} \to \tilde{N} \to \tilde{P} \to 0$
of associated sheaves is exact. Put $Y = \Spa(B,B^+)$ and consider the diagram
\[
\xymatrix{
 & M \otimes_A B \ar[r] \ar[d] & N \otimes_A B \ar[r] \ar[d] & P \otimes_A B \ar[r] \ar[d] & 0 \\
0 \ar[r] & H^0(Y, \tilde{M}) \ar[r] & H^0(Y, \tilde{N}) \ar[r] & H^0(Y, \tilde{P}) \ar[r] & 0;
}
\]
the bottom row is exact by Theorem~\ref{T:refined Kiehl}.
The middle vertical arrow is exact, so the right vertical arrow is surjective. By forming a similar exact sequence with $M$ on the right, we see that the left vertical arrow is also surjective; hence the right vertical arrow is injective. The upshot is that for any stably pseudocoherent $A$-module $P$, $P \otimes_A B \cong H^0(Y, \tilde{P})$ is a stably pseudocoherent $B$-module (by Theorem~\ref{T:refined Kiehl} again) and the functor $P \mapsto P \otimes_A B$ is exact. This implies all the claims.

\end{proof}

\begin{cor}
For $(A,A^+)$ a sheafy adic Banach ring, let 
$\calF^\bullet$ be a (cohomologically indexed) complex of sheaves of $\calO$-modules on $\Spa(A,A^+)$. Then the following conditions are equivalent.
\begin{enumerate}
\item[(a)]
The complex $\calF^\bullet$ has pseudocoherent (resp. fpd) cohomology sheaves 
which vanish in sufficiently large (resp.\ sufficiently small and sufficiently large) degrees.
\item[(b)]
The complex $\calF^\bullet$ is quasi-isomorphic to the complex associated to a bounded above (resp.\ bounded) complex of finite projective $A$-modules whose cohomology groups are stably pseudocoherent (resp.\ stably fpd).
\end{enumerate}
\end{cor}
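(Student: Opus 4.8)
The plan is to upgrade the module-level equivalence of Theorem~\ref{T:refined Kiehl} to an equivalence of derived categories and then invoke standard homological algebra over the ring $A$ (as in \cite{sga6}, \cite{thomason-trobaugh}, or \cite{stacks-project}). Write $X = \Spa(A,A^+)$, and for a pseudocoherent $A$-module $M$ write $\tilde M = M \otimes_A \calO$. The basic facts I would assemble are: (i) $M \mapsto \tilde M$ is exact on pseudocoherent $A$-modules, since by Theorem~\ref{T:weak flatness}(b) every rational localization $A \to \calO(U)$ is pseudoflat and hence by Remark~\ref{R:pseudoflat reduction} base extension is exact on pseudocoherent modules (and sheafification is exact); (ii) it is an equivalence onto the category of pseudocoherent $\calO$-modules (Theorem~\ref{T:refined Kiehl}); (iii) each $\tilde M$ is $\Gamma(U,-)$-acyclic on every rational subdomain $U$ with $\Gamma(U,\tilde M) = M \otimes_A \calO(U)$ (Theorem~\ref{T:pseudocoherent acyclicity}), so a bounded-above complex of such sheaves computes $R\Gamma(X,-)$ termwise; (iv) if $M^\bullet$ is a bounded-above complex of pseudocoherent $A$-modules with pseudocoherent cohomology, then a short downward induction through the exact sequences $0 \to Z^i \to M^i \to B^{i+1} \to 0$ and $0 \to B^i \to Z^i \to h^i(M^\bullet) \to 0$, using Lemma~\ref{L:pseudocoherent 2 of 3}, shows that all cocycle and coboundary modules of $M^\bullet$ are pseudocoherent, whence $\mathcal{H}^i(\widetilde{M^\bullet}) \cong \widetilde{h^i(M^\bullet)}$ by (i).

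From (i)--(iv) I would deduce that $R\Gamma(X,-)$ and $\widetilde{(-)}$ are mutually quasi-inverse equivalences between the bounded-above derived category of $A$-modules with pseudocoherent cohomology and that of $\calO$-modules with pseudocoherent cohomology sheaves. One direction is that $R\Gamma(X,\widetilde{M^\bullet}) \cong M^\bullet$ by (iii). For the other, given a bounded-above complex $\calF^\bullet$ of $\calO$-modules with pseudocoherent cohomology, each $\mathcal{H}^q(\calF^\bullet)$ equals $\widetilde{H^q}$ with $H^q := H^0(X,\mathcal{H}^q(\calF^\bullet))$ pseudocoherent (Theorem~\ref{T:refined Kiehl}); since $\widetilde{H^q}$ is $\Gamma$-acyclic, the hypercohomology spectral sequence $E_2^{p,q} = H^p(X,\mathcal{H}^q(\calF^\bullet)) \Rightarrow \mathbb{H}^{p+q}(X,\calF^\bullet)$ collapses, showing $R\Gamma(X,\calF^\bullet)$ has cohomology $H^i$ in degree $i$; then the adjunction map $\widetilde{R\Gamma(X,\calF^\bullet)} \to \calF^\bullet$ is an isomorphism on cohomology sheaves, hence a quasi-isomorphism. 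In the fpd variant one uses the fpd half of Theorem~\ref{T:refined Kiehl} to see the $H^i$ are fpd $A$-modules and the boundedness of $\calF^\bullet$ to bound the degrees.

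Granting this, I would compare the three conditions as follows. Given (a), set $M^\bullet = R\Gamma(X,\calF^\bullet)$: it is a bounded-above complex of $A$-modules with pseudocoherent cohomology (bounded, with fpd cohomology, in the fpd case), hence quasi-isomorphic over $A$ to a complex $P^\bullet$ of finite projective $A$-modules that is bounded above in general and bounded in the fpd case; then $\calF^\bullet \cong \widetilde{M^\bullet} \cong \widetilde{P^\bullet}$ is the complex associated to $P^\bullet$, whose cohomology groups $h^i(P^\bullet) = H^i$ are pseudocoherent (resp. fpd), giving (b). Conversely, given (b), the terms, cocycles, coboundaries and cohomology modules of $P^\bullet$ are all pseudocoherent $A$-modules by Lemma~\ref{L:pseudocoherent 2 of 3}, the functor $\widetilde{(-)}$ is exact on them, so $\mathcal{H}^i(\calF^\bullet) \cong \widetilde{h^i(P^\bullet)}$ is pseudocoherent (resp. fpd) and vanishes for $i$ large (resp. large and small) since $P^\bullet$ is bounded above (resp. bounded); this is (a). The implication (b)$\Rightarrow$(c) is immediate, taking the one-element \'etale covering $\{X\}$. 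Finally (c)$\Rightarrow$(a): pseudocoherence (resp. the fpd property) of a sheaf of $\calO$-modules is defined locally, and cohomology of a complex of sheaves commutes with restriction to opens, so the cohomology sheaves of $\calF^\bullet$ are pseudocoherent (resp. fpd) as soon as their restrictions to the finitely many members of the covering are, and the degree bounds on the members combine into global ones.

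I expect the main obstacle to be the derived-categorical affineness asserted in the first two paragraphs. Although Theorems~\ref{T:pseudocoherent acyclicity} and~\ref{T:refined Kiehl} supply the essential input, one has to thread pseudocoherence carefully through every kernel, image, and truncation so that exactness of $\widetilde{(-)}$ and acyclicity of pseudocoherent sheaves remain applicable — there is no genuine flatness available, only the pseudoflatness of Theorem~\ref{T:weak flatness} — and the fpd case additionally requires extracting a \emph{bounded} complex of finite projectives, which rests on the fact (as in the proof of Corollary~\ref{C:locally fpd}) that sufficiently high syzygies of a bounded complex of finite projectives with fpd cohomology are themselves projective.
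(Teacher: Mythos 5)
Your argument is correct in outline, but it is a genuinely different route from the paper's. The paper never passes through $R\Gamma$ or a hypercohomology spectral sequence: it proves (a)$\Rightarrow$(b) by a direct descending induction at the sheaf level, building a bounded-above complex $\calG^\bullet$ of finite \emph{free} $\calO$-modules together with a map $\calG^\bullet \to \calF^\bullet$ that is an isomorphism on cohomology sheaves. At each step the kernel sheaves involved are pseudocoherent by Lemma~\ref{L:pseudocoherent 2 of 3}, hence (via Theorem~\ref{T:refined Kiehl}) finitely generated by global sections, which is exactly what is needed to adjoin new free generators killing and matching cohomology; the fpd case is then finished by truncation via \cite[Tag~0658]{stacks-project}, and (b)$\Rightarrow$(a),(c) and (c)$\Rightarrow$(a) are handled as you do, by locality via Corollary~\ref{C:refined Kiehl}. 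Your route instead front-loads a derived-affineness statement --- that $R\Gamma(X,-)$ and $\widetilde{(-)}$ are inverse equivalences on the relevant bounded-above derived categories --- and then resolves over the ring $A$ using the standard fact that a bounded-above (resp.\ bounded) complex with pseudocoherent (resp.\ fpd) cohomology is quasi-isomorphic to a bounded-above (resp.\ bounded) complex of finite projectives; morally this is the same induction, carried out over $A$ rather than over $\calO$ and transferred by the collapsing spectral sequence. What your approach buys is a stronger, reusable derived equivalence; what the paper's buys is the avoidance of spectral sequences and of any derived functor of $\widetilde{(-)}$.

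One point in your write-up deserves to be made explicit, since as stated it is slightly slippery: the ``adjunction map $\widetilde{R\Gamma(X,\calF^\bullet)} \to \calF^\bullet$'' only makes sense after replacing $\Gamma(X,\calI^\bullet)$ by your projective model $P^\bullet$ (the naive $\widetilde{(-)}$ applied to a complex of non-pseudocoherent modules has uncontrolled cohomology sheaves, precisely because only pseudoflatness, not flatness, is available). With that replacement you still must check that the resulting map of complexes $\widetilde{P^\bullet} \to \calI^\bullet$ induces isomorphisms $\widetilde{h^i(P^\bullet)} \to \mathcal{H}^i(\calF^\bullet)$: both sides are pseudocoherent sheaves, so by Theorem~\ref{T:refined Kiehl} it suffices to check on global sections, where the map factors through the edge map of the hypercohomology spectral sequence, which is an isomorphism by Theorem~\ref{T:pseudocoherent acyclicity} (and the same collapse over each rational subdomain gives compatibility). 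This is a routine diagram chase, not a gap, but without it the key quasi-isomorphism in your (a)$\Rightarrow$(b) step is asserted rather than proved.
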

\begin{proof}
It is clear that (b) implies (a); it thus suffices to check that (a) implies (b).
We build a bounded above complex $\calG^\bullet$ of finite free $\calO$-modules 
and a morphism $\calG^\bullet \to \calF^\bullet$ inducing isomorphisms $h^i(\calG^\bullet) \to h^i(\calF^\bullet)$ for all $i$, by the following inductive procedure: we ensure that for each $i$, the map from the truncated complex $0 \to \calG^i \to \calG^{i+1} \to \cdots$ to $\calF^\bullet$ induces a surjection $h^i(\calG^\bullet) \to h^i(\calF^\bullet)$ and isomorphisms $h^j(\calG^\bullet) \to h^j(\calF^\bullet)$ for $j>i$. As a base case, note that for $i$ sufficiently large we may take $\calG^\bullet$ to be the zero complex.

To carry out the induction from $i$ to $i-1$, it suffices to construct a commuting square
\[
\xymatrix{
\calG^{i-1} \ar[r] \ar[d] & \calG^i \ar[d] \\
\calF^{i-1} \ar[r] & \calF^i
}
\]
with $\calG^{i-1}$ finite free such that $h^i(\calG^\bullet) \to h^i(\calF^\bullet)$ is an isomorphism and $\ker(\calG^{i-1} \to \calG^i) \to h^{i-1}(\calF^\bullet)$ is surjective.

Note that by Lemma~\ref{L:pseudocoherent 2 of 3} and Theorem~\ref{T:weak flatness},
the kernel of a surjective morphism of pseudocoherent sheaves is again pseudocoherent; by applying this argument repeatedly to the exact sequences
\begin{gather*}
0 \to \ker(\calG^j \to \calG^{j+1}) \to \calG^j \to \image(\calG^j \to \calG^{j+1}) \to 0 \\
0 \to \image(\calG^{j-1} \to \calG^{j}) \to
\ker(\calG^{j} \to\calG^{j+1}) \to h^{j}(\calG^{\bullet}) \to 0
\end{gather*}
starting from the first index $j>i$ for which $\calG^{j+1} = 0$ and working down, we
deduce that the sheaves
\[
\ker(\calG^i \to \calG^{i+1}), \qquad
\calH^i = \ker(\ker(\calG^i \to \calG^{i+1}) \to h^i(\calF^\bullet))
\]
are pseudocoherent.
In light of Theorem~\ref{T:refined Kiehl}, we may thus ensure that $h^i(\calG^\bullet) \to h^i(\calF^\bullet)$ is an isomorphism
by choosing generators of $\calG^{i-1}$, mapping these to generators of $\calH^i$, then
choosing suitable images in $\calF^{i-1}$ to make the square commute.
By adding additional generators mapping to zero in $\calG^{i-1}$, we may ensure further that $\ker(\calG^{i-1} \to \calG^i) \to h^{i-1}(\calF^\bullet)$ is surjective.
This completes the induction.

By Corollary~\ref{C:base extension of psc}, the cohomology groups of $\calG^\bullet$ are stably pseudocoherent (resp.\ stably fpd) $A$-modules.
To complete the proof, it suffices to observe that in the fpd case, 
one can take $\calG^i = 0$ for $i$ sufficiently small. For this, apply
\cite[Tag~0658]{stacks-project}.
\end{proof}

We next extend the previous discussion to the \'etale site.

\begin{hypothesis}
For the remainder of \S\ref{subsec:pseudocoherent sheaves}, let $(A,A^+)$ be a sheafy adic Banach ring, put $X= \Spa(A,A^+)$, and assume that the \'etale site $X_{\et}$ (in the sense of \cite[Definition~8.2.19]{part1}) admits a stable  basis $\calB$ of adic affinoid spaces containing $X$. 
\end{hypothesis}

\begin{defn}
Choose $\Spa(B,B^+) \in \calB$. 
For $m \geq 0$, a topological $B$-module $S$ is \emph{\'etale-stably $m$-pseudocoherent} 
(resp.\ \emph{\'etale-stably $m$-fpd}) (with respect to $\calB$) if 
$S$ is $m$-pseudocoherent (resp.\ $m$-fpd) and for every morphism
$(B,B^+) \to (C,C^+)$ with $\Spa(C,C^+) \in \calB$, the $C$-module $S \otimes_B C$ is complete for the natural topology. Note that this implies that $S$ is stably $m$-pseudocoherent. (We do not know whether the converse holds.)
In case $A$ is strongly noetherian, Corollary~\ref{C:noetherian complete} implies that any finite $C$-module is
\'etable-stably pseudocoherent.

We say that $S$ is \emph{$m$-\'etale-pseudoflat} over $B$ (with respect to $\calB$) if for every \'etale-stably $m$-pseudocoherent $B$-module $P$ (with respect to $\calB$), we have $\Tor^B_1(P,S) = 0$. (Note that this terminology is not meant to imply that $S$ is either \'etale over $B$, in the case where it is a $B$-algebra, or $m$-pseudoflat over $B$.)
In particular, if $S = C$ for some $\Spa(C,C^+) \in \calB$ and $m \in \{2,\infty\}$, this implies that for $M$ an \'etale-stably $m$-pseudocoherent $B$-module, $M \otimes_B C$ is an \'etale-stably $m$-pseudocoherent $C$-module.
\end{defn}

\begin{lemma} \label{L:etale pseudoflat}
There exists a stable basis $\calB'$ of $X_{\et}$ contained in $\calB$ and containing $X$ such that every morphism between objects of $\calB'$ is $2$-\'etale-pseudoflat with respect to either $\calB$ or $\calB'$.
\end{lemma}
\begin{proof}
We take $\calB'$ to consist of those objects $\Spa(B,B^+) \in \calB$ for which
$(A,A^+) \to (B,B^+)$ is a composition of rational localizations and finite \'etale morphisms;
such a morphism $(A,A^+) \to (B,B^+)$ is $2$-\'etale-pseudoflat (with respect to either $\calB$ or $\calB'$) by Theorem~\ref{T:weak flatness}.
(In fact each factor in the composition is even $2$-pseudoflat, but due to the presence of the finite \'etale morphisms we cannot extend this conclusion to the composition.)
To prove that a general morphism $(B,B^+) \to (C,C^+)$ in $\calB'$ is $2$-\'etale-pseudoflat, it will suffice to show
that this morphism is itself a composition of rational localizations and finite \'etale morphisms. To this end,
we write
$(A,A^+) \to (B,B^+)$, $(A,A^+) \to (C,C^+)$ as compositions
\begin{gather*}
(A,A^+) = (B_0,B_0^+) \to \cdots \to (B_m, B_m^+) = (B,B^+), \\
(A,A^+) = (C_0,C_0^+) \to \cdots \to (C_n, C_n^+) = (C,C^+)
\end{gather*}
in which each morphism is a rational localization or a faithfully finite \'etale morphism
(noting that a general finite \'etale morphism may be factored as a rational localization followed by a faithfully finite \'etale morphism),
then induct on $m$ with $m=0$ as the base case. For the induction step, 
it suffices to check that the composition 
$(B_1,B_1^+) \to (B,B^+) \to (C,C^+)$ admits a factorization of the desired form, as then we may apply the induction hypothesis to the morphism $(B_1,B_1^+) \to (B,B^+)$, $(B_1,B_1^+) \to (C,C^+)$ to conclude.
The map
\[
(B_1, B_1^+) \to (B_1, B_1^+) \widehat{\otimes}_{(A,A^+)} (C_n, C_n^+)
\]
admits a factorization of the desired form as
\[
(B_1, B_1^+) \to (B_1, B_1^+) \widehat{\otimes}_{(A,A^+)} (C_1, C_1^+) \to 
\cdots \to (B_1, B_1^+) \widehat{\otimes}_{(A,A^+)} (C_n, C_n^+).
\] 
If $(A,A^+) \to (B_1, B_1^+)$ is a rational localization, then $(B_1, B_1^+) \widehat{\otimes}_{(A,A^+)} (C_n, C_n^+). = (C,C^+)$ and the induction step is done. 
If $(A,A^+) \to (B_1,B_1^+)$ is a faithfully finite \'etale morphism, we instead factor $(B_1,B_1^+) \to (C,C^+)$
as $(B_1,B_1^+) \to (B_1, B_1^+) \widehat{\otimes}_{(A,A^+)} (C_n, C_n^+) $, which has the desired form,
followed by
\[
(B_1, B_1^+) \widehat{\otimes}_{(A,A^+)} (C_n, C_n^+) 
\to (B,B^+) \widehat{\otimes}_{(A,A^+)} (C_n, C_n^+) \to (C,C^+) \widehat{\otimes}_{(A,A^+)} (C_n, C_n^+) \to (C,C^+)
\]
where the last map is multiplication. This composition is a morphism in $\calB$ between objects which are finite \'etale over $(C,C^+)$, and so is itself finite \'etale (but not necessarily faithful).

It remains to verify that $\calB'$ is indeed a basis (and hence a stable basis) of $X_{\et}$; that is,
any covering of $\Spa(A,A^+)$ by objects of $\calB$ can be refined to a covering by objects of $\calB'$.
This follows by the last paragraph of the proof of \cite[Proposition~8.2.20]{part1}.
\end{proof}

\begin{theorem} \label{T:pseudocoherent etale acyclicity}
Let $\calB'$ be a stable basis of $X_{\et}$ such that every morphism between objects of $\calB'$ is \'etale-pseudoflat with respect to $\calB'$ (e.g., the basis given in Lemma~\ref{L:etale pseudoflat}; note that Theorem~\ref{T:weak flatness etale} will allow taking $\calB'=\calB$).
Let $M$ be an \'etale-stably pseudocoherent $A$-module with respect to $\calB'$.
Let $\tilde{M} = M \otimes_A \calO$ be the associated presheaf on $X_{\et}$.
Then for any $U = \Spa(B,B^+) \in \calB'$ and any \'etale covering $\gothV$ of $U$ by elements of $\calB'$, we have
\[
H^i(U, \tilde{M}) = \check{H}^i(U,\tilde{M}; \gothV) = \begin{cases} M \otimes_A B & i=0 \\ 0 & i>0. \end{cases}
\]
In particular, $\tilde{M}$ extends to a sheaf which does not change sections on $\calB'$.
\end{theorem}
\begin{proof}
We check the criteria of \cite[Proposition~8.2.21]{part1}:
criterion (a) is immediate from Theorem~\ref{T:pseudocoherent acyclicity},
while criterion (b) follows from faithfully flat descent for modules
\cite[Tag~023M]{stacks-project}.
\end{proof}

\begin{defn} \label{D:psc etale sheaf}
Suppose that every morphism between objects of $\calB$ is \'etale-pseudoflat with respect to $\calB$ (this condition will be made redundant by Theorem~\ref{T:weak flatness etale}).
We say that a sheaf of $\calO_{X_{\et}}$-modules is \emph{pseudocoherent} (resp.\ \emph{fpd})
(with respect to $\calB$)
if it is locally the sheaf associated to an \'etale-stably pseudocoherent (resp.\ \'etale-stably fpd) module (with respect to $\calB$).
\end{defn}

\begin{lemma} \label{L:refined Kiehl etale}
Let $\calB'$ be a stable basis of $X_{\et}$ such that every morphism between objects of $\calB'$ is \'etale-pseudoflat with respect to $\calB'$.
With notation as in Lemma~\ref{L:weak flatness0},
the morphism $A \to B_1 \oplus B_2$ is an effective descent morphism for \'etale-stably pseudocoherent modules with respect to $\calB'$.
\end{lemma}
\begin{proof}
We emulate the proof of Lemma~\ref{L:refined Kiehl}.
Put $X = \Spa(A,A^+)$ and let $\gothV$ be the cover $\{U_1, U_2\}$ of $X_{\et}$ as in 
Lemma~\ref{L:weak flatness0}.
Given a descent datum for \'etale-stably pseudocoherent modules, we may associate to it a pseudocoherent sheaf $\calF$ on $X_{\et}$. Choose a surjection $\calE \to \calF$ for $\calE$ a finite free $\calO$-module and let $\calG$ be the kernel; by our choice of $\calB'$, $\calG$ is again the pseudocoherent sheaf associated to a descent datum.
By Theorem~\ref{T:pseudocoherent etale acyclicity}, for $j=1,2$,
$H^0(U_{j,\et},\calF) = H^0(U_j, \calF)$ and $H^i(U_j, \calF) = 0$ for all $i>0$.
Since $\gothV$ is already a covering in the analytic topology, we deduce that
$\calF$ is acyclic on $X_{\et}$ and $H^0(X, \calF) = H^0(X_{\et}, \calF)$.
In particular, by Lemma~\ref{L:refined Kiehl}, 
\begin{equation} \label{eq:sequence of psc sheaves etale}
0 \to \calG(X) \to \calE(X) \to \calF(X) \to 0
\end{equation}
is an exact sequence of stably pseudocoherent $A$-modules; it thus remains to check that $\calF(X)$ is \'etale-stably pseudocoherent. To check this, let $(A,A^+) \to (C,C^+)$ be any morphism in $\calB'$ and put $Y = \Spa(C,C^+)$; by the previous argument, $\calF(Y)$ is a stably pseudocoherent $C$-module, so it is enough to check that the natural map $\calF(X)\otimes_A C \to \calF(Y)$ is an isomorphism. To this end, note that in the diagram
\[
\xymatrix{
 & \calG(X) \otimes_A C \ar[r] \ar[d] & \calE(X) \otimes_A C \ar[r]\ar[d] & \calF(X) \otimes_A C \ar[r] \ar[d] & 0 \\
0 \ar[r]& \calG(Y) \ar[r] & \calE(Y) \ar[r] & \calF(Y) \ar[r] & 0
}
\]
the top row is obtained from the exact sequence \eqref{eq:sequence of psc sheaves etale} by tensoring with $C$,
while the bottom row is exact by applying \eqref{eq:sequence of psc sheaves etale} with $X$ replaced by $Y$.
Since the middle column is an isomorphism, the right column is surjective; repeating the argument with a different diagram shows that the left column is also surjective, so the right column is an isomorphism.
\end{proof}

\begin{theorem} \label{T:refined Kiehl etale}
Let $\calB'$ be a stable basis of $X_{\et}$ containing $X$ such that every morphism between objects of $\calB'$ is \'etale-pseudoflat with respect to $\calB'$, e.g., the basis given in Lemma~\ref{L:etale pseudoflat}.
Then the global sections functor defines an exact equivalence of categories between pseudocoherent (resp.\ fpd) sheaves of $\calO$-modules 
on $X_{\et}$ with respect to $\calB'$ and \'etale-stably pseudocoherent (resp.\ fpd) $A$-modules with respect to $\calB'$.
\end{theorem}
\begin{proof}

On account of \cite[Theorem~8.2.22]{part1}, we need only treat the pseudocoherent case.
As in Lemma~\ref{L:etale pseudoflat},
we apply \cite[Proposition~8.2.20]{part1} to the property that for 
$\{(B,B^+) \to (C_i, C_i^+)\}$ a covering in $\calB'$,
the map $B \to \bigoplus_i C_i$ is an effective descent morphism
for \'etale-stably pseudocoherent (resp.\ \'etale-stably fpd) modules with respect to $\calB'$
and the descent functor is exact.
Of the conditions of \cite[Proposition~8.2.20]{part1},
(a) follows from Theorem~\ref{T:pseudocoherent etale acyclicity},
(b) is formal, (c) follows from Lemma~\ref{L:refined Kiehl etale},
and (d) follows from faithfully flat descent for modules
\cite[Tag~03OD]{stacks-project}.

\end{proof}

\begin{theorem} \label{T:weak flatness etale}
Let $(A, A^+) \to (B,B^+)$ be an \'etale morphism of sheafy adic Banach rings
and put $Y = \Spa(B,B^+)$. 
Let $\calB'$ be the stable basis of $X_{\et}$ given by Lemma~\ref{L:etale pseudoflat}.
\begin{enumerate}
\item[(a)]
For any \'etale-stably pseudocoherent $A$-module $P$ with respect to $\calB$ or $\calB'$,
the natural map $P \otimes_A B \to H^0(Y_{\et}, \tilde{P})$, where $\tilde{P}$ is the sheaf associated to $P$ as in Theorem \ref{T:pseudocoherent etale acyclicity}, is an isomorphism,
and $P \otimes_A B$ is complete for its natural topology.
\item[(b)]
The morphism $A \to B$ is \'etale-pseudoflat with respect to $\calB$ or $\calB'$.
\end{enumerate}
\end{theorem}
\begin{proof}
Let 
\[
0 \to M \to N \to P \to 0
\]
be an exact sequence of $A$-modules in which $N$ is finite projective and $P$ is \'etale-stably pseudocoherent with respect to $\calB'$.
By Lemma~\ref{L:pseudocoherent 2 of 3}, $M$ is a pseudocoherent $A$-module. Moreover, for any $\Spa(C, C^+)\in\calB'$, we have the exact sequence 
\[
0 \to M\otimes_AC \to N\otimes_AC \to P\otimes_AC \to 0,
\] 
yielding that $M\otimes_AC$ is complete for its natural topology. This implies that $M$ is \'etale-stably pseudocoherent with respect to $\calB'$. Let $\tilde{M}$ be the associated sheaf in $X_{\et}$ as in Theorem \ref{T:pseudocoherent etale acyclicity}.

Let $\calB''$ be the stable basis of $Y_{\et}$ given by Lemma~\ref{L:etale pseudoflat}; then for any $U \in \calB'$ such that $X \to U$ factors through $Y$, we have $\calB'|_U = \calB''|_U$
(see the proof of Lemma~\ref{L:etale pseudoflat}). Consequently, $\tilde{M}$ restricts to a pseudocoherent sheaf on $Y_{\et}$ with respect to $\calB''$.
By Theorem~\ref{T:pseudocoherent etale acyclicity} and Theorem~\ref{T:refined Kiehl etale}, we have another commutative diagram
\[
\xymatrix{
 & M \otimes_A B \ar[r] \ar[d] & N \otimes_A B \ar[r] \ar[d] & P \otimes_A B \ar[r] \ar[d] & 0 \\
0 \ar[r] & H^0(Y_{\et}, \tilde{M}) \ar[r] & H^0(Y_{\et}, \tilde{N}) \ar[r] & H^0(Y_{\et}, \tilde{P}) \ar[r] & 0
}
\]
with exact rows in which the middle vertical arrow is an isomorphism; consequently, the right vertical arrow is surjective. Since $M$ is also \'etale-stably pseudocoherent with respect to $\calB'$, we can repeat the argument using a different diagram to show that the left vertical arrow is surjective; this in turn implies that the right vertical arrow is injective. Thus the right vertical arrow (as well as the left vertical arrow) is an isomorphism. This yields that $A\to B$ is \'etale-pseudoflat with respect to $\calB'$. This proves (b). (Note that being \'etale-pseudoflat with respect to $\calB'$ implies being \'etale-pseudoflat with respect to $\calB$.) By Theorem \ref{T:refined Kiehl etale}, $P\otimes_AB\cong H^0(Y_{\et}, \tilde{P})$ is \'etale-stably pseudocoherent with respect to $\calB''$; in particular, it is complete for its natural topology.
This proves (a) for $\calB'$, and hence for $\calB$.
\end{proof}

\begin{cor} \label{C:etale stably psc by basis}
The definitions of an \'etale-stably pseudocoherent (resp.\ fpd) $A$-module, an \'etale-pseudoflat morphism,
and a pseudocoherent (resp.\ fpd) sheaf of $\calO$-modules on $X_{\et}$ do not depend on the choice of $\calB$.
\end{cor}
\begin{proof}
Let $\calB'$ be the stable basis of $X_{\et}$ given by Lemma~\ref{L:etale pseudoflat}.
By Theorem~\ref{T:weak flatness etale}(a), the definition of an \'etale-stably pseudocoherent (resp.\ fpd) $A$-module is the same with respect to $\calB$ or $\calB'$; this implies the same for the definition of an \'etale-pseudoflat morphism.
By Theorem~\ref{T:weak flatness etale}(b), we can use Definition~\ref{D:psc etale sheaf} to define a pseudocoherent (resp.\ fpd) sheaf on $X_{\et}$ with respect to $\calB$; by the previous discussion plus Theorem~\ref{T:refined Kiehl etale},
this definition is also the same with respect to $\calB$ or $\calB'$.
\end{proof}

\begin{remark} \label{R:reified pseudocoherent}
In light of Remark~\ref{R:reified pseudoflat}, 
Theorem~\ref{T:pseudocoherent acyclicity}, Theorem~\ref{T:refined Kiehl},
Theorem~\ref{T:pseudocoherent etale acyclicity}, and Theorem~\ref{T:refined Kiehl etale}
may be adapted without incident to reified adic spaces.
\end{remark}

\subsection{Quasi-Stein spaces}
\label{subsec:quasi-Stein}

In preparation for working with period sheaves, we extend the preceding results on pseudocoherent sheaves from adic affinoid spaces to certain spaces analogous to Kiehl's quasi-Stein spaces \cite{kiehl-cartan}. This amounts to introducing a (commutative) analogue of the
\emph{Fr\'echet-Stein algebras} of Schneider-Teitelbaum in the absence of noetherian hypotheses. (Some of these ideas already appear in \cite{part1}; see for example
\cite[Lemma 5.3.2]{part1}.)

\begin{remark}
Throughout \S\ref{subsec:quasi-Stein},
we work in the language of adic spaces, but everything carries over without change to the context of reified adic spaces.
\end{remark}

\begin{defn} \label{D:quasi-Stein}
A \emph{quasi-Stein space} is an adic space $X$ such that $\calO(X)$ contains a topologically nilpotent unit and $X$ can be written as the union of an ascending sequence $U_0 \subseteq U_1 \subseteq \cdots$ of adic affinoid subspaces such that for each $i \geq 0$, the map
$\calO(U_{i+1}) \to \calO(U_i)$ has dense image. Note that by quasicompactness, every adic affinoid subspace of $X$ is contained in some $U_i$; moreover,
$\calO(X) = \varprojlim_i \calO(U_i)$ is an open mapping ring.
\end{defn}

We follow the proof of \cite[Satz~2.4]{kiehl-cartan} to obtain an analogous result.

\begin{lemma} \label{L:Kiehl calc}
With notation as in Definition~\ref{D:quasi-Stein}, 
put $A = \calO(X)$ and $A_i = \calO(U_i)$, 
let $M_i$ be a Banach module over $A_i$, and let
$\varphi_i: M_{i+1} \widehat{\otimes}_{A_{i+1}} A_i \to M_i$ be a bounded surjective morphism.
Use the induced maps $M_{i+1} \to M_i$ to view the $M_i$ as an inverse system.
\begin{enumerate}
\item[(a)]
The module $M = \varprojlim_i M_i$ has dense image in each $M_i$. In particular, 
the induced map $M \widehat{\otimes}_{A} A_i \to M_i$ is surjective 
\item[(b)]
We have $R^1 \varprojlim_i M_i = 0$.
\end{enumerate}
\end{lemma}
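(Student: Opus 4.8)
The plan is to establish this as the Banach-module incarnation of Kiehl's Theorem~A/B for quasi-Stein spaces (\cite[Satz~2.4]{kiehl-cartan}). The engine driving both parts is a single observation: for each $i$ the transition map $\pi_i \colon M_{i+1} \to M_i$ — obtained by composing the canonical map $M_{i+1} \to M_{i+1} \widehat{\otimes}_{A_{i+1}} A_i$ with $\varphi_i$ — is bounded and has \emph{dense image}. Boundedness is clear; density holds because $\varphi_i$ is surjective and the image of $M_{i+1}$ in $M_{i+1}\widehat{\otimes}_{A_{i+1}} A_i$ is dense, the latter being a consequence of the quasi-Stein hypothesis that $A_{i+1}\to A_i$ has dense image (Definition~\ref{D:quasi-Stein}). (Note also that each $\varphi_i$ is a \emph{strict} surjection by the open mapping theorem, Theorem~\ref{T:open mapping}.) Fix for each $i$ a real number $c_i \geq 1$ bounding the operator norm of $\pi_i$; all convergence estimates below are arranged by choosing approximation errors that decay faster than the partial products $c_j c_{j+1}\cdots$ grow.

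For (a): fix $i_0$, an element $m \in M_{i_0}$, and $\epsilon > 0$. Build a staircase of approximate lifts $p_l \in M_l$ for $l \geq i_0$ with $p_{i_0} = m$, choosing $p_{l+1}$ by density of $\pi_l$ so that $\| \pi_l(p_{l+1}) - p_l \| < \delta_l$, where the $\delta_l$ are fixed in advance (depending only on the $c_j$ and $\epsilon$) small enough that $\sum_{l \geq i_0} c_{i_0} c_{i_0+1}\cdots c_{l-1}\,\delta_l < \epsilon$ and the analogous tails converge for every starting index. For each fixed $j$ the sequence $l \mapsto \pi_j \pi_{j+1}\cdots \pi_{l-1}(p_l)$ (for $l$ large) is then Cauchy in the Banach module $M_j$; let $n_j$ be its limit. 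Continuity of each $\pi_j$ gives $\pi_j(n_{j+1}) = n_j$, so $n := (n_j)_j \in M = \varprojlim_j M_j$, and the telescoping bound yields $\| n_{i_0} - m \| < \epsilon$. This proves the density statement. For the ``in particular'', note that $M \to M_i$ factors through $M \widehat{\otimes}_A A_i \to M_i$; combining the density just proved with the strictness of $\varphi_i$ (Theorem~\ref{T:open mapping}) via a successive-approximation argument upgrades dense image to surjectivity.

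For (b): identify $R^1\varprojlim_i M_i$ with the cokernel of $d \colon \prod_i M_i \to \prod_i M_i$, $d((x_i)_i) = (x_i - \pi_i(x_{i+1}))_i$, so the claim is that $d$ is surjective. Given $(y_i)_i$, we are free to replace it by $(y_i)_i + d((z_i)_i)$. Choose the $z_i$ inductively — $z_0 = 0$, and $z_{i+1}$ with $\pi_i(z_{i+1})$ approximating $y_i + z_i$, using density of $\pi_i$ — so that $y'_i := y_i + z_i - \pi_i(z_{i+1})$ has $\| y'_i \| < \delta_i$ with $\delta_i$ chosen (again only in terms of the $c_j$) so that $\sum_{k \geq 0} c_i c_{i+1}\cdots c_{i+k-1}\,\delta_{i+k} < \infty$ for every $i$. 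Then $x_i := \sum_{k \geq 0} \pi_i \pi_{i+1}\cdots \pi_{i+k-1}(y'_{i+k})$ converges absolutely in $M_i$, and a short telescoping computation gives $x_i - \pi_i(x_{i+1}) = y'_i$, i.e.\ $(y'_i)_i = d((x_i)_i)$; hence $(y_i)_i = d((x_i - z_i)_i)$ lies in the image of $d$, so $R^1\varprojlim_i M_i = 0$.

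The main obstacle is purely the quantitative bookkeeping: since the transition maps $\pi_i$ are only bounded rather than norm-nonincreasing, one must be disciplined about choosing each approximation error $\delta_i$ small relative to the partial products $c_j \cdots c_{i-1}$ so that the staircases in (a) and the resolvent-type sums in (b) converge in the relevant Banach modules. A secondary subtlety is the ``in particular'' clause: because the natural map $M \widehat{\otimes}_A A_i \to M_{i+1}\widehat{\otimes}_{A_{i+1}} A_i$ need not be strict, the passage from density to surjectivity of $M \widehat{\otimes}_A A_i \to M_i$ has to be routed through the strict surjection $\varphi_i$ and the open mapping theorem rather than carried out directly.
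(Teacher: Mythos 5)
Your proof is correct and is essentially the paper's argument: the same staircase approximation for (a), and for (b) the same reduction (subtract a coboundary, using the density from (a), to make the entries summably small, then solve by the geometric series $\sum_{k}\pi_i\cdots\pi_{i+k-1}$). The only difference is bookkeeping: the paper first rescales the Banach norms so that each transition map has operator norm at most $\tfrac{1}{2}$, which eliminates your constants $c_i$ and $\delta_i$ and lets it phrase (b) as invertibility of $1-\varphi$ on the completed direct sum of the $M_i$.
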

\begin{proof}
For $i=0,1,\dots$ in succession, choose a Banach norm $\left| \bullet \right|_i$ on $M_i$ in such a way that $\left| \varphi_i(\bv_{i+1}) \right|_i \leq \frac{1}{2} \left| \bv_{i+1} \right|_{i+1}$ for all $\bv_{i+1} \in M_{i+1}$.
Pick $\bv_i \in M_i$ and $\epsilon > 0$. For $j=1,2,\dots$, we may choose
$\bv_{i+j} \in M_{i+j}$ such that 
$\left| \bv_{i+j} - \varphi_{i+j}(\bv_{i+j+1}) \right|_{i+j} \leq \epsilon$.
For each $j$, the images of the $\bv_{i+j+k}$ in $M_{i+j}$ converge to an element
$\bw_{i+j}$ satisfying $\varphi_{i+j}(\bw_{i+j+1}) = \bw_{i+j}$
and $\left| \bv_i - \bw_i \right| \leq \epsilon$;
this proves (a).

Let $N$ be the product $M_1 \times M_2 \times \cdots$.
The maps $M_{i+1} \to M_i$ together define a map $\varphi: N \to N$;
the group $R^1 \varprojlim_i M_i$ may be identified with the cokernel of $1-\varphi$ on $N$. To show that this vanishes, let $N'$ be the completed direct sum of the $M_i$ for the supremum norm; then $1-\varphi$ restricts to an endomorphism of $N'$, whose cokernel surjects onto the cokernel of $1-\varphi$ on $N$ because $M_{i+1}\to M_{i}$ has dense image. However, $N'$ is complete and $\left| \varphi(\bv) \right| \leq \frac{1}{2} \left| \bv \right|$ for all $\bv \in N'$, so $1-\varphi$ admits an inverse on $N'$;
this proves (b). (Compare \cite[Remarques~0.13.2.4]{ega3-1}.)
\end{proof}

\begin{cor} \label{C:quasi-Stein tensor}
With notation as in Lemma~\ref{L:Kiehl calc},
put $M = \varprojlim_i M_i$.
Suppose that for all $i$, $M_i$ is a stably pseudocoherent $A_i$-module and
$\varphi_i$ is an isomorphism. Then the map $M \otimes_A A_i \to M_i$ is an isomorphism.
\end{cor}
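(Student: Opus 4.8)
The plan is to establish the two directions of the isomorphism $\theta_i\colon M\otimes_A A_i\to M_i$ separately, after a few reductions. Since $M_i$ is $2$-pseudocoherent over the open mapping ring $A_i$, it is complete for its natural topology by Corollary~\ref{C:topologically 3-pseudocoherent}, so its Banach topology is the natural one; likewise $M_{i+1}\otimes_{A_{i+1}}A_i$ is $2$-pseudocoherent (Remark~\ref{R:need flatness}) and hence complete, so the completed tensor product in the definition of $\varphi_i$ is the ordinary one and $\varphi_i$ is literally an isomorphism of $A_i$-modules; iterating, $M_j\otimes_{A_j}A_i\xrightarrow{\sim}M_i$ for every $j\geq i$. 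Moreover each transition map $A_j\to A_i$ ($j\geq i$) comes from the open immersion $U_i\hookrightarrow U_j$ of adic affinoids, hence is pseudoflat: by Lemma~\ref{L:rational localization is flat} in the locally noetherian cases and by Theorem~\ref{T:weak flatness}(b) in the (really) stably uniform cases. In particular $\Tor_1^{A_j}(M_j,A_i)=0$, and kernels of surjections of $2$-pseudocoherent $A_j$-modules base change correctly to $A_i$.

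\emph{Surjectivity of $\theta_i$.} Choose a finite generating set of $M_i$ over $A_i$. By Lemma~\ref{L:Kiehl calc}(a), $M$ is dense in $M_i$; the surjection $A_i^t\to M_i$ sending the standard basis to these generators is strict by the open mapping theorem (Theorem~\ref{T:open mapping}(b)), and a small perturbation of a strict surjection onto a complete module is again surjective, so we may pick $\bx_1,\dots,\bx_t\in M$ whose images in $M_i$ generate $M_i$ over $A_i$. The $A$-linear map $A^t\to M$, $e_k\mapsto\bx_k$, then base changes to a surjection $A_i^t\to M_i$ factoring through $\theta_i$, so $\theta_i$ is surjective. (The same argument applies to the auxiliary towers constructed below.)

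\emph{Injectivity of $\theta_i$.} This is the substance, and here we follow the proof of \cite[Satz~2.4]{kiehl-cartan}. One builds, inductively in $j$, an exact sequence of inverse systems
\[
0\to(K_j)_j\to(F_j)_j\xrightarrow{\pi_\bullet}(M_j)_j\to 0,
\]
with $F_j=A_j^{t_j}$ finite free, $t_0\leq t_1\leq\cdots$, and transition maps $F_{j+1}\otimes_{A_{j+1}}A_j\to F_j$ given by coordinate projections; at stage $j\to j+1$ one lifts to $M$ (via density, perturbing as above) a generating set of $M_{j+1}$ consisting of the images of the elements already chosen plus finitely many new ones. By the pseudoflatness and $2$-pseudocoherence noted above, $(K_j)$ is again a system of complete ($1$-pseudocoherent) $A_j$-modules with $K_{j+1}\otimes_{A_{j+1}}A_j\xrightarrow{\sim}K_j$ and bounded transition maps, so $R^1\varprojlim_j K_j=0$ by Lemma~\ref{L:Kiehl calc}(b); hence $0\to\varprojlim_j K_j\to\varprojlim_j F_j\to\varprojlim_j M_j=M\to 0$ is exact. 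Applying $\cdot\otimes_A A_i$ and comparing term by term with $0\to K_i\to F_i\to M_i\to 0$ — using that the resulting vertical maps are surjective (surjectivity argument) and the identifications $F_j\otimes_{A_j}A_i\cong F_i$, $K_j\otimes_{A_j}A_i\cong K_i$ for $j\geq i$ — a diagram chase carried out at a fixed level $j\gg i$, where $M_j$ is finitely presented over the affinoid $A_j$ and $A_j\to A_i$ is pseudoflat, pins down $\ker\theta_i$ and forces it to vanish.

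\emph{Main obstacle.} The delicate point is precisely this injectivity. Because the modules $M_j$ may require more and more generators as $j\to\infty$, $M$ need not be finitely generated over $A$ and there is no global finite free presentation of $M$; one is forced to let the ranks $t_j$ grow, so that $\varprojlim_j F_j$ is an infinite product rather than a finite free module. The vanishing $R^1\varprojlim=0$ of Lemma~\ref{L:Kiehl calc}(b) is what lets the rank-growing resolution pass to the limit exactly, and pseudoflatness of the inclusion maps keeps the kernel tower $(K_j)$ of the right shape; the genuine care is in organizing the final comparison so that it closes rather than circling back to the statement being proved.
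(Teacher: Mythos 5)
Your surjectivity step matches the paper's, but the injectivity step — the substance of the corollary — has a genuine gap, and the specific identifications you lean on are false in the setting you create. Once you allow the ranks $t_j$ to grow and take transition maps $F_{j+1}\otimes_{A_{j+1}}A_j\to F_j$ that are (coordinate) projections, the kernels do \emph{not} satisfy $K_{j+1}\otimes_{A_{j+1}}A_j\xrightarrow{\sim}K_j$: the map is surjective, but its kernel contains all of $\ker\bigl(F_{j+1}\otimes_{A_{j+1}}A_j\to F_j\bigr)$, which is nonzero as soon as $t_{j+1}>t_j$. Likewise $F_j\otimes_{A_j}A_i\cong A_i^{t_j}\neq F_i$ and $K_j\otimes_{A_j}A_i\not\cong K_i$ for $j>i$. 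More seriously, after passing to the limit your middle term $\varprojlim_j F_j$ is no longer a finite free (or even finitely generated) $A$-module, so applying $\otimes_A A_i$ to the limit sequence gives a module you cannot compare with anything at level $i$: tensor products do not commute with this infinite limit, and the promised ``diagram chase at a fixed level $j\gg i$'' is never actually specified — as you yourself concede in your closing remark. So the argument does not close; injectivity of $\theta_i$ is not established.

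The paper's proof sidesteps exactly this by noticing that one never needs a presentation that is surjective at \emph{every} level: since $M_j\otimes_{A_j}A_i\cong M_i$, finitely many elements of $M$ generating $M_i$ over $A_i$ suffice, and one takes a \emph{single} finite free $A$-module $F$ (fixed rank) with $F\otimes_A A_i\to M_i$ surjective, without requiring $F\otimes_A A_j\to M_j$ to be surjective for $j>i$. Setting $N_j=\ker(F\otimes_A A_j\to M_j)$, (pseudo)flatness of $A_j\to A_i$ gives $N_j\otimes_{A_j}A_i\xrightarrow{\sim}N_i$, and since $\varprojlim_j (F\otimes_A A_j)=F$ (here finiteness of the rank is essential) one has $\varprojlim_j N_j=\ker(F\to M)$, so the top row $N\otimes_A A_i\to F\otimes_A A_i\to M\otimes_A A_i\to 0$ is exact for free. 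Lemma~\ref{L:Kiehl calc}(a) applied to the tower $(N_j)_{j\geq i}$ gives surjectivity of $N\otimes_A A_i\to N_i$, and the five lemma finishes; note that only part (a) of Lemma~\ref{L:Kiehl calc} is needed, not the $R^1\varprojlim$ vanishing you invoke. If you want to salvage your growing-rank construction, you would have to replace the claimed isomorphisms by mere surjections (which Lemma~\ref{L:Kiehl calc} tolerates) and still find a way to control $\bigl(\varprojlim_j F_j\bigr)\otimes_A A_i$; the fixed-rank device is precisely what makes that control unnecessary.
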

\begin{proof}
By Lemma~\ref{L:Kiehl calc}, there exist finitely many elements of $M$ which generate $M_i$ over $A_i$. That is, there exists a morphism $F \to M$ of $A$-modules with $F$ finite free such that the induced map $F \otimes_A A_i \to M_i$ is surjective.
For $j \geq i$, put $F_j = F \otimes_A A_j$ and $N_j = \ker(F_j \to M_j)$. 
By Corollary~\ref{C:base extension of psc},
the induced maps $N_j \otimes_{A_j} A_i \to N_i$ are isomorphisms.
Put $N = \varprojlim_i N_i$; we may apply Lemma~\ref{L:Kiehl calc} again to deduce that 
$N \otimes_A A_i \to N_i$ is surjective and $R^1 \varprojlim_i N_i = 0$. Since $F \cong \varinjlim_i F_i$, we deduce that $F \to M$ is surjective. To summarize, in the diagram
\[
\xymatrix{
N \otimes_A A_i \ar[r] \ar[d] & F \otimes_A A_i \ar[r] \ar[d] & M \otimes_A A_i \ar[r] \ar[d] & 0\\
N_i \ar[r] & F_i \ar[r] & M_i \ar[r] & 0,
}
\]
the rows are exact, the first vertical arrow is surjective, and the second vertical arrow is an isomorphism. By the five lemma, the third vertical arrow is bijective, as desired.
\end{proof}

\begin{theorem} \label{T:Kiehl quasi-Stein}
Let $X$ be a quasi-Stein space. Let $\calF$ be a pseudocoherent sheaf of $\calO$-modules on $X$.
\begin{enumerate}
\item[(a)]
For any adic affinoid subspace $U$ of $X$, $\calF(X) \to \calF(U)$ has dense image.
\item[(b)]
For each $x \in X$, the image of $\calF(X)$ generates the $\calO_x$-module $\calF_x$.
\item[(c)]
The groups $H^j(X, \calF)$ vanish for all $j>0$.
\end{enumerate}
\end{theorem}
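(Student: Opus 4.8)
The plan is to run Kiehl's argument for quasi-Stein spaces \cite{kiehl-cartan}, with the commutative-algebra inputs already assembled in Lemma~\ref{L:Kiehl calc} and Corollary~\ref{C:quasi-Stein tensor}. Write $A = \calO(X)$ and $A_i = \calO(U_i)$, so that $A = \varprojlim_i A_i$; since every adic affinoid subspace of $X$ is quasicompact it is contained in some $U_i$, so it suffices to work with the exhaustion. By the appropriate form of Kiehl's theorem (Theorem~\ref{T:refined Kiehl} or Theorem~\ref{T:stably noetherian Kiehl}, or the reified analogues in Remark~\ref{R:reified pseudocoherent} and Remark~\ref{R:really strongly noetheran sheafy}), $\calF|_{U_i}$ is the sheaf associated to a pseudocoherent $A_i$-module $M_i := \calF(U_i)$; being $2$-pseudocoherent, $M_i$ is complete for its natural topology by Corollary~\ref{C:topologically 3-pseudocoherent}, hence a Banach module over $A_i$. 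The restriction map $\calF(U_{i+1}) \to \calF(U_i)$ exhibits $\calF|_{U_i}$ as the sheaf associated to $M_{i+1} \otimes_{A_{i+1}} A_i$; by pseudoflatness of $A_{i+1} \to A_i$ (Theorem~\ref{T:weak flatness}(b), resp.\ Lemma~\ref{L:rational localization is flat} together with Remark~\ref{R:really strongly noetherian} in the locally noetherian cases) this base change is pseudocoherent, hence complete, hence equal to $M_{i+1} \widehat{\otimes}_{A_{i+1}} A_i$, and Kiehl's theorem shows that the natural map $\varphi_i \colon M_{i+1} \widehat{\otimes}_{A_{i+1}} A_i \to M_i$ is an isomorphism. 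Finally, the sheaf axiom for the ascending cover $\{U_i\}$ identifies $\calF(X)$ with $M := \varprojlim_i M_i$.

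Next I would feed this into the quasi-Stein machinery. The inverse system $(M_i)$ with the maps $\varphi_i$ satisfies the hypotheses of Lemma~\ref{L:Kiehl calc} and Corollary~\ref{C:quasi-Stein tensor}, since each $\varphi_i$ is a bounded isomorphism and each $M_i$ is $2$-pseudocoherent. Lemma~\ref{L:Kiehl calc}(a) then gives that $M$ has dense image in every $M_i = \calF(U_i)$, which yields (a). Corollary~\ref{C:quasi-Stein tensor} gives $M \otimes_A A_i \cong M_i$, so the image of $\calF(X)$ generates $M_i$ over $A_i$ and therefore generates the stalk $\calF_x = M_i \otimes_{A_i} \calO_x$ over $\calO_x$ for every $x \in U_i$, which is (b). And Lemma~\ref{L:Kiehl calc}(b) gives $R^1 \varprojlim_i M_i = 0$.

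For part (c), I would use that $\Gamma(X, -) = \varprojlim_i \Gamma(U_i, -)$ on sheaves, and that injective sheaves on $X$ restrict to flasque sheaves on the $U_i$ whose sections form a Mittag-Leffler inverse system; hence the Grothendieck spectral sequence for this composite reads $R^p \varprojlim_i H^q(U_i, \calF) \Rightarrow H^{p+q}(X, \calF)$, and since $\varprojlim$ over a countable directed set has cohomological amplitude $[0,1]$ it degenerates to the exact sequences $0 \to R^1 \varprojlim_i H^{j-1}(U_i, \calF) \to H^j(X, \calF) \to \varprojlim_i H^j(U_i, \calF) \to 0$ (compare \cite[Remarques~0.13.2.4]{ega3-1}). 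By Theorem~\ref{T:pseudocoherent acyclicity} (resp.\ Theorem~\ref{T:strongly noetherian Tate}, or the reified analogues) we have $H^q(U_i, \calF) = 0$ for all $q > 0$, so for $j \geq 2$ both outer terms vanish, while for $j = 1$ the sequence becomes $0 \to R^1 \varprojlim_i M_i \to H^1(X, \calF) \to 0$ with $R^1 \varprojlim_i M_i = 0$ by the previous paragraph. Thus $H^j(X, \calF) = 0$ for all $j > 0$. (Equivalently: finite intersections of members of the exhaustion are again members, so $\calF$ is acyclic on each of them, and the Čech cohomology of the ascending-chain cover $\{U_i\}$ computes $\varprojlim$ in degree $0$, $R^1 \varprojlim$ in degree $1$, and vanishes in higher degrees; cf.\ \cite[Tag~01EW]{stacks-project}.)

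The main obstacle I anticipate is the homological bookkeeping in part (c): one must check that the cohomology comparison $R\Gamma(X, \calF) \simeq R\varprojlim_i R\Gamma(U_i, \calF)$ (equivalently, that Čech cohomology computes derived cohomology for the cover $\{U_i\}$) genuinely applies in this setting, and that nothing survives in cohomological degrees $\geq 2$, given that the only acyclicity statement available on adic affinoids is Theorem~\ref{T:pseudocoherent acyclicity}. A secondary, more routine point is the identification $M_{i+1} \otimes_{A_{i+1}} A_i = M_{i+1} \widehat{\otimes}_{A_{i+1}} A_i$ used above, which relies on the completeness of $2$-pseudocoherent modules over open mapping rings (Corollary~\ref{C:topologically 3-pseudocoherent}) together with the pseudoflatness of the restriction maps (Theorem~\ref{T:weak flatness}(b)).
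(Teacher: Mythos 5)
Your proof is correct and follows essentially the same route as the paper: parts (a)--(b) come from the Kiehl-type glueing theorems (Theorem~\ref{T:stably noetherian Kiehl}/\ref{T:refined Kiehl}) together with Lemma~\ref{L:Kiehl calc} (the paper deduces (b) directly from the surjectivity in Lemma~\ref{L:Kiehl calc}(a), while you route it through Corollary~\ref{C:quasi-Stein tensor}, which is equally valid), and part (c) rests on acyclicity of $\calF$ over each $U_i$ plus $R^1 \varprojlim_i M_i = 0$ from Lemma~\ref{L:Kiehl calc}(b). The only cosmetic difference is that the paper packages the degree-$\geq 2$ vanishing via \v{C}ech cohomology of the nested cover $\{U_i\}$ and Kiehl's Hilfssatz~2.6, whereas you use the $R\varprojlim$ spectral sequence (Milnor exact sequence) --- the same computation in different clothing, and your justification of that comparison (flasqueness of restricted injectives, Mittag--Leffler, $R^p\varprojlim = 0$ for $p \geq 2$) is sound.
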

\begin{proof}
Set notation as in Definition~\ref{D:quasi-Stein}
and put $A_i = \calO(U_i)$, $M_i = \calF(U_i)$.
By Theorem~\ref{T:refined Kiehl}, 
the map $M_{i+1} \otimes_{A_{i+1}} A_i \to M_i$ is an isomorphism;
we may thus apply Lemma~\ref{L:Kiehl calc}(a) to deduce (a) and (b) (using that since $U$ is quasicompact, it is contained in some $U_i$).

Let $\gothU$ be the open covering of $X$ by the $U_i$.
By Theorem~\ref{T:pseudocoherent acyclicity},
$\calF$ is acyclic on each $U_i$. By \cite[Tag 01EW]{stacks-project} again, for $j>0$ we have 
\begin{align*}
\check{H}^j(X, \calF; \gothU) &= H^j(X, \calF) \\
\check{H}^{j}(U_i, \calF; \{U_0,\dots,U_i\}) &= H^j(U_i, \calF) = 0.
\end{align*}
In particular, for $j>1$ the restriction maps
\[
\check{H}^{j-1}(U_{i+1}, \calF; \{U_0,\dots,U_{i+1}\}) \to \check{H}^{j-1}(U_i, \calF; \{U_0,\dots,U_i\})
\]
are surjective, which implies (see \cite[Hilfssatz~2.6]{kiehl-cartan})
that
\[
\varprojlim_i \check{H}^{j}(U_i, \calF; \{U_0,\dots,U_i\}) \to \check{H}^j(X, \calF; \gothU)
\]
is bijective. This yields the vanishing of $H^j(X, \calF)$ for $j>1$; we obtain vanishing for $j=1$ by noting that $\check{H}^1(X, \calF; \gothU) = R^1 \varprojlim_i M_i$
and applying Lemma~\ref{L:Kiehl calc}(b).
This proves (c).
\end{proof}

\begin{cor} \label{C:quasi-Stein global sections}
For $X$ a quasi-Stein space,
the global sections functor from pseudocoherent sheaves of $\calO$-modules on $X$ to $\calO(X)$-modules is exact.
\end{cor}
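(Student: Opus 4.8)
The plan is to reduce the statement to the cohomology vanishing already established in Theorem~\ref{T:Kiehl quasi-Stein}(c). First I would unwind what is being asserted: a short exact sequence of pseudocoherent sheaves of $\calO$-modules on $X$ is a sequence $0 \to \calF' \to \calF \to \calF'' \to 0$ of morphisms of sheaves of $\calO$-modules which is exact as a sequence of sheaves of abelian groups on $X$. (That the pseudocoherence of any two of the three terms forces it on the third is the globalization of Lemma~\ref{L:pseudocoherent 2 of 3}, but we do not need this for the present claim.) The global sections functor $\calF \mapsto \calF(X)$ is left exact on all sheaves of abelian groups, so the only thing requiring proof is that $\calF(X) \to \calF''(X)$ is surjective.

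Next I would invoke the long exact cohomology sequence attached to $0 \to \calF' \to \calF \to \calF'' \to 0$, namely
\[
0 \to \calF'(X) \to \calF(X) \to \calF''(X) \to H^1(X,\calF') \to \cdots.
\]
Since $\calF'$ is pseudocoherent and $X$ is a quasi-Stein space in $\calC$, Theorem~\ref{T:Kiehl quasi-Stein}(c) gives $H^1(X,\calF') = 0$. Hence $\calF(X) \to \calF''(X)$ is surjective, and the sequence $0 \to \calF'(X) \to \calF(X) \to \calF''(X) \to 0$ is exact, which is exactly the desired conclusion.

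There is essentially no obstacle here: the substantive work has already been carried out in Theorem~\ref{T:Kiehl quasi-Stein}, and this corollary is a one-line consequence of it via the long exact sequence. The only point worth a remark is the legitimacy of that long exact sequence, which holds because the given sequence is exact as a sequence of abelian sheaves and sheaf cohomology on the site underlying $X$ has the usual long exact sequence; alternatively one could argue with \v{C}ech cohomology relative to the exhausting cover $\{U_i\}$ used in the proof of Theorem~\ref{T:Kiehl quasi-Stein}, but this refinement is unnecessary now that the theorem is available.
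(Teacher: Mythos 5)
Your proof is correct and is exactly the argument the paper intends: the corollary is stated as an immediate consequence of Theorem~\ref{T:Kiehl quasi-Stein}(c), with left exactness automatic and surjectivity on global sections coming from the vanishing of $H^1(X,\calF')$ via the long exact cohomology sequence. No discrepancy with the paper's approach.
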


\begin{cor} \label{C:local generators}
Let $X$ be a quasi-Stein space, let $\calF$ be a pseudocoherent sheaf of $\calO$-modules on $X$, and choose $\bv_1,\dots,\bv_n \in \calF(X)$. Then the following conditions are equivalent.
\begin{enumerate}
\item[(a)]
The elements $\bv_1,\dots,\bv_n$ generate $\calF(X)$ as a module over $\calO(X)$.
\item[(b)]
For each adic affinoid subspace $U$ of $X$, $\bv_1,\dots,\bv_n$ generate $\calF(U)$ as a module over $\calO(U)$. 
\end{enumerate}
\end{cor}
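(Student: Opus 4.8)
The plan is to package the given sections into a morphism of $\calO_X$-modules $\phi\colon \calO_X^n \to \calF$ and to convert each of the two generation statements into the vanishing of a first cohomology group, feeding in the acyclicity results already in place: Theorem~\ref{T:pseudocoherent acyclicity} on adic affinoids and Theorem~\ref{T:Kiehl quasi-Stein}(c) on $X$. I would set up notation as in Definition~\ref{D:quasi-Stein}, writing $X = \bigcup_i U_i$, $A = \calO(X)$, $A_i = \calO(U_i)$, $M = \calF(X)$, $M_i = \calF(U_i)$. By Theorem~\ref{T:refined Kiehl} (resp.\ Theorem~\ref{T:stably noetherian Kiehl} in the locally noetherian cases) each $M_i$ is a pseudocoherent $A_i$-module, $M_i$ is the base change of $M_{i+1}$ along $A_{i+1} \to A_i$, and the hypotheses of Corollary~\ref{C:quasi-Stein tensor} are therefore met, so the natural maps $M \otimes_A A_i \to M_i$ are isomorphisms.

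For (a)$\Rightarrow$(b): assuming $\bv_1,\dots,\bv_n$ generate $M$ over $A$, the isomorphism $M \otimes_A A_i \cong M_i$ shows at once that they generate each $M_i$ over $A_i$. An arbitrary adic affinoid subspace $U \subseteq X$ lies in some $U_i$ by quasicompactness (Definition~\ref{D:quasi-Stein}), and $\calF(U)$ is obtained from $M_i$ by base change along $A_i \to \calO(U)$ — this is the compatibility of the equivalence of Theorem~\ref{T:refined Kiehl} (resp.\ Theorem~\ref{T:stably noetherian Kiehl}) with restriction to $U$, which one checks on the rational subdomains of $U_i$ contained in $U$. Hence the images of $\bv_1,\dots,\bv_n$ generate $\calF(U)$ over $\calO(U)$, giving (b).

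For (b)$\Rightarrow$(a): applying (b) with $U = U_i$, each $A_i^n \to M_i$ is surjective; since $\{U_i\}$ covers $X$ and surjectivity is preserved by base change, $\phi$ is surjective on all stalks, hence an epimorphism, and $0 \to \calG \to \calO_X^n \to \calF \to 0$ is exact with $\calG := \ker\phi$. I would then check that $\calG$ is pseudocoherent: over $U_i$ one has $0 \to K_i \to A_i^n \to M_i \to 0$ with $K_i := \ker(A_i^n \to M_i)$ pseudocoherent over $A_i$ by the two-out-of-three property (Lemma~\ref{L:pseudocoherent 2 of 3}); and because $\calO(W)$ is pseudoflat over $A_i$ for every rational subdomain $W$ of $U_i$ (Theorem~\ref{T:weak flatness}; resp.\ flat, Lemma~\ref{L:rational localization is flat}) and $M_i$ is $2$-pseudocoherent, one has $\Tor_1^{A_i}(M_i, \calO(W)) = 0$, which identifies $\calG|_{U_i}$ with $\widetilde{K_i} = K_i \otimes_{A_i} \calO$ on rational subdomains and hence everywhere. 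Thus $\calG$ is a pseudocoherent sheaf on $X$, so $H^1(X, \calG) = 0$ by Theorem~\ref{T:Kiehl quasi-Stein}(c); the long exact sequence of $0 \to \calG \to \calO_X^n \to \calF \to 0$ then shows $A^n \to M$ is surjective, i.e.\ $\bv_1,\dots,\bv_n$ generate $\calF(X)$.

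The main obstacle will be the identification of $\calG = \ker\phi$ as a pseudocoherent sheaf in the non-noetherian (stably uniform) case: a subsheaf of a finite free sheaf need not be pseudocoherent, so this does not come for free as it would for coherent sheaves, and it is exactly here that the weak flatness of rational localizations (Theorem~\ref{T:weak flatness}), combined with the stability of pseudocoherence under two-out-of-three, is essential. Everything else is routine bookkeeping with the acyclicity and Kiehl-glueing statements already established.
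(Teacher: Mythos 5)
Your proof is correct and follows essentially the route the paper intends (the corollary is left without explicit proof there): (a)$\Rightarrow$(b) via the base-change isomorphism of Corollary~\ref{C:quasi-Stein tensor} combined with the Kiehl equivalences, and (b)$\Rightarrow$(a) by forming the kernel $\calG$ of $\calO_X^n \to \calF$, identifying it locally with $\widetilde{K_i}$ using Lemma~\ref{L:pseudocoherent 2 of 3} together with pseudoflatness (Theorem~\ref{T:weak flatness}, resp.\ Lemma~\ref{L:rational localization is flat}), and then invoking $H^1(X,\calG)=0$ from Theorem~\ref{T:Kiehl quasi-Stein}(c) (equivalently, the exactness of global sections in Corollary~\ref{C:quasi-Stein global sections}). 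The one point you treat briefly, the passage from the exhausting subspaces $U_i$ to an arbitrary adic affinoid subspace $U$, is handled no more explicitly in the paper itself, and your reduction to rational subdomains of $U_i$ contained in $U$ is the right mechanism for it.
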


\begin{proof}
If (a) holds, then Theorem~\ref{T:Kiehl quasi-Stein} implies that for each $U$, $\bv_1,\dots,\bv_n$ generate a dense submodule of $\calF(U)$; since $\calF(U)$ is finitely generated, Corollary~\ref{C:closure finitely generated}
implies that (b) holds. Conversely, suppose that (b) holds.
Let $\calG$ be the free sheaf of $\calO$-modules on the generators $\bv_1,\dots,\bv_n$;
we then have a natural surjective morphism $\calG \to \calF$ of pseudocoherent sheaves.
By Corollary~\ref{C:quasi-Stein global sections}, $\calG(X) \to \calF(X)$ is surjective;
this proves (a).
\end{proof}

\begin{cor} \label{C:quasi-Stein finitely generated}
Let $X$ be a quasi-Stein space. Let $\calF$ be a vector bundle on $X$.
If the $\calO(X)$-module $\calF(X)$ is finitely generated, then it is projective.
(The finite generation condition is needed because of Remark~\ref{R:uniformly pseudocoherent} below.)
\end{cor}
\begin{proof}
By hypothesis, there exist a finite free sheaf $\calG$ of $\calO$-modules and a morphism
$\calG \to \calF$ such that $\calG(X) \to \calF(X)$ is surjective.
By Corollary~\ref{C:local generators}, $\calG \to \calF$ is a surjective morphism of sheaves;
by Lemma~\ref{L:pseudocoherent 2 of 3}, the kernel $\calH$ of this morphism is pseudocoherent.
Since $\calF$ is a vector bundle, for each adic affinoid subspace $U$ of $X$, 
the surjection $\calG(U) \to \calF(U)$ of $\calO(U)$-modules splits; these splittings form 
a class in $H^1(X, \calF^\dual \otimes \calH)$, which vanishes by Theorem~\ref{T:Kiehl quasi-Stein}(c).
Consequently, $\calG(X) \to \calF(X)$ splits in the category of $\calO(X)$-modules, proving the claim.
\end{proof}

\begin{cor} \label{C:quasi-Stein flat}
Let $X$ be a locally noetherian quasi-Stein space. Then for any adic affinoid subspace $U$ of $X$, the morphism $\calO(X) \to \calO(U)$ is flat.
\end{cor}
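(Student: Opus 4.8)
The plan is to reduce to the case $U = U_i$ for a member of the exhausting sequence, and then to verify the flatness criterion that $I \otimes_{\calO(X)} \calO(U_i) \to \calO(U_i)$ is injective for every finitely generated ideal $I$ of $\calO(X)$ (see \cite[Tag~00M5]{stacks-project}).

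Set notation as in Definition~\ref{D:quasi-Stein} and write $A = \calO(X)$, $A_i = \calO(U_i)$. First I would dispose of the reduction: if $U$ is an adic affinoid subspace of $X$, then by quasicompactness $U \subseteq U_i$ for some $i$, and since an open immersion of locally noetherian adic spaces is \'etale, Lemma~\ref{L:rational localization is flat} shows that $\calO(U_i) \to \calO(U)$ is flat; composing with $A \to A_i$, it suffices to prove that each $A \to A_i$ is flat.

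Now fix a finitely generated ideal $I = (a_1, \dots, a_n) \subseteq A$ and consider the sheaf $\calG := \image(\calO_X^n \to \calO_X)$, where the map sends the $k$-th standard basis vector to $a_k$. Since $X$ is locally noetherian, $\calG$ is coherent: on each $U_j$ it is the sheaf associated to the finitely generated ideal $IA_j$ of the noetherian ring $A_j$. In particular $\calG$ is a subsheaf of $\calO_X$ via a monomorphism $\iota$, and $\calO_X^n \to \calG \xrightarrow{\iota} \calO_X$ is the original map. Applying exactness of global sections on the quasi-Stein space $X$ (Corollary~\ref{C:quasi-Stein global sections}) to the surjection $\calO_X^n \to \calG$ and to $\iota$, I find that $\Gamma(X, \iota)$ is injective with image exactly $I$; thus $\calG(X) \cong I$ as $A$-submodules of $A$. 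On the other hand, each $IA_j$ is $2$-pseudocoherent over the noetherian ring $A_j$, and the transition maps $IA_{j+1} \otimes_{A_{j+1}} A_j \to IA_j$ are isomorphisms by flatness of $A_{j+1} \to A_j$ (Lemma~\ref{L:rational localization is flat}); hence Corollary~\ref{C:quasi-Stein tensor} provides a natural isomorphism $\calG(X) \otimes_A A_i \xrightarrow{\sim} \calG(U_i) = IA_i$ compatible with $\iota$. Combining the two facts, the natural map $I \otimes_A A_i \to A_i$ factors as $I \otimes_A A_i \xrightarrow{\sim} IA_i \hookrightarrow A_i$ and is therefore injective, which is the required vanishing of $\Tor_1^A(A/I, A_i)$; since $I$ was an arbitrary finitely generated ideal, $A \to A_i$ is flat.

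The step I expect to need the most care is the identification $\calG(X) \cong I$ (rather than with the a priori larger module $\varprojlim_j IA_j$): this is precisely where the quasi-Stein hypothesis enters, through the surjectivity of $\Gamma(X, \calO_X^n) \to \Gamma(X, \calG)$, and it is what keeps the argument from being circular, since flatness cannot be deduced merely from injectivity of $(\varprojlim_j IA_j) \otimes_A A_i \to A_i$. One should also check that the isomorphism of Corollary~\ref{C:quasi-Stein tensor} is natural in the coherent sheaf, so that it is genuinely compatible with $\iota$; this is clear from its construction as the map induced by $\calG(X) = \varprojlim_j \calG(U_j) \to \calG(U_i)$.
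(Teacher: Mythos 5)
Your proof is correct and follows essentially the same route as the paper: reduce to $U = U_i$ via Lemma~\ref{L:rational localization is flat}, identify the finitely generated ideal $I$ with the global sections of its ideal sheaf (equivalently with $\varprojlim_j IA_j$), and then apply Corollary~\ref{C:quasi-Stein tensor} to obtain $I \otimes_A A_i \cong IA_i \hookrightarrow A_i$. The only cosmetic difference is that you establish the identification $\calG(X) \cong I$ via exactness of global sections (Corollary~\ref{C:quasi-Stein global sections}), whereas the paper cites Corollary~\ref{C:local generators}; both are immediate consequences of Theorem~\ref{T:Kiehl quasi-Stein}, so the arguments coincide in substance.
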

\begin{proof}
Choose $U_i$ as in Definition~\ref{D:quasi-Stein}; by Lemma~\ref{L:rational localization is flat}, it suffices to check the case where $U = U_i$ for some $i$.
Put $A = \calO(X)$, $A_i = \calO(U_i)$, and let $I$ be a finitely generated ideal of $A$.
Then $IA_i$ is a finitely generated ideal of $A_i$; by Lemma~\ref{L:rational localization is flat} again, $IA_i \otimes_{A_i} A_j \cong IA_j$. 
Let $I'$ be the inverse limit of the $IA_i$; then $I' \subseteq A$ by the left exactness of inverse limits, and evidently $I \subseteq I'$. By Corollary~\ref{C:local generators},
we must in fact have $I = I'$; hence by Corollary~\ref{C:quasi-Stein tensor}, the map $I \otimes_A A_i \to IA_i$ is an isomorphism. This implies that $A \to A_i$ is flat.
\end{proof}

\begin{remark}
In general, we would expect an analogue of
Corollary~\ref{C:quasi-Stein flat} based on Theorem~\ref{T:weak flatness}, but we have not been able to derive a general result. Instead, we will treat individual cases as they arise.
\end{remark}

\begin{cor} \label{C:quasi-Stein Prufer}
Let $X$ be a connected quasi-Stein space 
which is locally noetherian and 
regular of dimension $1$. Then $\calO(X)$ is a Pr\"ufer domain
(see Remark~\ref{R:Bezout torsion}).
\end{cor}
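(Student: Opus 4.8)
The plan is to verify directly the two conditions in the definition of a Pr\"ufer domain recalled in Remark~\ref{R:Bezout torsion}: that $A := \calO(X)$ is an integral domain, and that every finitely generated ideal of $A$ is projective. Throughout, write $X = \bigcup_i U_i$ as in Definition~\ref{D:quasi-Stein}, set $A_i = \calO(U_i)$, and observe that since $X$ is locally noetherian and regular of dimension $1$, each $A_i$ is a regular noetherian ring of Krull dimension at most $1$; in particular $A_i$ is normal, hence a finite product of normal noetherian domains (a product of Dedekind domains and fields), and $A_i$ has global dimension at most $1$, so every finitely generated submodule of a finite projective $A_i$-module is again finite projective.

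First I would show $A$ is a domain. Connectedness of $X$ already forces $A = \varprojlim_i A_i$ to have no nontrivial idempotents, since an idempotent of $A$ restricts to a compatible system of idempotents of the $A_i$, hence to a clopen partition of each $U_i$, thus of $X$. Now suppose $f,g \in A$ with $fg = 0$ and $f \neq 0$. On any connected (equivalently, irreducible, as $A_i$ is normal) component $Z$ of $U_i$ the coordinate ring is a domain, and the transition map $A_{i+1} \to A_i$ sends the coordinate ring of the component of $U_{i+1}$ containing $Z$ injectively into that of $Z$ (restriction of an integral adic space to a nonempty open subspace is injective). Consequently the elements $e_i \in A_i$ that equal $1$ on precisely those components of $U_i$ where $f$ does not vanish form a compatible system, hence define an idempotent $e \in A$; since $f \neq 0$ we have $e \neq 0$, while if $g \neq 0$ then, choosing $i$ and a component of $U_i$ on which $g$ is nonzero (so that $f$ vanishes there), we get $e \neq 1$, contradicting connectedness. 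Hence $g = 0$ and $A$ is a domain.

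For the second condition, fix a nonzero finitely generated ideal $I = (f_1,\dots,f_n)A$ and let $\calI \subseteq \calO_X$ be the coherent subsheaf it generates. By Theorem~\ref{T:stably noetherian Kiehl}, on each $U_i$ the sheaf $\calI|_{U_i}$ corresponds to the finitely generated ideal $I_i := (f_{1,i},\dots,f_{n,i})A_i$ of $A_i$, which is a finite projective $A_i$-module by the remark above, so $\calI$ is a vector bundle on $X$. Using flatness of $A \to A_i$ (Corollary~\ref{C:quasi-Stein flat}) together with Corollary~\ref{C:local generators} one checks that $\calI(X) = I$ and $I \otimes_A A_i \cong I_i$; applying Lemma~\ref{L:Kiehl calc} and Corollary~\ref{C:quasi-Stein tensor} to the kernel $K$ of $A^n \to I$ then shows that $K = \varprojlim_i (K\otimes_A A_i)$ is finitely generated over $A$, so $I$ is finitely presented. (The identical bookkeeping applied to an arbitrary finitely generated ideal shows that $A$ is a coherent ring, a fact I would record and invoke below.)

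I would finish via Fitting ideals. Since $I$ is finitely presented, its Fitting ideals $\Fitt_j(I) \subseteq A$ are finitely generated and satisfy $\Fitt_j(I)\, A_i = \Fitt_j(I_i)$; because $I_i$ is finite projective over the finite product of Dedekind domains and fields $A_i$, each $\Fitt_j(I_i)$ is generated by an idempotent of $A_i$, and these idempotents are uniquely determined and compatible under the transition maps, so they assemble to an idempotent $e_j$ of $A$. Comparing the finitely generated (hence, by coherence of $A$, pseudocoherent) ideal $\Fitt_j(I)$ with $e_j A$ after base change to each $A_i$, where they agree, and using coherence of $A$ to detect vanishing of a module on the cover $\{U_i\}$ (via Theorem~\ref{T:Kiehl quasi-Stein}), one concludes $\Fitt_j(I) = e_j A$, which by the no-nontrivial-idempotents property lies in $\{0,A\}$ for every $j$. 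Hence there is an $r$ with $\Fitt_{r-1}(I) = 0$ and $\Fitt_r(I) = A$, forcing the finitely presented module $I$ to be locally free of rank $r$, i.e.\ finite projective over $A$. The main obstacle, and the step requiring genuine care, is this local-to-global passage over the non-noetherian ring $A$ — establishing finite presentation of $I$ (equivalently, coherence of $A$) and detecting vanishing on the cover — since the usual faithfully flat descent arguments are not available off the shelf and one must instead lean throughout on the quasi-Stein machinery of \S\ref{subsec:quasi-Stein}.
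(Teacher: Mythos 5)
Your overall architecture (domain via connectedness, then coherence of $\calO(X)$, then a Fitting-ideal argument forcing each $\Fitt_j(I)$ to be $0$ or the unit ideal) can be made to work, but the step you yourself identify as the crux is not actually delivered by the results you cite. Lemma~\ref{L:Kiehl calc} only gives that $K=\varprojlim_i K_i$ (with $K_i=\ker(A_i^n\to A_i)$) has dense image in each $K_i$ and that $R^1\varprojlim_i K_i=0$, and Corollary~\ref{C:quasi-Stein tensor} only identifies $K\otimes_A A_i$ with $K_i$; neither statement bounds the number of generators of $K$ over $A=\calO(X)$, so ``$K$ is finitely generated, hence $I$ is finitely presented and $A$ is coherent'' does not follow from them. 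This is precisely the failure flagged in Remark~\ref{R:uniformly pseudocoherent}: global sections of a pseudocoherent sheaf on a quasi-Stein space need not be finitely generated, and passing from the local data to finite generation is the content of Proposition~\ref{P:noetherian quasi-Stein} (Bellovin's argument, which genuinely uses finite-dimensionality), not of Lemma~\ref{L:Kiehl calc}. The same objection applies to your parenthetical claim that $\calI(X)=I$ (an element of $\varprojlim_i I A_i$ need not lie in $I$ a priori), though that claim is not needed for your argument.

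The gap is repairable inside \S\ref{subsec:quasi-Stein}: the sheaf $\ker(\calO_X^n\to\calO_X)$ is pseudocoherent with sections $K_i$ over $U_i$, and since each $A_i$ is a finite product of Dedekind domains and fields, $K_i$ is a finitely generated submodule of $A_i^n$, hence projective of rank at most $n$, hence generated by at most $n+1$ elements uniformly in $i$; as $X$ is locally noetherian of dimension $1$, Proposition~\ref{P:noetherian quasi-Stein} then gives that its global sections $K=\ker(A^n\to A)$ are finitely generated, so $I$ is finitely presented and $A$ is coherent. With that substitution the rest of your argument goes through (the idempotents $e_{j}$ glue because idempotent generators of an ideal are unique, $e_j\in\{0,1\}$ by connectedness, $\Fitt_j(I)=A$ when $e_j=1$ via Corollary~\ref{C:local generators}, and $\Fitt_j(I)=0$ when $e_j=0$ by injectivity of $A\to\prod_i A_i$). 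Note also that this is a genuinely different route from the paper's: the paper's two-line proof never mentions Fitting ideals or coherence, but deduces projectivity of a finitely generated ideal from finite presentation together with flatness, the latter checked over the affinoid subspaces via Corollary~\ref{C:quasi-Stein flat} and the Dedekind property of each $\calO(U)$, leaving the domain property (which you treat carefully) implicit.
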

\begin{proof}

We must check that any finitely generated ideal $I$ of $\calO(X)$ is projective.
By Corollary~\ref{C:quasi-Stein flat}, for any adic affinoid subspace $U$ of $X$, 
the map $I \otimes_{\calO(X)} \calO(U) \to I \calO(U)$ is an isomorphism.
Consequently, there is an exact sequence
\[
0 \to \calI \to \calO \to \calO/\calI \to 0
\]
of pseudocoherent sheaves on $X$ such that for each $U$, we have
$\calI(U) = I \calO(U)$. 
By our hypotheses on $X$, $\calO(U)$ is a Dedekind domain, so $\calI(U)$ is a finite projective $\calO(U)$-module.
By Corollary~\ref{C:quasi-Stein global sections}, 
\[
0 \to \calI(X) \to \calO(X) \to \calO(X)/\calI(X) \to 0
\]
is exact. By this plus Corollary~\ref{C:local generators}, we have $\calI(X) = I$.
By Corollary~\ref{C:quasi-Stein finitely generated}, we deduce that $I$ is projective.

\end{proof}

Note that in Corollary~\ref{C:quasi-Stein global sections}, the global sections of a pseudocoherent sheaf on $X$ do not necessarily form a pseudocoherent $\calO(X)$-module, due to possible failure of finite generation
(hence the finite generation condition in Corollary~\ref{C:quasi-Stein flat}). We address this issue next.

\begin{remark} \label{R:uniformly pseudocoherent}
For $X$ a quasi-Stein space, consider the following conditions on a pseudocoherent sheaf $\calF$ on $X$.
\begin{enumerate}
\item[(a)]
The module $\calF(X)$ is finitely generated over $\calO(X)$.
\item[(b)]
There exists an integer $n$ such that for every adic affinoid subspace $U$ of $X$, 
the module $\calF(U)$ is generated over $\calO(U)$ by at most $n$ elements. Equivalently,
with notation as in Definition~\ref{D:quasi-Stein},
for all $i$, the module $\calF(U_i)$ is generated over $\calO(U_i)$ by at most $n$ elements.
\item[(c)]
There exists an integer $n$ such that for $x \in X$,
the module $\calF_x$ is generated over $\calO_x$ by at most $n$ elements.
Equivalently, for every adic affinoid subspace $U$ of $X$ (or even just the ones in some covering), the Fitting ideal $\Fitt_n(\calF(U)) \subseteq \calO(U)$ is trivial.
\end{enumerate}
It is easily seen that (a) implies (b) and that (b) implies (c).
In order to deduce the reverse implications, we need to add some geometric hypotheses.
\end{remark}

In the locally noetherian case, we have the following result. The proof is due to Bellovin \cite[Lemma~2.1.7]{bellovin} in the case of rigid analytic spaces, but generalizes without substantial changes.
\begin{prop} \label{P:noetherian quasi-Stein}
Suppose that $X$ is a quasi-Stein space which is locally noetherian of finite dimension (i.e., it can be covered by the adic or reified adic spectra of strongly noetherian adic Banach rings of bounded Krull dimension). Then
conditions (a), (b), (c) of Remark~\ref{R:uniformly pseudocoherent} are equivalent.
\end{prop}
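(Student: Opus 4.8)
The plan is to prove the two implications not already recorded in Remark~\ref{R:uniformly pseudocoherent}, namely (c) $\Rightarrow$ (b) and (b) $\Rightarrow$ (a); the geometric hypotheses (locally noetherian, finite-dimensional) enter only in the first of these. Throughout I set notation as in Definition~\ref{D:quasi-Stein}: $X = \bigcup_i U_i$, $A = \calO(X)$, $A_i = \calO(U_i)$, $M_i = \calF(U_i)$, so that $\calF(X) = \varprojlim_i M_i$; I use repeatedly that by Theorem~\ref{T:stably noetherian Kiehl} the maps $M_{i+1} \otimes_{A_{i+1}} A_i \to M_i$ are isomorphisms, hence so is $\calF(X) \otimes_A A_i \to M_i$ by Corollary~\ref{C:quasi-Stein tensor}.

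For (c) $\Rightarrow$ (b) I would start from the Fitting-ideal form of (c), which gives an integer $n$ with $\Fitt_n(M_i) = A_i$, i.e.\ $\mu_{\gothp}(M_i) \leq n$ for every prime $\gothp$ of the noetherian ring $A_i$. Using quasicompactness of $U_i$ and the hypothesis that $X$ is covered by adic (or reified adic) spectra of rings of Krull dimension $\leq d$ for some fixed $d$, one bounds $\dim A_i \leq d$ uniformly in $i$ (the dimension bookkeeping is exactly as in \cite[Lemma~2.1.7]{bellovin}). The Forster--Swan theorem on the number of generators of a module over a ring of finite Krull dimension then gives that $M_i$ is generated over $A_i$ by at most $n + d$ elements, uniformly in $i$, which is (b).

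For (b) $\Rightarrow$ (a), fix $n$ with every $M_i$ generated by at most $n$ elements. The three tools I would use are: (i) the isomorphisms $M_{i+1}\otimes_{A_{i+1}}A_i \cong M_i$ above; (ii) density of the transition maps $M_{i+1}\to M_i$, which follows from density of $A_{i+1}\to A_i$ together with finite generation and completeness of $M_{i+1}$ for its natural topology (Corollary~\ref{C:topologically 3-pseudocoherent}), and likewise density of $\calF(X)\to M_i$ from Theorem~\ref{T:Kiehl quasi-Stein}(a); and (iii) the open mapping theorem (Theorem~\ref{T:open mapping}), which makes ``generates $M_i$'' an open condition on $n$-tuples, since a generating $n$-tuple gives a strict surjection $A_i^n \to M_i$ and a geometric-series perturbation argument shows every sufficiently close $n$-tuple still generates. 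With these, a successive-approximation argument in the spirit of Kiehl's treatment of quasi-Stein spaces \cite[Satz~2.4]{kiehl-cartan}, carried out in the rigid-analytic case in \cite[Lemma~2.1.7]{bellovin}, produces a system of generating $n$-tuples $\mathbf{g}^{(i)} \in M_i^n$ compatible in the inverse limit; its limit $\mathbf{g} \in \calF(X)^n$ maps to a generating tuple of each $M_i \cong \calF(X)\otimes_A A_i$. To finish I would show $\mathbf{g}$ generates $\calF(X)$ over $A$: for $m \in \calF(X)$ the preimages of $m_i$ under the surjections $A_i^n \to M_i$ determined by $\mathbf{g}$ form an inverse system of torsors under $K_i := \ker(A_i^n \to M_i)$, and since $K_{i+1}\otimes_{A_{i+1}}A_i \cong K_i$ (flatness of rational localizations), Lemma~\ref{L:Kiehl calc}(b) gives $R^1\varprojlim_i K_i = 0$, so this system has a global section, exhibiting $m$ as an $A$-combination of the $g_k$.

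The hard part will be (b) $\Rightarrow$ (a), and specifically keeping the number of generators pinned at $n$ throughout the approximation: density lets one approximately lift individual sections from $M_i$ to $M_{i+1}$, but an approximate lift of a generating $n$-tuple of $M_i$ need not generate $M_{i+1}$, so the approximations at the different levels $U_i$ must be interleaved rather than performed one level at a time. This is precisely where the uniform bound in (b) (equivalently, (c) plus finite-dimensionality via Forster--Swan) is essential, and it is the content of the argument I would import from \cite{bellovin}.
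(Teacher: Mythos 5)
Your reduction (c) $\Rightarrow$ (b) via Forster--Swan is a reasonable observation (granting the routine bookkeeping that each $\calO(U_i)$ is noetherian of Krull dimension at most $d$), but the proof collapses at the step you yourself flag as hard, namely (b) $\Rightarrow$ (a) by interleaved successive approximation. The lemma your scheme needs --- given a tuple of global sections whose restriction generates $M_i$, and any $\epsilon>0$, there is a generating tuple of $M_{i+1}$ within $\epsilon$ of it in the $M_i$-seminorm --- is false, already for the structure sheaf with one generator. Take $X$ the open unit disc, $U_i = \{|T|\leq \rho\}$ with $\rho<1$, $U_{i+1}=\{|T|\leq 1\}$, and $h = T-1 \in \calO(X)$. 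Then $h$ is a unit in $A_i$, hence generates $M_i=A_i$; but any unit $k=\sum_n c_nT^n$ of $A_{i+1}=K\{T\}$ satisfies $|c_0|>|c_n|$ for $n\geq 1$, and if $\lambda(k-h)_{A_i}<\rho$ then $|c_1-1|<1$ forces $|c_1|=1$, hence $|c_0|>1$ and the constant term of $k-h$ already has norm $|c_0+1|>1>\rho$, a contradiction. So every generator of $M_{i+1}$ lies at distance at least $\rho$ from $h$ on $U_i$: density of restriction plus openness of the generating condition do not let you keep the perturbations summably small against the openness radii, and the convergence/bookkeeping of your scheme cannot be carried out as described. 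Nothing in your sketch uses the finite-dimensionality hypothesis at this stage, which is a further sign that the essential input is missing (compare Proposition~\ref{P:Stein space uniform covering}, where the analogous implication requires a genuine geometric hypothesis).

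The citation to Bellovin does not repair this, because \cite[Lemma~2.1.7]{bellovin} --- which is what the paper's proof follows --- is not an interleaved approximation argument at all. It is an induction on $\dim X$: one chooses a closed, reduced, zero-dimensional subspace $Y$ meeting every irreducible component of every $U_i$; over $Y$ the bound $n$ on stalkwise generators gives finite generation of $i^*\calF$ outright (the base case is a product over points); the exactness of the global sections functor (Corollary~\ref{C:quasi-Stein global sections}) then lifts these generators \emph{exactly}, not approximately, to finitely many global sections of $\calF$, and the cokernel of the subsheaf they generate is supported on a closed subspace of dimension at most $\dim X - 1$, to which the induction hypothesis applies; one concludes with Corollary~\ref{C:local generators}. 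The exact lifting along the zero-dimensional skeleton and the drop in dimension are precisely the ingredients that replace your failed $\epsilon$-perturbation step, so to complete your argument you would need to import this mechanism (or find a substitute), not merely the density and open-mapping inputs you list.
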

\begin{proof}
We induct on $m = \dim(X)$. The case $m=0$ is clear, as then $X$ is a disjoint union of points $x_1, x_2, \dots$
and we have
\[
\calO(X) = \prod_{i=1}^\infty \calO(x_i), \qquad
\calF(X) = \prod_{i=1}^\infty \calF(x_i).
\]
Suppose now that $m>0$. Choose $U_i$ as in Definition~\ref{D:quasi-Stein}. We construct a sequence of closed, reduced, zero-dimensional subspaces $Y_i \subseteq U_i$ as follows: given $Y_j$ for all $j>i$, we take $Y_i$ to be $\cup_{j<i} Y_j$ together with one point 
in $U_i \setminus \cup_{j<i} U_j$ on each irreducible component of $U_i$ not meeting
$\cup_{j<i} Y_j$ (and hence not meeting $\cup_{j<i} U_j$). Put $Y = \cup_i Y_i$ and let $i: Y \to X$ be the inclusion.
Since $\dim(Y) = 0$, 
$i^* \calF$ is finitely generated over $\calO(Y)$. By
Corollary~\ref{C:quasi-Stein global sections}, $\calF(X)$ surjects onto
$(i_* i^* \calF)(X) = \calF(Y)$, so we can find a subsheaf $\calF'$ of $\calF$ generated by finitely many global sections
such that $i^* \calF'$ surjects onto $i^* \calF$. Put $\calG = \calF'/\calF$;
then $\calG$ is a pseudocoherent sheaf arising as $j_* j^* \calG$ for the closed immersion $j: Z \to X$ defined by $\ann_{\calO_X} \calG$. Since $Y$ meets each irreducible component of each $U_i$, $Z$ is locally noetherian of dimension at most $m-1$; moreover,
the fiberwise ranks of $\calG$ are bounded because $\calG$ is a quotient of $\calF$.
We may thus apply the induction hypothesis to show that $\calG(X)$ is finitely generated. By Corollary~\ref{C:quasi-Stein global sections}, we may lift a finite generating set of $\calG(X)$ to $\calF(X)$; by combining these with the generators of $\calF'$, we obtain a finite set of global sections of $\calF$ which generates $\calF_x$ for all $x \in X$.
These sections then generate $\calF(X)$ by Corollary~\ref{C:local generators}.
\end{proof}

Without noetherian hypotheses, we do not have an argument of this form; instead, we must assume the existence of a suitable covering.
\begin{defn}
For $m$ a nonnegative integer, an \emph{$m$-uniform covering} of an adic space $X$ is a finite or countable open covering $\gothV$ of $X$ such that each $V \in \gothV$ has nonempty intersection with at most $m$ other elements of $\gothV$, and each quasicompact subspace of $X$ has nonempty intersection with at most finitely many elements of $\gothV$.
\end{defn}

\begin{example}
Let $K$ be an analytic field, choose $x \in K^\times$ with $\left| x \right|  = c < 1$,
and put $c_{-1} = 0$ and $c_j = c^{-j}$ for $j \geq 0$.
Let $X$ be the affine space over $K$ with coordinates $T_1,\dots,T_n$.
As $j_1,\dots,j_n$ run over the nonnegative integers, the subspaces
\[
\{x \in X: \left| T_i \right| \in [c_{{j_i}-1},c_{j_i}] \quad (i=1,\dots,n)\}
\]
of $X$ form an $m$-uniform covering for $m = 3^n-1$.
\end{example}

\begin{lemma} \label{L:coloring}
Let $\gothV = \{V_j\}_{j \in J}$ be an $m$-uniform covering of an adic space $X$.
Then there exists a partition of $J$ into subsets $J_0,\dots,J_m$ such that for any $l \in \{0,\dots,m\}$ and any distinct $j,k \in J_l$, the spaces $V_j$ and $V_k$ are disjoint.
\end{lemma}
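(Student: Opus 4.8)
The plan is to recast the statement as a graph-coloring problem. Associate to the covering $\gothV$ its \emph{intersection graph} $G$: the vertex set is $J$, and two distinct indices $j, k \in J$ are joined by an edge precisely when $V_j \cap V_k \neq \emptyset$. The hypothesis that $\gothV$ is $m$-uniform says exactly that every vertex of $G$ has degree at most $m$ (the last clause in the definition of an $m$-uniform covering, concerning quasicompact subspaces, plays no role here). A partition $J = J_0 \sqcup \cdots \sqcup J_m$ with the stated property is the same thing as a proper vertex coloring of $G$ by the $m+1$ colors $\{0, \dots, m\}$, so the lemma reduces to the purely combinatorial claim that a graph of maximum degree at most $m$ is $(m+1)$-colorable.

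To prove that claim I would use the standard greedy coloring argument. Since $\gothV$ is finite or countable, fix an enumeration $J = \{j_1, j_2, \dots\}$ and color the vertices one at a time in this order: assuming $j_1, \dots, j_{n-1}$ have been colored, assign to $j_n$ any element of $\{0, \dots, m\}$ not occurring as the color of a neighbor of $j_n$ among $j_1, \dots, j_{n-1}$. Such a color exists because $j_n$ has at most $m$ neighbors in $G$, so at most $m$ of the $m+1$ available colors are excluded. Carrying this out for every $n$ yields a proper coloring of $G$, and taking $J_l$ to be the set of indices receiving color $l$ gives the desired partition: if $j, k \in J_l$ are distinct, then there is no edge between them in $G$, i.e.\ $V_j \cap V_k = \emptyset$.

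There is no serious obstacle here; the only point requiring a word of care is that $J$ may be countably infinite, but the sequential nature of the greedy procedure handles this without appeal to any choice principle beyond the fixed enumeration, and the degree bound is used uniformly at every step.
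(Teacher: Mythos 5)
Your proof is correct and is essentially the same as the paper's: both interpret the intersection pattern as a graph of maximum degree $m$ and apply the greedy coloring argument along an enumeration of $J$, using the degree bound at each step to find a free color among the $m+1$ available. The paper even cites the same graph-theoretic fact (a graph of maximum valence $m$ has chromatic number at most $m+1$) and likewise notes that countability poses no difficulty.
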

\begin{proof}
After enumerating the elements of $J$, we may assign them to $J_0,\dots,J_m$ inductively using the greedy algorithm: when the turn for $k \in J$ arrives, the previously assigned indices $j$ for which $V_j \cap V_k \neq \emptyset$ can cover at most $m$ of the $m+1$ subsets, so an assignment is always possible.
(This is the usual argument in graph theory that a graph of maximum valence $m$ has chromatic number at most $m+1$; countability is not essential.)
\end{proof}

The following argument is modeled on \cite[Proposition~2.1.13]{kpx}.
\begin{prop} \label{P:Stein space uniform covering}
Let $X$ be a quasi-Stein space.
Suppose that there exists an $m$-uniform covering $\gothV = \{V_j\}_{j \in J}$ for some nonnegative integer $m$.
Then conditions (a) and (b)
of Remark~\ref{R:uniformly pseudocoherent} are equivalent; moreover, it suffices to check (b) for $U \in \gothV$.
\end{prop}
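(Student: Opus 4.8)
The plan is to prove the only non-obvious implication, namely that condition (b) of Remark~\ref{R:uniformly pseudocoherent}, checked only for the $U \in \gothV$, forces condition (a); the implication (a) $\Rightarrow$ (b) and the equivalence of (b)-for-$\gothV$ with (b) for arbitrary affinoid $U$ are already recorded there. So I would assume there is an integer $n$ with $\calF(V_j)$ generated by at most $n$ elements over $\calO(V_j)$ for every $j \in J$, and aim to produce finitely many \emph{global} sections generating $\calF(X)$ over $\calO(X)$. First I would apply the coloring lemma (Lemma~\ref{L:coloring}) to partition $J = J_0 \sqcup \cdots \sqcup J_m$ so that $V_j \cap V_k = \emptyset$ whenever $j \neq k$ lie in a common $J_l$, and set $W_l = \bigcup_{j \in J_l} V_j = \bigsqcup_{j \in J_l} V_j$, an open subspace of $X$ with $\calO(W_l) = \prod_{j \in J_l} \calO(V_j)$ and $\calF(W_l) = \prod_{j \in J_l} \calF(V_j)$ by the sheaf axiom for a disjoint cover. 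Padding each local generating set to exactly $n$ elements and assembling coordinatewise shows immediately that $\calF(W_l)$ is generated by $n$ elements over $\calO(W_l)$, and one checks it is complete and metrizable for its natural topology (which agrees with the product topology $\prod_{j \in J_l} \calF(V_j)$). Moreover, for any finite $S \subseteq J_l$ the space $\bigsqcup_{j \in S} V_j$ is a quasicompact open affinoid subspace of $X$ lying in $\calC$, so Theorem~\ref{T:Kiehl quasi-Stein}(a) gives that $\calF(X) \to \calF(\bigsqcup_{j\in S} V_j) = \prod_{j\in S}\calF(V_j)$ has dense image; letting $S$ vary over all finite subsets of $J_l$ shows $\calF(X) \to \calF(W_l)$ has dense image.

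Next I would promote these approximations to honest global sections. Since $\calO(W_l)$ is an open mapping ring (complete, metrizable, and carrying the null sequence of units given by the diagonal powers of a topologically nilpotent unit), the surjection $\calO(W_l)^n \to \calF(W_l)$ is strict by the open mapping theorem (Theorem~\ref{T:open mapping}), so any family of elements of $\calF(X)$ whose images in $\calF(W_l)$ are sufficiently close to a generating set still generates $\calF(W_l)$ — exactly the Nakayama-plus-open-mapping argument used in the proof of Lemma~\ref{L:strongly noetherian finitely generated} (via \cite[Lemma~2.3.12]{part1}). Combining this with the density established above, I obtain sections $\bw_{l,1}, \dots, \bw_{l,n} \in \calF(X)$ whose images generate $\calF(W_l)$ over $\calO(W_l)$. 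Projecting $\calF(W_l) = \prod_{j \in J_l} \calF(V_j)$ onto each factor, for every $j \in J_l$ the restrictions $\bw_{l,1}|_{V_j}, \dots, \bw_{l,n}|_{V_j}$ generate $\calF(V_j)$ over $\calO(V_j)$, hence (via the Kiehl equivalence of Theorem~\ref{T:refined Kiehl}) generate the sheaf $\calF$ over the open set $V_j$.

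Finally I would globalize. The $(m+1)n$ sections $\bw_{l,k}$ ($0 \le l \le m$, $1 \le k \le n$) define a morphism $\calO_X^{(m+1)n} \to \calF$ which is surjective over each $V_j$ by the previous step, hence surjective as a map of sheaves since $\{V_j\}_{j \in J}$ covers $X$. Let $\calK$ be its kernel; restricting to any adic affinoid $U \subseteq X$ and passing between sheaves and modules via Theorems~\ref{T:refined Kiehl} and~\ref{T:pseudocoherent acyclicity}, Lemma~\ref{L:pseudocoherent 2 of 3} shows $\calK(U)$ is pseudocoherent, so $\calK$ is a pseudocoherent sheaf on $X$. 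Because $X$ is quasi-Stein, Theorem~\ref{T:Kiehl quasi-Stein}(c) gives $H^1(X, \calK) = 0$, so taking global sections of $0 \to \calK \to \calO_X^{(m+1)n} \to \calF \to 0$ (equivalently, using the exactness of global sections in Corollary~\ref{C:quasi-Stein global sections}) shows $\calO(X)^{(m+1)n} \to \calF(X)$ is surjective. Thus $\calF(X)$ is finitely generated, which is (a); together with Remark~\ref{R:uniformly pseudocoherent} this gives all the asserted equivalences.

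I expect the main obstacle to be exactly the uniformity demanded by condition (b): the hypothesis supplies generators of $\calF$ over each $\calO(V_j)$ separately, but these are not global sections and there is in general no affinoid neighborhood of a point well adapted to all of $X$. The coloring of Lemma~\ref{L:coloring} is the device that resolves this, repackaging the infinite family of local bounds as finitely many bounds over the disjoint-union affinoids $W_l$, after which density of global sections (Theorem~\ref{T:Kiehl quasi-Stein}(a)) and the open mapping theorem let those be replaced by genuine global sections; the remaining steps are then routine bookkeeping with pseudocoherent sheaves on a quasi-Stein space.
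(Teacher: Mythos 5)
Your use of the coloring lemma (Lemma~\ref{L:coloring}) and your final globalization step (surjectivity of $\calO_X^{(m+1)n}\to\calF$ over each $V_j$, pseudocoherence of the kernel, vanishing of $H^1$ on a quasi-Stein space) are fine and agree with the paper's framework, but the heart of your argument has a genuine gap: the passage from density of the image of $\calF(X)$ in $\calF(W_l)=\prod_{j\in J_l}\calF(V_j)$ to the existence of $n$ global sections generating $\calF(W_l)$. The topology on $\calO(W_l)=\prod_{j\in J_l}\calO(V_j)$ and on $\calF(W_l)$ is the product (Fr\'echet) topology, and in that topology the set of generating $n$-tuples is \emph{not} open: a tuple generates $\calF(W_l)$ over $\calO(W_l)$ if and only if it generates in every coordinate, so the generating locus is an infinite product $\prod_j G_j$ of proper open subsets, which has empty interior when $J_l$ is infinite. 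Concretely, in $\prod_j\calO(V_j)$ the element $(1,\dots,1,0,0,\dots)$ lies in every basic neighborhood of $(1,1,\dots)$ (such a neighborhood constrains only finitely many coordinates) yet generates nothing in the remaining factors. Hence ``sufficiently close to a generating tuple'' does not imply ``generating'': the Nakayama-plus-open-mapping perturbation you invoke (as in Lemma~\ref{L:strongly noetherian finitely generated}) is a Banach-ring argument requiring uniform smallness of the perturbation over all of $W_l$, and density in the product topology only controls finitely many $V_j$ at a time. (Your mechanism does work for a \emph{single} $V_j$: density into the affinoid $V_j$ plus the Banach perturbation argument produces global sections generating $\calF(V_j)$. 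The whole difficulty — which your closing paragraph correctly identifies but your proof does not overcome — is to do this for infinitely many $V_j$ with one finite set of sections.)

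The paper's proof addresses exactly this point with a more delicate construction, modeled on \cite[Proposition~2.1.13]{kpx}: after the same coloring, one well-orders $J_l$, uses the quasi-Stein exhaustion $U_i$ together with cutoff elements $x_j\in\calO(X)$ that are topologically nilpotent on $W_j=U_{i(j)}\cup\bigcup_{k<j}V_k$ and restrict to the inverse of a topologically nilpotent element on $V_j$, and builds the global sections as limits $\bv_t=\lim_j\bv_{j,t}$ of successive corrections $\bv_{j,t}=\bv_{j-1,t}+x_j^s\bw_t$. The conditions imposed there — norm at most $2^{-j}$ on $U_{i(j)}$ to force convergence, and the requirement that on each earlier $V_k$ the new tuple differ from the old one by a matrix congruent to the identity modulo topologically nilpotent entries, so that generation on the previously treated $V_k$ survives every later correction and the passage to the limit — are precisely the uniform control that your density argument cannot supply. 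To salvage your route you would need global sections simultaneously and uniformly close to generators on all $V_j$, $j\in J_l$, and producing those is exactly what the inductive correction scheme accomplishes; density into $\calF(W_l)$ alone does not.
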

\begin{proof}
By Lemma~\ref{L:coloring}, we may partition $J$ into subsets
$J_0,\dots,J_m$ such that for any $l \in \{0,\dots,m\}$ and any
distinct elements $j,k \in J_l$, we have $V_j \cap V_k = \emptyset$.
By Corollary~\ref{C:local generators}, it suffices to produce, for each $l \in \{0,\dots,m\}$, a finite set of global sections of $\calF$ generating $\calF(V_j)$ for all $j \in J_l$. 

Fix a well-ordering on the (at most countable) set $J_l$.
Choose a sequence $U_i$ as in Definition~\ref{D:quasi-Stein}, putting $U_{-1} = \emptyset$.
Fix Banach norms $\left| \bullet \right|_i$ on $\calF(U_i)$ (with no compatibility restrictions).
For $j \in J_l$, let $i(j)$ be the largest value of $i$ such that $U_i \cap V_{j} = \emptyset$, and put 
\[
W_j = U_{i(j)} \cup \bigcup_{k \in J_l, k< j} V_{k}.
\]
Then $W_j \cap V_{j} = \emptyset$, so $\calO(W_j)$ contains an element which restricts to zero on $W_j$ and to the inverse of a topologically nilpotent unit on $V_{j}$. By approximating such an element using Theorem~\ref{T:Kiehl quasi-Stein}(a), we can find  $x_j \in \calO(X)$ which restricts to a topologically nilpotent element on $W_j$ and to the inverse of a topologically nilpotent element on $V_{j}$.

To complete the proof, we will construct elements 
$\bv_{j,1},\dots,\bv_{j,n} \in \calF(X)$ for each $j \in J$ satisfying the following conditions.
\begin{itemize}
\item
For $t=1,\dots,n$, we have $\left| \bv_{j,t} \right|_{i(j)} \leq 2^{-j}$.
This will ensure that the sequence $\{\bv_{j,t}\}_j$ converges to a limit $\bv_t \in \calF(X)$.

\item
For each $k < j$, there exists an $n \times n$ matrix $A$ over $\calO(V_k)$ such that $A-1$ has topologically nilpotent entries and $\bv_{j,u} = \sum_t A_{tu} \bv_{k,u}$ in $\calF(V_{k})$. This will ensure that $\bv_{j,1},\dots,\bv_{j,n}$ generate $\calF(V_k)$ as a module over $\calO(V_k)$
if and only if $\bv_{k,1},\dots,\bv_{k,n}$ do so.

\item
The images of $\bv_{j,1},\dots,\bv_{j,n}$ in $\calF(V_{j})$ generate this module over $\calO(V_{j})$. This plus the previous condition will ensure that $\bv_1,\dots,\bv_n$ do likewise, completing the proof.
\end{itemize} 
We achieve the desired construction as follows.
For $t=1,\dots,n$, define $\bv_{j-1,t}$ to be $\bv_{k,t}$ if $j$ has a predecessor $k \in J_l$, and 0 otherwise.
Choose generators $\bw_1,\dots,\bw_n$ of $\calF(V_j)$.
Then for any sufficiently large integer $s$, we may take
\[
\bv_{j,t} = \bv_{j-1,t} + x_j^s \bw_t \qquad (t=1,\dots,n)
\]
to fulfill the desired conditions.
\end{proof}

\section{Supplemental foundations of perfectoid spaces}
\label{sec:perfectoid supplemental}

We next give some complements to \cite{part1} concerning foundations of the theory of perfectoid spaces. 

\setcounter{theorem}{0}

\begin{convention}
Hereafter, we continue to work mainly in the language of adic spaces, but everything can be carried over to the setting of reified adic spaces. In cases where this is straightforward, we simply observe in the statements of relevant theorems that ``the reified analogue also holds,'' and leave it to the reader to make the straightforward modifications required to see this. When more needs to be said, we insert a remark to do so: see Remarks~\ref{R:Fontaine primitive reified}, \ref{R:perfectoid correspondence reified}, \ref{R:tensor product reified},
\ref{R:perfectoid compatibility reified}, \ref{R:pseudocoherent reified}.
\end{convention}

\begin{remark}
We insert here a historical remark which came to our attention after \cite{part1} was completed: the definition of a perfectoid field appears already in \cite[\S 3.1]{matignon-reversat} under the terminology \emph{hyperperfect field} (\emph{corps hyperparfait}). As far as we know, this did not have any direct influence on subsequent developments.
\end{remark}

\begin{remark} \label{R:perfect uniform}
Starting in \cite[\S 3.1]{part1}, the notion of a \emph{perfect uniform} Banach algebra over $\FF_p$ is used intensively. However, at multiple points in \cite{part1} it is argued that the open mapping theorem (Theorem~\ref{T:open mapping}) implies that \emph{any} perfect Banach algebra $R$ over $\FF_p$ is uniform.
Namely, fix a topologically nilpotent unit $u$ in $R$; since the uniform condition depends only on the underlying topology
of $R$ (see condition (d) of \cite[Definition~2.8.1]{part1}), as in \cite[Remark~2.8.18]{part1} 
we may  work with a norm $\left| \bullet \right|_1$ on $R$ for which 
$\left| u \right|_1 \left| u^{-1}\right|_1= 1$ (that is, $u$ is a \emph{uniform unit} in $R$).
Now define a second norm $\left| \bullet \right|_2$ on $R$ by the formula $\left| x \right|_2 = \left| x^{1/p} \right|_1^p$; since $R$ is evidently complete with respect to $\left| \bullet \right|_2$ and $u$ is a uniform unit of the same norm with respect to both norms, the open mapping theorem for $\FF_p((u))$ implies that the two norms are equivalent, yielding that $R$ is uniform.
\end{remark}

\subsection{Generalized Witt vectors}
\label{subsec:generalized Witt}

We begin with a variant of the Witt vectors which is useful for uniformly describing several distinct situations. This type of construction appears prominently in the work of Drinfel'd \cite[\S 1]{drinfeld-witt} and Hazewinkel \cite[(18.6.13)]{hazewinkel},
and more recently in the work of Cais--Davis \cite{cais-davis} which we use as our primary reference.

\begin{hypothesis} \label{H:towers}
Throughout \S\ref{sec:perfectoid supplemental},
fix a positive integer $h$,
let $E$ be a complete discretely valued field with residue field $\FF_{p^h}$,
let $\varpi$ be a uniformizer of $E$,
and normalize the absolute value on $E$ so that $\left| \varpi \right| = p^{-1}$.
All Banach rings we consider will be $\gotho_E$-Banach algebras.
\end{hypothesis}

\begin{defn}
Let $W_\varpi$ be the functor of generalized Witt vectors associated to $\varpi$; this is the unique functor on $\gotho_E$-algebras with the following properties.
\begin{enumerate}
\item[(a)]
The underlying functor from $\gotho_E$-algebras to sets takes $R$ to $R^{\{0,1,\dots\}}$.
\item[(b)]
The \emph{ghost map} $W_{\varpi}(\bullet) \to \bullet^{\{0,1,\dots\}}$ given by
\[
(a_n)_{n=0}^\infty \mapsto \left( \sum_{m=0}^n \varpi^m a_m^{p^{h(n-m)}} \right)_{n=0}^\infty
\]
is a natural transformation of functors.
\end{enumerate}
See \cite[Proposition~2.3]{cais-davis} for references.
The functor $W_\varpi$ also comes with an endomorphism $\varphi_\varpi$, the \emph{Frobenius endomorphism}, corresponding to the left shift on ghost components.
For $r \in R$, write $[r]$ for the Witt vector $(r,0,\dots)$; the map
$[\bullet]: R \to W_\varpi(R)$ is then multiplicative.
\end{defn}

\begin{remark} \label{R:same Witt vectors}
By analogy with \cite[Definition~3.2.1]{part1},
one may define a \emph{strict $\varpi$-ring} to be a $\varpi$-torsion-free, $\varpi$-adically complete $\gotho_E$-algebra whose quotient modulo $\varpi$ is perfect.
As in \cite[Theorem~3.2.4]{part1}, one sees that reduction modulo $\varpi$ defines an equivalence of categories from strict $\varpi$-rings to perfect $\FF_{p^h}$-algebras,
with $W_\varpi$ providing a quasi-inverse.

For $S$ a perfect $\FF_{p^h}$-algebra, the universal property of the usual Witt vector ring $W(S)$ gives rise to a canonical morphism $W(S) \to W_\varpi(S)$ inducing an isomorphism $W_\varpi(S) \cong W(S) \otimes_{\Zp} \gotho_E$;
moreover, the diagram
\[
\xymatrix{
W(S) \ar^{\varphi^h}[r] \ar[d] & W(S) \ar[d] \\
W_\varpi(S) \ar^{\varphi_\varpi}[r] & W_{\varpi}(S)
}
\]
commutes.
In particular, the underlying ring of $W_\varpi(S)$ depends functorially only on $E$ and $S$, not on $\varpi$.
However, the choice of $\varpi$ does factor into the construction of the map $\theta$ arising in the perfectoid correspondence; see Definition~\ref{D:correspondence}.
\end{remark}

\begin{remark} \label{R:homogeneity}
In order to operate at the level of generality we have just introduced, we will need to 
know that a number of statements made in \cite{part1} remain valid if one replaces ordinary Witt vectors with generalized Witt vectors. Most of these statements can be accounted for using the following priniciple: the only fine structure of Witt vector arithmetic over perfect $\FF_p$-algebras used in the proofs is \cite[Remark~3.2.3]{part1}, and this carries over. More precisely, 
we have
\[
[\overline{x}] + [\overline{y}] = [\overline{x} + \overline{y}] + 
\sum_{n=1}^\infty \varpi^n [P_n(\overline{x}, \overline{y})]
\]
for some universal polynomials  $P_n$ in $\overline{x}^{p^{-n}}, \overline{y}^{p^{-n}}$
over $\FF_{p^h}$
which are homogeneous of total degree 1 (for the grading in which $\overline{x}, \overline{y}$ each have degree 1). 

An alternative approach would be to distinguish the cases where $E$ is of mixed characteristic and of equal characteristic $p$. In the former case,
$E$ is a finite extension of $\QQ_p$, so every ring we define in terms of $E$ will be a finite projective module over the corresponding ring defined in terms of $\QQ_p$, and the statements in question will then literally reduce to their counterparts in \cite{part1}. In the latter case, one must give separate arguments, but these are generally technically simpler than in the mixed characteristic case because the Witt vector construction degenerates: for any perfect $\FF_{p^h}$-algebra $R$ we simply have $W_\varpi(R) = R \llbracket \varpi \rrbracket$. We will take this approach explicitly in \S\ref{subsec:slopes}, where we rely on statements not appearing in \cite{part1}, for which we are not prepared to assert such a sweeping claim.
\end{remark}

\subsection{Primitive elements after Fontaine}
\label{subsec:Fontaine primitive}

We first introduce a generalization of the concept of a \emph{primitive element of degree $1$} given by Fontaine \cite{fontaine-bourbaki}. This will be used in \S\ref{subsec:Fontaine perfectoid} to give a corresponding generalization of perfectoid rings and spaces.

\begin{hypothesis} \label{H:primitive elements}
Throughout \S\ref{subsec:Fontaine primitive}, let $(R,R^+)$ be a perfect uniform adic Banach algebra over $\FF_{p^h}$. Let $\alpha$ denote the spectral norm on $R$.
\end{hypothesis}

\begin{lemma}
An element of $R^+$ is topologically nilpotent if and only if it is divisible in $R^+$ by some topologically nilpotent unit of $R$.
\end{lemma}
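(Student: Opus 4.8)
The plan is to prove the two directions separately, with the ``if'' direction being essentially trivial and the ``only if'' direction requiring the structure of a perfect uniform adic Banach algebra.

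For the ``if'' direction: suppose $x = uy$ where $u$ is a topologically nilpotent unit of $R$ and $y \in R^+$. Since $R^+$ is a ring of integral elements, $y$ is power-bounded, so $\{y^n\}$ is bounded; since $u$ is topologically nilpotent, $\{u^n\}$ converges to $0$; hence $x^n = u^n y^n$ converges to $0$, so $x$ is topologically nilpotent. (One should note $x$ does lie in $R^+$ by hypothesis; no integrality argument is needed here.)

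For the ``only if'' direction, let $x \in R^+$ be topologically nilpotent. Because $R$ is perfect of characteristic $p$, we may form $p$-power roots: set $y_n = x^{1/p^n}$ for each $n$. The key point is that $R$ admits a topologically nilpotent unit $\varpi$ (as all our Banach rings do), and one can arrange that $\alpha(x) \leq \alpha(\varpi)$ after replacing $\varpi$ by a suitable $p$-power root of itself — here I would use that taking $p$-th roots scales the spectral norm by the $p$-th root, so $\alpha(\varpi^{1/p^k}) = \alpha(\varpi)^{1/p^k} \to 1$, while $\alpha(x) < 1$ strictly since $x$ is topologically nilpotent (indeed $\alpha(x^n)^{1/n} = \alpha(x) \to 0$ forces $\alpha(x) < 1$). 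Thus for $k$ large, $\alpha(x) \leq \alpha(\varpi^{1/p^k})$; rename $\varpi^{1/p^k}$ as $\varpi$. Now I claim $x/\varpi \in R^+$: the element $x/\varpi$ has spectral norm $\alpha(x)/\alpha(\varpi) \leq 1$, and since $R^+$ is integrally closed in $R$ and contains all power-bounded elements of the relevant form — more precisely, in a uniform adic Banach algebra the power-bounded elements form a ring and $R^+$ is an open integrally closed subring containing the topologically nilpotent elements — one checks $x/\varpi$ is power-bounded hence lies in $R^\circ$, and then lands in $R^+$ because...

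Here is where the main obstacle lies: getting $x/\varpi \in R^+$ rather than merely $x/\varpi \in R^\circ$ requires knowing how $R^+$ sits relative to $R^\circ$. The cleanest route, which I would pursue, is to instead write $x = \varpi \cdot (x/\varpi)$ only after first showing $x/\varpi$ is \emph{integral} over $R^+$: since $x$ is topologically nilpotent and $\varpi$ is a unit, for any monic relation we can clear denominators; alternatively, use that $x^{1/p} \in R^+$ (as $R^+$ is perfect, being integrally closed in the perfect ring $R$ and closed) and $(x^{1/p}/\varpi^{1/p})^p = x/\varpi$, reducing integrality of $x/\varpi$ to membership of $x^{1/p}/\varpi^{1/p}$ — iterating, one reduces to an element of spectral norm arbitrarily close to $\alpha(x)^{1/p^n} \to 1$ over $\varpi$-norm $\to 1$, which is power-bounded. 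Then integral closedness of $R^+$ in $R$ finishes it. So the real content is the norm bookkeeping plus the observation that in a perfect uniform Banach algebra $R^+$ is automatically perfect and integrally closed, both of which are standard (cf.\ the conventions of \cite{part1}). I expect the write-up to be short once the right normalization of $\varpi$ is fixed at the outset.
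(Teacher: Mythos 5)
Your ``if'' direction is fine and matches the paper. The problem is in the ``only if'' direction, and you have correctly located it yourself: after normalizing $\varpi$ you only get $\alpha(x/\varpi) \leq 1$, i.e.\ power-boundedness, and then your proposed repair does not close. The claim that power-boundedness plus integral closedness of $R^+$ ``finishes it'' is false: an element of $R^\circ$ need not be integral over $R^+$ (if it always were, integral closedness would force $R^+ = R^\circ$, whereas $R^+$ can be a proper subring of $R^\circ$), and the $p$-th-root reduction ``integrality of $x/\varpi$ reduces to membership of $x^{1/p}/\varpi^{1/p}$'' is circular, since membership of $x^{1/p}/\varpi^{1/p}$ in $R^+$ is exactly the same kind of statement you are trying to prove. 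There is also a smaller slip: the spectral norm is power-multiplicative but not multiplicative, so you cannot write $\alpha(x/\varpi) = \alpha(x)/\alpha(\varpi)$; you should estimate $\alpha(x\varpi^{-p^{-n}}) \leq \alpha(x)\,\alpha(\varpi^{-1})^{p^{-n}}$ instead.

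The fix is to demand a \emph{strict} inequality, which is available to you and is what the paper does. Since $x$ is topologically nilpotent and $\alpha$ is power-multiplicative, $\alpha(x) < 1$ (this is the content of \cite[Theorem~2.3.10]{part1}). Because $\alpha(\varpi^{-1})^{p^{-n}} \to 1$ as $n \to \infty$, for $n$ large one has
\[
\alpha\bigl(x \varpi^{-p^{-n}}\bigr) \leq \alpha(x)\,\alpha(\varpi^{-1})^{p^{-n}} < 1,
\]
so $x\varpi^{-p^{-n}}$ is topologically nilpotent, and every topologically nilpotent element of $R$ lies in $R^+$ (openness of $R^+$ gives $t^m \in R^+$ for $m$ large, and integral closedness then gives $t \in R^+$). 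Thus $x = \varpi^{p^{-n}} \cdot (x\varpi^{-p^{-n}})$ exhibits the desired divisibility by the topologically nilpotent unit $\varpi^{p^{-n}}$, with no appeal to $R^\circ$ or to an integrality argument at all.
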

\begin{proof}
Since $R^+ \subseteq R^\circ$, any product of an element of $R^+$ with a topologically nilpotent element is again topologically nilpotent. Conversely, suppose $\overline{x} \in R^+$ is topologically nilpotent. By convention, $R$ is required to contain a topologically nilpotent unit $\overline{u}$. By \cite[Theorem~2.3.10]{part1}, we have
$\left| \overline{x} \right|_{\spect} < 1$, so for any sufficiently large $n$ we have
\[
\left| \overline{x} \overline{u}^{-p^{-n}} \right|_{\spect} \leq 
\left| \overline{x} \right|_{\spect} \left| \overline{u}^{-1} \right|_{\spect}^{p^{-n}} < 1.
\]
For such $n$, both $\overline{u}^{p^{-n}}$ and $\overline{x} \overline{u}^{-p^{-n}}$ are topologically nilpotent and hence belong to $R^+$. This proves the claim.
\end{proof}

\begin{defn} \label{D:Fontaine primitive}
An element $z = \sum_{n=0}^\infty \varpi^n [\overline{z}_n] \in W_\varpi(R^+)$ is \emph{Fontaine primitive (of degree $1$)} if $\overline{z}_0$ is topologically nilpotent and $\overline{z}_1$ is a unit in $R^+$. 
An ideal of $W_\varpi(R^+)$ is \emph{Fontaine primitive (of degree $1$)} if it is principal and some (hence any) generator is Fontaine primitive.
\end{defn}

\begin{remark}
One can also make sense of an element of $W_\varpi(R^+)$ being primitive of degree $n$ for $n$ an integer greater than 1; however, we will not use this concept. Accordingly, we abbreviate \emph{(Fontaine) primitive of degree $1$} to \emph{(Fontaine) primitive} hereafter.
\end{remark}

\begin{remark}
Crucially, in Definition~\ref{D:Fontaine primitive} we do not require $\overline{z}_0$ to be a unit of $R$. For instance, $z=\varpi$ is Fontaine primitive.
\end{remark}

\begin{defn}
Let $W_\varpi(R^+)[[R]]$ be the subring of $W_\varpi(R)$ generated by $W_\varpi(R^+)$ and $[\overline{z}]$ for all $\overline{z} \in R$. This ring can also be written as $W_\varpi(R^+)[[\overline{u}]^{-1}]$ for any single topologically nilpotent unit $\overline{u} \in R$.
\end{defn}

We have the following result, which consolidates \cite[Lemma~5.5, Lemma~5.16]{kedlaya-witt}
and \cite[Lemma~3.3.6, Lemma~3.3.9]{part1}.
\begin{lemma} \label{L:primitive division}
Suppose $z \in W_\varpi(R^+)$ is Fontaine primitive. Let $S$ be either $W_\varpi(R^+)$ or $W_\varpi(R^+)[[R]]$.
\begin{enumerate}
\item[(a)]
Every $x \in S$ is congruent modulo $z$ to some
$y = \sum_{i=0}^\infty \varpi^n [\overline{y}_n]$ with $\alpha(\overline{y}_0) \geq \alpha(\overline{y}_n)$ for all $n>0$.
\item[(b)]
For any $\epsilon >0$ and any nonnegative integer $m$, we may further ensure that
for all $\beta \in \calM(R)$,
\begin{align*}
\beta(\overline{y}_1) &\leq \max\{\epsilon, \beta(\overline{z}_0)^{p^{-1}+\cdots+p^{-m}} \beta(\overline{y}_0)\} \\
\beta(\overline{y}_n) &\leq \max\{\epsilon, \beta(\overline{y}_0)\}.  
\end{align*}
\end{enumerate}
\end{lemma}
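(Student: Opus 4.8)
The statement is about approximating an arbitrary element $x$ of $S$ modulo a Fontaine primitive $z = \sum_{n \geq 0} \varpi^n [\overline{z}_n]$ by a Witt vector whose Teichm\"uller coordinates satisfy good size constraints. The basic mechanism is the one familiar from the theory of primitive elements: since $\overline{z}_1$ is a unit in $R^+$ and $\overline{z}_0$ is topologically nilpotent, one can use $z$ to perform a kind of ``Weierstrass division'' at the level of Witt vector coordinates, clearing leading terms successively. The plan is to set up an iterative approximation: given $x = \sum_n \varpi^n [\overline{x}_n] \in S$, one subtracts a suitable multiple $[\overline{c}] \cdot z$ (with $\overline{c} \in R$, or $\overline{c} \in R^+$ when $S = W_\varpi(R^+)$ — here one needs to check that the topological nilpotence of $\overline{z}_0$ keeps the relevant coordinates in $R^+$) chosen so as to kill or shrink the coordinate $\overline{x}_0$ relative to the others; because $\overline{z}_1$ is a unit, one gains a factor that is controlled by $\alpha(\overline{z}_0)$ (which is $<1$) at each step in the appropriate place. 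Iterating and invoking completeness of $W_\varpi(R^+)$ (resp. $W_\varpi(R^+)[[R]]$) for the $(p,\varpi)$-adic / weak topology, the corrections converge, and the limit $y$ has the desired shape.

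First I would handle part (a). The strategy: write $x = \sum_{n\geq 0}\varpi^n[\overline{x}_n]$ and look for $w \in S$ with $x - wz = y$ of the stated form. Reduce modulo $\varpi$ first: modulo $\varpi$, $z \equiv [\overline{z}_0]$ (up to higher Teichm\"uller corrections coming from the addition law, which are handled by Remark 1.5.5-style homogeneity of the universal addition polynomials, cf. \cite[Remark~3.2.3]{part1} and Remark~\ref{R:homogeneity}), so one can arrange the $\varpi^0$-coordinate of $x - wz$ to have spectral norm as large as, or larger than, the later coordinates — this is where the condition $\alpha(\overline{y}_0) \geq \alpha(\overline{y}_n)$ for $n > 0$ comes from, and it is essentially a normalization: repeatedly move ``excess'' from high coordinates into the zeroth coordinate using multiplication by $z$, whose zeroth ghost component is the unit-ish quantity controlled by $\overline{z}_1$. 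Concretely, I would run the approximation coordinate-by-coordinate in the $\varpi$-adic direction, at stage $n$ correcting by a term of the form $\varpi^n[\overline{c}_n] z$, using that the $\varpi^{n+1}$-coordinate of $[\overline{c}_n]z$ is $\overline{c}_n \overline{z}_1 + (\text{lower-order in } \varpi\text{, i.e. already-corrected terms})$ and $\overline{z}_1$ is invertible in $R^+$; the homogeneity of the Witt addition polynomials guarantees the estimates on norms propagate correctly. Completeness of $S$ in the relevant topology then produces the limit.

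For part (b): once one is free to add further multiples of $z$ to $y$ (this changes $y$ by elements of $zS$ but keeps $y$ of the form in (a) after re-normalizing), I would sharpen the estimate on $\overline{y}_1$ in particular. The point is that by a further round of the same division — now applied to shrink $\overline{y}_1$ by repeatedly extracting factors involving $\overline{z}_0^{1/p}, \overline{z}_0^{1/p^2}, \dots$ (which are topologically nilpotent, hence in $R^+$, and whose spectral norms are $\alpha(\overline{z}_0)^{1/p}, \dots$) — one accumulates, after $m$ steps, a factor $\beta(\overline{z}_0)^{p^{-1} + \cdots + p^{-m}}$ in front of $\beta(\overline{y}_0)$; the $\max$ with $\epsilon$ appears because the approximation is only carried out to finite $\varpi$-adic precision (one stops once the tail is $\varpi^m$-small, equivalently once an $\epsilon$-threshold is reached). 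The bound $\beta(\overline{y}_n) \leq \max\{\epsilon, \beta(\overline{y}_0)\}$ for $n \geq 2$ (and for $n=0$ trivially) is then just the pointwise version of the $\alpha$-inequality from (a), read off at each $\beta \in \calM(R)$, tolerating the $\epsilon$ error from finite precision. Throughout I would quote Lemma~\ref{L:primitive division}'s predecessors \cite[Lemma~5.5, Lemma~5.16]{kedlaya-witt} and \cite[Lemma~3.3.6, Lemma~3.3.9]{part1} for the shape of the estimates, checking only that nothing in those arguments used that $\overline{z}_0$ is a \emph{unit} — only that it is topologically nilpotent — and that the generalized Witt vectors $W_\varpi$ behave identically to ordinary Witt vectors here by Remark~\ref{R:same Witt vectors} and Remark~\ref{R:homogeneity}.

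\textbf{Main obstacle.} The delicate point is keeping track of which ring the correction coefficients $\overline{c}$ live in: for $S = W_\varpi(R^+)$ one must ensure all corrections stay in $R^+$ (not just $R$), which uses that $\overline{z}_0$ topologically nilpotent means division by $\overline{z}_1$ followed by multiplication by a power of $\overline{z}_0$ lands back in $R^+$; and in part (b), bounding $\beta(\overline{y}_1)$ by the \emph{product} $\beta(\overline{z}_0)^{p^{-1}+\cdots+p^{-m}}\beta(\overline{y}_0)$ rather than merely by $\beta(\overline{y}_0)$ requires doing the second-stage division carefully enough to see the geometric-series exponent emerge, and not losing it to the $\max\{\epsilon,-\}$ prematurely — i.e., the order of operations (first achieve (a) to full precision, then improve the $\overline{y}_1$ coordinate) matters. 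Everything else is bookkeeping with the homogeneous Witt addition polynomials.
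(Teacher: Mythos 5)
Your proposal follows essentially the same route as the paper's proof: write $z = [\overline{z}_0] + \varpi u$ with $u$ a unit of $W_\varpi(R^+)$ (using only that $\overline{z}_1$ is a unit and $\overline{z}_0$ is topologically nilpotent, never that $\overline{z}_0$ is a unit), iterate a division step that subtracts a multiple of $z$ so the tail contracts by a factor $\beta(\overline{z}_0)$, use the homogeneity of Witt-vector arithmetic (Remark~\ref{R:homogeneity}) and completeness to converge, and obtain (b) by running finitely many further iterations, with the $\max\{\epsilon,\cdot\}$ coming from stopping after a norm threshold and the exponent $p^{-1}+\cdots+p^{-m}$ from the $p^{-k}$-th-power homogeneity of the carries, exactly as in \cite[Lemma~3.3.9]{part1}. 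The only differences are presentational (the paper subtracts $u^{-1}$ times the whole tail at each step rather than correcting coordinate-by-coordinate, and your $R^+$-membership worry is automatic in that formulation), so your plan is sound.
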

\begin{proof}
Write $z = [\overline{z}_0] + \varpi u$ with $u \in W_\varpi(R^+)^\times$.
We define $w_0, w_1, \ldots \in S$ as follows.
Put $w_0 = 0$. Given $w_l$, put $y_l = x - w_l z$,
write $y_l = \sum_{n=0}^\infty \varpi^n [\overline{y}_{l,n}]$,
and put $w_{l+1} = w_l + u^{-1} \sum_{n=0}^\infty \varpi^n [\overline{y}_{l,n+1}]$.
By writing
\[
y_{l+1} = [\overline{y}_{l,0}] - [\overline{z}_0] u^{-1} \sum_{n=0}^\infty \varpi^n [\overline{y}_{l,n+1}]
\]
and using the homogeneity of Witt vector arithmetic (Remark~\ref{R:homogeneity}),
one sees that 
\[
\sup_n \{\beta(\overline{y}_{l+1,n})\} \leq \max\{\beta(\overline{y}_{l,0}), \beta(\overline{z}_0) \sup_n \{\beta(\overline{y}_{l,n})\}\}
\qquad (\beta \in \calM(R)).
\]
To deduce (a), note that if $y_l$ does not have the desired property for any $l$, then
\[
\sup_n \{\alpha(\overline{y}_{l+1,n})\} \leq \alpha(\overline{z}_0) \sup_n \{\alpha(\overline{y}_{l,n})\}
\]
for all $l$, so the $w_l$ converge in $W_\varpi(R^+)$ to a limit $w$ satisfying $wz = x$.

To obtain (b), choose $N$ such that
\[
\alpha(\overline{z}_0)^N \sup_n \{\alpha(\overline{y}_{0,n})\} < \epsilon.
\]
Then for all $l \leq N$, we have
\[
\alpha(\overline{y}_{l,n}) \leq \max\{\epsilon, \alpha(\overline{y}_{l,0})\}.
\]
As in \cite[Lemma~3.3.9]{part1},
we may again use the homogeneity of Witt vector arithmetic (Remark~\ref{R:homogeneity})
to see by induction on $m$ that (b) holds with $y = y_l$ for any $l \geq N+m$.
\end{proof}

\begin{remark}\label{R:Fontaine primitive reified}
In the setting of reified adic spaces, there is no separate definition of a Fontaine primitive element; we make the definition by passing from a graded adic Banach ring $(A,A^{\Gr})$ to its associated adic Banach ring $(A,A^+)$ as in Definition~\ref{D:associated graded}.
\end{remark}

\begin{defn} \label{D:Gauss norm}
For $r>0$, as in \cite[\S 3.3]{part1} we may construct a power-multiplicative norm $\lambda(\alpha^r)$ on $W_{\varpi}(R^+)$satisfying 
\begin{equation} \label{eq:Gauss norm}
\lambda(\alpha^r)\left( \sum_{n=0}^\infty \varpi^n [\overline{x}_n] \right) = \sup_n \{p^{-n} \alpha(\overline{x}_n)^r\}.
\end{equation}
For any fixed $x$, the map $r \mapsto \lambda(\alpha^r)(x)$ is log-convex
\cite[Lemma~4.2.3]{part1}, \cite[Lemma~4.4]{kedlaya-noetherian}.
\end{defn}

\subsection{Perfectoid rings after Fontaine}
\label{subsec:Fontaine perfectoid}

In \cite{part1}, there was a hard distinction made between perfectoid algebras over $\Qp$ and perfect uniform Banach algebras in characteristic $p$. The definition of perfectoid algebras given by Fontaine \cite{fontaine-bourbaki} consolidates both cases and also includes some intermediate cases in which $p$ is nonzero but not invertible.

\begin{defn} \label{D:Fontaine perfectoid}
An adic Banach ring $(A,A^+)$ (with $A$ an $\gotho_E$-algebra, as per Hypothesis~\ref{H:towers}) is \emph{Fontaine perfectoid} if $A$ is uniform
 and contains a topologically nilpotent unit $u$ such that $u^p$ divides $p$ in $A^+$ (so in particular $p$ is topologically nilpotent) and $\overline{\varphi}: A^+/(u) \to A^+/(u^p)$ is surjective; as in \cite[Remark~3.4.9]{part1}, this implies that $\overline{\varphi}: A^+/(u') \to A^+/(u')$ is surjective whenever $u' \in A^+$ is a unit in $A$ dividing $p$ in $A^+$.
(This argument does not apply to $u' = p$ if $p$ is not itself a unit in $A$, but the claim in this case will follow from Theorem~\ref{T:Fontaine perfectoid correspondence}.)
A Banach algebra $A$ over $\gotho_E$ is Fontaine perfectoid if $(A,A^\circ)$ is Fontaine perfectoid.
\end{defn}

\begin{remark} \label{R:p-th root of pu}
For any $u$ as in Definition~\ref{D:Fontaine perfectoid},
we can choose $u' \in A^+$ with $(u')^p \equiv u \pmod{u^p A^+}$;
then $u'$ is also a topologically nilpotent unit in $A$.
\end{remark}

\begin{prop} \label{P:extend perfectoid}
Let $(A,A^+)$ be an adic Banach ring. Then $(A,A^+)$ is Fontaine perfectoid if and only if $A$ is Fontaine perfectoid.
\end{prop}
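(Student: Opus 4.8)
The statement to prove is Proposition~\ref{P:extend perfectoid}: for an adic Banach ring $(A,A^+)$, being Fontaine perfectoid is equivalent to $A$ being Fontaine perfectoid (i.e., the choice of integral subring is irrelevant). One direction is essentially definitional: if $A$ is Fontaine perfectoid, then by Definition~\ref{D:Fontaine perfectoid} the pair $(A,A^\circ)$ is Fontaine perfectoid. To pass from $(A,A^\circ)$ to an arbitrary $(A,A^+)$ one must check that the topologically nilpotent unit $u$ with $u^p \mid p$ in $A^\circ$ can be taken to lie in $A^+$, and that the surjectivity of $\overline{\varphi}\colon A^+/(u)\to A^+/(u^p)$ holds. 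The plan is first to record that $A^{\circ\circ}\subseteq A^+$ always (topologically nilpotent elements lie in every ring of integral elements, since $A^+$ is open and integrally closed), so by Remark~\ref{R:p-th root of pu} and the lemma just before Definition~\ref{D:Fontaine primitive} we may replace $u$ by a $p$-th power root lying in $A^{\circ\circ}\subseteq A^+$; the condition $u^p \mid p$ is about divisibility in $A^+$ versus $A^\circ$, and here one uses that $A^+$ and $A^\circ$ have the same associated graded / that $p$-divisibility of a topologically nilpotent element passes between them (indeed $a \mid p$ in $A^\circ$ with $a$ topologically nilpotent forces $p/a \in A^{\circ\circ} \subseteq A^+$). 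The surjectivity of Frobenius modulo $u$ is a statement about $A^{\circ\circ}/(u)$ essentially, and since $A^{\circ\circ}\subseteq A^+ \subseteq A^\circ$ it transfers between the candidate rings of integral elements; as noted in Definition~\ref{D:Fontaine perfectoid} (citing \cite[Remark~3.4.9]{part1}) it then upgrades to surjectivity of $\overline{\varphi}$ modulo $p$, which is independent of the choice of $A^+$.

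For the reverse direction --- if $(A,A^+)$ is Fontaine perfectoid for some particular $A^+$, then $A$ is Fontaine perfectoid, i.e., $(A,A^\circ)$ is Fontaine perfectoid --- one argues in the same way but in the other direction along the inclusions $A^{\circ\circ}\subseteq A^+\subseteq A^\circ$. The uniformity of $A$ is part of the hypothesis in either formulation and is unaffected. Given a topologically nilpotent unit $u\in A^+$ with $u^p\mid p$ in $A^+$ and $\overline{\varphi}\colon A^+/(u)\to A^+/(u^p)$ surjective, one needs $\overline{\varphi}\colon A^\circ/(u)\to A^\circ/(u^p)$ surjective. This is where the real content lies: given $x\in A^\circ$, one must produce $y\in A^\circ$ with $y^p\equiv x\pmod{u^p}$. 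The key point is that for $x\in A^\circ$ (power-bounded), $x$ is "close" to an element of $A^+$: more precisely, since $A$ is uniform, $A^\circ = A^{\circ\circ} + A^+$ is not quite right, but one can use that $u^p\mid p$ to write things modulo $u^p$, where $A^\circ/(u^p)$ and $A^+/(u^p)$ differ only by the image of $A^\circ$, and a standard successive-approximation / telescoping argument (solve $y_0^p\equiv x$ modulo $u$, correct modulo $u^2$, etc., converging $u$-adically which is the same as $p$-adically up to a bounded factor since $u^p\mid p$) using the uniform completeness of $A$ lets one lift solutions from $A^+$ to $A^\circ$. This is the same style of argument used for \cite[Remark~3.4.9]{part1} and its relative, and it is the one step I expect to require care.

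The main obstacle, then, is the Frobenius-surjectivity transfer for the larger ring $A^\circ$: making precise that solving $y^p\equiv x\pmod{u^p}$ in $A^+$ suffices to solve it (for $x\in A^\circ$) in $A^\circ$, using only uniformity of $A$ and $u^p\mid p$. Everything else --- the containments $A^{\circ\circ}\subseteq A^+\subseteq A^\circ$, the replacement of $u$ by a $p$-th root in $A^{\circ\circ}$ via Remark~\ref{R:p-th root of pu}, the divisibility bookkeeping, and the promotion from "modulo $u$" to "modulo $p$" --- is routine or already recorded in the cited remarks. I would organize the write-up as: (i) state the containments and reduce to a normalized choice of $u\in A^{\circ\circ}$; (ii) observe $u^p\mid p$ is insensitive to the choice of $A^+\supseteq A^{\circ\circ}$; (iii) prove the Frobenius-surjectivity transfer by approximation; (iv) conclude via \cite[Remark~3.4.9]{part1}. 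I would keep the approximation argument short by citing the analogous computation in \cite[\S3.4]{part1} wherever possible.
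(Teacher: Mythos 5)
There is a genuine gap at exactly the step you yourself flag as the main obstacle: you never supply the mechanism by which Frobenius surjectivity transfers from $A^+$ to a different ring of integral elements (in particular to $A^\circ$). Your successive-approximation scheme cannot get started: to solve $y_0^p \equiv x \pmod{u}$ for an arbitrary $x \in A^\circ$ you would already need surjectivity of $\overline{\varphi}$ on the larger ring modulo $u$, which is what is to be proved; the hypothesis only gives surjectivity of $A^+/(u) \to A^+/(u^p)$, and an element of $A^\circ$ need not be congruent modulo $u$ to an element of $A^+$. (Also, the definition only asks for surjectivity modulo $u^p$, so no iteration to convergence is needed; the telescoping is beside the point.) The paper's proof rests on a scaling trick absent from your sketch: after adjusting $u$ via Remark~\ref{R:p-th root of pu} so that $p/u^p$ is topologically nilpotent and $\overline{\varphi}\colon A^+/(u^{p^2}) \to A^+/(u^{p^3})$ is surjective, one takes $x$ in the other ring of integral elements $A^{+\prime}$, notes that $u^p x \in A^{\circ\circ} \subseteq A^+$, finds $y \in A^+$ with $y^p - u^p x \in u^{p^3} A^+$, and then checks that $y/u \in A^{+\prime}$ because its $p$-th power lies in $A^{+\prime}$ and $A^{+\prime}$ is integrally closed, with $(y/u)^p \equiv x \pmod{u^p A^{+\prime}}$. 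It is this ``multiply into $A^{\circ\circ}$, solve modulo a higher power of $u$, divide back and invoke integral closedness'' device that makes the transfer work, and it is needed in both directions: even the direction you call essentially definitional is not, since starting from surjectivity over $A^\circ$ (or over one $A^+$) and passing to another ring of integral elements, the error term a priori lies only in $u^p A^\circ \subseteq u^{p-1} A^{+\prime}$, which is one power of $u$ short of what the definition requires.

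A second, smaller error: your justification for transferring the divisibility $u^p \mid p$ is wrong as stated. It is not true that $a \mid p$ in $A^\circ$ with $a$ topologically nilpotent forces $p/a \in A^{\circ\circ}$: in the completion of $\Qp(p^{1/p^\infty})$ take $a = u^p = p$, so that $p/a = 1$. The correct repair is the one the paper makes in its first sentence: replace $u$ by $u'$ with $u'^p \equiv u \pmod{u^p A^+}$ (Remark~\ref{R:p-th root of pu}); then $u'^p$ differs from $u$ by a unit, so $p/u'^p$ differs from $p/u = (p/u^p)\,u^{p-1}$ by a unit and hence is topologically nilpotent, so it lies in $A^{\circ\circ}$ and therefore in every ring of integral elements, giving $u'^p \mid p$ there.
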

\begin{proof}
It is equivalent to check that if $(A,A^{+ \prime})$ is a second adic Banach ring with the same underlying Banach ring $A$ and $(A,A^+)$ is Fontaine perfectoid, then so is $(A,A^{+ \prime})$. 
By Remark~\ref{R:p-th root of pu},
we may choose the topologically nilpotent unit $u$ so that
$\overline{\varphi}: A^+/(u^{p^2}) \to A^+/(u^{p^3})$ is surjective
and $p/u^p$ is topologically nilpotent; in this case $u^p$ divides $p$ also in $A^{+ \prime}$. We may check that $\overline{\varphi}: A^{+ \prime}/(u) \to A^{+ \prime}/(u^p)$ is surjective by imitating the proof of \cite[Proposition~3.6.2(d)]{part1}: for $x \in A^{+\prime}$, we have $u^p x \in A^+$, so we can find
$y \in A^+$ with $y^p - u^p x \in u^{p^3} A^+ \subseteq u^{p^3-1} A^{+ \prime}$.
Hence $(y/u)^p - x \in u^{p^3-p-1} A^{\prime +}$, so $y/u \in A^{+\prime}$ (since $A^{+ \prime}$ is integrally closed) and $(y/u)^p - x \in u^p A^{+ \prime}$. This proves the claim.
\end{proof}

\begin{remark} \label{R:perfect uniform2}
If $(A,A^+)$ is an adic Banach ring of characteristic $p$, then $A$ is Fontaine perfectoid if and only if it is perfect (and hence uniform, by Remark~\ref{R:perfect uniform}). If $(A,A^+)$ is an adic Banach algebra over $\Qp$, then by \cite[Proposition~3.6.2(d,e)]{part1}, $(A,A^+)$ is Fontaine perfectoid if and only if it is perfectoid in the sense of \cite[Definition~3.6.1]{part1}.
In particular, an analytic field $K$ is Fontaine perfectoid if and only if it either is of characteristic $0$ and perfectoid, or of characteristic $p$ and perfect;
this means that it is not necessary to retrace any of \cite[\S 3.5]{part1} here.
\end{remark}

\begin{remark}
Recall that for $A$ a uniform Banach ring, there are various inequivalent norms on $A$ which define the same norm topology \cite[Remark~2.8.18]{part1}. In particular, if $A$ is a uniform Banach ring in which $p$ is a topologically nilpotent unit, there is a unique choice of norm defining the same topology under which $A$ becomes a uniform Banach algebra over $\Qp$. Note that the definition of a Fontaine perfectoid ring depends only on the underlying topology, so it is invariant under changes of norm of this sort. 
\end{remark}

The perfectoid correspondence for Fontaine perfectoid rings takes the following form.
\begin{defn} \label{D:correspondence}
Let $(A,A^+)$ be a Fontaine perfectoid adic Banach ring. Let 
$R^+(A,A^+)$ be the inverse limit of $A^+/(u^p)$ under $\overline{\varphi}$
for any $u$ as in Definition~\ref{D:Fontaine perfectoid}
(the exact choice being immaterial by \cite[Lemma~3.4.2]{part1}).
By Remark~\ref{R:p-th root of pu}, the class of $u$ in $A^+/(u^p)$ lifts to a
topologically nilpotent element $\overline{u} \in R^+(A,A^+)$.
Put $R(A,A^+) = R^+(A,A^+)[\overline{u}^{-1}]$; by Lemma~\ref{L:p-th power inverse limit}, this depends only on $A$, not on $A^+$ or the choice of $\overline{u}$.
As in \cite[Proposition~3.4]{cais-davis}, we see that the natural maps
\begin{equation} \label{eq:theta identification}
\varprojlim_{\varphi_\varpi} W_\varpi(A^+)
\to \varprojlim_{\varphi_\varpi} W_\varpi(A^+/(u^p))
\to W_\varpi(\varprojlim_{\overline{\varphi}} A^+/(u^p))
= W_\varpi(R^+(A, A^+))
\end{equation}
are isomorphisms. 
Define the map $\theta:
W_{\varpi}(R^+(A, A^+)) \to A^+$  by identifying the source with
$\varprojlim_{\varphi_\varpi} W_{\varpi}(A^+)$ using \eqref{eq:theta identification}, projecting the inverse limit onto its final term $W_{\varpi}(A^+)$, then projecting onto the first (Witt or ghost) component
$A^+$. Note that $\theta$ is surjective because $\overline{\varphi}: A^+/(u) \to A^+/(u^p)$ is surjective. Put $I(A,A^+)=\ker(\theta)$.
In the case $h=1$, $E = \QQ_p$, $\varpi = p$, we recover the map $\theta$
constructed in \cite[\S 3]{part1}; see \cite[Proposition~3.10]{cais-davis}.

Let $((R,R^+),I)$ be a pair in which $(R,R^+)$ is a perfect uniform adic Banach ring and $I$ is an ideal which is Fontaine primitive. Put $A^+((R,R^+),I) = W_\varpi(R^+)/I$
and $A((R,R^+),I) = W_\varpi(R^+)[[R]]/I$; note that the latter depends only on $R$ and $I$, not on $R^+$.
\end{defn}

We have the following analogue of \cite[Proposition~3.6.25(a)]{part1}.
\begin{lemma} \label{L:p-th power inverse limit}
For $(A,A^+)$ a Fontaine perfectoid adic Banach ring,
the maps
\[
R(A,A^+) \mapsto \varprojlim_{x \mapsto x^p} A,
\qquad R^+(A,A^+) \mapsto \varprojlim_{x \mapsto x^p} A^+
\]
taking $\overline{x}$ to $(\dots, \theta([\overline{x}^{1/p}]), \theta([\overline{x}]))$
are bijections.
\end{lemma}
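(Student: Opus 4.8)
The plan is to prove that the two maps displayed in the statement are mutually inverse bijections by directly analyzing the structure of $\varprojlim_{x \mapsto x^p} A^+$ and comparing it with $R^+(A,A^+) = \varprojlim_{\overline{\varphi}} A^+/(u^p)$, following the template of the corresponding result \cite[Proposition~3.6.25(a)]{part1} but keeping track of the topologically nilpotent unit $u$ with $u^p \mid p$ rather than assuming $p$ is invertible. First I would fix $u$ as in Definition~\ref{D:Fontaine perfectoid}, so that by Remark~\ref{R:p-th root of pu} we may even arrange $\overline{\varphi}\colon A^+/(u)\to A^+/(u^p)$ surjective together with a few extra powers of surjectivity, and recall the identification $W_\varpi(R^+) \cong \varprojlim_{\varphi_\varpi} W_\varpi(A^+)$ from \eqref{eq:theta identification}, together with the definition of $\theta$ as projection onto the final term followed by projection onto the first component. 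The key input is that for $x,y\in A^+$ with $x\equiv y \pmod{u^{p} A^+}$, a short calculation with the homogeneity of Witt vector arithmetic (Remark~\ref{R:homogeneity}, exactly as in \cite[Lemma~3.4.2]{part1}) gives $x^p \equiv y^p \pmod{u^{p^2} A^+}$, and more generally raising to the $p^k$-th power improves the congruence by a factor of $u^{p^{k+1}-p}$; this is what lets one pass between "coherent sequences of $p$-power roots in $A^+$" and "coherent sequences modulo $u^p$ under $\overline{\varphi}$".

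Next, for the map $R^+(A,A^+) \to \varprojlim_{x\mapsto x^p} A^+$: given $\overline{x}\in R^+(A,A^+)$, represented by a compatible system $(\overline{x}^{(n)})_n$ with $(\overline{x}^{(n+1)})^p\equiv \overline{x}^{(n)}\pmod{u^p}$, I would define its image to be $(\theta([\overline{x}^{1/p^n}]))_n$, where $[\overline{x}^{1/p^n}]\in W_\varpi(R^+)$ is the Teichmüller lift of the $p^n$-th root of $\overline{x}$ (available since $R^+$ is perfect), and check using the congruence-improvement lemma above that $\theta([\overline{x}^{1/p^n}])$ is a well-defined element of $A^+$ independent of the choice of representatives and that the $p$-th power of the $(n+1)$-st term is the $n$-th term, so that the sequence indeed lies in $\varprojlim_{x\mapsto x^p} A^+$. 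For the inverse direction, given $(x_n)_n \in \varprojlim_{x\mapsto x^p} A^+$ with $x_{n+1}^p = x_n$, reduce each $x_n$ modulo $u^p$; the compatibility $x_{n+1}^p = x_n$ forces $\overline{\varphi}(x_{n+1} \bmod u^p) = x_n\bmod u^p$, so $(x_n \bmod u^p)_n$ defines an element $\overline{x}\in R^+(A,A^+)$. The two constructions are inverse because $\theta([\overline{x}]) \bmod u^p$ recovers the final component $x_0 \bmod u^p$ of the coherent sequence by definition of $\theta$ together with \eqref{eq:theta identification}, and conversely a coherent sequence in $A^+$ is determined by its image in $R^+(A,A^+)$ since, by the congruence-improvement lemma, $\theta([\overline{x}^{1/p^n}])$ is the limit of $p^m$-th powers of any lifts of $\overline{x}^{1/p^{n+m}}\bmod u^p$ and this limit is forced. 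The statement for $R(A,A^+) = R^+(A,A^+)[\overline{u}^{-1}]$ and $\varprojlim_{x\mapsto x^p} A$ then follows by inverting $\overline{u}$ (equivalently $u$) on both sides, noting that $\theta([\overline{u}])$ is, up to a unit, $u$, so the localizations match.

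The main obstacle I expect is the bookkeeping of the powers of $u$ in the congruence-improvement step and verifying that everything is independent of the many choices involved (the choice of $u$, the choice of representatives $\overline{x}^{(n)}$, the choice of lifts in $A^+$ of classes in $A^+/(u^p)$); this is where \cite{part1} can work with the principal ideal $(p)$ directly, whereas here one must carry $u$ throughout and invoke $u^p\mid p$ together with the uniformity of $A$ (so that $A^\circ = A^+$-style integral closedness arguments, as in the proof of Proposition~\ref{P:extend perfectoid}, are available to conclude that formal $p$-power roots actually land in $A^+$). Once the congruence lemma is established with explicit exponents, the rest is a formal diagram chase essentially identical to \cite[Proposition~3.6.25(a)]{part1}, and indeed \cite[Proposition~3.4]{cais-davis} already records the isomorphism \eqref{eq:theta identification} that does most of the work; so I would expect the proof to be short, citing \cite{part1} and \cite{cais-davis} for the parts that transcribe without change and spelling out only the $u$-adic estimates.
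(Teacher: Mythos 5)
Your proposal is correct and follows essentially the same route as the paper: there, too, one checks that the displayed map and reduction modulo $u^p$ are mutually inverse — the round trip on $\varprojlim_{x \mapsto x^p} A^+$ being the identity via the same congruence-improvement induction (phrased in the paper as: agreement modulo $\varpi^n$ for all terms formally upgrades to $\varpi^{n+1}$) — and then the statement for $R(A,A^+)$ and $A$ is deduced by writing both sides as $\varinjlim_n \overline{u}^{-n}$ of the integral versions, which is your "invert $\overline{u}$" step. The only quibble is that your claimed gain of a factor $u^{p^{k+1}-p}$ after taking $p^k$-th powers is too strong for $p \geq 3$ (since $p \in u^p A^+$, each $p$-th power gains only roughly a factor $u^p$), but as only the unboundedness of the gain is used, this does not affect the argument.
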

\begin{proof}
Put $R = R(A,A^+)$, $R^+ = R^+(A,A^+)$,
and $R' = \varprojlim_{x \mapsto x^p} A^+$; note that there is also a natural map
$R' \to \varprojlim_{\overline{\varphi}} A^+/(u^p) = R^+$.
The fact that the composition $R^+ \to R' \to R^+$ is the identity is a consequence of the construction of $\theta$.
To see that the composition $R' \to R^+ \to R'$ is the identity, note that any sequence $(\dots, x_1, x_0)$ maps to another sequence $(\dots,y_1,y_0)$ such that $x_i - y_i \in \varpi^n A^+$ for all $i$ for $n=1$, but this formally implies the same for $n+1$, hence for all $n$. This proves that the second map is a bijection; the bijectivity of the first map follows from the equalities
\[
R = \varinjlim_n \overline{u}^{-n} R^+, \qquad
\varprojlim_{x \mapsto x^p} A = \varinjlim_n \overline{u}^{-n} R'.
\]
The proof is thus complete.
\end{proof}

\begin{theorem} \label{T:Fontaine perfectoid correspondence}
The functors
\[
(A,A^+) \mapsto ((R,R^+),I)(A,A^+), \qquad
((R,R^+),I) \mapsto (A,A^+)((R,R^+),I)
\]
define equivalences of categories between the category of Fontaine perfectoid adic Banach rings and the category of pairs $((R,R^+),I)$ in which $(R,R^+)$ is a perfect uniform adic Banach ring and $I$ is an ideal of $W_\varpi(R^+)$ which is Fontaine primitive.
\end{theorem}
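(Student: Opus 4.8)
The plan is to prove that the two functors are quasi-inverse by checking the compositions in both directions, building directly on the maps $\theta$ and the identifications set up in Definition~\ref{D:correspondence}. First I would verify the easier composition: starting from a pair $((R,R^+),I)$, form $A^+ = W_\varpi(R^+)/I$ and $A = W_\varpi(R^+)[[R]]/I$, equip these with the quotient Banach structure (via the Gauss norm $\lambda(\alpha^r)$ of Definition~\ref{D:Gauss norm} for a suitable $r$, descended to the quotient), and check that $(A,A^+)$ is indeed Fontaine perfectoid. Here the key inputs are that $\overline z_1$ being a unit forces $\varpi$ (equivalently a suitable power of $[\overline u]$) to be a topologically nilpotent unit in $A$ with the right divisibility, and that $\varphi_\varpi$ induces the required surjectivity of $\overline\varphi$ on $A^+/(\text{unit})$; the division algorithm of Lemma~\ref{L:primitive division}(a) is precisely what gives surjectivity and controls the norms so that $A$ is uniform. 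Then one computes $R^+(A,A^+)$: by construction $A^+ \cong W_\varpi(R^+)/I$, and passing to the inverse limit under $\overline\varphi$ recovers $R^+$ (this is where Lemma~\ref{L:p-th power inverse limit} and the isomorphisms \eqref{eq:theta identification} do the work), while the ideal $I$ is recovered as $\ker(\theta)$. One must check this kernel is again principal generated by a Fontaine primitive element, but $I$ itself was such, and $W_\varpi(R^+) \to W_\varpi(R^+)/I = A^+$ exhibits $\ker\theta = I$ directly.

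Next I would verify the other composition: starting from a Fontaine perfectoid $(A,A^+)$, set $(R,R^+) = (R(A,A^+),R^+(A,A^+))$ and $I = \ker(\theta)$. One must show $(R,R^+)$ is a perfect uniform adic Banach ring (perfectness is immediate from the inverse-limit-under-Frobenius construction; uniformity and the Banach structure come from transporting the norm via Lemma~\ref{L:p-th power inverse limit}, identifying $R$ with $\varprojlim_{x\mapsto x^p} A$), that $I$ is Fontaine primitive — its generator has $\overline z_0$ topologically nilpotent and $\overline z_1$ a unit because $u^p \mid p$ and the surjectivity hypothesis in Definition~\ref{D:Fontaine perfectoid} translate into exactly these conditions on a generator of $\ker\theta$, arguing as in the classical case — and finally that the natural map $W_\varpi(R^+)[[R]]/I \to A$ (resp.\ $W_\varpi(R^+)/I \to A^+$) is an isomorphism of Banach rings. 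Surjectivity of $\theta$ is already noted in Definition~\ref{D:correspondence}; injectivity after quotienting by $I$ is tautological; that it is a homeomorphism (strict) follows from the open mapping theorem (Theorem~\ref{T:open mapping}) once both sides are complete, combined with the norm estimates from Lemma~\ref{L:primitive division}(b).

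Throughout, I would lean on the fact that all the hard analytic content — the division algorithm, the norm convexity, the inverse limit identifications — has been isolated into Lemmas~\ref{L:primitive division}, \ref{L:p-th power inverse limit} and Definition~\ref{D:Gauss norm}, and on Remark~\ref{R:homogeneity} to license transporting the arguments of \cite[\S 3]{part1} (especially the proof of \cite[Theorem~3.6.5]{part1} or its analogue) from ordinary Witt vectors to $W_\varpi$. In fact the cleanest exposition is probably to reduce to that theorem: in the equal-characteristic case $W_\varpi(R) = R\llbracket\varpi\rrbracket$ and the statement degenerates, while in the mixed-characteristic case $E/\QQ_p$ is finite and one extends scalars from the $\QQ_p$-statement, citing Remark~\ref{R:homogeneity} for why this is legitimate. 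I would also need to check functoriality — that a morphism of Fontaine perfectoid rings induces a morphism of pairs compatibly with $\theta$ — but this is formal from the naturality of $\theta$ and of the inverse limits.

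The main obstacle I expect is the careful bookkeeping around the Banach (topological) structure rather than the ring structure: one must choose compatible norms on $W_\varpi(R^+)$, on the quotient $A^+ = W_\varpi(R^+)/I$, and on $R^+(A,A^+) = \varprojlim_{\overline\varphi} A^+/(u^p)$, and verify that the bijections of Lemma~\ref{L:p-th power inverse limit} and the isomorphism $W_\varpi(R^+)[[R]]/I \cong A$ are not merely algebraic but topological isomorphisms, with the resulting $(R,R^+)$ genuinely uniform. This is exactly the point where the open mapping theorem and the precise norm estimates of Lemma~\ref{L:primitive division}(b) must be invoked, and where a naive reduction to \cite{part1} needs the most care because the general base ring $(R,R^+)$ there is handled by the same machinery we are now re-deploying.
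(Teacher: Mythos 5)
Your proposal is correct and takes essentially the same route as the paper's own (deliberately brief) proof: mimic \cite[Theorem~3.6.5]{part1} via Remark~\ref{R:homogeneity}, with Lemma~\ref{L:primitive division} supplying the division algorithm and norm control, the Gauss norm of Definition~\ref{D:Gauss norm} (descended to the quotient) giving the Banach structure on $A = W_\varpi(R^+)[[R]]/I$, and Lemma~\ref{L:p-th power inverse limit} together with \eqref{eq:theta identification} identifying $R$ with the Frobenius inverse limit so that the two compositions are naturally isomorphic. The only place the paper is more explicit than you are is the principality of $\ker(\theta)$ in the direction starting from $(A,A^+)$: it exhibits the Fontaine primitive element $\varpi t + [\overline{u}]v$ with $t=-1$ and $v$ a lift of $\varpi/\theta([\overline{u}])$, and checks via Lemma~\ref{L:primitive division} that this element generates $\ker(\theta)$, where you defer to ``arguing as in the classical case.''
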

\begin{proof}
As this is closely analogous to \cite[Theorem~3.6.5]{part1}, we only give a quick summary of the proof. Given $(A,A^+)$, we equip $R$ with the pullback of the spectral norm on $A$ along the composition $R(A,A^+) \to \varprojlim_{x \mapsto x^p} A \to A$. Using the fact that $\varpi$ is topologically nilpotent in $A$, we see that this formula defines a power-multiplicative norm under which $R$ is complete. It follows easily that $(R,R^+)$ is perfect uniform.
We may construct an element of $I(R,R^+)$ which is Fontaine primitive as follows: for $\overline{u}$ as in Definition~\ref{D:correspondence}, 
lift $\varpi/\theta([\overline{u}])$ to some $v \in W_\varpi(R^+)$ and take $t=-1$, so that
$\varpi t + [\overline{u}]v \in \ker(\theta)$.
Using Lemma~\ref{L:primitive division}, one checks that $\varpi t + [\overline{u}]v$ is in fact a generator of $\ker(\theta)$.

Given $((R,R^+), I)$, equip $W_\varpi(R^+)[[R]]$ with the lift of the spectral norm on $R$ via the function 
$\lambda$ of Definition~\ref{D:Gauss norm}, then equip $A^+ = W_\varpi(R^+)/I$ and $A = W_\varpi(R^+)[[R]]/I$ with the quotient norm. 
By Lemma~\ref{L:primitive division}, this is again a power-multiplicative norm under which $A$ is complete; it is then apparent that $(A,A^+)$ is Fontaine perfectoid
and that the two functors compose in either direction to natural isomorphisms.
\end{proof}

\begin{remark} \label{R:same quotient}
With notation as in Definition~\ref{D:correspondence} and Theorem~\ref{T:Fontaine perfectoid correspondence}, we have $A^+/(u) \cong R^+/(\overline{u})$.
\end{remark}

\begin{remark} \label{R:perfectoid correspondence reified}
As in \cite[Theorem~11.7]{kedlaya-reified},
the reified analogue of Theorem~\ref{T:Fontaine perfectoid correspondence} puts in correspondence graded adic Banach rings $(A,A^{\Gr})$ which are Fontaine perfectoid (meaning that $A$ is Fontaine perfectoid) with pairs $((R,R^{\Gr}), I)$ in which $R$ is a perfect uniform Banach algebra over $\FF_p$ and $I$ is an ideal of $W_\varpi(R^+)$ which is Fontaine primitive. Given $(A,A^{\Gr})$, 
let $(A,A^+)$ be the associated adic Banach ring;
we may apply Theorem~\ref{T:Fontaine perfectoid correspondence} to obtain $((R,R^+),I)$,
and then take 
\[
R^{+,r} = \varprojlim 
\left(
\cdots \stackrel{x \mapsto x^p}{\longrightarrow} A^{+,r/p} \stackrel{x \mapsto x^p}{\longrightarrow} A^{+,r} \right).
\]
Given $((R,R^{\Gr}), I)$, let $(R,R^+)$ be the associated adic Banach ring;
we may apply Theorem~\ref{T:Fontaine perfectoid correspondence} to  obtain
$(A,A^+)$, and then take
$A^{+,r}$ to be the image of $W_\varpi(R^{+,r})$ in $A$.
\end{remark}

\begin{remark} \label{R:complex almost optimal}
We make explicit a variant of \cite[Remark~3.1.6]{part1} which was already used several times in \cite{part1}. Let $C^\bullet$ be a complex in which each term is a perfect uniform Banach ring and each morphism is a $\ZZ_p$-linear combination of ring homomorphisms. If such a complex is strict exact at some position, then it is almost optimal there. Likewise, if we apply Theorem~\ref{T:Fontaine perfectoid correspondence} to each ring with respect to a compatible family of Fontaine primitive ideals, by Lemma~\ref{L:primitive division} the resulting untilting complex is again almost optimal exact.
\end{remark}

We now return to the process of extending the discussion of \cite[\S 3.6]{part1} to Fontaine perfectoid rings. We continue with an analogue of \cite[Example~3.6.6]{part1}.
\begin{example} \label{exa:polynomials}
Suppose that $(A,A^+)$ and $((R,R^+),I)$ correspond as in Theorem~\ref{T:Fontaine perfectoid correspondence}. For any index set $J$ and any function $f: J \to (0, +\infty)$, let $B,S$ be the completions of
\[
A[T_j^{p^{-\infty}}: j \in J], \qquad
R[T_j^{p^{-\infty}}: j \in J]
\]
for the weighted Gauss norms under which $T_j^{p^{-n}}$ has norm $f(j)^{p^{-n}}$.
Then $B$ is perfectoid, $S$ is perfect, and $(B,B^\circ)$ corresponds to $((S,S^\circ), IW_\varpi(S^\circ))$ via Theorem~\ref{T:Fontaine perfectoid correspondence}. 

Suppose in addition that $f(j) = 1$ for all $j$. Let $B^+, S^+$ be the completions of
\[
A^+[T_j^{p^{-\infty}}: j \in J], \qquad
R^+[T_j^{p^{-\infty}}: j \in J]
\]
in $B,S$. Then  $(B,B^+)$ corresponds to $((S,S^+), IW_\varpi(S^+))$ via Theorem~\ref{T:Fontaine perfectoid correspondence}. 

Returning to the case of general $f$, suppose that $(A,A^{\Gr})$ and $((R,R^{\Gr}), I)$ correspond as in Remark~\ref{R:perfectoid correspondence reified}. View the rings
\[
A^{\Gr}[T_j^{p^{-\infty}}: j \in J], \qquad
R^{\Gr}[T_j^{p^{-\infty}}: j \in J]
\]
as graded rings with $T_j^{p^{-n}}$ placed in degree $p^{-n} f(j)$; then let
$B^{\Gr}, S^{\Gr}$ be the images of these rings in $\Gr B, \Gr S$.
Then $(B,B^{\Gr})$ corresponds to $((S, S^{\Gr}), IW_\varpi(S^+))$ via 
Remark~\ref{R:perfectoid correspondence reified}.
\end{example}

We next have an analogue of \cite[Proposition~3.6.11]{part1}, but with an extra argument added to remedy a deficiency in the proof of that statement (see Remark~\ref{R:gap in tensor product}).

\begin{theorem} \label{T:tensor product}
Let $(A,A^+) \to (B,B^+), (A,A^+) \to (C,C^+)$ be morphisms of Fontaine perfectoid adic Banach rings. Let $((R,R^+),I)$ be the pair corresponding to $(A,A^+)$ via
Theorem~\ref{T:Fontaine perfectoid correspondence} and put
$(S,S^+) = (R,R^+)(B,B^+), (T,T^+) = (R,R^+)(C,C^+)$.
Put
\[
(D,D^+) = (B,B^+) \widehat{\otimes}_{(A,A^+)} (C,C^+),
\qquad
(U,U^+) = (S,S^+) \widehat{\otimes}_{(R,R^+)} (T,T^+).
\]
\begin{enumerate}
\item[(a)]
The rings
$(D,D^+)$ and $((U,U^+), IW_\varpi(U^+))$ correspond via Theorem~\ref{T:Fontaine perfectoid correspondence}.
\item[(b)]
The tensor product norm on $D$ induced by the spectral norms on $B$ and $C$ coincides with the spectral norm on $D$.
\end{enumerate}
\end{theorem}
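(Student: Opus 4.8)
The plan is to prove (a) and (b) together by reducing to the ``standard'' perfectoid rings of Example~\ref{exa:polynomials}, where the tilt, the primitive ideal, and the norm are all explicit, and then descending along strict quotient maps. First consider the standard case: suppose $B = A\{T_j^{p^{-\infty}}: j \in J\}$ and $C = A\{T_k^{p^{-\infty}}: k \in K\}$ are obtained from $A$ as in Example~\ref{exa:polynomials}. Then $D = B \widehat{\otimes}_A C$ is again of this form with index set $J \sqcup K$, so Example~\ref{exa:polynomials} directly gives that $D$ is Fontaine perfectoid with tilt $U = S\widehat{\otimes}_R T = R\{T_j^{p^{-\infty}}: j \in J \sqcup K\}$ and primitive ideal $IW_\varpi(U^+)$, which is (a). For (b) one checks that the tensor-product norm on $D$ induced by the weighted Gauss norms on $B$ and $C$ is again the weighted Gauss norm: the bound $\leq$ is submultiplicativity applied to monomials, and $\geq$ comes from reading off coefficients of an arbitrary representation $\sum_i b_i \otimes c_i$. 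Since the Gauss norm is power-multiplicative and equals the spectral norm, (b) follows in this case.

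\textbf{Reduction to the standard case.} Given arbitrary $(B,B^+)$ with tilt $(S,S^+)$, using $S^\circ$ (or any topologically generating set) as an index set produces a bounded surjection $S_0 = R\{T_{\overline{s}}^{p^{-\infty}}: \overline{s} \in S^\circ\} \to S$, which is automatically strict by the open mapping theorem (Theorem~\ref{T:open mapping}). Applying Theorem~\ref{T:Fontaine perfectoid correspondence} with the compatible primitive ideals $I$, $IW_\varpi(S_0^+)$, $IW_\varpi(S^+)$ lifts this to a morphism $B_0 \to B$ with $B_0$ standard; it is surjective modulo a topologically nilpotent unit by Remark~\ref{R:same quotient} together with surjectivity of $S_0^+ \to S^+$, hence surjective by completeness, hence strict by Theorem~\ref{T:open mapping}. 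Doing the same for $C$ and forming completed tensor products gives a strict surjection $D_0 = B_0 \widehat{\otimes}_A C_0 \to D$ with $D_0$ standard (strictness of $D_0 \to D$ again by Theorem~\ref{T:open mapping}), and with tilt $U_0 = S_0 \widehat{\otimes}_R T_0$ a standard perfect uniform ring.

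\textbf{Descent.} Set $U = S\widehat{\otimes}_R T$ (with its tensor-product norm) and $\mathfrak{e} = \ker(U_0 \to U)$. The key point is that $S \otimes_R T$ is perfect --- fibred products of perfect $\FF_{p^h}$-algebras over a perfect base are perfect --- so $U$ is reduced and $\mathfrak{e}$ is closed under extraction of $p$-th roots; a short argument then shows the quotient norm on $U_0/\mathfrak{e} = U$ is power-multiplicative (for $p$-th-root-closed $\mathfrak{e}$ in a perfect uniform ring, every element of the residue class of $\overline{x}^p$ is the $p$-th power of an element of the residue class of $\overline{x}$, forcing $|\overline{x}^p| = |\overline{x}|^p$ and hence $|\overline{x}^n| = |\overline{x}|^n$). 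Thus $(U,U^+)$ is perfect uniform with spectral norm equal to the quotient norm, and $IW_\varpi(U^+)$ is Fontaine primitive (topological nilpotence of $\overline{z}_0$ and the unit property of $\overline{z}_1$ are preserved under $W_\varpi(R^+) \to W_\varpi(U^+)$). One then transports this back through the correspondence: using Lemma~\ref{L:primitive division} to control the $\lambda$-norm of Definition~\ref{D:Gauss norm} modulo $I$, and Remark~\ref{R:complex almost optimal} so that the strict (almost optimal) exact sequence $0 \to \overline{\mathfrak{e}} \to D_0 \to D \to 0$ matches the one obtained from $0 \to \mathfrak{e} \to U_0 \to U \to 0$ after applying Theorem~\ref{T:Fontaine perfectoid correspondence}, one identifies $\ker(D_0 \to D)$ with the ideal generated by $\mathfrak{e}$ in $W_\varpi(U_0^+)[[U_0]]/IW_\varpi(U_0^+)$, yielding an isometric isomorphism $D \cong W_\varpi(U^+)[[U]]/IW_\varpi(U^+)$. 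By Theorem~\ref{T:Fontaine perfectoid correspondence} this is (a); and (b) follows by matching the $\lambda$-quotient norm on $D$ (which by construction of the correspondence is its spectral norm) with the $\pi$-tensor-product norm --- the two agree on $D_0$ by the standard case and both pass to the quotient.

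\textbf{Main obstacle.} I expect the difficulty to be concentrated in the last paragraph, and this is where the argument must go beyond \cite{part1} (cf.\ Remark~\ref{R:gap in tensor product}): without flatness or noetherian hypotheses on completed tensor products one cannot invoke flat base change to describe $\ker(D_0 \to D)$ or to see that the $\pi$-norm is already uniform, so one is forced to track the kernel and the norm explicitly through the Witt-vector presentation and the primitive-division estimates of Lemma~\ref{L:primitive division}. The perfectness of $S \otimes_R T$ and the resulting $p$-th-root-closedness of $\mathfrak{e}$ are the structural inputs that make this bookkeeping possible.
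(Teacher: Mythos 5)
Your overall route --- treat the free perfectoid rings of Example~\ref{exa:polynomials} first, present $B$ and $C$ as strict quotients of such rings, and descend --- is the same as the paper's, and your standard case and the construction of the strict surjections $B_0 \to B$, $C_0 \to C$ are essentially fine (modulo the cosmetic point that one should index by $S^+$, $T^+$ rather than $S^\circ$, $T^\circ$ to keep track of the rings of integral elements). The gap is in the descent step, which is exactly where \cite[Proposition~3.6.11]{part1} was incomplete (Remark~\ref{R:gap in tensor product}). Writing $\mathfrak{e} = \ker(U_0 \to U)$, what is needed is the inclusion $\ker\bigl(D_0 \to A((U,U^+),IW_\varpi(U^+))\bigr) \subseteq \ker(D_0 \to D)$, i.e., that the elements $\theta([\overline{r}])$ for $\overline{r} \in \mathfrak{e}$ die in $D = B \widehat{\otimes}_A C$; the reverse inclusion is the easy one, coming from the map $D \to A((U,U^+),IW_\varpi(U^+))$ furnished by the universal property of the completed tensor product (which your writeup never invokes, though it is needed to conclude that the factored map is an isomorphism). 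Your justification of the hard inclusion --- that Remark~\ref{R:complex almost optimal} lets one ``match'' the sequence $0 \to \overline{\mathfrak{e}} \to D_0 \to D \to 0$ with the one obtained from $0 \to \mathfrak{e} \to U_0 \to U \to 0$ by applying Theorem~\ref{T:Fontaine perfectoid correspondence} --- is circular: the correspondence and Remark~\ref{R:complex almost optimal} apply to complexes of perfect uniform Banach \emph{rings}, and the assertion that untilting $U_0 \to U$ reproduces $D_0 \to D$ (equivalently, that the two kernels agree) is precisely statement (a); $D$ is defined as a completed tensor product, not as an untilt, and nothing in the correspondence says it commutes with completed tensor products or with passage to kernels.

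Moreover, the structural input you single out --- perfectness of $S \otimes_R T$ and the resulting stability of $\mathfrak{e}$ under $p$-th roots --- is not what makes the hard inclusion work. Since $\ker(D_0 \to D)$ consists of convergent sums of simple tensors with one factor in $\ker(B_0 \to B)$ or $\ker(C_0 \to C)$, one must exhibit $\theta([\overline{r}])$ in that form. The paper does this by using the strict exact sequence $(J \widehat{\otimes}_R \tilde{T}) \oplus (\tilde{S} \widehat{\otimes}_R K) \to \tilde{U} \to U \to 0$ to write $\overline{r}$ as a convergent sum of simple tensors with one factor in $J = \ker(\tilde{S} \to S)$ or $K = \ker(\tilde{T} \to T)$, and then using the $\varphi^{-1}$-stability of $J$ and $K$ \emph{separately}, together with the homogeneity of Witt vector arithmetic (Remark~\ref{R:homogeneity}), to lift this to an expression $[\overline{r}] = \sum_i \varpi^{n_i}[\overline{s}'_i] \otimes [\overline{t}'_i]$ with $(\overline{s}'_i, \overline{t}'_i) \in (J,\tilde{T}) \cup (\tilde{S}, K)$, whence $\theta([\overline{r}])$ visibly maps to zero in $B \widehat{\otimes}_A C$. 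Root-closedness of $\mathfrak{e}$ as a whole provides no such tensor decomposition, so the ``bookkeeping'' you appeal to has no mechanism behind it. (A smaller issue of the same kind occurs in your part (b): the tensor-product norm on $D$ is not literally the quotient of the one on $D_0$, since the spectral norms on $B$, $C$ agree with the quotient norms from $B_0$, $C_0$ only up to almost optimal constants; one still needs the explicit $(1+\epsilon)$ argument via Lemma~\ref{L:primitive division} and the equality of the spectral and tensor-product norms on $U$ from \cite[Remark~3.1.6(c)]{part1}.)
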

\begin{proof}
Construct $(\tilde{S}, \tilde{S}^+)$, $(\tilde{T}, \tilde{T}^+)$ out of $(R,R^+)$ as in 
Example~\ref{exa:polynomials} using the index sets $S^+, T^+$; we then have canonical strict surjections $(\tilde{S},\tilde{S}^+) \to (S,S^+)$,
$(\tilde{T},\tilde{T}^+) \to (T,T^+)$. Apply Theorem~\ref{T:Fontaine perfectoid correspondence} to $((\tilde{S},\tilde{S}^+), IW_\varpi(\tilde{S}^+))$,
$((\tilde{T},\tilde{T}^+), IW_\varpi(\tilde{T}^+))$ to obtain
$(\tilde{B},\tilde{B}^+), (\tilde{C}, \tilde{C}^+)$.
Put
\[
(\tilde{D},\tilde{D}^+) = (\tilde{B}, \tilde{B}^+) \widehat{\otimes}_{(A,A^+)} (\tilde{C}, \tilde{C}^+),
\qquad
(\tilde{U}, \tilde{U}^+) = (\tilde{S}, \tilde{S}^+) \widehat{\otimes}_{(R,R^+)} (\tilde{T}, \tilde{T}^+).
\]
By Example~\ref{exa:polynomials}, 
$(\tilde{D},\tilde{D}^+)$ and $((\tilde{U}, \tilde{U}^+), IW_\varpi(\tilde{U}^+))$ correspond via Theorem~\ref{T:Fontaine perfectoid correspondence}.

Let $J,K$ be the kernels of $\tilde{S} \to S, \tilde{T} \to T$. We then have a strict exact sequence
\[
(J \widehat{\otimes}_R \tilde{T}) \oplus (\tilde{S} \widehat{\otimes}_R K) \to \tilde{U} \to U \to 0.
\]
In particular, any $\overline{r} \in \ker(\tilde{U} \to U)$ can be written as a convergent sum of simple tensors $\overline{s}_i \otimes \overline{t}_i$ with $(\overline{s}_i, \overline{t}_i) \in (J, \tilde{T}) \cup (\tilde{S}, K)$.
Using Witt vector arithmetic and the fact that $J,K$ are $\varphi^{-1}$-stable ideals, we may write $[\overline{r}] \in W_\varpi(\tilde{U})$ as a convergent sum $\sum_i \varpi^{n_i} [\overline{s}'_i] \otimes [\overline{t}'_i]$ with $(\overline{s}'_i, \overline{t}'_i) \in (J, \tilde{T}) \cup (\tilde{S}, K)$. Consequently, $\theta([\overline{r}]) \in \tilde{D}$ maps to zero in $B \widehat{\otimes}_A C$;
the map $\tilde{D} \to D$ thus factors through an injective map $A((U,U^+),IW_\varpi(U^+)) \to D$. We also have a map in the other direction coming from the universal property of the completed tensor product; by examining the effects on $\theta([\overline{s}]), \theta([\overline{t}])$ for $\overline{s} \in S, \overline{t} \in T$, we see that these maps are inverses of each other. This proves (a).

To prove (b), we check that the map $D \to A((U,U^+),IW_\varpi(U^+))$ is isometric for the tensor product norm on $D$ and the spectral norm on the target. Given $x \in A((U,U^+),IW_\varpi(U^+))$ of norm $c>0$, apply Lemma~\ref{L:primitive division} to write $x = \sum_{i=0}^\infty \varpi^i \theta([\overline{x}_i])$ with $\theta([\overline{x}_i])$ having norm at most $c$.
By \cite[Remark~3.1.6(c)]{part1}, the spectral norm on $U$ coincides with the tensor product norm; hence for any $\epsilon > 0$, we may write $\overline{x}_i \in U$ as a convergent sum of simple tensors $\overline{s}_i \otimes \overline{t}_i$ each of norm at most $(1+\epsilon)c$. Using Witt vector arithmetic, we thus obtain an expression of $x$ as a convergent sum $\sum_{i=0}^\infty \varpi^{n_i} \theta([\overline{s}'_i]) \otimes \theta([\overline{t'}_i])$ in which each simple tensor $\overline{s}'_i \otimes \overline{t}'_i$ has norm at most $(1+\epsilon)c$. This proves the claim.
\end{proof}

\begin{remark} \label{R:gap in tensor product}
The proof of \cite[Proposition~3.6.11]{part1} is incomplete: it is essentially the proof
of Theorem~\ref{T:tensor product}(b), without an argument to show that
$B \widehat{\otimes}_A C \to A(S \widehat{\otimes}_R T)$ is injective. However, Theorem~\ref{T:tensor product} implies that
\cite[Proposition~3.6.11]{part1} is correct as stated.
\end{remark}

\begin{remark} \label{R:tensor product reified}
The statement of Theorem~\ref{T:tensor product} adapts to reified adic spectra, with an analogous proof, as follows.
Suppose that $(A,A^{\Gr})$ and $((R,R^{\Gr}),I)$ correspond via
Remark~\ref{R:perfectoid correspondence reified}.
For morphisms $(A,A^{\Gr}) \to (B,B^{\Gr}), (A,A^{\Gr}) \to (C,C^{\Gr})$
corresponding to $(R,R^{\Gr}) \to (S,S^{\Gr}), (R,R^{\Gr}) \to (T,T^{\Gr})$,
put
\[
(D,D^{\Gr}) = (B,B^{\Gr}) \widehat{\otimes}_{(A,A^{\Gr})} (C,C^{\Gr}),
\qquad
(U,U^{\Gr}) = (S,S^{\Gr}) \widehat{\otimes}_{(R,R^{\Gr})} (T,T^{Gr})
\]
with tensor products defined as in \cite[Definition~6.1]{kedlaya-reified}.
Then one may show as above that
$(D,D^{\Gr})$ and $((U,U^{\Gr}), IW_\varpi(U^+))$ correspond via Remark~\ref{R:perfectoid correspondence reified}.
\end{remark}

We next have an analogue of \cite[Theorem~3.6.14(a,b)]{part1}.
\begin{theorem} \label{T:Fontaine perfectoid homeomorphism}
With notation as in Theorem~\ref{T:Fontaine perfectoid correspondence}, there is a functorial identification $\Spa(A,A^+) \cong \Spa(R,R^+)$ which matches up rational subspaces on both sides. More precisely, for $\overline{f}_1, \dots, \overline{f}_n, \overline{g} \in R$, the rational subspace
\[
\{v \in \Spa(R,R^+): v(\overline{f}_i) \leq v(\overline{g}) \quad (i=1,\dots,n)\}
\]
corresponds to
\[
\{v \in \Spa(A,A^+): v(f_i) \leq v(g) \quad (i=1,\dots,n)\}
\mbox{ for } f_i = \theta([\overline{f}_i]), g_i = \theta([\overline{g}_i]),
\]
and every rational subspace of $\Spa(A,A^+)$ can be written in this form.
\end{theorem}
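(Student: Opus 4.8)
The plan is to transport the perfectoid correspondence from the level of rings to the level of adic spectra, reducing everything to statements about rational localizations and the behavior of the map $\theta$. First I would construct the map of topological spaces. Given $v \in \Spa(A,A^+)$, define $v^\flat$ on $R$ by setting $v^\flat(\overline{x}) = v(\theta([\overline{x}]))$; using Lemma~\ref{L:p-th power inverse limit} and the multiplicativity of $[\bullet]$ together with the compatibility $\theta([\overline{x}^{1/p}])^p = \theta([\overline{x}])$ (so that $v^\flat(\overline{x})^{1/p} = v^\flat(\overline{x}^{1/p})$), one checks that $v^\flat$ is a semivaluation on $R$ which is bounded by the spectral norm (hence by $1$ on $R^+$), so $v^\flat \in \Spa(R,R^+)$. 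Conversely, given $w \in \Spa(R,R^+)$, one recovers a semivaluation on $A$: since $\theta$ is surjective with $A^+ = W_\varpi(R^+)/I$, every $x \in A$ lifts to $\sum_{n} \varpi^n[\overline{x}_n] \in W_\varpi(R^+)[[R]]$, and Lemma~\ref{L:primitive division}(a) lets us choose the lift so that $\overline{x}_0$ dominates the other coefficients; then $w^\sharp(x)$ should be $w(\overline{x}_0)$. The main content is to show this is well-defined (independent of the choice of primitive division, using part (b) of Lemma~\ref{L:primitive division} to push the ambiguity below any $\epsilon$) and genuinely multiplicative, and that $w \mapsto w^\sharp$ is inverse to $v \mapsto v^\flat$.

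Once the bijection of underlying sets is in hand, the next step is to identify rational subspaces. For $\overline{f}_1,\dots,\overline{f}_n,\overline{g} \in R$ with $f_i = \theta([\overline{f}_i])$, $g = \theta([\overline{g}])$, the defining inequalities $v(f_i) \le v(g)$ translate directly via $v(f_i) = v^\flat(\overline{f}_i)$, so the set-theoretic matching of the two rational subspaces is immediate from the definition of $v^\flat$. Two points need care here. First, one must check that $f_1,\dots,f_n,g$ generate the unit ideal in $A$ whenever $\overline{f}_1,\dots,\overline{f}_n,\overline{g}$ generate the unit ideal in $R$ — this follows because $\theta$ is surjective and $W_\varpi(R^+)[[R]] \to A$ is surjective, so a partition-of-unity relation upstairs maps to one downstairs; alternatively one uses that the rational subspace being all of $\Spa$ is a statement about the spectrum and transports along the already-constructed homeomorphism. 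Second, and this is the real point of the ``every rational subspace arises this way'' clause: an arbitrary rational subspace of $\Spa(A,A^+)$ is cut out by some $f_1,\dots,f_n,g \in A$, and one must approximate each such element by one of the form $\theta([\overline{f}])$ closely enough that the rational subspace is unchanged. For this I would use that topologically nilpotent units of $A$ admit compatible systems of $p$-power roots (Lemma~\ref{L:p-th power inverse limit}), so after multiplying by a large power of such a unit one may assume $f_i, g \in A^+$, lift to $W_\varpi(R^+)$, apply primitive division (Lemma~\ref{L:primitive division}(a)) to replace each by $\theta([\overline{f}_i])$ plus a term that is $\varpi$-divisible, and observe that an element of the ideal generated by $f_1,\dots,f_n,g,\varpi$ modifications does not change the rational subspace provided the correction terms are small relative to $g$ in spectral norm — here Lemma~\ref{L:primitive division}(b) with $\epsilon$ chosen appropriately does the job, exactly as in the analogous argument of \cite[Theorem~3.6.14]{part1}.

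The topological content — that the bijection and its inverse are continuous, hence a homeomorphism — then follows formally: the sets just matched form a neighborhood basis of rational subspaces on each side, and each is carried to the other, so both directions are continuous. Functoriality in $(A,A^+)$ is immediate from functoriality of $\theta$ and of the correspondence $(A,A^+) \leftrightarrow ((R,R^+),I)$ established in Theorem~\ref{T:Fontaine perfectoid correspondence}, since $v \mapsto v^\flat$ is defined purely in terms of $\theta$ and $[\bullet]$.

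\textbf{Main obstacle.} The delicate step is the well-definedness and multiplicativity of the inverse map $w \mapsto w^\sharp$: primitive division produces a representative $y = \sum_n \varpi^n[\overline{y}_n]$ only up to an error in $zW_\varpi(R^+)$, and one must verify that $w(\overline{y}_0)$ is insensitive to this, and that $w^\sharp(xx') = w^\sharp(x)w^\sharp(x')$ rather than merely $\le$. Both are handled by the quantitative estimates of Lemma~\ref{L:primitive division}(b): choosing $\epsilon$ small compared to the relevant values and iterating the division, the contribution of the error terms to $w(\overline{y}_0)$ can be made negligible, and the leading Witt-vector coefficient of a product is (up to such negligible corrections) the product of the leading coefficients by the homogeneity of Witt vector arithmetic (Remark~\ref{R:homogeneity}). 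This is precisely the mechanism already used in \cite[\S 3.6]{part1}, and the argument carries over to the Fontaine-perfectoid setting with the generalized Witt vectors $W_\varpi$ without essential change; the one genuinely new check is that a Fontaine primitive generator $z$, which need not have $\overline{z}_0$ a unit, still permits the division estimates — but this is exactly what Lemma~\ref{L:primitive division} was stated to provide.
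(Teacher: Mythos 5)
There is a genuine gap, and it sits exactly where you flagged the "main obstacle": your explicit inverse $w \mapsto w^\sharp$, defined by $w^\sharp(x) = w(\overline{y}_0)$ for a dominating representative $y = \sum_n \varpi^n[\overline{y}_n]$ produced by Lemma~\ref{L:primitive division}(a), is not well-defined, and in fact gives wrong values. The domination condition $\alpha(\overline{y}_0) \geq \alpha(\overline{y}_n)$ is a statement about the \emph{spectral} norm, i.e.\ a supremum over all of $\calM(R)$; it guarantees that $\alpha(\overline{y}_0)$ computes the spectral norm of $x$, but it says nothing about the value of $\overline{y}_0$ at one fixed valuation $w$. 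Concretely, take $A = K\{S^{p^{-\infty}}, T^{p^{-\infty}}\}$ with tilt $R$, let $x = S + pT$ with $S = \theta([\overline{S}])$, $T = \theta([\overline{T}])$, and take the dominating lift $y = [\overline{S}] + \varpi[\overline{T}]$, whose Witt components are $(\overline{S}, \overline{T}^{p}, \dots)$ and all have spectral norm $1$. If $w$ is the rank-one weighted Gauss point with $w(\overline{S}) = \epsilon_0 < |p|$ and $w(\overline{T}) = 1$, your recipe gives $w^\sharp(x) = \epsilon_0$, whereas the valuation $v$ actually corresponding to $w$ (the one with $v(\theta([\overline{a}])) = w(\overline{a})$) must satisfy $v(x) = \max(\epsilon_0, |p|) = |p|$, since $v(S)$ and $v(pT)$ are distinct. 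Another dominating representative of the same $x$ (absorbing $p$ into the leading coefficient modulo $\ker\theta$) yields $|p|$ instead of $\epsilon_0$, so different admissible lifts disagree. The quantitative estimates of Lemma~\ref{L:primitive division}(b) cannot repair this: they bound errors in spectral norm, which controls nothing at an individual (let alone higher-rank) point; any correct pointwise untilting has to approximate $x$ well \emph{at the given valuation}, not globally.

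The paper sidesteps this entirely: it defines the forward map exactly as you do (via comparison relations for $v(\theta([\overline{x}]))$), but for bijectivity it does not construct an inverse by hand. Instead it uses the already-established bijection $\calM(A) \cong \calM(R)$ from \cite{part1} to reduce the question fibrewise to completed residue fields, where either $A = R$ (equal characteristic) or one invokes the perfectoid-field case \cite[Theorem~3.6.14(a)]{part1} verbatim. Your treatment of the remaining steps is essentially the paper's: the identification of rational subspaces via Lemma~\ref{L:primitive division} together with the $\epsilon$-stability of rational subspaces under perturbation of parameters, and the unit-ideal transfer via the common-zero criterion on Gelfand spectra (your first suggestion, pushing a partition of unity through $\theta$, does not work as stated since $[\bullet]$ is not additive, but your fallback is the paper's argument). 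So the proposal would be repaired by replacing the explicit $w^\sharp$ with the residue-field reduction, or by redoing the approximation of lifts pointwise at $w$ rather than with a single globally dominating representative.
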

\begin{proof}
We first define the map on points.
In one direction, given $v \in \Spa(A,A^+)$, we obtain a valuation $w \in \Spa(R,R^+)$
by specifying that $w(\overline{x}) \leq w(\overline{y})$ if and only if
$v(\theta([\overline{x}])) \leq v(\theta([\overline{y}]))$.
By \cite[Theorem~3.5.3]{part1}, the induced map $\calM(A) \to \calM(R)$ is a bijection,
and to check that the original map is a bijection we need only consider the case where $A$ and $R$ are analytic fields. In this case, either $A = R$ and there is nothing to check, or $A$ is an analytic field over $\QQ_p$ and we may apply \cite[Theorem~3.6.14(a)]{part1} as written.

We now have a functorial identification $\Spa(A,A^+) \cong \Spa(R,R^+)$ on the level of sets. To check that this map is a homeomorphism, it suffices to check the compatibility of rational subspaces. For $\overline{f}_1,\dots,\overline{f}_n,\overline{g} \in R$,
if we put $f_i = \theta([\overline{f}_i])$, $g = \theta([\overline{g}])$, then
$f_1,\dots,f_n,g$ generate the unit ideal in $A$ if and only if 
$\overline{f}_1,\dots,\overline{f}_n,\overline{g}$ generate the unit ideal in $R$
(by applying \cite[Corollary~2.3.6]{part1} in both rings).
This means that rational subspaces in $\Spa(R,R^+)$ identify with rational subspaces of $\Spa(A,A^+)$. Conversely, if $U$ is the rational subspace of $\Spa(A,A^+)$ defined by the parameters $f_1,\dots,f_n,g$ as in \cite[(2.4.3.2)]{part1}, then 
by \cite[Remark~2.4.7]{part1} there exists $\epsilon>0$ such that any $f'_1,\dots,f'_n,g' \in A$ satisfying $\left| f_i - f_i' \right| < \epsilon, \left| g - g' \right| < \epsilon$
generate the unit ideal and also define the rational subspace $U$.
Using Lemma~\ref{L:primitive division}, we may choose $\overline{f}_1,\dots,\overline{f}_n,\overline{g} \in R$ such that
\[
\alpha(f_i - \theta([\overline{f}_i])) \leq \max\{\alpha(\varpi) \alpha(f_i), \epsilon\}, \quad
\alpha(g - \theta([\overline{g}])) \leq \max\{\alpha(\varpi) \alpha(g), \epsilon\} \qquad
(\alpha \in \calM(A)).
\]
Then $U$ corresponds to the rational subspace of $\Spa(R,R^+)$ defined by
$\overline{f}_1,\dots,\overline{f}_n,\overline{g} \in R$, as claimed.
\end{proof}

\begin{remark} \label{T:perfectoid homeomorphism reified}
For $(A, A^{\Gr})$ and $((R, R^{\Gr}), I)$ corresponding as in Remark~\ref{R:perfectoid
correspondence reified}, there is likewise a functorial identification $\Spra(A,A^{\Gr}) \cong \Spra(R, R^{\Gr})$ which matches up rational subspaces on both sides:
for $\overline{f}_1, \dots, \overline{f}_n, \overline{g} \in R$ and $q_1,\dots,q_n > 0$,
the rational subspace
\[
\{v \in \Spra(R,R^{\Gr}): v(\overline{f}_i) \leq q_i v(\overline{g}) \quad (i=1,\dots,n)\}
\]
corresponds to
\[
\{v \in \Spra(A,A^{\Gr}): v(f_i) \leq q_i v(g) \quad (i=1,\dots,n)\}
\mbox{ for } f_i = \theta([\overline{f}_i]), g_i = \theta([\overline{g}_i]).
\]
\end{remark}

We next give an omnibus result on the stability of the perfectoid property under passage along various classes of morphisms. This includes most of the remaining results of \cite[\S 3.6]{part1}, including \cite[Theorem~3.6.14(c), Theorem~3.6.17, Proposition~3.6.19, Theorem~3.6.21]{part1}. (The original proof of \cite[Proposition~3.6.19]{part1} is incomplete because it does not establish uniformity; the argument given here addresses this point.)

\begin{theorem} \label{T:Fontaine perfectoid compatibility}
Let $\calP$ be one of the following classes of morphisms of adic Banach rings:
\begin{enumerate}
\item[(i)]
rational localizations;
\item[(ii)]
morphisms with uniform target and dense image;
\item[(iii)]
surjective morphisms with uniform target;
\item[(iv)]
finite \'etale morphisms;
\item[(v)]
morphisms $(C,C^+) \to (D,D^+)$
in which $D^+$ is the completion of $C^+$ with respect to a finitely generated ideal containing an ideal of definition of $C^+$;
\item[(vi)]
morphisms $(C,C^+) \to (D,D^+)$
in which $D^+$ is the completion of an \'etale $C^+$-algebra with respect to an ideal of definition of $C^+$;
\item[(vii)]
morphisms $(C,C^+) \to (D,D^+)$
in which $D^+$ is the completion of an algebraic localization of $C^+$ with respect to an ideal of definition of $C^+$.
\end{enumerate}
Set notation as in Theorem~\ref{T:Fontaine perfectoid correspondence}.
Then the following statements hold.
\begin{enumerate}
\item[(a)]
Let $\overline{\psi}: (R,R^+) \to (S,S^+)$ be a morphism having property $\calP$. Then $(S,S^+)$ is again perfect uniform, and the corresponding morphism $(A,A^+) \to (B,B^+)$ also has property $\calP$.
\item[(b)]
Let $\psi: (A,A^+) \to (B,B^+)$ be a morphism having property $\calP$. Then $(B,B^+)$ is again Fontaine perfectoid, and the corresponding morphism $(R,R^+) \to (S,S^+)$ also has property $\calP$.
\end{enumerate}
\end{theorem}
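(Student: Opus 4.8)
The plan is to establish both (a) and (b) by reducing, via the perfectoid correspondence of Theorem~\ref{T:Fontaine perfectoid correspondence}, to statements that are either already proved in \cite[\S 3.6]{part1} or can be checked directly on the Witt-vector side. The key structural point is that the correspondence $((R,R^+),I) \leftrightarrow (A,A^+)$ is essentially functorial in $R$: once one knows that $(S,S^+)$ is perfect uniform, applying Theorem~\ref{T:Fontaine perfectoid correspondence} to $((S,S^+), IW_\varpi(S^+))$ produces a Fontaine perfectoid $(B,B^+)$, and conversely. So in each case the content is (i) checking that the perfect-uniform (resp.\ Fontaine-perfectoid) property propagates along a morphism in $\calP$, and (ii) checking that the induced morphism on the other side again lies in $\calP$.

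First I would dispose of cases (i)--(iv) for part (a). For rational localizations this is exactly Theorem~\ref{T:Fontaine perfectoid homeomorphism}, which matches rational subspaces on both sides, together with Example~\ref{exa:polynomials} (a rational localization of a perfect uniform ring is perfect uniform: it is a quotient of a completed perfect polynomial ring). For morphisms with uniform target and dense image, and for surjections with uniform target, one argues as in \cite[\S 3.6]{part1}: the perfect-uniform property passes to such targets formally (a uniform quotient of a perfect ring is perfect since Frobenius is surjective on the quotient and the quotient is reduced), and the corresponding statement on the $A$-side follows by applying $\theta$ and using that $\theta$ is surjective with kernel generated by a Fontaine primitive element, so density and surjectivity transfer. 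For finite \'etale morphisms one invokes the compatibility of the perfectoid correspondence with finite \'etale extensions, which is \cite[Theorem~3.6.14(c)]{part1} in the $\QQ_p$ case; in the general $\gotho_E$ setting one uses Remark~\ref{R:homogeneity} (either the homogeneity principle or the reduction to finite extensions of $\QQ_p$ / equal characteristic) to see the proof carries over verbatim. For part (a) in cases (v)--(vii), these are statements about completions of $R^+$ (resp.\ $A^+$) along ideals of definition; since $A^+/(u) \cong R^+/(\overline{u})$ by Remark~\ref{R:same quotient} and more generally the integral structures are linked through $W_\varpi$, the relevant completions correspond, and one checks directly (as in \cite[Theorem~3.6.17, Proposition~3.6.19, Theorem~3.6.21]{part1}) that the resulting ring is again perfect uniform and the morphism type is preserved.

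Then I would handle part (b) by a symmetric argument, but here the logical flow is the reverse: given $\psi: (A,A^+)\to (B,B^+)$ in $\calP$, I first produce a candidate $(S,S^+)$ on the characteristic-$p$ side and only afterwards conclude that $(B,B^+)$ is Fontaine perfectoid. For (i), this is again Theorem~\ref{T:Fontaine perfectoid homeomorphism}: every rational subspace of $\Spa(A,A^+)$ arises from parameters of the form $\theta([\overline{f}_i]), \theta([\overline{g}])$, so the rational localization $B$ is identified with $A((S,S^+), IW_\varpi(S^+))$ for an explicit rational localization $S$ of $R$, whence $B$ is Fontaine perfectoid by Theorem~\ref{T:Fontaine perfectoid correspondence}. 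For (ii) and (iii) one uses Theorem~\ref{T:tensor product} together with the fact that $B$ is, by hypothesis or by a presentation, a uniform quotient or a dense image of something perfectoid; one then checks $\overline{\varphi}$ is surjective modulo $p$ on $B^+$ using the lift via $W_\varpi$ and Lemma~\ref{L:primitive division}. For (iv), finite \'etale, one again invokes the perfectoid correspondence for finite \'etale covers. For (v)--(vii) one argues as in part (a), using that $B^+/(u)$ is a completion of $A^+/(u)$ of the prescribed type and Remark~\ref{R:same quotient}.

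The main obstacle I anticipate is the finite \'etale case (iv), and within it the passage to general $\gotho_E$. In \cite{part1} the almost-purity-type input is only proved for $\QQ_p$-algebras, so the honest work is to justify that the argument goes through for $W_\varpi$ in place of $W$; the cleanest route is the dichotomy in Remark~\ref{R:homogeneity}: in mixed characteristic $E/\QQ_p$ is finite and every ring is a finite projective module over its $\QQ_p$-counterpart so one literally reduces to \cite{part1}, while in equal characteristic $W_\varpi(R^+) = R^+\llbracket \varpi\rrbracket$ degenerates and finite \'etaleness descends along $R^+ \to R^+\llbracket\varpi\rrbracket/(z)$ by elementary deformation theory. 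A secondary subtlety is making sure, in cases (v)--(vii), that the integral-level operations (completions along finitely generated ideals, \'etale algebras, algebraic localizations) really are intertwined by $W_\varpi$ and $\theta$ in the non-noetherian generality here; this is handled exactly as in \cite[\S 3.6]{part1} once one has Remark~\ref{R:same quotient} and Lemma~\ref{L:primitive division}, so it is routine but needs to be stated carefully.
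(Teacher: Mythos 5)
There is a genuine gap, and it sits at the heart of case (i) (which the paper also uses to dispose of case (vii)). Your treatment of rational localizations assumes exactly what has to be proved. Theorem~\ref{T:Fontaine perfectoid homeomorphism} only matches \emph{points} and rational subspaces of $\Spa(A,A^+)$ and $\Spa(R,R^+)$; it says nothing about the rings representing those subspaces. The rational localization $B$ of $A$ is defined by a universal property among all adic Banach rings (equivalently as a quotient of a Tate algebra $A\{T_1,\dots,T_n\}$), and it is not known in advance to be uniform, let alone Fontaine perfectoid; in particular your step ``the rational localization $B$ is identified with $A((S,S^+),IW_\varpi(S^+))$'' in (b)(i) is precisely the content of the theorem, not a consequence of the homeomorphism. (Your justification of the char-$p$ half via Example~\ref{exa:polynomials} is also off: that example adjoins variables $T_j^{p^{-\infty}}$ and does not show that a quotient of a Tate algebra is perfect uniform; the stable uniformity of perfect uniform rings is a separate input from \cite{part1}.) The paper closes this gap by first reducing (b) to (a) via the universal property, then invoking Tate's reduction \cite[Proposition~2.4.24]{part1} to restrict to a simple Laurent covering defined by some $\overline{g}\in R$, and there giving an explicit presentation: map the perfectoid Tate algebra $A\{(T/r)^{p^{-\infty}}\}$ to $A$ via $T^{p^{-n}}\mapsto\theta([\overline{g}^{p^{-n}}])$, present the two Laurent pieces as quotients as in Example~\ref{exa:polynomials}, and use Theorem~\ref{T:tensor product} together with Proposition~\ref{P:extend perfectoid} to see that the base extensions back to $A$ are Fontaine perfectoid and, by construction, satisfy the universal property of the rational localizations. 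Without an argument of this kind (the analogue of Scholze's theorem that rational localizations of perfectoid rings are perfectoid), your case (i) is unproven.

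Two smaller points. In case (iii)(b), the surjectivity of $\overline{\psi}$ does not follow formally from case (ii): the paper first proves Lemma~\ref{L:strict to almost optimal} (strict homomorphisms of Fontaine perfectoid rings are almost optimal, via a $p$-th root approximation using Lemma~\ref{L:primitive division}), deduces that $A^+/(u)\to B^+/(u)$ is almost surjective, and only then concludes surjectivity on the tilted side; your sketch omits this. In case (iv), quoting \cite[Theorem~3.6.14(c)]{part1} and reducing to it via Remark~\ref{R:homogeneity} is shakier than you suggest: a Fontaine perfectoid ring need not be a $\QQ_p$-algebra ($p$ is topologically nilpotent but not assumed invertible), so the mixed-characteristic ``finite projective over the $\QQ_p$-counterpart'' reduction does not apply to the almost purity statement itself. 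The paper instead proves (iv)(a) directly on Witt vectors (using Theorem~\ref{T:tensor product}) and reduces (iv)(b) to the case of analytic fields \cite[Theorem~3.5.6]{part1} by means of the henselian property of adic local rings and glueing of finite \'etale morphisms over rational coverings --- the latter step again resting on case (i).
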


We will prove the individual cases of Theorem~\ref{T:Fontaine perfectoid compatibility} separately, recording corollaries as we go.

\begin{proof}[Proof of Theorem~\ref{T:Fontaine perfectoid compatibility}, case (i)]
In light of Theorem~\ref{T:Fontaine perfectoid homeomorphism} and the universal property of rational localizations, it suffices to check (a). Moreover, by
\cite[Proposition~2.4.24]{part1}, we need only check the case where
$(R,R^+) \to (S,S^+)$ is part of a simple Laurent covering. This can be carried out using the explicit calculation made in \cite[Lemma~3.6.13]{part1}, which may be summarized as follows.
Let $\overline{g} \in R$ be an element defining the simple Laurent covering $U_+, U_-$ of $\Spa(R,R^+)$.
For any sufficiently large $r>0$, the ring $\tilde{R} = R\{(T/r)^{p^{-\infty}}\}$ 
(i.e., the completion of $R[T^{1/p^n}: n \geq 0]$ for the weighted Gauss norm
under which $T^{p^{-n}}$ has norm $r^{p^{-n}}$)
admits a bounded map to $R$ taking $T^{p^{-n}}$ to $\overline{g}^{p^{-n}}$. This corresponds via Theorem~\ref{T:Fontaine perfectoid homeomorphism} to the map from $\tilde{A} = A\{(T/r)^{p^{-\infty}}\}$ to $A$ taking $T^{p^{-n}}$ to $\theta([\overline{g}^{p^{-n}}])$. As in Example~\ref{exa:polynomials}, we see that the rings
\[
\tilde{A}\{U\}/(T-U) \cong A\{T^{p^{-\infty}}\}, \qquad \tilde{A}\{U\}/(TU-1) \cong A \{T^{-p^{-\infty}}, (T/r)^{p^{-\infty}}\}
\]
are perfectoid algebras corresponding to
\[
\tilde{R}\{U\}/(T-U) \cong R\{T^{p^{-\infty}}\}, \qquad \tilde{R}\{T/r,U\}/(TU-1) \cong R\{T^{-p^{-\infty}}, (T/r)^{p^{-\infty}}\}.
\]
Take the maps from $\tilde{A}$ to these two rings and then perform the base extension back to $A$; we then obtain the rational localizations of $A$ corresponding to $U_+, U_-$.
By Theorem~\ref{T:tensor product} and Proposition~\ref{P:extend perfectoid}, the targets of these maps are Fontaine perfectoids. From the universal property of rational localizations, we obtain the desired result.
\end{proof}

\begin{cor} \label{C:Fontaine perfectoid stably uniform}
Any Fontaine perfectoid ring is stably uniform, and hence sheafy by
\cite[Theorem~2.8.10]{part1}.
\end{cor}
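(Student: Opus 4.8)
The plan is to deduce this immediately from case (i) of Theorem~\ref{T:Fontaine perfectoid compatibility}, which has just been established. Recall that an adic Banach ring $(A,A^+)$ is \emph{stably uniform} precisely when $\calO(U)$ is uniform for every rational subspace $U$ of its adic spectrum. So, starting from a Fontaine perfectoid $(A,A^+)$, I would fix an arbitrary rational subspace $U$ of $\Spa(A,A^+)$ and consider the corresponding rational localization $(A,A^+) \to (\calO(U),\calO^+(U))$. By the already-established case~(i) of Theorem~\ref{T:Fontaine perfectoid compatibility}, the target is again Fontaine perfectoid, and in particular uniform by definition. Since $U$ was arbitrary, this shows $(A,A^+)$ is stably uniform.

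For the sheafiness claim I would then simply invoke \cite[Theorem~2.8.10]{part1}, which asserts that every stably uniform adic Banach ring is sheafy.

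There is essentially no obstacle remaining at this point: all the genuine content sits inside case~(i) of Theorem~\ref{T:Fontaine perfectoid compatibility}, where one reduces (via \cite[Proposition~2.4.24]{part1}) to simple Laurent coverings, presents such a localization as a base extension built from the perfectoid polynomial rings of Example~\ref{exa:polynomials}, and then applies Theorem~\ref{T:tensor product} together with Proposition~\ref{P:extend perfectoid} to conclude that the localized ring is Fontaine perfectoid. The only point worth checking is that the definition of \emph{stably uniform} quantifies over all rational subspaces rather than over the members of some fixed Laurent tower, but this is already accommodated by case~(i) being stated for the full class of rational localizations, so nothing further is needed.
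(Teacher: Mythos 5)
Your proposal is correct and is precisely the argument the paper intends: the corollary is stated as an immediate consequence of case~(i) of Theorem~\ref{T:Fontaine perfectoid compatibility} (rational localizations of Fontaine perfectoid rings are again Fontaine perfectoid, hence uniform), with sheafiness then following from \cite[Theorem~2.8.10]{part1}. No gap to report.
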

\begin{cor} \label{C:Fontaine perfectoid etale acyclic}
Let $(A,A^+)$ be a Fontaine perfectoid adic Banach ring and put $X = \Spa(A,A^+)$.
\begin{enumerate}
\item[(a)]
We have $H^0(X, \calO) = H^0(X_{\et}, \calO) = A$.
\item[(b)]
For $i>0$, $H^i(X, \calO) = H^i(X_{\et}, \calO) = 0$.
\item[(c)]
For $i>0$, the groups $H^i(X, \calO^+)$, $H^i(X_{\et}, \calO^+)$ are almost zero (i.e., annihilated by $A^{\circ \circ}$).
\end{enumerate}
\end{cor}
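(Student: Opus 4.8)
The statement splits cleanly: parts (a) and (b) are formal consequences of sheafiness, whereas part (c) contains the genuine perfectoid input, and I would reduce the latter to characteristic $p$.

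\emph{Parts (a) and (b).} By Corollary~\ref{C:Fontaine perfectoid stably uniform} (itself a consequence of case (i) of Theorem~\ref{T:Fontaine perfectoid compatibility}), $(A,A^+)$ is stably uniform, hence sheafy. Applying Theorem~\ref{T:pseudocoherent acyclicity} with $M = A$ (equivalently \cite[Theorem~2.4.23]{part1}) gives $H^0(X,\calO) = A$ and $H^i(X,\calO) = 0$ for $i > 0$. To pass to $X_{\et}$, I would note that $X_{\et}$ admits a basis of rational localizations of finite \'etale covers of $(A,A^+)$, each again Fontaine perfectoid (cases (i) and (iv) of Theorem~\ref{T:Fontaine perfectoid compatibility}, or \cite[\S 3.6]{part1} in the cases needed); since a finite \'etale ring homomorphism is faithfully flat, the \v{C}ech complex of $\calO$ for such a cover is exact by faithfully flat descent for modules \cite[Tag~023M]{stacks-project}. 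Feeding the analytic acyclicity and this descent into \cite[Proposition~8.2.21]{part1} then yields $H^0(X_{\et},\calO) = A$ and $H^i(X_{\et},\calO) = 0$ for $i>0$.

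\emph{Reduction of (c).} Because $u$ is a unit in $A$, it is a non-zero-divisor in $A^+$, and likewise on every rational subspace; from the exact sequences $0 \to \calO^+/(u) \stackrel{u^{n-1}}{\to} \calO^+/(u^n) \to \calO^+/(u^{n-1}) \to 0$ and a standard Mittag--Leffler and $R^1\varprojlim$ argument (each $\calO^+(U)$ being $u$-adically complete and separated) one sees that it suffices to prove $H^i(X,\calO^+/(u))$ is almost zero for $i > 0$. By Remark~\ref{R:same quotient}, functoriality of $\theta$ under rational localization, and the rational-subspace-matching homeomorphism $X \cong \Spa(R,R^+) =: X^\flat$ of Theorem~\ref{T:Fontaine perfectoid homeomorphism}, there is an isomorphism of sheaves $\calO^+_X/(u) \cong \calO^+_{X^\flat}/(\overline{u})$; for $X_{\et}$ one argues identically, the impossibility of removing the word ``almost'' tracing back to the fact that finite \'etale covers are only \emph{almost} finite \'etale at the integral level. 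Thus (c) reduces to the characteristic-$p$ assertion that $H^i(X^\flat,\calO^+_{X^\flat})$ is almost zero for $i>0$.

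\emph{The main obstacle.} This last statement is the globalized almost purity theorem, and it is where I expect all the technical weight to lie. The mechanism is that $R$ is perfect, so Frobenius is an automorphism of $\calO^+_{X^\flat}$ carrying $(\overline{u})$ to $(\overline{u}^{p})$: given a \v{C}ech cocycle for a rational (or finite \'etale) cover, dividing by $\overline{u}^{1/p^{n}}$, applying $\varphi^{-n}$, and invoking the acyclicity of the \emph{full} structure sheaf $\calO_{X^\flat}$ from (a)--(b), one writes the cocycle as $\overline{u}^{1-\epsilon}$ times a coboundary for every $\epsilon > 0$, so that it is annihilated modulo coboundaries by $R^{\circ\circ}$. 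When $E$ has mixed characteristic one may instead sidestep this via Remark~\ref{R:homogeneity}, since every ring in sight is finite projective over its $\QQ_p$-counterpart and one may cite \cite[Theorem~3.6.21]{part1} and its proof directly, leaving only the equal-characteristic case to the argument just sketched. Either way, (a) and (b) are routine given stable uniformity, and all the perfectoid content of the corollary is concentrated in (c).
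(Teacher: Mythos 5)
Your treatment of (a) and (b) is essentially the paper's: stable uniformity (Corollary~\ref{C:Fontaine perfectoid stably uniform}) plus \cite[Theorem~2.4.23]{part1}, and for the \'etale statement you simply re-run the criteria of \cite[Proposition~8.2.21]{part1} instead of quoting \cite[Theorem~8.2.22]{part1} directly. For (c), however, you take a genuinely different route from the paper. You reduce modulo $u$, identify $\calO^+/(u)$ with $\calO^+_{X^\flat}/(\overline{u})$ via Remark~\ref{R:same quotient} and Theorem~\ref{T:Fontaine perfectoid homeomorphism}, and then pass back up by a $u$-adic limit argument (Scholze's original strategy). The paper instead never tilts the quotient at all: it observes that the augmented \v{C}ech complex for a rational or \'etale cover is strict exact by (a)--(b) and the open mapping theorem, invokes Remark~\ref{R:complex almost optimal} (i.e.\ \cite[Remark~3.1.6]{part1}) to upgrade strict exactness to \emph{almost optimal} exactness in characteristic $p$, and then transfers this to the untilted complex via Lemma~\ref{L:primitive division}; this avoids both the mod-$u$ d\'evissage and the $R^1\varprojlim$ bookkeeping your route requires. (Also, your mixed-characteristic shortcut should cite the almost-acyclicity statement \cite[Theorem~8.3.2]{part1}, not the almost purity theorem \cite[Theorem~3.6.21]{part1}.)

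The one genuine gap is in your characteristic-$p$ step, which as written does not work. First, ``acyclicity of the full structure sheaf'' only produces \emph{some} coboundary witness; to run any Frobenius rescaling you need a uniform norm bound, i.e.\ strict exactness of the \v{C}ech complex, which comes from the open mapping theorem (Theorem~\ref{T:open mapping}) since all terms are Banach modules -- this quantitative input is exactly what Remark~\ref{R:complex almost optimal} supplies and what your sketch never invokes. Second, the Frobenius is pointed the wrong way: if you apply $\varphi^{-n}$ to the cocycle $c$ and solve there, you must apply $\varphi^{n}$ to the resulting witness to recover $c$, which raises its norm to the $p^n$-th power and destroys the bound. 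The correct manoeuvre is the opposite one: with $C$ the strictness constant for the fixed cover, solve $\varphi^{n}(c)=db$ with $\left|b\right|\leq C\left|c\right|^{p^n}$ and take the witness $\varphi^{-n}(b)$, whose norm is at most $C^{1/p^n}\left|c\right|$; then for any $m$, choosing $n$ with $C^{1/p^n}\leq\left|\overline{u}\right|^{-1/p^m}$ shows $\overline{u}^{1/p^m}c$ is a coboundary with entries in $\calO^+$, whence the class is killed by $R^{\circ\circ}$. With that correction (or by simply citing Remark~\ref{R:complex almost optimal}), and with the mod-$u$ limit argument spelled out at the level of \v{C}ech complexes for a fixed cover, your proof goes through.
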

\begin{proof}
To deduce (a) and (b), combine Corollary~\ref{C:Fontaine perfectoid stably uniform}
with \cite[Theorem~2.4.23, Theorem~8.2.22]{part1}.
For (c), add Remark~\ref{R:complex almost optimal}.
\end{proof}

\begin{proof}[Proof of Theorem~\ref{T:Fontaine perfectoid compatibility}, case (ii)]
In the setting of (a), the ring $S$ is reduced and admits the 
dense perfect $\FF_p$-subalgebra $\overline{\psi}(R)$, so $S$ is also perfect.
Since the set of finite sums $\sum_{i=0}^n \varpi^i [\overline{x}_i] \in W_\varpi(S)$ has dense image in $B$, it follows that $\psi(A)$ is dense in $B$.

In the setting of (b), let $\alpha,\beta,\overline{\alpha},\overline{\beta}$ be the spectral norms on $A,B,R,S$, respectively. 
Given $x \in \psi(A) \cap B^+$, choose $w \in \psi^{-1}(x)$.
By Lemma~\ref{L:primitive division}, we can find $\overline{w} \in R$ such that
\[
\gamma(w - \theta([\overline{w}])) \leq \gamma(\varpi) \max\{\gamma(w), \beta(x)\} \qquad
(\gamma \in \calM(A)).
\]
Put $y = \psi(\theta([\overline{w}^{1/p}]))$; then $\gamma(x - y^p) \leq \gamma(\varpi) \beta(x)$ for all $\gamma \in \calM(B)$, so $\beta(x-y^p) \leq \beta(\varpi) \beta(x)$.
Since $B$ is uniform and $\psi(A)$ is dense in $B$, it follows that $B$ is perfectoid.
Now given $\overline{x} \in S$, choose $w \in A$ with $\beta(\psi(w) - \theta([\overline{x}])) \leq \beta(\varpi) \overline{\beta}(x)$, then apply Lemma~\ref{L:primitive division} again to find $\overline{w} \in R$ such that
\[
\gamma(w - \theta([\overline{w}])) \leq \gamma(\varpi) \max\{\gamma(w), \overline{\beta}(\overline{x})\} \qquad
(\gamma \in \calM(A)).
\]
Put $\overline{y} = \overline{\psi}(\overline{w})$; then $\gamma(\theta([\overline{x} - [\overline{y}])) \leq \gamma(\varpi) \overline{\beta}(\overline{x})$ for
$\gamma \in \calM(B)$, so $\overline{\beta}(\overline{x} - \overline{y}) \leq \beta(\varpi) \overline{\beta}(\overline{x})$, yielding (b).
\end{proof}

\begin{cor} \label{C:nonuniform perfectoid tensor product}
Let $A \to B, A \to C$ be morphisms of Banach rings such that $B,C$ are Fontaine perfectoid. Then the uniform completion of
$B \otimes_A C$  is Fontaine perfectoid (whether or not $A$ is).
\end{cor}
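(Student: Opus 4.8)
We write $D_0 = B \otimes_A C$ for the algebraic tensor product equipped with the tensor product seminorm, and let $D$ be its uniform completion; by construction $D$ is a uniform Banach ring over $\gotho_E$ and the map $D_0 \to D$ has dense image, so the images of $B$ and $C$ in $D$ generate a dense subring. The plan is to verify directly that $D$ satisfies Definition~\ref{D:Fontaine perfectoid}: since $D$ is already uniform, it remains to exhibit a topologically nilpotent unit of $D$ whose $p$-th power divides $p$ in $D^\circ$ and for which the associated Frobenius map on the quotient is surjective. (One could instead hope to produce a Fontaine perfectoid Banach ring mapping to $D$ with dense image and then invoke Theorem~\ref{T:Fontaine perfectoid compatibility}(b) in case~(ii); but since $A$ need not be perfectoid, there is no evident perfectoid common base for $B$ and $C$, so I would proceed by hand.)

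First I would fix a topologically nilpotent unit $u \in B$ with $u^p \mid p$ in $B^\circ$ and $\overline{\varphi}\colon B^\circ/(u) \to B^\circ/(u^p)$ surjective (upgrading $u$ via Remark~\ref{R:p-th root of pu} if convenient), and let $\bar u \in D$ be its image. Since $B \to D$ is a bounded ring homomorphism, $\bar u$ is a topologically nilpotent unit of $D$ and $\bar u^p \mid p$ in $D^\circ$, so everything reduces to checking that $\overline{\varphi}\colon D^\circ/(\bar u) \to D^\circ/(\bar u^p)$ is surjective. The heart of the matter is an elementary congruence computation. Let $\Sigma \subseteq D^\circ$ be the subring generated by the images of $B^\circ$ and $C^\circ$. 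Given $\beta \in B^\circ$, perfectoidness of $B$ yields $\beta_1 \in B^\circ$ with $\beta \equiv \beta_1^p \pmod{u^p B^\circ}$; given $\gamma \in C^\circ$, perfectoidness of $C$ (via the implication $\overline{\varphi}\colon C^\circ/(p) \to C^\circ/(p)$ surjective recorded in Definition~\ref{D:Fontaine perfectoid}) yields $\gamma_1 \in C^\circ$ with $\gamma \equiv \gamma_1^p \pmod{p\, C^\circ}$, hence $\gamma \equiv \gamma_1^p \pmod{\bar u^p D^\circ}$ in $D$ because $p \in \bar u^p D^\circ$. Thus each product $\beta\gamma$ is congruent modulo $\bar u^p D^\circ$ to $(\beta_1\gamma_1)^p$, and for a finite sum $\sigma = \sum_i \beta_i \gamma_i$ the universal congruence $\bigl(\sum_i \beta_{i,1}\gamma_{i,1}\bigr)^p \equiv \sum_i (\beta_{i,1}\gamma_{i,1})^p \pmod{p}$ shows that $\sigma$ is congruent modulo $\bar u^p D^\circ$ to the $p$-th power of an element of $\Sigma$. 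In other words, $\overline{\varphi}$ is already surjective onto the image of the subring $\Sigma$.

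It then remains to promote this to the surjectivity of $\overline{\varphi}\colon D^\circ/(\bar u) \to D^\circ/(\bar u^p)$, which requires two ingredients: that $\Sigma$ be dense in $D^\circ$, and that approximate $p$-th-power divisibility be bootstrapped to exact $p$-th-power divisibility. The latter is a standard successive-approximation argument of the type carried out in the proof of Proposition~\ref{P:extend perfectoid} (choosing $u$ with enough $p$-power roots so the iteration converges). I expect the density statement to be the main obstacle: although $B$ and $C$ each contain topologically nilpotent units, so that every element of $D_0$ is a unit multiple of an element of $\Sigma$, reconciling this purely algebraic fact with the uniform (spectral) topology on $D$ — i.e.\ genuinely showing that the image of the \emph{integral} subring $\Sigma$ is dense in the unit ball $D^\circ$ of the uniform completion — is the delicate point, and is probably most cleanly handled by taking the ring of integral elements on $D$ to be the completion of the integral closure of the image of $B^\circ \otimes_{A^\circ} C^\circ$ and invoking the insensitivity of the Fontaine perfectoid property to this choice (Proposition~\ref{P:extend perfectoid}). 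All of the genuinely perfectoid input is contained in the elementary congruences of the previous paragraph.
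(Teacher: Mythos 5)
There is a genuine gap, and it sits exactly where you flag it: the passage from your subring $\Sigma$ (generated by the images of $B^\circ$ and $C^\circ$) to the full ring of power-bounded elements of the uniform completion $D$. Your congruence argument only produces approximate $p$-th roots, modulo $\bar u^p D^\circ$, for elements of $\Sigma$. But an element of $D^\circ$ is merely a limit of elements of $B \otimes_A C$ of \emph{spectral} seminorm at most $1$, and the spectral seminorm of the uniform completion can be vastly smaller than the tensor-product seminorm: a simple tensor $\beta \otimes \gamma$ of spectral norm $\leq 1$ may have both factors of enormous norm, and the only way to move it into $\Sigma$ is to multiply by high negative powers of (images of) topologically nilpotent units, which destroys the congruence --- the error term $\bar u^p D^\circ$ gets multiplied by the same negative power and ceases to be divisible by $\bar u^p$. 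So ``Frobenius surjective on the image of $\Sigma$'' does not bootstrap to ``Frobenius surjective on $D^\circ/(\bar u^p)$'' by successive approximation, because general elements of $D^\circ$ are not approximable by elements of $\Sigma$ in the first place. The fallback you propose does not repair this: Proposition~\ref{P:extend perfectoid} applies only when the chosen $D^+$ is an honest ring of integral elements, in particular \emph{open} in $D$, and openness of the completed integral closure of the image of $B^\circ \otimes_{A^\circ} C^\circ$ (equivalently, that every topologically nilpotent element of $D$ has a power landing in the closure of $\Sigma$) is the same unproved density-type assertion in disguise.

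The paper's proof takes precisely the route you set aside, and the point you missed is that no perfectoid \emph{base} is needed: one maps the Fontaine perfectoid ring $B\{C^\circ\}$ --- the completed free perfectoid $B$-algebra on variables $T_\gamma^{p^{-\infty}}$ indexed by $C^\circ$, perfectoid by Example~\ref{exa:polynomials} since $B$ is --- to the uniform completion of $B \otimes_A C$ with dense image (the surjectivity of $\theta$ for $C$ ensures that the images of elements with compatible $p$-power roots, together with the image of $B$, topologically generate a dense subring), and then Theorem~\ref{T:Fontaine perfectoid compatibility}, case (ii)(b), concludes. That theorem succeeds where your congruences stall because an element in the image of a perfectoid ring admits a Teichm\"uller-type approximation with \emph{multiplicative} error control relative to the spectral norm of the target element (via Lemma~\ref{L:primitive division}), which is what allows the approximation argument to reach every element of the dense image in $D$, not just the subring generated by the unit balls. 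If you want to salvage a direct argument, you would need to prove an analogue of that multiplicative approximation for elements of $B \otimes_A C$ of small spectral norm, which is essentially re-proving case (ii) of Theorem~\ref{T:Fontaine perfectoid compatibility} in this special situation.
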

\begin{proof}
By case (ii) of Theorem~\ref{T:Fontaine perfectoid compatibility},
it suffices to produce a Fontaine perfectoid mapping to the uniform completion
of $B \otimes_{A} C$ with dense image.
To achieve this, write $C$ as a quotient of $A\{C^\circ\}$ as in Example~\ref{exa:polynomials}; then the Fontaine perfectoid ring $B\{C^\circ\}$ has the desired property.
\end{proof}

\begin{cor} \label{C:perfectoid direct limit}
Let $\{A_i\}_{i \in I}$ be a filtered direct system of Fontaine perfectoid Banach rings.
Equip $\varinjlim_i A_i$ with the infimum of the spectral norms
and let $A$ be the completion of $\varinjlim_i A_i$. Then $A$ is perfectoid.
\end{cor}
\begin{proof}
Let $B$ be the set of tuples $(a_i)_i \in \prod_{i \in I} A_i$ such that for each $\epsilon > 0$,
there exist only finitely many indices $i$ for which $\left| a_i \right| <\epsilon$;
then $B$ is a Banach ring with respect to the supremum norm, and is moreover Fontaine perfectoid.
There is a natural bounded homomorphism $B \to A$ taking $(a_i)_i$ to $\sum_i a_i$;
since $A$ is uniform and this homomorphism has dense image,
Theorem~\ref{T:Fontaine perfectoid compatibility}(ii) implies that $A$ is perfectoid.
(A more direct proof is also possible.)
\end{proof}

Before continuing, we introduce the following analogue of \cite[Lemma~3.1.6(a)]{part1},
generalizing \cite[Proposition~3.6.9(c)]{part1}.
\begin{lemma} \label{L:strict to almost optimal}
Any strict homomorphism of Fontaine perfectoid Banach rings is almost optimal.
\end{lemma}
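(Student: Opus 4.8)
The plan is to reduce the statement about a strict homomorphism $\psi \colon A \to B$ of Fontaine perfectoid Banach rings to the corresponding fact on the characteristic-$p$ side, namely that a strict homomorphism of perfect uniform Banach algebras over $\FF_{p^h}$ is almost optimal. This latter fact is exactly \cite[Remark~3.1.6]{part1} (recalled here as Remark~\ref{R:complex almost optimal}): a strict exact complex of perfect uniform Banach rings is automatically almost optimal. So the real content is the transfer between the two sides, and the main tool is Lemma~\ref{L:primitive division}, which already encodes the norm control on lifting elements through $\theta$.

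Here is how I would organize it. First, recall what ``strict'' and ``almost optimal'' mean: $\psi$ strict says the quotient norm on $\image(\psi)$ and the subspace norm agree up to a bounded factor, and almost optimal strengthens this by controlling the norm constant up to $(1+\epsilon)$ for every $\epsilon > 0$ — equivalently, for each $\epsilon>0$ and each $x$ in the image of norm $c$, there is a preimage of norm at most $(1+\epsilon)c$. Write $(R,R^+) \to (S,S^+)$ for the morphism of perfect uniform adic Banach rings corresponding to $\psi$ under Theorem~\ref{T:Fontaine perfectoid correspondence}, and let $\alpha, \beta, \overline\alpha, \overline\beta$ denote the spectral norms on $A,B,R,S$. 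The point is that strictness of $\psi$ is equivalent to strictness of $\overline\psi \colon R \to S$: indeed the spectral norm on $A$ is pulled back from the Gauss-type norm $\lambda(\overline\alpha)$ on $W_\varpi(R^+)$ (Definition~\ref{D:Gauss norm}, Theorem~\ref{T:Fontaine perfectoid correspondence}), and $B = W_\varpi(S^+)[[S]]/IW_\varpi(S^+)$ similarly; a homomorphism of the quotients is strict iff the induced map of the characteristic-$p$ rings is, because the Gauss norm is computed coefficientwise. So $\overline\psi$ is a strict homomorphism of perfect uniform Banach rings, hence almost optimal by Remark~\ref{R:complex almost optimal}.

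Now I would propagate ``almost optimal'' back up. Fix $\epsilon>0$ and take $x \in \image(\psi) \cap B$ with $\beta(x) = c$. Using Lemma~\ref{L:primitive division}(b) applied in $B = W_\varpi(S^+)[[S]]/I$, write $x = \sum_{i=0}^\infty \varpi^i \theta([\overline{x}_i])$ with each $\overline\beta(\overline{x}_i) \leq (1+\epsilon)c$ (after rescaling $\epsilon$). Each $\theta([\overline{x}_i])$ lies in $\image(\psi)$ up to an error one can absorb — more precisely, since the $\overline{x}_i$ lie in $\image(\overline\psi)$ up to $\epsilon$-small corrections (a consequence of $\overline\psi$ having dense-enough image; this follows from case (ii) of Theorem~\ref{T:Fontaine perfectoid compatibility} together with strictness), by almost optimality of $\overline\psi$ we can choose preimages $\overline{w}_i \in R$ with $\overline\alpha(\overline{w}_i) \leq (1+\epsilon)^2 c$. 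Then $w = \sum_{i=0}^\infty \varpi^i \theta([\overline{w}_i]) \in A$ satisfies $\psi(w)$ within $\beta(\varpi) \cdot (1+\epsilon)^2 c$ of $x$, and $\alpha(w) \leq (1+\epsilon)^2 c$ by the Gauss norm formula \eqref{eq:Gauss norm}. Iterating this approximation (geometric convergence since $\beta(\varpi) = p^{-1} < 1$) produces a genuine preimage $\tilde w \in \psi^{-1}(x)$ with $\alpha(\tilde w) \leq (1+\epsilon')c$ for a suitably adjusted $\epsilon'$, which is exactly almost optimality.

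The main obstacle is the bookkeeping in that last step: one must make sure that the successive corrections keep the norm under control while their difference tends to zero, and that the errors introduced by ``$\overline{x}_i$ lies in $\image(\overline\psi)$ only up to $\epsilon$'' and by the inexactness of $\theta$-lifting both compound in a geometric (not merely bounded) fashion. This is the standard successive-approximation argument that underlies essentially every proof in \cite[\S 3.6]{part1} and in the preceding parts of this section (compare the proofs of Theorem~\ref{T:tensor product}(b) and of case (ii) of Theorem~\ref{T:Fontaine perfectoid compatibility}), so I expect it to go through with the same care; the conceptual input is entirely contained in Lemma~\ref{L:primitive division} and Remark~\ref{R:complex almost optimal}.
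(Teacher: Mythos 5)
Your reduction to characteristic $p$ has a genuine gap at its very first step. You assert that $\psi$ is strict if and only if the corresponding map $\overline{\psi}\colon R \to S$ is strict, ``because the Gauss norm is computed coefficientwise.'' This is not a proof: strictness of $\psi$ is a statement about $\image(\psi)$, an additive subgroup of $B$, and the expansion of an element of $B$ as $\sum_n \varpi^n \theta([\overline{x}_n])$ is neither unique nor additive, so nothing forces an element of $\image(\psi)$ to admit such an expansion with the $\overline{x}_n$ lying in (or near) $\image(\overline{\psi})$ with comparable norms. In the paper's logical order the implication runs the other way: it is precisely by means of this lemma that one deduces, in case (iii) of Theorem~\ref{T:Fontaine perfectoid compatibility}, that a strict surjection $\psi$ forces $\overline{\psi}$ to be surjective; so your step is essentially circular, or at least requires an argument of the same difficulty as the lemma itself. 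The same issue resurfaces in your descent step: you claim the coefficients $\overline{x}_i$ of an element of $\image(\psi)$ lie in $\image(\overline{\psi})$ up to $\epsilon$, citing case (ii) of Theorem~\ref{T:Fontaine perfectoid compatibility}; but a general strict homomorphism (e.g.\ a closed isometric embedding) need not have dense image, so that case does not apply, and even when it does, approximate membership of the Teichm\"uller coefficients in $\image(\overline{\psi})$ is exactly the kind of statement that must be proved rather than assumed.

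The paper's proof takes a different, self-contained route that never mentions $\overline{\psi}$: starting from a strictness constant $c \geq 1$, it shows that $c^{1/p}$ also works, whence almost optimality by iterating. Given $b \in \image(\psi)$, lift $b^p$ (also in the image) with constant $c$, apply Lemma~\ref{L:primitive division} to replace the lift by a Teichm\"uller element $\theta([\overline{x}])$ up to a $\varpi$-divisible error, and set $u = \theta([\overline{x}^{1/p}])$, so that $\alpha(u) \leq c^{1/p}\beta(b)$ while $\psi(u)^p$ is close to $b^p$; extracting $p$-th roots (via the computation of \cite[Proposition~3.6.9(c)]{part1} when $E$ has characteristic $0$ and $\gamma(\varpi)>0$, and trivially otherwise since $\calH(\gamma)$ then has unique $p$-th roots) gives $\beta(b - \psi(u)) \leq \beta(\varpi)^{1/p}\beta(b)$, and iterating this correction yields a convergent series summing to a preimage of $b$ of norm at most $c^{1/p}\beta(b)$. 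If you wish to salvage your strategy, you would first have to establish the transfer of strictness to $\overline{\psi}$ and the approximation of coefficients by $\image(\overline{\psi})$, and at that point you would find yourself running essentially this same $p$-th-root bootstrap.
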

\begin{proof}
Let $\psi: A \to B$ be such a homomorphism. Let $\alpha, \beta$ be the spectral norms on $A,B$. By hypothesis, there exists $c \geq 1$ such that every $b \in \image(\psi)$ lifts to some $a \in A$ with
$\alpha(a) \leq c \beta(b)$; it suffices to confirm that the constant $c^{1/p}$ has the same property. Given $b_l \in \image(\psi)$, lift $b_l^p$ to $a_l \in A$ with
$\alpha(a_l) \leq  \beta(b_l^p)$, then apply Lemma~\ref{L:primitive division} to find $\overline{x} \in R$ with
\[
\gamma(a_l - \theta([\overline{x}])) \leq \gamma(\varpi) \max\{\gamma(a_l), \beta(b_l)^p\} \qquad (\gamma \in \calM(A)).
\]
Put $u_l = \theta([\overline{x}^{1/p}])$, $v_l = \psi(u_l)$, and $b_{l+1} = b_l - v_l$;
note that $\alpha(u_l) \leq c^{1/p} \beta(b_l)$.
For $\gamma \in \calM(B)$, if $E$ is of characteristic $0$ and $\gamma(\varpi) > 0$, then the calculation
made in the proof of \cite[Proposition~3.6.9(c)]{part1} shows that
$\gamma(b_{l+1}) \leq \gamma(\varpi)^{1/p} \beta(b_l)$. However, this is also clear if $E$ is of characteristic $p$ or $\gamma(\varpi) = 0$, because then $\calH(\gamma)$ has unique $p$-th roots; it follows that $\beta(b_{l+1}) \leq \beta(\varpi)^{1/p} \beta(b_l)$.
If we now start with $b_0 = b$, the resulting series $\sum_{l=0}^\infty u_l$ converges to a limit $a$ satisfying $\psi(a)= b$ and $\alpha(a) \leq c^{1/p}\beta(b)$.
\end{proof}

\begin{proof}[Proof of Theorem~\ref{T:Fontaine perfectoid compatibility}, case (iii)]
In the setting of (a), $\overline{\psi}$ is almost optimal by \cite[Remark~3.1.6(a)]{part1}.
The claim thus follows from Lemma~\ref{L:primitive division}.

In the setting of (b), $B$ is perfectoid by case (ii), and 
$\psi$ is almost optimal by Lemma~\ref{L:strict to almost optimal}.
Consequently, the induced map $A^+/(u) \to B^+/(u)$ is almost surjective,
as then is the map $R^+/(\overline{u}) \to S^+/(\overline{u})$.
It follows that $\overline{\psi}$ is surjective.
\end{proof}

\begin{proof}[Proof of Theorem~\ref{T:Fontaine perfectoid compatibility}, case (iv)]
We prove (a) by following \cite[Lemma~3.6.20]{part1}.
We may assume that $S$ is of constant rank $d$ as an $R$-module.
For $S \in \FEt(R)$, $S$ is again perfect. By choosing elements $\overline{x}_1,\dots,\overline{x}_n \in S^+$ which generate $S$ as an $R$-module and then applying $\overline{\varphi}^{-1}$ as needed, we produce a morphism 
from a finite free $R^+$-module to $S^+$ whose cokernel is killed by $\overline{u}$.
This lifts to a morphism from a finite free $W_\varpi(R^+)$-module to $W_\varpi(S^+)$ whose cokernel is killed by $[\overline{u}]$. Quotienting by $I$ and inverting $u$, we see that $B$ is a finite $A$-module.
By \cite[Proposition~2.8.4, Lemma~3.5.4]{part1}, $B$ is locally free of constant rank $d$ as an $A$-module.
Since $S \in \FEt(R)$, $S$ is also a finite projective module over $S \otimes_R S$.
By Theorem~\ref{T:tensor product}, we may repeat the preceding argument to see that
$B$ is finite projective as a module over $B \otimes_A B$. It follows that $B \in \FEt(A)$.

To prove (b), it suffices to check that the functor $\FEt(R) \to \FEt(A)$ given by (a)
is an equivalence of categories. For $R$ an analytic field, this is
\cite[Theorem~3.5.6]{part1}. The general result follows as in \cite[Theorem~3.6.21]{part1} using the henselian property of adic local rings
\cite[Lemma~2.4.17]{part1} and the fact that finite \'etale morphisms of adic Banach rings
glue over rational coverings \cite[Theorem~2.6.9]{part1}.
\end{proof}

At this point, we formally recover the almost purity theorem.
\begin{cor}[Almost purity theorem] \label{C:almost purity}
Let $A$ be a Fontaine perfectoid ring and let $B$ be a faithfully finite \'etale $A$-algebra.
Then $B^\circ$ is an almost finite projective $A^\circ$-module.
In particular, the map $\Trace: B^{\circ \circ} \to A^{\circ \circ}$ is surjective.
\end{cor}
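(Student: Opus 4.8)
The plan is to push the statement across the perfectoid correspondence to characteristic $p$, where almost purity becomes essentially formal because Frobenius is bijective. Take $A^+ = A^\circ$ and apply Theorem~\ref{T:Fontaine perfectoid compatibility}(iv) to the finite \'etale morphism $A \to B$: this produces the corresponding finite \'etale morphism $(R,R^+) \to (S,S^+)$ of perfect uniform adic Banach rings, with $R^+ = R^\circ$, $S^+ = S^\circ$, together with the identifications $A^\circ = W_\varpi(R^\circ)/I$ and $B^\circ = W_\varpi(S^\circ)/I$ (the ideal $IW_\varpi(S^\circ)$ being again Fontaine primitive, as in the proof of that case). Thus it suffices to establish, in characteristic $p$, that (i) $S^\circ$ is an almost finite projective $R^\circ$-module and (ii) $R^{\circ\circ} \subseteq \Trace_{S/R}(S^{\circ\circ})$, and then to transfer (i) and (ii) back up through the generalized Witt vectors.

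For (i): finite \'etaleness makes the trace form $S \otimes_R S \to R$ perfect, so there exist finitely many $e_i, e_i^\vee \in S$ with $s = \sum_i \Trace_{S/R}(e_i^\vee s) e_i$ for all $s \in S$; after rescaling by a power of a topologically nilpotent unit $\overline u \in R$ we may assume $e_i \in S^\circ$ and, for some $N$, $\overline u^N e_i^\vee \in S^\circ$. Since $S$ is perfect, applying $\varphi^{-m}$ keeps this relation valid with $e_i, e_i^\vee$ replaced by their $p^m$-th roots, and $\overline u^{N/p^m}(e_i^\vee)^{1/p^m} = (\overline u^N e_i^\vee)^{1/p^m} \in S^\circ$; this exhibits $S^\circ$ as $\overline u^{N/p^m}$-almost finite projective over $R^\circ$ (i.e.\ there are maps $S^\circ \to (R^\circ)^n \to S^\circ$ composing to multiplication by $\overline u^{N/p^m}$), for every $m$. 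As $m \to \infty$ the spectral norm of $\overline u^{N/p^m}$ tends to $1$, so every $\epsilon \in R^{\circ\circ}$ is an $R^\circ$-multiple of some such element, yielding (i). For (ii): $\Trace_{S/R} \colon S \to R$ is surjective (check after base change to geometric points, where $S$ becomes a finite product of finite separable field extensions), so pick $\eta$ with $\Trace_{S/R}(\eta) = 1$ and rescale so $\overline u^N \eta \in S^\circ$; in characteristic $p$ the trace commutes with $p$-th powers (the freshman's dream applied to the sum of conjugates, again checked in the split case), so $\Trace_{S/R}((\overline u^N\eta)^{1/p^m}) = \overline u^{N/p^m} \in \Trace_{S/R}(S^\circ)$ for all $m$, and the same norm argument gives $R^{\circ\circ} \subseteq \Trace_{S/R}(S^\circ)$; since $(R^{\circ\circ})^2 = R^{\circ\circ}$ (using $p$-power roots of $\overline u$), writing $\epsilon = \epsilon_1\epsilon_2$ with $\epsilon_i \in R^{\circ\circ}$, $\epsilon_1 = \Trace_{S/R}(t)$, $t \in S^\circ$, gives $\epsilon = \Trace_{S/R}(\epsilon_2 t)$ with $\epsilon_2 t \in S^{\circ\circ}$, which is (ii).

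To transfer, I would lift (i) along the $\varpi$-adically complete pair $W_\varpi(R^\circ) \to W_\varpi(S^\circ)$ — for each $\epsilon$, choose finitely many elements of $S^\circ$ realizing the $\epsilon$-almost structure, take Teichm\"uller lifts, and use completeness to promote the resulting maps between finite free $W_\varpi(R^\circ)$-modules and $W_\varpi(S^\circ)$ to honest ones composing to multiplication by $[\epsilon]$ — or, equivalently, lift along the $u$-adically complete quotient $B^\circ \to B^\circ/(u) \cong S^\circ/(\overline u)$ over $A^\circ \to A^\circ/(u) \cong R^\circ/(\overline u)$ provided by Remark~\ref{R:same quotient}; then base-change along $W_\varpi(R^\circ) \to A^\circ$ (killing the Fontaine primitive generator of $I$), using stability of almost finite projectivity under base change, to conclude that $B^\circ$ is almost finite projective over $A^\circ$. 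Similarly, lift the inclusion $R^{\circ\circ} \subseteq \Trace_{S/R}(S^\circ)$ through $\varpi$-adic completeness and base change, using compatibility of trace with these operations, to get $A^{\circ\circ} \subseteq \Trace_{B/A}(B^\circ)$; then, exactly as in the last step of (ii) but now with $(A^{\circ\circ})^2 = A^{\circ\circ}$ for the Fontaine perfectoid ring $A$ (again via $p$-power roots of a topologically nilpotent unit, which exist by Remark~\ref{R:p-th root of pu} and the perfectoid correspondence), deduce $A^{\circ\circ} = \Trace_{B/A}(B^{\circ\circ})$.

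I expect the main obstacle to be the characteristic-$p$ core of the argument — arranging the dual-basis relation so that it survives extraction of arbitrarily high $p$-power roots while keeping control of the resulting almost bound — together with the bookkeeping needed to carry almost finite projectivity and the trace surjectivity faithfully through the generalized Witt vector and base-change operations. Once Theorem~\ref{T:Fontaine perfectoid compatibility}(iv) is available, everything else is formal almost mathematics.
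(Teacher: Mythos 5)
Your argument is correct and is essentially the paper's own route: the paper simply invokes Theorem~\ref{T:Fontaine perfectoid compatibility}(iv) and refers to the proof of the almost purity theorem in the prequel, which is exactly your scheme of tilting the finite \'etale morphism to characteristic $p$, running the trace-form/dual-basis argument with $p$-power roots there, and transporting the almost statements back through $W_\varpi$ and $\theta$. The only point to state a bit more carefully is that applying the compatibility theorem with $A^+=A^\circ$ gives $B^+$ equal to the integral closure of $A^\circ$ in $B$, which agrees with $B^\circ$ only up to almost elements --- harmless for the conclusion, but worth a remark.
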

\begin{proof}
This follows from Theorem~\ref{T:Fontaine perfectoid compatibility}(iv)
as in the proof of \cite[Theorem~5.5.9]{part1}.
\end{proof}

\begin{proof}[Proof of Theorem~\ref{T:Fontaine perfectoid compatibility}, case (v)]
The effect of the completion can be characterized alternately as replacing either
$A^+/(u)$ or $R^+/(\overline{u})$ with the completion with respect to some finitely generated ideal. Since these rings are isomorphic by Remark~\ref{R:same quotient}, both (a) and (b) are clear.
\end{proof}

\begin{proof}[Proof of Theorem~\ref{T:Fontaine perfectoid compatibility}, case (vi)]
This again follows from Remark~\ref{R:same quotient} after applying the Elkik-Arabia lifting theorem \cite{elkik}, \cite{arabia} to see that every \'etale extension of $A^+/(u)$ (resp.\ $R^+/(\overline{u})$) can be lifted to $A^+$ (resp.\ $R^+$).
\end{proof}

\begin{proof}[Proof of Theorem~\ref{T:Fontaine perfectoid compatibility}, case (vii)]
This is immediate from (i).
\end{proof}

We have the following extension of \cite[Proposition~3.6.22]{part1}, but the proof requires a slightly modified argument.
\begin{theorem} \label{T:Fontaine finite etale descent}
Let $A \to B$ be a faithfully finite \'etale morphism of Banach rings in which $p$ is topologically nilpotent. If $B$ is Fontaine perfectoid, then so is $A$.
\end{theorem}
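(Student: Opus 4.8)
The plan is to reduce to a Galois cover, transport the situation to characteristic $p$ via the perfectoid correspondence, descend the Fontaine primitive ideal past the group action, and then reconstruct $A$ from the descended data.

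\emph{Reduction and uniformity.} Let $C$ be the Galois closure of $B$ over $A$; it is a faithfully finite \'etale $B$-algebra, hence Fontaine perfectoid by Theorem~\ref{T:Fontaine perfectoid compatibility}(iv), and $A$ is its subring of invariants under $\Gal(C/A)$. Replacing $B$ by $C$, we may assume $A \to B$ is faithfully finite \'etale and Galois with group $G$, so that $A = B^G$; by Proposition~\ref{P:extend perfectoid} we may also take $B^+ = B^\circ$ and $A^+ = A^\circ = A \cap B^\circ$. Since $B/A$ is finite \'etale its trace form is perfect and $\Trace_{B/A}\colon B \to A$ is surjective; picking $c \in B$ with $\Trace_{B/A}(c) = 1$, the continuous $A$-linear map $b \mapsto \Trace_{B/A}(cb)$ splits $A \to B$, so $A$ is a closed subring of the uniform ring $B$. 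As $\calM(B) \to \calM(A)$ is surjective and restriction of points is compatible with restriction of functions, the spectral seminorm of $A$ is induced by that of $B$; hence $A^\circ = A \cap B^\circ$ is bounded, $A$ is uniform, and $p$ is topologically nilpotent in $A \subseteq B$.

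\emph{Tilting.} Applying Theorem~\ref{T:Fontaine perfectoid correspondence} to $(B, B^\circ)$ produces a perfect uniform adic Banach $\FF_{p^h}$-algebra $(S, S^+)$ and a Fontaine primitive ideal $I \subseteq W_\varpi(S^+)$; functoriality transports the $G$-action to a continuous $G$-action on $(S, S^+)$ (acting coordinatewise on Witt components) for which $I = \ker \theta$ is $G$-stable. Set $R := S^G$; as in the previous paragraph this is a closed, perfect, uniform $\FF_{p^h}$-subalgebra with $R^+ := (S^+)^G = R \cap S^+$, and one checks --- by comparing, modulo a pseudouniformizer, with the Galois decomposition $B \widehat{\otimes}_A B \cong \prod_{g \in G} B$ --- that $R \to S$ is again faithfully finite \'etale and Galois with group $G$. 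In particular $R$, being perfect uniform, is Fontaine perfectoid.

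\emph{Descent of the primitive ideal and conclusion.} Write $I = z W_\varpi(S^+)$ with $z$ Fontaine primitive. Then $g \mapsto g(z) z^{-1}$ is a continuous $1$-cocycle with values in $W_\varpi(S^+)^\times$; it suffices to show it is a coboundary, so that some $G$-invariant Fontaine primitive $z_0$ generates $I$. Via the factorization $W_\varpi(S^+)^\times = \{[t]: t \in (S^+)^\times\} \cdot (1 + \varpi W_\varpi(S^+))$, this reduces to the vanishing of $H^1(G, (S^+)^\times)$ and of $H^i(G, S^+)$ for $i > 0$ (the latter treating the pro-$p$ factor through its $\varpi$-adic filtration), and both follow from the almost purity theorem (Corollary~\ref{C:almost purity}) applied to the faithfully finite \'etale extension $R \to S$ of the Fontaine perfectoid ring $R$, together with the fact that these unit groups are uniquely $p$-divisible because $S$ is perfect. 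Given $z_0$, put $J := z_0 W_\varpi(R^+)$, a Fontaine primitive ideal of $W_\varpi(R^+)$, and let $A'$ be the Fontaine perfectoid ring corresponding to $((R, R^+), J)$ via Theorem~\ref{T:Fontaine perfectoid correspondence}. By Theorem~\ref{T:Fontaine perfectoid compatibility}(iv) the map $A' \to B$ is faithfully finite \'etale, and since $R \to S$ is Galois with group $G$ and the equivalence $\FEt(R) \simeq \FEt(A')$ respects completed tensor products (Theorem~\ref{T:tensor product}), it is Galois with group $G$ for the action transported from $S$; this action agrees with the original one on $B$ under the tilting equivalence, so $A' = B^G = A$ by Galois descent. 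Thus $A$ is Fontaine perfectoid.

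The hardest step is the descent of the Fontaine primitive ideal: no bare averaging argument eliminates the descent cocycle, and one must combine the almost purity input for $R \to S$ (legitimate precisely because $R$ is already known to be perfectoid) with the characteristic-$p$ divisibility of unit groups. Managing the completed Witt vector ring $W_\varpi(S^+)$ and the general coefficient field $E$ is exactly where this argument departs from the proof of \cite[Proposition~3.6.22]{part1}.
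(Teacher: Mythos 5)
There are two genuine gaps, both at the places your own summary flags as "the hardest step."

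First, the assertion that $R = S^G \to S$ is faithfully finite \'etale (Galois with group $G$) is not something one can "check by comparing modulo a pseudouniformizer." Taking $G$-invariants does not commute with reduction modulo $\overline{u}$ (the obstruction is a group cohomology term), and even granting a mod-$\overline{u}$ identification, finiteness, projectivity and perfectness of the trace form for $S$ over $S^G$ cannot be read off from the reduction. In fact this claim is essentially equivalent to the theorem itself: it is precisely what the paper's first step supplies, by invoking finite \'etale descent for affine schemes (as in the proof of \cite[Proposition~3.6.22]{part1}) to produce $R$ together with an identification of $S \otimes_R S$ with $(B \otimes_A B)^\flat$. Without that step you cannot legitimately apply Corollary~\ref{C:almost purity} to $R \to S$, nor Theorem~\ref{T:Fontaine perfectoid compatibility}(iv) to get $A' \to B$ finite \'etale. (Note that once you have reduced to the Galois case, the cleanest correct route in this paper is simply to quote Theorem~\ref{T:perfectoid group quotient}: $C$ is Fontaine perfectoid and $A = C^G$, with no need to descend the primitive ideal at all; its proof is independent of the present theorem, so there is no circularity. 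The paper itself takes neither route: it never passes to a Galois closure, working instead with the descent datum on $B \otimes_A B$ directly.)

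Second, the cohomological descent of the generator is not correct as stated. You need the class of $g \mapsto g(z)z^{-1}$ in $H^1(G, W_\varpi(S^+)^\times)$ to vanish, and you reduce this to $H^1(G,(S^+)^\times) = 0$ and $H^{i}(G,S^+) = 0$ for $i>0$. Almost purity only gives \emph{almost} vanishing of $H^{i}(G, S^+)$, and the claim $H^1(G,(S^+)^\times)=0$ does not follow from unique $p$-divisibility: for a finite group $G$ that is not a $p$-group (and after passing to a Galois closure it typically is not), unique $p$-divisibility only kills the $p$-primary part of the cohomology, while the prime-to-$p$ part survives and $(S^+)^\times$ is not uniquely $\ell$-divisible for $\ell \neq p$. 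This is exactly why the paper first normalizes the generator, replacing $z$ by $z' = z/z_1$ so that its higher Teichm\"uller components lie in $1 + S^{\circ\circ}$; the descent class then lives in $H^1$ with coefficients in $1 + W(\bullet^{\circ\circ})$, a group that \emph{is} uniquely divisible (prime-to-$p$ roots by binomial series on topologically nilpotent elements, $p$-th roots by perfectness) and for which almost-vanishing plus successive approximation (via \cite[Remark~3.1.6]{part1}, as in \cite[Lemma~5.4]{kedlaya-noetherian}) yields honest vanishing. Omitting that normalization leaves a cocycle in the full unit group, where your vanishing claims fail.
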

\begin{proof}
Let $S$ be the perfect uniform Banach algebra corresponding to $B$ via Theorem~\ref{T:Fontaine perfectoid correspondence}. Using finite \'etale descent for affine schemes as in the proof of \cite[Proposition~3.6.22]{part1}, we obtain a perfect uniform Banach algebra $R$ and a faithfully finite \'etale morphism $R \to S$ with the property that the faithfully finite \'etale morphisms $B \to B \otimes_A B$ and $S \to S \otimes_R S$ coincide.

By Theorem~\ref{T:Fontaine perfectoid correspondence}, the ideal $\ker(\theta): W(S^\circ) \to B^\circ$ is principal and any generator $z = \sum_{n=0}^\infty p^n [\overline{z}_n] \in W(S^\circ)$ has the property that $z_1 = (z - [\overline{z}_0])/p$ is a unit in $W(B^\circ)$. Choose such a generator and put $z' = z/z_1
= \sum_{n=0}^\infty p^n [\overline{z}'_n]$; then for $n \geq 1$, we have $\overline{z}'_n - 1 \in S^{\circ \circ}$.

By comparing the two images of $z'$ in $W((S \otimes_R S)^\circ)$, we obtain a class in
$H^1(\Spa(R,R^{\circ})_{\et}, 1 + W(\bullet ^{\circ \circ}))$. This class can be shown to vanish using
\cite[Remark~3.1.6]{part1}; see the proof of \cite[Lemma~5.4]{kedlaya-noetherian}. We thus deduce that
$\ker(\theta)$ admits a generator $z'' \in W(R^\circ)$.

The pair $(R, z'' W(R^\circ))$ now corresponds to a perfectoid algebra $A'$ in such a way that
$R \to S$ corresponds to a faithfully finite \'etale morphism $A' \to B$
and the two maps $S \to S \otimes_R S$ corresponds to $B \to B \otimes_{A'} B \cong B \otimes_A B$.
Consequently, the equalizer of the two maps $B \to B \otimes_{A'} B$ is equal to both $A$ and $A'$,
yielding an isomorphism $A \cong A'$. We conclude that $A$ is perfectoid.
\end{proof}

We also have the following related result. This is related to the treatment 
of finite group quotients of rigid spaces  (therein applied to the geometric interpretation of overconvergent modular forms) in
\cite[\S 6.4]{chojecki-hansen-johansson}.
Using this result, Hansen \cite{hansen} has shown that if $X$ is a perfectoid space
with an action of a finite group $G$, in many cases the quotient $X/G$ exists as a perfectoid space. (A weaker result, with a more complicated proof, appears in \cite[Proposition~2.5]{shen}.)
\begin{theorem} \label{T:perfectoid group quotient}
Let $A$ be a Fontaine perfectoid ring on which the finite group $G$ acts (on the right).
Then the invariant subring $A^G$ is again Fontaine perfectoid.
\end{theorem}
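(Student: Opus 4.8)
The plan is to reduce to the characteristic-$p$ case via the perfectoid correspondence (Theorem~\ref{T:Fontaine perfectoid correspondence}), using that perfect uniform rings are visibly stable under passage to invariants. Since $G$ acts by ring automorphisms on the uniform ring $A$, it acts by isometries for the spectral norm; taking this as the Banach norm, $A^G = \bigcap_{g \in G} \ker(g - \mathrm{id})$ is a closed, hence complete, subring of $A$ on which the restricted (power-multiplicative) norm is a Banach norm. It contains the topologically nilpotent unit $\prod_{g \in G} g(u)$ for any topologically nilpotent unit $u$ of $A$ (its inverse in $A$ is $G$-invariant), so $A^G$ is a uniform Banach ring in the sense of the paper, and $(A^G)^\circ = A^G \cap A^\circ = (A^\circ)^G$. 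By Proposition~\ref{P:extend perfectoid} it suffices to prove $(A^G, (A^G)^\circ)$ is Fontaine perfectoid. If $A$ has characteristic $p$, then $A$ is perfect uniform; for $y \in A^G$ its unique $p$-th root $y^{1/p} \in A$ satisfies $g(y^{1/p})^p = g(y) = y = (y^{1/p})^p$, hence $g(y^{1/p}) = y^{1/p}$ by uniqueness of $p$-th roots in the reduced ring $A$, so $A^G$ is perfect uniform, hence Fontaine perfectoid.

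In the general case, apply Theorem~\ref{T:Fontaine perfectoid correspondence} to $(A, A^\circ)$ to obtain a pair $((R,R^+),I)$ with $(R,R^+)$ perfect uniform and $I = \ker(\theta\colon W_\varpi(R^+) \to A^\circ)$ Fontaine primitive. By functoriality of the correspondence, $G$ acts compatibly on $(R,R^+)$ and on $I$, and the maps $\theta$, $\varphi_\varpi$ and $[\bullet]$ are $G$-equivariant. By the previous paragraph $R^G$ is perfect uniform; put $(R^+)^G = R^+ \cap R^G$, and note that since Witt functoriality is coordinatewise one has $W_\varpi(R^+)^G = W_\varpi((R^+)^G)$. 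Set $J = I \cap W_\varpi((R^+)^G) = I^G$. I will check that $J$ is a Fontaine primitive ideal of $W_\varpi((R^+)^G)$; granting this, Theorem~\ref{T:Fontaine perfectoid correspondence} applied to $((R^G,(R^+)^G),J)$ produces a Fontaine perfectoid ring $(A', (A')^+)$ with $(A')^+ = W_\varpi((R^+)^G)/J$, and the inclusion $W_\varpi((R^+)^G) \hookrightarrow W_\varpi(R^+) \xrightarrow{\theta} A^\circ$ (which kills $J$) induces an injection $(A')^+ \hookrightarrow (A^\circ)^G = (A^G)^\circ$ whose image is a $G$-invariant subring, i.e.\ a map $A' \to A^G$. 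If this map has dense image, then case (ii) of Theorem~\ref{T:Fontaine perfectoid compatibility}(b) (uniform target, dense image) shows that $A^G$ is Fontaine perfectoid, completing the proof.

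For the Fontaine primitivity of $J$: choose a Fontaine primitive generator $z$ of $I$; then $G$ permutes the generators, so $g(z) = z u_g$ with $u_g \in W_\varpi(R^+)^\times$ a $1$-cocycle. Following the normalization step in the proof of Theorem~\ref{T:Fontaine finite etale descent} (replace $z$ by $z$ divided by the unit $(z - [\overline{z}_0])/\varpi$ and invoke Lemma~\ref{L:primitive division}), we may arrange $u_g \in 1 + W_\varpi(R^{\circ\circ})$, so the obstruction to choosing $z$ $G$-invariant lies in $H^1(G, 1 + W_\varpi(R^{\circ\circ}))$. This group vanishes: when $p \nmid |G|$ it is annihilated by $|G| \in \gotho_E^\times$, and in general one filters by powers of a topologically nilpotent unit $[\overline{u}]$ and reduces to the associated graded pieces, which are modules over the perfect uniform ring $R$ whose higher cohomology is (almost) acyclic by the methods underlying Corollary~\ref{C:Fontaine perfectoid etale acyclic} and Remark~\ref{R:complex almost optimal}. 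Hence $z$ may be taken $G$-invariant; then $z \in W_\varpi((R^+)^G)$, it remains Fontaine primitive there, and since $z$ is a non-zero-divisor one checks directly (using $G$-invariance of $z$) that $J = z\,W_\varpi((R^+)^G)$, as required.

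It remains to see that $A' \to A^G$ has dense image. Taking $G$-invariants of $0 \to I \to W_\varpi(R^+) \xrightarrow{\theta} A^\circ \to 0$ yields $W_\varpi((R^+)^G)/J = (A')^+ \to (A^\circ)^G = (A^G)^\circ$ with cokernel embedding into $H^1(G, I)$; since $z$ is now $G$-invariant we have $I \cong W_\varpi(R^+)$ as $G$-modules, so this cokernel embeds into $H^1(G, W_\varpi(R^+))$, which (by the same filtration-to-graded-pieces argument, now applied to $R^+$ itself) is almost zero, i.e.\ killed by $A^{\circ\circ}$. Therefore the image of $(A')^+$ in $(A^G)^\circ$ contains $\epsilon\,(A^G)^\circ$ for every topologically nilpotent $\epsilon \in A$ and is thus dense, giving the claim. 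The main obstacle in this argument is precisely the (almost-)vanishing of $H^i(G,-)$ of the period rings $W_\varpi(R^+)$ for an action of the finite group $G$ that need not be free: unlike in the étale setting of Theorem~\ref{T:Fontaine finite etale descent}, this is genuine group cohomology rather than étale cohomology of a torsor, so one cannot cite the sheaf-cohomological statements of \S\ref{subsec:Fontaine perfectoid} directly, and must instead establish the almost-acyclicity of the graded pieces over the perfect uniform base and propagate it through the Witt vector construction.
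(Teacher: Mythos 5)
Your reduction to characteristic $p$ and the identification $(A^G)^\circ=(A^\circ)^G$, $R(A^G)=R^G$ are fine, but the proof has a genuine gap exactly at the point you yourself flag as ``the main obstacle'': the two cohomological vanishing claims, namely that $H^1(G,1+W_\varpi(R^{\circ\circ}))$ vanishes (so that the generator $z$ of $I$ can be made $G$-invariant) and that $H^1(G,W_\varpi(R^+))$ is almost zero (so that $\theta$ remains almost surjective on $G$-invariants). The machinery you invoke --- Remark~\ref{R:complex almost optimal} and Corollary~\ref{C:Fontaine perfectoid etale acyclic}, as used in Theorem~\ref{T:Fontaine finite etale descent} --- concerns \v{C}ech/Amitsur complexes of \emph{faithfully finite \'etale} morphisms, where the trace gives an (almost) contracting homotopy; it says nothing about group cohomology of a finite group acting on $A$ with possible fixed points, and no hypothesis in the theorem makes $A^G\to A$ finite \'etale or the action free. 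Moreover the specific reduction you propose is false: the associated graded pieces of your filtration of $1+W_\varpi(R^{\circ\circ})$ are modules over the characteristic-$p$ ring $R$, and for these $H^1(G,-)$ is not almost zero in general --- already for the trivial action of $G=\ZZ/p\ZZ$ one has $H^1(G,R)=\Hom(\ZZ/p\ZZ,R)=R$ since $pR=0$. So the filtration argument cannot deliver the vanishing you need when $p\mid\#G$ (the prime-to-$p$ case is the easy one), and without it neither the Fontaine-primitivity of $J=I^G$ in $W_\varpi((R^+)^G)$ nor the density of the image of $A'$ in $A^G$ is established; the appeal to Theorem~\ref{T:Fontaine perfectoid compatibility}(ii) then has nothing to apply to.

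For comparison, the paper avoids group cohomology of period rings entirely and verifies Definition~\ref{D:Fontaine perfectoid} for $A^G$ directly: one only has to show Frobenius is surjective on $A^{G\circ}/(u^p)$ for a suitable $G$-invariant topologically nilpotent unit $u=\theta([\overline{u}^{p^{-n}}])$ with $\overline{u}\in R^G$. When $\#G=p^k$ this is done by approximating $x\in A^{G\circ}$ by $\theta([\overline{y}])$ modulo $u^{p^{k+1}}$, forming $\overline{z}=\prod_{g\in G}(\overline{y}^g)^{1/p^{k+1}}$ in the perfect ring $R^\circ$, and checking $x\equiv\theta([\overline{z}])^p\pmod{u^pA^{G\circ}}$; the general case reduces to this via a $p$-Sylow subgroup $P$ and averaging the $P$-invariant approximate $p$-th root over coset representatives, using that $[G:P]$ is invertible. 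If you want to salvage your route, you would need an independent argument that the relevant $H^1$'s (almost) vanish for arbitrary, possibly non-free, finite group actions --- which is precisely the nontrivial content the paper's elementary tilt-side construction is designed to circumvent.
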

\begin{proof}
Put $R = R(A)$ in the notation of Theorem~\ref{T:Fontaine perfectoid correspondence}.
Note first that $R^G$ is perfect uniform and contains a topologically nilpotent unit
$\overline{u}$ (e.g.,
the norm of any topologically nilpotent unit of $R$), so it is perfectoid.
For $n$ sufficiently large, $u = \theta([\overline{u}^{p^{-n}}])$ is a topologically nilpotent unit of $A^G$ such that $u^{p}$ divides $p$ in $A^{G \circ} = A^{\circ G}$.
It thus remains to check that Frobenius is surjective on $A^{G \circ}/(u^p)$ for some such $u$.

We first treat the case where $\#G = p^k$ for some $k$.
We may choose $u$ so that $u^{p^{k+1}}$ divides $p$ in $A^{G \circ}$.
For each $x \in A^{G \circ}$, we can find $\overline{y} \in R^{\circ}$
with $x \equiv \theta([\overline{y}]) \pmod{u^{p^{k+1}} A^{\circ}}$; for $g \in G$,
we also have $x \equiv \theta([\overline{y}^g]) \pmod{u^{p^{k+1}} A^{\circ}}$.
Put 
\[
\overline{z} = \prod_{g \in G} (\overline{y}^g)^{1/p^{k+1}} \in R^{\circ}, \qquad
z = \theta([\overline{z}]);
\]
then $x^{p^k} = \prod_{g \in G} x \equiv z^{p^{k+1}} \pmod{u^{p^{k+1}} A^{G \circ}}$, so
$x \equiv z^p \pmod{u^p A^{G \circ}}$.

We next treat the general case. Let $P$ be a $p$-Sylow subgroup of $G$
and let $S$ be a set of left coset representatives of $P$ in $G$.
For each $x \in A^{G \circ}$, by the previous paragraph we can find $y \in A^{P \circ}$
with $x \equiv y^p \pmod{u^p A^{\circ}}$; we then also have $x \equiv (y^g)^p \pmod{u^p A^{\circ}}$ for each $g \in S$. Since $[G:P]$ is coprime to $p$, we may form
\[
z = \frac{1}{[G:P]} \sum_{g \in S} y^g \in A^{G \circ}
\]
and deduce that $x \equiv z^p \pmod{u^p A^{G \circ}}$.
\end{proof}

\begin{remark} \label{R:perfectoid compatibility reified}
Components (i), (ii), (iii), (iv) of Theorem~\ref{T:Fontaine perfectoid compatibility},
and additionally Corollary~\ref{C:Fontaine perfectoid stably uniform},
may be adapted to the reified case without incident.
To adapt components (v), (vi), (vii) of Theorem~\ref{T:Fontaine perfectoid compatibility},
one may pass from $(C,C^{\Gr})$ to the associated pair $(C,C^+)$,
form the morphism $(C,C^+) \to (D,D^+)$ as before,
then choose $D^{\Gr}$ so that $D^{+,r} = C^{+,r} \widehat{\otimes}_{C^+} D^+$ for all $r>0$.
\end{remark}

One construction in \cite[\S 3.6]{part1} which does not directly adapt to Fontaine perfectoid rings is \cite[Lemma~3.6.26]{part1}. The following replacement is adapted from \cite[Lemma~10.1.5]{scholze-berkeley2}.
\begin{lemma} \label{L:perfectoid cover}
Let $R$ be a Banach ring in which $p$ is topologically nilpotent.
\begin{enumerate}
\item[(a)]
There exists a filtered family $\{R \to R_i\}_{i \in I}$ of finite \'etale morphisms
with direct limit $R_\infty$ such that every injective finite \'etale morphism $R_\infty \to S$ splits.
\item[(b)]
Let $\tilde{R}_\infty$ be the separated completion of $R_\infty$ for the spectral seminorm.
Then $\tilde{R}_\infty$ is Fontaine perfectoid.
\end{enumerate}
\end{lemma}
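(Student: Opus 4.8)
The plan is to prove (a) by a countable iteration building a ``finite \'etale closure'' of $R$, and to deduce (b) by using the splitting property of (a) to manufacture $p$-th roots inside $\tilde R_\infty$.

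For (a): since a finite \'etale $R$-algebra is cut out by finitely many elements and relations inside a finite free $R$-module, the isomorphism classes of such algebras form a set; fix representatives $\{S_j\}_{j\in J}$. Let $F(R)=\varinjlim_{J'\subseteq J\text{ finite}}\bigotimes_{j\in J'}S_j$ (tensor over $R$), a filtered colimit of finite \'etale $R$-algebras and hence the direct limit of a filtered family of finite \'etale morphisms out of $R$. Set $R_0=R$, $R_{n+1}=F(R_n)$, and $R_\infty=\varinjlim_n R_n$; unwinding, $R_\infty$ is again the direct limit of a filtered family of finite \'etale $R$-algebras, which gives the family $\{R\to R_i\}$ of the statement. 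Given an injective finite \'etale morphism $R_\infty\to S$, the standard limit arguments for finitely presented algebras show $S\cong S_n\otimes_{R_n}R_\infty$ for some finite \'etale $R_n$-algebra $S_n$; since $S_n$ is isomorphic to one of the representatives used in forming $F(R_n)=R_{n+1}$, there is an $R_n$-algebra map $S_n\to R_{n+1}$, which together with $\mathrm{id}_{R_\infty}$ induces an $R_\infty$-algebra retraction $S=S_n\otimes_{R_n}R_\infty\to R_\infty$.

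For (b): the ring $\tilde R_\infty$ is a uniform Banach ring, the topologically nilpotent unit of $R$ remaining such in the completion, so by Definition~\ref{D:Fontaine perfectoid} (and, for our choice of pseudo-uniformizer, the equivalence recorded there with surjectivity of Frobenius modulo $p$) it remains to produce a topologically nilpotent unit $u$ with $u^p\mid p$ and to check that $x\mapsto x^p$ is surjective on $\tilde R_\infty^\circ/(p)$. With a little care about the decomposition of $\Spa(R)$ into the locus where $p$ is invertible and the locus where $p=0$, one reduces to the cases where $R$ is a $\Qp$-algebra or an $\Fp$-algebra. If $p$ is a unit, then for every unit $a\in R_\infty$ the monic separable polynomial $x^p-a$ has unit derivative, so $R_\infty[x]/(x^p-a)$ is injective finite \'etale over $R_\infty$ and splits by (a); applying this to $a=p$ and iterating produces a compatible system $u_m$ of $p^m$-th roots of $p$ in $R_\infty$, and $u=u_1$ satisfies $u^p=p$. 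For surjectivity of Frobenius, for each $m$ and each $r\in R_\infty^\circ$ the polynomial $x^p-u_m^{p^m-1}x-r$ has derivative $-u_m^{p^m-1}(1-u_mx^{p-1})$, a unit in $R_\infty[x]/(x^p-u_m^{p^m-1}x-r)$ since $u_m$ is topologically nilpotent and $x$ is power-bounded there; splitting this cover yields $s\in R_\infty^\circ$ with $s^p\equiv r\pmod{u_m^{p^m-1}}$, and a successive-approximation argument — correcting $s$ using higher $p$-power roots of $p$ and re-applying the splitting to the error term — upgrades this to an exact $p$-th root of $r$ in $\tilde R_\infty^\circ$, whence $\overline\varphi$ is surjective on $\tilde R_\infty^\circ/(p)$.

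If instead $R$ has characteristic $p$, then $u^p\mid p=0$ is automatic, so (by the characterization of characteristic-$p$ Fontaine perfectoid rings as perfect uniform rings) it suffices to show $\tilde R_\infty$ is perfect: for a topologically nilpotent unit $\varpi_0\in R$ and $r\in R_\infty^\circ$, the Artin--Schreier-type polynomial $x^p+\varpi_0^n x-r$ has unit derivative $\varpi_0^n$, so $R_\infty[x]/(x^p+\varpi_0^n x-r)$ is injective finite \'etale over $R_\infty$ and splits, giving $s_n\in R_\infty^\circ$ with $s_n^p=r-\varpi_0^n s_n$; then $s_n^p\to r$, and $(s_n)$ is Cauchy by the identity $(s_m-s_n)^p=s_m^p-s_n^p$, so its limit is a $p$-th root of $r$ in $\tilde R_\infty$.

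The main obstacle I anticipate is the successive-approximation step in the $\Qp$-algebra case: unlike in characteristic $p$ one cannot directly extract $p$-th roots of congruences, so one must iterate the splitting construction while tracking the norm estimates on both the error terms and the auxiliary $p$-power roots of $p$ carefully enough to guarantee convergence in $\tilde R_\infty^\circ$. Minor additional care is needed to justify the reduction to the two pure cases and to verify that everything transposes verbatim to reified adic spaces. Throughout I would follow the treatment in \cite{scholze-berkeley}.
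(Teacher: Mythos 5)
Your treatment of (a) is fine: the paper disposes of it with Zorn's lemma, and your explicit iterated ``tensor up all finite \'etale algebras'' construction is a legitimate (if more laborious) way to get the same splitting property. The problem is in (b), at the very first step: the claimed reduction ``to the cases where $R$ is a $\Qp$-algebra or an $\Fp$-algebra'' does not exist. The hypothesis is only that $p$ is topologically nilpotent, and the whole point of Fontaine's notion (Definition~\ref{D:Fontaine perfectoid}, and the discussion opening \S\ref{subsec:Fontaine perfectoid}) is to include the intermediate rings in which $p$ is neither invertible nor zero --- e.g.\ $R=\Zp$ with the $p$-adic norm, or $R=\gotho_K$ for $K$ perfectoid, or the rings $A^+$ occurring throughout the paper. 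For such $R$ the locus of $\calM(R)$ (or $\Spa(R,R^\circ)$) where $p$ is invertible is open but its complement is not, there is no idempotent decomposition of $R$ separating the two characteristics, and consequently neither of your two arguments applies: the Kummer covers $x^p-a$ are not \'etale where $p$ fails to be invertible, and the Artin--Schreier covers $x^p+\varpi_0^n x-r$ are not \'etale where $p\neq 0$. Since these mixed rings are exactly the cases the lemma is invoked for elsewhere in the paper, this is a genuine gap, not a technicality.

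The paper's own proof avoids any case division by one uniform trick: for an arbitrary topologically nilpotent unit $u_0\in R_\infty$ it considers $P_n(T)=T^{p^n}-u_0T-u_0$, computes that its discriminant is $u_0^{p^n-1}\bigl(p^{np^n}-(1-p^n)^{p^n-1}u_0\bigr)$, hence a unit for $n\gg 0$ regardless of whether $p$ is invertible, zero, or neither; splitting this cover yields $u$ with $u^{p^n}=u_0(1+u)$, so after enlarging $n$ one gets a topologically nilpotent unit $u$ with $u^p\mid p$ and $p/u^p$ topologically nilpotent. Then for $f\in\tilde R_\infty^\circ$ the cover $Q(T)=T^p-u^pT-f$ has unit discriminant $p^pf^{p-1}-(1-p)^{p-1}u^{p^2}$ precisely because $p/u^p$ is topologically nilpotent, and its root $g$ satisfies $g^p\equiv f\pmod{u^p}$, which is exactly the surjectivity of $\tilde R_\infty^\circ/(u)\to\tilde R_\infty^\circ/(u^p)$ required by Definition~\ref{D:Fontaine perfectoid}. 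Note in particular that no successive approximation and no exact $p$-th roots are needed (your claim that the iteration produces an \emph{exact} $p$-th root of every $f\in\tilde R_\infty^\circ$ is also an overclaim --- $\tilde R_\infty$ is only ``finite-\'etale closed'' relative to $R_\infty$, not closed under extraction of $p$-th roots --- but since only a congruence modulo $u^p$ is required, that overstatement is a secondary issue compared with the missing mixed-characteristic case). If you want to salvage your outline, replace the characteristic dichotomy by the degree-$p^n$ Artin--Schreier-type polynomial above; it simultaneously specializes to your Kummer argument where $p$ is invertible and to your Artin--Schreier argument in characteristic $p$.
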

\begin{proof}
Part (a) is obvious from Zorn's lemma. To prove (b), let $u_0 \in R_\infty$ be any topologically nilpotent unit.
Put $P_n(T) = T^{p^n} - u_0 T - u_0$; then
\begin{align*}
\Disc(P_n(T)) &= \Res(P_n(T), P_n'(T)) \\
&= \Res(T^{p^n} - u_0 T - u_0, p^n T^{p^{n-1}} - u_0) \\
&= \Res( p^n T^{p^n-1} - u_0, (1-p^n) u_0 T - p^n u_0) \\
&= u_0^{p^n-1} (p^{np^n} - (1 - p^{n})^{p^n-1} u_0).
\end{align*}
For $n$ sufficiently large, $p^{np^n} - (1 - p^{n})^{p^n-1} u_0$ is a unit, so $\tilde{R}_\infty \to \tilde{R}_\infty[T]/(P_n(T))$ is finite \'etale; by construction, this means that $P_n(T)$ has a root $u \in \tilde{R}_\infty$.
Note that $1 + u$ is a unit in $\tilde{R}_\infty$, so $u^{p^n}$ divides $u_0$; since $p$ is topologically nilpotent in $R$, by taking $n$ large enough we may ensure that $u^p$ divides $p$ and that $p/u^p$ is also topologically nilpotent.

It remains to check that $\tilde{R}_\infty^\circ/(u) \to \tilde{R}_\infty^\circ/(u^p)$ is surjective. 
For $f \in \tilde{R}_\infty^\circ$, put $Q(T) = T^p - u^p T - f$.
As above, we compute that
\begin{align*}
\Disc(Q(T)) &= \Res(T^p - u^p T - f, p T^{p-1} - u^p) \\
&= \Res(p T^{p-1} - u^p, (1-p) u^p T - p f) \\
&= p^p f^{p-1} - (1-p)^{p-1} u^{p^2},
\end{align*}
which is a unit in $\tilde{R}_\infty$ because $p/u^p$ is topologically nilpotent.
Hence $Q(T)$ has a root $g \in \tilde{R}_\infty$, which by construction satisfies $g^p \equiv f \pmod{u^p}$.
\end{proof}

\begin{defn}
For $X$ a preadic space (or a reified preadic space), an element $Y$ of $X_{\proet}$ is a \emph{perfectoid subdomain} if it admits a pro-\'etale presentation $\varprojlim_i Y_i$ satisfying the following conditions.
\begin{enumerate}
\item[(a)]
There exists an index $j \in I$ such that the maps $Y_i \to Y_j$ are faithfully finite \'etale for all $i \geq j$ and the space $Y_j$ is a preadic affinoid space.
\item[(b)]
The separated completion of $\calO(Y)$ for the spectral seminorm is a Fontaine perfectoid ring.
\end{enumerate}
By Lemma~\ref{L:perfectoid cover}, the perfectoid subdomains form a neighborhood basis of $X_{\proet}$.
\end{defn}

\begin{remark}
We provide here a table correlating the basic results on perfectoid fields, rings, and spaces in this document with its prequel \cite{part1} and with Scholze's foundational paper \cite{scholze1}.

\begin{center}
\begin{tabular}{c|c|c|c}
Topic & In \cite{scholze1} & In \cite{part1} & Ref. herein \\
\hline
Definition of a perfectoid field & 3.1 & 3.5.1 & N/A \\ 
\hline
Tilting equivalence for fields & 3.7 & 3.5.3, 3.5.6 & N/A\\
\hline
Tilting for rings &5.2 & 3.6.5 & \ref{T:Fontaine perfectoid correspondence}\\
\hline
Compatibility with rational localization & 6.3(i,ii) & 3.6.14 & \ref{T:Fontaine perfectoid homeomorphism}, \ref{T:Fontaine perfectoid compatibility} \\
\hline
Compatibility with tensor product & 6.18 &3.6.11 & \ref{T:tensor product} \\
\hline
Almost purity & 7.9 & 3.6.21, 5.5.9 & \ref{T:Fontaine perfectoid compatibility},
\ref{C:almost purity} \\
\hline
Almost acyclicity & 6.3(iv), 7.13 & 8.3.2 & \ref{C:Fontaine perfectoid etale acyclic} \\
\end{tabular}
\end{center}

\end{remark}

\subsection{Pseudocoherent and fpd sheaves on perfectoid spaces}

Based on \S\ref{subsec:Fontaine perfectoid}, we update \cite[\S 8.3]{part1}
to include perfectoid spaces in the sense of Fontaine. We also adapt some of our previous results on vector bundles to the cases of pseudocoherent and fpd sheaves.

\begin{defn}
A \emph{Fontaine perfectoid space} is an adic space covered by the adic spectra of Fontaine perfectoid adic Banach rings. By Theorem~\ref{T:Fontaine perfectoid correspondence}, to every Fontaine perfectoid space $X$ we may functorially associate a space $X^{\flat}$ of characteristic $p$ with the same underlying topological space; for perfectoid spaces, this is the usual perfectoid correspondence.
\end{defn}

\begin{theorem} \label{T:etale topoi}
For any Fontaine perfectoid space $X$, the \'etale topoi of $X$ and $X^\flat$ are functorially homeomorphic. In particular, the categories of \'etale $\Zp$-local systems, isogeny $\Zp$-local systems, and $\Qp$-local systems on $X$ and $X^{\flat}$ are naturally equivalent.
\end{theorem}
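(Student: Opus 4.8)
The plan is to reduce the assertion to a statement about small étale sites and then bootstrap from the affinoid case already established in this series. First I would recall that by Corollary~\ref{C:Fontaine perfectoid stably uniform}, every Fontaine perfectoid adic Banach ring is stably uniform and hence sheafy, so a Fontaine perfectoid space $X$ has a well-behaved structure sheaf and a well-behaved étale site; the same holds for $X^\flat$. The key local input is Theorem~\ref{T:Fontaine perfectoid homeomorphism} (and its functorial upgrade Theorem~\ref{T:Fontaine perfectoid correspondence}), which gives, for $(A,A^+)$ Fontaine perfectoid with tilt $(R,R^+)$, a functorial identification $\Spa(A,A^+)\cong\Spa(R,R^+)$ matching rational subspaces; combined with case (iv) of Theorem~\ref{T:Fontaine perfectoid compatibility}, which identifies $\FEt(A)$ with $\FEt(R)$ compatibly, this identification extends to an equivalence of the small étale sites $\Spa(A,A^+)_{\et}\simeq\Spa(R,R^+)_{\et}$. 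Indeed an object of the étale site is built by glueing rational localizations and finite étale morphisms, and Theorem~\ref{T:Fontaine perfectoid compatibility}(i),(iv) shows that each such building block transports between $A$ and $R$ while preserving the perfectoid property; one then checks that coverings correspond to coverings, using Theorem~\ref{T:Fontaine perfectoid homeomorphism} for rational coverings and the equivalence of $\FEt$ for finite étale coverings (plus Corollary~\ref{C:covering faithful}-type surjectivity on points, available here since the underlying spaces are literally homeomorphic).

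Next I would globalize: since a Fontaine perfectoid space is by definition glued from adic spectra of Fontaine perfectoid adic Banach rings, and the flat functor $X\mapsto X^\flat$ is functorial (so it respects the glueing data), the local equivalences of small étale sites glue to an equivalence $X_{\et}\simeq X^\flat_{\et}$. Here one must verify that the local equivalences are compatible on overlaps, which follows from the functoriality in Theorem~\ref{T:Fontaine perfectoid correspondence} and Theorem~\ref{T:Fontaine perfectoid homeomorphism} together with case (i) of Theorem~\ref{T:Fontaine perfectoid compatibility}: an overlap is covered by rational subdomains, and the tilting correspondence commutes with passage to such subdomains. This yields the claimed functorial homeomorphism of étale topoi.

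Finally, the consequence for local systems is formal once the étale topoi are identified: an étale $\Zp$-local system is a compatible system of locally constant sheaves of $\Zp/p^n\Zp$-modules of finite type on the étale site, and such data depend only on the étale topos; likewise for isogeny $\Zp$-local systems (i.e.\ objects of the isogeny category) and $\Qp$-local systems. So the equivalence of étale topoi induces equivalences of all three categories. I would phrase this as: for any site $\mathcal{T}$, the category of $\Zp$-local systems is an invariant of the topos $\mathrm{Sh}(\mathcal{T})$, and apply this to $X_{\et}$ and $X^\flat_{\et}$.

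The main obstacle I anticipate is the first step: carefully checking that the tilting correspondence really does extend from affinoids to an equivalence of \emph{small étale sites}, not just to a homeomorphism of underlying topological spaces. One has to handle non-quasicompact objects of the étale site, confirm that étale coverings are preserved in both directions (which requires knowing that a morphism of Fontaine perfectoid rings is étale on one side iff its tilt is étale on the other, for the relevant notion of étale morphism of adic spaces built out of cases (i) and (iv) of Theorem~\ref{T:Fontaine perfectoid compatibility}), and track functoriality so that the resulting equivalence is canonical enough to glue. The mixed-characteristic generality of Fontaine perfectoid rings adds bookkeeping here, but no essentially new idea beyond what is packaged in Theorems~\ref{T:Fontaine perfectoid correspondence}, \ref{T:Fontaine perfectoid homeomorphism}, and \ref{T:Fontaine perfectoid compatibility}.
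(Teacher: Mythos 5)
Your proposal is correct and follows essentially the same route as the paper: the paper's proof is simply "Immediate from Theorem~\ref{T:Fontaine perfectoid compatibility}," i.e.\ it uses exactly the compatibility of tilting with rational localizations and finite \'etale morphisms (together with the identification of adic spectra from Theorem~\ref{T:Fontaine perfectoid homeomorphism}) to match up the \'etale sites, with the glueing and the local-system consequence left as the formal steps you spell out. Your write-up just makes explicit the bookkeeping the paper leaves implicit.
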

\begin{proof}
Immediate from Theorem~\ref{T:Fontaine perfectoid compatibility}.
\end{proof}

\begin{prop} \label{P:perfectoid acyclic}
Let $(A,A^+)$ be a Fontaine perfectoid adic Banach ring and put $X = \Spa(A,A^+)$.
\begin{enumerate}
\item[(a)]
We have $H^0(X_{\proet}, \widehat{\calO}) = A$.
\item[(b)]
For $i>0$, $H^i(X_{\proet}, \widehat{\calO}) = 0$.
\item[(c)]
For $i>0$, the group $H^0(X_{\proet}, \widehat{\calO}^+)$ is annihilated by $A^{\circ \circ}$.
\end{enumerate}
\end{prop}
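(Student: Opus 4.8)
The plan is to follow the strategy of \cite[\S 8.3]{part1}, reducing to a \v{C}ech computation over the neighborhood basis of perfectoid subdomains. Part (a) is essentially immediate: $\widehat{\calO}$ is a sheaf on $X_{\proet}$, the space $X = \Spa(A,A^+)$ is itself a perfectoid subdomain (presented by the constant tower), and $\widehat{\calO}(X) = A$, so $H^0(X_{\proet}, \widehat{\calO}) = A$. For (b) and (c) I would invoke the \v{C}ech-to-derived-functor spectral sequence relative to a covering of $X$ by perfectoid subdomains: since perfectoid subdomains form a neighborhood basis of $X_{\proet}$, it suffices to prove that for every perfectoid subdomain $Y$ and every pro-\'etale covering $\gothV$ of $Y$ by perfectoid subdomains, the augmented \v{C}ech complex of $\widehat{\calO}$ (resp.\ $\widehat{\calO}^+$) is exact (resp.\ almost exact, i.e.\ has cohomology annihilated by $A^{\circ \circ}$) in positive degrees, with $\check H^0 = \widehat{\calO}(Y)$.

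First I would reduce the shape of the covering. By quasicompactness we may take $\gothV$ finite; by the structure of the pro-\'etale topology, $\gothV$ is refined by a covering built from a rational or finite \'etale covering at some finite level of a pro-\'etale presentation of $Y$, together with iterated faithfully pro-finite-\'etale towers over $Y$. The key point is that all the rings arising in the associated \v{C}ech complexes are again Fontaine perfectoid: rational localizations and finite \'etale algebras of a Fontaine perfectoid ring are Fontaine perfectoid by Theorem~\ref{T:Fontaine perfectoid compatibility}(i),(iv), and completed tensor products of Fontaine perfectoid rings over a Fontaine perfectoid base are Fontaine perfectoid by Theorem~\ref{T:tensor product}. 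In particular every member of the refined covering is a rational subdomain of $Y$, faithfully finite \'etale over $Y$, or a faithfully pro-finite-\'etale tower over $Y$, each a perfectoid subdomain, and all fibered products are perfectoid subdomains too.

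Next I would establish \v{C}ech acyclicity for each of these shapes. For a rational covering of $Y$ or a faithfully finite \'etale covering of $Y$, the \v{C}ech complex of $\calO$ is strict exact by Corollary~\ref{C:Fontaine perfectoid etale acyclic} (applied to the Fontaine perfectoid adic Banach ring $\widehat{\calO}(Y)$ on its analytic resp.\ \'etale site), and the same corollary gives the almost vanishing for $\calO^+$. For a faithfully pro-finite-\'etale tower $Z = \varprojlim_i Z_i \to Y$, each $\calO(Z_i)$ is itself Fontaine perfectoid by Theorem~\ref{T:Fontaine perfectoid compatibility}(iv), so the \v{C}ech complex of $\calO$ at level $i$ is strict exact by Corollary~\ref{C:Fontaine perfectoid etale acyclic}; the \v{C}ech complex of $\widehat{\calO}$ for the tower is then the uniform completion of the filtered colimit of these level-$i$ complexes. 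Filtered colimits are exact, so the uncompleted colimit is exact, and after tilting via Theorem~\ref{T:Fontaine perfectoid correspondence} and applying Remark~\ref{R:complex almost optimal} these complexes are almost optimal with constants uniform in $i$; hence the completed colimit is exact for $\widehat{\calO}$ and has cohomology annihilated by $A^{\circ \circ}$ for $\widehat{\calO}^+$. Combining the three shapes by Tate's reduction process (for the rational part) and finite \'etale / faithfully flat descent (for the \'etale and tower parts), exactly as in \cite[\S 8.3]{part1}, yields the \v{C}ech acyclicity for $\gothV$, hence (b) and (c).

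I expect the main obstacle to be the passage to the limit in the tower case: controlling the uniform completion of a filtered colimit of \v{C}ech complexes so that exactness is not lost. This is precisely what Remark~\ref{R:complex almost optimal} is designed to handle — the strict exact complexes obtained in characteristic $p$ after tilting are in fact almost optimal, with a uniform control on norms coming from Witt-vector arithmetic, and almost optimality is stable under filtered colimits and survives completion, yielding genuine exactness for $\widehat{\calO}$ and almost exactness for $\widehat{\calO}^+$. A secondary technical point, already dispatched above, is that the intermediate rings $\calO(Z_i)$ at finite level of a tower are themselves Fontaine perfectoid; this is exactly the content of Theorem~\ref{T:Fontaine perfectoid compatibility}(iv) and is what makes Corollary~\ref{C:Fontaine perfectoid etale acyclic} applicable at every stage.
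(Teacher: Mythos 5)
Your proposal is correct and follows essentially the same route as the paper: the paper's proof likewise reduces (as in the proof of \cite[Lemma~9.2.8]{part1}) to \v{C}ech-acyclicity, handling the rational and finite \'etale coverings via Corollary~\ref{C:Fontaine perfectoid etale acyclic} and the faithfully finite \'etale towers via tilting and Remark~\ref{R:complex almost optimal}. Your write-up simply spells out in more detail the reduction of a general pro-\'etale covering and the passage to the completed limit that the paper leaves implicit in its citation.
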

\begin{proof}
As in the proof of \cite[Lemma~9.2.8]{part1}, we use Corollary~\ref{C:Fontaine perfectoid etale acyclic} to reduce to checking \v{C}ech-acyclicity for a tower of faithfully finite \'etale morphisms, which follows from
Remark~\ref{R:complex almost optimal}.
\end{proof}

\begin{lemma} \label{L:perfectoid tower splitting}
Let $A$ be a Fontaine perfectoid Banach ring.  Let
$A = A_0 \to A_1 \to \cdots$ be a sequence of faithfully finite \'etale morphisms.
Let $B$ be the completed direct limit of this sequence for the spectral seminorm. Then
we have an isomorphism
\begin{equation} \label{eq:refined Kiehl splitting1}
B \cong A \oplus \widehat{\bigoplus_{m=1}^\infty} A_m/A_{m-1}
\end{equation}
of Banach modules over $A$, where each quotient $A_m/A_{m-1}$ is equipped with the quotient norm induced by the spectral norm on $A_m$ and the direct sum is equipped with the supremum norm.
In particular, the morphism $A \to B$ is pro-projective (hence $2$-pseudoflat by Lemma~\ref{L:pro-projective topologically flat}), and it splits in the category of Banach modules over $A$ (by projecting onto the first factor of \eqref{eq:refined Kiehl splitting1}).
\end{lemma}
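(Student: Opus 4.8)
The plan is to build the splitting by decomposing each finite level of the tower almost isometrically and then passing to the completed direct limit. Write $A_0 = A$. By Theorem~\ref{T:Fontaine perfectoid compatibility}(iv) and induction on $m$, each $A_m$ is Fontaine perfectoid and each $A_{m-1}\to A_m$ is faithfully finite \'etale; since these morphisms are faithful, the maps $\calM(A_m)\to\calM(A_{m-1})$ are surjective, so each $A_{m-1}\to A_m$, and hence each $A_m\to B$, is an isometry for the spectral seminorms. Consequently each $A_m$ sits as a closed subspace of $B$, the union $\bigcup_m A_m$ is dense, and $B$ is exactly its completion for the spectral seminorm. So the whole statement will follow once I produce, uniformly in $m$, a well-behaved $A$-module decomposition $A_m = A_{m-1}\oplus C_m$.

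The key sublemma I would prove is: \emph{for each $m\ge 1$ and each $\epsilon>0$ there is an $A_{m-1}$-linear map $\pi_m\colon A_m\to A_{m-1}$ with $\pi_m|_{A_{m-1}}=\mathrm{id}$ and operator norm $\le 1+\epsilon$ for the spectral norms.} The input is the almost purity theorem (Corollary~\ref{C:almost purity}), which gives that $\Trace_{A_m/A_{m-1}}\colon A_m^{\circ\circ}\to A_{m-1}^{\circ\circ}$ is surjective, together with the fibrewise estimate $\lVert\Trace_{A_m/A_{m-1}}\rVert\le 1$ for the spectral norms. The extra room comes from the freedom in the choice of pseudo-uniformizer: starting from a topologically nilpotent unit $u_0$ of $A$, its Teichm\"uller-type $p$-power roots $u_k$ (using that $R(A)$ is perfect, as in Remark~\ref{R:p-th root of pu} and Definition~\ref{D:correspondence}) are again topologically nilpotent units with $\lvert u_k\rvert=\lvert u_0\rvert^{1/p^k}\to 1$, and for $k$ large $u_k^p$ divides $p$, so $u_k$ is a valid pseudo-uniformizer for the Fontaine perfectoid structure (the choice being immaterial, cf.\ the proof of Proposition~\ref{P:extend perfectoid}). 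Choosing $k$ with $\lvert u_k\rvert^{-1}\le 1+\epsilon$, picking $e_m\in A_m^{\circ\circ}$ with $\Trace_{A_m/A_{m-1}}(e_m)=u_k$, and setting $\pi_m(x)=u_k^{-1}\,\Trace_{A_m/A_{m-1}}(e_m x)$ works: $\pi_m(r)=u_k^{-1}r\,\Trace(e_m)=r$ for $r\in A_{m-1}$, and $\lVert\pi_m\rVert\le \lvert u_k\rvert^{-1}\le 1+\epsilon$.

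Given this, I would fix positive reals $\epsilon_m$ with $C:=\prod_m(1+\epsilon_m)<\infty$, choose $\pi_m$ with norm $\le 1+\epsilon_m$, and set $C_m=\ker\pi_m\subseteq A_m$, so $A_m=A_{m-1}\oplus C_m$ as topological $A$-modules; since $A_m$ is finite projective over $A$ (a composite of finite \'etale morphisms) and $C_m$ is a direct summand, $C_m$ is finite projective over $A$, and the projection induces an isomorphism $C_m\cong A_m/A_{m-1}$ which is bi-Lipschitz with constant $1+\epsilon_m$ for the subspace and quotient norms. Iterating yields internal direct sums $A_m=A\oplus C_1\oplus\cdots\oplus C_m$ inside $B$; using the ultrametric inequality one way and the bounds $\lVert\pi_j\rVert\le 1+\epsilon_j$ with a telescoping estimate the other way, one checks that for $x=a_0+c_1+\cdots+c_m$ one has $\lvert x\rvert_{\mathrm{sp}}\le\max(\lvert a_0\rvert_{\mathrm{sp}},\lvert c_1\rvert_{\mathrm{sp}},\dots,\lvert c_m\rvert_{\mathrm{sp}})\le C\lvert x\rvert_{\mathrm{sp}}$, with $C$ \emph{independent of $m$}. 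Hence the identity on $\bigcup_m A_m$ is a homeomorphism onto the algebraic direct sum $A\oplus\bigoplus_m C_m$ with the supremum norm; completing (and invoking Theorem~\ref{T:open mapping}) gives the isomorphism \eqref{eq:refined Kiehl splitting1} of Banach $A$-modules, with each $A_m/A_{m-1}$ in its quotient norm. Finally, the $N$-th partial-sum projection $\rho_N\colon B\to A\oplus C_1\oplus\cdots\oplus C_N$ is a continuous $A$-linear idempotent with finite projective image, and $\rho_N(b)\to b$ for every $b$ since the $C_m$-components of any element of the completed direct sum tend to $0$; so $B$ is pro-projective over $A$ (Definition~\ref{D:pro-projective}), hence topologically flat by Lemma~\ref{L:pro-projective topologically flat}, and projecting onto the first summand $A$ is the desired retraction of $A\to B$.

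The hard part is precisely getting the norm bounds uniform in $m$. The naive trace splitting has operator norm only $\lvert u\rvert^{-1}>1$, and iterating such splittings loses a factor $\lvert u\rvert^{-1}$ at every level, so the spectral norm and the sup-norm of the decomposition on $\bigcup_m A_m$ would fail to be uniformly equivalent and the two completions would genuinely disagree (as one checks they generate different topologies); the resolution—shrinking $\lvert u\rvert$ toward $1$ by extracting $p$-power roots and arranging $\prod_m(1+\epsilon_m)<\infty$—is what forces the completions to coincide, and is the crux of the argument.
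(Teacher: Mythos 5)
Your argument is correct, and it amounts to an explicit reconstruction of what the paper handles by citation: the printed proof just says ``as in the proof of \cite[Lemma~9.2.8]{part1}, this is a consequence of Remark~\ref{R:complex almost optimal},'' i.e.\ it reduces to almost optimal exactness of strict exact complexes of perfect uniform Banach rings (the Frobenius $c\mapsto c^{1/p}$ improvement in characteristic $p$) transferred through the correspondence of Theorem~\ref{T:Fontaine perfectoid correspondence} via Lemma~\ref{L:primitive division}. You instead stay on the untilted side and get the same quantitative content from Corollary~\ref{C:almost purity}: trace splittings $\pi_m$ of operator norm at most $1+\epsilon_m$, with the room to push the constant toward $1$ coming from elements $\theta([\overline{u}^{p^{-k}}])$ of spectral norm tending to $1$, and then the convergent product $\prod_m(1+\epsilon_m)$ makes the sup norm on $A\oplus\bigoplus_m C_m$ uniformly equivalent to the spectral norm on $\bigcup_m A_m$, so the completions agree. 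The trade-off is that the paper's route is shorter and reuses the same almost-optimality statement that powers the \v{C}ech acyclicity arguments elsewhere (e.g.\ Lemma~\ref{L:loc acyclic}), whereas your route is self-contained modulo almost purity and makes visible exactly the point you flag as the crux -- that the naive trace splitting with a fixed pseudo-uniformizer loses a factor $|u|^{-1}$ at every level and would ruin the uniform comparison.

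Two asides in your write-up should be repaired, though neither affects the proof. First, for $k$ large one does \emph{not} have $u_k^p \mid p$ in mixed characteristic, and no such condition is needed: Corollary~\ref{C:almost purity} gives $e_m$ with $\Trace(e_m)=v$ for \emph{any} topologically nilpotent $v$, so it suffices that $u_k$ be a topologically nilpotent unit. Second, the bound $\lVert\pi_m\rVert\le |u_k|^{-1}$ tacitly uses $|u_k^{-1}|=|u_k|^{-1}$; rather than assert this, bound $|u_k^{-1}| = |\theta([\overline{u}^{-p^{-k}}])| = |\overline{u}^{-1}|^{p^{-k}}\to 1$ by power-multiplicativity of the spectral norm on the tilt (and similarly justify $\lVert\Trace\rVert\le 1$ fibrewise over $\calM(A_{m-1})$, which you only assert).
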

\begin{proof}
As in the proof of \cite[Lemma~9.2.8]{part1}, this is a consequence of
Remark~\ref{R:complex almost optimal}.
\end{proof}
\begin{cor} \label{C:topologically flat}
Let $A$ be a Fontaine perfectoid Banach ring. Let $B$ be a completed direct limit of some faithfully finite \'etale $A$-algebras, each equipped with the spectral norm. Then $A \to B$ is $2$-pseudoflat.
\end{cor}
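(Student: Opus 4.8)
The plan is to reduce to the case of a linear tower, where the statement is already contained in Lemma~\ref{L:perfectoid tower splitting}, and to bridge the gap to an arbitrary filtered colimit using the ``countably many elements at a time'' principle of Remark~\ref{R:reduce to countable}. Concretely, by Definition~\ref{D:topologically flat} and condition (c) of Remark~\ref{R:topologically flat reduction}, it suffices to fix a short exact sequence $0 \to M \to N \to P \to 0$ of $A$-modules with $N$ finite projective and $P$ strictly finitely presented, and to prove that $M \otimes_A B \to N \otimes_A B$ is injective. Write $B$ as the completion of $A_\infty := \varinjlim_{i \in I} A_i$ for the spectral seminorm, where each $A_i$ is faithfully finite \'etale over $A$ (so, by Theorem~\ref{T:Fontaine perfectoid compatibility}, case (iv), each $A_i$ is again Fontaine perfectoid). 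The key preliminary observation is that each transition map $A_i \to A_j$, being finite and faithfully flat, induces a surjection $\calM(A_j) \to \calM(A_i)$ of Gelfand spectra and is therefore an isometry for the spectral norms; hence the spectral seminorm on $A_\infty$ is computed termwise, and $A_\infty \to B$ is an isometric embedding with dense image.

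Next I would take $\bv \in M \otimes_A B$ mapping to $0$ in $N \otimes_A B$, write it as a finite sum $\bv = \sum_j \bw_j \otimes b_j$ with $\bw_j \in M$ and $b_j \in B$, and note that each $b_j$ is the limit of a Cauchy sequence each of whose terms lies in a single $A_i$; the totality of indices so occurring (over the finitely many $j$ and the countably many terms of each sequence) is countable, hence by directedness of $I$ is dominated by a countable chain $i_0 \leq i_1 \leq \cdots$ in $I$. Relabelling, this yields a linear subtower $A = A'_0 \to A'_1 \to \cdots$ of faithfully finite \'etale morphisms; let $B'$ be its completed direct limit. By the termwise computation of the seminorm from the first step, $B'$ embeds isometrically into $B$, realized as the closure of $\varinjlim_m A'_m$, and this closed subspace contains every $b_j$. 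Therefore $\bv$ is the image of $\bv' = \sum_j \bw_j \otimes b_j \in M \otimes_A B'$; moreover, since $N$ is finite projective over $A$ and $B' \hookrightarrow B$, the map $N \otimes_A B' \to N \otimes_A B$ is injective, so $\bv'$ already maps to $0$ in $N \otimes_A B'$.

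Finally I would apply Lemma~\ref{L:perfectoid tower splitting} to the linear tower $\{A'_m\}$: it gives that $A \to B'$ is pro-projective, hence topologically flat by Lemma~\ref{L:pro-projective topologically flat}. Invoking condition (c) of Remark~\ref{R:topologically flat reduction} for $A \to B'$, the map $M \otimes_A B' \to N \otimes_A B'$ is injective, so $\bv' = 0$ and therefore $\bv = 0$. This proves the required injectivity, and hence that $A \to B$ is topologically flat.

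I expect the main obstacle to be the second step: making rigorous the passage from an arbitrary filtered system of faithfully finite \'etale $A$-algebras to a cofinal countable linear subtower, and in particular verifying that the completed direct limit of that subtower injects into $B$ and captures the finitely many elements $b_j$ at hand. This is exactly where the isometry of the transition maps (a consequence of surjectivity of $\calM(A_j) \to \calM(A_i)$ for finite faithfully flat morphisms of uniform Banach rings) does the real work; the remaining steps are formal consequences of results already established.
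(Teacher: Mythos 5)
Your proposal is correct and follows essentially the same route as the paper, whose proof is precisely to combine the countability reduction of Remark~\ref{R:reduce to countable} (passing to a cofinal countable linear subtower capturing the finitely many elements at hand) with the splitting of Lemma~\ref{L:perfectoid tower splitting}, which gives pro-projectivity and hence topological flatness via Lemma~\ref{L:pro-projective topologically flat}. Your additional verifications (isometry of the transition maps via surjectivity on Gelfand spectra, and injectivity of $N \otimes_A B' \to N \otimes_A B$ for $N$ finite projective) are exactly the details the paper leaves implicit in that reduction.
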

\begin{proof}
Combine Remark~\ref{R:reduce to countable} with Lemma~\ref{L:perfectoid tower splitting}.
\end{proof}

At this point, by analogy with the \'etale case one might expect a definition of a \emph{pro-\'etale-stably pseudocoherent module} or a \emph{pro-\'etale-pseudoflat morphism}. These turn out to be unnecessary; the definitions in the \'etale
case will suffice.

\begin{theorem} \label{T:compare perfectoid cohomology}
Let $X = \Spa(A,A^+)$ be a Fontaine affinoid perfectoid space.
Let $\calB$ be a stable basis of perfectoid subdomains of $X_{\proet}$ such that every morphism $(B,B^+) \to (C,C^+)$ between objects of $\calB$ is \'etale-pseudoflat (see Definition~\ref{D:proetale pseudocoherent} below).
Let $M$ be an \'etale-stably pseudocoherent $A$-module.
Let $\tilde{M} = M \otimes_A \calO$ be the associated presheaf on $X_{\proet}$.
Then for any $U = \Spa(B,B^+) \in \calB$ and any pro-\'etale covering $\gothV$ of $U$ by elements of $\calB$, 
$M \otimes_A B$ is an \'etale-stably pseudocoherent $B$-module and
\[
H^i(U, \tilde{M}) = \check{H}^i(U,\tilde{M}; \gothV) = \begin{cases} M \otimes_A B & i=0 \\ 0 & i>0. \end{cases}
\]
In particular, $\tilde{M}$ extends to a sheaf which does not change sections on $\calB$.
\end{theorem}
\begin{proof}
We first prove the claim for a specific basis. Let $\calB'$ be the collection of perfectoid subdomains of $X_{\proet}$ which are towers of faithfully finite \'etale covers over a perfectoid subdomain in $X_{\et}$. By \cite[Lemma~9.2.5]{part1}, $\calB'$
is a basis of $X_{\proet}$; by Theorem~\ref{T:weak flatness etale}(b) plus Corollary~\ref{C:topologically flat}, 
every morphism between objects of $\calB'$ is \'etale-pseudoflat.
By Theorem~\ref{T:weak flatness etale}(a), base extension along any morphism between objects of $\calB'$ preserves the category of \'etale-stably pseudocoherent modules and is exact on such modules.

To prove the claim for $\calB'$,
by analogy with the proof of \cite[Proposition~2.4.21]{part1} or \cite[Proposition~8.2.21]{part1}, we may apply \cite[Proposition~9.2.6]{part1} to reduce to checking \v{C}ech-acyclicity
for a simple Laurent covering or a tower of faithfully finite \'etale covers. 
Since $A$ is sheafy by Corollary~\ref{C:Fontaine perfectoid stably uniform},
the case of a simple Laurent covering comes down to 
Theorem~\ref{T:pseudocoherent acyclicity}.
By Remark~\ref{R:reduce to countable}, the case of a tower of faithfully finite \'etale covers reduces formally to the case of a countable such tower, for which we may apply Lemma~\ref{L:perfectoid tower splitting}.

For general $U \in \calB$, let $\calB''$ be the collection of perfectoid subdomains of $U_{\proet}$ defined by analogy with $\calB'$.
To prove the claim, it will suffice to check that $M \otimes_A B$ is an \'etale-stably pseudocoherent $B$-module:
we can then apply the previous argument to $\calB''$ to deduce the assertions about $H^i(U, \tilde{M})$, and then
apply this knowledge to all elements of $\calB$ to recover the assertions about $\check{H}^i(U, \tilde{M}; \gothV)$.

Let
\[
0 \to P \to N \to M \to 0
\]
be an exact sequence of $A$-modules in which $N$ is finite projective; 
by the previous argument, we have an exact sequence
\[
0 \to \tilde{P} \to \tilde{N} \to \tilde{M} \to 0
\]
of sheaves on $X_{\proet}$, which we may then restrict to $U_{\et}$ to obtain an exact sequence of pseudocoherent sheaves. By Theorem~\ref{T:refined Kiehl etale}, taking global sections yields an exact sequence of \'etale-stably pseudocoherent $B$-modules. As in the proof of Theorem~\ref{T:refined Kiehl etale}, we may then apply the five lemma to the diagram
\[
\xymatrix{
 & P \otimes_A B \ar[r] \ar[d] & N \otimes_A B \ar[r] \ar[d] & M \otimes_A B \ar[r] \ar[d] & 0 \\
0 \ar[r] & H^0(U_{\et}, \tilde{P}) \ar[r] & H^0(U_{\et}, \tilde{N}) \ar[r] & H^0(U_{\et}, \tilde{M}) \ar[r] & 0
}
\]
of $B$-modules with exact rows (and a corresponding diagram with $M$ replaced by $P$) to deduce that we deduce that $M \otimes_A B \to H^0(U_{\et}, \tilde{M})$ is first surjective, and then injective. In particular, $M \otimes_A B$ is an \'etale-stably pseudocoherent $B$-module, as needed.
\end{proof}

\begin{defn} \label{D:proetale pseudocoherent}
Let $\calB$ be a stable basis of perfectoid subdomains of $X_{\proet}$ such that every morphism $(B,B^+) \to (C,C^+)$ between objects of $\calB$ is \'etale-pseudoflat.
Such a basis always exists: for example, the basis $\calB'$ from the proof of Theorem~\ref{T:compare perfectoid cohomology} has this property.  (By Theorem~\ref{T:weak flatness perfectoid}, it will follow that we may take $\calB$ to consist of all perfectoid subdomains.)

We say that a sheaf of $\calO_{X_{\proet}}$-modules is \emph{pseudocoherent} (resp.\ fpd) with respect to $\calB$
if it is locally (with respect to $\calB$) the sheaf associated to an \'etale-stably pseudocoherent (resp.\ \'etale-stably fpd) module as in Theorem~\ref{T:compare perfectoid cohomology}. Note that the latter definition (for a given base ring) does not depend on the choice of $\calB$, by Corollary~\ref{C:etale stably psc by basis}.
\end{defn}

\begin{theorem} \label{T:refined Kiehl proetale}
For $\calB$ as in Definition~\ref{D:proetale pseudocoherent},
the global sections functor defines an exact equivalence of categories between pseudocoherent (resp.\ fpd) sheaves of $\calO$-modules 
on $X_{\proet}$ with respect to $\calB$ and \'etale-stably pseudocoherent (resp.\ fpd) $A$-modules. In particular, the former category does not depend on $\calB$.
\end{theorem}
\begin{proof}
As in Lemma~\ref{L:etale pseudoflat},
we apply \cite[Proposition~9.2.6]{part1} to the property that for 
$\{(B,B^+) \to (C_i, C_i^+)\}$ a covering in $\calB$,
the map $B \to \bigoplus_i C_i$ is an effective descent morphism
for \'etale-stably pseudocoherent (resp.\ \'etale-stably fpd) modules over Banach rings and the descent functor is exact.
Of the conditions of \cite[Proposition~9.2.6]{part1},
(a) follows from Theorem~\ref{T:compare perfectoid cohomology},
(b) is formal, and (c) follows from Theorem~\ref{T:refined Kiehl etale}.
To check (d), we may apply Remark~\ref{R:reduce to countable} to reduce to countable towers.
In this context, descent at the level of modules, and completeness of the resulting module for the natural topology, follow from Lemma~\ref{L:perfectoid tower splitting} plus descent for pure morphisms \cite[Tag~08XA]{stacks-project}. To see that the descended module is \'etale-stably pseudocoherent,
use the five lemma as in the proof of Theorem~\ref{T:weak flatness etale} to show that the descent of modules commutes with base extension along an \'etale morphism $(B,B^+) \to (C,C^+)$.
\end{proof}

\begin{cor} \label{C:refined Kiehl perfectoid}
For $X$ a Fontaine perfectoid space, pullback of pseudocoherent (resp.\ fpd) sheaves over the structure sheaf $\calO_{X_{\et}}$ on $X_{\et}$ to the completed structure sheaf $\widehat{\calO}_X$ on $X_{\proet}$ defines an equivalence of categories.
\end{cor}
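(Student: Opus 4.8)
The plan is to reduce the statement to the affinoid case and thence to an assertion about modules over a single ring, then to combine the affinoid equivalences of \S\ref{subsec:pseudocoherent}--\S\ref{sec:foundations adic} with the pro-\'etale acyclicity (Corollary~\ref{C:compare perfectoid cohomology}) and pseudoflatness (Theorem~\ref{T:weak flatness perfectoid}) results. First I would observe that both the source and the target categories are defined locally on $X$, that the pullback functor along $X_{\proet} \to X$ commutes with restriction to open subspaces, and that $\Hom$ of pseudocoherent sheaves is again a sheaf; hence it suffices to treat $X = \Spa(A,A^+)$ affinoid perfectoid, which is stably uniform by Corollary~\ref{C:Fontaine perfectoid stably uniform}. (Well-definedness of the pullback functor on pseudocoherent sheaves, i.e.\ that $M \otimes_A \widehat{\calO}$ is a pseudocoherent $\widehat{\calO}$-module on $X_{\proet}$ for $M$ pseudocoherent over $A$, follows from the pseudoflatness of $A \to \widehat{\calO}(Y)$ for perfectoid subdomains $Y$, Theorem~\ref{T:weak flatness perfectoid}(b).)

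Next, using Theorem~\ref{T:refined Kiehl} (and Corollary~\ref{C:refined Kiehl} on $X_{\et}$), pseudocoherent (resp.\ fpd) $\calO_X$-modules on $X$ are identified with pseudocoherent (resp.\ fpd) $A$-modules, with $M \mapsto \tilde{M} = M \otimes_A \calO$ a quasi-inverse to global sections. Composing with the pullback functor turns the problem into the assertion that $M \mapsto \tilde{M} = M \otimes_A \widehat{\calO}$ is an equivalence from pseudocoherent (resp.\ fpd) $A$-modules to pseudocoherent (resp.\ fpd) $\widehat{\calO}$-modules on $X_{\proet}$, with quasi-inverse given by global sections. For this, Corollary~\ref{C:compare perfectoid cohomology} gives $\Gamma(X_{\proet}, \tilde{M}) = M$ for $M$ pseudocoherent; together with the left exactness of $\Hom$ and the identity $\Hom_{\widehat{\calO}}(\widehat{\calO}^n, \calF) = \calF(X)^n$, this yields both full faithfulness of $M \mapsto \tilde{M}$ and the adjunction $\Hom_{\widehat{\calO}}(\tilde{M}, \calF) = \Hom_A(M, \calF(X))$ for all pseudocoherent $M$. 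The remaining content is essential surjectivity: every pseudocoherent (resp.\ fpd) $\widehat{\calO}$-module $\calF$ on $X_{\proet}$ is isomorphic to $\tilde{M}$ for $M = \calF(X)$, which must be shown pseudocoherent (resp.\ fpd) over $A$.

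I would prove this last point as an effective-descent statement, imitating the proofs of Lemma~\ref{L:refined Kiehl} and Theorem~\ref{T:refined Kiehl}. Invoking the reduction process for the pro-\'etale topology used in the proof of Corollary~\ref{C:compare perfectoid cohomology} (cf.\ \cite[Proposition~9.2.6]{part1}), it is enough to check effective descent for pseudocoherent modules over Banach rings along three classes of covering morphisms: rational coverings (handled by Theorem~\ref{T:refined Kiehl}), finite \'etale coverings (handled by Corollary~\ref{C:refined Kiehl}), and the completed direct limit $A \to B$ of a countable tower of faithfully finite \'etale morphisms. For the last class I would use that $A \to B$ is topologically flat and splits in the category of Banach $A$-modules by Lemma~\ref{L:perfectoid tower splitting}, hence is universally injective; given a descent datum one forms the associated pseudocoherent $\widehat{\calO}$-sheaf $\calF$ on the corresponding pro-\'etale covering, shows it is finitely generated over $\widehat{\calO}$ by the appropriate analogue of \cite[Lemma~2.7.4]{part1}, deduces $H^i(X_{\proet}, \calF) = 0$ for $i > 0$ from Corollary~\ref{C:compare perfectoid cohomology}, resolves $\calF$ by finite free $\widehat{\calO}$-modules one term at a time (using Lemma~\ref{L:complete descent} so that the finite free terms descend), and closes the argument with the pseudoflatness of the relevant localizations (Theorem~\ref{T:weak flatness perfectoid}) and the five lemma, concluding that $\calF(X)$ is $m$-pseudocoherent for all $m$ and that the natural map $\calF(X) \otimes_A \widehat{\calO} \to \calF$ is an isomorphism, exactly as in the proof of Lemma~\ref{L:refined Kiehl}. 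The fpd case would then follow from the pseudocoherent case via Corollary~\ref{C:locally fpd}, since fpd of bounded projective dimension may be checked on a covering.

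The hard part will be this descent step along the tower $A \to B$: unlike the finite \'etale case there is no finiteness relation between $B$ and $A$, so pseudocoherence of the descended module cannot be imported directly but must be produced from the splitting, the topological flatness of $A \to B$ and of its rational and finite-\'etale localizations, and the pro-\'etale acyclicity of pseudocoherent sheaves, reproducing the inductive bookkeeping of Lemma~\ref{L:refined Kiehl} in this setting; establishing the finite-generation input (the analogue of \cite[Lemma~2.7.4]{part1} for perfectoid subdomains) is the step most likely to require care.
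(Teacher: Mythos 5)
Your proposal is correct and follows essentially the same route as the paper: reduce to the affinoid case, use Theorem~\ref{T:refined Kiehl} together with the acyclicity of Corollary~\ref{C:compare perfectoid cohomology} and the pseudoflatness of Theorem~\ref{T:weak flatness perfectoid} for well-definedness and full faithfulness, and reduce essential surjectivity (via Remark~\ref{R:reduce to countable}) to effective descent of pseudocoherent modules along the completed tower $A \to B$ of Lemma~\ref{L:perfectoid tower splitting}. The paper dispatches that last descent step in one line by invoking the splitting of $A \to B$ in Banach $A$-modules, whereas you propose to re-run the glueing bookkeeping of Lemma~\ref{L:refined Kiehl}; this is only a difference in how much detail is supplied at the step you correctly flag as the delicate one, not a different method.
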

\begin{proof}
This follows by combining Theorem~\ref{T:refined Kiehl etale} and Theorem~\ref{T:refined Kiehl proetale}.
\end{proof}

\begin{theorem} \label{T:weak flatness perfectoid}
Let $(A,A^+)$ be a Fontaine perfectoid adic Banach ring and put $X = \Spa(A,A^+)$.
Let $Y = \Spa(B,B^+)$ be a perfectoid subdomain in $X_{\proet}$.
\begin{enumerate}
\item[(a)]
For any \'etale-stably pseudocoherent $A$-module $P$, the natural map
$P \otimes_A B \to H^0(Y_{\proet}, \tilde{P})$ is an isomorphism
and $P \otimes_A B$ is complete for its natural topology.
\item[(b)]
The map
$A \to B$ is \'etale-pseudoflat.
\end{enumerate}
\end{theorem}
\begin{proof}
Part (a) is contained in Theorem~\ref{T:refined Kiehl proetale}, so we focus on (b). Let
\[
0 \to M \to N \to P \to 0
\]
be an exact sequence of $A$-modules in which $N$ is finite projective and $P$ is \'etale-stably pseudocoherent. 
For $\calB$ as in Definition~\ref{D:proetale pseudocoherent},
by Lemma~\ref{L:pseudocoherent 2 of 3},
the associated sequence
\[
0 \to \tilde{M} \to \tilde{N} \to \tilde{P} \to 0
\]
of sheaves on $X_{\proet}$ is an exact sequence of pseudocoherent sheaves with respect to $\calB$. 
By Theorem~\ref{T:refined Kiehl proetale} again, we have a commutative diagram
\[
\xymatrix{
 & M \otimes_A B \ar[r] \ar[d] & N \otimes_A B \ar[r] \ar[d] & P \otimes_A B \ar[r] \ar[d] & 0 \\
0 \ar[r] & H^0(Y_{\proet}, \tilde{M}) \ar[r] & H^0(Y_{\proet}, \tilde{N}) \ar[r] & H^0(Y_{\proet}, \tilde{P}) \ar[r] & 0
}
\]
with exact rows in which the middle vertical arrow is an isomorphism.
As in the proof of Theorem~\ref{T:weak flatness etale}, we may use the five lemma (and the existence of a corresponding diagram in which $P$ is replaced by $M$) to argue that all of the vertical arrows are first surjective, then injective.
It follows that the first row extends to a full short exact sequence; this proves (b).
\end{proof}

\begin{remark}
We do not know to what extent Theorem~\ref{T:weak flatness perfectoid} remains true if we weaken the hypotheses on $(A,A^+)$.
For example, if $(A,A^+)$ is stably uniform and $\Spa(B,B^+)$ is a perfectoid subdomain constructed using Lemma~\ref{L:perfectoid cover}, we do not know whether $A \to B$ is pseudoflat.
For a result of this form for affinoid algebras, see Lemma~\ref{L:remains torsion-free}.
\end{remark}

\begin{prop} \label{P:lift affinoid}
Let $X$ be a Fontaine perfectoid space for which $X^{\flat}$ is affinoid perfectoid.
Then $X$ is Fontaine affinoid perfectoid.
\end{prop}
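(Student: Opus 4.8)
The plan is to prove that $(A, A^+) := (\calO(X), \calO^+(X))$ is a Fontaine perfectoid adic Banach ring with tilt $(R, R^+)$, where $X^\flat = \Spa(R, R^+)$, and that the canonical map $X \to \Spa(A, A^+)$ is an isomorphism; the Fontaine perfectoid correspondence (Theorem \ref{T:Fontaine perfectoid correspondence}) and the homeomorphism theorem (Theorem \ref{T:Fontaine perfectoid homeomorphism}) then do the rest. Since $|X| = |X^\flat|$ is a spectral space, $X$ is quasicompact; so after covering $X$ by finitely many Fontaine affinoid perfectoid opens, tilting, and refining (using that rational subspaces form a basis of the affinoid space $X^\flat$ and match up on both sides by Theorem \ref{T:Fontaine perfectoid homeomorphism}), I obtain a finite rational covering $\{U_l = \Spa(S_l, S_l^+)\}$ of $X^\flat$ and Fontaine perfectoid adic Banach rings $(B_l, B_l^+) = (\calO(\tilde U_l), \calO^+(\tilde U_l))$ tilting to $(S_l, S_l^+)$, where $\tilde U_l \subseteq X$ denotes the open subspace with $|\tilde U_l| = |U_l|$.

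The first task is to record the needed acyclicity. For a rational subdomain $V$ of $X^\flat$ with preimage $\tilde V \subseteq X$, the augmented \v{C}ech complex of $\calO_X$ for the induced covering of $\tilde V$ is, termwise, the untilt via Theorem \ref{T:Fontaine perfectoid correspondence} (applied with respect to a compatible family of Fontaine primitive ideals) of the corresponding \v{C}ech complex of $\calO_{X^\flat}$; the latter is strict exact by Tate acyclicity on the affinoid perfectoid $X^\flat$ (Corollary \ref{C:Fontaine perfectoid etale acyclic}), so by Remark \ref{R:complex almost optimal} the former is almost optimal exact. Hence $H^i(\tilde V, \calO_X) = 0$ for $i > 0$ and $\calO(\tilde V)$ is a uniform Banach ring, realized as a closed subring of $\prod_l \calO(\tilde V \cap \tilde U_l)$; taking $\tilde V = X$ shows $A$ is a uniform Banach ring. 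Running the analogous argument at the integral level, where it now only yields \emph{almost} statements, and combining it with the sheaf isomorphism $\calO^+_X/(u) \cong \calO^{\flat+}_{X^\flat}/(\overline u)$ that arises locally from Remark \ref{R:same quotient} together with the long exact sequences for multiplication by $u$ on $\calO^+_X$ and by $\overline u$ on $\calO^{\flat+}_{X^\flat}$ (whose $H^1$'s are almost zero, again by Corollary \ref{C:Fontaine perfectoid etale acyclic}), I expect to obtain an almost isomorphism $A^+/(u) \simeq R^+/(\overline u)$. Here $u$ is a topologically nilpotent unit of $A$ glued from the elements $\theta_l([\overline u])$, for $\overline u \in R^+$ a topologically nilpotent unit having all $p$-power roots in $R^+$ (possible as $R^+$ is perfect) and chosen so that $u^p$ divides $p$; note $u$ then has $p$-power roots $u^{1/p^n}$, glued from $\theta_l([\overline u^{1/p^n}])$, in $A^+$.

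With these in hand, Fontaine perfectoidness of $(A, A^+)$ follows by an upgrade argument: since $R^+$ is perfect, $\overline\varphi$ is bijective on $R^+/(\overline u^p)$, so the almost comparison makes $\overline\varphi\colon A^+/(u) \to A^+/(u^p)$ almost surjective, and a successive approximation in the spirit of Lemma \ref{L:strict to almost optimal} — using that $A$ is uniform and that the $u^{1/p^n}$ lie in $A^+$ — promotes this to honest surjectivity. To identify the tilt $R(A, A^+)$ with $R$, I would unwind Definition \ref{D:correspondence}: $R^+(A, A^+) = \varprojlim_{\overline\varphi} A^+/(u^p)$ is the module of global sections of the sheaf $\varprojlim_{\overline\varphi} \calO^+_X/(u^p)$, which is locally $\varprojlim_{\overline\varphi} B_l^+/(u^p) = S_l^+$ and hence equals $\calO^{\flat+}_{X^\flat}$, with global sections $R^+$ by Tate acyclicity on $X^\flat$; the $R^1\varprojlim$ and almost-ambiguities are controlled as before. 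Finally Theorem \ref{T:Fontaine perfectoid homeomorphism} identifies $\Spa(A, A^+)$ with $\Spa(R, R^+) = X^\flat$ compatibly with rational subspaces, and together with the local identifications $\tilde U_l = \Spa(B_l, B_l^+)$ this shows $X \to \Spa(A, A^+)$ is an isomorphism.

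The main obstacle is the acyclicity bookkeeping of the middle two paragraphs: that \v{C}ech acyclicity of $\calO_{X^\flat}$ on the affinoid $X^\flat$ really does transport through the termwise untilting correspondence (so that $\calO_X$ is acyclic on $X$ and $A$ is a well-behaved uniform Banach ring), and — more delicately — that the integral \emph{almost} statements about $\calO^+_X$ and $\calO^{\flat+}_{X^\flat}$ can be massaged into the honest identification $A^+/(u) \simeq R^+/(\overline u)$, and hence the honest surjectivity of $\overline\varphi$ on $A^+/(u^p)$, required for Fontaine perfectoidness. Everything else is a matter of assembling results already available in this section.
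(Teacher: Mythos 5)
Your route is genuinely different from the paper's. The paper does not take global sections at all: it chooses, on each member $U_i$ of a rational covering of $X^\flat = \Spa(R,R^+)$, a generator $z_i$ of $\ker(\theta\colon W(R_i^+)\to A_i^+)$ normalized so that its Witt coordinates beyond the zeroth are congruent to $1$ modulo topological nilpotents; the ratios $z_i/z_j$ on overlaps then define a class in $H^1(X^\flat, 1+W(\bullet^{\circ\circ}))$, which vanishes by the successive-approximation argument used in the proof of Theorem~\ref{T:Fontaine finite etale descent} (via \cite[Remark~3.1.6]{part1} and the proof of \cite[Lemma~5.4]{kedlaya-noetherian}). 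This produces a single $z\in W(R^+)$ generating every local kernel, and one simply sets $A^+=W(R^+)/(z)$ and reads off $X\cong \Spa(A,A^+)$ from the correspondence. What the cocycle argument buys is precisely the compatibility that your approach must recover at the end: that the global untilt restricts to the prescribed local untilts, i.e.\ that the global primitive ideal extends to each $\ker\theta_l$. In your plan this is deferred to the last sentence ("together with the local identifications\dots") and is not automatic; you need to argue that the restriction maps $A\to B_l$ are exactly the rational localizations of $(A,A^+)$ at the $U_l$, e.g.\ by functoriality of $\theta$ and a comparison of the two primitive ideals of $W(R_l^+)$, and this deserves an explicit argument rather than an assertion.

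Two further soft spots. First, your opening claim that the \emph{augmented} \v{C}ech complex of $\calO_X$ is termwise the untilt of that of $\calO_{X^\flat}$ is circular: the augmentation term $\calO(\tilde V)$ is not yet known to be an untilt of anything (that is the proposition). The correct move, which your stated conclusions in fact only require, is to untilt the non-augmented complex (whose terms are the affinoid perfectoid intersections, with primitive ideals compatible by Theorem~\ref{T:Fontaine perfectoid compatibility}(i)), transfer almost optimal exactness in positive \v{C}ech degrees via Remark~\ref{R:complex almost optimal}, and then \emph{define} $A$ as the degree-zero kernel. Second, the upgrade from almost surjectivity of $\overline\varphi\colon A^+/(u)\to A^+/(u^p)$ to honest surjectivity is not what Lemma~\ref{L:strict to almost optimal} provides; the working argument is in the style of Proposition~\ref{P:extend perfectoid}: given $u^{1/p}x\equiv y^p \pmod{u^pA^+}$, the element $y/u^{1/p^2}$ has $p$-th power in $A^+$ and hence lies in $A^+$ because $A^+$ is integrally closed in $A$, and one or two iterations (using $u^p\mid p$) then give $x\equiv z^p\pmod{u^pA^+}$ on the nose. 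With these repairs your global-sections strategy does go through, but it is considerably longer than the paper's gluing of a single primitive element, whose only real content is the vanishing of the $H^1$ class.
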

\begin{proof}
By hypothesis, $X^{\flat} = \Spa(R,R^+)$ for some perfect uniform adic Banach ring $(R,R^+)$,
and there exists a rational covering $\{(R,R^+) \to (R_i, R_i^+)\}_i$ in which $\Spa(R_i,R_i^+)$ corresponds to a subspace $U_i$ of $X$ of the form $\Spa(A_i,A_i^+)$ for some Fontaine perfectoid adic Banach ring $(A_i, A_i^+)$.
As in the proof of Theorem~\ref{T:Fontaine finite etale descent}, we may choose a generator $z_i = \sum_{n=0}^\infty p^n [\overline{z}_{i,n}]$
of the kernel of $\theta: W(R_i^+) \to A_i^+$ in such a way that $1 - \overline{z}_{i,n} \in A_i^{\circ \circ}$
for all $n>0$, use these generators to define a class in $H^1(X^{\flat}, 1 + W(\bullet^{\circ \circ}))$,
then deduce from the vanishing of this class that we may choose a single $z \in W(R^+)$ generating the kernel of each $\theta: W(R_i^+) \to A_i^+$.
We then have $X \cong \Spa(A,A^+)$ for $A^+ = W(R^+)/(z)$.
\end{proof}

\begin{remark}
In the case of a perfectoid space over a perfectoid field, the statement of Proposition~\ref{P:lift affinoid} is
included in \cite[Proposition 6.17]{scholze1}.
We do not know whether the hypothesis in Proposition~\ref{P:lift affinoid} can be weakened to say that $X^{\flat}$, which is \emph{a priori} a perfectoid space, is also a preadic affinoid space; namely, this is not known to imply that $X^{\flat}$ is affinoid perfectoid unless $X^{\flat}$ is known to be sheafy
\cite[Proposition 3.1.16]{part1}.
\end{remark}

\begin{remark} \label{R:pseudocoherent reified}
A \emph{reified Fontaine perfectoid space} is an adic space covered by the reified adic spectra of Fontaine perfectoid graded adic Banach rings.
For such spaces, the analogues of
Theorem~\ref{T:etale topoi}, Proposition~\ref{P:perfectoid acyclic}, 
Theorem~\ref{T:compare perfectoid cohomology},
 Corollary~\ref{C:refined Kiehl perfectoid},
Theorem~\ref{T:weak flatness perfectoid},
and Proposition~\ref{P:lift affinoid}
 all hold, with analogous proofs.
\end{remark}

\subsection{The v-topology}

We adapt to perfectoid spaces a construction for perfect schemes originating in \cite{bhatt-scholze}.  Our treatment is closer to the case of perfect uniform adic spaces, as treated in \cite[\S 17]{scholze-berkeley2}.
\begin{defn} \label{D:total localization}
Let $(A,A^+)$ be an adic Banach ring. 
Consider the set of rational coverings $\{(A,A^+) \to (B_i,B_i^+)\}_i$
as a directed set via refinement. 
Form a directed system by associating to each covering the ring $\bigoplus_i B_i$, equipped with its spectral seminorm,
and to each refinement of  $\{(A,A^+) \to (B_i,B_i^+)\}_i$ by  $\{(A,A^+) \to (C_j,C_j^+)\}_j$ the isometric map $\bigoplus_i B_i \to \bigoplus_j C_j$ obtained by adding the maps $B_i \to C_j$ for each pair $(i,j)$ for which $\Spa(C_j, C_j^+) \subseteq \Spa(B_i, B_i^+)$ within $\Spa(A,A^+)$.
We may then take the completed direct limit to obtain a
morphism $(A,A^+) \to (A_{\loc}, A_{\loc}^+)$; we call this the \emph{total localization} of $(A,A^+)$. If $X = \Spa(A,A^+)$, we write $X_{\loc}$ for $\Spa(A_{\loc}, A_{\loc}^+)$.
Note that the connected components of $X_{\loc}$ correspond to points of $X$.
We may similarly define a \emph{strong total localization} using only strong rational coverings; its connected components correspond to points of the Gelfand spectrum of $A$.
\end{defn}

\begin{lemma} \label{L:loc acyclic}
Let $(A,A^+)$ be a Fontaine perfectoid adic Banach ring. Then $(A_{\loc}, A_{\loc}^+)$ is Fontaine perfectoid and the augmented \v{C}ech complex
\[
0 \to A \to A_{\loc} \to A_{\loc} \widehat{\otimes}_A A_{\loc} \to \cdots
\]
is almost optimal exact.
\end{lemma}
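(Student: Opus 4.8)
The plan is to build $(A_{\loc}, A_{\loc}^+)$ as a completed direct limit of Fontaine perfectoid rings arising from finite rational coverings, show that each stage is Fontaine perfectoid, and deduce the acyclicity from the corresponding acyclicity statements for finite coverings together with the behavior of almost optimal exactness under filtered colimits.

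\begin{proof}
For a rational covering $\gothV = \{(A,A^+) \to (B_i, B_i^+)\}_i$ of $X = \Spa(A,A^+)$, put $A_\gothV = \bigoplus_i B_i$, equipped with its spectral seminorm; by case (i) of Theorem~\ref{T:Fontaine perfectoid compatibility}, each $(B_i, B_i^+)$ is Fontaine perfectoid, and hence so is the finite product $(A_\gothV, A_\gothV^+)$ (finite products of Fontaine perfectoid rings are Fontaine perfectoid, since uniformity, the existence of a topologically nilpotent unit with $u^p \mid p$, and surjectivity of $\overline{\varphi}$ are all checked componentwise). For a refinement of $\gothV$ by $\gothW$, the induced isometric map $A_\gothV \to A_\gothW$ is a morphism of Fontaine perfectoid rings. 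Taking the completed direct limit over the directed set of rational coverings, case (ii) of Theorem~\ref{T:Fontaine perfectoid compatibility} (morphisms with uniform target and dense image) shows that $(A_{\loc}, A_{\loc}^+)$ is again Fontaine perfectoid, since $A_{\loc}$ is the completion of the filtered colimit $\varinjlim_\gothV A_\gothV$, which is uniform and receives a dense map from the Fontaine perfectoid ring $A_\gothV$ for any single $\gothV$.

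Next I would establish the exactness. Fix a rational covering $\gothV$. By Corollary~\ref{C:Fontaine perfectoid stably uniform}, $(A,A^+)$ is sheafy, and \cite[Theorem~2.4.23]{part1} (or Theorem~\ref{T:pseudocoherent acyclicity} with $M = A$) gives that the augmented \v{C}ech complex
\[
0 \to A \to A_\gothV \to A_\gothV \widehat{\otimes}_A A_\gothV \to \cdots
\]
is exact; more precisely, each term is a Fontaine perfectoid ring (using Theorem~\ref{T:tensor product} to see that the higher tensor powers $A_\gothV^{\widehat{\otimes}(n+1)}$ are Fontaine perfectoid), and each differential is a $\ZZ_p$-linear combination of ring homomorphisms, so by Remark~\ref{R:complex almost optimal} the complex is in fact almost optimal exact at each position. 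Passing to the filtered colimit over all rational coverings $\gothV$ and then completing: the colimit of almost optimal exact complexes of Fontaine perfectoid rings, with transition maps that are $\ZZ_p$-linear combinations of ring homomorphisms, remains almost optimal exact (the optimal bound $c$ may be taken uniformly equal to $1$ at each stage, so it survives the colimit and the completion), and the colimit of $A_\gothV^{\widehat{\otimes}(n+1)}$ computes $A_{\loc}^{\widehat{\otimes}(n+1)}$ because the completed tensor product commutes with completed filtered colimits. Hence
\[
0 \to A \to A_{\loc} \to A_{\loc} \widehat{\otimes}_A A_{\loc} \to \cdots
\]
is almost optimal exact, as claimed.
\end{proof}

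The main obstacle I expect is bookkeeping the uniformity of the optimal constant through the two limiting operations: one must verify that at each finite stage the \v{C}ech complex is not merely strict exact but almost optimal exact with a bound that does not degrade (Remark~\ref{R:complex almost optimal} supplies precisely this, giving the bound governed only by $\left| \varpi \right|$), and then confirm that almost optimal exactness is preserved both under filtered colimits along isometric (hence norm-nonincreasing and norm-reflecting on the relevant subquotients) transition maps and under the final completion — this is the reified/perfectoid analogue of the familiar fact that a filtered colimit of exact sequences is exact, but requires the quantitative refinement because completion is not exact in general. A secondary point requiring care is the identification $\varinjlim_\gothV \bigl(A_\gothV\bigr)^{\widehat{\otimes}_A (n+1)} \cong A_{\loc}^{\widehat{\otimes}_A(n+1)}$, which follows from the compatibility of $\widehat{\otimes}$ with completed direct limits but should be stated explicitly.
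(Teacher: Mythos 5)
Your construction of $A_{\loc}$ as a completed direct limit, the verification that it is Fontaine perfectoid via cases (i) and (ii) of Theorem~\ref{T:Fontaine perfectoid compatibility}, and the final passage to completed direct limits all match the paper's argument. However, there is a genuine gap at the key quantitative step: you assert that because each term of the finite-covering \v{C}ech complex is Fontaine perfectoid and each differential is a $\ZZ_p$-linear combination of ring homomorphisms, Remark~\ref{R:complex almost optimal} upgrades exactness to almost optimal exactness. The first assertion of that remark applies only to complexes of \emph{perfect uniform} Banach rings in characteristic $p$: the improvement of the strictness constant from $c$ to $c^{1/p^n}$ is driven by the Frobenius bijection, which commutes with the \v{C}ech differentials in characteristic $p$ but has no analogue acting on a Fontaine perfectoid ring in characteristic $0$. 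Nor does Lemma~\ref{L:strict to almost optimal} rescue the step, since it concerns a single strict \emph{ring homomorphism} (its proof uses multiplicativity to lift $b^p$), whereas the higher \v{C}ech differentials are alternating sums and a $p$-th power of a cocycle need not be a cocycle. So "strict exact in characteristic $0$ $\Rightarrow$ almost optimal" is exactly what is not available directly.

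The fix is the detour the paper takes, and it is the essential content of the proof: by Theorem~\ref{T:Fontaine perfectoid compatibility}(i) and Theorem~\ref{T:tensor product}, the complex $0 \to A \to A_\gothV \to A_\gothV \widehat{\otimes}_A A_\gothV \to \cdots$ is obtained via Theorem~\ref{T:Fontaine perfectoid correspondence} (with compatible Fontaine primitive ideals) from the corresponding \v{C}ech complex of the tilted covering in characteristic $p$; that complex is strict exact by Tate acyclicity for perfect uniform rings \cite[Theorem~8.2.22]{part1}, hence almost optimal exact by the first part of Remark~\ref{R:complex almost optimal}, and the \emph{second} part of that remark (resting on Lemma~\ref{L:primitive division}) transfers almost optimality to the characteristic-$0$ complex. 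With that substitution your colimit step goes through as you describe; note also that once the bound is almost optimal at every finite stage, your concern about completion is moot, since the colimit maps are isometric and the uniform bound survives completion, which is the paper's terse "take completed direct limits."
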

\begin{proof}
The fact that $(A_{\loc}, A_{\loc}^+)$ is Fontaine perfectoid is immediate from
Theorem~\ref{T:Fontaine perfectoid compatibility}. For each rational covering 
$\{(A,A^+) \to (B_i,B_i^+)\}_i$, the corresponding sequence with $A_{\loc}$ replaced with $\bigoplus_i B_i$ is almost optimal exact: it is obtained via Theorem~\ref{T:Fontaine perfectoid correspondence} from a corresponding sequence in characteristic $p$ which is strict exact (by \cite[Theorem~8.2.22]{part1}).
We may thus take completed direct limits to obtain another almost optimal exact sequence.
\end{proof}

\begin{lemma} \label{L:truncate flat}
For $(A,A^+)$ an adic Banach ring, $(A_{\loc}, A_{\loc}^+) \to (B,B^+)$ a morphism
of adic Banach rings, and $u \in A$ a topologically nilpotent unit, the map $A_{\loc}^+/(u) \to B^+/(u)$ is flat. If $\Spa(B,B^+) \to \Spa(A_{\loc}, A_{\loc}^+)$ is surjective, then $A_{\loc}^+/(u) \to B^+/(u)$ is also faithfully flat.
\end{lemma}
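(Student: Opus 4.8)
The plan is to deduce the statement from a structural property of the total localization: modulo the non--zero-divisor $u$, the ring $A_{\loc}^+$ becomes \emph{locally a valuation ring}. First I would recall what the construction $(A,A^+)\mapsto(A_{\loc},A_{\loc}^+)$ does to the adic spectrum. Each simple Laurent covering $\{U,U^{c}\}$ of $X=\Spa(A,A^+)$ contributes the ring $\calO(U)\oplus\calO(U^{c})$, whose adic spectrum is a \emph{disjoint} union; passing to the completed direct limit therefore forces the preimage of every rational subset of $X_{\loc}=\Spa(A_{\loc},A_{\loc}^+)$ to be open \emph{and} closed. Thus $X_{\loc}$ is totally separated (``$w$-local''), and from this one extracts the key claim: every localization of $A_{\loc}^+$ at a prime ideal is a valuation ring, because in the totally separated situation the rational neighbourhoods of a point of $X_{\loc}$ are clopen and the corresponding colimit of $\calO^+$-sections is the localization of $A_{\loc}^+$ at that prime, while $\calO^+$-stalks on any adic space are valuation rings. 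Reducing modulo $u$, every localization of $A_{\loc}^+/(u)$ at a prime is a \emph{chain ring}, i.e.\ a local ring whose ideals are totally ordered by inclusion (nilpotents can genuinely survive modulo $u$, so one cannot simply pass to reduced rings). Establishing this structural claim carefully is the step I expect to be the main obstacle; everything else is comparatively formal.

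Granting the structural claim, flatness of $A_{\loc}^+/(u)\to B^+/(u)$ may be checked after localizing at an arbitrary prime $\gothm$ of $A_{\loc}^+/(u)$. Over a chain ring $V$, a module $M$ is flat if and only if $M[v]=\ann_{V}(v)\cdot M$ for every $v\in V$. I would verify this for $M=(B^+/(u))_{\gothm}$ over $V=(A_{\loc}^+/(u))_{\gothm}$ by lifting $v$ to an element of $A_{\loc}^+$ and exploiting that $B^+$ is integrally closed in $B$ while $u$ is a unit of $A_{\loc}\subseteq B$: comparability of elements in $V$ (a valuation ring) reduces each needed divisibility relation to one taking place inside $B$, and integral closedness of $B^+$ forces the quotients that arise to lie in $B^+$, which is exactly what matches $M[v]$ with $\ann_{V}(v)\cdot M$. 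Concretely, if the lift of $v$ is divisible by $u$ the condition is vacuous, and otherwise one uses a factorization $u=vw$ in $A_{\loc}^+$ (available after localizing), together with the fact that $u$ is a non--zero-divisor in $B^{+}\subseteq B$.

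For the faithfully flat assertion, once $A_{\loc}^+/(u)\to B^+/(u)$ is known to be flat it suffices by Lemma~\ref{L:flat plus faithful} to show that $\Spec(B^+/(u))\to\Spec(A_{\loc}^+/(u))$ meets every maximal ideal of $A_{\loc}^+/(u)$. Each maximal ideal of $A_{\loc}^+/(u)$ arises from a point of $X_{\loc}$ by reducing the associated valuation modulo $u$; since $\Spa(B,B^+)\to X_{\loc}$ is assumed surjective, every such point lifts to $\Spa(B,B^+)$, and reducing the lifted valuation modulo $u$ yields a prime of $B^+/(u)$ lying over the given maximal ideal, exactly as in the proof of Lemma~\ref{L:covering faithful}. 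Hence the map is faithfully flat.

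As a consistency check (and an alternative route when $(A,A^+)$ is already Fontaine perfectoid), one may instead invoke Lemma~\ref{L:loc acyclic}, which gives that $(A_{\loc},A_{\loc}^+)$ is Fontaine perfectoid, pass to the tilt via Remark~\ref{R:same quotient} to realise $A_{\loc}^+/(u)$ as $R_{\loc}^+/(\overline u)$ for a perfect uniform $R_{\loc}$, and run the same chain-ring analysis there; but since the argument above uses only that $\calO^+$-stalks are valuation rings and that $B^+$ is integrally closed, it applies to an arbitrary adic Banach ring $(A,A^+)$ without any perfectoid hypothesis.
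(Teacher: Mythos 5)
Your overall architecture is close to the paper's (localize over the total localization, exploit a valuation-ring structure, then handle $u$), but the step you yourself flag as the main obstacle is justified by a false fact, and it is exactly the input the paper supplies differently. The claim ``$\calO^+$-stalks on any adic space are valuation rings'' is not true: for $x \in \Spa(A,A^+)$ the stalk $\calO_{X,x}^+$ is only a local ring whose image in the residue field is a valuation ring; it can contain nilpotents or zero-divisors (already for non-reduced affinoids), so it need not even be a domain. Likewise, the localization of $A_{\loc}^+$ at a prime is not the colimit of $\calO^+$-sections over clopen neighbourhoods of a point of $X_{\loc}$: that colimit is the stalk along a connected component, and each component carries many primes of $A_{\loc}^+$, of which the prime localizations are further localizations of that stalk. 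What the paper actually uses is the specific structural fact about the total localization: the connected components of $\Spec(A_{\loc})$ correspond to those of $\Spa(A_{\loc},A_{\loc}^+)$, and the stalk of $A_{\loc}^+$ along each component is a valuation ring --- equivalently, after completion each component is $\Spa(K,K^+)$ for an adic field. Your proposal neither states nor proves this; deriving it from a general statement about $\calO^+$-stalks cannot work, so the structural claim remains unestablished.

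The second gap is in the mod-$u$ flatness verification. Granting that the relevant local rings of $A_{\loc}^+/(u)$ are chain rings, your criterion $M[v]=\ann_V(v)\cdot M$ ultimately requires, after writing $u=vw$, the implication $v\,(b-wb')=0 \Rightarrow b\in wB^+ + uB^+$, i.e.\ that (a lift of) $v$ acts as a non-zero-divisor on $B^+$. In your setting $v$ is an arbitrary element of a localization of $A_{\loc}^+$, and neither ``$u$ is a unit in $B$'' nor integral closedness of $B^+$ in $B$ gives this. This is precisely what the paper's reduction buys: once one is over an adic field $(K,K^+)$, every nonzero element of $K^+$ is invertible in $K\subseteq B$, so $B^+$ is torsion-free, hence flat, over the valuation ring $K^+$, and flatness of $K^+/(u)\to B^+/(u)$ follows by base change --- no chain-ring criterion and no integral-closedness argument are needed. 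Your treatment of faithful flatness (lift points of $X_{\loc}$ via the assumed surjectivity, as in Lemma~\ref{L:covering faithful}, and apply Lemma~\ref{L:flat plus faithful}) is in the right spirit, but it too leans on the unproved description of $A_{\loc}^+$ and its maximal ideals along components, so it inherits the first gap.
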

\begin{proof}
We may check this locally with respect to the connected components of
$\Spec(A_{\loc})$, which by \cite[Proposition~2.6.4]{part1} correspond to the connected components of $\Spa(A_{\loc}, A_{\loc}^+)$. The stalk of $A_{\loc}^+$ at any connected component is a valuation ring (namely the valuation ring of $\Spa(A,A^+)$ at its corresponding point); since we are considering truncations modulo $u$, we are free to replace this valuation ring with its completion. We are thus reduced to the case $A^+/(u)=K^+/(u)$ where $(K, K^+)$ is an adic field. In this case, $B^+$ is torsion-free over $K^+$ and hence flat, so the same remains true after truncating modulo $u$.
\end{proof}

\begin{defn} \label{D:v-topology}
A \emph{v-covering} (or \emph{valuation covering}) of adic spaces is a surjective morphism $Y \to X$ such that
every quasicompact open subset of $X$ is contained in the image of some quasicompact open subset of $Y$. (This condition is modeled on a similar condition appearing in the definition of the fpqc topology for schemes.) 
For example, if $X$ is an adic affinoid space, then $X_{\loc} \to X$ is a v-covering.

We define the \emph{v-topology} on the category of Fontaine perfectoid spaces by declaring a family $\{Y_i \to X\}$ to be a covering if the morphism $\sqcup_i Y_i \to X$ is a v-covering. For $X$ a Fontaine perfectoid space, let $X_{\faith}$ be the resulting site.  (This topology was called the \emph{faithful topology} in the original version of \cite{scholze-berkeley2}; the terminology here matches that of the final version.)
\end{defn}

\begin{theorem} \label{T:faithful acyclic}
For any Fontaine perfectoid space $X$,
the structure presheaf $\calO$ on $X_{\faith}$ and its sub-presheaf $\calO^+$ are both sheaves. Moreover, if $X$ is a Fontaine affinoid perfectoid space, then $\calO$ is acyclic on $X_{\faith}$ while $\calO^+$ is almost acyclic on $X_{\faith}$.
(The reified analogue also holds.)
\end{theorem}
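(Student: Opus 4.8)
The plan is to reduce the v-topology statement to a combination of the already-established acyclicity for pro-\'etale and \'etale topologies together with the good behaviour of the total localization functor $(A,A^+)\mapsto(A_{\loc},A_{\loc}^+)$. First I would set up the standard framework: since every v-covering of a Fontaine affinoid perfectoid space $X=\Spa(A,A^+)$ can be refined (after passing to a quasicompact open cover of the source and target, using the quasicompactness condition built into the definition of a v-covering) by a composition of a rational covering of $X$, a finite \'etale covering, and a single surjective map, it suffices by the usual Tate-style reduction (as in \cite[Proposition~2.4.20]{part1} or its \'etale analogue \cite[Proposition~8.2.20, Proposition~8.2.21]{part1}) to verify the sheaf and acyclicity axioms on each of these three types of covering separately. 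For rational and \'etale coverings, acyclicity of $\calO$ and almost-acyclicity of $\calO^+$ are already known from Corollary~\ref{C:Fontaine perfectoid etale acyclic}. So the whole content is concentrated in the third case: a surjective morphism of Fontaine affinoid perfectoid spaces.

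For that case the key tool is Lemma~\ref{L:loc acyclic}, which tells us that the augmented \v{C}ech complex of $A\to A_{\loc}$ is almost optimal exact, and the observation that $X_{\loc}\to X$ is itself a v-covering which dominates (after refinement) any given surjection; more precisely, given a surjection $(A,A^+)\to(B,B^+)$ of Fontaine perfectoid rings, the map $A_{\loc}\to B_{\loc}$ admits a section-like structure coming from Lemma~\ref{L:truncate flat} (faithful flatness of $A_{\loc}^+/(u)\to B^+/(u)$ when $\Spa(B,B^+)\to\Spa(A_{\loc},A_{\loc}^+)$ is surjective). Concretely I would argue: it suffices to check the descent/acyclicity axioms for the single v-covering $X_{\loc}\to X$ (by the standard argument that once acyclicity holds for one cofinal family of coverings and the structure presheaf is separated, it holds for all); and for $X_{\loc}\to X$ this is exactly the content of Lemma~\ref{L:loc acyclic}. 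The passage from ``acyclic for $X_{\loc}\to X$'' to ``acyclic for all v-coverings'' uses that $X_{\loc}\to X$ refines into any v-covering after a further total localization, together with the fact (from Lemma~\ref{L:truncate flat}) that total localizations behave like faithfully flat base changes modulo $u$, so that \v{C}ech cohomology can be computed along the $X_{\loc}$-tower. This gives a Cartan--Leray / comparison argument identifying $\check H^i(\gothV,\calO)$ with $\check H^i(\{X_{\loc}\},\calO)$ for any refinement $\gothV$, hence the vanishing.

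To assemble this I would proceed in the following order. (1) Reduce to $X$ Fontaine affinoid perfectoid and to checking separatedness and \v{C}ech-acyclicity on a cofinal family of v-coverings, invoking the reduction machinery of \cite{part1}. (2) Show that the v-coverings of the form $X_{\loc}\to X$, and their iterates $X_{\loc,\loc}\to X_{\loc}\to X$, are cofinal among all v-coverings of affinoid perfectoid spaces — this uses the definition of the total localization as a completed colimit over all rational coverings together with the fact that any v-covering is refined, locally, by a composite of a rational covering, a finite \'etale covering, and a surjection, and that surjections are absorbed by $X_{\loc}$. (3) For the cover $X_{\loc}\to X$, invoke Lemma~\ref{L:loc acyclic} directly: the augmented \v{C}ech complex is almost optimal exact, giving $\calO$-acyclicity on the nose and $\calO^+$-almost-acyclicity. (4) Deduce separatedness of $\calO$ and $\calO^+$ (the $i=0$ part) from the same exactness, hence the presheaves are sheaves. (5) Combine with the rational and \'etale cases via the Tate/\'etale reduction to conclude for all v-coverings. (6) Note the reified analogue follows by the same argument using Remark~\ref{R:pseudocoherent reified} and the reified versions of Lemma~\ref{L:loc acyclic} and Lemma~\ref{L:truncate flat}.

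The main obstacle I anticipate is step (2): making precise and correct the claim that total localizations are cofinal among v-coverings, i.e. that an arbitrary surjective morphism of affinoid perfectoid spaces (satisfying the quasicompactness condition) is dominated, after refinement, by the canonical map $X_{\loc}\to X$. This requires care because $X_{\loc}$ is built only from \emph{rational} coverings, so one must genuinely use that \'etale-local and surjective pieces can be handled separately (finite \'etale by Corollary~\ref{C:Fontaine perfectoid etale acyclic}, surjectivity by Lemma~\ref{L:truncate flat}) rather than being literally absorbed into $X_{\loc}$; the honest statement is probably that $\{X_{\loc}\to X\}$ together with finite \'etale covers forms a cofinal family, and one runs the Tate reduction over that family. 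Getting the bookkeeping of almost-mathematics right when passing to completed colimits in step (3)–(4) is routine but needs the ``almost optimal'' terminology from \cite[Remark~3.1.6]{part1} to be propagated correctly through the colimit, as is done in Lemma~\ref{L:loc acyclic} itself.
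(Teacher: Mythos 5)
There is a genuine gap, and it sits exactly where you flagged it: step (2). Neither the covering $X_{\loc}\to X$ nor the family ``total localizations together with finite \'etale covers'' is cofinal among v-coverings, and no decomposition of an arbitrary v-covering into rational plus finite \'etale plus ``one surjection'' helps, because the single surjection is the whole problem. The v-topology genuinely contains coverings not refined by any combination of rational, finite \'etale, or total-localization coverings: already $\Spa(C,\gotho_C)\to\Spa(K,\gotho_K)$ for $C$ a completed algebraic closure of a perfectoid field $K$, or more generally coverings by large ``products of points,'' are v-coverings of this kind. So your Cartan--Leray comparison identifying $\check H^i(\gothV,\calO)$ with $\check H^i(\{X_{\loc}\},\calO)$ for an arbitrary refinement $\gothV$ has no justification, and the reduction in steps (1)--(2) does not go through as written.

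The correct use of the two lemmas you cite is different. One does not try to dominate a given covering by $X_{\loc}\to X$; one \emph{interposes} $X_{\loc}$: given any v-covering $Z\to X$, the composite $Z\times_X X_{\loc}\to X_{\loc}\to X$ (a fibre product of Fontaine perfectoids, hence Fontaine perfectoid by Theorem~\ref{T:tensor product}) refines it, so coverings of the form $Y\to X_{\loc}\to X$ are cofinal for trivial reasons. The two stages are then handled separately: the stage $X_{\loc}\to X$ by Lemma~\ref{L:loc acyclic} (almost optimal exactness of the augmented \v{C}ech complex), and the stage $Y\to X_{\loc}$ --- which is an \emph{arbitrary} v-covering of the total localization --- by Lemma~\ref{L:truncate flat}. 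The point of that lemma, which your sketch does not exploit, is that it applies to any morphism out of $(A_{\loc},A_{\loc}^+)$: since $A_{\loc}^+$ is locally (on connected components) a valuation ring, $A_{\loc}^+/(u)\to B^+/(u)$ is automatically flat, and faithfully flat when $\Spa(B,B^+)\to\Spa(A_{\loc},A_{\loc}^+)$ is surjective; ordinary faithfully flat descent modulo $u$ then gives \v{C}ech (almost-)exactness for $\calO^+$, and completeness plus inverting $u$ gives acyclicity for $\calO$. In other words, the arbitrary surjection is tamed not by refining it away but by first passing to $X_{\loc}$, over which every surjection becomes faithfully flat modulo $u$. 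With this replacement for your step (2) the rest of your outline (the spectral-sequence reduction to a cofinal family, the sheaf property from the degree-zero part, and the reified analogue) is fine; the appeal to Corollary~\ref{C:Fontaine perfectoid etale acyclic} for rational and finite \'etale coverings is not needed separately, since those are subsumed in the two-stage argument.
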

\begin{proof}
By the usual spectral sequence argument, it suffices to verify
\v{C}ech-(almost)-acyclicity for a cofinal family of coverings. In particular, it suffices to consider coverings of the form $Y \to X_{\loc} \to X$; it in turn suffices to treat the two coverings $Y \to X_{\loc}, X_{\loc} \to X$ individually. Of these,
the former is covered by Lemma~\ref{L:truncate flat} and the latter by 
Lemma~\ref{L:loc acyclic}.
\end{proof}
\begin{cor} \label{C:faithful acyclic vector bundle}
Let $X = \Spa(A,A^+)$ be a Fontaine affinoid perfectoid space.
Let $M$ be a finite projective $A$-module. Then
\[
H^i(X_{\faith}, \tilde{M}) = \begin{cases} M & i=0 \\ 0 & i>0. \end{cases}
\]
(The reified analogue also holds.)
\end{cor}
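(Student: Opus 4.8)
The plan is to reduce immediately to the case $M = A$, which is precisely the acyclicity assertion of Theorem~\ref{T:faithful acyclic}, using that a finite projective module is a retract of a finite free one. First I would write $M$ as the image of an idempotent endomorphism $e$ of $A^n$ for some $n$. Applying the additive functor $N \mapsto \tilde N = N \otimes_A \calO$ yields an idempotent endomorphism $\tilde e$ of $\calO^n$ on $X_{\faith}$ with image $\tilde M$, so that $\tilde M$ is a retract of $\calO^n$ in the category of presheaves of $\calO$-modules. Since $\calO$ (hence $\calO^n$) is a sheaf on $X_{\faith}$ by Theorem~\ref{T:faithful acyclic}, and a retract of a sheaf is again a sheaf (the equalizer diagram expressing the sheaf condition for $\tilde M$ over any $v$-covering is a retract of the corresponding diagram for $\calO^n$), it follows that $\tilde M$ is a sheaf.

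Next I would invoke the fact that sheaf cohomology on the site $X_{\faith}$ is an additive functor commuting with finite direct sums and with retracts, so that $H^i(X_{\faith}, \tilde M)$ is the image of the idempotent $H^i(\tilde e)$ on $H^i(X_{\faith}, \calO^n) = H^i(X_{\faith}, \calO)^{\oplus n}$. By Theorem~\ref{T:faithful acyclic} — together with the computation $H^0(X_{\faith}, \calO) = A$, which follows from the sheaf property and the \v{C}ech computation given there — this group equals $A^n$ for $i = 0$ and vanishes for $i > 0$. For $i > 0$ the group $H^i(X_{\faith}, \tilde M)$ is a direct summand of $0$, hence zero; for $i = 0$ it is the image of $e$ acting on $A^n$, which is $M$ by construction. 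This gives the stated identification.

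The reified analogue follows by the identical argument, since Theorem~\ref{T:faithful acyclic} already includes its reified version. I do not expect a genuine obstacle here; the only point deserving care is the verification that $\tilde M$ is an honest sheaf (rather than merely a presheaf) on $X_{\faith}$, and the retract argument above handles this cleanly. An alternative, if one preferred to avoid the retract formalism, would be to repeat verbatim the \v{C}ech-acyclicity argument of Theorem~\ref{T:faithful acyclic} for the coverings $Y \to X_{\loc} \to X$ with $\calO$ replaced by $\tilde M$, using Lemma~\ref{L:truncate flat} and Lemma~\ref{L:loc acyclic} together with the fact that tensoring the relevant almost optimal exact sequences with the finite projective module $M$ preserves (almost optimal) exactness; but this is strictly more laborious than necessary.
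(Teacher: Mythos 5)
Your proof is correct and is essentially the argument the paper intends: the corollary is stated without proof as an immediate consequence of Theorem~\ref{T:faithful acyclic}, the implicit reasoning being exactly your observation that a finite projective module is a direct summand of a finite free one, so that $\tilde{M}$ is a retract of $\calO^n$ and its (sheafiness and) cohomology follows by additivity. Nothing further is needed.
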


\begin{remark}
For $X$ a Fontaine perfectoid space, any perfectoid subdomain $Y$ of $X$
may be canonically identified with a Fontaine perfectoid space, and any pro-\'etale covering then induces a v-covering. In this manner, we obtain a morphism of sites
$X_{\faith} \to X_{\proet}$.
\end{remark}

\begin{theorem} \label{T:vector bundle faithful descent}
For $X$ a Fontaine perfectoid space, the categories of $\calO_X$-vector bundles for the analytic topology, $\calO_X$-vector bundles for the \'etale topology,
$\widehat{\calO}_X$-vector bundles for the pro-\'etale topology,
and $\calO_X$-vector bundles for the v-topology are all equivalent to each other. In case $X = \Spa(A,A^+)$ is Fontaine affinoid perfectoid, these categories are also equivalent to the category of finite projective $A$-modules.
(The reified analogue also holds.)
\end{theorem}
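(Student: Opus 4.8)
The plan is to reduce to the case $X = \Spa(A,A^+)$ Fontaine affinoid perfectoid and to show that each of the four categories is equivalent, via global sections, to the category of finite projective $A$-modules; the general statement then follows by gluing, since each of the four topologies refines the analytic topology, admits a neighbourhood basis of affinoid perfectoid opens, and yields a sheaf category satisfying descent for the analytic topology (so forms a stack over the small analytic site of $X$), and the comparison functors commute with restriction. Fix $(A,A^+)$ Fontaine affinoid perfectoid; it is sheafy by Corollary~\ref{C:Fontaine perfectoid stably uniform}, so \cite[Theorem~2.7.7]{part1} already identifies $\calO_X$-vector bundles for the analytic topology with finite projective $A$-modules. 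The whole argument is then organized around the chain of pullback functors from finite projective $A$-modules to analytic, then \'etale, then pro-\'etale $\widehat{\calO}$-, then v-topology $\calO$-vector bundles, with the goal of showing each is an equivalence.

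First I would establish full faithfulness of every functor in the chain. For this it suffices to know that on any affinoid perfectoid the relevant structure sheaf computes the coordinate ring in degree $0$ and has vanishing higher cohomology: this is Corollary~\ref{C:Fontaine perfectoid etale acyclic} for $X_{\et}$, Proposition~\ref{P:perfectoid acyclic} for $X_{\proet}$ (with $\widehat{\calO}$), and Corollary~\ref{C:faithful acyclic vector bundle}, built on Theorem~\ref{T:faithful acyclic}, for $X_{\faith}$. Indeed, for finite projective $A$-modules $M,N$ the internal Hom sheaf $\mathcal{H}om(\tilde M,\tilde N)$ is again $\widetilde{\Hom_A(M,N)}$, so its degree-$0$ cohomology in any of these topologies is $\Hom_A(M,N)$; this gives full faithfulness of each functor beyond the analytic level, and the analytic case is covered by \cite[Theorem~2.7.7]{part1}.

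The substance is essential surjectivity, i.e.\ effective descent of finite projective modules along the coverings in question. For $X_{\et}$ this is standard: an \'etale vector bundle is trivialized on a covering assembled from rational localizations and finite \'etale maps, and finite projective modules descend effectively along these by \cite[Theorem~2.6.9]{part1}. For $X_{\proet}$, after the reduction of Remark~\ref{R:reduce to countable} any trivializing perfectoid subdomain may be refined to one of the form $\Spa(B,B^+)$ with $B$ the completed direct limit of a linear tower of faithfully finite \'etale $A$-algebras; by Lemma~\ref{L:perfectoid tower splitting} the map $A \to B$ splits in the category of Banach $A$-modules, so Lemma~\ref{L:complete descent} gives effective descent of finite projective modules, exactly as in the proof of Corollary~\ref{C:refined Kiehl perfectoid}. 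For $X_{\faith}$, one refines an arbitrary v-covering by the total localization $X_{\loc} \to X$ followed by a covering of $X_{\loc}$, and treats the two pieces separately: for $X_{\loc} \to X$ one uses that the augmented \v{C}ech complex is almost optimal exact (Lemma~\ref{L:loc acyclic}), and for a map $(A_{\loc},A_{\loc}^+) \to (B,B^+)$ one uses that the reduction $A_{\loc}^+/(u) \to B^+/(u)$ modulo a topologically nilpotent unit $u$ is (faithfully) flat by Lemma~\ref{L:truncate flat}. In both pieces one descends the datum modulo $u$ by ordinary faithfully flat descent, then reassembles an honest finite projective $A$-module by the Beauville-Laszlo-style glueing of \S\ref{subsec:BL glueing} along $u$ (using that finite projectivity is detected on the base extensions and the completeness properties of finite modules over open mapping rings); a convenient alternative is first to transport everything to characteristic $p$ via the functorial correspondence of Theorem~\ref{T:Fontaine perfectoid correspondence}, where $A^+/(u)\cong R^+/(\overline{u})$, and descend there.

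The hard part is precisely this v-descent step. The difficulty is that neither $X_{\loc}\to X$ nor a general $\Spa(B,B^+)\to X_{\loc}$ is known to be flat at the level of structure rings --- rational localizations of nonnoetherian uniform Banach rings are only known to be pseudoflat (Theorem~\ref{T:weak flatness}) --- so classical faithfully flat descent is not directly available, and one must genuinely work through the $u$-adic reduction, where Lemma~\ref{L:truncate flat} restores flatness, and then climb back up via Beauville-Laszlo glueing. Since every ingredient above has a reified counterpart cited in the text (Remark~\ref{R:perfectoid compatibility reified}, Remark~\ref{R:pseudocoherent reified}, the reified forms of Theorem~\ref{T:faithful acyclic} and Corollary~\ref{C:faithful acyclic vector bundle}), the reified analogue follows by the same argument.
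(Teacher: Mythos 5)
Your comparisons among the analytic, \'etale and pro-\'etale categories are fine and essentially the route the paper takes (acyclicity for full faithfulness, \cite[Theorem~2.6.9]{part1} for \'etale descent, and the splitting of Lemma~\ref{L:perfectoid tower splitting} plus Lemma~\ref{L:complete descent} for the pro-\'etale step). The genuine gap is in the v-descent step, in three places. First, for the piece $X_{\loc}\to X$ you invoke ``ordinary faithfully flat descent'' modulo $u$, but no such flatness is available: Lemma~\ref{L:truncate flat} only gives flatness of $A_{\loc}^+/(u)\to B^+/(u)$, i.e.\ for morphisms \emph{out of} the total localization, and flatness of $A^+/(u)\to A_{\loc}^+/(u)$ (equivalently, of rational localizations reduced mod $u$) is exactly what is not known in this nonnoetherian setting --- this is why the paper works only with pseudoflatness and with \emph{almost} acyclicity (Lemma~\ref{L:loc acyclic}, Theorem~\ref{T:faithful acyclic}), which give full faithfulness but not, by themselves, effectivity. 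Second, a v-descent datum is given on a finite projective $B$-module, with cocycle over $B\widehat{\otimes}_A B$; it does not automatically preserve any chosen $B^+$-lattice, so there is no descent datum modulo $u$ to feed into flat descent until the cocycle has been normalized to lie in $\GL_n(B^+\widehat{\otimes}_{A^+}B^+)$ and to be congruent to the identity modulo topologically nilpotent elements. The paper arranges this by first treating the case of a point: over a perfectoid field one chooses a lattice admitting a compatible descent datum and uses honest faithfully flat descent for the \emph{residue fields} to normalize the cocycle, and only then globalizes.

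Third, the proposed reassembly via the Beauville--Laszlo glueing of \S\ref{subsec:BL glueing} along $u$ cannot do the lifting you need: $u$ is a unit in $A$ and $A^+$ is already $u$-adically complete, so the glueing data $(R_f,\widehat{R})$ with $R=A^+$, $f=u$ degenerate and the construction returns nothing new. The mechanism that actually closes the argument in the paper is a successive-approximation (almost-mathematics) argument: once the cocycle matrix is congruent to $1$ modulo topological nilpotents, repeated application of Theorem~\ref{T:faithful acyclic} (acyclicity of $\calO$ and almost acyclicity of $\calO^+$ on $X_{\faith}$) lets one change basis so as to move the matrix ever closer to the identity, and completeness yields convergence, hence a descended finite free module. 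Your outline would need to be rebuilt around this pointwise normalization plus approximation scheme (or an equivalent almost-flat-descent argument mod $u^n$ with a careful lattice discussion), rather than around flat descent mod $u$ and Beauville--Laszlo glueing.
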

\begin{proof}
By \cite[Theorem~8.2.22, Theorem~9.2.15]{part1} and Corollary~\ref{C:faithful acyclic vector bundle}, it suffices to check that every vector bundle for the v-topology descends to the analytic topology.
By the same argument plus \cite[Theorem~8.2.22]{part1}, we may work \'etale locally on $X$. We may thus reduce to the case where $X = \Spa(A,A^+)$ and we are given a vector bundle specified with respect to a v-covering $Y = \Spa(B,B^+) \to X$.

In the case where $(A,A^+) = (K,K^+)$ is an adic perfectoid field, 
we may reduce to the case where $(B,B^+) = (L,L^+)$ is another adic perfectoid field.
Let $V$ be a finite-dimensional $L$-vector space equipped with a descent datum;
we can then find some lattice in $V$ which admits a compatible descent datum.
Applying faithfully flat descent to the residue fields of $K$ and $L$, we may further
ensure that the original descent datum can be described by an element of
$\GL_n(L^+ \widehat{\otimes}_{K^+} L^+)$ congruent to the identity modulo topological nilpotents. By repeated application of Theorem~\ref{T:faithful acyclic}, we may modify the choice of basis to move this matrix progressively closer to the identity; this proves the claim.
(An argument using Schauder bases and Lemma~\ref{L:complete descent} is also possible in this case.)

In the general case, using the previous paragraph (and working locally),
we may again reduce to considering a finite free module $M$ over $B$ admitting a basis whose two images in $M \otimes_B (B \widehat{\otimes}_A B)$ differ by an element of $\GL_n(B^+ \widehat{\otimes}_{A^+} B^+)$ congruent to the identity modulo topological nilpotents.
The claim then follows again by repeated application of Theorem~\ref{T:faithful acyclic}.
\end{proof}

\begin{cor}
Let $X$ a Fontaine perfectoid space, let $\nu: X_{\faith} \to X_{\proet}$ be the canonical  morphism, and let $\calF$ be an $\calO_X$-vector bundle on $X_{\faith}$. Then $R^i \nu_* \calF = 0$ for all $i>0$.
\end{cor}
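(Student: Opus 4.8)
The plan is to compute $R^i\nu_*\calF$ via its standard description as the sheafification on $X_{\proet}$ of the presheaf sending an object $U$ to $H^i$ of the restriction of $\calF$ to the localized site $(X_{\faith})_{/\nu^* U}$, and then to observe that this presheaf already vanishes in positive degrees on a neighbourhood basis of $X_{\proet}$. As recorded above, the perfectoid subdomains of $X$ form such a basis, and a sheaf on $X_{\proet}$ whose sections vanish over every member of a neighbourhood basis has vanishing stalks, hence is zero. So it suffices to prove that $H^i(Y_{\faith}, \calF|_{Y_{\faith}}) = 0$ for every perfectoid subdomain $Y$ of $X$ and every $i>0$.

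First I would identify $(X_{\faith})_{/\nu^* Y}$ with the v-site $Y_{\faith}$ of $Y$ itself. By the remark preceding this corollary, a perfectoid subdomain $Y$ is canonically a Fontaine perfectoid space, and since the v-topology is generated by v-coverings (which are stable under localization and pull back to v-coverings), localizing $X_{\faith}$ along $\nu^* Y$ reproduces the v-site of $Y$. Moreover $Y$ is Fontaine \emph{affinoid} perfectoid: by the very definition of a perfectoid subdomain, the completion $\widehat{\calO}(Y)$ of $\calO(Y)$ for the spectral seminorm is a Fontaine perfectoid ring, and $Y$ is identified with $\Spa(\widehat{\calO}(Y), \widehat{\calO}(Y)^+)$. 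Since $\calF$ is an $\calO_X$-vector bundle on $X_{\faith}$, its restriction to $Y_{\faith}$ is an $\calO_Y$-vector bundle on $Y_{\faith}$.

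Now by Theorem~\ref{T:vector bundle faithful descent}, applied to the Fontaine affinoid perfectoid space $Y$, the category of $\calO_Y$-vector bundles on $Y_{\faith}$ is equivalent to the category of finite projective $\widehat{\calO}(Y)$-modules; thus $\calF|_{Y_{\faith}} \cong \tilde{M}$ for some finite projective $\widehat{\calO}(Y)$-module $M$. Corollary~\ref{C:faithful acyclic vector bundle} then gives $H^i(Y_{\faith}, \tilde{M}) = 0$ for all $i>0$, which is exactly the required vanishing; assembling over the basis, $R^i\nu_*\calF = 0$ for $i>0$. The only real obstacle is the bookkeeping of the first two paragraphs: one must check that localizing the v-site along a perfectoid subdomain genuinely recovers $Y_{\faith}$ (so that the formula for higher direct images applies) and that $\calF$ restricts there to an honest vector bundle. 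Both are formal consequences of the definitions of the v-topology and of a perfectoid subdomain, but they are precisely where one uses that perfectoid subdomains are themselves Fontaine affinoid perfectoid spaces rather than mere pro-objects; everything else is immediate from the two cited results.
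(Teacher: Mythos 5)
Your proof is correct and follows essentially the same route as the paper: both arguments rest on Theorem~\ref{T:vector bundle faithful descent} together with Corollary~\ref{C:faithful acyclic vector bundle}, reducing via the standard presheaf description of $R^i \nu_*$ (and the fact that perfectoid subdomains form a basis of $X_{\proet}$) to acyclicity over a Fontaine affinoid perfectoid space. The only cosmetic difference is that you localize to a perfectoid subdomain before descending $\calF$ to a finite projective module, whereas the paper descends $\calF$ to an $\widehat{\calO}_X$-vector bundle on $X_{\proet}$ first and reduces to $\calF = \calO_X$; the ingredients and overall structure are the same.
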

\begin{proof}
By Theorem~\ref{T:vector bundle faithful descent}, $\calF$ arises by pullback from
a $\widehat{\calO}_X$-vector bundle on $X_{\proet}$; we may thus realize $\calF$ locally on $X$ as a direct sum of a free bundle. This reduces the question to the case $\calF = \calO_X$, which we may check in the case that $X  = \Spa(A,A^+)$. It further suffices to verify \v{C}ech-acyclicity for any v-covering $Y = \Spa(B,B^+) \to X$, for which we may apply Theorem~\ref{T:faithful acyclic}.
\end{proof}

\begin{remark}
For $X$ a Fontaine perfectoid space, one may ask to what extent the statements of
Theorem~\ref{T:faithful acyclic} and
Theorem~\ref{T:vector bundle faithful descent} characterize  Fontaine perfectoid spaces among all uniform adic spaces on which $p$ is topologically nilpotent. We have not attempted to answer this question.
\end{remark}

\begin{remark} \label{R:vector bundle approximation}
Let $X$ be a Fontaine perfectoid space. 
Let $\nu_{\faith}: X_{\faith} \to X$ be the canonical morphism.
Let $\calF \to \calG$ be an inclusion of $\calO_X$-modules with $\calF$ locally finitely presented and $\calG$ locally finite free.
Using the flattening property of blowups \cite[Tag~0815]{stacks-project},
one can construct a v-covering $f: Y \to X$ such that $\image(f^* \calF \to f^* \calG)$ is locally finite free. However, this image does not give rise to a sheaf on $X_{\faith}$, and hence does not descend to a locally free $\calO_X$-module.
\end{remark}

\subsection{Applications of Andr\'e's lemma}

During the preparation of this paper, the work of Andr\'e \cite{andre-dsc1, andre-dsc2} and Bhatt \cite{bhatt-dsc} became available. In these papers, Hochster's direct summand conjecture in commutative algebra is resolved using perfectoid spaces. While the details of this story are generally orthogonal to our present discussion, this work includes a key lemma with surprisingly strong consequences for the basic theory of perfectoid rings; this leads to some convenient simplifications in our work.

\begin{lemma} \label{L:Andre step}
Let $A$ be a Fontaine perfectoid Banach ring corresponding to $R$ via Theorem~\ref{T:Fontaine perfectoid correspondence}.
Let $\overline{u}, \overline{v}$ be topologically nilpotent units in $R$ and put
$u = \theta([\overline{u}]), v = \theta([\overline{v}])$.
Let $g \in A^\circ$ be an arbitrary element and put
\[
B = A \langle T^{p^{-\infty}} \rangle \left\langle \frac{T-g}{u} \right\rangle.
\]
Then $B$ is perfectoid and $A^\circ/(v) \to B^\circ/(v)$ is almost faithfully flat; that is, for 
any $A^\circ/(v)$-module $M$, the groups $\Tor_i^{A^\circ/(v)}(M, B^\circ/(v))$ are almost zero $A^\circ$-modules for all $i>0$.
\end{lemma}
\begin{proof}
We follow the proof of \cite[Theorem~2.3]{bhatt-dsc}.
Note that $A \langle T^{p^{-\infty}} \rangle$ and $R \langle \overline{T}^{p^{-\infty}} \rangle$ again correspond as
in Theorem~\ref{T:Fontaine perfectoid correspondence}.
By Lemma~\ref{L:primitive division} (applied as in the proof of Theorem~\ref{T:Fontaine perfectoid homeomorphism}),
we can find $\overline{f} \in R^\circ \langle \overline{T}^{p^{-\infty}} \rangle$ such that 
$\theta([\overline{f}])$ is congruent to $T-g$ modulo some power of $v$ and
\[
B = A \langle T^{p^{-\infty}} \rangle \left\langle \frac{\theta([\overline{f}])}{u} \right\rangle;
\]
then $A \to B$ corresponds to
$R \to R \langle \overline{T}^{p^{-\infty}} \rangle \left\langle \frac{\overline{f}}{\overline{u}} \right\rangle$. In particular, $B^\circ$ is almost isomorphic to
\[
A^\circ \langle T^{p^{-\infty}},U^{p^{-\infty}} \rangle/(u^{p^{-n}} U^{p^{-n}} - 
\theta([\overline{f}^{p^{-n}}]): n=0,1,\dots),
\]
and so $B^\circ/(v)$ is almost isomorphic to
\[
\varinjlim_{n \to \infty} B_n, \qquad B_n = (A^\circ/(v)) [T^{p^{-\infty}}, U^{p^{-n}}] / (u^{ p^{-n}} U^{p^{-n}} - 
\theta([\overline{f}^{p^{-n}}]));
\]
it therefore suffices to check that for each $n$, $B_n$ is faithfully flat over $A^\circ/(v)$.

This last statement, for a given $v$, formally implies the corresponding statement for any power of $v$; consequently, at this point we are free to replace $v$ by a suitable $p$-power root \emph{depending on $n$}. In particular, we may assume that $v^p$ divides $p$; $v^{p^n}$ divides $\theta([\overline{f}])-(T-g)$;
$v^{p^n}$ divides $g - \theta([\overline{h}])$ for some $\overline{h} \in R^\circ$; and $v^{p^n}$ divides $u$. In this case, we have
\[
B_n = (A^\circ/(v)) [T^{p^{-\infty}}, U^{p^{-n}}] / (
\theta([\overline{f}^{p^{-n}}])) = (A^\circ/(v)) [T^{p^{-\infty}}, U^{p^{-n}}] / (T^{p^{-n}} -  \theta([\overline{h}^{p^{-n}}]))
\]
and this is obviously free over $A^\circ/(v)$ on the basis 
\[
\{T^i U^j: i \in \ZZ[p^{-1}] \cap [0, p^{-n}), j \in p^{-n} \ZZ_{\geq 0}
\}.
\]
This proves the desired result.
\end{proof}

\begin{theorem}[Andr\'e] \label{T:Andre flat}
Let $A$ be a Fontaine perfectoid Banach ring, choose topologically nilpotent units $u,v$ in $A$,
and let $g \in A$ be an element. 
Let $B$ be 
the completion of 
\[
\varinjlim_{\ell \to \infty} A \langle T^{p^{-\infty}} \rangle \left\langle \frac{T-g}{u^\ell} \right\rangle
\]
for the infimum of the spectral norms (which does not depend on $u$). Then $B$ is perfectoid
and $A^\circ/(v) \to B^\circ/(v)$ is almost faithfully flat.
\end{theorem}
\begin{proof}
By Corollary~\ref{C:perfectoid direct limit},
$B$ is perfectoid.
Note that the conclusion is formally independent of the choice of either $u$ or $v$; we may thus take $u$
and $v$ as in Lemma~\ref{L:Andre step}, and let $B_\ell$ denote the ring defined therein with $u$ replaced by $u^\ell$. Then $B^\circ/(v)$ is almost isomorphic to $\varinjlim B_\ell^\circ/(v)$, so the claim follows at once from
Lemma~\ref{L:Andre step}.
\end{proof}

\begin{defn}
Let $A$ be a uniform Banach ring and let $I$ be an ideal of $A$. Define the \emph{uniform closure} of $I$ as the kernel in $A$ of the spectral seminorm on $A/I$ induced by the quotient seminorm; this is an ideal of $A$ containing the closure of $I$, but is not itself guaranteed to be closed. Nonetheless, the terminology is justified in the context of perfectoid rings by Theorem~\ref{T:uniform closure} below.
\end{defn}

\begin{theorem}[Bhatt] \label{T:uniform closure}
Let $A$ be a Fontaine perfectoid Banach ring, let $I$ be an ideal of $A$, and let $J$ be the uniform closure of $I$.
Then $J$ is a closed ideal and $A/J$ is a Fontaine perfectoid ring.
\end{theorem}
\begin{proof}
Let $R$ be the ring corresponding to $A$ via Theorem~\ref{T:Fontaine perfectoid correspondence}.
Suppose first that $I$ is the closure of an ideal generated by $\theta([\overline{g}^{p^{-n}}])$ for $\overline{g}$ running over some subset of $R$ and $n$ running over all nonnegative integers. Let $S$ be the quotient of $R$ by the closed ideal generated by $\overline{g}^{p^{-n}}$ for $\overline{g}$ and $n$ as above; then $S$ is a perfectoid ring of characteristic $p$.
By case (iii) of Theorem~\ref{T:Fontaine perfectoid compatibility}(a), $A/I$ is perfectoid and the surjective homomorphism $A \to A/I$ corresponds to $R \to S$.
In particular, $A/I$ is a uniform Banach ring, from which it follows that $I=J$; in particular, $J$ is closed and everything has been verified in this case.

To treat the general case, let $B$ be the separated completion of $A/J$ with respect to the spectral seminorm; then $B$ is a uniform Banach ring
which is perfectoid by case (ii) of Theorem~\ref{T:Fontaine perfectoid compatibility}(b),
and so it will suffice to check that $A \to B$ is surjective (as then we must have $B = A/J$).
For this, it suffices to check that $A^\circ/(u) \to B^\circ/(u)$ is surjective for some topologically nilpotent unit $u$ of $A$ for which $u^p$ divides $p$ in $A^\circ$; we may in turn check this after replacing $A$ with a larger perfectoid ring $A'$ for which $A^\circ/(u) \to A^{\prime \circ}/(u)$ is almost faithfully flat, and replacing $I$
with $IA'$ (as then $B$ is replaced by $B \widehat{\otimes}_{A} A'$).
Using Theorem~\ref{T:Andre flat} repeatedly, we may reduce to the case where $I$ admits a generating set as in the previous paragraph, and then argue as above.
\end{proof}

\begin{cor}
Let $\psi: (A,A^+) \to (B,B^+)$ and $\overline{\psi}: (R,R^+) \to (S,S^+)$ be a pair of morphisms corresponding
as in Theorem~\ref{T:Fontaine perfectoid correspondence}.  Then $\psi$ is injective if and only if $\overline{\psi}$ is injective.
\end{cor}
\begin{proof}
If $\ker(\overline{\psi})$ contains a nonzero element $\overline{x}$, then $\ker(\psi)$ contains the nonzero element
$\theta([\overline{x}])$. Conversely, if $I = \ker(\psi)$ is nonzero, then the uniform closure $J$ of $I$ is nonzero,
and $A \to B$ factors through $A/J$ because $B$ is uniform; hence
$A \to A/J$ corresponds to a morphism $R \to R'$ with nonzero kernel through which $\overline{\psi}$ factors. \end{proof}

\begin{lemma}
Let $A$ be a Fontaine perfectoid Banach ring. Let $I$ be an ideal of $A$ whose uniform closure $\overline{I}$ is finitely generated.
Then $I$ is generated by an idempotent element of $A$.
\end{lemma}
\begin{proof}
By Theorem~\ref{T:uniform closure}, $\overline{I}$ is closed and $A/\overline{I}$ is Fontaine perfectoid.
By case (iii) of Theorem~\ref{T:Fontaine perfectoid compatibility}(b), $A \to A/\overline{I}$ corresponds to a surjective morphism $R \to S$ of perfect rings.
Let $J$ be the kernel of the latter; then $\overline{I}$ is the closure of the ideal generated by $\theta([\overline{x}])$ for
$\overline{x} \in J$. 
By Corollary~\ref{C:closure finitely generated}, on one hand $I = \overline{I}$, 
and on the other hand $I$ is generated by
$\theta([\overline{x}_1]), \dots, \theta([\overline{x}_n])$ for some $\overline{x}_1,\dots,\overline{x}_n \in J$.

We now argue as in \cite[Lemma~3.2.12]{part1}. By construction, there exists an $n \times n$ matrix $M$ over $A$
such that $\theta([\overline{x}_j^{1/p}]) = \sum_i M_{ij} \theta([\overline{x}_i])$.
Let $N$ be the $n \times n$ matrix over $A$ such that $N_{ii} = \theta([\overline{x}_i^{(p-1)/p}])$
and $N_{ij} = 0$ for $i \neq j$. Then
\[
0 = \sum_i (1-MN)_{ij} \theta([\overline{x}_i^{1/p}]).
\] 
For each $\alpha \in \calM(A)$, $\det(1-MN)$ evaluates to 1 or 0 in $\calH(\alpha)$ depending on whether
or not $\alpha(\theta([\overline{x}_i])) = 0$ for $i=1,\dots,n$; it is thus an idempotent element of $A$.
To check the claim, it now suffices to consider the case where $\det(1-MN) = 1$, in which case $\overline{x}_i = 0$ for $i=1,\dots,n$ and so $I=0$; and the case where $\det(1-MN) = 0$, in which case $I = A$.
\end{proof}

\begin{remark}
Although we will not need it here, we remark that Theorem~\ref{T:uniform closure} can also be used to show that the quotient of any perfectoid ring by any maximal ideal is a perfectoid field. See \cite[Corollary~2.9.14]{kedlaya-aws}.
\end{remark}

\begin{example} \label{exa:perfectoid quotient}
Let $K$ be a perfectoid field and put $A = K \langle T^{\pm p^{-\infty}} \rangle$ and $I = (T-1)$.
We claim that for each positive integer $n$, the class of $\sum_{i=0}^{p^n-1} T^{ip^{-n}}$ has quotient norm 1
and spectral seminorm $p^{-n}$. To check the second point, we may first enlarge $K$ to containing a full set of $p$-power roots of unity, in which case each point of $\Spa(A,A^+)$ is a copy of $\Spa(K,K^+)$ in which $T^{p^{-n}}$ corresponds to a (not necessarily primitive) $p^n$-th root of unity. 
By \cite[Theorem~2.3.10]{part1}, the spectral seminorm can be computed by taking the supremum over these choices,
that is, by taking the maximum of $\left| \sum_{i=0}^{p^n-1} \zeta^{i} \right|$ as $\zeta$ runs over all $p^n$-th roots of unity in $K$. The sum evaluates to $p^n$ for $\zeta=1$ and $0$ for $\zeta \neq 1$, proving the claim.

By the previous calculation, $A/I$ is not uniform; hence by Theorem~\ref{T:uniform closure},
the uniform closure $J$ of $I$ must be strictly larger than $I$.
In fact, one can say more: for $K$ containing an algebraic closure of $\QQ_p$,
an old calculation of Fresnel-de Mathan
\cite[Th\'eor\`eme~1]{fresnel-demathan} shows that the ideal $(T-1)$ is not even reduced.
\end{example}

\subsection{Seminormality}
\label{subsec:seminormality adic}

\begin{defn}
An adic Banach ring $(A,A^+)$ is \emph{seminormal} if $A$ is seminormal as a ring and uniform as a Banach ring. (Since $A^+$ is integrally closed in $A$, it too is forced to be seminormal.) This condition is not stable under rational localizations in general, in particular because the uniform condition is not \cite[Example~2.8.7]{part1};
we are thus forced to define $(A,A^+)$ to be \emph{stably seminormal} if for every rational localization $(A,A^+) \to (B,B^+)$, $(B,B^+)$ is again seminormal.
\end{defn}

\begin{prop} \label{P:affinoid seminormal}
Let $(A,A^+)$ be an adic Banach ring in which $A$ is an affinoid algebra over an analytic field $K$. 
\begin{enumerate}
\item[(a)]
If $A$ is seminormal, it is also stably seminormal.
\item[(b)]
Suppose that $K$ is of characteristic $0$, and let $L$ be an analytic field containing $K$. If $A$ is seminormal, then so is $A \widehat{\otimes}_K L$.
\end{enumerate}
\end{prop}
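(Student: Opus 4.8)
Proposal for the proof of Proposition~\ref{P:affinoid seminormal}:

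The plan is to reduce both parts to the algebraic statements about seminormality established in \S\ref{subsec:seminormal rings}, using the fact that affinoid algebras over analytic fields are noetherian and excellent so that the completion lemmas apply. For part~(a), the key point is that a rational localization $(A,A^+) \to (B,B^+)$ of an affinoid algebra has $B$ equal to a rational localization in the sense of classical rigid geometry, hence $B$ is again an affinoid algebra (in particular noetherian, excellent, and reduced since $A$ is reduced). I would argue seminormality of $B$ by checking it on the local rings of maximal ideals via \cite[Proposition~3.7]{swan}: for $\gothm$ a maximal ideal of $B$ lying over $\gothn$ in $A$, the local ring $B_\gothm$ has the same completion as the completion of $A_\gothn$ along the corresponding maximal ideal of the Tate algebra / localization, and one uses Lemma~\ref{L:excellent seminormal}: $A$ seminormal and excellent implies $\widehat{A_\gothn}$ seminormal; the completed local ring of $B$ at $\gothm$ is a localization-completion built from $\widehat{A_\gothn}$ in a way that preserves seminormality (since seminormality is Zariski-local and stable under the relevant flat maps between complete local rings, e.g.\ formally smooth base changes coming from adjoining Tate variables, handled by Lemma~\ref{L:seminormal field extension} applied to the residue extension together with Lemma~\ref{L:excellent seminormal}(a)). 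Then Lemma~\ref{L:excellent seminormal}(a) pulls seminormality of $\widehat{B_\gothm}$ back down to $B_\gothm$, and uniformity of $B$ is automatic here because affinoid algebras over an analytic field are automatically uniform once reduced — so $(B,B^+)$ is seminormal, giving stable seminormality.

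For part~(b), with $K$ of characteristic~$0$, I would proceed as follows. Since $A$ is an affinoid algebra over $K$ and $K$ has characteristic $0$, $A$ is reduced (as it is seminormal) hence $A \widehat{\otimes}_K L$ is an affinoid algebra over $L$; I need it seminormal. First reduce to checking on completed local rings at maximal ideals of $A \widehat{\otimes}_K L$ via \cite[Proposition~3.7]{swan} and Lemma~\ref{L:excellent seminormal}(a): it suffices to show each $\widehat{(A\widehat{\otimes}_K L)_\gothm}$ is seminormal. Such a completed local ring receives a flat local map from $\widehat{A_\gothn}$ where $\gothn$ is the contraction of $\gothm$ to $A$, with the fibre governed by a field extension of the residue field. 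Because $K$ has characteristic $0$, all residue field extensions in sight are separable, so $\kappa(\gothm) \otimes_{\kappa(\gothn)} \kappa(\gothm)$ is reduced and Lemma~\ref{L:seminormal field extension} applies to control the base change; combined with Lemma~\ref{L:excellent seminormal}(b) (which gives seminormality of $\widehat{A_\gothn}$ from seminormality of the excellent ring $A_\gothn$) and the stability of seminormality under the relevant completed base extensions — which, following the remark after Lemma~\ref{L:seminormal field extension}, extends \cite[\S5]{greco-traverso} using Lemma~\ref{L:seminormal finite approximation} — one concludes $\widehat{(A\widehat{\otimes}_K L)_\gothm}$ is seminormal. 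Uniformity of $A\widehat{\otimes}_K L$ again follows since it is a reduced affinoid algebra over the analytic field $L$.

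The main obstacle I anticipate is the commutative-algebra bookkeeping for the completed base change in part~(b): one must identify $\widehat{(A\widehat{\otimes}_K L)_\gothm}$ precisely enough with a completed tensor product / base change of $\widehat{A_\gothn}$ to invoke Lemma~\ref{L:seminormal field extension} and the excellence-based completion results, and the residue extension $\kappa(\gothm)/\kappa(\gothn)$ need not be finite or purely transcendental, so one must genuinely use the transfinite approximation argument of Lemma~\ref{L:seminormal finite approximation} to reduce to the two cases handled there. The characteristic~$0$ hypothesis is exactly what makes these residue extensions separable, so it enters in an essential way and cannot be dispensed with; in characteristic $p$ one would face inseparable residue extensions where $\ell \otimes_k \ell$ is nonreduced and the base change can destroy seminormality. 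Part~(a) should be comparatively routine once the correct local model for rational localizations of affinoid algebras is in place, since no field extension is involved and everything is excellent and noetherian.
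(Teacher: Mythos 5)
For part (a) your argument is essentially the paper's: one reduces to completed local rings at maximal ideals via Lemma~\ref{L:excellent seminormal}, and the whole point is that the completed local ring of $B$ at a maximal ideal is \emph{literally} the completed local ring of $A$ at the corresponding maximal ideal \cite[Proposition~7.3.3/5]{bgr}. Your extra step describing $\widehat{B_\gothm}$ as a ``localization-completion built from $\widehat{A_\gothn}$ in a way that preserves seminormality'' is both unnecessary and unsupported as stated; once you quote the BGR identification, two applications of Lemma~\ref{L:excellent seminormal} finish the job.

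For part (b) there is a genuine gap, and it sits exactly at the step you flag as your ``main obstacle.'' Your plan is to compare $\widehat{(A\widehat{\otimes}_K L)_\gothm}$ with $\widehat{A_\gothn}$ via a flat local map and to ``control the base change'' using Lemma~\ref{L:seminormal field extension} applied to the residue extension $\kappa(\gothm)/\kappa(\gothn)$, plus an unproven ``stability of seminormality under the relevant completed base extensions.'' But Lemma~\ref{L:seminormal field extension} is a statement about base change of the ground field of a scheme (i.e.\ about $R\otimes_k\ell$ with $\ell\otimes_k\ell$ reduced), not about flat local homomorphisms of complete local rings with separable residue extension; no lemma of that latter type is available in the paper, and establishing one is precisely the content you would need to supply. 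The paper sidesteps this by a different ordering of operations in which every step \emph{is} covered by an existing lemma: fix a maximal ideal $\gothp$ of $A$ (not of $A\widehat{\otimes}_K L$); get $\widehat{A}_{\gothp}$ seminormal by Lemma~\ref{L:excellent seminormal}(b); apply Lemma~\ref{L:seminormal field extension} to the \emph{plain} tensor product $\widehat{A}_{\gothp}\otimes_K L$, which is an honest field base change (first use of characteristic $0$); prove this ring is excellent, via the excellence of $K\{T_1,\dots,T_n\}\otimes_K L$ from \cite[Theorem~102]{matsumura} (second, separate use of characteristic $0$, absent from your sketch); then complete at the ideal generated by $\gothp$ and apply Lemma~\ref{L:excellent seminormal}(b) again, identify this completion with the corresponding completion of $A\widehat{\otimes}_K L$, and descend with Lemma~\ref{L:excellent seminormal}(a). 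Without the intermediate uncompleted ring $\widehat{A}_{\gothp}\otimes_K L$ and without its excellence, your appeal to an extension of \cite[\S 5]{greco-traverso} via Lemma~\ref{L:seminormal finite approximation} does not by itself yield the seminormality of $\widehat{(A\widehat{\otimes}_K L)_\gothm}$. (You also implicitly assume the contraction $\gothn=\gothm\cap A$ is usable as a maximal ideal; the paper avoids this issue entirely by completing at the ideal generated by a maximal ideal of $A$.)
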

\begin{proof}
Let $(A,A^+) \to (B,B^+)$ be a rational localization. 
Since $B$ is an excellent ring \cite[Remark~2.5.3]{part1}, by Lemma~\ref{L:excellent seminormal} we need only check seminormality of the completed local rings of $B$ at all maximal ideals. But these rings also arise as completed local rings of $A$ at maximal ideals
\cite[Proposition~7.3.3/5]{bgr}, so (a) follows by another application of
Lemma~\ref{L:excellent seminormal}.

To check (b), for any maximal ideal $\gothp$ of $A$,
by Lemma~\ref{L:excellent seminormal} again, the completed local ring $\widehat{A}_{\gothp}$  is seminormal.
By Lemma~\ref{L:seminormal field extension}, $\widehat{A}_{\gothp} \otimes_K L$ is seminormal (here we use the characteristic 0 hypothesis).
On the other hand, by \cite[Theorem~102]{matsumura},
$K\{T_1,\dots,T_n\} \otimes_K L$ is excellent, as then are its quotient $A \otimes_K L$ and its completed localization $\widehat{A}_{\gothp} \otimes_K L$. (Here we again
use the characteristic 0 hypothesis, but this could be lifted with more work.)
By Lemma~\ref{L:seminormal field extension} again,
the completion of $\widehat{A}_{\gothp} \otimes_K L$ at the ideal generated by $\gothp$
is seminormal. This coincides with the completion of $A \widehat{\otimes}_K L$
at the ideal generated by $\gothp$, so by Lemma~\ref{L:seminormal field extension} 
yet again, the localization of $(A \widehat{\otimes}_K L)$ at the ideal generated by $\gothp$ is seminormal. This proves the claim.
\end{proof}

\begin{remark}
For affinoid algebras over an analytic field, Proposition~\ref{P:affinoid seminormal} implies that the seminormality condition is local for the analytic topology
(and even the \'etale topology by Lemma~\ref{L:seminormal etale2}), so it 
extends immediately to rigid analytic spaces. The forgetful functor from seminormal rigid analytic spaces to reduced (hence uniform) rigid analytic spaces admits a right adjoint, called \emph{seminormalization}.
\end{remark}

We have the following extension of Remark~\ref{R:perfect seminormal}.
\begin{theorem} \label{T:perfectoid seminormal}
Any Fontaine perfectoid adic Banach ring $(A,A^+)$ is seminormal.
\end{theorem}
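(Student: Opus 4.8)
Since $A$ is uniform, its spectral seminorm is a power-multiplicative norm equivalent to the given one, so $A$ is reduced; hence the map \eqref{eq:seminormal} is injective by the discussion in Definition~\ref{D:seminormal}. It remains to prove surjectivity: given $y,z\in A$ with $y^3=z^2$, we must exhibit $x\in A$ with $x^2=y$ and $x^3=z$. Such an $x$ is automatically unique, since if $x'$ is another solution then $(x-x')y=x^3-x'^3=0$ and hence, as computed in Definition~\ref{D:seminormal}, $(x-x')^3=3y(x-x')=0$, forcing $x=x'$ by reducedness. Replacing $(y,z)$ by $(u^{2k}y,u^{3k}z)$ for a topologically nilpotent unit $u\in A$ and $k\gg 0$, and dividing the eventual solution by $u^k$, we may and do assume $y,z\in A^\circ$ with $\alpha(y),\alpha(z)$ as small as we wish.

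Let $((R,R^+),I)$ be the pair corresponding to $(A,A^+)$ via Theorem~\ref{T:Fontaine perfectoid correspondence}. Then $R$ is perfect of characteristic $p$, hence seminormal by Remark~\ref{R:perfect seminormal}; the map $\overline{x}\mapsto\theta([\overline{x}])$ from $R^\circ$ to $A^\circ$ is multiplicative and isometric for the spectral norms (by construction of the norm on $R$ in the proof of Theorem~\ref{T:Fontaine perfectoid correspondence}); and, as in the proof of Theorem~\ref{T:Fontaine perfectoid homeomorphism}, every element $f$ of $A$ can be written as $\theta([\overline{g}])+f'$ with $\overline{g}\in R$, $\alpha_R(\overline{g})\leq\alpha(f)$, and $\alpha(f')\leq\left|\varpi\right|\alpha(f)$, by Lemma~\ref{L:primitive division}. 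The strategy is to build $x$ as the limit of a Cauchy sequence $x_0,x_1,\dots\in A^\circ$, using Frobenius on $R$ (through the above approximation) to carry out at each stage the sort of extraction of roots that underlies the formula for seminormality of $R$ in Remark~\ref{R:perfect seminormal}.

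The case $p=2$ is transparent. Pick $\overline{y}\in R^\circ$ with $\alpha(y-\theta([\overline{y}]))\leq\left|\varpi\right|\alpha(y)$ and set $x_0=\theta([\overline{y}^{1/2}])$, using that $R$ is perfect of characteristic $2$; then $x_0^2=\theta([\overline{y}])$, so $\delta_0:=y-x_0^2$ has $\alpha(\delta_0)\leq\left|\varpi\right|\alpha(y)$. Inductively, given $x_n$ with $\delta_n:=y-x_n^2$ of small norm, choose $\overline{\delta}_n\in R^\circ$ with $\alpha(\delta_n-\theta([\overline{\delta}_n]))\leq\left|\varpi\right|\alpha(\delta_n)$ and put $x_{n+1}=x_n+\theta([\overline{\delta}_n^{1/2}])$; since squaring is additive in characteristic $2$, $x_{n+1}^2=x_n^2+\theta([\overline{\delta}_n])$, so $\alpha(\delta_{n+1})\leq\left|\varpi\right|\alpha(\delta_n)$ and $\alpha(x_{n+1}-x_n)=\alpha_R(\overline{\delta}_n)^{1/2}\to 0$. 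Thus $x:=\lim_n x_n$ satisfies $x^2=y$, and then $z^2=y^3=(x^3)^2$ forces $z=x^3$ by reducedness. For odd $p$ one runs an analogous but more delicate approximation, starting from the formula $\overline{x}=\overline{y}^{(p-3)/(2p)}\overline{z}^{1/p}$ of Remark~\ref{R:perfect seminormal} applied to approximations $\overline{y},\overline{z}\in R^\circ$ of $y,z$, and then improving the approximation by correcting $p$-th powers rather than squares — legitimate because $(a+b)^p\equiv a^p+b^p\pmod{p}$, so that modulo $u^p$ the correction step only involves extracting a single ($p$-th) root — while keeping track, via the auxiliary identity $x^p=zy^{(p-3)/2}$ satisfied by any solution together with the reducedness of $A$, that the relations $x_n^2\approx y$ and $x_n^3\approx z$ survive in the limit.

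The main obstacle is exactly the convergence of this iteration when $y$ is not a unit, which is the only situation not already settled by taking $x=z/y$ where $y$ is invertible. A naive Newton-type correction fails there because the relevant ``derivative'' is a power of the small element $x_n$ and hence topologically nilpotent rather than invertible; it is precisely to sidestep this that one must invoke the perfectoid structure of $A$ — Frobenius surjectivity, as encoded in Lemma~\ref{L:primitive division} — to replace additive corrections by extraction of approximate $p$-th roots, and the delicate point, especially for odd $p$, is to arrange the correction so that each step shrinks the defect by a fixed factor $\left|\varpi\right|$ (rather than stabilizing at some positive radius) without disturbing the remaining defining relations.
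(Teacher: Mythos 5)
Your reduction steps (reducedness gives injectivity, uniqueness of $x$, rescaling by a topologically nilpotent unit) are fine, but the core of the argument --- the convergent iteration --- does not work as written, and the gap is not cosmetic. In the $p=2$ case you assert $x_{n+1}^2 = x_n^2 + \theta([\overline{\delta}_n])$ ``since squaring is additive in characteristic $2$,'' but $A$ is not of characteristic $2$: the cross term $2x_n\theta([\overline{\delta}_n^{1/2}])$ is only divisible by $2$, not zero, so $\alpha(\delta_{n+1}) \leq \max\{\left|\varpi\right|\alpha(\delta_n),\ \alpha(2)\,\alpha(\delta_n)^{1/2}\}$. The square-root term dominates, the bound on $\alpha(\delta_n)$ stabilizes near $\alpha(2)^2$ rather than tending to $0$, and the sequence $(x_n)$ is not shown to be Cauchy. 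There is also a structural reason the $p=2$ iteration cannot be repaired as stated: it uses only $y$ and never $z$, so if it converged it would produce a square root of an essentially arbitrary $y \in A^\circ$, which is false (e.g.\ in the completed $2$-cyclotomic extension of $\QQ_2$, a perfectoid field, non-square units exist and are at positive distance from the set of squares). Any correct argument must exploit the relation $y^3=z^2$ itself, most naturally through $x=z/y$ where $y$ is invertible. For odd $p$ you explicitly defer the convergence (``the delicate point\dots is to arrange the correction so that each step shrinks the defect by a fixed factor'') --- but that is precisely the missing proof.

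The paper's argument avoids any Frobenius-based successive approximation. One observes that the desired $x$ exists uniquely in $\prod_{\beta \in \calM(A)} \calH(\beta)$, and by uniformity and completeness of $A$ it suffices to approximate $x$ uniformly by elements of $A$. Locally on $\Spa(A,A^+)$ this is trivial: near a point with $\beta(y)=\beta(z)=0$ take $w=0$ on a small rational subspace, and near a point with $\beta(y),\beta(z)\neq 0$ shrink so that $y,z$ become invertible and take $w=z/y=x$. These local approximants are then glued into a global one using the almost acyclicity of $\widehat{\calO}^+$ on $\Spa(A,A^+)_{\proet}$ (Proposition~\ref{P:perfectoid acyclic}); this sheaf-theoretic input is what replaces the convergence analysis you were attempting, and it is where the perfectoid hypothesis actually enters. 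If you want to salvage your approach, you would need to build the relation $y^3=z^2$ (i.e.\ the element $z$) into each correction step and prove a genuine contraction estimate; as written, the key analytic step is absent.
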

\begin{proof}
Let $(A,A^+)$ be a Fontaine perfectoid adic Banach ring with spectral norm $\alpha$.
Choose $(y,z) \in A \times A$ such that $y^3 = z^2$. There then exists a unique
$x \in \prod_{\beta \in \calM(A)} \calH(\beta)$ such that $x^2 = y, x^3 = z$.
It suffices to check that for any $\epsilon>0$, there exists $w \in A$ such that
$\beta(w-x) < \epsilon$ for all $\beta \in \calM(A)$.
In fact, by Proposition~\ref{P:perfectoid acyclic},
$\widehat{\calO}^+$ is an almost acyclic sheaf on $\Spa(A,A^+)_{\proet}$, so it suffices to construct $w$ locally around a single point $\beta \in \calM(A)$ (comparing the local constructions form a 1-cochain with small differential, and almost acyclicity implies that this cochain is close to a 1-coboundary). Given $y,z,\beta$, it will suffice to find a rational localization $(A,A^+) \to (B,B^+)$ encircling $\beta$ and an element $w \in B$ such that $\gamma(w-x) < \epsilon$ for all $\gamma \in \calM(B)$. This is trivial to accomplish: if $\beta(y) = \beta(z) = 0$, we can choose $(B,B^+)$ so that $w=0$ works; if $\beta(y), \beta(z) \neq 0$, we can choose $(B,B^+)$ so that $y$ and $z$ are invertible in $B$, and then $w = z/y = x$ works.
\end{proof}

\begin{cor} \label{C:pro-sheafy seminormal}
Let $(A,A^+)$ be an adic Banach ring in which $p$ is topologically nilpotent.
If $H^0(\Spa(A,A^+)_{\proet}, \widehat{\calO}) = A$, then $(A,A^+)$ is seminormal.
\end{cor}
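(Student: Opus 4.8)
The plan is to reduce to Theorem~\ref{T:perfectoid seminormal} by exhibiting a Fontaine perfectoid ring into which $(A,A^+)$ embeds compatibly with the seminormality test map \eqref{eq:seminormal}, and then to use the hypothesis $H^0(\Spa(A,A^+)_{\proet}, \widehat{\calO}) = A$ to descend seminormality back to $A$. First I would note that since $p$ is topologically nilpotent in $A$, we may cover $X = \Spa(A,A^+)$ in the pro-\'etale topology by perfectoid subdomains (using Lemma~\ref{L:perfectoid cover}): locally on $X_{\et}$ we can take a rational localization $(A,A^+) \to (A',A'^+)$ and then the completed direct limit $\widetilde{A'_\infty}$ of the faithfully finite \'etale tower of Lemma~\ref{L:perfectoid cover}(a), which is Fontaine perfectoid by Lemma~\ref{L:perfectoid cover}(b). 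These $\widehat{\calO}(Y)$ for $Y$ ranging over a pro-\'etale covering by perfectoid subdomains form, together with their self-fibre-products, an augmented \v{C}ech complex computing $H^\bullet(X_{\proet}, \widehat{\calO})$, and the hypothesis says this complex begins $0 \to A \to \widehat{\calO}(Y_0) \to \cdots$ exactly, i.e. $A = \ker(\prod_i \widehat{\calO}(Y_i) \to \prod_{i,j} \widehat{\calO}(Y_{ij}))$.

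Next I would run the seminormality argument through this equalizer presentation. Given $(y,z) \in A \times A$ with $y^3 = z^2$, each restriction to a perfectoid subdomain $Y_i$ lies in the Fontaine perfectoid ring $\widehat{\calO}(Y_i)$, which is seminormal by Theorem~\ref{T:perfectoid seminormal}; hence there is a unique $x_i \in \widehat{\calO}(Y_i)$ with $x_i^2 = y|_{Y_i}$, $x_i^3 = z|_{Y_i}$. (Uniqueness holds because $\widehat{\calO}(Y_i)$ is reduced — it is uniform — and in a reduced ring the solution to $x^2 = y$, $x^3 = z$ is unique, as $(x-x')^2$ and $(x-x')^3$ both vanish forces $x = x'$ by the computation in Definition~\ref{D:seminormal}.) By uniqueness the $x_i$ agree on overlaps $Y_{ij}$, so the family $(x_i)$ lies in the equalizer, which is $A$ by hypothesis; call the resulting element $w \in A$. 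Then $w^2 = y$ and $w^3 = z$ hold in $A$ because they hold after restriction to each $Y_i$ and $A$ injects into $\prod_i \widehat{\calO}(Y_i)$ (again by the equalizer description — in particular $A$ is reduced, so it is a legitimate candidate for seminormality). This shows \eqref{eq:seminormal} is surjective for $A$; it is injective because $A$ is reduced, so $A$ is seminormal.

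The main obstacle I anticipate is purely bookkeeping rather than conceptual: making sure the equalizer presentation of $A$ from $H^0(X_{\proet},\widehat{\calO}) = A$ is set up with an honest pro-\'etale covering by perfectoid subdomains, so that Theorem~\ref{T:perfectoid seminormal} applies termwise. One must check that such a covering exists at all — which it does, since perfectoid subdomains form a neighbourhood basis of $X_{\proet}$ — and that the \v{C}ech complex in degrees $0$ and $1$ really does compute (the relevant part of) $H^0$ and injectivity of $A \hookrightarrow \prod_i \widehat{\calO}(Y_i)$; this is standard since $\widehat{\calO}$ is a sheaf on $X_{\proet}$. There is no need for any acyclicity beyond $H^0$, so the argument is robust: the only inputs are that $\widehat{\calO}$ is a sheaf, that perfectoid subdomains are cofinal, that Fontaine perfectoid rings are reduced and seminormal (Theorem~\ref{T:perfectoid seminormal}), and the given hypothesis on $H^0$. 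One could alternatively phrase the descent via the v-topology or pfqc topology using Theorem~\ref{T:faithful acyclic}, but the pro-\'etale formulation matches the hypothesis directly and is cleanest.
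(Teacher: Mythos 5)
Your argument is correct and is essentially the paper's proof: the paper likewise applies Theorem~\ref{T:perfectoid seminormal} to produce a (unique, hence glueable) solution of $x^2=y$, $x^3=z$ in $H^0(\Spa(A,A^+)_{\proet},\widehat{\calO})$ and then uses the hypothesis to conclude $x\in A$, with reducedness/uniformity of $A$ coming from its injection into sections over perfectoid subdomains. You have merely spelled out the \v{C}ech/equalizer bookkeeping that the paper leaves implicit.
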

\begin{proof}
The ring $H^0(\Spa(A,A^+)_{\proet}, \widehat{\calO})$ is always uniform, so its equality with $A$ forces $A$ is uniform.
Choose $(y,z) \in A \times A$ such that $y^3 = z^2$. By Theorem~\ref{T:perfectoid seminormal}, there exists $x \in H^0(\Spa(A,A^+)_{\proet}, \widehat{\calO})$ such that $x^2 = y, x^3 = z$; by hypothesis, we then have $x \in A$ with the same effect.
\end{proof}

\begin{remark}
The converse of Corollary~\ref{C:pro-sheafy seminormal} holds for classical affinoid algebras; see 
Theorem~\ref{T:sections of completed structure sheaf}.
\end{remark}

\begin{remark} \label{R:sections of completed structure sheaf}
Let $X = \Spa(A,A^+)$ be a Fontaine perfectoid adic affinoid space. By Theorem~\ref{T:perfectoid seminormal}, $Y$ is seminormal. By Corollary~\ref{C:faithful acyclic vector bundle}, for $\nu_{\faith}: Y_{\faith} \to Y$ the canonical morphism,
the map $\calO_Y \to \nu_{\faith *} \calO_{Y_{\faith}}$ is an isomorphism.
\end{remark}

We record a variant of \cite[Proposition~2.4.24]{part1}.
\begin{lemma} \label{L:upward localization2}
Let $(A,A^+)$ be an adic Banach ring. For any $v_0 \in \Spa(A,A^+)$ and any neighborhood $U$ of $v_0$ in $\Spa(A,A^+)$, there exists a sequence of inclusions
$\Spa(A,A^+) = V_0 \supseteq \cdots \supseteq V_m$ such that $v_0 \in V_m \subseteq U$ and for $i=1,\dots,m$, the inclusion $V_{i-1} \supseteq V_i$ is represented by a rational localization $B_{i-1} \to B_i$ such that
\[
V_i = \{v \in V_{i-1}: v(f_i) \geq 1\}
\]
for some $f_i \in B_{i-1}$.
\end{lemma}
\begin{proof}
Without loss of generality, we may take $U$ to be the rational subspace defined by some parameters $f_1,\dots,f_n,g$. Choose a topologically nilpotent unit $x \in A$; then for $i=1,\dots,n$, any sufficiently large integer $m_i$ and any sufficiently large (depending on $m_i$) integer $k_i$ satisfy the following conditions.
\begin{itemize}
\item
We have $0 < v_0(x^{m_i}) \leq v_0(g)$.
\item
We have $v_0(x^{-k_i}(f_i + x^{m_i})) \geq 1$. (This is easily seen by treating the cases $v_0(f_i) = 0$, $v_0(f_i) \neq 0$ separately.)
\end{itemize}
For any such integers, the sequence
\begin{align*}
V_1 &= \{v \in V_0: v(x^{-k_1}(f_1 + x^{m_1})) \geq 1\} \\
V_2 &= \{v \in V_1: v(g / (f_1 + x^{m_1})) \geq 1 \} \\
V_3 &= \{v \in V_2: v(g / x^{m_i}) \geq 1\} \\
&\vdots \\
V_{3n-2} &= \{v \in V_{3n-3}: v(x^{-k_n}(f_n + x^{m_n})) \geq 1\} \\
V_{3n-1} &= \{v \in V_{3n-2}: v(g / (f_n + x^{m_n})) \geq 1 \} \\
V_{3n} &= \{v \in V_{3n-1}: v(g/x^{m_n}) \geq 1 \}
\end{align*}
has the desired effect.
\end{proof}

\begin{remark} \label{R:reified still Tate}
In the reified setting, one is typically less dependent on the presence of topologically nilpotent units; however, we do not know how to avoid the use of a topologically nilpotent unit
in the reified analogue of Lemma~\ref{L:upward localization2} (whose formulation we leave to the reader). At best, the given approach may handle the case where the topologically nilpotent elements generate the unit ideal, i.e., where the ring is \emph{free of trivial spectrum} \cite[Remark~2.3.9]{part1}.
\end{remark}

\begin{prop} \label{P:diamantine}
Let $(A,A^+)$ be a sheafy adic Banach ring in which $p$ is topologically nilpotent.
Suppose that $H^0(\Spa(A,A^+)_{\proet}, \widehat{\calO}) = A$
and $H^i(\Spa(A,A^+)_{\proet}, \widehat{\calO})$ is a Banach module over $A$ for all $i>0$.
Then for every rational localization $(A,A^+) \to (B,B^+)$,
we have $H^0(\Spa(B,B^+)_{\proet}, \widehat{\calO}) = B$;
in particular, by Corollary~\ref{C:pro-sheafy seminormal}, $(A,A^+)$ is stably seminormal.
\end{prop}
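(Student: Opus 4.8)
The plan is to prove that for every rational localization $(A,A^+) \to (B,B^+)$ with $U = \Spa(B,B^+)$, the natural map $B \to H^0(U_{\proet}, \widehat{\calO})$ is an isomorphism; granting this, the final clause is immediate, since $p$ is topologically nilpotent in $B$ and Corollary~\ref{C:pro-sheafy seminormal} then forces $(B,B^+)$ to be seminormal, so $(A,A^+)$ is stably seminormal by definition. Throughout, for a rational subspace $V$ of $X = \Spa(A,A^+)$ I write $\mathcal{H}^i(V) = H^i(V_{\proet}, \widehat{\calO})$.

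The main tool is the \v{C}ech-to-derived-functor spectral sequence for a simple Laurent covering. Fix $f \in A$, put $U_1 = \{v \in X : v(f) \le 1\}$, $U_2 = \{v \in X : v(f) \ge 1\}$, $U_{12} = U_1 \cap U_2$, with associated rational localizations $B_1, B_2, B_{12}$, and regard $\gothU = \{U_1, U_2\}$ as a covering of $X$ in the pro-\'etale site. Since $\gothU$ has only two members, the \v{C}ech complex of any presheaf is concentrated in degrees $0$ and $1$, so the spectral sequence $\check{H}^p(\gothU, \underline{\mathcal{H}}^q) \Rightarrow \mathcal{H}^{p+q}(X)$ degenerates at $E_2$ and yields
\begin{gather*}
A = \mathcal{H}^0(X) = \ker\big(\mathcal{H}^0(U_1) \oplus \mathcal{H}^0(U_2) \to \mathcal{H}^0(U_{12})\big),\\
0 \to \coker\big(\mathcal{H}^0(U_1)\oplus\mathcal{H}^0(U_2)\to\mathcal{H}^0(U_{12})\big) \to \mathcal{H}^1(X) \to \mathcal{H}^1(U_1)\oplus\mathcal{H}^1(U_2).
\end{gather*}
I would compare this with the exact sequence $0 \to A \to B_1 \oplus B_2 \to B_{12} \to 0$, which holds because $(A,A^+)$ is sheafy and hence its structure sheaf is acyclic on rational subspaces (\cite[Theorem~2.4.23]{part1}), together with the compatible, $\calO(U_?)$-linear natural maps $B_? \to \mathcal{H}^0(U_?)$.

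The heart of the argument is to deduce from this comparison that $B_i \to \mathcal{H}^0(U_i)$ is an isomorphism for $i=1,2$ and that $\mathcal{H}^1(U_i)$ is again a Banach module over $B_i$. Here the hypothesis that $\mathcal{H}^1(X)$ is a Banach $A$-module enters decisively: it renders the $\coker$-term in the second displayed sequence Hausdorff, and, combined with the strictness of $B_1 \oplus B_2 \to B_{12}$, an approximation argument then forces that cokernel to vanish and pins down $\mathcal{H}^0(U_i) = B_i$; feeding this back into the spectral sequence exhibits $\mathcal{H}^1(U_i)$ as a closed submodule of a finite product of Banach modules, hence Banach. In other words, all four hypotheses on $(A,A^+)$ propagate to the rational localization $\calO(\{v(f)\ge 1\})$ and to its Laurent complement $\calO(\{v(f)\le 1\})$. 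To pass to a general rational localization $U = \Spa(B,B^+)$, I would invoke Lemma~\ref{L:upward localization}: every $v_0 \in U$ lies in a rational subspace $V$ with $v_0 \in V \subseteq U$ obtained from $X$ by a finite chain $X = V_0 \supseteq \cdots \supseteq V_m = V$ in which each $V_i$ is a member of a simple Laurent covering of $V_{i-1}$. Induction on $m$ using the single-step statement gives $\mathcal{H}^0(V) = \calO(V)$ for all such $V$; since these $V$ form a basis of rational subspaces and both $\calO$ and $\mathcal{H}^0$ are sheaves for rational coverings (the latter because $\widehat{\calO}$ is a sheaf on $X_{\proet}$), the identity $\mathcal{H}^0 = \calO$ holds on every rational subspace, in particular on $U$.

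I expect the main obstacle to be precisely the step identifying $\mathcal{H}^0(U_i)$ with $B_i$: it is not enough to observe that the two relevant kernels both equal $A$, since one must genuinely control the \emph{a priori} larger ring $\mathcal{H}^0(U_i) = \widehat{\calO}(U_i)$. The subtlety is that rational localization does not preserve uniformity — this is exactly why the notions of \emph{stably} uniform and \emph{stably} seminormal are needed — so there is no formal reduction to a uniform completion; the Banach-module hypothesis on $\mathcal{H}^1(X)$ is the substitute that supplies the strictness and closedness needed to carry the comparison through.
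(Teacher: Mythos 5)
Your reduction to a basis of rational subspaces via Lemma~\ref{L:upward localization} and the sheaf property of $\nu_{\proet *}\widehat{\calO}$ agrees with the paper, but the core of your argument has two genuine gaps, both at the step you yourself identify as the heart. First, the identification $B_i \cong H^0(U_{i,\proet},\widehat{\calO})$ cannot be extracted from comparing $0 \to A \to B_1\oplus B_2 \to B_{12} \to 0$ with the \v{C}ech/Mayer--Vietoris sequence: the equality $A = \ker(H^0(U_1)\oplus H^0(U_2) \to H^0(U_{12}))$ only constrains pairs of sections that glue, and says nothing about a section of $\widehat{\calO}$ over $U_1$ alone which glues with nothing over $U_2$. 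Even injectivity of $B_i \to H^0(U_i)$ already amounts to $B_i$ having no nonzero elements of spectral seminorm zero (such elements die in every perfectoid subdomain), and nothing in the formal comparison supplies this. Your ``approximation argument'' would at best show the image of $H^0(U_1)\oplus H^0(U_2)$ is closed in $H^0(U_{12})$; to force the cokernel to vanish you would need density of the image of $B_{12}$ in $H^0(U_{12})$, which is essentially the statement being proved. Second, your propagation of the hypothesis that $H^1$ is Banach is backwards: the spectral sequence gives a surjection of $H^1(X)$ onto $\ker(H^1(U_1)\oplus H^1(U_2)\to H^1(U_{12}))$, so $H^1(U_i)$ is \emph{not} exhibited as a closed submodule of anything Banach, and non-Hausdorff pieces of $H^1(U_i)$ invisible from $X$ cannot be excluded this way. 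Without that propagation your induction along the chain of Lemma~\ref{L:upward localization} cannot be run at all.

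The paper's proof circumvents both problems by never invoking the hypotheses on the localizations. One fixes a perfectoid cover once and for all: Lemma~\ref{L:perfectoid cover} produces a Fontaine perfectoid $A'$ over $A$ (a completed tower of faithfully finite \'etale algebras), and $A''$ is the uniform completion of $A'\otimes_A A'$, perfectoid by Corollary~\ref{C:nonuniform perfectoid tensor product}. The Banach hypothesis on $H^1(\Spa(A,A^+)_{\proet},\widehat{\calO})$ is used exactly once, through the open mapping theorem (Theorem~\ref{T:open mapping}), to show that $0 \to A \to A' \to A''$ is \emph{strict} exact. This strict exactness is then pushed through the explicit presentation of the localization: it persists for $0 \to A\{T\} \to A'\{T\} \to A''\{T\}$ and for the subcomplex of multiples of $1-fT$ (using Remark~\ref{R:series multiplication} and the closedness/strictness just obtained), and passing to quotients computes $H^0(\Spa(B,B^+)_{\proet},\widehat{\calO}) = B$ directly for $B = A\{T\}/(1-fT)$, i.e.\ for the pieces $\{v(f)\ge 1\}$ that Lemma~\ref{L:upward localization} actually requires. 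Note also that the mechanism hinges on the invertibility of $1-fT$ in formal power series and does not extend naively to the complementary piece $\{v(f)\le 1\}$ (the paper flags exactly this in the remark following Corollary~\ref{C:weak flatness2}), so a symmetric simple-Laurent induction of the kind you propose is problematic for an additional reason. If you want to salvage your outline, you should replace the Mayer--Vietoris comparison by this strict-exactness transport along a fixed perfectoid cover.
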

\begin{proof}
It suffices to exhibit a basis for the topology of $\Spa(A,A^+)$
consisting of rational subspaces $\Spa(B,B^+)$, for each of which
we have $H^0(\Spa(B,B^+)_{\proet}, \widehat{\calO}) = B$
and $H^i(\Spa(B,B^+)_{\proet}, \widehat{\calO})$ is a Banach module over $B$ for all $i>0$.
By Lemma~\ref{L:upward localization2}, this can be achieved by showing that this property
is preserved under passage from $(A,A^+)$ to a rational localization $(B,B^+)$
in which $B= A\{T\}/(1-fT)$ for some $f \in A$.

Apply Lemma~\ref{L:perfectoid cover} to construct a Banach algebra $A'$ over $A$ which is Fontaine perfectoid and is the completed direct limit of a tower of faithfully finite \'etale $A$-algebras. Let $A''$ be the uniform completion of $A' \otimes_A A'$,
which by Corollary~\ref{C:nonuniform perfectoid tensor product} is again perfectoid.
Since $H^i(\Spa(A,A^+)_{\proet}, \widehat{\calO})$ is a Banach module over $A$ for all $i>0$,
the open mapping theorem (Theorem~\ref{T:open mapping}) implies that the complex
\[
0 \to A \to A' \to A'' \to \cdots
\]
is strict. This is then manifestly true again for the complex
\[
0 \to A\{T\} \to A'\{T\} \to A''\{T\} \to \cdots
\]
and also for the complex
\[
0 \to (1-fT) A\{T\} \to (1-fT) A'\{T\} \to (1-fT) A''\{T\} \to \cdots,
\]
proving the claim.
\end{proof}

See \S\ref{subsec:ax-sen-tate} and \cite{hansen-kedlaya} for related discussions.

\section{Perfect period sheaves}
\label{sec:perfect period sheaves}

We next return briefly to the theory of perfect period sheaves, and $\varphi$-modules over such sheaves, in order to incorporate the finite generation properties discussed in \S\ref{sec:foundations adic}. In the process, we also incorporate the expanded generality
provided by \S\ref{sec:perfectoid supplemental}.

\setcounter{theorem}{0}
\begin{convention} \label{C:drop dagger}
While we otherwise retain notation from \cite{part1}, we replace the notations $\tilde{\bA}^{\dagger,r}_*, \tilde{\bB}^{\dagger,r}_*$ with the lighter notations $\tilde{\bA}^r_*, \tilde{\bB}^r_*$. This creates no ambiguity because the new notations were not assigned in \cite{part1}.
\end{convention}

\begin{hypothesis} \label{H:pseudoflat Robba}
Throughout \S\ref{sec:perfect period sheaves}, 
retain Hypothesis~\ref{H:towers}; this includes the case where $E$ is of characteristic $p$ unless otherwise specified (see especially \S\ref{subsec:slopes}).
Let $(R,R^+)$ be a perfect uniform adic Banach algebra over $\FF_{p^h}$.
Let $\alpha$ denote the spectral norm on $R$.
Let $I$ be a (not necessarily closed) subinterval of $(0, +\infty)$.
Let $X$ be a preadic space over $\gotho_E$; we will sometimes take $X = \Spa(R,R^+)$, but when this occurs we will always say so explicitly.
\end{hypothesis}

\begin{remark}
We will use reified adic spectra to obtain some useful technical simplifications in the discussion of Robba rings. For relevant discussion, see Remarks~\ref{R:Robba reified}, 
\ref{R:reified Robba stably uniform},
\ref{R:phi-gamma point Robba}.
\end{remark}

\begin{remark}
All of the results that we state for a general preadic space can be generalized to certain stacks over the category of perfectoid spaces, such as the \emph{diamonds} of \cite{scholze-berkeley2}. We defer discussion of such matters to a subsequent paper.
(We use the word ``generalized'' cautiously here, as the functor from preadic spaces to diamonds is not fully faithful, even for rigid analytic spaces; see Theorem~\ref{T:sections of completed structure sheaf}.)
\end{remark}

\subsection{Robba rings and quasi-Stein spaces}

We begin by describing how to use the framework of quasi-Stein spaces to handle extended Robba rings, thus streamlining some discussion from \cite{part1}. We begin by recalling the construction of the desired rings from \cite[Definition~5.1.1]{part1}.

\begin{defn}
Put 
\[
\calE^{\inte}_R = W_\varpi(R), \qquad
\calE_R = W_\varpi(R)[\varpi^{-1}].
\]
For $r>0$, let $\tilde{\calR}^{\inte,r}_R$ be the completion of $W_{\varpi}(R^+)[[R]]$ with respect to the norm $\lambda(\alpha^r)$ defined in \eqref{eq:Gauss norm},
and put $\tilde{\calR}^{\bd,r}_R = \tilde{\calR}^{\inte,r}_R[\varpi^{-1}]$.
Write $\tilde{\calR}^I_R$ for the Fr\'echet completion of $W_{\varpi}(R^+)[[R]][\varpi^{-1}]$ with respect to the norms
$\lambda(\alpha^t)$ for all $t \in I$. For $I = [s,r]$ and $\varpi = p$, this agrees with
\cite[Definition~5.1.1]{part1}; for $I = (0, r]$ and $\varpi = p$, we get the ring $\tilde{\calR}^r_R$ of
\cite[Definition~5.1.1]{part1}; for $I = (0, +\infty)$ and $\varpi = p$, we get the ring $\tilde{\calR}^\infty_R$ of \cite[Definition~5.1.1]{part1}.

Let $\tilde{\calR}^{I,+}_R$ be the completion in $\tilde{\calR}^I_R$ of the subring generated by 
\[
\{x \in \tilde{\calR}^I_R: \lambda(\alpha^t)(x) < 1 \quad (t \in I)\}
\cup \{[\overline{y}]: \overline{y} \in R^+\}.
\]
For $I = [s,r]$ and $\varpi = p$, this agrees with \cite[Definition~5.3.10]{part1}.
\end{defn}

\begin{remark} \label{R:Robba norms}
For some purposes, it is more convenient to define the topology associated to the norm
$\lambda(\alpha^r)$ using the alternate norm $\lambda(\alpha^r)^{1/r}$.
For example, this guarantees that uniform units of $R$ lift to uniform units of $\tilde{\calR}^I_R$; otherwise, this would only hold when $I$ is a singleton closed interval. For an application of this, see
Remark~\ref{R:Robba reified}.
\end{remark}

\begin{defn} \label{D:Robba theta}
Let $(A,A^+)$ be a Fontaine perfectoid adic Banach ring
which corresponds to $(R,R^+)$ via Theorem~\ref{T:Fontaine perfectoid correspondence}.
Define the map $\theta: W_{\varpi}(R^+)\to A^+$
by identifying $W_{\varpi}(R^+)$ with $\varprojlim_{\varphi_\varpi} W_\varpi(A^+)$ as per \eqref{eq:theta identification},
then projecting onto $W_\varpi(A^+)$, then projecting onto the first component (or equivalently the first ghost component).
Since $(A,A^+)$ is perfectoid, 
by Theorem~\ref{T:Fontaine perfectoid correspondence}
the kernel of $\theta$ is generated by an element $z$ which is Fontaine primitive. We may extend $\theta$ to a map $\theta: \tilde{\calR}^I_R \to A$ for any interval $I$ containing $1$.
The kernel of the extended map is again generated by $z$, so $A$
is strictly pseudocoherent as a module over $\tilde{\calR}^{I}_R$;
since $(A,A^+)$ is sheafy by Corollary~\ref{C:Fontaine perfectoid stably uniform},
Remark~\ref{R:pseudocoherent from sheafy} implies that $A$ is also stably pseudocoherent as a module over $\tilde{\calR}^{I}_R$.
Similarly, Theorem~\ref{T:Fontaine perfectoid compatibility} implies that $\Spa(A,A^+)_{\et}$ admits a basis consisting of affinoid perfectoid spaces, so  by the same reasoning, $A$ is also \'etale-stably pseudocoherent
as a module over $\tilde{\calR}^{I}_R$.
By Remark~\ref{R:need flatness}, a module over $A$ is stably (resp.\ \'etale-stably) pseudocoherent if and only if it is stably (resp.\ \'etale-stably) pseudocoherent as a module over $\tilde{\calR}^{I}_R$.
\end{defn}

\begin{remark} \label{R:Robba quasi-Stein1}
By writing $I$ as a union of an increasing sequence $\{I_j\}$ of closed subintervals and writing $\tilde{\calR}^I_R$ as the inverse limit of the $\tilde{\calR}^{I_j}_R$, we see that $\tilde{\calR}^I_R$ is an open mapping ring.
\end{remark}

\begin{remark}
Using Remark~\ref{R:homogeneity},
the ``reality check'' calculations made in \cite[\S 5.2]{part1}
all carry over to general $E$ without incident. See also \S\ref{subsec:reality checks}, where some of these calculations are repeated in even greater generality.
\end{remark}

\begin{remark} \label{R:Robba reified}
Choose an extension of $R$ to a reified adic Banach ring $(R,R^{\Gr})$.
For $u>0$, let $\tilde{\calR}^{I,+,u}_R$ be the completion in $\tilde{\calR}^I_R$ of the subgroup generated by 
\[
\{x \in \tilde{\calR}^I_R: \lambda(\alpha^t)(x)^{1/t} < u \,\, (t \in I)\}
\cup \{[\overline{y}]: \overline{y} \in R^{+,u}\}.
\]
In case $I$ is a closed interval, $(\tilde{\calR}^I_R, \tilde{\calR}^{I,\Gr}_R)$ is a reified adic Banach ring for the norm $\sup\{\lambda(\alpha^t)^{1/t}: t \in I\}$.

For any inclusion $I \subseteq J \subseteq (0, +\infty)$ of closed intervals, the map $(\tilde{\calR}^J_R, \tilde{\calR}^{J,\Gr}_R) \to (\tilde{\calR}^I_R, 
\tilde{\calR}^{I,\Gr}_R)$ of reified adic Banach rings is a rational localization:
if $I= [s,r]$, then the corresponding rational subspace is
\[
\{v \in \Spra(\tilde{\calR}^J_R, \tilde{\calR}^{J,\Gr}_R):
p^{-1/s} \leq v(\varpi) \leq p^{-1/r}
\}.
\]
Since these spaces are all stably uniform (see Corollary~\ref{C:Robba stably uniform}
and Remark~\ref{R:reified Robba stably uniform} below), for any $I$ we may view $\tilde{\calR}^I_R$ as the ring of global sections of a quasi-Stein reified space.
\end{remark}

\begin{remark} \label{R:Robba quasi-Stein}
By contrast, for an inclusion $I \subseteq J \subseteq (0, +\infty)$ of closed intervals, the map $(\tilde{\calR}^J_R, \tilde{\calR}^{J,+}_R) \to (\tilde{\calR}^I_R, 
\tilde{\calR}^{I,+}_R)$ of adic Banach rings is not necessarily a rational localization. The best one can say, following \cite[Lemma~4.9]{kedlaya-noetherian}, is that if 
there exists a uniform unit $\overline{z} \in R^\times$ with $\overline{\alpha}(\overline{z}) = c \in (0,1)$, then for any $\rho>0$ we have strict isomorphisms
\begin{align*}
\tilde{\calR}^I_R\{T/\rho\}/(T-[\overline{z}]) \cong \tilde{\calR}^{I'}_R,
&\qquad I' = I \cap [\log_c \rho, +\infty), \\
\tilde{\calR}^I_R\{T/\rho\}/(T-[\overline{z}^{-1}]) \cong \tilde{\calR}^{I''}_R,
&\qquad I'' = I \cap (0, -\log_c \rho],
\end{align*}
(provided that we interpret $\tilde{\calR}^I_R$ as the zero ring when $I$ is an empty interval). 
Consequently, if an alternative to using reified spaces is desired, one can study more general inclusions of intervals by adjoining suitable uniform units to $R$ and then carrying out a descent construction using Lemma~\ref{L:perfect polynomial splitting} below.
\end{remark}

\begin{remark} \label{R:Robba reified2}
Retain notation as in Remark~\ref{R:Robba reified}. For $r,u>0$, let $\tilde{\calR}^{\inte,r,+,u}_R$ be the completion in $\tilde{\calR}^{\inte,r}_R$ of the subgroup generated by 
\[
\{x \in \tilde{\calR}^{\inte,r}_R: \lambda(\alpha^r)(x)^{1/r} < u\}
\cup \{[\overline{y}]: \overline{y} \in R^{+,u}\}
\]
For any $s \in (0,r]$, the map $(\tilde{\calR}^{\inte,r}_R, \tilde{\calR}^{\inte,r,\Gr}_R) \to (\tilde{\calR}^{\inte,s}_R, 
\tilde{\calR}^{\inte,r,\Gr}_R)$ of reified adic Banach rings is the rational localization corresponding to the rational subspace
\[
\{v \in \Spra(\tilde{\calR}^{\inte,r}_R, \tilde{\calR}^{\inte,r,\Gr}_R):
v(\varpi) \leq p^{-1/s}
\};
\]
also, 
the map $(\tilde{\calR}^{\inte,r}_R, \tilde{\calR}^{\inte,r,\Gr}_R) \to (\tilde{\calR}^{[s,r]}_R, 
\tilde{\calR}^{[s,r],\Gr}_R)$ of reified adic Banach rings is the rational localization corresponding to the rational subspace
\[
\{v \in \Spra(\tilde{\calR}^{\inte,r}_R, \tilde{\calR}^{\inte,r,\Gr}_R):
v(\varpi) \geq p^{-1/s}
\}.
\]
This last construction can be used to derive information about the rings $\tilde{\calR}^I_R$ from the rings $\tilde{\calR}^{\inte,r}_R$, as in Theorem~\ref{T:curve noetherian}.
\end{remark}

\begin{defn}
For $\rho>0$, write
\begin{gather*}
R\{(T/\rho)^{p^{-\infty}}\},
R\{(T/\rho)^{\pm p^{-\infty}}\},
R\{(\rho_1/T, T/\rho_2)^{p^{-\infty}}\}
\end{gather*}
for the respective completions of 
\[
R\{T/\rho\}[T^{p^{-\infty}}],
R\{\rho/T, T/\rho\}[T^{p^{-\infty}}],
R\{\rho_1/T, T/\rho_2\}[T^{p^{-\infty}}].
\]
\end{defn}

\begin{lemma} \label{L:perfect polynomial splitting}
For $S = R\{(T/\rho)^{p^{-\infty}}\}$
(resp.\ $S = R\{(T/\rho)^{\pm p^{-\infty}}\}$), $\tilde{\calR}^I_S$ may be canonically identified with the Fr\'echet completion of the ring
$\tilde{\calR}^I_R[T^{p^{-\infty}}]$ (resp.\ $\tilde{\calR}^I_R[T^{\pm p^{-\infty}}]$)
with respect to the seminorms
\[
\sum_{n \in \ZZ[p^{-1}]} x_n T^n \mapsto
\max\{\rho^{tn} \lambda(\alpha^t)(x_n)\} \qquad (t \in I).
\]
In particular, the morphism $\tilde{\calR}^I_R \to \tilde{\calR}^I_S$ splits in the category of topological modules over $\tilde{\calR}^I_R$.
\end{lemma}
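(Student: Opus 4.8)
The plan is to build an explicit comparison map and show it is an isometric isomorphism onto a dense subring. First I would record the basic facts about $S$: as in Example~\ref{exa:polynomials} (or directly from the definition of the weighted Gauss completion), $S = R\{(T/\rho)^{p^{-\infty}}\}$ (resp.\ $R\{(T/\rho)^{\pm p^{-\infty}}\}$) is again perfect and uniform, its spectral norm is the weighted Gauss norm $\alpha_S(\sum_n \overline{y}_n T^n) = \max_n\{\rho^n \alpha(\overline{y}_n)\}$ with $n$ ranging over $\ZZ[p^{-1}]_{\geq 0}$ (resp.\ $\ZZ[p^{-1}]$), the inclusion $R \hookrightarrow S$ is isometric, and a topologically nilpotent unit of $R$ remains one in $S$. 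Functoriality of $\tilde{\calR}^I_{(-)}$ applied to $R \to S$, together with the multiplicative Teichm\"uller map $T^{p^{-k}} \mapsto [T^{p^{-k}}] \in \tilde{\calR}^I_S$ (so that $[T^{p^{-k}}]^{p^k} = [T]$), then furnishes a ring homomorphism $\iota\colon \tilde{\calR}^I_R[T^{p^{-\infty}}] \to \tilde{\calR}^I_S$ (resp.\ with $T^{\pm p^{-\infty}}$). To reduce technicalities I would write $I$ as an increasing union of closed subintervals $I_j$, use that $\tilde{\calR}^I_R = \varprojlim_j \tilde{\calR}^{I_j}_R$ and likewise for $S$, and thereby reduce to the case that $I$ is a closed interval, where all rings in sight are Banach with the norms $\lambda(\alpha^t)$ and $\lambda(\alpha_S^t)$, $t \in I$; the Laurent case is handled by running the same argument with negative exponents permitted.

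The heart of the matter is the isometry identity
\[
\lambda(\alpha_S^t)\Big(\iota\big(\textstyle\sum_n x_n T^n\big)\Big) = \max_n\{\rho^{tn}\,\lambda(\alpha^t)(x_n)\} \qquad (x_n \in \tilde{\calR}^I_R,\ t \in I).
\]
The inequality $\le$ is immediate from submultiplicativity of $\lambda(\alpha_S^t)$, the compatibility $\lambda(\alpha_S^t)|_{\tilde{\calR}^I_R} = \lambda(\alpha^t)$ (which follows from $\alpha_S|_R = \alpha$), and $\lambda(\alpha_S^t)([T^n]) = \alpha_S(T^n)^t = \rho^{tn}$. For the reverse inequality I would exploit the $\ZZ[p^{-1}]$-grading of $S$ as the completion of $\bigoplus_n R T^n$: every element of the source of $\iota$ unfolds into a $\varpi$-adic Teichm\"uller expansion whose Teichm\"uller components lie in $S$ and hence split into $T$-homogeneous pieces, and the formula \eqref{eq:Gauss norm} shows that $\lambda(\alpha_S^t)$ is a supremum over $\varpi$-degree and $T$-degree simultaneously. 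The subtlety is that Witt-vector addition introduces carries mixing different $T$-degrees; invoking the homogeneity of Witt-vector arithmetic (Remark~\ref{R:homogeneity}), one checks that these carries can never \emph{lower} the contribution of a given $T$-degree, so that $\lambda(\alpha_S^t)(\sum_m x_m[T^m]) \ge \rho^{tn}\lambda(\alpha^t)(x_n)$ for every $n$. I expect this bookkeeping — ensuring the Gauss norm genuinely separates the $T$-homogeneous pieces — to be the main obstacle; it is essentially the computation of \cite[Lemma~4.9]{kedlaya-noetherian}, and it degenerates to a triviality when $E$ has characteristic $p$ (where $W_\varpi(S^+) = S^+\llbracket\varpi\rrbracket$ and the Teichm\"uller map is additive), so the real content lies in the mixed-characteristic case.

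Granting the isometry, I would then show $\iota$ has dense image. The dense subring $W_\varpi(S^+)[[S]][\varpi^{-1}]$ of $\tilde{\calR}^I_S$ is topologically generated over $\tilde{\calR}^I_R$ by the Teichm\"uller elements $[\overline{s}]$ with $\overline{s} \in S$ (using that $\varpi^{-1}$ and the inverse of a topologically nilpotent unit of $R$ already lie in $\tilde{\calR}^I_R$, and that $W_\varpi(S^+)$ is the $\varpi$-adic closure of the algebra generated by $\varpi$ and such Teichm\"uller elements); and for $\overline{s} = \sum_n \overline{y}_n T^n$, truncating the $T$-expansion and expanding $[\sum_n \overline{y}_n T^n]$ via the Witt addition law gives $\sum_n [\overline{y}_n][T^n] = \sum_n[\overline{y}_n T^n]$ plus $\varpi$-divisible corrections whose Teichm\"uller components are polynomials in $p$-power roots $\overline{y}_n^{p^{-m}}T^{np^{-m}}$, which again lie in $\bigcup_{n'\in\ZZ[p^{-1}]} R\,T^{n'}$ precisely because $R$ is perfect. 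An induction on $\varpi$-adic valuation then shows $[\overline{s}]$ is approximated by finite $\tilde{\calR}^I_R$-linear combinations of the $[T^n]$, so the image of $\iota$ is dense. Combining density with the isometry identifies $\tilde{\calR}^I_S$, via $\iota$, with the Fr\'echet completion of $\tilde{\calR}^I_R[T^{p^{-\infty}}]$ for the stated seminorms; undoing the reduction to closed intervals by passing to inverse limits yields the statement for arbitrary $I$.

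Finally, for the splitting: since each of the stated seminorms is a maximum over $n$ of a quantity depending only on the single coefficient $x_n$, the coefficient-extraction map $\sum_m x_m T^m \mapsto x_n$ is bounded on $\tilde{\calR}^I_R[T^{p^{-\infty}}]$ and hence extends continuously to $\tilde{\calR}^I_S$. The extension $\pi_0$ of $\sum_m x_m T^m \mapsto x_0$ is a continuous $\tilde{\calR}^I_R$-linear map with $\pi_0 \circ (\text{inclusion}) = \mathrm{id}$, so the structure map $\tilde{\calR}^I_R \to \tilde{\calR}^I_S$ splits in the category of topological $\tilde{\calR}^I_R$-modules.
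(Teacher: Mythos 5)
Your argument is correct and is essentially the paper's own: the proof in the paper is a one-line reference to the perfectoid polynomial adjunction (\cite[Example~3.6.6]{part1}, i.e.\ Example~\ref{exa:polynomials} here), whose content is exactly the Teichm\"uller-lift comparison map, the weighted Gauss-norm isometry on the dense subring $\tilde{\calR}^I_R[T^{p^{-\infty}}]$, and the density of its image that you spell out, with the splitting then furnished by the degree-zero coefficient projection, bounded because each seminorm is a maximum over coefficients. The only delicate point is the one you flag yourself: the lower bound in the isometry requires ruling out norm loss from Witt-vector carries when competing terms have equal norm, which is handled by the usual extremal-term (or perturbation) choice in the homogeneity computations (Remark~\ref{R:homogeneity} and the weighted Gauss-norm calculations of \cite{kedlaya-noetherian}) that you cite.
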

\begin{proof}
As in \cite[Example~3.6.6]{part1}.
\end{proof}

We next introduce a construction following \cite[Theorem~5.3.9]{part1}.
\begin{defn} \label{D:Robba fractional}
Let $\gotho_{E_\infty}$ (resp.\ $E_\infty$) denote the completion of $\gotho_E[\varpi^{p^{-\infty}}]$ (resp.\ $E[\varpi^{p^{-\infty}}]$). Note that the set
$\{\varpi^n: n \in \ZZ[p^{-1}] \cap [0,1)\}$ forms a topological basis for this module over $\gotho_E$ (resp.\ $E$); in particular, the inclusion
$\gotho_E \to \gotho_{E_\infty}$ (resp.\ $E \to E_\infty$) splits in the category of topological modules over $\gotho_E$ (resp.\ of Banach modules over $E$).

Each element $x$ of $\calE^{\inte}_R \widehat{\otimes}_{\gotho_E} \gotho_{E_\infty}$
can be written uniquely as a sum $\sum_{n \in \ZZ[p^{-1}]_{\geq 0}} \varpi^n [\overline{x}_n]$ with $\overline{x}_n \in R$. For $r>0$, we may characterize $\tilde{\calR}^{\inte,r}_R \widehat{\otimes}_{\gotho_E} \gotho_{E_\infty}$ as the subset consisting of those $x$ for which 
$p^{-n} \alpha(\overline{x}_n)$ converges to 0 in the following sense: for each $\epsilon > 0$,
there are only finitely many indices $n$ for which $p^{-n} \alpha(\overline{x_n}) > \epsilon$. The formula \eqref{eq:Gauss norm} again defines a power-multiplicative norm $\lambda(\alpha^r)$ on $\tilde{\calR}^{\inte,r}_R \widehat{\otimes}_{\gotho_E} \gotho_{E_\infty}$ with respect to which this ring is complete.
\end{defn}

\begin{lemma} \label{L:integral Robba perfectoid}
The Banach ring $\tilde{\calR}^{\inte,r}_R \widehat{\otimes}_{\gotho_E} \gotho_{E_\infty}$ is Fontaine perfectoid, and corresponds via
Theorem~\ref{T:Fontaine perfectoid correspondence} to
$R\{(\overline{\varpi}/p^{-1/r})^{p^{-\infty}}\}$.
\end{lemma}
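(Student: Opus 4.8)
The plan is to identify the ring $\tilde{\calR}^{\inte,r}_R \widehat{\otimes}_{\gotho_E} \gotho_{E_\infty}$ as a perfectoid ring of the type already understood, namely a perfectoid Tate algebra in one variable over a perfectoid base, and then read off its tilt. First I would dispose of the characteristic $p$ case by hand: there $W_\varpi(R^+) = R^+\llbracket \varpi \rrbracket$ and $\tilde{\calR}^{\inte,r}_R = R\{(T/p^{-1/r})\}$ with $T = \varpi$, so adjoining $\varpi^{p^{-\infty}}$ gives exactly $R\{(T/p^{-1/r})^{p^{-\infty}}\}$, which is perfect uniform, i.e.\ Fontaine perfectoid, and its own tilt. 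So the content is the mixed-characteristic case, where $E$ is a finite extension of $\Qp$.

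For the mixed-characteristic case, the key step is to exhibit $\gotho_{E_\infty}$ as a perfectoid $\gotho_E$-algebra: $E_\infty$, the completion of $E[\varpi^{p^{-\infty}}]$, is a perfectoid field (a standard example — it contains the topologically nilpotent unit $\varpi^{1/p}$ whose $p$-th power divides $\varpi$, hence divides $p$ up to a unit, and Frobenius is visibly surjective modulo $\varpi^{1/p}$ on $\gotho_{E_\infty}/(\varpi^{1/p})$). Its tilt is the perfect field $\FF_{p^h}((\overline\varpi^{p^{-\infty}}))$ with $\overline\varpi$ corresponding to the compatible system $(\varpi^{p^{-n}})_n$ under Lemma~\ref{L:p-th power inverse limit}, normalized so that $\alpha(\overline\varpi) = p^{-1}$. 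Next I would observe that $\tilde{\calR}^{\inte,r}_R \widehat{\otimes}_{\gotho_E} \gotho_{E_\infty}$, with its norm $\lambda(\alpha^r)$, is precisely the weighted perfectoid Tate algebra
\[
\left(R \widehat{\otimes}_{\FF_{p^h}} E_\infty^{\flat}\right)^{\sharp}\{(T/p^{-1/r})^{p^{-\infty}}\}
\]
constructed as in Example~\ref{exa:polynomials}: the element $\overline\varpi \in E_\infty^\flat$ plays the role of the variable (of spectral norm $p^{-1}$), so that rescaling to norm $p^{-1/r}$ amounts to the weight $f \equiv p^{-1/r}$, and every element $x = \sum_{n \in \ZZ[p^{-1}]_{\geq 0}} \varpi^n[\overline x_n]$ with the stated convergence condition matches a power series $\sum_n \overline x_n T^n$ over the perfectoid base whose Gauss norm is exactly $\sup_n p^{-n}\alpha(\overline x_n) = \lambda(\alpha^r)(x)$.

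Concretely, the cleanest route is: apply Example~\ref{exa:polynomials} with base ring $(A,A^+)$ corresponding via Theorem~\ref{T:Fontaine perfectoid correspondence} to the pair $((R,R^+), I)$ where here we want the characteristic-$p$ situation, i.e.\ we instead apply Example~\ref{exa:polynomials} directly on the characteristic-$p$ side to $R' = R\{(\overline\varpi/p^{-1/r})^{p^{-\infty}}\}$ — this is a perfect uniform Banach ring, being the perfect completion of a weighted Tate algebra over the perfect ring $R$ — and then tilt. The identification of $\tilde{\calR}^{\inte,r}_R \widehat{\otimes}_{\gotho_E}\gotho_{E_\infty}$ with the sharp $(R')^\sharp$ (for the primitive ideal generated by $\varpi t + [\overline\varpi]v$ as in the proof of Theorem~\ref{T:Fontaine perfectoid correspondence}, or more simply by the fact that in equal mixed characteristic $E_\infty$ has $\ker(\theta)$ generated by $\varpi - [\overline\varpi]$ up to a unit) is then a matter of comparing the explicit series descriptions of both rings and checking the norms agree; this uses Remark~\ref{R:homogeneity} to handle Witt vector arithmetic for general $E$ and Lemma~\ref{L:primitive division} to see that the primitive element has the claimed kernel.

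The main obstacle I anticipate is bookkeeping rather than conceptual: making the identification $\tilde{\calR}^{\inte,r}_R \widehat\otimes_{\gotho_E}\gotho_{E_\infty} \cong (R')^\sharp$ genuinely precise requires care about (i) which perfectoid base one tensors $R$ with — one must incorporate the ramified extension $E_\infty/\Qp$ correctly into the Witt vector picture via Remark~\ref{R:same Witt vectors}, so that $W_\varpi(-) \cong W(-)\otimes_{\Zp}\gotho_E$ and the map $\theta$ uses the chosen $\varpi$ — and (ii) verifying that the weight $p^{-1/r}$ is placed on the right generator, i.e.\ tracking the normalization $\alpha(\overline\varpi) = p^{-1}$ through the isomorphism $\tilde{\calR}^{\inte,r}_R\{U\}/(U - [\overline\varpi]) \cong \tilde{\calR}^{\inte,r}_R$-type calculations as in Remark~\ref{R:Robba quasi-Stein}. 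Once the ring is displayed as a weighted perfectoid Tate algebra over a perfectoid base, that it is Fontaine perfectoid and its tilt is the asserted ring follows immediately from Example~\ref{exa:polynomials} and Theorem~\ref{T:Fontaine perfectoid correspondence}.
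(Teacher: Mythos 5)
The identification at the heart of your second paragraph is false, and the error is exactly the subtlety that the paper's proof is about. In $\tilde{\calR}^{\inte,r}_R \widehat{\otimes}_{\gotho_E} \gotho_{E_\infty}$ the element $\varpi$ is \emph{not} a unit: the ring is an $\gotho_{E_\infty}$-algebra but not an $E_\infty$-algebra, and correspondingly in the asserted tilt $R\{(\overline{\varpi}/p^{-1/r})^{p^{-\infty}}\}$ the element $\overline{\varpi}$ is a non-invertible Tate variable. Any weighted perfectoid Tate algebra over (the untilt of) $R \widehat{\otimes}_{\FF_{p^h}} E_\infty^{\flat}$ would be an $E_\infty$-algebra, and by Example~\ref{exa:polynomials} its tilt would be $(R \widehat{\otimes}_{\FF_{p^h}} E_\infty^{\flat})\{(T/p^{-1/r})^{p^{-\infty}}\}$, which contains $E_\infty^{\flat}$ (where $\overline{\varpi}$ is invertible) together with a separate variable $T$; this is not the ring in the statement, and $\overline{\varpi}$ cannot simultaneously be a unit of the base and the Tate variable. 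The paper's proof consists precisely of handling this point: since $\varpi$ cannot serve as the topologically nilpotent unit required by Definition~\ref{D:Fontaine perfectoid}, one takes $u = [\overline{z}^{p^{-m}}]$ for a topologically nilpotent unit $\overline{z} \in R$ and $m$ sufficiently large, verifies the conditions of Definition~\ref{D:Fontaine perfectoid} directly with this $u$, and reads off the tilt from the explicit series description of Definition~\ref{D:Robba fractional} (via Lemma~\ref{L:p-th power inverse limit}).

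Your fallback plan in the final paragraphs---take $R' = R\{(\overline{\varpi}/p^{-1/r})^{p^{-\infty}}\}$, note that $\varpi - [\overline{\varpi}]$ is Fontaine primitive in $W_\varpi(R'^{+})$ (here it matters that Definition~\ref{D:Fontaine primitive} does not require the zeroth coefficient to be a unit), and identify $\tilde{\calR}^{\inte,r}_R \widehat{\otimes}_{\gotho_E} \gotho_{E_\infty}$ with the untilt $W_\varpi(R'^{+})[[R']]/(\varpi - [\overline{\varpi}])$---is a legitimate alternative route and would give both assertions at once via Theorem~\ref{T:Fontaine perfectoid correspondence}. But as written it is only a plan: the entire content is the deferred comparison, namely constructing the map by $\varpi^{p^{-n}} \mapsto [\overline{\varpi}^{p^{-n}}]$ and showing it is a bijection matching the two series descriptions and the two norms up to replacing one by a power (the quotient norm gives $\varpi$ norm $p^{-1/r}$ while $\lambda(\alpha^r)$ gives it norm $p^{-1}$), which requires Remark~\ref{R:homogeneity} and Lemma~\ref{L:primitive division} in earnest, along the lines of the proofs of Theorem~\ref{T:Fontaine perfectoid correspondence} and Theorem~\ref{T:tensor product}. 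Since the proposal neither carries out this comparison nor notices the non-unit issue above, it does not yet prove the lemma, although the final strategy, properly executed, would be an acceptable (if longer) substitute for the paper's direct verification.
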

\begin{proof}
Beware that Definition~\ref{D:Fontaine perfectoid} refers to a topologically nilpotent unit therein called $\varpi$, but the element of $\tilde{\calR}^{\inte,r}_R \widehat{\otimes}_{\gotho_E} \gotho_{E_\infty}$ by that name is not a unit.
Instead, we choose a topologically nilpotent unit
$\overline{z} \in R$ and consider $z = [\overline{z}^{p^{-m}}]$ for some suitable nonnegative integer $m$; this is a topologically nilpotent unit of 
$\tilde{\calR}^{\inte,r}_R \widehat{\otimes}_{\gotho_E} \gotho_{E_\infty}$ with the desired properties.
\end{proof}

\begin{prop} \label{P:Robba perfectoid}
For any $I = [s,r]$, the Banach ring $\tilde{\calR}^{I}_R \widehat{\otimes}_{E} E_\infty$ is Fontaine perfectoid, and corresponds via
Theorem~\ref{T:Fontaine perfectoid correspondence} to
$R\{(p^{-1/s}/\overline{\varpi}, \overline{\varpi}/p^{-1/r})^{p^{-\infty}}\}$.
\end{prop}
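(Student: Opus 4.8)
The statement for $I=[s,r]$ should be deduced from the already-established integral case (Lemma~\ref{L:integral Robba perfectoid}) by a rational-localization argument inside the reified framework, together with the compatibility of the perfectoid correspondence with rational localizations (Theorem~\ref{T:Fontaine perfectoid compatibility}, case (i)). The first step is to pass to the reified picture: fix an extension of $R$ to a reified adic Banach ring $(R,R^{\Gr})$, and work with the reified rings $\tilde{\calR}^{\inte,r}_R$ and $\tilde{\calR}^{I}_R$ as in Remarks~\ref{R:Robba reified} and \ref{R:Robba reified2}. There, the map $(\tilde{\calR}^{\inte,r}_R,\tilde{\calR}^{\inte,r,\Gr}_R) \to (\tilde{\calR}^{[s,r]}_R,\tilde{\calR}^{[s,r],\Gr}_R)$ is exhibited as the rational localization cutting out $\{v: v(\varpi) \geq p^{-1/s}\}$. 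Base-extending along $E \to E_\infty$ (which splits, so preserves the relevant exactness and passes through completed tensor products cleanly), we get that $\tilde{\calR}^{I}_R \widehat{\otimes}_E E_\infty$ is a rational localization of $\tilde{\calR}^{\inte,r}_R \widehat{\otimes}_{\gotho_E} \gotho_{E_\infty}$ (after inverting $\varpi$, i.e.\ passing to the generic fibre).

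The second step is to apply Lemma~\ref{L:integral Robba perfectoid}: the source ring $\tilde{\calR}^{\inte,r}_R \widehat{\otimes}_{\gotho_E} \gotho_{E_\infty}$ is Fontaine perfectoid with tilt $R\{(\overline{\varpi}/p^{-1/r})^{p^{-\infty}}\}$. Then, since a rational localization of a Fontaine perfectoid ring is again Fontaine perfectoid and the correspondence commutes with rational localizations (Theorem~\ref{T:Fontaine perfectoid compatibility}(i) and Theorem~\ref{T:Fontaine perfectoid homeomorphism}), it follows that $\tilde{\calR}^{I}_R \widehat{\otimes}_E E_\infty$ is Fontaine perfectoid and its tilt is the corresponding rational localization of $R\{(\overline{\varpi}/p^{-1/r})^{p^{-\infty}}\}$. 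The third step is to identify that rational localization explicitly. On the tilt side, $\overline{\varpi}$ is the honest topologically nilpotent element corresponding to $\varpi$ under the perfectoid correspondence (via $\theta([\overline{\varpi}^{1/p^n}])$), so the condition $v(\varpi) \geq p^{-1/s}$ translates under Theorem~\ref{T:Fontaine perfectoid homeomorphism} into $v(\overline{\varpi}) \geq p^{-1/s}$, i.e.\ $v(p^{-1/s}) \leq v(\overline{\varpi})$ in multiplicative notation; combined with the bound $v(\overline{\varpi}) \leq p^{-1/r}$ already present in $R\{(\overline{\varpi}/p^{-1/r})^{p^{-\infty}}\}$ (which corresponds to the defining inequality of $\tilde{\calR}^{\inte,r}$), one gets exactly $R\{(p^{-1/s}/\overline{\varpi},\overline{\varpi}/p^{-1/r})^{p^{-\infty}}\}$. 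This is a routine matching of Gauss-norm conditions with the rational-subspace conditions of Remark~\ref{R:Robba reified2}, using the description of the latter's structure ring as a weighted perfectoid Tate algebra quotient as in Example~\ref{exa:polynomials}.

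\textbf{Main obstacle.} The delicate point is the bookkeeping of norms and subgroups of the form $\tilde{\calR}^{I,+,u}_R$: one must check that the reified structure $\tilde{\calR}^{[s,r],\Gr}_R$ produced by the rational-localization machinery genuinely coincides with the one defined directly in Remark~\ref{R:Robba reified}, and that under the perfectoid correspondence this matches the graded structure on $R\{(p^{-1/s}/\overline{\varpi},\overline{\varpi}/p^{-1/r})^{p^{-\infty}}\}$ coming from Example~\ref{exa:polynomials} and Remark~\ref{R:perfectoid correspondence reified}. The use of the normalized norm $\lambda(\alpha^t)^{1/t}$ (Remark~\ref{R:Robba norms}) is essential here, since only then do uniform units lift to uniform units and do the rational-subspace inequalities come out in the clean form $v(\varpi) \geq p^{-1/s}$ rather than with awkward exponents. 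Everything else is formal consequence of results already in hand; once the norm identifications are pinned down, the statement follows. (Alternatively, one can bypass some of the reified formalism using Remark~\ref{R:Robba quasi-Stein} and Lemma~\ref{L:perfect polynomial splitting}, adjoining a uniform unit to $R$ to realize the interval endpoints and then descending, but the reified route is cleaner.)
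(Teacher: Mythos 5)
Your proposal is correct and follows essentially the same route as the paper: the paper's proof consists precisely of invoking Lemma~\ref{L:integral Robba perfectoid} and then making the rational localization of Remark~\ref{R:Robba reified2}, which preserves the Fontaine perfectoid property and is compatible with tilting by Theorem~\ref{T:Fontaine perfectoid compatibility}(i). Your additional bookkeeping of the normalized norms $\lambda(\alpha^t)^{1/t}$ and the identification of the rational subspace on the tilt side is exactly the implicit content of that one-line argument, spelled out.
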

\begin{proof}
This follows from Lemma~\ref{L:integral Robba perfectoid} by making a rational
localization (Remark~\ref{R:Robba reified2}), which preserves the perfectoid property
(Theorem~\ref{T:Fontaine perfectoid compatibility}(i)).
\end{proof}
\begin{cor} \label{C:Robba stably uniform}
The adic Banach rings $(\tilde{\calR}^{\inte,r}_R, \tilde{\calR}^{\inte,r,+}_R)$ (for any $r>0$)
and $(\tilde{\calR}^I_R, \tilde{\calR}^{I,+}_R)$ (for any $I$)
are stably uniform, and hence sheafy (by \cite[Theorem~2.8.10]{part1}). 
\end{cor}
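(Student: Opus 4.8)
The plan is to deduce stable uniformity from the perfectoid correspondence established just above, reducing the statement to the already-proven fact (Corollary~\ref{C:Fontaine perfectoid stably uniform}) that Fontaine perfectoid rings are stably uniform. The key observation is that although $(\tilde{\calR}^{\inte,r}_R, \tilde{\calR}^{\inte,r,+}_R)$ and $(\tilde{\calR}^I_R, \tilde{\calR}^{I,+}_R)$ are not themselves perfectoid, they become so after the faithfully finite-type base extension $\gotho_E \to \gotho_{E_\infty}$ (resp.\ $E \to E_\infty$) of Definition~\ref{D:Robba fractional}: this is exactly the content of Lemma~\ref{L:integral Robba perfectoid} and Proposition~\ref{P:Robba perfectoid}. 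Moreover, by those same results the base extension splits in the category of topological (Banach) modules over the base, so one has a retraction controlling norms.

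First I would treat the integral case. Given any rational localization $(\tilde{\calR}^{\inte,r}_R, \tilde{\calR}^{\inte,r,+}_R) \to (B, B^+)$, I would base-extend along $\gotho_E \to \gotho_{E_\infty}$ to obtain a rational localization of $\tilde{\calR}^{\inte,r}_R \widehat{\otimes}_{\gotho_E} \gotho_{E_\infty}$, which is Fontaine perfectoid by Lemma~\ref{L:integral Robba perfectoid}; by Corollary~\ref{C:Fontaine perfectoid stably uniform} (equivalently Theorem~\ref{T:Fontaine perfectoid compatibility}(i)), the target $B \widehat{\otimes}_{\gotho_E} \gotho_{E_\infty}$ is uniform. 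It then remains to descend uniformity of $B$ from uniformity of $B \widehat{\otimes}_{\gotho_E} \gotho_{E_\infty}$. Here I use that $\gotho_E \to \gotho_{E_\infty}$ splits as a map of topological $\gotho_E$-modules with the $\varpi^n$ ($n \in \ZZ[p^{-1}] \cap [0,1)$) as a topological basis; hence $B \to B \widehat{\otimes}_{\gotho_E} \gotho_{E_\infty}$ is an isometric (strict, split) inclusion of Banach rings, so the spectral (power-multiplicative) norm on the larger ring restricts to one on $B$, forcing the norm on $B$ to be equivalent to its own spectral norm, i.e.\ $B$ is uniform. Running this for every rational localization gives stable uniformity of $(\tilde{\calR}^{\inte,r}_R, \tilde{\calR}^{\inte,r,+}_R)$.

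For the ring $\tilde{\calR}^I_R$ I would first reduce to the case where $I = [s,r]$ is a closed interval: a general $\tilde{\calR}^I_R$ is a Fr\'echet completion of the $\tilde{\calR}^{I_j}_R$ over an increasing exhaustion by closed subintervals (Remark~\ref{R:Robba quasi-Stein1}, Remark~\ref{R:Robba reified}), and a rational subspace of $\Spa$ of such a Fr\'echet-Stein ring factors through one of the $\tilde{\calR}^{I_j}_R$ by quasicompactness; so it suffices to know each $(\tilde{\calR}^{[s,r]}_R, \tilde{\calR}^{[s,r],+}_R)$ is stably uniform. For a closed interval the same argument as in the integral case applies verbatim, now base-extending along $E \to E_\infty$ and invoking Proposition~\ref{P:Robba perfectoid} in place of Lemma~\ref{L:integral Robba perfectoid}, together with the splitting of $E \to E_\infty$ as Banach $E$-modules.

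The main obstacle I anticipate is the descent of uniformity along the base extension: one must check carefully that ``uniform after a split isometric base extension implies uniform'' — i.e.\ that the spectral seminorm is detected on the split summand — and that the base extension $\widehat{\otimes}_{\gotho_E}\gotho_{E_\infty}$ really is compatible with the rational localization maps on both sides (so that a rational localization of the Robba ring, base-extended, is a rational localization of the perfectoid ring). Both points are implicit in the constructions of Lemma~\ref{L:integral Robba perfectoid} and Proposition~\ref{P:Robba perfectoid} and the splitting statements of Definition~\ref{D:Robba fractional}, but they need to be assembled; the reduction from general $I$ to closed $I$ is routine by contrast.
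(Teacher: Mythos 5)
Your proposal is correct and is essentially the paper's own argument: the proof given there is precisely ``combine Lemma~\ref{L:integral Robba perfectoid} and Proposition~\ref{P:Robba perfectoid} with the splitting from Definition~\ref{D:Robba fractional},'' i.e.\ base-extend along $\gotho_E \to \gotho_{E_\infty}$ (resp.\ $E \to E_\infty$) to land in a Fontaine perfectoid ring, use stability of the perfectoid (hence uniform) property under rational localization, and descend uniformity along the split inclusion. You have merely filled in the routine details (compatibility of rational localization with base extension, descent of uniformity along a split strict inclusion, and the reduction of general $I$ to closed subintervals) that the paper leaves implicit.
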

\begin{proof}
This follows from Lemma~\ref{L:integral Robba perfectoid} and Proposition~\ref{P:Robba perfectoid} using the splitting from Definition~\ref{D:Robba fractional}.
\end{proof}

\begin{remark}  \label{R:reified Robba stably uniform}
The reified analogue of Corollary~\ref{C:Robba stably uniform} used in 
Remark~\ref{R:Robba reified} asserts that the reified adic Banach rings $(\tilde{\calR}^{\inte,r}_R, \tilde{\calR}^{\inte,r,\Gr}_R)$ (for any $r>0$)
and $(\tilde{\calR}^I_R, \tilde{\calR}^{I,\Gr}_R)$ (for any $I$)
are really stably uniform and hence sheafy (see Remark~\ref{R:reified pseudoflat}).
\end{remark}

\begin{lemma} \label{L:Robba v-covering}
Let $(R,R^{\Gr}) \to (S,S^{\Gr})$ be a morphism of perfect uniform reified adic Banach algebras over $\FF_{p^h}$ such that
$\Spra(S,S^+) \to \Spra(R,R^{\Gr})$ is surjective. Then for any closed interval $I = [s,r]$, the map
$\Spra(\tilde{\calR}^I_S, \tilde{\calR}^{I,\Gr}_S) \to \Spra(\tilde{\calR}^I_R, \tilde{\calR}^{I,\Gr}_R)$
is surjective.
\end{lemma}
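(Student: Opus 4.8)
The statement asserts surjectivity of the induced map on reified adic spectra associated to a Robba-ring construction over a closed interval $I = [s,r]$. The natural strategy is to pass through the perfectoid correspondence, where surjectivity questions are already well understood. First I would reduce to the point-theoretic statement: a map of reified adic spectra is surjective precisely when every point of the target has a preimage, and for this it suffices to lift valuations after completing residue fields, i.e.\ to check that for every $v \in \Spra(\tilde{\calR}^I_R, \tilde{\calR}^{I,\Gr}_R)$ there is an extension of $\calH(v)$ to a reified analytic field arising from a point of $\Spra(\tilde{\calR}^I_S, \tilde{\calR}^{I,\Gr}_S)$.

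Next I would invoke Proposition~\ref{P:Robba perfectoid}: after the harmless base extension $\widehat{\otimes}_E E_\infty$ (which is a split, faithfully flat-type operation and does not affect surjectivity of spectra, since $E \to E_\infty$ splits in the category of Banach modules and the resulting map on spectra is surjective), the ring $\tilde{\calR}^I_R \widehat{\otimes}_E E_\infty$ becomes Fontaine perfectoid and corresponds via Theorem~\ref{T:Fontaine perfectoid correspondence} to the explicit perfectoid ring $R\{(p^{-1/s}/\overline{\varpi}, \overline{\varpi}/p^{-1/r})^{p^{-\infty}}\}$, and similarly for $S$. Now Theorem~\ref{T:Fontaine perfectoid homeomorphism} (together with its reified analogue, Remark~\ref{T:perfectoid homeomorphism reified}) identifies the reified adic spectra of these perfectoid rings with the reified adic spectra of their tilts, functorially and compatibly with the map $R \to S$. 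So the problem is transported to showing that
\[
\Spra\bigl(S\{(p^{-1/s}/\overline{\varpi}, \overline{\varpi}/p^{-1/r})^{p^{-\infty}}\}\bigr) \to \Spra\bigl(R\{(p^{-1/s}/\overline{\varpi}, \overline{\varpi}/p^{-1/r})^{p^{-\infty}}\}\bigr)
\]
is surjective, where now everything lives in characteristic $p$.

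In characteristic $p$ this should follow from the hypothesis that $\Spra(S,S^{\Gr}) \to \Spra(R,R^{\Gr})$ is surjective, because the target ring is obtained from $R$ by a completed polynomial-type construction adjoining a variable with $p$-power roots together with a reified weight constraint — concretely, $R\{(p^{-1/s}/\overline{\varpi}, \overline{\varpi}/p^{-1/r})^{p^{-\infty}}\}$ is (a rational localization of) a perfected weighted Tate algebra over $R$, and the analogous ring over $S$ is the base change along $R \to S$. Given a point $v$ of the $R$-side, it restricts to a point $v_0$ of $\Spra(R,R^{\Gr})$ which by hypothesis lifts to a point $w_0$ of $\Spra(S,S^{\Gr})$; then I would extend $w_0$ to a valuation on the $S$-side of the polynomial construction by prescribing the value on the adjoined variable $T$ (and its $p$-power roots) to match $v$, which is consistent with the reified weight inequalities precisely because $v$ satisfies them over $R$. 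This is the standard fact that adjoining a (perfected) polynomial variable, even with a weight constraint, yields a surjection on adic (reified) spectra over any base; it can be organized using Example~\ref{exa:polynomials} and the explicit description of points of weighted perfect Tate algebras.

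\textbf{Main obstacle.} The delicate point is not the tilting step — which is formal given the cited theorems — but the bookkeeping of the reified/graded structures: one must check that the weight data $(R^{\Gr}$ versus $\tilde{\calR}^{I,\Gr}_R$, and the weights attached to the variable $\overline{\varpi}^{\pm p^{-n}})$ is preserved under the correspondence of Remark~\ref{T:perfectoid homeomorphism reified} and that the valuation one builds on the $S$-side genuinely lies in $\Spra(\tilde{\calR}^I_S, \tilde{\calR}^{I,\Gr}_S)$ rather than some larger spectrum, i.e.\ respects the integrality constraints $v(a) \le r$ for $a \in \tilde{\calR}^{I,+,r}_S$. I would expect this to come out cleanly once one writes the integral subrings explicitly as in Remark~\ref{R:Robba reified}, but it is where the argument requires care rather than invocation.
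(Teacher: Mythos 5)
Your proposal follows essentially the same route as the paper's proof: pass to $\tilde{\calR}^I_\bullet \widehat{\otimes}_E E_\infty$, identify these rings via Proposition~\ref{P:Robba perfectoid} and the reified perfectoid correspondence with the perfected weighted Tate algebras $R\{(p^{-1/s}/\overline{\varpi},\overline{\varpi}/p^{-1/r})^{p^{-\infty}}\}$ and its analogue over $S$, deduce surjectivity there from surjectivity of $\Spra(S,S^{\Gr}) \to \Spra(R,R^{\Gr})$, and descend along the surjection $\Spra(\tilde{\calR}^I_R \widehat{\otimes}_E E_\infty) \to \Spra(\tilde{\calR}^I_R)$. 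The paper simply asserts the characteristic-$p$ surjectivity and justifies the $E_\infty$ step by viewing it as a completion of a tower of finite extensions, whereas you fill in these points with a valuation-lifting argument and the splitting of Definition~\ref{D:Robba fractional}; this is only a difference in level of detail, not of method.
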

\begin{proof}
The map 
\[
R\{(p^{-1/s}/\overline{\varpi}, \overline{\varpi}/p^{-1/r})^{p^{-\infty}}\}
\to S\{(p^{-1/s}/\overline{\varpi}, \overline{\varpi}/p^{-1/r})^{p^{-\infty}}\}
\]
also corresponds to a surjective morphism of reified spectra. By Proposition~\ref{P:Robba perfectoid},
this means that the map
\[
\tilde{\calR}^{I}_R \widehat{\otimes}_{E} E_\infty
\to \tilde{\calR}^{I}_S \widehat{\otimes}_{E} E_\infty
\]
also corresponds to a surjective morphism of reified spectra. On the other hand, it is clear that
\[
\tilde{\calR}^{I}_R \to \tilde{\calR}^{I}_R \widehat{\otimes}_{E} E_\infty,
\]
being a completion of a tower of faithfully finite \'etale morphisms, also corresponds to a surjective morphism of reified spectra. This proves the claim.
\end{proof}

\subsection{Pseudoflatness for Robba rings}
\label{subsec:pseudoflat Robba}

We next derive some pseudoflatness assertions for morphisms between Robba rings, by analogizing the arguments used for rational localizations. In light of Remark~\ref{R:Robba reified} and Remark~\ref{R:Robba quasi-Stein}, we will make use of reified adic spectra to abbreviate the discussion.

\begin{prop} \label{P:weak flatness perfect Robba}
Suppose that $I$ is closed.
For any closed subinterval $I'$ of $I$, the morphism
$\tilde{\calR}^I_R \to \tilde{\calR}^{I'}_R$ is $2$-pseudoflat.
\end{prop}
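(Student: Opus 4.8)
The statement asserts that for any subinterval $I'$ of $I$, the restriction map $\tilde{\calR}^I_R \to \tilde{\calR}^{I'}_R$ is topologically flat. The plan is to mimic the proof of Lemma~\ref{L:weak flatness} (the topological flatness of rational localizations of uniform adic Banach rings), using the element $1 - fT$ trick together with the power-series module isomorphisms of Lemma~\ref{L:module power series}, but with the subtlety that passing from $I$ to $I'$ is controlled by \emph{two} one-sided operations (raising and lowering the outer and inner radii), only one of which is directly a rational localization in the reified setting. Since $\tilde{\calR}^I_R$ is an open mapping ring (Remark~\ref{R:Robba quasi-Stein1}), the notion of topological flatness from Definition~\ref{D:topologically flat} applies, and by transitivity of topological flatness (composition of topologically flat morphisms is topologically flat, which follows formally from the $\Tor$ characterization as in Remark~\ref{R:topologically flat reduction}) it suffices to treat the two basic moves separately.

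First I would reduce to the case where $I$ and $I'$ are closed intervals: writing $I = \bigcup_j I_j$ and $I' = \bigcup_j I'_j$ as increasing unions of closed subintervals with $I'_j \subseteq I_j$, the ring $\tilde{\calR}^I_R = \varprojlim_j \tilde{\calR}^{I_j}_R$ and similarly for $I'$, and one checks that topological flatness is inherited by such Fréchet completions by a diagram chase with the defining property (b) of Remark~\ref{R:topologically flat reduction} applied to a strictly finitely presented module $M$ over $\tilde{\calR}^I_R$: one uses that $M \otimes_{\tilde{\calR}^I_R} \tilde{\calR}^{I'}_R = \varprojlim_j (M \otimes_{\tilde{\calR}^{I_j}_R} \tilde{\calR}^{I'_j}_R)$ together with the surjectivity of transition maps and the Mittag-Leffler/$R^1\varprojlim$ vanishing supplied by Lemma~\ref{L:Kiehl calc}. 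So assume $I = [s,r]$ and $I' = [s',r']$ with $s \leq s' \leq r' \leq r$.

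Next, by Remark~\ref{R:Robba reified2}, after choosing a reified structure $(R,R^{\Gr})$, the map $(\tilde{\calR}^{\inte,r}_R, \tilde{\calR}^{\inte,r,\Gr}_R) \to (\tilde{\calR}^{[s,r]}_R, \tilde{\calR}^{[s,r],\Gr}_R)$ is a rational localization, and more generally the passages $[s,r] \to [s',r]$ (raising the inner radius: a rational subspace $v(\varpi) \geq p^{-1/s'}$) and $[s,r] \to [s,r']$ (lowering the outer radius) are each rational localizations of reified adic Banach rings. Since $(\tilde{\calR}^I_R, \tilde{\calR}^{I,\Gr}_R)$ is really stably uniform (Remark~\ref{R:reified Robba stably uniform}), the reified analogue of Theorem~\ref{T:weak flatness}(b) (see Remark~\ref{R:reified pseudoflat}) gives pseudoflatness for free — but we want the stronger \emph{topological} flatness. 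For that I would run the argument of Lemma~\ref{L:weak flatness} directly: for each of the two basic moves, write the target as $\tilde{\calR}^I_R\{T/\rho\}/(T - f)$ for a suitable $f$ (namely $f = \varpi$ or $f$ a lift of the inverse of the appropriate uniform unit, so that multiplication by $1 - fT$ or by $T - f$ is bijective on the relevant power-series module); then invoke Lemma~\ref{L:module power series} to identify $M \otimes_{\tilde{\calR}^I_R} \tilde{\calR}^I_R\{T\}$ with $M\{T\}$ for strictly finitely presented $M$, and conclude by the snake lemma exactly as in the diagram \eqref{eq:weak flatness} of Lemma~\ref{L:weak flatness}. Composing the two moves yields topological flatness of $\tilde{\calR}^{[s,r]}_R \to \tilde{\calR}^{[s',r']}_R$.

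\textbf{Main obstacle.} The delicate point is the move $T - f$ rather than $1 - fT$ when $f$ is not a unit in $\tilde{\calR}^I_R$ — precisely the issue flagged in the remark following Corollary~\ref{C:weak flatness2}. This is why the reified framework is essential here: after adjoining $\varpi^{p^{-\infty}}$ or using the reified structure, the problematic localization becomes one of the form handled by Lemma~\ref{L:perfect polynomial splitting} (or reduces, via Lemma~\ref{L:integral Robba perfectoid} and Proposition~\ref{P:Robba perfectoid}, to a genuine rational localization of a Fontaine perfectoid ring, whose topological flatness one gets from the splittings of Definition~\ref{D:Robba fractional} together with Theorem~\ref{T:weak flatness perfectoid}). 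I expect the cleanest route is actually to combine Proposition~\ref{P:Robba perfectoid} — which exhibits $\tilde{\calR}^I_R \widehat{\otimes}_E E_\infty$ as Fontaine perfectoid with an explicit perfectoid model — with the fact (Definition~\ref{D:Robba fractional}) that $\tilde{\calR}^I_R \to \tilde{\calR}^I_R \widehat{\otimes}_E E_\infty$ splits as a map of topological $\tilde{\calR}^I_R$-modules; then topological flatness of $\tilde{\calR}^I_R \to \tilde{\calR}^{I'}_R$ can be checked after the faithfully-split base change to $E_\infty$, where it becomes topological flatness of a rational localization between stably uniform (indeed perfectoid) rings, handled by Corollary~\ref{C:topologically flat finer covering} / Theorem~\ref{T:weak flatness}(b) upgraded to the topological statement via the explicit power-series presentations. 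The bookkeeping of which interval endpoint corresponds to which inequality on $v(\varpi)$, and ensuring the split injections are compatible across the base change, is the part requiring care.
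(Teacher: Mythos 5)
Your treatment of the closed-interval case is essentially the paper's: the paper deduces it from the reified analogue of Lemma~\ref{L:weak flatness} via Remark~\ref{R:Robba reified}, or alternatively from Lemma~\ref{L:weak flatness} itself via Remark~\ref{R:Robba quasi-Stein} plus the splitting of Lemma~\ref{L:perfect polynomial splitting}, and your decomposition into the two one-sided moves with the $1-fT$ snake-lemma diagram is the same mechanism. Two caveats there: the ``$T-f$ versus $1-fT$'' obstacle you flag does not actually arise, because $\varpi$ (and the uniform unit $[\overline{z}]$ of Remark~\ref{R:Robba quasi-Stein}) is invertible in $\tilde{\calR}^I_R$, so both endpoint moves are of the good Laurent form; the real reason for the reified detour (or for adjoining $T^{p^{-\infty}}$ and descending) is that the interval inclusion need not be a rational localization of adic Banach rings at all, for lack of elements of the required norms. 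Also, be careful with the phrase ``Theorem~\ref{T:weak flatness}(b) upgraded to the topological statement'': Remark~\ref{R:no topologically flat} says precisely that this upgrade is not available for general rational localizations; it holds here only because the specific localizations are compositions of the Laurent pieces covered by Lemma~\ref{L:weak flatness}.

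The genuine gap is in your reduction of the general case to the closed case by inverse limits. To get level-wise injectivity of $M \otimes_A S'_j \to N \otimes_A S'_j$ (with $A = \tilde{\calR}^I_R$, $S'_j = \tilde{\calR}^{I'_j}_R$) you factor through $A_j = \tilde{\calR}^{I_j}_R$, but exactness of $0 \to M \otimes_A A_j \to N \otimes_A A_j \to P \otimes_A A_j \to 0$ is exactly topological flatness of $\tilde{\calR}^I_R \to \tilde{\calR}^{I_j}_R$ with $I$ non-closed, i.e., an instance of the proposition you are trying to prove — the closed-closed case does not supply it, so the reduction is circular. In addition, the identification $M \otimes_A S \cong \varprojlim_j (M \otimes_A S'_j)$ is not formal: Corollary~\ref{C:quasi-Stein tensor} needs $2$-pseudocoherence hypotheses, and Lemma~\ref{L:Kiehl calc} by itself only gives density/surjectivity and $R^1\varprojlim$ vanishing, not the injectivity of $M \otimes_A S$ into the limit that your diagram chase requires. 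The paper avoids all of this: for general $I$ it simply reruns the proof of Lemma~\ref{L:weak flatness} directly on the Fr\'echet ring $\tilde{\calR}^I_R$, which is legitimate because $\tilde{\calR}^I_R$ is an open mapping ring (Remark~\ref{R:Robba quasi-Stein1}), so Remark~\ref{R:series multiplication} and Lemma~\ref{L:module power series} apply verbatim, and since $\varpi$ is a unit both interval-shrinking quotients are by elements of the form $1-fT$ up to a unit. Replacing your limit reduction by this direct emulation repairs the argument.
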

\begin{proof}
This follows immediately from the reified analogue of Lemma~\ref{L:weak flatness1}
using Remark~\ref{R:Robba reified}, or from Lemma~\ref{L:weak flatness1} itself using Remark~\ref{R:Robba quasi-Stein} plus Lemma~\ref{L:perfect polynomial splitting}.
\end{proof}

\begin{lemma} \label{L:Robba lift properties}
Suppose that $I= [s,r]$ is closed.
Let $(R, R^+) \to (S,S^+)$ be a morphism of one of the following types.
\begin{enumerate}
\item[(i)]
A rational localization.
\item[(ii)]
A faithfully finite \'etale morphism.
\item[(iii)]
A finite \'etale morphism.
\item[(iv)]
An \'etale morphism.
\item[(v)]
A morphism representing a perfectoid subdomain of $\Spa(R,R^+)$.
\end{enumerate}
Then $(\tilde{\calR}^I_R, \tilde{\calR}^{I,+}_R) \to (\tilde{\calR}^I_S, \tilde{\calR}^{I,+}_S)$ is of the same form. (In case (v), this should be interpreted after tensoring over $E$ with $E_\infty$.)
\end{lemma}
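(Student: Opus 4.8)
The plan is to prove Lemma~\ref{L:Robba lift properties} by handling each of the five types of morphism in turn, in each case exhibiting the morphism $(\tilde{\calR}^I_R, \tilde{\calR}^{I,+}_R) \to (\tilde{\calR}^I_S, \tilde{\calR}^{I,+}_S)$ as being built from the corresponding structure on $R \to S$ via the perfectoid identifications already established. The unifying device will be Proposition~\ref{P:Robba perfectoid}: for $I = [s,r]$ closed, $\tilde{\calR}^I_R \widehat{\otimes}_E E_\infty$ is Fontaine perfectoid, corresponding under Theorem~\ref{T:Fontaine perfectoid correspondence} to $R\{(p^{-1/s}/\overline{\varpi}, \overline{\varpi}/p^{-1/r})^{p^{-\infty}}\}$, and (by Lemma~\ref{L:perfect polynomial splitting} / Definition~\ref{D:Robba fractional}) the passage $\tilde{\calR}^I_R \to \tilde{\calR}^I_R \widehat{\otimes}_E E_\infty$ splits compatibly. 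So a property that descends along such a faithfully flat split extension can be checked after tensoring with $E_\infty$, where it becomes a statement about the perfectoid ring, which Theorem~\ref{T:Fontaine perfectoid compatibility} controls.

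For case (i), I would argue as in the proof of Theorem~\ref{T:Fontaine perfectoid compatibility}(i): it suffices (by \cite[Proposition~2.4.24]{part1}) to treat a simple Laurent covering of $\Spa(R,R^+)$ defined by some $\overline{g}$, and the explicit construction of \cite[Lemma~3.6.13]{part1} / Example~\ref{exa:polynomials} realizes the rational localization of $\tilde{\calR}^I_R$ at the corresponding subspace as a fiber-product construction built from weighted perfectoid Tate algebras over $\tilde{\calR}^I_R \widehat{\otimes}_E E_\infty$; one then descends via the splitting. Alternatively, and perhaps more cleanly, one observes directly that the rational localization parameters $\overline{f}_i, \overline{g} \in R$ give rise via $x \mapsto [x]$ (or rather via the Teichmüller lift composed with the structure map $R \to \tilde{\calR}^I_R$) to parameters defining the desired rational subspace of $\Spa(\tilde{\calR}^I_R, \tilde{\calR}^{I,+}_R)$, and then uses stable uniformity (Corollary~\ref{C:Robba stably uniform}) to identify the localization ring. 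For cases (ii), (iii), (iv): a finite étale $R$-algebra $S$ lifts canonically to a finite étale $\tilde{\calR}^I_R$-algebra $\tilde{\calR}^I_S$ — this is essentially the content of the construction of period sheaves, and follows because $\tilde{\calR}^I_R \to \tilde{\calR}^I_R\widehat{\otimes}_E E_\infty$ is a henselian-type pair (or one applies Theorem~\ref{T:Fontaine perfectoid compatibility}(iv) to the perfectoid quotient and descends the finite étale algebra back using the splitting and \cite[Theorem~2.6.9]{part1} for gluing finite étale morphisms over rational coverings). Faithful flatness / surjectivity on spectra in case (ii) follows from Lemma~\ref{L:Robba v-covering}. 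The étale case (iv) reduces to a rational localization followed by a finite étale morphism, as in the proof of case (vi)/(vii) of Theorem~\ref{T:Fontaine perfectoid compatibility}. For case (v), by definition a perfectoid subdomain is presented as a tower of faithfully finite étale morphisms over a rational subdomain; after applying $\widehat{\otimes}_E E_\infty$ one gets a corresponding pro-étale presentation of a perfectoid subdomain of $\Spa(\tilde{\calR}^I_R \widehat{\otimes}_E E_\infty, \cdot)$ by combining cases (i) and (ii), and one notes that the spectral completion commutes with the relevant completed direct limits.

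The main obstacle I anticipate is case (v), and more precisely verifying that the \emph{spectral seminorm} on $\calO(Y)$ for the lifted perfectoid subdomain $Y$ really matches the Fréchet/Gauss norm structure coming from the $\tilde{\calR}^I$ construction, i.e. that ``the completion of $\calO(Y)$ for the spectral seminorm is Fontaine perfectoid'' is preserved when one first replaces $R$ by $\tilde{\calR}^I_R$. This requires knowing that $\lambda(\alpha^t)$ pulls back correctly along the tower and that completing the direct limit of the lifted finite étale algebras (which are finite projective over $\tilde{\calR}^I_S$-type rings) for the spectral seminorm agrees with the direct limit of their own spectral-norm completions — essentially an argument of the type in Lemma~\ref{L:perfectoid tower splitting} and Remark~\ref{R:complex almost optimal}, transported from $R$ to $\tilde{\calR}^I_R$. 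A secondary subtlety, flagged in the statement itself, is that in case (v) the conclusion is only asserted after $\widehat{\otimes}_E E_\infty$, because $\tilde{\calR}^I_R$ itself need not be perfectoid (its Frobenius is injective but not surjective); so I must be careful throughout case (v) to work inside $\tilde{\calR}^I_R \widehat{\otimes}_E E_\infty$ and only at the very end use the splitting of Definition~\ref{D:Robba fractional} to transfer information about modules/algebras back to $\tilde{\calR}^I_R$, rather than claiming a perfectoid statement for $\tilde{\calR}^I_R$ directly. The remaining cases (i)--(iv) I expect to be routine given the machinery already in place.
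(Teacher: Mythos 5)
Your proposal is correct and takes essentially the same route as the paper: reduce via the splitting of Definition~\ref{D:Robba fractional} to the situation after $\widehat{\otimes}_E E_\infty$, identify the resulting rings as Fontaine perfectoid via Proposition~\ref{P:Robba perfectoid}, and transfer the property through the perfectoid correspondence using Theorem~\ref{T:Fontaine perfectoid compatibility}. The paper's proof is simply more economical: instead of re-running case-by-case arguments (simple Laurent coverings, henselian lifting, towers for case (v)), it invokes Theorem~\ref{T:Fontaine perfectoid compatibility} once for all five cases, since the corresponding statement about the characteristic-$p$ morphism $R\{(p^{-1/s}/\overline{\varpi}, \overline{\varpi}/p^{-1/r})^{p^{-\infty}}\} \to S\{(p^{-1/s}/\overline{\varpi}, \overline{\varpi}/p^{-1/r})^{p^{-\infty}}\}$ is immediate.
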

\begin{proof}
By the splitting from   Definition~\ref{D:Robba fractional},
it suffices to check the claim after tensoring over $E$ with $E_\infty$. In this
case, by Proposition~\ref{P:Robba perfectoid}, we are considering a morphism of perfectoid spaces, so we may use Theorem~\ref{T:Fontaine perfectoid compatibility}
to retrieve the claim from the corresponding (and obvious) statement about the morphism
\begin{gather*}
(R\{(p^{-1/s}/\overline{\varpi}, \overline{\varpi}/p^{-1/r})^{p^{-\infty}}\},
R\{(p^{-1/s}/\overline{\varpi}, \overline{\varpi}/p^{-1/r})^{p^{-\infty}}\}^{+}) \\
\to
(S\{(p^{-1/s}/\overline{\varpi}, \overline{\varpi}/p^{-1/r})^{p^{-\infty}}\},
S\{(p^{-1/s}/\overline{\varpi}, \overline{\varpi}/p^{-1/r})^{p^{-\infty}}\}^{+})
\end{gather*}
in characteristic $p$.
\end{proof}

\begin{prop} \label{P:Robba weak flatness}
Suppose that $I= [s,r]$ is closed.
Put $X = \Spa(R,R^+)$. For any perfectoid subdomain $\Spa(S,S^+)$ of $X_{\proet}$,
the morphism $\tilde{\calR}^{I}_R \to \tilde{\calR}^{I}_S$ is \'etale-pseudoflat, and also pseudoflat if
$\Spa(S,S^+) \to X$ is an open immersion.
(The reified analogue also holds.)
\end{prop}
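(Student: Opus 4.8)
The plan is to reduce to the case of a closed interval $I$ and then to the perfectoid situation, where the result follows from the machinery of \S\ref{sec:perfectoid supplemental}. First I would observe that pseudoflatness of $\tilde{\calR}^{I}_R \to \tilde{\calR}^{I}_S$ can be checked using condition (c) of Remark~\ref{R:pseudoflat reduction}: take a short exact sequence $0 \to M \to N \to P \to 0$ of $\tilde{\calR}^{I}_R$-modules with $N$ finite free and $P$ strictly finitely presented, so that (by Remark~\ref{R:pseudoflat strictly}) $M$ is finitely presented and complete for its natural topology, hence strictly finitely presented. Since $\tilde{\calR}^{I}_R$ is an open mapping ring (Remark~\ref{R:Robba quasi-Stein1}), it suffices to show that $M \otimes_{\tilde{\calR}^{I}_R} \tilde{\calR}^{I}_S \to N \otimes_{\tilde{\calR}^{I}_R} \tilde{\calR}^{I}_S$ is injective. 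Writing $I$ as an increasing union of closed subintervals $I_j$ and using Proposition~\ref{P:weak flatness perfect Robba} (which gives topological flatness, hence exactness of base extension of strictly finitely presented modules, along $\tilde{\calR}^{I}_R \to \tilde{\calR}^{I_j}_R$ and similarly on the $S$ side), together with the fact that $\tilde{\calR}^{I}_R = \varprojlim_j \tilde{\calR}^{I_j}_R$ and that the relevant tensor products are already complete, I would reduce the injectivity statement to the case where $I = [s,r]$ is a closed interval.

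Next, for $I = [s,r]$ closed, I would invoke the splitting from Definition~\ref{D:Robba fractional}: the inclusion $E \to E_\infty$ splits in the category of Banach modules over $E$, and by Lemma~\ref{L:perfect polynomial splitting} (or directly) the morphism $\tilde{\calR}^{I}_R \to \tilde{\calR}^{I}_R \widehat{\otimes}_E E_\infty$ splits in the category of topological modules over $\tilde{\calR}^{I}_R$, compatibly on the $S$ side. A standard argument then shows that it suffices to prove pseudoflatness of the morphism $\tilde{\calR}^{I}_R \widehat{\otimes}_E E_\infty \to \tilde{\calR}^{I}_S \widehat{\otimes}_E E_\infty$: injectivity of $M \otimes \tilde{\calR}^{I}_R \to M \otimes \tilde{\calR}^{I}_S$ can be detected after the faithfully-split base change to the $E_\infty$-version, because the splitting exhibits the original map as a retract of the base-changed one. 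By Proposition~\ref{P:Robba perfectoid}, $\tilde{\calR}^{I}_R \widehat{\otimes}_E E_\infty$ is Fontaine perfectoid and corresponds via Theorem~\ref{T:Fontaine perfectoid correspondence} to $R\{(p^{-1/s}/\overline{\varpi}, \overline{\varpi}/p^{-1/r})^{p^{-\infty}}\}$, and similarly for $S$; moreover by Lemma~\ref{L:Robba lift properties}(v) the morphism $(\tilde{\calR}^{I}_R, \tilde{\calR}^{I,+}_R) \to (\tilde{\calR}^{I}_S, \tilde{\calR}^{I,+}_S)$, after tensoring over $E$ with $E_\infty$, again represents a perfectoid subdomain. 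Thus Theorem~\ref{T:weak flatness perfectoid}(b) applies directly to give pseudoflatness of $\tilde{\calR}^{I}_R \widehat{\otimes}_E E_\infty \to \tilde{\calR}^{I}_S \widehat{\otimes}_E E_\infty$, completing the proof. The reified analogue follows by the same argument, using Remark~\ref{R:pseudocoherent reified} in place of Theorem~\ref{T:weak flatness perfectoid} and Remark~\ref{R:Robba reified} for the reified Robba ring structures.

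The main obstacle I anticipate is the bookkeeping around completed tensor products: one must be careful that ``pseudoflat'' is formulated via $\Tor_1$ against strictly finitely presented (hence complete) modules, so that the relevant base extensions $M \otimes_{\tilde{\calR}^{I}_R} \tilde{\calR}^{I}_S$ actually agree with completed tensor products and the snake-lemma / retract arguments are legitimate. The passage from a general interval to a closed interval requires knowing that the inverse limit $\tilde{\calR}^{I}_R = \varprojlim_j \tilde{\calR}^{I_j}_R$ interacts well with the finitely presented module $M$ — this is where one uses that $M$ is complete for its natural topology together with the topological flatness of Proposition~\ref{P:weak flatness perfect Robba}, much as in the quasi-Stein arguments of \S\ref{subsec:quasi-Stein} (compare Corollary~\ref{C:quasi-Stein tensor}). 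Once the reduction to the perfectoid closed-interval case is in place, the substantive content is entirely supplied by Theorem~\ref{T:weak flatness perfectoid} and Lemma~\ref{L:Robba lift properties}.
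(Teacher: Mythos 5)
Your argument is correct and follows essentially the same route as the paper's own (very terse) proof: reduce to a closed interval, then use the splitting of Definition~\ref{D:Robba fractional} together with Lemma~\ref{L:Robba lift properties}(v), Proposition~\ref{P:Robba perfectoid}, and Theorem~\ref{T:weak flatness perfectoid}(b) to transfer pseudoflatness from the perfectoid $E_\infty$-base-changed situation. The only place you are sketchier than fully rigorous is the passage from a general interval to closed subintervals, but the paper asserts exactly that reduction with no more detail, and your identification of where the work lies (completeness of the test module and the quasi-Stein/topological-flatness interaction, plus the fact that $\tilde{\calR}^I_R \to \tilde{\calR}^I_R \widehat{\otimes}_E E_\infty$ is itself topologically flat so the base-changed sequence is a legitimate test sequence) is the right one.
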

\begin{proof}
We may invoke
Theorem~\ref{T:weak flatness},
Theorem~\ref{T:weak flatness perfectoid}, and
Lemma~\ref{L:Robba lift properties} to conclude.
\end{proof}

\begin{remark} \label{R:perfectoid tower splitting Robba}
Suppose that $I$ is closed and let $R = R_0 \to R_1 \to \cdots$ be a tower of faithfully finite \'etale morphisms with completed direct limit $S$. Using 
Lemma~\ref{L:perfectoid tower splitting}
and the splitting of Definition~\ref{D:Robba fractional},
we may see that as a Banach module over $\tilde{\calR}^I_R$, $\tilde{\calR}^I_S$ 
splits as a direct sum of $\tilde{\calR}^I_R$ plus a pro-projective Banach module.
\end{remark}

\subsection{Modules over Robba rings}
\label{subsec:modules period sheaves}

We next extend the theory of pseudocoherent and fpd modules to perfect period sheaves. 

\begin{theorem} \label{T:perfect Robba proetale cohomology}
Put $X = \Spa(R,R^+)$.
Let $M$ be an \'etale-stably pseudocoherent module over $\tilde{\calR}^{\inte,r}$ (resp.\
$\tilde{\calR}^{I}_R$) for some closed interval $I$.
Let $\tilde{M}$ be the associated sheaf of modules over $\tilde{\calR}^{\inte,r}$ (resp.\ $\tilde{\calR}^I$) on $X_{\proet}$. Then
\[
H^i(X, \tilde{M}) = H^i(X_{\et}, \tilde{M}) = H^i(X_{\proet}, \tilde{M}) = \begin{cases} M & (i=0) \\ 0 & (i>0).
\end{cases}
\]
If $M$ is projective, the corresponding equality for $H^i(X_{\faith}, \tilde{M})$ also holds.
(The reified analogue also holds.)
\end{theorem}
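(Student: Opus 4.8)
The plan is to reduce the statement over the Robba rings $\tilde{\calR}^{\inte,r}_R$ and $\tilde{\calR}^I_R$ to the already-established acyclicity results for pseudocoherent (resp.\ fpd) sheaves over Fontaine perfectoid and stably uniform adic Banach rings, exploiting the fact that the Robba rings become perfectoid after a harmless base extension. First I would treat the case where $I$ is a closed interval. By Proposition~\ref{P:Robba perfectoid} and Lemma~\ref{L:integral Robba perfectoid}, the rings $\tilde{\calR}^{\inte,r}_R \widehat{\otimes}_{\gotho_E} \gotho_{E_\infty}$ and $\tilde{\calR}^I_R \widehat{\otimes}_E E_\infty$ are Fontaine perfectoid, and by Definition~\ref{D:Robba fractional} the morphisms $\tilde{\calR}^{\inte,r}_R \to \tilde{\calR}^{\inte,r}_R \widehat{\otimes}_{\gotho_E} \gotho_{E_\infty}$ and $\tilde{\calR}^I_R \to \tilde{\calR}^I_R \widehat{\otimes}_E E_\infty$ split in the category of topological (Banach) modules over the source. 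Moreover, Corollary~\ref{C:Robba stably uniform} tells us that $(\tilde{\calR}^{\inte,r}_R, \tilde{\calR}^{\inte,r,+}_R)$ and $(\tilde{\calR}^I_R, \tilde{\calR}^{I,+}_R)$ are stably uniform, hence sheafy, so Theorem~\ref{T:pseudocoherent acyclicity}, Theorem~\ref{T:refined Kiehl}, and Corollary~\ref{C:pseudocoherent etale acyclicity} already give the analytic and \'etale statements directly (the \'etale case requires a basis of stably uniform affinoids, which is provided by Lemma~\ref{L:Robba lift properties} together with Lemma~\ref{L:perfect polynomial splitting}).

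For the pro-\'etale, pfqc, and (in the fpd case) v-topology statements, the key input is Proposition~\ref{P:Robba weak flatness}: perfectoid subdomains of $X_{\proet}$ pull back to pseudoflat morphisms of Robba rings, and by Remark~\ref{R:perfectoid tower splitting Robba} towers of faithfully finite \'etale morphisms give splittings of $\tilde{\calR}^I_R \to \tilde{\calR}^I_S$ as Banach modules over $\tilde{\calR}^I_R$ (with pro-projective complement, hence topologically flat by Lemma~\ref{L:pro-projective topologically flat}). Combining these with the reduction of Remark~\ref{R:reduce to countable} (so that only countable towers need be considered), the argument of Corollary~\ref{C:compare perfectoid cohomology} carries over verbatim: one verifies \v{C}ech-acyclicity for covers of the type in \cite[Proposition~9.2.6]{part1}, using Theorem~\ref{T:pseudocoherent acyclicity} on the affinoid pieces and the module splitting on the faithfully finite \'etale tower piece. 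For the pfqc topology, since every pfqc covering is in particular a v-covering and the relevant morphisms are pseudoflat, one can mimic the proof of Theorem~\ref{T:pfqc acyclic vector bundle}, again reducing to \v{C}ech complexes that split. For the fpd statement over $X_{\faith}$, induct on projective dimension exactly as in Theorem~\ref{T:faithful acyclic fpd} and Theorem~\ref{T:pseudocoherent pfqc descent}, with the base case of vector bundles handled by Theorem~\ref{T:vector bundle faithful descent} applied (after base change to $E_\infty$) on the perfectoid space $\Spa(\tilde{\calR}^I_R \widehat{\otimes}_E E_\infty, \cdot)$ and then descended using the Banach-module splitting.

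The general (non-closed) interval case follows by writing $I$ as an increasing union of closed subintervals $\{I_j\}$ and realizing $\tilde{\calR}^I_R$ as the ring of global sections of a quasi-Stein space (Remark~\ref{R:Robba quasi-Stein1}, Remark~\ref{R:Robba reified}); one then invokes Theorem~\ref{T:Kiehl quasi-Stein} and Corollary~\ref{C:quasi-Stein global sections} for the analytic topology, and for the finer topologies one observes that the same inverse-limit/quasi-Stein argument commutes with the \v{C}ech computations since the transition maps are topologically flat by Proposition~\ref{P:weak flatness perfect Robba}. The reified analogues go through using Remark~\ref{R:reified pseudoflat}, Remark~\ref{R:reified pseudocoherent}, Remark~\ref{R:pseudocoherent reified}, and Remark~\ref{R:reified Robba stably uniform} in place of their adic counterparts.

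\textbf{Main obstacle.} The genuine subtlety is not in any single topology but in checking that the \v{C}ech-acyclicity arguments from Section~\ref{sec:perfectoid supplemental} apply to the Robba rings despite the fact that $\tilde{\calR}^I_R$ itself is \emph{not} perfectoid and an inclusion of closed intervals is \emph{not} a rational localization of the adic (as opposed to reified) Robba rings (Remark~\ref{R:Robba quasi-Stein}). The whole point of introducing the reified framework and the base change $\widehat{\otimes}_E E_\infty$ (which adjoins $\varpi^{p^{-\infty}}$ and turns the Robba annulus into a perfectoid annulus) is to circumvent exactly this; the careful bookkeeping of which splittings are merely topological-module splittings versus genuine ring maps, and the verification that pseudoflatness survives all the reductions, is where the real work lies. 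I expect the pfqc statement to be the most delicate, since (unlike the v-topology) it is not invariant under the perfectoid correspondence, so one cannot simply transport everything to characteristic $p$; one must argue directly with pseudoflat covers of the mixed-characteristic Robba rings, relying on Lemma~\ref{L:Robba lift properties}(v) and Proposition~\ref{P:Robba weak flatness}.
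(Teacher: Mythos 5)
Your proposal is correct and follows essentially the same route as the paper: for closed intervals, reduce via the $\widehat{\otimes}_E E_\infty$ base change (Lemma~\ref{L:integral Robba perfectoid}, Proposition~\ref{P:Robba perfectoid}) and the splitting of Definition~\ref{D:Robba fractional} to the perfectoid acyclicity statements (Theorem~\ref{T:pseudocoherent acyclicity}, Corollary~\ref{C:pseudocoherent etale acyclicity}, Corollary~\ref{C:compare perfectoid cohomology}, Theorem~\ref{T:pfqc acyclic vector bundle}, Theorem~\ref{T:faithful acyclic fpd}), and for general intervals use the quasi-Stein structure and Theorem~\ref{T:Kiehl quasi-Stein} to reduce to the closed case. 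The only quibbles are cosmetic: the fpd base case should cite Corollary~\ref{C:faithful acyclic vector bundle} rather than Theorem~\ref{T:vector bundle faithful descent}, and the analytic/\'etale statements concern coverings of $X = \Spa(R,R^+)$ itself, so the cleanest justification is the same $E_\infty$ reduction rather than a direct appeal to stable uniformity of the Robba ring's own spectrum.
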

\begin{proof}
Using Lemma~\ref{L:integral Robba perfectoid} (resp.\ 
Proposition~\ref{P:Robba perfectoid})
and the splitting from Definition~\ref{D:Robba fractional}, we reduce to corresponding statements for perfectoid spaces:
Theorem~\ref{T:pseudocoherent acyclicity},
Theorem~\ref{T:pseudocoherent etale acyclicity},
Theorem~\ref{T:compare perfectoid cohomology},
Corollary~\ref{C:faithful acyclic vector bundle}.
\end{proof}

\begin{defn}
For $X$ a perfect uniform adic space over $\FF_{p^h}$, 
for $r>0$ (resp.\ for $I$ a closed interval),
we define a \emph{pseudocoherent/fpd sheaf} of $\tilde{\calR}^{\inte,r}$-modules (resp.\ $\tilde{\calR}^{I}$-modules)
on $X_{\proet}$ to be a sheaf of modules which locally on $X_{\proet}$
is associated to an \'etale-stably pseudocoherent/fpd module. More precisely, there should exist a covering of
$X_{\proet}$ by perfectoid subdomains $Y$ on each of which the given sheaf restricts to the sheaf associated 
to an \'etale-stably pseudocoherent/fpd module over $\tilde{\calR}^{\inte,r}_{\overline{\calO}(Y)}$ (resp.\ $\tilde{\calR}^I_{\overline{\calO}(Y)}$). Note that 
this topological condition involves base changes on $\Spa(\calR^{\inte,r}_{\overline{\calO}(Y)}, \calR^{\inte,r,+}_{\overline{\calO}(Y)})$ (resp.\ $\Spa(\calR^I_{\overline{\calO}(Y)}, \calR^{I,+}_{\overline{\calO}(Y)})$) which need not be induced from $Y$.
\end{defn}

\begin{theorem} \label{T:perfect Robba Kiehl}
Suppose that $r>0$ and the interval $I$ is closed, and put $X = \Spa(R,R^+)$. 
\begin{enumerate}
\item[(a)]

The global sections functor induces equivalences of categories between
pseudocoherent sheaves of $\tilde{\calR}^{\inte,r}$-modules (resp.\ $\tilde{\calR}^I$-modules) on $X_{\proet}$ and \'etale-stably pseudocoherent $\tilde{\calR}^{\inte,r}_R$-modules (resp.\ $\tilde{\calR}^I_R$-modules).
(The reified analogue also holds.)

\item[(b)]

The global sections functor induces equivalences of categories between
fpd sheaves of $\tilde{\calR}^{\inte,r}$-modules (resp.\ $\tilde{\calR}^I$-modules) on $X_{\proet}$ and \'etale-stably fpd $\tilde{\calR}^{\inte,r}_R$-modules (resp.\ $\tilde{\calR}^I_R$-modules).
(The reified analogue also holds.)

\item[(c)]
For any covering of $I$ by finitely many closed subintervals $J_1,\dots,J_n$, the morphism
$\tilde{\calR}^I_R \to \bigoplus_i \tilde{\calR}^{J_i}_R$ is an effective descent morphism for stably or \'etale-stably pseudocoherent (resp.\ fpd) modules over Banach rings.
\item[(d)]
For any $s \in (0,r]$, the morphism
$\tilde{\calR}^{\inte,r}_R \to \tilde{\calR}^{\inte,s}_R \oplus \tilde{\calR}^{[s,r]}_R$ is an effective descent morphism for stably or \'etale-stably pseudocoherent (resp.\ fpd) modules over Banach rings.
\end{enumerate}
\end{theorem}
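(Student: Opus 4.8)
The plan is to bootstrap everything from the case of a closed interval handled via the perfectoid reduction, using the reified adic structure on the Robba rings only to organize the combinatorics of interval coverings.

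First I would prove (c) and (d), since these are the local effective-descent inputs needed for (a)--(b). By Remark~\ref{R:Robba reified}, restriction from a closed interval $I$ to a closed subinterval $J_i$ is a rational localization of the really stably uniform reified adic spectrum $\Spra(\tilde{\calR}^I_R,\tilde{\calR}^{I,\Gr}_R)$ (Remark~\ref{R:reified Robba stably uniform}), so a covering $I=\bigcup_i J_i$ by finitely many closed subintervals realizes $\{\Spra(\tilde{\calR}^{J_i}_R,\tilde{\calR}^{J_i,\Gr}_R)\}_i$ as a rational covering; similarly, by Remark~\ref{R:Robba reified2}, $\Spra(\tilde{\calR}^{\inte,s}_R,\dots)$ and $\Spra(\tilde{\calR}^{[s,r]}_R,\dots)$ form a simple Laurent covering of $\Spra(\tilde{\calR}^{\inte,r}_R,\tilde{\calR}^{\inte,r,\Gr}_R)$ via $v(\varpi)\leq p^{-1/s}$ versus $v(\varpi)\geq p^{-1/s}$. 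One needs the identification $\tilde{\calR}^{J_i}_R\widehat{\otimes}_{\tilde{\calR}^I_R}\tilde{\calR}^{J_j}_R\cong\tilde{\calR}^{J_i\cap J_j}_R$ (the empty interval giving the zero ring), which is immediate from the explicit localization description; granting this, the reified analogue of Theorem~\ref{T:refined Kiehl} (Remark~\ref{R:reified pseudocoherent}) shows that the global sections of the pseudocoherent sheaf glued from a descent datum form a pseudocoherent module realizing that datum, which is exactly (c), and likewise (d). (Alternatively one can avoid reified spaces by adjoining a uniform unit and using the strict isomorphisms of Remark~\ref{R:Robba quasi-Stein} together with the splitting of Lemma~\ref{L:perfect polynomial splitting}, but the reified route is cleaner.)

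Next I would prove (a) by Tate's reduction process. Since $(\tilde{\calR}^{\inte,r}_R,\tilde{\calR}^{\inte,r,+}_R)$ and $(\tilde{\calR}^I_R,\tilde{\calR}^{I,+}_R)$ are stably uniform (Corollary~\ref{C:Robba stably uniform}), the cases $M=\tilde{\calR}^{\inte,r}_R$, $M=\tilde{\calR}^I_R$, and more generally $M$ finite projective are handled; the general pseudocoherent case uses the acyclicity of Theorem~\ref{T:perfect Robba proetale cohomology}. The local effective-descent statement for a rational covering $\{(R,R^+)\to(R_j,R_j^+)\}_j$---that $\tilde{\calR}^{\inte,r}_R\to\bigoplus_j\tilde{\calR}^{\inte,r}_{R_j}$ (resp.\ $\tilde{\calR}^I_R\to\bigoplus_j\tilde{\calR}^I_{R_j}$) is an effective descent morphism for pseudocoherent modules---I obtain by tensoring over $\gotho_E$ with $\gotho_{E_\infty}$ (resp.\ over $E$ with $E_\infty$), which splits in the category of topological modules (Definition~\ref{D:Robba fractional}): by Lemma~\ref{L:integral Robba perfectoid} (resp.\ Proposition~\ref{P:Robba perfectoid}) we then sit in the perfectoid world, where Theorem~\ref{T:Fontaine perfectoid compatibility}(i) and Corollary~\ref{C:refined Kiehl perfectoid} give the conclusion, which descends back along the splitting in the manner of Lemma~\ref{L:complete descent}. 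Feeding these inputs into \cite[Proposition~2.4.20]{part1} gives the equivalence on $X$; into \cite[Proposition~8.2.20]{part1} together with faithfully flat descent for modules gives it on $X_{\et}$; and the $X_{\proet}$ case follows because for a perfectoid subdomain $\Spa(S,S^+)$ in $X_{\proet}$ the morphism $\tilde{\calR}^I_R\to\tilde{\calR}^I_S$ (resp.\ for $\tilde{\calR}^{\inte,r}$) splits as $\tilde{\calR}^I_R$ plus a pro-projective Banach module (Remark~\ref{R:perfectoid tower splitting Robba}, using Lemma~\ref{L:Robba lift properties} and its evident $\tilde{\calR}^{\inte,r}$-analogue), hence is again an effective descent morphism. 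The reified analogues follow verbatim from the reified counterparts of the cited results. Part (b) is then an induction on projective dimension: the base case of vector bundles over $\tilde{\calR}^{\inte,r}$ or $\tilde{\calR}^I$, for all five topologies including $X_{\pfqc}$ and $X_{\faith}$, reduces again via the $E_\infty$-splitting to Theorem~\ref{T:vector bundle faithful descent}; for the inductive step, present an fpd module $M$ of projective dimension $m$ as $0\to N\to F\to M\to 0$ with $F$ finite free and $N$ fpd of dimension $m-1$, and conclude by the five lemma using the equivalence for $N,F$ and the acyclicity of Theorem~\ref{T:perfect Robba proetale cohomology} (and its pfqc/faithful variants), exactly as in Theorem~\ref{T:pseudocoherent pfqc descent} and Theorem~\ref{T:faithful acyclic fpd}.

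The main obstacle I anticipate is the pro-\'etale part of (a)--(b) (and a fortiori the pfqc and faithful parts): one must know that a pro-\'etale cover of $X=\Spa(R,R^+)$ by perfectoid subdomains induces, at the level of Robba rings, morphisms along which pseudocoherent modules descend effectively, and this rests entirely on the Banach-module splitting $\tilde{\calR}^I_S\cong\tilde{\calR}^I_R\oplus(\text{pro-projective})$ of Remark~\ref{R:perfectoid tower splitting Robba}. Checking that this splitting is compatible with the \v{C}ech machinery---so that one obtains genuine effectivity of descent data, not merely vanishing of the relevant $R^1\varprojlim$ and higher cohomology---is the delicate point, and verifying the hypotheses of the \cite[Proposition~9.2.6]{part1}-type criteria carefully is where the bulk of the routine-but-nontrivial work will lie.
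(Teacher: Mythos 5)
Your proposal is correct and follows essentially the same route as the paper: parts (a) and (b) are reduced to the perfectoid statements (Theorem~\ref{T:refined Kiehl}, Corollary~\ref{C:refined Kiehl}, Corollary~\ref{C:refined Kiehl perfectoid}, Theorem~\ref{T:vector bundle faithful descent}, Theorem~\ref{T:pseudocoherent pfqc descent}) via Lemma~\ref{L:integral Robba perfectoid}, Proposition~\ref{P:Robba perfectoid} and the splitting of Definition~\ref{D:Robba fractional}, while (c) and (d) come from stable uniformity of the Robba rings together with the refined Kiehl theorem applied to the interval coverings. Your explicit use of the reified picture (Remarks~\ref{R:Robba reified}, \ref{R:Robba reified2}, \ref{R:reified pseudocoherent}) for (c)--(d) and of Remark~\ref{R:perfectoid tower splitting Robba} for the pro-\'etale case merely spells out the mechanics that the paper's terse proof leaves implicit, so there is no substantive difference.
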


\begin{proof}
To prove (a), using Lemma~\ref{L:integral Robba perfectoid}, Proposition~\ref{P:Robba perfectoid}, and the splitting from Definition~\ref{D:Robba fractional}, we may reduce the claim to statements about perfectoid spaces: 
Theorem~\ref{T:refined Kiehl},
Theorem~\ref{T:refined Kiehl etale},
Corollary~\ref{C:refined Kiehl perfectoid}.
The same applies to (b) in view of Lemma~\ref{L:complete descent}.
To prove (c) and (d),
since $(\tilde{\calR}^{\inte,r}_R, \tilde{\calR}^{\inte,r,+}_R)$ and 
$(\tilde{\calR}^I_R, \tilde{\calR}^{I,+}_R)$ are stably uniform by
Corollary~\ref{C:Robba stably uniform},
 we may directly apply Theorem~\ref{T:refined Kiehl}
 and Theorem~\ref{T:refined Kiehl etale}.
\end{proof}

\subsection{\texorpdfstring{$\varphi$}{Phi}-modules over period sheaves}
\label{subsec:perfect phi-modules}

We next extend the theory of $\varphi$-modules introduced in \cite[\S 6.1]{part1} to include pseudocoherent objects.

\begin{hypothesis}
For the remainder of \S\ref{sec:perfect period sheaves},
fix a positive integer $a$ divisible by $h$ and let $E_a$ denote the unramified extension of $E$ of degree $a$.
\end{hypothesis}

\begin{defn}
As in \cite[\S 9.3]{part1} (modulo Convention~\ref{C:drop dagger})
we define ring sheaves on $X_{\proet}$ via the following formulas, in which $Y$ denotes an arbitrary perfectoid subdomain:
\begin{gather*}
\tilde{\bA}_X(Y) = W_\varpi(\overline{\calO}(Y)),  \,
\tilde{\bA}^+_X(Y) = W_\varpi(\overline{\calO}^+(Y)), \,
\tilde{\bA}^r_X(Y) = \tilde{\calR}^{\inte,r}_{\overline{\calO}(Y)}, \,
\tilde{\bA}^\dagger_X(Y) = \tilde{\calR}^{\inte}_{\overline{\calO}(Y)}, \\
\tilde{\bB}_X(Y) = W_\varpi(\overline{\calO}(Y))[\varpi^{-1}], \,
\tilde{\bB}^r_X(Y) = \tilde{\calR}^{\bd,r}_{\overline{\calO}(Y)}, \,
\tilde{\bB}^\dagger_X(Y) = \tilde{\calR}^{\bd}_{\overline{\calO}(Y)}, \\
\tilde{\bC}^+_X(Y) = \tilde{\calR}^+_{\overline{\calO}(Y)}, \,
\tilde{\bC}^r_X(Y) = \tilde{\calR}^r_{\overline{\calO}(Y)}, \,
\tilde{\bC}^I_X(Y) = \tilde{\calR}^I_{\overline{\calO}(Y)},  \,
\tilde{\bC}_X(Y) = \tilde{\calR}_{\overline{\calO}(Y)}.
\end{gather*}
\end{defn}

\begin{defn}
Denote by $\varphi$ the endomorphism of any of the following perfect period rings or sheaves induced by the action of $\varphi_\varpi^a$ on $W_\pi(R^+)$:
\begin{equation} \label{eq:phi rings}
\tilde{\calE}^{\inte}, \tilde{\calE}, \tilde{\calR}^{\inte}, \tilde{\calR}^{\bd}, \tilde{\calR}^\infty, \tilde{\calR},
\tilde{\bA}, \tilde{\bA}^+, \tilde{\bA}^\dagger, \tilde{\bB}, \tilde{\bB}^\dagger,
\tilde{\bC}^+, \tilde{\bC}^\infty, \tilde{\bC}. 
\end{equation}
Note that these maps are all invertible.

For $r>0$, denote also by $\varphi$ the following induced morphisms between distinct rings or sheaves:
\begin{gather}
\tilde{\calR}^{\inte,r} \to \tilde{\calR}^{\inte,rp^{-ah}},
\tilde{\calR}^{\bd,r} \to \tilde{\calR}^{\bd,rp^{-ah}},
\tilde{\calR}^{r} \to \tilde{\calR}^{rp^{-ah}}, \notag \\
\tilde{\bA}^r \to \tilde{\bA}^{rp^{-ah}},
\tilde{\bB}^r \to \tilde{\bB}^{rp^{-ah}},
\tilde{\bC}^r \to \tilde{\bC}^{rp^{-ah}}.
\label{eq:phi rings2}
\end{gather}
In these cases, the target ring contains the source ring, so 
we obtain a well-defined action of $\varphi^{-1}$ on the following rings and sheaves:
\begin{gather}
\tilde{\calR}^{\inte,r},
\tilde{\calR}^{\bd,r},
\tilde{\calR}^{r},
\tilde{\bA}^r,
\tilde{\bB}^r,
\tilde{\bC}^r.
\label{eq:phi rings3}
\end{gather}

For $0 < s \leq r$, denote also by $\varphi$ the following induced morphisms between distinct rings or sheaves:
\begin{gather}
\tilde{\calR}^{[s,r]} \to \tilde{\calR}^{[sp^{-ah}, rp^{-ah}]},
\tilde{\bC}^{[s,r]} \to \tilde{\bC}^{[sp^{-ah}, rp^{-ah}]}.
\label{eq:phi rings4}
\end{gather}
In these cases, the target ring does not contain the source. However, if $s \leq rp^{-ah}$, then the maps
\begin{gather}
\tilde{\calR}^{[s,r]} \to \tilde{\calR}^{[s, rp^{-ah}]},
\tilde{\bC}^{[s,r]} \to \tilde{\bC}^{[s, rp^{-ah}]}.
\label{eq:phi rings5}
\end{gather}
do have this property.
\end{defn}

\begin{defn} \label{D:pseudocoherent phi-module}
For a ring $*$ listed in \eqref{eq:phi rings},
we define a \emph{projective (resp.\ pseudocoherent, fpd) $\varphi$-module} over $*$ to be 
a finite projective (resp.\ pseudocoherent, fpd) module $M$ over $*$ 
equipped with a semilinear $\varphi$-action (i.e., an isomorphism $\varphi^* M \cong M$). Similarly,
for a sheaf $*$ listed in \eqref{eq:phi rings},
we define a \emph{projective (resp.\ pseudocoherent, fpd) $\varphi$-module} over $*$ to be 
a finite locally free (resp.\ pseudocoherent, fpd) sheaf $M$ of $*$-modules
equipped with a semilinear $\varphi$-action (i.e., an isomorphism $\varphi^* M \cong M$), subject to topological restrictions as listed below.
We define the \emph{$\varphi$-(hyper)cohomology} of $M$ to be the (hyper)cohomology of the complex 
\[
0 \to M \stackrel{\varphi-1}{\to} M \to 0.
\]

For a morphism $*_1 \to *_2$ listed in \eqref{eq:phi rings2}, we define a
\emph{projective (resp.\ pseudocoherent, fpd) $\varphi$-module} over $*_1$ to be
a finite projective (resp.\ pseudocoherent, fpd) module $M$ over $*_1$ 
equipped with an isomorphism $\varphi^* M \cong M \otimes_{*_1} *_2$, subject to topological restrictions as listed below.
Note that this is equivalent to specifying a finite projective (resp.\ pseudocoherent, fpd) module $M$ over $*_2$  equipped with a semilinear $\varphi^{-1}$-action (again subject to the topological restrictions).
We define the \emph{$\varphi$-(hyper)cohomology} of $M$ to be the (hyper)cohomology of the complex 
\[
0 \to M \stackrel{\varphi-1}{\to} M \otimes_{*_1} *_2 \to 0.
\]

For a morphism $*_1 \to *_2$ listed in \eqref{eq:phi rings4}, let $*_1 \to *_3$ be the
corresponding morphism listed in \eqref{eq:phi rings5}; we define a
\emph{projective (resp.\ pseudocoherent, fpd) $\varphi$-module} over $*_1$ to be
a finite projective (resp.\ pseudocoherent, fpd) module $M$ over $*$ 
equipped with an isomorphism $\varphi^* M \otimes_{*_2} *_3 \cong M \otimes_{*_1} *_3$, 
subject to topological restrictions as listed below.
We define the \emph{$\varphi$-(hyper)cohomology} of $M$ to be the (hyper)cohomology of the complex 
\[
0 \to M \stackrel{\varphi-1}{\to} M \otimes_{*_1} *_3 \to 0.
\]
We write $H^i_{\varphi}$ to denote the functor of $i$-th cohomology groups.

We now return to the topological restrictions alluded to earlier.
\begin{itemize}
\item
For $* = \tilde{\bC}^{[s,r]}$, we insist that the module be complete for its natural topology and,
for every perfectoid subdomain $Y$ of $X_{\proet}$, 
the sections over $Y$ form an \'etale-stably pseudocoherent
$\tilde{\calR}^{[s,r]}_{\overline{\calO}(Y)}$-module.
\item
For $* = \tilde{\bC}^r$ (including $r = \infty$), we insist that  the module be complete for its natural topology and,
for every closed interval $I \subseteq [0,r)$, 
the base extension to $\tilde{\bC}^{[s,r]}$ satisfies the previous condition.
\item
For $* = \tilde{\bC}$, we insist that  the module be complete for its natural topology and,
for every perfectoid subdomain $Y$ of $X_{\proet}$, 
there exists $r>0$ such that the restriction of the sheaf to $Y$ arises by base extension from a sheaf over $\tilde{\bC}^r$ satisfying the previous condition.
\end{itemize}
\end{defn}

\begin{remark} 
Note that in Definition~\ref{D:pseudocoherent phi-module}, for modules
we only impose the pseudocoherent and fpd conditions at the algebraic level, without any of the topological restrictions that were needed in the previous discussions to pass freely between modules and sheaves. We will thus have to be more careful about that passage in this context.
\end{remark}

\begin{defn}
A \emph{projective (resp.\ pseudocoherent, fpd) $\varphi$-bundle} over $\tilde{\calR}_R$ is a collection consisting of one projective (resp. \'etale-stably pseudocoherent, \'etale-stably fpd) 
$\varphi$-module $M_I$ over $\tilde{\calR}^I_R$ for each closed interval $I = [s,r] \subset (0, +\infty)$ for which $s \leq rp^{-ah}$, together with an isomorphism
$M_I \otimes_{\tilde{\calR}^I_R} \tilde{\calR}^J_R \cong M_J$ for each containment
$J \subseteq I$ satisfying the cocycle condition. 
\end{defn}

\begin{convention} \label{conv:drop projective}
To conform with the usage in \cite{part1} and elsewhere, we will sometimes drop the adjective \emph{projective} in reference to a $\varphi$-module or $\varphi$-bundle.
\end{convention}

\begin{lemma} \label{L:lift action1}
For any ring (not sheaf) $*_R$ appearing in \eqref{eq:phi rings} or \eqref{eq:phi rings2},
every finitely generated $*_R$-module equipped with a semilinear $\varphi$-action (in particular, every pseudocoherent $\varphi$-module over $*_R$)
is a quotient of some projective $\varphi$-module over $*_R$.
\end{lemma}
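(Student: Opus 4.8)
The plan is to reduce the statement to a single essential construction: given a pseudocoherent $\varphi$-module $M$ over $*_R$, realize it as a quotient of a projective $\varphi$-module, i.e. a finite projective $*_R$-module $P$ equipped with a compatible $\varphi$-action together with a $\varphi$-equivariant surjection $P \to M$. First I would pick a finite generating set $\bv_1,\dots,\bv_n$ of $M$ over $*_R$, giving an $*_R$-linear surjection $\pi_0 \colon F \to M$ with $F = *_R^n$ free. This $\pi_0$ is not $\varphi$-equivariant, but because $M$ carries a semilinear $\varphi$-action we can compare $\pi_0$ with $\varphi \circ \pi_0 \circ \varphi^{-1}$ (after the appropriate reinterpretation in the cases of \eqref{eq:phi rings2}, where $\varphi$ is a map between two distinct rings and one works with the corresponding $\varphi^{-1}$-action on the larger ring). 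The standard device is then to replace $F$ by $\tilde F := \bigoplus_{i \in \ZZ_{\geq 0}} \varphi^i(F)$ — more precisely, to build a free module on the $\varphi$-orbit of the chosen generators — so that $\varphi$ acts on $\tilde F$ by shifting, and the surjection $\tilde F \to M$ sending $\varphi^i(\be_j) \mapsto \varphi^i(\bv_j)$ becomes $\varphi$-equivariant by construction. The subtlety is that $\tilde F$ as just described is infinitely generated, so one does not immediately get a projective $\varphi$-\emph{module} in the finite sense; one must truncate.

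The key steps, in order, would be: (1) produce the $*_R$-linear surjection $F \to M$ from a finite generating set; (2) write down the $\varphi$-equivariance defect, namely that $\varphi$ applied to the images of the generators gives new elements $\varphi(\bv_j)$, which by finite generation can be expressed as $*_R$-linear combinations of the $\bv_i$; (3) encode this as a matrix identity and use it to define a semilinear $\varphi$-action on $F$ itself making $F \to M$ equivariant — this is the crucial point, because if $\varphi(\bv_j) = \sum_i A_{ij} \bv_i$ for a matrix $A$ over $*_R$, one sets the $\varphi$-action on $F$ to send $\be_j \mapsto \sum_i A_{ij} \be_i$ (twisted appropriately by the Frobenius on coefficients), and then $F \to M$ intertwines the two $\varphi$-actions; (4) check this genuinely defines an isomorphism $\varphi^* F \cong F$ (resp.\ $\varphi^* F \cong F \otimes_{*_1} *_2$ in the cases of \eqref{eq:phi rings2}), which it does because $A$ is an invertible matrix — here one needs that $\varphi$ on the rings in \eqref{eq:phi rings}, \eqref{eq:phi rings2} is as described, with the relevant compatibility, and that $A$ can be taken invertible by enlarging the generating set to include $\varphi$-images if necessary so that $\varphi$ is forced to be an isomorphism rather than merely an endomorphism; (5) conclude that $F$, with this $\varphi$-action, is the desired projective $\varphi$-module surjecting onto $M$.

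The main obstacle I anticipate is step (3)--(4): ensuring the semilinear map one writes down on $F$ is an \emph{isomorphism} $\varphi^*F \to F$ and not just an endomorphism. A priori the matrix $A$ expressing $\varphi(\bv_j)$ in terms of the $\bv_i$ need not be invertible, since $M$ is only pseudocoherent, not projective, so the naive construction gives a $\varphi$-module in the weaker sense of ``module with a not-necessarily-invertible $\varphi$-semilinear endomorphism.'' The fix is the usual trick of adjoining $\varphi^{-1}$-translates: one enlarges the generating set from $\{\bv_j\}$ to $\{\bv_j\} \cup \{\varphi^{-1}(\bv_j)\} \cup \cdots$ up to a finite stage, exploiting that $\varphi$ is bijective on $M$ (the $\varphi$-action being an isomorphism $\varphi^*M \cong M$) to guarantee that after finitely many steps the span is stable and the resulting transition matrix is invertible. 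One must check that only finitely many translates are needed, which follows because $M$ is finitely generated and the ascending chain of submodules generated by successive $\varphi^{-1}$-translates must stabilize after finitely many terms once it contains a generating set. The rest — that the construction is functorial enough, that it works uniformly across the list \eqref{eq:phi rings}, \eqref{eq:phi rings2}, and that the semilinearity conventions match — is routine bookkeeping, handled case by case but with no new ideas, since each ring in those lists is a domain (or at least has the relevant torsion-freeness) on which $\varphi$ acts compatibly with the grading/valuation data already set up in \S\ref{sec:perfect period sheaves}.
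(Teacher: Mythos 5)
The crucial step of your plan, your own (3)--(4), is exactly where the argument has a gap, and the justification you offer does not work. The stabilization argument is vacuous: the submodule of $M$ generated by $\bv_1,\dots,\bv_n$ is already all of $M$, so adjoining $\varphi^{-1}$-translates never changes the chain; and in any case the rings in \eqref{eq:phi rings} and \eqref{eq:phi rings2} are built from an arbitrary perfect uniform Banach algebra $R$ and are not noetherian, so ascending chains of submodules of a finitely generated module need not stabilize. More importantly, stabilization of spans is irrelevant to what you actually need, namely invertibility of the matrix of $\varphi$ on some generating set. If you enlarge the generators to $\{\bv_j\}\cup\{\varphi^{-1}(\bv_j)\}$ and write down the obvious expressions, the resulting matrix is $\begin{pmatrix} A & I \\ 0 & 0 \end{pmatrix}$, visibly not invertible, and nothing in your argument produces an invertible choice.

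What closes this gap --- and is how the paper proves the lemma, by direct citation of \cite[Lemma~1.5.2]{part1}, applied to the $\varphi^{-1}$-action in the cases of \eqref{eq:phi rings2}, where $\varphi^{-1}$ is an honest endomorphism of the ring --- is the determinant-one block-matrix trick, reproduced in this paper in the proof of Lemma~\ref{L:cover B-pair}. Writing $A$ and $B$ for the matrices expressing the actions of $\varphi$ and $\varphi^{-1}$ on the generators $\bv_j$, one uses the redundancy relations on the doubled generating set to arrange that the $2n\times 2n$ matrix of $\varphi$ has the shape $\begin{pmatrix} A & C \\ 1 & B \end{pmatrix}$ with $C = AB-1$ (up to the appropriate Frobenius twist); block row reduction shows its determinant is $\pm 1$, so it defines a free, hence projective, $\varphi$-module of rank $2n$ with a $\varphi$-equivariant surjection onto $M$. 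Your instinct to bring in the $\varphi^{-1}$-translates is in the right direction, but without this explicit choice and determinant computation the invertibility claim is unproved; once you supply it, your argument is precisely the cited lemma. The remaining reinterpretation for the rings in \eqref{eq:phi rings2} is, as you say, routine.
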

\begin{proof}
For rings appearing in \eqref{eq:phi rings}, we may directly apply \cite[Lemma~1.5.2]{part1}.
For rings appearing in \eqref{eq:phi rings2}, we instead apply \cite[Lemma~1.5.2]{part1} to the $\varphi^{-1}$-action.
\end{proof}

\begin{remark}
Note that Lemma~\ref{L:lift action1} does not apply to $\tilde{\calR}^{[s,r]}_R$. This creates some minor complications when attempting to extend results about projective $\varphi$-modules to the pseudocoherent and fpd cases.
\end{remark}

\subsection{\texorpdfstring{$\varphi$}{Phi}-modules and local systems}

We next recall, following \cite[\S 8.3]{part1} (but with some variations), the relationship between $\varphi$-modules and local systems. In the case of integral local systems, we can upgrade the correspondence to include pseudocoherent objects on both sides.

\begin{defn}
Recall that for any topological space $T$, we obtain an associated \emph{constant sheaf}
$\underline{T}$ on $X_{\proet}$ given by $\underline{T}(Y) = \Cont(\left| Y \right|, T)$.
For $R$ a topological ring, we define an
\emph{\'etale projective/pseudocoherent/fpd $R$-local system} on $X_{\proet}$ to be a locally finite projective/pseudocoherent/fpd $\underline{R}$-module. By analogy with Convention~\ref{conv:drop projective}, we will sometimes drop the adjective \emph{projective}.
\end{defn}

\begin{remark} \label{R:GAGA for local systems}
We may similarly define local systems on schemes using the pro-\'etale topology, as in 
\cite[Definition~1.4.10]{part1}.
As in \cite[Remark~1.4.2]{part1}, we may see that \'etale pseudocoherent $\gotho_{E_a}/(\varpi^n)$-local systems and their cohomology on a scheme, a preadic space, or a reified adic space can be computed equally well in 
any topology which refines the finite \'etale topology and for which descent data for finite \'etale morphisms to $X$ are effective. This includes the finite \'etale topology, the \'etale topology, the pro-\'etale topology, and in the case of a perfectoid space the v-topology (by Theorem~\ref{T:vector bundle faithful descent}).

By \cite[Theorem~2.6.9]{part1},
we have $\Spec(R)_{\fet} \cong \Spa(R,R^+)_{\fet}$;
it follows that the categories of \'etale pseudocoherent $\gotho_{E_a}$-local systems
on $\Spec(R)_{\proet}$, $\Spa(R,R^+)_{\proet}$, and $\Spa(R,R^+)_{\faith}$ are equivalent to each other. (The reified analogue also holds.)
\end{remark}

\begin{lemma} \label{L:AS exact}
The sequences
\begin{gather}
0 \to \underline{\gotho_{E_a}}_X \to \tilde{\bA}_X \stackrel{\varphi-1}{\to} \tilde{\bA}_X \to 0 \label{eq:as1} \\
0 \to \underline{\gotho_{E_a}}_X \to \tilde{\bA}^\dagger_X \stackrel{\varphi-1}{\to} \tilde{\bA}^\dagger_X \to 0 \label{eq:as2} \\
0 \to \underline{E_a}_X \to \tilde{\bB}^\dagger_X \stackrel{\varphi-1}{\to} \tilde{\bB}^\dagger_X \to 0 \label{eq:as3} \\
0 \to \underline{E_a}_X \to \tilde{\bC}_X \stackrel{\varphi-1}{\to} \tilde{\bC}_X \to 0 \label{eq:as4} 
\end{gather}
are exact.
\end{lemma}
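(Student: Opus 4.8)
The plan is to verify exactness of each sequence locally on $X_{\proet}$, so we may assume $X = \Spa(R,R^+)$ is a Fontaine affinoid perfectoid space with $X^{\flat}$ having a compatible perfect uniform adic Banach ring $(R^{\flat},R^{\flat+})$; by the sheaf property (Proposition~\ref{P:perfectoid acyclic}, Theorem~\ref{T:perfect Robba proetale cohomology}) and the fact that all the period sheaves have acyclic higher cohomology on such spaces, it suffices to establish exactness of the corresponding sequences of global sections, i.e.\ to show that $\varphi-1$ is surjective on each of $\tilde{\bA}_R$, $\tilde{\bA}^\dagger_R = \tilde{\calR}^{\inte}_{R}$, $\tilde{\bB}^\dagger_R = \tilde{\calR}^{\bd}_{R}$, $\tilde{\bC}_R = \tilde{\calR}_R$, with kernel the appropriate constant ring. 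Left-exactness (identification of the kernel) is the easy part: an element fixed by $\varphi = \varphi_\varpi^a$ in $W_\varpi(R)$ must have all Witt components in the fixed subring $\FF_{p^a}$ of $R$ (using that $R$ is perfect, that $\varphi_\varpi^a$ acts on residues by $x \mapsto x^{p^{ah}}$, and that $\FF_{p^{ah}} \cap (\text{everything fixed by } x\mapsto x^{p^{ah}}) = \FF_{p^a}$ after accounting for $\gotho_E$-coefficients), so the kernel is $\gotho_{E_a}$; the argument for $\tilde{\bC}_R$ is analogous after inverting $\varpi$, and one gets $E_a$. For the constant-sheaf statement one then observes that this identification is compatible with the topology on $\left| Y \right|$ for varying $Y$, i.e.\ the kernel sheaf is $\underline{\gotho_{E_a}}_X$ (resp.\ $\underline{E_a}_X$).

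The substance is surjectivity of $\varphi-1$. For \eqref{eq:as1}, the standard technique is to solve $\varphi(x) - x = y$ one Witt component at a time: reducing mod $\varpi$ one needs surjectivity of $t \mapsto t^{p^{ah}} - t$ on $R$, which holds because $R$ is perfect and one can extract $p$-power roots and then solve the Artin–Schreier equation $s^{p^{ah}}-s = \bar y$ over the perfect ring $R$ (this is essentially the Artin–Schreier/Lang-type surjectivity over perfect rings, as in the Katz-style description recalled in the introduction); then one lifts successively, the error terms at each stage being controlled by the homogeneity of Witt vector arithmetic (Remark~\ref{R:homogeneity}). For \eqref{eq:as2} and \eqref{eq:as3} one must moreover keep the solution inside the overconvergent (bounded) subrings $\tilde{\calR}^{\inte,r}_R$; here the key point is that $\varphi$ \emph{shrinks} the radius, $\varphi \colon \tilde{\calR}^{\inte,r}_R \to \tilde{\calR}^{\inte,rp^{-ah}}_R$, so one works with $\varphi^{-1}$ instead: given $y \in \tilde{\calR}^{\inte,r}_R$ one solves $(\varphi^{-1}-1)(x) = -\varphi^{-1}(y)$ by the geometric series $x = -\sum_{n\ge 0}\varphi^{-n}(y)$, and one checks this converges in $\tilde{\calR}^{\inte,s}_R$ for suitable $s$ because the Gauss norms $\lambda(\alpha^t)$ behave well under $\varphi^{-1}$ (this is the standard overconvergence estimate; compare \cite[\S 5.2]{part1}). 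Passing to $\tilde{\calR}^{\bd}_R$ and then to $\tilde{\calR}_R$ is then a matter of inverting $\varpi$ and Fréchet-completing, using Proposition~\ref{P:weak flatness perfect Robba} and the quasi-Stein description (Remark~\ref{R:Robba quasi-Stein1}) together with Theorem~\ref{T:Kiehl quasi-Stein} to assemble the statement over a general interval from the closed-interval case; for \eqref{eq:as4} one additionally allows poles near the inner radius, which causes no new difficulty since $\varphi^{-1}$ still improves convergence.

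The reduction from sheaves to global sections requires a small amount of care: the sequences are sequences of sheaves on $X_{\proet}$, and exactness is a local (stalk) condition, so one may pass to a cofinal system of perfectoid subdomains $Y$; over each such $Y$ the terms are the global sections of the corresponding period rings over $(\overline{\calO}(Y), \overline{\calO}^+(Y))$, which is Fontaine perfectoid, so the above ring-theoretic surjectivity applies verbatim. Surjectivity of a map of sheaves then follows from surjectivity on a basis together with the fact that $\check H^1$ of the kernel sheaf vanishes on that basis — but the kernel here is a constant sheaf, which is automatically a sheaf and whose surjectivity onto itself is trivial, so no cohomological obstruction arises; alternatively one simply notes that $\varphi - 1$ being surjective on sections over every member of a basis makes it an epimorphism of sheaves.

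\textbf{Main obstacle.} The delicate point is the overconvergence bookkeeping in \eqref{eq:as2} and \eqref{eq:as3}: one must produce a solution of $(\varphi-1)(x)=y$ that lies in $\tilde{\calR}^{\inte,r}_R$ (not merely in $W_\varpi(R)$) with explicit control of the radius, and then check that the resulting $x$ is genuinely overconvergent rather than merely formal. This is where the log-convexity of $r \mapsto \lambda(\alpha^r)(x)$ (Definition~\ref{D:Gauss norm}) and the contraction property of $\varphi^{-1}$ on Gauss norms have to be combined carefully; everything else is a routine successive-approximation argument using Remark~\ref{R:homogeneity}. I expect the write-up to cite \cite[\S 5.2]{part1} or \cite[Proposition~5.2.\!*]{part1} for the precise overconvergent Artin–Schreier estimate rather than reproving it.
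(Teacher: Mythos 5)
There is a genuine gap, and it sits at the heart of your argument. You reduce everything to surjectivity of $\varphi-1$ on global sections over a perfectoid affinoid, and you justify the key step by asserting that $t \mapsto t^{p^{ah}} - t$ is surjective on $R$ ``because $R$ is perfect.'' Perfectness supplies $p$-power roots, which are purely inseparable; it does not supply roots of the separable Artin--Schreier equation $s^{p^{ah}} - s = \overline{y}$. Already for $R$ the completed perfection of $\FF_p((t))$ the map $s \mapsto s^p - s$ is far from surjective, and correspondingly $\varphi-1$ is \emph{not} surjective on $\tilde{\bA}_R = W_\varpi(R)$ for a fixed perfectoid subdomain $Y$: if it were, $H^1_\varphi(\tilde{\bA}_R)$ would vanish, whereas by Theorem~\ref{T:pseudocoherent type A} this group computes $H^1$ of the constant $\gotho_{E_a}$-local system on $Y_{\proet}$, which is in general nonzero. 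The lemma is a statement about sheaves on $X_{\proet}$, and the surjectivity is only pro-\'etale-local: the paper's proof of \eqref{eq:as1} is precisely the observation that adjoining a root of an Artin--Schreier equation yields a faithfully finite \'etale $R$-algebra, so the equation becomes solvable after a finite \'etale cover. Your ``reduction to global sections'' therefore asserts a false statement, and the passage to covers cannot be avoided. (The identification of the kernel is comparatively harmless; the paper disposes of exactness at the middle by citing \cite[Corollary~5.2.4]{part1}.)

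The overconvergent cases also fail as written: the geometric series $x = -\sum_{n \geq 0} \varphi^{-n}(y)$ does not converge (take $y=1$: every term equals $1$), and no Gauss norm rescues this, since $\varphi^{-1}$ does not contract Teichm\"uller terms of norm at least $1$. The paper's route is different and worth internalizing. For \eqref{eq:as2} and \eqref{eq:as3}, having already solved $(\varphi-1)(y)=x$ with $y \in \tilde{\calE}^{\inte}_R = W_\varpi(R)$ (after a cover, by the case \eqref{eq:as1}), one shows that if $x \in \tilde{\calR}^{\inte}_R$ then $y$ is \emph{automatically} overconvergent, via the inductive bound $\max\{1, |\overline{y}_0|, \dots, |\overline{y}_n|\} \leq \max\{1, |\overline{x}_0|, \dots, |\overline{x}_n|\}$ on Teichm\"uller components; no new solution needs to be constructed. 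For \eqref{eq:as4}, one writes an arbitrary $x \in \tilde{\calR}_R$ as a bounded element plus a convergent sum $\sum_i \varpi^{n_i}[\overline{z}_i]$ with $\overline{z}_i \in R^{\circ\circ}$, and the latter equals $(\varphi-1)(w)$ for the convergent series $w = -\sum_{i}\sum_{j} \varpi^{n_i}[\overline{z}_i^{p^j}]$ --- note the \emph{positive} powers of Frobenius, which converge precisely because the $\overline{z}_i$ are topologically nilpotent --- thereby reducing to the bounded case. Your write-up needs both corrections: sheaf-local (not ring-level) surjectivity via finite \'etale Artin--Schreier covers, and the a-posteriori overconvergence estimate in place of the divergent $\varphi^{-1}$-series.
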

\begin{proof}
Exactness at the left is trivial. For exactness at the middle, see \cite[Corollary~5.2.4]{part1}. For exactness at the right in the case of \eqref{eq:as1}, note that surjectivity amounts to the fact that adjoining to $R$ a root of an Artin-Schreier equation creates a faithfully finite \'etale $R$-algebra. 

To check exactness at the right in the case of \eqref{eq:as2} (and hence \eqref{eq:as3}),
by the previous paragraph it is enough to check that for $x \in \tilde{\calR}^{\inte}_R$, $y \in \tilde{\calE}^{\inte}_R$ satisfying $(\varphi-1)(y) = x$, we have $y \in \tilde{\calR}^{\inte}_R$. To see this, write $x = \sum_{n=0}^\infty \varpi^n [\overline{x}_n]$, $y = \sum_{n=0}^\infty \varpi^n [\overline{y}_n]$; by induction on $n$, we have
\[
\max\{1, \left|\overline{y}_0 \right|, \dots, \left|\overline{y}_n \right|\} \leq 
\max\{1, \left|\overline{x}_0 \right|, \dots, \left|\overline{x}_n \right|\}.
\]
This proves the claim.

To check exactness at the right in the case of \eqref{eq:as4}, note that any $x \in \tilde{\calR}_R$ can be written as the sum of some $y \in \tilde{\calR}^{\bd}$ with a convergent sum $\sum_{i=0}^\infty \varpi^{n_i} [\overline{z}_i]$ for some $n_i \in \ZZ$,
$\overline{z}_i \in R^{\circ \circ}$. We then have $x = y + (\varphi-1)(w)$ for
\[
w = -\sum_{i=0}^\infty \sum_{j=0}^\infty \varpi^{n_i} [\overline{z}_i^{p^j}].
\]
We are thus reduced to the previous paragraph.
\end{proof}

It is obvious that every pseudocoherent $\gotho_E$-local system admits a projective resolution of length 1 by projective local systems. 
We next verify the corresponding statement on the side of $\varphi$-modules.

\begin{lemma} \label{L:pseudocoherent type A torsion}
Let $S$ be one of $\tilde{\calR}^{\inte}_R, \tilde{\calE}_R^{\inte}$.
Let $M$ be a finitely presented $S$-module equipped with a semilinear $\varphi$-action.
\begin{enumerate}
\item[(a)]
The Fitting ideals of $M$ are generated by elements of $S$ of the form $\varpi^n [\overline{e}_n]$ for $n$ a nonnegative integer and $\overline{e}_n \in R$ an idempotent.
\item[(b)]
The $\varpi$-power torsion submodule $T$ of $M$ is finitely generated, and in particular killed by some power of $\varpi$.
\item[(c)]
The quotient $M/T$ is projective as an $S$-module. In particular, $T$ is also a finitely presented $S$-module.
\item[(d)]
The module $T$ is a finite direct sum of $\varphi$-stable submodules, each of which is finite projective over $S/\varpi^n S$ for some (varying) positive integer $n$.
\item[(e)]
The module $M$ is strictly pseudocoherent over $S$.
\end{enumerate}
\end{lemma}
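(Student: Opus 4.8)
The plan is to prove the five claims (a)--(e) in sequence, with each feeding into the next, and in particular to deduce (e) cheaply once (c) and (d) are in hand. Throughout, write $S$ for either $\tilde{\calR}^{\inte}_R$ or $\tilde{\calE}^{\inte}_R$, and recall that both rings are $\varpi$-torsion-free, $\varpi$-adically separated, with $S/\varpi S = R$ (in the extended Robba case $\tilde{\calR}^{\inte}_R/\varpi \cong R$ by inspection of the norm $\lambda(\alpha^r)$; in the Witt vector case it is immediate). The first step is (a): since $M$ is finitely presented with a semilinear $\varphi$-action, each Fitting ideal $\Fitt_i(M)$ is a finitely generated ideal of $S$ which is $\varphi$-stable (Fitting ideals are functorial, so they are carried to themselves by the isomorphism $\varphi^* M \cong M$). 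I would reduce modulo $\varpi$ to get a finitely generated $\varphi$-stable ideal $\overline{J}$ of the perfect ring $R$; because $R$ is perfect and reduced, any finitely generated $\varphi$-stable (equivalently $\varphi^{-1}$-stable, since $\varphi$ is bijective on $R$) ideal is generated by an idempotent $\overline{e}$ (a finitely generated radical ideal in a reduced ring need not be idempotent-generated in general, so here I must genuinely use $\varphi$-stability: if $\overline{J}=(\overline{a}_1,\dots,\overline{a}_k)$ then $\varphi$-stability forces $\overline{a}_j^{1/p}\in\overline{J}$, hence $\overline{J}=\overline{J}^{1/p}=\overline{J}^{1/p^2}=\cdots$, and one checks this union-stability plus finite generation yields idempotency). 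Lifting the idempotent through the $\varpi$-adically complete ring $S$ (idempotents lift along $\varpi$-adically complete maps) and using $\varpi$-adic completeness plus Nakayama to account for the $\varpi$-adic filtration, one gets that $\Fitt_i(M)$ is generated by elements $\varpi^{n}[\overline{e}_n]$; this is where I must be slightly careful about how the $\varpi$-adic valuation of a finitely generated $\varphi$-stable ideal is itself $\varphi$-stable, forcing the exponent $n$ to be well-defined.

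For (b), I would use (a): $M$ is finitely generated, so $T=M[\varpi^\infty]$ is the kernel of $M\to M\otimes_S S[\varpi^{-1}]=M_\varpi$, and $M_\varpi$ is a finitely presented module over $S[\varpi^{-1}]$ with a semilinear $\varphi$-action. Over $\tilde{\calE}_R$ or $\tilde{\calR}^{\bd}_R$ one knows (this is essentially in \cite{part1}, and also follows from the structure of Fitting ideals in (a), which over $S[\varpi^{-1}]$ become idempotent-generated) that such a module is finite projective; hence the image of $M$ in $M_\varpi$ is finitely presented and $T=\ker(M\to M_\varpi)$ is finitely generated by Lemma~\ref{L:pseudocoherent 2 of 3} applied to $0\to T\to M\to \image\to 0$. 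A finitely generated $\varpi$-power-torsion module is killed by a single power of $\varpi$ (Remark~\ref{R:torsion bound}). For (c), with $T$ now known to be finitely generated and $\varpi^N T=0$, the quotient $M/T$ is $\varpi$-torsion-free and finitely presented; I would show it is finite projective by checking flatness fiberwise via its Fitting ideals — by (a) these are idempotent-generated, and a finitely presented module whose Fitting ideals are generated by idempotents is finite projective (locally free of locally constant rank). Then $T=\ker(M\to M/T)$ is finitely presented since $M/T$ is finite projective.

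For (d), I decompose $T$ according to the idempotents governing the exponents: for each $n\ge 1$ let $T_n$ be the part of $T$ on which $\varpi^{n}$ acts as zero but $\varpi^{n-1}$ does not, cut out by a $\varphi$-stable idempotent of $R$ (obtained from the Fitting ideals of $T$ as in (a)); since $\varpi^N T=0$ only finitely many $T_n$ are nonzero, and each $T_n$ is finite projective over $S/\varpi^n S$ — again because its Fitting ideals over $S/\varpi^n S$ are idempotent-generated and it is finitely presented there — and the decomposition is visibly $\varphi$-stable because it was cut out by $\varphi$-stable idempotents. Finally (e) is immediate: $M$ sits in $0\to T\to M\to M/T\to 0$ with $M/T$ finite projective (hence pseudocoherent) and $T$ a finite direct sum of the $T_n$; each $T_n$ is finite projective over $S/\varpi^n S$, and $S/\varpi^n S$ is itself a pseudocoherent $S$-module (it has the finite free resolution $0\to S\xrightarrow{\times\varpi^n}S\to S/\varpi^n S\to 0$ since $\varpi$ is not a zero-divisor), so by the change-of-rings bullet in Remark~\ref{R:need flatness} ($S/\varpi^n S$ pseudocoherent over $S$ implies $S/\varpi^n S$-modules pseudocoherent over $S$ are pseudocoherent over $S$), $T_n$ is pseudocoherent over $S$; then Lemma~\ref{L:pseudocoherent 2 of 3}(a) gives $T$ pseudocoherent and another application gives $M$ pseudocoherent. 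The main obstacle I anticipate is step (a), specifically pinning down why a finitely generated $\varphi$-stable ideal of the perfect ring $R$ — and then of $S$ after accounting for the $\varpi$-adic filtration — is generated by (an idempotent times a power of) $\varpi$; the perfectness and bijectivity of $\varphi$ are essential and the argument must avoid any false general claim about finitely generated radical ideals, so this is where I would be most careful.
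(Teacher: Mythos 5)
Your overall skeleton matches the paper's ((a) gives the structure of Fitting ideals; then $T$ is finitely generated and killed by a power of $\varpi$; $M/T$ is projective; $T$ decomposes along idempotents; and (e) follows from Lemma~\ref{L:pseudocoherent 2 of 3} together with the resolution $0 \to S \stackrel{\times \varpi^n}{\to} S \to S/\varpi^n S \to 0$ — your argument for (e) is exactly right), and your observation that a finitely generated $\varphi$-stable ideal $J$ of $R$ satisfies $J = J^{[p]} \subseteq J^2$ and is hence idempotent-generated is a correct and pleasant reduction mod $\varpi$. However, there is a genuine gap at the crucial finiteness step. In (b) you write that since $M_\varpi$ is finite projective over $S[\varpi^{-1}]$, ``hence the image of $M$ in $M_\varpi$ is finitely presented'' over $S$. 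This inference is not justified and is exactly the hard point: a finitely generated $S$-submodule $L$ of a finite projective $S[\varpi^{-1}]$-module with $L[\varpi^{-1}]$ everything need not be finitely presented when $S$ is not coherent (take any finitely generated, non--finitely presented ideal containing $\varpi$), and at this step you make no use of the $\varphi$-structure, which is the only thing that can save you. The paper's proof spends essentially all of its effort here: it uses Lemma~\ref{L:lift action1} to produce a $\varphi$-equivariant surjection $F \to M$ with $F$ finite projective, sets $N = \ker(F \to M/T)$, shows $N/\varpi N$ is finite projective over $R$ (via \cite[Proposition~3.2.13]{part1} applied to $(M/T)/\varpi(M/T)$), and then proves by an explicit convergence argument that $N$ is generated by lifts of generators of $N/\varpi N$ — $\varpi$-adically for $\tilde{\calE}^{\inte}_R$, and in the Fr\'echet topology for $\tilde{\calR}^{\inte}_R$, where neither $\varpi$-adic completeness nor a naive Nakayama argument is available. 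Without this (or an equivalent substitute), your claims that $T$ is finitely generated and $M/T$ finitely presented are assumed rather than proved.

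The same issue undercuts your sketch of (a). You propose to lift the mod-$\varpi$ idempotent ``through the $\varpi$-adically complete ring $S$ \ldots using $\varpi$-adic completeness plus Nakayama,'' but $\tilde{\calR}^{\inte}_R$ is not $\varpi$-adically complete (it is only Fr\'echet complete and henselian along $(\varpi)$), so the argument as stated does not cover half the lemma; and even for $S = \tilde{\calE}^{\inte}_R = W_\varpi(R)$, knowing the reduction of $\Fitt_i(M)$ modulo $\varpi$ does not determine the ideal — one must control all the leading-term ideals of the $\varpi$-adic filtration and run a convergence/limit argument, which is precisely the content of \cite[Proposition~3.2.13]{part1} that the paper simply cites for (a). You correctly flag this as the delicate point, but the argument you would need there is the missing ingredient, not a routine completion of the sketch. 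The remaining steps (c), (d), (e) are fine modulo these inputs, apart from the small elision in (c) that torsion-freeness is what rules out positive powers of $\varpi$ in the generators $\varpi^n[\overline{e}_n]$ before you can say the Fitting ideals of $M/T$ are idempotent-generated.
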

\begin{proof}
Part (a) is immediate from \cite[Proposition~3.2.13]{part1}.
By Lemma~\ref{L:lift action1}, we may construct a surjection $F \to M$ of $S$-modules with semilinear $\varphi$-actions such that $F$ is finite projective as an $S$-module. Put $M' = M/T$ and let $N$ be the kernel of $F \to M'$. Since $M'$ is $\varpi$-torsion-free, it is flat over $\gotho_E$, so 
\[
0 \to N/\varpi N \to F/\varpi F \to M'/\varpi M' \to 0
\]
is an exact sequence of $R$-modules.
By \cite[Proposition~3.2.13]{part1}, $M'/ \varpi M'$ is a finite projective $R$-module, as then is $N/ \varpi N$. 

In the case where $S = \tilde{\calE}^{\inte}_R$, we may show directly (without Nakayama's lemma, which is not applicable without \emph{a priori} knowledge of finiteness) that $N$ is generated as an $S$-module by any finite set lifting a set of generators of $N/\varpi N$: given $\bv \in N$, we may write it as the limit of a $\varpi$-adically convergent series of linear combinations of generators. By a similar argument (replacing the $\varpi$-adic topology with the Fr\'echet topology), we may deduce the same conclusion when $S = \tilde{\calR}^{\inte}_R$.

Let $T_n$ be the $\varpi^n$-torsion submodule of $M$, so that $T = \bigcup_{n=1}^\infty T_n$.
By Lemma~\ref{L:pseudocoherent 2 of 3}, $M'$ is a finitely presented $S$-module and $T = \ker(M \to M')$ is a finitely generated $S$-module.
In particular, $T = T_n$ for some $n$.
This proves (b). 

By (b) and  Lemma~\ref{L:pseudocoherent 2 of 3}, $M/T$ is a finitely presented $S$-module.
We may then apply (a) to deduce (c). From (a) and (c), we deduce (d), which implies (e)
using Lemma~\ref{L:pseudocoherent 2 of 3}.
\end{proof}

\begin{cor} \label{C:pseudocoherent type A flat}
Let $S$ be one of $\tilde{\calR}^{\inte}_R, \tilde{\calE}_R^{\inte}$.
Then every pseudocoherent $\varphi$-module over $S$ admits a projective resolution of length at most $1$ by projective $\varphi$-modules, and in particular is fpd of projective dimension at most $1$.
\end{cor}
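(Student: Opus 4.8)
The plan is to reduce to Lemma~\ref{L:pseudocoherent type A torsion} and then build an explicit finite projective resolution of length~$1$. Let $M$ be a pseudocoherent $\varphi$-module over $S$ (where $S$ is one of $\tilde{\calR}^{\inte}_R$ or $\tilde{\calE}^{\inte}_R$); in particular $M$ is finitely presented, so Lemma~\ref{L:pseudocoherent type A torsion} applies. Write $T$ for the $\varpi$-power torsion submodule and $M' = M/T$; by parts (b), (c) of that lemma, $T$ is finitely presented and killed by some $\varpi^n$, and $M'$ is finite projective over $S$. So it suffices to show separately that $T$ is $1$-fpd and that $M'$ is $0$-fpd, and then assemble: from the short exact sequence $0 \to T \to M \to M' \to 0$ and Lemma~\ref{L:pseudocoherent 2 of 3}(d) (applied with the convention that a $0$-fpd module is $1$-fpd), $M$ is $1$-fpd. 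The module $M'$ is already finite projective, hence $0$-fpd, so nothing is needed there.

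The heart of the matter is thus to show that each torsion piece is $1$-fpd. By Lemma~\ref{L:pseudocoherent type A torsion}(d), $T$ decomposes as a finite direct sum of $\varphi$-stable submodules, each of which is finite projective over $S/\varpi^m S$ for some positive integer $m$ (varying with the summand); since a finite direct sum of $1$-fpd modules is $1$-fpd by Lemma~\ref{L:pseudocoherent 2 of 3}(d), it is enough to treat a single summand $T_m$ that is finite projective over $S/\varpi^m S$. Here I would use that $\varpi$ is not a zero-divisor in $S$: indeed $W_\varpi(R^+)$ and its completions are $\varpi$-torsion-free by Hypothesis~\ref{H:towers} and the construction of the norms $\lambda(\alpha^r)$, so $S$ is $\varpi$-torsion-free, hence (Remark~\ref{R:transfer pseudocoherence to quotient}) $S/\varpi^m S$ is a $1$-fpd $S$-module via the resolution $0 \to S \xrightarrow{\times \varpi^m} S \to S/\varpi^m S \to 0$. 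Since $T_m$ is finite projective over $S/\varpi^m S$, it is a direct summand of a finite free $S/\varpi^m S$-module $(S/\varpi^m S)^k$, which is $1$-fpd over $S$; and a direct summand of a $1$-fpd module is $1$-fpd (one can see this by Lemma~\ref{L:pseudocoherent 2 of 3}(f) applied to the splitting, or directly). Therefore $T_m$ is $1$-fpd over $S$, hence so is $T$, hence so is $M$.

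The main obstacle, such as it is, is bookkeeping rather than conceptual: one must make sure the various exact sequences line up so that Lemma~\ref{L:pseudocoherent 2 of 3}(d) can be invoked legitimately at each join (the statement there concerns $m$-fpd modules for a fixed $m$, so I would uniformly use $m=1$, absorbing the finite projective piece $M'$ as a trivially $1$-fpd module), and one must confirm that $\varphi$-equivariance of the splittings in Lemma~\ref{L:pseudocoherent type A torsion}(c),(d) is not actually needed for the fpd conclusion—only the underlying $S$-module structure matters for projective dimension. I do not expect to need any new input about the $\varphi$-action: the entire content has been pushed into Lemma~\ref{L:pseudocoherent type A torsion}, and the corollary is a formal consequence via the two-out-of-three property of fpd modules together with the observation that $\varpi$ is a non-zero-divisor so that $S/\varpi^m S$ is $1$-fpd.
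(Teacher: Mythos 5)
Your argument is correct, but it assembles the ingredients differently from the paper. The paper's proof combines Lemma~\ref{L:lift action1} with Lemma~\ref{L:pseudocoherent type A torsion}: use Lemma~\ref{L:lift action1} to produce a $\varphi$-equivariant surjection $F \to M$ with $F$ finite projective; its kernel is finitely presented (Lemma~\ref{L:pseudocoherent 2 of 3}), carries a semilinear $\varphi$-action, and is $\varpi$-torsion-free (being a submodule of a projective module over the $\varpi$-torsion-free ring $S$), hence is projective by Lemma~\ref{L:pseudocoherent type A torsion}, yielding a length-$1$ resolution --- in fact one by projective $\varphi$-modules. You instead split $M$ itself: $M \cong T \oplus M/T$ (the extension splits since $M/T$ is projective), and handle $T$ via Lemma~\ref{L:pseudocoherent type A torsion}(d) together with the observations that $\varpi$ is not a zero-divisor in $S$ (clear from Teichm\"uller expansions), so that $S/\varpi^m S$ is $1$-fpd as in Remark~\ref{R:transfer pseudocoherence to quotient}, and that a direct summand of a $1$-fpd module is $1$-fpd. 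Both routes are valid; yours bypasses Lemma~\ref{L:lift action1} entirely and makes explicit that only the underlying $S$-module structure matters for the conclusion, while the paper's route has the extra payoff of exhibiting the resolution within the category of $\varphi$-modules, which is what gets used elsewhere (e.g.\ in cohomology comparisons). One small repair: your parenthetical that the summand claim follows from Lemma~\ref{L:pseudocoherent 2 of 3}(f) does not quite work as stated, since applying (f) to $0 \to T_m \to (S/\varpi^m S)^k \to Q \to 0$ requires knowing that the complementary summand $Q$ is $2$-fpd, which is the same kind of statement you are trying to prove; the direct argument you also offer is the one to use: $\Ext^2_S(T_m \oplus Q, N) = 0$ for all $N$ forces $\Ext^2_S(T_m, N) = 0$, so $T_m$ is $1$-fpd by the Ext-characterization given in the definition of fpd modules.
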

\begin{proof}
Combine Lemma~\ref{L:lift action1} with Lemma~\ref{L:pseudocoherent type A torsion}.
\end{proof}

\begin{remark}
We have not investigated the structure of the underlying modules of pseudocoherent $\varphi$-modules over $\tilde{\calR}^{\bd}_R$ or $\tilde{\calE}_R$. In any case, we will have little use for such objects in what follows.
\end{remark}

\begin{theorem} \label{T:pseudocoherent type A}
Assume that $R$ is an $\gotho_{E_a}$-algebra (equivalently, an $\FF_{p^{ah}}$-algebra) and put $X = \Spa(R,R^+)$.
The following tensor categories are equivalent.
\begin{enumerate}
\item[(a)]
The category of \'etale projective (resp.\ pseudocoherent, fpd) $\gotho_{E_a}$-local systems on $X_{\proet}$.
\item[(b)]
The category of projective (resp.\ pseudocoherent, fpd) $\varphi$-modules over $\tilde{\calE}^{\inte}_R$.
\item[(c)]
The category of projective (resp.\ pseudocoherent, fpd) $\varphi$-modules over $\tilde{\calR}^{\inte}_R$.
\item[(d)]
The category of projective (resp.\ pseudocoherent, fpd) $\varphi$-modules over $\tilde{\bA}_{X}$, viewing the latter as a ring sheaf on any of the sites $X, X_{\et}, X_{\proet}, X_{\faith}$.
\item[(e)]
The category of projective (resp.\ pseudocoherent, fpd) $\varphi$-modules over $\tilde{\bA}^\dagger_{X}$, viewing the latter as a ring sheaf on any of the sites $X, X_{\et}, X_{\proet},  X_{\faith}$.
\end{enumerate}
In particular, the functors from (a) to (d) and from (a) to (e)
are base extensions of $\underline{\gotho_{E_a}}$-modules, with the quasi-inverse functors being $\varphi$-invariants;
and the functors from (d) to (b) and from (e) to (c) are global  sections.
Moreover, these equivalences induces an isomorphism between sheaf cohomology groups
(in case (a)), $\varphi$-cohomology groups (in cases (b)--(c)),
and $\varphi$-hypercohomology groups (in cases (d)--(e)).
(The reified analogue also holds.)
\end{theorem}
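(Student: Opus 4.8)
## Proof Plan

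The plan is to follow the architecture of \cite[\S 8.3]{part1}, upgrading each step from the projective case to the pseudocoherent and fpd cases. The projective case is already available from \cite{part1} (modulo the expanded generality of Fontaine perfectoid rings, which has been dealt with in \S\ref{sec:perfectoid supplemental}), so the real work is entirely in the finitely-generated-but-not-projective part of the statement. First I would reduce everything to the equivalence of (a), (b), (c), since the equivalences with (d) and (e) are matters of descent: by Theorem~\ref{T:perfect Robba proetale cohomology} and Theorem~\ref{T:perfect Robba Kiehl}, pseudocoherent (resp.\ fpd) sheaves of $\tilde{\bA}$-modules (resp.\ $\tilde{\bA}^\dagger$-modules) on any of the listed sites are equivalent to pseudocoherent (resp.\ fpd) modules over the ring of global sections, compatibly with cohomology; the $\varphi$-action is carried along formally, and one checks that a $\varphi$-module structure on a sheaf is the same as one on global sections. (For the v-site and pfqc-site in the fpd case, one additionally invokes Theorem~\ref{T:pseudocoherent pfqc descent} and Theorem~\ref{T:faithful acyclic fpd}.) The cohomology comparison then reduces to the statement that $R\Gamma$ commutes with the formation of the two-term complex $[M \xrightarrow{\varphi-1} M]$, which is immediate since $R\Gamma$ is exact on pseudocoherent sheaves here.

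Next I would handle the equivalence (b) $\Leftrightarrow$ (c). The key input is Corollary~\ref{C:pseudocoherent type A flat}: every pseudocoherent $\varphi$-module over $\tilde{\calE}^{\inte}_R$ or $\tilde{\calR}^{\inte}_R$ is fpd of projective dimension at most $1$, so by Lemma~\ref{L:lift action1} it fits in a short exact sequence $0 \to P_1 \to P_0 \to M \to 0$ of $\varphi$-modules with $P_0, P_1$ projective. One then defines the base-change functor $M \mapsto M \otimes_{\tilde{\calR}^{\inte}_R} \tilde{\calE}^{\inte}_R$ and its conjectural quasi-inverse. Full faithfulness and essential surjectivity on the projective objects is the statement of \cite[Theorem~8.5.x]{part1} (the existing projective equivalence), so the point is to propagate this across the resolution: given $M$ over $\tilde{\calE}^{\inte}_R$, resolve it by projectives, descend the resolution to $\tilde{\calR}^{\inte}_R$ using the projective case, and take the cokernel; exactness of base change (which holds here since $\tilde{\calR}^{\inte}_R \to \tilde{\calE}^{\inte}_R$ is flat, being a completion of a $\varpi$-torsion-free ring along the $\varpi$-adic topology) shows this is independent of the resolution and inverse to the forward functor. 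The "two out of three" property (Lemma~\ref{L:pseudocoherent 2 of 3}) keeps us inside the pseudocoherent world at each step, and Corollary~\ref{C:pseudocoherent type A flat} keeps us in the fpd world for the fpd statement.

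Finally I would handle (a) $\Leftrightarrow$ (b), the Artin--Schreier step. Here the engine is the exactness of the sequences \eqref{eq:as1} and \eqref{eq:as2} from Lemma~\ref{L:AS exact}: for a $\varphi$-module $M$ over $\tilde{\bA}_X$ one forms $M^{\varphi^a = 1}$, and conversely for a local system $L$ one forms $L \otimes_{\underline{\gotho_{E_a}}} \tilde{\bA}_X$; the projective case of this equivalence is \cite[Theorem~8.5.x]{part1}. To extend to pseudocoherent (resp.\ fpd) objects, I would again resolve: a pseudocoherent $\gotho_{E_a}$-local system admits a length-one resolution by projective local systems (obvious, since $\gotho_{E_a}$ is a discrete valuation ring so projective dimension is at most $1$ and finitely generated projective modules over it are free), and likewise a pseudocoherent $\varphi$-module over $\tilde{\bA}_X$ is fpd of projective dimension at most $1$ by Corollary~\ref{C:pseudocoherent type A flat} (applied to global sections via Theorem~\ref{T:perfect Robba Kiehl}) and Lemma~\ref{L:lift action1}; one then transfers the resolution across the already-established projective equivalence. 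The tensor structure and the cohomology isomorphism on projective objects are inherited, and the general cohomology comparison follows from the long exact sequence associated to the resolution together with the five lemma.

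The main obstacle I anticipate is bookkeeping around the \emph{torsion}: the module underlying a pseudocoherent $\varphi$-module over $\tilde{\calE}^{\inte}_R$ is a direct sum of a projective part and a $\varpi$-power-torsion part which is finite projective over $\tilde{\calE}^{\inte}_R/\varpi^n$ for varying $n$ (Lemma~\ref{L:pseudocoherent type A torsion}(d)), and one has to check that the resolutions chosen above are compatible with this decomposition, i.e.\ that they specialize correctly modulo $\varpi^n$ and that base change along $\tilde{\calR}^{\inte}_R \to \tilde{\calE}^{\inte}_R$ and along $\tilde{\bA}^\dagger_X \to \tilde{\bA}_X$ is exact on the relevant torsion modules. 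This is where Lemma~\ref{L:AS exact}'s exactness at the right for \eqref{eq:as2} (the overconvergence estimate) does the crucial work of pinning the $\varphi^{-1}$-fixed vectors inside the overconvergent ring. I expect no conceptual difficulty beyond this, but it requires care to state the compatibilities precisely.
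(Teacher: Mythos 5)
The main gap is your very first reduction. You dispose of (d) by asserting that pseudocoherent sheaves of $\tilde{\bA}$-modules on the listed sites are equivalent to modules over the ring of global sections ``by Theorem~\ref{T:perfect Robba proetale cohomology} and Theorem~\ref{T:perfect Robba Kiehl}.'' Those theorems only treat the sheaves $\tilde{\bA}^r$ and $\tilde{\bC}^I$, i.e.\ the Banach rings $\tilde{\calR}^{\inte,r}_{\overline{\calO}(Y)}$ and $\tilde{\calR}^I_{\overline{\calO}(Y)}$; the full Witt-vector sheaf $\tilde{\bA}_X(Y) = W_\varpi(\overline{\calO}(Y))$ is not among them, and it is not reached by any Gauss-norm/quasi-Stein argument, since when the norm on $R$ is nontrivial $W_\varpi(R) = \tilde{\calE}^{\inte}_R$ is strictly larger than every $\tilde{\calR}^{\inte,r}_R$. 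This is exactly why the paper devotes a separate step to (b) $\Leftrightarrow$ (d): it embeds $R$ into the completion $S$ of $R[T^{\pm p^{-\infty}}]$ for the $T$-adic norm (so the norm on $R$ becomes trivial and Witt vectors of $R$ are controlled), uses the constant-coefficient splitting of $R \to S$ to get an exact \v{C}ech sequence, and deduces the (b)--(d) comparison from the (c)--(e) comparison applied to $(S,S^+)$. Without an argument of this kind, your equivalence of (d) with (b), and the $\varphi$-hypercohomology comparison over $\tilde{\bA}$, are unsupported. Relatedly, essential surjectivity onto (d) is not a formal resolution transfer in the paper: it splits off the $\varpi$-power-torsion part via Lemma~\ref{L:pseudocoherent type A torsion}, handles the extension class through the cohomology comparison on torsion objects, and reduces the projective part to torsion by inverse limits and ultimately to \cite[Proposition~3.2.7]{part1} modulo $\varpi$.

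Your (b) $\Leftrightarrow$ (c) step rests on the claim that $\tilde{\calR}^{\inte}_R \to \tilde{\calE}^{\inte}_R$ is flat ``being a completion of a $\varpi$-torsion-free ring along the $\varpi$-adic topology.'' That general principle fails for non-noetherian rings --- see Remark~\ref{R:no pseudoflat completion}, which cites the Beauville--Laszlo example with $\Tor_1^R(R/\varphi R, \widehat{R}) \neq 0$ --- and $\tilde{\calR}^{\inte}_R$ is not noetherian, so this flatness would require a genuine argument or must be avoided. The paper avoids it: (b) $\Leftrightarrow$ (c) is obtained from the already established chain (b) $\Leftrightarrow$ (d) $\Leftrightarrow$ (a) $\Leftrightarrow$ (e) $\Leftrightarrow$ (c), and the cohomology comparison is made by comparing Yoneda extension groups in degrees $0$ and $1$ (using Lemma~\ref{L:pseudocoherent 2 of 3} and vanishing above degree $1$), with no flatness needed. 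If you want to stay closer to your plan, the fix is to argue via the structure theory (Lemma~\ref{L:pseudocoherent type A torsion}, Corollary~\ref{C:pseudocoherent type A flat}): reduce to the projective case plus the $\varpi^n$-torsion case, where the two rings agree modulo $\varpi^n$. Two smaller points: your claimed global length-one resolution of a pseudocoherent local system by projective local systems is only obvious locally, but it is also unnecessary --- resolving on the $\varphi$-module side via Lemma~\ref{L:lift action1} suffices, and that ring-level resolution-transfer for (a) $\to$ (b) is in fact a legitimate alternative to the paper's d\'evissage; and your citations of Theorems~\ref{T:pseudocoherent pfqc descent} and~\ref{T:faithful acyclic fpd} concern $\calO$-modules, not $\tilde{\bA}$-modules, so they do not by themselves settle the pfqc and v-topology cases of (d).
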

\begin{proof}
We first connect (c) and (e) (for any choice of site). In this case, the equivalence follows from 
Theorem~\ref{T:perfect Robba Kiehl}
and the comparison from Theorem~\ref{T:perfect Robba proetale cohomology}.
(Note that by Lemma~\ref{L:pseudocoherent type A torsion}, the underlying module of any pseudocoherent $\varphi$-module over $\tilde{\calR}^{\inte}_R$ is stably fpd.)

We next connect (b) and (d). Let $S,S^+$ be the completions of $R[T^{\pm p^{-\infty}}],
R[T^{p^{-\infty}}]$ for the $T$-adic norm (for any normalization; note that the norm on $R$ is now trivial). Since $R \to S$ admits a splitting given by the constant coefficient map, the \v{C}ech (simplicial) sequence
\[
0 \to R \to S \to S \widehat{\otimes}_R S \to S \widehat{\otimes}_R S \widehat{\otimes}_R S \to \cdots
\]
is exact. We may thus deduce the connection from the connection between (c) and (e) applied to $(S,S^+)$.

For an object $T$ of (a), the corresponding object of (d) (for the site $X_{\proet}$) is 
\begin{equation} \label{eq:type A base change1}
M = T \otimes_{\underline{\gotho_{E_a}}} \tilde{\bA}_X
\end{equation}
with the $\varphi$-action coming from the second factor; note that the base extension morphism is flat, so it acts on pseudocoherent and fpd objects. Similarly, the corresponding object of (e) is
\begin{equation} \label{eq:type A base change2}
M = T \otimes_{\underline{\gotho_{E_a}}} \tilde{\bA}^\dagger_X.
\end{equation}
By Lemma~\ref{L:AS exact}, the sequence 
\begin{equation} \label{eq:AS}
0 \to T \to M \stackrel{\varphi-1}{\to} M \to 0
\end{equation}
is exact.
It follows that the functors from (a) to (d) and from (a) to (e) are fully faithful.

We next check the comparison of cohomology from (a) to (e). Using the exactness of
\eqref{eq:AS}, this reduces to the acyclicity of $M$ from Theorem~\ref{T:perfect Robba proetale cohomology}. Note that this also immediately applies to the functor from (a) to (e) if we restrict consideration to $\varpi$-power-torsion objects.

We next check that the functor from (a) to (d) is essentially surjective.
By Lemma~\ref{L:pseudocoherent type A torsion}, any object $M$ of (d) fits into a short exact sequence
\[
0 \to M_0 \to M \to P \to 0
\]
of $\varphi$-modules in which $M_0$ is killed by some power of $\varpi$ and $P$ is projective. The extension is defined by some element of $H^1_\varphi(P^\dual \otimes M_0)$; note that $P^\dual \otimes M_0$ is again $\varpi$-power-torsion. By the fact that we have the comparison of cohomology from (a) to (e) on torsion objects, we may reduce to checking essential surjectivity in the torsion and projective cases. The latter itself reduces to the torsion case by taking inverse limits, using full faithfulness to match up the truncations. To treat the torsion case, we may again use comparison of cohomology to reduce to the case of $\varpi$-torsion objects, for which we may directly invoke \cite[Proposition~3.2.7]{part1}.

By the previous paragraph, the functor from (e) to (d) is essentially surjective
(because the equivalence from (a) to (d) factors through it).
However, from the previous calculations, this functor
induces a bijective map of $H^0$ groups and an injective map of $H^1$ groups, so in particular it is also fully faithful.
We thus have the full connection between (a) and (e).

We also have the equivalence between (b) and (c). For both of these categories, the cohomology vanishes by definition beyond degree 1, so it suffices to compare the Yoneda extension groups in degrees 0 and 1. By Lemma~\ref{L:pseudocoherent 2 of 3}, these extensions only involve pseudocoherent modules, so the comparison follows immediately from the equivalence. We therefore have the full connection between (b) and (c), thus completing the proof.
\end{proof}

\begin{cor} \label{C:pseudocoherent type A}
Let $X$ be a preadic space over $\gotho_{E_a}$. 
Then the category of \'etale projective (resp.\ pseudocoherent) $\gotho_{E_a}$-local systems $T$ on $X$ is equivalent to the category of projective (resp.\ pseudocoherent) $\varphi$-modules $M$ over $\bA^\dagger_X$ (viewing the latter as a ring sheaf on any of the sites $X, X_{\et}, X_{\proet}$; if $X$ is perfectoid we may also take $X_{\faith}$) via the functors
\[
T \mapsto M(T) = T \otimes_{\underline{E}} \tilde{\bA}^\dagger_X, \qquad
M \mapsto T(M) = M^{\varphi}.
\]
Moreover, this equivalence induces a functorial isomorphism of sheaf cohomology groups with $\varphi$-hypercohomology groups.
\end{cor}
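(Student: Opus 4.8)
The plan is to deduce Corollary~\ref{C:pseudocoherent type A} from Theorem~\ref{T:pseudocoherent type A} by a local-to-global (descent) argument, reducing an arbitrary preadic space $X$ over $\gotho_{E_a}$ to the affinoid perfectoid case already treated. First I would observe that the assignment $U \mapsto \{$pseudocoherent $\varphi$-modules over $\tilde{\bA}^\dagger_U\}$ (on whichever of the sites $U, U_{\et}, U_{\proet}$) is a stack for the \'etale topology: this is exactly the statement that $\tilde{\bA}^\dagger$-modules glue, which follows from Theorem~\ref{T:perfect Robba Kiehl}(a) applied over closed intervals and the comparison of $\tilde{\bA}^\dagger = \tilde{\calR}^{\inte}$ with the $\tilde{\calR}^{\inte,r}$ as $r \to 0$ (using Theorem~\ref{T:perfect Robba Kiehl}(d) to glue along the interval direction), together with the observation that the $\varphi$-action descends since $\varphi$ is functorial. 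Likewise the category of \'etale pseudocoherent $\gotho_{E_a}$-local systems on $X$ is a stack for the pro-\'etale topology by the definition of a local system as a locally finite pseudocoherent $\underline{\gotho_{E_a}}$-module and by the remark (Remark~\ref{R:GAGA for local systems}) that such objects may be computed on any topology refining the finite \'etale topology for which finite \'etale descent data are effective. Granting both stack properties, the functors $T \mapsto M(T)$ and $M \mapsto T(M)$ are defined locally and hence globally, and it suffices to prove the equivalence after passing to a cover.

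The key reduction is: any preadic space $X$ over $\gotho_{E_a}$ admits a pro-\'etale cover by affinoid perfectoid spaces $\Spa(R,R^+)$ with $R$ an $\FF_{p^{ah}}$-algebra — indeed, the perfectoid subdomains form a neighborhood basis of $X_{\proet}$ by Lemma~\ref{L:perfectoid cover}, and on a perfectoid subdomain the completed structure sheaf $\widehat{\calO}$ has the required form while $\tilde{\bA}^\dagger$ is built functorially from $\overline{\calO}$. Over such a cover, Theorem~\ref{T:pseudocoherent type A} gives the equivalence of categories (a) $\Leftrightarrow$ (e), with the explicit functors $T \mapsto T \otimes_{\underline{\gotho_{E_a}}} \tilde{\bA}^\dagger$ and $M \mapsto M^{\varphi}$, and simultaneously the isomorphism of sheaf cohomology with $\varphi$-hypercohomology. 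One then checks that these local equivalences are compatible with restriction along pro-\'etale morphisms (immediate, since both functors are defined by the same formulas and $\tilde{\bA}^\dagger$, $\underline{\gotho_{E_a}}$ are sheaves), hence glue to a global equivalence by the stack property. Full faithfulness globally follows because $\Hom$ is computed as an equalizer over the cover, and the cohomology comparison globalizes by the spectral sequence relating \v{C}ech and derived functor cohomology (as in Theorem~\ref{T:perfect Robba proetale cohomology}); the tensor-compatibility is inherited termwise.

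There are two mild wrinkles to address. One is passing between the rational $E = \gotho_E$ coefficients of the statement and the literal $\gotho_{E_a}$ of Theorem~\ref{T:pseudocoherent type A}: here one simply notes $E_a/E$ is unramified of degree $a$ with $h \mid a$, so an \'etale $\gotho_{E_a}$-local system $T$ on a space over $\gotho_{E_a}$ already carries the $\gotho_{E_a}$-structure, and the functor $T \mapsto T \otimes_{\underline{\gotho_{E_a}}} \tilde{\bA}^\dagger_X$ is exactly the one in Theorem~\ref{T:pseudocoherent type A} (this is why the corollary is stated with $E$ replaced silently by $E_a$ in the local systems and with $\tilde{\bA}^\dagger_X$ an $\gotho_{E_a}$-algebra sheaf). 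The second is that the corollary states the sites $X, X_{\et}, X_{\proet}$ (plus $X_{\pfqc}, X_{\faith}$ in the perfectoid case) all give equivalent categories; this follows because $\tilde{\bA}^\dagger$-modules satisfy descent for all of these topologies, by Theorem~\ref{T:perfect Robba proetale cohomology} and Theorem~\ref{T:perfect Robba Kiehl}, so the forgetful/pullback functors between the sites are themselves equivalences.

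The main obstacle I anticipate is \emph{not} the affinoid perfectoid case — that is handed to us by Theorem~\ref{T:pseudocoherent type A} — but verifying that pseudocoherent $\varphi$-modules over $\tilde{\bA}^\dagger_X$ genuinely form a stack for the pro-\'etale topology on a general (non-perfectoid, non-affinoid) preadic space, since pseudocoherence is not obviously a local condition in the absence of the pseudoflatness input we have only for stably uniform or perfectoid rings. The resolution is to reduce the gluing problem, via Remark~\ref{R:reduce to countable}, to countable linear towers of perfectoid subdomains, where Remark~\ref{R:perfectoid tower splitting Robba} supplies a splitting of $\tilde{\calR}^I_R \to \tilde{\calR}^I_S$ in the category of Banach modules; this splitting makes the relevant faithfully flat descent for pseudocoherent modules (as in Lemma~\ref{L:refined Kiehl} and Corollary~\ref{C:refined Kiehl perfectoid}) effective, and the $\varphi$-structure descends because $\varphi$ commutes with all the transition maps. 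Once this stack property is in hand, the rest is the formal gluing described above.
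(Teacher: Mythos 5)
Your proposal is correct and takes essentially the same route as the paper: the paper's proof is the one-line ``Immediate from Theorem~\ref{T:pseudocoherent type A},'' which means precisely the reduction you describe --- both sides are sheaf-theoretically (locally) defined, the functors are given globally by the same formulas, and the perfectoid subdomains (identified with their tilts, to which the theorem applies) form a basis of $X_{\proet}$, so the equivalence and the cohomology comparison are checked locally and glue. The only comment is that the ``main obstacle'' you flag is not really one: pseudocoherent $\varphi$-modules over $\tilde{\bA}^\dagger_X$ are by definition sheaves that are locally associated to pseudocoherent modules, so the stack property is built into the definition and the descent machinery (Remark~\ref{R:reduce to countable}, Remark~\ref{R:perfectoid tower splitting Robba}) you invoke, while harmless, is not needed beyond what Theorem~\ref{T:pseudocoherent type A} already supplies.
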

\begin{proof}
Immediate from Theorem~\ref{T:pseudocoherent type A}.
\end{proof}

\begin{defn}
A $\varphi$-module $M$ over $\tilde{\calR}^{\bd}_R$ or $\tilde{\calR}_R$ is \emph{globally \'etale} if it arises by base extension from $\tilde{\calR}^{\inte}_R$.

A $\varphi$-module $M$ over $\tilde{\bB}^\dagger$ or $\tilde{\bC}$ is \emph{\'etale} if it arises by base extension from $\tilde{\bA}^\dagger$.
Beware that for $X = \Spa(R,R^+)$, a $\varphi$-module over $\tilde{\calR}^{\bd}_R$ or $\tilde{\calR}_R$ can be \'etale as a sheaf (i.e., as a module over  $\tilde{\bB}^\dagger_X$ or $\tilde{\bC}_X$) without being globally \'etale; this has to do with the fact that the descent to $\tilde{\calR}^{\inte}_R$ is not uniquely determined. See \cite[Remark~7.3.5]{part1} for further discussion.
\end{defn}

\begin{remark} \label{R:local systems lattice}
As in \cite[Corollary~8.4.7]{part1}, one sees that
any \'etale $E$-local system over a preadic space can locally (for the analytic topology) be realized as the base extension of an $\gotho_E$-local system.
If $X$ is a perfectoid space, then we may make the same argument for $E$-local systems on $X_{\faith}$ by replacing the invocation of \cite[Lemma~8.2.17(a)]{part1} in the proof of
\cite[Proposition~8.4.6]{part1} with Theorem~\ref{T:vector bundle faithful descent} 
(as used in Remark~\ref{R:GAGA for local systems}); alternatively, we may apply Corollary~\ref{C:same local systems} below to reduce to the case of $X_{\proet}$.
\end{remark}

\begin{theorem}  \label{T:etale type C}
Let $X$ be a preadic space over $\gotho_{E_a}$. 
Then the category of \'etale $E_a$-local systems $V$ on $X$ is equivalent to the category of \'etale projective $\varphi$-modules $M$ over $\tilde{\bC}_X$ (viewing the latter as a ring sheaf on any of the sites $X, X_{\et}, X_{\proet}$; if $X$ is perfectoid we may also take $X_{\faith}$) via the functors
\[
V \mapsto M(V) = V \otimes_{\underline{E}} \tilde{\bC}_X, \qquad
M \mapsto V(M) = M^{\varphi}.
\]
Moreover, this equivalence induces a functorial isomorphism of sheaf cohomology groups with $\varphi$-hypercohomology groups.
\end{theorem}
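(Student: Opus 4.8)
The plan is to reduce the statement about $E_a$-local systems to the already-established integral statement (Theorem~\ref{T:pseudocoherent type A}) combined with an Artin--Schreier-type exact sequence. First I would treat the case $X = \Spa(R,R^+)$ with $(R,R^+)$ a perfect uniform adic Banach algebra over $\FF_{p^{ah}}$, and deduce the general case at the end by working locally on $X$ and applying descent (using the equivalences between the various sites given by Theorem~\ref{T:perfect Robba Kiehl} and the comparison of Theorem~\ref{T:perfect Robba proetale cohomology}). For the local case, I would build the functors $V \mapsto M(V) = V \otimes_{\underline{E_a}} \tilde{\bC}_X$ and $M \mapsto V(M) = M^\varphi$, note that $M(V)$ is automatically \'etale and functorial, and that full faithfulness together with the cohomology comparison both follow from the exactness of
\[
0 \to V \to M(V) \stackrel{\varphi-1}{\to} M(V) \to 0,
\]
which in turn reduces (after choosing a local trivialization of $V$) to the exactness of \eqref{eq:as4} from Lemma~\ref{L:AS exact}, plus the acyclicity of $\tilde{\bC}_X$-modules associated to pseudocoherent (here, projective) modules from Theorem~\ref{T:perfect Robba proetale cohomology}.

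The substantive point is essential surjectivity: given an \'etale projective $\varphi$-module $M$ over $\tilde{\bC}_X$, I must produce an \'etale $E_a$-local system $V$ with $M(V) \cong M$. By the definition of \'etale, $M$ arises by base extension from an \'etale projective $\varphi$-module $N$ over $\tilde{\bA}^\dagger_X$, i.e., $M \cong N \otimes_{\tilde{\bA}^\dagger_X} \tilde{\bC}_X$. Now apply Corollary~\ref{C:pseudocoherent type A} (the projective case of Theorem~\ref{T:pseudocoherent type A}, extended to a general preadic space over $\gotho_{E_a}$, after inverting $\varpi$): the functor $T \mapsto T \otimes_{\underline{E_a}} \tilde{\bA}^\dagger_X$ from \'etale projective $E_a$-local systems to \'etale projective $\varphi$-modules over $\tilde{\bA}^\dagger_X$ (obtained from the $\gotho_{E_a}$-version by inverting $\varpi$ and using Remark~\ref{R:local systems lattice} to realize an $E_a$-local system locally as the base extension of an $\gotho_{E_a}$-local system) is an equivalence. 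Thus $N \cong V \otimes_{\underline{E_a}} \tilde{\bA}^\dagger_X$ for some \'etale $E_a$-local system $V$, and then
\[
M \cong V \otimes_{\underline{E_a}} \tilde{\bA}^\dagger_X \otimes_{\tilde{\bA}^\dagger_X} \tilde{\bC}_X \cong V \otimes_{\underline{E_a}} \tilde{\bC}_X = M(V),
\]
as desired.

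The main obstacle, and the one requiring care, is that the descent of an \'etale $\varphi$-module over $\tilde{\bC}_X$ to $\tilde{\bA}^\dagger_X$ is \emph{not} unique (as flagged in the excerpt and in \cite[Remark~7.3.5]{part1}); consequently, while the composite functor $V \mapsto M(V)$ is well-defined and an equivalence onto \'etale $\varphi$-modules, one must verify that the quasi-inverse $M \mapsto M^\varphi$ genuinely recovers $V$ and does not depend on the chosen descent $N$. This is handled by the exact sequence \eqref{eq:as4}: taking $\varphi$-invariants of $M(V) = V \otimes_{\underline{E_a}} \tilde{\bC}_X$ gives back $V$ by that exactness, so $M^\varphi$ is intrinsic to $M$; and conversely, for any \'etale descent $N$ of $M$ to $\tilde{\bA}^\dagger_X$, the comparison of Corollary~\ref{C:pseudocoherent type A} shows $N^\varphi \cong M^\varphi$, so the two constructions agree. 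Finally, I would note the functorial isomorphism of sheaf cohomology with $\varphi$-hypercohomology follows formally from the exactness of \eqref{eq:as4} together with the acyclicity statements in Theorem~\ref{T:perfect Robba proetale cohomology}, and remark that the reified analogue is proved identically, substituting the reified versions of the cited results (Remark~\ref{R:pseudocoherent reified}, Remark~\ref{R:reified Robba stably uniform}).
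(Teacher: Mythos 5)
Your argument is in substance the paper's own: full faithfulness and the comparison of cohomology are obtained from the exactness of $0 \to V \to M(V) \stackrel{\varphi-1}{\to} M(V) \to 0$, which the paper deduces from Remark~\ref{R:local systems lattice} together with the Artin--Schreier sequence \eqref{eq:AS} (itself coming from Lemma~\ref{L:AS exact}) and Theorem~\ref{T:perfect Robba proetale cohomology}, and essential surjectivity is reduced to the integral equivalence of Theorem~\ref{T:pseudocoherent type A}. The one step that needs repair is your essential-surjectivity paragraph: the \'etale condition on a $\varphi$-module over $\tilde{\bC}_X$ must be read locally on $X$ --- indeed, since by Remark~\ref{R:local systems lattice} an $E_a$-local system only \emph{locally} admits an $\gotho_{E_a}$-lattice, under a global reading the functor $V \mapsto M(V)$ would not even land in the \'etale category --- so you cannot simply take one global descent $N$ over $\tilde{\bA}^\dagger_X$ and apply Corollary~\ref{C:pseudocoherent type A} once. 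The remedy is exactly what the paper does, and what your final paragraph nearly supplies: check essential surjectivity locally on $X$ (where a descent to $\tilde{\bA}^\dagger$ and a lattice for $V$ do exist), and then glue using the already-established full faithfulness, equivalently using the fact that $V(M)=M^{\varphi}$ is intrinsic to $M$, so that the natural global map $M^{\varphi} \otimes_{\underline{E_a}} \tilde{\bC}_X \to M$ can be tested locally. With that adjustment (and replacing ``local trivialization of $V$'' by the lattice reduction of Remark~\ref{R:local systems lattice} in your first paragraph), your proof coincides with the paper's.
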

\begin{proof}
Given $V$, we may combine Remark~\ref{R:local systems lattice}
with the exactness of \eqref{eq:AS} to deduce that
\[
0 \to V \to V(M) \stackrel{\varphi-1}{\to} V(M) \to 0
\]
is exact. It follows that the functor $V \mapsto M(V)$ is fully faithful;
by Theorem~\ref{T:perfect Robba proetale cohomology}, we also see that it compares cohomology groups. Thanks to full faithfulness, we may check essential surjectivity locally on $X$, for which we may invoke Theorem~\ref{T:pseudocoherent type A} to conclude.
\end{proof}
\begin{cor} \label{C:globally etale BC}
The categories of globally \'etale $\varphi$-modules over $\tilde{\calR}^{\bd}_R$ and over $\tilde{\calR}_R$ are equivalent via base extension.
\end{cor}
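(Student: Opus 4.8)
The statement to prove is Corollary~\ref{C:globally etale BC}: the categories of globally \'etale $\varphi$-modules over $\tilde{\calR}^{\bd}_R$ and over $\tilde{\calR}_R$ are equivalent via base extension. The plan is to deduce this formally from the preceding structural results rather than to argue directly. By definition, a globally \'etale $\varphi$-module over $\tilde{\calR}^{\bd}_R$ (resp.\ $\tilde{\calR}_R$) is one arising by base extension from a projective $\varphi$-module over $\tilde{\calR}^{\inte}_R = \tilde{\bA}^\dagger_R$ (resp.\ from the same data via the chain $\tilde{\calR}^{\inte}_R \to \tilde{\calR}^{\bd}_R \to \tilde{\calR}_R$). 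So the essential content is that the base extension functor from projective $\varphi$-modules over $\tilde{\calR}^{\inte}_R$ to globally \'etale $\varphi$-modules over $\tilde{\calR}^{\bd}_R$, and the one to $\tilde{\calR}_R$, each induce equivalences onto their essential images, and that these images are matched up compatibly.

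First I would set $X = \Spa(R,R^+)$ (using that $(R,R^+)$ is perfect uniform, as in Hypothesis~\ref{H:pseudoflat Robba}, with $R$ an $\gotho_{E_a}$-algebra so that Theorem~\ref{T:pseudocoherent type A} applies). By Theorem~\ref{T:pseudocoherent type A}, the category of projective $\varphi$-modules over $\tilde{\calR}^{\inte}_R = \tilde{\bA}^\dagger_R$ is equivalent, via taking $\varphi^a$-invariants, to the category of \'etale projective $\gotho_{E_a}$-local systems on $X_{\proet}$; and by Theorem~\ref{T:etale type C}, the category of \'etale projective $\varphi$-modules over $\tilde{\bC}_X = \tilde{\calR}_{X}$ is equivalent, again via $\varphi$-invariants, to the category of \'etale $E_a$-local systems on $X$. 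The key linking fact is that both base extension functors $\tilde{\calR}^{\inte}_R \to \tilde{\calR}^{\bd}_R$ and $\tilde{\calR}^{\inte}_R \to \tilde{\calR}_R$ are compatible with the formation of $\varphi$-invariants: by Lemma~\ref{L:AS exact}, the Artin--Schreier sequences \eqref{eq:as2}, \eqref{eq:as3}, \eqref{eq:as4} are exact, so for a globally \'etale $M$ over $\tilde{\calR}^{\bd}_R$ arising from a local system $T$ we have $M^{\varphi} = T$, and likewise after further base extension to $\tilde{\calR}_R$ the invariants recover $T \otimes_{\gotho_{E_a}} E_a$. Thus a globally \'etale $\varphi$-module over $\tilde{\calR}^{\bd}_R$ is determined up to canonical isomorphism by its underlying local system, and similarly over $\tilde{\calR}_R$; conversely every \'etale $\gotho_{E_a}$-local system (equivalently, after inverting $\varpi$, every $E_a$-local system admitting an integral lattice) gives rise to such a module on both sides.

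The argument then runs as follows. The functor $M \mapsto M \otimes_{\tilde{\calR}^{\bd}_R} \tilde{\calR}_R$ sends globally \'etale $\varphi$-modules to globally \'etale $\varphi$-modules by definition (both come from the same $\tilde{\calR}^{\inte}_R$-module). For full faithfulness, given globally \'etale $M_1, M_2$ over $\tilde{\calR}^{\bd}_R$, the internal hom $M_1^\dual \otimes M_2$ is again globally \'etale, so its $\varphi$-invariants are computed by the Artin--Schreier exact sequence \eqref{eq:as3} over $\tilde{\bB}^\dagger_X$, and by \eqref{eq:as4} over $\tilde{\bC}_X$; but $H^0_\varphi$ of \eqref{eq:as3} and \eqref{eq:as4} both compute the same $E_a$-local system (the one underlying $M_1^\dual \otimes M_2$, tensored up to $E_a$), so $\Hom_\varphi(M_1, M_2) \to \Hom_\varphi(M_1 \otimes \tilde{\calR}_R, M_2 \otimes \tilde{\calR}_R)$ is a bijection. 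For essential surjectivity, any globally \'etale $\varphi$-module $N$ over $\tilde{\calR}_R$ is by definition $N_0 \otimes_{\tilde{\calR}^{\inte}_R} \tilde{\calR}_R$ for some projective $\varphi$-module $N_0$ over $\tilde{\calR}^{\inte}_R$; then $N_0 \otimes_{\tilde{\calR}^{\inte}_R} \tilde{\calR}^{\bd}_R$ is a globally \'etale $\varphi$-module over $\tilde{\calR}^{\bd}_R$ mapping to $N$. I expect the only subtlety — and the main thing to be careful about — is to check that ``globally \'etale'' on the $\tilde{\calR}^{\bd}_R$ side is genuinely the full preimage of ``globally \'etale'' on the $\tilde{\calR}_R$ side, i.e.\ that the descent datum from $\tilde{\calR}_R$ down to $\tilde{\calR}^{\inte}_R$ is determined (this is exactly the phenomenon flagged in the definition and in \cite[Remark~7.3.5]{part1}, that descent to $\tilde{\calR}^{\inte}_R$ is not unique as a \emph{sheaf} but is pinned down once we fix the global integral structure); this is precisely what the exactness of the Artin--Schreier sequences in Lemma~\ref{L:AS exact} supplies, since it forces the integral lattice-data to be recovered functorially as $\varphi$-invariants. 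No serious computation is required beyond assembling these exact sequences and equivalences.
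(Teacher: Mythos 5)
Your proposal is correct and is essentially the paper's argument: the paper disposes of this corollary with the single line ``Immediate from Theorem~\ref{T:etale type C},'' and your writeup is just the natural unpacking of that citation --- essential surjectivity by the definition of globally \'etale, and full faithfulness by computing $\Hom_\varphi$ as $\varphi$-invariants of the (again globally \'etale) internal Hom and identifying both sides with the same local-system Hom via Theorem~\ref{T:pseudocoherent type A}, Theorem~\ref{T:etale type C}, and the Artin--Schreier exactness of Lemma~\ref{L:AS exact}. No further comment is needed.
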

\begin{proof}
Immediate from Theorem~\ref{T:etale type C}.
\end{proof}

\begin{cor} \label{C:same local systems}
For any perfectoid space $X$, the categories of \'etale $E$-local systems on $X_{\proet}, X_{\faith}$ are canonically equivalent.
\end{cor}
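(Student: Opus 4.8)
The plan is to deduce Corollary~\ref{C:same local systems} directly from the equivalences established in Theorem~\ref{T:etale type C}. The point is that the theorem already identifies, for any of the three sites $X_{\proet}$, $X_{\pfqc}$, $X_{\faith}$ (the latter two being available since $X$ is assumed perfectoid), the category of \'etale $E$-local systems with the category of \'etale projective $\varphi$-modules over the ring sheaf $\tilde{\bC}_X$ viewed on that site. So it suffices to check that the three categories of \'etale projective $\varphi$-modules over $\tilde{\bC}_X$ on the three sites are canonically equivalent, compatibly with the functors to local systems, and then transport the equivalence back.

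First I would recall that by Theorem~\ref{T:perfect Robba Kiehl}(a), on an affinoid perfectoid $X = \Spa(R,R^+)$ (equivalently on $X^\flat$), the categories of pseudocoherent sheaves of $\tilde{\calR}^I$-modules on $X$, $X_{\et}$, $X_{\proet}$ all agree with pseudocoherent $\tilde{\calR}^I_R$-modules, and likewise Theorem~\ref{T:perfect Robba proetale cohomology} gives the matching acyclicity on $X_{\pfqc}$ and (in the fpd case) $X_{\faith}$. Since a projective $\varphi$-bundle over $\tilde{\calR}_R$ is glued from finite projective pieces, it is in particular fpd, so the finer topologies $X_{\pfqc}$ and $X_{\faith}$ are covered as well; this identifies the underlying $\tilde{\bC}_X$-modules across all the sites, and the $\varphi$-structure is carried along formally because $\varphi$ is defined at the level of the ring sheaf. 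Then I would note that the ``\'etale'' condition — that the $\varphi$-module arises by base extension from $\tilde{\bA}^\dagger$ — is preserved under these identifications, again using Theorem~\ref{T:pseudocoherent type A} which handles the $\tilde{\bA}^\dagger$-side uniformly across $X, X_{\et}, X_{\proet}, X_{\pfqc}, X_{\faith}$. The upshot is a commuting triangle of equivalences: each site's category of \'etale $E$-local systems is equivalent to a single category of \'etale $\varphi$-modules over $\tilde{\bC}$, via the functor $V \mapsto V \otimes_{\underline{E}} \tilde{\bC}_X$, and these functors are compatible with the restriction maps between sites.

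To globalize from the affinoid case to an arbitrary perfectoid space $X$, I would observe that all the categories in question are stacks for the analytic topology on $X$ (local systems by definition, $\varphi$-modules over $\tilde{\bC}$ by the Kiehl-type gluing in Theorem~\ref{T:perfect Robba Kiehl}), so one may check the equivalence on a covering by affinoid perfectoid subspaces and then glue. Concretely: cover $X$ by affinoid perfectoid $U_j$, apply the affinoid case to each $U_j$ and to the intersections, and invoke the fact that an equivalence of stacks may be checked locally. Alternatively, since Theorem~\ref{T:etale type C} is itself already stated for an arbitrary preadic space over $\gotho_{E_a}$, one can simply apply it three times — once for each site — and compose the resulting equivalences through the common target category; the compatibility of the functors $V \mapsto V \otimes_{\underline{E}} \tilde{\bC}_X$ is then immediate from functoriality of $\otimes$.

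The main obstacle I anticipate is bookkeeping rather than a genuine mathematical difficulty: one must be careful that the ring sheaf $\tilde{\bC}_X$ on $X_{\pfqc}$ and $X_{\faith}$ really is the pullback of the one on $X_{\proet}$ and that the notion of \'etale $\varphi$-module (descent from $\tilde{\bA}^\dagger$) is stable under pullback along $X_{\faith} \to X_{\pfqc} \to X_{\proet}$; this is exactly what is packaged in Theorem~\ref{T:pseudocoherent type A}, so the work is to cite it correctly. A secondary subtlety is that the descent of an \'etale $\varphi$-module over $\tilde{\bC}$ to $\tilde{\bA}^\dagger$, hence to $\tilde{\bA}^\dagger$ on the v-topology, is not unique as noted after the definition of ``\'etale''; but uniqueness is not needed here — only the existence of the equivalences and their compatibility — so this does not cause trouble. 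With these citations in place the corollary follows immediately, which is presumably why it is stated without a proof in the excerpt.
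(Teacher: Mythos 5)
Your proposal is correct and follows essentially the same route as the paper: the paper's (one-line) proof likewise runs the argument of Theorem~\ref{T:etale type C} on each of the three sites to identify \'etale $E$-local systems with \'etale projective $\varphi$-modules over $\tilde{\bC}_X$, and then notes that the latter categories are already known to agree across the sites by Theorem~\ref{T:perfect Robba Kiehl}. Your extra remarks on gluing and on pullback compatibility of $\tilde{\bC}_X$ are just bookkeeping around that same argument, so no further comparison is needed.
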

\begin{proof}
By the same argument as in the proof of Theorem~\ref{T:etale type C}, we see that these categories are equivalent to the respective categories of projective $\varphi$-modules over $\tilde{\bC}_X$, which we already know to be equivalent using Theorem~\ref{T:perfect Robba Kiehl}.
\end{proof}

\subsection{Pseudocoherent \texorpdfstring{$\varphi$}{phi}-modules of type \texorpdfstring{$\bC$}{C}}

We next study pseudocoherent $\varphi$-modules over sheaves of type
$\bC$ in more detail.

\begin{theorem} \label{T:perfect generalized phi-modules}
Put $X = \Spa(R,R^+)$. The following exact tensor categories are equivalent
to each other.
\begin{enumerate}
\item[(a)]
The category of strictly pseudocoherent $\varphi$-modules $M$ over $\tilde{\calR}^\infty_R$
such that for every closed interval $I \subset (0, \infty)$,
$M \otimes_{\tilde{\calR}^\infty_R} \tilde{\calR}^I_R$ is \'etale-stably pseudocoherent.
\item[(b)]
The category of pseudocoherent $\varphi$-modules $M$ over $\tilde{\calR}_R$
such that for some $r>0$, the underlying module of $M$ is the base extension of a 
strictly pseudocoherent module $M_0$ over $\tilde{\calR}^{r}_R$
for which for every closed interval $I \subset (0, r]$,
$M_0 \otimes_{\tilde{\calR}^{r}_R} \tilde{\calR}^I_R$ is \'etale-stably pseudocoherent.
\item[(c)]
The category of pseudocoherent $\varphi$-bundles over $\tilde{\calR}_R$.
\item[(d)]
The category of \'etale-stably pseudocoherent $\varphi$-modules over $\tilde{\calR}^{[s,r]}_R$,
provided that $0 < s \leq r/p^{ah}$.
\item[(e)]
The category of pseudocoherent sheaves on  $\FFC_{R,\et}$ (the adic relative Fargues-Fontaine curve).
\item[(f)]
The category of pseudocoherent sheaves on  $\FFC_{R,\proet}$.
\item[(g)]
The category of pseudocoherent $\varphi$-modules over $\tilde{\bC}^\infty_{X}$.
\item[(h)]
The category of pseudocoherent $\varphi$-modules over $\tilde{\bC}_{X}$.
\item[(i)]
The category of pseudocoherent $\varphi$-modules over $\tilde{\bC}^{[s,r]}_{X}$.
\end{enumerate}
More precisely, the functors from (a) to (b), (a) to (d), (g) to (h), and (g) to (i) are base extensions;
the functor from (f) is (g) is a pullback;
and the functors from (f) to (e) to (c) to (d) are restrictions.
(The reified analogue also holds.)
\end{theorem}

\begin{proof}
We first note that the functors from (f) to (e) to (c) are equivalences by Theorem~\ref{T:perfect Robba Kiehl}.
Using $\varphi$ to move the interval around, we may also add (d) and (i) to the equivalence.

By imitating the proof of Lemma~\ref{L:weak flatness1}, we see that base extension of
strictly pseudocoherent $\tilde{\calR}^\infty_R$-modules to $\tilde{\calR}^I_R$ is exact;
consequently, the functor from (a) to (c) is fully faithful.
On the other hand, let $M$ be a pseudocoherent $\varphi$-bundle over $\tilde{\calR}_R$. For any $r,s \in \QQ$
with $0 < s \leq r/p^{ah}$, as in Remark~\ref{R:Robba reified} we may form a reified quasi-Stein space admitting a uniform covering by the spaces $\Spra(\tilde{\calR}^{[sp^{nah}, rp^{nah}]}_R, \tilde{\calR}^{[sp^{nah}, rp^{nah}],\Gr}_R)$ for $n \in \ZZ$.
Since the modules $M_{[sp^{nah}, rp^{nah}]}$ are all pullbacks of each other via powers of $\varphi$, they can all be generated by the same number of elements;
we may thus apply Proposition~\ref{P:Stein space uniform covering} to see that $M$ is generated by finitely many global sections. By taking the kernel of the map to $M$ from the associated finite free $\varphi$-bundle and repeating the process, we see that the module of global sections is a pseudocoherent $\varphi$-module over $\tilde{\calR}^\infty_R$; this yields the equivalence of (a) and (c).
A similar argument adds (b) to this equivalence.

By a similar argument, (g), (h), (i) are equivalent.
\end{proof}

\begin{cor} \label{C:perfect fpd phi-modules}
The statement of Theorem~\ref{T:perfect generalized phi-modules} remains true if
we replace ``pseudocoherent'' with ``fpd'' everywhere, or if we restrict to projective modules and use the v-topology in place of the pro-\'etale topology.
\end{cor}
\begin{proof}
Combine Theorem~\ref{T:perfect generalized phi-modules} 
with Theorem~\ref{T:perfect Robba Kiehl}.
\end{proof}

\begin{cor} \label{C:lift action}
For each of the categories in Theorem~\ref{T:perfect generalized phi-modules}
or Corollary~\ref{C:perfect fpd phi-modules}, each object admits a strict epimorphism from an object corresponding to a projective $\varphi$-module over $\tilde{\calR}^{\infty}_R$; in fact, we may even ensure that the underlying module is free rather than merely projective.
\end{cor}
\begin{proof}
Apply Lemma~\ref{L:lift action1}, Theorem~\ref{T:perfect generalized phi-modules},
and Corollary~\ref{C:perfect fpd phi-modules}.
\end{proof}

\begin{theorem} \label{T:perfect generalized phi-modules cohomology}
The equivalences in Theorem~\ref{T:perfect generalized phi-modules}
and Corollary~\ref{C:perfect fpd phi-modules}
induce isomorphisms of $\varphi$-cohomology groups in cases (a)--(d),
sheaf cohomology groups in cases (e)--(f), and 
$\varphi$-hypercohomology groups on any of $X_{\et}, X_{\proet}$
(or $X_{\faith}$ in the context of Corollary~\ref{C:perfect fpd phi-modules}) in cases (g)--(i). (The reified analogue also holds.)
\end{theorem}
\begin{proof}
For projective $\varphi$-modules, the comparison in cases (a)--(d) follows as in \cite[Proposition~6.3.19]{part1},
while the comparison in cases (a) and (e) follows as in \cite[Theorem~8.7.13]{part1}. 
For pseudocoherent $\varphi$-modules, we may then deduce the comparison of cohomology in the same cases by forming projective resolutions using Corollary~\ref{C:lift action}; note that since we are using a projective resolution and not an injective resolution, this argument requires a uniform bound on the cohomology degrees, which in this case is 1. 
Finally, using Theorem~\ref{T:perfect Robba proetale cohomology},
we may compare (e) with (f) and (a)--(d) with (g)--(i).
\end{proof}

For arbitrary preadic spaces, we deduce the following corollary.

\begin{theorem} \label{T:perfect generalized phi-modules preadic}
The following exact tensor categories are equivalent.
\begin{enumerate}
\item[(a)]
The category of pseudocoherent $\varphi$-modules over $\tilde{\bC}^\infty_{X}$.
\item[(b)]
The category of pseudocoherent $\varphi$-modules over $\tilde{\bC}_{X}$.
\item[(c)]
The category of pseudocoherent $\varphi$-modules over $\tilde{\bC}^{[s,r]}_{X}$,
provided that $0 < s \leq r/p^{ah}$.
\end{enumerate}
These equivalences also induce isomorphisms of
$\varphi$-hypercohomology groups on $X_{\proet}$.
Moreover, similar statements hold if we replace ``pseudocoherent'' with ``fpd'' everywhere, 
or if we restrict to projective modules and replace $X_{\proet}$ with $X_v$.
(The reified analogue also holds.)
\end{theorem}
\begin{proof}
This is an immediate consequence of Theorem~\ref{T:perfect generalized phi-modules}
and Theorem~\ref{T:perfect generalized phi-modules cohomology}.
\end{proof}

\begin{remark}
Beware that we have not shown that an arbitrary pseudocoherent $\varphi$-module over $\tilde{\bC}_X$ admits a projective resolution by projective $\varphi$-modules
over $\tilde{\bC}_X$, even locally on $X$ (unless $X$ is perfectoid,
in which case Corollary~\ref{C:lift action} applies).
This adds some extra complications to a few arguments.
\end{remark}

\begin{remark}  \label{R:O as pseudocoherent}
By combining Definition~\ref{D:Robba theta}
with Theorem~\ref{T:perfect generalized phi-modules preadic}, we see that
$\calO(X)$ may be viewed as a pseudocoherent $\varphi$-module over 
$\tilde{\bC}_X^\infty$.
\end{remark}

The reader comparing this discussion with \cite[\S 8.7]{part1} may have noticed that we have so far  failed to mention the schematic Fargues-Fontaine curve. That is because the relationship between pseudocoherence on the adic and schematic curves is a bit fraught; we turn to this next.

\begin{defn}
Given a module $M$ over some $E$-algebra equipped with a semilinear $\varphi$-action,
for each $n \in \ZZ$ we define the twist 
$M(n)$ to be the same underlying module with the action of $\varphi$ multiplied by $\varpi^{-n}$.
\end{defn}

\begin{lemma} \label{L:pseudocoherent h1 vanishing}
For every finitely generated $\tilde{\calR}_R$-module $M$ equipped with a semilinear $\varphi$-action,
there exists an integer $N$ such that for all $n \geq N$,
$H^0_{\varphi}(M(n))$ generates $M$ as a module over $\tilde{\calR}_R$ and 
$H^1_{\varphi}(M(n)) = 0$ for all $n \geq N$.
\end{lemma}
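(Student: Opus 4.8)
The plan is to reduce to the two already-understood endpoint cases — the integral case over $\tilde{\calR}^{\inte}_R$ (Lemma~\ref{L:pseudocoherent type A torsion} and Corollary~\ref{C:pseudocoherent type A flat}) and the case of projective $\varphi$-modules over $\tilde{\calR}_R$, for which the analogue of this statement was established in \cite{part1} — and then to glue along the Beauville--Laszlo-type decomposition of $\tilde{\calR}_R$ into $\tilde{\calR}^{\bd}_R$ and the completed local pieces. First I would invoke Lemma~\ref{L:lift action1}: $M$ is a quotient of a projective $\varphi$-module $F$ over $\tilde{\calR}_R$, with kernel $N$ again finitely generated with a semilinear $\varphi$-action. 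Twisting is exact and commutes with $\varphi$-cohomology up to the degree shift, so from the long exact sequence
\[
\cdots \to H^0_\varphi(F(n)) \to H^0_\varphi(M(n)) \to H^1_\varphi(N(n)) \to H^1_\varphi(F(n)) \to H^1_\varphi(M(n)) \to H^2_\varphi(N(n)) = 0
\]
(the last vanishing because $\varphi$-cohomology is concentrated in degrees $0,1$) it suffices to control $F$ and $N$ simultaneously. By Noetherian-type induction built on Lemma~\ref{L:pseudocoherent 2 of 3} — or more precisely by running the argument uniformly over all finitely generated $\varphi$-modules at once — the problem is reduced to: (i) for projective $F$, $H^1_\varphi(F(n))=0$ and $H^0_\varphi(F(n))$ generates for $n\gg0$; (ii) the same for $N$, which is again finitely generated but may not be projective, so one iterates.

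The engine for (i) is the structure theory of projective $\varphi$-modules over $\tilde{\calR}_R$ from \cite{part1}: such a module is globally a successive extension (pointwise in slopes, after the Fargues--Fontaine-type analysis) of objects of bounded slope, and high positive twisting pushes all slopes positive, killing $H^1$ and making $H^0$ (the lattice of $\varphi$-invariants) large enough to generate. I would cite the relevant statement from \cite{part1} directly rather than reprove it. The engine for the torsion part that shows up inside $N$ is Lemma~\ref{L:pseudocoherent type A torsion}: the $\varpi$-power-torsion submodule of any finitely presented $\varphi$-module over the integral ring is finite projective over $\tilde{\calR}^{\inte}_R/\varpi^n$ for some $n$, and base-extending to $\tilde{\calR}_R$ kills it; so after inverting $\varpi$ the only genuinely new contributions are torsion-free, hence (by the Prüfer-type behaviour of these rings, Remark~\ref{R:Bezout torsion} applied to the relevant slices) locally projective, and we are back in case (i). The twist $N$ is the common refinement: choose $N_0$ large enough to handle $F$, then enlarge to $N_1 \geq N_0$ to handle the kernel, and observe the induction on the number of generators (or on a suitable invariant such as the number of slopes) terminates because at each stage the kernel is generated by strictly fewer elements after the $H^0$-generators of the previous stage are split off.

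The main obstacle I anticipate is precisely the failure of Lemma~\ref{L:lift action1} for the half-open/closed-interval rings $\tilde{\calR}^{[s,r]}_R$ noted in the remark following it: if one needs to localize in the $\varpi$-adic direction to run the slope analysis, one cannot freely lift $\varphi$-module structures back, so the reduction to projective objects must be carried out entirely over $\tilde{\calR}_R$ itself (and its $\tilde{\calR}^{\bd}_R$ and $\tilde{\calR}^{\inte}_R$ sub/over-rings), never over an interval ring. Concretely, the delicate point is showing that the generation statement for $H^0_\varphi(M(n))$ — not just the vanishing of $H^1$ — survives the dévissage; this requires knowing that the $H^0$ of the quotient lifts, which is exactly where $H^1_\varphi(N(n))=0$ is used, so the two assertions of the lemma genuinely have to be proved together in a single induction, with the integer $N$ chosen uniformly. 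I expect the bookkeeping for that uniformity (making sure $N$ depends only on, say, the rank of $F$ and finitely many slope bounds, not on the unbounded length of a torsion filtration) to be the part that needs the most care, and Lemma~\ref{L:pseudocoherent type A torsion}(b),(d) — giving a uniform bound on the torsion exponent — is the tool that makes it go through.
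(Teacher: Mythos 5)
Your reduction to the projective case via Lemma~\ref{L:lift action1} is the right first move, but the dévissage you build on top of it has a genuine gap. You assert that the kernel $N$ of the surjection $F \to M$ is ``again finitely generated with a semilinear $\varphi$-action'' and then run an induction (``the kernel is generated by strictly fewer elements after the $H^0$-generators of the previous stage are split off''). Over a general perfectoid base, $\tilde{\calR}_R$ is not noetherian, and Lemma~\ref{L:lift action1} gives you no control whatsoever of the kernel; there is no reason for $N$ to be finitely generated, and no descending invariant that makes your iteration terminate. (Lemma~\ref{L:pseudocoherent type A torsion} and the Pr\"ufer/Beauville--Laszlo input you invoke live over $\tilde{\calR}^{\inte}_R$, $\tilde{\calE}^{\inte}_R$, or over a point, and do not transport to $\tilde{\calR}_R$ for general $R$.) A second problem is your ``engine'' for the projective case: over a general $R$ there is no slope filtration of a projective $\varphi$-module (Theorem~\ref{T:slope filtration} is only for $R$ an analytic field, and the HN polygon can jump), so ``high twisting pushes all slopes positive'' is not an available argument; the correct statement to quote is \cite[Proposition~6.2.2]{part1}, which is proved by direct norm estimates (and carries over to general $E$ via Remark~\ref{R:homogeneity}).

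The fix is that none of the kernel bookkeeping is needed, which is exactly how the paper argues. Since $H^1_\varphi$ is a cokernel of $\varphi-1$, it is right-exact in the module: the surjection $F(n) \to M(n)$ induces a surjection $H^1_\varphi(F(n)) \to H^1_\varphi(M(n))$, so the vanishing for projective $F$ (from \cite[Proposition~6.2.2]{part1}) immediately gives the vanishing for $M$, with $N$ depending only on $F$. For the generation statement one argues as in \cite[Proposition~6.2.4]{part1}: once $H^0_\varphi(F(n))$ generates $F$ for $n \geq N$, its image lands inside $H^0_\varphi(M(n))$ and generates $M$ because $F \to M$ is surjective. So the two assertions do not have to be proved by a joint induction with uniform torsion-exponent bounds; the whole argument is a two-line transfer from the projective case.
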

\begin{proof}
By Lemma~\ref{L:lift action1}, the claim about $H^1$ reduces to the special case where $M$ is a projective $\varphi$-module, which follows as in \cite[Proposition~6.2.2]{part1}
(using Remark~\ref{R:homogeneity}). Given the claim about $H^1$, the claim about $H^0$ may be similarly deduced as in \cite[Proposition~6.2.4]{part1}.
\end{proof}

\begin{cor} \label{C:pseudocoherent to proj}
Let $0 \to M_1 \to M \to M_2 \to 0$ be a short exact sequence of finitely generated $\tilde{\calR}_R$-modules equipped with semilinear $\varphi$-actions. 
\begin{enumerate}
\item[(a)]
There exists $N \in \ZZ$ such that for all $n \geq N$,
\[
0 \to M_1(n)^{\varphi} \to M(n)^{\varphi} \to M_2(n)^{\varphi} \to 0
\]
is exact.
\item[(b)]
For $f \in \tilde{\calR}_R$ such that $\varphi(f) = \varpi^d f$ for some $d >0$,
put
\[
M_f = (M[f^{-1}])^{\varphi} = \bigcup_{n \in \ZZ} f^{-n} M(dn)^{\varphi}
\]
and define $M_{1,f}, M_{2,f}$ similarly. Then
\[
0 \to M_{1,f} \to M_f \to M_{2,f} \to 0
\]
is an exact sequence of finitely generated modules over $(\tilde{\calR}_R[f^{-1}])^{\varphi^a}$.
\item[(c)]
With notation as in (b), suppose that $M$ is a pseudocoherent $\varphi$-module over $\tilde{\calR}_R$. Then $M_f$ is a pseudocoherent module over $(\tilde{\calR}_R[f^{-1}])^{\varphi}$.
\end{enumerate}
\end{cor}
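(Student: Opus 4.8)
The plan is to establish the three parts in order, with (a) and (b) being the technical core and (c) following formally once the Beauville--Laszlo-type glueing machinery is in place. For part (a), I would first apply Lemma~\ref{L:pseudocoherent h1 vanishing} to $M_1$: there is an integer $N$ such that $H^1_\varphi(M_1(n)) = 0$ for all $n \geq N$. Twisting the short exact sequence $0 \to M_1 \to M \to M_2 \to 0$ (the twist is exact since it only changes the $\varphi$-action, not the underlying modules) and taking the long exact sequence in $\varphi$-cohomology, the vanishing of $H^1_\varphi(M_1(n))$ forces $M(n)^\varphi \to M_2(n)^\varphi$ to be surjective, which gives the desired short exact sequence on $H^0$'s. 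Left-exactness is automatic.

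For part (b), I would pass to the colimit over $n$. Writing $M_f = \bigcup_n f^{-n} M(dn)^\varphi$ and similarly for $M_{1,f}, M_{2,f}$, the maps in the colimit are the ones induced by multiplication by $f$ (using $\varphi(f) = \varpi^d f$ to see that $f^{-n}M(dn)^\varphi \subseteq f^{-(n+1)}M(d(n+1))^\varphi$). Since filtered colimits are exact and part (a) gives exactness of $0 \to M_1(n)^\varphi \to M(n)^\varphi \to M_2(n)^\varphi \to 0$ for all $n \geq N$, the colimit $0 \to M_{1,f} \to M_f \to M_{2,f} \to 0$ is exact. Finite generation of $M_f$ over $(\tilde{\calR}_R[f^{-1}])^{\varphi}$ follows from Lemma~\ref{L:pseudocoherent h1 vanishing}: for $n$ large, $M(n)^\varphi$ generates $M$ over $\tilde{\calR}_R$, so a finite generating set of $M(n)^\varphi$ (which is finitely generated over $(\tilde{\calR}_R)^\varphi$ by the equivalence in Theorem~\ref{T:pseudocoherent type A}-style arguments, or directly because $H^0_\varphi$ of a finitely generated module is finitely generated over the invariant ring) generates $M_f$ after inverting $f$; the same applies to $M_{1,f}, M_{2,f}$ as subquotients.

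For part (c), assume $M$ is pseudocoherent over $\tilde{\calR}_R$. The strategy is to produce, iteratively, a resolution of $M_f$ by finite projective $(\tilde{\calR}_R[f^{-1}])^\varphi$-modules. By Lemma~\ref{L:lift action1} applied to $\tilde{\calR}_R$ (noting $\tilde{\calR}_R$ appears in \eqref{eq:phi rings}), choose a surjection $P \to M$ of $\varphi$-modules with $P$ a projective $\varphi$-module over $\tilde{\calR}_R$; then $P_f$ is finite projective over $(\tilde{\calR}_R[f^{-1}])^\varphi$ (this is the standard fact that the $\varphi$-invariants of $\calO[f^{-1}]$-localizations of projective $\varphi$-modules are projective, as in the étale $\varphi$-module dictionary). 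Let $N = \ker(P \to M)$; by Lemma~\ref{L:pseudocoherent 2 of 3}, $N$ is a pseudocoherent $\varphi$-module over $\tilde{\calR}_R$, and by part (b) applied to $0 \to N \to P \to M \to 0$ we get a short exact sequence $0 \to N_f \to P_f \to M_f \to 0$ with $N_f$ finitely generated over $(\tilde{\calR}_R[f^{-1}])^\varphi$. Iterating (replacing $M$ by $N$) yields a resolution of $M_f$ by finite projectives, proving pseudocoherence. The main obstacle I anticipate is the finite-generation bookkeeping in part (b): one must check carefully that the integer $N$ furnished by Lemma~\ref{L:pseudocoherent h1 vanishing} can be chosen uniformly so that both the generation statement for $M$ and the $H^1$-vanishing for the relevant modules (including the kernels $N$ arising in the iteration of (c)) hold simultaneously, and that the invariant rings $(\tilde{\calR}_R)^\varphi$ and $(\tilde{\calR}_R[f^{-1}])^\varphi$ behave well enough that "finitely generated over the invariant ring" is stable under the operations used — this is where one leans on the structural results of \S\ref{subsec:perfect phi-modules} and the comparison in Theorem~\ref{T:pseudocoherent type A}.
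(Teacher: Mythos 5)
Your proposal is correct and follows essentially the same route as the paper, which deduces (a) directly from Lemma~\ref{L:pseudocoherent h1 vanishing} via the twisted long exact sequence, obtains (b) from (a) by passing to the colimit (as in the corresponding argument of the prequel), and proves (c) by using Lemma~\ref{L:lift action1} together with Lemma~\ref{L:pseudocoherent 2 of 3} to build a resolution by projective $\varphi$-modules and applying (b) repeatedly. The only soft spot is your parenthetical claim that $M(n)^{\varphi}$ is finitely generated over $(\tilde{\calR}_R)^{\varphi}$, which is neither needed nor true in general; the finite generation in (b) is over $(\tilde{\calR}_R[f^{-1}])^{\varphi}$ and is obtained, as you also indicate, from the generation statement of Lemma~\ref{L:pseudocoherent h1 vanishing} combined with the $H^1$-vanishing applied to a surjection from a direct sum of twists.
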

\begin{proof}
Part (a) is immediate from Lemma~\ref{L:pseudocoherent h1 vanishing}, as in 
\cite[Corollary~6.2.3]{part1}. Part (b) is immediate from (a),
as in \cite[Lemma~6.3.3]{part1}. To deduce (c), 
use Lemma~\ref{L:lift action1} to construct a projective resolution of $M$ consisting of projective $\varphi$-modules,
then apply (b) repeatedly (and Lemma~\ref{L:pseudocoherent 2 of 3}).
\end{proof}

We next recall the construction of \cite[Definition~6.3.10]{part1}.
\begin{defn} \label{D:proj functor}
Let $P_R$ be the graded ring defined in \cite[\S 6.3]{part1} in the case $\varpi = p$; that is, $P_{R,d}$ consists of those $f \in \tilde{\calR}^\infty_R$ for which $\varphi(f) = \varpi^d f$. The space $\Proj(P_R)$ is covered by the spaces
$\Spec(P_R[f^{-1}]_0)$ as $f$ runs over homogeneous elements $P_{R,+}$. For each such element, the map $P_R[f^{-1}]_0 \to \tilde{\calR}^\infty_R[f^{-1}]$ defines a map
$\Spec(\tilde{\calR}^{\infty}_R[f^{-1}]) \to \Spec(P_R[f^{-1}]_0)$.
By \cite[Lemma~6.3.7]{part1}, $P_{R,+}$ generates the unit ideal of $\tilde{\calR}_R$,
so we get a map $\Spec(\tilde{\calR}^\infty_R) \to \Proj(P_R)$ which is $\varphi$-equivariant for the trivial action on the target.
By pulling back and taking global sections, we obtain a right exact 
(by Theorem~\ref{T:perfect Robba Kiehl} and Corollary~\ref{C:pseudocoherent to proj})
functor from quasicoherent (resp. quasicoherent finitely generated, quasicoherent finitely presented) sheaves on $\Proj(P_R)$ to arbitrary (resp.\ finitely generated, finitely presented) $\tilde{\calR}^\infty_R$-modules equipped with semilinear $\varphi$-actions. Unfortunately, since we do not know that $P_R[f^{-1}]_0 \to \tilde{\calR}_R^\infty[f^{-1}]$ is flat, we cannot immediately see that the functor is left exact, that it takes pseudocoherent sheaves to pseudocoherent modules, or that the resulting modules are complete.
\end{defn}

\begin{theorem} \label{T:algebraic equivalence}
The construction of Definition~\ref{D:proj functor} defines an exact equivalence of categories between the category of pseudocoherent sheaves (in the sense of \cite{sga6}) 
on $\Proj(P_R)$ (the schematic relative Fargues-Fontaine curve)
and the category of pseudocoherent $\tilde{\calR}_R$-modules equipped with isomorphisms with their $\varphi$-pullbacks. Moreover, under this equivalence, sheaf cohomology corresponds to $\varphi$-cohomology.
\end{theorem}
\begin{proof}
Let $V$ be a pseudocoherent sheaf on $\Proj(P_R)$.
As in \cite[Lemma~8.8.4]{part1}, there exists some $n \geq 0$ such that $V(n)$ is generated by global sections; consequently, we can construct an exact sequence
\begin{equation} \label{eq:perfect generalized phi-modules1}
0 \to V_1 \to V_2 \to V \to 0
\end{equation}
of pseudocoherent sheaves on $\Proj(P_R)$ in which $V_2$ is a vector bundle. Apply
Definition~\ref{D:proj functor} to obtain an exact sequence
\[
M_1 \to M_2 \to M \to 0
\]
of $\tilde{\calR}_R$-modules equipped with semilinear $\varphi$-actions,
in which $M_1$ is finitely generated, $M_2$ is finite projective, and so $M$ is finitely presented. Put $K = \ker(M_1 \to \ker(M_2 \to M))$; by Corollary~\ref{C:pseudocoherent to proj}, for
$d>0$ and $f \in P_{R,d}$,
\[
0 \to K_f \to M_{1,f} \to M_{2,f} \to M_f \to 0
\]
is an exact sequence of $P_R[f^{-1}]_0$-modules.
Now consider the commutative diagram
\[
\xymatrix{
& 0 \ar[r] & V_{1,f} \ar[r] \ar[d] & V_{2,f} \ar[r] \ar[d] & V_f \ar[d] \ar[r] & 0\\
0 \ar[r] & K_f \ar[r] & M_{1,f} \ar[r] & M_{2,f} \ar[r] & M_f \ar[r] & 0
}
\]
in which the top row is obtained from \eqref{eq:perfect generalized phi-modules1}
by taking sections over the open affine subspace $\Spec(P_R[f^{-1}]_0)$ of $\Proj(P_R)$.
As in \cite[Theorem~6.3.12]{part1}, the map $V_{2,f} \to M_{2,f}$ is an isomorphism;
this implies that $V_f \to M_f$ is surjective. However, since $V_1$ is itself pseudocoherent, we may repeat the argument with $V$ replaced by $V_1$ to see that $V_{1,f} \to M_{1,f}$ is also surjective. By the five lemma, we may now deduce that $V_f \to M_f$ is injective, and then repeat the argument with $V$ replaced by $V_1$ to see that $V_{1,f} \to M_{1,f}$ is injective. That is, the vertical arrows are all isomorphisms, so $K_f = 0$. By Lemma~\ref{L:pseudocoherent h1 vanishing}, this implies that $K=0$;
that is, the functor from pseudocoherent sheaves on $\Proj(P_R)$ to $\tilde{\calR}_R$-modules is exact. Since \cite[Theorem~6.3.12]{part1} ensures that this functor takes vector bundles to finite projective modules, we may deduce that the essential image of this functor consists of pseudocoherent $\tilde{\calR}_R$-modules. 

At this point, we know that Definition~\ref{D:proj functor} defines an exact functor from pseudocoherent sheaves on $\Proj(P_R)$ to pseudocoherent  $\tilde{\calR}_R$-modules equipped with semilinear $\varphi$-actions, and that Corollary~\ref{C:pseudocoherent to proj} defines an exact functor in the opposite direction.
It is obvious that the composition from $\tilde{\calR}_R$-modules and back is an equivalence.
In the other direction from \eqref{eq:perfect generalized phi-modules1}, the fact that the composition from $\Proj(P_R)$ and back is a quasi-equivalence reduces to the case of vector bundles, for which we may appeal to \cite[Theorem~6.3.12]{part1}.
This completes the proof.
\end{proof}
\begin{cor} \label{C:pseudocoherent sheaf as vb quotient}
Every pseudocoherent sheaf on $\Proj(P_R)$ is a quotient of some vector bundle.
\end{cor}
\begin{proof}
Combine Lemma~\ref{L:lift action1} with Theorem~\ref{T:algebraic equivalence}. 
\end{proof}

\begin{remark}
The discrepancy between Theorem~\ref{T:algebraic equivalence} and Theorem~\ref{T:perfect generalized phi-modules} is that the modules coming out of the former are not guaranteed to be complete for the natural topology, and so cannot be viewed as sheaves on the adic Fargues-Fontaine curve using the framework we have developed. This difficulty disappears in the case of a point; see \S\ref{subsec:coherent FF}.
\end{remark}

\subsection{Coherent sheaves on Fargues-Fontaine curves}
\label{subsec:coherent FF}

We now restrict back to the case where the base space is a point. In this case,  additional noetherian properties make it possible to establish a GAGA-style theorem for coherent sheaves on Fargues-Fontaine curves (as promised in the introduction of \cite{kedlaya-noetherian}).
\begin{hypothesis}
Throughout \S\ref{subsec:coherent FF}, let $R = L$ be a perfect analytic field of characteristic $p$ and put $R^+ = \gotho_L$.
\end{hypothesis}

\begin{remark} \label{C:pseudocoherent type A fpd}
In this setting, the rings $\tilde{\calR}^{\inte}_L$ and $\tilde{\calE}_L$ are both discrete valuation rings. Consequently, the statement of Corollary~\ref{C:pseudocoherent type A flat} can be formally upgraded: every finitely generated module equipped with a semilinear $\varphi$-action is a pseudocoherent $\varphi$-module, and is moreover fpd of global dimension at most 1.
\end{remark}

\begin{theorem} \label{T:curve noetherian}
The following statements hold.
(Here $P_L = P_R$ is as in Definition~\ref{D:proj functor}.)
\begin{enumerate}
\item[(a)]
For every nonzero homogeneous $f \in P_L$, the ring $P_L[f^{-1}]_0$ is noetherian. Moreover, it is a Dedekind domain, and if $L$ is algebraically closed it is even a principal ideal domain.
\item[(b)]
For $0 < s \leq r$, the ring $\tilde{\calR}^{[s,r]}_L$ is strongly noetherian. Moreover,
for every rational localization (resp. \'etale morphism) $(\tilde{\calR}^{[s,r]}_L, \tilde{\calR}^{[s,r],+}_L)
\to (B,B^+)$, the ring $B$ is a finite direct sum of principal ideal domains (resp.\ Dedekind domains).
\item[(c)]
For $0 < s \leq r$ and $n$ a nonnegative integer, every maximal ideal of  $\tilde{\calR}^{[s,r]}_L\{T_1,\dots,T_n\}$ contracts to a maximal ideal of $\tilde{\calR}^{[s,r]}_L$.
\item[(d)]
For $0 < s \leq r$ and $n$ a nonnegative integer, the ring $\tilde{\calR}^{[s,r]}_L\{T_1,\dots,T_n\}$ is regular of dimension $n+1$.
\end{enumerate}
\end{theorem}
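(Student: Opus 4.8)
The plan is to bootstrap from the case of a point in the structure space. Since $R = L$ is a perfect analytic field, the ring $\tilde{\calR}^{\inte,r}_L$ is already understood to be a principal ideal domain (by the calculations underlying $\tilde{\calR}^{\inte}_L$ being a discrete valuation ring, cf.\ Remark~\ref{C:pseudocoherent type A fpd}, combined with the fact that elements of $\tilde{\calR}^{\inte,r}_L$ are Laurent-type series with a Newton-polygon decomposition). First I would prove a strong noetherianity statement for $\tilde{\calR}^{\inte,r}_L$: using the remark from the excerpt that a ring separated and complete with respect to a principal ideal $f$ with $R/fR$ noetherian is itself noetherian, one reduces the noetherianity of $\tilde{\calR}^{\inte,r}_L\{T_1,\dots,T_n\}$ to that of its reduction modulo $[\overline{u}]$ for a suitable topologically nilpotent unit $\overline{u}$, which is a Tate algebra over the residue field and hence noetherian. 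One must be slightly careful about completeness, but the weighted Gauss norm makes $\tilde{\calR}^{\inte,r}_L$ complete with respect to the principal ideal $([\overline{u}])$. This gives (b) and (d) for the half-open interval, and then the rational localization $(\tilde{\calR}^{\inte,r}_L, \tilde{\calR}^{\inte,r,\Gr}_L) \to (\tilde{\calR}^{[s,r]}_L, \tilde{\calR}^{[s,r],\Gr}_L)$ from Remark~\ref{R:Robba reified2} transfers the strongly noetherian property to $\tilde{\calR}^{[s,r]}_L$ via \cite[Lemma~2.4.13]{part1}.

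Next I would establish the Dedekind/PID structure in (b). The ring $\tilde{\calR}^{[s,r]}_L$ is one-dimensional: its maximal ideals correspond (via Theorem~\ref{T:Fontaine perfectoid homeomorphism} and Proposition~\ref{P:Robba perfectoid}, after tensoring with $E_\infty$) to classical points of a closed annulus over the perfectoid field, which sits inside the Fargues--Fontaine curve; regularity of dimension $1$ then follows by a local computation at each maximal ideal, using that the completed local rings are those of a smooth curve. For the finer statement that a rational localization yields a finite direct sum of principal ideal domains (and an \'etale morphism a finite direct sum of Dedekind domains), I would invoke the classification of vector bundles / the structure of the Fargues--Fontaine curve over an algebraically closed field: the curve $\Proj(P_L)$ for $L$ algebraically closed is ``almost'' $\mathbb{P}^1$ in that it has trivial Picard group on affine pieces, giving the PID property; the general $L$ case is a finite \'etale descent, yielding Dedekind domains. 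Part (a) then follows by identifying $P_L[f^{-1}]_0$ with the ring of functions on an affine open of $\FFC_L$, which by part (b) and the compatibility of the schematic and adic curves (Theorem~\ref{T:perfect generalized phi-modules}) is a localization built from the $\tilde{\calR}^{[s,r]}_L$'s; noetherianity and the Dedekind property are local and transfer.

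For (c), the claim that maximal ideals of $\tilde{\calR}^{[s,r]}_L\{T_1,\dots,T_n\}$ contract to maximal ideals of $\tilde{\calR}^{[s,r]}_L$: I would use the going-down / dimension count already in place. Since $\tilde{\calR}^{[s,r]}_L$ is a one-dimensional noetherian domain and $\tilde{\calR}^{[s,r]}_L\{T_1,\dots,T_n\}$ is noetherian of dimension $n+1$ (by (d)), a maximal ideal $\gothm$ of the Tate algebra has residue field finite over the base in the rigid-analytic sense; the contraction $\gothm \cap \tilde{\calR}^{[s,r]}_L$ is a prime of coheight at most $n+1 - \dim(\tilde{\calR}^{[s,r]}_L\{T_1,\dots,T_n\}/\gothm) = n+1$, but in fact the fiber ring over it is a Tate algebra over a field and hence the contraction must be maximal (otherwise the fiber would have too large dimension). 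This is essentially the argument of \cite[Lemma~1.7.6]{huber} or the classical affinoid maximum-modulus argument, applied to the strongly noetherian ring from (b).

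I expect the main obstacle to be part (b)'s sharp structural claim—that rational localizations produce principal ideal domains rather than merely Dedekind domains—since this requires genuine input about the global geometry of the Fargues--Fontaine curve (triviality of line bundles on affine opens, or equivalently the classification of $\varphi$-modules over $\tilde{\calR}^{\bd}_L$), as opposed to the purely local arguments that suffice for noetherianity and regularity. Descending from algebraically closed $L$ to general $L$ while keeping track of which rings become PIDs versus only Dedekind also needs care.
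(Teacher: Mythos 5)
The central step of your argument for (b) does not work, and (b) is the heart of the theorem. You propose to deduce that $\tilde{\calR}^{\inte,r}_L\{T_1,\dots,T_n\}$ is noetherian from the remark that a ring which is separated and complete for the adic topology of a principal ideal with noetherian quotient is noetherian, taking the ideal to be generated by $[\overline{u}]$. But $[\overline{u}]$ is a \emph{unit} in $\tilde{\calR}^{\inte,r}_L$: this ring is by definition the completion of $W_\varpi(\gotho_L)[[L]]$, which contains $[\overline{u}^{-1}]$, so the quotient modulo $[\overline{u}]$ is the zero ring and the reduction is vacuous. If instead you intended to quotient by $\varpi$, the quotient is indeed the field $L$, but $\tilde{\calR}^{\inte,r}_L$ is complete only for the Banach norm $\lambda(\alpha^r)$, not $\varpi$-adically complete: its $\varpi$-adic completion is $W_\varpi(L)$, which forgets the growth condition on Teichm\"uller coefficients. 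So the hypothesis of the remark fails in either reading, and no such two-line reduction is available; this is exactly why the noetherianity of these rings is the main theorem of \cite{kedlaya-noetherian} (proved there by a genuinely different and much more involved analysis, first for $\tilde{\calR}^{r}_L$ and then transferred to $\tilde{\calR}^{[s,r]}_L$ by rational localization as in the reified picture), and why the paper simply cites that work for (b). Your further claim that the same argument yields (d) inherits this gap, and in addition regularity does not follow from such a completeness statement even when it applies.

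Part (c) is also not established by your dimension count. The assertion that a maximal ideal of $\tilde{\calR}^{[s,r]}_L\{T_1,\dots,T_n\}$ must contract to a maximal ideal of $\tilde{\calR}^{[s,r]}_L$ ``since otherwise the fiber would have too large dimension'' is not a valid inference: over a one-dimensional noetherian domain $A$, a maximal ideal of a polynomial or Tate-type algebra can in principle contract to a non-maximal prime (compare $(\pi T-1)\subset A[T]$ for $A$ a DVR), and ruling this out for $\tilde{\calR}^{[s,r]}_L\{T_1,\dots,T_n\}$ is a genuine Nullstellensatz-type statement; it is the subject of the separate work \cite{wear} cited in the paper. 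Note also that your logical order is inverted relative to the paper: there, (d) is deduced from (a) and (c) together with the Nullstellensatz for Tate algebras over the residue fields, whereas you derive (d) first (by the flawed completeness argument) and then (c) from (d). Finally, your treatment of the PID/Dedekind assertions in (a)--(b) via triviality of line bundles on affine opens of the Fargues--Fontaine curve is reasonable in spirit (and close to \cite{fargues-fontaine}), but as written it presupposes the noetherianity you have not yet obtained, and the descent from algebraically closed $L$ to general $L$ is not a finite \'etale descent and would need a real argument.
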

\begin{proof}
For (a), see \cite{fargues-fontaine}.
For (b), see \cite[Theorem~4.10, Theorem~7.11, Theorem~8.9]{kedlaya-noetherian}.
(Note that this derivation follows Remark~\ref{R:Robba reified2}, by first proving that
$\tilde{\calR}^{r}_L$ is strongly noetherian.)
For (c), see \cite{wear}. For (d), combine (a) and (c) with the Nullstellensatz for Tate algebras over an analytic field \cite[Proposition~7.1.1/3]{bgr}.
\end{proof}

\begin{theorem} \label{T:perfect generalized phi-modules at a point}
Let $F$ be a perfectoid analytic field corresponding to the perfect analytic field $L$,
and put $X = \Spa(F, \gotho_F)$. Then the following exact tensor categories are equivalent (and abelian).
\begin{enumerate}
\item[(a)]
The category of coherent $\varphi$-modules over $\tilde{\calR}^\infty_L$.
\item[(b)]
The category of coherent $\varphi$-modules over $\tilde{\calR}_L$.
\item[(c)]
The category of coherent $\varphi$-bundles over $\tilde{\calR}_L$.
\item[(d)]
The category of coherent $\varphi$-modules over $\tilde{\calR}^{[s,r]}_L$,
provided that $0 < s \leq r/p^{ah}$.
\item[(e)]
The category of coherent sheaves on $\Proj(P_L)$.
\item[(f)]
The category of coherent sheaves on $\FFC_L$.
\item[(g)]
The category of coherent sheaves on $\FFC_{L,\et}$.
\item[(h)]
The category of fpd sheaves on $\FFC_{L,\proet}$.
\item[(i)]
The category of fpd $\varphi$-modules over $\tilde{\bC}^\infty_{X}$.
\item[(j)]
The category of fpd $\varphi$-modules over $\tilde{\bC}_{X}$.
\item[(k)]
The category of coherent $\varphi$-bundles over $\tilde{\bC}_{X}$.
\item[(l)]
The category of coherent $\varphi$-modules over $\tilde{\bC}^{[s,r]}_{X}$,
provided that $0 < s \leq r/p^{ah}$.
\end{enumerate}
More precisely, the sheaves in (i)--(l) may be taken on any of $X,X_{\et},X_{\proet}$. If we restrict to projective modules, we may also replace the pro-\'etale topology with the v-topology.
\end{theorem}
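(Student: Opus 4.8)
The plan is to deduce this statement as the specialization to $R=L$ of Theorem~\ref{T:perfect generalized phi-modules}, Corollary~\ref{C:perfect fpd phi-modules} and Theorem~\ref{T:perfect generalized phi-modules cohomology}, the one genuinely new ingredient being the noetherian structure recorded in Theorem~\ref{T:curve noetherian}. First I would observe that over the rings occurring at a point, the three finiteness conditions ``pseudocoherent'', ``coherent'' and ``fpd (of projective dimension $\le 1$)'' all coincide. Indeed, Theorem~\ref{T:curve noetherian}(b) gives that $\tilde\calR^{[s,r]}_L$ is strongly noetherian, and Theorem~\ref{T:curve noetherian}(d) with $n=0$ gives that it is regular of dimension $1$; likewise Theorem~\ref{T:curve noetherian}(a) gives that each $P_L[f^{-1}]_0$ is a noetherian Dedekind domain, hence regular of dimension $1$. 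By Remark~\ref{R:noetherian pseudoflat}, over a noetherian ring a module is pseudocoherent iff it is finitely generated, and over a regular ring of finite dimension $m$ every pseudocoherent module is $m$-fpd; with $m=1$ the four classes (finitely generated, finitely presented, pseudocoherent, $1$-fpd) agree over $\tilde\calR^{[s,r]}_L$ and over $P_L[f^{-1}]_0$. Since $\FFC_L$ and $\FFC_{L,\et}$ are covered by the adic spectra of strongly noetherian rings of the first kind and $\Proj(P_L)$ by the spectra of rings of the second kind, the same coincidence holds for (pseudo)coherent and fpd sheaves on $\FFC_L$, $\FFC_{L,\et}$ and $\Proj(P_L)$, by Theorem~\ref{T:stably noetherian Kiehl} and the locality of these conditions.

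Next I would transport this collapse through the equivalences already available over a general base. Categories (d), (e), (f), (g), (n) of the present statement are literally the categories (d), (e), (f), (g), (l) of Theorem~\ref{T:perfect generalized phi-modules} with ``pseudocoherent'' replaced by ``coherent'', which by the previous paragraph changes nothing; categories (h), (i), (j), (k), (l), (m) are the ``fpd'' incarnations, which by the previous paragraph again coincide with the pseudocoherent ones and are therefore supplied, together with the $\FFC_{L,\pfqc}$ and $\FFC_{L,\faith}$ variants and the freedom to take the relevant sheaves on any of $X, X_{\et}, X_{\proet}, X_{\pfqc}, X_{\faith}$, by Corollary~\ref{C:perfect fpd phi-modules}. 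For categories (a), (b), (c) and the $\tilde\bC$-variants (k), (l), (m) the ambient rings $\tilde\calR^\infty_L, \tilde\calR_L, \tilde\bC^\infty_X, \tilde\bC_X$ are not noetherian, so one cannot argue directly on them; instead I would invoke the fact that the equivalence (a)$\leftrightarrow$(d) of Theorem~\ref{T:perfect generalized phi-modules} is a base extension along $\tilde\calR^\infty_L \to \tilde\calR^{[s,r]}_L$, which is topologically flat by Proposition~\ref{P:weak flatness perfect Robba}, so pseudocoherence, the fpd property, and the uniform projective-dimension bound are all detected on the strongly noetherian regular model $\tilde\calR^{[s,r]}_L$, where they coincide. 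Chaining the equivalences of Theorem~\ref{T:perfect generalized phi-modules}, Corollary~\ref{C:perfect fpd phi-modules} and Theorem~\ref{T:perfect generalized phi-modules cohomology} (the last one for the compatibility with cohomology) then yields the asserted equivalence of all fourteen exact tensor categories; the tensor structures need no further comment since the cited equivalences are already tensor equivalences.

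Finally, for abelianness it suffices to exhibit one of the fourteen categories as abelian, since every equivalence in play is an equivalence of exact tensor categories and hence transports an abelian structure. I would take item (f): the adic relative Fargues--Fontaine curve $\FFC_L$ is a locally noetherian (and sheafy) adic space by Theorem~\ref{T:curve noetherian}(b), so by Theorem~\ref{T:stably noetherian Kiehl} its category of coherent sheaves is, locally and hence globally, equivalent to the category of finitely generated modules over a noetherian ring; kernels, cokernels and images of morphisms of coherent sheaves are again coherent, with the usual exactness, so (f) is abelian. Transporting along the equivalences of the previous paragraph gives abelianness of all fourteen categories.

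The hard part will be the bookkeeping in the second paragraph: one must check that the equivalences of Theorem~\ref{T:perfect generalized phi-modules} are compatible, termwise on projective resolutions, with the base extensions to the noetherian model $\tilde\calR^{[s,r]}_L$, so that not merely pseudocoherence but the bounded projective dimension is preserved --- this bound is exactly what makes the pro-\'etale, pfqc and v-topology descent of Corollary~\ref{C:perfect fpd phi-modules} (and hence items (h), (i), (j)) available, and without the regularity of dimension $1$ in Theorem~\ref{T:curve noetherian}(d) it would fail. A minor secondary point is to confirm that $\FFC_L$ is genuinely locally noetherian and sheafy so that Theorem~\ref{T:stably noetherian Kiehl} applies, which is precisely the content of Theorem~\ref{T:curve noetherian}(b); everything else is a routine specialization.
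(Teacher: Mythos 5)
There is a genuine gap in your treatment of the categories over the non-noetherian rings, i.e.\ items (a), (b), (c) (and the bundle terms in (m) over \'etale covers). Your plan is to ``detect'' coherence, pseudocoherence and the fpd bound on the noetherian model $\tilde{\calR}^{[s,r]}_L$ after the topologically flat base extension $\tilde{\calR}^\infty_L \to \tilde{\calR}^{[s,r]}_L$, but this does not work as stated: the equivalence (a)$\leftrightarrow$(d) of Theorem~\ref{T:perfect generalized phi-modules} is an equivalence between categories of \emph{pseudocoherent} $\varphi$-modules, so you cannot even feed a merely coherent $\varphi$-module over $\tilde{\calR}^\infty_L$ into it without first knowing it is pseudocoherent --- and over a ring that is not known to be coherent, a coherent (or finitely presented) module need not be pseudocoherent, since the kernel of a presentation is finitely generated but has no reason to be finitely presented. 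Nor does a non-faithfully-flat (merely topologically flat or pseudoflat) base change descend such finiteness properties. So the precise identification ``coherent $=$ pseudocoherent $=$ fpd'' over $\tilde{\calR}^\infty_L$, $\tilde{\calR}^r_L$ and $\tilde{\calR}_L$, which is exactly the new content of the theorem at a point, is left unproved in your argument.

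The paper closes this hole by proving that all the relevant rings are Pr\"ufer domains, hence coherent rings with every finitely presented module $1$-fpd (Remark~\ref{R:Bezout torsion}): $\tilde{\calR}^{[s,r]}_L$ and its \'etale extensions by Theorem~\ref{T:curve noetherian}, the rings $\tilde{\calR}^r_L$ (including $r=\infty$) by combining Theorem~\ref{T:curve noetherian} with Corollary~\ref{C:quasi-Stein Prufer}, $\tilde{\calR}_L$ as a filtered direct limit of Pr\"ufer domains along flat maps (Remark~\ref{R:coherent conditions}), and $P_L[f^{-1}]_0$ by Theorem~\ref{T:curve noetherian}(a). Once the rings themselves are coherent, every occurrence of ``coherent'' or ``fpd'' may be replaced by ``pseudocoherent'', and the theorem (including abelianness, via Remark~\ref{R:noetherian pseudoflat}) follows from Theorem~\ref{T:perfect generalized phi-modules} and Corollary~\ref{C:perfect fpd phi-modules}; no termwise bookkeeping of projective resolutions is needed. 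Your first paragraph (the noetherian, regular-of-dimension-$1$ collapse for (d), (e), (n) and the spaces covered by such rings) is essentially the same reduction the paper makes for those items; what is missing is the coherence statement for the non-noetherian rings, most notably the quasi-Stein/Pr\"ufer input of Corollary~\ref{C:quasi-Stein Prufer}.
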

\begin{proof}
Note that the following rings are Pr\"ufer domains, and hence coherent rings by
Remark~\ref{R:Bezout torsion}.
\begin{itemize}
\item
The ring $\tilde{\calR}^{[s,r]}_L$, by Theorem~\ref{T:curve noetherian}. Alternatively, apply \cite[Proposition~2.6.8]{kedlaya-revisited} in case $E$ is of mixed characteristic,
and otherwise realize $\tilde{\calR}^{[s,r]}_L$ as a Berkovich affinoid algebra over $L$.
\item
The rings $\tilde{\calR}^r_L$ for all $r$ (including $r = \infty$), by
Theorem~\ref{T:curve noetherian} plus Corollary~\ref{C:quasi-Stein Prufer}.
\item
The ring $\tilde{\calR}_L$, by virtue of being the direct limit of the Pr\"ufer domains $\tilde{\calR}^r_L$ via flat morphisms.
Alternatively, in case $E$ is of mixed characteristic, 
see \cite[Theorem 2.9.6]{kedlaya-revisited}.
\item
Any Banach ring $B$ arising from an etale morphism
$(\tilde{\calR}^{[s,r]}_L, \tilde{\calR}^{[s,r],+}_L) \to (B,B^+)$,
by Theorem~\ref{T:curve noetherian}.
\item
The ring $P_L[f^{-1}]_0$ for any homogeneous $f \in P_L$, by Theorem~\ref{T:curve noetherian} or \cite[Lemma~6.3.6]{part1}.
\end{itemize}
This allows us to reduce to the corresponding statement with every occurrence of ``coherent'' or ``fpd'' replaced by ``pseudocoherent''. We thus deduce the claim from
Theorem~\ref{T:perfect generalized phi-modules} as supplemented by
Corollary~\ref{C:perfect fpd phi-modules}
and Theorem~\ref{T:algebraic equivalence} (to bring (e) into the equivalence).
\end{proof}

\begin{theorem} \label{T:perfect generalized phi-modules cohomology at a point}
The equivalences in Theorem~\ref{T:perfect generalized phi-modules}
induce isomorphisms of $\varphi$-cohomology groups in cases (a)--(d),
sheaf cohomology groups in cases (e)--(h), and 
$\varphi$-hyper\-cohomology groups on any of $X,X_{\et}, X_{\proet}$ in cases (i)--(l). If we restrict to projective modules, we may also replace the pro-\'etale topology with the v-topology.
\end{theorem}
\begin{proof}
This is immediate from Theorem~\ref{T:perfect generalized phi-modules cohomology}.
\end{proof}

\begin{remark} \label{R:GAGA}
Theorems~\ref{T:perfect generalized phi-modules at a point}
and Theorem~\ref{T:perfect generalized phi-modules cohomology at a point} include an assertion in the spirit of the GAGA principle for rigid analytic spaces \cite[Example~3.2.6]{conrad}: the morphism $\FFC_L \to \Proj(P_L)$ induces an equivalence of categories of coherent sheaves, as well as isomorphisms between sheaf cohomology groups of coherent sheaves. 
However, this morphism does not obey the universal property associated with  the analytification of a scheme locally of finite type over either $\CC$ or a nonarchimedean analytic field \cite[Expos\'e~XII, \S 1]{sga1}; that is, for $Y$ an adic space over $E$, a morphism $Y \to \Proj(P_L)$ of locally ringed spaces is not guaranteed to factor uniquely through $\FFC_L$. To recover the universal property, one must add the extra assumption that the factorization exists at the level of points; that is, for each $y \in Y$ mapping to $x \in \Proj(P_L)$, the seminorm on $\kappa(x)$ induced by $y$ must correspond to some element of $\FFC_L$. (Similar considerations apply to the map $\FFC_R \to \Proj(P_R)$.)
\end{remark}

\begin{remark}
One consequence of Theorem~\ref{T:perfect generalized phi-modules} is that for any analytic field $F$ over $E$, for $X = \Spa(F, \gotho_F)$, the category of pseudocoherent $\varphi$-modules over $\tilde{\bC}_X$ is abelian (namely, this reduces to the case of a perfectoid field). One cannot hope for this to hold in general. For instance, suppose $X = \Spa(A,A^+)$ is affinoid perfectoid. Then every pseudocoherent $A$-module arises as a pseudocoherent $\varphi^a$-module over $\tilde{\bC}_X$ (see Remark~\ref{R:O as pseudocoherent}), and pseudocoherent $A$-modules do not form an abelian category if $A$ is not coherent
(see Remark~\ref{R:noetherian pseudoflat}). On the other hand, if $X$ is strongly noetherian,  the hope
may be more justified; see for example Theorem~\ref{T:abelian category phi}.
\end{remark}

\begin{remark} \label{R:phi-gamma point Robba}
In Theorem~\ref{T:curve noetherian}(b), the given reference shows that the ring $\tilde{\calR}^{[s,r]}_{L}$ is in fact really strongly noetherian (see Remark~\ref{R:really strongly noetherian}). With this observation,
the obvious reified analogues of 
Theorem~\ref{T:perfect generalized phi-modules at a point}
and Theorem~\ref{T:perfect generalized phi-modules cohomology at a point} hold, with corresponding proofs.
\end{remark}

\subsection{Pseudocoherent \texorpdfstring{$B$}{B}-pairs}
\label{subsec:pseudocoherent B-pairs}

Returning to the general setting, we attempt to extend the equivalence among categories of 
pseudocoherent $\varphi$-modules of type $\bC$ to an analogue of $B$-pairs. In light of Remark~\ref{R:no pseudoflat completion}, this requires some care.

\begin{hypothesis}
Throughout \S\ref{subsec:pseudocoherent B-pairs}, fix a Fontaine perfectoid adic Banach ring $(A,A^+)$
over $\gotho_E$ corresponding to $(R,R^+)$ as in Theorem~\ref{T:Fontaine perfectoid correspondence}.
\end{hypothesis}

\begin{defn} \label{D:B-pair}
Let $\bB^+_{\dR,R}, \bB_{\mathrm{e},R}, \bB_{\dR,R}$ be the rings denoted $R_1, R_2, R_{12}$
in \cite[Definition~8.9.4]{part1}. To make these more explicit, choose an open affine subscheme $U$ of $\Proj(P_R)$ containing $\Spec(A)$ 
such that for $S$ the coordinate ring of $U$, the ideal $J$ of $S$ cutting out $\Spec(A)$ is principal
(see \cite[Remark~8.7.6]{part1}).
Then $\bB^+_{\dR, R}$ is the $J$-adic completion of $S$;
$\bB_{\mathrm{e}, R}$ is the coordinate ring of the complement of $\Spec(A)$ in
$\Proj(P_R)$; and $\bB_{\dR,R}$ is the coordinate ring of the fibred product of 
$\Spec(\bB^+_{\dR, R})$ and $\Spec(\bB_{\mathrm{e},R})$ over $\Proj(P_R)$, which is also the localization of $S$ in which a generator of $J$ is inverted.

A \emph{projective (resp.\ pseudocoherent, fpd) $B$-pair} over $R$ is a tuple $(M_{\dR}^+, M_{\mathrm{e}}, M_{\dR}, \iota_{\dR}^+, \iota_{\mathrm{e}})$ in which $M_{\dR}^+, M_{\mathrm{e}}, M_{\dR}$
are finite projective (resp.\ pseudocoherent, fpd) modules over the respective rings $\bB^+_{\dR,R}, \bB_{\mathrm{e},R}, \bB_{\dR, R}$ and $\iota_{\dR}^+:M_{\dR}^+ \otimes_{\bB^+_{\dR,R}} \bB_{\dR,R} \to M_{\dR}$,
$\iota_{\mathrm{e}}:  M_{\mathrm{e}}\otimes_{\bB_{\mathrm{e},R}} \bB_{\dR,R} \to M_{\dR}$ are isomorphisms.
Define the \emph{$B$-cohomology groups} of a pseudocoherent $B$-pair to be the kernel and cokernel of the map
\[
M_{\dR}^+ \oplus M_{\mathrm{e}} \to M_{\dR}, \qquad (\bv_1, \bv_2)
\mapsto \iota_{\dR}^+(\bv_1) - \iota_{\mathrm{e}}(\bv_2).
\]
\end{defn}

\begin{lemma} \label{L:cover B-pair}
Every pseudocoherent $B$-pair over $R$ is a quotient of some projective $B$-pair.
\end{lemma}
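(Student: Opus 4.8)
The plan is to reduce the statement to producing, for each of the three constituent modules of a pseudocoherent $B$-pair, a finite projective module over the appropriate ring surjecting onto it in a way compatible with the gluing isomorphisms. Recall that a pseudocoherent $B$-pair is a tuple $(M_{\dR}^+, M_{\mathrm{e}}, M_{\dR}, \iota_{\dR}^+, \iota_{\mathrm{e}})$, so the naive approach is to choose finite free covers of each piece separately and then patch; the subtlety, as flagged in Remark~\ref{R:no pseudoflat completion}, is that $\bB_{\dR,R}^+$ is a completion of $\tilde{\calR}^{[s,r]}_R$ which need not be pseudoflat, so a cover of $M_{\dR}^+$ need not descend, and a priori one cannot lift generators of $M_{\dR}$ to $M_{\dR}^+$ and $M_{\mathrm{e}}$ simultaneously. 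The key to circumventing this is to start from the pseudocoherent $\varphi$-module picture: by Theorem~\ref{T:perfect generalized phi-modules} (equivalence of (d) and, say, the $\varphi$-bundle picture), a pseudocoherent $B$-pair should arise from a pseudocoherent $\varphi$-module $M$ over $S = \tilde{\calR}^{[s,r]}_R$ via the recipe of Definition~\ref{D:B-pair}, so I would first establish (or use from the surrounding development) that every pseudocoherent $B$-pair is realized this way.

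Granting that, the argument goes as follows. First, apply Lemma~\ref{L:lift action1} to $M$: since $S = \tilde{\calR}^{[s,r]}_R$ does not appear in \eqref{eq:phi rings} or \eqref{eq:phi rings2}, I cannot quote it directly for this ring, so instead I would work on $\tilde{\calR}^\infty_R$ (where Lemma~\ref{L:lift action1} applies) using the equivalence of categories in Theorem~\ref{T:perfect generalized phi-modules} between pseudocoherent $\varphi$-modules over $\tilde{\calR}^\infty_R$ and over $\tilde{\calR}^{[s,r]}_R$: choose a projective $\varphi^a$-module $F$ over $\tilde{\calR}^\infty_R$ with a $\varphi$-equivariant surjection $F \twoheadrightarrow M$, and base-extend to $\tilde{\calR}^{[s,r]}_R$. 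Now build the $B$-pair $\calB(F)$ associated to $F$ via Definition~\ref{D:B-pair}: its de Rham lattice is $F \otimes_S \bB_{\dR,R}^+ = \varprojlim_n F/J^nF$ (by Lemma~\ref{L:pseudocoherent base extension}(a), which applies since $F$ is in particular pseudocoherent), its $\mathrm{e}$-part is $F_{\mathrm{e}}$ obtained from the global-sections construction over $\Proj(P_R)$, and its $\dR$-part is $F\otimes_S \bB_{\dR,R}$. Each of these is finite projective over the corresponding ring because $F$ is a projective $\varphi$-module and the three rings $\bB_{\dR,R}^+, \bB_{\mathrm{e},R}, \bB_{\dR,R}$ are flat (or at least pseudoflat) over the relevant coordinate rings in a way that preserves finite projectivity — this is exactly the content already used in \cite[Theorem~8.9.6]{part1} for the projective case. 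So $\calB(F)$ is a projective $B$-pair.

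Next I would check that the surjection $F \twoheadrightarrow M$ induces a morphism of $B$-pairs $\calB(F) \to \calB(M)$ which is surjective on all three constituents. Surjectivity on the $\mathrm{e}$-part follows from Corollary~\ref{C:pseudocoherent to proj}(b) (exactness of the $M \mapsto M_f$ operation on short exact sequences of finitely generated $\varphi$-modules, after twisting to kill $H^1$ via Lemma~\ref{L:pseudocoherent h1 vanishing}), applied to the kernel-cokernel sequence of $F \to M$. Surjectivity on the $\dR^+$-part follows from Lemma~\ref{L:pseudocoherent base extension}(b): writing $0 \to K \to F \to M \to 0$ with $K$ pseudocoherent (Lemma~\ref{L:pseudocoherent 2 of 3}), the vanishing of $\Tor_1^S(M, \bB_{\dR,R}^+)$ gives exactness of $0 \to K\otimes \bB_{\dR,R}^+ \to F\otimes\bB_{\dR,R}^+ \to M\otimes\bB_{\dR,R}^+ \to 0$, hence surjectivity. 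Surjectivity on the $\dR$-part then follows by base-extending the $\dR^+$ statement along $\bB_{\dR,R}^+ \to \bB_{\dR,R}$ (using that $\bB_{\mathrm{e},R} \to \bB_{\dR,R}$ is flat, so one can also see it from the $\mathrm{e}$-side), and compatibility with $\iota_{\dR}^+, \iota_{\mathrm{e}}$ is automatic from functoriality of the construction $\calB(-)$.

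The main obstacle I anticipate is the first reduction: confirming that an \emph{arbitrary} pseudocoherent $B$-pair over $R$ in the sense of Definition~\ref{D:B-pair} actually arises as $\calB(M)$ for some pseudocoherent $\varphi$-module $M$ over $\tilde{\calR}^{[s,r]}_R$ — i.e., an essential-surjectivity statement that is precisely what Theorem~\ref{T:pseudocoherent B-pairs} (referenced but not yet proved in the excerpt) is meant to supply. If that full equivalence is not yet available at this point in the paper, the alternative is to argue directly: given the $B$-pair, glue $M_{\dR}^+$ and $M_{\mathrm{e}}$ along $\iota_{\dR}^+, \iota_{\mathrm{e}}$ over the covering $\{\Spec \bB_{\dR,R}^+, \Spec \bB_{\mathrm{e},R}\} \to \Proj(P_R)$ (a Beauville--Laszlo-type gluing, in the spirit of Lemma~\ref{L:pseudocoherent glueing} / \cite[Proposition~1.3.6]{part1}) to obtain a pseudocoherent sheaf on $\Proj(P_R)$, whence a pseudocoherent $\varphi$-module by the construction of Definition~\ref{D:proj functor}; one would then need to know this sheaf is pseudocoherent, which follows from the ``two out of three'' property once one knows $M_{\dR}^+, M_{\mathrm{e}}$ are pseudocoherent as modules over the relevant rings. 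Either way, once $M$ is in hand the rest is the routine lifting argument sketched above, so I would present the proof in the order: (1) realize the $B$-pair via a pseudocoherent $\varphi$-module $M$; (2) cover $M$ by a projective $\varphi$-module $F$ using Lemma~\ref{L:lift action1}; (3) form $\calB(F)$ and check it is a projective $B$-pair; (4) check the induced map $\calB(F) \to \calB(M)$ is surjective constituent-by-constituent using Lemma~\ref{L:pseudocoherent base extension} and Corollary~\ref{C:pseudocoherent to proj}.
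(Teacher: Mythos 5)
Your plan has a genuine circularity at its core. Step (1) — realizing an arbitrary pseudocoherent $B$-pair as $\calB(M)$ for a pseudocoherent $\varphi$-module $M$ over $\tilde{\calR}^{[s,r]}_R$ — is exactly the essential surjectivity assertion of Theorem~\ref{T:pseudocoherent B-pairs}, and in the paper that essential surjectivity is \emph{deduced from} Lemma~\ref{L:cover B-pair}: one uses the lemma to resolve the given $B$-pair by projective $B$-pairs, lifts that resolution through the (already known) projective case, and only then obtains the $\varphi$-module $M$. So you cannot use the $\varphi$-module realization as input to the lemma. Your proposed fallback does not repair this: a Beauville--Laszlo-type gluing of the pseudocoherent pieces $M_{\dR}^+$ and $M_{\mathrm{e}}$ over $\Proj(P_R)$ is precisely what is \emph{not} available here, because $\bB^+_{\dR,R}$ is a $J$-adic completion of $\tilde{\calR}^{[s,r]}_R$ that is not known to be pseudoflat (Remark~\ref{R:no pseudoflat completion}); Beauville--Laszlo handles torsion-free or finite projective modules, not general pseudocoherent ones, and the ``two out of three'' property says nothing about whether the glued object exists or is pseudocoherent. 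Indeed the remark immediately preceding Theorem~\ref{T:pseudocoherent B-pairs} flags exactly this obstruction as the reason an extra argument — this lemma — is needed. (Your steps (2)--(4), covering $M$ by a projective $\varphi$-module and checking surjectivity via Lemma~\ref{L:pseudocoherent base extension} and Corollary~\ref{C:pseudocoherent to proj}, are fine in themselves, but they only apply once $M$ exists, which is the point at issue.)

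The paper's proof avoids $\varphi$-modules entirely and works directly with the $B$-pair, emulating the matrix argument of \cite[Lemma~1.5.2]{part1}: choose $n$ generators $\bv_1,\dots,\bv_n$ of $M_{\dR}^+$ and $\bw_1,\dots,\bw_n$ of $M_{\mathrm{e}}$ (possible since pseudocoherent modules are finitely generated), write the two comparison relations via $n\times n$ matrices $A,B$ over $\bB_{\dR,R}$, observe that the columns of $C=AB-1$ land in the kernel $N_{\dR}^+$ of $F_{\dR}^+\to M_{\dR}^+$, and form the block matrix $D=\begin{pmatrix} A & C \\ 1 & B\end{pmatrix}$, which has determinant $\det(AB-C)=1$ and hence is invertible over $\bB_{\dR,R}$. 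The isomorphism $D$ glues the free rank-$2n$ modules $F_{\mathrm{e}}\oplus F_{\mathrm{e}}$ and $F_{\dR}^+\oplus F_{\dR}^+$ into a projective (in fact constituent-wise free) $B$-pair that visibly surjects onto the original one. If you want a correct write-up along your lines, you should instead adopt this direct construction; the route through pseudocoherent $\varphi$-modules must come after, not before, this lemma.
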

\begin{proof}
We emulate the proof of \cite[Lemma~1.5.2]{part1}.
Set notation as in Definition~\ref{D:B-pair}. For some positive integer $n$, we can construct generators $\bv_1,\dots,\bv_n$ of $M_{\dR}^+$ and
$\bw_1,\dots,\bw_n$ of $M_{\mathrm{e}}$.
There then exist $n \times n$ matrices $A,B$ over $\bB_{\dR,R}$ such that
\[
\iota_{\mathrm{e}}(\bw_j) = \sum_i A_{ij} \iota_{\dR}^+(\bv_i), \qquad
\iota_{\dR}^+(\bv_j) = \sum_i B_{ij} \iota_{\mathrm{e}}(\bw_i).
\]
Let $F_{\dR}^+ \to M_{\dR}^+, F_{\mathrm{e}} \to M_{\mathrm{e}}$ be the surjective morphisms of modules corresponding to the chosen generating sets, so that
$F_{\dR}^+$, $F_{\mathrm{e}}$ are free modules of rank $n$ over $\bB_{\dR}^+$,
$\bB_{\mathrm{e}}$, respectively. Let $N_{\dR}^+$, $N_{\mathrm{e}}$ be the respective kernels of these morphisms.
By writing
\[
\iota_{\dR}^+(\bv_k) = \sum_j B_{jk} \iota_{\mathrm{e}}(\bw_j) = 
\sum_{i,j} A_{ij} B_{jk} \iota_{\dR}^+(\bv_i),
\]
we see that the columns of the matrix $C = AB-1$ belong to $N_{\dR}^+$.

Let $D$ be the block matrix $\begin{pmatrix} A & C \\ 1 & B \end{pmatrix}$;
by using row operations to clear the top left block, we see that
$\det(D) = \det(AB-C) = 1$. In particular, $D$ is invertible,
so it defines an isomorphism 
\begin{equation} \label{eq:B-pair construction}
(F_{\mathrm{e}} \oplus F_{\mathrm{e}}) \otimes_{\bB_{\mathrm{e}}} \bB_{\dR} \to
(F_{\dR}^+ \oplus F_{\dR}^+) \otimes_{\bB_{\dR}^+} \bB_{\dR}.
\end{equation}
This morphism carries $(N_{\mathrm{e}} \oplus F_{\mathrm{e}})  \otimes_{\bB_{\mathrm{e}}} \bB_{\dR}$ into
$(N_{\dR}^+ \oplus F_{\dR}^+) \otimes_{\bB_{\dR}^+} \bB_{\dR}$.
Consequently, we may interpret \eqref{eq:B-pair construction} as defining a projective $B$-pair mapping surjectively to the original one.
\end{proof}

\begin{defn} \label{D:pullback to B-pair}
Let $\calF$ be a pseudocoherent sheaf on $\Proj(P_R)$. By pulling back along the arrows in the cartesian square
\[
\xymatrix{
\Spec(\bB_{\dR,R}) \ar[r] \ar[d] & \Spec(\bB_{\dR,R}^+) \ar[d] \\
\Spec(\bB_{\mathrm{e},R}) \ar[r] & \Proj(P_R)
}
\]
we obtain a tuple $(M_{\dR}^+, M_{\mathrm{e}}, M_{\dR}, \iota_{\dR}^+, \iota_{\mathrm{e}})$ in which $M_{\dR}^+, M_{\mathrm{e}}, M_{\dR}$ as in the definition of a pseudocoherent $B$-pair, with one deficiency: the modules $M_{\dR}^+, M_{\dR}$ are not known to be pseudocoherent. The problem is that the horizontal arrows in the square are open immersions and hence flat, but the vertical arrows are not known to be flat
(Remark~\ref{R:no pseudoflat completion}).
\end{defn}

One circumstance in which we can guarantee pseudocoherence of $M_{\dR}^+$ is the following.
\begin{defn}
We say that a pseudocoherent sheaf on $\Proj(P_R)$ is \emph{$\theta$-finite} if its $J$-power-torsion submodule is finitely generated. We say that a pseudocoherent $B$-pair if the $J$-power-torsion submodule of $M_{\dR}^+$ is finitely generated.
\end{defn}

\begin{theorem} \label{T:pseudocoherent B-pairs}
The construction of Definition~\ref{D:pullback to B-pair} defines an equivalence of exact tensor categories
between $\theta$-finite pseudocoherent sheaves on $\Proj(P_R)$ 
and $\theta$-finite pseudocoherent $B$-pairs over $R$.
Moreover, this equivalence identifies $\varphi$-cohomology groups 
with $B$-cohomology groups.
\end{theorem}
\begin{proof}
Retain notation as in Definition~\ref{D:B-pair} and let $f$ be a generator of $J$; then $z$ is not a zero-divisor in $S$.The category of $\theta$-finite pseudocoherent sheaves on $\Proj(P_R)$ is equivalent to the category of descent data for pseudocoherent modules with finite $z$-torsion with respect to the diagram
$\bB_{\mathrm{e},R} \to S_z \leftarrow S$ (the $z$-torsion condition only being applied over $S$).
By Proposition~\ref{P:Beauville-Laszlo with limited torsion},
the category of pseudocoherent modules over $S$ with finite $z$-torsion is equivalent to the category of descent data for pseudocoherent modules with finite $z$-torsion with respect to the diagram
$\bB_{\dR,R}^+ \to \bB_{\dR,R} \leftarrow S_z$. Combining these results proves the claim.
\end{proof}

\subsection{Slopes of \texorpdfstring{$\varphi$}{phi}-modules}
\label{subsec:slopes}

We next recall the basic theory of slopes for $\varphi$-modules of type $\bC$, and use it to describe the relationship between projective $\varphi$-modules and \'etale $E$-local systems. 
In the process, we use descent for the v-topology, as provided by Remark~\ref{R:local systems lattice}, to simplify the arguments of \cite[\S 7]{part1}.

We begin by recalling the analogue of the Dieudonn\'e-Manin classification for $\varphi$-modules of type $\bC$ over a point, as in \cite[Proposition~4.2.16]{part1}.

\begin{defn}
For $m$ a positive integer, let $\Psi_m^*$ denote the restriction functor from $\varphi$-modules to $\varphi^m$-modules. Let $\Psi_{m*}$ denote the functor from $\varphi^m$-modules to $\varphi$-modules taking $M$ to $M \oplus \varphi^*(M) \oplus \cdots \oplus (\varphi^{m-1})^*(M)$.
\end{defn}

We next extend \cite[Proposition~4.2.15]{part1} to cover arbitrary $E$.

\begin{theorem} \label{T:DM}
Suppose that $R = L$ is an algebraically closed analytic field.
Then every $\varphi$-module over $\tilde{\calR}_L$ decomposes as a direct sum
of submodules, each of the form $\Psi_{d*} (\tilde{\calR}_L(c))$ for some pair of integers $c,d$ with $d>0$ and $\gcd(c,d) = 1$. (Such a decomposition is called a
\emph{Dieudonn\'e-Manin decomposition} of the $\varphi$-module.)
\end{theorem}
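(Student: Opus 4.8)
The plan is to reduce to the equal-characteristic case handled in \cite{part1} (note Hypothesis~\ref{H:pseudoflat Robba} now allows $E$ of equal characteristic $p$, for which $W_\varpi(R^+) = R^+ \llbracket \varpi \rrbracket$ degenerates and every statement about $\tilde{\calR}$-modules already follows from \cite{part1}), and otherwise to deduce the general mixed-characteristic case from the case $E = \Qp$ by a restriction-of-scalars argument. So the first step is: if $E$ has characteristic $p$, quote \cite[Proposition~4.2.16]{part1} (the Dieudonn\'e--Manin classification over $\tilde{\calR}_L$) with $\varphi = \varphi_\varpi^a$ in place of the $p$-power Frobenius; the argument there is insensitive to replacing $p$ by a prime power since it only uses the structure of the Robba ring over an algebraically closed perfectoid (here perfect) field, together with the invertibility of the relevant Frobenius operator and the classification of one-dimensional $\varphi$-modules via Newton polygons. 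As flagged in Remark~\ref{R:homogeneity}, in equal characteristic one simply gives the argument directly.

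For $E$ of mixed characteristic, the key point is that $E$ is a finite extension of $\Qp$, so $\tilde{\calR}_L$ (built from $W_\varpi$) is a finite free module over the analogous ring $\tilde{\calR}_{L,0}$ built from the ordinary Witt vectors $W$ over $\FF_{p^h}$ with respect to $\varpi_0 = p$: concretely $W_\varpi(L^+) \cong W(L^+) \otimes_{\ZZ_p} \gotho_E$ by Remark~\ref{R:same Witt vectors}, and this identification is compatible with the two Frobenii via the commuting square in that remark (the $\varphi_\varpi$ acting on the left tensor factor corresponds to $\varphi^h$). Hence a $\varphi$-module over $\tilde{\calR}_L$ (where $\varphi = \varphi_\varpi^a$, and $a$ is divisible by $h$) can be viewed, by restriction of scalars along $\tilde{\calR}_{L,0} \to \tilde{\calR}_L$, as a $\varphi^{a}_0$-module over $\tilde{\calR}_{L,0}$ of the same rank times $[E:\Qp]$, where $\varphi_0$ is the $p$-power Frobenius. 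One then applies the $E = \Qp$ case of the theorem (i.e.\ \cite[Proposition~4.2.15]{part1}, or rather its $\varphi^a$-version obtained by the standard $\Psi_m^*/\Psi_{m*}$ formalism recalled just above) to obtain a Dieudonn\'e--Manin decomposition of the underlying $\tilde{\calR}_{L,0}$-module, and then descends the decomposition back along the finite \'etale-like extension $\tilde{\calR}_{L,0} \to \tilde{\calR}_L$ using the commuting action of $\gotho_E$ (which centralizes the $\varphi_0$-action up to the identification above), exactly as in the proof that slope decompositions are insensitive to replacing $\varphi$ by a power and to unramified base change. The building blocks $\Psi_{d*}(\tilde{\calR}_L(c))$ are the images under this descent of the corresponding $\Psi_{d*}(\tilde{\calR}_{L,0}(c'))$.

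Alternatively — and this is probably cleaner to write — one reruns the proof of \cite[Proposition~4.2.16]{part1} verbatim. That proof proceeds by: (i) showing every $\varphi$-module over $\tilde{\calR}_L$ contains a $\varphi$-stable $\tilde{\calR}^{\bd}_L$-lattice, then a $\tilde{\calE}_L$-lattice, reducing the classification to the bounded Robba ring; (ii) using that $L$ is algebraically closed to solve the relevant Artin--Schreier-type equations and eigenvector equations, which produces rank-one sub-$\varphi$-modules of prescribed slope; (iii) an induction on rank, splitting off a sub-object of minimal (or maximal) slope and checking the extension splits because $H^1_\varphi$ of the relevant slope-negative twist vanishes over $\tilde{\calR}_L$. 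Every one of these inputs has an analogue in the present generality: the lattice existence is Lemma~\ref{L:lift action1} together with Lemma~\ref{L:pseudocoherent type A torsion}; the vanishing of $H^1_\varphi$ of suitable twists is Lemma~\ref{L:pseudocoherent h1 vanishing}; and the algebraic-closedness input is unchanged. The only genuine care needed is bookkeeping with the normalization $|\varpi| = p^{-1}$ and the Frobenius $\varphi = \varphi_\varpi^a$ rather than $\varphi_p$, but this only rescales the slopes by an overall positive factor and does not affect the combinatorial structure of the decomposition.

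The main obstacle I anticipate is purely organizational rather than mathematical: making the translation between the two Frobenii ($\varphi_\varpi$ versus the $p$-power Frobenius) and the two rings ($W_\varpi$ versus $W$) fully rigorous, so that slopes defined via $\varphi_\varpi^a$ match up correctly with slopes defined via $\varphi_p^a$ under the finite base change $\gotho_E/\Zp$, and so that "algebraically closed" does the right work for the generalized Witt construction. Concretely, the subtle point is confirming that the rank-one objects $\tilde{\calR}_L(c)$ really do exhaust the simple pieces over $L$ algebraically closed — i.e.\ that the unit $z \in \tilde{\calR}^{\bd}_L$ with $\varphi_\varpi(z) = \varpi z$ exists, which one checks by solving $\varphi_\varpi([\overline{u}^{p^{-n}}])/[\overline{u}] \cdot (\text{unit}) = \varpi$ using the algebraically closed residue field and the homogeneity of Witt-vector arithmetic from Remark~\ref{R:homogeneity}. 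Given the groundwork already laid in \S\ref{subsec:perfect phi-modules} and the cited results from \cite{part1}, I expect the write-up to be short, with the bulk being a careful statement of the reduction and a pointer to the unchanged portions of the proof of \cite[Proposition~4.2.16]{part1}.
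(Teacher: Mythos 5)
The paper's proof of Theorem~\ref{T:DM} is a citation: \cite[Theorem~4.5.7]{kedlaya-revisited} when $E$ has mixed characteristic and \cite[Theorem~11.1]{hartl-pink} when $E$ has equal characteristic. Your proposal does not actually reach either case. The equal-characteristic half rests on a false premise: \cite{part1} works exclusively with $E=\Qp$ and the $p$-typical Witt vectors, so there is nothing in \cite[Proposition~4.2.15]{part1} to ``quote with $\varphi_\varpi^a$ in place of the $p$-power Frobenius'' once $W_\varpi(L^+)=L^+\llbracket \varpi\rrbracket$; this is precisely the caveat in Remark~\ref{R:homogeneity}, which warns that in \S\ref{subsec:slopes} the needed statements do not appear in \cite{part1} and that no sweeping transfer is being asserted. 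The equal-characteristic classification is the Hartl--Pink theorem on $\sigma$-bundles, a separate (if parallel) development. Your fallback of ``rerunning the proof of \cite[Proposition~4.2.16]{part1} verbatim'' also misfires, because that proof is itself a citation to \cite{kedlaya-revisited}, whose hard steps --- producing a rank-one subobject of the correct slope over algebraically closed $L$ and descending the slope filtration --- are the real content and are not among the inputs you list; in particular Lemma~\ref{L:pseudocoherent h1 vanishing} (vanishing of $H^1_\varphi$ for large positive twists) is not the vanishing statement used in that d\'evissage.

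In mixed characteristic your restriction-of-scalars plan has a genuine gap at the descent step. After restricting along $\tilde{\calR}_{L,0}\to\tilde{\calR}_L$ (note that $\varphi_\varpi^a$ corresponds to $\varphi^{ha}\otimes 1$, not to $\varphi^a$), the Dieudonn\'e--Manin decomposition you obtain over $\tilde{\calR}_{L,0}$ is not canonical, so it cannot simply be ``descended along the commuting $\gotho_E$-action''; only the isoclinic (slope-by-slope) decomposition is canonical and hence $\gotho_E$-stable. That does give $M=\bigoplus_s M_s$ with each $M_s$ an $\tilde{\calR}_L$-submodule pure of a single slope, but you must still prove that a pure $\varphi$-module over $\tilde{\calR}_L$ with $L$ algebraically closed decomposes into the standard objects $\Psi_{d*}(\tilde{\calR}_L(c))$ --- for instance by checking that $\Psi_d^*(M_s)(-c)$ is \'etale, that \'etale $\varphi$-modules over a geometric point are trivial, and that $M_s$ is then a direct summand of a sum of standard modules --- together with the renormalization of slopes between the two Frobenii. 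None of this is supplied, and it is not ``purely organizational''. Since the reference the paper cites for mixed characteristic, \cite[Theorem~4.5.7]{kedlaya-revisited}, is stated for these coefficients directly, the reduction to $E=\Qp$ buys nothing here; the short correct proof is exactly the paper's two citations.
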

\begin{proof}
See \cite[Theorem~4.5.7]{kedlaya-revisited} in the case where $E$ is of mixed characteristic,
and \cite[Theorem~11.1]{hartl-pink} in the case where $E$ is of equal characteristic.
A distinct derivation will also be included in \cite{fargues-fontaine}.
\end{proof}

\begin{defn} \label{D:pure phi-module}
For $m$ a positive integer, let $\Psi_m^*$ denote the restriction functor from $\varphi$-modules to $\varphi^m$-modules. Let $\Psi_{m*}$ denote the functor from $\varphi^m$-modules to $\varphi$-modules taking $M$ to $M \oplus \varphi^*(M) \oplus \cdots (\varphi^{m-1})^*(M)$.

For $s = c/d$ a rational number, we say that a $\varphi$-module $M$ over $\tilde{\bC}_*$ is \emph{pure of slope $s$} if $(\Psi_m^*(M))(-d)$ is an \'etale $\varphi^m$-module.
Using Theorem~\ref{T:etale type C}, one checks easily that this definition depends only on $s$ and not on the representation of $s$ as $c/d$. In particular, \emph{\'etale} is the same as \emph{pure of slope $0$}.
\end{defn}

\begin{defn} \label{D:valuation function}
The sets of units in the rings $\tilde{\calR}^{\bd}_R$ and $\tilde{\calR}_R$ coincide
\cite[Corollary~5.2.3]{part1}. This means that each unit in $\tilde{\calR}_R$ has a well-defined valuation, as a continuous function from $\Spa(R,R^+)$ to $v(E^\times) \cong \ZZ$; note that the valuation map is $\varphi$-invariant.
\end{defn}

\begin{theorem} \label{T:pure locus is open}
Let $M$ be a $\varphi$-module over $\tilde{\bC}_X$ such that for some map $Y \to X$ where
$Y = \Spa(L, \gotho_L)$ for some perfectoid field $L$,
$M \otimes_{\tilde{\bC}_X} \tilde{\bC}_{Y}$ is pure of slope $s$. 
Then there exists a neighborhood $U$ of the image of $Y$ in $X$ such that $M|_U$ is pure of slope $s$. (The reified analogue also holds.)
\end{theorem}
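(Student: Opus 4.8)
The plan is to reduce to slope~$0$ by twisting and applying $\Psi_m^*$, then to use the correspondence between \'etale $\varphi$-modules and local systems together with an openness argument on the base. Concretely: after replacing $M$ by $(\Psi_m^*(M))(-d)$ for $s = c/d$ in lowest terms, we may assume $s = 0$, so that $M \otimes_{\tilde{\bC}_X} \tilde{\bC}_Y$ is \'etale. The goal becomes showing that the \'etale locus of a $\varphi$-module of type~$\bC$ is open. By Theorem~\ref{T:etale type C} (applied after a finite \'etale base change to arrange that the relevant coefficient field contains $E_a$), \'etale $\varphi$-modules over $\tilde{\bC}$ correspond to $E$-local systems, so \'etaleness at a point can be detected by the existence of a full-rank set of $\varphi$-invariant sections.

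\textbf{Key steps.} First I would localize on $X$ to reduce to the case $X = \Spa(A, A^+)$ affinoid, and then pass to a perfectoid subdomain (or use Lemma~\ref{L:perfectoid cover}) to further reduce to the case where $(A,A^+)$ is Fontaine perfectoid, so that Theorem~\ref{T:perfect generalized phi-modules} and the slope machinery apply with $R = \overline{\calO}(A)$ its tilt. Next, I would use the existence of a Dieudonn\'e--Manin decomposition at the point $Y$ (Theorem~\ref{T:DM}) to pin down the Harder--Narasimhan data of $M$ at the image point $x$ of $Y$: purity of slope~$s$ at $x$ means all slopes equal $s$. Then the heart of the argument is a semicontinuity statement: the Harder--Narasimhan slopes of $M$ at a point vary semicontinuously over $\Spa(R,R^+)$, and the locus where $M$ is pure of slope exactly $s$ (equivalently, where the HN polygon coincides with the straight line of slope $s$) is open. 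The way I would establish this is to twist so that $s = 0$ and show that the \'etale locus is open: the quantity $\dim_{\kappa(x)} (M \otimes \tilde{\bC}_{\calH(x)})^{\varphi}$ is lower semicontinuous in $x$ (by Theorem~\ref{T:perfect Robba proetale cohomology} / Theorem~\ref{T:perfect generalized phi-modules cohomology}, cohomology does not jump down under specialization-free localization, and one can produce local sections near $x$ that generate after Theorem~\ref{T:Kiehl quasi-Stein}-type approximation), while the generic slope constraints (slopes $\geq 0$ on one side, degree considerations on the other) are also closed conditions. Combining, the locus of points where $M$ has all slopes $0$ is open, and on that locus the $\varphi$-invariants form a local system whose base change recovers $M$; this gives the neighborhood $U$.

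\textbf{Main obstacle.} The delicate point is the semicontinuity of slopes, or more precisely producing enough $\varphi$-invariant sections of $M$ (or of an appropriate twist) in a neighborhood of $x$ once one knows they exist at $x$ itself. Over a point this is the slope filtration theorem, but in families one must argue that the slope-$\geq 0$ part and slope-$\leq 0$ part each spread out; the cleanest route is probably: use Theorem~\ref{T:etale type C} and Remark~\ref{R:local systems lattice} to descend along a v-cover, then invoke that the rank of $H^0_\varphi$ of the twists $M(n)$ for large $n$ stabilizes and is constant near $x$ by approximation (Theorem~\ref{T:Kiehl quasi-Stein}(a) and Nakayama), forcing the HN polygon to be locally constant. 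The reified analogue follows verbatim since every ingredient (Robba rings as quasi-Stein reified spaces, Remark~\ref{R:Robba reified}, Theorem~\ref{T:perfect generalized phi-modules}) has been stated in the reified setting.
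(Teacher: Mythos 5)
Your overall skeleton tracks the paper's argument up to a point: twist to reduce to $s=0$, localize and use Lemma~\ref{L:perfectoid cover} to assume $X$ perfectoid, pass to a v-covering so that the image point $x$ has $\calH(x)$ algebraically closed and containing $L$ (descending via Theorem~\ref{T:etale type C} and Remark~\ref{R:local systems lattice}), and invoke Theorem~\ref{T:DM} at that point. But the step where you actually spread \'etaleness from the point to a neighborhood is where your proposal breaks down. Your mechanism is a semicontinuity claim for $\dim (M \otimes \tilde{\bC}_{\calH(x)})^{\varphi}$, justified by cohomological base change plus ``Nakayama'' applied to $H^0_\varphi$ of twists $M(n)$. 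This does not work: the $\varphi$-cohomology groups are $E$-vector spaces, not finitely generated $\calO_X$- or $\tilde{\bC}_X$-modules, so Nakayama and coherent base-change arguments are unavailable, and there is no natural surjection from $H^0_\varphi$ over a neighborhood to $H^0_\varphi$ of the fiber. Worse, for $n$ large the twists $M(n)$ satisfy $H^1_\varphi(M(n)) = 0$ and $H^0_\varphi(M(n))$ generates $M$ at \emph{every} point regardless of the slopes (Lemma~\ref{L:pseudocoherent h1 vanishing}), so the ranks of these groups carry no slope information and cannot force the HN polygon to be locally constant. There is also an internal-ordering issue: the semicontinuity theorem for HN polygons appears in the paper \emph{after} this statement and its proof explicitly argues ``as in'' this theorem, so deriving openness of the pure locus from polygon semicontinuity would be circular unless you supply an independent proof of the latter --- and your sketch of that proof is exactly the flawed step above.

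The missing idea is quantitative approximation of an integral basis rather than semicontinuity of invariants. After the v-cover reduction, Theorem~\ref{T:DM} (with $s=0$) gives a basis of $M$ at the algebraically closed point on which $\varphi$ acts by a matrix over the integral subring; the paper then cites \cite[Lemma~7.1.2]{part1} (extended to general $E$ via Remark~\ref{R:homogeneity}) to perturb this basis to sections over some neighborhood $U = \Spa(R,R^+)$ so that the matrix of $\varphi$ lies in $\tilde{\calR}^{\inte}_R$; a standard Frobenius-contraction/successive-approximation argument then shows that a $\varphi$-module whose matrix is integral (and invertible) is \'etale, whence $M|_U$ is pure of slope $0$. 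If you want to salvage your route, you would need to prove that approximation lemma (or an equivalent statement that an \'etale model at a point with algebraically closed residue field extends to an integral model over a rational neighborhood); the abstract appeals to Theorem~\ref{T:Kiehl quasi-Stein}(a) and lower semicontinuity of $\varphi$-invariants do not substitute for it.
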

\begin{proof}
We reduce immediately to the case where $s=0$;
by Lemma~\ref{L:perfectoid cover}, we may also assume that $X$ is perfectoid.
Let $x \in X$ be the image of the unique point of $Y$.
We may then construct a (not necessarily pro-\'etale) v-covering $X'$ of $X$ such that $x$ lifts to a point $x' \in X'$ with $\calH(x')$ algebraically closed and containing $L$.
By Theorem~\ref{T:etale type C} and Remark~\ref{R:local systems lattice},
we may reduce to the case $X' = X$, $x' = x$.

In this case, we may combine
Theorem~\ref{T:DM} with \cite[Lemma~7.1.2]{part1} (as extended to general $E$ using Remark~\ref{R:homogeneity}) to show that for some $U = \Spa(R,R^+)$, $M$ admits a basis on which $\varphi$ acts via an integral matrix over $\tilde{\calR}^{\inte}_R$;
this proves the claim.
\end{proof}

\begin{defn}
For $M$ a $\varphi$-module over $\tilde{\bC}_X$,  a \emph{slope filtration} of $M$
is a filtration $0 = M_0 \subset \cdots \subset M_l = M$ by $\varphi$-submodules such that each quotient $M_i/M_{i-1}$ is a $\varphi$-module which is pure of some slope $s_i$,
and $s_1 > \cdots > s_l$.
\end{defn}

\begin{lemma} \label{L:H0 by slope}
Let $M_1, M_2$ be $\varphi$-modules over $\tilde{\bC}_X$ which are pure of slopes $s_1, s_2$. If $s_1 > s_2$, then $\Hom_{\varphi}(M_1, M_2) = 0$.
(The reified analogue also holds.)
\end{lemma}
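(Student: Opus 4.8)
The statement is the standard ``morphisms go downhill'' fact for slope theory, and the plan is to reduce it to the analogous statement over a point, where it follows from the Dieudonn\'e--Manin classification. First I would observe that $\Hom_\varphi(M_1, M_2) = H^0_\varphi(M_1^\dual \otimes M_2)$, and that $M_1^\dual \otimes M_2$ is again a $\varphi$-module over $\tilde{\bC}_X$ which is pure of slope $s_2 - s_1 < 0$ (purity is preserved by duals and tensor products, as one checks using Theorem~\ref{T:etale type C} after applying a suitable $\Psi_m^*$ and twist). So it suffices to show: if $N$ is a $\varphi$-module over $\tilde{\bC}_X$ pure of slope $s < 0$, then $H^0_\varphi(N) = 0$.

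\textbf{Reduction to a point.} Since $H^0_\varphi$ is computed as the kernel of $\varphi - 1$ on $N$, and $N$ is a sheaf (of type $\bC$), a global section which is nonzero is nonzero on some point $x \in X$; more precisely, choosing a point $\alpha \in \calM(R)$ of some affinoid $\Spa(R,R^+) \subseteq X$ on which the section is nonzero and then passing to a perfectoid field $L$ with a map $\Spa(L,\gotho_L) \to \Spa(R,R^+)$ hitting $\alpha$, the base extension $N \otimes_{\tilde{\bC}_X} \tilde{\bC}_{\Spa(L,\gotho_L)}$ is pure of slope $s<0$ and has a nonzero $\varphi$-invariant element. We may further pass to an algebraically closed perfectoid field, since the base extension along $\tilde{\bC}_L \to \tilde{\bC}_{L'}$ (for $L'$ a completed algebraic closure) is injective on $\varphi$-invariants by flatness and the fact that $\tilde{\calR}_L \to \tilde{\calR}_{L'}$ is faithfully flat (one can argue directly, or use Lemma~\ref{L:AS exact} / Theorem~\ref{T:etale type C} to see that invariants of the tensored module are the tensor of the invariants in the \'etale case and reduce the general case to it via twisting). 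So it remains to handle $R = L$ algebraically closed.

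\textbf{The point case.} Over an algebraically closed analytic field $L$, apply the Dieudonn\'e--Manin decomposition (Theorem~\ref{T:DM}): $N$ decomposes as a direct sum of pieces $\Psi_{d*}(\tilde{\calR}_L(c))$, and purity of slope $s<0$ forces every such piece to have $c/d = s < 0$. It therefore suffices to show $H^0_\varphi(\Psi_{d*}(\tilde{\calR}_L(c))) = 0$ when $c < 0$; by adjunction $\Psi_{d*}$ this equals $H^0_{\varphi^d}(\tilde{\calR}_L(c))$, i.e., the set of $x \in \tilde{\calR}_L$ with $\varphi^d(x) = \varpi^{-c} x$, and since $-c > 0$ this forces $x = 0$ by the valuation/growth argument (any nonzero $x$ has a well-defined $\varphi$-invariant valuation by Definition~\ref{D:valuation function}, but $\varpi^{-c} x$ has valuation strictly larger than that of $x$ when $-c > 0$; alternatively compare Gauss norms $\lambda(\alpha^t)$ on both sides and get a contradiction from the scaling of $t$ under $\varphi$, as in \cite[\S 4]{part1}). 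The reified analogue follows verbatim, using the reified versions of Theorem~\ref{T:DM} and Theorem~\ref{T:etale type C}.

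\textbf{Main obstacle.} The genuinely delicate point is the descent step: checking that a nonzero global $\varphi$-invariant section of $N$ over $X$ remains nonzero after base extension to a suitable algebraically closed perfectoid point. The cleanest route is to invoke Theorem~\ref{T:etale type C} together with the v-descent for $E$-local systems (Remark~\ref{R:local systems lattice}) after first twisting $N$ into the \'etale range by $\Psi_m^*$ and a twist — but one must be careful that twisting does not change the vanishing of $H^0_\varphi$, which it does not since it only rescales the $\varphi$-action. Everything else is either formal (reduction to slope $<0$, adjunction for $\Psi_{d*}$) or a direct Gauss-norm computation already present in \cite{part1}.
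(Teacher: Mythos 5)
Your overall strategy is viable but genuinely different from the paper's: you reduce to a geometric point and invoke the Dieudonn\'e--Manin classification (Theorem~\ref{T:DM}), whereas the paper never leaves the relative setting -- it applies $\Psi_m^*$ to make the slope integral, uses a pro-\'etale cover to reduce to $M \cong \tilde{\bC}_X(s)$ (purity means the twist is \'etale, hence pro-\'etale-locally trivial), and then does a Gauss-norm computation directly over a general $R$. Your route costs you two extra inputs that you only assert: that a nonzero section of a finite projective module over $\tilde{\calR}_R$ has nonzero image in $\tilde{\calR}_{\calH(\beta)}$ for some $\beta \in \calM(R)$ (true, by embedding into a free module and looking at Teichm\"uller coefficients, but it deserves a line), and that purity forces all Dieudonn\'e--Manin slopes to equal $s$ (also true, e.g.\ since direct summands of \'etale modules are \'etale, but again citable rather than free). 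Neither of these is the real problem.

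The genuine gap is in your treatment of the rank-one pieces at the point. First, a sign slip: with the paper's twist convention, $H^0_{\varphi^d}(\tilde{\calR}_L(c))$ consists of $x$ with $\varphi^d(x) = \varpi^{c}x$ where $c<0$, not $\varphi^d(x)=\varpi^{-c}x$; this matters, because for a \emph{positive} exponent the space is nonzero (e.g.\ $t = \log([1+\overline{\pi}])$ satisfies $\varphi(t)=\varpi t$), so the argument you wrote would ``prove'' a false vanishing. Second, and more seriously, your primary justification -- that ``any nonzero $x$ has a well-defined $\varphi$-invariant valuation by Definition~\ref{D:valuation function}'' -- is not available: that definition assigns valuations only to units of $\tilde{\calR}_R$ (equivalently one has the $\varpi$-adic valuation on the bounded subring $\tilde{\calR}^{\bd}_L$), and a general nonzero element of $\tilde{\calR}_L$, such as $t$ above, has no such invariant. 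The correct argument is the two-step one in the paper: from $\varphi^d(x)=\varpi^c x$ with $c<0$ one gets $\lambda(\alpha^{rp^{-adh}})(x) = p^{c}\lambda(\alpha^r)(x)$, so $\lambda(\alpha^r)(x)$ stays bounded as $r \to 0^+$, whence $x \in \tilde{\calR}^{\bd}_L$ by \cite[Lemma~5.2.2]{part1}; only then does the $\varpi$-adic leading-term (valuation) comparison force $c=0$, a contradiction. Your parenthetical ``alternatively compare Gauss norms'' gestures at exactly this, but it is not an optional alternative -- it is the essential step that legitimizes the valuation argument, and it only works with the correct (negative) exponent. With the sign fixed and this step made primary, your proof closes up and is a correct, if somewhat heavier, variant of the paper's.
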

\begin{proof}
Put $M = M_1^\dual \otimes M_2$; then $M$ is pure of slope $s = s_2 - s_1 < 0$ and we wish to check that $H^0_\varphi(M) = 0$. By applying $\Psi_m^*$ as needed, we may further reduce to the case where $s \in \ZZ$; by making a pro-\'etale cover, we may further reduce to the case $M \cong \tilde{\bC}_X(s)$. So suppose then that $x \in \tilde{\calR}_R$ satisfies $\varphi(x) = \varpi^m x$ for some integer $m<0$.
By writing
\[
\lambda(\alpha^{rp^{-ah}})(x) = \lambda(\alpha^{rp^{-ah}})(\varpi^{-m} \varphi(x))
= p^m \lambda(\alpha^r)(x),
\]
we see that $\lambda(\alpha^r)(x)$ remains bounded as $r \to 0^+$, which by \cite[Lemma~5.2.2]{part1} forces $x \in \tilde{\calR}^{\bd}_R$. 
But the valuation of $x$ is $\varphi$-invariant (see Definition~\ref{D:valuation function}), so we get a contradiction against the fact that $m \neq 0$.
\end{proof}

\begin{cor} \label{C:unique slope filtration}
A slope filtration of a $\varphi$-module over $\tilde{\bC}_X$ is unique if it exists.
(The reified analogue also holds.)
\end{cor}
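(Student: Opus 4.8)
The plan is to reduce the statement to the two-step case and then invoke Lemma~\ref{L:H0 by slope}. Suppose $0 = M_0 \subset \cdots \subset M_l = M$ and $0 = M'_0 \subset \cdots \subset M'_{l'} = M$ are two slope filtrations, with successive quotients pure of slopes $s_1 > \cdots > s_l$ and $s'_1 > \cdots > s'_{l'}$ respectively. It suffices to show that $M_1 = M'_1$ and $s_1 = s'_1$; the result then follows by induction on rank (or on the number of steps), applied to $M/M_1$ equipped with the two induced filtrations.

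First I would show $M_1 \subseteq M'_1$. Consider the composite $M_1 \hookrightarrow M \twoheadrightarrow M/M'_1$. The target carries the slope filtration with quotients of slopes $s'_2 > \cdots > s'_{l'}$, all strictly less than $s'_1$. Meanwhile $M_1$ is pure of slope $s_1$. If $s_1 > s'_1$, then $s_1$ exceeds every slope appearing in $M/M'_1$, so by applying Lemma~\ref{L:H0 by slope} repeatedly along the slope filtration of $M/M'_1$ (together with the elementary fact that a morphism from a pure $\varphi$-module into a $\varphi$-module possessing a slope filtration with all slopes below the given one must vanish, proven by peeling off one graded piece at a time and using the associated long exact sequence of $\Hom_\varphi$), the composite is zero, hence $M_1 \subseteq M'_1$. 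By symmetry, if instead $s'_1 > s_1$ then $M'_1 \subseteq M_1$. In either case one of the two first steps is contained in the other, so after possibly swapping the roles of the two filtrations we may assume $s_1 \geq s'_1$ and $M_1 \subseteq M'_1$.

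Next I would upgrade this to $M_1 = M'_1$ and $s_1 = s'_1$. Since $M_1 \subseteq M'_1$ and $M'_1$ is pure of slope $s'_1$, the quotient $M'_1/M_1$ is a $\varphi$-module; dividing $M$ by $M_1$ we obtain a filtration of $M/M_1$ whose first step is $M'_1/M_1$, and the induced filtration from the $M_i$ has first graded piece of slope $s_2 < s_1$. Now the inclusion $M_1 \hookrightarrow M'_1$ of $\varphi$-modules pure of slopes $s_1$ and $s'_1$ respectively is nonzero, so by Lemma~\ref{L:H0 by slope} (applied to $M_1$ and $M'_1$) we cannot have $s_1 > s'_1$; combined with $s_1 \geq s'_1$ this gives $s_1 = s'_1$. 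Two $\varphi$-modules that are pure of the same slope $s$ become \'etale after applying $\Psi_m^*$ and twisting, and a nonzero morphism of \'etale $\varphi^m$-modules that is injective with torsion-free (indeed projective) cokernel at each point is forced to be an isomorphism once the ranks agree; here the ranks need not agree a priori, so instead I would argue that $M'_1/M_1$, being pure of slope $s_1$ and simultaneously a subquotient whose existence would force a slope-$s_1$ piece into $M/M_1$, must vanish because $M/M_1$ has all slopes strictly less than $s_1$ (apply the vanishing-of-$\Hom_\varphi$ argument from the previous paragraph once more to the inclusion $M'_1/M_1 \hookrightarrow M/M_1$). Hence $M_1 = M'_1$, and we conclude by induction. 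The reified analogue is identical, using the reified form of Lemma~\ref{L:H0 by slope}.

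The main obstacle I anticipate is the bookkeeping in the ``vanishing of $\Hom_\varphi$ from a pure module into a slope-filtered module of strictly smaller slopes'' step: one must be careful that the intermediate subquotients appearing when one peels off graded pieces are again genuine $\varphi$-modules (so that Lemma~\ref{L:H0 by slope} applies), and that the long exact sequences in $\varphi$-cohomology are available in this generality. This is routine given the framework already in place, but it is the one place where a careless argument could go wrong.
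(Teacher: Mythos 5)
Your overall strategy is the intended one: the paper states this corollary with no written proof precisely because it is the standard d\'evissage consequence of Lemma~\ref{L:H0 by slope}, and your first paragraph (kill the composite $M_1 \to M/M'_1$ step by step using left-exactness of $\Hom_\varphi$ along the slope filtration of the target) is exactly that argument. Two points in your write-up need repair, though both are easily fixed. First, your case analysis in the containment step omits $s_1 = s'_1$: you only treat $s_1 > s'_1$ and $s'_1 > s_1$, yet the conclusion ``one of the two first steps is contained in the other'' is then invoked unconditionally. The fix is to observe that the vanishing of $M_1 \to M/M'_1$ only requires $s_1 > s'_j$ for $j \geq 2$, i.e.\ $s_1 \geq s'_1$ suffices; so after swapping you may assume $s_1 \geq s'_1$ and the containment $M_1 \subseteq M'_1$ follows in all cases.

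Second, and more seriously, your endgame rests on the assertion that $M'_1/M_1$ is ``pure of slope $s_1$,'' which you do not justify and which is not available off the shelf: a quotient of a projective $\varphi$-module by a $\varphi$-submodule need not even be projective, so Lemma~\ref{L:H0 by slope} does not apply to it directly. (The claim is in fact true --- after twisting and applying $\Psi_m^*$ one reduces to an injection of \'etale modules, hence of $\QQ_p$-local systems via Theorem~\ref{T:etale type C}, whose cokernel is again a local system --- but that is an argument you would have to supply.) The cleaner route, which avoids the quotient entirely, is: having shown $s_1 = s'_1$ from the nonzero inclusion $M_1 \hookrightarrow M'_1$, apply your first-paragraph argument symmetrically to the composite $M'_1 \hookrightarrow M \twoheadrightarrow M/M_1$, whose target is filtered with pure quotients of slopes $s_2 > \cdots > s_l$, all strictly less than $s'_1 = s_1$; this composite therefore vanishes, giving $M'_1 \subseteq M_1$ and hence $M_1 = M'_1$, after which your induction on $M/M_1$ proceeds as stated.
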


\begin{theorem} \label{T:slope filtration}
Suppose that $R = L$ is an analytic field.
Then every $\varphi$-module $M$ over $\tilde{\calR}_L$ admits a (unique) slope filtration.
\end{theorem}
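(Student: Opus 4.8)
The plan is to prove the existence of a slope filtration for a $\varphi$-module $M$ over $\tilde{\calR}_L$ by first base-changing to an algebraically closed field and then descending. First I would choose a perfectoid field $F$ over $E$ corresponding to $L$, set $X = \Spa(F, \gotho_F)$, and pass to a v-covering (or pro-\'etale cover followed by a v-covering as in the proof of Theorem~\ref{T:pure locus is open}) so as to replace $L$ by an algebraically closed perfect analytic field $\overline{L}$ over which Theorem~\ref{T:DM} applies. Over $\overline{L}$ the module $M \otimes_{\tilde{\calR}_L} \tilde{\calR}_{\overline{L}}$ acquires a Dieudonn\'e-Manin decomposition, hence visibly a slope filtration (indeed a slope \emph{splitting}), with the jumps $s_1 > \cdots > s_l$ intrinsic to $M$ since the multiset of slopes is Galois- and descent-invariant. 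By Corollary~\ref{C:unique slope filtration} the filtration over $\overline{L}$ is unique, so the subobjects $M_i$ are stable under the descent datum; applying descent for the v-topology (via the equivalence between $\varphi$-modules over $\tilde{\bC}$ and $E$-local systems from Theorem~\ref{T:etale type C}, combined with Remark~\ref{R:local systems lattice} to pass between lattices and local systems, or more directly Theorem~\ref{T:vector bundle faithful descent}) we obtain $\varphi$-submodules $0 = M_0 \subset \cdots \subset M_l = M$ of $M$ over $\tilde{\calR}_L$ whose formation is compatible with the base change to $\overline{L}$.

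It then remains to check that each $M_i$ is a $\varphi$-\emph{submodule} in the strong sense (i.e. $M_i$ and $M_i/M_{i-1}$ are again $\varphi$-modules over $\tilde{\calR}_L$, meaning finite projective with $\varphi^* \cong \mathrm{id}$) and that each graded piece $M_i/M_{i-1}$ is pure of the slope $s_i$. For the first point, flatness of $\tilde{\calR}_L \to \tilde{\calR}_{\overline{L}}$ (which follows from Proposition~\ref{P:weak flatness perfect Robba} applied along the tower of finite \'etale and rational-localization morphisms, together with the splitting of Definition~\ref{D:Robba fractional}, and the v-descent input) gives that $M_i$ and the quotients are finite projective over $\tilde{\calR}_L$ since this can be checked after the faithfully flat base change, and the $\varphi$-structure descends with the module. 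For the second point, purity of $M_i/M_{i-1}$ can be tested after base change to $\overline{L}$ by the very definition (Definition~\ref{D:pure phi-module}) combined with Theorem~\ref{T:etale type C}: after applying $\Psi_m^*$ and twisting, \'etaleness over $\tilde{\calR}_{\overline{L}}$ descends to \'etaleness over $\tilde{\calR}_L$ because the associated local systems descend. Uniqueness is then immediate from Corollary~\ref{C:unique slope filtration}.

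The main obstacle I anticipate is not the Dieudonn\'e-Manin input (which is cited) but the descent step: one must ensure that the v-covering $X' \to X$ can be chosen so that $\calH(x')$ is algebraically closed \emph{for a single chosen analytic field point}, whereas for a general analytic field $L$ (as opposed to the setting of Theorem~\ref{T:pure locus is open}, which concerns a \emph{single point}) the space $X = \Spa(F,\gotho_F)$ is itself a single point, so this is actually unproblematic here. The subtler issue is verifying that the descended subsheaves are locally free of \emph{constant} rank and carry honest $\varphi$-module structures; this is handled by observing that $\Spa(F,\gotho_F)$ is connected, so constancy of rank is automatic, and by invoking the faithfully flat descent for modules together with the fact (from the proof of Theorem~\ref{T:slope filtration}'s analogue for bundles, i.e. the methods behind \cite[\S 7]{part1}) that the slope filtration over the algebraically closed field, being $\mathrm{Gal}$-stable by uniqueness, furnishes the required descent datum. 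A secondary technical point is matching the two notions of ``pure of slope $s$'' — the Dieudonn\'e-Manin shape versus the \'etale-after-twist definition — but this is exactly the content of the remark following Definition~\ref{D:pure phi-module} and needs only Theorem~\ref{T:etale type C}.
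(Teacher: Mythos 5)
Your argument is correct and follows essentially the same route as the paper: uniqueness of the slope filtration (Corollary~\ref{C:unique slope filtration}) makes the filtration over an algebraically closed cover compatible with the descent datum, so existence can be checked after a v-covering and then settled by the Dieudonn\'e--Manin decomposition of Theorem~\ref{T:DM}. The paper's proof is just a terser version of this; your additional verifications (flatness, descent of purity via Theorem~\ref{T:etale type C}, constancy of ranks on the connected space $\Spa(F,\gotho_F)$) are the details it leaves implicit.
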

\begin{proof}
By Corollary~\ref{C:unique slope filtration} the filtration is unique if it exists, and 
moreover we can check its existence after making a v-covering. We may thus assume that $L$ is algebraically closed, in which case Theorem~\ref{T:DM} proves the claim.
\end{proof}

\begin{defn} \label{D:HN polygon}
From Definition~\ref{D:valuation function}, we obtain a canonical map
\[
\Pic(\FFC_R) \to \Cont(\Spa(R, R^+), \ZZ),
\]
which we normalize so that twisting by $n$ increases the degree uniformly by $n$.
For $M$ a projective $\varphi$-module over $\tilde{\bC}_X$ of rank $n>0$,
we define the \emph{degree} of $M$ to be the image of $\wedge^n M$ in $\Cont(\left| X \right|, \ZZ)$; this definition
extends uniquely to a degree function on pseudocoherent $\varphi^a$-modules which is additive on short exact sequences.
For $M$ projective of everywhere nonzero rank, we define the \emph{slope} of $M$ to be $\mu(M) = \deg(M)/\rank(M)$.

Suppose now that $R = L$ is an analytic field. We say that a projective $\varphi^a$-module $M$ over $\tilde{\calR}_L$ is \emph{semistable} if there exists no nonzero proper $\varphi$-submodule $N$ of $M$ such that $\mu(N) > \mu(M)$.
By Lemma~\ref{L:H0 by slope} and Theorem~\ref{T:slope filtration},
$M$ is semistable if and only if it is pure of slope $\mu(M)$.

With notation as in Theorem~\ref{T:slope filtration}, we define the \emph{Harder-Narasimhan (HN) polygon} to be the convex polygonal line originating at $(0,0)$ associated to the multiset containing $\mu(M_i/M_{i-1})$ with multiplicity $\rank(M_i/M_{i-1})$ for $i=1,\dots,l$. This can be defined using only the definition of semistability; the comparison with purity implies the behavior of the polygon under tensor products, as well as stability under arbitrary base extension.
The latter ensures that for $M$ a $\varphi$-module over $\tilde{\bC}_X$, we may view the HN polygon as a function from $\left| X \right|$ to the space of polygons; note that the right endpoint is
$(\rank(M), \deg(M))$, which is locally constant. 
Note also that the HN polygon at a point $x \in X$ depends only on the analytic field $\calH(x)$; that is, the fibers of the HN polygon map are \emph{partially proper} sets.
\end{defn}

We next extend \cite[Proposition~7.4.3]{part1}.
\begin{lemma} \label{L:compare filtrations}
Let $R = L$ be an analytic field. Let $L'$ be an analytic field containing $L$, but for which the norm on $L'$ restricts trivially to $L$. Let 
$M$ be a $\varphi$-module over $\tilde{\calR}^{\bd}_L$. Then the HN polygon of
$M \otimes_{\tilde{\calR}^{\bd}_L} \tilde{\calR}_{L'}$ lies on or above the HN polygon of
$M \otimes_{\tilde{\calR}^{\bd}_L} \tilde{\calR}_{L}$, with the same endpoints.
\end{lemma}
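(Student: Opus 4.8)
\textbf{Proof plan for Lemma~\ref{L:compare filtrations}.}
The equality of endpoints is the assertion that $\rank$ and $\deg$ are unchanged under the base extension $\tilde{\calR}^{\bd}_L \to \tilde{\calR}_{L'}$; this is immediate since $\rank$ is preserved by any base extension and, by Definition~\ref{D:HN polygon} together with the $\varphi$-invariance of the valuation map (Definition~\ref{D:valuation function}), $\deg$ is computed from $\wedge^n M$ viewed as a line bundle, whose $\varphi$-action is given by a single unit of $\tilde{\calR}^{\bd}_L$ whose valuation does not change when we enlarge the coefficient field (the units of $\tilde{\calR}^{\bd}$ and $\tilde{\calR}$ coincide by \cite[Corollary~5.2.3]{part1}, and the valuation of a unit of $\tilde{\calR}^{\bd}_L$ maps to the valuation of its image in $\tilde{\calR}^{\bd}_{L'}$). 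So the content is the inequality of polygons.

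The strategy is the standard one for comparing Harder--Narasimhan polygons under base change: the HN polygon of a $\varphi$-module $N$ over $\tilde{\calR}_{L'}$ is the upper convex hull of the points $(\rank N', \deg N')$ as $N'$ ranges over $\varphi$-submodules of $N$ (equivalently, of saturated $\varphi$-submodules, which are again projective $\varphi$-modules), and the maximal slope of $N$ is $\max_{N'} \mu(N')$. Thus it suffices to show: for every $\varphi$-submodule $N'$ of $M' := M \otimes_{\tilde{\calR}^{\bd}_L}\tilde{\calR}_{L'}$, the point $(\rank N', \deg N')$ lies on or below the HN polygon of $M_L := M \otimes_{\tilde{\calR}^{\bd}_L}\tilde{\calR}_L$. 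Equivalently, passing to $\wedge^{\rank N'}$ and using multiplicativity of HN polygons under tensor product (the tensor compatibility recorded in Definition~\ref{D:HN polygon}, valid because purity is preserved by tensor products), it suffices to treat the rank-one case: if $L(c) \hookrightarrow \wedge^j M'$ is a $\varphi$-submodule (a sub-line-bundle), then $c \le$ the value of the HN polygon of $\wedge^j M_L$ at abscissa $1$, i.e. $c \le \mu_{\max}(\wedge^j M_L)$. After replacing $M$ by a suitable exterior power we are reduced to: \emph{if $M$ is a $\varphi$-module over $\tilde{\calR}^{\bd}_L$ and $M' = M\otimes \tilde{\calR}_{L'}$ has a $\varphi$-stable sub-line-bundle of slope $c$, then $\mu_{\max}(M_L) \ge c$.} Twisting reduces to $c=0$, where the sub-line-bundle being $\varphi$-stable and \'etale (pure of slope $0$) gives a nonzero element of $H^0_\varphi(M')$; I want to descend this to produce a nonzero element of $H^0_\varphi(M_L)$ realizing a slope $\ge 0$ subobject, which forces $\mu_{\max}(M_L)\ge 0$.

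The key point — and the main obstacle — is the descent from $L'$ to $L$. Here the hypothesis that the norm on $L'$ is \emph{trivial} on $L$ is essential: it means $L \to L'$ factors through the trivially-valued field $L^{\mathrm{triv}}$, so at the level of Robba rings the inclusion $\tilde{\calR}^{\bd}_L \to \tilde{\calR}^{\bd}_{L'}$ (and likewise for $\tilde{\calR}$) is a base change along a map of the underlying perfect Banach algebras which splits in the category of topological modules — concretely, one writes $\gotho_{L'}$, hence $\tilde{\calR}^{\bd}_{L'}$ and $\tilde{\calR}_{L'}$, as completed direct sums over $\tilde{\calR}^{\bd}_L$ (resp. $\tilde{\calR}_L$) using a topological basis of $L'$ over the residue field, exactly as in Lemma~\ref{L:perfect polynomial splitting} and the splitting constructions of Definition~\ref{D:Robba fractional}. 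Such a splitting makes $\tilde{\calR}_L \to \tilde{\calR}_{L'}$ faithfully flat with an $\tilde{\calR}_L$-linear retraction, so that $H^0_\varphi$ and more generally $\varphi$-cohomology of $M_L$ inject into that of $M'$ and, crucially, a $\varphi$-submodule of $M'$ that is defined over $\tilde{\calR}_L$ can be detected; but the submodule we are handed is \emph{not} a priori defined over $L$. To get around this I would instead argue with the Harder--Narasimhan filtration directly: $M_L$ has a slope filtration by Theorem~\ref{T:slope filtration}, hence so does $M'$ by base-extending it, and by uniqueness (Corollary~\ref{C:unique slope filtration}) the intrinsic slope filtration of $M'$ refines (is compatible with) this base-extended one; since each graded piece of the base-extended filtration is pure over $L'$ of the same slope it had over $L$ (purity being stable under base extension, as recorded in Definition~\ref{D:HN polygon}), the HN polygon of $M'$ is obtained from that of $M_L$ only by possibly \emph{splitting} pure pieces further into strictly smaller slopes summing correctly — which can only push the polygon up while fixing the endpoints. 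Making ``only push up'' precise is the routine convexity bookkeeping: refining a slope filtration by inserting a subobject of a pure slope-$s$ piece produces a piece of slope $>s$ followed by one of slope $<s$ with the same total rank and degree, and replacing a segment of a convex polygon by two segments with the same endpoints and the first steeper yields a polygon lying on or above the original. This step — verifying that the intrinsic HN filtration of $M'$ is a refinement of the base change of the HN filtration of $M_L$, via purity being preserved and uniqueness of slope filtrations — is where all the real work sits, and it is exactly the analogue of \cite[Proposition~7.4.3]{part1} that the statement advertises.
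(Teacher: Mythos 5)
Your endpoint argument is fine, but the core of your plan rests on a map that does not exist. Because the norm on $L'$ restricts trivially to $L$, the inclusion $L \to L'$ is not even a bounded homomorphism of Banach fields, and there is no ring homomorphism $\tilde{\calR}_L \to \tilde{\calR}_{L'}$ at all: a typical element of $\tilde{\calR}_L$ involves infinitely many negative powers of $\varpi$ whose Teichm\"uller coefficients are small for the norm of $L$, and in $L'$ all of these coefficients acquire norm $1$, so the series diverges for every norm $\lambda(\alpha_{L'}^s)$. Only the bounded subring $\tilde{\calR}^{\bd}_L$ (indeed all of $\tilde{\calE}_L$) maps to $\tilde{\calR}^{\bd}_{L'}$, which is the only reason the statement of the lemma parses. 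Consequently your appeals to a splitting as in Lemma~\ref{L:perfect polynomial splitting}, to faithful flatness of ``$\tilde{\calR}_L \to \tilde{\calR}_{L'}$'' with a linear retraction, and above all to \emph{base-extending the HN filtration of} $M\otimes\tilde{\calR}_L$ so as to compare it with the intrinsic filtration of $M'$ via Corollary~\ref{C:unique slope filtration}, have no content. This obstruction is precisely why the paper argues entirely through the common subring: after reducing to $L$ algebraically closed with integral slopes, it applies the Dieudonn\'e--Manin classification to $M \otimes \tilde{\calE}_L$, forms the \emph{reverse} filtration (smallest slopes first), shows by an explicit calculation (as in \cite[Lemma~5.4.1]{part1}) that this filtration \emph{descends} to $M$ over $\tilde{\calR}^{\bd}_L$, and only then base-extends to $\tilde{\calR}_L$ and compares with the slope filtration there. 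That descent is the real content of the proof and is absent from your plan.

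Two further points would remain problematic even if such a base change existed. First, your final paragraph is internally inconsistent: if purity of the graded pieces were preserved, the base-extended filtration would itself be a slope filtration, uniqueness would force it to coincide with the intrinsic one, and the two polygons would be equal --- whereas the whole point of the lemma is that they can differ (this generic-versus-special discrepancy, already visible for rank-two modules over $\tilde{\calR}^{\bd}_L$ with bounded matrices that are \'etale over $\tilde{\calR}_L$ but not generically \'etale, is why the statement is an inequality and why Theorem 5.6.5(b) needs the hypothesis of matching polygons); also a subobject of a pure piece of slope $s$ has slope at most $s$, not greater, so your description of the refinement is backwards. Second, your reduction controls submodules of the wrong module: with your own characterization of the HN polygon as the upper hull of the points of $\varphi$-submodules, showing that every submodule of $M'$ has its point on or below the polygon of $M\otimes\tilde{\calR}_L$ would prove that the polygon of $M'$ lies on or \emph{below} that of $M\otimes\tilde{\calR}_L$, the reverse of what is asserted; and the intermediate claim that a $\varphi$-stable sub-line of $M'$ of slope $c$ forces $\mu_{\max}(M\otimes\tilde{\calR}_L)\ge c$ fails in the examples just mentioned. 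What must be bounded are the submodules of $M\otimes\tilde{\calR}_L$ in terms of the generic data, and that is exactly what the descended reverse filtration accomplishes in the paper's argument.
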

\begin{proof}
We may assume without loss of generality that $L$ is algebraically closed
and that the HN polygon of $M \otimes_{\tilde{\calR}^{\bd}_L} \tilde{\calR}_{L'}$ has integral slopes.
By the usual Dieudonn\'e-Manin classification theorem (e.g., see \cite[Corollary~14.6.4]{kedlaya-course}), $M \otimes_{\tilde{\calR}^{\bd}_L} \tilde{\calE}_L$ admits a basis on which $\varphi$ acts via a diagonal matrix whose entries are powers of $\varpi$
(the powers of which are the slopes of $M \otimes_{\tilde{\calR}^{\bd}_L} \tilde{\calR}_{L'}$). Form the ``reverse filtration'' of $M \otimes_{\tilde{\calR}^{\bd}_L} \tilde{\calE}_L$ by taking the first subobject to be generated by the basis vectors of \emph{smallest} slope, and so on;
by calculating as in the proof of \cite[Lemma~5.4.1]{part1}, we see that this filtration
descends to $M$. By base-extending this filtration to $\tilde{\calR}_L$ and then comparing with the slope filtration as in \cite[Proposition~3.5.4]{part1},
we obtain the desired comparison.
\end{proof}

\begin{theorem}
Let $M$ be a $\varphi$-module over $\tilde{\bC}_X$.
\begin{enumerate}
\item[(a)]
The HN polygon of $M$ is bounded above and below, and lower semicontinuous as a function on $\left| X \right|$. Here the space of polygons is equipped with the discrete topology and the partial ordering for which $\calP_1 \leq \calP_2$ if the two polygons have the same width and endpoints and the pointwise comparison holds.
\item[(b)]
If the HN polygon of $M$ is a constant function, then $M$ admits a unique filtration pulling back to the HN filtration over any point. (By Theorem~\ref{T:pure locus is open},
the successive quotients of this filtration are pure.)
\end{enumerate}
(The reified analogue also holds.)
\end{theorem}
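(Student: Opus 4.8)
The plan is to deduce both assertions from the slope theory for families of $\varphi$-modules developed in \cite[\S 7]{part1}, upgraded from the case $\varpi = p$ to general $E$ via Remark~\ref{R:homogeneity} and from adic spaces to arbitrary preadic (and reified preadic) spaces by working locally on $X$. Recall that $M$ is projective by Convention~\ref{conv:drop projective}. Both assertions are local on $X$, so I may assume $X = \Spa(R,R^+)$ is affinoid; then, using Theorem~\ref{T:perfect generalized phi-modules preadic} together with the analogous equivalences for projective objects from \cite{part1}, I may view $M$ as a projective $\varphi$-bundle over $\tilde{\calR}_R$, equivalently a projective $\varphi$-module over $\tilde{\calR}^{[s,r]}_R$ for suitable $0 < s \le r/p^{ah}$. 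By Definition~\ref{D:HN polygon} the rank and degree of $M$ are locally constant, which fixes the endpoints of the HN polygon, so it remains to control its interior uniformly.

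For boundedness in (a), I would use that the height of the HN polygon at an integer abscissa $k$ is the supremum of $\deg N$ over rank-$k$ $\varphi$-sub-bundles $N$ of the fibre $M_{\calH(x)}$; passing to $\wedge^k M$ this reduces to a uniform bound on the maximal and minimal slopes $\mu_{\max}, \mu_{\min}$ of a fixed projective $\varphi$-bundle $N'$ over $\tilde{\calR}_R$. By Lemma~\ref{L:pseudocoherent h1 vanishing} (valid for general $E$ by Remark~\ref{R:homogeneity}) there is an integer $n_0$, depending only on $N'$, such that $H^0_{\varphi}(N'(n))$ generates $N'(n)$ over $\tilde{\calR}_R$ for $n \ge n_0$; thus $N'(n_0)$ is a quotient of a free, hence slope-$0$ semistable, $\varphi$-module, so $\mu_{\min}(N') \ge -n_0$, and applying this to $(N')^{\dual}$ yields $\mu_{\max}(N') \le n_0'$ for some $n_0'$. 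Since the endpoints are fixed and every slope of the HN polygon lies between $\mu_{\min}$ and $\mu_{\max}$, this gives polygons $\calP_-, \calP_+$ with $\calP_- \le \mathrm{HN}(M,x) \le \calP_+$ for all $x$ in the given quasicompact piece.

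For semicontinuity, I would show that for each polygon $\calP_0$ with the correct width and endpoints the locus $\{x \in |X| : \mathrm{HN}(M,x) \ge \calP_0\}$ is closed — equivalently, the generic-type locus where the polygon lies on or below $\calP_0$ is open — which is the asserted semicontinuity; expressing $\calP_0$ through its heights at integer abscissae and passing to exterior powers as above reduces this to showing that $\{x : \mu_{\max}((\wedge^k M)_{\calH(x)}) \ge c\}$ is closed for each $k$ and $c$. Here the key input is Theorem~\ref{T:pure locus is open}: after applying a suitable $\Psi_m^*$ and twist, the locus where $\wedge^k M$ is pure of a given slope is open and the polygon is locally constant there, and combining this with the finite list of occurring polygons from boundedness and with the trivially-valued base-change comparison of Lemma~\ref{L:compare filtrations} one deduces the closedness as in \cite[\S 7.4]{part1}. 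The main obstacle is precisely this spreading-out step — passing from the pointwise HN data to a statement valid on a neighborhood — which rests on Theorem~\ref{T:pure locus is open} and the relative Dieudonn\'e--Manin arguments of \cite[Lemma~7.1.2]{part1} (extended via Remark~\ref{R:homogeneity}).

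For (b), assume the HN polygon is the constant function with slopes $s_1 > \cdots > s_l$, and induct on $l$. Fix $x \in X$ with fibrewise HN filtration $0 = M_0 \subset \cdots \subset M_l = M_{\calH(x)}$, where $M_1$ is the saturated maximal-slope sub-bundle, pure of slope $s_1$. By Theorem~\ref{T:pure locus is open} and the relative Dieudonn\'e--Manin structure over a neighborhood (as in the proof of that theorem), $M$ admits over some open $U \ni x$ a $\varphi$-sub-bundle $M_1^U$, pure of slope $s_1$, with $M|_U/M_1^U$ having all HN slopes $< s_1$; by constancy of the polygon its rank and degree are forced, and over every point of $U$ it must coincide with the top step of the HN filtration there, by maximality together with Lemma~\ref{L:H0 by slope} and Corollary~\ref{C:unique slope filtration}. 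Since a $\varphi$-sub-bundle is determined by its fibres, these glue to a global $\varphi$-sub-bundle $M_1 \subseteq M$, pure of slope $s_1$; then $M/M_1$ is a projective $\varphi$-module with constant HN polygon having $l-1$ slopes, and we conclude by induction, the purity of the successive quotients being automatic from Theorem~\ref{T:pure locus is open} as already noted in the statement. Uniqueness holds because any such filtration must be fibrewise the unique HN filtration and is again determined by its fibres. Finally, the reified analogue follows by the same argument, replacing each cited result of \cite{part1} or of this paper by its reified counterpart.
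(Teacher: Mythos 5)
Your boundedness argument for (a) (twisting, Lemma~\ref{L:pseudocoherent h1 vanishing} applied to $M$ and its dual, exterior powers) is fine and is a legitimate variant of what the paper does. The rest of the proposal, however, has two genuine problems. First, you have the semicontinuity backwards: you propose to show that $\{x : \mathrm{HN}(M,x) \geq \calP_0\}$ is \emph{closed} and the locus ``on or below $\calP_0$'' is open. What the paper proves is the opposite local statement: after reducing (via a v-cover, as in the proof of Theorem~\ref{T:pure locus is open}) to the case where $\calH(x)$ is algebraically closed, one produces a neighborhood $U$ of $x$ and a model of $M$ over $\tilde{\calR}^{\bd}_R$ whose Dieudonn\'e--Manin polygon over $\tilde{\calE}_R$ equals $\mathrm{HN}(M,x)$ (Theorem~\ref{T:DM} plus \cite[Lemmas~7.1.2, 7.4.4]{part1}); Lemma~\ref{L:compare filtrations} then shows that the HN polygon at every point of $U$ lies \emph{on or above} the polygon at $x$, i.e.\ $\{ \mathrm{HN} \geq \calP_0\}$ is open, not closed. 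Your substitute mechanism also does not deliver your claim: Theorem~\ref{T:pure locus is open} only says that purity of the whole module at a point spreads to a neighborhood, so it controls the locus where the polygon of $\wedge^k M$ is \emph{minimal}; it says nothing about $\{\mu_{\max}(\wedge^k M) \geq c\}$ for general $c$, and the ``spreading-out'' step you flag as the main obstacle is precisely the descent to $\tilde{\calR}^{\bd}_R$ that you never carry out.

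Second, in (b) the sentence ``by Theorem~\ref{T:pure locus is open} and the relative Dieudonn\'e--Manin structure, $M$ admits over some open $U \ni x$ a $\varphi$-sub-bundle $M_1^U$ pure of slope $s_1$ with quotient of smaller slopes'' is exactly the assertion that requires proof, and it does not follow from Theorem~\ref{T:pure locus is open}: that theorem (and its proof, which constructs an integral basis) applies only when the \emph{whole} module is pure at a point, and it cannot manufacture a sub-bundle over a neighborhood extending the top HN step of a non-semistable fibre. The paper's argument instead takes the $\tilde{\calR}^{\bd}_R$-span $M_0$ of the basis from (a), which carries a ``reverse filtration'' (ascending slopes, coming from the classical Dieudonn\'e--Manin filtration over $\tilde{\calE}_R$ as in the proof of Lemma~\ref{L:compare filtrations}), and the real content is that under the constant-polygon hypothesis this reverse filtration can be \emph{split}; this is reduced to a pointwise splitting criterion as in \cite[Lemma~7.4.8]{part1} and verified via \cite[Theorem~5.5.2]{kedlaya-revisited}. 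Once local existence of the filtration is established, your gluing via fibrewise uniqueness (Corollary~\ref{C:unique slope filtration}) is fine, but without the splitting step the induction on the number of slopes has no base to stand on.
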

\begin{proof}
In both cases, we may work locally around some $x \in X$; we may thus argue as in Theorem~\ref{T:pure locus is open} to reduce to the case where $X$ is perfectoid and $\calH(x)$ is algebraically closed. 
We then argue as in the proofs of 
\cite[Theorem~7.4.5, Proposition~7.4.6]{part1}:
by combining Theorem~\ref{T:DM} and \cite[Lemma~7.1.2, Lemma~7.4.4]{part1},
we obtain a neighborhood $U = \Spa(R,R^+)$ of $x$ in $X$ on which $M$ admits a basis
on which $\varphi$ acts via a matrix over $\tilde{\calR}^{\bd}_R$
whose classical Dieudonn\'e-Manin slope polygon (i.e., as computed over $\tilde{\calE}_R$)
coincides with the HN polygon of $M$ at $x$. By Lemma~\ref{L:compare filtrations},
the HN polygon of $M$ at each point of $U$ lies on or above the HN polygon at $x$; this proves (a).

To prove (b), we argue as in \cite[Theorem~7.4.9]{part1}.
Let $M_0$ be the $\tilde{\calR}^{\bd}_R$-span of the basis constructed in (a);
then $M_0$ admits a ``reverse filtration'' as in the proof of Lemma~\ref{L:compare filtrations}, and it suffices to split this filtration under the hypothesis of matching polygons. As in \cite[Lemma~7.4.8]{part1}, this reduces to checking the splitting pointwise, which we confirm by following the proof of \cite[Theorem~5.5.2]{kedlaya-revisited}.
\end{proof}

\section{Imperfect period rings: an axiomatic approach}
\label{sec:axiomatic}

We next set up an axiomatic framework in which we can construct and study imperfect period rings. This plays a role analogous to the
\emph{Tate-Sen formalism} used by Berger-Colmez \cite{berger-colmez}, but to avoid complications with higher ramification theory, we impose some limitations on the types of towers to be considered.

\setcounter{theorem}{0}

\begin{hypothesis}
Throughout \S\ref{sec:axiomatic}, retain Hypothesis~\ref{H:towers}, and
 let $(A,A^+)$ be a uniform adic Banach algebra over $\gotho_E$ with spectral norm $\alpha$.
\end{hypothesis}

\begin{convention}
Throughout \S\ref{sec:axiomatic}, we work over an affinoid space rather than an arbitrary preadic space; we thus freely use the $\bA, \bB, \bC$ suite of notations to refer to period \emph{rings} rather than period \emph{sheaves}.
\end{convention}

\subsection{Perfectoid towers}
\label{subsec:perfectoid towers}

We begin by considering certain towers of ring extensions, mostly to fix notations.

\begin{defn}
By a \emph{tower} over $(A,A^+)$, we will mean a sequence 
\[
\psi = (\psi_n: (A_{\psi,n}, A_{\psi,n}^+) \to (A_{\psi,n+1}, A_{\psi,n+1}^+))_{n=0}^\infty
\]
of bounded homomorphisms of uniform adic Banach algebras with $(A_{\psi,0}, A_{\psi,0}^+) = (A,A^+)$, such that each of the induced maps
$\psi_n^*$ on adic spectra is surjective. 
We will refer to $(A,A^+)$ (resp.\ $\Spa(A,A^+)$) as the \emph{base} 
(resp.\ the \emph{base space}) of the tower $\psi$.

For $\sigma: (A,A^+) \to (B,B^+)$ a bounded homomorphism of uniform adic Banach algebras over $E$, define the base extension of $\psi$ along $\sigma$ to be the tower 
\[
\psi' = (\psi'_n: (B_{\psi,n}, B_{\psi,n}^+) \to (B_{\psi,n+1}, B_{\psi,n+1}^+))_{n=0}^\infty
\]
with
\[
(B_{\psi,n}, B_{\psi,n}^+) = (A_{\psi,n}, A_{\psi,n}^+) \widehat{\otimes}_{(A,A^+)} (B,B^+).
\]
Let $\alpha_{\psi,n}$ be the spectral norm on $A_{\psi,n}$;
by \cite[Theorem~2.3.10]{part1}, the maps $\psi_n$ are isometric with respect to spectral norms. 
Let $(A_{\psi}, A_{\psi}^+)$ be the direct limit of the $(A_{\psi,n}, A_{\psi,n}^+)$; 
the ring $A_{\psi}$ then admits a unique power-multiplicative norm $\alpha_{\psi}$
which restricts to $\alpha_{\psi,n}$ on $A_{\psi,n}$ for each $n$.
Let $(\tilde{A}_{\psi}, \tilde{A}_\psi^+)$ be the completion of $(A_{\psi}, A_{\psi}^+)$ with respect to $\alpha_\psi$.

We say that $\psi$ is \emph{perfectoid} if $\tilde{A}_{\psi}$ is a Fontaine perfectoid Banach algebra. In this case, we may apply the perfectoid correspondence 
(Theorem~\ref{T:Fontaine perfectoid correspondence})
to produce a perfect uniform Banach algebra $(\tilde{R}_{\psi}, \tilde{R}_\psi^+)$ over $\FF_p$ with spectral norm $\overline{\alpha}_\psi$.
We also refer to the affinoid perfectoid space $\Spa(\tilde{A}_\psi, \tilde{A}_\psi^+)$ as the \emph{total space} of $\psi$.
\end{defn}

\begin{defn}
We say that $\psi$ is \emph{finite \'etale} if each map $\psi_n$ is finite \'etale;
note that in this case, it is automatic that each $A_{\psi,n}$ is uniform 
\cite[Proposition~2.8.16]{part1}. In this case, $\psi$ defines a perfectoid subdomain in $X_{\proet}$ for $X = \Spa(A,A^+)$; however, unlike in \cite{part1}, we will be interested in the individual levels of the tower, not just the inverse system as a pro-object.
\end{defn}

\begin{prop} \label{P:perfectoid tower persistence}
Suppose that $\psi$ is a perfectoid tower.
Then the base extension $\psi'$ of $\psi$ along $\sigma$ is again a perfectoid tower whenever $\sigma$ is of one of the following forms.
\begin{enumerate}
\item[(a)]
A rational localization which is again uniform. In this case, $\tilde{R}_{\psi'}$ is a rational localization of $\tilde{R}_{\psi}$.
\item[(b)]
A finite \'etale morphism. In this case, $\tilde{R}_{\psi'}$ is a finite \'etale algebra over $\tilde{R}_\psi$.
\end{enumerate}

In the following cases, assume further that $\psi$ is a finite \'etale perfectoid tower.

\begin{enumerate}
\item[(c)]
A morphism in which $B^+$ is obtained by completing $A^+$ at a finitely generated ideal containing an ideal of definition.
In this case, $\tilde{R}_{\psi'}^+$ is obtained by completing
$\tilde{R}_{\psi}^+$ at a finitely generated ideal containing an ideal of definition.
\item[(d)]
A morphism in which $B^+$ is obtained by taking the completion of a (not necessarily finite) \'etale extension of $A^+$ with respect to an ideal of definition of $A^+$.
In this case, $\tilde{R}_{\psi'}^+$ is the completion of an \'etale algebra over $\tilde{R}_{\psi}^+$ with respect to an ideal of definition of $\tilde{R}_\psi^+$.
\item[(e)]
A morphism in which $B^+$ is obtained by taking the $p$-adic completion of a (not necessarily finite type) algebraic localization of $A^+$ with respect to an ideal of definition of $A^+$.
In this case, $\tilde{R}_{\psi'}^+$ is the completion of an algebraic localization of  $\tilde{R}_{\psi}^+$ with respect to an ideal of definition of $\tilde{R}_\psi^+$.
\item[(f)]
A morphism with dense image. In this case, $\tilde{R}_{\psi} \to \tilde{R}_{\psi'}$ also has dense image. Moreover, if $\sigma$ is strict surjective, then so is $\tilde{R}_{\psi} \to \tilde{R}_{\psi'}$.
\end{enumerate}
\end{prop}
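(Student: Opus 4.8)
\textbf{Proof proposal for Proposition~\ref{P:perfectoid tower persistence}.}
The plan is to reduce each case to the corresponding stability statement for the perfectoid correspondence, namely Theorem~\ref{T:Fontaine perfectoid compatibility}, applied to the total space $\Spa(\tilde{A}_\psi, \tilde{A}_\psi^+)$ and its tilt $\Spa(\tilde{R}_\psi, \tilde{R}_\psi^+)$. The common structure of the argument is as follows. Given $\sigma: (A,A^+) \to (B,B^+)$, one forms the base-extended tower $\psi'$ level by level and takes its completed direct limit $\tilde{A}_{\psi'}$. The key observation is that for each of the listed classes of morphisms, the class is stable under completed base extension and under composition, so that $\tilde{A}_\psi \to \tilde{A}_{\psi'}$ (the map obtained by passing to completed direct limits) lies in the same class $\calP$; concretely, $\tilde{A}_{\psi'}$ is the completed tensor product $\tilde{A}_\psi \widehat{\otimes}_A B$ (up to identifying the direct limit of the base extensions with the base extension of the direct limit, which holds because the functor $\bullet \widehat{\otimes}_A B$ commutes with completed direct limits). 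One then applies the relevant part of Theorem~\ref{T:Fontaine perfectoid compatibility} to conclude both that $\tilde{A}_{\psi'}$ is Fontaine perfectoid (so $\psi'$ is indeed a perfectoid tower) and that the tilt $\tilde{R}_{\psi'}$ is obtained from $\tilde{R}_\psi$ by a morphism of the parallel type in characteristic $p$.

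More precisely, I would match the cases as follows. For (a), a uniform rational localization, the completed base extension of $\tilde{A}_\psi$ along $\sigma$ is a rational localization of $\tilde{A}_\psi$ by the universal property, so Theorem~\ref{T:Fontaine perfectoid compatibility}(i) applies; here one must also check that $\Spa(\tilde{A}_{\psi'}, \tilde{A}_{\psi'}^+)$ is the preimage of $\Spa(B,B^+)$ under $\Spa(\tilde{A}_\psi) \to \Spa(A)$, which follows from Theorem~\ref{T:Fontaine perfectoid homeomorphism} (the homeomorphism of $\Spa(\tilde{A}_\psi)$ with its tilt matching rational subspaces) together with the analogous statement already known at finite level. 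For (b), a finite \'etale morphism, the completed base extension $\tilde{A}_\psi \widehat{\otimes}_A B$ is a finite \'etale $\tilde{A}_\psi$-algebra (finite \'etale algebras glue and are stable under base change), so Theorem~\ref{T:Fontaine perfectoid compatibility}(iv) gives the claim, and $\tilde{R}_{\psi'}$ is finite \'etale over $\tilde{R}_\psi$. For (c)--(e), which all concern manipulations of $A^+$ rather than $A$, I would use Remark~\ref{R:same quotient}, which identifies $\tilde{A}_\psi^+/(u)$ with $\tilde{R}_\psi^+/(\overline{u})$, to transport the completion/localization/\'etale-extension operation on $A^+$ directly to the corresponding operation on $\tilde{R}_\psi^+$, then invoke cases (v), (vi), (vii) of Theorem~\ref{T:Fontaine perfectoid compatibility} respectively; one checks that the operation on $\tilde{A}_\psi^+$ is again of the stated form by compatibility of completion and \'etale base change with filtered colimits. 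For (f), dense image, the image of $A_\psi \widehat{\otimes}_A B$ is dense in $\tilde{A}_{\psi'}$ because $\sigma$ has dense image and completed tensor product preserves this, so cases (ii) and (iii) of Theorem~\ref{T:Fontaine perfectoid compatibility} give both the perfectoid property and the assertion about $\tilde{R}_\psi \to \tilde{R}_{\psi'}$; strict surjectivity is preserved because, by Corollary~\ref{C:Fontaine perfectoid stably uniform} and Lemma~\ref{L:strict to almost optimal}, strict morphisms of Fontaine perfectoid rings are almost optimal, and strictness descends through the perfectoid correspondence via Lemma~\ref{L:primitive division}.

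The main obstacle I anticipate is the bookkeeping involved in (a)--(e): verifying rigorously that forming the tower level by level and then completing the direct limit genuinely commutes with the base-change operation $\sigma$, so that $\tilde{A}_{\psi'}$ really does coincide with the object to which Theorem~\ref{T:Fontaine perfectoid compatibility} can be applied. For the rational-localization and finite-\'etale cases this is the statement that $\widehat{\varinjlim}_n (A_{\psi,n} \widehat{\otimes}_A B) \cong (\widehat{\varinjlim}_n A_{\psi,n}) \widehat{\otimes}_A B$, which is true but requires care about which tensor products are completed and in what order; for the integral cases (c)--(e) the subtlety is that the relevant operation is performed on the ring of integral elements $A^+$ and one must track it through the identification of Remark~\ref{R:same quotient} at every level and then in the limit. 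Once this compatibility is established, each case is an essentially immediate citation of the appropriate clause of Theorem~\ref{T:Fontaine perfectoid compatibility}, and in particular the assertions about $\tilde{R}_{\psi'}$ in terms of $\tilde{R}_\psi$ come for free from part (b) of that theorem.
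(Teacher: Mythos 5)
Your proposal is correct and follows essentially the same route as the paper: the paper's proof simply observes that each of the listed properties of $\sigma$ passes to the induced morphism $\tilde{A}_{\psi} \to \tilde{A}_{\psi'}$ of completed total spaces and then invokes Theorem~\ref{T:Fontaine perfectoid compatibility}, exactly as you do (your case matching (a)$\to$(i), (b)$\to$(iv), (c)--(e)$\to$(v)--(vii), (f)$\to$(ii)/(iii) is the intended one). The extra bookkeeping you flag about limits commuting with base change is glossed over in the paper's one-line argument, so your added care there is harmless but not a divergence in method.
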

\begin{proof}
Each of these properties implies the corresponding property for the homomorphism $\tilde{A}_{\psi} \to \tilde{B}_{\psi}$.
We may thus apply Theorem~\ref{T:Fontaine perfectoid compatibility} to conclude.
\end{proof}

\subsection{Weakly decompleting towers}
\label{subsec:reality checks}

We next define imperfect analogues of the period rings of types $\tilde{\bA}, \tilde{\bB}, \tilde{\bC}$ considered in \cite{part1}. In the process, we identify an additional restriction on perfectoid towers which is needed to make sense of the constructions.

\begin{defn} \label{D:period rings}
Throughout this definition, let $r,s$ denote arbitrary real numbers with $0 < s \leq r$.
To begin with, for any superscript $*$, define the rings $\tilde{\bA}^*_{\psi}, \tilde{\bB}^*_{\psi}, \tilde{\bC}^*_{\psi}$ to be the same as the ones in which the subscript $\psi$ is replaced by $\tilde{R}_\psi$.

Let $\bA^{r}_{\psi}$ be the subring of 
$\tilde{\bA}^{r}_{\psi}$ consisting of those $x$ for which
$\theta(\varphi_\varpi^{-n}(x)) \in A_{\psi,n}$ for all integers $n$ with $hn \geq -\log_p r$. Put $\bA^{\dagger}_{\psi} = \cup_{r>0} \bA^{r}_{\psi}$
and let $\bA_\psi$ be the $\varpi$-adic completion of $\bA^{\dagger}_\psi$.
Also put $R_\psi = \bA_\psi/(\varpi) \subseteq \tilde{R}_\psi$ and $R_\psi^+ = R_\psi \cap \tilde{R}_\psi^+$. For uniformity with subsequent notation, we denote
$R_\psi^{\perf}$ also as $\breve{R}_\psi$.

Put
\[
\breve{\bA}_\psi = \bigcup_{n=0}^\infty \varphi_\varpi^{-n}(\bA_\psi),
\qquad
\breve{\bA}^{r}_\psi = \bigcup_{n=0}^\infty \varphi_\varpi^{-n}(\bA^{p^{-hn}r}_\psi),
\qquad
\breve{\bA}^{\dagger}_\psi = \bigcup_{n=0}^\infty \varphi_\varpi^{-n}(\bA^{\dagger}_\psi).
\]
Let $\hat{\bA}_\psi$ be the $\varpi$-adic completion of $\breve{\bA}_\psi$ within $\tilde{\bA}_\psi$, so that $\hat{\bA}_\psi/(\varpi) \cong \breve{R}_\psi$.
(One could also reasonably denote this quotient by $\hat{R}_\psi$, but we will not do so.)
Let $\hat{\bA}^{r}_\psi$
denote the completion of $\breve{\bA}^{r}_\psi$
for the supremum of $\lambda(\overline{\alpha}_\psi^r)$ and the $\varpi$-adic norm.
Put $\hat{\bA}^{\dagger}_\psi = \cup_{r>0} \hat{\bA}^{r}_\psi$.

For each ring of type $\bA$ defined above, form a new ring with the symbol $\bA$ replaced by $\bB$ by inverting $\varpi$.

Let $\bC^{[s,r]}_\psi$ be the completion of $\bB^{r}_{\psi}$
with respect to $\max\{\lambda(\overline{\alpha}_\psi^s), \lambda(\overline{\alpha}_\psi^r)\}$ within $\tilde{\bC}^{[s,r]}_\psi$.
Let $\bC^r_\psi$ be the Fr\'echet completion of $\bB^{r}_{\psi}$
with respect to $\lambda(\overline{\alpha}_\psi^s)$ for all $s \in (0,r]$ within $\tilde{\bC}^{r}_{\psi}$. Put $\bC_\psi = \cup_{r>0} \bC^r_\psi$. Also put
\[
\breve{\bC}_\psi = \bigcup_{n=0}^\infty \varphi_\varpi^{-n}(\bC_\psi),
\qquad
\breve{\bC}^{r}_\psi = \bigcup_{n=0}^\infty \varphi_\varpi^{-n}(\bC^{p^{-hn}r}_\psi),
\qquad
\breve{\bC}^{[s,r]}_\psi = \bigcup_{n=0}^\infty \varphi_\varpi^{-n}(\bC^{[p^{-hn}s,p^{-hn}r]}_\psi).
\]
We do not define the notation $\hat{\bC}^*_\psi$; see Remark~\ref{R:C dense}.
\end{defn}

The following lemma can be used to help pin down the ring $R_\psi$.
\begin{lemma} \label{L:intermediate mod p subring}
Choose $\eta \in A_{\psi,n_0}$ for some $n_0 \geq 0$ satisfying
$\alpha_\psi(\eta) \in (p^{-1},1)$ and
$\alpha_\psi(\eta^{-1}) = \alpha_\psi(\eta)^{-1}$.
Identify $\tilde{R}_\psi^+$ with the inverse limit of 
$\tilde{A}_\psi^+/(\eta)$ under $\overline{\varphi}_\varpi$,
and let $R'_\psi$ be the subring of $\tilde{R}_\psi$
consisting of those sequences $(\overline{x}_n)$ for which $\overline{x}_n \in A_{\psi,n}^+/(\eta)$ for all sufficiently large $n$.
Then $R_\psi^+ \subseteq R'_\psi$.
\end{lemma}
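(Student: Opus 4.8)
The statement asserts that $R_\psi^+ \subseteq R'_\psi$, where $R_\psi^+ = R_\psi \cap \tilde{R}_\psi^+$ and $R_\psi = A_\psi/(\varpi) \subseteq \tilde{R}_\psi$. Recall that $A_\psi = \varinjlim_n A_{\psi,n}$, so an element $\overline{x} \in R_\psi$ is represented by some $x \in A_{\psi,N}$ for a suitable level $N$. Writing $\overline{x} = (\overline{x}_n)_{n \geq 0}$ as a sequence under the identification of $\tilde{R}_\psi^+$ with $\varprojlim_{\overline{\varphi}_\varpi} \tilde{A}_\psi^+/(\eta)$, the goal is to show that $\overline{x}_n \in A_{\psi,n}^+/(\eta)$ for all $n$ large. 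The first step is to recall precisely, from the definition of $R_\psi = A_\psi/(\varpi)$ and the construction of the perfectoid correspondence in Definition~\ref{D:correspondence}, how the component $\overline{x}_n$ is produced: it is the image of $\theta([\overline{x}^{1/p^n}])$ in $\tilde{A}_\psi^+/(\eta)$ — equivalently, after the identification $\tilde{R}_\psi^+ \cong \varprojlim_{x \mapsto x^p} \tilde{A}_\psi^+$ of Lemma~\ref{L:p-th power inverse limit}, $\overline{x}_n$ is the reduction mod $(\eta)$ of a compatible $p$-power root of $x$ (viewed in $\tilde{A}_\psi$).

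\textbf{Key steps.} First I would fix a representative $x \in A_{\psi,N}$ of $\overline{x}$ with $N \geq n_0$. Since $\overline{x} \in \tilde{R}_\psi^+$, we have $\alpha_\psi(x) \leq 1$; actually $\overline{\alpha}_\psi(\overline{x}) = \lim_m \alpha_\psi(x^{1/p^m})$ (spectral seminorm), so $\alpha_\psi(x^{1/p^m}) \leq 1$ for all $m$ once $\overline{\alpha}_\psi(\overline{x}) \leq 1$; more carefully, one uses power-multiplicativity of $\alpha_\psi$ to get $\alpha_\psi(x)^{p^{-m}}$ controlling the roots. The crucial point: although $A_\psi$ is \emph{not} perfect, each $x \in A_{\psi,N}$ does have $p$-power roots living at higher levels of the tower — this is exactly what makes the tower perfectoid. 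More precisely, the perfectoid condition on $\tilde{A}_\psi$ combined with the structure of the tower means that for $x \in A_{\psi,N}$ one can find, for each $m$, an element of $A_{\psi,N+cm}$ (for some constant $c$ depending on the tower, e.g. $c = 1$ in the typical case $\varphi$-compatible with the tower maps) which is a $p^m$-th root of $x$ modulo higher powers of the topologically nilpotent unit. I would then trace through: the $n$-th component $\overline{x}_n$ of $\overline{x}$, thought of in $\varprojlim_{\overline{\varphi}_\varpi} \tilde{A}_\psi^+/(\eta)$, is represented by such a $p^{hn}$-th (or $p^n$-th, depending on normalization) root of $x$ reduced mod $(\eta)$; for $n$ large enough that this root can be chosen in $A_{\psi,n}$, we get $\overline{x}_n \in A_{\psi,n}^+/(\eta)$ as desired. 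One must also check the "$+$" (integrality) part, but this follows from $\alpha_\psi$-boundedness of the roots together with the fact that $A_{\psi,n}^+$ is the set of power-bounded elements (using uniformity).

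\textbf{Main obstacle.} The delicate point is the bookkeeping around which level of the tower the $p$-power roots actually land in, and in what sense. The element $x \in A_{\psi,N}$ need not literally have a $p$-th root in $A_{\psi,N+1}$; rather, the perfectoid correspondence only gives roots in the \emph{completed} direct limit $\tilde{A}_\psi$, or roots in $A_{\psi,n}$ only \emph{modulo} $(\eta)$ (or modulo higher powers). So the argument must be phrased entirely modulo $(\eta)$: identify $\tilde{R}_\psi^+ = \varprojlim \tilde{A}_\psi^+/(\eta)$, observe that $\tilde{A}_\psi^+/(\eta) = A_{\psi,\infty}^+/(\eta)$ (direct limit, no completion needed since we killed a topologically nilpotent unit — here one uses that $(\eta)$ contains a power of the topologically nilpotent unit so that $A_\psi^+/(\eta) \to \tilde{A}_\psi^+/(\eta)$ is an isomorphism), and then for a sequence $(\overline{x}_n)$ coming from $x \in A_{\psi,N}$ via the Frobenius-compatible structure, each $\overline{x}_n$ for $n$ large is by construction a Frobenius-preimage chain terminating at the class of $x$, hence represented at level $n$ in $A_{\psi,n}^+/(\eta)$. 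So the heart of the matter is the identification $A_\psi^+/(\eta) \xrightarrow{\sim} \tilde{A}_\psi^+/(\eta)$ and the observation that $\overline{\varphi}_\varpi$ on $\varprojlim A_{\psi,n}^+/(\eta)$ restricted to sequences eventually supported at finite levels is exactly the map induced by the tower transition maps $\psi_n$. I expect the writeup to be short once these identifications are laid out cleanly; the risk is purely in getting the normalization of $\eta$ versus $\varpi$ versus the index shift right.
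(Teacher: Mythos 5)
There is a genuine gap, and it occurs at the very first step: you have misidentified the ring $R_\psi$. It is not the reduction modulo $\varpi$ of the direct limit $A_\psi = \varinjlim_n A_{\psi,n}$ --- in the towers of interest (e.g.\ the cyclotomic tower, where each $A_{\psi,n}$ is a field of characteristic $0$) that quotient is zero, and in any case a single element $x \in A_{\psi,N}$ does not determine an element of $\tilde{R}_\psi \cong \varprojlim_{x \mapsto x^p} \tilde{A}_\psi$ without a choice of compatible $p$-power roots, so the object you describe does not even sit inside $\tilde{R}_\psi$. In Definition~\ref{D:period rings}, $R_\psi$ is the image of the \emph{imperfect period ring} $\bA_\psi$ (equivalently of $\bA^{\dagger}_\psi = \cup_{r>0} \bA^{r}_\psi$) under reduction modulo $\varpi$ inside $\tilde{\bA}_\psi/(\varpi) = \tilde{R}_\psi$, where $\bA^{r}_\psi$ is the subring of $\tilde{\bA}^{r}_\psi$ cut out by the condition $\theta(\varphi_\varpi^{-n}(x)) \in A_{\psi,n}$ for all $n$ with $hn \geq -\log_p r$. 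Thus an element $\overline{x} \in R_\psi^+$ comes equipped with a Witt-vector-type lift $x \in \bA^{r}_\psi$, not with a representative at a finite level of the tower, and the lemma asserts precisely that this membership condition on $x$ forces the components of $\overline{x}$, taken modulo $\eta$, to lie eventually in $A_{\psi,n}^+/(\eta)$.

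Because of this misreading, your mechanism does not engage with the statement, and its key technical claim is unsupported: a perfectoid tower only requires the completed direct limit $\tilde{A}_\psi$ to be Fontaine perfectoid, which produces approximate $p$-power roots in $\tilde{A}_\psi^+$ (hence, by density, at \emph{some} unspecified finite level), but gives no control of the form ``a $p^m$-th root of $x \in A_{\psi,N}$ exists at level $N+cm$''; and what is needed is that the $n$-th component appears at level exactly $n$, which is the conclusion of the lemma, not an available input. The actual proof exploits the lift: for $x \in \bA^{r}_\psi$ reducing to $\overline{x}$, the $n$-th component of $\overline{x}$ in $\varprojlim_{\overline{\varphi}_\varpi} \tilde{A}_\psi^+/(\eta)$ is the class of $\theta(\varphi_\varpi^{-n}([\overline{x}]))$, and since $x - [\overline{x}]$ is divisible by $\varpi$, the Gauss-norm estimate
\[
\lambda(\overline{\alpha}_\psi)(\varphi_\varpi^{-n}(x - [\overline{x}])) = \lambda(\overline{\alpha}_\psi^{p^{-hn}})(x - [\overline{x}])
\]
is bounded by a quantity tending to $p^{-1}$ as $n \to \infty$; choosing $n_0$ so that this bound drops below $\alpha_\psi(\eta)$ (this is exactly where the hypothesis $\alpha_\psi(\eta) \in (p^{-1},1)$ enters), one finds that for $n \geq n_0$ the elements $\theta(\varphi_\varpi^{-n}(x)) \in A_{\psi,n}$ are $\overline{\varphi}_\varpi$-compatible modulo $\eta$ and represent the components of $\overline{x}$ there, whence $\overline{x} \in R'_\psi$. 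This Teichm\"uller-comparison step, which is the entire content of the proof, is absent from your proposal.
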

\begin{proof}
Choose $x \in \bA^{r}_{\psi}$ for some $r>0$.
For $hn \geq -\log_p r$,
we have 
\begin{align*}
\lambda(\overline{\alpha}_\psi)(\varphi_\varpi^{-n}(x - [\overline{x}]))
&= 
\lambda(\overline{\alpha}_\psi^{p^{-hn}})(x - [\overline{x}]) \\
&\leq p^{-1+p^{-hn}} \lambda(\overline{\alpha}_\psi^r)(x - [\overline{x}]).
\end{align*}
Consequently, if we choose $n_0$ so that $-1 + p^{-hn_0}/r < \log_p \alpha_\psi(\eta)$, then for $n \geq n_0$,
\[
\alpha_\psi(\theta(\varphi_\varpi^{-n-1}(x))^{p^h} - \theta(\varphi_\varpi^{-n}(x)))
\leq p^{-1+p^{-hn}} \leq \alpha_\psi(\eta).
\]
The elements $\theta(\varphi_\varpi^{-n}(x))$ thus form a sequence in $R'_\psi$.
\end{proof}

\begin{defn} \label{D:weakly decompleting}
We say that $\psi$ is \emph{weakly decompleting} if the following conditions hold.
\begin{enumerate}
\item[(a)]
The ring $R_\psi^{\perf}$ is dense in $\tilde{R}_{\psi}$.
\item[(b)]
For some $r>0$, the map $\bA^{r}_{\psi} \to R_{\psi}$
induced by reduction modulo $\varpi$ is strict surjective for the norms
$\lambda(\overline{\alpha}_\psi^r)$ and $\overline{\alpha}_\psi^r$.
In particular, $R_\psi$ is complete under $\overline{\alpha}_\psi^r$.
\end{enumerate}
\end{defn}

\begin{example}
Let $k_0 \to k_1 \to \cdots$ be a tower of finite extensions of perfect fields of characteristic $p$, and define the tower $\psi$ with $A_{\psi,n} = \Frac W_\varpi(k_n), A_{\psi,n}^+ = W_\varpi(k_n)$. Then $\psi$ is weakly decompleting if and only if $\cup_n k_n$ is finite over $k_0$, as otherwise condition (b) fails.
\end{example}

\begin{remark}
For $0 < s \leq 1 \leq r$, the map $\theta: \tilde{\bA}^{r}_\psi
\to \tilde{\bA}^{1}_\psi \to \tilde{A}_\psi$ extends by continuity to a surjective map $\theta: \tilde{\bC}^{[s,r]}_\psi \to \tilde{A}_\psi$.
This map restricts to a map $\breve{\bC}^{[s,r]}_\psi \to A_\psi$, but this map is not \emph{a priori} known to be surjective; see Remark~\ref{R:theta not surjective}.
\end{remark}

\begin{remark} \label{R:theta not surjective}
Note that the definition of a weakly decompleting tower does not guarantee that the map $\theta \circ \varphi_\varpi^{-n}: \bA_{\psi}^{r} \to A_{\psi,n}$ is surjective for some pair $r,n$ with $hn \geq -\log_p r$. However, this occurs in all of the special cases we consider in this paper.
When $\psi$ is decompleting, one can at least say that the map
$\theta: \breve{\bA}_{\psi}^{1} \to A_{\psi}$
is surjective; see Proposition~\ref{P:Ax}.
\end{remark}

For the remainder of \S\ref{subsec:reality checks},
assume that $\psi$ is weakly decompleting.
The next few lemmas constitute a series of \emph{reality checks} analogous to \cite[\S 5.2]{part1}. The first of these is a formal refinement of the lifting condition from Definition~\ref{D:weakly decompleting}(b); this provides an alternative to the use of Teichm\"uller lifts, which do not generally appear within the imperfect period rings.

\begin{lemma} \label{L:optimal lifts}
There exists $r_0 > 0$ such that for $r \in (0, r_0]$, 
every $\overline{x} \in R_{\psi}$ lifts to some $x \in \bA^{r}_{\psi}$ with $\lambda(\overline{\alpha}_\psi^r)(x - [\overline{x}]) \leq p^{-1/2} \overline{\alpha}_{\psi}^r(\overline{x})$.
In particular, the map $\bA^{r}_{\psi} \to R_{\psi}$ induced by reduction modulo $p$ is optimal surjective (not just strict surjective) for all sufficiently small $r>0$ (not just for a single such $r$).
\end{lemma}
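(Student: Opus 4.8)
The plan is to bootstrap from the strict surjectivity hypothesis of Definition~\ref{D:weakly decompleting}(b) by a successive-approximation argument, using the homogeneity of Witt vector arithmetic (Remark~\ref{R:homogeneity}) to control the error terms. First I would fix $r_1 > 0$ for which the reduction map $\bA^{r_1}_{\psi} \to R_\psi$ is strict surjective with some constant $c \geq 1$, i.e.\ every $\overline{x} \in R_\psi$ lifts to some $x_0 \in \bA^{r_1}_\psi$ with $\lambda(\overline{\alpha}_\psi^{r_1})(x_0) \leq c\,\overline{\alpha}_\psi^{r_1}(\overline{x})$. Then I would observe that $\lambda(\overline{\alpha}_\psi^r)$ behaves well under rescaling $r$: for $r \leq r_1$, the norm $\lambda(\overline{\alpha}_\psi^r)$ of a Witt vector $\sum_n \varpi^n[\overline{y}_n]$ is $\sup_n p^{-n}\overline{\alpha}_\psi(\overline{y}_n)^r$, so shrinking $r$ damps the higher Teichm\"uller components relative to the leading one. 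More precisely, given a lift $x$ of $\overline{x}$ with leading term agreeing with $[\overline{x}]$, the difference $x - [\overline{x}]$ is divisible by $\varpi$, and its $\lambda(\overline{\alpha}_\psi^r)$-norm is at most $p^{-1}$ times a quantity that depends only on the tail; choosing $r$ small enough relative to $\log c$ forces this to drop below $p^{-1/2}\overline{\alpha}_\psi^r(\overline{x})$.

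The key steps, in order, would be: (1) extract the strict surjectivity constant $c$ at some radius $r_1$; (2) write an arbitrary lift $x_0$ of $\overline{x}$ in the form $[\overline{x}] + \varpi w$ with $w \in \bA^{r_1}_\psi$, which is possible after subtracting the Teichm\"uller term using that the two agree modulo $\varpi$ — here one uses that $\bA^{r_1}_\psi$ is closed under this operation, which follows from its definition via the conditions $\theta(\varphi_\varpi^{-n}(x)) \in A_{\psi,n}$ together with multiplicativity of $[\bullet]$; (3) estimate $\lambda(\overline{\alpha}_\psi^r)(x_0 - [\overline{x}]) = \lambda(\overline{\alpha}_\psi^r)(\varpi w) \leq p^{-1}\lambda(\overline{\alpha}_\psi^r)(w)$, and bound $\lambda(\overline{\alpha}_\psi^r)(w)$ in terms of $\lambda(\overline{\alpha}_\psi^{r_1})(x_0)$ and hence $\overline{\alpha}_\psi^{r_1}(\overline{x})$, using log-convexity of $r \mapsto \lambda(\overline{\alpha}_\psi^r)(w)$ (Definition~\ref{D:Gauss norm}) to pass between radii $r$ and $r_1$; (4) choose $r_0 \in (0, r_1]$ so that the resulting constant is $\leq p^{-1/2}$, noting that $\overline{\alpha}_\psi^r(\overline{x}) \geq \overline{\alpha}_\psi^{r_1}(\overline{x})^{r/r_1}$-type comparisons let the small-$r$ side dominate. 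For the final sentence, I would iterate: given $\overline{x}$, produce $x^{(0)}$ with $\lambda(\overline{\alpha}_\psi^r)(x^{(0)} - [\overline{x}]) \leq p^{-1/2}\overline{\alpha}_\psi^r(\overline{x})$; the discrepancy $x^{(0)} - [\overline{x}]$ has a leading Teichm\"uller component $[\overline{x}_1]$ with $\overline{\alpha}_\psi^r(\overline{x}_1) \leq p^{-1/2}\overline{\alpha}_\psi^r(\overline{x})$ (up to the $p^{-n}$ weight), correct it by a new lift of $\overline{x}_1$, and sum the geometric series, whose total has $\lambda(\overline{\alpha}_\psi^r)$-norm exactly $\overline{\alpha}_\psi^r(\overline{x})$ — giving optimal surjectivity for every sufficiently small $r$, not merely one.

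The main obstacle I anticipate is step (2)–(3): checking that $\bA^{r}_\psi$ (which is defined by the imperfect conditions $\theta(\varphi_\varpi^{-n}(x)) \in A_{\psi,n}$, not as a completion of a Teichm\"uller-spanned module) is genuinely closed under the operation "subtract the Teichm\"uller lift of the leading term," and that the resulting $w$ still lies in $\bA^{r}_\psi$ with a controlled norm. Since $[\overline{x}] \in \bA^r_\psi$ whenever $\overline{x} \in R_\psi$ (one needs here that $R_\psi^{\perf}$ is dense in $\tilde R_\psi$ so that the relevant $\theta$-images land in $A_{\psi,n}$, cf.\ Lemma~\ref{L:intermediate mod p subring}), this reduces to the ring-theoretic closure properties of $\bA^r_\psi$, but the bookkeeping of how the norm of $w$ relates across radii — and verifying the constant genuinely beats $p^{-1/2}$ after shrinking $r$ — is where the real work lies. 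The rest is the homogeneity estimate of Remark~\ref{R:homogeneity} applied verbatim as in \cite[Lemma~3.3.9]{part1}, combined with log-convexity.
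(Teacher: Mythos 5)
Your core analytic estimate---extract a strict-surjectivity constant $c$ at a single radius $r_1$, then interpolate the norm of $x-[\overline{x}]$ down to small $r$ until the constant beats $p^{-1/2}$---is exactly the paper's argument. But the way you route it, through a decomposition $x_0 = [\overline{x}] + \varpi w$ with $[\overline{x}]$ and $w$ lying in $\bA^{r_1}_\psi$, rests on a false claim: it is not true in general that $[\overline{x}] \in \bA^{r}_\psi$ for $\overline{x} \in R_\psi$. Membership in $\bA^r_\psi$ requires $\theta(\varphi_\varpi^{-n}([\overline{x}])) = \theta([\overline{x}^{p^{-hn}}]) \in A_{\psi,n}$ for the relevant $n$, and $\overline{x}^{p^{-hn}}$ lives only in $\tilde{R}_\psi$; density of $R_\psi^{\perf}$ in $\tilde{R}_\psi$ (or Lemma~\ref{L:intermediate mod p subring}) gives nothing of the sort. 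Concretely, for the cyclotomic tower $[\overline{\pi}]$ does \emph{not} lie in $\bA^r_\psi$---which is precisely why the proof of Lemma~\ref{L:cyclotomic Rpsi} works with the lift $\pi = [\epsilon]-1$ rather than the Teichm\"uller lift of $\overline{\pi}$. So the ``closure under subtracting the leading Teichm\"uller term'' that you yourself flag as the main obstacle genuinely fails, and $w$ cannot be taken in $\bA^{r_1}_\psi$.

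The repair, and the paper's actual route, is to drop that requirement altogether: the lemma only asks that the lift $x$ lie in $\bA^{r}_\psi$ (which holds since $x \in \bA^{r_1}_\psi \subseteq \bA^{r}_\psi$ for $r \leq r_1$), while the difference $x-[\overline{x}]$ need only be \emph{measured} in the ambient perfect ring $\tilde{\bA}^{r}_\psi$, where the Gauss-norm formula \eqref{eq:Gauss norm} applies to its Teichm\"uller expansion. Since $x-[\overline{x}]$ has vanishing zeroth Teichm\"uller component, each term with $n \geq 1$ satisfies $p^{-n}\overline{\alpha}_\psi(\overline{y}_n)^r \leq p^{-(1-r/r_1)} \bigl( p^{-n}\overline{\alpha}_\psi(\overline{y}_n)^{r_1} \bigr)^{r/r_1}$, whence $\lambda(\overline{\alpha}_\psi^{r})(x-[\overline{x}]) \leq p^{-1+r/r_1} c^{r/r_1}\, \overline{\alpha}_\psi^{r}(\overline{x})$, and one chooses $r_0$ so this constant is at most $p^{-1/2}$; no decomposition inside the imperfect ring is ever needed. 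Finally, your closing iteration is unnecessary (and, as described, not coherent): once $\lambda(\overline{\alpha}_\psi^r)(x-[\overline{x}]) \leq p^{-1/2}\overline{\alpha}_\psi^r(\overline{x}) < \overline{\alpha}_\psi^r(\overline{x})$, the ultrametric inequality already forces $\lambda(\overline{\alpha}_\psi^r)(x) = \overline{\alpha}_\psi^r(\overline{x})$, so the very same lift witnesses optimal surjectivity at every $r \in (0,r_0]$.
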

\begin{proof}
Since $\psi$ is weakly decompleting, we may choose $r_1 > 0$ and $c>1$ such that every $\overline{x} \in R_{\psi}$ lifts to some $x \in \bA^{r_1}_{\psi}$ with $\lambda(\overline{\alpha}_\psi^{r_1})(x) \leq c \overline{\alpha}_{\psi}^{r_1}(\overline{x})$.
In particular, we have $\lambda(\overline{\alpha}_\psi^{r_1})(x - [\overline{x}]) \leq c \overline{\alpha}_{\psi}^{r_1}(\overline{x})$.
For $r \in (0,r_1]$, we then have
\[
\lambda(\overline{\alpha}_\psi^{r})(x - [\overline{x}]) 
\leq p^{-1+r/r_1} \lambda(\overline{\alpha}_\psi^{r})(x - [\overline{x}])^{r/r_1} 
\leq 
p^{-1+r/r_1} c^{r/r_1} \overline{\alpha}_{\psi}^{r}(\overline{x}).
\]
We thus deduce the claim by choosing $r_0 \in (0,r_1]$ so that
$(c/p)^{r_0/r_1} \leq p^{1/2}$.
\end{proof}
\begin{cor} \label{C:lift presentation}
For $r_0$ as in Lemma~\ref{L:optimal lifts}, for $r \in (0, r_0]$,
any $x \in \bB^{r}_\psi$ can be written as a $\lambda(\overline{\alpha}_\psi^r)$-convergent sum
$\sum_{n=m}^\infty \varpi^n y_n$ with $y_n \in \bA^{r_0}_\psi$ such that, if we write
$\overline{y}_n$ for the image of $y_n$ in $A_\psi$, then
$\lambda(\overline{\alpha}_\psi^{r_0})(y_n - [\overline{y}_n]) \leq p^{-1/2} \lambda(\overline{\alpha}_\psi^{r_0})(y_n)$ for all $n \geq 0$. Moreover, for any such representation, for all $s \in (0,r]$,
\begin{equation} \label{eq:lift presentation}
\lambda(\overline{\alpha}_\psi^s)(x) = \max_n \{p^{-n} \overline{\alpha}_\psi^s(\overline{y}_n)\}.
\end{equation}
\end{cor}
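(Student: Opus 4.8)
\textbf{Proof plan for Corollary~\ref{C:lift presentation}.} The plan is to deduce both the existence of the claimed representation and the norm formula \eqref{eq:lift presentation} from Lemma~\ref{L:optimal lifts} by a telescoping approximation argument. First I would handle existence. Given $x \in \bB^{r}_\psi$, write $x = \varpi^m x'$ with $x' \in \bA^{r}_\psi$ (possible since inverting $\varpi$ is the only operation passing from type $\bA$ to type $\bB$), so it suffices to treat $x \in \bA^{r}_\psi$, hence also $x \in \bA^{r_0}_\psi$ for $r_0 \leq r$. Let $\overline{y}_0$ be the image of $x$ in $R_\psi$ and use Lemma~\ref{L:optimal lifts} to choose $y_0 \in \bA^{r_0}_\psi$ lifting $\overline{y}_0$ with $\lambda(\overline{\alpha}_\psi^{r_0})(y_0 - [\overline{y}_0]) \leq p^{-1/2}\,\overline{\alpha}_\psi^{r_0}(\overline{y}_0) \leq p^{-1/2}\lambda(\overline{\alpha}_\psi^{r_0})(y_0)$; then $x - y_0 \in \varpi \bA^{r_0}_\psi$ (its reduction mod $\varpi$ vanishes), so $x - y_0 = \varpi x_1$ with $x_1 \in \bA^{r_0}_\psi$. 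Iterate: this produces $y_0, y_1, \dots \in \bA^{r_0}_\psi$, each with the stated optimality property, and $x = \sum_{n=0}^{N} \varpi^n y_n + \varpi^{N+1} x_{N+1}$. The tails go to zero in $\lambda(\overline{\alpha}_\psi^s)$ for every $s \in (0,r]$ because $\lambda(\overline{\alpha}_\psi^s)(\varpi^{N+1} x_{N+1}) \leq p^{-(N+1)}\lambda(\overline{\alpha}_\psi^s)(x_{N+1})$ and the $x_{N+1}$ have bounded $\lambda(\overline{\alpha}_\psi^{r_0})$-norm (they are obtained from $x$ by subtracting lifts of bounded norm and dividing by $\varpi$, so their norms do not grow — one checks $\lambda(\overline{\alpha}_\psi^{r_0})(x_{N+1}) \leq \max\{\lambda(\overline{\alpha}_\psi^{r_0})(x), \ldots\}$ remains controlled). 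This gives the required $\lambda(\overline{\alpha}_\psi^r)$-convergent expansion, and for a general $x \in \bB^{r}_\psi$ one just shifts the index by $m$.

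For the norm formula \eqref{eq:lift presentation}, I would argue as follows. Fix $s \in (0,r]$. The inequality $\lambda(\overline{\alpha}_\psi^s)(x) \leq \max_n\{p^{-n}\,\overline{\alpha}_\psi^s(\overline{y}_n)\}$ follows from the ultrametric inequality once we know $\lambda(\overline{\alpha}_\psi^s)(y_n) = \overline{\alpha}_\psi^s(\overline{y}_n)$ for each $n$: indeed $\lambda(\overline{\alpha}_\psi^s)(y_n) = \max\{\overline{\alpha}_\psi^s(\overline{y}_n),\, \lambda(\overline{\alpha}_\psi^s)(y_n - [\overline{y}_n])\}$ by the Gauss-norm description \eqref{eq:Gauss norm} of $\lambda$ (which reads off the constant Witt-component separately), and the second term is bounded by $\lambda(\overline{\alpha}_\psi^s)(y_n - [\overline{y}_n]) \leq p^{-1+s/r_0}\lambda(\overline{\alpha}_\psi^{r_0})(y_n - [\overline{y}_n])^{s/r_0} \leq p^{-1+s/r_0}(p^{-1/2}\lambda(\overline{\alpha}_\psi^{r_0})(y_n))^{s/r_0}$, which (for $s \leq r_0$) is strictly smaller than $\lambda(\overline{\alpha}_\psi^s)(y_n)$ unless both sides vanish — so the maximum is realized by the $[\overline{y}_n]$ term, giving $\lambda(\overline{\alpha}_\psi^s)(y_n) = \overline{\alpha}_\psi^s(\overline{y}_n)$. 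Here I am using log-convexity of $r \mapsto \lambda(\overline{\alpha}_\psi^r)(\,\cdot\,)$ from Definition~\ref{D:Gauss norm} to interpolate the $\lambda(\overline{\alpha}_\psi^s)$-norm of $y_n - [\overline{y}_n]$ between its value at $r_0$ and the trivial bound. For the reverse inequality $\lambda(\overline{\alpha}_\psi^s)(x) \geq p^{-n}\overline{\alpha}_\psi^s(\overline{y}_n)$ for each $n$, I would reduce modulo $\varpi^{n+1}$ and compare leading Witt components: the partial sum $\sum_{k \leq n}\varpi^k y_k$ has the same image in $\tilde{\bC}^{[s,r]}_\psi$ modulo $\varpi^{n+1}\tilde{\bA}^s_\psi$ as $x$, and because the $\lambda(\overline{\alpha}_\psi^s)$-norm of $\sum_{k\leq n}\varpi^k y_k$ is governed (again via \eqref{eq:Gauss norm}) by the component of top $\varpi$-degree among those with maximal $\overline{\alpha}_\psi^s$-weight, one extracts the bound; the key is that the corrections $y_k - [\overline{y}_k]$ are negligible by the estimate just proved, so the whole partial sum is $\lambda(\overline{\alpha}_\psi^s)$-isometric to $\sum_{k\leq n}\varpi^k[\overline{y}_k]$, whose norm is exactly $\max_{k\leq n}\{p^{-k}\overline{\alpha}_\psi^s(\overline{y}_k)\}$.

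The main obstacle I anticipate is the bookkeeping needed to ensure the auxiliary elements $x_{N+1}$ produced in the iteration have uniformly bounded $\lambda(\overline{\alpha}_\psi^{r_0})$-norm, so that the telescoping sum genuinely converges in all the Fréchet seminorms $\lambda(\overline{\alpha}_\psi^s)$, $s \in (0,r]$, rather than merely in $\lambda(\overline{\alpha}_\psi^r)$. This is where the optimality (rather than mere strictness) in Lemma~\ref{L:optimal lifts}, valid for \emph{all} sufficiently small $r$ simultaneously, is essential: it forces each subtraction step to strictly decrease (or at least not increase) the relevant norm, closing the recursion. Once that uniform bound is in hand, convergence and the second half of the norm formula are routine ultrametric manipulations with \eqref{eq:Gauss norm} and the log-convexity of Definition~\ref{D:Gauss norm}, exactly paralleling the reality-check computations of \cite[\S 5.2]{part1}.
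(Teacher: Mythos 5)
There is a genuine gap in the existence/convergence half, precisely at the point you flag as the ``main obstacle.'' Your argument needs the quotients $x_{N+1}=(x_N-y_N)/\varpi$ to have uniformly bounded $\lambda(\overline{\alpha}_\psi^{r_0})$-norm, but this is not what the recursion gives, and it is false in general: dividing by $\varpi$ multiplies $\lambda(\overline{\alpha}_\psi^{r_0})$ by a factor of up to $p$, and since the optimal lift $y_N$ only matches the level-zero Teichm\"uller coefficient of $x_N$, the dominant term of $x_N-y_N$ can sit at level $1$ at every step, so the honest estimate is $\lambda(\overline{\alpha}_\psi^{r_0})(x_{N+1})\leq p\,\lambda(\overline{\alpha}_\psi^{r_0})(x_N)$ (one sees unbounded growth already when $x=\sum_k \varpi^k[\overline{a}_k]$ with $p^{-k}\overline{\alpha}_\psi^{r_0}(\overline{a}_k)$ tending to $0$ slowly). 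What the recursion does give is that the tails satisfy $\lambda(\overline{\alpha}_\psi^{r_0})(\varpi^N x_N)\leq\lambda(\overline{\alpha}_\psi^{r_0})(x)$, i.e.\ boundedness, not decay. That suffices, via the interpolation you quote, to prove convergence in $\lambda(\overline{\alpha}_\psi^{s})$ for every $s<r_0$, but the corollary asserts convergence in $\lambda(\overline{\alpha}_\psi^{r})$ for all $r\in(0,r_0]$, in particular at $r=r_0$, and your argument gives nothing there. The paper closes exactly this point by a different induction: it compares the constructed $y_n$ against the Teichm\"uller coefficients $[\overline{x}_n]$ of the \emph{original} $x$ (whose weighted norms tend to $0$ since $x\in\tilde{\bA}^{r}_\psi$), using the bound $\lambda(\overline{\alpha}_\psi^{r})(\varpi^n y_n)\leq\max\{\lambda(\overline{\alpha}_\psi^{r})(\varpi^n[\overline{x}_n]),\,\max_{i<n}p^{-1/2}\lambda(\overline{\alpha}_\psi^{r})(\varpi^i y_i)\}$, to show that after finitely many steps the partial sum approximates $x$ within a factor $p^{-1/2}$ in $\lambda(\overline{\alpha}_\psi^{r})$; iterating this gives geometric convergence at $r$ itself. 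Some comparison of this kind with the fixed expansion of $x$, exploiting the $p^{-1/2}$ gain of Lemma~\ref{L:optimal lifts}, is what your telescoping scheme is missing.

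A secondary, repairable issue: your assertion that $x-y_0=\varpi x_1$ with $x_1\in\bA^{r_0}_\psi$ is not automatic, since membership in $\bA^{r_0}_\psi$ is a $\theta$-integrality condition and there is no reason in the general axiomatic setting that $A_{\psi,n}$ is saturated in $\tilde{A}_\psi$ with respect to $\varpi$ (it is if $\varpi$ is invertible there, as in the applications, but not by hypothesis). The clean fix, which is what the paper does, is to run the iteration $\varpi$-adically inside $\bA_\psi$, where ``reduction zero implies divisible by $\varpi$'' is tautological because $R_\psi=\bA_\psi/(\varpi)$; only the lifts $y_n$ need to lie in $\bA^{r_0}_\psi$, and the norm estimates only need $x_n\in\tilde{\bA}^{r_0}_\psi$, which does hold. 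By contrast, your treatment of the norm identity \eqref{eq:lift presentation} is essentially correct and matches what the paper treats as immediate: the domination condition at $r_0$ interpolates to all $s\in(0,r]$, so each $y_n$ is isometric to $[\overline{y}_n]$ and $x$ is $\lambda(\overline{\alpha}_\psi^{s})$-isometric to $\sum_n\varpi^n[\overline{y}_n]$, whose norm is the stated maximum.
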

\begin{proof}
We may assume $x \in \bA^{r}_\psi$.
The existence of such a representation within $\bA_\psi$ is immediate from Lemma~\ref{L:optimal lifts}.
To prove convergence with respect to $\lambda(\overline{\alpha}_\psi^r)$,
write $x = \sum_{n=0}^\infty \varpi^n [\overline{x}_n]$ and note that
$\lambda(\overline{\alpha}_\psi^r)(\varpi^n y_n)$ is bounded above by the greater
of $\lambda(\overline{\alpha}_\psi^r)(\varpi^n [\overline{x}_n])$ and the maximum of
$p^{-1/2} \lambda(\overline{\alpha}_\psi^r)(\varpi^i y_i)$ over $i < n$.
By an easy induction argument, it follows that there exists $n$ such that
\[
\lambda(\overline{\alpha}_\psi^r)\left(x - \sum_{i=0}^{n-1} \varpi^i y_i \right)
\leq p^{-1/2} \lambda(\overline{\alpha}_\psi^r)(x).
\]
Applying this argument repeatedly yields the desired convergence,
which in turn immediately implies \eqref{eq:lift presentation}.
\end{proof}

\begin{remark} \label{R:C dense}
Another corollary of Lemma~\ref{L:optimal lifts} is that 
$\breve{\bC}^*_\psi$ is dense in $\tilde{\bC}^*_\psi$. For this reason, there is no reason to introduce a separate notation $\hat{\bC}^*_\psi$ for the completion of $\breve{\bC}^*_\psi$.
\end{remark}

We next obtain
analogues of \cite[Lemmas~5.2.5 and~5.2.7]{part1}.
\begin{lemma} \label{L:intersect dagger}
For $r_0$ as in Lemma~\ref{L:optimal lifts}, for $0 < s \leq r \leq r_0$,
within $\tilde{\bC}^{[s,s]}_\psi$ we have $\tilde{\bA}^{s}_\psi \cap \bC^{[s,r]}_\psi = \bA^{r}_\psi$.
In particular, within $\tilde{\bC}_\psi$ we have $\tilde{\bA}^{\dagger}_\psi \cap \bC_\psi = \bA^{\dagger}_\psi$.
\end{lemma}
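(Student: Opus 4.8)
\textbf{Plan for the proof of Lemma~\ref{L:intersect dagger}.} The first statement implies the second by taking unions over $r$, so I will concentrate on proving that $\tilde{\bA}^{s}_\psi \cap \bC^{[s,r]}_\psi = \bA^{r}_\psi$ inside $\tilde{\bC}^{[s,s]}_\psi$. The inclusion $\bA^{r}_\psi \subseteq \tilde{\bA}^{s}_\psi \cap \bC^{[s,r]}_\psi$ is essentially definitional: $\bA^{r}_\psi \subseteq \tilde{\bA}^{r}_\psi \subseteq \tilde{\bA}^{s}_\psi$ since $s \leq r$, and $\bA^{r}_\psi \subseteq \bB^{r}_\psi \subseteq \bC^{[s,r]}_\psi$ because $\bC^{[s,r]}_\psi$ is defined as a completion of $\bB^{r}_\psi$. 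So the content is the reverse inclusion.

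\textbf{The main argument.} Take $x \in \tilde{\bA}^{s}_\psi \cap \bC^{[s,r]}_\psi$. Since $x \in \bC^{[s,r]}_\psi$, it is a limit of elements of $\bB^{r}_\psi$ for the norm $\max\{\lambda(\overline{\alpha}_\psi^s), \lambda(\overline{\alpha}_\psi^r)\}$; by Corollary~\ref{C:lift presentation}, I can write $x$ as a $\lambda(\overline{\alpha}_\psi^r)$-convergent sum $\sum_{n=m}^\infty \varpi^n y_n$ with $y_n \in \bA^{r_0}_\psi$ and the ``almost-Teichm\"uller'' bound $\lambda(\overline{\alpha}_\psi^{r_0})(y_n - [\overline{y}_n]) \leq p^{-1/2}\lambda(\overline{\alpha}_\psi^{r_0})(y_n)$, where $\overline{y}_n$ is the image of $y_n$ in $A_\psi$; moreover \eqref{eq:lift presentation} holds. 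The key point is to show $m \geq 0$, i.e.\ that $x$ has no negative $\varpi$-power part, so that $x \in \bA_\psi \cap \tilde{\bA}^{r}_\psi$ — and then to upgrade $\tilde{\bA}^s$-membership to $\tilde{\bA}^r$-membership. For the first: because $x \in \tilde{\bA}^{s}_\psi$, writing $x = \sum_{n=0}^\infty \varpi^n [\overline{x}_n]$ in $\tilde{\bA}^s_\psi$ with $\overline{x}_n \in \tilde{R}_\psi$, the negative part of the sum $\sum \varpi^n y_n$ must cancel. Comparing the two $\varpi$-adic expansions using the homogeneity of Witt vector arithmetic (Remark~\ref{R:homogeneity}) and \eqref{eq:lift presentation}, I can argue inductively on $n < 0$ that $\overline{\alpha}_\psi^s(\overline{y}_n) = 0$, hence $y_n = 0$ in $\bA^{r_0}_\psi$ (since $\overline{\alpha}_\psi^s$ and $\lambda$ are power-multiplicative and $y_n - [\overline{y}_n]$ is dominated by $[\overline{y}_n]$). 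Thus $m \geq 0$ and $x = \sum_{n\geq 0} \varpi^n y_n \in \bA_\psi$; in fact the sum converges for $\lambda(\overline{\alpha}_\psi^r)$, so $x \in \bA^{r}_\psi$ provided I also know $x \in \tilde{\bA}^{r}_\psi$.

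\textbf{Upgrading the radius.} It remains to pass from $x \in \tilde{\bA}^s_\psi$ to $x \in \tilde{\bA}^r_\psi$. This is the analogue of \cite[Lemma~5.2.5]{part1} and is where I expect the real work to lie. The elements $y_n \in \bA^{r_0}_\psi \subseteq \tilde{\bA}^{r_0}_\psi$ have Teichm\"uller expansions controlled on $[0,r_0] \supseteq [s,r]$, so by \eqref{eq:lift presentation} applied with $s$ replaced by $r$, the sum $\sum_{n\geq 0}\varpi^n y_n$ already converges in $\tilde{\bA}^r_\psi$ with $\lambda(\overline{\alpha}_\psi^t)(x) = \max_n\{p^{-n}\overline{\alpha}_\psi^t(\overline{y}_n)\}$ for all $t \in (0,r]$; hence $x \in \tilde{\bA}^r_\psi$ automatically once we know the representation $x = \sum_{n\geq 0}\varpi^n y_n$ holds. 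The one subtlety is to confirm that this representation, derived from the $\bC^{[s,r]}_\psi$-structure, genuinely reproduces $x$ inside $\tilde{\bA}^s_\psi$ — i.e.\ that the two descriptions of $x$ (as a limit of $\bB^r_\psi$-elements and as a Teichm\"uller series) are compatible. This follows because both computations take place inside the ambient ring $\tilde{\bC}^{[s,s]}_\psi$ and the map $\tilde{\bA}^s_\psi \to \tilde{\bC}^{[s,s]}_\psi$ is injective (it is a further completion/localization), so equality of images forces equality upstairs. Assembling: $x \in \bA^{r_0}_\psi$-spanned series converging in $\tilde{\bA}^r_\psi$ with $\theta(\varphi_\varpi^{-n}(x)) \in A_{\psi,n}$ for the relevant $n$ (inherited termwise from the $y_n$, using that each $y_n \in \bA^{r_0}_\psi$ and $r \leq r_0$), so $x \in \bA^{r}_\psi$ by definition. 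The main obstacle, as flagged, is the bookkeeping in the radius-upgrading step — matching the Fr\'echet/Banach topologies on $\bC^{[s,r]}_\psi$ against the $\varpi$-adic-plus-Gauss-norm topology on $\tilde{\bA}^s_\psi$ — but this is exactly parallel to \cite[Lemma~5.2.5, Lemma~5.2.7]{part1}, and Corollary~\ref{C:lift presentation} is tailored to make it go through.
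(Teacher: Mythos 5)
There is a genuine gap at the very first step of your main argument. Corollary~\ref{C:lift presentation} is stated, and is only valid, for elements of $\bB^{r}_\psi$; you apply it directly to $x \in \bC^{[s,r]}_\psi$, which is the \emph{completion} of $\bB^{r}_\psi$ for $\max\{\lambda(\overline{\alpha}_\psi^s),\lambda(\overline{\alpha}_\psi^r)\}$. A general element of this completion does not admit an expansion $\sum_{n=m}^\infty \varpi^n y_n$ with a finite lower index $m$: exactly as in the perfect ring $\tilde{\bC}^{[s,r]}_\psi$, one can build Cauchy sequences of partial sums $\sum_{n=-i}^{0}\varpi^n z_n$ whose limits require expansions unbounded below. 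Asserting that the particular $x$ at hand has such an expansion (because it also lies in $\tilde{\bA}^{s}_\psi$) is essentially the content of the lemma, so your argument is circular at this point; and your subsequent induction ``$\overline{\alpha}_\psi^s(\overline{y}_n)=0$ for $n<0$'' has no expansion to run on. The later ``upgrading the radius'' discussion also misidentifies where the work lies: once one has an honest approximation of $x$ by elements of $\bA^{r}_\psi$ converging under $\lambda(\overline{\alpha}_\psi^r)$, membership in $\bA^{r}_\psi$ follows from completeness, with no separate passage from $\tilde{\bA}^s$ to $\tilde{\bA}^r$ needed.

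The paper's proof repairs exactly this: it never expands $x$ itself, but writes $x$ as a limit of $x_i \in \bB^{r}_\psi$, applies Corollary~\ref{C:lift presentation} to each $x_i$ (where it is legitimate), and truncates the negative part to get $y_i \in \bA^{r}_\psi$. The hypothesis $x \in \tilde{\bA}^{s}_\psi$ is used quantitatively: since $x$ has no negative Teichm\"uller terms and $\lambda(\overline{\alpha}_\psi^s)(x_i-x)\leq p^{-j}$, formula \eqref{eq:lift presentation} forces the negative coefficients of $x_i$ to satisfy $\overline{\alpha}_\psi(\overline{x}_{il}) \leq p^{(l-j)/s}$ for $l<0$; the elementary estimate $p^{-l}p^{(l-j)r/s}\leq p^{1+(1-j)r/s}$ for $l\leq -1$ then converts this smallness at radius $s$ into smallness of $x_i-y_i$ at radius $r$ (negative powers of $\varpi$ only improve when the radius increases). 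Hence $y_i \to x$ under $\lambda(\overline{\alpha}_\psi^r)$ and $x \in \bA^{r}_\psi$. If you want to keep the flavor of your write-up, replace your claimed exact expansion of $x$ by this approximation-and-truncation scheme; the cancellation intuition you describe survives, but only as an estimate on the approximants, not as literal vanishing of coefficients of $x$.
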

\begin{proof}
Take $x$ in the intersection, and write $x$ as the limit in $\bC^{[s,r]}_\psi$ of a sequence
$x_0, x_1,\dots$ with $x_i \in \bB^{r}_\psi$. For each positive integer $j$, we can find
$N_j > 0$ such that
\[
\lambda(\overline{\alpha}_\psi^t)(x_i - x) \leq p^{-j} \qquad (i \geq N_j, t \in [s,r]).
\]
Write $x_i = \sum_{l=m}^\infty \varpi^l x_{il}$ as in Corollary~\ref{C:lift presentation}.
Put $y_i = \sum_{l=0}^\infty \varpi^l x_{il} \in \bA^{r}_\psi$.
For $i \geq N_j$, having $x \in \tilde{\bA}^{\dagger,s}_\psi$ and $\lambda(\overline{\alpha}_\psi^s)(x_i - x)
\leq p^{-j}$ implies that $\lambda(\overline{\alpha}_\psi^s)(\varpi^l x_{il}) \leq p^{-j}$ for $l < 0$ by
\eqref{eq:lift presentation}. That is,
\[
\overline{\alpha}_\psi(\overline{x}_{il}) \leq p^{(l-j)/s} \qquad (i \geq N_j, l < 0).
\]
Since $p^{-l} p^{(l-j)r/s} \leq p^{1 + (1-j)r/s}$ for $l \leq -1$, we deduce that
$\lambda(\overline{\alpha}_\psi^r)(x_i - y_i) \leq p^{1 + (1-j)r/s}$ for $i \geq N_j$.
Consequently, the sequence $y_0, y_1, \dots$ converges to $x$ under $\lambda(\overline{\alpha}_\psi^r)$;
it follows that $x \in \bA^{r}_\psi$ as desired.
\end{proof}

\begin{lemma} \label{L:decompose big}
For $r_0$ as in Lemma~\ref{L:optimal lifts}, for $0 < s \leq r \leq r_0$,
each $x \in \bC_{\psi}^{[s,s]}$ can be decomposed as $y+z$
with $y \in \bA_\psi^{\dagger,s}$,
$z \in \bC_\psi^{[s,r]}$, and
\[
\lambda(\overline{\alpha}_\psi^t)(z) \leq p^{1-t/s} \lambda(\overline{\alpha}_\psi^s)(x)^{t/s}
\qquad (t \in [s,r]).
\]
\end{lemma}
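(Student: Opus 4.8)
The statement is an analogue of \cite[Lemma~5.2.7]{part1}, now stated for the imperfect period rings $\bA^{\dagger}_\psi$ and $\bC^{[s,r]}_\psi$ in place of their perfect counterparts. The plan is to transfer the known decomposition on the perfect side to the imperfect side using the optimal lifting result Corollary~\ref{C:lift presentation}, together with the intersection formula Lemma~\ref{L:intersect dagger}.

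First I would recall what is available over the perfect rings: by (the extended Robba ring version of) \cite[Lemma~5.2.7]{part1}, any $\tilde{x} \in \tilde{\bC}^{[s,s]}_\psi$ can be written as $\tilde{y} + \tilde{z}$ with $\tilde{y} \in \tilde{\bA}^{\dagger,s}_\psi$, $\tilde{z} \in \tilde{\bC}^{[s,r]}_\psi$, and $\lambda(\overline{\alpha}_\psi^t)(\tilde{z}) \leq p^{1-t/s}\lambda(\overline{\alpha}_\psi^s)(\tilde{x})^{t/s}$ for $t \in [s,r]$; this is exactly the ``truncate the Witt vector expansion at the boundary between the two radii'' construction, using that over a perfect ring one has Teichm\"uller expansions. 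The issue over the imperfect ring $\bC^{[s,s]}_\psi$ is that elements of $\bA^{r}_\psi$ do not literally have Teichm\"uller expansions, so one cannot just truncate; instead one works with the ``optimal lift'' presentation of Corollary~\ref{C:lift presentation}. Concretely, given $x \in \bC^{[s,s]}_\psi$, approximate it in the $[s,s]$-norm by elements of $\bB^{r}_\psi$ (dense by construction of $\bC^{[s,r]}_\psi$ as a completion of $\bB^{r}_\psi$), write each such element as $\sum_n \varpi^n y_n$ with $y_n \in \bA^{r_0}_\psi$ satisfying $\lambda(\overline{\alpha}_\psi^{r_0})(y_n - [\overline{y}_n]) \leq p^{-1/2}\lambda(\overline{\alpha}_\psi^{r_0})(y_n)$ as in Corollary~\ref{C:lift presentation}, and then split: let $y$ collect the ``small-index'' part and $z$ the ``large-index'' part of the $\varpi$-expansion, chosen so that $y$ converges in $\bA^{\dagger,s}_\psi$ and $z$ converges in $\bC^{[s,r]}_\psi$. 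The norm bound on $z$ comes from \eqref{eq:lift presentation}: since $\lambda(\overline{\alpha}_\psi^t)(\varpi^n y_n) = p^{-n}\overline{\alpha}_\psi^t(\overline{y}_n)$ and log-convexity (Definition~\ref{D:Gauss norm}) of $t \mapsto \lambda(\overline{\alpha}_\psi^t)$, the part of the expansion that lies in $\bC^{[s,r]}_\psi$ but not in $\bA^{\dagger,s}_\psi$ automatically satisfies the claimed estimate, exactly as on the perfect side.

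Alternatively — and this is probably cleaner — I would apply the perfect-ring version directly to $x$ viewed in $\tilde{\bC}^{[s,s]}_\psi$ to get $\tilde{x} = \tilde{y} + \tilde{z}$, then show that one can correct $\tilde{y}$ and $\tilde{z}$ to land in the imperfect subrings. Since $x \in \bC^{[s,s]}_\psi$, approximating $x$ by elements of $\bB^{r}_\psi$ and applying the truncation to each summand produces $y_i \in \bA^{\dagger,s}_\psi$ and $z_i \in \bC^{[s,r]}_\psi$ with $x_i = y_i + z_i$; one then checks that $\{y_i\}$ is Cauchy in $\tilde{\bA}^{\dagger,s}_\psi$ and $\{z_i\}$ is Cauchy in $\bC^{[s,r]}_\psi$, so they converge to some $y \in \tilde{\bA}^{\dagger,s}_\psi$ and $z \in \bC^{[s,r]}_\psi$ with $x = y + z$. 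Finally, $y = x - z$ lies in $\bC^{[s,r]}_\psi$, hence $y \in \tilde{\bA}^{\dagger,s}_\psi \cap \bC_\psi = \bA^{\dagger}_\psi$ by Lemma~\ref{L:intersect dagger}, and more precisely $y \in \tilde{\bA}^{s}_\psi \cap \bC^{[s,r]}_\psi = \bA^{r}_\psi$; this is where the previous lemma does the essential work of landing $y$ back inside the imperfect ring.

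\textbf{Main obstacle.} The delicate point is the convergence/estimate bookkeeping in the $\varpi$-adic truncation: one must choose the truncation index for each approximating term $x_i$ uniformly enough (depending on $s$, $r$, and $\lambda(\overline{\alpha}_\psi^s)(x)$, but not on $i$ once $i$ is large) so that the $y_i$ converge in the $s$-Fréchet topology while the $z_i$ converge in the $[s,r]$-Banach topology, and so that the norm bound $\lambda(\overline{\alpha}_\psi^t)(z) \leq p^{1-t/s}\lambda(\overline{\alpha}_\psi^s)(x)^{t/s}$ survives the limit. The necessary estimates are exactly the ones appearing in the proof of Lemma~\ref{L:intersect dagger} (bounding $\overline{\alpha}_\psi(\overline{x}_{il})$ for negative $l$), combined with log-convexity of the Gauss norms, so the work is real but routine once those inputs are in hand.
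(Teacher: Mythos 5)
Your first-paragraph plan is essentially the paper's own proof: reduce by density to $x \in \bB^{s}_\psi$ (the paper reduces to $\bB^{s}_\psi$ rather than $\bB^{r}_\psi$, and note that $y$ must be the part of the expansion with $n \geq 0$ and $z$ the part with $n < 0$), expand $x = \sum_n \varpi^n y_n$ via Corollary~\ref{C:lift presentation}, and bound $z$ term by term using \eqref{eq:lift presentation} together with the interpolation $\lambda(\overline{\alpha}_\psi^t)(\varpi^n [\overline{y}_n]) = p^{-n(1-t/s)} \lambda(\overline{\alpha}_\psi^s)(\varpi^n [\overline{y}_n])^{t/s}$. Your alternative route through Lemma~\ref{L:intersect dagger} is workable but unnecessary (the paper does not use it here, since the truncation already produces $y$ inside the imperfect ring); the main argument is correct and matches the paper's.
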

\begin{proof}
As in the proof of \cite[Lemma~5.2.7]{part1}, we may reduce to the case
$x \in \bB^{s}_\psi$.
Write $x = \sum_{n=m}^\infty \varpi^n y_n$ as in Corollary~\ref{C:lift presentation},
and put $y = \sum_{n=0}^\infty \varpi^n y_n$ and $z = x - y$.
For $n<0$, we then have $\lambda(\overline{\alpha}_\psi^s)(\varpi^n x_n) \leq \lambda(\overline{\alpha}_\psi^s)(x)$ and so
\begin{align*}
\lambda(\overline{\alpha}_\psi^t)(\varpi^n x_n) &= \lambda(\overline{\alpha}_\psi^t)(\varpi^n [\overline{x}_n]) \\
&= p^{-n(1-t/s)} \lambda(\overline{\alpha}_\psi^s)(\varpi^n x_n)^{t/s} \\
&\leq p^{1-t/s} \lambda(\overline{\alpha}_\psi^s)(x)^{t/s}.
\end{align*}
This proves the claim.
\end{proof}

This in turn yields an analogue of \cite[Lemma~5.2.9]{part1}.
\begin{lemma} \label{L:interval intersection}
For $r_0 > 0$ as in Lemma~\ref{L:optimal lifts} and $0 < s \leq s' \leq r \leq r' \leq r_0$, inside $\bC^{[s',r]}_\psi$ we have
\[
\bC^{[s,r]}_\psi \cap \bC^{[s',r']}_\psi = \bC^{[s,r']}_\psi.
\]
\end{lemma}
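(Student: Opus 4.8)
The statement is an intersection formula of the usual ``Fr\'echet-glueing'' type: a function defined and bounded on the interval $[s,r]$ and also defined and bounded on $[s',r']$ (with $s \leq s' \leq r \leq r'$, so these intervals overlap in $[s',r]$) should be defined and bounded on the union $[s,r']$. The plan is to adapt the argument of \cite[Lemma~5.2.9]{part1}, using the decomposition result Lemma~\ref{L:decompose big} as the key local tool, exactly as the analogous statements in \S\ref{subsec:reality checks} mirror \cite[\S 5.2]{part1}.

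\textbf{Main steps.} First I would fix $x$ in the intersection $\bC^{[s,r]}_\psi \cap \bC^{[s',r']}_\psi$, viewed inside the common overcompletion $\tilde{\bC}^{[s',r]}_\psi$ (one should first check $x$ lands consistently there; this is immediate since $[s',r]$ is contained in both $[s,r]$ and $[s',r']$ and the various completions sit compatibly inside the completion of $\bB^{r'}_\psi$ over the relevant seminorms). Second, the point is to show $x$ can be approximated in the $\bC^{[s,r']}_\psi$-topology, i.e.\ simultaneously for $\lambda(\overline{\alpha}_\psi^t)$ with $t \in [s,r']$, by elements of $\bB^{r'}_\psi$ (or of $\bB^r_\psi$, whichever generating subring one uses). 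To produce such approximants, write $x$ as a limit of $x_i \in \bB^r_\psi$ in the $[s,r]$-topology; for each $x_i$ I would apply Lemma~\ref{L:decompose big} at the outer endpoint to split off the part that is ``too large near $s$'' versus the part that extends to larger radii, and symmetrically use that $x$ itself is bounded on $[s',r']$ to control the low-radius behavior. The inequality in Lemma~\ref{L:decompose big}, of the form $\lambda(\overline{\alpha}_\psi^t)(z) \leq p^{1-t/s}\lambda(\overline{\alpha}_\psi^s)(x)^{t/s}$, is exactly what propagates a bound from the small-radius end to the whole interval $[s,r']$; combined with log-convexity of $r \mapsto \lambda(\overline{\alpha}_\psi^r)$ (Definition~\ref{D:Gauss norm}, via \cite[Lemma~4.2.3]{part1}) one upgrades pointwise-at-$s'$-and-$r'$ control to control on all of $[s,r']$. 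Third, I would assemble these pieces into a Cauchy sequence for the $[s,r']$-seminorms whose limit is $x$, concluding $x \in \bC^{[s,r']}_\psi$; the reverse inclusion $\bC^{[s,r']}_\psi \subseteq \bC^{[s,r]}_\psi \cap \bC^{[s',r']}_\psi$ is trivial from the definitions (restricting the interval only weakens the completion).

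\textbf{Expected obstacle.} The routine but delicate part is bookkeeping the several seminorms at once: one must be careful that the decomposition produced by Lemma~\ref{L:decompose big} (which is stated for a single endpoint pair $(s,r)$) can be applied uniformly along the sequence $x_i$ and that the error terms it leaves behind are controlled not just in the $[s,r]$-range but, after invoking the boundedness of $x$ on $[s',r']$, across the full range $[s,r']$. In other words, the genuine content is verifying that the ``boundary'' contributions near radius $s$ and near radius $r'$ glue correctly on the overlap $[s',r]$, and this is precisely where log-convexity of the Gauss norms is indispensable: it is what prevents a function bounded only at the two ends of $[s,r']$ from blowing up in the middle. I do not anticipate any conceptual novelty beyond what is already present in \cite[Lemma~5.2.9]{part1} and Lemma~\ref{L:decompose big}; the work is to check that passing from the perfect period rings to the weakly decompleting imperfect ones (where Teichm\"uller lifts are unavailable, per the discussion preceding Lemma~\ref{L:optimal lifts}) does not break the estimates, which is handled by using the optimal-lift presentations of Corollary~\ref{C:lift presentation} in place of Teichm\"uller expansions.
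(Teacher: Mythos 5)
There is a genuine gap. The real content of this lemma is showing that the ``plus'' part of $x$ extends from the inner radius to the outer radius $r'$, and neither log-convexity nor your approximation scheme supplies this. Log-convexity only interpolates between radii at which finiteness of $\lambda(\overline{\alpha}_\psi^t)$ is already known; finiteness at $r'$ for the type-$\bA$ piece is exactly what is in question, so it cannot be invoked to produce it. Concretely, your plan to approximate $x$ by $x_i \in \bB^r_\psi$ in the $[s,r]$-topology and separately know $x \in \bC^{[s',r']}_\psi$ gives control of differences only on $[s',r]$, and smallness at radius $s'$ does \emph{not} propagate down to radius $s$ (terms $\varpi^{-n}[\overline{u}]$ with $\overline{\alpha}_\psi(\overline{u})$ small are tiny at large $t$ and huge at small $t$), nor does smallness at $r$ propagate up to $r'$; so the Cauchy sequence you want to assemble never materializes from the stated ingredients. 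Moreover, applying Lemma~\ref{L:decompose big} to each approximant $x_i$ (rather than once to $x$ itself, which is legitimate since $\bC^{[s,r]}_\psi$ maps to $\bC^{[s,s]}_\psi$) creates a further problem: the decomposition is not asserted to be linear or continuous, so the pieces $y_i, z_i$ need not converge, and you would have to rerun the argument on successive differences with the quantitative bound to repair this.

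The missing ingredient is Lemma~\ref{L:intersect dagger}, which is how the paper (following the model of \cite[Lemma~5.2.9]{part1} that you cite) finishes: apply Lemma~\ref{L:decompose big} once to $x$ to write $x = y + z$ with $y$ in a ring of type $\bA$ and $z \in \bC^{[s,r']}_\psi$ with the stated bound; then observe that $y = x - z$ lies in $\bC^{[s',r']}_\psi$ as well (both $x$ and $z$ do), so by Lemma~\ref{L:intersect dagger} applied on $[s',r']$ one gets $y \in \bA^{r'}_\psi \subset \bC^{[s,r']}_\psi$, whence $x = y + z \in \bC^{[s,r']}_\psi$. In other words, the passage from boundedness of $x$ on $[s',r']$ to extendability of the plus part is an algebraic intersection statement inside $\tilde{\bC}^{[s',s']}_\psi$, not an estimate; your proposal names the right decomposition lemma but never identifies or replaces this second step, which is where the lemma's actual content lies.
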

\begin{proof}
Given $x$ in the intersection, apply Lemma~\ref{L:decompose big} to write $x = y+z$ with $y \in \bA^{r}_\psi$, $z \in \bC^{[s,r']}_\psi$.
By Lemma~\ref{L:intersect dagger},
\[
y = z-x \in \bA^{r}_\psi \cap \bC^{[s',r']}_\psi
= \bA^{r'}_\psi \subset \bC^{[s,r']}_\psi,
\]
so $x \in \bC^{[s,r']}_\psi$ as desired.
\end{proof}

For weakly decompleting towers, we may expand the net of equivalences between finite \'etale algebras emerging from \cite{part1}
so as to include the imperfect period rings.
\begin{theorem} \label{T:big etale}
The functors in the diagram
\[
\xymatrix@R=20pt@!C=80pt{
& \varphi_\varpi^{-1}\text{-}\FEt(\breve{\bA}^{\dagger,1}_\psi) \ar[rd] \ar[dd] \ar[rrr] &&& \FEt(A_{\psi}) \ar[d] \\
& & \varphi_\varpi^{-1}\text{-}\FEt(\hat{\bA}^{\dagger,1}_\psi) \ar[r]\ar[d] & \varphi_\varpi^{-1}\text{-}\FEt(\tilde{\bA}^{\dagger,1}_\psi) \ar[d] \ar[r] & \FEt(\tilde{A}_{\psi}) \\
 (\varphi_\varpi^{-1}\text{-})\FEt(\bA^{\dagger}_\psi) \ar[r] \ar[d]  & (\varphi_\varpi^{-1}\text{-})\FEt(\breve{\bA}^{\dagger}_\psi) \ar[d] \ar[r]
 &  (\varphi_\varpi^{-1}\text{-})\FEt(\hat{\bA}^{\dagger}_\psi) \ar[d] \ar[r] & (\varphi_\varpi^{-1}\text{-})\FEt(\tilde{\bA}^\dagger_\psi) \ar[d] & \\
 (\varphi_\varpi^{-1}\text{-})\FEt(\bA_\psi) \ar[r] \ar[d] & (\varphi_\varpi^{-1}\text{-})\FEt(\breve{\bA}_\psi) \ar[r] & (\varphi_\varpi^{-1}\text{-})\FEt(\hat{\bA}_\psi) \ar[r] \ar[d] & (\varphi_\varpi^{-1}\text{-})\FEt(\tilde{\bA}_\psi) \ar[d] &\\
 (\varphi_\varpi^{-1}\text{-})\FEt(R_\psi) \ar[rr] & & (\varphi_\varpi^{-1}\text{-})\FEt(R_\psi^{\perf}) \ar[r] & (\varphi_\varpi^{-1}\text{-})\FEt(\tilde{R}_\psi)  &
}
\]
induced by the evident ring homomorphisms are rank-preserving equivalences of tensor categories. Here $\FEt(S)$ denotes the category of finite \'etale $S$-algebras, $\varphi_\varpi^{-1}\text{-}\FEt(S)$ denotes the category of finite \'etale $S$-algebras equipped with isomorphisms with their $\varphi_\varpi^{-1}$-pullbacks, and
$(\varphi_\varpi^{-1}\text{-})\FEt(S)$ means either $\FEt(S)$ or $\varphi_\varpi^{-1}\text{-}\FEt(S)$ (i.e., the two categories are equivalent to each other).
\end{theorem}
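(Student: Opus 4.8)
The plan is to prove the theorem by reducing all the equivalences to a single ``master'' equivalence, namely that finite \'etale algebras over $R_\psi$ (with $\varphi_\varpi^{-1}$-structure) coincide with those over $\tilde{R}_\psi$; all of the other functors in the diagram will be shown to be equivalences by comparing them against this one via purity, henselianity, and the perfectoid correspondence. First I would dispose of the rightmost column and the horizontal rows near it: the equivalences $\FEt(\tilde{\bA}^\dagger_\psi)\simeq\FEt(\tilde{\bA}_\psi)\simeq\FEt(\tilde{R}_\psi)$ and $\FEt(\tilde{\bA}^{\dagger,1}_\psi)\simeq\FEt(\tilde A_\psi)$ are instances of results already recorded in \cite{part1} for perfect period rings (the comparison of $\FEt$ over $\tilde{\bA}$, $\tilde{\bA}^\dagger$ and the characteristic-$p$ quotient, via the henselian property of $(\tilde{\bA}_\psi,(\varpi))$ and the almost purity theorem, here Corollary~\ref{C:almost purity}); the presence or absence of a $\varphi_\varpi^{-1}$-structure is harmless because $\varphi_\varpi$ acts bijectively on each of these rings, so a descent datum for $\varphi_\varpi^{-1}$ over a finite \'etale algebra is automatic. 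The column through $\hat{\bA}$ is handled identically since $\hat{\bA}_\psi$ is by construction the $\varpi$-adic completion of a $\varphi_\varpi$-stable ring with quotient $\breve R_\psi=R_\psi^{\perf}$, so again $(\hat{\bA}_\psi,(\varpi))$ is henselian with residue ring $R_\psi^{\perf}$, and $\FEt(\hat{\bA}_\psi)\simeq\FEt(R_\psi^{\perf})$; the dagger version follows by a limiting/overconvergence argument as in \cite{part1}.

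The genuinely new content is the leftmost column, where one must pass between the \emph{imperfect} rings $\bA_\psi$, $\bA^\dagger_\psi$, $R_\psi$ and their perfections $\breve{\bA}_\psi$, $\breve{\bA}^\dagger_\psi$, $R_\psi^{\perf}$. Here I would argue as follows. For $\FEt(R_\psi)\to\FEt(R_\psi^{\perf})$: the map $R_\psi\to R_\psi^{\perf}$ is a filtered colimit along the Frobenius $\varphi_\varpi^{-1}$, each stage being a purely inseparable (indeed $\varphi$-) extension, hence a universal homeomorphism on spectra, so by invariance of the \'etale site under universal homeomorphisms (the ring-theoretic form of Remark~\ref{R:seminormal etale}, i.e.\ \cite{conrad-mathoverflow}) the base-change functor $\FEt(R_\psi)\to\FEt(R_\psi^{\perf})$ is an equivalence; moreover an object of $\varphi_\varpi^{-1}\text{-}\FEt(R_\psi^{\perf})$ descends canonically to $\FEt(R_\psi)$ because the $\varphi_\varpi^{-1}$-structure is precisely the data needed to descend along this colimit. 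Then lift from characteristic $p$ to mixed characteristic: $(\bA_\psi,(\varpi))$ is a henselian pair (it is $\varpi$-adically complete) with residue ring $R_\psi$, and $(\breve{\bA}_\psi,(\varpi))$ likewise has residue ring $R_\psi^{\perf}$, so $\FEt(\bA_\psi)\simeq\FEt(R_\psi)$ and $\FEt(\breve{\bA}_\psi)\simeq\FEt(R_\psi^{\perf})$ by the standard \'etale-lifting theorem for henselian pairs; again the $\varphi_\varpi^{-1}$-structure tracks through. Finally the overconvergent rings $\bA^\dagger_\psi=\bigcup_{r>0}\bA^r_\psi$, $\breve{\bA}^\dagger_\psi$: since $\bA^\dagger_\psi$ is henselian along $(\varpi)$ (being a filtered union of the $\bA^r_\psi$, which are henselian, or by an explicit approximation using the reality-check lemmas, especially Lemma~\ref{L:optimal lifts}, Lemma~\ref{L:intersect dagger} and Lemma~\ref{L:decompose big}) with the same residue ring $R_\psi$, the functor $\FEt(\bA^\dagger_\psi)\to\FEt(\bA_\psi)$ is an equivalence, and similarly after perfection; this uses the key overconvergence input that a finite \'etale algebra over $\bA_\psi$ descends to $\bA^r_\psi$ for some $r$, which is where the lifting lemmas of \S\ref{subsec:reality checks} do their work (an idempotent or a monic separable polynomial over $\bA_\psi$ can be approximated $\varpi$-adically inside some $\bA^r_\psi$ and then corrected using Hensel's lemma within $\bA^r_\psi$, whose henselianity along $(\varpi)$ we establish from $\varpi$-adic completeness of a suitable subring).

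That leaves the top horizontal maps out of the corner objects: $\varphi_\varpi^{-1}\text{-}\FEt(\breve{\bA}^{\dagger,1}_\psi)\to\FEt(A_\psi)$ and the passage to $\hat{\bA}^{\dagger,1}_\psi$ and $\tilde{\bA}^{\dagger,1}_\psi$. For these I would use the map $\theta$: on the ``$r=1$'' rings one has $\theta\colon\breve{\bA}^{\dagger,1}_\psi\to A_\psi$ (and similarly for the completed versions into $\tilde A_\psi$), and by Remark~\ref{R:theta not surjective} together with density of $R_\psi^{\perf}$ in $\tilde R_\psi$ (weak decompletion, Definition~\ref{D:weakly decompleting}(a)) plus the lifting lemmas, $\theta$ is close enough to surjective that one can transport finite \'etale algebras across it; concretely, $\ker\theta$ is generated by a Fontaine-primitive element $z$ and the pair $(\breve{\bA}^{\dagger,1}_\psi, (z))$ (or $(\hat{\bA}^{\dagger,1}_\psi,(z))$, $(\tilde{\bA}^{\dagger,1}_\psi,(z))$) is henselian with residue ring $A_\psi$ (resp.\ $\tilde A_\psi$), so $\FEt$ is unchanged modulo $z$ — this is exactly the mechanism by which $\FEt(\tilde{\bA}^{\dagger,1}_\psi)\simeq\FEt(\tilde A_\psi)$ already, and the imperfect/decompleted versions follow by the column equivalences established above. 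Throughout, compatibility with tensor products and preservation of rank are automatic from the fact that every functor in sight is a base change or a colimit of base changes along the relevant ring maps, and rank is detected on geometric points which match up under universal homeomorphisms and henselian reductions. \textbf{The main obstacle} I anticipate is the overconvergence descent in the left column — showing that $\FEt(\bA_\psi^\dagger)\to\FEt(\bA_\psi)$ (and the $\varphi_\varpi^{-1}$-equivariant version) is an equivalence — because this is the one place where the completeness of $\bA_\psi$ must be genuinely traded for a uniform convergence radius, and it is precisely here that the weakly-decompleting hypothesis (via the optimal-lifts Lemma~\ref{L:optimal lifts} and the interval-intersection Lemma~\ref{L:interval intersection}) is indispensable; the rest is bookkeeping with henselian pairs and the perfectoid correspondence.
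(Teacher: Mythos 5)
There is a genuine gap, and it sits exactly at the top of the diagram. Your proposed mechanism for the arrows out of the $r=1$ corner objects — that $\ker\theta=(z)$ with $z$ Fontaine primitive makes $(\breve{\bA}^{\dagger,1}_\psi,(z))$, $(\hat{\bA}^{\dagger,1}_\psi,(z))$, $(\tilde{\bA}^{\dagger,1}_\psi,(z))$ henselian pairs with residue ring $A_\psi$ (resp.\ $\tilde{A}_\psi$), ``so $\FEt$ is unchanged modulo $z$'' — fails for two reasons. First, these pairs are not henselian: henselianity requires $(z)$ to lie in the Jacobson radical, but $z$ becomes invertible at plenty of maximal ideals of $\tilde{\bA}^{\dagger,1}_\psi$. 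For instance, composing the inclusion into $W_\varpi(\tilde{R}_\psi)$ with reduction modulo $\varpi$ sends $z$ to $\overline{z}_0\neq 0$; already for the cyclotomic tower over a $p$-adic field, where $\tilde{R}_\psi$ is a perfectoid field, this is a maximal ideal at which $z$ is a unit, and the Frobenius translates $\varphi_\varpi^{-n}(\ker\theta)$, $n\geq 1$ (note $\varphi_\varpi^{-1}$ preserves $\tilde{\bA}^{\dagger,1}_\psi$), cut out further points where $z$ is invertible. Second, even if such an equivalence of \emph{plain} categories held, it would not prove the statement: the theorem deliberately retains the $\varphi_\varpi^{-1}$-structure on exactly the three $r=1$ entries, because over $\tilde{\bA}^{\dagger,1}_\psi$ (unlike over the rings where $\varphi_\varpi$ is bijective) the Frobenius datum is genuinely extra structure — it is what propagates a finite \'etale algebra from the single untilt cut out by $\ker\theta$ to all its Frobenius translates — and your parenthetical claim that ``$\FEt(\tilde{\bA}^{\dagger,1}_\psi)\simeq\FEt(\tilde{A}_\psi)$ already'' is a misreading of the perfect-ring input. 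The paper instead invokes \cite[Corollary~5.5.6]{part1} for the maps out of $\tilde{\bA}^{\dagger,1}_\psi$ (a Frobenius-descent statement packaged with the primitive element and almost purity), \cite[Proposition~2.6.8]{part1} for $A_\psi\to\tilde{A}_\psi$, and handles the imperfect corner by a Frobenius-twisting limit argument: restriction $\varphi_\varpi^{-1}\text{-}\FEt(\breve{\bA}^{\dagger,r}_\psi)\to\varphi_\varpi^{-1}\text{-}\FEt(\breve{\bA}^{\dagger,s}_\psi)$ is an equivalence when $s=p^{-h}r$ (twist by $\varphi_\varpi$), hence always, and the direct $2$-limit identifies $\varphi_\varpi^{-1}\text{-}\FEt(\breve{\bA}^{\dagger,1}_\psi)$ with $\varphi_\varpi^{-1}\text{-}\FEt(\breve{\bA}^{\dagger}_\psi)$; the $\hat{\bA}^{\dagger,1}_\psi$ entry then gets essential surjectivity because an equivalence factors through it, and full faithfulness by the same twisting trick. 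Without a replacement for this step your diagram is disconnected at its upper-left portion.

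The rest of your outline is essentially the paper's argument and is fine modulo two small repairs: the bottom-row identifications (imperfect vs.\ perfect vs.\ completed, with or without $\varphi_\varpi^{-1}$) are \cite[Theorem~3.1.15]{part1}, as you in effect use via universal homeomorphisms; and the lifting to mixed characteristic is henselianity along $(\varpi)$ plus \cite[Theorem~1.2.8]{part1}. But the individual rings $\bA^{r}_\psi$ are not henselian along $(\varpi)$ — only the union $\bA^{\dagger}_\psi$ is, since the Newton/Hensel iteration must be allowed to shrink the radius (this is where Lemma~\ref{L:optimal lifts} and its companions enter), so your parenthetical justification should be discarded in favor of the hedge you already offer. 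Likewise $\breve{\bA}_\psi$ and $\breve{\bA}^{\dagger}_\psi$ are not $\varpi$-adically complete; they are filtered unions of copies of $\bA_\psi$, $\bA^{\dagger}_\psi$, hence still henselian along $(\varpi)$ (alternatively, the paper links them to $\hat{\bA}_\psi$, $\hat{\bA}^{\dagger}_\psi$ by full faithfulness plus essential surjectivity). These are fixable; the top-row gap is not, without importing the Frobenius-equivariant input described above.
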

\begin{proof}
The arrows in the bottom row of the diagram, with and without $\varphi_\varpi^{-1}$, are equivalences by \cite[Theorem~3.1.15]{part1}.
It is thus sufficient to link each entry of the diagram with the bottom row.

Consider first the bottom three rows of the diagram (in which case we may ignore $\varphi_\varpi^{-1}$).
Each of the rings 
\[
\bA^\dagger_\psi, \hat{\bA}^{\dagger}_\psi,
\tilde{\bA}^\dagger_\psi, \bA_\psi, \hat{\bA}_\psi,
\tilde{\bA}_\psi
\]
is henselian with respect to $(p)$;
consequently, quotienting any of these rings by $(p)$ induces an equivalence
by \cite[Theorem~1.2.8]{part1}.
The homomorphisms
$\breve{\bA}^\dagger_\psi \to \hat{\bA}^{\dagger}_\psi$,
$\breve{\bA}_\psi \to \hat{\bA}_\psi$
thus induce functors which are essentially surjective, but also fully faithful by
\cite[Lemma~2.2.4(a)]{part1}, and thus equivalences.

It remains to link the top two rows to the rest of the diagram.
To begin with, the homomorphisms out of $\tilde{\bA}^{\dagger,1}_\psi$ induce equivalences by
\cite[Corollary~5.5.6]{part1},
while the homomorphism $A_{\psi} \to \tilde{A}_{\psi}$ induces an equivalence
by \cite[Proposition~2.6.8]{part1}.

We next link $\breve{\bA}^{\dagger,1}_\psi$ to the bottom of the diagram.
For $0 <s \leq r$, we have a functor
$\varphi_\varpi^{-1}\text{-}\FEt(\breve{\bA}^{\dagger,r}_\psi) \to 
\varphi_\varpi^{-1}\text{-}\FEt(\breve{\bA}^{\dagger,s}_\psi)$,
which is evidently an equivalence for $s = p^{-h} r$; it follow easily that all of the functors are both fully faithful and essentially surjective.
By \cite[Remark~1.2.9]{part1}, we deduce that
$\varphi_\varpi^{-1}\text{-}\FEt(\breve{\bA}^{\dagger,1}_\psi) \to 
\varphi_\varpi^{-1}\text{-}\FEt(\breve{\bA}^{\dagger}_\psi)$ is an equivalence.

We finally link $\hat{\bA}^{\dagger,1}_\psi$ to the bottom of the diagram.
The functor induced by the homomorphism from this ring to $\hat{\bA}^{\dagger}_\psi$
is essentially surjective, because we have an equivalence that factors through it. We may check full faithfulness by imitating the argument given for
$\breve{\bA}^{\dagger,1}_\psi$.
\end{proof}

We next study the effect of base extensions on weakly decompleting towers.

\begin{lemma} \label{L:weakly decompleting affinoid base extension}
Let $\psi'$ be a base extension of $\psi$ such that that $R_{\psi'}^{\perf}$ is dense in $\tilde{R}_{\psi'}$ and $R_\psi \to R_{\psi'}$ is an affinoid homomorphism. Then $\psi'$ is also weakly decompleting.
\end{lemma}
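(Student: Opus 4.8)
The plan is to verify the two defining conditions of a weakly decompleting tower (Definition~\ref{D:weakly decompleting}) for $\psi'$. Condition (a), that $R_{\psi'}^{\perf}$ is dense in $\tilde{R}_{\psi'}$, is given by hypothesis, so all the work goes into condition (b): producing $r>0$ for which the reduction map $\bA^r_{\psi'} \to R_{\psi'}$ is strict surjective for the norms $\lambda(\overline{\alpha}_{\psi'}^r)$ and $\overline{\alpha}_{\psi'}^r$.

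First I would fix $r>0$ small enough that $\bA^r_\psi \to R_\psi$ is strict surjective (which exists since $\psi$ is weakly decompleting), and in fact invoke Lemma~\ref{L:optimal lifts} to arrange that this surjection is \emph{optimal}, say with every $\overline{x} \in R_\psi$ lifting to $x \in \bA^r_\psi$ with $\lambda(\overline{\alpha}_\psi^r)(x - [\overline{x}]) \leq p^{-1/2}\overline{\alpha}_\psi^r(\overline{x})$. The key observation is that an affinoid homomorphism $R_\psi \to R_{\psi'}$ factors through a strict surjection $R_\psi\{T_1/\rho_1,\dots,T_n/\rho_n\} \to R_{\psi'}$ of Banach rings over $\FF_{p^h}$, and the Gauss norm on the left-hand side is the restriction of $\overline{\alpha}_{\psi'}$. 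So I would reduce to showing that (i) the reduction map $\bA^r_{\psi'} \to R_{\psi'}$ hits the images of $T_1,\dots,T_n$ with good norm control — here I use that these can be lifted into $A_{\psi',n}$ for suitable $n$ (their pro-$p$-power-root system lives in $\tilde R_{\psi'}$, but one only needs finitely many levels), producing elements of $\bA^r_{\psi'}$ by the very definition of $\bA^r_{\psi'}$ as the subring of $\tilde{\bA}^r_{\psi'}$ of elements whose $\theta\circ\varphi_\varpi^{-m}$ land in $A_{\psi',m}$; and (ii) the image of $\bA^r_\psi$ under $\bA^r_\psi \to \bA^r_{\psi'}$ (base extension) surjects onto the image of $R_\psi$. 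Combining the lift of the polynomial generators with the lift of $R_\psi$-coefficients, and using that products and sums of elements of $\bA^r_{\psi'}$ stay in $\bA^r_{\psi'}$, I can lift any element of $R_\psi\{T_\bullet/\rho_\bullet\}$, hence any element of $R_{\psi'}$, to $\bA^r_{\psi'}$; the strictness (indeed near-optimality) of the lift follows by the same geometric-series / successive-approximation argument as in Corollary~\ref{C:lift presentation}, using that $\lambda(\overline{\alpha}^r_{\psi'})$ restricts correctly to the weighted Gauss norm and that multiplication by $\varpi$ and by Teichm\"uller-type lifts contracts by the expected factor.

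The main obstacle I anticipate is controlling the interaction between the two lifting steps: lifting the coefficients in $R_\psi$ gives elements of $\bA^r_\psi$ with good $\lambda(\overline{\alpha}_\psi^r)$-norm, while lifting the polynomial variables gives elements of $\bA^r_{\psi'}$ whose presence there relies on the levelwise condition defining $\bA^r_{\psi'}$; one must check that the \emph{product} expression lands in $\bA^r_{\psi'}$ and that the resulting error term (difference from the naive Teichm\"uller expression) is controlled in all the norms $\lambda(\overline{\alpha}_{\psi'}^t)$ simultaneously, not just for a single $r$. This is handled by shrinking $r$ once more if necessary and running the iteration of Lemma~\ref{L:optimal lifts}/Corollary~\ref{C:lift presentation} verbatim over $R_{\psi'}$, noting that the constants produced depend only on the single optimal-lifting constant for $\psi$ and on the weights $\rho_i$. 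The rest — that $R_{\psi'}$ is then complete under $\overline{\alpha}_{\psi'}^r$, which is part of condition (b) — is automatic since $R_{\psi'}\{T_\bullet/\rho_\bullet\}$-style completions are Banach and strict quotients of Banach rings over a complete field are complete. Hence $\psi'$ satisfies Definition~\ref{D:weakly decompleting}(a) and~(b), so it is weakly decompleting.
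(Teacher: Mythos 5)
Your strategy is the paper's: use the affinoid hypothesis to write $R_{\psi'}$ as a strict quotient of a Tate algebra over $R_\psi$, lift the images of the variables into $\bA^{r}_{\psi'}$ for $r$ small, lift coefficients through the strict surjection $\bA^{r}_\psi \to R_\psi$, and conclude that $\bA^{r}_{\psi'} \to R_{\psi'}$ is strict surjective, condition (a) of Definition~\ref{D:weakly decompleting} being part of the hypothesis. The paper's proof is exactly this, compressed into two sentences, so apart from the point below your argument is the same one, just with somewhat heavier machinery (Lemma~\ref{L:optimal lifts} and the iteration of Corollary~\ref{C:lift presentation}) than is needed: once the variables are lifted, strictness already follows because the map in question is a composite of strict surjections factoring through $\bA^{r}_{\psi'}$.

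The step whose justification does not work as written is your (i), the lifting of the images $\overline{t}_1,\dots,\overline{t}_n$ of the variables into $\bA^{r}_{\psi'}$. The membership condition defining $\bA^{r}_{\psi'}$ requires $\theta(\varphi_\varpi^{-m}(x)) \in A_{\psi',m}$ for \emph{all} $m$ with $hm \geq -\log_p r$, i.e.\ for all sufficiently deep levels of the tower, not for finitely many levels; and placing individual $\theta$-values in $A_{\psi',m}$ at a few levels does not by itself produce an element of $\tilde{\bA}^{r}_{\psi'}$ satisfying the condition. In particular the natural candidate, the Teichm\"uller lift $[\overline{t}_i]$, need not lie in $\bA^{r}_{\psi'}$ at all (compare the cyclotomic tower, where the usable lift of $\overline{\pi}$ in Lemma~\ref{L:cyclotomic Rpsi} is $[\epsilon]-1$, not $[\overline{\pi}]$). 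Fortunately the fact you need is immediate from Definition~\ref{D:period rings}: $R_{\psi'} = \bA_{\psi'}/(\varpi)$ and $\bA_{\psi'}$ is the $\varpi$-adic completion of $\bA^{\dagger}_{\psi'} = \bigcup_{r>0} \bA^{r}_{\psi'}$, so each $\overline{t}_i$ lifts to $\bA^{r_i}_{\psi'}$ for some $r_i>0$, and one takes $r$ no larger than the minimum of the $r_i$ and of a value working for $\psi$, shrinking further (or multiplying the $\overline{t}_i$ by a high power of a topologically nilpotent unit of $R_\psi$ lifted to $\bA^{r}_\psi$) to keep the $\lambda(\overline{\alpha}_{\psi'}^r)$-norms of the lifts at most $1$ so that the series built from lifted coefficients converge. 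With that substitution your proof is complete and coincides with the paper's.
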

\begin{proof}
Write $R_{\psi'}$ as a strict quotient of $R_\psi\{\overline{T}_1,\dots,\overline{T}_n\}$ for some $\overline{T}_1,\dots,\overline{T}_n$, and lift the images of $\overline{T}_1,\dots,\overline{T}_n$ to some $T_1,\dots,T_n \in R_{\psi'}$. Then for 
any $r>0$ for which $\bA^{r}_\psi \to R_\psi$ is strict surjective,
so is $\bA^{r}_\psi\{T_1,\dots,T_n\} \to R_{\psi'}$
This surjection factors through $\bA^r_{\psi'}$, proving the claim.
\end{proof}

\begin{prop} \label{P:weakly decompleting tower persistence}
The base extension $\psi'$ of $\psi$ along any morphism $(A,A^+) \to (B,B^+)$ of any one of the following forms is again weakly decompleting.
\begin{enumerate}
\item[(a)]
A rational localization which is again uniform.
In this case, $R_{\psi'}$ is a rational localization of $R_{\psi}$.
\item[(b)]
A finite \'etale morphism. In this case, $R_{\psi'}$ is a finite \'etale algebra over $R_\psi$.
\end{enumerate}

In the following cases, assume further that $\psi$ is a finite \'etale perfectoid tower.

\begin{enumerate}
\item[(c)]
A morphism in which $B^+$ is obtained by completing $A^+$ at a finitely generated ideal containing $p$.
In this case, $R_{\psi'}^+$ is obtained by completing
$R_{\psi}^+$ at a finitely generated open ideal.
\item[(d)]
A morphism in which $B^+$ is obtained by taking the $p$-adic completion of a (not necessarily finite) \'etale extension of $A^+$.
In this case, $R_{\psi'}^+$ is the completion of an \'etale algebra over $R_{\psi}^+$.
\item[(e)]
A morphism in which $B^+$ is obtained by taking the $p$-adic completion of a (not necessarily finite type) algebraic localization of $A^+$.
In this case, $R_{\psi'}^+$ is the completion of an algebraic localization of  $R_{\psi}^+$.
\end{enumerate}
\end{prop}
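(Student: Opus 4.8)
The strategy is to reduce the claimed persistence of weak decompletion to two independent facts: (i) that $\psi'$ is still a perfectoid tower and the passage $\psi \mapsto \psi'$ interacts with $R_\psi$ as described, which is exactly Proposition~\ref{P:perfectoid tower persistence} together with Proposition~\ref{P:weakly decompleting tower persistence} in the perfectoid-tower setting; and (ii) Lemma~\ref{L:weakly decompleting affinoid base extension}, which upgrades the combination ``$R_{\psi'}^{\perf}$ dense in $\tilde R_{\psi'}$ plus $R_\psi \to R_{\psi'}$ affinoid'' into weak decompletion. So the proof consists in verifying, case by case, that each of the five types of morphism $\sigma$ has both of these properties.

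First I would record the description of $R_{\psi'}$ and $R_{\psi'}^+$ in each case, which follows from the perfectoid correspondence (Theorem~\ref{T:Fontaine perfectoid correspondence}) applied levelwise together with Remark~\ref{R:same quotient} (identifying $A_{\psi',n}^+/(u)$ with $R_{\psi',n}^+/(\overline u)$) and the commutation of the tilting operation with the relevant classes of morphisms as packaged in Theorem~\ref{T:Fontaine perfectoid compatibility}: in case (a) a rational localization tilts to a rational localization; in case (b) a finite \'etale morphism to a finite \'etale one; in cases (c),(d),(e) completions, \'etale-then-completions, and localized-then-completions of $A^+$ correspond to the analogous operations on $R_\psi^+$. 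This gives the ``In this case'' assertions and, in particular, identifies the mod-$\varpi$ quotient $R_{\psi'} = A_{\psi'}/(\varpi)$ with the stated ring. Actually, the cleanest route is to cite Proposition~\ref{P:perfectoid tower persistence} directly for the perfectoid assertion and then observe that in each of (a)--(e) the induced homomorphism $R_\psi \to R_{\psi'}$ is affinoid: rational localizations and finite \'etale morphisms are affinoid by construction, and in (c),(d),(e) one reduces to the fact that a completion at a finitely generated open ideal, an \'etale-then-complete extension, or a localize-then-complete extension of $R_\psi^+$ induces an affinoid homomorphism on generic fibers (this is where one uses that the ideal of definition is finitely generated, equivalently that $p$ is topologically nilpotent, so that the completion is a quotient of a Tate algebra in finitely many variables).

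With $R_\psi \to R_{\psi'}$ affinoid in hand, it remains only to check density of $R_{\psi'}^{\perf}$ in $\tilde R_{\psi'}$, after which Lemma~\ref{L:weakly decompleting affinoid base extension} finishes the argument. For this I would argue that $\tilde R_{\psi'}$ is, in each case, obtained from $\tilde R_\psi$ by the tilted operation (rational localization, finite \'etale, or an adic completion), and that the image of $R_{\psi'}^{\perf}$ contains a dense subring: starting from a dense perfect subring $R_\psi^{\perf}\subseteq \tilde R_\psi$ (weak decompletion of $\psi$), its image together with the adjoined generators (a localization parameter, generators of the finite \'etale algebra, or the newly adjoined \'etale/localized elements) and their $p$-power roots generates a dense perfect subring of $\tilde R_{\psi'}$. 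In the completion cases one additionally notes that $R_{\psi'}^+$ is $\varpi$-adically dense in $\tilde R_{\psi'}^+$ by construction of the completion and that $R_{\psi'}^{\perf}$ surjects onto all $\varpi$-power quotients, so density passes to the limit.

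The main obstacle is the density verification in cases (d) and (e): there one is completing a possibly non-finite \'etale extension (resp.\ a possibly infinite-type algebraic localization) of $A^+$, and one must be careful that the subring of $R_{\psi'}^{\perf}$ generated by the image of $R_\psi^{\perf}$ and the new elements is still \emph{dense}, not merely that it maps to a dense subring of the uncompleted ring. The point is to show that every element of $\tilde R_{\psi'}$ is approximated $\varpi$-adically by such combinations; this uses the henselian/completion structure of $R_{\psi'}^+$ over $R_\psi^+$ (as in the proof of Proposition~\ref{P:perfectoid tower persistence}) to lift approximations at each finite level, together with the density statement for $\psi$ itself. Once this is arranged, the rest is bookkeeping with the perfectoid correspondence and Lemma~\ref{L:weakly decompleting affinoid base extension}.
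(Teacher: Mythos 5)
There is a genuine gap, and it occurs at the point your proposal treats as bookkeeping. The perfectoid correspondence and Theorem~\ref{T:Fontaine perfectoid compatibility} (equivalently Proposition~\ref{P:perfectoid tower persistence}) only control the \emph{completed perfect} rings $\tilde{A}_{\psi'}$ and $\tilde{R}_{\psi'}$; they say nothing about the imperfect ring $R_{\psi'}=\bA_{\psi'}/(\varpi)$, whose elements are by definition those admitting lifts $x\in\bA^{\dagger}_{\psi'}$ with $\theta(\varphi_\varpi^{-n}(x))\in B_{\psi',n}$. So your identification of $R_{\psi'}$ with the stated rings, and your density argument ``adjoin the new generators and their $p$-power roots,'' both presuppose that the new generators lie in $R_{\psi'}^{\perf}$ --- which is exactly what has to be proved. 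In case (a) the paper produces such elements by showing the rational subspace of $\Spa(\tilde{R}_\psi,\tilde{R}_\psi^+)$ can be cut out by parameters lying in $R_\psi$ itself (approximate in the dense subring $R_\psi^{\perf}$, then raise to $p^n$-th powers); in case (b) it invokes the equivalence $\FEt(A_\psi)\cong\FEt(\bA^\dagger_\psi)$ of Theorem~\ref{T:big etale} to descend the finite \'etale algebra to the imperfect period ring, hence modulo $\varpi$ to a finite \'etale $R_\psi$-algebra inside $R_{\psi'}$. Tilting ``finite \'etale to finite \'etale'' at the level of $\tilde{R}_\psi$ is not a substitute for this descent, and your sketch contains no mechanism to replace it.

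Second, your plan to route all five cases through Lemma~\ref{L:weakly decompleting affinoid base extension} fails for (c), (d), (e): the induced map $R_\psi\to R_{\psi'}$ is in general \emph{not} an affinoid homomorphism there. Completing at a finitely generated open ideal introduces power-series directions, not Tate-algebra directions (already $\ZZ_p\{T\}\rightsquigarrow\ZZ_p\llbracket T\rrbracket$ does not give a topologically finite type generic fiber), and in (d), (e) the \'etale extension or localization need not even be of finite type, so your parenthetical justification is incorrect. The paper does not use the affinoid lemma in these cases; instead it argues directly: in (c) the image of $R_\psi$ is dense in $R_{\psi'}$ for the adic topology of the finitely generated open ideal, and in (d), (e) one identifies $R_{\psi'}^+$ as the completion of an \'etale extension, respectively an algebraic localization, of $R_\psi^+$, after which both conditions of Definition~\ref{D:weakly decompleting} (density of $R_{\psi'}^{\perf}$ and strict surjectivity of $\bA^r_{\psi'}\to R_{\psi'}$, by lifting topological generators and successive approximation) follow at once. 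As written, your argument establishes neither the affinoidness it relies on nor the imperfect-level identifications it asserts, so the proof does not go through.
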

\begin{proof}
To deduce (a), note that any rational subspace of
$\Spa(\tilde{R}_\psi, \tilde{R}_\psi^+)$ can be specified using parameters in the dense subring $R_\psi^{\perf}$ (by \cite[Remark~2.4.8]{part1}) and hence also using some parameters in $R_\psi$ (by raising each parameter to the $p^n$-th power for some large $n$). 
Let $\overline{f}_1,\dots,\overline{f}_n,\overline{g} \in R_{\psi}$ be some such parameters;
then choose lifts $f_1,\dots,f_n,g$ of these parameters to $\bA^\dagger_\psi$. For all sufficiently small $r>0$, the lifts belong to $\bA^r_\psi$ and generate the unit ideal,
and the rational localization of $\tilde{\bA}^r_\psi$ defined by these parameters may be identified with $\tilde{\bA}^r_{\psi'}$. 
Since $R_{\psi'}^{\perf}$ contains $R_\psi[(\overline{f}_1/\overline{g})^{p^{-\infty}}, \dots,
(\overline{f}_n/\overline{g})^{p^{-\infty}}]$, it is dense in $\tilde{R}_{\psi'}$; this guarantees condition (a) of Definition~\ref{D:weakly decompleting}. Condition (b) of the definition follows from Lemma~\ref{L:weakly decompleting affinoid base extension}
and \cite[Lemma~2.4.13]{part1}.

To deduce (b), note that $\FEt(A_{\psi})
\cong \FEt(\bA^\dagger_\psi)$ by 
Theorem~\ref{T:big etale}. This guarantees condition (a)  of Definition~\ref{D:weakly decompleting}; condition (b) again holds 
by Lemma~\ref{L:weakly decompleting affinoid base extension}.

To deduce (c), note that $R_\psi$ maps isometrically to a dense subring of $R_{\psi'}$ and that $R_{\psi}^{\perf}$ is dense in $\tilde{R}_{\psi'}$. This implies both conditions of Definition~\ref{D:weakly decompleting}.

To deduce (d), note that $R_{\psi'}^+$ is the completion of an \'etale extension of $R_\psi^+$. This implies both conditions of Definition~\ref{D:weakly decompleting}.

To deduce (e), note that $R_{\psi'}^+$ is the completion of an algebraic localization of $R_\psi^+$. This implies both conditions of Definition~\ref{D:weakly decompleting}.
\end{proof}

In order to deal with pseudocoherent $\varphi$-modules in type $\bC$, we will need the following additional condition.
\begin{defn} \label{D:sheafy tower}
We say that a tower $\psi$ is \emph{noetherian} if there exists $r_0 > 0$ such that for $0 < s \leq r \leq r_0$,
the Banach ring $\bC^{[s,r]}_\psi$ is strongly noetherian (and hence sheafy by \cite[Proposition~2.4.16]{part1}).
This implies that the ring $\breve{\bC}^{[s,r]}_\psi$ is coherent.
\end{defn}

\begin{remark} \label{R:affinoid slice}
Let $\psi$ be a tower satisyfing the following conditions.
\begin{enumerate}
\item[(a)]
The ring $R_\psi^{\perf}$ is dense in $\tilde{R}_\psi$.
\item[(b)]
The ring $A$ is a Banach algebra over an analytic field $K$ with perfect residue field $k$.
\item[(c)]
The ring $R_\psi$ is an affinoid algebra over some field of the form $k((\overline{\pi}))$ with $\overline{\alpha}_{\psi}(\overline{\pi}) = p^{-1}$.
\end{enumerate}
By arguing as in Lemma~\ref{L:weakly decompleting affinoid base extension}, we may see that $\psi$ is perfectoid and weakly decompleting, so we may
choose $r_0$ as in Lemma~\ref{L:optimal lifts}. For $0 < s \leq r \leq r_0$
with $r,s \in \log_p \left| K^\times \right|$, 
the ring $\bC^{[s,r]}_\psi$ is an affinoid algebra over $K$.
If we do not require that $r,s \in \log_p \left| K^\times \right|$,
then $\bC^{[s,r]}_\psi$ is still an affinoid algebra over $K$ in the sense of Berkovich.
We conclude that $\psi$ is a noetherian tower.
\end{remark}

\subsection{Modules and sheaves}
\label{subsec:projective modules}

In preparation for the study of $\varphi$-modules and $\Gamma$-modules, we study modules over some of the rings we have just introduced, retracing the steps of \S\ref{sec:perfect period sheaves}.

\begin{hypothesis}
Throughout \S\ref{subsec:projective modules}, assume that the tower $\psi$ is weakly decompleting, and fix $r_0 > 0$ as in Lemma~\ref{L:optimal lifts}. In case $\psi$ is noetherian, we also assume that $r_0$ satisfies the conditions
of Definition~\ref{D:sheafy tower}.
\end{hypothesis}

\begin{defn} \label{D:not quasi-Stein}
For $0 < s \leq r \leq r_0$,
let $\bC^{[s,r],+}_\psi$ be the completion in $\bC^{[s,r]}_\psi$ of the subring generated by
\[
\{x \in \bC^{[s,r]}_\psi: \lambda(\alpha^t)(x) < 1 \quad (t \in [s,r])\}
\cup 
\{z \in \bA^{r}_\psi: \lambda(\alpha^r)(z) \leq 1, \overline{z} \in R_\psi^+\}.
\]
\end{defn}

\begin{lemma} \label{L:imperfect Robba interval localization}
For $0 < s \leq r \leq r_0$ and $\rho>0$, let $\bC^r_\psi\{T/\rho\}$ (resp.\ $\bC^{[s,r]}_\psi\{T/\rho\}$)
be the Fr\'echet completion of $\bC^r_\psi[T]$ (resp.\ $\bC^{[s,r]}_\psi[T]$) for the $\rho$-Gauss extensions of $\lambda(\overline{\alpha}_\psi^u)^{1/u}$ for all $u \in (0,r]$ (resp.\ all $u \in [s,r]$). Then for $t \in [s,r]$, the maps
\begin{align*}
\bC^r_\psi\{T/p^{-1/s}\}/(T-\varpi) &\to \bC^s_\psi \\
\bC^{[s,r]}_\psi\{T/p^{-1/t}\}/(T-\varpi) &\to \bC^{[s,t]}_\psi \\
\bC^r_\psi\{T/p^{-1/s}\}/(\varpi T-1) &\to \bC^{[s,r]}_\psi \\
\bC^{[s,r]}_\psi\{T/p^{-1/t}\}/(\varpi T-1) &\to \bC^{[t,r]}_\psi
\end{align*}
are strict isomorphisms.
\end{lemma}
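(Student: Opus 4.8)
The four stated isomorphisms are all instances of identifying a ring of type $\bC$ over a smaller interval with a weighted Tate-algebra quotient over a larger interval. The plan is to prove them by reducing the statement, via completion, to the corresponding (and already established) computations for the perfect rings $\tilde{\bC}$, which appear in Remark~\ref{R:Robba reified2} and in the proof of Theorem~\ref{T:Fontaine perfectoid compatibility}(i). Concretely, I would first treat the first of the four maps, $\bC^r_\psi\{T/p^{-1/s}\}/(T-\varpi) \to \bC^s_\psi$: the quotient $\bC^r_\psi\{T/p^{-1/s}\}/(T-\varpi)$ has an obvious map to $\bC^s_\psi$ sending $T \mapsto \varpi$, and I need to see this is a strict isomorphism. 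For surjectivity, note that $\bC^s_\psi$ is by definition the Fr\'echet completion of $\bB^s_\psi \subseteq \bB^r_\psi$ with respect to $\lambda(\overline{\alpha}_\psi^u)$ for $u \in (0,s]$, and every such seminorm is recovered from the Gauss-extended seminorms on $\bC^r_\psi\{T/p^{-1/s}\}$ after substituting $T = \varpi$; for injectivity I would use that $T - \varpi$ is not a zero-divisor together with Corollary~\ref{C:lift presentation} (the explicit series presentation of elements of $\bB^r_\psi$) to control the norms.

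The cleanest route, though, is probably to bypass the direct norm estimates by appealing to the perfect case. Since $\breve{\bC}^*_\psi$ is dense in $\tilde{\bC}^*_\psi$ (Remark~\ref{R:C dense}), and the rings $\bC^{[s,r]}_\psi$ are defined as closed subrings of $\tilde{\bC}^{[s,r]}_\psi$ cut out by completion of $\bB^r_\psi$, the four maps in question are restrictions of the corresponding maps for $\tilde{\bC}$, which are already known to be strict isomorphisms (the first and third from the discussion following Remark~\ref{R:Robba reified2}, with $\overline{z}$ there replaced by the element encoding $\varpi$; the second and fourth likewise after a rational localization). Thus it suffices to check three things: (i) the source ring $\bC^r_\psi\{T/p^{-1/s}\}$ is a closed subring of $\tilde{\bC}^r_\psi\{T/p^{-1/s}\}$, which follows because $\bC^r_\psi \hookrightarrow \tilde{\bC}^r_\psi$ is a closed embedding and the Tate-algebra completion is functorial; (ii) the image of $T-\varpi$ (resp.\ $\varpi T-1$) generates the same ideal after base change, i.e.\ the quotient commutes with passage to the perfect ring — here I would use that $T-\varpi$ is a non-zero-divisor in both rings and invoke flatness of $\bC^r_\psi \to \tilde{\bC}^r_\psi$ (or at worst pseudoflatness, Proposition~\ref{P:weak flatness perfect Robba}) together with Theorem~\ref{T:perfect Robba Kiehl}; (iii) the induced map $\bC^s_\psi \to \tilde{\bC}^s_\psi$ agrees with the given closed embedding, which is immediate from the construction.

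For the second and fourth maps, which localize to a proper subinterval $[s,t]$ or $[t,r]$, I would additionally invoke Lemma~\ref{L:interval intersection} to identify the image of the Tate quotient with the correct interval ring; the point is that inverting $\varpi T - 1$ (resp.\ adjoining a root of $T - p^{-1/t}$) changes the convergence radius exactly as recorded there, and the intersection formula of Lemma~\ref{L:interval intersection} pins down $\bC^{[s,t]}_\psi$ and $\bC^{[t,r]}_\psi$ inside $\bC^{[s,r]}_\psi$. Strictness of all four maps then follows from the open mapping theorem (Theorem~\ref{T:open mapping}), since these are continuous surjections of complete metrizable modules over the open mapping ring $E$ — so once bijectivity is established, strictness is automatic.

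\textbf{Main obstacle.} The genuinely delicate point is step (ii) above: verifying that forming the quotient by $T - \varpi$ (or $\varpi T - 1$) commutes with the completed base change from $\bC^r_\psi$ to $\tilde{\bC}^r_\psi$. Without knowing $\bC^r_\psi \to \tilde{\bC}^r_\psi$ is genuinely flat, one must argue directly with the norm structure — this is where Corollary~\ref{C:lift presentation} is indispensable, as it lets one write a general element of $\bB^r_\psi$ as a $\lambda(\overline{\alpha}_\psi^r)$-convergent series whose terms are ``almost Teichm\"uller'' lifts, and then track how dividing by $T-\varpi$ (which on the level of the perfect ring is just the substitution recovering $\bC^s_\psi$) interacts with these series. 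I expect the bulk of the work to be a bookkeeping argument showing that the quotient norm on $\bC^r_\psi\{T/p^{-1/s}\}/(T-\varpi)$ matches the Fr\'echet topology of $\bC^s_\psi$, for which the estimate \eqref{eq:lift presentation} is the key input.
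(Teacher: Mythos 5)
Your fallback sketch is in fact the paper's proof: the paper disposes of this lemma with ``As in Remark~\ref{R:Robba reified}'', i.e.\ one redoes the perfect-case computation directly, with the series presentation of Corollary~\ref{C:lift presentation} (and the estimate \eqref{eq:lift presentation}) standing in for Teichm\"uller expansions, and division by $T-\varpi$ handled coefficientwise. So the part of your proposal that works is precisely the part you set aside, and the ``main obstacle'' you identify (matching the quotient norm with the Fr\'echet topology via \eqref{eq:lift presentation}) is the actual content of the proof.

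Your preferred ``cleanest route'' has a genuine gap. Knowing that $\tilde{\bC}^r_\psi\{T/p^{-1/s}\}/(T-\varpi)\to\tilde{\bC}^s_\psi$ is a strict isomorphism does not descend to the imperfect subrings: it tells you that an element of $\bC^s_\psi\subseteq\tilde{\bC}^s_\psi$ lifts to the perfect weighted Tate algebra, but gives no control on whether it lifts with coefficients in $\bC^r_\psi$. Surjectivity in the imperfect case is exactly the statement that $\bB^r_\psi$ is dense in $\bC^s_\psi$ with quantitative norm control, which is what Corollary~\ref{C:lift presentation} is for and which no base-change formalism supplies; note also that your inclusion is reversed ($\bB^r_\psi\subseteq\bB^s_\psi$ for $s\le r$, not the other way), which is precisely why this density is not formal. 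Moreover, the flatness input you invoke for step (ii) is not available here: Proposition~\ref{P:weak flatness perfect Robba} concerns subinterval maps between the perfect rings $\tilde{\calR}^I_R$, not the inclusion $\bC^r_\psi\to\tilde{\bC}^r_\psi$; pseudoflatness and splitting of that inclusion are only established later (Corollary~\ref{C:split base extension1}) under the extra hypothesis that $R_\psi$ is $F$-(finite projective), which is not assumed in this lemma and whose use would risk circularity with \S\ref{subsec:Frobenius splittings}, where results downstream of the present lemma are used. (Injectivity, on the other hand, needs no flatness at all: dividing a power series in $T$ by $T-\varpi$ and solving for the coefficients recursively keeps them in the closed subring $\bC^r_\psi$.) Your appeals to Lemma~\ref{L:interval intersection} for identifying the subinterval rings and to Theorem~\ref{T:open mapping} for strictness are fine once bijectivity is in hand.
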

\begin{proof}
As in Remark~\ref{R:Robba reified}.
\end{proof}

\begin{lemma} \label{L:weak flatness imperfect Robba}
For $0 < s \leq t \leq r \leq r_0$, the morphisms
\[
\bC^{[s,r]}_\psi \to \bC^{[s,t]}_\psi, \bC^{[s,r]}_\psi \to \bC^{[t,r]}_\psi
\]
are $2$-pseudoflat.
\end{lemma}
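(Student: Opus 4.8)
The strategy is to reduce each of the four displayed morphisms to an application of Lemma~\ref{L:weak flatness} (or rather Corollary~\ref{C:weak flatness}/Corollary~\ref{C:topologically flat finer covering}), exploiting the strict isomorphisms of Lemma~\ref{L:imperfect Robba interval localization} which realize these maps as (iterated) simple Laurent localizations of the rings $\bC^r_\psi$ and $\bC^{[s,r]}_\psi$ after adjoining a variable. Concretely, by Lemma~\ref{L:imperfect Robba interval localization}, each of the four morphisms is obtained from a morphism of the form $\bC^?_\psi \to \bC^?_\psi\{T/\rho\}$ followed by a quotient by an element of the form $T - \varpi$ or $\varpi T - 1$; and a quotient by $T-\varpi$ or $\varpi T - 1$ in a Tate algebra over a complete ring is exactly the kind of rational localization handled by Lemma~\ref{L:weak flatness}, modulo the usual caveat (flagged in the paper's remark after Corollary~\ref{C:weak flatness2}) that the ``$T - f$'' direction requires $f$ to be a unit. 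Here $f = \varpi$, which is indeed a unit in all of $\bC^r_\psi$, $\bC^{[s,r]}_\psi$ (since $\varpi$ is invertible after inverting $\varpi$), so both the $T - \varpi$ and the $\varpi T - 1$ quotients are genuine simple Laurent localizations, and the obstruction described in that remark does not arise.

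The key steps, in order: (i) observe that $\bC^r_\psi$ and $\bC^{[s,r]}_\psi$ are open mapping rings (each is a Fr\'echet or Banach completion, as in Remark~\ref{R:Robba quasi-Stein1}), and that $\varpi$ is a topologically nilpotent unit in each, so the hypotheses of Lemma~\ref{L:weak flatness} are met once we exhibit the relevant Laurent covering; (ii) for the map $\bC^{[s,r]}_\psi \to \bC^{[s,t]}_\psi$ with $t \in [s,r]$, use the third and fourth isomorphisms of Lemma~\ref{L:imperfect Robba interval localization} to write it as the composite of the localization onto $\{v(\varpi) \geq p^{-1/t}\}$ (a ``$\geq$'' Laurent piece, hence topologically flat by Lemma~\ref{L:weak flatness} applied with $f = \varpi^{-1}$, noting $\varpi^{-1}$ is a unit), preceded by the base change $\bC^{[s,r]}_\psi \to \bC^{[s,r]}_\psi\{T/p^{-1/t}\}$ which is topologically flat by Lemma~\ref{L:module power series}; similarly for $\bC^{[s,r]}_\psi \to \bC^{[t,r]}_\psi$ using the $\{v(\varpi) \leq p^{-1/t}\}$ piece; (iii) for $\bC^r_\psi \to \bC^s_\psi$ and $\bC^r_\psi \to \bC^{[s,r]}_\psi$, note that $\bC^r_\psi$ is a Fr\'echet completion, so first reduce to a closed interval $\bC^{[s',r]}_\psi$ by the inverse limit presentation (as in Remark~\ref{R:Robba quasi-Stein1}) and then apply step (ii); alternatively, realize $\bC^r_\psi$ directly as a quasi-Stein ring and invoke the snake-lemma argument of Lemma~\ref{L:weak flatness} verbatim; (iv) compose topologically flat maps to conclude, noting that pseudoflat follows from topologically flat by Corollary~\ref{C:topologically 3-pseudocoherent}, and that the last clause of the statement only asks for pseudoflatness.

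I expect the main subtlety to be bookkeeping rather than any deep obstruction: one must be careful that the completions appearing in Lemma~\ref{L:imperfect Robba interval localization} are taken with respect to the correct family of seminorms (the $1/u$-normalized ones, following Remark~\ref{R:Robba norms}), so that the strict isomorphisms there genuinely identify the source with a \emph{rational} localization in the adic-space sense and Lemma~\ref{L:weak flatness} applies on the nose. A secondary point of care: Lemma~\ref{L:weak flatness} as stated concerns a single simple Laurent covering of a \emph{uniform} adic Banach ring, so for the Fr\'echet rings $\bC^r_\psi$ one should either pass to the closed-interval subrings first (where stable uniformity is available via Corollary~\ref{C:imperfect stably uniform}, to be proved subsequently) or, more cleanly, imitate the diagram \eqref{eq:weak flatness} directly, since the proof of Lemma~\ref{L:weak flatness} uses only Remark~\ref{R:series multiplication} and Lemma~\ref{L:module power series}, both of which hold for any complete metrizable module over an open mapping ring and hence for $\bC^r_\psi$. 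Once the identifications are set up correctly, the pseudoflatness is immediate from the cited results.
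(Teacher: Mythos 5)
Your proposal is correct and is essentially the paper's own argument: the proof there consists precisely of invoking Lemma~\ref{L:imperfect Robba interval localization} and then emulating the proof of Proposition~\ref{P:weak flatness perfect Robba}, i.e.\ rerunning the snake-lemma diagram of Lemma~\ref{L:weak flatness} (via Remark~\ref{R:series multiplication} and Lemma~\ref{L:module power series}) for the weighted Tate algebras $\bC^?_\psi\{T/\rho\}$ with $f=\varpi$ or $\varpi^{-1}$, exactly as you recommend in your closing paragraph. Your observation that $\varpi$ being a unit neutralizes the ``$T-f$'' obstruction, and your caution about the weighting/Fr\'echet issues (handled by imitating the diagram rather than citing Lemma~\ref{L:weak flatness} verbatim), match the intended reduction; only trivial bookkeeping slips remain (e.g.\ it is the second, not third, isomorphism of Lemma~\ref{L:imperfect Robba interval localization} that gives $\bC^{[s,t]}_\psi$).
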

\begin{proof}
Using Lemma~\ref{L:imperfect Robba interval localization}, we may emulate the proof of 
Proposition~\ref{P:weak flatness perfect Robba}.
\end{proof}

\begin{prop}  \label{P:glueing projective modules over intervals}
Let $I$ be a closed subinterval of $(0,r_0]$ and let $I_1,\dots,I_n$
be closed subintervals of $I$ which cover $I$. 
\begin{enumerate}
\item[(a)]
For any stably pseudocoherent $\bC^I_\psi$-module $M$, the augmented \v{C}ech complex
\[
0 \to M \to \bigoplus_{i=1}^n M \otimes_{\bC^I_\psi} \bC^{I_i}_\psi \to \cdots
\]
is exact.
\item[(b)]
The morphism
$\bC^I_\psi \to \bigoplus_{i=1}^n \bC^{I_i}_\psi$ is an effective descent morphism both for 
finite projective modules over Banach rings
and for \'etale-stably pseudocoherent modules over Banach rings.
\end{enumerate}
\end{prop}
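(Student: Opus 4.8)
\textbf{Proof proposal for Proposition~\ref{P:glueing projective modules over intervals}.}

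The plan is to reduce both statements to the corresponding facts for the perfect period rings $\tilde{\calR}^I$ established in \S\ref{sec:perfect period sheaves}, by using the faithfully flat (indeed split) descent from $\bC^{[s,r]}_\psi$ up to $\tilde{\bC}^{[s,r]}_\psi$. First I would observe that by Remark~\ref{R:C dense}, $\breve{\bC}^{[s,r]}_\psi$ is dense in $\tilde{\bC}^{[s,r]}_\psi$, but more usefully, for the purposes of exactness I want a splitting. Here the key structural input is Lemma~\ref{L:perfect polynomial splitting} together with the affinoid-slice picture of Remark~\ref{R:affinoid slice}: after adjoining a uniform unit (i.e., working with $R_\psi^{\perf}$ and then with $\tilde{R}_\psi$), the morphism $\bC^{[s,r]}_\psi \to \tilde{\bC}^{[s,r]}_\psi$ arises, up to completion, from the inclusion of an affinoid ring into its perfectoidization, and the latter splits in the category of topological modules via the fractional-power decomposition of Definition~\ref{D:Robba fractional}. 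Granting such a splitting, the functor $\bullet \otimes_{\bC^I_\psi} \tilde{\bC}^I_\psi$ is faithfully exact on complete modules, so (a) follows from the analogous exactness statement for $\tilde{\bC}^I$, which is Theorem~\ref{T:perfect Robba proetale cohomology} (or more precisely the \v{C}ech-acyclicity packaged in Theorem~\ref{T:perfect Robba Kiehl}(c)), once one checks the covering $\{I_i\}$ of $I$ corresponds to the covering used there.

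For part (b), I would argue in two steps. For finite projective modules: a descent datum for $\{\bC^{I_i}_\psi\}$ base-extends to a descent datum for $\{\tilde{\bC}^{I_i}_\psi\}$, which by Theorem~\ref{T:perfect Robba Kiehl} (the stably uniform, hence Kiehl, case, using Corollary~\ref{C:Robba stably uniform}) is effective, producing a finite projective $\tilde{\bC}^I_\psi$-module $\tilde{M}$ with $\varphi_\varpi^{-1}$-type extra structure coming from the decompletion filtration. To descend $\tilde{M}$ back to $\bC^I_\psi$ one applies Lemma~\ref{L:complete descent}: the morphism $\bC^I_\psi \to \tilde{\bC}^I_\psi$ of complete open mapping rings splits in the category of complete topological $\bC^I_\psi$-modules (by the splitting discussed above), hence is an effective descent morphism for finite projective modules, and one then matches the descended module against the original descent datum. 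For pseudocoherent modules: given a descent datum $\{M_i\}$ over $\{\bC^{I_i}_\psi\}$, glue the associated sheaves on the quasi-Stein-type space (cf. Remark~\ref{R:Robba reified}) to get a pseudocoherent sheaf $\calF$; by part (a) applied iteratively (first finding a finite generating set via a surjection from a finite free module, then taking kernels and repeating, exactly as in the proof of Lemma~\ref{L:refined Kiehl}) one shows $\calF(X)$ is $m$-pseudocoherent for all $m$, and that $\calF(X) \otimes_{\bC^I_\psi} \bC^{I_i}_\psi \cong M_i$, using Corollary~\ref{C:open Robba weak flatness} and Lemma~\ref{L:weak flatness imperfect Robba} (pseudoflatness of the interval-restriction maps) for the base-change compatibility and the five lemma.

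The main obstacle I anticipate is the very first point: justifying that $\bC^{[s,r]}_\psi \to \tilde{\bC}^{[s,r]}_\psi$ admits a splitting in the category of complete topological $\bC^{[s,r]}_\psi$-modules. Unlike the perfect case, here the source is genuinely "imperfect", so one cannot simply quote Lemma~\ref{L:perfect polynomial splitting}; instead one must exhibit the completion $\tilde{R}_\psi$ of $R_\psi^{\perf}$, and then of $R_\psi$, as carrying a compatible topological basis so that the completed perfection splits off a copy of the decompleted ring. The cleanest route is probably to note that $\tilde{R}_\psi$ is the completed perfection of $R_\psi$ and that completed perfections of uniform affinoid algebras split topologically off the original ring (the "$p$-power fractional monomials" form a Schauder-type basis of the complement), then propagate this splitting through the Witt-vector and Fr\'echet-completion constructions defining $\bC^{[s,r]}_\psi$ and $\tilde{\bC}^{[s,r]}_\psi$ using Corollary~\ref{C:lift presentation} to control norms; once this is in place the rest is a routine assembly of already-proven descent and acyclicity results. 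If a clean splitting proves elusive, a fallback is to work instead with the v-topology / pfqc machinery of \S\ref{sec:perfectoid supplemental} applied to the perfectoid slices $\bC^{[s,r]}_\psi \widehat{\otimes}_E E_\infty$ via Proposition~\ref{P:Robba perfectoid}, and descend along $E \to E_\infty$ separately using Definition~\ref{D:Robba fractional}.
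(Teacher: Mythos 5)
There is a genuine gap, and it sits exactly where you flagged it: the splitting of $\bC^{[s,r]}_\psi \to \tilde{\bC}^{[s,r]}_\psi$ (equivalently of $R_\psi \to \tilde{R}_\psi$) in the category of complete topological modules is simply not available under the hypotheses of this proposition. At this point of the paper the tower is only assumed weakly decompleting; the splitting you want is precisely the content of Lemma~\ref{L:full splitting} and Corollary~\ref{C:split base extension1}, which are proved later and only under the additional hypothesis that $R_\psi$ is $F$-(finite projective) (Hypothesis~\ref{H:phigamma split}). Your claim that ``completed perfections of uniform affinoid algebras split topologically off the original ring'' is the $F$-split condition in disguise, and it fails in general: $F$-purity does not imply $F$-splitness without coherence/finiteness hypotheses (see the discussion in \S\ref{subsec:Frobenius splittings}), and nothing in the weakly decompleting hypothesis forces $R_\psi$ to be affinoid, $F$-finite, or regular. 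Your fallback also does not work: Proposition~\ref{P:Robba perfectoid} makes $\tilde{\bC}^{[s,r]}_\psi \widehat{\otimes}_E E_\infty$ perfectoid because the residue ring $\tilde{R}_\psi$ is perfect, but $\bC^{[s,r]}_\psi \widehat{\otimes}_E E_\infty$ has imperfect ``reduction'' $R_\psi$ and is not perfectoid, so the v-topology/pfqc machinery of \S\ref{sec:perfectoid supplemental} cannot be applied to it directly. Separately, even granting a splitting, descending the module $\tilde{M}$ produced by Theorem~\ref{T:perfect Robba Kiehl} back down to $\bC^I_\psi$ requires producing a descent datum along $\bC^I_\psi \to \tilde{\bC}^I_\psi$ from the interval datum, and the ``$\varphi_\varpi^{-1}$-type extra structure'' you invoke is not canonically there; in the paper such decompletion of modules without Frobenius structure is only achieved via $\Gamma$-actions and the decompleting (not merely weakly decompleting) hypothesis.

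The intended argument avoids passing to the perfect rings altogether. One reduces at once to a covering of $I=[s,r]$ by two intervals $[s,t]$, $[t,r]$. Part (a) then follows from Lemma~\ref{L:weak flatness imperfect Robba} exactly as in the proof of Corollary~\ref{C:weak flatness2}: the presentations of Lemma~\ref{L:imperfect Robba interval localization} (quotients of $\bC^{[s,r]}_\psi\{T/\rho\}$ by $T-\varpi$ and $\varpi T-1$) let one run the ``multiplication by $1-fT$ on convergent power series'' snake-lemma argument of Lemma~\ref{L:weak flatness} directly for the imperfect rings, giving topological flatness of the interval restrictions and exactness of the two-term \v{C}ech sequence. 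Part (b) is then the Kiehl-type glueing argument of Lemma~\ref{L:refined Kiehl}, with Lemma~\ref{L:weak flatness imperfect Robba} replacing Theorem~\ref{T:weak flatness}; your sketch of the pseudocoherent case (surjection from a finite free module, kernels, five lemma, Corollary~\ref{C:open Robba weak flatness}) is in fact this argument and is fine, but the finite projective case and the exactness statement should be obtained the same direct way rather than through the unsupported splitting.
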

\begin{proof}
Both statements reduce immediately to the case of a covering of $I = [s,r]$ by two subintervals $I_1 = [s,t]$, $I_2 = [t,r]$. In this case, (a) follows from Lemma~\ref{L:weak flatness imperfect Robba} as in the proof of Theorem~\ref{T:pseudocoherent acyclicity}; (b) follows from the proof of Lemma~\ref{L:refined Kiehl},
using Lemma~\ref{L:weak flatness imperfect Robba} in place of
Theorem~\ref{T:weak flatness}.
\end{proof}

\begin{defn} \label{D:artificial vector bundle}
By a \emph{vector bundle} (resp.\ \emph{pseudocoherent sheaf})
over $\bC^r_\psi$ for some $r>0$, we will mean a collection consisting of a finite projective (resp. \ \'etale-stably pseudocoherent) module $M_{[t,s]}$ over $\bC^{[t,s]}_\psi$ for each pair $t,s$ with $0 < t \leq s \leq r$, together with isomorphisms
$M_{[t,s]} \otimes_{\bC^{[t,s]}_{\psi}} \bC^{[t',s']}_{\psi} \to M_{[t',s']}$
satisfying the cocycle condition for $t \leq t' \leq s' \leq s$. 
The inverse limit of the $M_{[t,s]}$ forms the $\bC^r_\psi$-module of \emph{global sections} of the bundle/sheaf.
\end{defn}

\begin{remark} \label{R:pseudocoherent sheaf C}
We will mostly use the definition of a pseudocoherent sheaf in the case where $\psi$ is noetherian.
In this case, any finitely generated module over $\bC^{[t,s]}_\psi$ is 
\'etale-stably pseudocoherent; the union of the spaces
$\Spa(\bC^{[t,s]}_\psi, \bC^{[t,s],+}_\psi)$ is a strongly noetherian quasi-Stein space with $\bC^r_\psi$ as its rings of global sections (see Lemma~\ref{L:global sections C} below); and a finite locally free sheaf (resp.\ coherent sheaf) on this space is the same as a vector bundle (resp.\ pseudocoherent sheaf) over $\bC^r_\psi$ in the sense of Definition~\ref{D:artificial vector bundle}.
\end{remark}

\begin{lemma} \label{L:global sections C}
The natural map $\bC^r_\psi \to \varprojlim_{s>0} \bC^{[s,r]}_\psi$ is an isomorphism.
\end{lemma}
\begin{proof}
The map is injective because the map $\bC^r_\psi \to \bC^{[s,r]}_\psi$ is an isometry with respect to
$\lambda(\overline{\alpha}_\psi^t)$ for all $t \in [s,r]$. To see that it is surjective, note that $\bC^r_\psi$ has dense image in $\bC^{[s,r]}_\psi$ for each $s$. Hence for any $x \in \varprojlim_{s>0} \bC^{[s,r]}_\psi$, we
can find $x_n \in \bC^r_\psi$ so that 
\[
\lambda(\overline{\alpha}^t_\psi)(x-x_n) \leq 2^{-n} \qquad (t \in [2^{-n}r, r]).
\]
The sequence $\{x_n\}$ is a Cauchy sequence in $\bC^r_\psi$ with limit $x \in \bC^r_\psi$.
\end{proof}

\begin{lemma} \label{L:global sections are dense}
Suppose either that $(M_{[t,s]})$ is a vector bundle over $\bC^r_\psi$ for some $r \in (0,r_0]$,
or that $\psi$ is noetherian and $(M_{[t,s]})$ is a pseudocoherent sheaf over $\bC^r_\psi$ for some $r \in (0,r_0]$.
\begin{enumerate}
\item[(a)]
The global sections of this bundle are dense in each $M_{[t,s]}$.
\item[(b)]
We have $R^1 \varprojlim M_{[t,s]} = 0$.
\end{enumerate}
\end{lemma}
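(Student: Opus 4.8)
The plan is to reduce this statement, which concerns the interval sheaves $\bC^r_\psi$ associated to a weakly decompleting tower, to the corresponding facts already established for quasi-Stein spaces. The structural backbone is Lemma~\ref{L:Kiehl calc}, which proves exactly the two assertions (a) and (b) for an inverse system of Banach modules over the rings of sections of a quasi-Stein space, provided one knows the restriction maps $M_{i+1} \widehat{\otimes}_{A_{i+1}} A_i \to M_i$ are bounded surjective (indeed isomorphisms, for the density statement to be sharp). So the work is to exhibit the data of a pseudocoherent sheaf over $\bC^r_\psi$ as precisely such an inverse system.

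First I would fix an increasing sequence $I_0 \subseteq I_1 \subseteq \cdots$ of closed subintervals of $(0,r]$ exhausting $(0,r)$, e.g.\ $I_j = [s_j, r]$ with $s_j \to 0^+$, and set $A_j = \bC^{I_j}_\psi$, $M_j = M_{I_j}$. By the definition of a pseudocoherent sheaf over $\bC^r_\psi$ (Definition~\ref{D:artificial vector bundle}), the transition maps are the canonical isomorphisms $M_{j+1} \otimes_{A_{j+1}} A_j \xrightarrow{\sim} M_j$; the point is that these are \emph{already completed}, i.e.\ the natural map $M_{j+1} \otimes_{A_{j+1}} A_j \to M_{j+1} \widehat{\otimes}_{A_{j+1}} A_j$ is an isomorphism. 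This follows because $M_{j+1}$ is pseudocoherent, hence $2$-pseudocoherent, hence complete for its natural topology by Corollary~\ref{C:topologically 3-pseudocoherent} (each $\bC^{I_j}_\psi$ is an open mapping ring by the same argument as Remark~\ref{R:Robba quasi-Stein1}, since it is an inverse limit of Banach rings along strict maps), and base extension of a $2$-pseudocoherent module along the morphism $A_{j+1} \to A_j$ — which is pseudoflat by Lemma~\ref{L:weak flatness imperfect Robba} — stays pseudocoherent, hence complete. Concretely, one can run the argument of Lemma~\ref{L:module power series}/Lemma~\ref{L:strict perfect base extension}: present $M_{j+1}$ by a finite free resolution, base-extend, and use pseudoflatness plus strictness of the open mapping theorem to see the completed and uncompleted tensor products coincide. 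Then I also need $\calO(\bC^r_\psi) = \varprojlim_j A_j$ and that $\bC^r_\psi$ is an open mapping ring, which is Remark~\ref{R:Robba quasi-Stein1} applied in this imperfect setting (or rather its evident analogue — the ring $\bC^r_\psi$ is by construction the Fréchet completion of $\bB^r_\psi$, hence an inverse limit of the Banach rings $\bC^{I_j}_\psi$).

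With these identifications in place, I would invoke Lemma~\ref{L:Kiehl calc}(a) to get part (a): the global sections $\varprojlim_j M_j = \varprojlim_{[t,s]} M_{[t,s]}$ (cofinality of the $I_j$ among all closed subintervals is immediate) are dense in each $M_j$, and since every $M_{[t,s]}$ receives a surjection from some $M_j$ via the transition isomorphisms, density propagates to all $M_{[t,s]}$. For part (b), Lemma~\ref{L:Kiehl calc}(b) gives $R^1\varprojlim_j M_j = 0$; and $R^1\varprojlim$ over the cofinal subsystem $\{I_j\}$ agrees with $R^1\varprojlim$ over the full directed system of closed subintervals (cofinal subsystems compute the same derived limit), so $R^1\varprojlim_{[t,s]} M_{[t,s]} = 0$ as claimed.

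The main obstacle, and the only step requiring genuine care, is verifying that the transition maps in the pseudocoherent sheaf are bounded surjective maps \emph{of the completed form} $M_{j+1}\widehat{\otimes}_{A_{j+1}} A_j \to M_j$ — i.e., checking the completeness of $M_{j+1}\otimes_{A_{j+1}} A_j$ so that Lemma~\ref{L:Kiehl calc} applies verbatim. This is where pseudoflatness of $A_{j+1}\to A_j$ (Lemma~\ref{L:weak flatness imperfect Robba}) is essential, combined with Corollary~\ref{C:topologically 3-pseudocoherent} to get completeness of $2$-pseudocoherent modules over open mapping rings; everything downstream is then a mechanical transcription of the quasi-Stein arguments. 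One should remark that unlike in the perfect case we cannot simply cite Theorem~\ref{T:Kiehl quasi-Stein}, since $(\bC^{[t,s]}_\psi, \bC^{[t,s],+}_\psi)$ is neither known to be stably uniform nor strongly noetherian under the bare axioms of a weakly decompleting tower — hence the need to rerun Lemma~\ref{L:Kiehl calc} directly rather than quoting its corollaries.
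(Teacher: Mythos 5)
Your overall route is the same as the paper's: the paper reduces, via Proposition~\ref{P:glueing projective modules over intervals}, to the inverse limit over a chain of intervals $[r_i,r]$ with $r_i \to 0$ and then argues by analogy with Lemma~\ref{L:Kiehl calc}, which is exactly your plan. Your preparatory step is also fine, though slightly more than is needed: to apply Lemma~\ref{L:Kiehl calc} one only needs each $M_{I_j}$ to be a Banach module and a bounded surjection $M_{I_{j+1}} \widehat{\otimes}_{A_{j+1}} A_j \to M_{I_j}$; since $M_{I_j}$ is $2$-pseudocoherent, hence complete for its natural topology by Corollary~\ref{C:topologically 3-pseudocoherent}, the continuous isomorphism $M_{I_{j+1}} \otimes_{A_{j+1}} A_j \to M_{I_j}$ automatically factors through the completion, so the discussion of pseudoflatness is not essential (though harmless).

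There is, however, one incorrect step: the propagation of density in part (a) from the chain to an arbitrary $M_{[t,s]}$. You assert that ``every $M_{[t,s]}$ receives a surjection from some $M_j$ via the transition isomorphisms,'' but the transition map out of $M_j = M_{[s_j,r]}$ is the base-extension map $M_j \to M_j \otimes_{\bC^{[s_j,r]}_\psi} \bC^{[t,s]}_\psi \cong M_{[t,s]}$, which is not surjective (nor even obviously dense) --- its image is merely a generating subgroup. The repair is short but uses an extra input: by Lemma~\ref{L:imperfect Robba interval localization}, the restriction map $\bC^{[s_j,r]}_\psi \to \bC^{[t,s]}_\psi$ has dense image (the smaller-interval ring is a quotient of a Tate algebra over the larger one with the variable sent to $\varpi$ or $\varpi^{-1}$, both already in the image). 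Since multiplication on the Banach module $M_{[t,s]}$ is jointly continuous, the closure of the image of $M_j$ is then a $\bC^{[t,s]}_\psi$-submodule containing a generating set, hence all of $M_{[t,s]}$, and density of the global sections in $M_j$ (from the Kiehl-type argument on the chain) pushes forward to density in $M_{[t,s]}$. With this substitution --- or by invoking Proposition~\ref{P:glueing projective modules over intervals} as the paper does to mediate between the full system of intervals and the chain --- your argument goes through; the cofinality argument you give for part (b) is fine as stated.
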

\begin{proof}
In the noetherian case, this follows from Lemma~\ref{L:Kiehl calc}; the general case follows by a similar argument.
\end{proof}

\begin{lemma} \label{L:uniformly pseudocoherent}
Suppose that $\psi$ is noetherian.
Let $\calF$ be  a pseudocoherent sheaf over $\bC^r_\psi$ for some $r \in (0,r_0]$. Then the following conditions are equivalent.
\begin{enumerate}
\item[(a)]
The global sections of $\calF$ form a pseudocoherent $\bC^r_\psi$-module.
\item[(b)]
There exists a sequence $m_0, m_1, \dots$ of integers such that for all $0 < t \leq s \leq r$, $\calF(\Spa(\bC^{[t,s]}_\psi, \bC^{[t,s],+}_\psi))$ admits a projective resolution $\cdots \to M_1 \to M_0 \to 0$ such that $M_i$ has rank at most $m_i$ for all $i$.
\item[(c)]
There exist a locally finite covering of $(0,r]$ by closed subintervals $\{I_j\}_j$ 
and a sequence $m_0, m_1, \dots$ of integers such that for all $j$,
$\calF(\bC^{I_j}_\psi)$ admits a projective resolution $\cdots \to M_1 \to M_0 \to 0$ such that $M_i$ has rank at most $m_i$ for all $i$.
\end{enumerate}
\end{lemma}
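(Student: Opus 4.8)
The plan is to prove Lemma~\ref{L:uniformly pseudocoherent} by establishing the cycle of implications (a) $\Rightarrow$ (b) $\Rightarrow$ (c) $\Rightarrow$ (a), mirroring the structure of Remark~\ref{R:uniformly pseudocoherent} and Proposition~\ref{P:Stein space uniform covering} from the quasi-Stein discussion, but now working directly with the formal bundle/sheaf language of Definition~\ref{D:artificial vector bundle} since we cannot assume the rings $\bC^{[t,s]}_\psi$ are stably uniform or strongly noetherian.

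First I would prove (a) $\Rightarrow$ (b). Given $M$ pseudocoherent over $\bC^r_\psi$, choose a projective resolution $\cdots \to P_1 \to P_0 \to M \to 0$ by finite free $\bC^r_\psi$-modules, say with $P_i$ of rank $m_i$. By Corollary~\ref{C:open Robba weak flatness}, base extension along $\bC^r_\psi \to \bC^{[s,r]}_\psi$ carries this to a projective resolution of $M \otimes_{\bC^r_\psi} \bC^{[s,r]}_\psi$; composing with the further localizations $\bC^{[s,r]}_\psi \to \bC^{[t,s']}_\psi$ (pseudoflat by Lemma~\ref{L:weak flatness imperfect Robba}) gives projective resolutions on every subinterval with the same rank bounds $m_i$, which are independent of $[t,s]$. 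The implication (b) $\Rightarrow$ (c) is trivial: take $\{I_j\}$ to be any locally finite covering of $(0,r]$ by closed subintervals.

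The substance is (c) $\Rightarrow$ (a), which I expect to be the main obstacle. Here the strategy is to construct $M$ as the module of global sections $\varprojlim M_{[t,s]}$ and to show it is pseudocoherent by producing, inductively in the homological degree, finite free global presentations. The key input is Lemma~\ref{L:global sections are dense}: global sections are dense in each $M_{[t,s]}$ and $R^1\varprojlim = 0$. Using an $m$-uniform covering of $(0,r]$ refining $\{I_j\}$ (an explicit one exists, e.g. by the interval analogue of the example preceding Lemma~\ref{L:coloring}, with $m=2$), one runs the coloring argument of Lemma~\ref{L:coloring} and then the approximation-and-patching construction of Proposition~\ref{P:Stein space uniform covering}: on each color class, whose members are pairwise disjoint subintervals, one uses density of global sections to manufacture elements $x_j \in \bC^r_\psi$ that are topologically nilpotent on a growing union of intervals and invertible-topologically-nilpotent on the next, then assembles convergent series $\bv_t = \lim_j \bv_{j,t}$ whose images generate $\calF$ on each interval of that class. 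Combining the finitely many color classes and applying the analogue of Corollary~\ref{C:local generators} (which follows from Lemma~\ref{L:global sections are dense}(a)), one sees $\calF$ is generated by finitely many global sections. Taking the kernel $\calG$ of the map from the associated finite free sheaf, one checks via Proposition~\ref{P:glueing projective modules over intervals}(a) and Lemma~\ref{L:pseudocoherent 2 of 3} that $\calG$ is again a pseudocoherent sheaf satisfying (c) (with rank bounds shifted down by one), so one iterates: after $i$ steps one has built $\cdots \to F_1 \to F_0 \to \calF \to 0$ with $F_i$ finite free global, proving $M = \calF(\bC^r_\psi)$ is pseudocoherent and that $\tilde M \cong \calF$.

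The delicate points to handle carefully will be: (i) verifying the uniform rank bound in (c) is genuinely needed and used — it is what guarantees that the number of generators $n$ can be chosen independently of the interval, hence that the patching produces finitely many global generators rather than infinitely many; (ii) checking that the $x_j$ obtained by approximating via Lemma~\ref{L:global sections are dense}(a) can be taken in $\bC^r_\psi$ itself (not merely in some $\bC^{[t,s]}_\psi$), which requires the density statement at the level of the quasi-Stein-type limit; and (iii) confirming that the Fr\'echet topology on the $\bC^{[t,s]}_\psi$ behaves well enough for the convergence argument of Proposition~\ref{P:Stein space uniform covering} to go through, i.e.\ fixing Banach norms on each $\calF(\bC^{I_j}_\psi)$ with no compatibility and controlling images under restriction. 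None of these is conceptually new relative to \S\ref{subsec:quasi-Stein}, but the bookkeeping must be redone in the present setting where we only have pseudoflatness (Lemma~\ref{L:weak flatness imperfect Robba}) rather than flatness.
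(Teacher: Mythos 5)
Your proposal follows essentially the same route as the paper: (a)$\Rightarrow$(b) via Corollary~\ref{C:open Robba weak flatness} (together with the pseudoflatness of further interval localizations), (b)$\Rightarrow$(c) trivially, and (c)$\Rightarrow$(a) by rerunning the uniform-covering/patching argument of Proposition~\ref{P:Stein space uniform covering} with Lemma~\ref{L:global sections are dense} supplying density of global sections and the vanishing of $R^1\varprojlim$ in place of Theorem~\ref{T:Kiehl quasi-Stein}. The extra bookkeeping you flag (uniform rank bounds, approximation in $\bC^r_\psi$, choice of norms) is exactly the content the paper leaves to the phrase ``by analogy,'' so no genuinely different ideas or gaps are involved.
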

\begin{proof}
By Remark~\ref{R:pseudocoherent sheaf C} and Lemma~\ref{L:global sections C}, we may apply 
Corollary~\ref{C:quasi-Stein flat} to deduce that $\bC^r_\psi \to \bC^{[t,s]}_\psi$ is flat; from this plus Corollary~\ref{C:quasi-Stein tensor}, we deduce the implication from (a) to (b).
The implication from (b) to (c) is trivial. The implication from (c) to (a)
follows from Proposition~\ref{P:Stein space uniform covering}.
\end{proof}

\begin{defn}
For $r \in (0,r_0]$,
a \emph{uniformly pseudocoherent sheaf} over  $\bC^r_\psi$
is a pseudocoherent sheaf over $\bC^{r}_\psi$ satisfying any of the equivalent conditions of Lemma~\ref{L:uniformly pseudocoherent}.
\end{defn}

\begin{lemma} \label{L:Robba base extension pseudocoherent}
Suppose that $\psi$ is noetherian.
For $r \in (0,r_0]$,
base extension defines an equivalence of categories between pseudocoherent modules over $\bC^r_\psi$ 
and uniformly pseudocoherent 
sheaves over $\bC^r_\psi$, with the quasi-inverse being the global sections functor.
\end{lemma}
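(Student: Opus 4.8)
The statement to prove (Lemma~\ref{L:Robba base extension pseudocoherent}) asserts that, for $r \in (0,r_0]$, the base extension functor $M \mapsto \tilde{M}$ is an equivalence between pseudocoherent $\bC^r_\psi$-modules and uniformly pseudocoherent sheaves over $\bC^r_\psi$. The key inputs are already in place: Lemma~\ref{L:uniformly pseudocoherent} identifies the essential image, Lemma~\ref{L:global sections are dense} gives density of global sections and vanishing of $R^1\varprojlim$, Corollary~\ref{C:open Robba weak flatness} says base extension to a closed interval preserves pseudocoherence and commutes with projective resolutions, and Proposition~\ref{P:glueing projective modules over intervals} provides \v{C}ech exactness and effective descent over interval coverings. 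So the proof should be a fairly direct assembly, running in parallel with the proof of Theorem~\ref{T:Kiehl quasi-Stein} and Theorem~\ref{T:refined Kiehl}.

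First I would check \emph{well-definedness of the functor}: given a pseudocoherent $\bC^r_\psi$-module $M$, one forms $M_{[t,s]} = M \otimes_{\bC^r_\psi} \bC^{[t,s]}_\psi$ with the obvious transition isomorphisms; by Corollary~\ref{C:open Robba weak flatness} this is a pseudocoherent sheaf, and condition (b) of Lemma~\ref{L:uniformly pseudocoherent} holds because a single projective resolution of $M$ base-extends to a resolution of every $M_{[t,s]}$ with uniformly bounded ranks, so $\tilde{M}$ is uniformly pseudocoherent. Next, \emph{full faithfulness}: for a morphism $\tilde{M} \to \tilde{N}$ of sheaves, taking global sections gives $M = \varprojlim M_{[t,s]} \to \varprojlim N_{[t,s]} = N$ (using that $\varprojlim$ over a cofinal sequence $[r_i,r]$ recovers $M$, $N$ respectively — this is where density of global sections enters, exactly as in Corollary~\ref{C:quasi-Stein tensor}: combine the surjectivity from Lemma~\ref{L:global sections are dense}(a) with the vanishing of $R^1\varprojlim$ from part (b), plus the five lemma applied to a finite free presentation). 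Conversely a morphism $M \to N$ of modules base-extends to a morphism of sheaves, and these operations are mutually inverse because $M \otimes_{\bC^r_\psi} \bC^{[t,s]}_\psi \cong (\varprojlim M_{[t',s']}) \otimes_{\bC^r_\psi} \bC^{[t,s]}_\psi \to M_{[t,s]}$ is an isomorphism — again the Corollary~\ref{C:quasi-Stein tensor} argument, using Lemma~\ref{L:weak flatness imperfect Robba} for pseudoflatness of the interval localizations.

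For \emph{essential surjectivity}: given a uniformly pseudocoherent sheaf $\calF$, condition (a) of Lemma~\ref{L:uniformly pseudocoherent} says it is already of the form $\tilde{M}$ for some pseudocoherent $M$ over $\bC^r_\psi$ — but I should be careful: the content is rather that the \emph{equivalent characterizations} (a)/(b)/(c) hold, and (c)$\Rightarrow$(a) in that lemma is precisely the construction of $M$ by a Proposition~\ref{P:Stein space uniform covering}-style patching of finitely many global sections, iterated to build a projective resolution, using Lemma~\ref{L:global sections are dense} in place of Theorem~\ref{T:Kiehl quasi-Stein}(a). So essential surjectivity is immediate from that implication. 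Finally I would record that the module $M$ so produced satisfies $\tilde{M} \cong \calF$ by reconstructing the sheaf from $M$ via base extension and invoking full faithfulness (or the five-lemma comparison over each interval as in the proof of Lemma~\ref{L:refined Kiehl}).

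\textbf{Main obstacle.} The only non-formal point is the full-faithfulness/inverse-of-base-extension step, i.e.\ showing $M \otimes_{\bC^r_\psi}\bC^{[t,s]}_\psi \xrightarrow{\sim} M_{[t,s]}$ when $M$ is only pseudocoherent (not finite projective). This is exactly the obstruction handled in Corollary~\ref{C:quasi-Stein tensor}: one presents $M$ by $F \to M$ with $F$ finite free so that $F \otimes \bC^{[t,s]}_\psi \to M_{[t,s]}$ is surjective (possible by Lemma~\ref{L:global sections are dense}(a)), takes kernels $N_{[t,s]}$, shows the $N_{[t,s]}$ themselves form a pseudocoherent sheaf with the transition maps being isomorphisms (here Corollary~\ref{C:open Robba weak flatness}/Lemma~\ref{L:weak flatness imperfect Robba} replace Lemma~\ref{L:rational localization is flat}), then applies Lemma~\ref{L:global sections are dense} again to see $\varprojlim N_{[t,s]} \otimes \bC^{[t,s]}_\psi \to N_{[t,s]}$ is surjective, and concludes by the five lemma. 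The subtlety — and the reason one cannot simply quote Theorem~\ref{T:Kiehl quasi-Stein} — is that the pairs $(\bC^{[t,s]}_\psi, \bC^{[t,s],+}_\psi)$ are not known \emph{a priori} to be stably uniform or strongly noetherian, so one must run the quasi-Stein bookkeeping by hand using the pseudoflatness lemmas of this subsection; but no genuinely new idea is needed beyond what is already assembled above.
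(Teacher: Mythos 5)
Your proposal is correct and follows essentially the same route as the paper: the functor exists by the pseudoflatness of the interval localizations, essential surjectivity is exactly Lemma~\ref{L:uniformly pseudocoherent}, and your full-faithfulness step (density of global sections, vanishing of $R^1\varprojlim$, five lemma applied to a finite free presentation) is just a more detailed rendering of the paper's appeal to pseudoflatness together with the identity $\bigcap_{s \in (0,r_0]} \bC^{[s,r_0]}_\psi = \bC^{r_0}_\psi$. No gap.
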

\begin{proof}
The base extension functor exists and is fully faithful because
$\bC^r_\psi \to \bC^{[t,s]}_\psi$ is flat (see the proof of Lemma~\ref{L:uniformly pseudocoherent}).
It is essentially surjective by Corollary~\ref{C:quasi-Stein tensor} and Lemma~\ref{L:uniformly pseudocoherent}.

\end{proof}

\begin{lemma} \label{L:Robba base extension vector bundle}
For $r \in (0, r_0]$, let $(M_{[t,s]})$ be a vector bundle over $\bC^r_\psi$ which is uniformly pseudocoherent.
Then $M = \varprojlim_{s>0} M_{[s,r]}$
is a finite projective $\bC^r_\psi$-module.
\end{lemma}
\begin{proof}
By (the proof of) Proposition~\ref{P:Stein space uniform covering}, $M$ is finitely generated.
We may thus emulate the proof of Corollary~\ref{C:quasi-Stein finitely generated} to conclude.
\end{proof}

\subsection{\texorpdfstring{$\varphi$}{phi}-modules}
\label{subsec:phi-modules}

We next extend the concepts of projective and pseudocoherent $\varphi$-modules to the case of imperfect period rings.

\begin{hypothesis} \label{H:weakly decompleting Frobenius}
Throughout \S\ref{subsec:phi-modules}, assume that the tower $\psi$ is weakly decompleting, let $a$ be a positive integer, put $\varphi = \varphi_\varpi^a$,
and put $q = p^{ah}$.
\end{hypothesis}

\begin{defn} \label{D:imperfect phi-modules}
Let $*_\psi$ be one of the perfect or imperfect period rings of Definition~\ref{D:period rings} on which $\varphi$ acts. 
By a \emph{projective (resp.\ pseudocoherent, fpd) $\varphi$-module} 
over $*_\psi$, we will mean
a finite projective (resp.\ \'etale-stably pseudocoherent, \'etale-stably fpd) module $M$ over $*_\psi$ equipped with an isomorphism $\varphi^* M \cong M$, subject to topological conditions as listed below.

[Moved from 5.4.10].
For $r>0$, a \emph{projective (resp.\ pseudocoherent, fpd) $\varphi$-module} over $\bC^{r}_\psi$
is a finite projective (resp.\ pseudocoherent, fpd) module $M$ over $\bC^{r}_\psi$ equipped with an isomorphism
$\varphi^* M \cong M \otimes_{\bC^{r}_\psi} \bC^{r/q}_\psi$
of modules over $\bC^{r/q}_\psi$, subject to topological conditions as listed below. We similarly define projective (resp.\ pseudocoherent, fpd) $\varphi$-modules over $\breve{\bC}^{r}_\psi$ and $\tilde{\bC}_\psi$.

For $r>0$ and $s \in (0,r/q]$, a \emph{projective (resp.\ pseudocoherent, fpd) $\varphi$-module} over $\bC^{[s,r]}_\psi$
is a finite projective  (resp.\ pseudocoherent, fpd) module $M$ over $\bC^{[s,r]}_\psi$ equipped with an isomorphism
$\varphi^* M  \otimes_{\bC^{[s/q,r/q]}_\psi} \bC^{[s,r/q]}_\psi \cong M \otimes_{\bC^{[s,r]}_\psi} \bC^{[s,r/q]}_\psi$
of modules over $\bC^{[s,r/q]}_\psi$, subject to topological conditions as listed below. We similarly define projective (resp.\ pseudocoherent, fpd) $\varphi$-modules over $\breve{\bC}^{[s,r]}_\psi$ and $\tilde{\bC}^{[s,r]}_\psi$.

As in Definition~\ref{D:pseudocoherent phi-module}, we also impose topological restrictions as follows.
\begin{itemize}
\item
For $* = \bC^{[s,r]}$ (resp.\ $* = \tilde{\bC}^{[s,r]}$), we insist that the module be \'etale-stably pseudocoherent.
(Recall that this condition is empty when $\psi$ is noetherian and $* = \bC^{[s,r]}$.)
\item
For $* = \bC^r$ (resp.\ $* = \tilde{\bC}^r$), we insist that  the module be complete for its natural topology and,
for every closed interval $I \subseteq [0,r)$, 
the base extension to $\bC^{[s,r]}$ (resp.\ $\tilde{\bC}^{[s,r]}$) satisfies the previous condition.
\item
For $* = \bC$ (resp.\ $* = \tilde{\bC}$), we insist that the module be complete for its natural topology and,
for some $r>0$, arises by base extension from a module over $\bC^r$ (resp.\ $\tilde{\bC}^r$)
satisfying the previous condition.
\end{itemize}

As usual, we omit the adjective \emph{projective} except in cases of emphasis.
\end{defn}

\begin{remark} \label{R:imperfect pseudocoherent}
Over a ring on which $\varphi$ is not bijective, the concept of a pseudocoherent $\varphi$-module requires some care. For example, if $R_\psi = K\{T\}$ for some perfect analytic field $K$, then the trivial $\varphi$-module structure on $R_\psi$ induces a bijective action of $\varphi$ on the pseudocoherent $R_\psi$-module $M = K\{T\}/(T)$, but
the map $\varphi^* M \to M$ is not injective.
\end{remark}

\begin{remark} \label{R:breve phi-modules}
If $*_\psi$ is a symbol with no accent on top, then base extension of $\varphi$-modules from $*_\psi$ to $\breve{*}_\psi$ is an equivalence of categories. The same is true for pseudocoherent $\varphi$-modules provided that $\breve{*}_\psi$ is pseudoflat over $*_\psi$; see \S\ref{subsec:Frobenius splittings}.
\end{remark}

\begin{lemma} \label{L:DM}
Take $R$ to be one of $\bA_\psi, \hat{\bA}_\psi, \tilde{\bA}_\psi$.
Let $M$ be a finite projective module over $R/(p^m)$
for some positive integer $m$ equipped with a semilinear $\varphi$-action.
Then there exists a
faithfully finite \'etale $R$-algebra $U$
such that $M \otimes_{R/(p^m)} U/(p^m)$ admits a basis fixed by $\varphi$.
More precisely, if $l<m$ is another positive integer
and $M \otimes_{R/(p^m)} R/(p^l)$ admits a $\varphi$-fixed basis, then $U$ can be chosen so that
this basis lifts
to a $\varphi$-fixed basis of $M \otimes_{R/(p^m)} U/(p^m)$.
\end{lemma}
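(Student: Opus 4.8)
The plan is to reduce the statement to an Artin--Schreier descent computation over the residue ring $\breve{R}_\psi$, using the henselian property of $R$ with respect to $(p)$ to lift the resulting finite \'etale extension back up. First I would treat the base case $m=1$. Here $M$ is a finite projective $\breve{R}_\psi$-module (since $R/(p) \cong \breve{R}_\psi$ in the cases $R = \hat{\bA}_\psi, \tilde{\bA}_\psi$, and $R/(p) = R_\psi$ for $R = \bA_\psi$ -- one needs a uniform statement, so I would work with $\breve{R}_\psi$ throughout, invoking Theorem~\ref{T:big etale} at the end to transfer back to $\bA_\psi$ if needed) equipped with a semilinear $\varphi$-action, i.e., a $\varphi$-module over $\breve{R}_\psi$ in the sense of the $p$-adic Hodge theory dictionary. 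The existence of a faithfully finite \'etale $\breve{R}_\psi$-algebra trivializing $M$ is exactly \cite[Proposition~3.2.7]{part1} (or its proof): Zariski-locally $M$ is free, one writes $\varphi$ in matrix form, and solving $\varphi(B) U = B$ for the change-of-basis matrix $U$ amounts to adjoining roots of an \'etale system, which is the content of the nonabelian Artin--Schreier description of $\FEt$ over a ring of characteristic $p$. One must check that this is \emph{faithfully} finite \'etale, which follows because the equation system has a solution after any residue-field extension.

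Next I would carry out the induction from $l$ to $l+1$ (for $l < m$), assuming $M \otimes_{R/(p^m)} R/(p^l)$ already has a $\varphi$-fixed basis $\overline{\be}_1,\dots,\overline{\be}_n$ over $R/(p^l)$. Lift these arbitrarily to elements $\be_1,\dots,\be_n$ of $M \otimes_{R/(p^m)} R/(p^{l+1})$; they form a basis (by Nakayama, since $p$ is topologically nilpotent and the module is finite projective), and $\varphi(\be_i) = \sum_j C_{ji} \be_j$ for a matrix $C$ over $R/(p^{l+1})$ with $C \equiv 1 \pmod{p^l}$. Write $C = 1 + p^l N$ with $N$ a matrix over $R/(p)\cong \breve{R}_\psi$. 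Seeking a new basis $\be'_i = \sum_j (1 + p^l P)_{ji}\be_j$ fixed by $\varphi$, the cocycle condition unwinds modulo $p^{l+1}$ to the equation $P - \varphi(P) = N$ in matrices over $\breve{R}_\psi$ (here one uses that $\varphi$ is $\sigma$-semilinear with $\sigma$ the $q$-power Frobenius, and $p^l \cdot p^l = 0$ in $R/(p^{l+1})$ when $2l \ge l+1$, i.e. $l\ge 1$, so the computation is genuinely linear -- this is the standard trick). This is again an Artin--Schreier system: its solution requires passing to a faithfully finite \'etale cover $V$ of $\breve{R}_\psi$ (entrywise adjoining roots of $X^q - X = $ appropriate linear combinations), and by the henselian property of $R$ with respect to $(p)$ together with \cite[Theorem~1.2.8]{part1}, $V$ lifts uniquely to a faithfully finite \'etale $R$-algebra $U'$; composing with the cover from stage $l$ and clearing denominators gives the desired $U$.

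The main obstacle I anticipate is bookkeeping the faithfulness and the compatibility of covers across the induction: one wants a \emph{single} faithfully finite \'etale $U$ that works at level $m$, refining the one at level $l$, and one must make sure the solvability of each Artin--Schreier system is preserved under all prior base extensions (it is, since \'etale base change is flat and the systems remain \'etale, but this needs to be said). A secondary subtlety is the precise identification of $R/(p)$: for $R = \bA_\psi$ this is $R_\psi$, not $\breve{R}_\psi$, and $R_\psi$ is only known to have the \emph{same} finite \'etale theory as $\breve{R}_\psi$ and $\tilde{R}_\psi$ via Theorem~\ref{T:big etale}; so the cleanest route is to prove the lemma first for $R = \hat{\bA}_\psi$ or $\tilde{\bA}_\psi$ and then descend along $\bA_\psi \to \hat{\bA}_\psi$, or simply to phrase all the Artin--Schreier solving over $\breve{R}_\psi = R_\psi^{\perf}$ and pull the resulting cover back. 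Once these compatibility points are handled, the proof is a routine successive-approximation argument with no analytic estimates required, since everything takes place in $p$-power-torsion quotients.
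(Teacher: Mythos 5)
Your proposal is correct and is essentially the argument the paper intends: the paper's proof is simply ``As in \cite[Lemma~3.2.6]{part1}'', and that proof is exactly this d\'evissage --- trivialize modulo $p$ via the (nonabelian) Artin--Schreier description of $\varphi$-modules in characteristic $p$, then lift a $\varphi$-fixed basis one $p$-power at a time by solving $P - \varphi(P) = N$ over the mod-$p$ reduction and lifting the resulting faithfully finite \'etale cover through the henselian pair $(R,(p))$ (with the identification of $R/(p)$ and the transfer of finite \'etale covers handled by Theorem~\ref{T:big etale}, as you note). Your bookkeeping of the induction and of faithfulness is consistent with that reference, so no gap.
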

\begin{proof}
As in \cite[Lemma~3.2.6]{part1}.
\end{proof}
\begin{cor} \label{C:fully faithful to extended}
The base change functors among the categories of $\varphi$-modules over the rings
\[
 \bA_\psi \to \breve{\bA}_\psi \to \hat{\bA}_\psi  \to
 \tilde{\bA}_\psi
\]
are exact tensor equivalences.
\end{cor}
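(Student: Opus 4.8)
\textbf{Proof strategy for Corollary~\ref{C:fully faithful to extended}.}

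The plan is to prove the four-term chain of equivalences by handling the pieces separately and then composing. First I would treat the equivalence $\varphi\text{-}\mathrm{Mod}(\breve{\bA}_\psi) \simeq \varphi\text{-}\mathrm{Mod}(\hat{\bA}_\psi)$. Since $\hat{\bA}_\psi$ is the $\varpi$-adic completion of $\breve{\bA}_\psi$ and both rings are $\varpi$-adically separated and complete (the former by construction of the completion, the latter because it is a completion) with $\hat{\bA}_\psi/(\varpi) \cong \breve{R}_\psi \cong \breve{\bA}_\psi/(\varpi)$, one reduces modulo $\varpi^m$ for each $m$ and takes inverse limits; here the key input is that $\breve{\bA}_\psi \to \hat{\bA}_\psi$ is a flat (indeed faithfully flat) map of $\varpi$-adically complete rings with the same reduction mod $\varpi$, so base change of finite projective modules with semilinear $\varphi$-action is an equivalence. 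For the step $\bA_\psi \to \breve{\bA}_\psi$ I would invoke Remark~\ref{R:breve phi-modules}: $\breve{\bA}_\psi = \bigcup_n \varphi_\varpi^{-n}(\bA_\psi)$ is a filtered union of copies of $\bA_\psi$ transported by $\varphi_\varpi$-powers, and descending a $\varphi$-module along this union back to $\bA_\psi$ is automatic precisely because $\varphi = \varphi_\varpi^a$ already acts bijectively on $\bA_\psi$-level objects once one remembers the $\varphi$-structure (the quasi-inverse takes a $\varphi$-module over $\breve{\bA}_\psi$ to its $\varphi_\varpi^{nh}$-translates intersected appropriately). Finally, the equivalence $\hat{\bA}_\psi \to \tilde{\bA}_\psi$ should follow by the same completion argument as in the first step, using that $\tilde{\bA}_\psi$ is the $\varpi$-adic completion of $\tilde{R}_\psi$-Witt vectors while $\hat{\bA}_\psi/(\varpi) = \breve{R}_\psi = R_\psi^{\perf}$ is dense in $\tilde{R}_\psi = \tilde{\bA}_\psi/(\varpi)$ (weak decompleteness, Definition~\ref{D:weakly decompleting}(a)); one then applies Lemma~\ref{L:DM} to handle the passage from dense subring to completion level by level.

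The organizing device throughout is Lemma~\ref{L:DM} together with the henselian-type arguments already used in Theorem~\ref{T:big etale}: over each of $\bA_\psi, \hat{\bA}_\psi, \tilde{\bA}_\psi$ a $\varphi$-module modulo $\varpi^m$ becomes trivial after a faithfully finite \'etale base change, and such base changes are controlled compatibly across the three rings by Theorem~\ref{T:big etale} (which already identifies their categories of finite \'etale algebras, with or without $\varphi_\varpi^{-1}$-structure). Concretely, to show full faithfulness of, say, $\bA_\psi \to \tilde{\bA}_\psi$ one reduces to computing $\Hom_\varphi(\mathbf{1}, M) = M^{\varphi}$ for $M$ a $\varphi$-module over the target that descends, and one checks that the $\varphi$-invariants do not grow under the ring extension by a mod-$\varpi^m$ dévissage plus the étale descent; essential surjectivity is then the statement that a $\varphi$-module over $\tilde{\bA}_\psi$ trivialized over a faithfully finite \'etale cover has its descent datum already defined over the corresponding cover of $\bA_\psi$, which is exactly the content of Theorem~\ref{T:big etale} applied to the cover.

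I expect the main obstacle to be the essential surjectivity at the integral (non-inverted-$\varpi$) level, specifically lifting a $\varphi$-module from $\breve{R}_\psi = R_\psi^{\perf}$ or from a torsion quotient up through the $\varpi$-adic tower without losing finite projectivity. The issue is that $\bA_\psi$ and its relatives are not $\varpi$-adically noetherian in general, so one cannot blindly invoke standard deformation theory; instead one must use the optimal lifting property (Lemma~\ref{L:optimal lifts}, via $r_0$) to control the norms of successive lifts and show the limit module is again finite projective, combined with the fact that projectivity descends along the faithfully finite \'etale covers supplied by Lemma~\ref{L:DM}. Once the integral $\varpi$-power-torsion case and the finite projective case are in hand, one patches them via the short exact sequence $0 \to M[\varpi^\infty] \to M \to M/M[\varpi^\infty] \to 0$ (all terms $\varphi$-stable, pseudocoherence handled by Lemma~\ref{L:pseudocoherent 2 of 3}) and the fact that the extension class lives in a $\varphi$-cohomology group that is computed compatibly across the four rings. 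Assembling these, the composite $\bA_\psi \to \breve{\bA}_\psi \to \hat{\bA}_\psi \to \tilde{\bA}_\psi$ is an exact tensor equivalence, and exactness of each functor is immediate since each map is flat (Corollary~\ref{C:open Robba weak flatness}-type arguments, or direct verification for the completions) and the tensor structure is visibly preserved.
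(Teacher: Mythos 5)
Your outline shares some real ingredients with the paper's argument (Remark~\ref{R:breve phi-modules} for $\bA_\psi \to \breve{\bA}_\psi$, Lemma~\ref{L:DM} for trivializing torsion quotients, Theorem~\ref{T:big etale} for moving finite \'etale descent data between the rings), but the two completion steps, which carry the real content, do not work as you describe. For $\breve{\bA}_\psi \to \hat{\bA}_\psi$ you propose to reduce modulo $\varpi^m$ and take inverse limits, asserting along the way that the completion map is faithfully flat. The flatness assertion is unproved and is exactly the sort of statement that can fail for completions of non-noetherian rings (compare Remark~\ref{R:no pseudoflat completion}); more importantly, it is not needed for exactness here (short exact sequences of finite projective modules split), and the limit argument does not give essential surjectivity: taking $\varprojlim_m M/\varpi^m M$ reconstructs a module over the \emph{completion} $\hat{\bA}_\psi$, not a descent to the non-complete ring $\breve{\bA}_\psi$, so the step you call an equivalence is precisely what remains to be proved. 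For $\hat{\bA}_\psi \to \tilde{\bA}_\psi$ the situation is worse for a ``same completion argument'': the reductions modulo $\varpi$ are different rings ($R_\psi^{\perf}$ versus its completion $\tilde{R}_\psi$), so a mod-$\varpi^m$ comparison does not even get started; what is needed is descent of $\varphi$-modules along a dense, non-finite inclusion, and your appeal to Lemma~\ref{L:optimal lifts} (a statement about lifting ring elements with norm control) and to a torsion/torsion-free d\'evissage (irrelevant here, since the corollary concerns projective $\varphi$-modules) does not supply it.

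The paper avoids both pitfalls by never comparing adjacent rings: it reduces to showing that base change from each ring $R$ in the chain to $\tilde{\bA}_\psi$ is an equivalence. Full faithfulness is the statement that $\varphi$-invariants do not grow, proved by using Lemma~\ref{L:DM} to build a $\varpi$-adically completed direct limit $U$ of faithfully finite \'etale $R$-algebras over which $M$ acquires a $\varphi$-fixed basis, and then exploiting the identities $U^{\varphi} = \tilde{U}^{\varphi}$ and $\tilde{\bA}_\psi \cap U = R$ inside $\tilde{U} = U \widehat{\otimes}_R \tilde{\bA}_\psi$ (this also disposes of $\breve{\bA}_\psi$ via Remark~\ref{R:breve phi-modules}). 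Essential surjectivity is then only needed for $\bA_\psi \to \tilde{\bA}_\psi$, and it is not obtained by a limit or lifting argument at all: one invokes Theorem~\ref{T:pseudocoherent type A} to identify $\varphi$-modules over $\tilde{\bA}_\psi \otimes_{\gotho_E} \gotho_{E_a}$ with $\gotho_{E_a}$-local systems on $\Spec(\tilde{R}_\psi \otimes_{\FF_p} \FF_q)$, uses Lemma~\ref{L:DM} to factor this equivalence through $\varphi$-modules over $\bA_\psi \otimes_{\gotho_E} \gotho_{E_a}$ as in the proof of Theorem~8.5.3 of the prequel, and finally removes the coefficient extension by faithfully flat descent. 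Your closing paragraph gestures toward the descent-datum idea, but without the local-system interpretation (or an equivalent mechanism) the passage from the complete rings back to $\bA_\psi$ is not justified in your write-up.
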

\begin{proof}
It is enough to check that for each ring $R$ in the diagram, the base change functor
from $R$ to $\tilde{\bA}_\psi$ is an equivalence of categories.
To prove full faithfulness,
it suffices (as in \cite[Remark~4.3.4]{part1}) to prove that for any $\varphi$-module $M$ over $R$, any $\bv \in M \otimes_{R}
\tilde{\bA}_\psi$ fixed by $\varphi$ must belong to $M$.
For this, we may omit the case $R = \breve{\bA}_\psi$ thanks to
Remark~\ref{R:breve phi-modules}; we may thus assume that $R$ is $\varpi$-adically complete.
Apply Lemma~\ref{L:DM} to construct
a $R$-algebra $U$ which is the $\varpi$-adic completed direct limit of faithfully finite \'etale
$R$-subalgebras, such that $M \otimes_{R} U$ admits a $\varphi$-fixed basis.
Put $\tilde{U} = U \widehat{\otimes}_{R} \tilde{\bA}_\psi$,
so that we may view $\bv$ as an element of $M \otimes_{R} \tilde{U}$.
By writing $\bv$ in terms of a $\varphi$-fixed basis of $M \otimes_{R} U$
and noting that $U^{\varphi} = \tilde{U}^{\varphi}$,
we deduce that $\bv \in M \otimes_{R} U$. Since within $\tilde{U}$
we have $\tilde{\bA}_\psi \cap U = R$, we deduce that $\bv \in M$ as desired.

To prove essential surjectivity, it is now enough to check this for $\bA_\psi \to \tilde{\bA}_\psi$. From Theorem~\ref{T:pseudocoherent type A}, we obtain an equivalence of categories between $\gotho_{E_a}$-local systems on $\Spec (\tilde{R}_\psi \otimes_{\FF_p} \FF_{q})$
and $\varphi$-modules over $\tilde{\bA}_\psi \otimes_{\gotho_E} \gotho_{E_a}$.
Using Lemma~\ref{L:DM}, we may imitate the construction
in the proof of \cite[Theorem~8.5.3]{part1} to factor this functor through
the category of $\varphi$-modules over $\bA_\psi \otimes_{\gotho_E} \gotho_{E_a}$.
We then complete the proof by faithfully flat descent.
\end{proof}

\begin{defn}
Choose $R$ from among $\bB_\psi, \bB^\dagger_\psi, \bC_\psi, \breve{\bB}_\psi, \breve{\bB}^\dagger_\psi, \breve{\bC}_\psi$, then define $R_0$ to be $\bA_\psi,
\bA^\dagger_\psi,
\bA^{\dagger}_{\psi}, \breve{\bA}_\psi, \breve{\bA}^\dagger_\psi, \breve{\bA}^\dagger_\psi$, respectively.
We say a $\varphi$-module $M$ over $R$ is \emph{globally \'etale} if it arises by base extension from $R_0$. If instead this holds locally with respect to some rational covering of $\Spa(A,A^+)$ (whose terms inherit the weakly decompleting property via Proposition~\ref{P:weakly decompleting tower persistence}), we say that $M$ is \emph{\'etale}. One can also define \emph{globally pure} and \emph{pure} $\varphi$-modules by analogy with Definition~\ref{D:pure phi-module}, but we will not use them here.
\end{defn}

\begin{prop} \label{P:fully faithful imperfect}
The following functors are fully faithful.
\begin{enumerate}
\item[(a)]
The base change functor from $\varphi$-modules over $\bA_\psi^\dagger$
(resp.\ $\breve{\bA}_\psi^\dagger$)
 to $\varphi$-modules
over $\bA_\psi$ (resp.\ $\breve{\bA}_\psi$).
\item[(b)]
The base change functor from globally \'etale $\varphi$-modules over $\bB_\psi^\dagger$ (resp.\ $\breve{\bB}_\psi^\dagger$)
to $\varphi$-modules over
$\bC_\psi$ (resp.\ $\breve{\bC}_\psi$).
\end{enumerate}
\end{prop}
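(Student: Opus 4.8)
The plan is to reduce both parts to the statement that a $\varphi$-invariant element of the bigger ring automatically lies in the smaller ring, which is the standard mechanism (compare \cite[Remark~4.3.4]{part1} and the proof of Corollary~\ref{C:fully faithful to extended}). For part (a), given a $\varphi$-module $M$ over $\bA_\psi^\dagger$ (resp.\ $\breve{\bA}_\psi^\dagger$), full faithfulness of the base change functor to $\varphi$-modules over $\bA_\psi$ (resp.\ $\breve{\bA}_\psi$) amounts to showing that for any such $M$, the natural map
\[
M^{\varphi} \to (M \otimes_{\bA_\psi^\dagger} \bA_\psi)^{\varphi}
\]
is a bijection, and similarly with internal Homs, which reduce to this case by the usual trick of passing to $M^\dual \otimes N$. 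The key inputs are: first, that $\bA_\psi^\dagger$ and $\bA_\psi$ (resp.\ their $\breve{}$ analogues) are henselian with respect to $(\varpi)$, so by Lemma~\ref{L:DM} (or its $\varphi^{-1}$-variant, after noting $\varphi$ is not invertible on these rings so one works with the action as stated) we may pass to a faithfully finite \'etale cover over which $M$ acquires a $\varphi$-fixed basis modulo each power of $\varpi$, hence (by $\varpi$-adic completeness in the completed case, or by a direct convergence argument using the Fr\'echet structure in the overconvergent case) a genuine $\varphi$-fixed basis; and second, the ``reality check'' intersection statement $\tilde{\bA}_\psi^{\dagger} \cap \bA_\psi = \bA_\psi^{\dagger}$ inside $\tilde{\bA}_\psi$, which is Lemma~\ref{L:intersect dagger} (together with Theorem~\ref{T:big etale}, which identifies the finite \'etale covers of the overconvergent and non-overconvergent rings, so that the cover produced over $\bA_\psi$ descends to one over $\bA_\psi^\dagger$). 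Combining: a $\varphi$-fixed $\bv$ in $M\otimes \bA_\psi$ becomes $\varphi$-fixed in $M$ over the cover $U$, lies in $M\otimes U^\dagger$ because $U^\dagger \cap \bA_\psi = U^\dagger$-type intersections hold termwise in the tower of finite \'etale algebras, and then descends to $M\otimes \bA_\psi^\dagger$ by faithfully flat (finite \'etale) descent.

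For part (b), the argument is parallel but organized around the $\bB$/$\bC$ suite. A globally \'etale $\varphi$-module over $\bB_\psi^\dagger$ (resp.\ $\breve{\bB}_\psi^\dagger$) is by definition the base extension of a $\varphi$-module over $\bA_\psi^\dagger$ (resp.\ $\breve{\bA}_\psi^\dagger$), so after inverting $\varpi$ and applying part (a) we already control morphisms at the level of $\bB_\psi$ and $\breve{\bB}_\psi$; the remaining task is to pass from $\bB_\psi^\dagger$ to $\bC_\psi$ (resp.\ $\breve{\bB}_\psi^\dagger$ to $\breve{\bC}_\psi$), i.e.\ to show $(M\otimes \bC_\psi)^\varphi \subseteq M\otimes \bB_\psi^\dagger$. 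Here one reduces to $M$ free with $\varphi$ acting by an invertible matrix $U$ over $\bA_\psi^\dagger$ (using global \'etaleness), and then the inclusion follows by a Frobenius-descent argument: a $\varphi$-fixed vector in $M\otimes \bC_\psi$ has, by the interval-intersection lemmas (Lemma~\ref{L:interval intersection} and Lemma~\ref{L:intersect dagger} in the imperfect setting), entries that are overconvergent on the whole punctured disc and hence lie in $\bB_\psi^\dagger$, exactly as in the perfect case treated in Lemma~\ref{L:H0 by slope} and the surrounding material. The $\breve{}$ case is identical, or one reduces to the non-$\breve{}$ case via Remark~\ref{R:breve phi-modules}. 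One should also record that both functors are compatible with tensor products and internal Homs, so that full faithfulness on Hom-sets follows from the bijectivity of $H^0_\varphi$ just established.

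The main obstacle I anticipate is the overconvergent refinement in both parts: the henselian/Artin--Schreier machinery of Lemma~\ref{L:DM} naturally produces $\varphi$-fixed bases only over $\varpi$-adically complete rings, whereas $\bA_\psi^\dagger$, $\bB_\psi^\dagger$, $\bC_\psi$ are only Fr\'echet-complete (or not even that, for $\bA_\psi^\dagger$). Controlling the overconvergence radius of the descended basis requires a quantitative version of the reality-check lemmas of \S\ref{subsec:reality checks} — specifically one needs that the finite \'etale cover $U$ can be chosen with $U^\dagger$ still an overconvergent ring of some definite radius $r$, which is precisely what Theorem~\ref{T:big etale} supplies (the equivalence $\FEt(A_\psi)\cong \varphi_\varpi^{-1}\text{-}\FEt(\bA_\psi^\dagger)$ and its radius-$r$ refinements), and then one must run the convergence estimate from Corollary~\ref{C:lift presentation} to see that the $\varphi$-fixed vector genuinely converges on the prescribed annulus rather than just formally $\varpi$-adically. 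Everything else — full faithfulness as a formal consequence of $H^0_\varphi$-bijectivity, reduction to the free case, faithfully flat descent — is routine; the crux is threading the overconvergence bookkeeping through the \'etale descent step.
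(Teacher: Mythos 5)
Your reduction of full faithfulness to the statement that a $\varphi$-invariant vector of the base-extended module already lies in the original module is the right first step (and matches the paper), as is the idea that the reality-check intersections of \S\ref{subsec:reality checks} should finish the job. The genuine gap is exactly at the point you flag as the crux. Lemma~\ref{L:DM} applies only over the $\varpi$-adically complete rings $\bA_\psi$, $\hat{\bA}_\psi$, $\tilde{\bA}_\psi$, and the cover $U$ it produces (a completed direct limit of faithfully finite \'etale $\bA_\psi$-algebras) carries no overconvergent structure: your claim that the resulting $\varphi$-fixed basis ``lies in $M\otimes U^\dagger$'' is not convergence bookkeeping that Corollary~\ref{C:lift presentation} or Theorem~\ref{T:big etale} can supply --- Theorem~\ref{T:big etale} matches up categories of finite \'etale algebras but says nothing about a $\varphi$-fixed basis of a module being defined over the overconvergent subring. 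That assertion is an overconvergent \'etale-triviality statement over the imperfect rings, of Cherbonnier--Colmez type; with only the $\varphi$-structure available there is no reason for it to hold (note the remark immediately following the proposition: base extension from $\bA^\dagger_\psi$ to $\bA_\psi$ is not essentially surjective), and in the paper the analogous descent is obtained only later, using the $\Gamma$-action (Theorem~\ref{T:perfect equivalence1}).

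The paper's proof avoids all of this by going up to the perfect rings rather than sideways through \'etale covers: given $\bv$ $\varphi$-invariant in $M \otimes_{\bA^\dagger_\psi} \bA_\psi$, extend scalars to $\tilde{\bA}_\psi$ and invoke the already-established equivalence of $\varphi$-modules over $\tilde{\bA}^\dagger_\psi$ and $\tilde{\bA}_\psi$ (Theorem~\ref{T:pseudocoherent type A}) to conclude $\bv \in M \otimes \tilde{\bA}^\dagger_\psi$; then, since $M$ is projective and $\bA_\psi \cap \tilde{\bA}^\dagger_\psi = \bA^\dagger_\psi$ inside $\tilde{\bA}_\psi$, intersect to get $\bv \in M$. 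Part (b) is identical with Corollary~\ref{C:globally etale BC} in place of Theorem~\ref{T:pseudocoherent type A} and with Lemma~\ref{L:intersect dagger} giving $\bC_\psi \cap \tilde{\bB}^\dagger_\psi = \bB^\dagger_\psi$ inside $\tilde{\bC}_\psi$. In particular your detour through $\bB_\psi$ via part (a) contributes nothing toward the target $\bC_\psi$, and your ``bounded on the whole punctured disc, hence in $\bB^\dagger_\psi$'' step would require an imperfect analogue of the perfect-ring slope estimate that the citation of Corollary~\ref{C:globally etale BC} is designed to replace. If you replace your \'etale-cover/Lemma~\ref{L:DM} step by this scalar extension to the tilde rings, the rest of your skeleton collapses to the paper's short argument.
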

\begin{proof}
Again as in \cite[Remark~4.3.4]{part1}, part (a) reduces  to checking that
any $\varphi$-invariant element $\bv$ of $M \otimes_{\bA_\psi^\dagger} \bA_\psi$ (resp.\ $M \otimes_{\breve{\bA}_\psi^\dagger} \breve{\bA}_\psi$)
belongs to $M$ itself.
By extending scalars to $\tilde{\bA}_\psi$ and applying 
Theorem~\ref{T:pseudocoherent type A},
we deduce that $\bv$ belongs to $M \otimes_{\bA_\psi^\dagger} \tilde{\bA}^{\dagger}_\psi$
(resp.\ $M \otimes_{\breve{\bA}_\psi^\dagger} \tilde{\bA}^{\dagger}_\psi$).
Since $M$ is projective and $\bA_\psi \cap \tilde{\bA}_\psi^\dagger = \bA_\psi^\dagger$ (resp.\ $\breve{\bA}_\psi \cap \tilde{\bA}_\psi^\dagger = \breve{\bA}_\psi^\dagger$) 
within $\tilde{\bA}_\psi$, we deduce that $\bv \in M$ as desired.

Similarly, part (b) reduces to checking that any $\varphi$-invariant element $\bv$ of $M \otimes_{\bB_\psi^\dagger} \bC_\psi$
(resp.\ $M \otimes_{\breve{\bB}_\psi^\dagger} \breve{\bC}_\psi$)
belongs to $M$ itself.
By extending scalars to $\tilde{\bB}_\psi^\dagger$ and applying Corollary~\ref{C:globally etale BC},
we deduce that $\bv$ belongs to $M \otimes_{\bB_\psi^\dagger} \tilde{\bB}^{\dagger}_\psi$
(resp.\ $M \otimes_{\breve{\bB}_\psi^\dagger} \tilde{\bB}^{\dagger}_\psi$).
Since $M$ is projective and $\bC_\psi \cap \tilde{\bB}_\psi^\dagger = \bB_\psi^\dagger$
(resp.\ $\breve{\bC}_\psi \cap \tilde{\bB}_\psi^\dagger = \breve{\bB}_\psi^\dagger$)
within $\tilde{\bC}_\psi$
by Lemma~\ref{L:intersect dagger}, we deduce that $\bv \in M$ as desired.
\end{proof}

\begin{remark}
In general, base extension of $\varphi$-modules from
$\bA^\dagger_\psi$ to $\bA_\psi$ is not essentially surjective, even if $\psi$ is a decompleting tower. 
For the cyclotomic tower, a simple example
appears in \cite[\S 3.3]{tsuzuki-over}.
\end{remark}

\begin{defn} \label{D:type C imperfect phi-module}
For $r > 0$, a \emph{projective (resp.\ pseudocoherent, fpd) $\varphi$-bundle} 
over $\bC^{r}_\psi$
consists of a projective (resp.\ \'etale-stably pseudocoherent, \'etale-stably fpd) $\varphi^a$-module $M_{[t,s]}$ over $\bC^{[t,s]}_\psi$ for each pair $t,s$ with $s \in (0,r], t \in (0,s/q]$, together with isomorphisms
$M_{[t,s]} \otimes_{\bC^{[t,s]}_{\psi}} \bC^{[t',s']}_{\psi} \to M_{[t',s']}$
satisfying the cocycle condition for $t \leq t' \leq s' \leq s$. 
By taking a direct 2-limit \cite[Remark~1.2.9]{part1}
over all $r > 0$, we obtain the category of \emph{projective (resp.\ pseudocoherent, fpd) $\varphi$-bundles} over $\bC_\psi$.
We similarly define projective (resp.\ pseudocoherent, fpd) $\varphi$-bundles over $\breve{\bC}^{r}_\psi$ and $\breve{\bC}_\psi$.

Thanks to Lemma~\ref{L:weak flatness imperfect Robba}, the morphisms between these rings give rise to base extension functors between the various types of $\varphi$-modules and the corresponding types of $\varphi$-bundles. As usual, we omit the adjective \emph{projective} unless it is needed for emphasis.
\end{defn}

\begin{lemma} \label{L:phi-modules to phi-bundles}
If $\psi$ is noetherian, the following exact tensor categories are equivalent.
\begin{enumerate}
\item[(a)]
The category of projective (resp.\ pseudocoherent, fpd) $\varphi$-modules over $\bC^{r_0}_\psi$.
\item[(b)]
The category of projective (resp.\ pseudocoherent, fpd)  $\varphi$-bundles over $\bC^{r_0}_\psi$.
\item[(c)]
The category of projective (resp.\ pseudocoherent, fpd) $\varphi$-modules over $\bC^{[s,r_0]}_\psi$ for any $s \in (0, r_0/q]$.
\end{enumerate}
More precisely, the functor from (a) to (b) is base extension, the functor from (b) to (a) is the global sections functor, and the functor from (b) to (c) is restriction.
For general $\psi$, the same statements hold for projective $\varphi$-modules.
\end{lemma}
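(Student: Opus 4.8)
The plan is to mirror the proof of Theorem~\ref{T:perfect generalized phi-modules}, specialized to the single-level tower rings and with the simplifications afforded by the interval glueing results of \S\ref{subsec:projective modules}. First I would establish the equivalence between (b) and (c). Given a $\varphi$-bundle $(M_{[t,s]})$, restriction to a single interval $[s,r_0]$ with $s \in (0,r_0/q]$ gives a $\varphi$-module over $\bC^{[s,r_0]}_\psi$: the $\varphi$-structure on the bundle supplies exactly the required isomorphism $\varphi^*M_{[s/q,r_0/q]} \otimes \bC^{[s,r_0/q]}_\psi \cong M_{[s,r_0]} \otimes \bC^{[s,r_0/q]}_\psi$ after identifying $M_{[s/q,r_0/q]}$ with $\varphi^*$ applied to the appropriate restriction. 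Conversely, from a $\varphi$-module $M$ over $\bC^{[s,r_0]}_\psi$ one reconstructs the bundle on all intervals $[p^{-ahn}s, p^{-ahn}r_0]$ for $n \in \ZZ$ by pulling back along powers of $\varphi$ (and its inverse, using that $\varphi$ is invertible on $\tilde{R}_\psi$ hence on the ambient perfect rings), then glues these over overlaps using Proposition~\ref{P:glueing projective modules over intervals}(a) to fill in all intermediate intervals; full faithfulness in both directions follows from Lemma~\ref{L:weak flatness imperfect Robba} together with the intersection identity $\bigcap_{s} \bC^{[s,r_0]}_\psi = \bC^{r_0}_\psi$ from Lemma~\ref{L:interval intersection} (or rather its consequences used in Lemma~\ref{L:Robba base extension pseudocoherent}).

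Next I would connect (a) with (b). Base extension from $\bC^{r_0}_\psi$ to $\bC^{[t,s]}_\psi$ preserves finite projectivity trivially, preserves pseudocoherence by Corollary~\ref{C:open Robba weak flatness} (the base extension of a projective resolution remains one), and hence preserves the fpd property; compatibility with the $\varphi$-structures is immediate, so we get a functor from (a) to (b). For the reverse direction, I take global sections: given a $\varphi$-bundle, its module $N = \varprojlim M_{[t,s]}$ over $\bC^{r_0}_\psi$ inherits a $\varphi$-action, and I must show $N$ is of the declared type and that $\tilde{N}$ recovers the bundle. Here is where the $\varphi$-structure does real work: the modules $M_{[p^{-ahn}s,p^{-ahn}r_0]}$ are all obtained from one another by pulling back along $\varphi$, so they can all be generated by the same finite number of elements and admit projective resolutions with the same rank sequence. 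This is precisely the hypothesis needed to apply the uniform-covering argument — one forms the quasi-Stein–type space (as in Remark~\ref{R:Robba reified}, or directly when the $\bC^{[t,s]}_\psi$ are stably uniform by Corollary~\ref{C:imperfect stably uniform}) with its $m$-uniform covering by the intervals $[p^{-ahn}s, p^{-ahn}r_0]$, and invokes Proposition~\ref{P:Stein space uniform covering} (pseudocoherent case) or simply Lemma~\ref{L:Robba base extension pseudocoherent}/Lemma~\ref{L:Robba base extension vector bundle} to conclude that $N$ is finitely generated and $\tilde{N}$ is the given bundle. Taking kernels of maps from free $\varphi$-bundles and iterating (using Lemma~\ref{L:pseudocoherent 2 of 3} to propagate pseudocoherence) handles the pseudocoherent and fpd cases, exactly as in the proof of Theorem~\ref{T:perfect generalized phi-modules}.

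Finally I would check that these functors are compatible with the tensor structure and are exact (exactness of base extension follows from pseudoflatness, Lemma~\ref{L:weak flatness imperfect Robba}; exactness of global sections from the vanishing $R^1\varprojlim = 0$ in Lemma~\ref{L:global sections are dense}(b) together with Lemma~\ref{L:pseudocoherent h1 vanishing}-style arguments applied levelwise), and that the round-trip composites in both directions are naturally isomorphic to the identity — the bundle-to-module-to-bundle direction by Lemma~\ref{L:Robba base extension pseudocoherent}, and the module-to-bundle-to-module direction because $\bC^{r_0}_\psi = \bigcap_s \bC^{[s,r_0]}_\psi$. I expect the main obstacle to be the global-sections step for pseudocoherent $\varphi$-bundles: one must be careful that the "same number of generators at every level" principle genuinely applies, i.e. that pulling back a pseudocoherent module along the (non-surjective-on-spectra but flat-enough) Frobenius maps of \eqref{eq:phi rings5} does not change the rank sequence of a projective resolution. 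This is the analogue of the delicate point in the proof of Theorem~\ref{T:perfect generalized phi-modules} where one uses that $M_{[sp^{nah},rp^{nah}]}$ are all $\varphi$-pullbacks of each other; here the argument must be run with the imperfect rings $\bC^{[s,r]}_\psi$ rather than $\tilde{\calR}^{[s,r]}_R$, so one leans on Lemma~\ref{L:weak flatness imperfect Robba} and Corollary~\ref{C:open Robba weak flatness} in place of Proposition~\ref{P:weak flatness perfect Robba}, but the structure of the argument is identical.
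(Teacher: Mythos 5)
Your argument is correct and coincides with the paper's own proof: the paper likewise obtains the equivalence of (b) and (c) from Proposition~\ref{P:glueing projective modules over intervals}, and the equivalence of (a) and (b) by noting that any $\varphi$-bundle is uniformly pseudocoherent (a projective resolution of $M_{[r_0 q^{-1},r_0]}$ pulls back along powers of $\varphi$ to resolutions of the same ranks at every level) and then invoking Lemma~\ref{L:Robba base extension pseudocoherent} and Lemma~\ref{L:Robba base extension vector bundle}. The extra verifications you spell out (exactness, tensor compatibility, the round-trip identifications) and the caveat about rank sequences under $\varphi$-pullback are exactly the points the paper treats, only more tersely.
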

\begin{proof}
Full faithfulness of the base extension functor from (a) to (b)
follows from Lemma~\ref{L:Robba base extension pseudocoherent}
in the case of $\psi$ noetherian, and Lemma~\ref{L:global sections C} for general $\psi$.
To prove that the essential image includes a 
given projective (resp.\ pseudocoherent, fpd) $\varphi$-bundle $\{M_{[s,r]}\}$, 
note that any such bundle is uniformly pseudocoherent:
given a projective resolution of $M_{[rq^{-1},r]}$, we may pull back
using powers of $\varphi$ to get a projective resolution of $M_{[rq^{-n-1},rq^{-n}]}$
with modules of the same rank for $n=0,1,\dots$.
We may thus apply Lemma~\ref{L:Robba base extension pseudocoherent}
and Lemma~\ref{L:Robba base extension vector bundle} to deduce that
the $\varphi$-bundle arises from a projective (resp.\ pseudocoherent, fpd) module over $\bC_\psi$;
this proves the equivalence of (a) and (b). 
The equivalence of (b) and (c) follows from
Proposition~\ref{P:glueing projective modules over intervals}.
\end{proof}

We have the following analogue of \cite[Proposition~6.3.19]{part1}.
\begin{prop} \label{P:same cohomology}
For $r \in (0, r_0]$, either let $M_r$ be a $\varphi$-module over $\bC^r_\psi$,
or assume that $\psi$ is noetherian and let $M_r$ be a pseudocoherent $\varphi$-module over $\bC^r_\psi$.
For $s \in (0, r/q]$, put $M_{[s,r]} = M_r \otimes_{\bC^r_\psi} \bC^{[s,r]}_\psi$.
Then the vertical arrows in the diagram
\[
\xymatrix{
0 \ar[r] & M_r \ar^{\varphi-1}[r] \ar[d] & M_{r/q} \ar[r] \ar[d] & 0 \\
0 \ar[r] & M_{[s,r]} \ar^{\varphi-1}[r] & M_{[s,r/q]} \ar[r] & 0
}
\]
induce an isomorphism on the cohomology of the horizontal complexes.
\end{prop}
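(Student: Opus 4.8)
The plan is to reduce the statement to a statement about modules over the interval rings by noting that taking $\varphi$-cohomology is computing $H^i$ of a two-term complex, so we need only check that the vertical maps induce a bijection on kernels and cokernels. The key input is that, after passing from $M_r$ to the associated $\varphi$-bundle $\{M_{[s,r]}\}$ and reconstructing $M_r$ as global sections (Lemma~\ref{L:phi-modules to phi-bundles}), the rows are linked by a chain of pseudoflat base extensions (Lemma~\ref{L:weak flatness imperfect Robba}) together with the density/vanishing facts of Lemma~\ref{L:global sections are dense}. First I would handle the case where $M_r$ is a finite projective $\varphi$-module, by analogy with the proof of \cite[Proposition~6.3.19]{part1}: this is essentially a \v{C}ech-style computation showing that on the overlap region the cohomology of $\bC^{[s,r]}_\psi$-valued complexes agrees with that of the $\bC^r_\psi$-valued one, which follows from the fact that $\bigcap_{s \in (0, r/q]} \bC^{[s,r/q]}_\psi = \bC^{r/q}_\psi$ (full faithfulness, as used in Lemma~\ref{L:Robba base extension pseudocoherent}) for injectivity on $H^0$, and from Lemma~\ref{L:global sections are dense}(a) together with an approximation argument for surjectivity; the $R^1\varprojlim$ vanishing of Lemma~\ref{L:global sections are dense}(b) handles $H^1$.

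Next I would reduce the pseudocoherent case to the projective one. Using Lemma~\ref{L:lift action1}, every pseudocoherent $\varphi$-module over $\bC^{r_0}_\psi$ (or over $\bC^r_\psi$ for $r$ small enough, after shrinking) admits a projective resolution by projective $\varphi$-modules over $\bC^\infty_\psi$, hence by base extension a projective resolution by projective $\varphi$-modules over $\bC^r_\psi$; correspondingly we get resolutions of the $M_{[s,r]}$ by the pseudoflatness of $\bC^r_\psi \to \bC^{[s,r]}_\psi$ (Corollary~\ref{C:open Robba weak flatness}, Lemma~\ref{L:weak flatness imperfect Robba}). Since the two-term $\varphi$-complex has cohomology only in degrees $0$ and $1$, there is a uniform bound on cohomological degree, so one can compute the $\varphi$-hypercohomology of $M_r$ (resp.\ $M_{[s,r]}$) via the double complex built from the projective resolution and the two-term $\varphi$-complex, and apply the projective case termwise. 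The isomorphism on hypercohomology of the total complexes then yields the claimed isomorphism on the cohomology of the original horizontal complexes; this uses nothing more than a spectral sequence comparison (or a direct diagram chase with the truncated resolutions, as in the proof of Theorem~\ref{T:perfect generalized phi-modules cohomology}).

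One subtlety to flag: Lemma~\ref{L:lift action1} does \emph{not} apply to the interval rings $\bC^{[s,r]}_\psi$, only to the ``one-sided'' rings. This is why I work with resolutions over $\bC^r_\psi$ (where the appropriate analogue of \cite[Lemma~1.5.2]{part1} is available, via the same argument as in Theorem~\ref{T:perfect generalized phi-modules}) and then base-extend, rather than resolving the $M_{[s,r]}$ directly. I also need the resolutions over $\bC^r_\psi$ to remain resolutions after base extension to $\bC^{[s,r]}_\psi$, which is exactly Corollary~\ref{C:open Robba weak flatness}. The main obstacle is the projective case itself --- i.e., adapting \cite[Proposition~6.3.19]{part1} to the present imperfect interval rings --- since that is where all the genuine analytic content (the approximation estimate controlling how a global section differs from an interval section, and the $R^1\varprojlim$ bookkeeping) resides; once that is in place, the passage to pseudocoherent modules is formal homological algebra.
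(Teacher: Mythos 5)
Your overall strategy (projective case first, then formal reduction) is not the paper's, and the reduction step has a genuine gap under the hypotheses in force. Your reduction needs, for each pseudocoherent $\varphi$-module $M$ over $\bC^r_\psi$, a surjection $P \to M$ from a projective $\varphi$-module whose kernel is again a $\varphi$-module. But Lemma~\ref{L:lift action1} does not cover $\bC^r_\psi$: it is stated only for the perfect rings of \eqref{eq:phi rings} and \eqref{eq:phi rings2}, where either $\varphi$ is bijective or a $\varphi$-module can be reinterpreted as a module with a semilinear $\varphi^{-1}$-action over the larger ring, since $\varphi^{-1}$ is an endomorphism of that ring. For the imperfect ring the inverse Frobenius does not preserve $\bC^{r/q}_\psi$ (this is exactly why $\breve{\bC}_\psi$ is introduced), so neither trick applies, and the ring ``$\bC^\infty_\psi$'' you invoke is not defined. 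Moreover, even granting a surjection $P \to M$, endowing $K=\ker(P\to M)$ with a $\varphi$-module structure requires the natural map $\varphi^* K \to K \otimes_{\bC^r_\psi} \bC^{r/q}_\psi$ to be an isomorphism, which needs (pseudo)flatness of $\varphi\colon \bC^r_\psi \to \bC^{r/q}_\psi$; under Hypothesis~\ref{H:weakly decompleting Frobenius} (weakly decompleting only, no $F$-(finite projective) assumption) this is not available. So the ``formal homological algebra'' half of your argument is the part that breaks, not the analytic half.

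The paper's proof is much shorter and sidesteps resolutions entirely: for the two-term complexes in question, $H^0_\varphi$ and $H^1_\varphi$ are the Hom and Yoneda $\Ext^1$ groups from the unit object in the exact category of pseudocoherent $\varphi$-modules over $\bC^r_\psi$ (resp.\ over $\bC^{[s,r]}_\psi$), extensions of pseudocoherent objects being pseudocoherent by Lemma~\ref{L:pseudocoherent 2 of 3}; since base extension induces an equivalence of these exact categories by Lemma~\ref{L:phi-modules to phi-bundles}, and the vertical arrows in the diagram are exactly this base extension, the comparison is immediate, and no separate treatment of the projective case (nor any analogue of \cite[Proposition~6.3.19]{part1} for the imperfect rings) is needed. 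If you want to salvage your route, you would first have to prove an analogue of Lemma~\ref{L:lift action1} over $\bC^r_\psi$ together with the flatness of $\varphi$ needed for kernels, which is more than the proposition itself requires.
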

\begin{proof}
By interpreting the cohomology groups as Yoneda extension groups, we may deduce the claim immediately from Lemma~\ref{L:phi-modules to phi-bundles}.
\end{proof}

\subsection{\texorpdfstring{$\Gamma$}{Gamma}-modules}
\label{subsec:gamma-modules}

We next define $\Gamma$-modules over period rings. In the case of Galois towers these will indeed be certain modules equipped with continuous group actions, but we will phrase the definition in such a way as to apply also to non-Galois towers.

\begin{hypothesis}
Throughout \S\ref{subsec:gamma-modules}, assume that the tower $\psi$ is finite \'etale and weakly decompleting.
\end{hypothesis}

\begin{defn} \label{D:Cech complex}
For $k \geq 0$, let $\psi^k$ be the tower with
$A_{\psi^k, n}$ equal to the completion of the $(k+1)$-fold tensor product of $A_{\psi,n}$ over $A$. Note that this tower is perfectoid by Corollary~\ref{C:nonuniform perfectoid tensor product} but need not be weakly decompleting.

Let $*_\psi$ be one of the perfect or imperfect period rings described in Definition~\ref{D:period rings}. For $j=0,\dots,k+1$, let $i_{k,j}: *_{\psi^k} \to *_{\psi^{k+1}}$ be the map induced by the bounded homomorphism $\tilde{A}_{\psi^k} \to \tilde{A}_{\psi^{k+1}}$ taking
$x_0 \otimes \cdots \otimes x_k$ to 
$x_0 \otimes \cdots \otimes x_{j-1} \otimes 1 \otimes x_j \otimes \cdots \otimes x_k$. We may assemble the rings $*_{\psi^\bullet}$ into a complex by taking the differential $d^k: *_{\psi^k} \to *_{\psi^{k+1}}$ to be $\sum_{j=0}^{k+1} (-1)^j i_{k,j}$.
\end{defn}

\begin{lemma} \label{L:equalizer}
The equalizer of the two maps $i_{0,0}, i_{0,1}: R_{\psi} \to R_{\psi^1}$ is contained in $\overline{\varphi}(R_\psi)$.
\end{lemma}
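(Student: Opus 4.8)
The plan is to identify, for the perfectoid tower $\psi$ and its ``self-fibre-product'' tower $\psi^1$, exactly what it means for $x \in R_\psi$ to lie in the equalizer of $i_{0,0}$ and $i_{0,1}$, and then to extract a $p$-th root inside $R_\psi$. First I would recall that $R_\psi = A_\psi/(\varpi)$, where $A_\psi = \varinjlim_n A_{\psi,n}$, and that $R_{\psi^1} = A_{\psi^1}/(\varpi)$, where $A_{\psi^1,n}$ is the completed tensor square $A_{\psi,n} \widehat{\otimes}_A A_{\psi,n}$; the two maps $i_{0,0}, i_{0,1}$ come from the two coprojections $x \mapsto x \otimes 1$ and $x \mapsto 1 \otimes x$. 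So an element $x \in R_\psi$ in the equalizer has the property that its two images in $R_{\psi^1}$ agree. The point is that, level by level in the tower, $A_{\psi,n}/A$ is a finite étale extension, and the equalizer of the two coprojections $A_{\psi,n} \rightrightarrows A_{\psi,n} \widehat{\otimes}_A A_{\psi,n}$ is precisely $A$ (by faithfully flat/étale descent — this is the standard Amitsur complex computation, and $A \to A_{\psi,n}$ is even finite étale hence descent-effective, cf.\ \cite[Theorem~2.6.9]{part1}). Reducing mod $\varpi$, the equalizer of $i_{0,0}, i_{0,1}$ on $A_{\psi,n}/(\varpi)$ maps into the image of $A/(\varpi) = A_{\psi,0}/(\varpi)$, up to the subtlety that reduction mod $\varpi$ need not commute with taking the equalizer on the nose; here one uses that $A \to A_{\psi,n}$ is finite étale, so $A_{\psi,n}$ is a finite projective (hence flat) $A$-module, and the Amitsur complex stays exact after $\otimes_A A/(\varpi)$.

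The key observation, and the reason $\overline{\varphi}(R_\psi)$ (rather than $R = A/(\varpi)$) appears in the statement, is that $R_\psi$ is \emph{not} the increasing union of the $A_{\psi,n}/(\varpi)$ but rather involves the Frobenius: the perfectoid total ring $\tilde R_\psi$ is built from the inverse limit under $\overline{\varphi}_\varpi$ of the $\tilde A_\psi/(\varpi)$-like quotients, and $R_\psi$ is cut out inside $\tilde R_\psi$ by a convergence/algebraicity condition. The natural structure here is from Lemma~\ref{L:intermediate mod p subring}: identifying $\tilde R_\psi^+$ with $\varprojlim_{\overline{\varphi}_\varpi} \tilde A_\psi^+/(\eta)$, the subring $R_\psi^+$ sits inside the ring $R'_\psi$ of sequences $(\overline{x}_n)$ with $\overline{x}_n \in A_{\psi,n}^+/(\eta)$ for $n \gg 0$, where the sequence condition is $\overline{x}_{n+1}^{p^h} = \overline{x}_n$. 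So I would argue: if $x = (\overline{x}_n) \in R_\psi$ lies in the equalizer, then each component $\overline{x}_n$ lies in the equalizer of the two maps $A_{\psi,n}/(\eta) \rightrightarrows (A_{\psi,n} \widehat{\otimes}_A A_{\psi,n})/(\eta)$, hence (by the étale descent above applied at level $n$, and a limit/approximation argument to handle the ``$n \gg 0$'' qualifier and the $\eta$-adic rather than $\varpi$-adic quotient) $\overline{x}_n$ lies in the image of $A/(\eta)$ — i.e., the component sits at level $0$ of the tower. But then the relation $\overline{x}_{n+1}^{p^h} = \overline{x}_n$ exhibits $\overline{x}_{n+1}$ as a $p^h$-th root of $\overline{x}_n$; re-expressing $x$ via the shifted sequence $(\overline{x}_{n+1})$ shows $x = \overline{\varphi}_\varpi^h(y) = \varphi^{-h}$ applied the other way... more precisely, that $x$ is a $\overline{\varphi}$-power of an element of $R_\psi$ coming from the same data, i.e.\ $x \in \overline{\varphi}(R_\psi)$ (here $\overline{\varphi} = \overline{\varphi}_\varpi^{h}$ on $R$, matching the running conventions, so that $\overline{\varphi}(R_\psi) \supseteq \overline{\varphi}(\text{level-}0\text{ elements})$ is exactly what the $p^h$-th root relation produces).

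I would organize the write-up as: (1) recall $i_{0,j}$ explicitly and reduce to a statement about components in $\varprojlim_{\overline{\varphi}_\varpi} A_{\psi,n}^+/(\eta)$ via Lemma~\ref{L:intermediate mod p subring}; (2) prove the finite-étale equalizer fact at each level, i.e.\ the equalizer of $A_{\psi,n} \rightrightarrows A_{\psi,n}\widehat{\otimes}_A A_{\psi,n}$ equals $A$, and note it survives base change along $A \to A/(\eta)$ by flatness; (3) combine with the $\overline{\varphi}_\varpi$-compatibility of the components to extract the $p^h$-th root and conclude $x \in \overline{\varphi}(R_\psi)$. The main obstacle I expect is step (2) together with the passage between the $\varpi$-adic, $\eta$-adic, and level-wise pictures: one must be careful that ``$x$ in the equalizer of $i_{0,0},i_{0,1}$ on $R_\psi$'' genuinely forces each component $\overline{x}_n$ (for $n$ large) into $A/(\eta)$, and that the completed tensor products $A_{\psi^1,n}^+ = A_{\psi,n}^+ \widehat{\otimes}_{A^+} A_{\psi,n}^+$ interact with the mod-$\eta$ reductions and the $\overline{\varphi}_\varpi$-inverse-limits in the way the argument needs. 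This is where finiteness of the étale maps (so all the tensor products are finite projective modules, no completion subtleties) does the real work, and it is the only place where the hypothesis that $\psi$ is \emph{finite} étale — not merely perfectoid — is used.
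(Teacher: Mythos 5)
Your proposal is correct and follows essentially the same route as the paper: the paper's proof likewise observes that for $n$ large the components $\theta(\varphi_\varpi^{-n}(x))$ of an element of the equalizer lie in the equalizer of $i_{0,0}, i_{0,1}\colon A_{\psi} \to A_{\psi^1}$, which equals $A$ by faithfully flat descent (your level-by-level Amitsur computation), and then concludes $\overline{\varphi}^{-1}(x) \in R_\psi$ by exactly the Frobenius-shift of the component sequence that you describe. The only slip is notational: $\overline{\varphi}$ here is $\overline{\varphi}_\varpi$, which is already the $p^h$-power Frobenius, not $\overline{\varphi}_\varpi^{h}$.
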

\begin{proof}
Suppose $x$ belongs to the equalizer. Then for all $n$ sufficiently large,
$\theta(\varphi^{-n}([x]))$ belongs to the equalizer of $i_{0,0}, i_{0,1}: A_{\psi} \to A_{\psi^1}$; however, by faithfully flat descent this equalizer is equal to $A$. It follows that $\overline{\varphi}^{-1}(x) \in R_{\psi}$.
\end{proof}

\begin{defn} \label{D:Gamma-module}
Let $*_\psi$ be as in Definition \ref{D:Cech complex}. 
By a \emph{projective (resp.\ pseudocoherent, fpd) $\Gamma$-module}  over $*_\psi$, we will mean a pair $(M, \iota)$ in which $M$ is a finite projective (resp.\ pseudocoherent, fpd) $*_\psi$-module and $\iota: M \otimes_{i_{0,0}} *_{\psi^1} \to M \otimes_{i_{0,1}} *_{\psi^1}$ is a $*_{\psi^1}$-linear isomorphism satisfying the \emph{cocycle condition}: 
\[
(\iota \otimes i_{1,2}) \circ (\iota \otimes i_{1,0})
= \iota \otimes i_{1,1}
\]
where $\iota \otimes i_{1,j}$ denotes the base change to $*_{\psi^2}$ 
of $\iota$ along $i_{1,j}$.
Thanks to the cocycle condition, we may unambiguously define the complex $M^{\bullet} = M \otimes_{\iota} *_{\psi^\bullet}$
with differentials $d^k = \sum_{j=0}^{k+1} (-1)^j i_{k,j}$
as in Definition~\ref{D:Cech complex}.
Let $H^\bullet_{\Gamma}(M)$ denote the cohomology of $M^{\bullet}$.

We will have occasion later to consider data $\iota$ as above associated to somewhat more general modules $M$.
When this occurs, we will refer to $\iota$ as a \emph{$\Gamma$-structure} on $M$.
\end{defn}

The term \emph{$\Gamma$-module} is suggested by the following example, which includes the setup of traditional $p$-adic Hodge theory.
\begin{defn}
For $\Gamma$ a topological group and $A$ a topological left $\Gamma$-module, let $C_{\cont}^{\bullet}(\Gamma,A)$ be the complex of inhomogeneous continuous $\Gamma$-cochains with values in $A$. That is,
for $k \geq 0$, $C_{\cont}^{k}(\Gamma,A) = \Cont(\Gamma^k, A)$
and the differential $d^k: C_{\cont}^k(\Gamma,A) \to C_{\cont}^{k+1}(\Gamma,A)$ is given by
\begin{align*}
d^k(f)(\gamma_1,\dots,\gamma_{k+1})
& = \gamma_1(f(\gamma_2,\dots,\gamma_{k+1})) \\
&+ (-1)^i \sum_{i=1}^k (-1)^i f(\gamma_1,\dots,\gamma_{i-1},
\gamma_i \gamma_{i+1}, \gamma_{i+2}, \dots, \gamma_{k+1})\\
&+ (-1)^{k+1} f(\gamma_1,\dots,\gamma_k).
\end{align*}
\end{defn}

\begin{example} \label{exa:Galois tower}
Suppose $\psi$ is Galois with group $\Gamma$.
For each $k \geq 0$, there is an isomorphism of topological rings
\[
*_{\psi^k} \cong \Cont(\Gamma^k, *_\psi)
\]
induced by sending $x_0 \otimes \cdots \otimes x_k$ to $(\gamma_1,\dots,\gamma_k)\mapsto x_0\gamma_1(x)\cdots(\gamma_1\cdots\gamma_k)(x_k)$. It is then clear that under this isomorphism $i_{k,0} \circ \cdots  \circ i_{0,0}$ identifies $*_\psi$ with the constant maps in $\Cont(\Gamma^k, *_\psi)$
and evaluation at the identity in $\Gamma^k$ induces the multiplication map $*_{\psi^k} \to *_\psi$.
Using this identification, we may identify a projective (resp.\ pseudocoherent, fpd) $\Gamma$-module over $*_{\psi}$ with a finite projective (resp.\ pseudocoherent, fpd) module $M$ over $*_{\psi}$
equipped with a continuous semilinear $\Gamma$-action;
note that the continuity of the $\Gamma$-action is a formal consequence of the structure of the ring $*_{\psi^1}$ and does not need to be imposed separately. Similarly,
the complex $M^{\bullet}$ may be naturally identified with
$C_{\cont}^{\bullet}(\Gamma, M)$.
\end{example}

We may generalize the previous construction as follows. 
\begin{example} \label{exa:Galois tower2}
Suppose there exist a finite \'etale tower
$\psi'$ which is Galois with group $\Gamma_1$ and a closed subgroup $\Gamma_2$ of $\Gamma_1$ with $A_{\psi',n}^{\Gamma_2} = A_{\psi,n}$ for all $n \geq 0$.
Since $\psi$ is perfectoid, we have $*_{\psi'}^{\Gamma_2} = *_\psi$.

The action of $\Gamma_1^{k+1}$ on $*_{\psi^k}$ 
corresponds to the action of $\Gamma_1$ on $C_{\cont}^k(\Gamma_1,*_\psi)$ given by
\begin{equation} \label{eq:group action on cochains}
(\eta_0,\dots,\eta_k)(f)(\gamma_1,\dots,\gamma_k) = 
\eta_0(f(\eta_0^{-1} \gamma_1 \eta_1, \cdots, \eta_{k-1}^{-1} \gamma_k \eta_k)).
\end{equation}
In this setting, a projective (resp.\ pseudocoherent, fpd) $\Gamma$-module 
over $*_\psi$ is the same as a finite projective (resp.\ pseudocoherent, fpd) $*_\psi$-module $M$ equipped with a continuous semilinear $\Gamma_1$-action on $M' = M \otimes_{*_{\psi}} *_{\psi'}$ whose restriction to
$\Gamma_2$ fixes $M$. We have an identification
\[
M^k = C_{\cont}^k(\Gamma_1,M')^{\Gamma_2^{k+1}}.
\]
Note that this construction is naturally independent of $\psi'$;
in particular, in case $\Gamma_2$ is normal in $\Gamma_1$, a $\Gamma$-module is the same as a $(\Gamma_1/\Gamma_2)$-module in the sense of 
Example~\ref{exa:Galois tower}.
\end{example}

\begin{defn}
For each nonnegative integer $n$, let $\psi_{(n)}$ be the truncated tower $(A_{\psi,n} \to A_{\psi,n+1} \to \cdots)$.
\end{defn}

\begin{remark} \label{R:small connection}
Suppose that $*_\psi$ is equipped with a norm, and let $M$ be a pseudocoherent $\Gamma$-module over $*_\psi$. Equip $M$ with the quotient norm induced by some finite presentation of its underlying module.
Then for any $k \geq 0$ and $\epsilon \in (0,1)$, there exists $n_0 \geq 0$ such that for $n \geq n_0$, if we replace $\psi$ with $\psi_{(n)}$ by base extension,
then the differential $M \otimes_{*_\psi} *_{\psi^k}
\to M \otimes_{*_\psi} *_{\psi^{k+1}}$
differs from the base extension of the differential 
$*_{\psi^k} \to *_{\psi^{k+1}}$ by a map whose operator norm $c$ is less than $\epsilon$.
\end{remark}

\begin{remark} \label{R:small connection Galois}
In the setting of Example~\ref{exa:Galois tower}, Remark~\ref{R:small connection} can be interpreted as follows.
For $M$ a pseudocoherent $*_\psi$-module, the data of a $\Gamma$-module structure on $M$ amounts to an element of the pointed set $H^1_{\cont}(\Gamma, \Aut_{*_\psi}(M))$. By fixing a norm on $M$ and replacing $\Gamma$ with a suitable open subgroup, for any $\epsilon > 0$ we obtain a 1-cocycle with values in the subgroup of $\Aut_{*_\psi}(M)$ consisting of elements which differ from the identity by an endomorphism of operator norm less than $\epsilon$.

This argument can be phrased even more concretely as follows.
Concretely, for any $\epsilon>0$, for each $\bv$ in a set of generators of $M$, $(\gamma-1)(\bv)$ has norm at most $\epsilon$ for all $\gamma$ in some open subgroup of $\Gamma$;
using the ``Leibniz rule'' identity
\begin{equation}
(\gamma-1)(\overline{x} \bv)
= (\gamma-1)(\overline{x}) \bv + \gamma(\overline{x})
(\gamma-1)(\bv)
\end{equation}
together with continuity of the action of $\Gamma$ on $*_\psi$,
one reaches the same conclusion for arbitrary $\bv \in M$.
\end{remark}

\subsection{Decompleting towers}
\label{subsec:decompleting towers}

In order to descend $(\varphi, \Gamma)$-modules, we must augment the definition of a weakly decompleting tower with one extra hypothesis.

\begin{defn} \label{D:decompleting tower}
We say that the tower $\psi$ is \emph{decompleting} if it is finite \'etale and weakly decompleting, and additionally for each sufficiently large $n$, the complex
$\overline{\varphi}_\varpi^{-1}(R_{\psi_{(n)}^\bullet})/R_{\psi_{(n)}^\bullet}$ is exact.
This formally implies that each of the $\psi_{(n)}$ is also decompleting.
\end{defn}

The property of a tower being decompleting is less stable under base extension than the property of being weakly decompleting. For finite \'etale extensions,
we will establish stability after some further study of $\Gamma$-modules; see
Corollary~\ref{C:finite etale decompleting}. For rational localizations, we are forced to build this into the definition.
\begin{defn}
We say that a tower $\psi$ is \emph{locally decompleting} if $(A,A^+)$ is stably uniform
and the base extension of $\psi$ along every rational localization of $(A,A^+)$
is decompleting.
\end{defn}

For the remainder of \S\ref{subsec:decompleting towers}, let $\psi$ be a decompleting tower. 
Our next few lemmas are formal refinements of the definition.
\begin{lemma} \label{L:decompleting all}
For each nonnegative integer $n$, the complex $\overline{\varphi}^{-1}_{\varpi}(R_{\psi_{(n)}^\bullet})/R_{\psi_{(n)}^\bullet}$ is strict exact.
\end{lemma}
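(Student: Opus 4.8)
The statement upgrades the \emph{exactness} of the complex $\overline{\varphi}_\varpi^{-1}(R_{\psi_{(n)}^\bullet})/R_{\psi_{(n)}^\bullet}$, which is built into Definition~\ref{D:decompleting tower} for all sufficiently large $n$, to \emph{strict} exactness for all $n\geq 0$. The plan is to reduce to the case of large $n$ via Frobenius and then argue strictness using the open mapping theorem. First I would observe that the complex $R_{\psi_{(n)}^\bullet}$ is a complex of Banach spaces over the analytic field (or more generally, of complete metrizable modules): each $R_{\psi_{(n)}^k}$ is a quotient of a Tate-algebra type ring over $R_{\psi_{(n)}}$, hence an open mapping ring, and the differentials $d^k=\sum_j(-1)^j i_{k,j}$ are bounded (indeed isometric on spectral norms, by the discussion following Hypothesis on towers that $\psi_n$ are isometric). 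The quotient $\overline{\varphi}_\varpi^{-1}(R_{\psi_{(n)}^k})/R_{\psi_{(n)}^k}$, equipped with the quotient (semi)norm, is again a complete metrizable module once we know $R_{\psi_{(n)}^k}$ is closed in $\overline{\varphi}_\varpi^{-1}(R_{\psi_{(n)}^k})$; this closedness is a reality-check consequence of the fact that $R_\psi$ is complete under $\overline{\alpha}_\psi^r$ (Definition~\ref{D:weakly decompleting}(b)) together with Lemma~\ref{L:optimal lifts}, applied at each level $\psi_{(n)}^k$.

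The key step is the reduction in $n$. For any $n$, applying $\overline{\varphi}_\varpi$ gives an isometric identification of $R_{\psi_{(n)}^\bullet}$ with $\overline{\varphi}_\varpi(R_{\psi_{(n)}^\bullet})\subseteq R_{\psi_{(n-1)}^\bullet}$, and more usefully $\overline{\varphi}_\varpi^{-m}$ identifies (up to isometry) the complex $\overline{\varphi}_\varpi^{-1}(R_{\psi_{(n)}^\bullet})/R_{\psi_{(n)}^\bullet}$ with $\overline{\varphi}_\varpi^{-1}(R_{\psi_{(n+m)}^\bullet})/R_{\psi_{(n+m)}^\bullet}$: the point is that $\overline{\varphi}_\varpi$ is a bijective endomorphism of the perfect ring $\tilde R_{\psi^k}$ carrying $R_{\psi_{(n+1)}^k}$ isometrically onto $R_{\psi_{(n)}^k}$ viewed inside $\overline{\varphi}_\varpi(\tilde R_{\psi^k})$, so iterating $m$ times matches the two quotient complexes term by term, compatibly with the differentials. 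Choosing $m$ large enough that $n+m$ falls in the range where Definition~\ref{D:decompleting tower} guarantees exactness, we conclude the complex at level $n$ is exact as a complex of abstract modules, for every $n\geq 0$.

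It remains to promote exactness to strict exactness, and this is where the open mapping theorem does the work: I would apply Theorem~\ref{T:open mapping}(b) (and, for the continuity input, part (a)) to the maps $\ker d^k \to \ker d^k$ factoring through the coboundaries. Concretely, for a complex of complete metrizable modules over an open mapping ring in which every differential is continuous, exactness at a given spot implies that $d^{k-1}$ induces a \emph{continuous bijection}, hence by the open mapping theorem a \emph{homeomorphism}, from $\operatorname{coim} d^{k-1}$ onto $\ker d^k$; equivalently the image of $d^{k-1}$ is closed and $d^{k-1}$ is strict. This is precisely the argument already used in the excerpt to prove, e.g., Lemma~\ref{L:strict perfect base extension} and Corollary~\ref{C:strictly finitely presented}, so I would cite Theorem~\ref{T:open mapping} and that pattern rather than rewriting it. The main obstacle, and the only genuinely delicate point, is verifying that the quotient modules $\overline{\varphi}_\varpi^{-1}(R_{\psi_{(n)}^k})/R_{\psi_{(n)}^k}$ really are \emph{complete} for their natural (quotient) topology — i.e.\ that $R_{\psi_{(n)}^k}$ is a \emph{closed} subspace of $\overline{\varphi}_\varpi^{-1}(R_{\psi_{(n)}^k})$ — since without this the open mapping theorem does not apply. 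I expect this to follow from Lemma~\ref{L:optimal lifts} and Corollary~\ref{C:lift presentation} applied levelwise (each $\psi^k$ inherits enough of the weakly decompleting structure via Corollary~\ref{C:nonuniform perfectoid tensor product} and the explicit description of $R_{\psi^k}$), giving a uniform bound relating the norm of an element of $R_{\psi_{(n)}^k}$ to that of its image under any finite approximation by $\overline{\varphi}_\varpi$-translates; I would spell this out only to the extent needed to invoke Theorem~\ref{T:open mapping}.
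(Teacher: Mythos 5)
Your strictness step (exactness plus the open mapping theorem, Theorem~\ref{T:open mapping}) agrees with the paper's proof, but your reduction of exactness to the case of large $n$ contains a genuine gap. The notation $\psi_{(n)}^\bullet$ refers to the \v{C}ech-type towers of Definition~\ref{D:Cech complex} formed from the \emph{truncated} tower $\psi_{(n)}$, so the $k$-th term is built from completed $(k+1)$-fold tensor products of the $A_{\psi,n+j}$ taken over the new base $A_{\psi,n}$. At level $k=0$ your Frobenius identification is correct (one checks from the definition of $\bA^r$ via $\theta\circ\varphi_\varpi^{-j}$ that $\overline{\varphi}_\varpi^{-m}(R_{\psi_{(n)}})=R_{\psi_{(n+m)}}$), but for $k\geq 1$ it fails: applying $\overline{\varphi}_\varpi^{-m}$ to $R_{\psi_{(n)}^k}$ only shifts the levels and yields the ring attached to tensor products of the $A_{\psi,n+m+j}$ over $A_{\psi,n}$, whereas $R_{\psi_{(n+m)}^k}$ is attached to tensor products over $A_{\psi,n+m}$. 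These are genuinely different rings; in the Galois situation of Example~\ref{exa:Galois tower}, the level-$n$ complex computes continuous cochains on $\Gamma_n^k$ (with values in a Frobenius twist of $\overline{\varphi}_\varpi^{-1}(R_\psi)/R_\psi$), and changing $n$ changes the group, which no Frobenius twist can accomplish. So the claimed term-by-term isomorphism of complexes, and hence your entire reduction, breaks down. Indeed, if it were correct, the clause ``for each sufficiently large $n$'' in Definition~\ref{D:decompleting tower} would be equivalent to the single case $n=0$ and the lemma would be contentless.

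What the reduction actually requires is a descent argument, and this is how the paper proceeds: exactness at a sufficiently deep level $n'$, which is part of Definition~\ref{D:decompleting tower}, is transported down to an arbitrary level $n$ by faithfully flat descent along the ring map $R_{\psi_{(n)}}\to R_{\psi_{(n')}}$. In the Galois picture this is the familiar step of deducing the vanishing of $\Gamma_n$-cohomology from the vanishing of cohomology of a finite-index open subgroup $\Gamma_{n'}\subseteq\Gamma_n$, which is not formal and is precisely what the Frobenius twist cannot supply. Your final step --- completeness of the terms of the quotient complex and an application of Theorem~\ref{T:open mapping} to upgrade exactness to strict exactness --- is sound and coincides with the second half of the paper's argument.
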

\begin{proof}
Exactness follows from the fact that the complex $\overline{\varphi}^{-1}_{\varpi}(R_{\psi_{(n')}^{\bullet}})/R_{\psi_{(n')}^\bullet}$ is obtained from  $\overline{\varphi}^{-1}_{\varpi}(R_{\psi_{(n)}^\bullet})/R_{\psi_{(n)}^\bullet}$ by tensoring along the flat morphism $R_{\psi_{(n)}} \to  R_{\psi_{(n')}}$.
Strictness follows from exactness via the open mapping theorem
(Theorem~\ref{T:open mapping}).
\end{proof}

\begin{lemma} \label{L:decompleting uniform}
For each nonnegative integer $n$,  the complex $\tilde{R}_{\psi_{(n)}^\bullet}/R_{\psi_{(n)}^\bullet}$ is strict exact.
\end{lemma}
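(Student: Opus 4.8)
The goal is to upgrade Lemma~\ref{L:decompleting all}, which gives strict exactness of $\overline{\varphi}^{-1}_{\varpi}(R_{\psi_{(n)}^\bullet})/R_{\psi_{(n)}^\bullet}$, to the analogous statement for the completed rings $\tilde{R}_{\psi_{(n)}^\bullet}/R_{\psi_{(n)}^\bullet}$. The main point is that the ``mod $\varpi$'' ring $\tilde{R}_\psi$ is the completion of $R_\psi^{\perf}=\breve R_\psi$ (by Definition~\ref{D:weakly decompleting}(a) the perfect closure is dense), and $\breve R_\psi$ is obtained from $R_\psi$ by iterating $\overline{\varphi}_\varpi^{-1}$. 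So the plan is to write $\breve R_{\psi_{(n)}^\bullet}/R_{\psi_{(n)}^\bullet}$ as an increasing union (direct limit) of copies of $\overline{\varphi}^{-m}_{\varpi}(R_{\psi_{(n+?)}^\bullet})/R_{\psi_{(n)}^\bullet}$ — more precisely, one filters $\breve R_\psi$ by the subrings $\overline{\varphi}_\varpi^{-m}(R_\psi)$ and notes that $\overline{\varphi}_\varpi^{-m}(R_{\psi^\bullet})/R_{\psi^\bullet}$ is, after applying the isomorphism $\overline{\varphi}_\varpi^m$, the same as $R_{\psi^\bullet}/\overline{\varphi}_\varpi^m(R_{\psi^\bullet})$, which relates back to the quotient complexes appearing in Lemma~\ref{L:decompleting all} applied to various truncations $\psi_{(n')}$. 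Then a direct-limit argument (exactness commutes with filtered colimits) gives strict exactness of $\breve R_{\psi_{(n)}^\bullet}/R_{\psi_{(n)}^\bullet}$, and finally one passes to the $\overline{\alpha}_\psi^{r}$-completion.

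\textbf{Key steps in order.} First I would record the identification $\breve R_\psi = \varinjlim_m \overline{\varphi}_\varpi^{-m}(R_\psi)$ and observe that each transition map is an isomorphism of complexes after twisting by a power of $\overline{\varphi}_\varpi$; combined with Lemma~\ref{L:decompleting all} applied to every $\psi_{(n')}$ with $n' \geq n$, this yields that $\breve R_{\psi_{(n)}^\bullet}/R_{\psi_{(n)}^\bullet}$ is a filtered union of strict exact subcomplexes with uniformly bounded strictness constants (the constants are controlled because $\overline{\varphi}_\varpi$ rescales the valuation $\lambda(\overline{\alpha}_\psi^r)$ in a controlled way, cf.\ the computation used in Lemma~\ref{L:intermediate mod p subring}). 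Second, I would conclude strict exactness of the union using the open mapping theorem (Theorem~\ref{T:open mapping}) once completeness is in hand — but here the union is not yet complete, so the cleanest route is to pass directly to the completion. Third, I would take the $\overline{\alpha}_\psi^{r}$-completion: since $\tilde R_{\psi^k} = \widehat{\breve R_{\psi^k}}$ and the differentials are bounded with norm $1$, a bounded strict exact complex of Banach spaces remains strict exact after completion provided the strictness is \emph{uniform} and one has a bounded contracting homotopy, or alternatively one invokes the standard fact that the completion of a strict exact complex of normed groups with a bounded splitting is strict exact. The relevant input — that $R_{\psi^\bullet} \to \tilde R_{\psi^\bullet}$ admits the structure needed — comes from the optimal-lift machinery of Lemma~\ref{L:optimal lifts} and Corollary~\ref{C:lift presentation}, which give good quantitative control on approximations by the imperfect subring.

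\textbf{Main obstacle.} The delicate point is uniformity of the strictness constants across the filtration $\overline{\varphi}_\varpi^{-m}(R_{\psi^\bullet})$ as $m \to \infty$, and then the passage to the completion. Exactness of a direct limit of exact complexes is automatic, but \emph{strict} exactness of the completion requires either a uniform bound on the norms of the splitting maps $s^k$ (so that they extend to the completion) or an argument via the open mapping theorem after completion. I expect the honest work is to show that the splittings produced by Lemma~\ref{L:decompleting all} for each truncation $\psi_{(n')}$ can be assembled into splittings of the $\breve R$-level complex with operator norms bounded independently of $m$; the rescaling behavior of $\overline{\varphi}_\varpi$ on $\lambda(\overline{\alpha}_\psi^{\bullet})$ is what makes this possible, exactly as in the proof of Lemma~\ref{L:optimal lifts} where the constant $(c/p)^{r/r_1}$ is driven below $1$ by shrinking $r$. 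Once that uniform bound is secured, completing the complex and invoking Theorem~\ref{T:open mapping} for the ambient complete rings $\tilde R_{\psi^k}$ finishes the argument.
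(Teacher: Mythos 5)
Your plan follows the paper's proof: after reducing to $n=0$, the paper shows the subcomplexes $\overline{\varphi}^{-m}(R_{\psi^\bullet})/R_{\psi^\bullet}$ are \emph{uniformly} strict exact by exactly the mechanism you describe --- lifting a cocycle one Frobenius step at a time, with the strictness constant $c$ of Lemma~\ref{L:decompleting all} entering at the $j$-th step raised to the power $p^{1-j}$, so the total cost is bounded by $(c\epsilon)^{p^{-1}+\cdots+p^{1-m}} < (c\epsilon)^{1/(p-1)}$ independently of $m$ --- and then passes to the completion by density, which is precisely the ``honest work'' you deferred. The only corrections: the truncations $\psi_{(n')}$ play no role in this step (Lemma~\ref{L:decompleting all} is invoked only for the tower $\psi$ itself, the iteration taking place entirely inside its Frobenius filtration), and the relevant norm in this characteristic-$p$ setting is $\overline{\alpha}_\psi$ rather than $\lambda(\overline{\alpha}_\psi^r)$.
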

\begin{proof}
We may simplify notation by considering only the case $n=0$.
It suffices to check that the complexes $\overline{\varphi}^{-m}_{\varpi}(R_{\psi^\bullet})/R_{\psi^\bullet}$ are uniformly strict exact over all $m > 0$. To check this, fix
$k \geq 0$
and choose $\epsilon, c \geq 1$ so that every cochain of $\overline{\varphi}^{-1}_{\varpi}(R_{\psi^k})/R_{\psi^k}$ can be lifted to a cochain of $\overline{\varphi}^{-1}_{\varpi}(R_{\psi^k})$ at most $\epsilon$ times its norm and every cocycle in $\overline{\varphi}^{-1}_{\varpi}(R_{\psi^k})/R_{\psi^k}$
can be lifted to a cochain of norm at most $c$ times its norm.
Let $x$ be a cocycle in $\overline{\varphi}^{-m}_{\varpi}(R_{\psi^k})/R_{\psi^k}$ of norm $t$.
We may then lift the projection of $x$ to $\overline{\varphi}^{-m}_{\varpi}(R_{\psi^k})/\overline{\varphi}^{-m+1}_{\varpi}(R_{\psi^k})$
to a cochain of 
norm at most $c^{p^{1-m}} t$.
This cochain in turn lifts to a cochain $y$ in $\overline{\varphi}^{-m}_{\varpi}(R_{\psi^{k-1}})/R_{\psi^{k-1}}$ of norm at most $(c \epsilon)^{p^{1-m}} t$.
Now $x-d^{k-1}(y)$ is a cocycle in  $\overline{\varphi}^{1-m}_{\varpi}(R_{\psi^{k}})/R_{\psi^{k}}$; repeating the process, we lift $x$ to a cochain of norm at most
$(c \epsilon)^{p^{-1} + \cdots + p^{1-m}} t < (c \epsilon)^{1/(p-1)}t$.
This proves the claim.
\end{proof}

\begin{cor} \label{C:decompleting uniform}
There exists $r_0 > 0$ such that for $0 < s \leq r \leq r_0$,
for each nonnegative integer $n$, 
the complexes
$\varphi_\varpi^{-1}({\bA}^{\dagger,r/p^h}_{\psi_{(n)}^\bullet})/\bA^{r}_{\psi_{(n)}^\bullet}$ 
and
$\tilde{\bA}^{\dagger,r}_{\psi_{(n)}^\bullet}/\bA^{r}_{\psi_{(n)}^\bullet}$ 
are strict exact for the norms $\lambda(\overline{\alpha}_{\psi_{(n)}^\bullet}^r)$,
and the complexes
$\varphi_\varpi^{-1}(\bC^{[s/p^h,r/p^h]}_{\psi_{(n)}^\bullet})/\bC^{[s,r]}_{\psi_{(n)}^\bullet}$
and
$\tilde{\bC}^{[s,r]}_{\psi_{(n)}^\bullet}/\bC^{[s,r]}_{\psi_{(n)}^\bullet}$ 
are strict exact for the norms 
$\max\{\lambda(\overline{\alpha}_{\psi_{(n)}^\bullet}^s),
\lambda(\overline{\alpha}_{\psi_{(n)}^\bullet}^r)\}$.
\end{cor}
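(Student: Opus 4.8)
The plan is to deduce Corollary~\ref{C:decompleting uniform} from the strict exactness statements for the ``naked'' rings $R_\psi, \tilde R_\psi$ (Lemma~\ref{L:decompleting all}, Lemma~\ref{L:decompleting uniform}) by bootstrapping up the tower of period rings in exactly the way the $\bA$- and $\bC$-rings are built on top of the reduction mod $\varpi$. Concretely, since the tower $\psi$ is decompleting, it is in particular weakly decompleting, so we may fix $r_0>0$ as in Lemma~\ref{L:optimal lifts}, applied simultaneously to $\psi$ and to all the truncations $\psi_{(n)}$ (the constant can be taken uniform in $n$ because the lifting constant in Definition~\ref{D:weakly decompleting}(b) is inherited under truncation, as the rings $R_{\psi_{(n)}}$ and the presentations stabilize). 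Throughout, it suffices to treat $n=0$, since each $\psi_{(n)}$ is again decompleting and the argument is identical.

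\textbf{Key steps.} First I would handle the integral rings $\bA^r$. Using Corollary~\ref{C:lift presentation}, every element of $\bA^r_{\psi^k}$ (or of $\tilde\bA^{\dagger,r}_{\psi^k}$, writing $\tilde R$ for the reduction) can be written as a $\lambda(\overline\alpha^r)$-convergent sum $\sum_{m\geq 0}\varpi^m y_m$ with $y_m$ lifting elements $\overline y_m\in R_{\psi^k}$ (resp.\ $\tilde R$) essentially optimally, and with $\lambda(\overline\alpha^s)(\sum \varpi^m y_m)=\max_m\{p^{-m}\overline\alpha^s(\overline y_m)\}$. This reduces the strict exactness of $\varphi_\varpi^{-1}(\bA^{\dagger,r/p^h}_{\psi^\bullet})/\bA^r_{\psi^\bullet}$ and of $\tilde\bA^{\dagger,r}_{\psi^\bullet}/\bA^r_{\psi^\bullet}$ to a ``coefficientwise'' statement: a cocycle is killed $\varpi$-adically layer by layer using the strict exactness of $\tilde R_{\psi^\bullet}/R_{\psi^\bullet}$ from Lemma~\ref{L:decompleting uniform} (and of $\overline\varphi_\varpi^{-1}(R_{\psi^\bullet})/R_{\psi^\bullet}$ from Lemma~\ref{L:decompleting all}). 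The uniformity of the lifting constant in $k$ and the geometric decay in the successive approximations give a single overall bound, hence strictness; this is the same device as in the proof of Lemma~\ref{L:decompleting uniform} itself, with the $p$-power exponents replaced by $\varpi$-adic filtration steps. Inverting $\varpi$ then gives the corresponding statement for $\bB^r$. Second, I would pass from $\bA/\bB$ to $\bC^{[s,r]}$. Here I would invoke Lemma~\ref{L:decompose big}: every element of $\bC^{[s,s]}_{\psi^k}$ decomposes as $y+z$ with $y\in\bA^{\dagger,s}_{\psi^k}$ and $z\in\bC^{[s,r]}_{\psi^k}$ with controlled norm, and Lemma~\ref{L:interval intersection} and Lemma~\ref{L:intersect dagger} let one reassemble a cocycle on $[s,r]$ from its restrictions. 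Combined with the already-established exactness on the $\bA^{\dagger}$-rings and with the (already known) fact that $\bC^{[s,r]}_\psi$ is complete for $\max\{\lambda(\overline\alpha^s),\lambda(\overline\alpha^r)\}$ (using that $\tilde\bC^{[s,r]}_{\psi^\bullet}$ is strict exact, which itself follows by the same bootstrap from $\tilde R$ together with the reality-check Lemmas~\ref{L:decompose big}, \ref{L:interval intersection}), one obtains strict exactness of $\varphi_\varpi^{-1}(\bC^{[s/p^h,r/p^h]}_{\psi^\bullet})/\bC^{[s,r]}_{\psi^\bullet}$ and of $\tilde\bC^{[s,r]}_{\psi^\bullet}/\bC^{[s,r]}_{\psi^\bullet}$. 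The open mapping theorem (Theorem~\ref{T:open mapping}) upgrades mere exactness to strictness wherever convenient, exactly as in Lemma~\ref{L:decompleting all}.

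\textbf{Main obstacle.} I expect the genuinely delicate point to be the \emph{uniformity} of the strictness constants across the two directions in which one takes limits at once: the simplicial degree $k$ in $\psi^\bullet$ (where the \v Cech differentials get no better bounded as $k$ grows) and the radius parameter (passing between $[s,r]$ and subintervals, and between $r$ and $r/p^h$ under $\varphi_\varpi^{-1}$). One has to arrange that the constant $c$ controlling a single lift on $\psi^k$ can be chosen independent of $k$ — this uses that $R_{\psi^k}=R_{\psi^1}{}^{\widehat\otimes}$-type rings and the strict exactness of $\tilde R_{\psi^\bullet}/R_{\psi^\bullet}$ is genuinely \emph{strict}, not just exact — and then feed that into a telescoping estimate of the form $(c\epsilon)^{p^{-1}+p^{-2}+\cdots}<(c\epsilon)^{1/(p-1)}$, as in Lemma~\ref{L:decompleting uniform}, so that the infinitely many correction terms contribute only a bounded factor. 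Getting this telescoping to interact correctly with the $\varpi$-adic filtration on $\bA$ and with the two-parameter norm $\max\{\lambda(\overline\alpha^s),\lambda(\overline\alpha^r)\}$ on $\bC^{[s,r]}$ is the crux; once it is in place, the rest is the routine propagation of strict exactness through completions and localizations already carried out repeatedly in \S\ref{subsec:reality checks}.
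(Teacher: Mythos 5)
Your argument is essentially the paper's own proof, just spelled out: the paper deduces the corollary directly from Lemma~\ref{L:optimal lifts} (via the $\varpi$-adic expansions of Corollary~\ref{C:lift presentation}) together with the strict exactness mod $\varpi$ given by Lemma~\ref{L:decompleting uniform}, exactly the bootstrap you describe. Your expansion of the details, including the passage to the rings $\bC^{[s,r]}$ via Lemmas~\ref{L:decompose big} and~\ref{L:interval intersection}, is a correct elaboration of what the paper treats as immediate.
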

\begin{proof}
This is immediate from Lemma~\ref{L:optimal lifts}
and Lemma~\ref{L:decompleting uniform}.
\end{proof}

We next indicate how to execute some descent constructions, generalizing the approach to decompletion described in \cite[\S 2]{kedlaya-new-phigamma}.
To make corresponding statements about pseudocoherent $\Gamma$-modules, we need some extra restrictions on $\psi$; see \S\ref{subsec:Frobenius splittings}.

\begin{lemma} \label{L:Gamma quasi-isomorphism}
For $r_0$ as in Corollary~\ref{C:decompleting uniform},
choose $r,s$ with $0 < s \leq r \leq r_0$ and $k \geq 0$.
\begin{enumerate}
\item[(a)]
For $M$ a $\Gamma$-module over $R_\psi$, 
there exists $m_0 \geq 0$ such that for all $m \geq m_0$,
the complexes
\[
M \otimes_{R_\psi} (\overline{\varphi}_\varpi^{-m-1}(R_{\psi^{\bullet}})/\overline{\varphi}_\varpi^{-m}(R_{\psi^\bullet})),
\qquad
M \otimes_{R_\psi} (\tilde{R}_{\psi^{\bullet}}/\overline{\varphi}_\varpi^{-m}(R_{\psi^\bullet}))
\]
are strict exact in degree $k$.
\item[(b)]
For $M$ a $\Gamma$-module over $\bA^r_{\psi}$,
there exists $m_0 \geq 0$ such that for all $m \geq m_0$, 
the complexes
\[
M \otimes_{\bA^r_{\psi}} (\varphi_\varpi^{-m-1}(\bA^{rp^{-h(m+1)}}_{\psi^{\bullet}})/\varphi_\varpi^{-m}(\bA^{rp^{-hm}}_{\psi^{\bullet}})),
\,
M \otimes_{\bA^r_{\psi}} 
(\tilde{\bA}^{r}_{\psi^{\bullet}}/\varphi_\varpi^{-m}(\bA^{rp^{-hm}}_{\psi^{\bullet}}))
\]
are strict exact in degree $k$.
\item[(c)]
For $M$ a $\Gamma$-module over $\bC^{[s,r]}_{\psi}$,
there exists $m_0 \geq 0$ such that for all $m \geq m_0$, 
the complexes
\begin{gather*}
M \otimes_{\bC^{[s,r]}_{\psi}} (\varphi_\varpi^{-m-1}(\bC^{[sp^{-h(m+1)},rp^{-h(m+1)}]}_{\psi^{\bullet}})/\varphi_\varpi^{-m}(\bC^{[sp^{-hm},rp^{-hm}]}_{\psi^{\bullet}})),
\\
M \otimes_{\bC^{[s,r]}_{\psi}} 
(\tilde{\bC}^{[s,r]}_{\psi^{\bullet}}/\varphi_\varpi^{-m}(\bC^{[sp^{-hm},rp^{-hm}]}_{\psi^{\bullet}}))
\end{gather*}
are strict exact in degree $k$.
\end{enumerate}
\end{lemma}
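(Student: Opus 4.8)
\textbf{Proof strategy for Lemma~\ref{L:Gamma quasi-isomorphism}.}
The plan is to reduce all three parts to the strict exactness results already established in Lemma~\ref{L:decompleting all}, Lemma~\ref{L:decompleting uniform}, and Corollary~\ref{C:decompleting uniform}, using the ``small connection'' principle from Remark~\ref{R:small connection} to transfer strict exactness from the underlying ring complexes to the complexes tensored with $M$. First I would treat part (a), which is the cleanest case since the rings carry no extra Fr\'echet structure. By Lemma~\ref{L:decompleting uniform} (applied to the truncated towers $\psi_{(n)}$), the complexes $\tilde{R}_{\psi^{\bullet}}/R_{\psi^\bullet}$ and the graded pieces $\overline{\varphi}_\varpi^{-m-1}(R_{\psi^\bullet})/\overline{\varphi}_\varpi^{-m}(R_{\psi^\bullet})$ are strict exact uniformly in $m$ (the uniformity in $m$ is exactly what the proof of Lemma~\ref{L:decompleting uniform} extracts). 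Now equip $M$ with a quotient norm from a finite presentation; by Remark~\ref{R:small connection}, after replacing $\psi$ by $\psi_{(n_0)}$ for $n_0$ large, the differentials of $M \otimes_{R_\psi}(-)$ in degree $k$ differ from the base extensions of the ring differentials by operators of norm $c < \epsilon$. A standard perturbation-of-strict-exactness argument (a contracting-homotopy estimate: if a complex of Banach modules is strict exact with a homotopy of norm $\leq C$ in a given degree, then any perturbation of the differential by norm $c < 1/C$ keeps it strict exact there) then yields strict exactness of the perturbed complex in degree $k$. The ``$m \geq m_0$'' in the statement absorbs the dependence of the homotopy constant on $m$, which is controlled precisely because the ring-level exactness was uniform in $m$.

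For parts (b) and (c) the same scheme applies, but now the relevant ring-level strict exactness statements are those of Corollary~\ref{C:decompleting uniform}, for the norms $\lambda(\overline{\alpha}^r_{\psi^{\bullet}})$ in case (b) and $\max\{\lambda(\overline{\alpha}^s_{\psi^\bullet}), \lambda(\overline{\alpha}^r_{\psi^\bullet})\}$ in case (c). I would fix a finite presentation of $M$ as a module over $\bA^r_\psi$ (resp.\ $\bC^{[s,r]}_\psi$) and equip it with the corresponding quotient norm; the ``Leibniz rule'' form of the small-connection estimate (Remark~\ref{R:small connection Galois}, together with the continuity of the $\Gamma$-action on the coefficient rings) shows that, after truncating the tower, the $M$-differentials again differ from the scalar differentials by operators of small operator norm. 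Applying the same perturbation argument degree by degree gives the claim. One subtlety worth flagging: in cases (b) and (c) one must check that the open-mapping-theorem hypotheses are in force, i.e.\ that the modules appearing (base extensions of $M$ along the tower maps, completions, and the quotient modules $\varphi_\varpi^{-m-1}(\cdots)/\varphi_\varpi^{-m}(\cdots)$) are complete metrizable modules over open mapping rings; this follows from Remark~\ref{R:Robba quasi-Stein1} and Corollary~\ref{C:topologically 3-pseudocoherent} since $M$ is $2$-pseudocoherent, so its natural topology agrees with the quotient topology and is complete.

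\textbf{Main obstacle.} I expect the genuine work to be the uniformity in $m$ together with the bookkeeping of which quotient complexes are genuinely strict exact: Corollary~\ref{C:decompleting uniform} is stated for $\tilde{\bA}^{\dagger,r}/\bA^r$ and $\varphi^{-1}(\bA^{\dagger,r/p^h})/\bA^r$ directly, but the lemma asks for the finer graded pieces $\varphi^{-m-1}(\cdots)/\varphi^{-m}(\cdots)$ and the truncated quotients $\tilde{\bA}^r/\varphi^{-m}(\cdots)$. Deriving the latter from the former requires iterating Corollary~\ref{C:decompleting uniform} along $\varphi$ and tracking how the strict-exactness constant degrades (it should \emph{improve} geometrically as $m$ grows, since applying $\varphi^{-1}$ shrinks the relevant radii and the homotopy norms, exactly as in the telescoping estimate $(c\epsilon)^{p^{-1}+\cdots+p^{1-m}}$ appearing in the proof of Lemma~\ref{L:decompleting uniform}); this is where the constant $m_0$ enters, and it is the only place where a real, if routine, estimate is needed. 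Everything else is the formal perturbation argument plus the already-proven ring-level decompletion statements.
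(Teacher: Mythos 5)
Your proposal follows essentially the same route as the paper's proof: truncate the tower so that Remark~\ref{R:small connection} makes the $M$-differentials a small perturbation of the base-extended ring differentials, invoke the ring-level strict exactness of Lemma~\ref{L:decompleting all}/Lemma~\ref{L:decompleting uniform} (and Corollary~\ref{C:decompleting uniform} for (b),(c)) with lifting constants that improve as $m$ grows under $\varphi_\varpi^{-1}$ to choose $m_0$, and then run the contraction/successive-approximation argument using completeness of $M$. The only quibble is that your first paragraph's appeal to exactness ``uniform in $m$'' understates the point (mere uniform boundedness would not explain the need for $m_0$), but your closing paragraph correctly identifies the improvement of the homotopy constant with $m$, which is exactly how the paper arranges the lifting constant $c^{-1/3}$ against the perturbation norm $c$.
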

\begin{proof}
We prove only (a), as (b) and (c) follow by similar arguments
thanks to Corollary~\ref{C:decompleting uniform}.
We may freely replace $\psi$ with $\psi_{(n)}$ for some conveniently large $n$
depending on $k$, so as to land in the setting of Remark~\ref{R:small connection}. By Lemma~\ref{L:decompleting all}, we may then choose $m$ so that for the differentials in and out of $\overline{\varphi}_\varpi^{-m-1}(R_{\psi^{k}})/\overline{\varphi}_\varpi^{-m}(R_{\psi^k})$, any element of the image having norm $t$ lifts to an element of the domain having norm at most $c^{-1/3} t$. 
We then find that every cocycle in $M \otimes_{R_\psi} (\overline{\varphi}_\varpi^{-m-1}(R_{\psi^{k}})/\overline{\varphi}_\varpi^{-m}(R_{\psi^k}))$ of norm $t$ differs from a coboundary by an element of norm at most $c^{1/3}t$.
This implies the first claim because $M$ is complete; the second claim follows by a similar argument in which Lemma~\ref{L:decompleting all} is replaced with 
Lemma~\ref{L:decompleting uniform}.
\end{proof}
\begin{cor} \label{C:finite etale decompleting}
Let $\psi'$ be the base extension of $\psi$ along any finite \'etale morphism. Then $\psi'$ is also decompleting.
\end{cor}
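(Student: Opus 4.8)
The plan is to verify that the base extension $\psi'$ of $\psi$ along a finite \'etale morphism $\sigma: (A,A^+) \to (B,B^+)$ retains the decompleting property. By Proposition~\ref{P:weakly decompleting tower persistence}(b), $\psi'$ is already finite \'etale and weakly decompleting, with $R_{\psi'}$ finite \'etale over $R_\psi$; thus it remains only to check the extra exactness hypothesis of Definition~\ref{D:decompleting tower}, namely that for $n$ sufficiently large the complex $\overline{\varphi}_\varpi^{-1}(R_{\psi_{(n)}'^\bullet})/R_{\psi_{(n)}'^\bullet}$ is exact.

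First I would observe that it suffices to treat the case $n=0$, since by Proposition~\ref{P:weakly decompleting tower persistence} the truncated towers $\psi'_{(n)}$ are themselves base extensions (along the same $\sigma$) of the truncated towers $\psi_{(n)}$, which are decompleting once $n$ is large enough. Next, the key structural point is that $R_{\psi'} = R_\psi \otimes_{R_\psi} R_{\psi'}$ is a finite projective $R_\psi$-module (indeed faithfully finite \'etale), and hence a flat, in fact even a direct summand of a finite free $R_\psi$-module. Therefore, writing $P = R_{\psi'}$ viewed as an $R_\psi$-module, the complex $\overline{\varphi}_\varpi^{-1}(R_{\psi'^\bullet})/R_{\psi'^\bullet}$ may be identified with $P^{\bullet} \otimes_{R_\psi}$ applied to the complex $\overline{\varphi}_\varpi^{-1}(R_{\psi^\bullet})/R_{\psi^\bullet}$, once one checks that the $k$-fold completed tensor products behave well: $R_{\psi'^k} \cong R_{\psi^k} \otimes_{R_\psi} R_{\psi'}$ compatibly with the face maps $i_{k,j}$. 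This compatibility follows because finite \'etale base extension commutes with the formation of the tensor-product towers, and because finiteness of $R_{\psi'}$ over $R_\psi$ lets one replace completed tensor products by ordinary ones without loss.

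Given this identification, the result follows by tensoring the strict exact complex $\overline{\varphi}_\varpi^{-1}(R_{\psi^\bullet})/R_{\psi^\bullet}$ (strict exactness coming from Lemma~\ref{L:decompleting all}) with the finite projective $R_\psi$-module $R_{\psi'}$; a direct summand of a finite free module preserves exactness of complexes of Banach modules, and strictness is preserved as well by the open mapping theorem (Theorem~\ref{T:open mapping}) since all the modules involved remain complete and metrizable. Alternatively, and perhaps more cleanly, one can invoke faithfully flat descent directly: the map $R_{\psi'} \to R_{\psi''}$ for a sufficiently large level $\psi''$ (obtained by further base extension) is faithfully flat, and the equalizer of the two maps $R_{\psi'} \to R_{\psi'^1}$ computes the invariants as in Lemma~\ref{L:equalizer}, so exactness of the \v{C}ech-type complex follows from exactness for $\psi$ together with finite flat base change.

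The main obstacle I anticipate is bookkeeping around the completed tensor products: one must be careful that the rings $R_{\psi'^k}$ (defined via completions of $(k+1)$-fold tensor products, as in Definition~\ref{D:Cech complex}) are genuinely the finite \'etale base extensions of the corresponding $R_{\psi^k}$, so that the face maps and the $\overline{\varphi}_\varpi$-filtration are transported correctly. This is where finiteness of the \'etale morphism is essential — it allows one to commute the completion past the (now finite) module tensor product — and it is the reason the statement is limited to \emph{finite} \'etale base extensions rather than, say, arbitrary \'etale ones. Once this compatibility is pinned down, the exactness and strictness are formal consequences of the decompleting property of $\psi$ and the preservation of strict exact sequences under tensoring with a finite projective module.
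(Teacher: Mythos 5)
Your reduction to the level of the rings $R$, and your observation that $R_{\psi'}$ is finite projective (indeed finite \'etale) over $R_\psi$ with $\overline{\varphi}^{-1}(R_{\psi'}) \cong R_{\psi'} \otimes_{R_\psi} \overline{\varphi}^{-1}(R_\psi)$, are fine, and the bookkeeping you flag (identifying $R_{\psi'^k}$ with $R_{\psi^k} \otimes_{R_\psi} R_{\psi'}$) is real but is the lesser issue. The genuine gap is the central step: the complex $\overline{\varphi}_\varpi^{-1}(R_{\psi'^\bullet})/R_{\psi'^\bullet}$ is \emph{not} the plain base extension of $\overline{\varphi}_\varpi^{-1}(R_{\psi^\bullet})/R_{\psi^\bullet}$ by the $R_\psi$-module $R_{\psi'}$. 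The differentials $d^k = \sum_j (-1)^j i_{k,j}$ are not $R_\psi$-linear for any single module structure on $R_{\psi^k}$ (the vertex maps $R_\psi \to R_{\psi^k}$ are distinct ring maps), so "$d \otimes \mathrm{id}_{R_{\psi'}}$" is not well defined; under any consistent termwise identification $R_{\psi'^k} \cong R_{\psi^k} \otimes_{R_\psi} R_{\psi'}$, exactly one face map at each level acquires a twist by the descent datum $\iota$ of $R_{\psi'}$ over $R_\psi$ — this is precisely the $\Gamma$-module structure of Definition~\ref{D:Gamma-module}, and in the Galois picture of Example~\ref{exa:Galois tower} the complex for $\psi'$ is the continuous cochain complex with coefficients in the twisted module $R_{\psi'} \otimes_{R_\psi} (\overline{\varphi}^{-1}(R_\psi)/R_\psi)$, not a tensor-trivial one. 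Exactness is not preserved under such twists by a formal flatness argument: for instance $H^i_{\cont}(\Gamma_K, \CC_K(1)) = 0$ for all $i$, yet twisting by the rank-one module $\CC_K(-1)$ produces nonvanishing cohomology. So "tensoring an exact complex of Banach modules with a direct summand of a finite free module" cannot close the argument, and your alternative faithfully-flat-descent suggestion does not address the twist either (there is no characteristic-$p$ avatar of the base $A$ along which to perform the base change).

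This is exactly why the paper has Lemma~\ref{L:Gamma quasi-isomorphism}: its proof is quantitative, not formal. One first replaces $\psi$ by $\psi_{(n)}$ so that, by Remark~\ref{R:small connection}, the twisted differential differs from the untwisted one by an operator of small norm, and then uses the strict exactness of Lemma~\ref{L:decompleting all} (resp.\ Lemma~\ref{L:decompleting uniform}), whose constants improve under deep Frobenius twists $\overline{\varphi}^{-m}$, to run a perturbation/iteration argument. The paper's proof of the corollary is then a one-liner: view $R_{\psi'_{(n)}}$ as a $\Gamma$-module over $R_{\psi_{(n)}}$ (using Proposition~\ref{P:weakly decompleting tower persistence} to know it is finite projective) and apply Lemma~\ref{L:Gamma quasi-isomorphism}. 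To repair your write-up you should replace the flatness step by an appeal to that lemma (or reproduce its approximation argument), keeping your observations about finiteness and the \'etale compatibility of Frobenius as supporting facts.
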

\begin{proof}
View $R_{\psi'_{(n)}}$ as a $\Gamma$-module over $R_{\psi_{(n)}}$ 
and apply Lemma~\ref{L:Gamma quasi-isomorphism}.
\end{proof}

\begin{lemma}  \label{L:projective dense subring}
Let $(A_i, \alpha_i)_{i \in I}$ be a directed system in the category of Banach rings and submetric homomorphisms. Let $A$ be the direct limit of the $A_i$ equipped with the infimum seminorm.
Let $A \to R$ be an isometric homomorphism with dense image from $A$ to a Banach ring $R$. Then every finite projective module over $R$ arises by base extension from some finite projective module over some $A_i$.
\end{lemma}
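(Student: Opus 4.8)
The plan is to argue that a finite projective module over $R$ is ``approximately defined over $A$'' via an idempotent matrix, then descend that idempotent to one of the $A_i$. Concretely, a finite projective $R$-module $P$ is the image of an idempotent $e \in \mathrm{Mat}_{n \times n}(R)$ for some $n$. Using that $A$ has dense image in $R$, I would first approximate $e$ by a matrix $e'$ over the image of $A$ that is close enough in the operator norm (equivalently, entrywise in $\alpha$) that $e'$ is ``almost idempotent'', i.e. $\|(e')^2 - e'\|$ is small. Then the standard trick applies: if $\|(e')^2 - e'\| < 1/4$, the matrix $e'$ has no spectrum near $1/2$, and one can replace it by a genuine idempotent $e'' $ in the same matrix algebra, lying in the closed subalgebra generated by $e'$, which is therefore again a matrix over (the closure of) the image of $A$. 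Since $e''$ is close to $e'$ which is close to $e$, the idempotents $e$ and $e''$ are conjugate over $R$ (idempotents within distance depending on $\|e\|$ are conjugate by an invertible element built from $1 + (e-e'')(\cdots)$), so $\mathrm{image}(e'') \otimes_{?} R \cong P$.

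The subtlety is that after the first approximation step, $e'$ has entries in the image of $A$ in $R$, not literally in $A$, and the idempotent $e''$ produced by the functional-calculus argument lives a priori only in the completion. To handle this cleanly I would work as follows. First, I would arrange $e'$ to have entries that are honest images of elements of $A$ (possible since $A \to R$ has dense image; approximation of finitely many entries involves only finitely many elements, and since $A = \varinjlim A_i$, these come from a single $A_{i_0}$ — here I use that the transition maps and $A \to R$ are submetric/isometric so that norms are controlled). Thus $e'$ is the image of a matrix $\tilde e'$ over some $A_{i_0}$. Now $\|(\tilde e')^2 - \tilde e'\|$ computed in $A_{i_0}$ (with the relevant seminorm) may exceed its value in $R$; but because $A = \varinjlim A_i$ carries the infimum seminorm and $A \to R$ is isometric, by passing to a larger index $i_1 \geq i_0$ I can make the norm of $(\tilde e')^2 - \tilde e'$ in $A_{i_1}$ as close as desired to its (small) norm in $R$. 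Having made it $< 1/4$ in $A_{i_1}$, the matrix $\tilde e'$ is itself a matrix over a Banach ring $A_{i_1}$ in which it is almost idempotent, so the functional-calculus argument runs entirely inside $A_{i_1}$ (or its completion, which we may as well replace $A_{i_1}$ by, or rather we note the idempotent lands in the closed subalgebra generated by $\tilde e'$, which for our purposes we can absorb by allowing $A_{i_1}$ to be complete — if it is not, replace it by its completion and then re-descend, using that the completion is again a filtered colimit target). This produces a genuine idempotent $\tilde e'' \in \mathrm{Mat}_{n\times n}(A_{i_1})$ whose image in $R$ is conjugate to $e$, so $P' = \mathrm{image}(\tilde e'')$ is a finite projective $A_{i_1}$-module with $P' \otimes_{A_{i_1}} R \cong P$.

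The step I expect to be the main obstacle is the bookkeeping around seminorms in the filtered colimit: making precise that almost-idempotency of $\tilde e'$, or more simply the statement $\|(\tilde e')^2 - \tilde e'\|_{A_{i}} \to \|(e')^2 - e'\|_{R}$ along the system, really follows from the submetric hypothesis on the transition maps plus the infimum-seminorm description of $A$ plus the isometry of $A \to R$. (One has to be careful that $A$ itself is only a \emph{semi}normed ring — the infimum seminorm need not be a norm — but this causes no trouble since we never need $A$ to be Hausdorff; only the individual $A_i$ are Banach, and the functional calculus is carried out in one $A_i$.) Once that reduction is in place, the idempotent-lifting and idempotent-conjugacy lemmas are entirely standard (they are the same ones underlying the proof that $K_0$ is compatible with colimits of Banach algebras, cf.\ the argument behind \cite[Tag~08WE]{stacks-project} for ordinary rings), and conclude the proof. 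A final remark: one should also verify that the resulting equivalence is compatible with morphisms if one wants a statement at the level of categories, but as stated the lemma only asserts essential surjectivity of base extension, so the argument above suffices.
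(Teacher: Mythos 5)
Your overall strategy is the same as the paper's: represent the finite projective $R$-module as the image of a projector matrix, approximate that matrix by one defined over a single $A_{i_0}$, correct the almost-idempotent to an honest idempotent inside a (complete) $A_{i_1}$, and then identify the two images over $R$ by an explicit invertible element built from the two nearby idempotents. Your bookkeeping point is also correct and genuinely needed: since $A$ carries the infimum seminorm and $A\to R$ is isometric, the norm of $(\tilde e')^2-\tilde e'$ in $A_i$ decreases along the system to its norm in $R$, so after enlarging the index you may assume it is as small as you like \emph{in $A_{i_1}$}, which is exactly what makes the correction step run inside $A_{i_1}$ rather than only in $R$. (Also, your worry about completing $A_{i_1}$ is vacuous: the $A_i$ are Banach rings by hypothesis.)

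The one step that does not survive as stated is the idempotent-lifting itself. You invoke the archimedean functional-calculus picture: ``$\lVert (e')^2-e'\rVert<1/4$, no spectrum near $1/2$, pass to an idempotent in the closed subalgebra generated by $e'$.'' In this paper the Banach rings are nonarchimedean $\gotho_E$-algebras, and the lemma is applied (in the proof of Lemma~\ref{L:Gamma equivalences}) to rings such as $\tilde{R}_\psi$, which is of characteristic $p$ -- possibly $p=2$ -- and to rings in which $p$ is topologically nilpotent; there $1/2$ need not exist, the binomial series for $(1+x)^{-1/2}$ has $2$-adic denominators, and ``spectrum near $1/2$'' has no meaning. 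The standard repair, and what the paper actually does, is a division-free Newton iteration: set $W_0$ equal to your approximating matrix and $W_{l+1}=3W_l^2-2W_l^3$; the integer-coefficient identities $W_{l+1}-W_l=(W_l^2-W_l)(1-2W_l)$ and $W_{l+1}^2-W_{l+1}=(W_l^2-W_l)^2(4W_l^2-4W_l-3)$ give convergence in $A_{i_1}$ to a true projector once the initial approximation is good enough, with a threshold depending on the norm of the original projector (the paper uses $\left|U-V\right|<\left|U\right|^{-3}$), not an absolute $1/4$. With that substitution (and the conjugacy step phrased via the invertible element $UW+(1-U)(1-W)$, as in the paper), your argument is complete and coincides with the paper's proof.
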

\begin{proof}
Let $M$ be a finite projective $R$-module, and choose an $R$-linear surjection $F \to M$ with $F$ a finite free $R$-module. Choose a projector on $F$ corresponding to a splitting of $F \to M$, and represent this projector by the matrix $U$.
We may then choose a matrix $V$ over some $A_i$ such that $\left| U - V \right| < \left| U \right|^{-3}$. Define the sequence $W_0, W_1, \dots$ by Newton iteration, taking $W_0 = V$ and $W_{l+1} = 3W_l^2 - 2W_l^3$. Since
\begin{align*}
W_{l+1} - W_l &= (W_l^2 - W_l)(1-2W_l) \\
W_{l+1}^2 - W_{l+1} &= (W_l^2 - W_l)^2(4W_l^2 - 4W_l - 3),
\end{align*}
by induction on $l$ we have
\begin{align*}
\left|W_l - U\right| &< \left|U\right|^{-2} \\
\left|W_l^2 - W_l\right| &\leq \left|U \right|^{-2} \left( \left| V^2-V \right| \left| U\right|^2 \right)^{2^l}.
\end{align*}
Consequently, the $W_l$ converge to a projector $W$ over $A_i$ satisfying
$\left| U - W \right| < \left| U \right|^{-2}$; 
this implies that $\left| UW + (1-U)(1-W) - 1 \right| < 1$,
whence the image of $W$ is a finite projective module $M_i$ over $A_i$ satisfying $M_i \otimes_{A_i} R \cong M$.
\end{proof}

\begin{lemma} \label{L:Gamma equivalences}
Choose $r,s$ with $0<s \leq r$. For $* \in \{R, \bA^{r}, \bC^{[s,r]}\}$, base extension of $\Gamma$-modules from $\breve{*}_\psi$ to $\tilde{*}_{\psi}$
is an equivalence of tensor categories.
\end{lemma}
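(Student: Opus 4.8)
The plan is to prove this by exhibiting, for each choice of $* \in \{R, \bA^r, \bC^{[s,r]}\}$, that base extension from $\breve{*}_\psi$ to $\tilde{*}_\psi$ is fully faithful and essentially surjective on the categories of $\Gamma$-modules, using the decompletion machinery of Lemma~\ref{L:Gamma quasi-isomorphism} together with the finite \'etale equivalences of Theorem~\ref{T:big etale}. The key point is that $\tilde{*}_\psi$ is obtained from $\breve{*}_\psi$ by completion, so the passage between them is governed by a filtration whose graded pieces are controlled by the exactness statements in Corollary~\ref{C:decompleting uniform} and Lemma~\ref{L:Gamma quasi-isomorphism}.

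First I would handle full faithfulness. Given $\Gamma$-modules $M, N$ over $\breve{*}_\psi$, a morphism of the base extensions is an element of $\Hom_{\tilde{*}_\psi}(M \otimes \tilde{*}_\psi, N \otimes \tilde{*}_\psi)$ compatible with the descent isomorphisms $\iota$; I want to show every such morphism descends uniquely to $\breve{*}_\psi$. Since $M$ is finite projective, $\Hom_{\breve{*}_\psi}(M,N)$ is again a $\Gamma$-module, and the claim reduces to showing $H^0_\Gamma$ of $\Hom(M,N) \otimes (\tilde{*}_{\psi^\bullet}/\breve{*}_{\psi^\bullet})$ vanishes in degree $0$ — equivalently, that an element of $N' \otimes \tilde{*}_\psi$ (where $N' = \Hom(M,N)$) which is fixed by the descent data and which, compatibly, comes from $\breve{*}_\psi$ after applying $i_{0,0}$ and $i_{0,1}$, must already lie in $N' \otimes \breve{*}_\psi$. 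After replacing $\psi$ by a truncation $\psi_{(n)}$ to enter the small-connection regime of Remark~\ref{R:small connection}, one applies Lemma~\ref{L:Gamma quasi-isomorphism}(a) (resp.\ (b), (c)): the complex $N' \otimes (\tilde{*}_{\psi^\bullet}/\varphi_\varpi^{-m}(\cdot))$ is strict exact in the relevant degrees for $m$ large, and then a successive-approximation argument over the filtration $\breve{*}_\psi \subset \varphi_\varpi^{-1}(\cdot) \subset \cdots$, using completeness of $\tilde{*}_\psi$, forces the descent. (Uniqueness is the injectivity half, which is immediate since $\breve{*}_\psi \to \tilde{*}_\psi$ is injective and $M$ is projective.)

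Next I would handle essential surjectivity, which I expect to be the main obstacle. Given a $\Gamma$-module $\tilde{M}$ over $\tilde{*}_\psi$, I want to produce a $\Gamma$-module over $\breve{*}_\psi$ base-extending to it. Here the strategy is to use that $\breve{*}_\psi$ is the increasing union of $\varphi_\varpi^{-m}(*_\psi)$, and that $\tilde{*}_\psi$ is the completion of $\breve{*}_\psi$; the descent datum $\iota$ on $\tilde{M}$ can be approximated, using Corollary~\ref{C:decompleting uniform} and the small-connection estimates of Remark~\ref{R:small connection}, by a descent datum over some $\varphi_\varpi^{-m}(*_\psi)$, and then one corrects it iteratively. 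Concretely, after trivializing $\tilde{M}$ over a finite \'etale cover via Theorem~\ref{T:big etale} (which applies to each of the three ring systems and their $\varphi_\varpi^{-1}$-equivariant analogues), the datum of the $\Gamma$-module amounts to a cohomology class in the appropriate nonabelian $H^1$; the decompletion lemma says the natural map from the $H^1$ over $\breve{*}_\psi$ to that over $\tilde{*}_\psi$ is bijective, using strict exactness of the quotient complex in degrees $0$ and $1$. For the underlying module I would invoke Lemma~\ref{L:projective dense subring} (applied to the directed system $\varphi_\varpi^{-m}(*_\psi)$ with dense union inside $\breve{*}_\psi$, then to $\breve{*}_\psi \to \tilde{*}_\psi$) to descend the projective module, and finally patch this together with the descended descent datum, checking the cocycle condition holds because it holds after the (injective) base extension to $\tilde{*}_\psi$.

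The compatibility with tensor products is formal: base extension is a tensor functor, and the identification of $\Gamma$-module structures is compatible with $\otimes$ on both sides by construction of the complexes $M^\bullet$ in Definition~\ref{D:Gamma-module}. The genuine difficulty, as indicated, is the essential surjectivity for $* = \bC^{[s,r]}$: unlike the $\bA^r$ case, the ring $\bC^{[s,r]}_\psi$ does not admit the henselian/lifting structure used for type-$\bA$ objects (cf.\ Lemma~\ref{L:lift action1} explicitly failing for $\tilde{\calR}^{[s,r]}_R$), so one must run the decompletion argument directly at the level of the \v{C}ech complexes rather than first reducing to a $\varphi_\varpi^{-1}$-module over an integral ring. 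The estimates of Lemma~\ref{L:Gamma quasi-isomorphism}(c), which feed on Corollary~\ref{C:decompleting uniform}, are exactly what is needed to carry this through, so the proof should close without introducing new ideas beyond careful bookkeeping of the norms $\max\{\lambda(\overline{\alpha}_\psi^s), \lambda(\overline{\alpha}_\psi^r)\}$ along the filtration.
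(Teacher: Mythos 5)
Your proposal follows essentially the paper's own argument: full faithfulness is extracted from Lemma~\ref{L:Gamma quasi-isomorphism}, and essential surjectivity is obtained by descending the underlying projective module via Lemma~\ref{L:projective dense subring}, truncating to $\psi_{(n)}$ so as to enter the small-connection regime of Remark~\ref{R:small connection}, and then iteratively correcting the $\Gamma$-structure using the decompletion estimates until the discrepancy converges to zero --- exactly the paper's proof, which treats $* = R$ in detail and notes the other two cases are handled identically (there is no separate henselian input in the $\bA^{r}$ case, so your worry about $\bC^{[s,r]}$ being special is unnecessary). One caveat: the aside in which you trivialize $\tilde{M}$ over a finite \'etale cover via Theorem~\ref{T:big etale} and read off a bijection of nonabelian $H^1$'s directly from the decompletion lemma should be dropped; finite \'etale covers do not trivialize finite projective modules, and Lemma~\ref{L:Gamma quasi-isomorphism} is an abelian, coefficient-wise statement, so the nonabelian descent of the $\Gamma$-structure must be produced by the successive-approximation argument you describe afterwards rather than quoted from it. Since your final route does exactly that, the proof closes as in the paper.
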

\begin{proof}
The base extension functor is fully faithful by
Lemma~\ref{L:Gamma quasi-isomorphism},
so it suffices to check essential surjectivity. 
We explain only the case $* = R$ in detail, the other cases being similar.
Let $\tilde{M}$ be a $\Gamma$-module over $\tilde{R}_\psi$.
By Lemma~\ref{L:projective dense subring}, for some $m \geq 0$ we can write the underlying module of $\tilde{M}$ as $M \otimes_{\overline{\varphi}_\varpi^{-m}(R_\psi)} \tilde{R}_\psi$ for some finite projective module $M$ over $\overline{\varphi}_\varpi^{-m}(R_\psi)$;
to simplify notation let us assume that $m=0$.
Thanks to full faithfulness, we are free to replace $\psi$ with $\psi_{(n)}$ for some suitable $n$, so we may put ourselves in the setting of
Remark~\ref{R:small connection} for $k=0$ using a presentation of $\tilde{M}$ induced by a presentation of $M$.  
That is, the given differential on $\tilde{M}$ differs from the base extension of some differential on $M$ (initially the trivial one) by a map of operator norm $c<1$.
We may then apply Lemma~\ref{L:Gamma quasi-isomorphism} (for some sufficiently large $m$) to modify the base extension to reduce $c$ by a constant factor (depending only on $m$). By iterating this construction, we reach the desired conclusion.
\end{proof}

\subsection{\texorpdfstring{$(\varphi, \Gamma)$}{(phi, Gamma)}-modules}
\label{subsec:phigamma}

We now incorporate the action of Frobenius and use our results on $\Gamma$-modules to descend relative $(\varphi, \Gamma)$-modules over perfect period rings (which arise from the global relative $(\varphi, \Gamma)$-modules described in \cite[\S 9]{part1}) to imperfect period rings.

\begin{hypothesis} \label{H:phigamma}
Throughout \S\ref{subsec:phigamma}, 
retain Hypothesis~\ref{H:weakly decompleting Frobenius},
and assume in addition that the tower $\psi$ is decompleting.
\end{hypothesis}

\begin{defn}
Let $*_\psi$ be one of the perfect or imperfect period rings of Definition~\ref{D:period rings} on which $\varphi$ acts. 
By a \emph{projective (resp.\ pseudocoherent, fpd) $(\varphi, \Gamma)$-module} over $*_\psi$, we will mean
a projective (resp.\ pseudocoherent, fpd) $\Gamma$-module $M$ over $*_\psi$ equipped with an isomorphism $\varphi^* M \cong M$ of projective (resp.\ pseudocoherent, fpd) $\Gamma$-modules. We say that a $(\varphi, \Gamma)$-module is \emph{(globally) \'etale} if this is true of the underlying $\varphi$-module. We similarly define
\emph{projective (resp.\ pseudocoherent, fpd) $(\varphi, \Gamma)$-modules} over $\bC^r_\psi, \breve{\bC}^r_\psi, \tilde{\bC}^r_\psi, \bC^{[s,r]}_\psi, \breve{\bC}^{[s,r]}_\psi, \tilde{\bC}^{[s,r]}_\psi$
for $r>0$ and $s \in (0,r/q]$, as well as
\emph{projective (resp.\ pseudocoherent, fpd) $(\varphi, \Gamma)$-bundles} over $\bC^r_\psi, \breve{\bC}^r_\psi, \tilde{\bC}^r_\psi, \bC_\psi, \breve{\bC}_\psi, \tilde{\bC}_\psi$. 
We also impose topological restrictions as in Definition~\ref{D:imperfect phi-modules}.

As in Definition~\ref{D:Gamma-module}, we will sometimes have occasion to consider the situation where $M$ is a module with $\Gamma$-structure together with a compatible semilinear $\varphi$-action. We will refer to such extra data on $M$ as a \emph{$(\varphi, \Gamma)$-structure} on $M$.
\end{defn}

\begin{theorem} \label{T:perfect equivalence1}
The (exact tensor) categories of (projective) $(\varphi, \Gamma)$-modules over the rings in the diagram
\[
\xymatrix@R=30pt@!C=60pt{
 \bA^{\dagger}_\psi \ar[r] \ar[d]  & \breve{\bA}^\dagger_\psi \ar[d] \ar[r]
 &  \hat{\bA}^\dagger_\psi \ar[d] \ar[r] &
 \tilde{\bA}^\dagger_\psi \ar[d] \\
 \bA_\psi \ar[r] & \breve{\bA}_\psi \ar[r] & 
 \hat{\bA}_\psi \ar[r] &\tilde{\bA}_\psi
}
\]
are equivalent via the apparent base change functors.
(If $A$ is a Banach algebra over $\gotho_{E_a}$,
then by Theorem~\ref{T:pseudocoherent type A} these categories are equivalent to the category of $\gotho_{E_a}$-local systems on $\Spa(A,A^+)$.)
\end{theorem}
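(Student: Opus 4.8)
The plan is to reduce the eightfold statement to a small number of ``link'' equivalences, each of which is established by an invariance-of-scalars argument already prepared in the preceding subsections. The diagram has two rows and four columns; since every arrow is a base change functor along a ring homomorphism, it suffices to show that four well-chosen arrows are equivalences and then chase the square. Concretely, I would first establish that the four vertical arrows (passage from the overconvergent ring $*^{\dagger}_\psi$ to its $\varpi$-adic completion $*_\psi$) are equivalences, and then establish that the four horizontal arrows in the bottom row are equivalences; the top row then follows by commutativity, or is handled in parallel.

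For the horizontal arrows in the bottom row, $\bA_\psi \to \breve{\bA}_\psi \to \hat{\bA}_\psi \to \tilde{\bA}_\psi$, I would invoke Corollary~\ref{C:fully faithful to extended}, which already asserts exactly that these base change functors on $\varphi$-modules are exact tensor equivalences; adding the $\Gamma$-structure is then formal, because a $(\varphi,\Gamma)$-module is a $\varphi$-module equipped with descent data along the $\psi^{\bullet}$ tower, and the equivalence of Corollary~\ref{C:fully faithful to extended} is compatible with the cosimplicial rings $*_{\psi^\bullet}$ (here one uses that $\psi^k$, while only perfectoid and not weakly decompleting, still sits inside its completion, so the completion functor identifies $\varphi$-modules over $\bA_{\psi^k}$ with those over $\tilde{\bA}_{\psi^k}$ uniformly in $k$). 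Alternatively, one can phrase this using Lemma~\ref{L:Gamma equivalences}, which gives the equivalence of $\Gamma$-modules over $\breve{*}_\psi$ and $\tilde{*}_\psi$ for $* \in \{R, \bA^r, \bC^{[s,r]}\}$, combined with $\varpi$-adic completion and $\varprojlim$ over powers of $\varpi$ as in the proof of Corollary~\ref{C:fully faithful to extended}.

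For the vertical arrows, $*^{\dagger}_\psi \to *_\psi$, the key input is Proposition~\ref{P:fully faithful imperfect}(a), which already gives full faithfulness of base change of $\varphi$-modules from $\bA^\dagger_\psi$ to $\bA_\psi$ (and from $\breve{\bA}^\dagger_\psi$ to $\breve{\bA}_\psi$); the remaining cases $\hat{\bA}^\dagger_\psi \to \hat{\bA}_\psi$ and $\tilde{\bA}^\dagger_\psi \to \tilde{\bA}_\psi$ are handled by the same argument (intersecting inside $\tilde{\bA}_\psi$: $\hat{\bA}_\psi \cap \tilde{\bA}^\dagger_\psi = \hat{\bA}^\dagger_\psi$, etc., which follows from Lemma~\ref{L:intersect dagger}), or are already in \cite{part1}. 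For essential surjectivity of $\tilde{\bA}^\dagger_\psi \to \tilde{\bA}_\psi$ one uses the known statement from \cite{part1} (this is \cite[Theorem~8.5.3]{part1}-type content, and also follows from Theorem~\ref{T:pseudocoherent type A} applied over $\Spa(\tilde R_\psi,\tilde R_\psi^+)$); essential surjectivity then propagates to the other overconvergent rings via the horizontal equivalences and a diagram chase. Again the $\Gamma$-structure rides along because all the intersection identities are $\varphi_\varpi^{-1}$- and hence $\Gamma$-equivariant (the $\Gamma$-action being encoded by the cosimplicial rings, and Lemma~\ref{L:Gamma quasi-isomorphism} controls the descent of the $\Gamma$-structure between the imperfect and perfect rings).

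The main obstacle I anticipate is bookkeeping rather than a genuine mathematical difficulty: one must verify that all the fully-faithfulness and essential-surjectivity statements proved for $\varphi$-modules are compatible with the extra $\Gamma$-data in a way that respects the cosimplicial structure $*_{\psi^\bullet}$, and in particular that the towers $\psi^k$ (which fail to be weakly decompleting) do not obstruct this. The clean way to handle it is to observe that for \emph{each} ring in the diagram the base change to $\tilde{\bA}_\psi$ is an equivalence of the underlying-$\varphi$-module categories \emph{together with} an isomorphism of the associated complexes $M \otimes_\iota *_{\psi^\bullet}$, so that the $\Gamma$-structure (a descent datum in degree $1$ satisfying the cocycle condition in degree $2$) transports mechanically; this is exactly the pattern used in Example~\ref{exa:Galois tower} and in the proof of Lemma~\ref{L:Gamma equivalences}. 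The parenthetical identification with $\gotho_{E_a}$-local systems when $A$ is an $\gotho_{E_a}$-algebra is then immediate from Theorem~\ref{T:pseudocoherent type A} applied to $X = \Spa(A,A^+)$, since the pro-\'etale local systems on $X$ pull back to the ones on the total space of the tower and the tower realizes a neighborhood basis.
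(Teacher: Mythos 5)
Your reductions for full faithfulness and for the bottom row are the right ones (Corollary~\ref{C:fully faithful to extended} for $\bA_\psi \to \breve{\bA}_\psi \to \hat{\bA}_\psi \to \tilde{\bA}_\psi$, Proposition~\ref{P:fully faithful imperfect}(a) and the intersection identities for the overconvergent rings, Theorem~\ref{T:pseudocoherent type A} for the right-hand column), and these match the paper. The gap is in your last step: ``essential surjectivity then propagates to the other overconvergent rings via the horizontal equivalences and a diagram chase.'' At that point the only equivalences available are the bottom row and the right vertical arrow, so the chase gives you, for a $(\varphi,\Gamma)$-module $M$ over $\bA_\psi$, a descent of $M \otimes_{\bA_\psi} \tilde{\bA}_\psi$ to $\tilde{\bA}^{\dagger}_\psi$ --- and nothing more. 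To conclude that $M$ descends to $\bA^{\dagger}_\psi$ you would need essential surjectivity of the top-row functor $\bA^{\dagger}_\psi \to \tilde{\bA}^{\dagger}_\psi$, which in your plan is only supposed to ``follow by commutativity'' from the verticals you are in the middle of proving; the argument is circular. Moreover this step cannot be formal bookkeeping on the $\Gamma$-data: the analogous statement for $\varphi$-modules alone, namely essential surjectivity of base change from $\bA^{\dagger}_\psi$ to $\bA_\psi$, is \emph{false} (see the remark following Proposition~\ref{P:fully faithful imperfect}), so no combination of full-faithfulness statements and naturality of the cosimplicial rings $*_{\psi^\bullet}$ can produce it.

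This missing step is exactly the Cherbonnier--Colmez-type decompletion, and it is where the decompleting hypothesis on $\psi$ enters. The paper closes the diagram by applying Lemma~\ref{L:Gamma equivalences} (whose proof rests on the quantitative estimates of Lemma~\ref{L:Gamma quasi-isomorphism}) to descend the underlying $\Gamma$-module of an object over $\tilde{\bA}^{\dagger}_\psi$ (equivalently over $\tilde{\bA}^{r}_\psi$ for suitable $r$, then un-perfecting Frobenius from $\breve{\bA}^{\dagger}_\psi$ to $\bA^{\dagger}_\psi$); the $\varphi$-structure then comes along automatically by the full faithfulness you have already established. You do cite Lemma~\ref{L:Gamma equivalences}, but only as an optional alternative for the bottom row, where it is not needed; it is indispensable precisely at the step you dispatch with a diagram chase. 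Repair: prove essential surjectivity of $\bA^{\dagger}_\psi \to \tilde{\bA}^{\dagger}_\psi$ on $(\varphi,\Gamma)$-modules via Lemma~\ref{L:Gamma equivalences} first, and then your chase legitimately yields the remaining equivalences.
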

\begin{proof}
The categories of $(\varphi, \Gamma)$-modules over $\tilde{\bA}^\dagger_\psi$
and $\tilde{\bA}_\psi$ are equivalent to each other by Theorem~\ref{T:pseudocoherent type A}.
It thus remains to produce enough equivalences of categories to link the rings in the right column
to all of the other rings in the diagram.
Among the rings in the bottom row of the diagram (which all carry the weak topologies), all of the
claimed equivalences follow from Corollary~\ref{C:fully faithful to extended}.

We next check that the base change functors on $(\varphi, \Gamma)$-modules
from all of the rings in the top
row of the diagram down to $\tilde{\bA}_\psi$ are fully faithful.
It suffices to check the corresponding statement for $\varphi$-modules,
which follows from Proposition~\ref{P:fully faithful
imperfect}(a) in the case of $\bA^\dagger_\psi$ and from similar arguments in the other cases.

It finally remains to check that base extension of $(\varphi,\Gamma)$-modules
from $\bA^{\dagger}_\psi$ to $\tilde{\bA}^\dagger_\psi$ is essentially surjective. For this, it suffices to apply Lemma~\ref{L:Gamma equivalences}
to descend the underlying $\Gamma$-module.
\end{proof}

\begin{theorem} \label{T:perfect equivalence1a}
The (exact tensor) categories of globally \'etale $(\varphi, \Gamma)$-modules over the rings in the diagram
\begin{equation} \label{eq:big diagram}
\xymatrix@R=30pt@!C=60pt{
\bC_\psi \ar[r] & \breve{\bC}_\psi \ar[rr] & & \tilde{\bC}_\psi \\
 \bB^{\dagger}_\psi \ar[d] \ar[r] \ar[u] & \breve{\bB}^\dagger_\psi \ar[d] \ar[r]
 \ar[u] &  \hat{\bB}^\dagger_\psi \ar[d] \ar[r] & \tilde{\bB}^\dagger_\psi \ar[d] \ar[u]\\
 \bB_\psi \ar[r] & \breve{\bB}_\psi \ar[r] & \hat{\bB}_\psi \ar[r] & \tilde{\bB}_\psi
}
\end{equation}
are equivalent via the apparent base change functors.
(If $A$ is a Banach algebra over $\gotho_{E_a}$,
then by Theorem~\ref{T:pseudocoherent type A} these categories are equivalent to the category of isogeny $\gotho_{E_a}$-local systems on $\Spa(A,A^+)$.)

\end{theorem}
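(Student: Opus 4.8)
The plan is to mirror the proof of Theorem~\ref{T:perfect equivalence1}, replacing the integral period rings by their rational counterparts and inserting the Robba ring column $\bC_\psi \to \breve{\bC}_\psi \to \tilde{\bC}_\psi$. As there, it suffices to exhibit for each ring occurring in \eqref{eq:big diagram} an equivalence of categories of globally \'etale $(\varphi,\Gamma)$-modules connecting it to a single fixed ``hub''; I will take the hub to be $\tilde{\bC}_\psi$. The argument then has three parts: the hub equivalences, the two weak-topology rows, and the Robba column, followed by commutativity of the diagram of base-change functors.

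First I would establish the hub equivalences. Globally \'etale $(\varphi,\Gamma)$-modules over $\tilde{\bB}^\dagger_\psi$ and over $\tilde{\bC}_\psi$ are equivalent: for the underlying $\varphi$-modules this is Corollary~\ref{C:globally etale BC} applied over $\tilde{R}_\psi$, and since the functor is base extension along a flat map it is compatible with the descent data along $\psi^{\bullet}$ that define the $\Gamma$-structure (Definition~\ref{D:Gamma-module}), so it upgrades to an equivalence of $(\varphi,\Gamma)$-modules. Likewise the equivalence between globally \'etale $\varphi$-modules over $\tilde{\bB}^\dagger_\psi$ and over $\tilde{\bB}_\psi$ is part of Theorem~\ref{T:pseudocoherent type A} after inverting $\varpi$, and upgrades in the same way. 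For the two bottom rows of \eqref{eq:big diagram}, which carry the weak topologies and involve only $\bA$-type rings once $\varpi$ is inverted, I would tensor Theorem~\ref{T:perfect equivalence1} (equivalently Corollary~\ref{C:fully faithful to extended}) with $E$ over $\gotho_E$; this is exact and compatible with base change and with the $\psi^{\bullet}$-descent, so every base-change functor among globally \'etale $(\varphi,\Gamma)$-modules over $\bB_\psi \to \breve{\bB}_\psi \to \hat{\bB}_\psi \to \tilde{\bB}_\psi$, over $\bB^\dagger_\psi \to \breve{\bB}^\dagger_\psi \to \hat{\bB}^\dagger_\psi \to \tilde{\bB}^\dagger_\psi$, and along the vertical arrows $\bB^\dagger_\psi \to \bB_\psi$, etc., is an equivalence, linking these rows to the hub.

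It then remains to treat the Robba column. Full faithfulness of base change from $\bC_\psi$ and $\breve{\bC}_\psi$ down to $\tilde{\bC}_\psi$ follows from Proposition~\ref{P:fully faithful imperfect}(b): for $\breve{\bC}_\psi$ directly, and for $\bC_\psi$ by combining \ref{P:fully faithful imperfect}(b) for $\bB^\dagger_\psi \to \bC_\psi$ with the already-established equivalences among the $\bB^\dagger$-rings and the identity $\bC_\psi \cap \tilde{\bB}^\dagger_\psi = \bB^\dagger_\psi$ inside $\tilde{\bC}_\psi$ (Lemma~\ref{L:intersect dagger}). For essential surjectivity I would show that a globally \'etale $(\varphi,\Gamma)$-module $\tilde{M}$ over $\tilde{\bC}_\psi$ descends to $\bC_\psi$: its underlying globally \'etale $\varphi$-module descends by Theorem~\ref{T:perfect equivalence1} (integral) together with Corollary~\ref{C:globally etale BC} after inverting $\varpi$ (using Remark~\ref{R:breve phi-modules} to pass through $\breve{\bC}_\psi$), and the $\Gamma$-structure is then descended by Lemma~\ref{L:Gamma equivalences}. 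Since that lemma is phrased for the interval rings $\bC^{[s,r]}_\psi$, I would first pass to the language of $(\varphi,\Gamma)$-bundles over $\bC_\psi$ via Lemma~\ref{L:phi-modules to phi-bundles} and Proposition~\ref{P:same cohomology}, apply \ref{L:Gamma equivalences} on each annulus $[sp^{-hn},rp^{-hn}]$ for $n \in \ZZ$, and verify that the resulting descent data are $\varphi$-compatible, so that they glue to a $(\varphi,\Gamma)$-module over $\bC_\psi$ by Proposition~\ref{P:glueing projective modules over intervals}. The parenthetical identification with isogeny $\gotho_{E_a}$-local systems then follows by combining with Theorem~\ref{T:pseudocoherent type A} (equivalently Theorem~\ref{T:etale type C}) applied over $\tilde{R}_\psi$, together with the fact that the tower $\psi$ exhibits $\Spa(\tilde{A}_\psi,\tilde{A}_\psi^+)$ as a perfectoid subdomain over the base.

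I expect the main obstacle to be the final step: reconciling the Frobenius-saturation descent (from $\tilde{\bC}_\psi$ toward $\bC_\psi$) with the decompletion of the $\Gamma$-action (Lemma~\ref{L:Gamma equivalences}, available only at the annulus level) so that they produce a \emph{single} compatible object over $\bC_\psi$, rather than merely compatible objects at each finite level of the tower or on each annulus. The $(\varphi,\Gamma)$-bundle formalism of \S\ref{subsec:phi-modules}--\S\ref{subsec:phigamma} is exactly what is designed to organize this, but checking the cocycle condition and $\varphi$-equivariance after decompletion, and confirming that the interval-gluing of Proposition~\ref{P:glueing projective modules over intervals} carries along the $\Gamma$-descent datum, is where the care is required; everything else is a bookkeeping exercise parallel to Theorem~\ref{T:perfect equivalence1}.
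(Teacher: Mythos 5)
Your overall skeleton is right (handle the two $\bB$-rows by inverting $\varpi$ in Theorem~\ref{T:perfect equivalence1}, join $\tilde{\bC}_\psi$ via Corollary~\ref{C:globally etale BC}, then connect the top row), but two steps do not work as written. First, the full faithfulness of base change from $\bC_\psi$ (or $\breve{\bC}_\psi$) to $\tilde{\bC}_\psi$ does not follow from Proposition~\ref{P:fully faithful imperfect}(b): that proposition concerns a different functor, namely base change of globally \'etale $\varphi$-modules from $\bB^\dagger_\psi$ to $\bC_\psi$, and says nothing about the $\Gamma$-structure or about the map to the perfect ring. The correct tool is Lemma~\ref{L:Gamma quasi-isomorphism}, which gives full faithfulness of base extension of arbitrary projective $\Gamma$-modules from $\bC_\psi$ to $\tilde{\bC}_\psi$, and hence in particular for globally \'etale $(\varphi,\Gamma)$-modules; this is exactly what the paper invokes.

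Second, your essential surjectivity argument is where the genuine gap lies, and it is also unnecessary. You propose to descend the underlying $\varphi$-module and then decomplete the $\Gamma$-action annulus by annulus via Lemma~\ref{L:Gamma equivalences}, gluing with Proposition~\ref{P:glueing projective modules over intervals}; but the crux --- showing that the $\varphi$-descent and the interval-level $\Gamma$-decompletions are compatible and assemble into a single object over $\bC_\psi$ --- is precisely the step you leave open, and carrying it out amounts to reproving (the globally \'etale case of) Theorem~\ref{T:add tilde1}. The paper avoids all of this with a one-line observation: once the $\bB$-rows and the join of $\tilde{\bC}_\psi$ are in place, base extension of globally \'etale $(\varphi,\Gamma)$-modules from $\bB^\dagger_\psi$ to $\tilde{\bC}_\psi$ is already known to be essentially surjective, and this functor factors through $\bC_\psi$; hence $\bC_\psi \to \tilde{\bC}_\psi$ is essentially surjective on globally \'etale objects with no decompletion or gluing needed, and $\breve{\bC}_\psi$ is then handled by Remark~\ref{R:breve phi-modules}. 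If you want to keep your route, you must actually verify the cocycle and $\varphi$-compatibility of the decompleted data (as is done later for Theorem~\ref{T:add tilde1}); as the proposal stands, that verification is missing.
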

\begin{proof}
If we omit the first row from the diagram, then
the claim follows from Theorem~\ref{T:perfect equivalence1}.
We may join $\tilde{\bC}_\psi$ to the rest of the diagram using
Corollary~\ref{C:globally etale BC}.
To connect $\bC_\psi$ (and hence $\breve{\bC}_\psi$, thanks to Remark~\ref{R:breve phi-modules})
to the rest of the diagram,
note that base extension of arbitrary projective $\Gamma$-modules
from $\bC_\psi$ to $\tilde{\bC}_\psi$ is fully faithful by
Lemma~\ref{L:Gamma quasi-isomorphism}, and hence the same is true of 
globally \'etale $(\varphi, \Gamma)$-modules; on the other hand, 
this functor is also essentially surjective because base extension from $\bB^\dagger_\psi$ to $\tilde{\bC}_\psi$ is known to be essentially surjective.
This completes the proof.
\end{proof}

\begin{theorem} \label{T:add tilde1}
For any $r,s$ with $0 < s \leq r/q$, the following exact tensor categories are equivalent.
\begin{enumerate}
\item[(a)]
The category of $(\varphi, \Gamma)$-modules over $\bC_\psi$.
\item[(b)]
The category of $(\varphi,\Gamma)$-bundles over $\bC_\psi$.
\item[(c)]
The category of $(\varphi, \Gamma)$-modules over $\breve{\bC}_\psi$.
\item[(d)]
The category of $(\varphi, \Gamma)$-bundles over $\breve{\bC}_\psi$.
\item[(e)]
The category of $(\varphi, \Gamma)$-modules over $\breve{\bC}^{[s,r]}_\psi$.
\item[(f)]
The category of $(\varphi, \Gamma)$-modules over $\tilde{\bC}_\psi$.
\item[(g)]
The category of $(\varphi,\Gamma)$-bundles over $\tilde{\bC}_\psi$.
\item[(h)]
The category of $(\varphi, \Gamma)$-modules over $\tilde{\bC}^{[s,r]}_\psi$.
\item[(i)]
The category of vector bundles on the relative Fargues-Fontaine curve
$\FFC_{\tilde{R}_\psi}$ equipped with a semilinear action of $\Gamma$ which is continuous on sections over any $\Gamma$-invariant adic affinoid subspace.
\item[(j)]
The category of $\varphi$-modules over $\tilde{\bC}_{\Spa(A,A^+)}$.
\item[(k)]
The category of $\varphi$-bundles over  $\tilde{\bC}_{\Spa(A,A^+)}$.
\item[(l)]
The category of $\varphi$-modules over $\tilde{\bC}^{[s,r]}_{\Spa(A,A^+)}$.
\end{enumerate}
\end{theorem}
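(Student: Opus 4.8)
The plan is to prove the theorem by a diagram chase that assembles the equivalences already established, organized into three blocks: the imperfect categories (a)--(e), the perfect extended categories (f)--(h), and the sheaf-theoretic/Fargues--Fontaine categories (i)--(l). Every functor in sight is already an exact tensor functor at the level of underlying modules, so the only real content beyond citing earlier results is (i) bridging the three blocks and (ii) checking that the $\varphi$-structures and $\Gamma$-structures transport compatibly along each bridge; granting this, composing block equivalences yields the asserted equivalence of exact tensor categories.

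First I would dispose of the Frobenius/sheaf side. Since $\psi$ is finite \'etale, its total space is a perfectoid subdomain $Y$ of $X_{\proet}$ for $X = \Spa(A,A^+)$, carrying a $\Gamma$-action in the \v{C}ech sense of Definition~\ref{D:Cech complex}, and $Y^{\flat} = \Spa(\tilde R_\psi, \tilde R_\psi^+)$. Applying Theorem~\ref{T:perfect generalized phi-modules preadic} (projective case) with base space $X$ gives (j)$=$(k)$=$(l), and the relative Fargues--Fontaine equivalence of Theorem~\ref{T:perfect generalized phi-modules} applied with $R = \tilde R_\psi$ identifies projective $\varphi$-bundles over $\tilde\bC_\psi = \tilde\bC_{\tilde R_\psi}$ with vector bundles on $\FFC_{\tilde R_\psi}$; carrying the $\Gamma$-action through these functors yields (i)$=$(f)$=$(g)$=$(h). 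To link this block to (j)--(l) I would run pro-\'etale descent along $Y \to X$: by Remark~\ref{R:perfectoid tower splitting Robba} the inclusion of the global sections of $\tilde\bC^I$ over $X$ into $\tilde\bC^I_\psi$ is split with pro-projective complement, hence by Lemma~\ref{L:complete descent} an effective descent morphism, and a descent datum for the cover $Y\to X$ is by definition a $(\varphi,\Gamma)$-module over $\tilde\bC_\psi$; thus (f)--(h) $=$ (j)--(l), and the whole perfect/sheaf block is accounted for.

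Next I would treat the imperfect block and its bridge. The equalities (a)$=$(b) and (c)$=$(d)$=$(e) are the $(\varphi,\Gamma)$-equivariant refinements of Lemma~\ref{L:phi-modules to phi-bundles} and its evident analogue with $\bC$ replaced by $\breve\bC$: the equivalences there come from Lemma~\ref{L:Robba base extension pseudocoherent} and Lemma~\ref{L:Robba base extension vector bundle}, which are natural in the interval, so both the Frobenius-compatibility data and the \v{C}ech descent data of Definition~\ref{D:Gamma-module} transport unchanged. The equality (a)$=$(c) is base extension along $\bC_\psi \to \breve\bC_\psi$, an equivalence on $\varphi$-modules by Remark~\ref{R:breve phi-modules}, again with the $\Gamma$-structure carried along. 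Finally, to bridge to the perfect extended block I would invoke Lemma~\ref{L:Gamma equivalences}: for each closed interval $[s,r]$ base extension of $\Gamma$-modules from $\breve\bC^{[s,r]}_\psi$ to $\tilde\bC^{[s,r]}_\psi$ is an equivalence, and applying this interval-by-interval to the bundle descriptions (d) and (g) — Frobenius-compatibility being preserved because $\varphi$ acts on all the relevant intervals compatibly and the equivalence is natural — gives (d)$=$(g), hence (e)$=$(h), closing the chain.

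The main obstacle I anticipate is not an analytic one — the hard estimates (strict exactness of the decompletion complexes, splittings of perfectoid towers) are already packaged in Lemma~\ref{L:Gamma quasi-isomorphism}, Lemma~\ref{L:Gamma equivalences}, and Remark~\ref{R:perfectoid tower splitting Robba} — but rather the bookkeeping needed to identify, functorially and $\varphi$-equivariantly, the two a priori different notions of "$\Gamma$-module" in play: the \v{C}ech descent datum over the tower $\psi^\bullet$ of Definition~\ref{D:Gamma-module} (which for Galois $\psi$ is a continuous semilinear action by Example~\ref{exa:Galois tower} and in general reduces via Example~\ref{exa:Galois tower2}) versus the descent datum for the pro-\'etale cover $Y \to X$ on the sheaf side. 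This is complicated by the fact that the intermediate towers $\psi^k$ need not be weakly decompleting, so the imperfect rings $\bC_{\psi^k}$ only enter through their images in the extended rings $\tilde\bC_{\psi^k}$; matching the \v{C}ech complexes and confirming that the composite of block equivalences is independent of the auxiliary choices (of $r_0$, and of a Galois overtower) is where the genuine care is required.
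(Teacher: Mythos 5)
Your overall architecture coincides with the paper's proof: the equivalences among (f)--(l) rest on Theorem~\ref{T:perfect generalized phi-modules} (and its preadic form), (a)$=$(b) on Lemma~\ref{L:phi-modules to phi-bundles}, (a)$=$(c) on Remark~\ref{R:breve phi-modules}, and the imperfect--perfect bridge on Lemma~\ref{L:Gamma equivalences}. Your only real deviation is minor: you obtain (d)$=$(g) by applying Lemma~\ref{L:Gamma equivalences} interval by interval to the bundle components, whereas the paper deduces (b)$=$(g) and (e)$=$(h) from that lemma and then gets (d)$=$(g) by a factoring argument (essentially surjective because (b)$\to$(g) factors through (d), fully faithful because (e)$\to$(h) is); the paper's route also avoids the ``evident'' breve analogue of Lemma~\ref{L:phi-modules to phi-bundles} that you assert for (c)$=$(d)$=$(e). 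Both variants are fine.

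The one step whose justification does not work as written is your bridge between (f)--(h) and (j)--(l). You invoke Remark~\ref{R:perfectoid tower splitting Robba} and Lemma~\ref{L:complete descent} for ``the inclusion of the global sections of $\tilde{\bC}^I$ over $X$ into $\tilde{\bC}^I_\psi$,'' but in the axiomatic setting of \S\ref{sec:axiomatic} the base $(A,A^+)$ is only a uniform Banach algebra, not Fontaine perfectoid; there is no ring of the form $\tilde{\bC}^I_A$, and $H^0(X_{\proet},\tilde{\bC}^I)$ is a priori just the equalizer of $\tilde{\bC}^I_\psi \rightrightarrows \tilde{\bC}^I_{\psi^1}$, to which neither the splitting of Remark~\ref{R:perfectoid tower splitting Robba} (which concerns $\tilde{\calR}^I_R \to \tilde{\calR}^I_S$ for a finite \'etale tower over a perfect base $R$) nor Lemma~\ref{L:complete descent} applies. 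The equivalence you want is not module descent along a ring map at all: since $Y \to X$ is a covering in $X_{\proet}$, a locally finite projective sheaf of $\tilde{\bC}^{[s,r]}_X$-modules is tautologically the same as its restriction to $Y$ together with a descent datum over the \v{C}ech nerve (gluing of sheaves), and over $Y$ and its self-fibre products --- all perfectoid, with rings $\tilde{\bC}^{[s,r]}_{\psi^k}$ --- sheaves are modules by Theorem~\ref{T:perfect Robba Kiehl} and Theorem~\ref{T:perfect generalized phi-modules}; the descent datum is then precisely the $\Gamma$-structure of Definition~\ref{D:Gamma-module}. (Your splitting argument is legitimate only locally, i.e.\ for $\tilde{\bC}^I(W) \to \tilde{\bC}^I(W \times_X Y)$ with $W$ a perfectoid subdomain.) This sheaf-theoretic rephrasing is exactly what the paper's terse citation of Theorem~\ref{T:perfect generalized phi-modules} for (f)--(l) packages; with the bridge repaired in this way, your proof is complete and essentially identical to the paper's.
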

\begin{proof}
The equivalences among (f)--(l) follow from
Theorem~\ref{T:perfect generalized phi-modules}.
By Lemma~\ref{L:phi-modules to phi-bundles}, the global sections functor from (b) to (a) is an equivalence.
By Remark~\ref{R:breve phi-modules}, the base extension functor from (a) to (c) is an equivalence.
By Lemma~\ref{L:Gamma equivalences}, the base extension functors from (b) 
to (g) and from (e) to (h) are equivalences.
We may now see that the base extension functor from (d) to (g) is essentially surjective (because the equivalence from (b) to (g) factors through (d))
and fully faithful (because the functor from (e) to (h) is fully faithful), hence an equivalence.
\end{proof}
\begin{cor}
If $\psi$ is locally decompleting, then
the category of $(\varphi, \Gamma)$-modules over $\bC_\psi$
admits glueing over coverings of (i.e., is a stack on) $\Spa(A,A^+)$.
\end{cor}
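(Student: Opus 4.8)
The plan is to transport the gluing assertion, via the chain of equivalences in Theorem~\ref{T:add tilde1}, to the essentially formal statement that $\varphi$-modules over the ring \emph{sheaf} $\tilde{\bC}$ on a pro-\'etale site satisfy descent. Since a rational subspace $V$ of $X = \Spa(A,A^+)$ is again stably uniform and the base extension of $\psi$ to $V$ is again locally decompleting, it suffices to treat coverings of $X$ itself. So first I would fix a rational covering $\{(A,A^+) \to (A_i,A_i^+)\}_{i \in S}$ of $X$ and, for each finite $T \subseteq S$, form the rational localization $(A_T, A_T^+)$ corresponding to the intersection $\bigcap_{i \in T} \Spa(A_i,A_i^+)$ (a rational subspace of $X$, rational subspaces being stable under intersection since $(A,A^+)$ is sheafy). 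Base-extending $\psi$ along $(A,A^+) \to (A_T,A_T^+)$ yields towers $\psi_T$; since each of these maps (and the identity map of $(A,A^+)$) is a rational localization and $\psi$ is locally decompleting, $\psi$ and all the $\psi_T$ are decompleting, hence fall within the scope of Theorem~\ref{T:add tilde1}. The target is then to show that the restriction functor from $(\varphi,\Gamma)$-modules over $\bC_\psi$ to descent data $(M_i, \iota_{ij})$ with $M_i$ over $\bC_{\psi_i}$, $\iota_{ij} \colon M_i|_{\psi_{ij}} \cong M_j|_{\psi_{ij}}$ satisfying the cocycle condition over each $\bC_{\psi_{ijk}}$, is an equivalence, and likewise that $\Hom$-sets descend.

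The heart of the argument, and the step I expect to be the main obstacle, is to upgrade Theorem~\ref{T:add tilde1} (applied level-by-level to the $\psi_T$) to a \emph{commuting} family of equivalences compatible with the restriction functors. This requires observing that each constituent equivalence in that theorem is implemented by a base-change functor that commutes with further base change: the passage between $\varphi$-modules and $\varphi$-bundles of Lemma~\ref{L:phi-modules to phi-bundles}, the decompletion equivalence of Lemma~\ref{L:Gamma equivalences}, and the perfectoid comparison of Theorem~\ref{T:perfect generalized phi-modules}; and it requires the identification, via Proposition~\ref{P:perfectoid tower persistence}(a), of $\tilde{R}_{\psi_T}$ with the rational localization of $\tilde{R}_\psi$ cut out by $\Spa(A_T,A_T^+)$, so that $\tilde{\bC}_{\psi_T}$ is canonically the value of the sheaf $\tilde{\bC}_{\Spa(A,A^+)}$ on the perfectoid subdomain of $\Spa(A_T,A_T^+)_{\proet}$ attached to the tower. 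Granting this bookkeeping, the descent problem for $(\varphi,\Gamma)$-modules over $\bC_\psi$ is carried isomorphically to the descent problem for $\varphi$-modules over the ring sheaf $\tilde{\bC}_{\Spa(A,A^+)}$ along the pro-\'etale covering induced by $\{\Spa(A_i,A_i^+)\}$.

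That reduced problem I would then dispatch formally. The object $\tilde{\bC}_{\Spa(A,A^+)}$ is a genuine sheaf of rings on $\Spa(A,A^+)_{\proet}$ whose restriction to each $\Spa(A_i,A_i^+)_{\proet}$ is $\tilde{\bC}_{\Spa(A_i,A_i^+)}$; sheaves of modules over a sheaf of rings satisfy descent for any covering of the site, and being (locally the sheaf associated to) a finite projective $\varphi$-module is an analytic-local condition on $X$, so a sheaf of $\tilde{\bC}$-modules with semilinear $\varphi$-action that is locally a projective $\varphi$-module is one globally. Descent of morphisms follows the same way, since $\Hom$ of $\varphi$-modules is the $\varphi$-invariants of an internal $\Hom$ sheaf, which is itself a sheaf. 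Transporting back through Theorem~\ref{T:add tilde1} yields the corollary. The identical template, fed the pseudocoherent forms of Theorem~\ref{T:add tilde1} and Theorem~\ref{T:perfect generalized phi-modules preadic}, would also give gluing for pseudocoherent $(\varphi,\Gamma)$-modules over $\bC_\psi$.
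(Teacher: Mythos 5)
Your proposal is correct and follows essentially the same route as the paper: the paper's proof likewise uses Theorem~\ref{T:add tilde1} to transfer the question to $\varphi$-modules over the perfect period sheaf and then invokes the glueing theorem for those (Theorem~\ref{T:perfect Robba Kiehl}), which is exactly the content of your final formal descent step combined with your citations of Theorem~\ref{T:perfect generalized phi-modules}. The base-change compatibility bookkeeping you spell out is left implicit in the paper, since all the equivalences in Theorem~\ref{T:add tilde1} are base-extension or restriction functors and hence commute with further localization.
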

\begin{proof}
Thanks to Theorem~\ref{T:add tilde1},
this follows from the corresponding statement for $\varphi$-modules over $\tilde{\bC}_\psi$, which follows from Theorem~\ref{T:perfect Robba Kiehl}.
\end{proof}

\begin{theorem} \label{T:perfect equivalence2b}
If $\psi$ is locally decompleting, then
base extension of \'etale $(\varphi, \Gamma)$-modules from $\bC_\psi$
to $\tilde{\bC}_\psi$ is an equivalence of exact tensor categories.
(If $A$ is a Banach algebra over $\gotho_{E_a}$,
then by Theorem~\ref{T:pseudocoherent type A} these categories are equivalent to the category of $E_a$-local systems on $\Spa(A,A^+)$.)
\end{theorem}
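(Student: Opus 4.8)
The strategy is to bootstrap from two facts already in hand: the equivalence of \emph{all} projective $(\varphi,\Gamma)$-modules over $\bC_\psi$ and $\tilde{\bC}_\psi$ furnished by Theorem~\ref{T:add tilde1} (the chain (a)$\simeq$(c)$\simeq$(f), whose quasi-inverse is routed through $(\varphi,\Gamma)$-bundles), and the equivalence of \emph{globally} \'etale $(\varphi,\Gamma)$-modules along the same base extension furnished by Theorem~\ref{T:perfect equivalence1a}. Since base extension $\bC_\psi \to \tilde{\bC}_\psi$ is thus already known to be an equivalence on the ambient categories, what remains is precisely to check that it carries \'etale objects to \'etale objects and reflects the \'etale property. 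The tensor structure is automatic, as base extension is monoidal, and exactness in both directions is automatic as well, since a short exact sequence of projective $(\varphi,\Gamma)$-modules splits and an additive equivalence preserves split exact sequences.

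For the forward implication, suppose $M$ over $\bC_\psi$ is \'etale. By definition there is a rational covering $\{U_i\}$ of $\Spa(A,A^+)$ on which $M$ becomes globally \'etale; because $\psi$ is locally decompleting, each base-changed tower $\psi_i$ is again decompleting, so Theorem~\ref{T:perfect equivalence1a} applies over $U_i$ and shows $M|_{U_i}$ is the image of a $\varphi$-module over $\bA^\dagger_{\psi_i}$. Base extending that module through $\bA^\dagger_{\psi_i} \to \tilde{\bA}^\dagger_{\psi_i} \to \tilde{\bC}_{\psi_i}$ exhibits $M \otimes_{\bC_\psi}\tilde{\bC}_\psi$ as locally globally \'etale over $\tilde{\bC}_\psi$, hence \'etale (the \'etale, i.e.\ pure-of-slope-$0$, locus is open by Theorem~\ref{T:pure locus is open}, so being \'etale over a covering forces being \'etale).

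The reverse implication is the substantive step. Let $N$ be an \'etale $(\varphi,\Gamma)$-module over $\tilde{\bC}_\psi$ and let $M$ over $\bC_\psi$ be the object with $M\otimes_{\bC_\psi}\tilde{\bC}_\psi\cong N$ provided by Theorem~\ref{T:add tilde1}; I must show $M$ is \'etale. Since the underlying $\varphi$-module of $N$ is \'etale, after replacing $\Spa(A,A^+)$ by the members $U_i$ of a suitable rational covering (all towers $\psi_i$ remaining decompleting, by local decompletenss) we may assume $N|_{U_i}$ is globally \'etale over $\tilde{\bC}_{\psi_i}$. By Theorem~\ref{T:perfect equivalence1a} each $N|_{U_i}$ then descends to a globally \'etale $(\varphi,\Gamma)$-module $M_i$ over $\bC_{\psi_i}$, and — using the full faithfulness clauses of Theorem~\ref{T:perfect equivalence1a} over the double and triple overlaps — the canonical identifications among the restrictions of $N$ transport to a descent datum on the $M_i$ satisfying the cocycle condition. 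By the stacky descent corollary following Theorem~\ref{T:add tilde1} (which is exactly where the locally decompleting hypothesis is used), the $M_i$ glue to a $(\varphi,\Gamma)$-module $M'$ over $\bC_\psi$; it is \'etale because it is locally globally \'etale, and it satisfies $M'\otimes_{\bC_\psi}\tilde{\bC}_\psi\cong N$, so by the uniqueness inherent in the equivalence of Theorem~\ref{T:add tilde1} we get $M'\cong M$. Thus $M$ is \'etale, completing the matching of the two classes of objects. Finally, when $A$ is a Banach algebra over $\gotho_{E_a}$, the identification with \'etale $E_a$-local systems on $\Spa(A,A^+)$ follows by transporting to the perfect side and invoking Theorem~\ref{T:pseudocoherent type A} in the form used for $E_a$-local systems, as in Theorem~\ref{T:etale type C}.

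The main obstacle I anticipate is the bookkeeping in the reverse direction: turning the collection of locally descended objects $M_i$ into a genuine descent datum rather than a family of pairwise-isomorphic restrictions. This hinges on the full faithfulness built into Theorem~\ref{T:perfect equivalence1a} holding uniformly over every rational localization and every iterated overlap — precisely the content packaged by the hypothesis that $\psi$ is locally decompleting — together with the stack property of $(\varphi,\Gamma)$-modules over $\bC_\psi$, which lets us reassemble the glued object and thereby close the loop with the equivalence of Theorem~\ref{T:add tilde1}. Once these are in place, the remaining verifications are formal.
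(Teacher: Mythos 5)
Your proposal is correct and follows essentially the paper's own argument: full faithfulness of base extension on \'etale objects comes from the ambient equivalence of Theorem~\ref{T:add tilde1}, and essential surjectivity comes from combining Theorem~\ref{T:add tilde1} with Theorem~\ref{T:perfect equivalence1a} applied over the members of a rational covering, which is exactly where the locally decompleting hypothesis enters. The only remark worth making is that your re-glueing of the local descents $M_i$ via the stack property is an unnecessary detour: full faithfulness of base extension over each $U_i$ (from Theorem~\ref{T:add tilde1} for the decompleting tower $\psi_i$) already identifies $M|_{U_i}$ with the globally \'etale $M_i$, so the descent $M$ is \'etale without any reassembly.
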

\begin{proof}
Full faithfulness follows from Theorem~\ref{T:add tilde1}.
Essential surjectivity follows by combining Theorem~\ref{T:add tilde1}
with Theorem~\ref{T:perfect equivalence1a}.
\end{proof}

\begin{remark}
We have no natural imperfect analogue of a $B$-pair; for this reason, we do not include an analogue of Theorem~\ref{T:pseudocoherent B-pairs} here.
\end{remark}

We next compare the cohomology of $(\varphi, \Gamma)$-modules with the (pro-)\'etale cohomology of local systems.

\begin{defn}
For $M$ a $(\varphi, \Gamma)$-module over a ring on which $\varphi$ acts, we define the cohomology groups
$H^i_{\varphi, \Gamma}(M)$ to be the cohomology groups of the total complex in the category of $\Gamma$-modules associated to the double complex
\[
0 \to M^{\bullet} \stackrel{\varphi-1}{\to} M^{\bullet} \to 0,
\]
where $M^{\bullet}$ is as defined in Definition~\ref{D:Gamma-module}.
\end{defn}

\begin{theorem} \label{T:Galois cohomology1}
In Theorem~\ref{T:perfect equivalence1},
the equivalences of categories induce isomorphisms of $(\varphi, \Gamma)$-cohomology groups and (in case $A$ is a Banach algebra over $\gotho_{E_a}$) cohomology of the corresponding $\gotho_{E_a}$-local system on $\Spa(A,A^+)$.
\end{theorem}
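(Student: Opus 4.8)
The plan is to reduce the statement to a compatibility of base change with a certain double complex, and then treat the two directions of that complex separately. For any ring $*_\psi$ occurring in the diagram of Theorem~\ref{T:perfect equivalence1} and any projective $(\varphi,\Gamma)$-module $M$ over it, the group $H^i_{\varphi,\Gamma}(M)$ is the cohomology of the total complex of $M^\bullet \xrightarrow{\varphi-1} M^\bullet$, where $M^\bullet=M\otimes_\iota *_{\psi^\bullet}$ is the \v{C}ech complex of the descent datum $\iota$. A base-change arrow $*_\psi\to *'_\psi$ in the diagram produces a morphism of such double complexes; since $M$ is projective and the \v{C}ech term of the base-changed module over $*'_\psi$ is simply $M\otimes_{*_\psi}*'_{\psi^\bullet}$ (the formation of $(k+1)$-fold completed tensor powers over $A$ being compatible with all the completions of Definition~\ref{D:period rings}), the vertical ($\varphi$) differentials match up. It therefore suffices to prove that every base-change arrow induces an isomorphism on the cohomology of the total complex; by transitivity one may check this for the arrows $*_\psi\to\tilde{\bA}^\dagger_\psi$ in the top row and $*_\psi\to\tilde{\bA}_\psi$ in the bottom row, after which the comparison between $\tilde{\bA}^\dagger_\psi$ and $\tilde{\bA}_\psi$, and with the \'etale cohomology of the associated local system, is supplied by Theorem~\ref{T:pseudocoherent type A}.

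First I would dispose of the saturation steps $\bA^\dagger_\psi\to\breve{\bA}^\dagger_\psi$ and $\bA_\psi\to\breve{\bA}_\psi$. Here the cone of the base-change map is the total complex of $\breve{M}^\bullet/M^\bullet\xrightarrow{\varphi-1}\breve{M}^\bullet/M^\bullet$ with $\breve{M}=M\otimes_{*_\psi}\breve{*}_\psi$. Every element of $\breve{*}_{\psi^k}/*_{\psi^k}=\bigcup_n\varphi_\varpi^{-n}(*_{\psi^k})/*_{\psi^k}$ is annihilated by a power of $\varphi=\varphi_\varpi^a$, so the same holds on $\breve{M}^k/M^k$; hence $\varphi-1$ is bijective there, with inverse $-\sum_{j\ge 0}\varphi^j$ (a finite sum on each element). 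Consequently each row of the quotient double complex is exact, its total complex is acyclic, and the step preserves cohomology.

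Next I would treat the remaining horizontal steps, which reduce to the claim that the base-change map on \v{C}ech complexes alone, $M^\bullet\to(M\otimes_{*_\psi}\tilde{\bA}^\dagger_\psi)^\bullet$ (resp.\ over $\tilde{\bA}_\psi$), is a quasi-isomorphism; given this, the double-complex map is an isomorphism on total cohomology by the usual spectral sequence. For the step $\breve{\bA}^\dagger_\psi\to\tilde{\bA}^\dagger_\psi$ this is exactly the colimit over $m$, and over the radius $r$, of the strict-exactness statement of Lemma~\ref{L:Gamma quasi-isomorphism} --- the point at which the decompleting hypothesis enters --- so that the quotient complex $M\otimes(\tilde{\bA}^\dagger_{\psi^\bullet}/\breve{\bA}^\dagger_{\psi^\bullet})$ is a filtered colimit of acyclic complexes. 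The completion steps ($\breve{}\to\hat{}$ in either row, and the vertical $\varpi$-adic completions) are handled at a fixed radius $r$, where the rings involved are $\varpi$-adically complete Banach rings reducing modulo $\varpi$ to the $\breve{}$-type ring (for $\hat{}$) or the $\tilde{}$-type ring (for $\tilde{}$): one applies Lemma~\ref{L:Gamma quasi-isomorphism} to the reduction and lifts by $\varpi$-adic d\'evissage, using that $M$ is projective and that the relevant cones are complexes of $\varpi$-torsion-free, $\varpi$-adically complete modules with acyclic reduction, then passes to the colimit over $r$. Treating $\hat{}\to\tilde{}$ the same way, the intermediate $\hat{}$-rings follow by cancellation.

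Finally, for the identification with the cohomology of the associated $\gotho_{E_a}$-local system, I would use that $\psi$ is a faithful pro-finite-\'etale tower, so the perfectoid subdomain attached to $\psi$ is a covering of $X=\Spa(A,A^+)$ in $X_{\proet}$ whose \v{C}ech nerve over $X$ is $\psi^\bullet$; a $(\varphi,\Gamma)$-module over $\tilde{\bA}_\psi$ together with its $(\varphi,\Gamma)$-cohomology is then the same datum as a $\varphi$-module over the pro-\'etale ring sheaf $\tilde{\bA}_X$ together with its $\varphi$-hypercohomology, and the $\tilde{\bA}$-case of Theorem~\ref{T:pseudocoherent type A} identifies this last with the $\gotho_{E_a}$-local system and its \'etale cohomology. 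I expect the main obstacle to be the bookkeeping in the completion steps: checking that reduction modulo $\varpi$ is compatible with the \v{C}ech nerve and that the cones in question are $\varpi$-adically separated, so that the mod-$\varpi$ acyclicity genuinely lifts. Everything else is formal once Lemma~\ref{L:Gamma quasi-isomorphism} and Theorem~\ref{T:pseudocoherent type A} are in hand.
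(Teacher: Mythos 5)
Your proposal is correct and follows essentially the same route as the paper: the paper's proof simply cites Theorem~\ref{T:pseudocoherent type A} for the perfect rings $\tilde{\bA}_\psi, \tilde{\bA}^\dagger_\psi$ (including the identification with the cohomology of the $\gotho_{E_a}$-local system) and reduces the remaining rings to that case via Lemma~\ref{L:Gamma quasi-isomorphism}, which is exactly the skeleton of your argument. Your additional details---the observation that $\varphi-1$ is bijective on $\breve{*}_{\psi^\bullet}/*_{\psi^\bullet}$ by $\varphi$-nilpotence, the colimit over $m$ and the radius, and the $\varpi$-adic d\'evissage for the completed rings---are sound elaborations of the bookkeeping the paper leaves implicit in that citation.
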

\begin{proof}
For the base rings $\tilde{\bA}_\psi, \tilde{\bA}^{\dagger}_{\psi}$, this 
follows from Theorem~\ref{T:pseudocoherent type A}.
For the other base rings, we may reduce to the previous  case using Lemma~\ref{L:Gamma quasi-isomorphism}.
\end{proof}

\begin{theorem} \label{T:Galois cohomology1c}
Let $M$ be a $(\varphi, \Gamma)$-module over $\bC_\psi$.
\begin{enumerate}
\item[(a)]
Put $\breve{M} = M \otimes_{\bC_\psi} \breve{\bC}_\psi$,
$\tilde{M} = M \otimes_{\bC_\psi} \tilde{\bC}_\psi$.
Then the natural maps $H^i_{\varphi, \Gamma}(M) \to H^i_{\varphi,\Gamma}(\breve{M}) \to H^i_{\varphi,\Gamma}(\tilde{M})$ are bijections for all $i \geq 0$.
\item[(b)]
Choose $r_0>0$ such that $M$ descends to a $(\varphi, \Gamma)$-module $M_r$ over $\bC^{r_0}_\psi$. For $0 < s \leq r \leq r_0$, put $M_{[s,r]} = M_r \otimes_{\bC^{r_0}_\psi} \bC^{[s,r]}_\psi$,
$\breve{M}_{[s,r]} = M_r \otimes_{\bC^{r_0}_\psi} \breve{\bC}^{[s,r]}_\psi$.
$\tilde{M}_{[s,r]} = M_r \otimes_{\bC^{r_0}_\psi} \tilde{\bC}^{[s,r]}_\psi$.
Then for any $r,s$ with $0 < r \leq r_0$, $0 < s \leq r/q$, there exists $m_0 \geq 0$ such that for any integer $m \geq m_0$, 
the cohomology groups of the total complexes associated to each of the complexes
\begin{gather*}
0 \to \varphi^{-m}(M_{[s/q^{m}, r/q^{m}]}^\bullet) \stackrel{\varphi-1}{\to}
\varphi^{-m}(M_{[s/q^{m}, r/q^{m+1}]}^\bullet) \to 0 \\
0 \to \breve{M}_{[s,r]}^\bullet \stackrel{\varphi-1}{\to}
\breve{M}_{[s,r/q]}^\bullet \to 0 \\
0 \to \tilde{M}_{[s,r]}^\bullet \stackrel{\varphi-1}{\to}
\tilde{M}_{[s,r/q]}^\bullet \to 0
\end{gather*}
are isomorphic to $H^i_{\varphi, \Gamma}(M)$ via the natural comparison morphisms.
\end{enumerate}
\end{theorem}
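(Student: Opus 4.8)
The plan is to deduce part (a) from part (b), so the real content lies in (b). Fix $r\le r_0$, $s\le r/q$, and a cohomological degree $i$. By definition $H^i_{\varphi,\Gamma}(M)$ is the $i$-th cohomology of the total complex of the \v{C}ech--Frobenius double complex $[M^\bullet\xrightarrow{\varphi-1}M^\bullet]$ formed over $\bC_\psi=\bigcup_{r'>0}\bC^{r'}_\psi$, while the three complexes in (b) are the analogous double complexes over the interval rings $\varphi^{-m}(\bC^{[s/q^m,r/q^m]}_{\psi^\bullet})$, $\breve\bC^{[s,r]}_{\psi^\bullet}$, and $\tilde\bC^{[s,r]}_{\psi^\bullet}$. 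I would establish the comparison in two independent directions and then splice them together by filtering the relevant total complexes by \v{C}ech degree (a spectral sequence argument).

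\emph{Frobenius direction.} First I would invoke Proposition~\ref{P:same cohomology}, which relates the $\varphi$-cohomology of a $\varphi$-module over $\bC^{r_0}_\psi$ to that of its restriction to a closed interval, and iterate it through the structure isomorphism $\varphi^*M\cong M$ to pass to the $\varphi^{-m}$-shifted interval $\varphi^{-m}(\bC^{[s/q^m,r/q^m]}_\psi)$. Applied termwise along the \v{C}ech tower $\psi^\bullet$, this produces a quasi-isomorphism from $[M^\bullet\xrightarrow{\varphi-1}M^\bullet]$ to the first complex of (b), i.e.\ that complex computes $H^i_{\varphi,\Gamma}(M)$. A point requiring attention is that the higher \v{C}ech towers $\psi^k$ ($k\ge1$) are perfectoid (by Corollary~\ref{C:nonuniform perfectoid tensor product}) but not necessarily weakly decompleting, so Proposition~\ref{P:same cohomology} does not literally apply to them; however its proof uses only the $\varphi$-bundle equivalence (Lemma~\ref{L:phi-modules to phi-bundles}) and pseudoflatness of interval localizations (Lemma~\ref{L:weak flatness imperfect Robba}), whose proofs in \S\ref{subsec:projective modules} depend only on the Robba-ring structure of the $\bC^{[s,r]}_{\psi^k}$. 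So I would first record that these results carry over to arbitrary perfectoid towers, after which the termwise argument goes through.

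\emph{Decompletion direction.} Next I would use Lemma~\ref{L:Gamma quasi-isomorphism}(c): for $m$ large (depending on the \v{C}ech degree in play) the quotient complexes $M\otimes\bigl(\varphi^{-m-1}(\bC^{[s/q^{m+1},r/q^{m+1}]}_{\psi^\bullet})/\varphi^{-m}(\bC^{[s/q^m,r/q^m]}_{\psi^\bullet})\bigr)$ and $M\otimes\bigl(\tilde\bC^{[s,r]}_{\psi^\bullet}/\varphi^{-m}(\bC^{[s/q^m,r/q^m]}_{\psi^\bullet})\bigr)$ are strict exact in the relevant degrees. Since $\breve\bC^{[s,r]}_{\psi^\bullet}=\bigcup_m\varphi^{-m}(\bC^{[s/q^m,r/q^m]}_{\psi^\bullet})$ and cohomology commutes with filtered colimits, the inclusions $\varphi^{-m}(M^\bullet_{[s/q^m,r/q^m]})\hookrightarrow\breve M^\bullet_{[s,r]}\hookrightarrow\tilde M^\bullet_{[s,r]}$ induce isomorphisms on cohomology in bounded degrees once $m$ is large; running this for the intervals $[s,r]$ and $[s,r/q]$ simultaneously and compatibly with $\varphi-1$ identifies the total cohomology of the first, second, and third complexes of (b). Combining with the Frobenius direction proves (b). For (a), the same machinery applied to $\breve M$ (where $\breve M=\colim_n\varphi^{-n}M$ and cohomology commutes with the colimit) and to $\tilde M$ (using the perfect-ring comparison of Theorem~\ref{T:perfect generalized phi-modules cohomology} termwise along $\psi^\bullet$, the terms $\tilde\bC_{\psi^k}$ being $\tilde\bC_{\tilde R_{\psi^k}}$ with $\tilde R_{\psi^k}$ perfectoid) shows that $H^i_{\varphi,\Gamma}(\breve M)$ and $H^i_{\varphi,\Gamma}(\tilde M)$ are computed by the same interval complexes as $H^i_{\varphi,\Gamma}(M)$; hence the natural maps are isomorphisms.

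I expect the main obstacle to be the bookkeeping that makes the two directions compatible: the bound $m_0$ coming from Lemma~\ref{L:Gamma quasi-isomorphism} grows with the cohomological degree, so the comparison has to be organized degree by degree (which is exactly enough to compute each fixed $H^i$), and the intermediate \v{C}ech rings are not weakly decompleting, which forces the verification that the inputs to the Frobenius direction rest only on the perfectoid hypothesis and not on the decompleting one. Everything else is a mapping-cone diagram chase modeled on the proof of Theorem~\ref{T:Galois cohomology1}.
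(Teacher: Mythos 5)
Your proposal runs on exactly the ingredients that the paper's (very terse) proof cites: Proposition~\ref{P:same cohomology} for the Frobenius/interval comparison, Lemma~\ref{L:Gamma quasi-isomorphism} for the decompletion comparison, and the perfect-side interval comparison (your use of Theorem~\ref{T:perfect generalized phi-modules cohomology} is the paper's citation of \cite[Proposition~6.3.19]{part1}), spliced together degree by degree as in Theorem~\ref{T:Galois cohomology1}. So in substance this is the paper's argument with the bookkeeping written out, and the degree-by-degree organization (only \v{C}ech degrees $\le i+1$ matter for a fixed $H^i$, so the $m_0$ of Lemma~\ref{L:Gamma quasi-isomorphism} can be taken for finitely many degrees at once) is fine.

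The one step I would not accept as written is the claim that Proposition~\ref{P:same cohomology} ``carries over to arbitrary perfectoid towers'' because its inputs ``depend only on the Robba-ring structure.'' The results of \S\ref{subsec:projective modules} are proved under the standing hypothesis that the tower is weakly decompleting, and that hypothesis enters in more than a cosmetic way: the whole interval-module theory there is anchored to the $r_0$ of Lemma~\ref{L:optimal lifts}, and, concretely, Lemma~\ref{L:Robba base extension vector bundle} --- an input to Lemma~\ref{L:phi-modules to phi-bundles}, which is what the proof of Proposition~\ref{P:same cohomology} actually uses --- invokes Proposition~\ref{P:weakly decompleting tower persistence}. Since the \v{C}ech towers $\psi^k$, $k\ge 1$, are only known to be perfectoid (the paper says so explicitly in Definition~\ref{D:Cech complex}), your termwise application of Proposition~\ref{P:same cohomology} in the Frobenius direction is not covered by the results as stated, and the asserted extension would need its own proof rather than a remark. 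The standard way around this, which is presumably what the one-line proof intends, is to perform the interval/Frobenius comparison only on the perfect side, where Theorem~\ref{T:perfect generalized phi-modules cohomology} applies to every \v{C}ech term $\tilde{\bC}^{[s,r]}_{\psi^k}=\tilde{\calR}^{[s,r]}_{\tilde{R}_{\psi^k}}$ with no decompleting hypothesis, and to use Lemma~\ref{L:Gamma quasi-isomorphism} (degreewise, $m$ large) to pass between the imperfect and perfect interval complexes; Proposition~\ref{P:same cohomology} then only needs to be invoked for the base tower $\psi$ itself, which is weakly decompleting by Hypothesis~\ref{H:phigamma}. With that reorganization your argument for both (a) and (b) goes through.
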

\begin{proof}
Apply \cite[Proposition~6.3.19]{part1},
Proposition~\ref{P:same cohomology},
and Lemma~\ref{L:Gamma quasi-isomorphism}.
\end{proof}

\begin{theorem} \label{T:Galois cohomology1b}
Suppose that $A$ is a $\gotho_{E_a}$-algebra.
Let $E$ be an \'etale $E_a$-local system on $\Spa(A,A^+)$.
Let $M$ be the \'etale $(\varphi, \Gamma)$-module over one of $\bC_\psi, \breve{\bC}_\psi, \tilde{\bC}_\psi$
corresponding to $E$ via Theorem~\ref{T:perfect equivalence2b}.
Then for $i \geq 0$, there is a natural (in $E$ and $A$) bijection
$H^i_{\proet}(\Spa(A,A^+)_,E) \cong H^i_{\varphi, \Gamma}(M)$.
\end{theorem}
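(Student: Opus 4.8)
The plan is to compute both sides by a \v{C}ech resolution along the perfectoid tower $\psi$, passing through the perfect period sheaf of type $\bC$. First, by Theorem~\ref{T:Galois cohomology1c}(a) the groups $H^i_{\varphi,\Gamma}(M)$ do not depend on whether $M$ is taken over $\bC_\psi$, $\breve{\bC}_\psi$, or $\tilde{\bC}_\psi$, so it suffices to treat the \'etale $(\varphi,\Gamma)$-module $\tilde{M} = E \otimes_{\underline{E_a}} \tilde{\bC}_\psi$ over $\tilde{\bC}_\psi$. Write $X = \Spa(A,A^+)$, let $Y \in X_{\proet}$ be the perfectoid subdomain determined by the finite \'etale tower $\psi$, and for $k \geq 0$ let $Y_k \in X_{\proet}$ be the perfectoid subdomain attached to the tower $\psi^k$ of Definition~\ref{D:Cech complex} (the $(k{+}1)$-fold fibre power of $Y = Y_0$ over $X$; it is again a perfectoid subdomain by Corollary~\ref{C:nonuniform perfectoid tensor product}). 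By construction $\tilde{\bC}_X(Y_k) = \tilde{\bC}_{\tilde{R}_{\psi^k}} = \tilde{\bC}_{\psi^k}$; and since each $\psi_n$ induces a surjection of adic spectra, $Y \to X$ is surjective, so $\{Y \to X\}$ is a covering of $X_{\proet}$ whose \v{C}ech nerve is $Y_\bullet$.

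Second, by Theorem~\ref{T:etale type C} the functor $E \mapsto \calE := E \otimes_{\underline{E_a}} \tilde{\bC}_X$ identifies \'etale $E_a$-local systems on $X$ with \'etale $\varphi$-modules over the sheaf $\tilde{\bC}_X$ on $X_{\proet}$, and identifies $H^i_{\proet}(X,E)$ with the $i$-th $\varphi$-hypercohomology group of $\calE$, naturally in $E$ and $X$. I would then compute this $\varphi$-hypercohomology via the \v{C}ech spectral sequence for $Y \to X$. The restriction of $\calE$ to $(Y_k)_{\proet}$ is a finite projective $\varphi$-module over $\tilde{\bC}_{\psi^k}$, hence acyclic: by the $\varphi$-bundle reduction of Theorem~\ref{T:perfect generalized phi-modules} this reduces to the acyclicity of pseudocoherent $\tilde{\bC}^{[s,r]}$-modules on the pro-\'etale site, which is Theorem~\ref{T:perfect Robba proetale cohomology}. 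Consequently the \v{C}ech-to-derived-functor spectral sequence degenerates, and $H^i_\varphi(X_{\proet},\calE)$ is the $i$-th cohomology of the total complex of the double complex $\bigl[\,\calE(Y_\bullet) \xrightarrow{\varphi-1} \calE(Y_\bullet)\,\bigr]$.

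Third, I would match this total complex with the one defining $H^i_{\varphi,\Gamma}(\tilde{M})$. Under the equivalence, $\calE(Y_k)$ is canonically the term $\tilde{M} \otimes_\iota \tilde{\bC}_{\psi^k}$ of $\tilde{M}^\bullet$ in Definition~\ref{D:Gamma-module}; the cosimplicial \v{C}ech differentials coincide with $d^k = \sum_{j}(-1)^j i_{k,j}$, and the two $\varphi-1$ directions coincide. Thus the total complex is precisely $\mathrm{Tot}\bigl(0 \to \tilde{M}^\bullet \xrightarrow{\varphi-1} \tilde{M}^\bullet \to 0\bigr)$, which computes $H^i_{\varphi,\Gamma}(\tilde{M})$; combining with the first two steps yields the asserted bijection. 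Naturality in $E$ and $A$ is inherited from the functoriality of Theorem~\ref{T:etale type C}, of the towers $\psi^k$, and of the \v{C}ech construction.

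The main obstacle will be the bookkeeping around the second and third steps: confirming that $Y \to X$ is a genuine pro-\'etale covering whose $(k{+}1)$-fold fibre products are the perfectoid subdomains attached to $\psi^k$ and carry precisely the ring $\tilde{\bC}_{\psi^k}$; that the acyclicity of $\calE$ on these subdomains holds uniformly in $k$ after the $\varphi$-bundle reduction; and, most delicately, that the \v{C}ech cosimplicial differentials coincide on the nose with the $d^k$ of Definition~\ref{D:Cech complex} (including their compatibility with the $\varphi-1$ direction of the double complex). Once these identifications are pinned down, the conclusion is formal.
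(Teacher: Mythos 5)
Your proposal is correct and follows essentially the same route as the paper: reduce to the perfect ring $\tilde{\bC}_\psi$ via Theorem~\ref{T:Galois cohomology1c}, then identify $H^i_{\proet}(\Spa(A,A^+),E)$ with $\varphi$-hypercohomology via Theorem~\ref{T:etale type C} and compute it by the \v{C}ech complex along the tower using the acyclicity results. The paper simply compresses your second and third steps into a citation of Theorem~\ref{T:perfect generalized phi-modules cohomology} (which rests on Theorem~\ref{T:perfect Robba proetale cohomology}), so your unpacking of the \v{C}ech/acyclicity bookkeeping is exactly what underlies its terse proof.
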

\begin{proof}
For the base ring $\tilde{\bC}_\psi$, this follows from
Theorem~\ref{T:perfect generalized phi-modules cohomology}.
For the base rings $\bC_\psi, \breve{\bC}_\psi$, this reduces to the previous case via Theorem~\ref{T:Galois cohomology1c}.
\end{proof}

As a corollary, we obtain a version of the Ax-Sen-Tate theorem
conditioned on the existence of a suitable tower $\psi$.

\begin{defn} \label{D:canonical section}
Recall that $\FFC_{\tilde{R}_\psi}$ admits a distinguished line bundle $\calL_\psi$ and a distinguished section $t_\psi$ of $\calL_\psi$ whose zero locus is naturally isomorphic to $\Spec(\tilde{A}_\psi)$ (see \cite[\S 8.8]{part1}). 
By Theorem~\ref{T:add tilde1}, the inclusion defined by $t_\psi$  of $\calL_\psi^{-1}$ into the trivial line bundle on $\FFC_{\tilde{R}_\psi}$ corresponds to an inclusion
$M_\psi \to \bC_\psi$ of $(\varphi, \Gamma)$-modules over $\bC_\psi$.
If we put $\tilde{M}_\psi = M_\psi \otimes_{\bC_\psi} \tilde{\bC}_\psi$, we then have a canonical identification
\begin{equation} \label{eq:tilde section map}
(\tilde{\bC}_\psi/\tilde{M}_\psi)^{\varphi=1} \cong \tilde{A}_\psi.
\end{equation}
\end{defn}

\begin{prop} \label{P:Ax}
Set notation as in Definition~\ref{D:canonical section} and put $X = \Spa(A,A^+)$.
\begin{enumerate}
\item[(a)]
Via the identification \eqref{eq:tilde section map}, we have
\begin{equation} \label{eq:section map}
(\bC_\psi/M_\psi)^{\varphi=1} \cong A_\psi.
\end{equation}
\item[(b)]
We have $H^0_{\proet}(X, \widehat{\calO}) = A$.
\end{enumerate}
\end{prop}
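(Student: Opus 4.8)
The key content is part (a); part (b) will then follow from a \v{C}ech computation. Here is the plan. Since $\psi$ is finite \'etale with surjective transition maps, $Y_\psi:=\varprojlim_n\Spa(A_{\psi,n},A_{\psi,n}^+)$ is a perfectoid subdomain and $\{Y_\psi\to X\}$ is a covering of $X_{\proet}$, whose $(k+1)$-fold self fibre product is $Y_{\psi^k}$ with $\widehat\calO(Y_{\psi^k})=\tilde A_{\psi^k}$. So by \cite[Tag~01EW]{stacks-project} together with acyclicity of $\widehat\calO$ on perfectoid subdomains (Proposition~\ref{P:perfectoid acyclic}), $H^0_{\proet}(X,\widehat\calO)$ is the equalizer of the two face maps $\tilde A_\psi\rightrightarrows\tilde A_{\psi^1}$. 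Applying Definition~\ref{D:canonical section} to the perfectoid towers $\psi$ and $\psi^1$ (and using that $\tilde\bC_{\psi^k}/\tilde M_{\psi^k}$ is the degree-$k$ term of the $(\varphi,\Gamma)$-complex of $\tilde\bC_\psi/\tilde M_\psi$, with $\varphi$-invariants $\tilde A_{\psi^k}$), this equalizer is identified with $H^0_{\varphi,\Gamma}(\tilde\bC_\psi/\tilde M_\psi)$.

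Next I would deduce (b) from (a). Write $Q:=\bC_\psi/M_\psi$, a pseudocoherent $(\varphi,\Gamma)$-module over $\bC_\psi$ (Lemma~\ref{L:pseudocoherent 2 of 3}). Because $M_\psi$ is a line bundle whose local generators are non-zero-divisors in both $\bC_\psi$ and $\tilde\bC_\psi$ (they lift the Fontaine primitive generator of $\ker\theta$, cf.\ Definition~\ref{D:Robba theta}), base extension along $\bC_\psi\to\tilde\bC_\psi$ is exact on $0\to M_\psi\to\bC_\psi\to Q\to0$ and identifies $Q\otimes_{\bC_\psi}\tilde\bC_\psi$ with $\tilde\bC_\psi/\tilde M_\psi$. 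Comparing the long exact $(\varphi,\Gamma)$-cohomology sequences of this sequence and of its perfect base extension, and invoking the comparison isomorphisms $H^i_{\varphi,\Gamma}(M_\psi)\cong H^i_{\varphi,\Gamma}(\tilde M_\psi)$ and $H^i_{\varphi,\Gamma}(\bC_\psi)\cong H^i_{\varphi,\Gamma}(\tilde\bC_\psi)$ of Theorem~\ref{T:Galois cohomology1c}(a) for the (projective) flanking terms, the five lemma yields $H^i_{\varphi,\Gamma}(Q)\cong H^i_{\varphi,\Gamma}(\tilde\bC_\psi/\tilde M_\psi)$ for all $i$. Thus $H^0_{\proet}(X,\widehat\calO)=H^0_{\varphi,\Gamma}(Q)$, which by (a) is the equalizer of the two face maps $A_\psi\rightrightarrows A_{\psi^1}$ (sitting compatibly inside the equalizer of $\tilde A_\psi\rightrightarrows\tilde A_{\psi^1}$); this equals $A$ by faithfully flat descent along the finite \'etale surjections $A\to A_{\psi,n}$ (cf.\ Lemma~\ref{L:equalizer}). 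Since $A\subseteq H^0_{\proet}(X,\widehat\calO)$ is clear, (b) follows.

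For (a): the map in question is the one induced on $\varphi$-invariants by $Q\to Q\otimes_{\bC_\psi}\tilde\bC_\psi=\tilde\bC_\psi/\tilde M_\psi$, whose target $\varphi$-invariants are $\tilde A_\psi$ by Definition~\ref{D:canonical section}. That this map lands in $A_\psi$ is immediate from the definition of the rings $\bA^r_\psi$ in Definition~\ref{D:period rings}: the condition $\theta(\varphi_\varpi^{-n}(x))\in A_{\psi,n}$ is exactly what forces a $\varphi$-fixed class modulo $M_\psi$ to descend to the tower. The substance is the reverse inclusion $A_\psi\subseteq(\bC_\psi/M_\psi)^{\varphi=1}$, which unwinds to the surjectivity of $\theta\colon\breve\bA^1_\psi\to A_\psi$ anticipated in Remark~\ref{R:theta not surjective}. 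I would prove this by the successive-approximation scheme underlying the decompletion results of \S\ref{subsec:decompleting towers}: after replacing $\psi$ by a truncation and passing to Frobenius perfections (Remark~\ref{R:breve phi-modules}, Lemma~\ref{L:Gamma equivalences}), produce an initial lift of a given $a\in A$ modulo $\varpi$ from the strict surjectivity of $\bA^r_\psi\to R_\psi$ (Definition~\ref{D:weakly decompleting}(b), sharpened by Lemma~\ref{L:optimal lifts}), then correct the $\varpi$-adic error using the strict exactness of $\tilde R_{\psi_{(n)}^\bullet}/R_{\psi_{(n)}^\bullet}$ together with solvability of $\varphi_\varpi-1$ modulo $\varpi$ (Lemma~\ref{L:decompleting uniform}, Corollary~\ref{C:decompleting uniform}, Lemma~\ref{L:AS exact}), the decompleting hypothesis being precisely what keeps the correction convergent.

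The main obstacle I anticipate is exactly this last step of (a): propagating the finite-level lifting through the decompletion tower while controlling growth, so as to realize every element of $A_\psi$ as $\theta$ of a convergent overconvergent Witt-vector-type series. Everything else — the exactness of the base-changed fundamental sequence, the identification of \v{C}ech $H^0$ with $H^0_{\varphi,\Gamma}$, and the dévissage through the projective comparison of Theorem~\ref{T:Galois cohomology1c}(a) — is formal bookkeeping built on results already in place.
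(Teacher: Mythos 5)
Your formal bookkeeping (the \v{C}ech identification of $H^0_{\proet}(X,\widehat{\calO})$ with the equalizer of $\tilde{A}_\psi \rightrightarrows \tilde{A}_{\psi^1}$, and the d\'evissage through Theorem~\ref{T:Galois cohomology1c} applied to the projective modules $M_\psi$ and $\bC_\psi$ to compare $H^i_{\varphi,\Gamma}(\bC_\psi/M_\psi)$ with $H^i_{\varphi,\Gamma}(\tilde{\bC}_\psi/\tilde{M}_\psi)$) matches the machinery the paper actually uses. The genuine gap is the step you yourself flag as the main obstacle: you make (b) depend on (a), and you propose to prove (a) --- equivalently the surjectivity of $\theta$ on $\breve{\bA}^{1}_\psi$ --- by a direct successive-approximation argument. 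That step is not carried out, and it is not clear it can be carried out as described: the decompleting hypothesis is a statement about strict exactness of the complexes $\overline{\varphi}_\varpi^{-1}(R_{\psi_{(n)}^\bullet})/R_{\psi_{(n)}^\bullet}$, i.e.\ it controls the $\Gamma$/\v{C}ech direction, not the problem of producing a $\varphi$-compatible lift of a given $a \in A_\psi$ with control of the overconvergent norms $\lambda(\overline{\alpha}_\psi^r)$; the naive mod-$\varpi$ correction scheme only produces lifts in the completed rings $\hat{\bA}_\psi$ or $\tilde{\bA}_\psi$. Indeed Remark~\ref{R:theta not surjective} treats the surjectivity of $\theta\colon \breve{\bA}^{1}_\psi \to A_\psi$ as a \emph{consequence} of Proposition~\ref{P:Ax}, not as something available to prove it.

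The paper avoids this entirely by reversing your logical order. Once $H^i(X_{\proet},\widehat{\calO})$ is identified with the cohomology of the complex $(\bC_{\psi^\bullet}/M_{\psi^\bullet})^{\varphi=1}$ (via Theorem~\ref{T:Galois cohomology1c} and \eqref{eq:tilde section map}), one only needs the \emph{easy} inclusion $(\bC_\psi/M_\psi)^{\varphi=1} \subseteq A_\psi$ --- the direction you correctly call immediate from the definition of $\bA^{r}_\psi$ --- to conclude $H^0(X_{\proet},\widehat{\calO}) \subseteq A_\psi$, and then faithfully flat descent along the finite \'etale maps $A \to A_{\psi,n}$ pins it down to $A$, giving (b) with no lifting argument at all. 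Part (a) is then deduced formally: running the same argument for each truncated tower $\psi_{(n)}$ gives $A_{\psi,n} = H^0 \subseteq (\bC_{\psi_{(n)}}/M_{\psi_{(n)}})^{\varphi=1}$, and transporting by powers of $\varphi$ (note $\varphi^n(\bA^{r}_{\psi_{(n)}}) \subseteq \bA^{rp^{-hn}}_\psi$) yields the reverse inclusion $A_\psi \subseteq (\bC_\psi/M_\psi)^{\varphi=1}$. So your plan should be restructured: prove (b) (for all truncations) first using only the forward inclusion plus descent, and obtain (a) as a corollary, rather than attempting the hard approximation step on which your current proposal rests.
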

\begin{proof}
We may view 
$H^i(X_{\proet}, \widehat{\calO})$ as the cohomology of the complex $\tilde{A}_{\psi^{\bullet}}$.
By applying Theorem~\ref{T:Galois cohomology1c} to $\bC_\psi$ and $M_\psi$ and using \eqref{eq:tilde section map},
we may view
$H^i(X_{\proet}, \widehat{\calO})$ also as the cohomology of the complex $(\bC_{\psi^\bullet}/M_{\psi^\bullet})^{\varphi=1}$.
In particular, 
$H^0(X_{\proet}, \widehat{\calO}) \subseteq (\bC_{\psi}/M_{\psi})^{\varphi=1}
\subseteq
A_{\psi}$,
so by faithfully flat descent we have
$H^0(X_{\proet}, \widehat{\calO}) \subseteq A$.
Since the other inclusion is obvious, we must have
$H^0(X_{\proet}, \widehat{\calO}) = A$.
This yields (b); by applying the same reasoning to each tower $\psi_{(n)}$, we deduce (a).
\end{proof}

\begin{cor} \label{C:stay pseudocoherent}
For $0 < s \leq 1 \leq r \leq r_0$ (resp.\ $0 < s \leq 1 \leq r$),
$A_\psi$ (resp.\ $\tilde{A}_\psi$) is pseudocoherent as a module over $\breve{\bC}^{[s,r]}_\psi$ (resp.\ $\tilde{\bC}^{[s,r]}_\psi$).
Consequently (by Remark~\ref{R:need flatness}), pseudocoherent
modules over $A_\psi$ (resp.\ $\tilde{A}_\psi$) remain pseudocoherent when viewed as modules over $\bC^{[s,r]}_\psi$ (resp.\ $\tilde{\bC}^{[s,r]}_\psi$).
\end{cor}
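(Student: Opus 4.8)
The strategy is to reduce the statement to the pseudocoherence of $A$ over $\bC_{\Spa(A,A^+)}^{[s,r]}$ (or $\tilde{\bC}^{[s,r]}_{\Spa(A,A^+)}$), which we already have in hand. First I would treat the perfect/completed case: for $0 < s \leq 1 \leq r$, the map $\theta: \tilde{\bC}^{[s,r]}_\psi \to \tilde{A}_\psi$ has kernel generated by a single (Fontaine primitive) element $z$ (Definition~\ref{D:Robba theta}), so $\tilde{A}_\psi = \tilde{\bC}^{[s,r]}_\psi/(z)$ admits the projective resolution $0 \to \tilde{\bC}^{[s,r]}_\psi \xrightarrow{\times z} \tilde{\bC}^{[s,r]}_\psi \to \tilde{A}_\psi \to 0$ (using that $z$ is not a zero-divisor, as in Remark~\ref{R:transfer pseudocoherence to quotient}), hence is $1$-fpd and in particular pseudocoherent. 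The ``consequently'' clause then follows from Remark~\ref{R:need flatness}: since $\tilde{A}_\psi$ is pseudocoherent over $\tilde{\bC}^{[s,r]}_\psi$, a module is pseudocoherent over $\tilde{A}_\psi$ if and only if it is pseudocoherent over $\tilde{\bC}^{[s,r]}_\psi$.

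The imperfect case is the one requiring real input, and it is where I expect the main obstacle to lie. Here I would use Proposition~\ref{P:Ax}(a): the section $t_\psi$ of the distinguished line bundle gives an inclusion $M_\psi \to \bC_\psi$ of $(\varphi,\Gamma)$-modules with $(\bC_\psi/M_\psi)^{\varphi=1} \cong A_\psi$. More precisely, I would descend $M_\psi \hookrightarrow \bC_\psi$ to the level $[s,r]$: for $r_0$ as in Corollary~\ref{C:decompleting uniform} and $0 < s \leq 1 \leq r \leq r_0$, one gets an inclusion $M_{\psi,[s,r]} \to \bC^{[s,r]}_\psi$ of pseudocoherent (indeed projective rank-one) modules, and $A_\psi$ should be identified with the cokernel of $\varphi - 1$ acting suitably, or more directly with $\bC^{[s,r]}_\psi/J$ for $J$ the kernel of the restriction of $\theta$. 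The cleanest route is probably to observe that after extending scalars to $\breve{\bC}^{[s,r]}_\psi$, the kernel of $\theta: \breve{\bC}^{[s,r]}_\psi \to \breve{A}_\psi$ is still principal generated by a non-zero-divisor (this is the image of $z$, using that $\breve{\bC}^{[s,r]}_\psi \to \tilde{\bC}^{[s,r]}_\psi$ is injective and $\theta$ on $\tilde{\bC}^{[s,r]}_\psi$ has kernel $(z)$, combined with the fact that $\breve{A}_\psi = A_\psi^{\perf}$ — wait, one must be careful: $A_\psi$ itself, not its perfection, is the target). So the genuine work is to show that the kernel of $\theta: \breve{\bC}^{[s,r]}_\psi \to A_\psi$ is a principal ideal generated by a non-zero-divisor, equivalently that $A_\psi$ is $1$-fpd over $\breve{\bC}^{[s,r]}_\psi$; once this is established the conclusion follows exactly as in the perfect case via Remark~\ref{R:need flatness}.

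I would establish this principality by the following steps. One: by Proposition~\ref{P:Ax}(a) and the defining inclusion $M_\psi \to \bC_\psi$, together with the analogous statement at finite level for the truncated towers $\psi_{(n)}$, reduce to showing that $M_{\psi,[s,r]}$ is a rank-one projective $\breve{\bC}^{[s,r]}_\psi$-module whose image in $\bC^{[s,r]}_\psi$ is the kernel of $\theta$. Two: use Lemma~\ref{L:Gamma equivalences} (or Theorem~\ref{T:add tilde1}) to transport the perfect-case description of $\tilde{M}_{\psi,[s,r]} \to \tilde{\bC}^{[s,r]}_\psi$ as $(z) \hookrightarrow \tilde{\bC}^{[s,r]}_\psi$ down to $\breve{\bC}^{[s,r]}_\psi$, using that $M_{\psi,[s,r]} = \tilde{M}_{\psi,[s,r]} \cap \bC^{[s,r]}_\psi$ inside $\tilde{\bC}^{[s,r]}_\psi$ is compatible with the intersection $\breve{\bC}^{[s,r]}_\psi \cap (z)\tilde{\bC}^{[s,r]}_\psi$ — here is where I expect to spend the most effort, since I must verify that this intersection is generated by a single element over $\breve{\bC}^{[s,r]}_\psi$ and that this element is a non-zero-divisor, which amounts to a flatness-type computation; the fact that $\breve{\bC}^{[s,r]}_\psi \to \tilde{\bC}^{[s,r]}_\psi$ is faithfully flat (or at least pseudoflat, via Remark~\ref{R:perfectoid tower splitting Robba} giving a splitting, hence Lemma~\ref{L:pro-projective topologically flat}) should make the descent of the principal-ideal structure go through. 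Three: conclude that $A_\psi = \breve{\bC}^{[s,r]}_\psi/(z')$ is $1$-fpd over $\breve{\bC}^{[s,r]}_\psi$, and then apply Remark~\ref{R:need flatness} to get the pseudocoherence transfer for modules over $A_\psi$. Finally, I would note that the statement for $\bC^{[s,r]}_\psi$ (without the accent) follows from the $\breve{\bC}^{[s,r]}_\psi$ statement because $\breve{\bC}^{[s,r]}_\psi$ is faithfully flat — or at least an effective-descent object for pseudocoherence — over $\bC^{[s,r]}_\psi$ (Remark~\ref{R:perfectoid tower splitting Robba}), so pseudocoherence over the smaller ring can be tested after this base change; alternatively one applies the splitting of Definition~\ref{D:Robba fractional}-type directly to see $A_\psi$ is already pseudocoherent over $\bC^{[s,r]}_\psi$.
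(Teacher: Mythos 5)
Your treatment of the easy half is fine and is exactly what the paper does: the perfect case is already recorded in Definition~\ref{D:Robba theta} (the kernel of $\theta$ on $\tilde{\bC}^{[s,r]}_\psi$ is generated by a Fontaine primitive non-zero-divisor, so $\tilde{A}_\psi$ is $1$-fpd, hence pseudocoherent), and the ``consequently'' clause is just Remark~\ref{R:need flatness}. You also correctly locate the substance in the imperfect case and correctly identify the input from Proposition~\ref{P:Ax}(a), namely surjectivity of $\theta$ from the imperfect rings onto $A_\psi$ (cf.\ Remark~\ref{R:theta not surjective}).

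The imperfect half, however, has two genuine gaps. First, the identification of $\tilde{M}_{\psi,[s,r]}\hookrightarrow\tilde{\bC}^{[s,r]}_\psi$ with $(z)\hookrightarrow\tilde{\bC}^{[s,r]}_\psi$ is wrong for the intervals the corollary allows: $M_\psi$ comes from the section $t_\psi$, whose divisor is the whole $\varphi$-orbit of the $\theta$-point, so as soon as $[s,r]$ meets that orbit in more than one point (which happens for general $0<s\leq 1\leq r\leq r_0$, e.g.\ whenever $r/s>q$) the image of $M_{\psi,[s,r]}$ is the ideal $(t_\psi)$, which is \emph{strictly smaller} than $\ker(\theta)=(z)$, and the cokernel is a direct sum of several Frobenius twists of $\tilde{A}_\psi$ rather than $\tilde{A}_\psi$. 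So reading off $\ker(\theta)\cap\breve{\bC}^{[s,r]}_\psi$ as the descent of $M_\psi$ does not work as stated; one would have to restrict to an interval meeting the orbit in a single point and then supply a separate argument for general intervals, neither of which appears in your plan. Second, and more seriously, the flatness/splitting input your ``flatness-type computation'' relies on is not available under the corollary's hypotheses: Remark~\ref{R:perfectoid tower splitting Robba} and Lemma~\ref{L:pro-projective topologically flat} concern $\tilde{\calR}^I_R\to\tilde{\calR}^I_S$ along a tower of faithfully finite \'etale maps of \emph{perfect} rings, and say nothing about the inclusion $\breve{\bC}^{[s,r]}_\psi\to\tilde{\bC}^{[s,r]}_\psi$, nor about flatness of $\bC^{[s,r]}_\psi\to\breve{\bC}^{[s,r]}_\psi$, which your last paragraph also needs. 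The splitting and pseudoflatness of the imperfect-to-perfect inclusion are precisely Lemma~\ref{L:full splitting} and Corollaries~\ref{C:split base extension1}--\ref{C:split base extension2} (and Lemma~\ref{L:splitting1} for flatness of Frobenius), all of which require $R_\psi$ to be $F$-(finite projective), i.e.\ Hypothesis~\ref{H:phigamma split}; the corollary stands only under Hypothesis~\ref{H:phigamma} (decompleting). Thus the step you yourself flag as the main effort --- showing $\breve{\bC}^{[s,r]}_\psi\cap z\tilde{\bC}^{[s,r]}_\psi$ is principal generated by a non-zero-divisor, which is in any case a statement stronger than the pseudocoherence actually claimed --- is left without a valid argument, and the concluding reduction from $\breve{\bC}^{[s,r]}_\psi$ to $\bC^{[s,r]}_\psi$ rests on the same unavailable flatness.
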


\begin{remark}
In some cases,
\eqref{eq:section map} can be shown to extend to an isomorphism
\[
(\bC_{\psi^\bullet}/M_{\psi^\bullet})^{\varphi=1} \cong A_{\psi^\bullet}
\]
of complexes; for instance, this holds in the context of Example~\ref{exa:Galois tower2}.
However, even when such an isomorphism exists, one can still have
$H^i(X_{\proet}, \widehat{\calO}) \neq 0$ for some $i>0$;
see for example Remark~\ref{R:nonzero higher cohomology}.
This suggests that one should treat one's intuition about the pro-\'etale cohomology of coherent sheaves with extreme caution.
For another cautionary tale in a similar vein, see Example~\ref{exa:bad cohomology over perfectoid}.
\end{remark}

\subsection{Frobenius splittings and pseudocoherent \texorpdfstring{$(\varphi, \Gamma)$}{(phi, Gamma)}-modules}
\label{subsec:Frobenius splittings}

In practice, the decompleting towers we consider have additional useful properties that can be further exploited, especially to make statements about pseudocoherent $(\varphi, \Gamma)$-modules. These properties arise from properties of the Frobenius actions on $\Fp$-algebras, some of which already play an important role in commutative algebra;
see for example \cite{hochster-f-purity} for a recent survey.
\begin{defn}
Let $R$ be an $\FF_{p^h}$-algebra, and let $S$ be a copy of $R$ viewed as an $R$-algebra via $\overline{\varphi}$. (If $R$ is reduced, we may identify $S$ with $\overline{\varphi}_\varpi^{-1}(R)$ inside $R^{\perf}$.)
We say\footnote{Of these terms, only \emph{$F$-finite}, \emph{$F$-pure}, and \emph{$F$-split} seem to be commonly used.} that $R$ is \emph{$F$-finite}, \emph{$F$-coherent}, \emph{$F$-pure}, \emph{$F$-split}, \emph{$F$-flat}, or \emph{$F$-projective} if as an $R$-module $S$ is respectively finitely generated,  finitely presented, pure (for every $R$-module $M$ the map $M \to M \otimes_R S$ is injective), split (for the map $R \to S$), flat, or projective.
We will concatenate these definitions, writing for instance \emph{$F$-(finite flat)} to mean $F$-finite and $F$-flat.
We make the following observations.
\begin{itemize}
\item
All of these conditions hold if $R$ is perfect, as then $R \to S$ is an isomorphism.
\item
A finitely generated algebra over an $F$-finite field  is always $F$-finite,
and is  $F$-projective if and only if it is regular (Kunz's criterion; see \cite{kunz}).
\item
$F$-coherent implies $F$-finite, and conversely if $R$ is noetherian.
\item
$F$-flat implies $F$-pure.
\item
$F$-pure implies reduced.
\item
$F$-split implies $F$-pure, but not conversely\footnote{Karl Schwede suggests the example $R = k \llbracket x \rrbracket \otimes_k k^{1/p}$ for $k$ a field which is not $F$-finite.} unless $R$ is $F$-coherent 
\cite[Corollary~5.2]{hochster-roberts}.
\item
$F$-(coherent flat) is equivalent to $F$-(finite projective).
\item
$F$-(finite projective) implies $F$-(coherent split).
\end{itemize}
\end{defn}

\begin{lemma} \label{L:lift generators}
Suppose that $\psi$ is weakly decompleting and $R_\psi$ is $F$-finite.
Let $\overline{x}_1,\dots,\overline{x}_n$ be generators of $\overline{\varphi}^{-1}(R_\psi)$
as a module over $R_\psi$, and lift them to $x_1,\dots,x_n \in \varphi^{-1}(\bA^{r_0}_\psi)$ for some $r_0 > 0$. Then for some $r_1 \in (0,r_0]$, for all $r \in (0,r_1]$,
$x_1,\dots,x_n$ generate  $\varphi^{-1}(\bA^r_\psi)$ as a module over $\bA^r_\psi$.
\end{lemma}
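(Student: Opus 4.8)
The statement is a $p$-adic approximation argument along the same lines as Lemma~\ref{L:optimal lifts} and Corollary~\ref{C:lift presentation}, promoting a generating statement about $\overline{\varphi}^{-1}(R_\psi)$ over $R_\psi$ modulo $\varpi$ to a generating statement about $\varphi^{-1}(\bA^r_\psi)$ over $\bA^r_\psi$ integrally. The plan is to first fix scales: apply Lemma~\ref{L:optimal lifts} to choose $r_0$ (shrinking the given $r_0$ if necessary) so that reduction mod $\varpi$ gives an \emph{optimal} surjection $\bA^r_\psi \to R_\psi$ for all $r \in (0,r_0]$, and likewise an optimal surjection $\varphi^{-1}(\bA^{r}_\psi) \to \overline{\varphi}^{-1}(R_\psi)$ — the latter is obtained from the former by applying $\varphi^{-1}$ and noting that $\varphi$ scales $\lambda(\overline{\alpha}_\psi^{\bullet})$ in a controlled way. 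Since $\psi$ is weakly decompleting and $R_\psi$ is $F$-finite, $\overline{\varphi}^{-1}(R_\psi)$ is a finitely generated $R_\psi$-module, so the hypothesis on $\overline{x}_1,\dots,\overline{x}_n$ makes sense and there is a constant $c \geq 1$ such that every $\overline{y} \in \overline{\varphi}^{-1}(R_\psi)$ can be written $\overline{y} = \sum_i \overline{c}_i \overline{x}_i$ with $\overline{\alpha}_\psi^{r}(\overline{c}_i) \leq c\,\overline{\alpha}_\psi^{r}(\overline{y})$, uniformly for $r$ small (again using that lowering $r$ only improves constants, as in the proof of Lemma~\ref{L:optimal lifts}).

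Next I would run the successive-approximation loop. Given $y \in \varphi^{-1}(\bA^r_\psi)$ with image $\overline{y}$ mod $\varpi$, lift the coefficients $\overline{c}_i$ to $c_i \in \bA^{r_1}_\psi$ using the optimal surjection $\bA^{r_1}_\psi \to R_\psi$, with $\lambda(\overline{\alpha}_\psi^{r_1})(c_i - [\overline{c}_i]) \leq p^{-1/2}\lambda(\overline{\alpha}_\psi^{r_1})(c_i)$; then $y - \sum_i c_i x_i$ lies in $\varpi\,\varphi^{-1}(\bA^{r}_\psi)$ (here one needs $x_i$ and the $c_i$ to lie in the relevant integral subring, which forces a choice of $r_1 \le r_0$ controlling the norms of $x_1,\dots,x_n$), with $\lambda(\overline{\alpha}_\psi^{r})$ bounded by a fixed fraction of $\lambda(\overline{\alpha}_\psi^{r})(y)$ provided $r$ is chosen small enough that the constant $c$ is absorbed — this is exactly the mechanism of Lemma~\ref{L:optimal lifts}, where $(c/p)^{r/r_1} \le p^{-1/2}$. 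Iterating and summing the $\varpi$-adically convergent series of correction terms (convergence in $\lambda(\overline{\alpha}_\psi^r)$ being guaranteed since that norm dominates the $\varpi$-adic norm, and the partial sums form a $\lambda(\overline{\alpha}_\psi^r)$-Cauchy sequence by the geometric decay) expresses $y$ as an $\bA^r_\psi$-linear combination of $x_1,\dots,x_n$. One must check the limit lands back in $\varphi^{-1}(\bA^r_\psi)$ rather than merely $\tilde{\bA}^r_\psi$: this follows because each partial sum does and $\varphi^{-1}(\bA^r_\psi)$ is closed in $\tilde{\bA}^r_\psi$ (it is the completion $\hat{\bA}^r_\psi$-relevant subring defined by the integrality condition $\theta(\varphi_\varpi^{-n}(x)) \in A_{\psi,n}$, which is a closed condition).

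\textbf{Main obstacle.} The delicate point is bookkeeping the two independent radius parameters. We need one radius $r_1 \le r_0$ at which the \emph{lifts} $x_i$ of the generators $\overline{x}_i$ actually lie in $\varphi^{-1}(\bA^{r_1}_\psi)$ with finite norm, and we separately need to shrink to $r_1' \le r_1$ so that the approximation constant $c$ coming from $F$-finiteness is swallowed by the $p^{-1/2}$ contraction factor when we pass from $\lambda(\overline{\alpha}_\psi^{r_1})$ to $\lambda(\overline{\alpha}_\psi^{r})$ for $r$ small; and we must verify these shrinkings are compatible, i.e.\ that after shrinking, the $x_i$ still generate mod $\varpi$ (which is automatic, being a statement at a single level) and still have controlled norms (monotonicity of $r \mapsto \lambda(\overline{\alpha}_\psi^r)(x_i)$ under log-convexity, Definition~\ref{D:Gauss norm}, plus the fact that $\varphi^{-1}$ interacts predictably with these norms). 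Once $r_1$ is fixed correctly, the approximation loop is routine and essentially identical to Corollary~\ref{C:lift presentation}. A secondary subtlety is that $\varphi^{-1}(\bA^r_\psi)$ is not literally $\bA^{r}_\psi$ but involves the rescaled radius $\bA^{p^{-h}r}_\psi$ under $\varphi^{-1}$ (cf.\ the definition of $\breve{\bA}^r_\psi$); one should track this carefully but it introduces only a harmless rescaling of the radius by a bounded factor, absorbed into the final choice of $r_1$.
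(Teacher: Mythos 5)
Your overall strategy -- successive $\varpi$-adic approximation based on the optimal surjection of Lemma~\ref{L:optimal lifts} together with a uniform coefficient bound supplied by $F$-finiteness and the open mapping theorem -- is the same one underlying the paper's proof, which simply refers to the analogous statement \cite[Lemma~5.5.2]{part1}; your remark about the radius rescaling under $\varphi^{-1}$ is also correct and worth making explicit. However, there is a genuine gap in your convergence bookkeeping. The one-step estimate you actually have is: choosing coefficients $c_i$ with $\lambda(\overline{\alpha}_\psi^r)(c_i) \leq c_r \lambda(\overline{\alpha}_\psi^r)(y)$, the error $z = y - \sum_i c_i x_i$ is divisible by $\varpi$ and satisfies only $\lambda(\overline{\alpha}_\psi^r)(z) \leq \max\{1, c_r M_r\}\, \lambda(\overline{\alpha}_\psi^r)(y)$ with $M_r = \max_i \lambda(\overline{\alpha}_\psi^r)(x_i)$. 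Since this constant is always $\geq 1$ (and $c_r, M_r$ may genuinely exceed $1$), this is \emph{not} a contraction, and divisibility by $\varpi$ does not lower the $\lambda(\overline{\alpha}_\psi^r)$-norm at the same radius: the gain $p^{-1+r/r_1}$ you invoke from Lemma~\ref{L:optimal lifts} appears only when a $\varpi$-divisible element bounded at radius $r_1$ is re-measured at a strictly smaller radius $r$, and even then the bound is against $\lambda(\overline{\alpha}_\psi^{r_1})(y)^{r/r_1}$, not against $\lambda(\overline{\alpha}_\psi^{r})(y)$. Consequently, iterating at a fixed radius, the terms $\varpi^l c_{l,i}$ of your coefficient series are only bounded by $c_r \max\{1, c_r M_r\}^l \lambda(\overline{\alpha}_\psi^r)(y)$, which need not tend to $0$; so the claimed geometric decay and $\lambda(\overline{\alpha}_\psi^r)$-Cauchyness fail, and with them your closedness argument for landing in $\varphi^{-1}(\bA^r_\psi)$.

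The repair -- and the reason the conclusion is stated as ``for some $r_1$, for all $r \in (0,r_1]$'' -- is to separate the two radii for the whole series rather than per step. Run the iteration once and for all at an auxiliary radius $r_1 \leq r_0$ chosen small enough that $c_{r_1} M_{r_1}$ is under control (rescaling the strictness constant via $\overline{\alpha}_\psi^{r} = (\overline{\alpha}_\psi^{r_1})^{r/r_1}$ makes these constants tend to $1$ as the radius shrinks), and record only that the coefficient series converges $\varpi$-adically while its Teichm\"uller coordinates obey a bound of the shape $p^{-n}\overline{\alpha}_\psi(\overline{a}_{i,n})^{r_1} \leq C \cdot (c_{r_1}M_{r_1})^{n} \lambda(\overline{\alpha}_\psi^{r_1})(y)$. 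Then for any strictly smaller $r$, the identity $p^{-n}\overline{\alpha}_\psi(\overline{a}_{i,n})^{r} = (p^{-n})^{1-r/r_1}\bigl(p^{-n}\overline{\alpha}_\psi(\overline{a}_{i,n})^{r_1}\bigr)^{r/r_1}$ shows these quantities tend to $0$, so $a_i \in \tilde{\bA}^{r}_\psi$, and the integrality condition $\theta(\varphi_\varpi^{-m}(a_i)) \in A_{\psi,m}$ follows from the $\varpi$-adic congruence with the partial sums because $A_{\psi,m}$ is closed. In other words, the mechanism of Lemma~\ref{L:optimal lifts} must be applied to the limit (boundedness at $r_1$ implies membership at every $r < r_1$), not invoked as a per-step contraction at a single radius; with that change your argument closes and agrees with the proof of \cite[Lemma~5.5.2]{part1} that the paper cites.
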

\begin{proof}
As in \cite[Lemma~5.5.2]{part1}.
\end{proof}

\begin{lemma} \label{L:splitting1}
Suppose that $\psi$ is weakly decompleting and $R_\psi$ is $F$-(finite projective).
Then for some $r_0 > 0$, for all $r \in (0,r]$,
$\varphi^{-1}(\bA^{r}_\psi)$ is finite projective over $\bA^{r}_\psi$.
\end{lemma}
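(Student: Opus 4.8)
Lemma~\ref{L:splitting1} is the projective refinement of Lemma~\ref{L:lift generators}, so the plan is to promote the statement ``$\varphi^{-1}(\bA^r_\psi)$ is generated by $n$ elements'' to ``$\varphi^{-1}(\bA^r_\psi)$ is finite projective,'' using the hypothesis that $S = \overline{\varphi}^{-1}(R_\psi)$ is finite projective over $R_\psi$. The reference ``as in \cite[Lemma~5.5.2]{part1}'' suggests the argument runs parallel to the perfectoid case, with two extra ingredients: first one reduces modulo $\varpi$ to the known statement about $R_\psi$, and then one uses a flatness/lifting argument to deduce the same over $\bA^r_\psi$.

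Concretely, I would first apply Lemma~\ref{L:lift generators} to fix generators $\overline{x}_1,\dots,\overline{x}_n$ of $S$ over $R_\psi$ and lifts $x_1,\dots,x_n \in \varphi^{-1}(\bA^{r_0}_\psi)$ that generate $\varphi^{-1}(\bA^r_\psi)$ over $\bA^r_\psi$ for all $r$ in some interval $(0,r_1]$; this gives a surjection $(\bA^r_\psi)^n \to \varphi^{-1}(\bA^r_\psi)$. The goal is then to show this surjection splits for $r$ small enough. The key observation is that $\bA^r_\psi$ is $\varpi$-adically separated and $\varpi$ is not a zero-divisor, and reduction modulo $\varpi$ sends $\bA^r_\psi$ to $R_\psi$ (by Definition~\ref{D:weakly decompleting}(b), at least after shrinking $r$) and $\varphi^{-1}(\bA^r_\psi)$ to $\overline{\varphi}^{-1}(R_\psi) = S$. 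Since $S$ is finite projective over $R_\psi$, the surjection $R_\psi^n \to S$ splits; I would lift the splitting idempotent $\overline{U} \in \mathrm{Mat}_{n}(R_\psi)$ to a matrix $U$ over $\bA^r_\psi$, then run a Newton-type iteration (exactly as in the proof of Lemma~\ref{L:projective dense subring} above, or as in \cite[Lemma~5.5.2]{part1}) to correct $U$ to a genuine idempotent $U'$ over $\bA^r_\psi$ congruent to $\overline{U}$ mod $\varpi$. Convergence of this iteration is $\varpi$-adic, hence automatic once $\bA^r_\psi$ is $\varpi$-adically complete — which it is not, so instead one works inside the $\varpi$-adic completion $\bA_\psi$, obtains an idempotent there, and must then descend it back to $\bA^r_\psi$. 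This last descent is the delicate point.

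The descent step is where I would use the refined structure of the norm $\lambda(\overline{\alpha}_\psi^r)$: following the pattern of Corollary~\ref{C:lift presentation} and Lemma~\ref{L:intersect dagger}, one shows that if the iteration is started from a matrix $U$ over $\bA^{r_0}_\psi$ and performed so as to control the $\lambda(\overline{\alpha}_\psi^r)$-norms of the successive corrections (using that the correction at stage $l$ has image in $\varpi^{2^l}$ times a bounded set), then the limit idempotent $U'$ still lies in $\bA^{r}_\psi$ for $r$ small enough, not merely in $\bA_\psi$. Concretely I expect to need the inequality $\lambda(\overline{\alpha}_\psi^r)(y)^{1/r} \le \lambda(\overline{\alpha}_\psi^{r_0})(y)^{1/r_0}$ for $r \le r_0$ (log-convexity of the Gauss norms, Definition~\ref{D:Gauss norm}) to trade a small radius for extra $\varpi$-divisibility, exactly as in Lemma~\ref{L:optimal lifts}. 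Once $U'$ is an idempotent over $\bA^r_\psi$ reducing to $\overline{U}$, the image of $U'$ is a finite projective $\bA^r_\psi$-module, and comparing with $S$ mod $\varpi$ plus $\varpi$-adic separatedness identifies it with $\varphi^{-1}(\bA^r_\psi)$ (the surjection $(\bA^r_\psi)^n \to \varphi^{-1}(\bA^r_\psi)$ becomes split since its reduction mod $\varpi$ is split and the modules involved are $\varpi$-adically separated, e.g.\ via Lemma~\ref{L:pseudocoherent 2 of 3} and a Nakayama-type argument on generators).

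The main obstacle, as flagged, is the descent from $\bA_\psi$ back to $\bA^r_\psi$: the Newton iteration naturally converges only $\varpi$-adically, whereas membership in $\bA^r_\psi$ is controlled by the Fr\'echet/Gauss norms, so one must carry norm bounds through every step of the iteration rather than appealing to completeness abstractly. I would handle this by shrinking $r_0$ at the outset (invoking Lemma~\ref{L:lift generators} and Lemma~\ref{L:optimal lifts}) so that the initial matrix $U$ satisfies $\lambda(\overline{\alpha}_\psi^{r_0})(U^2 - U)$ is as small as needed relative to a fixed power of $\varpi$, and then checking by induction on $l$ — paralleling the displayed estimates in the proof of Lemma~\ref{L:projective dense subring} — that $\lambda(\overline{\alpha}_\psi^{r})(W_{l+1} - W_l)$ decays geometrically for all $r \le r_0$, so the limit lies in every $\bA^r_\psi$ with $r \le r_1$ for a suitable $r_1$. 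Everything else is formal once this estimate is in place.
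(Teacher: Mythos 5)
Your route is genuinely different from the paper's. The paper disposes of the $F$-(finite free) case directly via Lemma~\ref{L:lift generators}, and then reduces the general projective case to the free case by an analytic local--global argument: Tate's reduction \cite[Proposition~2.4.20]{part1} to a simple Laurent covering, Kiehl-type glueing of finite projective modules over a glueing square \cite[Proposition~2.7.5]{part1}, with the key input that the square of rings $\bA^r_\psi \to \bA^r_{\psi_1}, \bA^r_{\psi_2} \to \bA^r_{\psi_{12}}$ is a glueing square --- which is deduced from the corresponding statement for $R_\psi$ via Lemma~\ref{L:optimal lifts}, and for $R_\psi$ from stable uniformity/sheafiness inherited from $\tilde{R}_\psi$. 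You avoid all localization and instead lift the splitting idempotent from $R_\psi$ to $\bA^r_\psi$ by a norm-controlled Newton iteration. If completed, your argument is more self-contained (no appeal to the glueing-square formalism or to sheafiness of $R_\psi$), at the price of the norm bookkeeping; one simplification worth making is that there is no need to pass through $\bA_\psi$ at all: $\bA^r_\psi$ is closed in $\tilde{\bA}^r_\psi$, hence complete for $\lambda(\overline{\alpha}_\psi^r)$, and by log-convexity the initial defect $\lambda(\overline{\alpha}_\psi^r)(U^2-U)$ tends to a value $\leq p^{-1}$ as $r \to 0^+$ (for a lift $U$ chosen via Lemma~\ref{L:optimal lifts}), so for $r$ small the iteration converges in the Banach norm of $\bA^r_\psi$ itself. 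You must, as you note, simultaneously track $\varpi$-divisibility of the corrections (small $\lambda$-norm alone does not imply divisibility by $\varpi$) to know that the limit idempotent $U'$ still reduces to $\overline{U}$ modulo $\varpi$; since multiplication by $\varpi$ is a closed embedding, this works.

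The one genuine soft spot is the final step, where you identify $\image(U')$ with $\varphi^{-1}(\bA^r_\psi)$ ``via Lemma~\ref{L:pseudocoherent 2 of 3} and a Nakayama-type argument.'' Nakayama is not applicable here: $\varpi$ is not in the Jacobson radical of $\bA^r_\psi$ and $\bA^r_\psi$ is not $\varpi$-adically complete, and Lemma~\ref{L:pseudocoherent 2 of 3} says nothing about splitting or separatedness. (The paper itself flags precisely this pitfall in the proof of Lemma~\ref{L:pseudocoherent type A torsion}: ``without Nakayama's lemma, which is not applicable without \emph{a priori} knowledge of finiteness \dots''.) What is actually needed is a successive-approximation argument in the norm: knowing $\pi \circ U' \equiv \pi \pmod{\varpi}$, one writes any $m \in \varphi^{-1}(\bA^r_\psi)$ as a $\lambda(\overline{\alpha}_\psi^r)$-convergent series $\pi(U'(f_0 + \varpi f_1 + \cdots))$, which requires choosing the lifts $f_i$ with uniformly bounded norm --- available thanks to the strictness in Definition~\ref{D:weakly decompleting}(b), refined by Lemma~\ref{L:optimal lifts} and Corollary~\ref{C:lift presentation}. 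Injectivity of $\pi$ on $\image(U')$ then follows from $\varpi$-adic separatedness and the fact that $\image(U')$ is a direct summand. With that replacement the argument goes through; as written, the appeal to Nakayama/separatedness alone would not.
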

\begin{proof}
If $R_\psi$ is $F$-(finite free), then Lemma~\ref{L:lift generators} implies that
for any $r>0$ sufficiently small,
$\varphi^{-1}(\bA^{r}_\psi)$ is finite free over $\bA^{r}_\psi$.
To deduce the general case, by \cite[Proposition~2.4.20]{part1} it suffices to show that we can infer the finite projective property for $\psi$ for the same property for the towers $\psi_1, \psi_2, \psi_{12}$ obtained from $\psi$ along a simple Laurent covering.
By \cite[Proposition~2.7.5]{part1}, it suffices to check that for $r>0$ sufficiently small,
the diagram
\[
\xymatrix{
\bA^r_\psi \ar[r] \ar[d] & \bA^r_{\psi_1} \ar[d] \\
\bA^r_{\psi_2} \ar[r] & \bA^r_{\psi_{12}} 
}
\]
is a glueing square in the sense of \cite[Definition~2.7.3]{part1}.
Using Lemma~\ref{L:optimal lifts}, this reduces to checking that
\[
\xymatrix{
R_\psi \ar[r] \ar[d] & R_{\psi_1} \ar[d] \\
R_{\psi_2} \ar[r] & R_{\psi_{12}} 
}
\]
is a glueing square. We may deduce this as follows:
$(\tilde{R}_\psi, \tilde{R}_\psi^+)$ is stably uniform by \cite[Proposition~3.1.7]{part1}; 
thus $(R_\psi, R_\psi^+)$ is stably uniform by \cite[Remark~2.8.12]{part1};
thus $(R_\psi, R_\psi^+)$ is sheafy by \cite[Theorem~2.8.10]{part1};
thus the structure sheaf on $\Spa(R_\psi, R_\psi^+)$ is acyclic on rational subspaces
by \cite[Theorem~2.4.23]{part1}.
\end{proof}

\begin{hypothesis} \label{H:phigamma split}
For the remainder of \S\ref{subsec:Frobenius splittings},
retain Hypothesis~\ref{H:phigamma}, but assume further that $R_\psi$ is $F$-(finite projective).
\end{hypothesis}

\begin{remark} \label{R:descent pseudocoherent}
In general, one must distinguish between the category of pseudocoherent modules over $R_\psi^{\perf}$ and the direct 2-limit of the categories of pseudocoherent modules over $\overline{\varphi}^{-m}(R_\psi)$ for all $m$. However, by hypothesis the map $R_\psi \to R_\psi^{\perf}$ is faithfully flat, so a module over $R_\psi$ becomes pseudocoherent over $R_\psi^{\perf}$ if and only if it was already pseudocoherent to begin with.
\end{remark}

\begin{prop} \label{P:perfect equivalence2a}
The (exact tensor) categories of pseudocoherent (resp.\ fpd) $\varphi$-modules over the rings in the diagram
\[
 \bA_\psi \to \breve{\bA}_\psi \to
 \hat{\bA}_\psi  \to \tilde{\bA}_\psi
\]
are equivalent via the apparent base change functors; in particular, by Corollary~\ref{C:pseudocoherent type A flat} and Theorem~\ref{T:pseudocoherent type A}, these categories are abelian and every object is fpd of projective dimension at most $1$.
Moreover, these equivalences induce isomorphisms of $\varphi$-cohomology groups.
\end{prop}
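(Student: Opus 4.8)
The plan is to bootstrap from Corollary~\ref{C:fully faithful to extended}, which is exactly the projective case of the assertion, using the structure theory of pseudocoherent $\varphi$-modules over strict $\varpi$-rings together with faithfully flat descent; the fpd case will then follow by tracking projective dimension. The first step is to record the flatness inputs supplied by Hypothesis~\ref{H:phigamma split}. Since $R_\psi$ is $F$-(finite projective), iterating the $\varpi$-adically completed form of Lemma~\ref{L:splitting1} shows that each $\varphi_\varpi^{-n}(\bA_\psi)$ is a finite projective $\bA_\psi$-module, so $\bA_\psi \to \breve\bA_\psi = \bigcup_n \varphi_\varpi^{-n}(\bA_\psi)$ is a filtered colimit of finite projective maps, hence faithfully flat. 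On the other hand $R_\psi$ is $F$-pure and therefore reduced, so $\breve\bA_\psi/(\varpi) = \hat\bA_\psi/(\varpi) = R_\psi^{\perf}$ is perfect; thus $\hat\bA_\psi$ and $\tilde\bA_\psi$ are strict $\varpi$-rings (Remark~\ref{R:same Witt vectors}), $\hat\bA_\psi = W_\varpi(R_\psi^{\perf})$ and $\tilde\bA_\psi = W_\varpi(\tilde R_\psi)$, so that Lemma~\ref{L:pseudocoherent type A torsion} and Corollary~\ref{C:pseudocoherent type A flat} apply to pseudocoherent $\varphi$-modules over these two rings (in particular such modules are fpd of projective dimension at most $1$ and have $\varpi$-power-torsion submodule killed by a single power of $\varpi$). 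The remaining two arrows — the $\varpi$-adic completion $\breve\bA_\psi \to \hat\bA_\psi$ and the map $\hat\bA_\psi \to \tilde\bA_\psi$ induced by $R_\psi^{\perf} \to \tilde R_\psi$ — I would show are flat (or at least pseudoflat, which is all that is needed for pseudocoherent modules by Remark~\ref{R:pseudoflat reduction}) by invoking the pseudoflatness machinery of \S\ref{subsec:pseudoflat rational}–\S\ref{subsec:pseudoflat Robba} in the $W_\varpi$ setting; this is the first point that needs care.

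Granting these flatness statements, base extension along every arrow in the chain preserves pseudocoherence and fpd-ness and is exact on pseudocoherent modules (Remark~\ref{R:need flatness}), so each arrow gives a well-defined base-change functor between the relevant categories, and pseudocoherence descends along the faithfully flat maps. It then suffices to prove that base change from each of $\bA_\psi$, $\breve\bA_\psi$, $\hat\bA_\psi$ to $\tilde\bA_\psi$ is an exact tensor equivalence. For full faithfulness I would use that, for a pseudocoherent $\varphi$-module $M$, the $\varphi$-cohomology is computed by the two-term complex $0 \to M \xrightarrow{\varphi-1} M \to 0$; since pseudocoherent $\varphi$-modules over $\hat\bA_\psi$ and $\tilde\bA_\psi$ admit resolutions by projective $\varphi$-modules (Lemma~\ref{L:lift action1}), the groups $\Hom$ and $\Ext^1$ in these categories are $\varphi$-cohomology groups, and for $\bA_\psi$, $\breve\bA_\psi$ one reduces to this by the canonical exact sequence $0 \to T \to M \to M/T \to 0$ with $T$ the $\varpi$-power-torsion submodule and $M/T$ projective. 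Base change matches the projective summand by Corollary~\ref{C:fully faithful to extended}, matches the torsion summand as below, and reassembles by the five lemma.

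For essential surjectivity, given a pseudocoherent $\varphi$-module $\tilde M$ over $\tilde\bA_\psi$, Lemma~\ref{L:pseudocoherent type A torsion} gives $0 \to \tilde T \to \tilde M \to \tilde M/\tilde T \to 0$ with $\tilde M/\tilde T$ projective and $\tilde T$ killed by $\varpi^n$; the projective quotient descends to $\bA_\psi$ by Corollary~\ref{C:fully faithful to extended}, and one descends $\tilde T$ by induction on $n$, reducing modulo $\varpi$ to a pseudocoherent $\overline\varphi$-module over $R_\psi^{\perf}$ — equivalently, via the faithfully flat maps $\overline\varphi_\varpi^{-\bullet}$, over $R_\psi$ — and invoking \cite[Proposition~3.2.7]{part1} and Lemma~\ref{L:DM} exactly as in the proof of Theorem~\ref{T:pseudocoherent type A}; faithfully flat descent of pseudocoherence along $\bA_\psi \to \tilde\bA_\psi$ guarantees the descended torsion module is pseudocoherent over $\bA_\psi$. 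The extension reassembles because its class lies in $H^1_\varphi$ of the internal hom, already matched. Finally, the preservation of $\varphi$-cohomology follows from the two-term-complex description plus flatness (so that $\ker$ and $\coker$ of $\varphi-1$ commute with the tensor products, using $\Tor_1^{\bA_\psi}(M,\tilde\bA_\psi)=0$ for pseudocoherent $M$), bootstrapped from Corollary~\ref{C:fully faithful to extended} by the five lemma on $0 \to T \to M \to M/T \to 0$; the ``abelian, fpd of projective dimension $\le 1$'' clause is then transported from $\tilde\bA_\psi$, where it is Corollary~\ref{C:pseudocoherent type A flat} and Theorem~\ref{T:pseudocoherent type A}. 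I expect the main obstacle to be exactly the two points flagged above: establishing flatness (or pseudoflatness) of the two ``analytic'' maps $\breve\bA_\psi \to \hat\bA_\psi$ and $\hat\bA_\psi \to \tilde\bA_\psi$ without noetherian hypotheses, and making the descent of the torsion part effective all the way down to $\bA_\psi$ — for which the $F$-(finite projective) hypothesis, via the explicit Frobenius splitting of Lemma~\ref{L:splitting1}, is precisely what renders $\bA_\psi \to \breve\bA_\psi$ faithfully flat.
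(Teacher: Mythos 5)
Your overall skeleton is the paper's: the $F$-(finite projective) hypothesis makes $R_\psi \to R_\psi^{\perf}$ faithfully flat, whence (applying \cite[Proposition~3.2.13]{part1} over $R_\psi^{\perf}$ and descending) every finitely presented module over $R_\psi$ with a semilinear $\varphi$-action is projective; one then imitates Lemma~\ref{L:pseudocoherent type A torsion} over \emph{each} ring in the diagram to write any pseudocoherent $\varphi$-module as an extension of a projective $\varphi$-module by a $\varpi$-power-torsion pseudocoherent one, and deduces the equivalences and the cohomology comparison from Corollary~\ref{C:fully faithful to extended}. Where you diverge is in making the argument contingent on flatness (or pseudoflatness) of $\breve{\bA}_\psi \to \hat{\bA}_\psi$ and $\hat{\bA}_\psi \to \tilde{\bA}_\psi$, which you leave unproven and yourself flag as the main obstacle. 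Attacking that head-on is delicate: $\breve{\bA}_\psi \to \hat{\bA}_\psi$ is a completion of a non-noetherian ring, precisely the situation in which Remark~\ref{R:no pseudoflat completion} warns that pseudoflatness can fail, so ``invoking the pseudoflatness machinery'' is not a routine citation. (Pseudoflatness and splitting of $\bA_\psi \to \tilde{\bA}_\psi$ do hold under the standing hypothesis, but via the Frobenius-splitting argument of Lemma~\ref{L:full splitting} and Corollary~\ref{C:split base extension1}, which come after this proposition and are not used in its proof.)

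More to the point, none of this flatness is needed. Once the torsion/projective structure theorem is available over a ring $S$ in the diagram, every pseudocoherent $\varphi$-module $M$ over $S$ is an extension of a projective module by a module filtered by finite projective modules over $S/\varpi^n S$; since every target ring $S'$ in the diagram is $\varpi$-torsion-free, $\Tor_1^S(S/\varpi^n S, S') = S'[\varpi^n] = 0$, hence $\Tor_1^S(M, S') = 0$. That vanishing is all your exactness and cohomology-comparison steps require, and it also shows base extension preserves pseudocoherence (each torsion constituent base-extends to a finite projective module over $S'/\varpi^n S'$, which is pseudocoherent over $S'$ because $S'/\varpi^n S'$ is $1$-fpd over $S'$). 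For the same reason, your appeal to ``faithfully flat descent of pseudocoherence along $\bA_\psi \to \tilde{\bA}_\psi$'' --- a map you have not shown to be flat --- should be replaced either by the observation that the descended torsion module is pseudocoherent by its structure, or by Remark~\ref{R:descent pseudocoherent} applied modulo $\varpi$. With those adjustments your argument collapses onto the paper's proof.
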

\begin{proof}
By using the flatness of $R_\psi \to R_\psi^{\perf}$ and applying \cite[Proposition~3.2.13]{part1} to $R_\psi^{\perf}$, we see that any finitely presented $\varphi$-module over $R_\psi$ admitting a semilinear $\varphi$-action is projective.
Using this fact, we may imitate the proof of Lemma~\ref{L:pseudocoherent type A torsion} to see that for any pseudocoherent $\varphi$-module $M$ over any of the rings in the diagram, the $\varpi$-power-torsion submodule $T$ of $M$ is again a pseudocoherent $\varphi$-module and $M/T$ is projective. We may thus deduce the claim from
Corollary~\ref{C:fully faithful to extended}.
\end{proof}

\begin{lemma} \label{L:full splitting}
There exists $r_0>0$ such that for $0 < s \leq r \leq r_0$,
there exist isomorphisms
\begin{align*}
\tilde{R}_\psi &\cong R_\psi \oplus \widehat{\bigoplus_{m=1}^\infty} \overline{\varphi}^{-m}(R_\psi)/\overline{\varphi}^{-m+1}(R_\psi) \\
\tilde{\bA}^r_\psi &\cong \bA^r_\psi \oplus \widehat{\bigoplus_{m=1}^\infty} \varphi^{-m}(\bA^{r/p^{hm}}_\psi)/\varphi^{-m+1}(\bA^{r/p^{h(m-1)}}_\psi) \\
\tilde{\bC}^{[s,r]}_\psi &\cong \bC^{[s,r]}_\psi \oplus \widehat{\bigoplus_{m=1}^\infty} \varphi^{-m}(\bC^{[s/p^{hm}, r/p^{hm}]}_\psi)/\varphi^{-m+1}(\bC^{[s/p^{h(m-1)}, r/p^{h(m-1)}]}_\psi).
\end{align*}
More precisely, in each case,
for $R$ the first summand on the right-hand side, each subsequent summand on the right-hand side is a finite projective $R$-module topologized using the quotient topology
(for the respective norms $\overline{\alpha}_\psi$, $\lambda(\overline{\alpha}_\psi^r)$,
$\max\{\lambda(\overline{\alpha}_\psi^s), \lambda(\overline{\alpha}_\psi^r)\}$);
the completion is with respect to the supremum norm; and the isomorphism 
is an isomorphism of normed $R$-modules.
\end{lemma}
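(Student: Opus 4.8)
The statement is the exact analogue, for the imperfect Frobenius tower, of Lemma~\ref{L:perfectoid tower splitting} and Remark~\ref{R:perfectoid tower splitting Robba}, where the role of a faithfully finite \'etale tower is played by the chain
\[
R_\psi = \overline{\varphi}^0(R_\psi) \hookrightarrow \overline{\varphi}^{-1}(R_\psi) \hookrightarrow \overline{\varphi}^{-2}(R_\psi) \hookrightarrow \cdots
\]
(resp.\ $\bA^r_\psi \hookrightarrow \varphi^{-1}(\bA^{r/p^h}_\psi) \hookrightarrow \cdots$, resp.\ the $\bC^{[s,r]}$-version), whose completed union is $\tilde{R}_\psi$ (resp.\ $\tilde{\bA}^r_\psi$, $\tilde{\bC}^{[s,r]}_\psi$) by weak decompletion (and Remark~\ref{R:C dense} in the $\bC$ case). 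The point of the extra hypothesis $R_\psi$ is $F$-(finite projective) is twofold: first, applying the functor $\overline{\varphi}^{-m+1}$ to the finite projective $R_\psi$-module $\overline{\varphi}^{-1}(R_\psi)$ shows inductively that each step $\overline{\varphi}^{-m+1}(R_\psi)\hookrightarrow\overline{\varphi}^{-m}(R_\psi)$ is finite projective, so the quotients $Q_m := \overline{\varphi}^{-m}(R_\psi)/\overline{\varphi}^{-m+1}(R_\psi)$ are finite projective over $R_\psi$; second, as recorded in the discussion of $F$-purity, $F$-(finite projective) implies $F$-split, so each of these inclusions admits an $R_\psi$-linear retraction, hence the short exact sequences $0\to\overline{\varphi}^{-m+1}(R_\psi)\to\overline{\varphi}^{-m}(R_\psi)\to Q_m\to 0$ split compatibly.

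\textbf{Key steps.} (1) Assemble the compatible splittings into an $R_\psi$-module isomorphism $\breve{R}_\psi = R_\psi^{\perf}\cong R_\psi\oplus\bigoplus_{m\ge1}Q_m$. (2) Check that each inclusion $\overline{\varphi}^{-m+1}(R_\psi)\hookrightarrow\overline{\varphi}^{-m}(R_\psi)$ is isometric for (the restriction of) $\overline{\alpha}_\psi$, so that the $Q_m$ carry their quotient norms and the retractions may be taken submetric, exactly as in the bookkeeping behind Remark~\ref{R:complex almost optimal} and the proof of Lemma~\ref{L:perfectoid tower splitting}; then pass to completions, using density of $\breve{R}_\psi$ in $\tilde{R}_\psi$, to obtain $\tilde{R}_\psi\cong R_\psi\oplus\widehat{\bigoplus}_{m\ge1}Q_m$ with the supremum norm. (3) Transport this to $\bA^r_\psi$: by Lemma~\ref{L:splitting1} there is $r_0>0$ such that $\varphi^{-1}(\bA^r_\psi)$ is finite projective over $\bA^r_\psi$ for all $r\in(0,r_0]$, and its reduction mod $\varpi$ is $\overline{\varphi}^{-1}(R_\psi)$, which is $F$-split; lifting the retraction (using Lemma~\ref{L:optimal lifts} to control norms and the glueing-square argument from the proof of Lemma~\ref{L:splitting1}) gives a submetric retraction $\varphi^{-1}(\bA^{r/p^h}_\psi)\to\bA^r_\psi$, and iterating with the radius shrinking by $p^{-h}$ at each step produces the decomposition of $\tilde{\bA}^r_\psi$ with the level-$m$ summand $\varphi^{-m}(\bA^{r/p^{hm}}_\psi)/\varphi^{-m+1}(\bA^{r/p^{h(m-1)}}_\psi)$. (4) Derive the $\bC^{[s,r]}$ statement either by the same procedure applied to $\bC^{[s,r]}_\psi$ (whose finite-projectivity of Frobenius pullbacks follows from step (3) via Lemma~\ref{L:imperfect Robba interval localization}/Lemma~\ref{L:weak flatness imperfect Robba}) or by tensoring the $\bA^r_\psi$ decomposition along $\bA^r_\psi\to\bC^{[s,r]}_\psi$ and completing. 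Finally, observe that $\varphi$ is an isometry of $\bA^r_\psi$ onto $\bA^{r/p^h}_\psi$ for the rescaled norms $\lambda(\overline{\alpha}_\psi^r)^{1/r}$, so all the level-$m$ summands are uniformly comparable (via powers of $\varphi^{-1}$) to the level-$1$ summand; this gives the uniform norm control needed for the completed direct sum to make sense and for a single $r_0$ to serve all three statements.

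\textbf{Main obstacle.} The essential work is step (3): transferring the $F$-splitting from $R_\psi$, where it is visible modulo $\varpi$, up to the non-$\varpi$-adically-complete Banach rings $\bA^r_\psi$ and $\bC^{[s,r]}_\psi$ while keeping the retractions norm-nonincreasing and the radii under control. A henselian lift is unavailable, so one must instead argue that the cokernel $\varphi^{-1}(\bA^{r/p^h}_\psi)/\bA^r_\psi$ is itself finite projective — reducing mod $\varpi$ it is the finite projective module $Q_1$, and one upgrades this to $\bA^r_\psi$ by a Beauville--Laszlo-type glueing as in Lemma~\ref{L:pseudocoherent glueing} together with the acyclicity on rational subspaces of $\Spa(\bA^r_\psi,\bA^{r,+}_\psi)$ used in the proof of Lemma~\ref{L:splitting1} — and then extract the retraction from the resulting splitting, with the Gauss-norm estimate of Lemma~\ref{L:optimal lifts} (and Corollary~\ref{C:lift presentation}) guaranteeing submetricity. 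The remaining points — isometry of the tower inclusions, density of $\breve{R}_\psi$ in $\tilde{R}_\psi$, and convergence of the supremum-norm direct sum — are then formal, following the template of Lemma~\ref{L:perfectoid tower splitting} and Remark~\ref{R:perfectoid tower splitting Robba}.
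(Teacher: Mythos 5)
Your overall skeleton agrees with the paper's proof: split the single inclusion $R_\psi \hookrightarrow \overline{\varphi}^{-1}(R_\psi)$ once (from the $F$-(finite projective) hypothesis, resp.\ from Lemma~\ref{L:splitting1} in type $\bA$), transport this splitting to every level of the tower by applying powers of $\varphi^{-1}$, compose, and complete. The gap is in the norm bookkeeping, which is the real content of the lemma. Your step (2) asserts that the retractions ``may be taken submetric, exactly as in the bookkeeping behind Remark~\ref{R:complex almost optimal} and the proof of Lemma~\ref{L:perfectoid tower splitting}''; but those statements rest on almost optimal exactness for complexes of \emph{perfect} uniform Banach rings (ultimately almost purity), whereas here the level-one splitting is an abstract $F$-splitting of the imperfect ring $R_\psi$. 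It is continuous, hence bounded by some constant $c \geq 1$, but nothing forces $c=1$, and the analogy with the perfectoid tower gives no norm control. Your fallback in the last paragraph does not repair this: $\varphi$ is not an isometry for the rescaled norms (from $\lambda(\overline{\alpha}_\psi^{s})(\varphi(x)) = \lambda(\overline{\alpha}_\psi^{sp^h})(x)$ one sees that $\varphi$ raises the rescaled norm to the power $p^h$), and even if the level-$m$ splittings were merely ``uniformly comparable'' to the level-one one, a uniform bound $C>1$ on each retraction would not bound the composites $\pi_1 \circ \cdots \circ \pi_M$ that define the projections in the decomposition, so continuity of the decomposition map on the completion would not follow.

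The correct mechanism, which is what the paper uses and which your setup almost contains, is quantitative rather than uniform: conjugating the \emph{fixed} level-one splitting by $\overline{\varphi}^{-(m-1)}$ (resp.\ $\varphi^{-(m-1)}$) extracts roots of the underlying norms, so the level-$m$ splitting has distortion constant of the form $c^{\epsilon_m}$ with $\epsilon_m$ decaying geometrically in $m$ (in type $\bA$ and $\bC$ one first uses Lemma~\ref{L:optimal lifts} to compare $\lambda(\overline{\alpha}_\psi^r)$ with the residual norm, which is where the restriction $r \leq r_0$ enters). The product $\prod_m c^{\epsilon_m}$ then converges, giving a single bound (such as $c^{p/(p-1)}$ in the first case) for the total decomposition; this is exactly what makes the completed direct sum a bounded isomorphism of normed $R$-modules, and it is all the lemma claims—submetricity is neither available nor needed. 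As a minor further point, the Beauville--Laszlo-style detour you sketch for type $\bA$ is unnecessary: Lemma~\ref{L:splitting1} already supplies the level-one splitting there, and the rest is the same iteration.
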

\begin{proof}
To obtain the first isomorphism, we start with a splitting 
\begin{equation} \label{eq:splitting1}
\overline{\varphi}^{-1}(R_\psi) \cong R_\psi \oplus \overline{\varphi}^{-1}(R_\psi)/ R_\psi
\end{equation}
and then repeatedly apply $\overline{\varphi}^{-1}$ to obtain splittings
\begin{equation} \label{eq:splitting2}
\overline{\varphi}^{-m}(R_\psi) \cong \overline{\varphi}^{-m+1}(R_\psi) \oplus \overline{\varphi}^{-m}(R_\psi)/ \overline{\varphi}^{-m+1}(R_\psi).
\end{equation}
Composing these yields an isomorphism
\begin{equation} \label{eq:splitting3}
R_\psi^{\perf} \cong R_\psi \oplus \bigoplus_{m=1}^\infty \overline{\varphi}^{-m}(R_\psi)/\overline{\varphi}^{-m+1}(R_\psi).
\end{equation}
To see that this is a topological isomorphism, equip $\overline{\varphi}^{-m}/\overline{\varphi}^{-m+1}(R_\psi)$ with the quotient norm. There then exists $c>1$ such that in \eqref{eq:splitting1}, the norm of an element on the right-hand side is at most $c$ times its image on the left-hand side.
Then in \eqref{eq:splitting2}, the norm of an element on the right-hand side is at most $c^{p^{1-m}}$ times its image on the left-hand side. Consequently, in 
\eqref{eq:splitting3}, the norm of an element on the right-hand side is at most $c^{1+1/p+1/p^2+\cdots} = c^{p/(p-1)}$ times its image on the left-hand side.

To obtain the second isomorphism, apply Lemma~\ref{L:splitting1} to produce a splitting
\begin{equation} \label{eq:splitting4}
\varphi^{-1}(\bA^{r_0/p^h}_\psi)  \cong \bA^{r_0}_\psi \oplus \varphi^{-1}(\bA^{r_0/p^h}_\psi)/ \bA^{r_0}_\psi
\end{equation}
and then repeatedly apply $\varphi^{-1}$ to obtain splittings
\begin{equation} \label{eq:splitting5}
\varphi^{-m}(\bA^{r_0/p^{hm}}_\psi)  \cong \varphi^{-m+1}(\bA^{r_0/p^{h(m-1)}}_\psi) \oplus \varphi^{-m}(\bA^{r_0/p^{hm}}_\psi)/ \varphi^{-m+1}(\bA^{r_0/p^{h(m-1)}}_\psi).
\end{equation}
Composing these yields an isomorphism
\begin{equation} \label{eq:splitting6}
\breve{\bA}^{r_0}_\psi \cong \bA^{r_0}_\psi \oplus \bigoplus_{m=1}^\infty {\varphi}^{-m}(\bA^{r_0/p^{hm}}_\psi)/\varphi^{-m+1}(\bA^{r_0/p^{h(m-1)}}_\psi).
\end{equation}
By Lemma~\ref{L:optimal lifts} and the previous paragraph, there exist $c>0$ and $r_0>0$ such that for each $m$, the isomorphism \eqref{eq:splitting5} has the property that for $r \in (0,r_0]$, the norm with respect to $\lambda(\overline{\alpha}_\psi^r)$ of an element of the right-hand side is at most $c^{r/p^{h(m-1)}}$ times the norm on the left-hand side. Consequently, in \eqref{eq:splitting6}, the norm with respect to $\lambda(\overline{\alpha}_\psi^r)$ of an element of the right-hand side is at most $c^{p^hr/(p^h-1)}$ times the norm on the left-hand side. 
This yields the second isomorphism; the third isomorphism follows similarly.
\end{proof}

\begin{cor} \label{C:split base extension1}
There exists $r_0>0$ such that for $0 < s \leq r \leq r_0$,
the morphisms $R_\psi \to \tilde{R}_\psi$,
$\bA^{r}_\psi \to \tilde{\bA}^{r}_\psi$,
$\bC^{[s,r]}_\psi \to \tilde{\bC}^{[s,r]}_\psi$
are $2$-pseudoflat, and also split in the category of Banach modules over the source.
Also, the map $\bA_\psi \to \tilde{\bA}_\psi$ is $2$-pseudoflat and split in the category of $\bA_\psi$-modules.
\end{cor}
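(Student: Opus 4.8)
The plan is to extract everything from the explicit module decompositions in Lemma~\ref{L:full splitting}, with one extra bookkeeping step to reach the ring $\bA_\psi$, which that lemma does not treat. First I would handle the three closed-interval assertions, taking $r_0$ as in Lemma~\ref{L:full splitting}. For each of $R_\psi \to \tilde{R}_\psi$, $\bA^r_\psi \to \tilde{\bA}^r_\psi$, $\bC^{[s,r]}_\psi \to \tilde{\bC}^{[s,r]}_\psi$, Lemma~\ref{L:full splitting} writes the target, as a normed module over the source $R$, in the shape $R \oplus \widehat{\bigoplus}_{m=1}^\infty P_m$ with each $P_m$ a finite projective $R$-module and the completion formed for the supremum norm. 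Projection onto the leading summand $R$ is a continuous $R$-linear retraction of the inclusion, which gives the splitting statement. For pseudoflatness, let $\pi_n$ be the projection onto $R \oplus P_1 \oplus \cdots \oplus P_n$: this is a continuous $R$-linear projector with finite projective image, and because an element of the completed direct sum is exactly a null sequence in the $P_m$, one has $\pi_n(x) \to x$ for every $x$ in the target. Hence the target is a pro-projective $R$-module in the sense of Definition~\ref{D:pro-projective} (the source is a Banach ring, hence an open mapping ring), so it is topologically flat over $R$ by Lemma~\ref{L:pro-projective topologically flat}, and therefore pseudoflat by Definition~\ref{D:topologically flat}.

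Next I would treat $\bA_\psi \to \tilde{\bA}_\psi$ by re-running the proof of Lemma~\ref{L:full splitting} with the $\varpi$-adic topology in place of the Banach topologies. Under Hypothesis~\ref{H:phigamma split} the ring $R_\psi$ is $F$-(finite projective), hence $F$-split; running the glueing-square argument of Lemma~\ref{L:splitting1} at the level of $\bA_\psi$ (reducing modulo $\varpi$ to the splitting $R_\psi \to \overline{\varphi}^{-1}(R_\psi)$ and to the sheafiness of $\Spa(R_\psi,R_\psi^+)$, which holds as $R_\psi$ inherits stable uniformity) shows that each inclusion $\varphi^{-m+1}(\bA_\psi) \to \varphi^{-m}(\bA_\psi)$ splits over $\bA_\psi$ with finite projective cokernel $N_m$. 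Composing these gives $\breve{\bA}_\psi \cong \bA_\psi \oplus \bigoplus_{m \geq 1} N_m$ as $\bA_\psi$-modules; the resulting projection onto $\bA_\psi$ is $\varpi$-adically continuous, so it extends over the $\varpi$-adic completion to split $\bA_\psi \to \tilde{\bA}_\psi$ and to identify $\tilde{\bA}_\psi \cong \bA_\psi \oplus \widehat{\bigoplus}_{m \geq 1} N_m$, the completion now being $\varpi$-adic. Pseudoflatness over $\bA_\psi$ would then be checked against a $2$-pseudocoherent module $P$ via the criterion of Remark~\ref{R:pseudoflat reduction}(c), using $\Tor_1^{\bA_\psi}(P,N_m)=0$ for each $m$ (the $N_m$ are finite projective) and passing to the completion through a convergence argument modelled on Lemma~\ref{L:pro-projective topologically flat}, adapted to the $\varpi$-adic topology and carried out modulo successive powers of $\varpi$, where each finite level is a base change along $W_{\varpi,l}(R_\psi) \to W_{\varpi,l}(\tilde{R}_\psi)$ whose $\varpi$-graded pieces reproduce the pseudoflat map $R_\psi \to \tilde{R}_\psi$.

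The main obstacle is precisely this last pseudoflatness check for $\bA_\psi$. The ring $\bA_\psi$ carries the $\varpi$-adic topology and has no null sequence of units, so it is not an open mapping ring and the tidy implication ``pro-projective $\Rightarrow$ topologically flat $\Rightarrow$ pseudoflat'' of Lemma~\ref{L:pro-projective topologically flat} is not directly available; moreover $\varpi$-adic completion does not in general preserve (pseudo)flatness (Remark~\ref{R:no pseudoflat completion}), so the vanishing of $\Tor_1$ must be squeezed out of the explicit pro-(finite projective) description of $\tilde{\bA}_\psi$ and of the finite-level base changes rather than from an abstract completion principle. Everything in the three closed-interval cases is a direct transcription of Lemma~\ref{L:full splitting} and presents no difficulty.
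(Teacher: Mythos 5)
Your treatment of the three closed-interval morphisms is correct and is exactly the intended deduction: the paper states the corollary without proof immediately after Lemma~\ref{L:full splitting}, the splitting being projection onto the leading summand and pseudoflatness following because the decompositions exhibit each target as a pro-projective module over the source (Definition~\ref{D:pro-projective}, Lemma~\ref{L:pro-projective topologically flat}, Definition~\ref{D:topologically flat}).

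The argument for $\bA_\psi \to \tilde{\bA}_\psi$, however, has a genuine gap: you identify $\tilde{\bA}_\psi$ with the $\varpi$-adic completion of $\breve{\bA}_\psi \cong \bA_\psi \oplus \bigoplus_m N_m$. That completion is by definition $\hat{\bA}_\psi$, not $\tilde{\bA}_\psi$ (Definition~\ref{D:period rings}): one has $\hat{\bA}_\psi/(\varpi) \cong \breve{R}_\psi = R_\psi^{\perf}$, whereas $\tilde{\bA}_\psi/(\varpi) = \tilde{R}_\psi$ is the \emph{norm} completion of $R_\psi^{\perf}$, which is strictly larger in the basic examples (cyclotomic and toric towers); $\varpi$-adic convergence never produces the metric completion at the mod-$\varpi$ level. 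So your retraction only splits $\bA_\psi \to \hat{\bA}_\psi$, and the $\Tor$ computation as set up never reaches $\tilde{\bA}_\psi$. (A smaller slip in the same paragraph: $\bA_\psi/(\varpi^l)$ is not $W_{\varpi,l}(R_\psi)$, since $R_\psi$ is imperfect; it is a flat $\gotho_E/(\varpi^l)$-lift of $R_\psi$, but not a truncated Witt ring.)

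The repair is to $\varpi$-adically complete the \emph{second} isomorphism of Lemma~\ref{L:full splitting}, i.e.\ the $\bA^{r}_\psi$-linear splitting of $\bA^{r}_\psi \to \tilde{\bA}^{r}_\psi$, rather than the uncompleted union $\breve{\bA}_\psi$. Writing elements by Teichm\"uller expansions shows $\tilde{\bA}^{r}_\psi/(\varpi^N) \cong \tilde{\bA}_\psi/(\varpi^N)$, while Lemma~\ref{L:optimal lifts} and Corollary~\ref{C:lift presentation} show every element of $\bA^{\dagger}_\psi$ is congruent modulo $\varpi^N$ to an element of $\bA^{r_0}_\psi$, so the $\varpi$-adic completion of $\bA^{r_0}_\psi$ is $\bA_\psi$. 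Since the retraction $\tilde{\bA}^{r_0}_\psi \to \bA^{r_0}_\psi$ is $\bA^{r_0}_\psi$-linear, it is compatible with reduction modulo each $\varpi^N$ and passes to the inverse limit, yielding an $\bA_\psi$-linear retraction $\tilde{\bA}_\psi \to \bA_\psi$; this is the asserted splitting. Pseudoflatness then still requires the kind of d\'evissage you sketch at the end (testing the criterion of Remark~\ref{R:pseudoflat reduction}(c) through the $\varpi$-adic filtration, using $\varpi$-torsion-freeness of both rings and the already-established pseudoflatness of $R_\psi \to \tilde{R}_\psi$ on graded pieces), but now run against the correct completion.
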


\begin{cor} \label{C:perfect cohomology base change}
There exists $r_0>0$ such that for $0 < s \leq r \leq r_0$,
for any strict complex $C^{\bullet}$ of Banach modules over $\bC^{[s,r]}_\psi$, the complex
$\tilde{C}^{\bullet} = C^{\bullet} \widehat{\otimes}_{\bC^{[s,r]}_\psi} \tilde{\bC}^{[s,r]}_\psi$ is strict and the maps
\[
h^{\bullet}(C) \widehat{\otimes}_{\bC^{[s,r]}_\psi} \tilde{\bC}^{[s,r]}_\psi
\to
h^{\bullet}(\tilde{C})
\]
are isomorphisms.
\end{cor}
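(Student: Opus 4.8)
The plan is to reduce everything to the explicit description of $\tilde{\bC}^{[s,r]}_\psi$ as a completed direct sum of finite projective modules furnished by Lemma~\ref{L:full splitting}, and then to check that forming completed tensor products and cohomology along such a direct sum commutes, using the \emph{uniform} norm estimates built into that lemma.

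First I would fix $r_0>0$ as in Lemma~\ref{L:full splitting}, so that for $0<s\le r\le r_0$ we have, writing $R=\bC^{[s,r]}_\psi$, an isomorphism of normed $R$-modules
\[
\tilde{\bC}^{[s,r]}_\psi \;\cong\; R \;\oplus\; \widehat{\bigoplus_{m=1}^\infty} Q_m,
\]
where each $Q_m$ is a finite projective $R$-module with its quotient topology, the completion is for the supremum norm, and — crucially — the norm distortion between the two sides is bounded by a constant independent of $m$ (and, by the statement of that lemma, controlled uniformly in $s,r$). In particular $\tilde{\bC}^{[s,r]}_\psi$ is pro-projective over $R$ (compare Corollary~\ref{C:split base extension1} and Lemma~\ref{L:pro-projective topologically flat}). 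For a finite projective $R$-module $P$, realized as a direct summand of some $R^n$, and any Banach $R$-module $M$, the algebraic tensor product $M\otimes_R P$ is already complete, being a direct summand of $M^{\oplus n}$ (cf.\ Remark~\ref{R:finitely generated strict}), so $M\widehat{\otimes}_R P = M\otimes_R P$. Combining this with the displayed splitting and the uniform norm bound, the natural map
\[
M\;\oplus\;\widehat{\bigoplus_{m=1}^\infty}\bigl(M\otimes_R Q_m\bigr)\;\longrightarrow\; M\widehat{\otimes}_R \tilde{\bC}^{[s,r]}_\psi
\]
is a topological isomorphism, functorially in $M$; this is the key structural statement.

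Next I would apply this termwise to $C^{\bullet}$. For each $m$, since $Q_m$ is a direct summand of some $R^{n_m}$, the complex $C^{\bullet}\otimes_R Q_m$ is a complemented subcomplex of $(C^{\bullet})^{\oplus n_m}$; since $C^{\bullet}$ is strict, $(C^{\bullet})^{\oplus n_m}$ is strict with the same strictness constant (for the supremum norm on the direct sum), and a short diagram chase shows that a complemented subcomplex of a strict complex is again strict with a strictness constant bounded by that of the ambient complex, hence bounded independently of $m$, and moreover satisfies $h^i(C^{\bullet}\otimes_R Q_m)\cong h^i(C^{\bullet})\otimes_R Q_m$. The remaining point is that the completed direct sum of a family of complexes which is \emph{uniformly} strict is again strict, and that its cohomology is the completed direct sum of the cohomologies; this is proved exactly as the corresponding statements about completed direct sums in the proof of Lemma~\ref{L:Kiehl calc} (one uses uniform strictness to approximate cocycles by coboundaries and to kill the relevant $R^1\varprojlim$). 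Assembling, with $Q_0=R$, and using the structural isomorphism first with $M=C^i$ and then with $M=h^i(C^{\bullet})$,
\[
h^i(\tilde{C}^{\bullet}) \;=\; h^i\!\Bigl(\widehat{\bigoplus_m} \,C^{\bullet}\otimes_R Q_m\Bigr)
\;=\; \widehat{\bigoplus_m}\, h^i(C^{\bullet}\otimes_R Q_m)
\;=\; \widehat{\bigoplus_m}\,\bigl(h^i(C^{\bullet})\otimes_R Q_m\bigr)
\;=\; h^i(C^{\bullet})\,\widehat{\otimes}_R \tilde{\bC}^{[s,r]}_\psi,
\]
and strictness of $\tilde{C}^{\bullet}$ falls out of the same computation.

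I expect the main obstacle to be the bookkeeping of the uniform norm estimates: one must make sure the constant in Lemma~\ref{L:full splitting} really does allow one to identify $C^{\bullet}\widehat{\otimes}_R\tilde{\bC}^{[s,r]}_\psi$ with the completed direct sum of the $C^{\bullet}\otimes_R Q_m$ \emph{as topological complexes} (the completed tensor product being otherwise badly behaved, cf.\ Example~\ref{exa:no topological flatness}), and that the strictness constants of the $C^{\bullet}\otimes_R Q_m$ are bounded independently of $m$ so that the completed-direct-sum arguments of Lemma~\ref{L:Kiehl calc} apply. Once that uniformity is secured, everything else is a formal manipulation of direct summands. One might hope to shortcut the argument by invoking Lemma~\ref{L:strict perfect base extension} through pro-projectivity and topological flatness, but that lemma is stated only for perfect complexes, so the direct argument above seems unavoidable for a general strict complex $C^{\bullet}$.
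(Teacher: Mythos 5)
Your route is the paper's route: the paper offers no separate argument for this corollary, treating it (together with Corollaries~\ref{C:split base extension1} and~\ref{C:split base extension2}) as an immediate consequence of the splitting of Lemma~\ref{L:full splitting}, which is exactly the decomposition you exploit, and your reduction to finite projective summands plus completed direct sums is the intended mechanism.

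One step in your write-up is justified too quickly, though the conclusion is salvageable by the same device that proves Lemma~\ref{L:full splitting}. A complemented subcomplex of a strict complex is strict with constant bounded by the ambient constant \emph{times the operator norm of the idempotent}: if $e$ is the projector and $b = d(a)$ with $\|a\| \leq c\|b\|$, the lift inside the summand is $ea$, of norm at most $\|e\|\,c\,\|b\|$. So your claim that the strictness constants of $C^{\bullet}\otimes_R Q_m$ are ``bounded by that of the ambient complex, hence bounded independently of $m$'' is not automatic: it requires presentations of the $Q_m$ as direct summands of free $R$-modules whose splitting maps have norms bounded uniformly in $m$ (these also enter when you identify the tensor-product norm on $C^i \otimes_R Q_m$ with the subquotient norm coming from $\tilde{\bC}^{[s,r]}_\psi$). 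Such uniform presentations do exist, but one must produce them: start from the single presentation of $\varphi^{-1}(\bC^{[s/p^h,r/p^h]}_\psi)$ as a direct summand of a finite free module over $\bC^{[s,r]}_\psi$ furnished by $F$-(finite projectivity) via Lemma~\ref{L:splitting1}, apply $\varphi^{-1}$ repeatedly, and compose; as in the proof of Lemma~\ref{L:full splitting}, the distortion constants at the $m$-th stage are of the form $c^{r/p^{h(m-1)}}$ and telescope to a convergent product, giving the uniform bound. With that supplied, your completed-direct-sum manipulation (uniform strictness passing to $\widehat{\bigoplus}_m$, as in Lemma~\ref{L:Kiehl calc}) goes through and the corollary follows.
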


\begin{cor} \label{C:imperfect stably uniform}
For $0 < s \leq r \leq r_0$,
the adic affinoid algebra $(\bC^{[s,r]}_\psi,\bC^{[s,r],+}_\psi)$ is stably uniform,
and hence sheafy by \cite[Theorem~2.8.10]{part1}.
Moreover, this remains true upon passage to any finite \'etale cover of any rational localization of 
$(\bC^{[s,r]}_\psi,\bC^{[s,r],+}_\psi)$.
\end{cor}
\begin{proof}
Since $(\tilde{\bC}^{[s,r]}_\psi, \tilde{\bC}^{[s,r],+}_\psi)$ is stably uniform by
Corollary~\ref{C:Robba stably uniform},
this follows from
Corollary~\ref{C:split base extension1}. Explicitly, if $(\bC^{[s,r]}_\psi,\bC^{[s,r],+}_\psi) \to (B, B^+)$ is 
a composition of finite \'etale morphisms of rational localizations, then
\[
(\tilde{\bC}^{[s,r]}_\psi,\tilde{\bC}^{[s,r],+}_\psi) \to (\tilde{B}, \tilde{B}^+) = (B, B^+) \widehat{\otimes}_{(\bC^{[s,r]}_\psi,\bC^{[s,r],+}_\psi)}
(\tilde{\bC}^{[s,r]}_\psi,\tilde{\bC}^{[s,r],+}_\psi)
\]
is also such a composition,
and so $\tilde{B}$ is uniform as in the proof of Corollary~\ref{C:Robba stably uniform}.
Since the inclusion $\bC^{[s,r]}_\psi \to \tilde{\bC}^{[s,r]}_\psi$ splits in the category of Banach modules over $\bC^{[s,r]}_\psi$, so then does the inclusion $B \to \tilde{B}$; consequently, $B$ is uniform.
\end{proof}

\begin{cor} \label{C:split base extension2}
There exists $r_0>0$ such that for $0 < s \leq r \leq r_0$,
the base extension of an \'etale-stably pseudocoherent module 
along any of the morphisms  $R_\psi \to \tilde{R}_\psi$,
$\bA_\psi \to \tilde{\bA}_\psi$,
$\bA^{r}_\psi \to \tilde{\bA}^{r}_\psi$,
$\bC^{[s,r]}_\psi \to \tilde{\bC}^{[s,r]}_\psi$
is again \'etale-stably pseudocoherent.
\end{cor}

\begin{remark}
Beware that the splittings described in Lemma~\ref{L:full splitting} are not guaranteed to be $\Gamma$-equivariant. However, for toric towers one does obtain a $\Gamma$-equivariant splitting; see Remark~\ref{R:toric splitting}.
\end{remark}

\begin{remark}
For $r \in (0,r_0]$, let $\Spa(\bC^r_\psi, \bC^{r,+}_\psi)$ be the space
\[
\bigcup_{0<t\leq s \leq r_0} \Spa(\bC^{[t,s]}_\psi, \bC^{[t,s],+}_\psi).
\]
By Corollary~\ref{C:imperfect stably uniform}, $\Spa(\bC^r_\psi, \bC^{r,+}_\psi)$ is a quasi-Stein space; in particular, the category of vector bundles (resp.\ pseudocoherent sheaves) over $\bC^r_\psi$ in the sense of Definition~\ref{D:artificial vector bundle} is equivalent to the category of vector bundles (resp.\ pseudocoherent sheaves) on 
$\Spa(\bC^r_\psi, \bC^{r,+}_\psi)$.
Consequently, as indicated in Definition~\ref{D:not quasi-Stein}, in this context 
the results of \S\ref{subsec:projective modules} become direct corollaries of the corresponding results of \S\ref{subsec:quasi-Stein}.
\end{remark}

\begin{lemma} \label{L:Gamma quasi-isomorphism2}
The statements of Lemma~\ref{L:Gamma quasi-isomorphism} also hold for
pseudocoherent $\Gamma$-modules.
\end{lemma}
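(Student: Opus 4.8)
The plan is to reduce the statement of Lemma~\ref{L:Gamma quasi-isomorphism2} to the projective case already established in Lemma~\ref{L:Gamma quasi-isomorphism}, by using the fact that under Hypothesis~\ref{H:phigamma split} a pseudocoherent $\Gamma$-module admits a resolution by projective $\Gamma$-modules. The three assertions (a), (b), (c) are strict exactness statements in a fixed cohomological degree $k$ for the base-changed \v{C}ech-type complexes $M \otimes_{*_\psi}(\varphi_\varpi^{-m-1}(*_{\psi^\bullet})/\varphi_\varpi^{-m}(*_{\psi^\bullet}))$ and $M \otimes_{*_\psi}(\tilde{*}_{\psi^\bullet}/\varphi_\varpi^{-m}(*_{\psi^\bullet}))$. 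The point is that the tensor factors appearing after the $M\otimes_{*_\psi}(-)$ are, up to the splittings of Lemma~\ref{L:full splitting} (really its analogue for the rings $R_{\psi^\bullet}$, $\bA^r_{\psi^\bullet}$, $\bC^{[s,r]}_{\psi^\bullet}$, which holds because each $\psi^k$ arises from $\psi$ by a tensor-power construction and $R_\psi$ is $F$-(finite projective)), complete flat $*_\psi$-modules; so $M\otimes_{*_\psi}(-)$ already computes the correct object and the usual base-change argument of Lemma~\ref{L:strict perfect base extension} applies term by term.

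Concretely, first I would fix $k$ and, as in the proof of Lemma~\ref{L:Gamma quasi-isomorphism}(a), replace $\psi$ by $\psi_{(n)}$ for $n$ large enough to land in the setting of Remark~\ref{R:small connection}. Then, using Lemma~\ref{L:lift action1} (or rather its analogue: every pseudocoherent $\Gamma$-module over $*_\psi$ is a quotient of a projective one, which follows from \cite[Lemma~1.5.2]{part1} applied to the underlying module together with the cocycle-descent compatibility), build a finite projective resolution $0 \to N_l \to P_{l-1} \to \cdots \to P_0 \to M \to 0$ of $M$ by projective $\Gamma$-modules, where $l$ exceeds $k$. Each $P_i$ is a projective $\Gamma$-module, and by Lemma~\ref{L:pseudocoherent 2 of 3} the syzygy $N_l$ is again pseudocoherent. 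For the projective terms $P_i$, Lemma~\ref{L:Gamma quasi-isomorphism} gives $m_0(i,k)$ such that the relevant complexes $P_i \otimes_{*_\psi}(-)$ are strict exact in degrees $\le k+l$; taking $m_0 = \max_i m_0(i,k)$ handles all of them simultaneously. The tensor factors $\varphi_\varpi^{-m-1}(*_{\psi^\bullet})/\varphi_\varpi^{-m}(*_{\psi^\bullet})$ and $\tilde{*}_{\psi^\bullet}/\varphi_\varpi^{-m}(*_{\psi^\bullet})$ are topologically flat over $*_\psi$ (by the splitting of Lemma~\ref{L:full splitting}, each graded piece is finite projective, hence pro-projective, hence topologically flat by Lemma~\ref{L:pro-projective topologically flat}, and flatness passes to the completed direct sum), so tensoring the resolution with any fixed term preserves exactness and strictness; this lets me compute the cohomology of $M \otimes_{*_\psi}(-)$ in degree $k$ as a subquotient built from exact complexes and conclude it vanishes, with strictness coming from the open mapping theorem (Theorem~\ref{T:open mapping}) since all modules in sight are finitely generated and complete by Corollary~\ref{C:topologically 3-pseudocoherent}.

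The step I expect to be the main obstacle is controlling strictness uniformly through the resolution: each application of the base-change machinery of Lemma~\ref{L:strict perfect base extension} requires that the syzygy modules be complete for their natural topologies and that the differentials be strict, and one has to check that the splittings of Lemma~\ref{L:full splitting} (which a priori are only stated for $R_\psi$, $\bA^r_\psi$, $\bC^{[s,r]}_\psi$, not for their $\psi^\bullet$-analogues) extend to the simplicial rings $*_{\psi^\bullet}$ with operator-norm bounds uniform in the simplicial degree. The fix is to observe that the tensor-power construction $\psi \mapsto \psi^k$ preserves the $F$-(finite projective) hypothesis on the mod-$\varpi$ ring — indeed $R_{\psi^k}$ is a completed tensor power of $R_\psi$ over $R = R_{\psi^0}$, and $F$-(finite projective) is equivalent to geometric regularity, which is stable under such base change — so Lemma~\ref{L:full splitting} and its corollaries (Corollaries~\ref{C:split base extension1}, \ref{C:split base extension2}, \ref{C:perfect cohomology base change}, \ref{C:imperfect stably uniform}) apply to each $\psi^k$ with the bounds depending only on $k$; assembling these, one gets the uniform-in-$\bullet$ strictness needed to run the induction. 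Everything else is a routine transcription of the proof of Lemma~\ref{L:Gamma quasi-isomorphism} with "finite projective" replaced by "pseudocoherent with a bounded-rank projective resolution."
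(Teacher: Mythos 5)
There is a genuine gap, and it sits exactly at the step your whole strategy rests on: the claim that every pseudocoherent $\Gamma$-module over $*_\psi$ admits a surjection from (hence a resolution by) projective $\Gamma$-modules, ``which follows from \cite[Lemma~1.5.2]{part1} applied to the underlying module together with the cocycle-descent compatibility.'' That lemma handles a \emph{single} semilinear automorphism (this is why Lemma~\ref{L:lift action1} works for $\varphi$- or $\varphi^{-1}$-actions), but a $\Gamma$-module is a descent datum over the whole simplicial tower $*_{\psi^\bullet}$ --- for Galois towers, a continuous semilinear action of a profinite group --- and an arbitrary lift of the structure to the generators of a free cover will neither satisfy the cocycle condition nor extend continuously to the group. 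The paper is explicit that such reductions are not available: the introduction (\S\ref{subsec:introduction pseudocoherent}) notes that projective resolutions of pseudocoherent $(\varphi,\Gamma)$-modules by projective ones are not known to exist, the best general statement in this direction (Lemma~\ref{L:lift to psc}) only produces a surjection from a \emph{pseudocoherent} $\Gamma$-module after replacing $\psi$ by $\psi_{(n)}$ and invoking a coherence hypothesis, and the one case where a surjection from a projective $(\varphi,\Gamma)$-module is actually proved (Theorem~\ref{T:pseudocoherent resolution}, cyclotomic tower over a point) requires slope theory and induction from a procyclic subgroup --- far from a formal consequence of \cite[Lemma~1.5.2]{part1}. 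Your secondary patch is also shaky: $F$-(finite projective) is not ``equivalent to geometric regularity'' for the nonnoetherian rings in play (Kunz's criterion is for finitely generated algebras over an $F$-finite field), so the asserted stability under completed tensor powers is unjustified.

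The paper's proof avoids resolutions entirely: under Hypothesis~\ref{H:phigamma split}, Corollary~\ref{C:split base extension1} (pseudoflatness and a Banach-module splitting of $*_\psi \to \tilde{*}_\psi$, coming from the Frobenius splitting of Lemma~\ref{L:full splitting}) supplies exactly the exactness and completeness needed to rerun the successive-approximation argument of Lemma~\ref{L:Gamma quasi-isomorphism} \emph{directly} with the pseudocoherent module $M$, equipped with the quotient norm from a finite presentation as in Remark~\ref{R:small connection}. In other words, the only properties of $M$ the original proof uses are completeness for a norm defined by finitely many generators and good behavior of $M \otimes_{*_\psi}(-)$ along the split, pseudoflat extensions; no dimension shifting or lifting of the $\Gamma$-structure is required. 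If you want to salvage your write-up, replace the resolution step by this direct argument; as written, the proposal does not prove the lemma.
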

\begin{proof}
Thanks to Corollary~\ref{C:split base extension1}, we may argue as in 
the proof of Lemma~\ref{L:Gamma quasi-isomorphism}.
\end{proof}

\begin{theorem} \label{T:Galois cohomology1c2}
For $* \in \{R, \bA, \bA^\dagger, \bB, \bB^\dagger, \bC\}$,
let $M$ be a pseudocoherent $(\varphi, \Gamma)$-module over $*_\psi$
and put $\breve{M} = M \otimes_{*_\psi} \breve{*}_\psi$,
$\tilde{M} = M \otimes_{*_\psi} \tilde{*}_\psi$.
Then the natural maps $H^i_{\varphi, \Gamma}(M) \to H^i_{\varphi,\Gamma}(\breve{M}) \to H^i_{\varphi,\Gamma}(\tilde{M})$ are bijections for all $i \geq 0$.
\end{theorem}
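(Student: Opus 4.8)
The plan is to reduce everything to statements already established for $\Gamma$-modules (Lemma~\ref{L:Gamma quasi-isomorphism} and its pseudocoherent refinement Lemma~\ref{L:Gamma quasi-isomorphism2}) together with the comparison of $\varphi$-cohomology with interval/bundle versions (Proposition~\ref{P:same cohomology}). The key point is that a pseudocoherent $(\varphi,\Gamma)$-module is a pseudocoherent $\Gamma$-module equipped with a $\varphi$-structure, and $(\varphi,\Gamma)$-cohomology is computed by the total complex of the double complex $0 \to M^\bullet \xrightarrow{\varphi-1} M^\bullet \to 0$, where $M^\bullet$ is the $\Gamma$-cochain complex of Definition~\ref{D:Gamma-module}. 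So the comparison $H^i_{\varphi,\Gamma}(M) \to H^i_{\varphi,\Gamma}(\breve M) \to H^i_{\varphi,\Gamma}(\tilde M)$ will follow if we show that, after suitable truncation, the relevant maps of complexes are quasi-isomorphisms in each $\Gamma$-cochain degree compatibly with $\varphi-1$.

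First I would treat the case $* = R, \bA, \bA^\dagger, \bB, \bB^\dagger$: here $\tilde{*}_\psi$ is already $\varpi$-adically (or appropriately) complete and there is no need to pass to intervals. By Corollary~\ref{C:split base extension1}, the maps $R_\psi \to \tilde R_\psi$, $\bA_\psi \to \tilde\bA_\psi$, etc., are pseudoflat and split as Banach modules over the source; hence for a pseudocoherent module $M$, base extension is exact (Remark~\ref{R:pseudoflat reduction}) and commutes with cohomology of the $\Gamma$-cochain complex. Then Lemma~\ref{L:Gamma quasi-isomorphism2} (applied in each fixed $\Gamma$-cochain degree $k$, after replacing $\psi$ with a sufficiently deep truncation $\psi_{(n)}$ depending on $k$) shows that $M^\bullet \to \breve M^\bullet \to \tilde M^\bullet$ induces isomorphisms on cohomology; the $\varphi-1$ maps are compatible by construction, so passing to total complexes of the respective double complexes gives the desired bijections $H^i_{\varphi,\Gamma}$. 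The intermediate ring $\breve{*}_\psi$ is handled by the same argument (or, for the non-accented rings, by Remark~\ref{R:breve phi-modules} once pseudoflatness of $\breve{*}_\psi$ over $*_\psi$ is in hand, which is exactly what Corollary~\ref{C:split base extension1}/\ref{C:split base extension2} provide).

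Next I would treat $* = \bC$. Here one must descend $M$ to a pseudocoherent $(\varphi,\Gamma)$-module $M_r$ over $\bC^{r_0}_\psi$ for some $r_0 > 0$ in the range of Corollary~\ref{C:decompleting uniform}, using Lemma~\ref{L:phi-modules to phi-bundles} and Lemma~\ref{L:Robba base extension pseudocoherent} to pass between $\bC_\psi$-modules and uniformly pseudocoherent sheaves. For $0 < s \le r/q$ one forms $M_{[s,r]}$, $\breve M_{[s,r]}$, $\tilde M_{[s,r]}$ as in Theorem~\ref{T:Galois cohomology1c}(b). The comparison then proceeds in two stages: Proposition~\ref{P:same cohomology} (extended to pseudocoherent modules exactly as there) identifies $H^i_{\varphi,\Gamma}(M)$ with the cohomology of the total complex built from the interval version $0 \to \varphi^{-m}(M^\bullet_{[s/q^m, r/q^m]}) \xrightarrow{\varphi-1} \varphi^{-m}(M^\bullet_{[s/q^m, r/q^{m+1}]}) \to 0$; then Lemma~\ref{L:Gamma quasi-isomorphism2}(c) together with Corollary~\ref{C:split base extension1} identifies this with the $\breve{\phantom{x}}$- and $\tilde{\phantom{x}}$-versions. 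Combining the two stages gives the bijections. The main obstacle I anticipate is bookkeeping: one must choose the truncation depth $n$ (in $\psi_{(n)}$) and the Frobenius-iterate depth $m$ uniformly enough that the quasi-isomorphisms hold simultaneously in all the finitely many $\Gamma$-cochain degrees $k$ that contribute to a given $H^i_{\varphi,\Gamma}$ (the double complex has only two columns, so only $k \le i$ matter), and then verify that these choices are compatible with the $\varphi-1$ differentials. None of this is deep given Lemma~\ref{L:Gamma quasi-isomorphism2}, Proposition~\ref{P:same cohomology}, and Corollary~\ref{C:split base extension1}, but it requires care to state the truncation arguments cleanly; as elsewhere in this section, invoking Remark~\ref{R:reduce to countable} to restrict to linear towers simplifies the indexing.
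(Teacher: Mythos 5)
Your proposal is correct and follows essentially the same route as the paper: the paper's proof consists of applying Lemma~\ref{L:Gamma quasi-isomorphism2} (the pseudocoherent refinement of Lemma~\ref{L:Gamma quasi-isomorphism}, enabled by the splitting of Corollary~\ref{C:split base extension1}), with the type-$\bC$ case handled by the same interval/truncation machinery already used for Theorem~\ref{T:Galois cohomology1c} via Proposition~\ref{P:same cohomology}. You have simply made explicit the bookkeeping that the paper leaves implicit.
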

\begin{proof}
Apply Proposition~\ref{P:perfect equivalence2a} and Lemma~\ref{L:Gamma quasi-isomorphism2}.
\end{proof}

The descent of pseudocoherent $(\varphi, \Gamma)$-modules requires a bit more care.
The case of type $\bA$ poses no serious issues.
\begin{theorem} \label{T:perfect equivalence2}
The (exact tensor) categories of pseudocoherent (resp.\ fpd) $(\varphi, \Gamma)$-modules over the rings in the diagram
\[
\xymatrix@R=30pt@!C=60pt{
 \bA^{\dagger}_\psi \ar[r] \ar[d]  & \breve{\bA}^\dagger_\psi \ar[d] \ar[r]
 &  \hat{\bA}^\dagger_\psi \ar[d] \ar[r] &
 \tilde{\bA}^\dagger_\psi \ar[d] \\
 \bA_\psi \ar[r] & \breve{\bA}_\psi \ar[r] & 
 \hat{\bA}_\psi \ar[r] &\tilde{\bA}_\psi
}
\]
are equivalent via the apparent base change functors.
\end{theorem}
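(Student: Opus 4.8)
The plan is to run the proof of Theorem~\ref{T:perfect equivalence1} line by line, replacing ``projective'' throughout by ``pseudocoherent (resp.\ fpd)'' and feeding in the pseudocoherent/fpd strengthenings of its three ingredients. The identification $\tilde{\bA}^\dagger_\psi \simeq \tilde{\bA}_\psi$ for pseudocoherent (resp.\ fpd) $\varphi$-modules is already contained in Theorem~\ref{T:pseudocoherent type A}; applying that theorem also to each $\psi^k$ (which is perfectoid, hence covered) lets the $\Gamma$-structure, a descent datum over the $\psi^\bullet$-complex, be transported, so the identification persists for $(\varphi,\Gamma)$-modules. Proposition~\ref{P:perfect equivalence2a} supplies the bottom-row equivalences $\bA_\psi \to \breve{\bA}_\psi \to \hat{\bA}_\psi \to \tilde{\bA}_\psi$ at the level of pseudocoherent/fpd $\varphi$-modules; to promote these to $(\varphi,\Gamma)$-modules one uses that $\bA_{\psi^1}$ is finite \'etale over $\bA_\psi$ along $i_{0,0}$ and that $\breve{\bA}_{\psi^1} \cong \bA_{\psi^1} \otimes_{\bA_\psi} \breve{\bA}_\psi$ (Proposition~\ref{P:weakly decompleting tower persistence}, Theorem~\ref{T:big etale}), so the descent datum passes along. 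After these steps the theorem is reduced, exactly as in Theorem~\ref{T:perfect equivalence1}, to connecting each of the eight rings to $\tilde{\bA}_\psi$, i.e.\ to pseudocoherent versions of Proposition~\ref{P:fully faithful imperfect}(a) and of the essential surjectivity of $\bA^\dagger_\psi \to \tilde{\bA}^\dagger_\psi$.

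Both of those will be obtained by d\'evissage along the canonical filtration of a pseudocoherent $(\varphi,\Gamma)$-module $M$: let $T \subseteq M$ be the $\varpi$-power-torsion submodule. Under Hypothesis~\ref{H:phigamma split} the ring map $R_\psi \to R_\psi^{\perf}$ is faithfully flat, so \cite[Proposition~3.2.13]{part1} applies over $R_\psi^{\perf}$ and descends; running the argument of Lemma~\ref{L:pseudocoherent type A torsion} then shows that $T$ is a pseudocoherent $(\varphi,\Gamma)$-module killed by a single power $\varpi^n$ and that $\bar M := M/T$ is a projective $(\varphi,\Gamma)$-module, and since $T$ and $\bar M$ are functorial in $M$ the sequence $0 \to T \to M \to \bar M \to 0$ is canonical (hence $\varphi$- and $\Gamma$-equivariant). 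For $\bar M$ the projective case, Theorem~\ref{T:perfect equivalence1}, already yields the desired equivalences. For $T$, being $\varpi^n$-torsion it is a module over the common quotient $*_\psi/(\varpi^n)$, so that \emph{no} base extension occurs between a $\dagger$-ring and its non-$\dagger$ counterpart, and these quotients collapse to $\bA_\psi/(\varpi^n)$, $\breve{\bA}_\psi/(\varpi^n)$, $\tilde{\bA}_\psi/(\varpi^n)$; the required equivalences for $T$ follow by further d\'evissage in $n$ to the $\varpi$-torsion case and then from Proposition~\ref{P:perfect equivalence2a}, faithfully flat descent along $R_\psi \to R_\psi^{\perf}$ (together with Remark~\ref{R:descent pseudocoherent}), and Lemma~\ref{L:Gamma equivalences}/Lemma~\ref{L:Gamma quasi-isomorphism2} for the passage $\breve{R}_\psi \to \tilde{R}_\psi$, with the $\Gamma$-descent datum carried along at each stage. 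One then reassembles: the base-extension functors are exact on the relevant subcategories (by Corollary~\ref{C:split base extension1} for the non-$\dagger$ rings, and because $T$ is untouched while $\bar M$ is projective for the $\dagger$ rings), so full faithfulness for $M$ follows from that for $T$ and for $\bar M$ by the five lemma, using that the pertinent $\Ext^1$ groups of pseudocoherent $(\varphi,\Gamma)$-modules are identified by the comparisons already in place; essential surjectivity is obtained by descending $\tilde{T}$ and $\tilde{\bar M}$ separately and then recognizing the extension class of $\tilde{M}$ inside a $\Gamma$-equivariant $\Ext^1$ group matched across the rings the same way.

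The main obstacle is the treatment of the overconvergent rings $\bA^\dagger_\psi$, $\breve{\bA}^\dagger_\psi$, $\hat{\bA}^\dagger_\psi$, which are henselian but not $\varpi$-adically complete, so Proposition~\ref{P:perfect equivalence2a} does not apply to them directly; concretely, one must still check that a $\varpi$-torsion-free pseudocoherent $\varphi$-module $\bar M$ over $\bA^\dagger_\psi$ is genuinely finite projective, not merely projective after $\varpi$-adic completion. The cleanest route I see is first to verify that $\bA^\dagger_\psi \to \tilde{\bA}^\dagger_\psi$ is pseudoflat: since $\bA^\dagger_\psi = \varinjlim_r \bA^r_\psi$ and $\tilde{\bA}^\dagger_\psi = \varinjlim_r \tilde{\bA}^r_\psi$, this follows from the pseudoflat split maps $\bA^r_\psi \to \tilde{\bA}^r_\psi$ of Corollary~\ref{C:split base extension1}, because $\Tor_1$ commutes with filtered colimits and base extension preserves $2$-pseudocoherence (Remark~\ref{R:need flatness}). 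Then a pseudocoherent $\varphi$-module over $\bA^\dagger_\psi$ remains pseudocoherent over $\tilde{\bA}^\dagger_\psi$; one transports the torsion/projective decomposition from $\tilde{\bA}^\dagger_\psi$ (via its $\varphi$-module equivalence with $\tilde{\bA}_\psi$) back down, and the projectivity of $\bar M$ over $\bA^\dagger_\psi$ is recovered from the henselian property of $\bA^\dagger_\psi$ with respect to $(\varpi)$ together with the projectivity of $\bar M/\varpi\bar M$ over $R_\psi$. The intersection identity $\bA_\psi \cap \tilde{\bA}^\dagger_\psi = \bA^\dagger_\psi$ used in Proposition~\ref{P:fully faithful imperfect}(a) (compare Lemma~\ref{L:intersect dagger}) then only ever sees the projective summand and goes through unchanged, so the whole argument closes up.
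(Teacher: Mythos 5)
Your proposal is correct and follows essentially the same route as the paper: decompose a pseudocoherent $(\varphi,\Gamma)$-module as an extension of a projective object by a $\varpi$-power-torsion one, dispose of the bottom row and the rightmost vertical arrow via Proposition~\ref{P:perfect equivalence2a} and Theorem~\ref{T:pseudocoherent type A}, and reduce the overconvergent rings to the projective case, Theorem~\ref{T:perfect equivalence1}, using that the mod-$\varpi^n$ quotients of a $\dagger$-ring and its completion coincide. The only local deviation is that where the paper obtains the torsion/projective decomposition over $\bA^\dagger_\psi$, $\breve{\bA}^\dagger_\psi$, $\hat{\bA}^\dagger_\psi$ by rerunning the argument of Lemma~\ref{L:pseudocoherent type A torsion} directly over those rings (as in the proof of Proposition~\ref{P:perfect equivalence2a}), you instead base-extend pseudoflatly to $\tilde{\bA}^\dagger_\psi$ (Corollary~\ref{C:split base extension1} plus a filtered-colimit Tor argument) and recover projectivity of the torsion-free quotient from the henselian pair $(\bA^\dagger_\psi,(\varpi))$ --- a valid alternative, since $\varpi$ lies in the Jacobson radical, the quotient is finitely presented and $\varpi$-torsion-free with projective reduction over $R_\psi$, and finite projective modules lift along henselian pairs.
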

\begin{proof}
By Proposition~\ref{P:perfect equivalence2a} and Theorem~\ref{T:pseudocoherent type A}, the claim holds for each of the arrows in the bottom row and for the rightmost vertical arrow. It thus suffices to link 
$\bA^\dagger_\psi, \breve{\bA}^\dagger_\psi, \hat{\bA}^\dagger_\psi$ to the other rings in the diagram. As in the proof of Proposition~\ref{P:perfect equivalence2a}, we may show that each pseudocoherent $\varphi$-module over one of these rings is an extension of a projective $\varphi$-module by a $\varpi$-power-torsion pseudoocoherent $\varphi$-module;
we may thus reduce the claim to Theorem~\ref{T:perfect equivalence1}.
\end{proof}

In type $\bC$, we obtain only a weaker analogue of Lemma~\ref{L:Gamma equivalences}. The full analogue requires a stronger hypothesis; see Lemma~\ref{L:Gamma equivalences2}. 
\begin{lemma} \label{L:Gamma equivalences2a}
Choose $r,s$ with $0<s \leq r$.
\begin{enumerate}
\item[(a)]
Base extension of pseudocoherent $\Gamma$-modules from $\breve{\bC}^{[s,r]}_\psi$ to $\tilde{\bC}^{[s,r]}_{\psi}$
is a fully faithful functor of exact tensor categories.
\item[(b)]
The essential image of the functor in (a) includes all pseudocoherent $\Gamma$-modules whose underlying modules descend to $\breve{\bC}^{[s,r]}_\psi$.
\end{enumerate}
\end{lemma}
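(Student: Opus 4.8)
\textbf{Proof plan for Lemma~\ref{L:Gamma equivalences2a}.}
The plan is to imitate the proof of Lemma~\ref{L:Gamma quasi-isomorphism2} and Lemma~\ref{L:Gamma equivalences}, but keeping track of pseudocoherence rather than projectivity. For part (a), full faithfulness is a reformulation of the statement that for any pseudocoherent $\Gamma$-module $M$ over $\breve{\bC}^{[s,r]}_\psi$, the natural map $H^0_\Gamma(M) \to H^0_\Gamma(M \otimes_{\breve{\bC}^{[s,r]}_\psi} \tilde{\bC}^{[s,r]}_\psi)$ is a bijection (apply this to internal Homs of $\Gamma$-modules, which remain pseudocoherent by Lemma~\ref{L:pseudocoherent 2 of 3} after passing to a finite free resolution). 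This bijection is exactly the $i=0$ case of Lemma~\ref{L:Gamma quasi-isomorphism2} applied to $* = \bC$, restricted to a closed interval: the point is that by Corollary~\ref{C:split base extension1}, $\breve{\bC}^{[s,r]}_\psi \to \tilde{\bC}^{[s,r]}_\psi$ is pseudoflat and split, so the augmented \v{C}ech-type complex
$M \otimes_{\breve{\bC}^{[s,r]}_\psi} (\tilde{\bC}^{[s,r]}_{\psi^\bullet}/\breve{\bC}^{[s,r]}_{\psi^\bullet})$
is strict exact (this is the content of Lemma~\ref{L:Gamma quasi-isomorphism2}(c), extended from the perfect to the decompleted interval rings via Corollary~\ref{C:decompleting uniform}). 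Exactness at degree $0$ of this complex yields full faithfulness; the tensor-exact structure is preserved because the base extension is pseudoflat (Corollary~\ref{C:split base extension1}, using Remark~\ref{R:pseudoflat reduction}).

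For part (b), suppose $\tilde{M}$ is a pseudocoherent $\Gamma$-module over $\tilde{\bC}^{[s,r]}_\psi$ whose underlying $\tilde{\bC}^{[s,r]}_\psi$-module is $M_0 \otimes_{\breve{\bC}^{[s,r]}_\psi} \tilde{\bC}^{[s,r]}_\psi$ for some pseudocoherent $\breve{\bC}^{[s,r]}_\psi$-module $M_0$. The strategy mirrors the essential-surjectivity argument in Lemma~\ref{L:Gamma equivalences}: thanks to full faithfulness we may freely replace $\psi$ by $\psi_{(n)}$ for $n$ large, putting ourselves in the setting of Remark~\ref{R:small connection} with $k=0$ using a presentation of $\tilde{M}$ induced by a finite presentation of $M_0$. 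The given differential (descent datum) on $\tilde{M}$ then differs from the base extension of the trivial descent datum on $M_0$ by a map of small operator norm $c<1$. One then applies Lemma~\ref{L:Gamma quasi-isomorphism2}(c) (for some sufficiently large shift $m$) to correct the base-extended descent datum so that the discrepancy shrinks by a fixed factor depending only on $m$; iterating this convergent process produces a $\Gamma$-module structure on $M_0$ (or more precisely on its base extension to $\breve{\bC}^{[s,r]}_\psi$, which is the same category of underlying modules) whose base extension recovers $\tilde{M}$ up to isomorphism. Completeness of the relevant spaces of presentation-valued cochains (guaranteed because the underlying modules are complete for their natural topologies by Corollary~\ref{C:topologically 3-pseudocoherent}, and $\breve{\bC}^{[s,r]}_\psi$, $\tilde{\bC}^{[s,r]}_\psi$ are open mapping rings) ensures the iteration converges.

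The main obstacle, and the reason part (b) is stated with the hypothesis that the underlying module already descends, is precisely that we cannot invoke Lemma~\ref{L:projective dense subring} to descend the underlying module: that lemma is only available for finite projective modules, and there is no known analogue letting one descend an arbitrary pseudocoherent $\tilde{\bC}^{[s,r]}_\psi$-module to $\breve{\bC}^{[s,r]}_\psi$ (the failure of $\tilde{\bC}^{[s,r]}_\psi$ to be pseudoflat over $\breve{\bC}^{[s,r]}_\psi$ in a way that is compatible with extracting a finite presentation, compare Remark~\ref{R:no pseudoflat completion}). So the descent of the module itself is genuinely extra input, which is why the full analogue of Lemma~\ref{L:Gamma equivalences} is deferred to Lemma~\ref{L:Gamma equivalences2} under a stronger hypothesis on $\psi$; here we only need to descend the $\Gamma$-action once the module is given, and for that the approximation argument above suffices.
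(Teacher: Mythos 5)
Your proposal is correct and follows essentially the same route as the paper: part (a) rests on the split, pseudoflat base extension (Remark~\ref{R:descent pseudocoherent}, Corollary~\ref{C:split base extension1}) together with the cohomology comparison of Lemma~\ref{L:Gamma quasi-isomorphism2}, and part (b) imitates the essential-surjectivity argument of Lemma~\ref{L:Gamma equivalences}, with Lemma~\ref{L:Gamma quasi-isomorphism2} driving the successive approximation and the assumed descent of the underlying module replacing the appeal to Lemma~\ref{L:projective dense subring}, exactly as in the paper. One small caution: in (a) you should not assert that internal Homs of pseudocoherent $\Gamma$-modules are again pseudocoherent (cf.\ Remark~\ref{R:no internal Homs}); the comparison of Hom groups can be run directly from the split injection and the degree-zero case of Lemma~\ref{L:Gamma quasi-isomorphism2}, which is all the paper's argument uses.
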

\begin{proof}
Part (a) follows from Remark~\ref{R:descent pseudocoherent} and Corollary~\ref{C:split base extension2} (which guarantee the existence of the base extension functor)
and Corollary~\ref{C:split base extension1} (which gives full faithfulness).
To check (b), let $\tilde{M}$ be a pseudocoherent $\Gamma$-module over $\tilde{\bC}^{[s,r]}_\psi$
whose underlying module has the form $M \otimes_{\breve{\bC}^{[s,r]}_\psi} \tilde{\bC}^{[s,r]}_\psi$
for some pseudocoherent module $M$ over $\breve{\bC}^{[s,r]}_\psi$. Using Lemma~\ref{L:Gamma quasi-isomorphism2}, we may imitate the proof of 
Lemma~\ref{L:Gamma equivalences} to replace $\tilde{M}$ with an isomorphic $\Gamma$-module structure induced from $M$.
\end{proof}

\subsection{Pseudocoherent \texorpdfstring{$(\varphi, \Gamma)$}{(phi, Gamma)}-modules of type \texorpdfstring{$\bC$}{C}}
\label{subsec:pseudocoherent type C}

To extend the results of \S\ref{subsec:Frobenius splittings} to pseudocoherent $(\varphi, \Gamma)$-modules over imperfect period rings of type $\bC$, we need to assume that the tower $\psi$ is noetherian.
Fortunately, this hypothesis will be satisfied in most cases of interest.

\begin{hypothesis}
Throughout \S\ref{subsec:pseudocoherent type C},
assume Hypothesis~\ref{H:phigamma} (in particular, $\psi$ is a decompleting tower) and Hypothesis~\ref{H:phigamma split} (i.e., $R_\psi$ is $F$-(finite projective)), and assume further that $\psi$ is noetherian.
\end{hypothesis}

Using the extra hypothesis, we may refine Lemma~\ref{L:Gamma equivalences2a} to obtain an analogue of Lemma~\ref{L:Gamma equivalences} for pseudocoherent $\Gamma$-modules over $\bC^{[s,r]}_\psi$.
\begin{lemma} \label{L:lift to psc}
For $r,s$ with $0<s \leq r$, let $\tilde{M}$ be a finite $\tilde{\bC}^{[s,r]}_{\psi}$-module with $\Gamma$-structure. 
Then there exists $n_0 \geq 0$ such that for $n \geq n_0$,
there exist a pseudocoherent $\breve{\bC}^{[s,r]}_{\psi_{(n)}}$-module $M'$ with $\Gamma$-structure
and a surjective morphism $M' \otimes_{\breve{\bC}^{[s,r]}_{\psi_{(n)}}} \tilde{\bC}^{[s,r]}_{\psi_{(n)}} \to M \otimes_{\tilde{\bC}^{[s,r]}_{\psi}} \tilde{\bC}^{[s,r]}_{\psi_{(n)}}$ of finite  $\tilde{\bC}^{[s,r]}_{\psi_{(n)}}$-modules with $\Gamma$-structure.
\end{lemma}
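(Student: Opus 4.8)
The plan is to adapt the decompletion strategy behind Lemma~\ref{L:Gamma equivalences} and Lemma~\ref{L:Gamma quasi-isomorphism2}, the new feature being that $M$ (written $\tilde M$ in the statement) need not be projective, so one cannot descend it directly and must instead descend a finite free \emph{cover} together with a transported descent datum. First observe that the truncated tower $\psi_{(n)}$ has the same total space as $\psi$, whence $\tilde{\bC}^{[s,r]}_{\psi_{(n)}} = \tilde{\bC}^{[s,r]}_{\psi}$ for every $n$; the base change on the right-hand side of the assertion is therefore trivial and the target is just $M$ itself. Thus it suffices to produce, for $n$ sufficiently large, a finite projective $\breve{\bC}^{[s,r]}_{\psi_{(n)}}$-module $M'$ carrying a $\Gamma$-module structure, together with a $\Gamma$-equivariant surjection $M' \otimes_{\breve{\bC}^{[s,r]}_{\psi_{(n)}}} \tilde{\bC}^{[s,r]}_{\psi} \to M$; any such $M'$ is automatically pseudocoherent (being projective), so the coherence hypothesis standing in this subsection is not actually needed for this particular step.

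Concretely, I would choose a finite free $\tilde{\bC}^{[s,r]}_{\psi}$-module $\tilde F$ with its standard $\Gamma$-structure and a surjection $p\colon \tilde F \to M$ of $\tilde{\bC}^{[s,r]}_{\psi}$-modules onto a generating set of $M$. By Lemma~\ref{L:projective dense subring}, applied to the directed system $\varphi_\varpi^{-n}(\bC^{[p^{-hn}s,\,p^{-hn}r]}_{\psi})$ whose union $\breve{\bC}^{[s,r]}_{\psi}$ is dense in $\tilde{\bC}^{[s,r]}_{\psi}$ by Remark~\ref{R:C dense}, the underlying module of $\tilde F$ already descends to a finite projective module $F'$ over $\breve{\bC}^{[s,r]}_{\psi}$, hence over $\breve{\bC}^{[s,r]}_{\psi_{(n)}}$ for any $n$. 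It then remains to transport the descent datum of $M$ to $F'$, i.e.\ to equip $\tilde F$ with a $\Gamma$-module structure $\iota_{\tilde F}$, defined over these decompleted rings, for which $p$ becomes equivariant. Here I would use the continuity of the $\Gamma$-action (Remark~\ref{R:small connection}, Remark~\ref{R:small connection Galois}): after replacing $\psi$ by $\psi_{(n)}$ with $n$ large, the failure of $p$ to intertwine the standard structure on $\tilde F$ with the given structure on $M$ has small operator norm, so by strictness of the relevant surjections (Theorem~\ref{T:open mapping}) one lifts this discrepancy to a small perturbation of the standard descent datum on $\tilde F$ making $p$ equivariant. This perturbed $\iota_{\tilde F}$ need not satisfy the cocycle condition, but its obstruction is again small and lies in $\ker p$, so it may be removed by the iterative scheme of Lemma~\ref{L:Gamma equivalences}: one solves the successive approximate cocycle equations via Lemma~\ref{L:Gamma quasi-isomorphism2} and keeps all the data over the rings $\breve{\bC}^{[s,r]}_{\psi_{(n)}}$ by means of the pseudoflat splittings of Corollary~\ref{C:split base extension1}. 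Passing to the limit produces the sought $\iota_{\tilde F}$, which descends to a $\Gamma$-structure on $M' := F'$, along with the required equivariant surjection.

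The main obstacle is precisely this transport of a descent datum through a non-isomorphism: because $M$ is not projective the surjection $p$ does not split, so the cocycle-correction iteration must be carried out for $p$ and $\iota_{\tilde F}$ simultaneously rather than for $\iota_{\tilde F}$ alone as in Lemma~\ref{L:Gamma equivalences}, and the continuity input forces the passage to a sufficiently high truncation $\psi_{(n)}$ before the iteration can be started near the split structure. A minor technical point to settle en route is that $M$, being only assumed finite over $\tilde{\bC}^{[s,r]}_{\psi}$, must be given a workable topology — its natural quotient topology — so that Remark~\ref{R:small connection Galois} and Theorem~\ref{T:open mapping} apply to it; granting that, finite generation and pseudocoherence of the output $M'$, as well as surjectivity of $p$ after each correction, are all formal, the last point via the open mapping theorem since the correction terms have small operator norm.
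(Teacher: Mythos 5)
Your setup (free cover, truncating the tower, invoking the decompletion machinery of Lemma~\ref{L:Gamma quasi-isomorphism2}) is in the right spirit, and your observation that $\tilde{\bC}^{[s,r]}_{\psi_{(n)}} = \tilde{\bC}^{[s,r]}_{\psi}$ is correct. But the heart of your argument has a genuine gap: you claim that, after truncation, the lifted map $\iota_{\tilde F}$ on the finite free cover $\tilde F$ can be corrected to an honest descent datum --- an \emph{isomorphism} satisfying the cocycle condition --- on $\tilde F$ itself, making $p\colon \tilde F \to \tilde M$ equivariant. That is precisely the assertion that every finite $\Gamma$-module admits a projective (indeed free) $\Gamma$-equivariant cover, which is not known in this setting and is exactly the difficulty the lemma is designed to sidestep (the introduction of the paper flags that projective resolutions by projective $(\varphi,\Gamma)$-modules are not available in general, and the one result of this shape that the paper does prove, Theorem~\ref{T:pseudocoherent resolution}, needs slope-filtration arguments over a point, not a perturbation argument). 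The smallness supplied by Remark~\ref{R:small connection} controls $\iota_{\tilde M}$ relative to the trivial datum on $M$, but once you lift through the non-split surjection $p$ the error terms take values in $\ker p$, and the obstruction to repairing the cocycle defect \emph{on the free module} has no reason to vanish: the iteration of Lemma~\ref{L:Gamma equivalences} conjugates an already existing descent datum toward a decompleted one, it does not manufacture a descent datum that may fail to exist.

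The paper's proof avoids this entirely. It lifts $\iota_{\tilde M}$ to a \emph{mere morphism} $\iota_{\tilde F}$ --- explicitly not required to be an isomorphism nor to satisfy the cocycle condition --- uses Lemma~\ref{L:Gamma quasi-isomorphism2} (after replacing $\psi$ by $\psi_{(n)}$) to replace it by a lift induced from a morphism $\iota_F$ over $\breve{\bC}^{[s,r]}_{\psi^\bullet}$, and then sets $M' = F/K$ for a suitable \emph{finitely generated} submodule $K \subseteq \ker(F \to \tilde M)$: because $\iota_{\tilde M}$ is an isomorphism satisfying the cocycle condition, the finitely many defects of $\iota_F$ can be absorbed into such a $K$, and the induced map on $M'$ is then a genuine descent datum. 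The resulting $M'$ is not projective, only finitely presented over $\breve{\bC}^{[s,r]}_{\psi}$, and its pseudocoherence is deduced from the coherence of $\breve{\bC}^{[s,r]}_{\psi}$ via Remark~\ref{R:noetherian pseudoflat}; so your side remark that the coherence hypothesis is not needed for this step is also incorrect --- it is used at exactly this point, and dispensing with it rests on the unproved projectivity claim above.
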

\begin{proof}
Choose a surjection $\tilde{F} \to \tilde{M}$ of $\tilde{\bC}^{[s,r]}_\psi$-modules
in which $\tilde{F} = F \otimes_{\breve{\bC}^{[s,r]}_\psi} \tilde{\bC}^{[s,r]}_\psi$ for some
finite free module $F$ over $\breve{*}_\psi$. We may lift the map 
\[
\iota_{\tilde{M}}:
 \tilde{M} \otimes_{i_{0,0}} \tilde{\bC}^{[s,r]}_{\psi^0} \to \tilde{M} \otimes_{i_{0,1}} \tilde{\bC}^{[s,r]}_{\psi^1}
 \]
defining the $\Gamma$-structure on $\tilde{M}$ to a morphism 
\[
\iota_{\tilde{F}}: \tilde{F} \otimes_{i_{0,0}} \tilde{\bC}^{[s,r]}_{\psi^0} \to \tilde{F} \otimes_{i_{0,1}} \tilde{\bC}^{[s,r]}_{\psi^1},
\]
but this morphism need neither be an isomorphism nor satisfy the cocycle condition. Nonetheless, using Lemma~\ref{L:Gamma quasi-isomorphism2} in place of Lemma~\ref{L:Gamma quasi-isomorphism}, we may
(after replacing $\psi$ with $\psi_{(n)}$ for some suitably large $n$) imitate the proof of Lemma~\ref{L:Gamma equivalences} to replace $\iota_{\tilde{F}}$ with a sequence of lifts of $\iota_{\tilde{M}}$ converging to a lift induced by a morphism
\[
\iota_{F}: F \otimes_{i_{0,0}} \breve{\bC}^{[s,r]}_{\psi^0} \to F \otimes_{i_{0,1}} \breve{\bC}^{[s,r]}_{\psi^1}.
\]
Since $\iota_{\tilde{F}}$ lifts the map $\iota_{\tilde{M}}$ which is an isomorphism satisfying the cocycle condition, there exists a finitely generated submodule $K$ of $\ker(F \to \tilde{M})$ such that for $M' = F/K$, the map
\[
\iota_{M'}: M' \otimes_{i_{0,0}} \breve{\bC}^{[s,r]}_{\psi^0} \to M' \otimes_{i_{0,1}} \breve{\bC}^{[s,r]}_{\psi^1}
\]
induced by $\iota_F$ is an isomorphism satisfying the cocycle condition.
Since $\breve{\bC}^{[s,r]}_\psi$ is coherent, Remark~\ref{R:noetherian pseudoflat} implies that
$M'$ is a pseudocoherent $\Gamma$-module over $\breve{\bC}^{[s,r]}_\psi$.
This yields the desired result.
\end{proof}

\begin{lemma} \label{L:Gamma equivalences2}
For $r,s$ with $0<s \leq r$, the following statements hold.
\begin{enumerate}
\item[(a)]
Base extension of pseudocoherent $\Gamma$-modules from $\breve{\bC}^{[s,r]}_\psi$ to $\tilde{\bC}^{[s,r]}_{\psi}$
is an equivalence of exact tensor categories. 
\item[(b)]
Every finitely presented $\tilde{\bC}^{[s,r]}_{\psi}$-module with $\Gamma$-structure is \'etale-stably pseudocoherent.
\end{enumerate}
\end{lemma}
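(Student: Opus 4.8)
The plan is to deduce both parts from Lemma~\ref{L:Gamma equivalences2a} together with a ``resolve, descend, cokernel'' argument driven by Lemma~\ref{L:lift to psc}, making essential use of the running hypothesis that $\breve{\bC}^{[s,r]}_\psi$ is coherent: over a coherent ring, finitely generated, finitely presented and pseudocoherent modules coincide (Remark~\ref{R:noetherian pseudoflat}). The technical heart is the following claim, which subsumes both statements: if $\tilde M$ is a $\Gamma$-module over $\tilde{\bC}^{[s,r]}_\psi$ whose underlying module is (strictly) finitely presented, then there exist an integer $n$, a pseudocoherent $\Gamma$-module $M''$ over $\breve{\bC}^{[s,r]}_{\psi_{(n)}}$, and a $\Gamma$-equivariant isomorphism $M''\otimes_{\breve{\bC}^{[s,r]}_{\psi_{(n)}}}\tilde{\bC}^{[s,r]}_{\psi_{(n)}}\cong\tilde M$. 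Here I use that truncating the tower does not change the total (perfectoid) ring, so $\tilde{\bC}^{[s,r]}_{\psi_{(n)}}=\tilde{\bC}^{[s,r]}_\psi$, and that, exactly as in the proof of Lemma~\ref{L:Gamma equivalences}, replacing $\psi$ by $\psi_{(n)}$ is harmless (full faithfulness, and the breve rings' insensitivity to truncation, transport the conclusion back).

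To prove the claim I would argue as follows. First, apply Lemma~\ref{L:lift to psc} to obtain, after replacing $\psi$ by $\psi_{(n)}$, a pseudocoherent $\Gamma$-module $M'$ over $\breve{\bC}^{[s,r]}_\psi$ and a $\Gamma$-equivariant surjection $\phi\colon \tilde M':=M'\otimes\tilde{\bC}^{[s,r]}_\psi\twoheadrightarrow\tilde M$. Since $\tilde M'$ is finitely generated and $\tilde M$ is finitely presented, $\ker\phi$ is finitely generated (Lemma~\ref{L:pseudocoherent 2 of 3}), and being the kernel of a $\Gamma$-equivariant map it is a finite $\Gamma$-module over $\tilde{\bC}^{[s,r]}_\psi$. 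Applying Lemma~\ref{L:lift to psc} a second time (enlarging $n$) produces a pseudocoherent $\Gamma$-module $N'$ over $\breve{\bC}^{[s,r]}_\psi$ with a $\Gamma$-equivariant surjection $N'\otimes\tilde{\bC}^{[s,r]}_\psi\twoheadrightarrow\ker\phi$. Composing with $\ker\phi\hookrightarrow\tilde M'$ gives a $\Gamma$-equivariant map $\tilde N'\to\tilde M'$, which by the full faithfulness part of Lemma~\ref{L:Gamma equivalences2a}(a) descends to a map $f\colon N'\to M'$ over $\breve{\bC}^{[s,r]}_\psi$. Set $M'':=\coker f$, a pseudocoherent $\Gamma$-module over the coherent ring $\breve{\bC}^{[s,r]}_\psi$; base extension is right exact, so $M''\otimes\tilde{\bC}^{[s,r]}_\psi\cong\coker(\tilde N'\to\tilde M')=\tilde M'/\ker\phi\cong\tilde M$ compatibly with the $\Gamma$-actions, proving the claim.

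Given the claim, part (b) follows once one knows that $\breve{\bC}^{[s,r]}_\psi\to\tilde{\bC}^{[s,r]}_\psi$ is pseudoflat: then by Remark~\ref{R:need flatness} the module $M''\otimes\tilde{\bC}^{[s,r]}_\psi\cong\tilde M$ is pseudocoherent. For part (a): the base extension functor is well-defined (it preserves pseudocoherence, again by pseudoflatness and Remark~\ref{R:need flatness}), it is fully faithful by Lemma~\ref{L:Gamma equivalences2a}(a), and it is essentially surjective by the claim (alternatively, the claim shows the underlying module of any pseudocoherent $\Gamma$-module over $\tilde{\bC}^{[s,r]}_\psi$ descends, so Lemma~\ref{L:Gamma equivalences2a}(b) applies directly); compatibility with tensor products and with the exact structure is then routine (cf.\ Remark~\ref{R:pseudocoherent tensor product} and the exactness of pseudoflat base extension).

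The pseudoflatness of $\breve{\bC}^{[s,r]}_\psi\to\tilde{\bC}^{[s,r]}_\psi$ is, I expect, the main obstacle. One has it for $\bC^{[s,r]}_\psi\to\tilde{\bC}^{[s,r]}_\psi$ via the Banach-module splitting of Lemma~\ref{L:full splitting} (Corollary~\ref{C:split base extension1}), which exhibits $\tilde{\bC}^{[s,r]}_\psi$ as a pro-projective, hence topologically flat, hence pseudoflat $\bC^{[s,r]}_\psi$-module; the point is to upgrade this by rescaling with powers of $\varphi$ and passing to the direct limit over the Frobenius-localization levels so as to get $\tilde{\bC}^{[s,r]}_\psi$ pro-projective over $\breve{\bC}^{[s,r]}_\psi$, checking that the level-wise splittings can be chosen compatibly. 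A secondary, more bookkeeping-flavored, obstacle is managing the successive shifts $\psi\rightsquigarrow\psi_{(n)}$ from the two invocations of Lemma~\ref{L:lift to psc} (one must also record that the running hypotheses --- $F$-(finite projective) at the $R$-level, coherence of the breve rings --- persist under truncation) and verifying that $f\colon N'\to M'$ really base-extends to the composite $\tilde N'\to\ker\phi\hookrightarrow\tilde M'$; both are straightforward but require care.
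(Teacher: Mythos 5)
Your proof is correct and follows the paper's own argument essentially step for step: two applications of Lemma~\ref{L:lift to psc} (after shrinking $\psi$ to $\psi_{(n)}$), descent of the connecting map $\tilde N'\to\tilde M'$ via the full faithfulness in Lemma~\ref{L:Gamma equivalences2a}(a), and the identification of $\tilde M$ with the base extension of the cokernel, which is pseudocoherent because $\breve{\bC}^{[s,r]}_\psi$ is coherent (Remark~\ref{R:noetherian pseudoflat}). The one clarification is that the pseudoflatness of $\breve{\bC}^{[s,r]}_\psi\to\tilde{\bC}^{[s,r]}_\psi$ that you flag as the ``main obstacle'' is not a new input you need to supply: the existence and exactness of the base extension functor on pseudocoherent $\Gamma$-modules is already part of the statement of Lemma~\ref{L:Gamma equivalences2a}(a), whose proof (via Remark~\ref{R:descent pseudocoherent} and Corollary~\ref{C:split base extension1}) is exactly the Frobenius-rescaled splitting argument you sketch, so both (a) and (b) follow at once from your descent claim.
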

\begin{proof}
By Lemma~\ref{L:Gamma equivalences2a}, the base extension functor is fully faithful 
and its essential image include all objects whose underlying modules descent to
pseudocoherent modules over $\breve{\bC}^{[s,r]}_\psi$.
In particular, in the course of proving essential surjectivity, we may pass freely from $\psi$ to $\psi_{(n)}$ for some conveniently large $n$.

Now let $\tilde{M}$ be a finitely presented $\Gamma$-module over $\tilde{\bC}^{[s,r]}_\psi$. By Lemma~\ref{L:lift to psc}, after replacing $\psi$ with some $\psi_{(n)}$,
we can find a pseudocoherent $\Gamma$-module $M'$ over $\breve{\bC}^{[s,r]}_\psi$
such that for $\tilde{M}' = M' \otimes_{\breve{\bC}^{[s,r]}_\psi} \tilde{\bC}^{[s,r]}_\psi$, there exists a surjective morphism
$\tilde{M}' \to \tilde{M}$ of finitely presented $\Gamma$-modules.
By Lemma~\ref{L:pseudocoherent 2 of 3}, the kernel $\tilde{N}$ of this map is finitely generated. We may thus apply Lemma~\ref{L:lift to psc} again to obtain a
surjective $\Gamma$-equivariant morphism $\tilde{N}' \to \tilde{N}$ of finitely generated modules such that 
\[
\tilde{N}' \to \tilde{M}' \to \tilde{M} \to 0
\]
is exact, $\tilde{N}'$ is induced from a pseudocoherent $\Gamma$-module $N$ over $\breve{\bC}^{[s,r]}_\psi$, and the map $\tilde{N}' \to \tilde{M}'$ is induced from a morphism
$N' \to M'$ of pseudocoherent $\Gamma$-modules. Since $\breve{\bC}^{[s,r]}_\psi$ is coherent, Remark~\ref{R:noetherian pseudoflat} implies that
$M = \coker(\tilde{N}' \to \tilde{M}')$ is a pseudocoherent $\Gamma$-module over $\breve{\bC}^{[s,r]}_\psi$ and $\tilde{M} \cong M \otimes_{\breve{\bC}^{[s,r]}_\psi} \tilde{*}_\psi$.
(Here we use the splitting aspect of Corollary~\ref{C:split base extension1} to ensure that $M$ is a pseudocoherent $\breve{\bC}^{[s,r]}_\psi$-module.)
This completes the proof of both (a) and (b).
\end{proof}

We then obtain a pseudocoherent analogue of
Theorem~\ref{T:add tilde1}. 

\begin{theorem} \label{T:add tilde2}
For all $r,s$ with $0 < s \leq r/q$, the following exact tensor categories are equivalent,
and in addition are abelian categories.
\begin{enumerate}
\item[(a)]
The category of pseudocoherent $(\varphi, \Gamma)$-modules over $\bC_\psi$.
\item[(b)]
The category of pseudocoherent $(\varphi,\Gamma)$-bundles over $\bC_\psi$.
\item[(c)]
The category of pseudocoherent $(\varphi, \Gamma)$-modules over $\breve{\bC}_\psi$.
\item[(d)]
The category of pseudocoherent $(\varphi, \Gamma)$-bundles over $\breve{\bC}_\psi$.
\item[(e)]
The category of pseudocoherent $(\varphi, \Gamma)$-modules over $\breve{\bC}^{[s,r]}_\psi$.
\item[(f)]
The category of pseudocoherent $(\varphi, \Gamma)$-modules over $\tilde{\bC}_\psi$.
\item[(g)]
The category of pseudocoherent $(\varphi,\Gamma)$-bundles over $\tilde{\bC}_\psi$.
\item[(h)]
The category of pseudocoherent $(\varphi, \Gamma)$-modules over $\tilde{\bC}^{[s,r]}_\psi$.
\item[(i)]
The category of pseudocoherent sheaves on the relative Fargues-Fontaine curve
$\FFC_{\tilde{R}_\psi}$.
\item[(j)]
The category of pseudocoherent $\varphi$-modules over $\tilde{\bC}_{\Spa(A,A^+)}$.
\item[(k)]
The category of pseudocoherent $\varphi$-bundles over  $\tilde{\bC}_{\Spa(A,A^+)}$.
\item[(l)]
The category of pseudocoherent $\varphi$-modules over $\tilde{\bC}^{[s,r]}_{\Spa(A,A^+)}$.
\end{enumerate}
\end{theorem}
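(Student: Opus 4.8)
The plan is to mirror, essentially line for line, the proof of Theorem~\ref{T:add tilde1}, systematically replacing each input with its pseudocoherent counterpart, and then to deduce the abelian property by isolating the single case in the list whose underlying base ring is actually coherent. Concretely, I would first record the equivalences among (f)--(l): applying Theorem~\ref{T:perfect generalized phi-modules} (and Theorem~\ref{T:perfect generalized phi-modules preadic}) with base $\tilde{R}_\psi$, the categories of pseudocoherent $\varphi$-modules over $\tilde{\bC}^\infty_\psi$, over $\tilde{\bC}_\psi$, of pseudocoherent $\varphi$-bundles over $\tilde{\bC}_\psi$, of pseudocoherent $\varphi$-modules over $\tilde{\bC}^{[s,r]}_\psi$, and of pseudocoherent sheaves on $\FFC_{\tilde{R}_\psi}$ are equivalent via the stated base-extension, global-sections and restriction functors; since all these functors are natural in the base, they carry along any compatible $\Gamma$-action, giving (f), (g), (h), (i), while the same theorem applied to $X = \Spa(A,A^+)$ (using that $\psi$ presents a perfectoid subdomain, so that sheaf data on $X_{\proet}$ descended along $\psi$ is the same as $(\varphi,\Gamma)$-data over $\tilde{R}_\psi$) supplies (j), (k), (l).

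Next I would link the imperfect rings. By Lemma~\ref{L:phi-modules to phi-bundles} in its pseudocoherent form, the global-sections functor from (b) to (a) and the restriction functor from (b) to $\bC^{[s,r_0]}_\psi$-modules are equivalences; by Corollary~\ref{C:split base extension1} the maps $\bC_\psi \to \breve{\bC}_\psi$ and $\bC^{[s,r]}_\psi \to \breve{\bC}^{[s,r]}_\psi$ are pseudoflat (being split injections of Banach modules by Lemma~\ref{L:full splitting}), so Remark~\ref{R:breve phi-modules} gives that base extension from (a) to (c), and the corresponding passage from $\bC^{[s,r]}_\psi$- to $\breve{\bC}^{[s,r]}_\psi$-$(\varphi,\Gamma)$-modules, are equivalences; thus (a)--(e) are all equivalent. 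By Lemma~\ref{L:Gamma equivalences2}, base extension of pseudocoherent $\Gamma$-modules from $\breve{\bC}^{[s,r]}_\psi$ to $\tilde{\bC}^{[s,r]}_\psi$ is an equivalence; a $\varphi$-structure on such a module is an isomorphism of pseudocoherent $\Gamma$-modules over the appropriate shrunken intervals, so the same full faithfulness (Lemma~\ref{L:Gamma equivalences2}(a), applied on each interval) lets the $\varphi$-isomorphism descend, whence base extension from (e) to (h), and the analogous statement at the level of bundles from (d) to (g), are equivalences of categories of pseudocoherent $(\varphi,\Gamma)$-modules. The two halves of the diagram are then joined exactly as in the proof of Theorem~\ref{T:add tilde1}: the functor from (d) to (g) is essentially surjective because the composite equivalence from (b) to (g) factors through it, and fully faithful because the functor from (e) to (h) is.

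For the abelian statement, I would single out case (e). Since $\breve{\bC}^{[s,r]}_\psi$ is coherent by hypothesis, every pseudocoherent $\breve{\bC}^{[s,r]}_\psi$-module is finitely presented and finitely presented $\breve{\bC}^{[s,r]}_\psi$-modules form an abelian category (Remark~\ref{R:noetherian pseudoflat}). Hence kernels and cokernels of morphisms of pseudocoherent $(\varphi,\Gamma)$-modules over $\breve{\bC}^{[s,r]}_\psi$, computed in the underlying module category, are again pseudocoherent, and they inherit both the $\Gamma$-descent datum (a subquotient of a descent datum is a descent datum) and the $\varphi$-isomorphism (which is a genuine isomorphism since $\varphi$ is bijective on $\breve{\bC}^{[s,r]}_\psi$); so (e) is an abelian category, and abelianness transports across all the equivalences above.

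The main obstacle will be the descent of pseudocoherent $(\varphi,\Gamma)$-modules from $\tilde{\bC}^{[s,r]}_\psi$ down to $\breve{\bC}^{[s,r]}_\psi$ (and thence to $\bC_\psi$): unlike the projective situation, one cannot reduce to an already-settled case by choosing a projective resolution by projective $(\varphi,\Gamma)$-modules, so one is forced to argue with bare finite-presentation data and to rely essentially on the coherence of $\breve{\bC}^{[s,r]}_\psi$ to keep kernels and cokernels of approximating morphisms finitely presented. That difficulty is exactly what has been packaged into Lemma~\ref{L:Gamma equivalences2} (and, underneath it, Lemma~\ref{L:full splitting} and the $F$-(finite projective) hypothesis); once those are granted, the remaining work is bookkeeping — transporting $\varphi$- and $\Gamma$-structures through the web of functors, plus the degenerate-interval and bundle-versus-module reductions, which are routine given \S\ref{subsec:projective modules} and Lemma~\ref{L:phi-modules to phi-bundles}.
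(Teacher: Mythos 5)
Your proposal is correct and is essentially the paper's own argument: the paper's proof consists of repeating the proof of Theorem~\ref{T:add tilde1} with Lemma~\ref{L:Gamma equivalences2} substituted for Lemma~\ref{L:Gamma equivalences}, which is exactly the web of functors you spell out, and your isolation of case (e) together with the standing coherence hypothesis on $\breve{\bC}^{[s,r]}_\psi$ is the intended source of the abelian property. One small imprecision: $\varphi$ is not an automorphism of $\breve{\bC}^{[s,r]}_\psi$ but an isomorphism onto $\breve{\bC}^{[s/q,r/q]}_\psi$, so the induced $\varphi$-structure on kernels and cokernels should be obtained via pseudoflatness of the restriction $\breve{\bC}^{[s,r]}_\psi \to \breve{\bC}^{[s,r/q]}_\psi$ (from Lemma~\ref{L:weak flatness imperfect Robba}, passing to the colimit over $\varphi^{-n}$) rather than from literal bijectivity of $\varphi$ on the interval ring.
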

\begin{proof}
This follows from the proof of Theorem~\ref{T:add tilde1} upon replacing
Lemma~\ref{L:Gamma equivalences}
with Lemma~\ref{L:Gamma equivalences2}.
\end{proof}

\begin{cor} \label{C:toric tower coherent}
There exists $N \geq 0$ depending only on $(A,A^+)$ such that 
for every pseudocoherent $\Gamma$-module $\tilde{M}$ over $\tilde{A}_\psi$, the groups
$H^i_\Gamma(\tilde{M})$ are finitely generated $A$-modules for all $i \geq 0$ and vanish for all $i > N$.
\end{cor}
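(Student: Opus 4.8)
The plan is to trade the $\Gamma$-cohomology of $\tilde M$ over $\tilde A_\psi$ for the $(\varphi,\Gamma)$-cohomology of a pseudocoherent $(\varphi,\Gamma)$-module of type $\bC$, and then to compute the latter on the imperfect interval rings $\bC^{[s,r]}_\psi$, which in the cases of interest are strongly noetherian (Remark~\ref{R:affinoid slice}). Two separate things must be extracted: a degree bound (from the $p$-adic Lie group $\Gamma$ via Lazard) and finite generation over $A$ (from a compactness/Fredholm argument in the Frobenius direction).

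\emph{Step 1: reduction to type $\bC$.} By Definition~\ref{D:canonical section} and Remark~\ref{R:O as pseudocoherent}, the section $t_\psi$ exhibits $\tilde A_\psi$ as the pseudocoherent $(\varphi,\Gamma)$-module $\tilde\bC^\infty_\psi/\tilde M_\psi$ over $\tilde\bC_\psi$; applying the long exact $\varphi$-cohomology sequence to $0\to\tilde M_\psi\to\tilde\bC_\psi\to\tilde\bC_\psi/\tilde M_\psi\to 0$ and using $H^1_\varphi(\tilde\bC_\psi)=0$ (Lemma~\ref{L:AS exact}) and $H^2_\varphi=0$, one finds $H^0_\varphi(\tilde A_\psi)=\tilde A_\psi$ and $H^1_\varphi(\tilde A_\psi)=0$. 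Given a pseudocoherent $\Gamma$-module $\tilde M$ over $\tilde A_\psi$, I would choose a resolution $\cdots\to\tilde A_\psi^{a_1}\to\tilde A_\psi^{a_0}\to\tilde M\to 0$ by finite free $\tilde A_\psi$-modules and push it forward along the closed immersion $\Spec\tilde A_\psi\hookrightarrow\FFC_{\tilde R_\psi}$ (which is exact, so preserves pseudocoherence by Corollary~\ref{C:stay pseudocoherent} together with Remark~\ref{R:need flatness}), obtaining a pseudocoherent $(\varphi,\Gamma)$-module $\tilde N=i_*\tilde M$ over $\tilde\bC_\psi$ with $\varphi$-cohomology $\tilde M$ in degree $0$ and $0$ in degree $1$ (spectral sequence from the resolution). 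Degenerating the double complex computing $\varphi$-$\Gamma$-hypercohomology yields $H^i_\Gamma(\tilde M)\cong H^i_{\varphi,\Gamma}(\tilde N)$; by Theorem~\ref{T:add tilde2}, $\tilde N$ descends to a pseudocoherent $(\varphi,\Gamma)$-module $N$ over $\bC_\psi$, and by Theorem~\ref{T:Galois cohomology1c2} the descent preserves cohomology, so $H^i_\Gamma(\tilde M)\cong H^i_{\varphi,\Gamma}(N)$ for all $i$, compatibly with the $A$-module structures (via Proposition~\ref{P:Ax}).

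\emph{Step 2: interval level and degree bound.} By Theorem~\ref{T:add tilde2}, $N$ arises from a pseudocoherent $(\varphi,\Gamma)$-module $N_{[s,r]}$ over $\bC^{[s,r]}_\psi$ for some $0<s\le r/q$, and Theorem~\ref{T:Galois cohomology1c}(b), in its $\Gamma$-equivariant form supplied by Lemma~\ref{L:Gamma quasi-isomorphism2}, identifies $H^i_{\varphi,\Gamma}(N)$ with the cohomology of the total complex of $[N^\bullet_{[s,r]}\xrightarrow{\varphi-1}N^\bullet_{[s,r/q]}]$, where $N^\bullet$ is the $\Gamma$-\v{C}ech complex of Definition~\ref{D:Gamma-module}. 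Under the running hypotheses (Remark~\ref{R:affinoid slice}) each $\bC^{[s,r]}_\psi$ is a strongly noetherian Berkovich-affinoid algebra over the base field, so $N_{[s,r]}$, $N_{[s,r/q]}$ are finitely generated there. Since $\Gamma$ is a $p$-adic Lie group of some dimension $d=d(\psi)$ (bounded in terms of $(A,A^+)$), after replacing $\psi$ by $\psi_{(n)}$ for $n$ large — which changes neither the cohomology nor finite generation, cf.\ Remark~\ref{R:small connection} — the $\Gamma$-action on $N_{[s,r]}$ is analytic by Proposition~\ref{P:extend analyticity along affinoid}, so by Lazard (Theorem~\ref{T:lazard analytic}, Remark~\ref{R:finite Gamma subcomplex}) the complex $N^\bullet_{[s,r]}$ is quasi-isomorphic to a complex of length $\le d$ whose terms are finite direct sums of $N_{[s,r]}$. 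Hence the total complex is quasi-isomorphic to a bounded complex of length $\le N:=d+1$ with terms finitely generated over $\bC^{[s,r]}_\psi$ and $\bC^{[s,r/q]}_\psi$; in particular $H^i_\Gamma(\tilde M)=0$ for $i>N$, with $N$ depending only on $(A,A^+)$.

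\emph{Step 3: finite generation over $A$ — the main obstacle.} After further shrinking the outer radius (which leaves the cohomology unchanged by Theorem~\ref{T:Galois cohomology1c}), the Frobenius differential $\varphi-1\colon N^\bullet_{[s,r]}\to N^\bullet_{[s,r_1]}$ with $r_1<r/q$ factors through the completely continuous restriction map $\bC^{[s,r]}_\psi\to\bC^{[s,r_1]}_\psi$ between these quasi-Stein sections, while all differentials are $A$-linear. A relative Fredholm argument in the spirit of \cite[\S 4]{kpx} — potential orthonormalizability of $\bC^{[s,r]}_\psi$ over $A$ plus Serre's theory of completely continuous operators over the noetherian base $A$ — then shows that the cohomology of such a complex is finitely generated over $A$. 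The genuine difficulty is that one wants this for \emph{pseudocoherent}, not just projective, $(\varphi,\Gamma)$-modules, and the paper does not provide projective resolutions by projective $(\varphi,\Gamma)$-modules over $\bC_\psi$ (nor over $\bC^{[s,r]}_\psi$). I would handle this either (i) by carrying out the compact-operator estimates directly at the level of pseudocoherent modules, since all that is needed is that the restriction map is completely continuous and $A$ is noetherian; or (ii) by a d\'evissage on the Fargues--Fontaine side: using Theorem~\ref{T:add tilde2} to view $N$ as a pseudocoherent sheaf on $\FFC_{\tilde R_\psi}$, filtering by the $J$-power-torsion subsheaf as in Lemma~\ref{L:pseudocoherent type A torsion} / Corollary~\ref{C:lift action}, reducing to projective $\varphi$-modules on the curve (handled by case (i) or by \cite[\S 4]{kpx} after Theorem~\ref{T:perfect generalized phi-modules}) plus torsion pieces, and assembling via the long exact $\Gamma$-cohomology sequence and the fact that finitely generated $A$-modules are closed under kernels, cokernels and extensions (Lemma~\ref{L:pseudocoherent 2 of 3}). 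I expect route (i) to be the cleanest, with the only real work being the bookkeeping needed to run the orthonormalizability/compactness argument for a finitely generated rather than free module over $\bC^{[s,r]}_\psi$.
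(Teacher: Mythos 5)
Your Steps 1 and 2 begin in the same way as the paper's argument (Corollary~\ref{C:stay pseudocoherent}, Theorem~\ref{T:add tilde2}, Theorem~\ref{T:Galois cohomology1c2}), but you then head in a direction that misses the one idea the paper's proof actually turns on, and your Step 3 cannot be repaired along the lines you propose. The point of the descent is not merely that the module lives over $\bC_\psi$: since $\tilde M$ is killed by $\ker\theta$, the descended object is a pseudocoherent $\Gamma$-module over $A_\psi=\varinjlim_n A_{\psi,n}$ itself, so for $n$ large it is realized by a finitely generated module $M_n$ over $A_{\psi,n}$ with the $\Gamma$-structure defined over $A_{\psi^1,n}$. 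Because $A_{\psi,n}$ is module-finite over the noetherian ring $A$, the cohomology is then computed by the complex $M_n \otimes_{A_{\psi,n}} A_{\psi^\bullet,n}$, a complex of finitely generated $A$-modules, from which the paper reads off finite generation and the uniform bound $N$. In other words, the finiteness here is a $\Gamma$-decompletion phenomenon (descending the coefficient ring from $\tilde A_\psi$ all the way to a \emph{finite level} of the tower), not a Frobenius-compactness phenomenon, and no appeal to Lazard or to Fredholm theory is needed.

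Neither of your two routes in Step 3 can supply the finiteness. For route (ii): the $(\varphi,\Gamma)$-cohomology of a projective $(\varphi,\Gamma)$-module over $\bC_\psi$ carries no $A$-module structure at all (its natural coefficient field is $E_a$, via $H^0_{\varphi,\Gamma}$ of the trivial module), and for affinoid $X$ it is typically infinite-dimensional over $\QQ_p$ --- already $H^1_{\varphi,\Gamma}$ of the trivial rank-one module is $H^1_{\proet}(X,\QQ_p)$, which is huge for a disc or annulus (compare Example~\ref{exa:bad cohomology over perfectoid} for the analogous size of $H^1(X_{\proet},\widehat{\calO})$); so a d\'evissage through projective $(\varphi,\Gamma)$-modules cannot extract finite generation over $A$ of the torsion piece from the long exact sequence. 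For route (i): the module you are studying is supported on the $\theta$-divisor, so as long as the shrunken interval still contains the relevant zero of $t_\theta$ the module $N_{[s,r_1]}$ does not change at all, and there is no completely continuous restriction map to exploit; moreover $A$ does not map to $\bC^{[s,r]}_\psi$, so the $(\varphi,\Gamma)$-complex is not a complex of Banach $A$-modules to which a Kiehl/\cite{kpx}-style argument could apply --- in \cite{kpx} the affinoid base is the coefficient ring of an arithmetic family and embeds in the relative Robba ring, which is exactly what fails in the present geometric setting. (Your Lazard-based degree bound is a secondary issue: it is not the paper's mechanism, it presupposes analyticity of the $\Gamma$-action on a pseudocoherent module and strong noetherianity of $\bC^{[s,r]}_\psi$ beyond the running hypotheses, and in any case it does not address the finite generation over $A$, which is where the proof genuinely fails.)
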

\begin{proof}
By Corollary~\ref{C:stay pseudocoherent},
$M$ is a pseudocoherent $\varphi$-module over $\tilde{\bC}_\psi$; we may thus
apply Theorem~\ref{T:add tilde2} to descend $\calF$ to a pseudocoherent $(\varphi,\Gamma)$-module $M$ over $\bC_\psi$, which is in fact a pseudocoherent $\Gamma$-module over $A_{\psi}$. By Theorem~\ref{T:Galois cohomology1c2},
we have $H^i_{\Gamma}(M) = H^i_{\Gamma}(\tilde{M})$ for all $i$.
For any sufficiently large $n$, we may realize $M$ as a module $M_n$ over $A_{\psi,n}$
in such a way that the $\Gamma$-module structure is defined over $A_{\psi^1,n}$.
By faithfully flat descent, we may compute $H^i_{\Gamma}(M)$ as 
the cohomology of the complex $M_n \otimes_{A_{\psi,n}} A_{\psi^\bullet,n}$.
Since this complex consists of finitely generated $A$-modules, the claim follows at once.
\end{proof}

\begin{remark} \label{R:higher vanishing}
In Corollary~\ref{C:toric tower coherent}, it is not guaranteed that there exists $N \geq 0$ such that
$H^i_{\Gamma}(\tilde{M}) = 0$ for all $i > N$. One circumstance in which this is guaranteed is when
$\psi$ is Galois with group $\Gamma$ being a $p$-adic Lie group, as in this case $\Gamma$-cohomology can be computed by a finite complex.
\end{remark}

\begin{remark} \label{R:decompleting conditions}
To summarize, one can apply all of the results of \S\ref{subsec:phigamma}, \S\ref{subsec:Frobenius splittings}, and \S\ref{subsec:pseudocoherent type C}
 provided that the tower $\psi$ satisfies the following conditions.
\begin{enumerate}
\item[(a)]
The tower is locally decompleting (and in particular, finite \'etale, perfectoid, and weakly decompleting).
\item[(b)]
The ring $R_\psi$ is $F$-(finite projective).
\item[(c)]
For $0 <s \leq r \leq r_0$, the  ring $\breve{\bC}^{[s,r]}_\psi$ is noetherian.
\end{enumerate}
\end{remark}

\section{Towers over fields}
\label{sec:towers fields}

We next apply the axiomatic formalism described in \S\ref{sec:axiomatic} to
exhibit several known and conjectured classes of examples of decompleting towers
over analytic fields.
The case of the cyclotomic tower recovers the Cherbonnier-Colmez theorem on overconvergence of $p$-adic Galois representations
\cite{cherbonnier-colmez} via the technique described in \cite{kedlaya-new-phigamma},
while the case of the Kummer tower suggests a conjecture of a similar flavor.

\setcounter{theorem}{0}
\begin{hypothesis} \label{H:p-adic field}
Throughout \S\ref{sec:towers fields},
let $K$ be a field of characteristic $0$ complete for a discrete valuation, whose residue field $\kappa_K$ is perfect of characteristic $p$. We refer to such an object hereafter as a \emph{$p$-adic field}. 
\end{hypothesis}

\subsection{The cyclotomic tower over a \texorpdfstring{$p$}{p}-adic field}
\label{subsec:cyclotomic}

We begin with the case of the $p$-cyclotomic tower over a $p$-adic field. 
By applying Theorem~\ref{T:perfect equivalence1}, we may then recover the
theorem of Cherbonnier-Colmez \cite{cherbonnier-colmez}
in essentially the same fashion as in \cite{kedlaya-new-phigamma}.
By applying Theorem~\ref{T:Galois cohomology1b},
we also recover Herr's description of Galois cohomology in terms of $(\varphi, \Gamma)$-modules \cite{herr, herr-tate}.

\begin{defn} \label{D:cyclotomic tower}
Put $K_0 = K$ and $\epsilon_0 = 1$.
For $n \geq 1$, put $K_n = K[\epsilon_n]/(\Phi_{p^n}(\epsilon_n))$, where
$\Phi_{p^n}(T) = (T^{p^n}-1)/(T^{p^{n-1}}-1)$ is the $p^n$-th cyclotomic polynomial.
Set notation as in Hypothesis~\ref{H:towers} with
$h=1$, $E = \Qp$, $\varpi = p$. 
Form the tower $\psi$ over $K$ by taking $A_{\psi,n} = K_n$, $A_{\psi,n}^+ = A_{\psi,n}^{\circ}$,
and $A_{\psi,n} \to A_{\psi,n+1}$ to be the map identifying $\epsilon_n$ with $\epsilon_{n+1}^p$.
This tower is finite \'etale and Galois with group $\Gamma = \ZZ_p^\times$
acting via the cyclotomic character; we are thus in the setting of Example~\ref{exa:Galois tower}.
\end{defn}

\begin{lemma} \label{L:cyclotomic perfectoid}
The tower $\psi$ is perfectoid.
\end{lemma}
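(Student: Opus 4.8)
The statement to prove is that the cyclotomic tower $\psi$ of Definition~\ref{D:cyclotomic tower} is perfectoid, i.e., that the completion $\tilde{A}_\psi$ of $A_\psi = \bigcup_n K_n$ under $\alpha_\psi$ is a Fontaine perfectoid Banach algebra (equivalently, by Proposition~\ref{P:extend perfectoid}, $(\tilde{A}_\psi, \tilde{A}_\psi^+)$ is Fontaine perfectoid). The most direct route is to identify $\tilde{A}_\psi$ with the completed cyclotomic extension $\widehat{K_\infty}$ and verify the defining conditions of Definition~\ref{D:Fontaine perfectoid} by hand. First I would recall that each $K_n/K$ is totally ramified of degree $p^{n-1}(p-1)$ (for $n \geq 1$) with uniformizer related to $\epsilon_n - 1$, and that the transition maps $K_n \to K_{n+1}$ are isometric for the spectral (= valuation) norms, so that $A_\psi$ carries a well-defined power-multiplicative norm $\alpha_\psi$ extending the given discrete valuation on $K$; let $(\tilde{A}_\psi, \tilde{A}_\psi^+)$ be its completion, which is an analytic field since each $K_n$ is.

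\textbf{Key steps.} The concrete verification proceeds as follows. (i) Observe $p$ is topologically nilpotent in $\tilde{A}_\psi$ since it already is in $K$. (ii) Produce a topologically nilpotent unit $u \in \tilde{A}_\psi^+$ with $u^p \mid p$: the standard choice is to take $u$ related to $\epsilon_n - 1$ for $n$ large enough that $v(\epsilon_n - 1) < v(p)/p$, using the well-known computation $v(\epsilon_n - 1) = \tfrac{1}{p^{n-1}(p-1)} v(p)$ in the valuation normalization where $v(p) = 1$; for $n \geq 2$ one has $v(\epsilon_n-1) = \tfrac{1}{p}\, v(\epsilon_{n-1}-1)$, which gives the $p$-th-power divisibility we need once $n$ is large. (iii) Check surjectivity of $\overline{\varphi}\colon \tilde{A}_\psi^+/(u) \to \tilde{A}_\psi^+/(u^p)$: here the point is that $\epsilon_{n+1}^p = \epsilon_n$, so the Frobenius on the residue-level quotient hits the class of $\epsilon_n - 1$, and more generally, because $K_\infty$ is dense in $\tilde{A}_\psi$ and every element of $K_n^+$ is a polynomial in $\epsilon_n - 1$ over $\gotho_K$ (with $\kappa_K$ perfect so that Frobenius is surjective on the residue field), one can approximate any class modulo $u^p$ by a $p$-th power modulo $u$. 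This is precisely the kind of argument carried out in \cite[\S 3.5]{part1} for perfectoid analytic fields, and one may alternatively just cite \cite[Proposition~3.6.2]{part1} (or \cite{scholze1}) to conclude that $\widehat{K_\infty}$ is a perfectoid field once one knows it is an analytic field in which $p$ is topologically nilpotent and Frobenius is surjective on $\gotho/(p)$ after a bounded correction. Since $\tilde{A}_\psi$ is in particular uniform (being an analytic field), Definition~\ref{D:Fontaine perfectoid} is satisfied, so $\psi$ is perfectoid.

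\textbf{Main obstacle.} The only genuinely delicate point is the Frobenius-surjectivity condition (iii): one must control not just the algebraic extension $K_\infty$ but its completion, i.e., check that approximating an arbitrary element of $\tilde{A}_\psi^+$ by a $p$-th power modulo $u^p$ works uniformly. The clean way to handle this is to note that $\tilde{A}_\psi^+$ is the $\alpha_\psi$-completion of $\bigcup_n \gotho_{K_n}$, reduce modulo $u^p$ to a statement about the union, and there use that $\gotho_{K_n}/(u^p)$ is generated over (the image of) $\gotho_{K_{n-1}}$ together with $\epsilon_n - 1$, combined with perfectness of $\kappa_K$ to extract $p$-th roots of the residual part. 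Everything else is a routine ramification computation for cyclotomic fields. I would therefore keep the exposition short: set up the valuation computation for $v(\epsilon_n - 1)$, choose $u$, invoke the density of $K_\infty$ plus perfectness of $\kappa_K$ for the Frobenius surjectivity, and cite \cite{part1} (or \cite{scholze1}, \cite{fontaine-bourbaki}) for the packaging of these facts into the Fontaine perfectoid property.
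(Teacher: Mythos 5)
Your overall strategy (verify Definition~\ref{D:Fontaine perfectoid} directly, or invoke \cite[Proposition~3.6.2]{part1} once one knows $\tilde{A}_\psi$ is a nondiscretely valued analytic field with Frobenius surjective modulo $p$) is the right one, but as written it has a genuine gap: you carry out the verification over an arbitrary base $K$ as in Hypothesis~\ref{H:p-adic field}, and several of the facts you lean on are simply false in that generality. If $K$ already contains $p$-power roots of unity, then $K_n = K[\epsilon_n]/(\Phi_{p^n}(\epsilon_n))$ is not a field (it is a product of fields), so ``$K_n/K$ is totally ramified of degree $p^{n-1}(p-1)$'' and ``$\tilde{A}_\psi$ is an analytic field since each $K_n$ is'' both fail. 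More seriously, even when the $K_n$ are fields, your key step (iii) rests on the claim that $\gotho_{K_n}$ is generated over $\gotho_K$ (or over $\gotho_{K_{n-1}}$) by $\epsilon_n$; this integral monogenicity by the root of unity is special to the absolutely unramified case and breaks down as soon as $K$ is ramified over $\QQ_p$, since then $\epsilon_n-1$ is no longer a uniformizer of $K_n$ and $\gotho_K[\epsilon_n]$ is in general a proper order in $\gotho_{K_n}$. So the Frobenius-surjectivity argument you sketch does not go through for general $K$.

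The missing idea, which is exactly the paper's first move, is a reduction to the absolutely unramified case: $K$ is a finite (hence finite \'etale) extension of $K_0 = \Frac(W(\kappa_K))$, and the cyclotomic tower over $K$ is the base extension of the cyclotomic tower over $K_0$ along $K_0 \to K$; by Proposition~\ref{P:perfectoid tower persistence}(b) (ultimately Theorem~\ref{T:Fontaine perfectoid compatibility}), it therefore suffices to treat $K$ absolutely unramified. In that case everything you wrote becomes correct: the $K_n$ are fields, $\gotho_{A_\psi} = \gotho_K[\epsilon_1,\epsilon_2,\dots]$, $\tilde{A}_\psi$ is a nondiscretely valued analytic field, and the surjectivity of Frobenius on $\gotho_{\tilde{A}_\psi}/(p)$ is immediate from the relations $\epsilon_{n+1}^p = \epsilon_n$ together with perfectness of $\kappa_K$, so \cite[Proposition~3.6.2]{part1} applies. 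With that reduction inserted at the start, your argument matches the paper's; without it, the proof is incomplete.
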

\begin{proof}
By Proposition~\ref{P:perfectoid tower persistence}, we need only check this in case $K$ is absolutely unramified (i.e., $K = \Frac(W(\kappa_K))$). In this case, $\tilde{A}_{\psi}$ is an analytic field, so to check that $(\tilde{A}_\psi, \tilde{A}_\psi^+)$ is perfectoid, by \cite[Proposition~3.6.2]{part1} it suffices to check that $\tilde{A}_\psi$ is nondiscretely valued (which is clear) and that Frobenius is surjective on $\gotho_{\tilde{A}_\psi}/(p)$.
But $\gotho_{A_{\psi}} = \gotho_K[\epsilon_1, \epsilon_2, \dots]$ and 
\[
\gotho_{\tilde{A}_{\psi}}/(p) = \gotho_K[\overline{\epsilon}_1, \overline{\epsilon}_2, \dots]/(\overline{\epsilon}_1^p, \overline{\epsilon}_2^p - \overline{\epsilon}_1, \dots),
\]
so the surjectivity of Frobenius is clear.
\end{proof}

\begin{defn}
Since $\psi$ is perfectoid, we may set notation as in \S\ref{sec:axiomatic}.
Let $\epsilon \in \tilde{R}_{\psi}$ be the element $(\dots, \epsilon_1, \epsilon_0)$. Put $\pi = [\epsilon] - 1$;
note that $\overline{\alpha}_\psi(\overline{\pi}) = p^{-p/(p-1)}$.
\end{defn}

\begin{lemma} \label{L:cyclotomic Rpsi}
The ring $R_{\psi}$ is a finite separable extension of $\kappa_K((\overline{\pi}))$
and $\tilde{R}_{\psi}$ is the completion of $R_{\psi}^{\perf}$.
In particular, $\psi$ is weakly decompleting and $R_\psi$ is $F$-(finite projective).
\end{lemma}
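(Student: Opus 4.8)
<br>

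The plan is to prove Lemma~\ref{L:cyclotomic Rpsi} by first making an explicit computation in the absolutely unramified case and then promoting it to the general case by finite étale descent. First I would reduce to the case where $K = \Frac(W(\kappa_K))$ is absolutely unramified: a general $K$ is a finite (necessarily separable, since we are in characteristic $0$) extension of such a $K_0$, and the cyclotomic tower over $K$ is the base extension of the cyclotomic tower over $K_0$ along this finite étale morphism. By Proposition~\ref{P:weakly decompleting tower persistence}(b), the ring $R_\psi$ for the tower over $K$ is then a finite étale algebra over $R_{\psi_0}$ for the tower over $K_0$; combined with the transitivity of ``finite separable'' and the fact that a finite étale algebra over $\kappa_K((\overline{\pi}))$ is a finite product of finite separable extensions, this will yield the claim in general once it is established over $K_0$. (One should also note here that $\kappa_K$ itself is perfect, so there is no subtlety about the residue field of the finite extension.)

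In the absolutely unramified case, I would compute $R_\psi = A_\psi/(p)$ directly. We have $\gotho_{A_\psi} = \gotho_K[\epsilon_1, \epsilon_2, \dots]$, and the key point is to identify $R_\psi^+ = \gotho_{A_\psi}/(p)$ (or rather its image in $\tilde R_\psi$) with the subring $\kappa_K[\overline{\pi}^{1/p^\infty}]$-overconvergent completion appearing in the classical theory. Concretely, $\overline{\epsilon} - 1 = \overline{\pi}$ has $\overline{\alpha}_\psi(\overline{\pi}) = p^{-p/(p-1)} \in (p^{-1},1)$, so $\overline{\pi}$ qualifies as the element $\eta$ of Lemma~\ref{L:intermediate mod p subring}; that lemma gives $R_\psi^+ \subseteq R'_\psi$, the ring of sequences eventually landing in $A_{\psi,n}^+/(\overline{\pi})$. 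The standard computation in the field-of-norms construction (as in \cite[\S 5.3]{part1}, adapted) identifies $A_{\psi,n}^+/(\overline{\pi})$ with $\kappa_K[\overline{\pi}^{1/p^n}]$ and hence shows that $R_\psi$ is exactly $\kappa_K((\overline{\pi}))$ in the unramified case; that $\tilde R_\psi$ is the completed perfect closure of $R_\psi$ then follows from the density of $R_\psi^{\perf}$ in $\tilde R_\psi$ established in Lemma~\ref{L:cyclotomic perfectoid}'s proof (Frobenius surjectivity mod $p$) together with the fact that the perfect closure of $\kappa_K((\overline{\pi}))$ is dense in any complete perfect uniform ring containing it with the right spectral norm.

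Granting the identification $R_\psi \cong \kappa_K((\overline{\pi}))$ (finite separable extension thereof, in general), the remaining two assertions are formal. For weak decompletingness I would verify the two conditions of Definition~\ref{D:weakly decompleting}: condition (a), density of $R_\psi^{\perf}$ in $\tilde R_\psi$, is exactly what was just argued; condition (b), the existence of $r>0$ with $\bA^r_\psi \to R_\psi$ strict surjective, follows because $R_\psi$ is a finite (separable) extension of $\kappa_K((\overline{\pi}))$ — one lifts $\overline{\pi}$ to $\pi = [\epsilon] - 1 \in \bA^r_\psi$ for suitable $r$ and lifts the finitely many generators of the finite extension, exactly as in Lemma~\ref{L:weakly decompleting affinoid base extension} (which applies once one knows $R_\psi$ is an affinoid, indeed finite, algebra over $\kappa_K((\overline{\pi}))$). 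Finally, $F$-(finite projective) for $R_\psi$: a finite separable extension of $\kappa_K((\overline{\pi}))$ is a one-dimensional regular (in fact local, or a product of local) $\FF_p$-algebra, and any such ring is $F$-finite (being finite over the $F$-finite field $\kappa_K((\overline{\pi}))$, using that $\kappa_K$ is perfect) and regular, hence $F$-(finite projective) by Kunz's criterion as cited in the excerpt. The main obstacle is the explicit ``mod $p$'' computation identifying $R_\psi$ — i.e., checking that the eventual-image condition of Lemma~\ref{L:intermediate mod p subring} is tight and produces precisely $\kappa_K((\overline{\pi}))$ rather than something larger — but this is a well-trodden calculation in the field-of-norms literature and in \cite[\S 5.3]{part1}, so I expect it to go through without essential difficulty.
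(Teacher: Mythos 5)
Your architecture is exactly the paper's: reduce to absolutely unramified $K$ via Proposition~\ref{P:weakly decompleting tower persistence}, identify $R_\psi$ with $\kappa_K((\overline{\pi}))$ in that case (the upper bound via Lemma~\ref{L:intermediate mod p subring}, the lower bound via the explicit element $\pi = [\epsilon]-1$), obtain $\tilde{R}_\psi$ as the completion of $R_\psi^{\perf}$ by comparing truncated valuation rings, get strict surjectivity of $\bA^{r}_\psi \to R_\psi$ from the fact that $\pi$ is a good lift of $\overline{\pi}$, and finish $F$-(finite projective) by Kunz. So the route is the intended one; the problems are local, but two of them are genuine errors as written.

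First, your application of Lemma~\ref{L:intermediate mod p subring} fails: the element $\eta$ there must lie in some $A_{\psi,n_0}$ (a characteristic-zero ring) and satisfy $\alpha_\psi(\eta) \in (p^{-1},1)$, whereas $\overline{\pi}$ lives in $\tilde{R}_\psi$ and $\overline{\alpha}_\psi(\overline{\pi}) = p^{-p/(p-1)} < p^{-1}$, so it is not in the allowed interval (nor in the allowed ring). The correct choice is, e.g., $\eta = \epsilon_{n_0}-1$ for suitable $n_0$ (any $n_0 \geq 1$ for $p>2$, $n_0 \geq 2$ for $p=2$). Relatedly, $R_\psi$ is by definition the image of $\bA_\psi$ modulo $\varpi$ inside $\tilde{R}_\psi$, not $\gotho_{A_\psi}/(p)$; there is no natural map from the latter into $\tilde{R}_\psi$, and Lemma~\ref{L:intermediate mod p subring} exists precisely to relate $R_\psi^+$ to the truncations $A_{\psi,n}^+/(\eta)$ without such a map. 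Second, the ``fact'' that the perfect closure of $\kappa_K((\overline{\pi}))$ is dense in any complete perfect uniform ring containing it with the right norm is false in general (it already fails for the completed algebraic closure of $\kappa_K((\overline{\pi}))$); the density you need is exactly the assertion being proved, and it must come from the explicit presentation of $\gotho_{\tilde{A}_\psi}/(p)$ computed in the proof of Lemma~\ref{L:cyclotomic perfectoid}, i.e., from the comparison of truncated valuation rings, which is what the paper does. Third, you assert but never argue the containment $\kappa_K((\overline{\pi})) \subseteq R_\psi$: it holds because $\theta(\varphi^{-n}(\pi)) = \epsilon_n - 1 \in A_{\psi,n}$ and Teichm\"uller lifts of elements of $\kappa_K$ have $\theta\circ\varphi^{-n}$-images in $W(\kappa_K) = \gotho_K \subseteq A_{\psi,n}$, so convergent Laurent series in $\pi$ over $W(\kappa_K)$ lie in $\bA^{r}_\psi$. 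Finally, a simplification: once the unramified case is done, Proposition~\ref{P:weakly decompleting tower persistence}(b) already yields both the weakly decompleting property and the description of $R_\psi$ as a finite \'etale (hence finite separable) extension of $\kappa_K((\overline{\pi}))$ for general $K$, so your separate lifting of generators of the finite extension via Lemma~\ref{L:weakly decompleting affinoid base extension} is unnecessary; in the unramified case itself, condition (b) of Definition~\ref{D:weakly decompleting} needs only the quantitative statement that $\lambda(\overline{\alpha}_\psi^r)(\pi - [\overline{\pi}])$ is small compared to $\overline{\alpha}_\psi^r(\overline{\pi})$ for suitable $r$, as in the paper.
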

\begin{proof}
By Proposition~\ref{P:weakly decompleting tower persistence} we may again reduce to the case where $K$ is absolutely unramified, in which case we will check that
in fact $R_{\psi} = \kappa_K((\overline{\pi}))$.
Since $\pi \in \calR^{\inte}_\psi$, we have $\kappa_K((\overline{\pi})) \subseteq R_{\psi}$. 
We may then see that $\tilde{R}_{\psi}$ is the completion of $R_{\psi}^{\perf}$ by comparing truncated valuation rings,
and then see that $R_{\psi} \subseteq \kappa_K((\overline{\pi}))$
using Lemma~\ref{L:intermediate mod p subring}.

To deduce that $\psi$ is weakly decompleting, it remains to check that 
$\bA^{r}_\psi$ maps strict surjectively onto $R_\psi$ for some $r>0$. For this, it suffices to choose $r$ so that 
$\lambda(\overline{\alpha}^r_\psi)(\pi - [\overline{\pi}]) \leq p^{-1/2} \overline{\alpha}^r_{\psi}(\overline{\pi})$.
\end{proof}

\begin{lemma} \label{L:cyclotomic coherent}
There exists some $r_0 > 0$ such that for $0 < s \leq r \leq r_0$, the ring
$\bC^{[s,r]}_\psi$ is noetherian (hence coherent). That is, the tower $\psi$ is noetherian.
\end{lemma}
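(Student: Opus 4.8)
The plan is to realize $\bC^{[s,r]}_\psi$, for $s,r$ sufficiently small, as a retract in the category of modules over itself of a strongly noetherian ring, and then to invoke the elementary fact that a commutative ring admitting such a retraction of a ring embedding into a noetherian ring is itself noetherian.

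First I would record the structural facts already in hand: by Lemma~\ref{L:cyclotomic perfectoid} the tower $\psi$ is perfectoid, and by Lemma~\ref{L:cyclotomic Rpsi} it is weakly decompleting with $R_\psi$ a finite separable extension of $\kappa_K((\overline{\pi}))$ that is in particular $F$-(finite projective), and $\psi$ is moreover decompleting. Since each level $K_n$ of the tower is a finite extension of the $p$-adic field $K$, it is again a $p$-adic field, so the completed union $\tilde{A}_\psi$ is a (perfectoid) analytic field; hence its tilt $\tilde{R}_\psi$ is a perfect analytic field of characteristic $p$. Theorem~\ref{T:curve noetherian}(b), applied with $L=\tilde{R}_\psi$, then shows that $\tilde{\bC}^{[s,r]}_\psi=\tilde{\calR}^{[s,r]}_{\tilde{R}_\psi}$ is strongly noetherian, in particular noetherian, for every $0<s\leq r$.

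Next, choosing $r_0>0$ as in Corollary~\ref{C:split base extension1} (which applies since $\psi$ is decompleting and $R_\psi$ is $F$-(finite projective)), for every $0<s\leq r\leq r_0$ the inclusion $\bC^{[s,r]}_\psi\to\tilde{\bC}^{[s,r]}_\psi$ splits in the category of Banach modules over $\bC^{[s,r]}_\psi$; concretely, the retraction $\sigma$ is projection onto the first summand in the decomposition of Lemma~\ref{L:full splitting}. To conclude I would spell out the following observation: if $A\hookrightarrow B$ is a ring embedding admitting an $A$-linear retraction $\sigma$ and $B$ is noetherian, then $A$ is noetherian, since for an ideal $I$ of $A$ one picks finitely many elements of $I$ generating the ideal $IB$ of $B$ and applies $\sigma$ — using $\sigma|_A=\mathrm{id}$ and $A$-linearity, together with $I=IB\cap A$ — to see that those same elements generate $I$ over $A$. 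Taking $A=\bC^{[s,r]}_\psi$ and $B=\tilde{\bC}^{[s,r]}_\psi$ yields that $\bC^{[s,r]}_\psi$ is noetherian, hence coherent by Remark~\ref{R:noetherian pseudoflat}.

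The only points needing care are: confirming that $\tilde{R}_\psi$ genuinely is a perfect analytic field, so that Theorem~\ref{T:curve noetherian}(b) applies — this rests on $K$ being a $p$-adic field, so that the cyclotomic levels remain analytic fields — and ensuring that the Frobenius splitting of Corollary~\ref{C:split base extension1} is available, for which the decompleting hypothesis is needed; this is established independently of any coherence assertion, so there is no circularity. An essentially equivalent but slightly less self-contained route is to invoke Remark~\ref{R:affinoid slice} directly: since $R_\psi$ is a finite separable extension of $\kappa_K((\overline{\pi}))$, hence an affinoid algebra over it, each $\bC^{[s,r]}_\psi$ with $0<s\leq r\leq r_0$ is a Berkovich affinoid algebra over $K$ and therefore a strongly noetherian Banach ring; the sole discrepancy with the hypotheses of that remark is that $\overline{\alpha}_\psi(\overline{\pi})=p^{-p/(p-1)}$ rather than $p^{-1}$, which merely rescales the annulus radii and does not affect the conclusion.
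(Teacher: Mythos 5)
Your argument is correct, but it is not the route the paper takes. The paper's proof is a one-liner: for $0<s\leq r\leq r_0$ the ring $\bC^{[s,r]}_\psi$ is literally a finite direct sum of rings of rigid analytic functions on closed annuli over $p$-adic fields (this is essentially your closing alternative via Remark~\ref{R:affinoid slice}, with $R_\psi$ finite separable over $\kappa_K((\overline{\pi}))$ lifting to a finite extension of an annulus algebra), and each such ring is a principal ideal domain, hence noetherian and coherent. Your main route instead descends noetherianity from the perfect ring: $\tilde{\bC}^{[s,r]}_\psi=\tilde{\calR}^{[s,r]}_{\tilde{R}_\psi}$ is strongly noetherian by Theorem~\ref{T:curve noetherian}(b) since $\tilde{R}_\psi$ is a perfect analytic field, and the $\bC^{[s,r]}_\psi$-linear retraction of Lemma~\ref{L:full splitting}/Corollary~\ref{C:split base extension1} (available because $\psi$ is decompleting by Lemma~\ref{L:cyclotomic decompleting} and $R_\psi$ is $F$-(finite projective), neither of which uses the present lemma, so no circularity) gives $I=IB\cap A$ for every ideal $I$, whence $A$ inherits the ascending chain condition; this module-split descent argument is valid as you state it, needing only $A$-linearity of $\sigma$, not multiplicativity. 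The trade-off: the paper's identification is more elementary and yields the stronger structural fact (PID on each summand, which is also what feeds Theorem~\ref{T:curve noetherian}-style statements elsewhere), whereas your splitting argument is heavier, invoking the Fargues--Fontaine noetherianity results and the Frobenius-splitting machinery, but it is more robust in that it would apply whenever the perfect ring is known to be noetherian and a Frobenius splitting exists, without an explicit description of the imperfect ring. Your remark that the normalization $\overline{\alpha}_\psi(\overline{\pi})=p^{-p/(p-1)}$ versus $p^{-1}$ in Remark~\ref{R:affinoid slice} only rescales radii is also fine.
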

\begin{proof}
This ring is a direct sum of subrings, each of which is the ring of rigid analytic functions on a closed annulus. It is well-known that any such ring is a principal ideal domain (see for example \cite[Proposition~8.3.2]{kedlaya-course}).
\end{proof}

\begin{lemma} \label{L:cyclotomic decompleting}
The tower $\psi$ is decompleting.
\end{lemma}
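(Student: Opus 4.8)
The goal is to verify the condition in Definition~\ref{D:decompleting tower}: for each sufficiently large $n$, the complex $\overline{\varphi}_\varpi^{-1}(R_{\psi_{(n)}^\bullet})/R_{\psi_{(n)}^\bullet}$ is exact. Since we already know from Lemma~\ref{L:cyclotomic perfectoid} and Lemma~\ref{L:cyclotomic Rpsi} that $\psi$ is finite \'etale, perfectoid, and weakly decompleting, what remains is precisely the decompletion statement at the level of the characteristic-$p$ imperfect rings. The plan is to reduce, via Proposition~\ref{P:weakly decompleting tower persistence}(b) (finite \'etale base extension preserves the weakly decompleting property) together with Corollary~\ref{C:finite etale decompleting} if needed, to the case where $K$ is absolutely unramified, so that $R_\psi = \kappa_K((\overline{\pi}))$ exactly and $R_{\psi^k}$ is the completed $(k+1)$-fold tensor power over $\kappa_K((\overline{\pi}^{p^{-\infty}}))^{\wedge}$-structures appropriately. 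One then observes that $\overline{\varphi}_\varpi^{-1}(R_{\psi_{(n)}}) = \kappa_K((\overline{\pi}^{1/p}))$ is a degree-$p$ purely inseparable (hence faithfully flat) extension of $R_{\psi_{(n)}} = \kappa_K((\overline{\pi}))$, and the assertion becomes a \v{C}ech-type exactness statement for this extension along the \v{C}ech complex of the tower.

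\textbf{Key steps, in order.} First, reduce to $K$ absolutely unramified using Proposition~\ref{P:weakly decompleting tower persistence}. Second, identify $R_{\psi_{(n)}^\bullet}$ concretely: $R_{\psi_{(n)},m} = \kappa_K((\overline{\pi}_m))$ where $\overline{\pi}_m$ is a suitable uniformizer (a $p^{m}$-th-power-compatible system), and $R_{\psi_{(n)}^k,m}$ is the completed tensor product of $k+1$ copies over $R_{\psi_{(n)},m}$; passing to the limit over $m$ gives $R_{\psi_{(n)}^k}$. Third, recognize that the complex $\overline{\varphi}_\varpi^{-1}(R_{\psi_{(n)}^\bullet})/R_{\psi_{(n)}^\bullet}$ computes (a shift of) the cohomology of the augmented complex obtained by tensoring the faithfully flat extension $R_{\psi_{(n)}} \to \overline{\varphi}_\varpi^{-1}(R_{\psi_{(n)}})$ against the \v{C}ech complex of the cover $R_{\psi_{(n)}} \to R_{\psi_{(n')}}$ for $n'$ large; since the cover is finite \'etale and faithfully flat, \v{C}ech descent (as invoked in Lemma~\ref{L:equalizer} and Lemma~\ref{L:decompleting all}) gives exactness of $R_{\psi_{(n)}^\bullet}$ itself in positive degrees and equals $R_{\psi_{(n)}}$ in degree $0$, and the same holds after the faithfully flat base change $-\otimes_{R_{\psi_{(n)}}} \overline{\varphi}_\varpi^{-1}(R_{\psi_{(n)}})$. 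Taking the quotient of these two acyclic resolutions then yields exactness of the quotient complex. Fourth, note that one needs this only for $n$ sufficiently large to ensure the strictness/norm-compatibility bookkeeping works; but exactness as plain complexes of abelian groups is what the definition literally demands, and Lemma~\ref{L:decompleting all} upgrades it to strict exactness automatically.

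\textbf{Main obstacle.} The technically delicate point is Step~3: making precise that $R_{\psi^\bullet}$ really is a resolution to which faithfully flat descent applies, given that these rings are completed tensor products rather than ordinary ones, and that Frobenius is not surjective on $R_\psi$. The honest route is to compare with the \emph{perfectoid} \v{C}ech complex $\tilde{R}_{\psi^\bullet}$, for which descent is already available (it underlies Proposition~\ref{P:perfectoid acyclic} and the almost-purity machinery), and then to use weak decompletion to transfer: one checks that $\overline{\varphi}_\varpi^{-1}(R_{\psi}) \to \tilde{R}_\psi$ and $R_\psi \to \tilde{R}_\psi$ are both flat with the same ``relative size,'' so the quotient complex $\overline{\varphi}_\varpi^{-1}(R_{\psi^\bullet})/R_{\psi^\bullet}$ injects into $\tilde{R}_{\psi^\bullet}/R_{\psi^\bullet}$ with cokernel controlled by higher Frobenius twists, and a telescoping/limit argument over the twists $\overline{\varphi}_\varpi^{-m}(R_\psi)$ (exactly as in the proof of Lemma~\ref{L:decompleting uniform}, but run in the reverse direction) forces the intermediate complexes to be exact. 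Alternatively, and perhaps more cleanly, one can simply invoke that $\kappa_K((\overline{\pi}))$ is a field, so $\overline{\varphi}_\varpi^{-1}(R_\psi)$ is a finite (inseparable) field extension and hence a finite free $R_\psi$-module of rank $p$; then $\overline{\varphi}_\varpi^{-1}(R_{\psi^\bullet}) = R_{\psi^\bullet} \otimes_{R_\psi} \overline{\varphi}_\varpi^{-1}(R_\psi)$ as complexes (finite free base change commutes with the completed tensor products here because everything is finite over the base in each degree), and the quotient is $R_{\psi^\bullet} \otimes_{R_\psi} \big(\overline{\varphi}_\varpi^{-1}(R_\psi)/R_\psi\big)$, a finite free base change of an acyclic-in-positive-degrees complex, hence acyclic in positive degrees with $H^0 = R_\psi \otimes_{R_\psi}(\overline{\varphi}_\varpi^{-1}(R_\psi)/R_\psi)$, which is what the definition wants once one checks the degree-$0$ term matches. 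I would write it this second way, falling back to the telescoping argument only if the finite-free base change compatibility with $\widehat{\otimes}$ needs more justification than a one-line remark.
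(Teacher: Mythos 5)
There is a genuine gap, and it is located exactly where you declare victory. Definition~\ref{D:decompleting tower} requires the complex $\overline{\varphi}_\varpi^{-1}(R_{\psi_{(n)}^\bullet})/R_{\psi_{(n)}^\bullet}$ to be \emph{exact}, i.e.\ acyclic in \emph{all} degrees, including degree $0$. In the Galois setting (Example~\ref{exa:Galois tower}) this complex is the continuous cochain complex $C^\bullet_{\cont}(\Gamma_n, \overline{\varphi}^{-1}(R_{\psi_{(n)}})/R_{\psi_{(n)}})$, so exactness means $H^i_{\cont}(\Gamma_n, \overline{\varphi}^{-1}(R_{\psi_{(n)}})/R_{\psi_{(n)}}) = 0$ for all $i \geq 0$, in particular the $\Gamma_n$-invariants of the quotient module must vanish. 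Your argument concludes with ``acyclic in positive degrees with $H^0 = \overline{\varphi}^{-1}(R_\psi)/R_\psi$, which is what the definition wants'' --- but that is precisely what the definition does \emph{not} want: since $R_\psi$ is not perfect, this $H^0$ is nonzero, and a complex with nonzero $H^0$ is not exact. If your formal base-change/descent argument were a proof, it would show that every weakly decompleting finite \'etale tower is decompleting, which would render the extra condition vacuous. Relatedly, your Step~3 misidentifies the complex: $R_{\psi^\bullet}$ is not the \v{C}ech complex of a finite-level faithfully flat cover $R_{\psi_{(n)}} \to R_{\psi_{(n')}}$ (faithfully flat descent enters in Lemma~\ref{L:decompleting all} only to propagate exactness from large $n$ to all $n$, not to prove it), and even if descent applied it would only yield vanishing in positive degrees together with $H^0$ equal to the coefficient module, never $H^0 = 0$.

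The missing content is the actual analytic computation with the cyclotomic action, which your flat base-change formula $\overline{\varphi}^{-1}(R_{\psi^k}) \cong R_{\psi^k} \otimes_{R_\psi} \overline{\varphi}^{-1}(R_\psi)$ merely reformulates rather than resolves. After reducing to $K$ absolutely unramified, one uses the $H$-equivariant decomposition $\overline{\varphi}^{-1}(R_\psi)/R_\psi \cong \bigoplus_{i=1}^{p-1} (1+\overline{\pi})^{i/p} R_\psi$ for $H \subseteq 1+p\ZZ_p$, and then shows that for $\gamma = 1+p^2$ and $n$ large, $\gamma^{p^n}-1$ acts \emph{bijectively} on each twisted summand $(1+\overline{\pi})^{i/p} R_\psi$: the point is the norm comparison $\overline{\alpha}_\psi((\gamma^{p^n}-1)(\overline{y})) = p^{-p^{n+2}/(p-1)} \overline{\alpha}_\psi(\overline{y})$ for $\overline{y} = (1+\overline{\pi})^{i/p}$ versus the much stronger contraction $\overline{\alpha}_\psi((\gamma^{p^n}-1)(\overline{x})) \leq p^{-p^{n+3}/(p-1)} \overline{\alpha}_\psi(\overline{x})$ on $R_\psi$, combined via the Leibniz rule \eqref{eq:Leibniz rule} to make $((1+\overline{\pi})^{(\gamma^{p^n}-1)i/p}-1)^{-1}(\gamma^{p^n}-1)$ the identity plus a contraction on the summand. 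This bijectivity is what kills the cohomology in all degrees (including degree $0$) for the procyclic subgroup $1+p^{n+2}\ZZ_p$, and hence gives exactness of the quotient complex. None of this can be extracted from faithfully flat descent or from the telescoping comparison with $\tilde{R}_{\psi^\bullet}$ that you sketch as a fallback, since those arguments are insensitive to the twist by $(1+\overline{\pi})^{i/p}$ on which everything hinges.
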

\begin{proof}
By Corollary~\ref{C:finite etale decompleting}, we may once again reduce to the case
where $K$ is absolutely unramified.
Since we are in the setting of Example~\ref{exa:Galois tower}, we need only check that the continuous $H$-cohomology of 
$\overline{\varphi}^{-1}(R_\psi)/R_\psi$ vanishes for every sufficiently small open subgroup $H$ of $\Gamma$. However, 
for $H \subseteq 1 + p \ZZ_p$ we have an $H$-equivariant decomposition
\[
\overline{\varphi}^{-1}(R_\psi)/R_\psi \cong \bigoplus_{i=1}^{p-1}
(1 + \overline{\pi})^{i/p} R_\psi
\]
of finite-dimensional vector spaces over $R_\psi$, and hence of Banach modules
over $R_\psi$ by Theorem~\ref{T:open mapping}.
Consequently, it is in fact enough to check that the continuous $H$-cohomology of $\overline{y} R_\psi$ vanishes in case $\overline{y} = (1 + \overline{\pi})^{i/p}$ for some $i \in \{1,\dots,p-1\}$.
Put $\gamma = 1 + p^2 \in \Gamma$, so that $\gamma$ generates a procyclic pro-$p$-subgroup of $\Gamma$. Then
\[
(\gamma^{p^n}-1)(\overline{\pi}) = (\gamma^{p^n}-1)(1 + \overline{\pi}) = 
((1 + \overline{\pi})^{\gamma^{p^n} - 1} -1)(1 + \overline{\pi}).
\]
Note that $\gamma^{p^n} - 1$ has $p$-adic valuation $n+2$,
so $(1 + \overline{\pi})^{\gamma^{p^n} - 1} -1$ is a power series in $\overline{\pi}^{p^{n+2}}$ with zero constant term and nonzero linear term.
Consequently, on one hand, using \eqref{eq:Leibniz rule} we obtain
\[
\overline{\alpha}_{\psi}((\gamma^{p^n}-1)(\overline{x}))
\leq p^{-p^{n+3}/(p-1)} \overline{\alpha}_{\psi}(\overline{x})
\qquad (\overline{x} \in R_\psi);
\]
on the other hand,
\[
\overline{\alpha}_{\psi}((\gamma^{p^n}-1)(\overline{y}))
= p^{-p^{n+2}/(p-1)} \overline{\alpha}_{\psi}(\overline{y}).
\]
By writing
\begin{equation} 
(\gamma^{p^n}-1)(\overline{x} \overline{y})
= (\gamma^{p^n}-1)(\overline{x}) \overline{y} + \gamma^{p^n}(\overline{x})
(\gamma^{p^n}-1)(\overline{y}),
\end{equation}
we see that for all $\overline{z} \in \overline{y} R_\psi$ we have
\[
\overline{\alpha}_{\psi}((\gamma^{p^n}-1)(\overline{z}))
= p^{-p^{n+2}/(p-1)} \overline{\alpha}_{\psi}(\overline{z}).
\]
Consequently, $\gamma^{p^n}-1$ acts bijectively on $\overline{y} R_\psi$, so the continuous $H$-cohomology vanishes for $H = 1 + p^{n+2} \ZZ_p$ for all $n \geq 0$.
This proves the claim.
\end{proof}

Putting everything together, we have the following.
\begin{theorem} \label{T:cyclotomic decompleting}
The tower $\psi$ satisfies the conditions of Remark~\ref{R:decompleting conditions},
so all of the results of \S\ref{subsec:phigamma}, \S\ref{subsec:Frobenius splittings},
\S\ref{subsec:pseudocoherent type C}
apply to it.
\end{theorem}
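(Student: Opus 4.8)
The plan is to verify the three conditions of Remark~\ref{R:decompleting conditions} in turn, each of which has already been checked piecemeal above; so the proof will essentially be a matter of assembling the preceding lemmas. First I would record that $\psi$ is a finite \'etale Galois tower with group $\Gamma = \ZZ_p^\times$ (Definition~\ref{D:cyclotomic tower}), and that it is perfectoid by Lemma~\ref{L:cyclotomic perfectoid}; combined with the weak decompleting property and the $F$-(finite projective) property of $R_\psi$ established in Lemma~\ref{L:cyclotomic Rpsi}, and with the decompleting property established in Lemma~\ref{L:cyclotomic decompleting}, one has that $\psi$ is decompleting. To upgrade this to \emph{locally} decompleting, as required by condition (a), I would invoke the stability of the relevant properties under rational localization: Proposition~\ref{P:perfectoid tower persistence}(a) and Proposition~\ref{P:weakly decompleting tower persistence}(a) show that base extension of $\psi$ along any rational localization of $(K, \gotho_K)$ remains perfectoid and weakly decompleting; but since $K$ is an analytic field, its only rational localization is itself (the adic spectrum of an analytic field is a point), so there is nothing further to check and $\psi$ is trivially locally decompleting. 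This also makes the hypothesis ``$(A, A^+)$ stably uniform'' in the definition of locally decompleting automatic, since analytic fields are stably uniform.

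Condition (b), that $R_\psi$ is $F$-(finite projective), is exactly the last assertion of Lemma~\ref{L:cyclotomic Rpsi}: $R_\psi$ is a finite separable extension of $\kappa_K((\overline{\pi}))$, hence a product of finite separable extensions of a one-dimensional regular ring, hence regular and $F$-finite, hence $F$-(finite projective) by Kunz's criterion. Condition (c), that $\breve{\bC}^{[s,r]}_\psi$ is coherent for all $0 < s \leq r$, follows from Lemma~\ref{L:cyclotomic coherent}, which identifies $\bC^{[s,r]}_\psi$ as a finite direct sum of rings of rigid analytic functions on closed annuli and hence noetherian; passing to $\breve{\bC}^{[s,r]}_\psi$ is a direct limit along the flat maps induced by $\varphi_\varpi^{-1}$, so by Remark~\ref{R:coherent conditions} the direct limit is again coherent (one should be slightly careful that Lemma~\ref{L:cyclotomic coherent} is stated with an implicit $r_0$, but the coherence of $\breve{\bC}^{[s,r]}_\psi$ for the remaining larger values of $r$ can be reduced to the small case using the reified localization descriptions of Remark~\ref{R:Robba reified2}, or simply by noting that every closed annulus ring is a principal ideal domain regardless of radii).

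With conditions (a), (b), (c) of Remark~\ref{R:decompleting conditions} verified, the conclusion that all results of \S\ref{subsec:phigamma}, \S\ref{subsec:Frobenius splittings}, and \S\ref{subsec:pseudocoherent type C} apply is immediate, since those sections were written precisely under Hypotheses~\ref{H:phigamma}, \ref{H:phigamma split} together with the coherence assumption of \S\ref{subsec:pseudocoherent type C}. I do not anticipate a genuine obstacle here: the real content was distributed across Lemmas~\ref{L:cyclotomic perfectoid}, \ref{L:cyclotomic Rpsi}, \ref{L:cyclotomic coherent}, and \ref{L:cyclotomic decompleting}, with Lemma~\ref{L:cyclotomic decompleting} (the decompleting property, resting on an explicit Koszul-type computation of $\Gamma$-cohomology of $\overline{\varphi}^{-1}(R_\psi)/R_\psi$ via the action of $\gamma = 1 + p^2$) being the heart of the matter. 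The only mild subtlety in the present theorem is bookkeeping: making sure that ``locally decompleting'' is genuinely vacuous over an analytic field and that the coherence in condition (c) is not only asserted for small intervals; both are handled as indicated above.
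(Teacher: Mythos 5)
Your proposal is correct and is essentially the paper's own proof: the theorem is deduced by citing Lemma~\ref{L:cyclotomic decompleting} for the (locally) decompleting property (automatic over an analytic field, as you and the paper both note), Lemma~\ref{L:cyclotomic Rpsi} for $R_\psi$ being $F$-(finite projective), and Lemma~\ref{L:cyclotomic coherent} for coherence. The only refinement worth recording is that your worry about the restriction $r \leq r_0$ in Lemma~\ref{L:cyclotomic coherent} is dispatched most cleanly not by appealing to annuli of arbitrary radii but by writing $\breve{\bC}^{[s,r]}_\psi = \bigcup_n \varphi_\varpi^{-n}(\bC^{[p^{-hn}s,\,p^{-hn}r]}_\psi)$, so that for $n$ large the intervals lie in $(0,r_0]$ and the union is a direct limit of coherent rings along finite flat transition maps, hence coherent by Remark~\ref{R:coherent conditions}.
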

\begin{proof}
The tower $\psi$ is decompleting (and hence automatically locally decompleting) by Lemma~\ref{L:cyclotomic decompleting}.
The ring $R_\psi$ is $F$-(finite projective) by Lemma~\ref{L:cyclotomic Rpsi}.
The ring $\breve{\bC}^{[s,r]}_\psi$ is coherent by
Lemma~\ref{L:cyclotomic coherent}.
\end{proof}

\begin{remark} \label{R:infinite orbit}
When applying Theorem~\ref{T:add tilde2} in the setting of Theorem~\ref{T:cyclotomic decompleting}, we may also keep in mind Theorem~\ref{T:perfect generalized phi-modules at a point}, which guarantees that every pseudocoherent $(\varphi,\Gamma)$-module over $\bC_\psi$ is fpd with projective dimension at most 1.
Moreover, the torsion in any such module is killed by some power of $\log([1+\overline{\pi}])$, because any point of $\FFC_{\tilde{R}_\psi}$ not in the zero locus of this function has infinite $\Gamma$-orbit. (See \cite[Proposition~4.1]{liu-herr} for an equivalent calculation.)
\end{remark}

\begin{theorem} \label{T:pseudocoherent resolution}
Any pseudocoherent $(\varphi, \Gamma)$-module over $\tilde{\bC}_\psi$ is a quotient of some projective
$(\varphi, \Gamma)$-module.
\end{theorem}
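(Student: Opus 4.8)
The plan is to use Theorem~\ref{T:add tilde2} to reduce to a statement about a single perfect analytic field, dispose of the $\varphi$-structure by the tools already available for perfectoid points, and then isolate the one genuinely new ingredient: compatibility of the covering map with the action of $\Gamma$.

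Since $\psi$ is the cyclotomic tower, $\tilde{A}_\psi$ is a perfectoid field and $\tilde{R}_\psi =: L$ is a perfect analytic field (Lemma~\ref{L:cyclotomic Rpsi}), so $\tilde{\bC}_\psi = \tilde{\calR}_L$ is a Pr\"ufer domain, as in the proof of Theorem~\ref{T:perfect generalized phi-modules at a point} (building on Theorem~\ref{T:curve noetherian}). Consequently every pseudocoherent $\tilde{\bC}_\psi$-module is finitely presented and $1$-fpd (Remark~\ref{R:Bezout torsion}), and---forgetting the $\Gamma$-action---Corollary~\ref{C:lift action} applied with $R = \tilde{R}_\psi$ already furnishes a strict $\varphi$-equivariant epimorphism onto any pseudocoherent $\varphi$-module over $\tilde{\bC}_\psi$ from a \emph{free} $\varphi$-module. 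Moreover, because $\psi$ is Galois with group $\Gamma = \ZZ_p^\times$, a $(\varphi,\Gamma)$-module over $\tilde{\bC}_\psi$ is exactly such a module equipped with a commuting continuous semilinear $\Gamma$-action (Example~\ref{exa:Galois tower}). So the remaining task is to upgrade the free $\varphi$-module to a $(\varphi,\Gamma)$-module so that the epimorphism becomes a morphism of $(\varphi,\Gamma)$-modules.

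I would first split off torsion. Let $T \subseteq M$ be the $\tilde{\bC}_\psi$-torsion submodule; it is $\varphi$- and $\Gamma$-stable, and both $T$ and $M/T$ are pseudocoherent $(\varphi,\Gamma)$-modules by the argument of Corollary~\ref{C:bounded torsion} (the relevant ideals being principal) together with Theorem~\ref{T:add tilde2}. Since $\tilde{\bC}_\psi$ is Pr\"ufer, $M/T$ is finite projective, hence is itself a projective $(\varphi,\Gamma)$-module and needs no covering. For $T$, Remark~\ref{R:infinite orbit} shows $T$ is annihilated by a power of $t = \log([1+\overline{\pi}])$ and is supported at the single zero of $t$ on $\FFC_{\tilde{R}_\psi}$, which is the unique $\Gamma$-fixed point; via Theorem~\ref{T:pseudocoherent B-pairs} this presents $T$ as a finite-length module over the complete discrete valuation ring $\bB^+_{\dR,L}$ with compatible $\Gamma$-action (the $\varphi$-structure being encoded by the $B$-pair formalism). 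One natural route is then to produce a projective $B$-pair covering $T$'s $B$-pair by Lemma~\ref{L:cover B-pair} ignoring $\Gamma$, and to correct its $\Gamma$-descent datum to an exact one by Hensel-style successive approximation, the defect of the cocycle condition being topologically nilpotent since the $\Gamma$-action on $\bB^+_{\dR,L}$ is trivial modulo any power of the maximal ideal on a small enough open subgroup of $\Gamma$. Transporting back along Theorem~\ref{T:pseudocoherent B-pairs} yields a projective $(\varphi,\Gamma)$-module surjecting onto $T$; combining it with the free $\varphi$-cover of $M$ and correcting the latter's $\Gamma$-descent datum by the same technique produces a $(\varphi,\Gamma)$-equivariant surjection onto $M$.

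The main obstacle is precisely this last correction: a naive lift of the $\Gamma$-descent datum from $M$ to a free module satisfies the cocycle condition only up to an error term, which must be killed iteratively. The tool is the cohomological vanishing for $\Gamma$-modules of Lemma~\ref{L:Gamma quasi-isomorphism} and its pseudocoherent refinement Lemma~\ref{L:Gamma quasi-isomorphism2}: after replacing $\Gamma$ by a small enough open subgroup (Remark~\ref{R:small connection Galois}) and, where needed, twisting high enough to make the initial defect small (Lemma~\ref{L:pseudocoherent h1 vanishing}), the successive corrections converge just as in the proof of Lemma~\ref{L:Gamma equivalences}. I expect the delicate bookkeeping to be (i) performing each correction $\varphi$-equivariantly so that the full $(\varphi,\Gamma)$-structure is preserved at every stage, and (ii) assembling the contributions coming from $T$ and from $M/T$ into a single \emph{free} $(\varphi,\Gamma)$-module rather than merely a projective one.
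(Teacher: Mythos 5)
Your reduction to the field case, the identification of $\tilde{\bC}_\psi=\tilde{\calR}_L$ as a Pr\"ufer domain, and the splitting $0\to T\to M\to N\to 0$ with $T$ killed by a power of $t$ and $N=M/T$ projective (Remark~\ref{R:infinite orbit}) all match the paper's starting point. But there are two genuine gaps. The larger one is the assembly step: covering $T$ by a projective $(\varphi,\Gamma)$-module and observing that $N$ is projective does \emph{not} yield a cover of $M$, because the extension of $N$ by $T$ need not split and there is no map $N\to M$ to combine with. The paper's mechanism is to lift the extension class of $M$ in $H^1_{\varphi,\Gamma}(N^\dual\otimes T)$ along a surjection $F(n)\to T$ from a projective $(\varphi,\Gamma)$-module; the obstruction lives in $H^2_{\varphi,\Gamma}(N^\dual\otimes K_n)$ with $K_n=\ker(F(n)\to T)$, and is killed by twisting so that the slopes of $K_n$ exceed the maximal slope of $N$, forcing $H^1_{\varphi}(N^\dual\otimes K_n)=0$; the resulting lifted extension $M'$ of $N$ by $F(n)$ is projective and surjects onto $M$ by the five lemma. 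Your substitute --- equipping the free $\varphi$-cover of $M$ itself with a $\Gamma$-structure by ``correcting the descent datum'' --- has no supporting vanishing theorem: Lemmas~\ref{L:Gamma quasi-isomorphism}, \ref{L:Gamma quasi-isomorphism2} and \ref{L:Gamma equivalences} concern decompletion along the tower (vanishing of cohomology of quotients like $\tilde{R}_{\psi^\bullet}/\overline{\varphi}^{-m}(R_{\psi^\bullet})$), not lifting a semilinear $\Gamma$-action from a quotient module to a free cover, and Lemma~\ref{L:pseudocoherent h1 vanishing} controls $H^1_\varphi$ after twisting, which is not the cocycle defect you would need to kill. Note also that freeness of the cover is not required, so your worry (ii) is moot.

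The second gap is in the torsion case itself. Your Hensel-style correction rests on the claim that the $\Gamma$-action on $\bB^+_{\dR,L}$ is trivial modulo any power of the maximal ideal after shrinking $\Gamma$; this is false, since the residue field is $\tilde{A}_\psi$ (the completed cyclotomic extension), on which every open subgroup of $\Gamma$ acts faithfully. What is true, and what the paper uses, is only continuity: after replacing a fixed $\gamma$ of infinite order by a power, the matrices $B,C$ of $\gamma^{\pm1}$ on a finite set of generators of $T$ satisfy $|B-1|,|C-1|<1$. Even so, one cannot lift an action of \emph{all} of $\Gamma$ to a finite free cover by successive approximation; the paper's device is structural: use the determinant-one block-matrix trick of \cite[Lemma~1.5.2]{part1} to put a $\gamma$-action on $F_0\oplus F_0$ compatible with the surjection, extend it by continuity to the procyclic closure $\Gamma'$ of $\langle\gamma\rangle$ (an open subgroup), and then pass to $\Ind^{\Gamma}_{\Gamma'}$, which sidesteps the cocycle obstruction for the full group; an arbitrary choice of $\varphi$-matrix and a twist then finish the torsion case. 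This induction step, and the slope-twisting argument above, are the two ideas your proposal is missing, and without them the argument does not go through.
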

\begin{proof}
Let $M$ be a pseduocoherent $(\varphi, \Gamma)$-module.
Put $t = \log([1+\overline{\pi}])$; by Remark~\ref{R:infinite orbit}, 
we have an exact sequence
\[
0 \to T \to M \to N \to 0
\]
of pseudocoherent $(\varphi, \Gamma)$-modules with $T$ killed by a power of $t$
and $N$ projective.

To prove that $M$ admits a surjection from a projective $(\varphi, \Gamma)$-module, we first treat the case $M = T$. 
Since $T$ is killed by a power of $t$, we may also view it as a module over $\tilde{\bC}^{[p^{-1/2}, p^{1/2}]}_\psi$.
Let $\gamma$ be an element of $\Gamma$ of infinite order.
Choose a finite free $\tilde{\bC}^{[p^{-1/2}, p^{1/2}]}_\psi$-module $F_0$ admitting a surjection $F_0 \to T$. Since the action of $\Gamma$ on $T$ is continuous, by replacing $\gamma$ by a suitable power we may ensure that
the actions of $\gamma, \gamma^{-1}$ of $T$  may be represented by matrices $B,C$
such that $|B-1|, |C-1| < 1$. By arguing as in \cite[Lemma~1.5.2]{part1}, we obtain a semilinear $\gamma$-action on $F_1 = F_0 \oplus F_0$ and a $\gamma$-equivariant surjection $F_0 \to T$; by the conditions on $B$ and $C$, this action extends continuously
to an action of the closure $\Gamma'$ of the subgroup of $\Gamma$ generated by $\Gamma'$.
Put $F = \Ind^{\Gamma}_{\Gamma'} F_1$; then $F_1 \to M$ corresponds to a $\Gamma$-equivariant surjection $F \to M$. We may choose any matrix of action for $\varphi$
with coefficients in $\tilde{\bC}^{[p^{1/2}, p^{1/2}]}_\psi$ to promote $F$ to a projective $(\varphi, \Gamma)$-module over $\tilde{\bC}^{[p^{-1/2}, p^{1/2}]}_\psi$
admitting a $(\varphi, \Gamma)$-equivariant surjection $F \to T$. Moreover, for $n \in \ZZ$, we may twist the action to obtain a surjection $F(n) \to T$.

We now return to the general case. For $n \in \ZZ$, define maps $F(n) \to T$ as in the previous paragraph, and let $K_n$ be the kernel of $F(n) \to T$. Let $s_0$ be the maximum slope of $N$ and let $s_1$ be the minimum slope of $F$. Let $P_n$ be the final quotient in the slope filtration of $K_n$. By pushing out, we form the exact sequence
\[
0 \to P_n \to F(n)/\ker(K_n \to P_n) \to T \to 0
\]
in which the middle term has rank $r = \rank(P_n)$ and degree $r \mu(P_n) + \deg(T)$.
Since this term is a quotient of $F(n)$, its average slope is at least $\mu(F(n)) \geq s_1 + n $;
therefore $\mu(P_n) \geq s_1 + n - \deg(T) /r \geq s_1 + n - \deg(T)$.
In particular, if $n > \deg(T) - s_1 + s_0$, then the slopes of $N^\dual \otimes K_n$ are all positive, so $H^1_{\varphi}(N^\dual \otimes K_n) = 0$ 
(apply \cite[Theorem~8.8.15]{part1}) and hence $H^2_{\varphi, \Gamma}(N^\dual \otimes K_n) = 0$. Consequently, $H^1_{\varphi, \Gamma}(N^\dual \otimes F(n)) \to
H^1_{\varphi, \Gamma}(N^\dual \otimes T)$ is surjective, so the extension class defining $M$ lifts to define a commutative diagram
\[
\xymatrix{
0 \ar[r] & F(n) \ar[r] \ar[d] & M' \ar[r] \ar[d] & N \ar[r] \ar@{=}[d] & 0 \\
0 \ar[r] & T \ar[r] & M \ar[r] & N \ar[r] & 0
}
\]
with exact rows. By the five lemma, $M' \to M$ is surjective, proving the claim.
\end{proof}

\begin{cor} 
The category of pseudocoherent $(\varphi, \Gamma)$-modules over $\tilde{\bC}_\psi$ is 
the abelian completion of the exact category of
$(\varphi, \Gamma)$-modules.
\end{cor}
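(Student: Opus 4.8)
The plan is to realize the abelian completion concretely. Write $\mathcal{E}$ for the exact tensor category of (projective) $(\varphi,\Gamma)$-modules over $\tilde{\bC}_\psi$ and $\mathcal{A}$ for the category of pseudocoherent $(\varphi,\Gamma)$-modules over $\tilde{\bC}_\psi$. I would show that the natural inclusion $\iota\colon \mathcal{E}\hookrightarrow \mathcal{A}$ exhibits $\mathcal{A}$ as the abelian envelope of $\mathcal{E}$ by checking the three defining features: (i) $\mathcal{A}$ is abelian; (ii) $\iota$ is fully faithful and exact and reflects exact sequences; (iii) every object of $\mathcal{A}$ admits a two-term presentation by objects of $\mathcal{E}$, i.e.\ is the cokernel of a morphism in $\iota(\mathcal{E})$. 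Granting (i)--(iii), the universal property characterizing the abelian completion — every exact functor from $\mathcal{E}$ to an abelian category extends, uniquely up to natural isomorphism, to an exact functor from $\mathcal{A}$ — follows from the general theory of abelian envelopes of exact categories.

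Ingredient (i) is Theorem~\ref{T:add tilde2}: pseudocoherent $(\varphi,\Gamma)$-modules over $\tilde{\bC}_\psi$ form an abelian category (and since the base is the $p$-adic field $K$, the ring $\tilde R_\psi$ is a perfect analytic field, so Theorem~\ref{T:perfect generalized phi-modules at a point} also applies). For (ii): a short exact sequence of projective $(\varphi,\Gamma)$-modules is in particular one of pseudocoherent $(\varphi,\Gamma)$-modules, so $\iota$ is exact; and a short exact sequence in $\mathcal{A}$ all of whose terms happen to be projective is by definition a conflation in $\mathcal{E}$, so $\iota$ reflects exactness, full faithfulness being clear. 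For (iii): given $M\in\mathcal{A}$, Theorem~\ref{T:pseudocoherent resolution} yields a projective $(\varphi,\Gamma)$-module $P_0$ with a surjection $P_0\twoheadrightarrow M$; the kernel $N$ lies in $\mathcal{A}$ (it is pseudocoherent by Lemma~\ref{L:pseudocoherent 2 of 3}, with the inherited $(\varphi,\Gamma)$-structure), so a second application of Theorem~\ref{T:pseudocoherent resolution} gives $P_1\in\mathcal{E}$ with $P_1\twoheadrightarrow N$, whence $M\cong\coker(P_1\to P_0)$ with $P_0,P_1\in\mathcal{E}$. I would also record that the tensor product of two pseudocoherent $(\varphi,\Gamma)$-modules is again pseudocoherent (Remark~\ref{R:pseudocoherent tensor product}), so the monoidal structure is respected.

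The step requiring the most care is deducing the universal property from (i)--(iii). One is forced to set $G(M):=\coker(F(f))$ for a presentation $P_1\xrightarrow{f}P_0\to M\to 0$ and must check that this is well defined on objects and morphisms and is exact. The classical argument, as for finite projective modules over a coherent ring, uses that the presenting objects are projective in $\mathcal{A}$; this fails here, since e.g.\ the unit $(\varphi,\Gamma)$-module has nonsplit self-extensions because $H^1_{\varphi,\Gamma}$ of the unit is the nonzero Galois cohomology group $H^1(G_K,\QQ_p)$. So instead I would invoke the characterization of the abelian envelope purely in terms of (ii) and (iii) — a fully faithful exact embedding of an exact category into an abelian category which reflects exactness and under which every object is finitely presented by the subcategory is an abelian envelope — where the exact-category machinery handles the comparison of presentations by refinements rather than by liftings. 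A cleaner alternative, which I would pursue if citing that formalism proves awkward, is to identify $\mathcal{A}$ with the heart of the canonical $t$-structure on $D^b(\mathcal{E})$: (iii) places $\mathcal{A}$ inside this heart, and (ii) together with the finiteness of projective dimension over the relevant period rings (Theorem~\ref{T:perfect generalized phi-modules at a point}, Remark~\ref{R:infinite orbit}) forces the inclusion to be an equivalence, giving $D^b(\mathcal{E})\simeq D^b(\mathcal{A})$ and hence the universal property.
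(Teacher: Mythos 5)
Your proposal is correct and follows essentially the same route as the paper, which states this corollary without a separate proof because it is immediate from Theorem~\ref{T:pseudocoherent resolution} (every pseudocoherent $(\varphi,\Gamma)$-module admits a surjection from a projective one), Lemma~\ref{L:pseudocoherent 2 of 3} (the kernel is again pseudocoherent, yielding two-term presentations), and the abelianness already supplied by Theorem~\ref{T:add tilde2} together with Theorem~\ref{T:perfect generalized phi-modules at a point} --- exactly the inputs you assemble. The only caution is that the general principle invoked in your last paragraph (that a fully faithful exact, exactness-reflecting embedding with two-term presentations automatically satisfies the universal property of the abelian envelope) is not a theorem in that generality, e.g.\ it fails for split exact structures; but the paper treats the universal property at the same informal level, so the substance of your first two paragraphs is what the corollary is asserting.
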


\begin{remark} \label{R:nonzero higher cohomology}
The theory of $(\varphi, \Gamma)$-modules in this setting reproduces the $(\varphi, \Gamma)$-modules of traditional $p$-adic Hodge theory;
the theory of pseudocoherent $(\varphi, \Gamma)$-modules in this setting reproduces 
the \emph{generalized $(\varphi, \Gamma)$-modules} described in \cite[\S 3]{liu-herr} and
\cite[Appendix]{kedlaya-bordeaux}.
(By Theorem~\ref{T:pseudocoherent resolution},
the distinction made in \textit{loc.\ cit.}\ between generalized $(\varphi, \Gamma)$-modules
and \emph{$(\varphi, \Gamma)$-quotients} collapses.)
It is useful to keep in mind some facts from the usual theory by translating them into the language of period sheaves. For example, 
\[
H^1(\Spa(\Qp,\Zp)_{\proet}, \widehat{\calO}) 
= H^1_{\varphi, \Gamma}(\tilde{\bC}_\psi/\tilde{\bC}_\psi(-1))
= H^1_{\varphi, \Gamma}(\bC_\psi/\bC_\psi(-1))
= \Qp
\]
where the second equality follows from Theorem~\ref{T:Galois cohomology1c}
(plus Theorem~\ref{T:cyclotomic decompleting}) and the third from \cite[\S 4.1]{liu-herr}.
\end{remark}

\begin{remark}
One important actor from traditional $p$-adic Hodge theory that does not make an appearance in our framework is the reduced trace of $\varphi$, often called $\psi$ in the literature. We have avoided introducing the reduced trace for two reasons. One is that it does not admit an analogue in the realm of perfect period rings. The other is that  its key properties are quite specific to the cyclotomic tower; for instance, for a general decompleting tower one does not expect the reduced trace of $\varphi$ to preserve
$\bA_\psi$. For these reasons, we broadly advocate to rework applications of $p$-adic Hodge theory in which the reduced trace plays a central role, such as in the study of Iwasawa cohomology (e.g., see \cite{kpx, pottharst} and especially
\cite{kedlaya-pottharst}), to obtain constructions more compatible with the perfectoid approach to $p$-adic Hodge theory.
\end{remark}

\subsection{Kummer towers}
\label{subsec:Kummer}

We next consider a class of non-Galois towers, for which we establish the weakly decompleting property and conjecture the decompleting property. The theory of $(\varphi, \Gamma)$-modules in this setting reproduces Caruso's theory of $(\varphi, \tau)$-modules
\cite{caruso}; the decompleting property would thus establish an analogue of the Cherbonnier-Colmez theorem in Caruso's theory.

\begin{hypothesis} \label{H:Kummer}
Throughout \S\ref{subsec:Kummer}, set notation as in Hypothesis~\ref{H:towers} with
$h=1$, $E = \Qp$, $\varpi = p$,
then fix a uniformizer $\varpi_0$ of $K$.
\end{hypothesis}

\begin{defn}
Put $K_0 = K$.
For $n \geq 1$, put $K_n = K_{n-1}[\varpi_n]/(\varpi_n^p - \varpi_{n-1})$.
This tower is finite \'etale but not Galois.
However, it does satisfy the hypotheses of Example~\ref{exa:Galois tower2}
by taking $\psi'$ to be the tower with $A_{\psi',n} = K_n[\epsilon_n]$
for $\epsilon_n$ as in Definition~\ref{D:cyclotomic tower};
namely, the tower $\psi'$ is Galois with Galois group $\Gamma_1 = \Zp \rtimes \Zp^\times$, and $A_{\psi,n} = A_{\psi',n}^{\Gamma_2}$ for $\Gamma_2 = \Zp^\times$.
\end{defn}

\begin{lemma} \label{L:Kummer perfectoid}
The tower $\psi$ is perfectoid.
\end{lemma}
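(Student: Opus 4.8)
The plan is to follow the template already established for the cyclotomic tower in Lemma~\ref{L:cyclotomic perfectoid}, reducing to a rigidified local model. First I would invoke Proposition~\ref{P:perfectoid tower persistence}: since the property of being a perfectoid tower is preserved under base extension along morphisms of type (a)--(f), and in particular along the completed unramified base extension, it suffices to treat the case where $K$ is absolutely unramified, i.e.\ $K = \Frac(W(\kappa_K))$. In that case each $K_n = K[\varpi_n]/(\varpi_n^p - \varpi_{n-1})$ is totally ramified over $K$, $\tilde{A}_\psi$ is an analytic field (the completion of $\bigcup_n K_n$), and we must show this field is Fontaine perfectoid.

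To verify that $\tilde{A}_\psi$ is perfectoid, I would apply the field criterion \cite[Proposition~3.6.2]{part1} (as in the cyclotomic case): it suffices to check that $\tilde{A}_\psi$ is nondiscretely valued and that Frobenius is surjective on $\gotho_{\tilde{A}_\psi}/(p)$. The valuation is nondiscrete because the normalized valuations of $\varpi_n$ form the sequence $v(\varpi_0)/p^n \to 0$, which accumulates at $0$. For the Frobenius surjectivity, note that $\gotho_{A_\psi} = \gotho_K[\varpi_1, \varpi_2, \dots]$ with $\varpi_n^p = \varpi_{n-1}$, so modulo $p$ the reduction $\gotho_{\tilde{A}_\psi}/(p)$ contains the images $\overline{\varpi}_n$ with $\overline{\varpi}_n^p = \overline{\varpi}_{n-1}$; hence every $\overline{\varpi}_n$ is a $p$-th power (of $\overline{\varpi}_{n+1}$), and since $\kappa_K$ is perfect, Frobenius is surjective on the whole quotient. (One should also record, for later use in the weakly decompleting argument, that $\tilde{R}_\psi$ is the completed perfection of a ring containing $\kappa_K((\overline{\varpi})^{1/p^\infty})$ where $\overline{\varpi} = (\dots, \varpi_1, \varpi_0) \in \tilde{R}_\psi$, but that is not needed just for the perfectoid statement.)

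I do not expect a serious obstacle here; the argument is essentially identical to Lemma~\ref{L:cyclotomic perfectoid}, the only difference being that the Kummer generators $\varpi_n$ replace the cyclotomic units $\epsilon_n$, and the relevant structural fact is the compatible system of $p$-th power roots of a uniformizer rather than of $1$. If one wished to avoid the field criterion entirely, an alternative is to observe directly that $\bigcup_n \gotho_{K_n}$ together with its $p$-th power Frobenius exhibits the tilting data, but the reduction to the absolutely unramified case followed by \cite[Proposition~3.6.2]{part1} is the cleanest route. The mild bookkeeping point to be careful about is that $\tilde{A}_\psi$ as defined is the completion of $A_\psi = \bigcup_n K_n$ for the spectral norm $\alpha_\psi$; since each $K_n$ is a discretely valued field and the transition maps are isometric for the (unique power-multiplicative) extensions of the valuation on $K$, this spectral norm is simply the unique multiplicative extension, so $\tilde{A}_\psi$ is indeed an analytic field and the field criterion applies.
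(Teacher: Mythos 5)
There is a genuine gap in your first step. You invoke Proposition~\ref{P:perfectoid tower persistence} to ``reduce to the case where $K$ is absolutely unramified,'' but that proposition only transfers the perfectoid property \emph{along} a base extension of a given tower: to use it you would have to exhibit the Kummer tower over $K$ as the base extension of a perfectoid tower over $K_0 = \Frac(W(\kappa_K))$. No such tower exists in general, because the Kummer tower is built from the chosen uniformizer $\varpi_0$ of $K$ via $K_n = K_{n-1}[\varpi_n]/(\varpi_n^p - \varpi_{n-1})$, and $\varpi_0$ does not lie in $K_0$ unless $K$ is itself absolutely unramified; base-extending the tower $K_0[p^{1/p^n}]$ along $K_0 \to K$ produces $K[p^{1/p^n}]$, not $K[\varpi_0^{1/p^n}]$. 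This is exactly the point where the Kummer case differs from Lemma~\ref{L:cyclotomic perfectoid}: there $K_n = K[\epsilon_n]/(\Phi_{p^n}(\epsilon_n))$ with $\Phi_{p^n}$ defined over $\Qp$, so the tower over $K$ literally is a base extension along a finite \'etale morphism and the reduction is legitimate. For the Kummer tower the defining polynomials involve $\varpi_0$, so the reduction step has no justification (and the property certainly does not ``descend'' along base extensions, which is what you would need if you tried to enlarge the base instead).

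The good news is that the reduction is unnecessary: your verification works verbatim over an arbitrary $p$-adic field $K$, which is how the paper argues. Since the $K_n$ are totally ramified of degree $p$ over each other, $\tilde{A}_\psi$ is an analytic field, nondiscretely valued because $v(\varpi_n) = v(\varpi_0)/p^n \to 0$, and one checks Frobenius surjectivity on $\gotho_{\tilde{A}_\psi}/(\varpi_0)$ (equivalently modulo $p$): from $\gotho_{A_\psi} = \gotho_K[\varpi_1, \varpi_2, \dots]$ one gets
\[
\gotho_{\tilde{A}_\psi}/(\varpi_0) \cong \kappa_K[\overline{\varpi}_1, \overline{\varpi}_2, \dots]/(\overline{\varpi}_1^p, \overline{\varpi}_2^p - \overline{\varpi}_1, \dots),
\]
on which Frobenius is visibly surjective because each $\overline{\varpi}_n$ is a $p$-th power and $\kappa_K$ is perfect. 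So the fix is simply to delete the reduction and apply \cite[Proposition~3.6.2]{part1} directly to $\tilde{A}_\psi$ over the given $K$; your remaining bookkeeping about the spectral norm being the unique multiplicative extension of the valuation is correct and needs no change.
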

\begin{proof}
Again, $\tilde{A}_\psi$ is an analytic field, so by
\cite[Proposition~3.6.2]{part1} we need only check that it is nondiscretely valued (which is clear) and that Frobenius is surjective on
$\gotho_{\tilde{A}_\psi}/(\varpi_0)$. The latter is clear because
\[
\gotho_{A_\psi} = \gotho_K[\varpi_1, \varpi_2, \dots]
\]
and so
\[
\gotho_{\tilde{A}_\psi}/(\varpi_0) = \kappa_K[\overline{\epsilon}_1, \overline{\epsilon}_2, \dots]/(\overline{\epsilon}_1^p, \overline{\epsilon}_2^p - \overline{\epsilon}_1, \dots).
\]
\end{proof}

\begin{defn}
Since $\psi$ is perfectoid, we may now set notation as in \S\ref{sec:axiomatic}.
By \cite[Lemma~3.4.2]{part1}, we have an identification
\[
\gotho_{\tilde{R}_\psi}
\cong \varprojlim_{\overline{\varphi}} \gotho_{\tilde{A}_\psi}/(\varpi);
\]
let $\overline{\varpi} \in \gotho_{\tilde{R}_\psi}$ be the element corresponding to the sequence $(\dots, \varpi_1, \varpi_0)$.
Note that $\overline{\alpha}_\psi(\overline{\varpi}) = p^{-1/e}$ where $e$ is the absolute ramification index of $K$.
\end{defn}

\begin{lemma} \label{L:Kummer Rpsi}
The ring $R_{\psi}$ equals $\kappa_K(( \overline{\varpi}))$
and $\tilde{R}_{\psi}$ is the completion of $R_{\psi}^{\perf}$.
In particular, $\psi$ is weakly decompleting and $R_\psi$ is $F$-(finite projective).
\end{lemma}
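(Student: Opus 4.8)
The statement is the Kummer-tower analogue of Lemma~\ref{L:cyclotomic Rpsi}, so the strategy mirrors that proof closely, with the cyclotomic element $\pi$ replaced by the uniformizer $\overline{\varpi}$. First I would observe that $\overline{\varpi}$, being the image of a topologically nilpotent element, lifts to an element of the integral Robba ring $\tilde{\calR}^{\inte}_\psi$; concretely one may take a Teichm\"uller-type lift (or any lift whose Witt components are controlled), so $[\overline{\varpi}] \in \bA^{r}_\psi$ for all sufficiently small $r > 0$. Since the subring of $A_\psi$ reducing to $R_\psi$ contains $\gotho_K$ and the $\varpi_n$, and since $\overline{\alpha}_\psi(\overline{\varpi}) = p^{-1/e}$ where $e$ is the absolute ramification index of $K$, we get the inclusion $\kappa_K((\overline{\varpi})) \subseteq R_\psi$. (Here one uses that $\kappa_K$ embeds into $R_\psi$ as the residue field of the absolutely unramified part, exactly as in the cyclotomic case.)

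For the reverse inclusion $R_\psi \subseteq \kappa_K((\overline{\varpi}))$, I would invoke Lemma~\ref{L:intermediate mod p subring}: choosing $\eta \in A_{\psi,n_0}$ an appropriate power of $\varpi_0$ with $\alpha_\psi(\eta) \in (p^{-1},1)$, that lemma shows $R_\psi^+$ is contained in the subring $R'_\psi$ of $\tilde{R}_\psi$ consisting of sequences $(\overline{x}_n)$ with $\overline{x}_n \in A_{\psi,n}^+/(\eta)$ for $n$ large. But $A_{\psi,n}^+/(\eta)$ is (a truncation of) $\gotho_{K_n} = \gotho_K[\varpi_n]$, whose image in $\gotho_{\tilde{R}_\psi}/(\eta)$ is generated over $\kappa_K$ by $\overline{\varpi}$; tracking this through the inverse limit gives $R_\psi^+ \subseteq \kappa_K \llbracket \overline{\varpi} \rrbracket$ and hence $R_\psi \subseteq \kappa_K((\overline{\varpi}))$ after inverting $\overline{\varpi}$. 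Combined with the previous paragraph this yields $R_\psi = \kappa_K((\overline{\varpi}))$ exactly (no separable extension is needed here, unlike the ramified cyclotomic case, because the Kummer tower is ``coordinatized'' by $\overline{\varpi}$ itself). Then $\tilde{R}_\psi$ being the completion of $R_\psi^{\perf} = \kappa_K((\overline{\varpi}))^{\perf}$ follows by the standard comparison of truncated valuation rings, since both sides have the same $\gotho/(\eta)$.

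Finally, the two asserted consequences: $\psi$ is weakly decompleting because condition (a) of Definition~\ref{D:weakly decompleting} is the density of $R_\psi^{\perf}$ in $\tilde{R}_\psi$, just established, while condition (b) — strict surjectivity of $\bA^{r}_\psi \to R_\psi$ for some $r > 0$ — follows by choosing $r$ small enough that $\lambda(\overline{\alpha}_\psi^r)([\overline{\varpi}] - [\overline{\varpi}]) $ is trivially controlled, i.e. the lift $[\overline{\varpi}]$ of the uniformizer $\overline{\varpi}$ of $R_\psi$ together with Teichm\"uller lifts of $\kappa_K$ generate a subring mapping strict surjectively onto $\kappa_K((\overline{\varpi}))$ (the same computation as in the last line of Lemma~\ref{L:cyclotomic Rpsi}). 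And $R_\psi = \kappa_K((\overline{\varpi}))$ is $F$-(finite projective) because it is a one-dimensional regular (indeed a complete discretely valued field with perfect residue field), so Frobenius makes it a finite free module of rank $p$ over itself (basis $1, \overline{\varpi}, \dots, \overline{\varpi}^{p-1}$ together with a $p$-basis of $\kappa_K$, which here is empty since $\kappa_K$ is perfect). The main obstacle, such as it is, is bookkeeping the identification $A_{\psi,n}^+/(\eta) \cong \gotho_{K_n}/(\eta)$ and verifying that the transition maps $\overline{\varphi}_\varpi$ send $\varpi_{n+1} \mapsto \varpi_{n+1}^p = \varpi_n$ compatibly, so that the inverse limit picks out precisely $\kappa_K\llbracket \overline{\varpi}\rrbracket$; this is routine but must be done carefully because $K$ may be ramified, so $\varpi_0$ itself is not a $p$-power-compatible system — only the chosen $\overline{\varpi}$ is.
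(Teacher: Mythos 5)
Your proposal is correct and follows essentially the same route as the paper: Teichm\"uller-lifting $\overline{\varpi}$ to get $\kappa_K((\overline{\varpi})) \subseteq R_\psi$, comparing truncated valuation rings for the statement about $\tilde{R}_\psi$, applying Lemma~\ref{L:intermediate mod p subring} for the reverse inclusion, and using $[\overline{\varpi}] \in \bA^{r}_\psi$ for condition (b) of weak decompletion. The only cosmetic slip is the phrase about controlling $\lambda(\overline{\alpha}_\psi^r)([\overline{\varpi}]-[\overline{\varpi}])$; since the lift of $\overline{\varpi}$ is exactly $[\overline{\varpi}]$, strict surjectivity holds for every $r>0$ with no smallness condition, as the paper notes.
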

\begin{proof}
Since $\varpi \in \tilde{\calR}^{\inte}_\psi$, we have $\kappa_K((\overline{\varpi})) \subseteq R_\psi$. We may then see that $\tilde{R}_\psi$ is the completion of $R_\psi^{\perf}$ by comparing truncated valuation rings,
and then see that $R_{\psi} \subseteq \kappa_K((\overline{\varpi}))$
using Lemma~\ref{L:intermediate mod p subring}.

To deduce that $\psi$ is weakly decompleting, it remains to check that 
$\bA^{r}_\psi$ maps strict surjectively onto $R_\psi$ for some $r>0$. However, this is true for any $r>0$ because $[\overline{\varpi}] \in \bA^{r}_\psi$.
\end{proof}

\begin{conj} \label{conj:Kummer decompleting}
The tower $\psi$ is decompleting. 
\end{conj}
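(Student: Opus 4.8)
The plan is to verify the single remaining condition in Definition~\ref{D:decompleting tower}: that for all sufficiently large $n$ the complex $\overline{\varphi}_\varpi^{-1}(R_{\psi_{(n)}^\bullet})/R_{\psi_{(n)}^\bullet}$ is exact, the other requirements being already supplied by Lemma~\ref{L:Kummer perfectoid} and Lemma~\ref{L:Kummer Rpsi}. As in the cyclotomic case (Lemma~\ref{L:cyclotomic decompleting}), I would first use Corollary~\ref{C:finite etale decompleting} together with Proposition~\ref{P:weakly decompleting tower persistence} to reduce to the case in which $K$ is absolutely unramified, so that $R_\psi = \kappa_K((\overline{\varpi}))$ and $\overline{\varphi}^{-1}(R_\psi)/R_\psi \cong \bigoplus_{i=1}^{p-1} \overline{\varpi}^{i/p} R_\psi$ as $R_\psi$-Banach modules. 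Since $\psi$ embeds in the Galois tower $\psi'$ with group $\Gamma_1 = \ZZ_p \rtimes \ZZ_p^\times$ and $\Gamma_2 = \ZZ_p^\times$ in the sense of Example~\ref{exa:Galois tower2}, the complex $\overline{\varphi}^{-1}(R_{\psi^\bullet})/R_{\psi^\bullet}$ is the $\Gamma_2^{\bullet+1}$-invariant subcomplex of $C^\bullet_{\cont}(\Gamma_1,\,\overline{\varphi}^{-1}(R_{\psi'})/R_{\psi'})$, with $R_{\psi'}$ a finite free algebra over $R_\psi$ at each level and $R_{\psi'}^{\Gamma_2} = R_\psi$. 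Via the evident spectral sequence relating $\Gamma_2$-invariant cochains to $H^\bullet_{\cont}(\Gamma_1,-)$, and a cyclotomic-style vanishing for the cohomology of $\Gamma_2$ on the modules $C^k_{\cont}(\Gamma_1,R_{\psi'})$ (which is prime-to-$p$-torsion-free in characteristic $p$, so that the $\mu_{p-1}$ part is harmless), the problem reduces to showing that $H^i_{\cont}(\Gamma_1,\,\overline{\varphi}^{-1}(R_{\psi'})/R_{\psi'})$ vanishes for all $i \geq 0$ after replacing $\Gamma_1$ by a sufficiently small open subgroup.

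For this vanishing I would apply the criterion of Theorem~\ref{T:kill analytic cohomology}. The module $\overline{\varphi}^{-1}(R_{\psi'})/R_{\psi'}$ is a finite free Banach module over the completed analytic field $\widehat{R_{\psi'}}$ of characteristic $p$, hence a Banach space over an analytic field of characteristic $p$; one must check that the $\Gamma_1$-action is analytic for $w(\cdot) = -\log\left|\cdot\right|$, which by Proposition~\ref{P:extend analyticity along affinoid} reduces to analyticity of the action on the base field. This is the explicit estimate: writing $\tau$ for a topological generator of the $\ZZ_p$-part of $\Gamma_1$ (so $\tau$ fixes the cyclotomic element $\overline{\epsilon}-1$ while $\tau(\overline{\varpi}) = \overline{\epsilon}\,\overline{\varpi}$) and $\sigma$ for a generator of the $\ZZ_p$-part of $\Gamma_2$ (acting by the cyclotomic character on $\overline{\epsilon}-1$ and, up to the semidirect-product twist, fixing $\overline{\varpi}$), one computes exactly as in Lemma~\ref{L:cyclotomic decompleting} that $\tau^{p^m}-1$ and $\sigma^{p^m}-1$ scale the spectral norm by a factor bounded below by a fixed positive power of $p$ depending only on $m$. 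Having established analyticity, I would take $\Gamma'$ to be the closed procyclic pro-$p$ subgroup generated by $\tau$: on each piece $\overline{\varpi}^{i/p} R_{\psi'}$, working over $R_{\psi'}$ where the cyclotomic element is available, $\tau^{p^m}-1$ acts by multiplication by a unit that alters the norm by a fixed power of $p$, hence invertibly, so $H^\bullet_{\cont}(\Gamma',-)$ vanishes; Theorem~\ref{T:kill analytic cohomology} then propagates the vanishing from $\Gamma'$ to $\Gamma_1$.

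The hard part will be the two places where the non-abelian structure intervenes, and this is why the statement is left as a conjecture. First, unlike the cyclotomic tower where $\Gamma = \ZZ_p^\times$ is abelian and a single procyclic generator does everything, here $\Gamma_2$ is not normal in $\Gamma_1$, so the descent from $H^\bullet_{\cont}(\Gamma_1,-)$ back to the $\Gamma_2$-invariant complex $M^\bullet$ — i.e.\ the degeneration of the spectral sequence invoked in the first step, equivalently the exactness of $(-)^{\Gamma_2^{k+1}}$ on the modules $C^k_{\cont}(\Gamma_1,-)$ with the twisted action \eqref{eq:group action on cochains} — must be checked directly, and the relevant cohomology of $\Gamma_2 \cong \ZZ_p^\times$ acting on $R_{\psi'}$ (which acts faithfully on the cyclotomic direction but only through the semidirect twist on the Kummer direction) is not transparently trivial. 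Second, one must ensure that the interaction of the Kummer and cyclotomic directions does not obstruct the norm estimates feeding into analyticity and into the procyclic vanishing, in particular that enlarging scalars from $R_\psi$ to $R_{\psi'}$ to make $\tau^{p^m}-1$ act by a unit is compatible with afterward taking $\Gamma_2$-invariants. An alternative route, closer to Caruso's $(\varphi,\tau)$-modules, would be to run the decompletion argument of \S\ref{subsec:decompleting towers} directly at the Kummer level using locally analytic vectors for the full group $\Gamma_1$, but this essentially repackages the same difficulty rather than avoiding it.
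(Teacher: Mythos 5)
There is no proof in the paper to compare against: the statement you are addressing is precisely Conjecture~\ref{conj:Kummer decompleting}, which the paper leaves open; the remark immediately following it only isolates the obstruction and records a sufficient condition, namely that for $n$ large the map $\tau_n-1$ carries $\varphi^{-1}(R_\psi)/R_\psi$ onto (in fact bijectively to) the subset of $\varphi^{-1}(R_{\psi'})/R_{\psi'}$ cut out by the semidirect-product compatibility with $\Gamma_2$. Your sketch correctly locates the two sources of trouble (the non-normality of $\Gamma_2$ in $\Gamma_1$ and the interaction of the Kummer and cyclotomic directions), and you candidly flag them as unresolved, so the proposal does not establish the conjecture; but beyond that, several of the structural assertions it rests on are unjustified or false.

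Concretely: (a) $R_{\psi'}$ is not ``a finite free algebra over $R_\psi$'' in any useful sense --- the degrees $[K_n(\mu_{p^n}):K_n]$ are unbounded --- and the identity $R_{\psi'}^{\Gamma_2}=R_\psi$ at the level of the \emph{imperfect} rings is itself unknown; Example~\ref{exa:Galois tower2} identifies invariants only for the perfect period rings $\tilde{*}_{\psi'}$, using that $\psi$ is perfectoid, so your opening move of rewriting $\overline{\varphi}^{-1}(R_{\psi^\bullet})/R_{\psi^\bullet}$ as the $\Gamma_2^{\bullet+1}$-invariant subcomplex of $C^\bullet_{\cont}(\Gamma_1,\overline{\varphi}^{-1}(R_{\psi'})/R_{\psi'})$ is not available without first proving such an identification for $R$ and for the \v{C}ech levels $R_{\psi^k}$. (b) The reduction to absolutely unramified $K$ does not work here: unlike the cyclotomic case, the Kummer tower over $K$ depends on the choice of $\varpi_0$ and is not the base extension of a tower over $\Frac W(\kappa_K)$ along a finite \'etale morphism, so Corollary~\ref{C:finite etale decompleting} does not apply (nor is the reduction needed, since Lemma~\ref{L:Kummer Rpsi} gives $R_\psi=\kappa_K((\overline{\varpi}))$ directly). (c) Most importantly, the norm estimates you invoke for $\tau^{p^m}-1$ on pieces like $\overline{\varpi}^{i/p}R_{\psi'}$, and the analyticity needed for Theorem~\ref{T:kill analytic cohomology}, presuppose that elements of $R_{\psi'}$ admit expansions in $\overline{\pi}$ and $\overline{\varpi}$ with compatible norms; this is exactly what fails, because $\psi'$ is not weakly decompleting and $R_\psi\widehat{\otimes}_{\kappa_K}\kappa_K((\overline{\pi}))\to R_{\psi'}$ is not an isomorphism (the norms do not match), so the termwise computations on monomials cannot be propagated to all of $R_{\psi'}$. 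In short, your plan reduces the conjecture to statements about $R_{\psi'}$ and about descent along the non-normal subgroup $\Gamma_2$ that are at least as hard as the conjecture itself, which is the same difficulty the paper's remark identifies; a genuine proof would have to supply new control on $R_{\psi'}$ (or argue directly at the level of the $\tau_n-1$ criterion) rather than assume it.
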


\begin{remark}
The main difficulty in proving that $\psi$ is decompleting 
is the fact that $\psi'$ is not weakly decompleting, so the structure of $R_{\psi'}$ is quite complicated. 
To wit, let $\epsilon \in \tilde{R}_{\psi'}$ be the element $(\dots, \epsilon_1, \epsilon_0)$
and put $\pi = [\epsilon] - 1$. 
Note that $R_{\psi'}$ contains both $R_\psi$ and $\kappa_K((\overline{\pi}))$.
Let $L$ be the completion of $\kappa_K((\overline{\pi}))^{\perf}$; then
the induced map $\tilde{R}_\psi \widehat{\otimes}_{\kappa_K} L \to \tilde{R}_{\psi'}$ has dense image but is not an isomorphism, due to a mismatch of norms.
Similarly, the map
$R_\psi \widehat{\otimes}_{\kappa_K} \kappa_K((\overline{\pi})) \to R_{\psi'}$ is not an isomorphism.

In any case, if we let $\gamma_n$ denote the element $1+p^n$ of $\Gamma_2$
and let $\tau_n$ be the element $p^n$ of $\Zp \subset \Gamma_1$, we have
\[
\gamma_n(\overline{\pi}) = \overline{\pi} + (1 + \overline{\pi}) \overline{\pi}^{p^n}, \quad
\tau_n(\overline{\pi}) = \overline{\pi}, \quad
\gamma_n(\overline{\varpi}) = \overline{\varpi}, \quad
\tau_n(\overline{\varpi}) = (1 + \overline{\pi}^{p^n}) \overline{\varpi}.
\]
It would suffice to check that for any sufficiently large $n$, the map
\[
\tau_n - 1: \varphi^{-1}(R_\psi)/R_\psi \to
\{\overline{x} \in \varphi^{-1}(R_{\psi'})/R_{\psi'}: \gamma(\overline{x}) = \frac{\tau_n^{\gamma}-1}{\tau_n-1}(\overline{x}) \quad (\gamma \in \Zp^\times \cap \ZZ) \}
\]
is an isomorphism, or even surjective.
\end{remark}

\begin{remark}
One consequence of Conjecture~\ref{conj:Kummer decompleting} which is known is that
base extension of $(\varphi, \Gamma)$-modules from $\bA^\dagger_\psi$ to $\bA_\psi$
is an equivalence of categories; this was recently established by Gao and Liu \cite{gao-liu}. The relationship between the proof technique and the methods used here remains to be determined.
\end{remark}

\subsection{\texorpdfstring{$\Phi$}{Phi}-iterate towers}
\label{subsec:phi-iterate}

We exhibit some additional examples of weakly decompleting towers which are not at all related to $p$-adic Lie groups, using the work of Cais and Davis \cite{cais-davis}.
This construction provides an instructive model for considering Lubin-Tate towers,
but the latter will be postponed to a subsequent paper.

\begin{hypothesis} \label{H:cais-davis}
Throughout \S\ref{subsec:Kummer}, fix $E, h, \varpi$ as in Hypothesis~\ref{H:towers} and assume that $K$ contains $E$.
In addition, fix a power series $\Phi(T) \in \gotho_E \llbracket T \rrbracket$
satisfying $\Phi(0)= 0$ and $\Phi(T) \equiv T^p \pmod{\varpi}$.
\end{hypothesis}

\begin{defn}
Let $\psi$ be the tower over $K$ in which $K_{\psi,n+1} = K_{\psi,n}[\varpi_{n+1}]/(\Phi(\varpi_{n+1}) - \varpi_n)$.
By \cite[Lemma~3.4.2]{part1}, we have an identification
\[
\gotho_{\tilde{R}_\psi}
\cong \varprojlim_{\overline{\varphi}} \gotho_{\tilde{A}_\psi}/(\varpi);
\]
let $\overline{\varpi}$ be the element of $\gotho_{\tilde{R}_\psi}$ corresponding to the sequence $(\dots, \varpi_1, \varpi_0)$.
\end{defn}

\begin{lemma} \label{L:Frobenius equivariant embedding}
Let $R$ be a $\varpi$-torsion-free $\gotho_E$-algebra.
Let $f: R \to R$ be an $\gotho_E$-algebra homomorphism that induces the $p^h$-power Frobenius map modulo $\varpi$. Then there exists a unique $\gotho_F$-algebra homomorphism
$\lambda_f: R \to W_\varpi(R)$ which is a section of the first projection $W_\pi(R) \to R$ and which satisfies $\lambda_f \circ f = \varphi_\varpi \circ \lambda_f$.
In particular, the construction is functorial in the pair $(R,f)$.
\end{lemma}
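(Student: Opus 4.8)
\textbf{Proof plan for Lemma~\ref{L:Frobenius equivariant embedding}.}
The plan is to construct $\lambda_f$ componentwise using the ghost-coordinate description of $W_\varpi$, in exact analogy with the classical construction of the Witt vector functor's universal property, and to check that $\varpi$-torsion-freeness makes the ghost map injective so that the construction is forced. First I would recall from Hypothesis~\ref{H:towers} and the definition of $W_\varpi$ that the ghost map $w\colon W_\varpi(R) \to R^{\{0,1,\dots\}}$ sends $(a_n)$ to $\bigl(\sum_{m=0}^n \varpi^m a_m^{p^{h(n-m)}}\bigr)_n$, and that it is a natural transformation of functors; when $R$ is $\varpi$-torsion-free the ghost map is injective (each ghost component determines $a_n$ in terms of $a_0,\dots,a_{n-1}$ after dividing by $\varpi^n$). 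The key observation is that requiring $\lambda_f$ to be a ring homomorphism sectioning the first projection and satisfying $\lambda_f \circ f = \varphi_\varpi \circ \lambda_f$ pins down its ghost components: if $\lambda_f(x) = (a_0(x), a_1(x), \dots)$ with ghost components $w_n(x) = \sum_{m=0}^n \varpi^m a_m(x)^{p^{h(n-m)}}$, then $a_0(x) = x$, and the intertwining relation translated into ghost coordinates (using that $\varphi_\varpi$ acts by the left shift on ghost components) forces $w_n(x) = w_{n-1}(f(x)) = \cdots = w_0(f^n(x)) = f^n(x)$ for all $n$.

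So the construction is: \emph{define} $\lambda_f(x)$ to be the unique element of $W_\varpi(R)$ whose $n$-th ghost component is $f^n(x)$, provided such an element exists. The main obstacle — and the only substantive point — is the existence: one must show that the system of ghost components $(f^n(x))_{n \geq 0}$ actually comes from a Witt vector with coordinates in $R$ (not merely in $R[\varpi^{-1}]$). This is precisely where the hypothesis $\Phi(T)\equiv T^p\pmod\varpi$, i.e., that $f$ induces the $p^h$-power Frobenius modulo $\varpi$, is used. The relevant integrality criterion (Dwork's lemma, in the generalized-Witt-vector form recorded in Cais--Davis; see \cite[Proposition~2.3 and the surrounding discussion]{cais-davis}) states that a sequence $(b_n)_{n\geq 0}$ in a $\varpi$-torsion-free $\gotho_E$-algebra $R$ is the ghost vector of an element of $W_\varpi(R)$ if and only if $b_{n} \equiv b_{n-1}^{p^h} \pmod{\varpi^n R}$ for all $n \geq 1$. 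With $b_n = f^n(x)$ we need $f^n(x) \equiv f^{n-1}(x)^{p^h} \pmod{\varpi^n}$; since $f$ reduces to the $p^h$-power map mod $\varpi$, we have $f(y) \equiv y^{p^h} \pmod{\varpi}$ for all $y$, and applying $f^{n-1}$ (which is an $\gotho_E$-algebra map, hence $\varpi$-adically continuous) together with a short induction on $n$ promotes this to a congruence mod $\varpi^n$. I would carry out that induction explicitly since it is the crux; everything else is formal.

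Once existence is secured, the remaining verifications are routine applications of ghost-map injectivity over $R$ (legitimate because $R$ is $\varpi$-torsion-free). To see $\lambda_f$ is a ring homomorphism: $\lambda_f(x)+\lambda_f(y)$ and $\lambda_f(xy)$ have ghost components $f^n(x)+f^n(y) = f^n(x+y)$ and $f^n(x)f^n(y) = f^n(xy)$ respectively, so they agree with $\lambda_f(x+y)$ and $\lambda_f(xy)$ after applying $w$, hence agree; similarly $\lambda_f(1)$ has all ghost components $1$, so it equals the Witt vector $1 = (1,0,0,\dots)$, and $\gotho_E$-linearity is checked the same way on ghost components. That $\lambda_f$ sections the first projection is immediate from $a_0(x)=f^0(x)=x$. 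For the intertwining relation, both $\lambda_f(f(x))$ and $\varphi_\varpi(\lambda_f(x))$ have $n$-th ghost component $f^{n+1}(x)$ (using that $\varphi_\varpi$ is the left shift on ghost coordinates), so they coincide. Uniqueness is exactly the pinning-down argument above: any homomorphism with the stated properties has ghost components $f^n(x)$, and the ghost map is injective. Functoriality in $(R,f)$ then follows formally, since a morphism $(R,f)\to(R',f')$ commutes with everything in sight and ghost components are natural. I expect the write-up to be short: the Dwork-lemma input is the one genuinely nontrivial ingredient and I would simply cite it from \cite{cais-davis}, spending the bulk of the prose on the mod-$\varpi^n$ induction that verifies its hypotheses.
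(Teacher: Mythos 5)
Your overall strategy (ghost components plus a Dwork-style integrality criterion) is the standard route, and it is essentially what the reference the paper cites for this lemma (\cite[Proposition~2.13]{cais-davis}) does — the paper itself gives no argument beyond that citation. But there is a genuine error at exactly the step you identify as the crux. The integrality criterion you quote is mis-stated: Dwork's lemma for $\varpi$-torsion-free $\gotho_E$-algebras says that $(b_n)_n$ lies in the image of the ghost map if and only if $b_n \equiv \sigma(b_{n-1}) \pmod{\varpi^n}$ for some fixed $\gotho_E$-algebra endomorphism $\sigma$ lifting the $p^h$-power Frobenius, \emph{not} $b_n \equiv b_{n-1}^{p^h} \pmod{\varpi^n}$. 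The literal-power version is neither the correct statement nor sufficient, and the congruence you propose to prove by induction, $f^n(x) \equiv f^{n-1}(x)^{p^h} \pmod{\varpi^n}$, is simply false in general: take $E=\QQ_p$, $h=1$, $\varpi=p$, $R=\Zp[T]$, $f(T)=T^p+p$, $x=T$; then $f^2(x)-f(x)^p = p$, which is not divisible by $p^2$, so the congruence already fails at $n=2$. Applying $f^{n-1}$ to the $n=1$ congruence only preserves it modulo $\varpi$, and the usual trick of raising a mod-$\varpi$ congruence to the $p^h$-th power produces $f^{n-1}(x)^{p^h} \equiv f^{n-2}(x)^{p^{2h}}$-type statements, not the one you need; there is no way to complete the induction as planned.

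The fix is immediate and makes the existence step even easier than you anticipated: apply Dwork's lemma with $\sigma = f$ itself, which is a legitimate Frobenius lift by hypothesis. For the sequence $b_n = f^n(x)$ one has $b_n = f(b_{n-1})$ \emph{exactly}, so the required congruences $b_n \equiv f(b_{n-1}) \pmod{\varpi^n}$ hold trivially and no induction is needed at all. (If you want to avoid citing Dwork's lemma, you can prove the needed instance directly: solving $\varpi^n a_n = b_n - \sum_{m<n}\varpi^m a_m^{p^{h(n-m)}}$ by induction on $n$, using $f(a_m)\equiv a_m^{p^h} \pmod{\varpi}$ and hence $f(a_m)^{p^{h(n-1-m)}} \equiv a_m^{p^{h(n-m)}} \pmod{\varpi^{n-m}}$.) The remainder of your argument — uniqueness, multiplicativity, the intertwining relation, and functoriality, all checked on ghost components using injectivity of the ghost map for $\varpi$-torsion-free $R$ — is correct as written.
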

\begin{proof}
See \cite[Proposition~2.13]{cais-davis}.
\end{proof}

\begin{prop} \label{P:cais-davis}
The tower $\psi$ is weakly decompleting
with $R_\psi = \kappa_K((\overline{\varpi}))$. In particular, $R_\psi$ is $F$-(finite projective).
\end{prop}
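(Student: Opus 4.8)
<br>

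The plan is to apply the general mechanism for establishing the weakly decompleting property, following the same template used in the proof of Lemma~\ref{L:cyclotomic Rpsi} (the cyclotomic case) and Lemma~\ref{L:Kummer Rpsi} (the Kummer case). First I would verify that the tower $\psi$ is perfectoid. Since $\tilde{A}_\psi$ is an analytic field (it is the completed union of a tower of finite extensions of the discretely valued field $K$, each obtained by adjoining a root of $\Phi$), it suffices by \cite[Proposition~3.6.2]{part1} to check that it is nondiscretely valued and that Frobenius is surjective on $\gotho_{\tilde{A}_\psi}/(\varpi)$. Nondiscreteness is clear since we are extracting an infinite tower of roots of $\varpi_0$ (using $\Phi(T) \equiv T^p \pmod \varpi$, so $\varpi_{n+1}^p \equiv \varpi_n$). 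Surjectivity of Frobenius modulo $\varpi$ follows because $\gotho_{A_\psi} = \gotho_K[\varpi_1, \varpi_2, \dots]$ and modulo $\varpi$ the relations become $\overline{\varpi}_{n+1}^p = \overline{\varpi}_n$ (plus higher-order terms killed by the mod-$\varpi$ reduction), so every element is visibly a $p$-th power up to $\varpi$.

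Next I would pin down $R_\psi$. The containment $\kappa_K((\overline{\varpi})) \subseteq R_\psi$ is immediate: using Lemma~\ref{L:Frobenius equivariant embedding} applied to $(\gotho_{A_\psi}, f)$ where $f$ is the map sending $\varpi_{n+1} \mapsto \varpi_n$ (which induces the $p^h$-power Frobenius mod $\varpi$ by the hypothesis on $\Phi$), we obtain a Frobenius-equivariant section $\lambda_f$, and the image of $\overline{\varpi}$ under an appropriate composition lies in a bounded-type period ring, so $[\overline{\varpi}] \in \bA^r_\psi$ for suitable $r$, whence $\kappa_K((\overline{\varpi}))$ lies in $R_\psi$. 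For the reverse inclusion I would compare truncated valuation rings to see that $\tilde{R}_\psi$ is the completion of $R_\psi^{\perf}$, then invoke Lemma~\ref{L:intermediate mod p subring} with $\eta$ chosen as a power of $\overline{\varpi}$ (or of $\varpi_0$): this shows that any element of $R_\psi^+$ yields a sequence whose late terms lie in $A_{\psi,n}^+/(\eta) = \gotho_{K_n}/(\varpi_0^{\text{some power}})$, which forces membership in $\kappa_K \llbracket \overline{\varpi} \rrbracket$, and inverting gives $R_\psi \subseteq \kappa_K((\overline{\varpi}))$.

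With $R_\psi = \kappa_K((\overline{\varpi}))$ identified, the verification of Definition~\ref{D:weakly decompleting} is routine: condition (a), density of $R_\psi^{\perf}$ in $\tilde{R}_\psi$, follows from the comparison of truncated valuation rings just established; condition (b), strict surjectivity of $\bA^r_\psi \to R_\psi$ for some $r>0$, holds because $[\overline{\varpi}] \in \bA^r_\psi$ for all $r$ (as in the Kummer case), so the Teichm\"uller lift of the uniformizer already lives in $\bA^r_\psi$ and one checks the norm estimate directly. Finally, $R_\psi = \kappa_K((\overline{\varpi}))$ is a Laurent series field over the perfect field $\kappa_K$, hence regular of dimension $1$, hence $F$-(finite projective) by Kunz's criterion (the $F$-finiteness is automatic since $\kappa_K$ is perfect, hence $F$-finite, and $\kappa_K((\overline{\varpi}))$ is finitely generated as an algebra over the $F$-finite field $\kappa_K((\overline{\varpi}^p))$). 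The main obstacle here is purely bookkeeping: one must choose the auxiliary element $\eta$ and the radius $r$ correctly so that the estimates in Lemma~\ref{L:intermediate mod p subring} and Lemma~\ref{L:optimal lifts} go through, and be careful that $\Phi$ being a general lift of $T^p$ (rather than $T^p$ or the cyclotomic polynomial) does not disturb the mod-$\varpi$ computation — but since only the congruence $\Phi(T) \equiv T^p \pmod{\varpi}$ enters the relevant reductions, everything reduces to the same skeleton as the previous two cases.
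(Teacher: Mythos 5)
The central step of this proposition is not the perfectoid or weakly-decompleting bookkeeping but the production of a lift of $\overline{\varpi}$ to $\bA^{+}_\psi$, and this is exactly where your argument has a gap. You assert (twice) that $[\overline{\varpi}] \in \bA^{r}_\psi$ ``as in the Kummer case.'' In the Kummer case this holds because the sequence $(\varpi_n)$ is literally $p$-power compatible, so $\theta(\varphi_\varpi^{-n}([\overline{\varpi}])) = \varpi_n \in A_{\psi,n}$. Here the $\varpi_n$ are only $\Phi$-compatible, and $\theta(\varphi_\varpi^{-n}([\overline{\varpi}]))$ is the limit $\lim_m \varpi_{n+m}^{p^{hm}}$ computed in $\tilde{A}_\psi$, which has no reason to lie in the finite level $A_{\psi,n}$; so the Teichm\"uller lift of $\overline{\varpi}$ does not belong to $\bA^{r}_\psi$ in general, and neither condition (b) of Definition~\ref{D:weakly decompleting} nor the inclusion $\kappa_K((\overline{\varpi})) \subseteq R_\psi$ can be obtained this way. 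The paper's proof instead constructs a different, non-Teichm\"uller lift: apply Lemma~\ref{L:Frobenius equivariant embedding} to the pair $(\gotho_E \llbracket T \rrbracket, \Phi)$, following \cite[Proposition~7.14]{cais-davis}, to get $\lambda_\Phi$ with $\lambda_\Phi \circ \Phi = \varphi_\varpi \circ \lambda_\Phi$ and ghost components $r, \Phi(r), \Phi^2(r), \dots$; then map $T \mapsto \varpi_n$ at level $n$ to obtain $x_n \in W_\varpi(\gotho_{A_{\psi,n}})$ with ghost components $(\varpi_n, \varpi_{n-1}, \dots, \varpi_0, \Phi(\varpi_0), \dots)$, compatible under $\varphi_\varpi$, hence an element of $\bA^{+}_\psi$ via \eqref{eq:theta identification} which reduces to $\overline{\varpi}$; only with such a lift in hand can one ``argue as in Lemmas~\ref{L:cyclotomic Rpsi} and~\ref{L:Kummer Rpsi}.''

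Your own invocation of Lemma~\ref{L:Frobenius equivariant embedding} is applied to the wrong object: you take $R = \gotho_{A_\psi}$ with $f$ the shift $\varpi_{n+1} \mapsto \varpi_n$, but the lemma requires $f$ to induce the $p^h$-power Frobenius on all of $R/(\varpi)$, in particular on $\gotho_K/(\varpi)$, and such an endomorphism of $\gotho_{A_\psi}$ need not exist (already a Frobenius lift on a ramified $\gotho_K$ need not exist); moreover, even granting it, what $\lambda_f$ produces is a lift of $\overline{\varpi}$ that is \emph{not} the Teichm\"uller lift, so the conclusion ``$[\overline{\varpi}] \in \bA^{r}_\psi$'' would still not follow. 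On the secondary point, your direct verification of the perfectoid property by imitating the Kummer computation (Eisenstein structure of $\Phi(T) - \varpi_n$ via Weierstrass preparation, $\gotho_{A_\psi} = \gotho_K[\varpi_1, \varpi_2, \dots]$, Frobenius surjectivity modulo $\varpi_0$) is a plausible alternative to the paper's citation of the strictly APF property from \cite{cais-davis-lubin}, provided you actually supply the argument that each $\varpi_{n+1}$ is a uniformizer; but the identification of $R_\psi$ and the weakly decompleting property, which are the substance of the proposition, remain unproved as written.
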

\begin{proof}
The tower $\psi$ is perfectoid because it defines a strictly APF extension of $K$,
as shown in \cite{cais-davis-lubin}. 
To check the other claims, it suffices to exhibit a lift of $\overline{\varpi}$ to $\bA^+_\psi$, as then one may argue as in the proofs of Lemma~\ref{L:cyclotomic Rpsi} and Lemma~\ref{L:Kummer Rpsi}.
For this, we recall the proof of \cite[Proposition~7.14]{cais-davis}.
Identify $\Phi$ with a ring endomorphism of
$\gotho_E \llbracket T \rrbracket$.
By Lemma~\ref{L:Frobenius equivariant embedding},
there exists a unique $\gotho_E$-algebra homomorphism 
$\lambda_\Phi: \gotho_E \llbracket T \rrbracket \to W_\varpi(\gotho_E \llbracket T \rrbracket)$ which is a section to the first projection and satisfies
$\lambda_\Phi \circ \Phi = \varphi_\varpi \circ \lambda_\Phi$. By
\cite[Corollary~2.14]{cais-davis}, for $r \in \gotho_E \llbracket T \rrbracket$,
the ghost components of $\lambda_\Phi(r)$ are $r, \Phi(r), \Phi^2(r),\dots$.
For each $n$, consider the $\gotho_E$-algebra homomorphism 
$\gotho_E \llbracket T \rrbracket \to \gotho_{A_{\psi,n}}$ taking $T$ to $\varpi_n$.
The composition $\gotho_E \llbracket T  \rrbracket \to W_\varpi(\gotho_E \llbracket T \rrbracket) \to W_\varpi(\gotho_{A_{\psi,n}})$ then takes $T$ to 
an element $x_n$ with ghost components $(\varpi_n,\varpi_{n-1},\dots,\varpi_0, \Phi(\varpi_0), \dots)$. In particular, the $x_n$ form an element of $\varprojlim_{\Phi} W_\varpi(\gotho_{A_{\psi,n}})$, which maps to $\bA^+_\psi$ via \eqref{eq:theta identification}. The resulting element of $\bA^+_\psi$ maps to $\overline{\varpi}$ in $R_\psi$, proving the claim.
\end{proof}

\begin{conj} \label{C:cais-davis}
The tower $\psi$ is decompleting.
\end{conj}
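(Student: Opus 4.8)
The goal is to establish that a $\Phi$-iterate tower $\psi$ (as in Hypothesis~\ref{H:cais-davis}) is decompleting, which by Definition~\ref{D:decompleting tower} amounts to showing that for each sufficiently large $n$, the complex $\overline{\varphi}_\varpi^{-1}(R_{\psi_{(n)}^\bullet})/R_{\psi_{(n)}^\bullet}$ is exact. By Proposition~\ref{P:cais-davis} we already know $\psi$ is perfectoid and weakly decompleting with $R_\psi = \kappa_K((\overline{\varpi}))$, so the reality-check machinery of \S\ref{subsec:reality checks}, in particular Lemma~\ref{L:optimal lifts} and the identification $R_\psi \cong \kappa_K((\overline{\varpi}))$, is available. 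The strategy is to follow the template of Lemma~\ref{L:cyclotomic decompleting}: since this is a finite \'etale tower, by Corollary~\ref{C:finite etale decompleting} we may reduce to the case where $K$ is absolutely unramified over $E$ (so $\overline{\alpha}_\psi(\overline{\varpi}) = p^{-1}$), and then one must produce an explicit contraction for the \v{C}ech complex of the tower of \'etale algebras $\overline{\varphi}^{-m}(R_\psi)/R_\psi$.

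First I would decompose $\overline{\varphi}^{-1}(R_\psi)/R_\psi$ as a finite direct sum of rank-one $R_\psi$-modules spanned by the classes of $\overline{\varpi}^{i/p}$ for $i = 1,\dots,p-1$ (and more generally by monomials $\prod \overline{\varpi}_j^{a_j}$ at level $m$), exactly as in the cyclotomic case. Since the tower here is not Galois, I would instead work directly with the \v{C}ech differentials $d^k$ of Definition~\ref{D:Cech complex}, using the description of $R_{\psi^k}$ as a completed tensor power. The key computation is an analogue of the norm estimate in Lemma~\ref{L:cyclotomic decompleting}: one needs to understand how the maps $i_{k,j}$ act on the generators $\overline{\varpi}^{1/p^m}$, and in particular to exhibit a ``partial section'' of $d^k$ that strictly contracts norms on each graded piece. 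The essential input is the interplay between the Witt-vector lift of $\overline{\varpi}$ constructed via $\lambda_\Phi$ (from Lemma~\ref{L:Frobenius equivariant embedding} and the proof of Proposition~\ref{P:cais-davis}) and the structure of $R_{\psi^1}$; one wants to show that passing through an extra tensor factor multiplies the relevant valuations by a definite factor bounded away from $1$, so that a geometric-series argument of the kind used in Lemma~\ref{L:decompleting uniform} closes. Given such an estimate, the exactness of $\overline{\varphi}^{-1}(R_{\psi_{(n)}^\bullet})/R_{\psi_{(n)}^\bullet}$ follows formally, and Lemma~\ref{L:decompleting all} then upgrades it to strict exactness.

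The main obstacle, and the reason this is stated as a conjecture rather than a theorem, is precisely this explicit norm computation: unlike the cyclotomic tower, where the action of $\Gamma = \ZZ_p^\times$ supplies a procyclic pro-$p$ element $\gamma$ whose action on $\overline{\pi}$ is governed by the very transparent identity $\gamma^{p^n}(1+\overline{\pi}) = (1+\overline{\pi})^{\gamma^{p^n}}$, a general $\Phi$-iterate tower has no group of deck transformations at all. One has only the non-Galois \v{C}ech complex, and controlling the differentials requires a fine understanding of how iterating $\Phi$ distorts the valuation of $\overline{\varpi}^{1/p^m}$ after base change to $R_{\psi^1} = R_\psi \widehat{\otimes}_{\kappa_K} R_\psi$ (or its perfect completion). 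In favorable cases — e.g.\ when $\Phi$ is a Lubin--Tate power series, where there \emph{is} extra structure coming from the formal group and the interplay with Witt vectors exploited by Cais--Davis --- one expects this to be tractable, but in the generality of Hypothesis~\ref{H:cais-davis} I do not see how to push the argument through, which is why the decompleting property is left open here. A secondary (and milder) point is that one would also want the coherence of $\breve{\bC}^{[s,r]}_\psi$, as in Lemma~\ref{L:cyclotomic coherent}; this should again follow from realizing each summand as a ring of analytic functions on an annulus, but it is only relevant once the decompleting property is in hand.
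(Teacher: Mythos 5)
The statement you were asked about is Conjecture~\ref{C:cais-davis}: the paper offers no proof at all. Its author proves only the weaker Proposition~\ref{P:cais-davis} (the tower is perfectoid and weakly decompleting, with $R_\psi = \kappa_K((\overline{\varpi}))$) and explicitly leaves the decompleting property open, deferring it (together with the Lubin--Tate case, where the Cais--Davis formal-group/Witt-vector interplay and Carter's verification of Scholl's criterion are expected to enter) to later work. Your proposal is therefore not a proof, and you are right to present it as a plan: the entire content of the conjecture is exactly the step you flag as missing, namely a contraction estimate for the \v{C}ech differentials of $\overline{\varphi}_\varpi^{-1}(R_{\psi_{(n)}^\bullet})/R_{\psi_{(n)}^\bullet}$ in the absence of any group of deck transformations. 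In the cyclotomic and toric cases the estimate comes from the explicit action of a procyclic pro-$p$ element of $\Gamma$ on $\overline{\pi}$ (resp.\ $\overline{t}_i$) via the Leibniz rule \eqref{eq:Leibniz rule}; for a general $\Phi$-iterate tower no such operator exists, and nothing in \S\ref{subsec:decompleting towers} or in the Cais--Davis lift $\lambda_\Phi$ of Lemma~\ref{L:Frobenius equivariant embedding} currently supplies a substitute. So your assessment of where the difficulty lies agrees with the paper's own reason for stating this as a conjecture.

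One step of your sketch is nevertheless off as written: you propose to ``reduce to the case where $K$ is absolutely unramified'' by Corollary~\ref{C:finite etale decompleting}. That corollary transfers the decompleting property \emph{from} a tower already known to be decompleting \emph{to} its base extension along a finite \'etale morphism; in the cyclotomic case this applies because the tower over $K$ is visibly the base extension of the tower over $\Frac W(\kappa_K)$. A $\Phi$-iterate tower, by contrast, depends on the choice of $\varpi_0 \in K$ and on $\Phi \in \gotho_E\llbracket T\rrbracket$ with $E \subseteq K$ possibly ramified, so it need not descend to an unramified subfield at all, and the corollary gives no reduction. The coherence of $\breve{\bC}^{[s,r]}_\psi$, on the other hand, is indeed the easy part: since $R_\psi$ is a one-variable Laurent series field, that ring is again a finite direct sum of annulus rings exactly as in Lemma~\ref{L:cyclotomic coherent}, so it would pose no obstacle once the decompleting property itself were established.
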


\section{Toric towers}
\label{sec:toric}

We next consider towers derived from the standard perfectoid cover of a torus, which is obtained by taking roots of the coordinate functions. The standard towers
are sufficient for many applications to the theory of relative $(\varphi, \Gamma)$-modules on smooth analytic spaces, such as those discussed in \S\ref{sec:applications}. For additional applications, we also introduce an analogue of the Andreatta-Brinon construction of relative toric towers, which combines a cyclotomic tower over a base $p$-adic field with the extraction of $p$-th roots of local coordinates. We also describe logarithmic variants of both constructions.

\setcounter{theorem}{0}
\begin{hypothesis}
Throughout \S\ref{sec:toric}, we take $E=\Qp, \varpi=p$ in Hypothesis~\ref{H:towers}.
\end{hypothesis}

\subsection{Standard toric towers}
\label{subsec:standard toric towers}

We begin by constructing toric towers over a base ring which is already perfectoid.
\begin{defn} \label{D:standard toric tower1}
Let $(A_0, A_0^+)$ be an adic perfectoid algebra over $\Qp$
corresponding to the perfect uniform adic Banach ring $(R_0,R_0^+)$ via
Theorem~\ref{T:Fontaine perfectoid correspondence}.
For $d \geq 0$ and $q_1,\dots,q_d,r_1,\dots,r_d$ real numbers with $0 < q_i \leq r_i$, the \emph{standard toric tower} over $(A_0, A_0^+)$ 
with parameters $q_1,\dots,q_d, r_1,\dots,r_d$
is the finite \'etale tower 
$\psi$ with 
\begin{align*}
A &= A_0\{q_1/T_1, \dots, q_d/T_d, T_1/r_1, \dots, T_d/r_d\} \\
A^+ &= A_0^+\{q_1/T_1, \dots, q_d/T_d, T_1/r_1, \dots, T_d/r_d\} \\
A_{\psi,n} &= A_0\{q_1^{p^{-n}}/T_1^{p^{-n}}, \dots, q_d^{p^{-n}} / T_d^{p^{-n}}, T_1^{p^{-n}}/r_1^{p^{-n}},\dots,T_d^{p^{-n}}/r_d^{p^{-n}} \}.
\end{align*}
If $q_1,\dots,q_d,r_1,\dots,r_d$ are not mentioned, we take them to equal $1$.
If $d=1$, we usually omit it from the subscripts. For $i=1,\dots,d$, let $\overline{t}_i \in  R_\psi$ be the element corresponding to the sequence $T_i, T_i^{1/p}, \dots$.
\end{defn}

\begin{lemma} \label{L:standard toric weakly decompleting}
With notation as in Definition~\ref{D:standard toric tower1}, the tower $\psi$ is weakly decompleting and 
\begin{equation} \label{eq:reduction of standard toric tower}
R_\psi = R_0\{
q_1/\overline{t}_1, \dots,q_d/\overline{t}_d, 
\overline{t}_1/r_1, \dots,\overline{t}_d/r_d\}.
\end{equation}
\end{lemma}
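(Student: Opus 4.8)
The plan is to prove the two assertions of Lemma~\ref{L:standard toric weakly decompleting} by reducing everything to the perfectoid correspondence, then checking the two conditions of Definition~\ref{D:weakly decompleting} directly. First I would observe that $(A,A^+)$ is itself a perfectoid algebra: it is a rational localization of $A_0\{T_1^{\pm p^{-\infty}},\dots,T_d^{\pm p^{-\infty}}\}$ along the cover extracting $p$-power roots, so by Example~\ref{exa:polynomials} together with Theorem~\ref{T:Fontaine perfectoid compatibility}(i) it corresponds to a perfect uniform adic Banach ring; moreover each $A_{\psi,n}$ is likewise perfectoid with $\tilde{A}_\psi$ equal to the uniform completion of the direct limit. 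Thus $\psi$ is a perfectoid tower, and the whole machinery of \S\ref{subsec:reality checks} is available. The computation of $\tilde{R}_\psi$ is then immediate from Example~\ref{exa:polynomials}: it is the completion of $R_0[\overline{t}_i^{\pm p^{-\infty}}]$ along the corresponding rational cover, i.e. $R_0\{q_i^{p^{-\infty}}/\overline{t}_i^{p^{-\infty}}, \overline{t}_i^{p^{-\infty}}/r_i^{p^{-\infty}}\}$.

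Next I would identify $R_\psi = A_\psi/(p)$ inside $\tilde{R}_\psi$. The key point is that $A_\psi$ is the direct limit of the rings $A_{\psi,n}$, so $R_\psi$ is the direct limit of the $A_{\psi,n}/(p)$; since $A_{\psi,n}/(p) = R_0/(p)\{q_i^{p^{-n}}/\overline{t}_i^{p^{-n}}, \overline{t}_i^{p^{-n}}/r_i^{p^{-n}}\}$ by Remark~\ref{R:same quotient} applied to the perfectoid correspondence, taking the union over $n$ and noting that the Tate-algebra generators at finite level all lie in the displayed ring on the right of \eqref{eq:reduction of standard toric tower} gives one containment. For the reverse containment I would use Lemma~\ref{L:intermediate mod p subring}: choosing $\eta$ to be a suitable power of $[\overline{u}]$ for a topologically nilpotent unit $\overline{u}$ of $R_0$ (or of $R_\psi$), one checks that any element of $R_0\{q_i/\overline{t}_i, \overline{t}_i/r_i\}$, being a convergent sum of monomials in the $\overline{t}_i^{\pm 1}$ with coefficients in $R_0$, arises as $\theta(\varphi_\varpi^{-n}(x))$ for some $x \in \bA^r_\psi$ once $n$ is large — indeed one builds $x$ explicitly as the corresponding sum of Witt-vector monomials $[\overline{t}_i]^{\pm p^{-n}}$ times lifts of the coefficients, using the alternate description of $\bA^r_\psi$ from Definition~\ref{D:period rings}.

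Having pinned down $R_\psi$, condition (a) of Definition~\ref{D:weakly decompleting} is that $R_\psi^{\perf}$ is dense in $\tilde{R}_\psi$, which is clear since $R_\psi$ already contains all the $\overline{t}_i^{\pm p^{-n}}$ and $R_0$, hence contains a dense subring of $\tilde{R}_\psi$, so its perfect closure is dense a fortiori. For condition (b) I would exhibit, for all sufficiently small $r>0$, a strict surjection $\bA^r_\psi \to R_\psi$ for the norms $\lambda(\overline{\alpha}_\psi^r)$ and $\overline{\alpha}_\psi^r$: the explicit lift constructed in the previous step, sending a monomial $\prod_i \overline{t}_i^{a_i}$ (with coefficient $\overline{c}$) to $[\overline{c}']\prod_i [\overline{t}_i]^{a_i}$ for $\overline{c}'$ a Teichm\"uller-type lift of $\overline{c}$ in $R_0$, is an isometric section for $r$ small enough that $\lambda(\overline{\alpha}_\psi^r)(p)<1$ dominates the error terms, so reduction mod $p$ is optimal surjective; alternatively one invokes the splitting of Definition~\ref{D:Robba fractional} together with Corollary~\ref{C:Robba stably uniform}. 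I expect the main obstacle to be the reverse containment $R_\psi \supseteq R_0\{q_i/\overline{t}_i,\overline{t}_i/r_i\}$, i.e. checking that \emph{every} element of the full Tate algebra on the right — not just the finite-level generators — lifts into some $\bA^r_\psi$; this is a convergence bookkeeping argument with the Gauss norm $\lambda(\overline{\alpha}_\psi^r)$ and is the only place one must work rather than cite, but it is the exact analogue of the lifting arguments already carried out for the cyclotomic and Kummer towers in Lemmas~\ref{L:cyclotomic Rpsi} and~\ref{L:Kummer Rpsi}. Finally, the $F$-(finite projective) property of $R_\psi$, though not asserted in this lemma, will follow later since $R_\psi$ is a Tate algebra over $R_0$ and hence regular, but here I would simply record \eqref{eq:reduction of standard toric tower} and the weakly decompleting conclusion.
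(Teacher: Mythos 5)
There is a genuine gap, and it sits exactly at the containment $R_\psi \subseteq R_0\{q_1/\overline{t}_1,\dots,\overline{t}_d/r_d\}$, which is the nontrivial direction. Your argument for it rests on the claim that ``$R_\psi$ is the direct limit of the $A_{\psi,n}/(p)$,'' but that is not the definition: $R_\psi$ is the reduction modulo $\varpi$ of the imperfect ring $\bA_\psi$ (equivalently of $\bA^{\dagger}_\psi$), i.e.\ the set of $\overline{x} \in \tilde{R}_\psi$ lifting to some $x \in \bA^{r}_\psi$, where membership in $\bA^{r}_\psi$ is the condition that $\theta(\varphi_\varpi^{-n}(x)) \in A_{\psi,n}$ for all large $n$. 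Identifying this with a union of finite-level quotients is essentially the assertion to be proved, not a starting point. The auxiliary step is also incorrect: $A_{\psi,n}$ is \emph{not} Fontaine perfectoid (Frobenius is not surjective modulo $p$, since $T_i^{p^{-n}}$ has no $p$-th root in $A_{\psi,n}$), so Remark~\ref{R:same quotient} does not apply to it, and in any case the displayed identity ``$A_{\psi,n}/(p) = R_0/(p)\{\cdots\}$'' is not meaningful ($R_0$ already has characteristic $p$, and the correspondence only relates $A_0^+/(u)$ to $R_0^+/(\overline{u})$ for a suitable pseudouniformizer, not quotients by $p$ at level $n$). The correct tool for this direction is precisely Lemma~\ref{L:intermediate mod p subring}, which bounds $R_\psi^+$ from above by the ring of sequences $(\overline{x}_n)$ with $\overline{x}_n \in A_{\psi,n}^+/(\eta)$ for $n$ large; this is how the paper obtains $R_\psi \subseteq R_0\{q_i/\overline{t}_i,\overline{t}_i/r_i\}$.

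You have the roles of the two containments swapped: you invoke Lemma~\ref{L:intermediate mod p subring} for the ``reverse'' inclusion $R_0\{q_i/\overline{t}_i,\overline{t}_i/r_i\} \subseteq R_\psi$, but that lemma only provides an upper bound on $R_\psi^+$ and cannot produce elements of $R_\psi$. What does prove that inclusion is the explicit lifting you sketch afterwards — sending a convergent sum $\sum_J \overline{c}_J \overline{t}^J$ to $\sum_J [\overline{c}_J][\overline{t}]^J$, whose images under $\theta \circ \varphi_\varpi^{-n}$ land in $A_{\psi,n}$ because $T_i^{\pm p^{-n}} \in A_{\psi,n}$ and $\theta([\overline{c}_J^{p^{-n}}]) \in A_0$ — and this is the direction the paper calls ``clear.'' (Note also that to show $\overline{y} \in R_\psi$ you must exhibit $\overline{y}$ as the reduction modulo $\varpi$ of an element of $\bA^{r}_\psi$; it does not ``arise as $\theta(\varphi_\varpi^{-n}(x))$,'' which lives in the characteristic-$0$ ring $A_{\psi,n}$.) Once both containments are in place, your verification of conditions (a) and (b) of Definition~\ref{D:weakly decompleting} is fine — in fact, since $[\overline{t}_i]$ and the Teichm\"uller lifts of coefficients are exact lifts, the same monomial-by-monomial section gives the strict (indeed isometric) surjection $\bA^{r}_\psi \to R_\psi$ for every $r$, with no smallness condition on $r$ needed. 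So the repair is: keep your explicit-lift argument but use it for the easy inclusion, and replace the direct-limit identification by an application of Lemma~\ref{L:intermediate mod p subring} for the hard one, as in the paper's proof and in the parallel Lemmas~\ref{L:cyclotomic Rpsi} and~\ref{L:Kummer Rpsi}.
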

\begin{proof}
It is clear that $R_0\{
q_1/\overline{t}_1, \dots,q_d/\overline{t}_d, 
\overline{t}_1/r_1, \dots,\overline{t}_d/r_d\} \subseteq R_\psi$.
The inclusion in the other direction follows from
Lemma~\ref{L:intermediate mod p subring}.
This implies the stated results.
\end{proof}

\begin{remark} \label{R:affinoid1}
With notation as in Definition~\ref{D:standard toric tower1}, let $\alpha$ denote the spectral norm on $R_0$. Then for $0 < s \leq r$,
we have the identification
\begin{equation} \label{eq:affinoid1}
\bC^{[s,r]}_\psi \cong \tilde{\calR}^{[s,r]}_{R_0}
\{q_1/[\overline{t}_1], \dots, q_d/[\overline{t}_d], [\overline{t}_1]/r_1, \dots, [\overline{t}_d]/r_d\}
\end{equation}
provided that we equip $\tilde{\calR}^{[s,r]}_{R_0}$
with the norm $\max\{\lambda(\alpha^s)^{1/s}, \lambda(\alpha^r)^{1/r}\}$
as in Remark~\ref{R:Robba norms} (instead of our usual choice $\max\{\lambda(\alpha^s), \lambda(\alpha^r)\}$).

Suppose in addition that $A_0$ is a perfectoid field. By 
\cite[Theorem~4.10]{kedlaya-noetherian}, we may deduce from \eqref{eq:affinoid1} that $\bC^{[s,r]}_\psi$ is really strongly noetherian.
\end{remark}

\begin{defn} \label{D:toric tower1}
Let $(A_0, A_0^+)$ be an adic perfectoid algebra over $\QQ_p$.
By a \emph{toric tower} over $(A_0, A_0^+)$, we will mean a tower obtained from some standard toric tower over $(A_0, A_0^+)$ by a sequence of base extensions
along morphisms $(A,A^+) \to (B,B^+)$ each of one of the following forms:
\begin{itemize}
\item[(i)]
a rational localization;
\item[(ii)]
a finite \'etale extension;
\item[(iii)]
a morphism in which $B^+$ is obtained by completing $A^+$ at a finitely generated ideal containing $p$;
\item[(iv)]
a morphism in which $B^+$ is obtained by taking the $p$-adic completion of a (not necessarily finite) \'etale extension of $A^+$;
\item[(v)]
a morphism in which $B^+$ is obtained by taking the $p$-adic completion of a (not necessarily of finite type) algebraic localization of $A^+$.
\end{itemize}
By Lemma~\ref{L:standard toric weakly decompleting} and Proposition~\ref{P:weakly decompleting tower persistence}, any such tower is weakly decompleting,
and moreover $R_\psi$ is $F$-(finite projective).
If only base extensions of type (i) and (ii) are used, we will refer to the resulting tower also as a \emph{restricted toric tower}.
\end{defn}

\begin{remark} \label{R:standard group action}
Suppose that $A_0$ contains $\QQ_p(\mu_{p^\infty})$. Then any toric tower is Galois with group $\Gamma = \ZZ_p^d$; the action of an element $\gamma$ of the $i$-th copy of $\ZZ_p$ in $\Gamma$ fixes $T_j^{p^{-n}}$ for $j \neq i$ and takes $T_i^{p^{-n}}$ to $\zeta_{p^n} T_i^{p^{-n}}$. Consequently, the cohomology of $\Gamma$-modules can be computed as continuous group cohomology as per
Example~\ref{exa:Galois tower}.
\end{remark}

\begin{remark} \label{R:strongly noetherian imperfect}
Let $\psi$ be a restricted toric tower obtained by base extension from a standard toric tower $\psi_0$, and suppose moreover that $A_0$ is a perfectoid field. For $0 < s \leq r$, by Remark~\ref{R:affinoid1}, $\bC^{[s,r]}_{\psi_0}$ is really strongly noetherian. There then exists $r_0>0$ such that for $r \leq r_0$, $\bC^{[s,r]}_\psi$ can be obtained from $\bC^{[s,r]}_{\psi_0}$ by a sequence of rational localizations and finite \'etale morphisms, and therefore is also really strongly noetherian. For similar reasons, $R_\psi$ and $\bA^{r}_{\psi}$ are also really strongly noetherian.

As suggested in \cite{kedlaya-noetherian}, one expects Tate algebras over $\bC^{[s,r]}_{\psi_0}$ to satisfy additional ring-theoretic properties by analogy with affinoid algebras over a field; for instance, they should be regular and excellent and admit a relative form of the Nullstellensatz. Such results would imply similar statements about the rings $\bC^{[s,r]}_\psi$. See \cite{wear} for some results in this direction.
\end{remark}

We may obtain the decompleting property for standard toric towers by directly imitating the proof of Theorem~\ref{T:cyclotomic decompleting}. In order to make the argument applicable to general toric towers, we need a small additional computation.
\begin{lemma} \label{L:standard toric decompleting}
Any toric tower is decompleting. (The reified analogue also holds.)
\end{lemma}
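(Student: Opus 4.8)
The strategy is to reduce the general case to the standard case (already handled, in effect, by imitating Theorem~\ref{T:cyclotomic decompleting}), and then to propagate the decompleting property through the five types of base extension allowed in Definition~\ref{D:toric tower1}. First I would record that a standard toric tower is decompleting: we already know from Lemma~\ref{L:standard toric weakly decompleting} that it is weakly decompleting, so by Definition~\ref{D:decompleting tower} it remains to check that $\overline{\varphi}_\varpi^{-1}(R_{\psi_{(n)}^\bullet})/R_{\psi_{(n)}^\bullet}$ is exact for $n$ large. Since $A_0$ contains $\QQ_p(\mu_{p^\infty})$ after a harmless finite \'etale (hence, by the argument below, permissible) base extension, Remark~\ref{R:standard group action} puts us in the Galois setting of Example~\ref{exa:Galois tower} with $\Gamma = \ZZ_p^d$; the complex in question is then computed by continuous group cohomology. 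Using the explicit action $\gamma_i \colon \overline{t}_j^{p^{-n}} \mapsto \overline{\epsilon}_n^{\delta_{ij}} \overline{t}_j^{p^{-n}}$ and the decomposition of $\overline{\varphi}_\varpi^{-1}(R_\psi)/R_\psi$ into $R_\psi$-spans of monomials $\prod_i (1+\overline{\pi})^{a_i/p} \overline{t}_i^{b_i/p}$ with $(a,b) \not\equiv 0 \pmod p$, one shows exactly as in the proof of Lemma~\ref{L:cyclotomic decompleting} that for a suitable small open subgroup $H$ of $\Gamma$ the operator $\gamma^{p^m}-1$ acts bijectively on each such monomial summand (the key valuation estimate being that $(\gamma_i^{p^m}-1)$ changes the $\overline{\pi}$-adic and $\overline{t}_i$-adic valuations by controlled amounts, with the contribution of the twisting monomial dominating that of a general $R_\psi$-coefficient via the Leibniz rule \eqref{eq:Leibniz rule}). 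This kills all continuous $H$-cohomology of the complex, and Corollary~\ref{C:hochschild-serre} (applicability of which is clear since $\Gamma/H$ is a finite $p$-group, so every subgroup is subnormal by Remark~\ref{R:not subnormal}) upgrades this to vanishing of $\Gamma$-cohomology; strict exactness then follows from the open mapping theorem (Theorem~\ref{T:open mapping}), as in Lemma~\ref{L:decompleting all}.

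Next I would handle the stability under base extension. The decompleting property is preserved under finite \'etale base extension by Corollary~\ref{C:finite etale decompleting}, with no further work. For the remaining four types of morphism (rational localization, completion at a finitely generated ideal containing $p$, $p$-adic completion of an \'etale $A^+$-algebra, $p$-adic completion of an algebraic localization of $A^+$), I would argue directly: in each case Proposition~\ref{P:weakly decompleting tower persistence} already tells us that the base-extended tower $\psi'$ is weakly decompleting and identifies $R_{\psi'}$ (resp. $R_{\psi'}^+$) very explicitly as the corresponding localization/completion of $R_\psi$ (resp. $R_\psi^+$). One then checks that the complex $\overline{\varphi}_\varpi^{-1}(R_{\psi'^\bullet_{(n)}})/R_{\psi'^\bullet_{(n)}}$ is exact. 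The cleanest route is to observe that this complex is obtained from $\overline{\varphi}_\varpi^{-1}(R_{\psi^\bullet_{(n)}})/R_{\psi^\bullet_{(n)}}$ by applying a flat (indeed, in the relevant cases, faithfully flat or completion) base change on the coefficient ring that commutes with $\overline{\varphi}_\varpi^{-1}$: for rational localizations this is Theorem~\ref{T:weak flatness}; for the $p$-adic completion cases it follows because completion at an ideal of definition is exact on finitely generated modules over (after truncating mod $p$) the rings in play, which are really strongly noetherian by Remark~\ref{R:strongly noetherian imperfect}; for the \'etale and algebraic localization cases the base change is flat and the $\overline{\varphi}_\varpi$-equivariant identification of $R_{\psi'}^+$ in Proposition~\ref{P:weakly decompleting tower persistence}(d),(e) is exactly what is needed. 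Since exactness of the original complex was just established, exactness of the base-extended complex follows, and by Definition~\ref{D:decompleting tower} (noting the property passes to the truncated towers $\psi'_{(n)}$) the tower $\psi'$ is decompleting.

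Finally, I would assemble these pieces: an arbitrary toric tower is obtained from a standard toric tower by a finite sequence of base extensions of types (i)--(v), so by induction on the length of this sequence, using the standard case as the base and the five stability statements above for the induction step, every toric tower is decompleting. The reified analogue is obtained by the same argument, substituting throughout the reified versions of the cited results (Remark~\ref{R:reified pseudoflat}, Remark~\ref{R:Fontaine primitive reified}, and the reified form of Proposition~\ref{P:weakly decompleting tower persistence}), with no essential change.

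\textbf{Main obstacle.} I expect the genuinely delicate point to be the base case, specifically the uniform valuation estimates showing that $\gamma^{p^m}-1$ acts invertibly on each monomial summand of $\overline{\varphi}_\varpi^{-1}(R_\psi)/R_\psi$ for the \emph{relative} torus (i.e. with the extra variables $\overline{t}_i$ present), since one must simultaneously track the $\overline{\pi}$-adic and $\overline{t}_i$-adic contributions and confirm that the twisting-monomial term always dominates the Leibniz-rule error term coming from a general coefficient in $R_\psi$; this is the one place where the argument is not a formal consequence of the cyclotomic case but requires the small additional computation alluded to just before the statement. Everything downstream---propagation through base extensions, the reified analogue---is comparatively routine given the infrastructure already in place.
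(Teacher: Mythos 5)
There is a genuine gap, and it sits exactly where you declare the work to be ``routine'': the propagation of the decompleting property through base extensions of types (i), (iii)--(v) by ``flat base change of the complex'' does not work. The terms of the new complex are indeed (termwise) base extensions $\overline{\varphi}^{-1}(R_{\psi^k})/R_{\psi^k} \otimes_{R_{\psi^k}} R_{\psi'^k}$, but $\Gamma$ (equivalently the cosimplicial structure) acts nontrivially on the new coefficients, not merely through $R_{\psi^k}$: on a monomial summand $\overline{t}^{e/p}R_{\psi'}$ the Leibniz rule gives $(\gamma^{p^n}-1)(\overline{t}^{e/p}\overline{x}) = (\gamma^{p^n}-1)(\overline{t}^{e/p})\overline{x} + \gamma^{p^n}(\overline{t}^{e/p})(\gamma^{p^n}-1)(\overline{x})$ with $\overline{x}$ ranging over the \emph{new} ring $R_{\psi'}$, so bounded invertibility of $\gamma^{p^n}-1$ (equivalently, vanishing of continuous cohomology) on the old summand does not transfer by flatness or exactness of completion; one needs the quantitative bound $\overline{\alpha}_{\psi'}((\gamma^{p^n}-1)(\overline{x})) \leq c\,p^{-p^{n+1}/(p-1)}\overline{\alpha}_{\psi'}(\overline{x})$ on $R_{\psi'}$ itself. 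That estimate is precisely Lemma~\ref{L:exponential convergence gamma}, whose proof is where the induction over operations (i)--(v) actually lives (via the analyticity-propagation argument of Proposition~\ref{P:extend analyticity along affinoid}); with it in hand, the paper proves the lemma for an arbitrary toric tower in one stroke, with no induction on the decompleting property at all. Your plan cites the analyticity machinery only in passing and replaces it, at the crucial step, by an unjustified base-change principle.

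Two further problems. First, your reduction to the case $A_0 \supseteq \QQ_p(\mu_{p^\infty})$ is backwards: adjoining all $p$-power roots of unity is not a finite \'etale extension, and Corollary~\ref{C:finite etale decompleting} only pushes the decompleting property \emph{up} to a finite \'etale cover, never down; the paper instead splits the inclusions $A_0 \to A_0 \otimes_{\QQ_p}\QQ_p(\mu_{p^n})$ compatibly via Lemma~\ref{L:perfectoid tower splitting} so that exactness descends from the completed direct limit. Second, your decomposition of $\overline{\varphi}^{-1}(R_\psi)/R_\psi$ into monomials $\prod_i(1+\overline{\pi})^{a_i/p}\overline{t}_i^{b_i/p}$ is the one for \emph{relative} toric towers over a $p$-adic field (Lemma~\ref{L:andreatta-brinon}); here the base $A_0$ is perfectoid, so $R_0$ is perfect, $(1+\overline{\pi})^{1/p}$ already lies in $R_\psi$, and the correct decomposition uses only the $\overline{t}^{e/p}$-summands --- your ``main obstacle'' paragraph, which locates the difficulty in tracking $\overline{\pi}$-adic contributions, shows the same conflation of this lemma with the relative case treated later in the paper.
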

\begin{proof}
Suppose first that $A_0$ contains $\Qp(\mu_{p^\infty})$,
in which case $\psi$ is Galois (see Remark~\ref{R:standard group action}).
As in Definition~\ref{D:toric tower1},  $\psi$ is weakly decompleting and
we have the $\Gamma$-equivariant decomposition
\begin{equation} \label{eq:ab-decomposition1}
\overline{\varphi}^{-1}(R_{\psi^\bullet})/R_{\psi^\bullet}
\cong  
\bigoplus_{e_1,\dots,e_d} \overline{t}_1^{e_1/p} \cdots \overline{t}_d^{e_d/p} R_{\psi^\bullet},
\end{equation}
where $(e_1,\dots,e_d)$ runs over $\{0,\dots,p-1\}^d \setminus \{(0,\dots,0)\}$;
this also constitutes a decomposition of Banach modules by the open mapping theorem (Theorem~\ref{T:open mapping}).
 For each index $(e_1,\dots,e_d)$, put $\overline{y} = \overline{t}_1^{e_1/p} \cdots \overline{t}_d^{e_d/p}$,
choose $i \in \{1,\dots,d\}$ for which $e_i \neq 0$, and let $\gamma$ be the canonical generator of the $i$-th copy of $\Zp$ within $\Gamma$. Define $\overline{\pi}$ as in Theorem~\ref{T:cyclotomic decompleting};
then $(\gamma^{p^n}-1)(\overline{y}) = \overline{\pi}^{p^{n-1}} \overline{y}$, so 
\[
\overline{\alpha}_\psi((\gamma^{p^n}-1)(\overline{y})) = p^{-p^{n}/(p-1)} \overline{y}.
\]
On the other hand, by Lemma~\ref{L:exponential convergence gamma} below, there exists $c>0$ such that
\[
\overline{\alpha}_\psi((\gamma^{p^n}-1)(\overline{x}))
\leq c p^{-p^{n+1}/(p-1)}
\overline{\alpha}_\psi(\overline{x}) \qquad (n \geq 0, \overline{x} \in R_\psi).
\]
Using \eqref{eq:Leibniz rule}, we see that
for $n$ sufficiently large, 
$\overline{\pi}^{-p^{n-1}} (\gamma^{p^n}-1)$
acts on $\overline{y} R_\psi$ as the identity plus an operator of norm less than 1. It follows that $\gamma^{p^n}-1$ acts invertibly on $\overline{y} R_\psi$, 
from which it follows that $\psi$ is decompleting (by any of Lemma~\ref{L:hochschild-serre}, Theorem~\ref{T:kill analytic cohomology}, or an elementary calculation using the fact that $\Gamma$ is abelian).

To treat the general case, equip $A_n = A_0 \otimes_{\Qp} \Qp(\mu_{p^n})$ with the spectral norm. Since $A_n$ is finite \'etale over $A_0$, it is again perfectoid by
Theorem~\ref{T:Fontaine perfectoid compatibility}.
By Lemma~\ref{L:perfectoid tower splitting}, we may split the inclusions $A_0 \to A_n$ in such a way that the splitting extends continuously to the completed direct limit of the $A_n$. This then allows us to reduce to the previous case.
\end{proof}

\begin{lemma} \label{L:exponential convergence gamma}
Let $\psi$ be a toric tower such that $A_0$ contains $\Qp(\mu_{p^\infty})$,
so that $\psi$ is Galois with group $\Gamma = \Zp^d$. Then for any $\gamma \in \Gamma$,
there exists $c>0$ such that 
\begin{equation} \label{eq:exponential convergence gamma}
\overline{\alpha}_\psi((\gamma^{p^n}-1)(\overline{x}))
\leq c p^{-p^{n+1}/(p-1)}
\overline{\alpha}_\psi(\overline{x}) \qquad (n \geq 0, \overline{x} \in R_\psi).
\end{equation}
\end{lemma}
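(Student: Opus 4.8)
The plan is to reduce to the case of a \emph{standard} toric tower and then do that case by an explicit monomial computation; the constant $c$ will be allowed to depend on $\gamma$ and on $\psi$.

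\emph{Reduction to a standard tower.} By Definition~\ref{D:toric tower1}, $\psi$ is obtained from some standard toric tower $\psi_0$ (with parameters $q_1,\dots,q_d,r_1,\dots,r_d$) by a finite sequence of base extensions along morphisms $(A,A^+)\to(B,B^+)$ of types (i)--(v), where $A=A_0\{q_1/T_1,\dots,T_d/r_d\}$. In all five cases the Galois group $\Gamma$ acts \emph{trivially} on $A$ (its action on $A_{\psi_0,n}$ permutes the roots $T_i^{p^{-m}}$ but fixes $A_{\psi_0,0}=A$), hence trivially on $B$, hence trivially on $\overline B:=B/(p)$; consequently, at each stage, $\Gamma$ acts on $R_{\psi'}$ only through its action on $R_\psi$, so $(\gamma^{p^n}-1)|_{R_{\psi'}}$ is the base extension of $(\gamma^{p^n}-1)|_{R_\psi}$ along $R_\psi\to R_{\psi'}$. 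Thus \eqref{eq:exponential convergence gamma} for $\psi'$ follows from the same estimate for $\psi$ up to a bounded factor: for a rational localization (or a completion of types (iii)--(v)) one writes $R_{\psi'}$ as a quotient of a Tate-type algebra over $R_\psi$ on which the operator acts coefficientwise, so its operator norm for the spectral ($=$ quotient) norm is no larger than on $R_\psi$; for a finite \'etale extension one uses that $R_{\psi'}$ is, as a $\Gamma$-module, a direct summand of $R_\psi^{\oplus N}$ with $\Gamma$ acting diagonally, and that the spectral norm on this finite $R_\psi$-module is equivalent to $\overline\alpha_\psi$ up to a constant depending only on the extension. Iterating over the finitely many base extensions produces $c$.

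\emph{The standard case.} For $\psi_0$ itself, Lemma~\ref{L:standard toric weakly decompleting} gives $R_{\psi_0}=R_0\{q_1/\overline t_1,\dots,\overline t_d/r_d\}$, so every $\overline x\in R_{\psi_0}$ has a convergent expansion $\overline x=\sum_{\mathbf n\in\ZZ^d}c_{\mathbf n}\,\overline{\mathbf t}^{\,\mathbf n}$ with $c_{\mathbf n}\in R_0$, and $\overline\alpha_{\psi_0}(\overline x)=\sup_{\mathbf n}\overline\alpha_{\psi_0}(c_{\mathbf n}\overline{\mathbf t}^{\,\mathbf n})$ because the monomials $\overline{\mathbf t}^{\,\mathbf n}$ are pairwise orthogonal for the Gauss norm. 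Since $A_0\supseteq\Qp(\mu_{p^\infty})$ we have $\epsilon\in R_0$ with $\overline\alpha_{\psi_0}(\epsilon-1)=p^{-p/(p-1)}$ (notation as in \S\ref{subsec:cyclotomic}), and for $\gamma\in\Gamma=\ZZ_p^d$ with components $\mathbf a\in\ZZ_p^d$ one has $\gamma(\overline t_i)=\epsilon^{a_i}\overline t_i$ and $\gamma$ trivial on $R_0$, so
\[
(\gamma^{p^n}-1)(c_{\mathbf n}\overline{\mathbf t}^{\,\mathbf n})=c_{\mathbf n}\,(\epsilon^{p^n\langle\mathbf a,\mathbf n\rangle}-1)\,\overline{\mathbf t}^{\,\mathbf n}.
\]
If $\langle\mathbf a,\mathbf n\rangle=0$ this vanishes; otherwise write $\langle\mathbf a,\mathbf n\rangle=p^ku$ with $u\in\ZZ_p^\times$, $k\ge0$, and use that $\tilde R_{\psi_0}$ has characteristic $p$: $\epsilon^{p^n\langle\mathbf a,\mathbf n\rangle}-1=(\epsilon^u-1)^{p^{n+k}}$. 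The leading term of $\epsilon^u-1$ (a binomial-type expansion in $\epsilon-1$ with unit leading coefficient) dominates, so $\overline\alpha_{\psi_0}(\epsilon^u-1)=\overline\alpha_{\psi_0}(\epsilon-1)=p^{-p/(p-1)}$, whence $\overline\alpha_{\psi_0}(\epsilon^{p^n\langle\mathbf a,\mathbf n\rangle}-1)=p^{-p^{n+k+1}/(p-1)}\le p^{-p^{n+1}/(p-1)}$. Taking the supremum over $\mathbf n$ yields \eqref{eq:exponential convergence gamma} with $c=1$. The reified analogue is identical.

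\emph{Expected obstacle.} The genuinely delicate step is the norm bookkeeping in the reduction, namely verifying that the explicit rate $p^{-p^{n+1}/(p-1)}$ survives a finite \'etale base extension; this is why one must appeal to the finite-projective-module structure of $R_{\psi'}$ over $R_\psi$ rather than merely to the fact that $\gamma^{p^n}$ is an isometry (which would only give a rate bounded by $1$). The rational-localization and completion steps are handled uniformly via the coefficientwise action, and the arithmetic input --- the characteristic-$p$ identity $\epsilon^{p^m}-1=(\epsilon-1)^{p^m}$, which is precisely what forces the exponent $p^{n+1}$ --- is elementary. I do not expect any difficulty beyond this.
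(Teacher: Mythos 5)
Your computation for the standard tower is correct, and it is exactly what the paper means by reading the claim off \eqref{eq:reduction of standard toric tower}: the action of $\gamma^{p^n}$ is diagonal on monomials, and in characteristic $p$ one has $\epsilon^{p^{n+k}u}-1=(\epsilon^u-1)^{p^{n+k}}$ with $\overline{\alpha}_\psi(\epsilon^u-1)=p^{-p/(p-1)}$ for $u\in\Zp^\times$, giving the bound with $c=1$. Your overall plan (standard case, then propagation along the operations (i)--(v) of Definition~\ref{D:toric tower1}) is also the paper's plan, which propagates the estimate by arguing as in Proposition~\ref{P:extend analyticity along affinoid}. The problem lies in how you carry out the propagation.

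The claims that the operator acts \emph{coefficientwise} on a Tate-type presentation of $R_{\psi'}$ over $R_\psi$, and that in the finite \'etale case $R_{\psi'}$ is, \emph{as a $\Gamma$-module}, a direct summand of $R_\psi^{\oplus N}$ with the diagonal action, are unjustified and false in general. It is true that $\Gamma$ fixes $A$ and $B$, but there is no $\Gamma$-fixed copy of (a reduction of) $A$ inside $R_\psi$ from which the base extension is pulled back: the characteristic-$p$ data defining $R_{\psi'}$ --- the parameters $\overline{f}_1,\dots,\overline{f}_n,\overline{g}\in R_\psi$ of Proposition~\ref{P:weakly decompleting tower persistence}(a), or the equation of an \'etale generator --- are only approximations produced via the perfectoid correspondence and are not $\Gamma$-invariant. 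For instance, for $A=A_0\{T^{\pm 1}\}$ and the subspace $v(T-1)\leq v(p)$, the relevant parameter is an approximation of $\overline{t}-1$, and $\gamma(\overline{t}-1)=\epsilon^{a}\overline{t}-1\neq\overline{t}-1$; likewise $R_{\psi'}^{\Gamma}$ is far too small to generate $R_{\psi'}$ over $R_\psi$ (already $R_\psi^\Gamma$ is essentially $R_0$), so the equivariant-summand claim is itself a decompletion-type statement that cannot be assumed --- and, as you observe, isometry of $\gamma^{p^n}$ plus mere continuity on the new generators cannot produce the specific exponent $p/(p-1)$ (via $\gamma^{p^n}-1=(\gamma^{p^m}-1)^{p^{n-m}}$ in characteristic $p$, a bound $\delta^{p^{n-m}}$ with $\delta$ not small enough falls short). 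The correct propagation, which is what the paper's citation of Proposition~\ref{P:extend analyticity along affinoid} is pointing at, is: take a strict presentation of $R_{\psi'}$ over $R_\psi$, use \eqref{eq:Leibniz rule} to reduce to the finitely many new generators, and obtain the rate on those generators from the structure of each case --- for (i) and (v) they have the form $\overline{f}/\overline{g}$ with $\overline{f},\overline{g}\in R_\psi$ and $\overline{g}^{-1}$ bounded in $R_{\psi'}$, so the quotient rule, the inductive estimate on $R_\psi$, and the isometry of $\gamma^{p^n}$ give the rate; for (iii), $R_\psi\to R_{\psi'}$ is isometric with dense image; for (ii) and (iv), reduce after a rational cover (already handled) to a standard \'etale presentation $R_\psi[X]/(P)$ and transfer the rate to the generator by a Hensel/Krasner argument using the invertibility of $P'(x)$ --- noting that finitely many initial values of $n$ can always be absorbed into $c$, so only the rate for large $n$ must be produced.
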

\begin{proof}
For $\psi$ a standard toric tower, we may read the claim directly off of \eqref{eq:reduction of standard toric tower}. It thus suffices to check that if $\psi'$ is obtained from a toric tower $\psi$ by a single operation of one of the forms (i)--(v) enumerated in Definition~\ref{D:toric tower1} and the action of $\Gamma$ on $R_{\psi}$ is analytic, then so is the action on $R_{\psi'}$.
This is obvious for type (iii); for the other types, we may argue as in the proof of Proposition~\ref{P:extend analyticity along affinoid}.
\end{proof}

\begin{theorem} \label{T:standard toric decompleting}
Let $\psi$ be a toric tower.
\begin{enumerate}
\item[(a)]
The results of \S\ref{subsec:phigamma} and \S\ref{subsec:Frobenius splittings} hold for $\psi$.
\item[(b)]
If $\psi$ is a restricted toric tower over a perfectoid field,
then the results of \S\ref{subsec:pseudocoherent type C} also hold for $\psi$.
\end{enumerate}
(The reified analogue also holds.)
\end{theorem}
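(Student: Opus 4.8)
The plan is to verify the three conditions of Remark~\ref{R:decompleting conditions} for a toric tower $\psi$, exactly paralleling the proof of Theorem~\ref{T:cyclotomic decompleting}. For part (a), the first task is to check that $\psi$ is locally decompleting. Lemma~\ref{L:standard toric decompleting} already gives that $\psi$ itself is decompleting; since the base extension of a toric tower along a rational localization of $(A,A^+)$ is again a toric tower (it falls under operation (i) in Definition~\ref{D:toric tower1}), Lemma~\ref{L:standard toric decompleting} applied to those base extensions yields the locally decompleting property, provided $(A,A^+)$ is stably uniform. Stable uniformity holds because $A$ is perfectoid (Corollary~\ref{C:Fontaine perfectoid stably uniform}): a standard toric tower has perfectoid base $A_0$, and each of the operations (i)--(v) preserves the perfectoid property by Theorem~\ref{T:Fontaine perfectoid compatibility}, so the base ring $A$ of any toric tower is perfectoid and hence stably uniform. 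The second task is that $R_\psi$ is $F$-(finite projective), which is already recorded in Definition~\ref{D:toric tower1} (it follows from Lemma~\ref{L:standard toric weakly decompleting}, which gives an explicit description of $R_{\psi_0}$ as a Tate algebra over the perfect ring $R_0$, combined with Proposition~\ref{P:weakly decompleting tower persistence}). With (a) and (b) of Remark~\ref{R:decompleting conditions} in hand, all of \S\ref{subsec:phigamma} and \S\ref{subsec:Frobenius splittings} apply.

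For part (b), I must additionally verify condition (c) of Remark~\ref{R:decompleting conditions}: for $0 < s \leq r$, the ring $\breve{\bC}^{[s,r]}_\psi$ is coherent. Here I would use Remark~\ref{R:strongly noetherian imperfect}: for a restricted toric tower over a perfectoid field, there exists $r_0 > 0$ such that for $r \leq r_0$, the ring $\bC^{[s,r]}_\psi$ is really strongly noetherian, being obtained from the really strongly noetherian ring $\bC^{[s,r]}_{\psi_0}$ (Remark~\ref{R:affinoid1}) by finitely many rational localizations and finite \'etale morphisms. Then $\breve{\bC}^{[s,r]}_\psi = \bigcup_{n} \varphi_p^{-n}(\bC^{[p^{-n}s,p^{-n}r]}_\psi)$ is a filtered direct limit of noetherian rings; by Remark~\ref{R:coherent conditions}, the direct limit of coherent rings along flat transition maps is coherent, so I need the transition maps $\varphi_p^{-n}(\bC^{[p^{-n}s,p^{-n}r]}_\psi) \to \varphi_p^{-n-1}(\bC^{[p^{-n-1}s,p^{-n-1}r]}_\psi)$ to be flat. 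These are isomorphic (via $\varphi_p^{-n}$) to the inclusion $\bC^{[s,r]}_\psi \to \varphi_p^{-1}(\bC^{[p^{-1}s,p^{-1}r]}_\psi)$, which by Lemma~\ref{L:splitting1} (whose hypotheses hold since $R_\psi$ is $F$-(finite projective)) realizes the target as a finite projective, hence flat, module over the source, at least for $r$ sufficiently small. A Frobenius-twist argument then handles all $s,r$. For the general case $r > r_0$, I would cover $[s,r]$ by subintervals of length at most $r_0$ after applying $\varphi_p^{-n}$ for suitable $n$, and use the glueing results (Proposition~\ref{P:glueing projective modules over intervals}) to propagate coherence, or simply note that the coherence condition in Remark~\ref{R:decompleting conditions}(c) and its consequences in \S\ref{subsec:pseudocoherent type C} are only invoked for small intervals in the first place after twisting by $\varphi$.

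Finally, the reified analogue: since Lemma~\ref{L:standard toric decompleting} already asserts its reified version, and the ring-theoretic inputs (Corollary~\ref{C:Robba stably uniform}, Remark~\ref{R:reified Robba stably uniform}, Remark~\ref{R:reified pseudoflat}, Remark~\ref{R:reified pseudocoherent}) all have their reified counterparts, the same argument transfers verbatim with adic spectra replaced by reified adic spectra and strongly noetherian replaced by really strongly noetherian. The main obstacle I anticipate is the coherence verification in part (b): carefully tracking that the transition maps in the directed system defining $\breve{\bC}^{[s,r]}_\psi$ are flat (which needs the $F$-(finite projective) hypothesis fed through Lemma~\ref{L:splitting1}, and hence the restriction to small radii), and ensuring the reduction from arbitrary intervals to small intervals via Frobenius twists is legitimate in the nonnoetherian setting. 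Everything else is a bookkeeping exercise of invoking the already-established stability properties of toric towers under the five standard operations.
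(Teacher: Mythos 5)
Your overall strategy is exactly the paper's: the official proof consists of citing Lemma~\ref{L:standard toric decompleting} for the locally decompleting property (hence (a)), and Remark~\ref{R:strongly noetherian imperfect} for the $F$-(finite projective) property of $R_\psi$ and the coherence of $\breve{\bC}^{[s,r]}_\psi$ (hence (b) via Remark~\ref{R:decompleting conditions}). Your extra work on part (b) --- making the coherence of $\breve{\bC}^{[s,r]}_\psi$ explicit by writing it as a directed union of noetherian rings with flat transition maps, using Lemma~\ref{L:splitting1} and a Frobenius twist to reduce to small radii --- is a legitimate filling-in of a step the paper leaves implicit, and is consistent with how the analogous coherence claim is handled in the cyclotomic case (Lemma~\ref{L:cyclotomic coherent} feeding into Theorem~\ref{T:cyclotomic decompleting}).

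There is, however, one genuinely incorrect step in your treatment of (a): the claim that ``the base ring $A$ of any toric tower is perfectoid and hence stably uniform'' via Corollary~\ref{C:Fontaine perfectoid stably uniform}. The base of a standard toric tower is $A = A_0\{q_1/T_1,\dots,T_d/r_d\}$, a Tate-type algebra over the perfectoid ring $A_0$; it is \emph{not} perfectoid, since the coordinates $T_i$ admit no $p$-power roots in $A$ (Frobenius is not surjective on $A^+/(u^p)$) --- indeed the whole purpose of the tower is to adjoin those roots so that the completed limit $\tilde{A}_\psi$ becomes perfectoid. Theorem~\ref{T:Fontaine perfectoid compatibility} preserves perfectoidness of the \emph{total space} under the operations (i)--(v), not of the base. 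The conclusion you need (stable uniformity of $(A,A^+)$, as required in the definition of locally decompleting) is nonetheless true, but the correct argument is the splitting one: the inclusion $A \to \tilde{A}_\psi$ splits in the category of Banach $A$-modules (Remark~\ref{R:toric splitting} for the standard tower, and the splitting persists under the base-change operations), so any rational localization $B$ of $A$ embeds with a splitting into the perfectoid ring $B \widehat{\otimes}_A \tilde{A}_\psi$, which is uniform by Proposition~\ref{P:perfectoid tower persistence}(a) and Corollary~\ref{C:Fontaine perfectoid stably uniform}; uniformity of $B$ follows, exactly as in the proof of Corollary~\ref{C:imperfect stably uniform}. With that repair, the rest of your argument --- rational localizations of toric towers are again toric towers, so Lemma~\ref{L:standard toric decompleting} applied to them yields locally decompleting, and conditions (a), (b) of Remark~\ref{R:decompleting conditions} give all of \S\ref{subsec:phigamma} and \S\ref{subsec:Frobenius splittings} --- matches the paper.
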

\begin{proof}
By Lemma~\ref{L:standard toric decompleting}, the tower $\psi$ is locally decompleting;
this proves (a).
By Remark~\ref{R:strongly noetherian imperfect},
the ring $R_\psi$ is $F$-(finite projective)  and the rings $\breve{\bC}^{[s,r]}_\psi$ are coherent; this proves (b)
by Remark~\ref{R:decompleting conditions}.
\end{proof}

\begin{cor} \label{C:pseudocoherent fpd local}
Suppose that $\psi$ is a restricted toric tower over a perfectoid field.
For $a$ a positive integer, every pseudocoherent $\varphi^a$-module over $\tilde{\bC}_\psi$ is fpd of projective dimension at most $d+1$.
\end{cor}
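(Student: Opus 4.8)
The plan is to trade the statement over $\tilde{\bC}_\psi$ for a purely ring-theoretic bound on the homological dimension of the interval rings $\tilde{\bC}^{[s,r]}_\psi$, and then to prove that bound by comparison with the \emph{imperfect} rings $\bC^{[s,r]}_\psi$, which for a restricted toric tower over a perfectoid field are honest regular noetherian rings of Krull dimension at most $d+1$.

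\textbf{Reduction to the interval rings.} Let $M$ be a pseudocoherent $\varphi^a$-module over $\tilde{\bC}_\psi=\tilde{\calR}_{\tilde{R}_\psi}$. By Corollary~\ref{C:lift action}, $M$ admits a strict epimorphism from a $\varphi^a$-module whose underlying $\tilde{\bC}_\psi$-module is finite free; iterating and invoking Lemma~\ref{L:pseudocoherent 2 of 3} at each stage, we obtain a resolution $\cdots\to P_1\to P_0\to M\to0$ by $\varphi^a$-modules $P_i$ finite free over $\tilde{\bC}_\psi$, and the $(d+1)$-st syzygy $M_{d+1}=\ker(P_d\to P_{d-1})$ is again a pseudocoherent $\varphi^a$-module. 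It thus suffices to prove that $M_{d+1}$ is finite projective over $\tilde{\bC}_\psi$. By Theorem~\ref{T:perfect generalized phi-modules} together with Corollary~\ref{C:perfect fpd phi-modules}, a pseudocoherent $\varphi^a$-module over $\tilde{\bC}_\psi$ is finite projective as soon as its base extension to $\tilde{\bC}^{[s,r]}_\psi$ is finite projective for some (equivalently every) $0<s\le r/q$ with $r$ small; and since $\tilde{\bC}_\psi\to\tilde{\bC}^{[s,r]}_\psi$ is topologically flat by Proposition~\ref{P:weak flatness perfect Robba}, $M_{d+1}\otimes\tilde{\bC}^{[s,r]}_\psi$ is precisely the $(d+1)$-st syzygy of the pseudocoherent $\tilde{\bC}^{[s,r]}_\psi$-module $M\otimes\tilde{\bC}^{[s,r]}_\psi$. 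The whole statement therefore reduces to: \emph{every pseudocoherent module over $\tilde{\bC}^{[s,r]}_\psi$ is $(d+1)$-fpd}, for $0<s\le r$ sufficiently small.

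\textbf{The ring-theoretic bound.} By Remark~\ref{R:affinoid1} the imperfect ring $\bC^{[s,r]}_\psi$ is obtained from the Tate algebra $\tilde{\calR}^{[s,r]}_{L}\{T_1,\dots,T_d\}$ (with $L$ the tilt of the perfectoid base field) by a rational localization, followed by the rational localizations and finite \'etale morphisms defining the restricted toric tower (Remark~\ref{R:strongly noetherian imperfect}). Since $\tilde{\calR}^{[s,r]}_{L}\{T_1,\dots,T_d\}$ is regular of Krull dimension $d+1$ by Theorem~\ref{T:curve noetherian}(d), and regularity (for rings in which $p$ is invertible) together with the dimension bound is preserved under localization and finite \'etale extension, $\bC^{[s,r]}_\psi$ is regular noetherian of Krull dimension at most $d+1$, hence of global dimension at most $d+1$. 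Now $\tilde{\bC}^{[s,r]}_\psi$ is the completion of the filtered union $\breve{\bC}^{[s,r]}_\psi=\bigcup_n\varphi_\varpi^{-n}(\bC^{[p^{-n}s,\,p^{-n}r]}_\psi)$ (Remark~\ref{R:C dense}), whose transition maps are finite projective because $R_\psi$ is $F$-(finite projective) by Theorem~\ref{T:standard toric decompleting} (cf.\ Lemma~\ref{L:splitting1}); such a filtered colimit of rings of global dimension $\le d+1$ along flat maps is coherent (Remark~\ref{R:coherent conditions}) and has weak global dimension $\le d+1$. Finally the completion map $\bC^{[s,r]}_\psi\to\tilde{\bC}^{[s,r]}_\psi$ realizes $\tilde{\bC}^{[s,r]}_\psi$ as a completed direct sum of finite projective $\bC^{[s,r]}_\psi$-modules (Lemma~\ref{L:full splitting}), hence is pro-projective, topologically flat, and — $\bC^{[s,r]}_\psi$ being noetherian — flat; its fibres over primes of $\bC^{[s,r]}_\psi$ are completions of filtered unions of finite \'etale field algebras, so have weak global dimension $0$. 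Consequently $\tilde{\bC}^{[s,r]}_\psi$ has weak global dimension at most $d+1$. Given a pseudocoherent $\tilde{\bC}^{[s,r]}_\psi$-module $N$ and its $(d+1)$-st syzygy $N_{d+1}$ in a resolution by finite free modules, $N_{d+1}$ is finitely presented (Lemma~\ref{L:pseudocoherent 2 of 3}) and of flat dimension $0$, hence flat, hence finite projective; so $N$ is $(d+1)$-fpd, as needed.

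\textbf{Main obstacle.} The crux is the last passage of the previous paragraph, namely bounding the homological dimension of the completion $\tilde{\bC}^{[s,r]}_\psi$ by that of the uncompleted ring: one must run the argument ``flat base change with regular noetherian source of global dimension $m$ and zero-dimensional fibres has weak global dimension $\le m$'' in the non-noetherian Banach setting, where completion is not automatically flat nor coherence-preserving. The pro-projective splitting of Lemma~\ref{L:full splitting} (equivalently Corollary~\ref{C:split base extension1}) is the key input that makes $\bC^{[s,r]}_\psi\to\tilde{\bC}^{[s,r]}_\psi$ flat and should also be used to identify its fibres; this is where care is required. A secondary point is that the equivalences of Theorem~\ref{T:perfect generalized phi-modules} and Corollary~\ref{C:perfect fpd phi-modules} transfer the \emph{numerical} projective-dimension bound and not merely the fpd property, which is handled by applying the quasi-Stein gluing argument of Theorem~\ref{T:perfect generalized phi-modules} to the $(d+1)$-st syzygies compatibly over a covering of $(0,\infty)$ by closed subintervals.
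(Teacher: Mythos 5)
Your opening reduction is fine and is consistent with the paper's ingredients: using Corollary~\ref{C:lift action}, Proposition~\ref{P:weak flatness perfect Robba}, Theorem~\ref{T:perfect generalized phi-modules} and Corollary~\ref{C:perfect fpd phi-modules} to pass to the $(d+1)$-st syzygy over an interval ring, and Remark~\ref{R:affinoid1}, Remark~\ref{R:strongly noetherian imperfect} and Theorem~\ref{T:curve noetherian} to see that $\bC^{[s,r]}_\psi$ is really strongly noetherian and regular of dimension at most $d+1$. The problem is what you then reduce to: the structure-free assertion that \emph{every} pseudocoherent module over the completed ring $\tilde{\bC}^{[s,r]}_\psi$ is $(d+1)$-fpd, which you try to obtain by a change-of-rings argument across the completion $\breve{\bC}^{[s,r]}_\psi \to \tilde{\bC}^{[s,r]}_\psi$ ("flat with weak-dimension-zero fibres"). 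That step is a genuine gap. The inequality $\mathrm{w.gl.dim}(S) \le \mathrm{gl.dim}(R) + \sup_{\gothp} \mathrm{w.gl.dim}(S \otimes_R \kappa(\gothp))$ for a flat map $R \to S$ is not available without noetherian (or at least strong coherence) hypotheses on $S$, which $\tilde{\bC}^{[s,r]}_\psi$ does not satisfy; moreover the relevant fibres are the \emph{algebraic} tensor products $\tilde{\bC}^{[s,r]}_\psi \otimes_{\bC^{[s,r]}_\psi} \kappa(\gothp)$, which are quotients/localizations of the completed ring and are not "completions of filtered unions of finite \'etale field algebras" (completing changes the ring and the Tor computation), and no argument is given that they have weak dimension zero. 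Lemma~\ref{L:full splitting} only gives a splitting in the category of Banach modules over the source (hence topological flatness, and flatness over the noetherian source), not any control of fibres or of homological dimension after completion; recall also the warning after Remark~\ref{R:coherent conditions} that coherence does not survive completion. Indeed the blanket statement you reduce to is stronger than anything proved in the paper: all of its finiteness results over the completed rings (here and in \S\ref{sec:applications}) are established only for modules carrying the extra descent/$\Gamma$-structure, and avoiding precisely the question you have reduced to is the point of that machinery.

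The paper's proof goes the other way: rather than bounding the homological dimension of the completed ring, it descends the object itself. Since $\psi$ is a restricted toric tower over a perfectoid field, Theorem~\ref{T:standard toric decompleting} makes the decompletion results of \S\ref{subsec:pseudocoherent type C} available, and Theorem~\ref{T:add tilde2} identifies the pseudocoherent object with a pseudocoherent module over the imperfect rings $\breve{\bC}^{[s,r]}_\psi$, i.e.\ with a finitely presented module over some $\varphi^{-n}(\bC^{[s',r']}_\psi)$, which is regular noetherian of dimension at most $d+1$ by Theorem~\ref{T:curve noetherian}; there the bound is classical (Remark~\ref{R:noetherian pseudoflat}), and the length-$(d+1)$ resolution is carried back along the split, pseudoflat base extension to $\tilde{\bC}^{[s,r]}_\psi$ (Corollary~\ref{C:split base extension1}) and glued over intervals as in your first paragraph. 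The essential point is that this descent uses the decompletion equivalences, i.e.\ the $\Gamma$-structure/descent data, which your reduction to a bare module over $\tilde{\bC}^{[s,r]}_\psi$ discards. To repair your argument you would either have to prove the structure-free interval statement (which is not known and is doubtful), or retain the descent data and argue as the paper does.
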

\begin{proof}
This follows from Theorem~\ref{T:standard toric decompleting},
Theorem~\ref{T:add tilde2}, and Theorem~\ref{T:curve noetherian}(c).
\end{proof}

Let us make explicit a point which has already come up implicitly in the preceding discussion. 
\begin{remark} \label{R:toric splitting}
For toric towers, the inclusion $A_\psi \to \tilde{A}_\psi$ admits a unique $\Gamma$-equivariant splitting: in the case of a standard toric tower this is the map
\[
\sum_{i_1,\dots,i_d \in \ZZ[p^{-1}]} a_{i_1,\dots,i_d} T_1^{i_1} \cdots T_d^{i_d}
\mapsto
\sum_{i_1,\dots,i_d \in \ZZ} a_{i_1,\dots,i_d} T_1^{i_1} \cdots T_d^{i_d}.
\]
Similarly, the inclusion $\bC_\psi \to \tilde{\bC}_\psi$ admits a unique $\Gamma$-equivariant splitting.
\end{remark}

\subsection{Relative toric towers}
\label{subsec:andreatta-brinon}

We next describe a modification to the construction of toric towers over a $p$-adic field that incorporates the cyclotomic tower.
The resulting \emph{relative toric towers} are decompleting; the ensuing results extend the work of Andreatta and Brinon \cite{andreatta-brinon},
in particular their analogue of the theorem of Cherbonnier-Colmez. It also follows that the categories of $(\varphi, \Gamma)$-modules considered in \cite{andreatta-brinon}
are canonically equivalent to the categories of relative $(\varphi, \Gamma)$-modules
on the same underlying spaces in the sense of \cite{part1}, so any results about the objects of \cite{andreatta-brinon} can be immediately reinterpreted in our present framework. 

\begin{defn} \label{D:standard toric tower}
Let $K$ be a $p$-adic field. Let $\psi_0$ denote the cyclotomic tower over $K$. For $d \geq 0$ and $q_1,\dots,q_d,r_1,\dots,r_d \in p^{\QQ}$ with $q_i \leq r_i$, the \emph{standard relative toric tower} over $K$ with parameters $q_1,\dots,q_d, r_1,\dots,r_d$
is the finite \'etale tower 
$\psi$ with 
\[
A = K\{q_1/T_1, \dots, q_d/T_d, T_1/r_1, \dots, T_d/r_d\},
\]
$A^+ = A^\circ$, and
\[
A_{\psi,n} = A_{\psi_0,n} \{q_1^{p^{-n}}/T_1^{p^{-n}}, \dots, q_d^{p^{-n}} / T_d^{p^{-n}}, T_1^{p^{-n}}/r_1^{p^{-n}},\dots,T_d^{p^{-n}}/r_d^{p^{-n}} \}.
\]
If $q_1,\dots,q_d,r_1,\dots,r_d$ are not mentioned, we take them to equal $1$.
If $d=1$, we usually omit it from the subscripts. For $i=1,\dots,d$, let $\overline{t}_i \in  R_\psi$ be the element corresponding to the sequence $T_i, T_i^{1/p}, \dots$.
\end{defn}

We have the following generalizations of Lemma~\ref{L:standard toric weakly decompleting}.
\begin{lemma}
With notation as in Definition~\ref{D:standard toric tower},
the tower $\psi$ is perfectoid, and  $\tilde{R}_\psi$ is the completion of
\[
\tilde{R}_{\psi_0}\{q_1/\overline{t}_1, \dots,q_d/\overline{t}_d,
\overline{t}_1/r_1, \dots,\overline{t}_d/r_d
\}^{\perf}.
\]
\end{lemma}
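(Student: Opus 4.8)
The plan is to reduce the statement to the case of a \emph{standard} (non-relative) toric tower over a perfectoid base, which was already handled in Lemma~\ref{L:standard toric weakly decompleting} and Remark~\ref{R:affinoid1}. The key observation is that the standard relative toric tower $\psi$ over $K$ is obtained from the cyclotomic tower $\psi_0$ over $K$ by adjoining $p$-power roots of the coordinates $T_1,\dots,T_d$ (with the stated radii), and that $\psi_0$ is itself perfectoid by Lemma~\ref{L:cyclotomic perfectoid}. So I would first pass from $K$ to $\tilde{A}_{\psi_0}$: this is a perfectoid Banach algebra (indeed $(\tilde{A}_{\psi_0}, \tilde{A}_{\psi_0}^+)$ is perfectoid), corresponding via Theorem~\ref{T:Fontaine perfectoid correspondence} to the perfect uniform Banach ring $(\tilde{R}_{\psi_0}, \tilde{R}_{\psi_0}^+)$. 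Relative to this new base, $\psi$ is literally a base extension of a standard toric tower in the sense of Definition~\ref{D:standard toric tower1}: one forms $\tilde{A}_{\psi_0}\{q_1/T_1,\dots,q_d/T_d,T_1/r_1,\dots,T_d/r_d\}$ and extracts $p$-power roots of the $T_i$.

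First I would make precise the compatibility $A_{\psi,n} = A_{\psi_0,n}\{q_i^{p^{-n}}/T_i^{p^{-n}}, T_i^{p^{-n}}/r_i^{p^{-n}}\}$ together with the fact that taking completed direct limits over $n$ commutes with the toric coordinate-adjunction (this is a routine check: the weighted Tate algebra construction is functorial and continuous for the colimit). This identifies $\tilde{A}_\psi$ with the uniform completion of the direct limit, and since $\tilde{A}_{\psi_0}$ is perfectoid and the coordinate tower is the standard toric tower over it, one applies Lemma~\ref{L:standard toric weakly decompleting} (or rather its proof together with Example~\ref{exa:polynomials} / Lemma~\ref{L:integral Robba perfectoid}) to conclude that $\tilde{A}_\psi$ is Fontaine perfectoid, i.e.\ $\psi$ is a perfectoid tower. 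Concretely, $\tilde{R}_\psi$ is then the completion of $\tilde{R}_{\psi_0}[\overline{t}_1^{p^{-\infty}},\dots,\overline{t}_d^{p^{-\infty}}]$ for the appropriate weighted Gauss norm (the weights being $q_i, r_i$), which is exactly the completion of $\tilde{R}_{\psi_0}\{q_1/\overline{t}_1,\dots,q_d/\overline{t}_d, \overline{t}_1/r_1,\dots,\overline{t}_d/r_d\}^{\perf}$.

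Next I would verify the identification of $\tilde{R}_\psi$ with the claimed ring. By Theorem~\ref{T:Fontaine perfectoid correspondence} applied to the standard toric tower over $\tilde{A}_{\psi_0}$, together with Example~\ref{exa:polynomials} (which computes the tilt of a perfectoid Tate algebra obtained by adjoining $p$-power roots of coordinates), the tilt of $\tilde{A}_\psi$ is precisely $\tilde{R}_{\psi_0}\{(\overline{t}_i/f(i))^{p^{-\infty}}\}$-type completion with the weight function determined by $q_i, r_i$; matching notation gives the completion of $\tilde{R}_{\psi_0}\{q_1/\overline{t}_1,\dots,\overline{t}_d/r_d\}^{\perf}$. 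The element $\overline{t}_i$ is the image under the perfectoid correspondence of the sequence $T_i, T_i^{1/p},\dots$, so the statement matches on the nose. One should also note that the norm on $\tilde{R}_{\psi_0}$ entering this weighted Gauss norm is its spectral norm, which is compatible with the norm implicitly used in the statement; I would spell this out briefly using Remark~\ref{R:Robba norms} if needed to reconcile the two normalization conventions.

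The main obstacle, such as it is, is bookkeeping rather than substance: one must be careful that the ``relative'' feature — namely that the base tower $\psi_0$ is itself infinite and only perfectoid after completion — does not interfere with commuting the direct limit over $n$ past the toric coordinate adjunction and past uniform completion. This is where one invokes the splitting of the cyclotomic tower (Lemma~\ref{L:perfectoid tower splitting}, applied to $\psi_0$ over $K$) to control the completed direct limit, exactly as in the proof of Lemma~\ref{L:standard toric decompleting} where the general toric case is reduced to the case $A_0 \supseteq \Qp(\mu_{p^\infty})$. Once that reduction is in place, everything else is a direct appeal to Theorem~\ref{T:Fontaine perfectoid correspondence}, Theorem~\ref{T:Fontaine perfectoid compatibility}, and Example~\ref{exa:polynomials}, and the proof is essentially ``as in Lemma~\ref{L:standard toric weakly decompleting}'' with $R_0$ replaced by $\tilde{R}_{\psi_0}$.
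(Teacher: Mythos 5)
Your proposal is correct and is essentially the argument the paper intends: its own proof of this lemma is just ``Straightforward,'' and the straightforward route is exactly your reduction, namely identify $\tilde{A}_\psi$ with the total space of the standard toric tower over the perfectoid base $\tilde{A}_{\psi_0}$ and then apply Example~\ref{exa:polynomials} together with Theorem~\ref{T:Fontaine perfectoid correspondence} and Theorem~\ref{T:Fontaine perfectoid compatibility} to obtain both the perfectoid property and the identification of $\tilde{R}_\psi$ with the completion of $\tilde{R}_{\psi_0}\{q_1/\overline{t}_1,\dots,\overline{t}_d/r_d\}^{\perf}$. One caution: your appeal to Lemma~\ref{L:perfectoid tower splitting} for the cyclotomic tower over $K$ is not licensed as stated, since that lemma requires the base to be Fontaine perfectoid and the discretely valued field $K$ is not; but it is also unnecessary, because to commute the completed direct limit past the coordinate adjunction one only needs that each map $A_{\psi_0,n}\{q_i^{p^{-n}}/T_i^{p^{-n}}, T_i^{p^{-n}}/r_i^{p^{-n}}\} \to \tilde{A}_{\psi_0}\{q_i^{p^{-n}}/T_i^{p^{-n}}, T_i^{p^{-n}}/r_i^{p^{-n}}\}$ is isometric for spectral norms with dense union of images, whence the two uniform completions coincide (alternatively, quote Theorem~\ref{T:Fontaine perfectoid compatibility}(ii)).
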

\begin{proof}
Straightforward.
\end{proof}

\begin{lemma} \label{L:standard toric weakly decompleting2}
With notation as in Definition~\ref{D:standard toric tower},
the tower $\psi$ is weakly decompleting and
\[
R_\psi = R_{\psi_0}\{q_1/\overline{t}_1, \dots,q_d/\overline{t}_d,
\overline{t}_1/r_1, \dots,\overline{t}_d/r_d
\}.
\]
\end{lemma}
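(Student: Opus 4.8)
The statement to be proven, Lemma~\ref{L:standard toric weakly decompleting2}, is the relative-toric analogue of Lemma~\ref{L:standard toric weakly decompleting}, and the plan is to follow the same two-part pattern: first compute $R_\psi$ explicitly, then deduce weak decompletion. The inclusion $R_{\psi_0}\{q_1/\overline{t}_1, \dots, q_d/\overline{t}_d, \overline{t}_1/r_1, \dots, \overline{t}_d/r_d\} \subseteq R_\psi$ is immediate: each generator of the left-hand ring arises from an honest element of $\bA^r_\psi$ for suitable $r>0$ (the series $[\overline{t}_i]$, $[\overline{t}_i]^{-1}$ lie in $\bA^r_\psi$ for all $r>0$, and $R_{\psi_0} \subseteq R_\psi$ by the cyclotomic case, Lemma~\ref{L:cyclotomic Rpsi}). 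The substantive inclusion is the reverse one, $R_\psi \subseteq R_{\psi_0}\{q_1/\overline{t}_1, \dots\}$, and here I would invoke Lemma~\ref{L:intermediate mod p subring} exactly as in the proof of Lemma~\ref{L:standard toric weakly decompleting}: choosing $\eta \in A_{\psi,n_0}$ a uniformizer-type element with $\alpha_\psi(\eta) \in (p^{-1},1)$ and $\alpha_\psi(\eta^{-1}) = \alpha_\psi(\eta)^{-1}$, Lemma~\ref{L:intermediate mod p subring} shows that every element of $R_\psi^+$ lies in the subring $R'_\psi$ of sequences $(\overline{x}_n)$ with $\overline{x}_n \in A_{\psi,n}^+/(\eta)$ for $n$ large. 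Since $A_{\psi,n}^+/(\eta)$ is by construction generated over $A_{\psi_0,n}^+/(\eta)$ by the images of $T_i^{\pm p^{-n}}$ (up to the weight conditions imposed by $q_i,r_i$), taking the inverse limit under $\overline{\varphi}$ and using the cyclotomic identification $R_{\psi_0}^+ = \gotho_{R_{\psi_0}}$ to pin down the base layer, one obtains that $R_\psi^+$ is contained in the completion over $R_{\psi_0}^+$ generated by the $\overline{t}_i^{\pm 1}$ with the stated weights, i.e.\ in $R_{\psi_0}\{q_1/\overline{t}_1,\dots\}^+$. Inverting a topologically nilpotent unit then gives $R_\psi \subseteq R_{\psi_0}\{q_1/\overline{t}_1,\dots\}$, completing the identification.

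Granting the explicit description of $R_\psi$, weak decompletion (Definition~\ref{D:weakly decompleting}) has two clauses. Clause (a), density of $R_\psi^{\perf}$ in $\tilde{R}_\psi$, is immediate from the preceding lemma identifying $\tilde{R}_\psi$ as the completion of $R_\psi^{\perf}$ (which is in turn built from the fact that $R_{\psi_0}^{\perf}$ is dense in $\tilde{R}_{\psi_0}$, Lemma~\ref{L:cyclotomic Rpsi}, together with the standard adjunction of $p$-power roots of the $\overline{t}_i$, as in Example~\ref{exa:polynomials}). Clause (b), the existence of $r>0$ for which $\bA^r_\psi \to R_\psi$ is strict surjective for the norms $\lambda(\overline{\alpha}_\psi^r)$ and $\overline{\alpha}_\psi^r$, I would handle via Lemma~\ref{L:weakly decompleting affinoid base extension}: one knows from Theorem~\ref{T:cyclotomic decompleting} (or just Lemma~\ref{L:cyclotomic Rpsi}) that $\psi_0$ is weakly decompleting, so fix $r_1>0$ with $\bA^{r_1}_{\psi_0} \to R_{\psi_0}$ strict surjective; the ring map $R_{\psi_0} \to R_\psi$ is then an affinoid homomorphism by the explicit presentation just established (it is $R_{\psi_0}\{q_1/\overline{t}_1,\dots\}$, a rational localization of a Tate algebra over $R_{\psi_0}$, hence a strict quotient of $R_{\psi_0}\{\overline{T}_1,\dots,\overline{T}_n\}$ for suitable generators); and $R_\psi^{\perf}$ is dense in $\tilde{R}_\psi$ by clause (a). Lemma~\ref{L:weakly decompleting affinoid base extension} then concludes that $\psi$ is weakly decompleting. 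Alternatively, one can argue more hands-on by lifting the images of $\overline{T}_i$ to $[\overline{t}_i]^{\pm 1} \in \bA^r_\psi$ and noting that $\bA^{r}_{\psi_0}\{T_1,\dots\} \to R_\psi$ is then strict surjective for $r \le r_1$, using that $\lambda(\overline{\alpha}_\psi^r)([\overline{t}_i]) = \overline{\alpha}_\psi^r(\overline{t}_i)$ exactly.

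The main obstacle, such as it is, will be bookkeeping around the weight parameters $q_1,\dots,q_d,r_1,\dots,r_d$: one must check that the rational-localization structure of $A_{\psi,n}$ over $A_{\psi_0,n}$ (cutting out $q_i^{p^{-n}} \le |T_i^{p^{-n}}| \le r_i^{p^{-n}}$) descends correctly under the $\overline{\varphi}$-inverse-limit to the stated annulus $\{q_i \le |\overline{t}_i| \le r_i\}$ in characteristic $p$, and that the resulting norms match up so that Lemma~\ref{L:intermediate mod p subring} and Lemma~\ref{L:weakly decompleting affinoid base extension} apply with the correct $\eta$. This is the same computation already carried out for standard toric towers in \S\ref{subsec:standard toric towers}, with $R_{\psi_0}$ in place of $R_0$; the only new ingredient is that the base ring $R_{\psi_0}$ is itself an imperfect period ring (a finite separable extension of $\kappa_K((\overline{\pi}))$) rather than a perfect uniform Banach algebra, but this plays no role in the argument since Lemma~\ref{L:intermediate mod p subring} and Lemma~\ref{L:weakly decompleting affinoid base extension} are stated without such a hypothesis. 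I expect the proof to be short, citing Lemma~\ref{L:standard toric weakly decompleting} mutatis mutandis.
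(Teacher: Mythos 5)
Your proposal is correct and matches the paper's own (very terse) proof: the containment $R_{\psi_0}\{q_1/\overline{t}_1,\dots,\overline{t}_d/r_d\}\subseteq R_\psi$ is immediate, the reverse inclusion is exactly Lemma~\ref{L:intermediate mod p subring}, and the explicit presentation of $R_\psi$ then yields both clauses of weak decompletion just as in Lemma~\ref{L:standard toric weakly decompleting}. One small caution: $\psi$ is not literally a base extension of $\psi_0$ (it also extracts roots of the $T_i$), so your invocation of Lemma~\ref{L:weakly decompleting affinoid base extension} is not by its letter; but your ``hands-on'' alternative --- lifting the generators to $[\overline{t}_i]^{\pm 1}\in\bA^r_\psi$ and using strict surjectivity of $\bA^{r}_{\psi_0}\to R_{\psi_0}$ --- is the right way to finish, and is what the paper's ``this implies both claims'' amounts to.
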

\begin{proof}
It is apparent that $R_{\psi_0}\{q_1/\overline{t}_1, \dots,q_d/\overline{t}_d,
\overline{t}_1/r_1, \dots,\overline{t}_d/r_d
\} \subseteq R_\psi$.
The inclusion in the other direction follows from
Lemma~\ref{L:intermediate mod p subring}. This implies both claims.
\end{proof}

\begin{defn} \label{D:toric tower}
By a \emph{relative toric tower} over a $p$-adic field $K$,
we will mean a tower obtained by starting with a standard relative toric tower and then
performing  a finite number of base extensions along morphisms
as in Definition~\ref{D:toric tower1}.
Any such tower is Galois with group $\Gamma = \ZZ_p^\times \rhd \ZZ_p^d$,
with $\ZZ_p^d$ acting as in Remark~\ref{R:standard group action} fixing $\tilde{A}_{\psi_0}$ and $\ZZ_p^\times$ acting on $\tilde{A}_{\psi_0}$
as in Definition~\ref{D:cyclotomic tower} fixing $T_i^{p^{-n}}$.
Consequently, the cohomology of $\Gamma$-modules can be computed as continuous group cohomology as per
Example~\ref{exa:Galois tower}.
\end{defn}

\begin{prop} \label{P:toric weakly decompleting}
Any relative toric tower $\psi$ over a $p$-adic field is weakly decompleting.
\end{prop}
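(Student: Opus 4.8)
The plan is to reduce the general case to the standard relative toric tower via stability of the weakly decompleting property under base extension, exactly as was done for standard toric towers. First I would note that by definition a relative toric tower $\psi$ is obtained from a standard relative toric tower $\psi_{\mathrm{st}}$ by finitely many base extensions along morphisms of types (i)--(v) of Definition~\ref{D:toric tower1}. Since each of these morphism classes is covered by the corresponding case of Proposition~\ref{P:weakly decompleting tower persistence} (rational localizations preserving uniformity; finite \'etale morphisms; completions at finitely generated ideals containing $p$; $p$-adic completions of \'etale extensions; $p$-adic completions of algebraic localizations), it suffices to establish that the starting tower $\psi_{\mathrm{st}}$ is weakly decompleting. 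Here I would also record, as in the toric case, that $R_{\psi}$ is $F$-(finite projective), since it arises from $R_{\psi_{\mathrm{st}}}$ by the same operations and $F$-(finite projective) is preserved by them (this fact is already used in Definition~\ref{D:toric tower1} for the absolute toric towers and the same argument applies verbatim).

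The remaining work is therefore entirely at the level of the standard relative toric tower, and this is precisely the content of Lemma~\ref{L:standard toric weakly decompleting2}: there one shows $\psi_{\mathrm{st}}$ is perfectoid, computes $R_{\psi_{\mathrm{st}}} = R_{\psi_0}\{q_1/\overline{t}_1, \dots, q_d/\overline{t}_d, \overline{t}_1/r_1, \dots, \overline{t}_d/r_d\}$ where $\psi_0$ is the cyclotomic tower over $K$, and verifies conditions (a) and (b) of Definition~\ref{D:weakly decompleting}. Condition (a) --- density of $R_{\psi_{\mathrm{st}}}^{\perf}$ in $\tilde{R}_{\psi_{\mathrm{st}}}$ --- follows from the explicit description of $\tilde{R}_{\psi_{\mathrm{st}}}$ as the completion of the perfect closure of $\tilde{R}_{\psi_0}\{q_1/\overline{t}_1,\dots\}$ together with density of $R_{\psi_0}^{\perf}$ in $\tilde{R}_{\psi_0}$ (which is part of Lemma~\ref{L:cyclotomic Rpsi}). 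Condition (b) --- the strict surjectivity of $\bA^{r}_{\psi_{\mathrm{st}}} \to R_{\psi_{\mathrm{st}}}$ modulo $p$ for some $r>0$ --- reduces to the cyclotomic case (Lemma~\ref{L:cyclotomic Rpsi}, where one chooses $r$ small so that $\lambda(\overline{\alpha}^r)(\pi - [\overline{\pi}]) \le p^{-1/2}\overline{\alpha}^r(\overline{\pi})$) combined with the observation that the Teichm\"uller lifts $[\overline{t}_i] \in \bA^{r}_{\psi_{\mathrm{st}}}$ give optimal lifts of the extra generators $\overline{t}_i$, so one can splice a presentation of $R_{\psi_0}$ with the polynomial-in-$\overline{t}_i^{\pm 1}$ structure. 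Concretely one argues as in Lemma~\ref{L:weakly decompleting affinoid base extension}: writing $R_{\psi_{\mathrm{st}}}$ as a weighted Tate algebra over $R_{\psi_0}$ in the variables $\overline{t}_i$, the lift $\bA^{r}_{\psi_0}\{q_i^{p^0}/[\overline{t}_i], [\overline{t}_i]/r_i\} \to R_{\psi_{\mathrm{st}}}$ is strict surjective whenever $\bA^{r}_{\psi_0} \to R_{\psi_0}$ is.

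I do not anticipate a genuine obstacle here: the statement is purely a persistence-plus-base-case assertion, and both ingredients are already in place (Proposition~\ref{P:weakly decompleting tower persistence} and Lemma~\ref{L:standard toric weakly decompleting2}). The one point requiring a modicum of care is that the base extensions in the definition of a relative toric tower are applied to the \emph{relative} standard tower, whose level-zero ring $A = K\{q_1/T_1,\dots,q_d/T_d,T_1/r_1,\dots,T_d/r_d\}$ is not itself perfectoid --- unlike in the absolute case of Definition~\ref{D:toric tower1}, where the base was already perfectoid. However, Proposition~\ref{P:weakly decompleting tower persistence} is stated for base extensions of an arbitrary weakly decompleting tower, not just of perfectoid-base towers, so this causes no difficulty; one simply applies it with $\psi_{\mathrm{st}}$ (rather than an absolute standard toric tower) as the starting point. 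Thus the proof is a short invocation: $\psi_{\mathrm{st}}$ is weakly decompleting by Lemma~\ref{L:standard toric weakly decompleting2}, and weak decompletion propagates along the chain of base extensions defining $\psi$ by repeated application of Proposition~\ref{P:weakly decompleting tower persistence}.
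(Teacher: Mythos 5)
Your proposal is correct and matches the paper's own argument, which is exactly the two-step citation you describe: the standard relative toric tower is weakly decompleting by Lemma~\ref{L:standard toric weakly decompleting2}, and the property persists along the base extensions of types (i)--(v) by Proposition~\ref{P:weakly decompleting tower persistence}. Your observation that the persistence proposition requires no perfectoid hypothesis on the base, only that the starting tower be weakly decompleting, is the right resolution of the only subtle point.
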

\begin{proof}
This follows from Proposition~\ref{P:weakly decompleting tower persistence}
and Lemma~\ref{L:standard toric weakly decompleting}.
\end{proof}

\begin{remark} \label{R:toric tower}
For a standard relative toric tower, $A^+$ is noetherian (because we assumed $q_i, r_i \in p^{\QQ}$), as is $R_\psi^+$ by Lemma~\ref{L:standard toric weakly decompleting2};
also, $R_\psi$ is $F$-(finite projective).
Moreover, there exists $r_0 > 0$ such that for $r,s \in p^{\QQ}$
with $0 < s \leq r \leq r_0$, $\bA^{r,+}_{\psi}$ and $\bC^{[s,r],+}_{\psi}$ are noetherian.
It follows that for any relative toric tower, the same statements hold for some choice of $r_0 > 0$; therefore, the adic Banach rings
\[
(A, A^+), (R_\psi, R_\psi^+), (\bA^r_\psi, \bA^{r,+}_{\psi}), (\bC^{[s,r]}_\psi, \bC^{[s,r],+}_{\psi})
\]
are all strongly noetherian. (Note that it is not immediately obvious how to adapt this argument to the reified case.)
\end{remark}

\begin{lemma} \label{L:andreatta-brinon}
Any relative toric tower $\psi$ over a $p$-adic field $K$ is decompleting.
\end{lemma}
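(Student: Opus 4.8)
The plan is to imitate the proofs of Theorem~\ref{T:cyclotomic decompleting} and Lemma~\ref{L:standard toric decompleting}, combining the Galois description of $\Gamma$-cohomology from Example~\ref{exa:Galois tower} with a decomposition of $\overline{\varphi}^{-1}(R_{\psi})/R_{\psi}$ adapted to the semidirect-product structure $\Gamma = \ZZ_p^\times \ltimes \ZZ_p^d$ from Definition~\ref{D:toric tower}. Since $\psi$ is already weakly decompleting (Proposition~\ref{P:toric weakly decompleting}) and Galois, it suffices to show that for all sufficiently small open subgroups $H$ of $\Gamma$ one has $H^i_{\cont}(H, \overline{\varphi}^{-1}(R_\psi)/R_\psi) = 0$ for all $i \geq 0$, and then to note that the same argument applies to every $\psi_{(n)}$. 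First I would reduce to a standard relative toric tower: Corollary~\ref{C:finite etale decompleting} handles base extensions of type (ii), while base extensions of types (i) and (iii)--(v) only pass $R_\psi$ to a rational localization or to $p$-adic completions of (algebraic or \'etale) localizations, operations under which the $\Gamma$-action on the relevant graded pieces — controlled entirely by the elements $\overline{\pi}$ and $\overline{t}_1,\dots,\overline{t}_d$ — and the estimates below are unchanged.

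For a standard relative toric tower, $R_\psi = R_{\psi_0}\{\overline{t}_1,\dots,\overline{t}_d\}$, where $\psi_0$ is the cyclotomic tower over $K$ and $R_{\psi_0}$ is a finite separable extension of $\kappa_K((\overline{\pi}))$ (Lemma~\ref{L:cyclotomic Rpsi}); in particular $R_{\psi_0}$ is a field in which $\overline{\pi}$ is a unit, and $\{\overline{\pi},\overline{t}_1,\dots,\overline{t}_d\}$ is a $p$-basis of $R_\psi$. Hence $\overline{\varphi}^{-1}(R_\psi)$ is finite free over $R_\psi$ on the monomials $b\,\overline{t}_1^{e_1/p}\cdots\overline{t}_d^{e_d/p}$ with $b$ running through a basis of $\overline{\varphi}^{-1}(R_{\psi_0})$ over $R_{\psi_0}$ (including $b=1$) and $(e_1,\dots,e_d)\in\{0,\dots,p-1\}^d$, and grouping these yields a $\Gamma$-equivariant (and, by Theorem~\ref{T:open mapping}, topological) direct sum decomposition
\[
\overline{\varphi}^{-1}(R_\psi)/R_\psi \;\cong\; N_0 \;\oplus\; \bigoplus_{(e)\neq 0} N_e,
\]
with $N_0 = (\overline{\varphi}^{-1}(R_{\psi_0})/R_{\psi_0})\otimes_{R_{\psi_0}}R_\psi$ and, for $(e)\neq 0$, $N_e \cong \overline{\varphi}^{-1}(R_{\psi_0})\otimes_{R_{\psi_0}}R_\psi$ a finite free $R_\psi$-module bearing the label $\overline{t}_1^{e_1/p}\cdots\overline{t}_d^{e_d/p}$. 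On $N_0$ the subgroup $\ZZ_p^d$ acts trivially (it fixes $\tilde{A}_{\psi_0}$ and the integral coordinates), while $\ZZ_p^\times$ acts through its cyclotomic action on $\overline{\varphi}^{-1}(R_{\psi_0})$; on $N_e$ with $(e)\neq 0$, choosing $i$ with $e_i\neq 0$ and letting $\gamma_i$ be the $i$-th canonical generator of $\ZZ_p^d$, one has $\gamma_i^{p^n}(b\,\overline{t}^{e/p}r) = \epsilon^{p^{n-1}e_i}\,b\,\overline{t}^{e/p}r$, i.e. $\gamma_i^{p^n}-1$ acts by multiplication by the scalar $(1+\overline{\pi}^{p^{n-1}})^{e_i}-1 = \overline{\pi}^{p^{n-1}}\bigl(e_i + \binom{e_i}{2}\overline{\pi}^{p^{n-1}} + \cdots\bigr)$.

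Now I would establish the vanishing piece by piece. For $N_e$ with $(e)\neq 0$: since $\overline{\pi}$ is a unit in the field $R_{\psi_0}$ and $e_i$ is prime to $p$, the displayed scalar is a unit in $R_\psi$ once $n$ is large; as $N_e$ is finite free over $R_\psi$, the operator $\gamma_i^{p^n}-1$ is then invertible on $N_e$, so $H^\bullet_{\cont}(\overline{\langle\gamma_i^{p^n}\rangle},N_e)=0$, and since $\overline{\langle\gamma_i^{p^n}\rangle}$ is subnormal in $H$ (normal in $\ZZ_p^d\cap H$, which is normal in $H$ because $\ZZ_p^d=\ker(\Gamma\to\ZZ_p^\times)$ is normal in $\Gamma$), Corollary~\ref{C:hochschild-serre} gives $H^\bullet_{\cont}(H,N_e)=0$. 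For $N_0$: using the Hochschild--Serre spectral sequence (Lemma~\ref{L:hochschild-serre}) for $H_0 := \ZZ_p^d\cap H \triangleleft H$, the $E_2$-terms are finite direct sums of copies of $N_0$ carrying an $H/H_0$-action that differs from the cyclotomic one only by powers of the cyclotomic character coming from the Lie algebra of $H_0$; such twists act trivially on the characteristic-$p$ module $N_0$, so the vanishing of the $E_2$-terms for small $H$ follows from Lemma~\ref{L:cyclotomic decompleting}, whose proof exhibits a $\gamma\in\ZZ_p^\times$ with $\gamma^{p^n}-1$ acting invertibly on each graded piece $(1+\overline{\pi})^{j/p}R_{\psi_0}$, hence on $(1+\overline{\pi})^{j/p}R_\psi$ and any such twist. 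Assembling the two cases gives the exactness of $\overline{\varphi}^{-1}(R_{\psi^\bullet})/R_{\psi^\bullet}$ on sufficiently large truncations, i.e. $\psi$ is decompleting.

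The main obstacle is the invertibility computation on the pieces $N_e$ with $(e)\neq 0$: one must pin down the $\gamma_i$-action as multiplication by a power of $\epsilon^{1/p}$ and then use that the resulting multiplier is a \emph{unit}, which is precisely where this argument diverges from the Kummer case of \S\ref{subsec:Kummer} (there the analogous multiplier involves a uniformizer rather than a unit, which is why that case stays conjectural). A secondary point needing care is the bookkeeping in the Hochschild--Serre reduction for $N_0$, namely checking that the cyclotomic twists introduced on passing to the quotient are invisible on a characteristic-$p$ coefficient module, so that Lemma~\ref{L:cyclotomic decompleting} applies verbatim.
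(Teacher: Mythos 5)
Your decomposition of $\overline{\varphi}^{-1}(R_\psi)/R_\psi$ and your treatment of the summands carrying a nontrivial toric exponent are in the spirit of the paper's argument (which likewise splits along the $p$-basis $\{\overline{\pi},\overline{t}_1,\dots,\overline{t}_d\}$ and disposes of those summands via Hochschild--Serre through the normal subgroup $\ZZ_p^d$). But even there your stated justification is not enough: $\gamma_i^{p^n}-1$ is only \emph{semilinear} on $N_e$, so ``the multiplier is a unit and $N_e$ is finite free, hence the operator is invertible'' is not a valid deduction. One must use the Leibniz rule \eqref{eq:Leibniz rule} together with the relative analogue of Lemma~\ref{L:exponential convergence gamma}: the norm of $\gamma_i^{p^n}-1$ on $R_\psi$ decays like $p^{-p^{n+1}/(p-1)}$, strictly faster than the norm $p^{-p^{n}/(p-1)}$ of the multiplier $(1+\overline{\pi}^{p^{n-1}})^{e_i}-1$, so that the operator is an invertible scalar times $1$ plus a contraction. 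Unithood of the multiplier alone cannot carry this step, and your diagnosis that ``unit versus non-unit'' is the essential divergence from the Kummer case misplaces where the work lies.

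The genuine gap is your treatment of $N_0$, i.e.\ the summands with $e_0\neq 0$ and all toric exponents zero, which is precisely the delicate point. Since $\ZZ_p^\times$ is \emph{not} normal (nor subnormal) in $\Gamma=\ZZ_p^\times\rhd\ZZ_p^d$, the vanishing supplied by the cyclotomic procyclic subgroup cannot be propagated to $H$ by Corollary~\ref{C:hochschild-serre}; the paper instead invokes Theorem~\ref{T:kill analytic cohomology}, after checking analyticity of the action via the Lemma~\ref{L:exponential convergence gamma}-type estimate. Your proposed substitute---Hochschild--Serre for $H_0=\ZZ_p^d\cap H$ with the claim that the $E_2$-terms are ``finite direct sums of copies of $N_0$'' carrying a cyclotomically twisted action that is ``trivial in characteristic $p$''---does not hold up. The subgroup $H_0$ acts nontrivially on $N_0$ (it moves the $\overline{t}_i$ inside $R_\psi$), and its continuous cohomology $H^q_{\cont}(H_0,N_0)$ is not a sum of copies of $N_0$: already $H^0$ is a much smaller module, and for $q>0$ these groups are typically large and need not even be Hausdorff Banach modules (compare Remark~\ref{R:nonzero higher cohomology} and Example~\ref{exa:bad cohomology over perfectoid}); moreover a twist by a power of the cyclotomic character does not act trivially on a characteristic-$p$ module---a $p$-adic unit reduces to a generally nontrivial scalar in $\FF_p^\times$. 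To run your intended ``invertible scalar plus small perturbation'' argument on those inner cohomology groups you would need norm estimates on objects with no evident Banach structure, which is exactly the difficulty that Theorem~\ref{T:kill analytic cohomology} (proved via Lazard-type complexes) is designed to bypass. Without that input, or an honest reproof of it, your argument for $N_0$ does not close. A smaller remark: the paper does not reduce to the standard relative toric tower; it works with a general one, and your claim that the estimates are ``unchanged'' under operations (i) and (iii)--(v) is itself the content of the proofs of Lemma~\ref{L:exponential convergence gamma} and Proposition~\ref{P:extend analyticity along affinoid} and needs to be argued, not asserted.
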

\begin{proof}
Put $\Gamma_0 = (1 + p \ZZ_p) \rhd \ZZ_p^d$.
By Proposition~\ref{P:toric weakly decompleting},
$\psi$ is weakly decompleting 
and we have a $\Gamma_0$-equivariant decomposition
\begin{equation} \label{eq:ab-decomposition}
\overline{\varphi}^{-1}(R_\psi)
\cong 
\bigoplus_{e_0,\dots,e_d=0}^{p-1}
(1 + \overline{\pi})^{e_0/p} \overline{t}_1^{e_1/p} \cdots \overline{t}_d^{e_d/p}
R_\psi
\end{equation}
of modules over $R_\psi$, which is also a decomposition of Banach modules by the open mapping theorem (Theorem~\ref{T:open mapping}).
It thus suffices to check acyclicity for each summand with $e_0,\dots,e_d$ not all zero.
Put $\overline{y} = 
(1 + \overline{\pi})^{e_0/p} \overline{t}_1^{e_1/p} \cdots \overline{t}_d^{e_d/p}$.

Suppose first that $e_i \neq 0$ for some $i>0$.
Let $\gamma$ be the canonical generator of the $i$-th copy of $\ZZ_p^d$.
As in the proof of Theorem~\ref{T:standard toric decompleting}, we see that
for $n$ sufficiently large, 
 $\gamma^{p^n}-1$ acts invertibly on $\overline{y} R_\psi$. The acyclicity property for this summand follows by Lemma~\ref{L:hochschild-serre}.

Suppose next that $e_0 \neq 0$ but $e_1 = \cdots = e_d = 0$. 
Put $\gamma = 1 +p^2 \in \ZZ_p^\times$.
Then $(\gamma^{p^n}-1)(\overline{y}) = ((1 + \overline{\pi})^{p^{n+1} m} - 1) \overline{y}$ for some integer $m$ coprime to $p$, so 
\[
\overline{\alpha}_\psi((\gamma^{p^n}-1)(\overline{y})) = p^{-p^{n+2}/(p-1)} \overline{y};
\]
meanwhile, we may imitate the proof of Lemma~\ref{L:exponential convergence gamma}
to deduce that there exists $c>0$ such that
\[
\overline{\alpha}_\psi((\gamma^{p^n}-1)(\overline{x}))
\leq c p^{-p^{n+3}/(p-1)}
\overline{\alpha}_\psi(\overline{x}) \qquad (\overline{x} \in R_\psi).
\]
Using \eqref{eq:Leibniz rule}, we see that
for $n$ sufficiently large, 
$((1 + \overline{\pi})^{p^{n+1}m}-1)^{-1} (\gamma^{p^n}-1)$
acts on $\overline{y} R_\psi$ as the identity plus an operator of norm less than 1. It follows again that $\gamma^{p^n}-1$ acts invertibly on $\overline{y} R_\psi$.
We cannot continue as in the previous case because $\Zp^\times$ is not normal in $\Gamma$; instead, we apply Theorem~\ref{T:kill analytic cohomology} to conclude.
\end{proof}

\begin{theorem} \label{T:andreatta-brinon}
Let $\psi$ be a relative toric tower over a $p$-adic field. Then the results of \S\ref{subsec:phigamma}, \S\ref{subsec:Frobenius splittings},  \S\ref{subsec:pseudocoherent type C} hold for $\psi$.
\end{theorem}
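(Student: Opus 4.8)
The plan is to verify that $\psi$ meets the three conditions listed in Remark~\ref{R:decompleting conditions} --- that $\psi$ is locally decompleting, that $R_\psi$ is $F$-(finite projective), and that $\breve{\bC}^{[s,r]}_\psi$ is coherent for all $0 < s \leq r$ --- after which the assertion follows formally from that remark. This is exactly the pattern used in the proofs of Theorem~\ref{T:cyclotomic decompleting} and Theorem~\ref{T:standard toric decompleting}, and most of the substantive work has already been carried out in the lemmas of \S\ref{sec:toric}.

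For the first condition, recall from Definition~\ref{D:toric tower} that the base extension of a relative toric tower along a rational localization of its base is again a relative toric tower; hence Lemma~\ref{L:andreatta-brinon} shows that every such base extension is decompleting. To finish checking that $\psi$ is locally decompleting one verifies that $(A,A^+)$ is stably uniform. This is straightforward: by Remark~\ref{R:toric tower} the ring $(A,A^+)$ is strongly noetherian, and the base of a relative toric tower, being obtained from a Tate algebra over the $p$-adic field $K$ by the operations of Definition~\ref{D:toric tower1}, is reduced and remains so after any rational localization, hence uniform at every stage. Thus $\psi$ is locally decompleting.

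For the remaining two conditions, Remark~\ref{R:toric tower} records directly that $R_\psi$ is $F$-(finite projective), and also furnishes some $r_0 > 0$ such that $(\bC^{[s,r]}_\psi, \bC^{[s,r],+}_\psi)$ is strongly noetherian --- so in particular $\bC^{[s,r]}_\psi$ is noetherian, hence coherent --- whenever $0 < s \leq r \leq r_0$. For arbitrary $0 < s \leq r$ one writes $\breve{\bC}^{[s,r]}_\psi$ as the filtered colimit of the subrings $\varphi_p^{-n}(\bC^{[p^{-n}s,\, p^{-n}r]}_\psi)$; all sufficiently large $n$ give noetherian terms by the previous sentence, the transition maps are flat, and so Remark~\ref{R:coherent conditions} shows that $\breve{\bC}^{[s,r]}_\psi$ is coherent, exactly as in the proof of Theorem~\ref{T:cyclotomic decompleting}. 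Having verified conditions (a), (b), (c) of Remark~\ref{R:decompleting conditions}, we conclude.

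The only genuinely nontrivial input is Lemma~\ref{L:andreatta-brinon}, whose proof is the real content: it treats the cyclotomic direction $\ZZ_p^\times$, which is not normal in $\Gamma$, by combining the analyticity of the $\Gamma$-action (established as in Lemma~\ref{L:exponential convergence gamma}) with the cohomology-vanishing criterion of Theorem~\ref{T:kill analytic cohomology}. Within the present argument itself the one point requiring a moment's care is the colimit description of $\breve{\bC}^{[s,r]}_\psi$ used to pass from the small intervals controlled by Remark~\ref{R:toric tower} to arbitrary intervals; everything else is bookkeeping with results already in hand.
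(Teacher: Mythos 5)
Your proof is correct and follows essentially the same route as the paper: cite Lemma~\ref{L:andreatta-brinon} for the (locally) decompleting property, Remark~\ref{R:toric tower} for $F$-(finite projectivity) of $R_\psi$ and for the noetherian/coherence input on the rings $\bC^{[s,r]}_\psi$, and conclude via Remark~\ref{R:decompleting conditions}. The extra details you supply (stable uniformity of the base and the flat-colimit argument passing from small intervals to coherence of $\breve{\bC}^{[s,r]}_\psi$ for all $0<s\leq r$) merely make explicit steps the paper leaves implicit.
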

\begin{proof}
By Lemma~\ref{L:andreatta-brinon}, the tower $\psi$ is locally decompleting.
By Remark~\ref{R:toric tower},
the ring $R_\psi$ is $F$-(finite projective)  and the rings $\breve{R}_\psi, \breve{\bA}^r_\psi, \breve{\bC}^{[s,r]}_\psi$ are coherent. By Remark~\ref{R:decompleting conditions}, this proves the claim.
\end{proof}

\begin{cor} \label{C:pseudocoherent fpd local2}
For $a$ a positive integer, every pseudocoherent $\varphi^a$-module over $\tilde{\bC}_\psi$ is fpd of projective dimension at most $d+1$.
\end{cor}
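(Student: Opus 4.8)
The plan is to transport the assertion, through the web of equivalences supplied by Theorem~\ref{T:andreatta-brinon} and Theorem~\ref{T:add tilde2}, to a statement about finitely generated modules over a strongly noetherian \emph{regular} imperfect period ring, where the bound $d+1$ comes from the Krull dimension. Concretely, let $M$ be a pseudocoherent $\varphi^a$-module over $\tilde\bC_\psi$. First I would invoke Theorem~\ref{T:andreatta-brinon} to the effect that all of \S\ref{subsec:phigamma}--\S\ref{subsec:pseudocoherent type C} apply to the relative toric tower $\psi$; then Theorem~\ref{T:perfect generalized phi-modules} (applied with $R=\tilde R_\psi$) replaces $M$ by the corresponding pseudocoherent $\varphi^a$-module over $\tilde\bC^{[s,r]}_\psi$ for some $0<s\le r/q$, and Theorem~\ref{T:add tilde2} together with Remark~\ref{R:breve phi-modules} lets me further descend it to a pseudocoherent $\varphi^a$-module over $\breve\bC^{[s,r]}_\psi$, whose underlying module is pseudocoherent over that ring.

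Second, I would establish that $\bC^{[s,r]}_\psi$, and hence $\breve\bC^{[s,r]}_\psi$, is regular of Krull dimension $d+1$. For a standard relative toric tower the ring $\bC^{[s,r]}_\psi$ is a Tate algebra in $d$ variables over $\bC^{[s,r]}_{\psi_0}$, where $\psi_0$ is the cyclotomic tower over $K$; by the proof of Lemma~\ref{L:cyclotomic coherent} the latter is a finite direct sum of rings of analytic functions on closed annuli over $p$-adic fields, each a principal ideal domain, hence regular of dimension $1$. The Tate algebra in $d$ variables over such a ring is regular of dimension $d+1$: this follows from Theorem~\ref{T:curve noetherian}(c) (every maximal ideal of a Tate algebra contracts to a maximal ideal of the coefficient ring) together with the Nullstellensatz for affinoid algebras, just as in the derivation of Theorem~\ref{T:curve noetherian}(d). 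Passing to an arbitrary relative toric tower introduces only rational localizations and finite \'etale morphisms (Remark~\ref{R:toric tower}), which preserve regularity and do not raise the dimension; and $\breve\bC^{[s,r]}_\psi$ is a filtered colimit of copies of $\bC^{[s,r]}_\psi$ along flat transition maps (as used in the proof of Theorem~\ref{T:andreatta-brinon}). Hence, by Remark~\ref{R:noetherian pseudoflat} (regular rings of dimension $d+1$) and Remark~\ref{R:need flatness} (flat base extension along the colimit), every pseudocoherent module over $\breve\bC^{[s,r]}_\psi$ is $(d+1)$-fpd.

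Third, I would finish by a syzygy argument. Using Corollary~\ref{C:lift action} (together with Lemma~\ref{L:lift action1}) I would build a resolution $\cdots \to P_1 \to P_0 \to M \to 0$ of the original $\varphi^a$-module by projective $\varphi^a$-modules over $\tilde\bC^\infty_\psi$, and let $N$ be its $(d+1)$-st syzygy; by Lemma~\ref{L:pseudocoherent 2 of 3} it is again a pseudocoherent $\varphi^a$-module. To see that $N$ is projective it suffices, by the $\varphi^a$-bundle equivalence of Theorem~\ref{T:perfect generalized phi-modules} (equivalently Theorem~\ref{T:perfect Robba Kiehl}), to verify that $N_{[s',r']}$ is finite projective over $\tilde\bC^{[s',r']}_\psi$ for every closed interval $[s',r']$; since base extension out of $\tilde\bC^\infty_\psi$ is flat (Proposition~\ref{P:weak flatness perfect Robba}), $N_{[s',r']}$ is the $(d+1)$-st syzygy of $M_{[s',r']}$, and applying the descent of Step~1 interval-by-interval together with the $(d+1)$-fpd statement of Step~2 shows this syzygy is projective (by the usual fact that the $(d+1)$-st syzygy of a $(d+1)$-fpd module is projective, as in the proof of Corollary~\ref{C:locally fpd}). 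Therefore $M$ is $(d+1)$-fpd.

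The main obstacle is Step~1 and its compatibility with Step~3: one must move the pseudocoherent $\varphi^a$-module faithfully between the non-noetherian perfect period rings $\tilde\bC_\psi$, $\tilde\bC^{[s,r]}_\psi$---over which nothing a priori bounds projective dimension---and the strongly noetherian, here regular, imperfect ring $\bC^{[s,r]}_\psi$, \emph{and} check that this transport commutes with forming syzygies and with restriction to closed subintervals. This is precisely what Theorem~\ref{T:add tilde2}, supplemented by the coherence of $\breve\bC^{[s,r]}_\psi$, is designed to do (the $\Gamma$-structure, when present, makes the descent go through; in its absence one uses the splitting and pseudoflatness of $\breve\bC^{[s,r]}_\psi \hookrightarrow \tilde\bC^{[s,r]}_\psi$ from Corollary~\ref{C:split base extension1}). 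The regularity and dimension count of the imperfect period rings---the toric analogue of the fact that the Fargues--Fontaine curve over a point is a regular curve---is the geometric input that produces the bound $d+1$.
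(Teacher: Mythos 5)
Your route is the paper's own: the proof there reads ``Analogous to Corollary~\ref{C:pseudocoherent fpd local}'', whose argument is precisely to invoke Theorem~\ref{T:andreatta-brinon} so that Theorem~\ref{T:add tilde2} applies, transfer to the strongly noetherian imperfect rings $\bC^{[s,r]}_\psi$ (Remark~\ref{R:toric tower}) where the Krull dimension $d+1$ and regularity give the bound, and then return to $\tilde{\bC}_\psi$ --- exactly your Steps 1--2, with your syzygy Step 3 merely making the transfer back explicit. The one caveat worth recording is that general relative toric towers also involve the operations (iii)--(v) of Definition~\ref{D:toric tower1}, not only rational localizations and finite \'etale maps, so the regularity claim in your Step 2 must be checked for those cases as well (it holds, via excellence and the fact that completions, \'etale extensions and localizations preserve regularity), but this is at the same level of detail that the paper itself leaves implicit.
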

\begin{proof}
Analogous to Corollary~\ref{C:pseudocoherent fpd local}.
\end{proof}

\begin{remark} \label{R:andreatta-brinon}
The towers considered by Andreatta-Brinon \cite{andreatta-brinon} are those obtained by
starting with a standard relative toric tower in which $q_1 = \cdots = q_d= r_1 = \cdots = r_d= 1$ and then performing a sequence of operations of types (ii)--(v) in Definition~\ref{D:toric tower1}.
In that setting, the statement of Theorem~\ref{T:perfect equivalence1}
is \cite[Th\'eor\`eme~4.35]{andreatta-brinon} while the other statements
of \S\ref{subsec:phigamma}, \S\ref{subsec:Frobenius splittings},
\S\ref{subsec:pseudocoherent type C} are new.
\end{remark}

\subsection{Logarithmic structures on toric towers}
\label{subsec:logarithmic toric}

We next consider some modified constructions in which we allow some limited ramification within the towers themselves, starting with the nonrelative case. To simplify matters slightly, we assume that the base ring contains the $p$-cyclotomic extension; this restriction is likely to be harmless in applications. We also limit ourselves to projective modules rather than pseudocoherent modules.

\begin{defn} \label{D:standard ramified toric tower}
Let $K$ be the completion of $\Qp(\mu_{p^\infty})$.
Let $(A_0, A_0^+)$ be an adic perfectoid algebra over $K$
corresponding to the perfect uniform adic Banach ring $(R_0,R_0^+)$ via
Theorem~\ref{T:Fontaine perfectoid correspondence}.
For $d\geq a\geq 0$, and positive real numbers $q_1, \dots, q_a, r_1,\dots,r_d$ with $q_i\leq r_i$, the \emph{standard ramified toric tower} with parameters $q_1,\dots, q_a, r_1,\dots,r_d$ is the tower $\psi$ with 
\begin{align*}
A &= A_0\{q_1/T_1, \dots, q_a/ T_a, T_1/r_1,\dots,T_d/r_d\}, \\
A^+ &= A_0^+\{q_1/T_1, \dots, q_a/ T_a, T_1/r_1,\dots,T_d/r_d\}, \\
A_{\psi,n} &= A_0 \{q_1^{p^{-n}}/T_1^{p^{-n}}, \dots, q_a^{p^{-n}} / T_a^{p^{-n}}, T_1^{p^{-n}}/r_1^{p^{-n}},\dots,T_d^{p^{-n}}/r_d^{p^{-n}}\}, \\
A_{\psi,n}^+ &= A_0^+ \{q_1^{p^{-n}}/T_1^{p^{-n}}, \dots, q_a^{p^{-n}} / T_a^{p^{-n}}, T_1^{p^{-n}}/r_1^{p^{-n}},\dots,T_d^{p^{-n}}/r_d^{p^{-n}}  \}.
\end{align*}
If $q_1, \dots, q_a, r_1,\dots,r_d$ are not mentioned, we take them to equal 1.
For $i=1,\dots,d$, let $\overline{t}_i \in  R_\psi$ be the element corresponding to the sequence $T_i, T_i^{1/p}, \dots$.
\end{defn}

\begin{lemma}
With notation as in Definition~\ref{D:standard ramified toric tower},
the tower $\psi$ is perfectoid, and $\tilde{R}_\psi$ is the completion of
\[
R_0\{q_1/\overline{t}_1, \dots, q_a/\overline{t}_a, \overline{t}_1/r_1, \dots,\overline{t}_d/r_d
\}^{\perf}.
\]
\end{lemma}
\begin{proof}
Straightforward.
\end{proof}

\begin{lemma} \label{L:standard ramified toric weakly decompleting2}
With notation as in Definition~\ref{D:standard ramified toric tower},
the tower $\psi_0$ is weakly decompleting and
\[
R_\psi = R_{\psi_0}\{
q_1/\overline{t}_1, \dots, q_a/\overline{t}_a, \overline{t}_1/r_1, \dots,\overline{t}_d/r_d
\}.
\]
\end{lemma}
\begin{proof}
Analogous to Lemma~\ref{L:standard toric weakly decompleting2}.
\end{proof}

\begin{hypothesis} \label{H:ramified1}
For the remainder of \S\ref{subsec:logarithmic toric}, let $\psi$ be a tower derived from a standard ramified toric tower by a sequence of operations of the forms (i)--(v) of Definition~\ref{D:toric tower1}.
By Proposition~\ref{P:weakly decompleting tower persistence}
and
Lemma~\ref{L:standard ramified toric weakly decompleting2},
$\psi$ is weakly decompleting, and $R_\psi$ is $F$-(finite projective) with $p$-basis
$\{\overline{t}_1^{-1},\dots,\overline{t}_a^{-1}, \overline{t}_1,\dots,\overline{t}_d\}$.
\end{hypothesis}

Since $\psi$ is not a finite \'etale tower, we do not have an \emph{a priori} definition of $\Gamma$-modules and $(\varphi, \Gamma)$-modules over $*_\psi$. We instead use the following \emph{ad hoc} definition.

\begin{defn}
Define the group $\Gamma = \ZZ_p^d$; it acts on $\psi$
as in Remark~\ref{R:standard group action}, although the tower is no longer Galois.
Nonetheless, we may still define a \emph{$\Gamma$-module} (resp.\ \emph{$(\varphi, \Gamma)$-module}) over $*_\psi$ to be a finite projective module (resp.\ $\varphi$-module) over $*_\psi$ equipped with a continuous semilinear action of $\Gamma$.
Put $\Gamma_0 = \Gamma$ and $\Gamma_n = p^n \ZZ_p^d$
for $n>0$.
\end{defn}

\begin{lemma}
For each nonnegative integer $n$, the complex $C^\bullet_{\cont}(\Gamma_n, \overline{\varphi}^{-1}(R_\psi)/R_\psi))$ is strict exact.
\end{lemma}
\begin{proof}
The decomposition \eqref{eq:ab-decomposition1} from the proof of Lemma~\ref{L:standard toric decompleting} is valid in this context and $\Gamma_n$-equivariant for $n>0$, so the remainder of that proof carries over.
\end{proof}

\begin{cor} \label{C:ramified Gamma equivalences}
The statements of
Lemma~\ref{L:Gamma quasi-isomorphism} and
Lemma~\ref{L:Gamma equivalences}
hold in this setting.
\end{cor}

\subsection{Logarithmic structures on relative toric towers}
\label{subsec:logarithmic relative toric}

We now redo the logarithmic construction in the setting of relative toric towers. Again, we restrict attention from pseudocoherent modules to projective modules for simplicity.

\begin{defn} \label{D:ramified relative toric tower}
Let $K$ be a $p$-adic field. Let $\psi_0$ denote the cyclotomic tower over $K$. For $d \geq a\geq  0$ and $q_1,\dots,q_a,r_1,\dots,r_d \in p^{\QQ}$ with $q_i \leq r_i$, the \emph{standard ramified relative toric tower} over $K$ with parameters $q_1,\dots,q_a, r_1,\dots,r_d$
is the tower 
$\psi$ with 
\begin{align*}
A &= K\{q_1/T_1, \dots, q_a/T_a, T_1/r_1, \dots, T_d/r_d\} \\
A^+ &= \gotho_K\{q_1/T_1, \dots, q_a/T_a, T_1/r_1, \dots, T_d/r_d\} \\
A_{\psi,n} &= (K \otimes_{\QQ_p} \QQ_p(\mu_{p^n})) \{q_1^{p^{-n}}/T_1^{p^{-n}}, \dots, q_a^{p^{-n}} / T_a^{p^{-n}}, T_1^{p^{-n}}/r_1^{p^{-n}},\dots,T_d^{p^{-n}}/r_d^{p^{-n}} \} \\
A_{\psi,n}^+ &= (\gotho_K \otimes_{\ZZ_p} \ZZ_p[\mu_{p^n}])\{q_1^{p^{-n}}/T_1^{p^{-n}}, \dots, q_a^{p^{-n}} / T_a^{p^{-n}}, T_1^{p^{-n}}/r_1^{p^{-n}},\dots,T_d^{p^{-n}}/r_d^{p^{-n}} \}.
\end{align*}
If $q_1,\dots,q_a,r_1,\dots,r_d$ are not mentioned, we take them to equal $1$.
If $d=1$, we usually omit it from the subscripts. For $i=1,\dots,d$, let $\overline{t}_i \in  R_\psi$ be the element corresponding to the sequence $T_i, T_i^{1/p}, \dots$.
\end{defn}

\begin{lemma}
With notation as in Definition~\ref{D:ramified relative toric tower},
the tower $\psi$ is perfectoid, and $\tilde{R}_\psi$ is the completion of
\[
\tilde{R}_{\psi_0}\{q_1/\overline{t}_1, \dots, q_a/\overline{t}_a, \overline{t}_1/r_1, \dots,\overline{t}_d/r_d\}^{\perf}.
\]
\end{lemma}
\begin{proof}
Straightforward.
\end{proof}

\begin{lemma} \label{L:standard ramified relative toric weakly decompleting2}
With notation as in Definition~\ref{D:standard ramified toric tower},
the tower $\psi$ is weakly decompleting and
\[
R_\psi = R_{\psi_0}\{q_1/\overline{t}_1, \dots, q_a/\overline{t}_a,
\overline{t}_1/r_1, \dots,\overline{t}_d/r_d
\}.
\]
\end{lemma}
\begin{proof}
Analogous to Lemma~\ref{L:standard toric weakly decompleting2}.
\end{proof}

\begin{hypothesis} \label{H:ramified2}
For the remainder of \S\ref{subsec:logarithmic relative toric}, let $\psi$ be a tower derived from a standard ramified relative toric tower by a sequence of operations of the forms (i)--(v) of Definition~\ref{D:toric tower1}.
By Proposition~\ref{P:weakly decompleting tower persistence}
and
Lemma~\ref{L:standard ramified relative toric weakly decompleting2},
$\psi$ is weakly decompleting, and $R_\psi$ is $F$-(finite projective) with $p$-basis
$\{\overline{\pi}, \overline{t}_1^{-1}, \dots, \overline{t}_a^{-1}, \overline{t}_1,\dots,\overline{t}_d\}$.
\end{hypothesis}

\begin{defn}
Define the group $\Gamma = \ZZ_p^\times \rhd \ZZ_p^d$; it acts on $\psi$
as in Definition~\ref{D:toric tower}, although the tower is no longer Galois.
Nonetheless, we may still define a \emph{$\Gamma$-module} (resp.\ \emph{$(\varphi, \Gamma)$-module}) over $*_\psi$ to be a finite projective module (resp.\ $\varphi$-module) over $*_\psi$ equipped with a continuous semilinear action of $\Gamma$.
Put $\Gamma_0 = \Gamma$ and $\Gamma_n = (1 + p^n \ZZ_p)^\times \rhd p^n \ZZ_p^d$
for $n>0$.
\end{defn}

\begin{lemma}
For each nonnegative integer $n$, the complex $C^\bullet_{\cont}(\Gamma_n, \overline{\varphi}^{-1}(R_\psi)/R_\psi))$ is strict exact.
\end{lemma}
\begin{proof}
The decomposition \eqref{eq:ab-decomposition} from the proof of Lemma~\ref{L:andreatta-brinon} is valid in this context and $\Gamma_n$-equivariant for $n>0$, so the remainder of that proof carries over.
\end{proof}

\begin{cor} \label{C:ramified Gamma equivalences2}
The statements of
Lemma~\ref{L:Gamma quasi-isomorphism} and
Lemma~\ref{L:Gamma equivalences}
hold in this setting.
\end{cor}

\subsection{Ramified towers and \texorpdfstring{$(\varphi, \Gamma)$}{(phi, Gamma)}-modules}
\label{subsec:ramified phi-gamma}

We now give the main statements about $(\varphi, \Gamma)$-modules for nonrelative and relative ramified toric towers.

\begin{hypothesis}
Throughout \S\ref{subsec:ramified phi-gamma}, let $\psi$ be a tower as in either
Hypothesis~\ref{H:ramified1} or Hypothesis~\ref{H:ramified2}.
Let $a$ be a fixed positive integer.
\end{hypothesis}

\begin{theorem} \label{T:ramified perfect equivalence1}
The following statements hold.
\begin{enumerate}
\item[(a)]
The exact tensor categories of $(\varphi^a, \Gamma)$-modules over the rings in the diagram
\[
\xymatrix@R=30pt@!C=60pt{
 \bA^{\dagger}_\psi \ar[r] \ar[d]  & \breve{\bA}^\dagger_\psi \ar[d] \ar[r]
 &  \hat{\bA}^\dagger_\psi \ar[d] \ar[r] &
 \tilde{\bA}^\dagger_\psi \ar[d] \\
 \bA_\psi \ar[r] & \breve{\bA}_\psi \ar[r] & 
 \hat{\bA}_\psi \ar[r] &\tilde{\bA}_\psi
}
\]
are equivalent via the apparent base change functors.
\item[(b)]
The exact tensor categories of globally \'etale $(\varphi^a, \Gamma)$-modules over the rings in the diagram
\[
\xymatrix@R=30pt@!C=60pt{
\bC_\psi \ar[r] & \breve{\bC}_\psi \ar[rr] & & \tilde{\bC}_\psi \\
 \bB^{\dagger}_\psi \ar[d] \ar[r] \ar[u] & \breve{\bB}^\dagger_\psi \ar[d] \ar[r]
 \ar[u] &  \hat{\bB}^\dagger_\psi \ar[d] \ar[r] & \tilde{\bB}^\dagger_\psi \ar[d] \ar[u]\\
 \bB_\psi \ar[r] & \breve{\bB}_\psi \ar[r] & \hat{\bB}_\psi \ar[r] & \tilde{\bB}_\psi
}
\]
are equivalent via the apparent base change functors.
\item[(c)]
The exact tensor categories of $(\varphi^a, \Gamma)$-modules
over $\bC_\psi$, $\breve{\bC}_\psi$, and $\tilde{\bC}_\psi$ are equivalent via the apparent base change functors.
\item[(d)]
Within each of (a)--(c), the $(\varphi^a, \Gamma)$-cohomology of a projective (resp.\ pseudocoherent) $(\varphi^a, \Gamma)$-module is invariant under the apparent base change functors.
\end{enumerate}
\end{theorem}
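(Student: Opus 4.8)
The plan is to derive Theorem~\ref{T:ramified perfect equivalence1} by running the axiomatic machinery of \S\ref{sec:axiomatic} almost verbatim, substituting the \emph{ad hoc} notion of $\Gamma$-module from \S\ref{subsec:logarithmic toric} and \S\ref{subsec:logarithmic relative toric} for the Galois-theoretic one used in Definition~\ref{D:Gamma-module}. Concretely, for part (a) I would first invoke the weak-decompleting property of $\psi$ recorded in Hypothesis~\ref{H:ramified1}/Hypothesis~\ref{H:ramified2}, together with the $F$-(finite projective) property of $R_\psi$ stated there, so that all of the purely ring-theoretic results of \S\ref{subsec:reality checks} and \S\ref{subsec:Frobenius splittings} (Theorem~\ref{T:big etale}, Lemma~\ref{L:optimal lifts}, Corollary~\ref{C:fully faithful to extended}, Proposition~\ref{P:perfect equivalence2a}, Lemma~\ref{L:full splitting} and its corollaries) apply with no change; these hold for weakly decompleting towers with no use of the finite \'etale hypothesis. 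The one place where the genuine decompleting hypothesis enters is the descent of the \emph{$\Gamma$-structure}, and this is exactly where Corollary~\ref{C:ramified Gamma equivalences} (resp.\ Corollary~\ref{C:ramified Gamma equivalences2}) supplies the substitutes for Lemma~\ref{L:Gamma quasi-isomorphism}, Lemma~\ref{L:Gamma equivalences}, Lemma~\ref{L:Gamma quasi-isomorphism2}, Lemma~\ref{L:Gamma equivalences2}. So part (a) follows by the argument of Theorem~\ref{T:perfect equivalence1} in the projective case and Theorem~\ref{T:perfect equivalence2} in the pseudocoherent case, reading ``$\Gamma$-module'' in the new sense throughout.

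For part (b) I would follow the proof of Theorem~\ref{T:perfect equivalence1a}: once part (a) is in hand, the bottom two rows of diagram~\eqref{eq:big diagram} are linked; $\tilde{\bC}_\psi$ is attached using Corollary~\ref{C:globally etale BC} (a statement about perfect period rings that has nothing to do with $\psi$); and $\bC_\psi$, hence $\breve{\bC}_\psi$, is attached by combining the full faithfulness of base extension of arbitrary $\Gamma$-modules from $\bC_\psi$ to $\tilde{\bC}_\psi$ (now coming from Corollary~\ref{C:ramified Gamma equivalences}/Corollary~\ref{C:ramified Gamma equivalences2} in place of Lemma~\ref{L:Gamma quasi-isomorphism}) with the already-established essential surjectivity from $\bB^\dagger_\psi$. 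Part (c) is then the analogue of Theorem~\ref{T:add tilde1}: the equivalences among the various ``tilde'' rings and the Fargues--Fontaine curve come from Theorem~\ref{T:perfect generalized phi-modules} and Lemma~\ref{L:phi-modules to phi-bundles} (independent of $\psi$), and passage between $\bC_\psi$, $\breve{\bC}_\psi$, $\breve{\bC}^{[s,r]}_\psi$ and their tilde counterparts uses the $\Gamma$-equivalences of Corollary~\ref{C:ramified Gamma equivalences}/Corollary~\ref{C:ramified Gamma equivalences2} exactly as in the proof of Theorem~\ref{T:add tilde1}. For part (d), the pseudocoherent refinement requires the coherence of $\breve{\bC}^{[s,r]}_\psi$ exactly as in \S\ref{subsec:pseudocoherent type C}; under Hypothesis~\ref{H:ramified2} this coherence follows from the noetherianity of $\bC^{[s,r],+}_\psi$ observed in Remark~\ref{R:toric tower}, while in the perfectoid-field case derived from a standard relative toric tower by operations of types (i)--(ii) it follows from the really-strongly-noetherian statement of Remark~\ref{R:strongly noetherian imperfect}; one then runs the proof of Theorem~\ref{T:add tilde2}, again using Corollary~\ref{C:ramified Gamma equivalences}/Corollary~\ref{C:ramified Gamma equivalences2} in place of Lemma~\ref{L:Gamma equivalences2}.

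Part (e), the compatibility with $(\varphi^a,\Gamma)$-cohomology, follows formally once all four equivalences of categories are established: the relevant comparison isomorphisms are those of Theorem~\ref{T:Galois cohomology1}, Theorem~\ref{T:Galois cohomology1c}, Theorem~\ref{T:Galois cohomology1c2}, whose proofs invoke only Lemma~\ref{L:Gamma quasi-isomorphism}/Lemma~\ref{L:Gamma quasi-isomorphism2} (available here via Corollary~\ref{C:ramified Gamma equivalences}/Corollary~\ref{C:ramified Gamma equivalences2}) together with Proposition~\ref{P:same cohomology} and \cite[Proposition~6.3.19]{part1}, all of which are insensitive to whether the tower is finite \'etale; one simply interprets every occurrence of $M^\bullet$ as the continuous-cochain complex $C^\bullet_{\cont}(\Gamma_n, M)$ attached to the $\Gamma$-action and notes that the reductions ``replace $\psi$ by $\psi_{(n)}$'' still make sense because $\Gamma_n$ acts on the truncated tower.

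The genuine obstacle — and the reason this theorem is not an immediate corollary of \S\ref{sec:axiomatic} — is that the earlier development systematically uses the finite \'etale (in fact Galois, via Example~\ref{exa:Galois tower}/Example~\ref{exa:Galois tower2}) structure of $\psi$ to identify the complex $M^\bullet$ with a \v{C}ech complex and to invoke faithfully flat descent (e.g.\ in Lemma~\ref{L:equalizer}, Lemma~\ref{L:decompleting all}, and the essential-surjectivity arguments). For the ramified towers this apparatus is simply unavailable, so the whole point of Corollary~\ref{C:ramified Gamma equivalences}/Corollary~\ref{C:ramified Gamma equivalences2} is that the four key $\Gamma$-descent lemmas can nonetheless be proved directly from the explicit $\Gamma_n$-equivariant decompositions \eqref{eq:ab-decomposition1}/\eqref{eq:ab-decomposition} of $\overline{\varphi}^{-1}(R_\psi)/R_\psi$, bypassing descent entirely. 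I would therefore organize the write-up so that every citation to a \S\ref{sec:axiomatic} result is accompanied by an explicit note of which input — the purely ring-theoretic ones (weak decompletion, $F$-finiteness, coherence) versus the $\Gamma$-descent ones — is being used, and that only the latter are being re-supplied by the corollaries in \S\ref{subsec:logarithmic toric}/\S\ref{subsec:logarithmic relative toric}; beyond that bookkeeping, the proof is a mechanical transcription of the proofs of Theorems~\ref{T:perfect equivalence1}, \ref{T:perfect equivalence1a}, \ref{T:perfect equivalence2}, \ref{T:add tilde1}, \ref{T:add tilde2} and \ref{T:Galois cohomology1}--\ref{T:Galois cohomology1c2}.
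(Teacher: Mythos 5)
Your proposal is correct and coincides with the paper's own argument, which likewise derives the theorem from Corollary~\ref{C:ramified Gamma equivalences} (and Corollary~\ref{C:ramified Gamma equivalences2}) by transcribing the proofs of Theorems~\ref{T:perfect equivalence1}, \ref{T:perfect equivalence1a}, \ref{T:add tilde1}, \ref{T:perfect equivalence2}, \ref{T:add tilde2}, and \ref{T:Galois cohomology1}--\ref{T:Galois cohomology1c2} with the \emph{ad hoc} notion of $\Gamma$-module. Your explicit bookkeeping of which inputs are ring-theoretic and which are $\Gamma$-descent (including the coherence sources for part (d)) is exactly the substance the paper's terse citation leaves implicit.
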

\begin{proof}
This follows from Corollary~\ref{C:ramified Gamma equivalences}
as in the proofs of Theorems~\ref{T:perfect equivalence1},
\ref{T:perfect equivalence1a},
 \ref{T:add tilde1}, \ref{T:Galois cohomology1}, and \ref{T:Galois cohomology1c}.
\end{proof}

What is notably absent from Theorem~\ref{T:ramified perfect equivalence1} is the relationship between $(\varphi, \Gamma)$-modules and local systems. We address this next.
\begin{defn}
We say that a morphism $X \to \Spa(A_0,A_0^{+})$ is (faithfully) (finite) \emph{Kummer-\'etale} if it is (faithfully) (finite) flat and becomes (faithfully) (finite) \'etale after pullback along the map
$T_1 \mapsto T_1^{p^n}, \dots, T_d \mapsto T_d^{p^n}$ for some nonnegative integer $n$. By analogy with the pro-\'etale topology of \cite[\S 9.1]{part1}, we may define the \emph{pro-Kummer-\'etale topology} of $\Spa(A_0, A_0^+)$.
\end{defn}

\begin{theorem}
The categories of \'etale $(\varphi^a, \Gamma)$-modules over $\tilde{\bA}_\psi$,
globally \'etale $(\varphi^a, \Gamma)$-modules over $\tilde{\bB}_\psi$,
and \'etale $(\varphi^a, \Gamma)$-modules over $\tilde{\bC}_\psi$ are respectively equivalent to the categories of $\ZZ_{p^a}$-local systems, isogeny $\ZZ_{p^a}$-local systems, and $\QQ_{p^a}$-local systems on $\Spa(A_0,A_0^{+})$ for the  
pro-Kummer-\'etale topology. Moreover, $(\varphi^a, \Gamma)$-cohomology corresponds to pro-Kummer-\'etale cohomology of local systems.
\end{theorem}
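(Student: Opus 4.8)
The strategy is to reduce the Kummer-\'etale statement to the \'etale statement already proved in the excerpt (Theorem~\ref{T:pseudocoherent type A}, Theorem~\ref{T:etale type C}, and their cohomological companions Theorem~\ref{T:perfect generalized phi-modules cohomology}), by exhibiting a weakly decompleting perfectoid tower $\psi$ whose associated perfectoid space carries a Kummer-\'etale site that matches the usual \'etale site after a controlled modification. Concretely, for a standard ramified toric tower (Hypothesis~\ref{H:ramified1}) or a standard ramified relative toric tower (Hypothesis~\ref{H:ramified2}), the total space $\Spa(\tilde{A}_\psi, \tilde{A}_\psi^+)$ is a perfectoid space, and by construction the pro-Kummer-\'etale site of $\Spa(A_0, A_0^+)$ has the tower $\psi$ (together with its finite \'etale refinements after extracting $p$-th roots of the coordinates) as a cofinal family of covers. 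The first step is to make this precise: show that every pro-Kummer-\'etale cover can be refined by a composition of (a) the coordinate-root tower $\psi$ itself and (b) a pro-\'etale cover in the sense of \cite[\S 9.1]{part1} of one of its finite levels. This is the Kummer-\'etale analogue of the statement in \S\ref{sec:perfectoid supplemental} that perfectoid subdomains form a neighborhood basis, and the proof is the same: the obstruction to \'etaleness of a Kummer-\'etale map is removed after pulling back along $T_i \mapsto T_i^{p^n}$, hence after passing high enough in $\psi$.

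Given this cofinality, the second step is to identify the sheaves of period rings. On the pro-Kummer-\'etale site, one defines $\tilde{\bA}, \tilde{\bB}, \tilde{\bC}$ by the same formulas as on the pro-\'etale site (applied to the completed structure sheaf $\widehat{\calO}$), and the point is that their sections over the cofinal covers produced in step one are exactly the rings $\tilde{\bA}_\psi, \tilde{\bB}_\psi, \tilde{\bC}_\psi$ of the tower. Then one runs the Artin--Schreier / Frobenius-descent machinery: for $\ZZ_{p^a}$-local systems, the sequence \eqref{eq:as1} (and its imperfect refinements through Corollary~\ref{C:ramified Gamma equivalences}) together with the acyclicity results of Theorem~\ref{T:perfect Robba proetale cohomology} gives, exactly as in the proof of Theorem~\ref{T:pseudocoherent type A}, an equivalence between local systems on the Kummer-\'etale site and \'etale $\varphi^a$-modules over $\tilde{\bA}_\psi$, compatibly with the $\Gamma$-action; the $\Gamma$-structure is forced by the Galois-type structure of the covers $\psi^\bullet$ as in Example~\ref{exa:Galois tower} and Definition~\ref{D:Cech complex}, even though the individual covers are not finite \'etale. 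For isogeny local systems and $\QQ_{p^a}$-local systems one passes to $\tilde{\bB}_\psi$ and $\tilde{\bC}_\psi$ respectively, using Corollary~\ref{C:globally etale BC} and Theorem~\ref{T:etale type C} verbatim.

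The third step is the cohomology comparison. One shows that $\tilde{\bA}_\psi$ (resp.\ $\tilde{\bB}_\psi, \tilde{\bC}_\psi$) computes pro-Kummer-\'etale cohomology of the associated sheaf by verifying \v{C}ech-acyclicity of the period sheaves on the cofinal family of Kummer-\'etale covers; this follows by combining Remark~\ref{R:complex almost optimal} (almost optimality of the augmented \v{C}ech complexes built from perfect uniform Banach rings) with Lemma~\ref{L:perfectoid tower splitting} for the finite \'etale part, exactly as in Theorem~\ref{T:perfect generalized phi-modules cohomology} and Theorem~\ref{T:Galois cohomology1b}. Then the total-complex spectral sequence argument identifies $H^i_{\proet\text{-}\mathrm{K}}$ with the $(\varphi^a,\Gamma)$-hypercohomology, i.e.\ with $H^i_{\varphi^a,\Gamma}$ of the complex of Definition~\ref{D:Gamma-module} twisted by the Artin--Schreier double complex.

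\textbf{The main obstacle.} The delicate point is step one: in the ordinary \'etale setting, the fact that finite \'etale covers glue and that descent data are effective (\cite[Theorem~2.6.9]{part1} and \cite[Theorem~2.6.9]{part1}) makes the reduction to the tower $\psi$ essentially formal, but here the basic covers $\psi^\bullet$ are \emph{not} finite \'etale, and one must check that the Kummer-\'etale site is nonetheless well-behaved enough — in particular that $\widehat{\calO}$ and the period sheaves are genuine sheaves on it and that Kummer-\'etale descent for finite \'etale morphisms to the levels of $\psi$ is effective. One handles this by factoring any Kummer-\'etale morphism as a composite of the coordinate-power map (whose descent is governed by the explicit ramified structure of $\psi$, hence by the decompositions \eqref{eq:ab-decomposition1} and \eqref{eq:ab-decomposition} that already underlie Corollary~\ref{C:ramified Gamma equivalences}) followed by a finite \'etale morphism (governed by \cite[Theorem~2.6.9]{part1}); the $\Gamma$-equivariant direct-sum decompositions of $\overline{\varphi}^{-1}(R_\psi)/R_\psi$ are precisely what let one transfer \'etale-site statements across the coordinate-power map. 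Once this factorization and the attendant effectivity are in place, everything else is a mechanical reprise of the arguments already carried out for toric towers in \S\ref{subsec:standard toric towers}--\S\ref{subsec:logarithmic relative toric} and for perfect period sheaves in \S\ref{sec:perfect period sheaves}.
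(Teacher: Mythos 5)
Your overall strategy (reduce to the perfect-level results, with the tower as a cofinal family of pro-Kummer-\'etale covers) points in the right direction, but there is a concrete missing ingredient, and your resolution of the ``main obstacle'' substitutes the wrong tool for it. Since the statement involves only the perfect rings $\tilde{\bA}_\psi, \tilde{\bB}_\psi, \tilde{\bC}_\psi$, no decompletion is needed: Corollary~\ref{C:ramified Gamma equivalences} and the decompositions \eqref{eq:ab-decomposition1}, \eqref{eq:ab-decomposition} compare $*_\psi$ with $\tilde{*}_\psi$ and say nothing about sheafiness of $\widehat{\calO}$ or the period sheaves on the pro-Kummer-\'etale site, nor about descent of finite \'etale algebras across the coordinate-power maps, so they cannot play the role you assign them. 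What is actually needed --- both for essential surjectivity in your step two (trivializing an \'etale $\varphi^a$-module modulo $p^n$, equivalently exactness of \eqref{eq:as1} viewed on the pro-Kummer-\'etale site) and for identifying $\Gamma$-equivariant local systems on $\Spa(\tilde{A}_\psi, \tilde{A}_\psi^+)$ with local systems on the base --- is the fact that every finite \'etale cover of the perfectoid ring $\tilde{A}_\psi$ descends to a finite \'etale cover of some finite level $A_{\psi,n}$, hence is finite Kummer-\'etale over the base: this is the equivalence $\FEt(A_\psi) \to \FEt(\tilde{A}_\psi)$ of \cite[Proposition~2.6.8]{part1} (compare Theorem~\ref{T:big etale}). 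Your step-one cofinality claim does not fill this gap: it shows the pro-Kummer-\'etale topos is generated by tower-based covers, but it does not show that the pro-(finite \'etale) covers of the perfectoid total space on which the Artin--Schreier and trivialization arguments take place are dominated by objects of that site.

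For comparison, the paper's proof avoids redoing any site-theoretic foundations: Theorem~\ref{T:pseudocoherent type A}, Theorem~\ref{T:etale type C} and Corollary~\ref{C:globally etale BC} already give the full statement, including the cohomology comparison, with $(\varphi^a,\Gamma)$-modules replaced by $\varphi^a$-modules and the base replaced by $\Spa(\tilde{A}_\psi, \tilde{A}_\psi^+)$; the transfer to the pro-Kummer-\'etale site of the base is then exactly the $\FEt$ equivalence above, plus, for $\QQ_{p^a}$-local systems, the local existence of $\ZZ_{p^a}$-lattices from \cite[Lemma~8.4.2]{part1} --- a point your proposal also passes over when it asserts that Theorem~\ref{T:etale type C} applies ``verbatim.'' If you add these two inputs, your more explicit route through period sheaves on the pro-Kummer-\'etale site can be carried out, but most of its steps then collapse into the paper's short reduction.
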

\begin{proof}
By Theorem~\ref{T:pseudocoherent type A}
Theorem~\ref{T:etale type C},
and Corollary~\ref{C:globally etale BC}, the given statement would hold if we replaced
$(\varphi^a, \Gamma)$-modules with $\varphi^a$-modules and $\Spa(A_0,A_0^+)$
with $\Spa(\tilde{A}_{\psi}, \tilde{A}_{\psi}^+)$.
By \cite[Proposition~2.6.8]{part1}, the functor 
$\FEt(A_\psi) \to \FEt(\tilde{A}_\psi)$
is an equivalence; this proves the original statements about $\ZZ_{p^a}$-local systems and isogeny $\ZZ_{p^a}$-local systems. For the statement about $\QQ_{p^a}$-local systems, we must additionally invoke \cite[Lemma~8.4.2]{part1} to see that \'etale $\QQ_{p^a}$-local systems locally descend to isogeny $\ZZ_{p^a}$-local systems; we may then deduce this case (for all $\psi$ at once) from the previous cases (for all $\psi$ at once).
\end{proof}

One can also use this logarithmic construction to study the category of relative $(\varphi, \Gamma)$-modules over the base space, by building a suitable logarithmic condition into the definition of a $(\varphi, \Gamma)$-module.

\begin{defn}
We say a pseudocoherent $(\varphi^a, \Gamma)$-module $M$ over $*_\psi$ is \emph{effective}
(or that the action of $\Gamma$ on $M$ is \emph{effective}) if
for $i =1,\dots,d$, the $i$-th factor of $\ZZ_p$ in $\Gamma$ acts trivially on $M/T_i M$. 
Similarly, a $\Gamma$-cochain $f: \Gamma^{n+1} \to M$ is \emph{effective} 
if for $i=1,\dots,d$, the composite $\Gamma^{n+1} \to M/T_i M$ factors through 
the quotient of $\Gamma$ by the $i$-th factor of $\ZZ_p$.
\end{defn}

\begin{theorem} \label{T:functor to log}
Put $X = \Spa(A_{\psi,0}, A_{\psi,0}^+)$
and choose $* \in \{\bA, \bA^\dagger, \bC\}$.
\begin{enumerate}
\item[(a)]
There is a equivalence of exact tensor categories between $\varphi^a$-modules over $\tilde{*}_X$ and effective $(\varphi^a, \Gamma)$-modules over $*_\psi$.
\item[(b)]
In (a), the sheaf cohomology of a 
$\varphi^a$-module over $\tilde{*}_X$ is canonically identified with the $\varphi^a$-(hyper)cohomology of the complex of continuous effective $\Gamma$-cochains with values in the corresponding $(\varphi^a, \Gamma)$-module over $*_\psi$.
\end{enumerate}
\end{theorem}
\begin{proof}
By Theorem~\ref{T:ramified perfect equivalence1}, it suffices to check the analogous statements with $*_\psi$ replaced by $\tilde{*}_\psi$ everywhere.
Part (a) follows from Theorem~\ref{T:vector bundle faithful descent} via the usual splitting argument (Definition~\ref{D:Robba fractional}).
Part (d) follows similarly from Theorem~\ref{T:faithful acyclic}.
\end{proof}

\begin{remark} \label{R:projective space using ramified}
As an exercise, we suggest using the aforementioned results to compute the cohomology
of the sheaves $\widehat{\calO}(d)$ on the pro-\'etale site of a projective space over a perfectoid field, using the obvious coordinates.
This can also be done without ramified towers (see Remark~\ref{R:cover smooth by toric} for the key idea), but the calculation becomes somewhat more difficult;
see Example~\ref{exa:bad cohomology over perfectoid} for an example.
\end{remark}

\begin{remark}
The implicit role of affine $d$-space in the preceding constructions can also be assumed by a more general (possibly singular) affine toric variety. We omit further details.
\end{remark}

\subsection{Application to semistable comparison isomorphisms}

As a sample illustration of relative toric towers, we derive a statement used in recent work of Colmez and Nizio\l\ on semistable comparison isomorphisms for rigid analytic spaces
\cite{colmez-niziol}. Note that this example does not require rings of type $\bA^\dagger$
as in Cherbonnier-Colmez \cite{cherbonnier-colmez} or Andreatta-Brinon \cite{andreatta-brinon}, but only rings of type $\bA$ as in Andreatta-Iovi\cb{t}\u{a} \cite{andreatta-iovita};
however, the arguments do ultimately rely on the perfectoid correspondence in a stronger form than that provided by the original almost purity theorem of Faltings.

\begin{example} \label{exa:colmez-niziol}
Let $K$ be a $p$-adic field, and choose a uniformizer $\pi_K$ of $\gotho_K$.
Put
\[
A^+ = \gotho_K\{T_1^{\pm},\dots,T_a^{\pm}, T_{a+1},\dots,T_{d}\}/(T_{a+1} \cdots T_{a+b} - \pi_K^h), \qquad A= A^+[\pi_K^{-1}]
\]
for some nonnegative integers $a,b,d,h$ with $d \geq a+b$.
Let $B^+$ be the completion of an \'etale algebra over $A^+$ and put $B = B^+[\pi_K^{-1}]$. Then $(B,B^+)$ occurs as the base of a ramified relative toric tower $\psi$ over $K$.
Let $\overline{B}$ be the maximal subring of an absolute integral closure of $B$
which is \'etale over $B[(T_{a+b+1} \cdots T_d)^{-1}]$
(or equivalently by Abhyankar's lemma, which becomes \'etale after base extension from $B$ to $B[T_{a+b+1}^{1/m}, \dots, T_d^{1/m}: m >0]$ followed by flattening).
This ring is perfectoid, so we may apply Theorem~\ref{T:Fontaine perfectoid correspondence} to produce a corresponding perfect uniform Banach ring $S$.
\end{example}

\begin{theorem} \label{T:colmez-niziol}
With notation as in Example~\ref{exa:colmez-niziol},
for $r \in \ZZ$, the natural morphisms
\[
\RR\Gamma_{\cont}(\Gamma, \bA_\psi(r))
\to \RR\Gamma_{\cont}(\Gamma, \tilde{\bA}_\psi(r))
\to \RR\Gamma_{\cont}(\Aut(\overline{B}/B_{\psi,\infty}), \tilde{\bA}_{S}(r))
\]
are quasi-isomorphisms.
\end{theorem}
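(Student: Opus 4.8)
\textbf{Proof plan for Theorem~\ref{T:colmez-niziol}.}

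The plan is to decompose the claimed chain of quasi-isomorphisms into two independent statements and treat each using the machinery developed in \S\ref{sec:axiomatic} and \S\ref{sec:toric}. The first morphism is the decompletion statement $\RR\Gamma_{\cont}(\Gamma, \bA_\psi(r)) \to \RR\Gamma_{\cont}(\Gamma, \tilde{\bA}_\psi(r))$; the second is the comparison passing from the imperfect/perfect period rings of the relative tower $\psi$ to the perfect period ring of the ambient perfectoid cover $\overline{B}$, taking into account the change of group from $\Gamma$ to $\Aut(\overline{B}/B_{\psi,\infty})$. I would first check that $\psi$ is indeed a ramified relative toric tower over $K$ in the sense of \S\ref{subsec:logarithmic relative toric}: the base ring $(B,B^+)$ is obtained from a standard ramified relative toric tower by operations of types (i)--(v) of Definition~\ref{D:toric tower1} (the localization inverting $T_1,\dots,T_a$, the completion of an \'etale algebra, and the passage to $B^+[\pi_K^{-1}]$). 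Hypothesis~\ref{H:ramified2} then applies, so $\psi$ is weakly decompleting with $R_\psi$ being $F$-(finite projective), and Corollary~\ref{C:ramified Gamma equivalences2} grants the analogues of Lemma~\ref{L:Gamma quasi-isomorphism} and Lemma~\ref{L:Gamma equivalences} in this setting.

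For the first morphism, the point is that a projective (indeed, after twisting by $r$ still trivial rank $1$) $(\varphi,\Gamma)$-module over $\bA_\psi$ base-extends to $\tilde{\bA}_\psi$ compatibly with cohomology; but here we do not even need the $\varphi$-structure, only the $\Gamma$-module structure. By Theorem~\ref{T:ramified perfect equivalence1}(a) (or directly by the analogue of Lemma~\ref{L:Gamma quasi-isomorphism} supplied by Corollary~\ref{C:ramified Gamma equivalences2}, applied with $M = \bA_\psi(r)$), the map $\RR\Gamma_{\cont}(\Gamma, \bA_\psi(r)) \to \RR\Gamma_{\cont}(\Gamma, \tilde{\bA}_\psi(r))$ is a quasi-isomorphism: the cochain complexes computing $H^\bullet_\Gamma$ on the two sides agree through the identification $\bA_\psi(r) \otimes_{\bA_\psi} \tilde{\bA}_\psi = \tilde{\bA}_\psi(r)$, and Lemma~\ref{L:Gamma quasi-isomorphism2}/\ref{L:Gamma equivalences2} (valid here by Corollary~\ref{C:ramified Gamma equivalences2}) gives that the quotient complexes $\tilde{\bA}_{\psi^\bullet}/\varphi^{-m}(\bA_{\psi^\bullet})$ are strict exact in each degree, hence contribute nothing in the limit. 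I would be careful to note that the twist by $r$ is harmless since it only rescales $\varphi$ and does not touch the $\Gamma$-action or the underlying module, so the decompletion argument is literally the untwisted one.

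For the second morphism, the key is that $\tilde{A}_\psi$ and $\tilde{B}$ (the completion of $\overline{B}$ for the spectral norm, whose tilt is $S$) are related through $\FEt$: the tower $B = B_{\psi,0} \to B_{\psi,1} \to \cdots$ has completed direct limit $\tilde{B}_\psi$, and $\overline{B}$ is the (perfectoid) limit of the maximal tamely-then-wildly-ramified extension; $\Aut(\overline{B}/B_{\psi,\infty})$ is precisely the Galois group of the residual cover $\overline{B}/\tilde{B}_\psi$. The passage $\RR\Gamma_{\cont}(\Gamma, \tilde{\bA}_\psi(r)) \to \RR\Gamma_{\cont}(\Aut(\overline{B}/B_{\psi,\infty}), \tilde{\bA}_S(r))$ is then an instance of Hochschild--Serre (Lemma~\ref{L:hochschild-serre}) for the extension $1 \to \Aut(\overline{B}/B_{\psi,\infty}) \to \Gal(\overline{B}/B) \to \Gamma \to 1$, combined with the almost-vanishing of the higher cohomology of the normal subgroup on the completed period ring. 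Concretely, $\RR\Gamma_{\cont}(\Aut(\overline{B}/B_{\psi,\infty}), \tilde{\bA}_S(r))$ computes the pro-\'etale cohomology of $\tilde{\bA}_S(r)$ over the space $\Spa(\tilde{A}_\psi, \tilde{A}_\psi^+)$ by Theorem~\ref{T:perfect generalized phi-modules cohomology} (applied to the $\varphi$-module of type $\bA$, then using Theorem~\ref{T:pseudocoherent type A} to identify $\tilde{\bA}$-cohomology with sheaf cohomology and then with Galois cohomology of the finite-\'etale tower), while $\RR\Gamma_{\cont}(\Gamma, \tilde{\bA}_\psi(r))$ computes the same pro-\'etale object via the tower $\psi$; the comparison is the statement that these two pro-\'etale presentations of the same sheaf on the same space agree, which follows because both are \v{C}ech computations for covers that refine each other. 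I would spell this out via Remark~\ref{R:complex almost optimal}: the augmented \v{C}ech complexes for the two towers are both almost optimal exact after tilting, so their cohomologies coincide.

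\textbf{Main obstacle.} The delicate point is the second morphism, specifically the bookkeeping around the logarithmic (Kummer) ramification at the toric boundary divisor $T_{a+b+1} \cdots T_d = 0$ and the relation $T_{d+1} T_{a+1} \cdots T_{a+b} = \pi_K^h$. One must verify that $\overline{B}$ — defined as the maximal subextension \'etale over $B[(T_{a+b+1}\cdots T_d)^{-1}]$, equivalently (by Abhyankar) \'etale after adjoining all $p$-power roots of the $T_i$ — really is perfectoid and that its tilt $S$ receives the expected map from $\tilde{R}_\psi$, so that $\tilde{\bA}_S$ makes sense and Theorem~\ref{T:Fontaine perfectoid correspondence} applies; this is where the ``stronger form of the perfectoid correspondence'' alluded to in the introduction to \S8 is needed, rather than Faltings's original almost purity. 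A secondary subtlety is checking that $\Aut(\overline{B}/B_{\psi,\infty})$ acts on $\tilde{\bA}_S$ with the continuity needed to form $\RR\Gamma_{\cont}$, and that the Hochschild--Serre spectral sequence degenerates appropriately (here the higher cohomology of the subgroup on $\tilde{\bA}_S(r)$ is almost zero by Remark~\ref{R:complex almost optimal}, so after inverting $p$ it vanishes and the edge map is the claimed quasi-isomorphism). Once the perfectoidness of $\overline{B}$ and the compatibility of tilts are in hand, the rest is an assembly of Theorem~\ref{T:ramified perfect equivalence1}, Corollary~\ref{C:ramified Gamma equivalences2}, Lemma~\ref{L:hochschild-serre}, and Remark~\ref{R:complex almost optimal}.
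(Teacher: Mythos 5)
Your handling of the first quasi-isomorphism is essentially the paper's: one replaces $\bA_\psi$ by $\breve{\bA}_\psi$ and invokes Corollary~\ref{C:ramified Gamma equivalences2} (the ramified analogue of Lemma~\ref{L:Gamma quasi-isomorphism}), exactly as you propose. One small caution: if $(r)$ denotes the Tate twist (as in Colmez--Nizio\l), it \emph{does} change the $\Gamma$-action, so the correct justification is not ``the twist does not touch the $\Gamma$-action'' but simply that the decompletion lemma applies to an arbitrary $\Gamma$-module over the relevant ring, so the coefficient twist is irrelevant.

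For the second quasi-isomorphism there are two genuine problems. First, your concluding mechanism --- that $\RR\Gamma_{\cont}(\Gamma,\tilde{\bA}_\psi(r))$ and $\RR\Gamma_{\cont}(\Aut(\overline{B}/B_{\psi,\infty}),\tilde{\bA}_S(r))$ are ``\v{C}ech computations of the same pro-\'etale sheaf on the same space, for covers refining each other'' --- is false as stated. The $\Gamma$-cochain complex is a \v{C}ech-type computation over the \emph{base} $\Spa(B,B^+)$ (via the tower $\psi$), whereas the group-cochain complex of $\Aut(\overline{B}/B_{\psi,\infty})$ is the \v{C}ech complex of the v-cover $\Spa(\widehat{\overline{B}})\to\Spa(\tilde{A}_\psi,\tilde{A}_\psi^+)$; on $\Spa(\tilde{A}_\psi,\tilde{A}_\psi^+)$ the sheaf $\tilde{\bA}(r)$ is acyclic with $H^0=\tilde{\bA}_\psi(r)$, so the two complexes cannot be compared as presentations of one sheaf on one space. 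The role of the pfqc/v-topology comparison --- which is what the paper's proof cites, via Theorem~\ref{T:pseudocoherent type A} applied on $\Spa(\tilde{A}_\psi,\tilde{A}_\psi^+)$ --- is precisely to identify the $\Aut(\overline{B}/B_{\psi,\infty})$-cochain complex of $\tilde{\bA}_S(r)$ with $\tilde{\bA}_\psi(r)$ placed in degree $0$; that inner-layer identification is then what gets assembled with the $\Gamma$-direction (your Hochschild--Serre/Cartan--Leray skeleton is the right framework for that assembly), whereas the ``covers refine each other on the same space'' step you substitute for it would not go through. Second, and more seriously, disposing of the ``almost'' ambiguity ``after inverting $p$'' is not available: the theorem is an integral statement about rings of type $\bA$, and it is needed integrally in the Colmez--Nizio\l{} application, so inverting $p$ changes the statement. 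What is required --- and what the paper's machinery supplies --- is honest, not almost, acyclicity of $\tilde{\bA}=W_\varpi(\overline{\calO})$ on affinoid perfectoids for the pro-\'etale, pfqc and v topologies (Theorem~\ref{T:pseudocoherent type A}, resting on Theorem~\ref{T:faithful acyclic} and Lemma~\ref{L:perfectoid tower splitting}); only $\calO^+$ and $\tilde{\bA}^+$ are merely almost acyclic. This is exactly the ``stronger form of the perfectoid correspondence'' your obstacle paragraph gestures at, but the resolution you propose does not capture it.
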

\begin{proof}
To check the first quasi-isomorphism, we may replace $\bA_\psi$ with $\breve{\bA}_\psi$; the claim then follows from Corollary~\ref{C:ramified Gamma equivalences2}
(specifically its extension of Lemma~\ref{L:Gamma quasi-isomorphism}).
The second claim follows from the comparison of cohomology with hypercohomology in Theorem~\ref{T:pseudocoherent type A}, using the v-topology
on $\Spa(\tilde{A}_\psi, \tilde{A}_\psi^+)$.
\end{proof}

\begin{remark} \label{R:colmez-niziol}
Theorem~\ref{T:colmez-niziol} is used in \cite[Proposition~2.20(ii)]{colmez-niziol} as part of an extended calculation of syntomic cohomology groups. This, combined with Scholze's proof of the finite-dimensionality of \'etale cohomology for smooth proper rigid spaces
\cite{scholze2}, allows Colmez and Nizio\l\ to recover the comparison isomorphism for proper semistable formal schemes over $\gotho_K$ \cite[Corollary~1.5]{colmez-niziol}.
\end{remark}

\begin{remark}
The case $h=0$ of Theorem~\ref{T:colmez-niziol} is also covered by the prior results of 
Andreatta-Iovi\cb{t}\u{a} \cite{andreatta-iovita}. (Strictly speaking, this is only true if one also assumes $d = a+b$ to avoid logarithmic structures, but their methods apply equally well in the logarithmic case.) However, the case $h>0$ is crucial for the intended application described in Remark~\ref{R:colmez-niziol}; this example, together with Scholze's comparison isomorphism \cite{scholze2}, can be taken as strong evidence for the importance in $p$-adic Hodge theory of the generalization of the original almost purity theorem of Faltings to the context of perfectoid rings.
\end{remark}

\section{Applications to pseudocoherent sheaves}
\label{sec:applications}

Using imperfect period rings associated to (nonrelative) toric towers, we establish a number of key properties of pseudocoherent sheaves of $\widehat{\calO}$-modules 
and pseudocoherent $(\varphi, \Gamma)$-modules on rigid analytic spaces over an arbitrary (and in particular, not necessarily discretely valued) analytic field of mixed characteristics. 
Consequences for the cohomology of pro-\'etale local systems will be discussed in a subsequent paper.
(Similar arguments can be made in the equal characteristic case, but some care needs to be taken to avoid the use of resolution of singularities and to handle inseparable base field extensions.) 

\setcounter{theorem}{0}

\begin{hypothesis}
Throughout \S\ref{sec:applications}, let $K$ be an analytic field containing $\Qp$,
let $X$ be a rigid analytic space over $K$ (viewed as an adic space locally of tft over $K$), and let $a$ be a positive integer.
Let $\nu_{\proet}: X_{\proet} \to X$ be the canonical morphism.
\end{hypothesis}

\begin{remark}
It is not difficult to carry over all of the following results in the context of reified adic spaces. However, we will generally be free to replace $K$ with a conveniently large overfield (e.g., a perfectoid or even algebraically closed field); consequently, the reified results can all be formally deduced from the context of standard adic spaces. We thus omit any further mention of reified spaces in what follows.
\end{remark}

\subsection{Geometric arguments}

We begin with some key geometric arguments about analytic spaces.
\begin{remark} \label{R:cover smooth by toric}
For any adic Banach algebra $(A,A^+)$, the disc $\Spa(A\{T\}, A^+\{T\})$ is covered by the annuli $\Spa(A\{T^{\pm}\}, A^+\{T^{\pm}\})$ and $\Spa(A\{(T-1)^{\pm}\}, A^+\{(T-1)^{\pm}\})$. By induction on dimension, we see that the closed unit polydisc 
over $(A,A^+)$
\[
\Spa(A\{T_1,\dots,T_n\}, A^+\{T_1,\dots,T_n\})
\]
is covered by finitely many copies of the closed unit polyannulus over $(A,A^+)$
\[
\Spa(A\{T_1^{\pm}, \dots, T_n^{\pm}\}, A^+\{T_1^{\pm}, \dots, T_n^{\pm}\}).
\]
A related observation is that the map $(T_1, \dots, T_n) \mapsto (1+pT_1, \dots, 1+pT_n)$
identifies the polydisc with the rational subdomain
of the polyannulus defined by the conditions
\[
v(T_1 - 1) \leq v(p), \dots, v(T_n-1) \leq v(p).
\]
Either construction has the following consequence.
Let $Y \to X$ be a smooth morphism of uniform adic spaces. Then there exist a covering $\{U_i\}_i$ of $X$ by rational subspaces
and, for each $i$, a covering $\{V_{i,j}\}_j$ of $Y \times_X U_i$ by rational subspaces each of which is \'etale over some closed unit polyannulus over $U_i$. In particular, for any perfectoid subdomain $W = \Spa(B,B^+)$ of $U_{i,\proet}$, $Y \times_X W$
can be covered by the base spaces of finitely many restricted toric towers over $W$.
We may thus use Theorem~\ref{T:standard toric decompleting} to apply the results of 
\S\ref{subsec:phigamma} to various questions about smooth morphisms of adic spaces.
Of course one could also use ramified towers as in \S\ref{subsec:logarithmic toric}
 to obtain similar results, but avoiding ramified towers leads to some technical simplifications in the ensuing arguments.
\end{remark}

\begin{remark} \label{R:resolution}
Suppose that $X = \Spa(A,A^+)$ where $A$ is a classical affinoid algebra over $K$ (i.e., a quotient of $K\{T_1,\dots,T_n\}$ for some $n$) which is reduced. 
The ring $A$ is known to be excellent
\cite[Theorem~1.1.3]{conrad-irreducible}. We may thus apply Temkin's desingularization theorem for quasiexcellent $\QQ$-schemes \cite{temkin-resolution1} to produce a projective birational morphism $f_0: Y_0 \to \Spec(A)$ of schemes which analytifies to a morphism $f: Y \to X$ of rigid analytic spaces with $Y$ smooth. In particular, $f_0$ is a covering in the $h$-topology (Definition~\ref{D:h-topology}) and the pullback of $f$ to any perfectoid space is a covering in the v-topology
(Definition~\ref{D:v-topology}). 
Moreover, the construction satisfies certain functoriality properties; for example, the open subspace of $X$ over which $f$ is an isomorphism is precisely the subspace at which $X$ is smooth.
\end{remark}

We also need an analogue of Remark~\ref{R:resolution} for embedded resolution of singularities. 
\begin{remark} \label{R:resolution2}
Let $Z \hookrightarrow X$ be a closed immersion of rigid analytic spaces and suppose that $X = \Spa(A,A^+)$ is a smooth affinoid space. Temkin has shown \cite{temkin-resolution2} that there exists a projective birational morphism $f_0: Y_0 \to \Spec(A)$
(actually a composition of blowups along regular centers)
 which analytifies to a morphism $f: Y \to X$ of rigid analytic spaces with $Y$ smooth and $f^{-1}(Z)$ a normal crossings divisor. Again, the construction satisfies certain functoriality properties; for example, the open subspace of $X$ over which $f$ is an isomorphism is precisely the subspace at which $Z$ is a normal crossings divisor.
\end{remark}

\begin{remark} \label{R:inverse image ideal sheaf}
Suppose that $X$ is seminormal in the sense of \S\ref{subsec:seminormality adic}, and set notation as in Remark~\ref{R:resolution}.
By Proposition~\ref{P:h-topology seminormal} plus rigid GAGA \cite[Example~3.2.6]{conrad},
the map $\calO_X \to f_* \calO_Y$ is an isomorphism.

With the same notation, let  $\calI$ be an ideal subsheaf of $\calO_X$ and let $\calJ$ be the inverse image ideal subsheaf of $\calO_Y$ (i.e., the image of $f^* \calI$ in $\calO_Y$). Then $f_* \calJ$ is an ideal subsheaf of $\calO_X$ containing $\calI$, but it need not equal $\calI$; all that one can say in general is that $\calI$ and $f_* \calJ$ have the same integral closure. In particular, they have the same support, and moreover there exists an integer $m$ such that $(f_* \calJ)^m \subseteq \calI$.
\end{remark}

\begin{remark} \label{R:not closed immersion}
Let $X' \to X$ be a closed immersion and let $Y$ be a perfectoid subdomain of $X_{\proet}$. Then the corresponding perfectoid subdomain $Y'$ of $X'_{\proet}$ is not $Y \times_X X'$ but rather the uniformization thereof, which is indeed perfectoid by Theorem~\ref{T:Fontaine perfectoid compatibility}(b). 
Consequently, it is not immediate that $Y' \to Y$ is a closed immersion, but this does in fact
hold because of Theorem~\ref{T:uniform closure}.
\end{remark}

To indicate the extent to which pro-\'etale cohomology fails to obey the intuition associated with coherent cohomology, we offer an example of the failure of the GAGA principle in this context.
\begin{example} \label{exa:bad cohomology over perfectoid}
Let $K$ be a perfectoid field containing $\Qp(\mu_{p^\infty})$. Put $A = K\{T^{\pm}\}$ and let $\psi$ be the standard toric cover over $A$.
As in Example~\ref{exa:Galois tower}, we may identify
$H^i(\Spa(A,A^{\circ})_{\proet}, \widehat{\calO})$ with $H^i_{\cont}(\Zp, \tilde{A}_\psi)$;
one thus computes 
\[
H^i(\Spa(A,A^\circ)_{\proet}, \widehat{\calO}) = \begin{cases} A & i=0,1 \\ 0 & i>1.
\end{cases}
\]
Moreover, there is a distinguished isomorphism between the $H^0$ and $H^1$ groups. 

Next, put $A = K\{T\}$. We may compute 
$H^i(\Spa(A,A^\circ)_{\proet}, \widehat{\calO})$
either using a ramified toric tower, or by the following argument (a form of
``Abhyankar's trick''). The morphism
\[
K\{T\} \to K\{T^{\pm}\}, \qquad
T \mapsto T^p - T^{-1}
\]
expresses $\Spa(K\{T,T^{-1}\}, K\{T,T^{-1}\}^\circ)$ as a finite \'etale cover of $\Spa(K\{T\},K\{T\}^\circ)$.
By viewing $K\{T,T^{-1}\}$ as a finite projective module $M$ over $K\{T\}$ in this fashion and splitting the inclusion $K\{T\} \to M$, we may deduce again that
\[
H^i(\Spa(A,A^\circ)_{\proet}, \widehat{\calO}) = \begin{cases} A & i=0,1 \\ 0 & i>1
\end{cases}
\]
with a distinguished isomorphism between $H^0$ and $H^1$.

Finally, put $X = \PP^1_K$. By covering $X$ with $\Spa(K\{T\},K\{T\}^\circ)$ and $\Spa(K\{T^{-1}\},K\{T^{-1}\}^\circ)$ and using the above considerations, we compute that
\[
H^i(X_{\proet}, \widehat{\calO}) = \begin{cases} K & i=0,2 \\ 0 & i=1, i>1.
\end{cases}
\]
In particular, the map $H^2(X, \calO) \to H^2(X_{\proet}, \widehat{\calO})$ is not an isomorphism.
\end{example}

\begin{remark} \label{R:HT spectral sequence}
David Hansen has observed that Example~\ref{exa:bad cohomology over perfectoid} admits a more conceptual explanation. 
Let $K$ be a perfectoid field containing $\Qp(\mu_{p^\infty})$ and let $X$ be any smooth adic space over $K$. Then there exists a Hodge-Tate spectral sequence
\[
E_2^{ij} = H^i(X, \Omega^j_X)(-j) \Longrightarrow H^{i+j}(X_{\proet}, \widehat{\calO}_X)
\]
arising from a canonical identification $R^j \nu_{\proet *}\widehat{\calO}_X \cong \Omega^j_X(-j)$ (obtained, for example, 
by reducing to the case where $K$ is algebraically closed,
then applying \cite[Proposition~3.23]{scholze-cdm}).
This plus acyclicity explains the first two calculations.

To explain the third calculation, one may use the fact that the Hodge-Tate spectral sequence degenerates at $E_2$ if $X$ is proper. When $X$ is the analytification of a scheme over $K$, as in the case $X= \PP^1_K$, this degeneration may be deduced from the corresponding degeneration for schemes using the GAGA principle. 
If $X$ descends to a discretely valued subfield of $K$, degeneration follows from the Galois-equivariance of the differentials together with the vanishing of $\CC_K(i)^{G_K}$ for any discretely valued extension $K$ of $\QQ_p$ and any $i \neq  0$.
The general case can be reduced to the discretely valued case using a spreading-out argument of Conrad--Gabber (in preparation).
\end{remark}

We also include a genuinely analytic example.
\begin{example}
Suppose that $K$ is algebraically closed. Let
\[
\pi: \GG_{m,K} \to \GG_{m,K}/q^\ZZ \cong E
\]
be the Tate uniformization of an elliptic curve $E$ over $K$ with multiplicative reduction.
For $\lambda \in \QQ_p^\times$, we obtain a rank 1 $\QQ_p$-local system $V$ on $E$ by equipping the trivial local system on $\GG_{m,K}$ with the translation action scaled by $\lambda$. Let $\calF$ be the associated $(\varphi, \Gamma)$-module over $\tilde{\bC}^{[s,r]}_E$, which can itself by obtained by equipping the trivial $(\varphi, \Gamma)$-module on $\GG_{m,K}$ with the translation action scaled by $\lambda$.

We compute the cohomology of the trivial local system on $\GG_{m,K}$. For $n \geq 0$, let $\GG_{m,K,n}$ be a copy of $\GG_{m,K}$ mapping to the original copy via the $p^n$-power map. The spaces $\GG_{m,K,n}$ then form a toric tower $\psi$ over $\GG_{m,K}$; let $\tilde{\GG}_{m,K}$ denote the limit of this tower, which is a perfectoid space.
Using this cover, we compute the $(\varphi, \Gamma)$-cohomology of the terms in the exact sequence
\[
0 \to \tilde{\bC}^{[s,r]} \to t_\theta^{-1} \tilde{\bC}^{[s,r]} \to t_\theta^{-1} \tilde{\bC}^{[s,r]}/\tilde{\bC}^{[s,r]} \to 0.
\]
We may identify the elements of $H^0(\tilde{\GG}_{m,K}, \tilde{\bC}^{[s,r]})$ with certain series
$\sum_{n \in \ZZ[p^{-1}]} c_n T^n$ with coefficients in $\tilde{\bC}^{[s,r]}_K$.
If we restrict to the subset of these series with zero constant term, then $\gamma-1$ acts invertibly for any nontrivial $\gamma \in \ZZ_p$. This yields
\[
H^i_{\varphi,\Gamma}(\GG_{m,K,\proet}, t_\theta^{-1} \tilde{\bC}^{[s,r]}) =
\begin{cases}
H^0_{\varphi,\Gamma}(\Spa(K,K^+), t_\theta^{-1} \tilde{\bC}^{[s,r]}) & i=0,1  \\
0 & i>1;
\end{cases}
\]
note that we have an exact sequence
\[
0 \to \QQ_p \to H^0_{\varphi,\Gamma}(\Spa(K,K^+), t_\theta^{-1} \tilde{\bC}^{[s,r]}) \to K \to 0.
\]
Meanwhile, note that $\varphi-1$ acts surjectively on $H^0(\tilde{\GG}_{m,K},  t_\theta^{-1} \tilde{\bC}^{[s,r]}/\tilde{\bC}^{[s,r]})$; we thus have
\[
H^i_{\varphi,\Gamma}(\GG_{m,K,\proet}, t_\theta^{-1} \tilde{\bC}^{[s,r]}/\tilde{\bC}^{[s,r]}) =
\begin{cases}
H^0(\GG_{m,K}, \calO) & i=0,1 \\ 0 & i>1.
\end{cases}
\]
Putting this together, we obtain the following.
\begin{itemize}
\item
We have $H^0(\GG_{m,K,\proet}, \QQ_p)= \QQ_p$.
\item
We have an exact sequence
\[
0 \to H^0(\GG_{m,K}, \calO) / K \to H^1(\GG_{m,K,\proet}, \QQ_p) \to \QQ_p \to 0.
\]
This agrees with a calculation of Colmez--Dospinescu--Nizio\l\ \cite{colmez-dospinescu-niziol}
made in a somewhat different fashion: they interpret $H^0(\GG_{m,K}, \calO) \to H^1(\GG_{m,K,\proet}, \QQ_p)$ as the exponential map followed by the Kummer map.

\item
We have $H^2(\GG_{m,K,\proet}, \QQ_p) \cong H^0(\GG_{m,K}, \calO) / K$.
\item
We have $H^i(\GG_{m,K,\proet}, \QQ_p) = 0$ for $i>2$.
\end{itemize}
We may now compute $H^i(E_{\proet}, V)$ using a spectral sequence
whose $E_2$-term consists of the kernel and cokernel of $\lambda (q^*) - 1$ on $H^i(\GG_{m,K,\proet}, \QQ_p)$.
If $\lambda = 1$, then we get the usual values
\[
H^i(E_{\proet}, V) = \begin{cases} \QQ_p & i =0, 2 \\
\QQ_p \oplus \QQ_p & i=1 \\
0 & i > 2. \end{cases}
\]
If $\lambda \notin q^\ZZ$, then the action of $\lambda (q^*) - 1$ on $H^i(\GG_{m,K,\proet}, \QQ_p)$ is invertible, so $H^i(E_{\proet}, V) = 0$ for all $i$.

This leaves the case where $\lambda = q^m$ for some nonzero $m \in \ZZ$. In this case, the
kernel and cokernel of $\lambda (q^*) - 1$ on $H^0(\GG_{m,K}, \calO)$ are both of the form $K \cdot T^{-m}$,
so from the spectral sequence we obtain
\[
H^i(E_{\proet}, V) = \begin{cases} 0 & i=0 \\
K & i =1,3 \\
K \oplus K & i=2 \\
0 & i>3.
\end{cases}
\]
\end{example}

\subsection{Ax-Sen-Tate for rigid analytic spaces}
\label{subsec:ax-sen-tate}

As suggested in \cite{part1}, we extend the Ax-Sen-Tate theorem to adic spaces of finite type over an analytic field of mixed characteristics.
To begin with, we record a consequence of the main theorem of \cite{ax}.
\begin{prop} \label{P:ax-sen-tate}
In case $X = \Spa(K, K^+)$  for some $K^+$, we have $H^0(X_{\proet}, \widehat{\calO}_X) = K$.
\end{prop}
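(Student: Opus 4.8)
The plan is to reduce Proposition~\ref{P:ax-sen-tate} to the classical Ax-Sen-Tate theorem, which computes the Galois invariants of the completed algebraic closure of an analytic field. First I would observe that $H^0(X_{\proet}, \widehat{\calO}_X)$ does not depend on the choice of $K^+$, since the pro-\'etale site of $\Spa(K,K^+)$ and the sheaf $\widehat{\calO}$ depend only on $K$ (the ring of integral elements affects only the value-theoretic, not the topological, structure; alternatively one can pass to a cofinal system of perfectoid subdomains where this is manifest). So I may as well take $K^+ = K^\circ$.

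Next, I would exhibit a perfectoid subdomain of $X_{\proet}$ that is large enough to see all of $\widehat{\calO}$. Concretely, let $C$ be the completion of an algebraic closure $\overline{K}$ of $K$; then $\Spa(C, C^+)$ (for a suitable $C^+$) arises as a pro-\'etale limit over $X$ via the tower of finite separable subextensions of $\overline{K}/K$, and it is a perfectoid field by the standard theory (it is the total space of a finite \'etale tower, hence covered by the formalism of \S\ref{subsec:perfectoid towers}; alternatively one invokes the perfectoid property of completed algebraic closures directly). By the acyclicity of $\widehat{\calO}$ on perfectoid affinoids (Proposition~\ref{P:perfectoid acyclic}(a)) together with a \v{C}ech computation for the pro-\'etale cover $\Spa(C,C^+) \to X$, one gets $H^0(X_{\proet}, \widehat{\calO}_X) = (C)^{\Gal(\overline{K}/K)}$, the differential in the \v{C}ech complex being the difference of the two structure maps, whose equalizer is precisely the Galois-fixed subring. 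The continuity of the Galois action on $C$ is automatic from the structure of the tower.

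Finally, I would invoke the Ax-Sen-Tate theorem \cite{ax} to identify $(C)^{\Gal(\overline{K}/K)} = K$. Strictly, Ax's theorem is usually stated for complete discretely valued or more general complete fields with perfect residue field; since the paper's standing hypothesis allows $K$ to be an arbitrary analytic field over $\Qp$, I should either cite Ax in the generality needed or reduce to a discretely valued subfield by a standard approximation/spreading-out argument. I expect this last point---getting the hypotheses of the classical theorem to match the generality of analytic fields allowed here---to be the main obstacle, though it is one the authors explicitly flag as known (they note in the introduction that Ax-Sen-Tate is incorporated, while declining to reprove it perfectoid-theoretically). Everything else is a formal consequence of the acyclicity results already established for $\widehat{\calO}$ on perfectoid spaces.
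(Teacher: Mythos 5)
Your proposal is essentially the paper's own proof: the paper likewise passes to the completed algebraic closure $\CC_K$, notes it is perfectoid, identifies $H^0(X_{\proet}, \widehat{\calO}_X)$ with $\CC_K^{G_K}$ using the acyclicity of $\widehat{\calO}$ on perfectoid affinoids (Corollary~\ref{C:compare perfectoid cohomology}), and then invokes the main theorem of \cite{ax}. The only caveat you raise is not actually an obstacle, since Ax's theorem applies to an arbitrary complete rank-one valued field (no discreteness or perfect-residue-field hypothesis is needed), which is exactly how the paper cites it.
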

\begin{proof}
Let $\CC_K$ be a completed algebraic closure of $K$; the absolute Galois group $G_K$ acts on $\CC_K$ by continuity. Let $\overline{K}$ be the integral closure of $K$ in $\CC_K$. Since $\CC_K$ is perfectoid, by 
Theorem~\ref{T:faithful acyclic} we
have
$H^0(X_{\proet}, \widehat{\calO}_X) = \CC_K^{G_K}$; by the main theorem of \cite{ax}, we have $\CC_K^{G_K} = \overline{K}^{G_K} = K$.
\end{proof}

We next recall some facts about seminormality from \S\ref{subsec:seminormality adic} and point out their significance.
\begin{remark} \label{R:effect of seminormalization}
Let $X'$ be the seminormalization of $X$.
For any perfectoid subdomain $Y = \Spa(A,A^+)$ of $X_{\proet}$, by Remark~\ref{R:sections of completed structure sheaf}, $A$ is seminormal and so $Y \to X$ factors through $X'$; it follows that  $Y \times_X X' \cong Y$ is uniform, so the perfectoid subdomain of $X'$ corresponding to $Y$ coincides with $Y$. That is, we have an isomorphism of sites $X_{\proet} \cong X'_{\proet}$ inducing an isomorphism of rings $\widehat{\calO}_X \cong \widehat{\calO}_{X'}$.

This means that relative $p$-adic Hodge theory over $X'$ is \emph{de facto} equivalent to relative $p$-adic Hodge theory over $X$.
For example, for any perfect period sheaf $*$, the classes of relative $(\varphi, \Gamma)$-modules over $*_X$ and $*_{X'}$ are canonically equivalent.
\end{remark}

\begin{theorem} \label{T:sections of completed structure sheaf}
The space $X$ is seminormal if and only if 
the map
$\calO_X \to \nu_{\proet *} \widehat{\calO}_X$ is an isomorphism.
\end{theorem}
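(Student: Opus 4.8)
The plan is to prove both implications locally, reducing to the affinoid case $X = \Spa(A,A^+)$ with $A$ a classical affinoid algebra over $K$. The ``only if'' direction is essentially already available: if $X$ is seminormal, then in particular it is reduced (hence uniform), and by Corollary~\ref{C:pro-sheafy seminormal} it would suffice to know $H^0(\Spa(B,B^+)_{\proet},\widehat{\calO}) = B$ for every rational localization $(A,A^+)\to(B,B^+)$. Since seminormality of $A$ implies stable seminormality by Proposition~\ref{P:affinoid seminormal}(a), each such $B$ is itself seminormal, so it is enough to treat a single affinoid $\Spa(A,A^+)$ with $A$ seminormal and show $H^0(\Spa(A,A^+)_{\proet},\widehat{\calO}) = A$. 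For this I would use the desingularization input: by Remark~\ref{R:resolution}, choose a projective birational $f\colon Y \to X$ with $Y$ smooth, which is an $h$-cover of schemes and whose pullback to any perfectoid space is a v-cover. By Remark~\ref{R:inverse image ideal sheaf} (which uses Proposition~\ref{P:h-topology seminormal} and rigid GAGA), seminormality of $X$ gives $\calO_X \xrightarrow{\sim} f_*\calO_Y$. Now cover $Y$ by rational subspaces \'etale over closed unit polyannuli over rational subspaces of $X$ as in Remark~\ref{R:cover smooth by toric}, so that pro-\'etale-locally on $Y$ we are in the setting of a restricted toric tower. Applying Theorem~\ref{T:standard toric decompleting} together with Proposition~\ref{P:Ax}(b) (the Ax-Sen-Tate statement conditioned on a decompleting tower), we get $H^0(W_{\proet},\widehat{\calO}) = \calO_Y(W)$ for $W$ in a basis of $Y_{\proet}$; a \v{C}ech/descent argument then gives $H^0(Y_{\proet},\widehat{\calO}_Y) = \calO_Y(Y)$, and pushing forward along the v-cover $f$ (using acyclicity of $\widehat{\calO}$ for such covers, Theorem~\ref{T:faithful acyclic} / Corollary~\ref{C:compare perfectoid cohomology}) yields $H^0(X_{\proet},\widehat{\calO}_X) = H^0(Y_{\proet},\widehat{\calO}_Y) = \calO_X(X) = A$.

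For the ``if'' direction, suppose $\calO_X \to \nu_{\proet *}\widehat{\calO}_X$ is an isomorphism. Then $A = H^0(X_{\proet},\widehat{\calO}_X)$, and since $p$ is topologically nilpotent in $A$ we may invoke Corollary~\ref{C:pro-sheafy seminormal} directly: it asserts exactly that $(A,A^+)$ is seminormal under this hypothesis. The only subtlety is globalizing: the isomorphism $\calO_X \to \nu_{\proet *}\widehat{\calO}_X$ is a statement about a sheaf, so it holds over every rational subdomain, and seminormality of adic Banach rings is checked locally (indeed Zariski-locally, by Definition~\ref{D:seminormal} applied to the affinoid algebras), so seminormality of each local ring $\calO_X(U)$ for $U$ ranging over an affinoid cover gives seminormality of $X$ as a space.

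The main obstacle I anticipate is in the ``only if'' direction: carefully assembling the local computation on $Y$ (via toric towers) into the global statement $H^0(Y_{\proet},\widehat{\calO}_Y) = \calO_Y(Y)$, and then transferring across the v-cover $f\colon Y\to X$. The toric-tower machinery only directly computes $H^0$ of $\widehat{\calO}$ on perfectoid subdomains sitting in a single tower; one must check that the covering of $Y_{\proet}$ by such subdomains (coming from Remark~\ref{R:cover smooth by toric}) is fine enough, that the \v{C}ech spectral sequence for the covering degenerates appropriately in degree $0$ (which only requires injectivity of the relevant differential, hence is easy), and that the pushforward along $f$ does not introduce higher-cohomology contributions to $H^0$ --- this last point is where the acyclicity results for the v-topology (Theorem~\ref{T:faithful acyclic}) and the identification $\calO_X \xrightarrow{\sim} f_*\calO_Y$ from seminormality are both essential. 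A secondary technical point is that the toric covers of $Y$ involve passing to perfectoid subdomains, and one must match these up compatibly with the v-cover structure; here Remark~\ref{R:not closed immersion} is a reminder that base change of perfectoid subdomains along non-smooth maps behaves subtly, but since $f$ restricted to the smooth locus is an isomorphism and we only need the statement about $H^0$ (a ``partially proper'' type invariant), this should not cause genuine difficulty.

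Finally, I would remark in passing that combined with Remark~\ref{R:effect of seminormalization}, this theorem shows that the pro-\'etale site and its completed structure sheaf determine $X$ exactly up to seminormalization: the seminormalization $X'$ of $X$ satisfies $\calO_{X'} \xrightarrow{\sim} \nu_{\proet *}\widehat{\calO}_{X'}$ and $X_{\proet} \cong X'_{\proet}$ compatibly with $\widehat{\calO}$, so one recovers $\calO_{X'}$ but not $\calO_X$ itself unless $X$ was already seminormal.
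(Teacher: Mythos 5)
Your ``if'' direction is fine: invoking Corollary~\ref{C:pro-sheafy seminormal} affinoid-locally (together with the Zariski-local nature of seminormality) is legitimate and slightly more direct than the paper, which instead applies the forward implication to the seminormalization and concludes via Remark~\ref{R:effect of seminormalization} and Proposition~\ref{P:h-topology seminormal}; both routes work. (As a side remark, your citation of Corollary~\ref{C:pro-sheafy seminormal} at the start of the ``only if'' direction is in the wrong direction and is not needed there --- showing that $\calO_X \to \nu_{\proet *}\widehat{\calO}_X$ is an isomorphism is exactly the statement $H^0(\Spa(B,B^+)_{\proet},\widehat{\calO})=B$ on a basis, no corollary required.)

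The ``only if'' direction, however, has a genuine gap. The toric-tower decompletion machinery you invoke is only available over a \emph{perfectoid} base: Definitions~\ref{D:standard toric tower1} and~\ref{D:toric tower1} require the base ring $(A_0,A_0^+)$ to be perfectoid, and Remark~\ref{R:cover smooth by toric} produces restricted toric towers only after first passing to a perfectoid subdomain $W$ of the base. Your plan applies Theorem~\ref{T:standard toric decompleting} and Proposition~\ref{P:Ax}(b) directly to a smooth space $Y$ over the arbitrary analytic field $K$ of the hypothesis; that is not licensed, and the phrase ``pro-\'etale-locally on $Y$ we are in the setting of a restricted toric tower'' hides exactly the descent that has to be proved. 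The failure is visible already in the zero-dimensional case $X=\Spa(K,K^+)$ (smooth and seminormal): there the assertion $H^0(X_{\proet},\widehat{\calO})=K$ \emph{is} the Ax--Sen--Tate theorem, and no toric (or, for non-discretely-valued $K$, cyclotomic) tower over $K$ is available to compute it. The paper closes this gap with an extra step your proposal omits: after proving the smooth case over a perfectoid base field, it base-changes to a perfectoid extension $K'$ of $K$, sets $A'=A\widehat{\otimes}_K K'$ and $A''=A\widehat{\otimes}_K K''$ with $K''$ the uniform completion of $K'\otimes_K K'$, and proves exactness of the \v{C}ech sequence $0\to A\to A'\to A''$; by \cite[Lemma~2.2.10(b)]{part1} this reduces to the case $A=K$, i.e.\ to Proposition~\ref{P:ax-sen-tate}, which is Ax's theorem and is an external input that cannot be dispensed with in this argument. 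Without this reduction-to-perfectoid-base step (or some substitute for the field-level Ax--Sen--Tate input), your local computation on the resolution $Y$ does not get off the ground; the rest of your argument (resolution $f\colon Y\to X$, the identification $\calO_X\cong f_*\calO_Y$ from seminormality via Remark~\ref{R:inverse image ideal sheaf}, and the transfer of sections along $f$) matches the paper's third paragraph and is sound once the smooth case over general $K$ is in place.
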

\begin{proof}
We first check that if $K$ is a perfectoid field and $X$ is smooth over $K$, then $\calO_X \cong \nu_{\proet *} \widehat{\calO}_X$. Using Remark~\ref{R:cover smooth by toric}, we may reduce to the case where
$X$ is the base of a restricted toric tower; thanks to
Theorem~\ref{T:standard toric decompleting},
the claim follows from Proposition~\ref{P:Ax}(b).

We next check that the conclusion of the previous paragraph remains true for arbitrary $K$. Since the claim is local, we may assume that $X = \Spa(A,A^+)$.
Let $K'$ be a perfectoid extension of $K$
and put $A' = A \widehat{\otimes}_K K'$.
Let $K''$ be the uniform completion of $K' \otimes_K K'$
and put $A'' = A \widehat{\otimes}_K K''$.
Since the claim is known for $A'$ by the previous paragraph, we need only check that the \v{C}ech sequence $0 \to A \to A' \to A''$ is exact; however, by \cite[Lemma~2.2.10(b)]{part1}
this reduces to the corresponding statement for $A = K$, which
is Proposition~\ref{P:ax-sen-tate}.

We next check that if $X$ is seminormal, then $\calO_X \cong \nu_{\proet *} \widehat{\calO}_X$.
The claim is again local on $X$, so by Lemma~\ref{L:perfectoid cover} we may assume that
there exists a covering of $X_{\proet}$ by a single perfectoid subdomain $Y$.
Construct a proper birational morphism $f: X' \to X$ with $X'$ smooth by applying Remark~\ref{R:resolution},
and let $Y'$ be the perfectoid subdomain of $X'$ corresponding to $Y$ by pullback.
Given a section of $\nu_{\proet *} \widehat{\calO}_X$, we may evaluate it on some perfectoid subdomain of $Y$, pull back to $Y'$, then apply the previous paragraph to get a section of $\calO_{X'}$. Since $X$ is seminormal, by Remark~\ref{R:inverse image ideal sheaf} we have $\calO_X \cong f_* \calO_{X'}$; we thus deduce that $\calO_X \to \nu_{\proet *} \widehat{\calO}_X$ is surjective, as required.

We finally check that if $\calO_X \cong \nu_{\proet *} \widehat{\calO}_X$, then $X$ is seminormal. 
Let $f: X' \to X$ be the seminormalization of $X$.
By Remark~\ref{R:effect of seminormalization} and the previous paragraph, we have $f_* \calO_{X'} \cong \nu_{\proet *} \widehat{\calO}_X \cong \calO_X$. By Proposition~\ref{P:h-topology seminormal}, it follows that $X$ is seminormal.
\end{proof}

\begin{cor} \label{C:sections of completed vector bundle}
Suppose that $X$ is seminormal.
Then for any vector bundle $\calF$ over $X$,
the map
$\calF \to \nu_{\proet*} \nu_{\proet}^* \calF$ is an isomorphism.
\end{cor}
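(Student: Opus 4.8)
The plan is to reduce the statement about a general vector bundle to the case already treated in Theorem~\ref{T:sections of completed structure sheaf}, namely $\calF = \calO_X$. First I would note that the claim is local on $X$: the question of whether $\calF \to \nu_{\proet*}\nu_{\proet}^*\calF$ is an isomorphism can be checked on a covering of $X$ by rational subspaces, and over each such subspace a vector bundle is a direct summand of a finite free module. More precisely, by Kiehl glueing (or rigid GAGA), over a sufficiently small affinoid $U = \Spa(A,A^+)$ the bundle $\calF|_U$ corresponds to a finite projective $A$-module $M$, which is a direct summand of some finite free module $A^n$; choose a complementary projective module $N$ so that $M \oplus N \cong A^n$.

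The key step is then to observe that $\nu_{\proet}^*$ is an additive functor and $\nu_{\proet *}$ is left exact, so both $\calF \mapsto \calF$ and $\calF \mapsto \nu_{\proet*}\nu_{\proet}^*\calF$ are additive functors on vector bundles, and the adjunction map $\calF \to \nu_{\proet*}\nu_{\proet}^*\calF$ is a natural transformation compatible with direct sums. Applying this to the decomposition $\widetilde{M} \oplus \widetilde{N} \cong \calO_U^n$, the adjunction map for $\calO_U^n$ is the $n$-fold direct sum of the adjunction map for $\calO_U$, which is an isomorphism by Theorem~\ref{T:sections of completed structure sheaf} (using that $U$ is seminormal, which follows since seminormality passes to rational localizations of affinoid algebras by Proposition~\ref{P:affinoid seminormal}, hence $X$ seminormal forces each small enough $U$ to be seminormal as well). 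A direct summand of an isomorphism (as a map of this particular shape, i.e.\ a direct sum of copies of a single adjunction map, restricted to a summand) is an isomorphism: concretely, the adjunction map for $\widetilde{M}$ is a retract of the adjunction map for $\calO_U^n$ in the arrow category, and retracts of isomorphisms are isomorphisms. Hence $\calF|_U \to \nu_{\proet*}\nu_{\proet}^*(\calF|_U)$ is an isomorphism, and since this holds on a covering, the global statement follows.

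One technical point to handle carefully is the compatibility of the formation of $\nu_{\proet*}\nu_{\proet}^*$ with restriction to an open subspace $U$, i.e.\ that $(\nu_{\proet*}\nu_{\proet}^*\calF)|_U \cong \nu_{U,\proet*}\nu_{U,\proet}^*(\calF|_U)$; this is a standard base-change compatibility for the morphism of sites $\nu_{\proet}$, since $U_{\proet}$ is the localized site $X_{\proet}/U$ and pushforward along a morphism of sites commutes with such localization. I do not expect this to be a genuine obstacle, merely something to state. The only place where the hypothesis that $X$ is seminormal is actually used is in invoking Theorem~\ref{T:sections of completed structure sheaf}; everything else is formal manipulation with additive functors and retracts. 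So the main (minor) obstacle is simply to package the "direct summand of an isomorphism is an isomorphism" argument correctly at the level of the adjunction natural transformation, rather than any deep input.
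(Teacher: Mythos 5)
Your argument is correct and is exactly the intended one (the paper leaves the corollary unproved precisely because it follows this way): the claim is local, seminormality passes to the affinoid pieces by Proposition~\ref{P:affinoid seminormal}, and locally the adjunction map is a finite direct sum of copies of the structure-sheaf case settled by Theorem~\ref{T:sections of completed structure sheaf}. Your retract-of-a-summand packaging is fine, though since a vector bundle is locally \emph{free} one can skip it and compare with $\calO_U^n$ directly.
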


\begin{remark}
A related assertion to Theorem~\ref{T:sections of completed structure sheaf}
would state that if $X = \Spa(A,A^+)$ and $\psi$ is a perfectoid finite \'etale tower over $X$, then $A \to \tilde{A}_\psi$ splits in the category
of Banach modules over $A$ if and only if $A$ is seminormal. This will be addressed in upcoming work of the first author with David Hansen \cite{hansen-kedlaya}.
\end{remark}

We next use the preceding arguments to make some inferences about ideals of $\widehat{\calO}_X$.
\begin{lemma} \label{L:stable submodules}
Assume that 
$X = \Spa(A,A^+)$ is the base of a restricted toric tower $\psi$.
Let $\psi'$ be a finite \'etale tower factoring through $\psi$.
Let $F$ be a finite projective $A$-module.
Let $M$ be a submodule of $F \otimes_A \tilde{A}_{\psi'}$ 
such that the two base extensions $M \otimes_{\tilde{A}_{\psi'}, \iota_{0,0}} \tilde{A}_{\psi^{\prime 1}}$,
$M \otimes_{\tilde{A}_{\psi'}, \iota_{0,1}} \tilde{A}_{\psi^{\prime 1}}$
have the same image in $F \otimes_A \tilde{A}_{\psi^{\prime 1}}$.
Then $M$ is the base extension of a submodule of $F$, and is complete for the subspace topology.
\end{lemma}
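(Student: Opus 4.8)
The plan is to reduce the statement to a descent assertion for $\Gamma$-modules (or for pseudocoherent modules with descent data) that we have already established in the axiomatic framework. The key observation is that the hypothesis on $M$ says precisely that $M$, viewed with its trivial "extension" structure inside $F \otimes_A \tilde{A}_{\psi'}$, carries a $\Gamma$-module (equivalently, descent datum) structure over $\tilde{A}_{\psi'}$ relative to the cover $\tilde{A}_{\psi'} \to \tilde{A}_{\psi'^1}$; the condition that the two pullbacks have the same image in $\tilde{A}_{\psi'^1}$ is what makes the canonical isomorphism $(F \otimes_A \tilde{A}_{\psi'}) \otimes_{\iota_{0,0}} \tilde{A}_{\psi'^1} \cong (F \otimes_A \tilde{A}_{\psi'}) \otimes_{\iota_{0,1}} \tilde{A}_{\psi'^1}$ restrict to an isomorphism $M \otimes_{\iota_{0,0}} \tilde{A}_{\psi'^1} \cong M \otimes_{\iota_{0,1}} \tilde{A}_{\psi'^1}$, and the cocycle condition is inherited from the ambient module. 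So $M$ is a $\Gamma$-module over $\tilde{A}_{\psi'}$, which moreover sits as a submodule of the base extension of $F$.

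First I would invoke Theorem~\ref{T:standard toric decompleting} to know that $\psi$ (and hence, via Corollary~\ref{C:finite etale decompleting}, the finite \'etale refinement $\psi'$) is decompleting, with $R_\psi$ being $F$-(finite projective) and the rings $\breve{\bC}^{[s,r]}_\psi$ coherent if we are over a perfectoid field; by replacing $K$ with a perfectoid (even algebraically closed) overfield and using the $h$/v-descent packaged in Remark~\ref{R:effect of seminormalization} and the foregoing geometric remarks, we may assume this. Then I would apply Lemma~\ref{L:Gamma equivalences} (for $*=R$, i.e.\ over $\tilde{A}_\psi$, whose underlying ring is $\tilde{A}_\psi$): base extension of $\Gamma$-modules from $\breve{R}_\psi = R_\psi^{\mathrm{perf}}$-level data — more precisely from the appropriate imperfect ring — to $\tilde{A}_\psi$ is an equivalence of categories. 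Concretely, the $\Gamma$-module $M$ over $\tilde{A}_{\psi'}$ descends to a finite module $M_0$ over $A_{\psi'}$, and since we are comparing data over a finite \'etale tower, faithfully flat descent along $A \to A_{\psi',n}$ (for $n$ large enough that $M_0$ and its $\Gamma$-structure are already defined at level $n$, as in the proof of Corollary~\ref{C:toric tower coherent}) produces the submodule of $F$ whose base extension is $M$. The point that $M_0$ lands inside $F$ rather than merely admitting a map to $F$ follows from full faithfulness of the base extension functor applied to the inclusion $M \hookrightarrow F \otimes_A \tilde{A}_{\psi'}$, whose target is the base extension of the $\Gamma$-module $F$ (with trivial descent datum).

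The main obstacle I anticipate is bookkeeping around the fact that $M$ is only assumed to be a submodule, not a priori finitely generated or pseudocoherent, so I cannot directly cite the equivalences of categories for pseudocoherent $\Gamma$-modules; I expect to need Lemma~\ref{L:projective dense subring} (which is exactly designed for this: writing a module over a completed limit as base extension from a finite level provided it is finite projective) together with the observation from Lemma~\ref{L:Robba base extension vector bundle}-style Fitting-ideal arguments that $M$, being a submodule of a projective module over an integrally-behaved ring, is itself finite projective at each level, hence amenable to Lemma~\ref{L:projective dense subring}. An alternative, cleaner route, which I would pursue if the finiteness of $M$ is not automatic, is to first check that $M$ is finitely generated using Lemma~\ref{L:splitting to finite} or Lemma~\ref{L:global sections are dense} applied to the associated sheaf over $\bC^{[s,r]}_\psi$, then run the descent; but the essential content is already Lemma~\ref{L:Gamma equivalences} plus faithfully flat descent, so the proof should be short once these finiteness points are dispatched.
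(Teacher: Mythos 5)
There is a genuine gap. Your whole strategy runs through treating $M$ as a $\Gamma$-module and descending it, but $M$ is an \emph{arbitrary} submodule of $F \otimes_A \tilde{A}_{\psi'}$, and $\tilde{A}_{\psi'}$ is a huge non-noetherian perfectoid ring: there is no reason at the outset for $M$ to be finitely generated, let alone finite projective or pseudocoherent. The decompletion statements you invoke (Lemma~\ref{L:Gamma equivalences}, or its pseudocoherent variants) are stated for finite projective (resp.\ pseudocoherent) $\Gamma$-modules, and moreover for the period rings $R_\psi, \bA^r_\psi, \bC^{[s,r]}_\psi$, not for $\tilde{A}_\psi$ itself; Lemma~\ref{L:projective dense subring} likewise takes finite projectivity as an input. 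The finiteness of $M$ is essentially part of the conclusion (submodules of $F$ over the noetherian ring $A$ are finitely generated), so it cannot be assumed, and the lemmas you offer to dispatch it (\ref{L:splitting to finite}, \ref{L:global sections are dense}, Fitting-ideal arguments) do not apply to an arbitrary submodule here. There is also an architectural circularity: the machinery that does let one descend finitely generated $\Gamma$-modules over $\tilde{A}_\psi$ (Corollary~\ref{C:stay pseudocoherent}, Theorem~\ref{T:add tilde2}, Proposition~\ref{P:stable finite}, Corollary~\ref{C:toric tower coherent}) is built on Proposition~\ref{P:coherent pullback} and Lemma~\ref{L:provector bundle submodule}, which in turn rest on the present lemma. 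Finally, since $M$ need not be flat, ``the two base extensions have the same image'' does not by itself yield an isomorphism $M \otimes_{\iota_{0,0}} \tilde{A}_{\psi^{\prime 1}} \cong M \otimes_{\iota_{0,1}} \tilde{A}_{\psi^{\prime 1}}$ satisfying the cocycle condition, and $\psi'$ is not assumed Galois, so even the passage from the hypothesis to a $\Gamma$-module structure needs an argument.

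The paper's proof sidesteps all of this. Using the canonical splitting of $A \to \tilde{A}_\psi$ (Remark~\ref{R:toric splitting}) and Lemma~\ref{L:perfectoid tower splitting}, one gets $A$-linear splittings $\pi$ of $A \to \tilde{A}_{\psi'}$, and one defines $J \subseteq F$ as the $A$-submodule generated by all $\pi(\bv)$ with $\bv \in M$; since $A$ is noetherian and $F$ is finite, $J$ is automatically finitely generated, so no finiteness of $M$ is ever needed. One then proves $M = J \otimes_A \tilde{A}_{\psi'}$ by reducing (via a free cover $F' \to J$) to the case $J = F = A$ and arguing pointwise on $\calM(A)$ using \cite[Corollary~2.3.5]{part1}: the hypothesis that the two base extensions of $M$ have the same image is used precisely to propagate nonvanishing of $M$ at one point of $\calM(\tilde{A}_{\psi'})$ above a given $\beta \in \calM(A)$ to \emph{all} points above $\beta$. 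That propagation step is the key idea of the proof, and it is absent from your proposal.
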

\begin{proof}
The map $A \to \tilde{A}_\psi$ admits an $A$-linear splitting (e.g., Remark~\ref{R:toric splitting}); by Lemma~\ref{L:perfectoid tower splitting}, the same is true of the map
$A \to \tilde{A}_{\psi'}$. 
Let $J$ be the submodule of $F$ generated by $\pi(\bv)$
as $\pi$ varies over splittings of $A \to \tilde{A}_{\psi'}$ and $\bv$ varies over $M$.
We check that the evident inclusion $M \subseteq J \otimes_A \tilde{A}_{\psi'}$ is in fact an equality.

We first treat the case $J = F$; we may reduce to the case where $F$ is finite free,
and then to the case $F = A$. By \cite[Corollary~2.3.5]{part1}, it suffices to check that for each $\delta \in \calM(\tilde{A}_{\psi'})$, there exists $\bv \in M$ such that $\delta(\bv) > 0$. Let $\beta \in \calM(A)$ be the restriction of $\delta$; by the construction of $J$,
$M$ has nonzero image in $\tilde{A}_{\psi'} \widehat{\otimes}_A \calH(\beta)$. 
Consequently, for some $\delta \in \calM(\tilde{A}_{\psi'})$ lifting $\beta$, there exists $\bv \in M$ such that $\delta(\bv) > 0$; however, the same then holds for all $\delta$ lifting $\beta$ (for different $\bv$) because of the equality of the base extensions.

We now treat the general case.
Since $A$ is a noetherian ring, we may choose a strict surjection $F' \to J$ of $A$-modules with $F'$ finite free.
Let $M'$ be the inverse image of $M$ in $F' \otimes_A \tilde{A}_{\psi'}$.
By the previous paragraph, $M' = F' \otimes_A \tilde{A}_{\psi'}$ and hence $M = J \otimes_A \tilde{A}_{\psi'}$.

We have now established that $M$ is the base extension of a submodule of $F$; it remains to verify that $M$ is complete for the natural topology. Let $\overline{M}$ be the closure of $M$ for the natural topology on $F \otimes_A \tilde{A}_{\psi'}$; we may then apply the preceding argument to $\overline{M}$ to deduce that it is the base extension
of a submodule of $F$. In particular, it is finitely generated, and so by Corollary~\ref{C:closure finitely generated}
it coincides with $M$. 
\end{proof}

\begin{lemma} \label{L:subsheaf from pro}
If $\calG$ is a coherent subsheaf of a vector bundle $\calF$ over $\calO_X$, then  
$\calG=\nu_{\proet *}(\image(\nu_{\proet}^*\calG\to \nu_{\proet}^*\calF))$.
\end{lemma}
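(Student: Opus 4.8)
The plan is to reduce the statement to a local computation on the base spaces of restricted toric towers, where Lemma~\ref{L:stable submodules} can be applied directly. Since the asserted equality of sheaves is local on $X$ for the analytic topology, I would first replace $X$ by an affinoid subspace $\Spa(A,A^+)$, and by Lemma~\ref{L:perfectoid cover} arrange that $X_{\proet}$ admits a covering by a single perfectoid subdomain $Y$. Using Remark~\ref{R:cover smooth by toric} I would further cover $X$ by rational subspaces over which $X$ becomes \'etale over a closed unit polyannulus, i.e.\ the base of a restricted toric tower; since the statement is local this is harmless. (Strictly speaking $X$ need not be smooth here, but one only needs the covering structure from Remark~\ref{R:cover smooth by toric}, which applies to any adic Banach algebra; alternatively one works over a resolution via Remark~\ref{R:resolution} and descends, but the toric-tower covering is more direct.) Thus I may assume $X = \Spa(A,A^+)$ is the base of a restricted toric tower $\psi$, with $A$ a classical affinoid algebra.

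Next, I would express the section functor over $Y$ in terms of the tower. Choosing a finite \'etale tower $\psi'$ factoring through $\psi$ whose limit $\tilde{A}_{\psi'}$ refines $Y$, we have by Corollary~\ref{C:compare perfectoid cohomology} (and the comparison of $\widehat{\calO}$-sections with $\tilde{A}_{\psi'}$) that $(\nu_{\proet *}\widehat{\calO}_X)(X)$ is computed as the equalizer of the two maps $\tilde{A}_{\psi'} \rightrightarrows \tilde{A}_{\psi^{\prime 1}}$, and similarly for the pulled-back bundles. Writing $\calF = \tilde{F}$ for a finite projective $A$-module $F$ and $\calG = \tilde{J}$ for a coherent subsheaf (using that $A$ is noetherian, so $\calG$ corresponds to a finitely generated, hence finitely presented, submodule $J \subseteq F$), the sheaf $\image(\nu_{\proet}^*\calG \to \nu_{\proet}^*\calF)$ over $Y$ is the image $M_Y$ of $J \otimes_A \tilde{A}_{\psi'}$ in $F \otimes_A \tilde{A}_{\psi'}$. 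By flatness of $A \to \tilde{A}_{\psi'}$ (which holds: the tower is perfectoid and by Corollary~\ref{C:topologically flat} / Lemma~\ref{L:perfectoid tower splitting} the map is topologically flat, hence pseudoflat, and for a finitely presented module flatness is all that is needed), this image is simply $J \otimes_A \tilde{A}_{\psi'}$. Then $\nu_{\proet *}$ of this sheaf, evaluated on $X$, is the equalizer of the two base extensions of $J \otimes_A \tilde{A}_{\psi'}$, which is a submodule $M$ of $F \otimes_A \tilde{A}_{\psi'}$ whose two images in $F\otimes_A\tilde{A}_{\psi^{\prime 1}}$ agree. Lemma~\ref{L:stable submodules} now says $M$ is the base extension of a submodule of $F$, and tracing through the construction (the submodule generated by $\pi(\bv)$ over splittings $\pi$) shows this submodule is exactly $J$; hence $M = J \otimes_A \tilde{A}_{\psi'}$, and descending along $Y \to X$ gives $\nu_{\proet *}(\image(\cdots)) = \tilde{J} = \calG$ as sheaves on $X$.

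The main obstacle I anticipate is bookkeeping the flatness/descent step cleanly: one must ensure that forming the image of $\nu_{\proet}^*\calG \to \nu_{\proet}^*\calF$ commutes with the relevant base extensions in the pro-\'etale site (so that the image sheaf really is the sheaf associated to $J \otimes_A \tilde{A}_{\psi'}$ rather than something larger), and that $\nu_{\proet *}$ of an associated sheaf on a perfectoid cover is correctly computed by the \v{C}ech equalizer — this is where Corollary~\ref{C:compare perfectoid cohomology} and the exactness of global sections over quasi-Stein-type towers are used, together with the fact that $A$ is noetherian so that there are no subtleties with finite generation of $J$. Once the identification of $M$ with $J\otimes_A\tilde{A}_{\psi'}$ via Lemma~\ref{L:stable submodules} is in hand, the rest is formal sheafification and gluing over the rational/toric covering of $X$. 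Everything else — the reduction to the affinoid toric case, the compatibility with rational localizations (Theorem~\ref{T:weak flatness}), and the descent along the toric tower — is routine given the machinery already developed.
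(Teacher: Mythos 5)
Your argument breaks at its central step. You apply Lemma~\ref{L:stable submodules} to ``$M=$ the equalizer of the two base extensions,'' but that lemma is about an $\tilde{A}_{\psi'}$-submodule $M$ of $F\otimes_A\tilde{A}_{\psi'}$ (its hypothesis requires forming $M\otimes_{\tilde{A}_{\psi'},\iota_{0,j}}\tilde{A}_{\psi^{\prime 1}}$), whereas the equalizer --- which is what computes $\nu_{\proet *}(\image(\nu_{\proet}^*\calG\to\nu_{\proet}^*\calF))(X)$ --- is only an $A$-module, so the hypotheses do not even parse for it, and the conclusion you extract ($M=J\otimes_A\tilde{A}_{\psi'}$) is not what is needed (what is needed is that the equalizer is $J$). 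If you instead apply the lemma to the image sheaf $J\otimes_A\tilde{A}_{\psi'}$ itself, you only learn that it descends to some submodule $J'\supseteq J$ of $F$; identifying $J'$ with $J$ and identifying the pushforward with $\calG$ requires two inputs your write-up never supplies: injectivity of $F/J\to(F/J)\otimes_A\widehat{\calO}(Y)$ (over a toric base this follows from the splitting of $A\to\tilde{A}_{\psi'}$, as in the proof of Lemma~\ref{L:stable submodules}), and, crucially, the fact that the equalizer of $F\otimes_A\tilde{A}_{\psi'}\rightrightarrows F\otimes_A\tilde{A}_{\psi^{\prime 1}}$ is exactly $F$, i.e.\ Corollary~\ref{C:sections of completed vector bundle}, which is where seminormality of $X$ enters. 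Your proof never uses seminormality, yet the statement is false without it: taking $\calG=\calF=\calO_X$ on a non-seminormal $X$, the right-hand side is the structure sheaf of the seminormalization by Theorem~\ref{T:sections of completed structure sheaf}. So the sentence ``tracing through the construction shows this submodule is exactly $J$'' is precisely the missing content.

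The reduction to restricted toric towers is also not available in the stated generality. Remark~\ref{R:cover smooth by toric} produces \'etale maps to polyannuli only for smooth spaces; a singular (even seminormal) affinoid is not the base of a restricted toric tower, and the flatness/splitting inputs you cite (Corollary~\ref{C:topologically flat}, Lemma~\ref{L:perfectoid tower splitting}) require a Fontaine perfectoid base ring, so they do not apply directly to $A\to\tilde{A}_{\psi'}$ for a general affinoid $A$. The fallback you mention --- resolve via Remark~\ref{R:resolution} and descend --- is exactly the nontrivial step: pushing forward the inverse image of a coherent subsheaf along a resolution does not recover the subsheaf (Remark~\ref{R:inverse image ideal sheaf}), and one must also compare pro-\'etale pushforwards along the resolution; none of this is routine. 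For contrast, the paper's proof needs none of this machinery: it works locally to realize $\calG$ as the kernel of a map of vector bundles $\calF\to\calF'$, pushes the left-exact sequence forward, and applies Corollary~\ref{C:sections of completed vector bundle} to both $\calF$ and $\calF'$; no toric towers and no appeal to Lemma~\ref{L:stable submodules} are required.
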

\begin{proof}
We may work locally to suppose that $\calG=\ker(\calF\to\calF')$, where $\calF'$ is another vector bundle over $\calO_X$. Pushing forward the exact sequence
\[
0\to\image(\nu_{\proet}^*\calG\to \nu^*_{\proet}\calF)\to\nu^*_{\proet}\calF\to\nu^*_{\proet}\calF'
\]
and using Corollary \ref{C:sections of completed vector bundle}, we get 
\[
0\to\nu_{\proet*}(\image(\nu_{\proet}^*\calG\to \nu^*_{\proet}\calF))\to\calF\to\calF'.
\]  
This yields $\calG=\nu_{\proet *}(\image(\nu_{\proet}^*\calG\to \nu_{\proet}^*\calF))$.
\end{proof}

\begin{lemma} \label{L:provector bundle submodule}
For $X$ seminormal and $\calF$ a locally finite free $\calO_X$-module,
the functor $\calG \mapsto \image(\nu_{\proet}^* \calG \to \nu_{\proet}^* \calF)$
defines an equivalence of categories 
(with quasi-inverse $\nu_{\proet *}$)
between coherent subsheaves of $\calF$ and $\widehat{\calO}_X$-submodules of $\nu_{\proet}^* \calF$.
\end{lemma}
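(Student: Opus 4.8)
The plan is to reduce the global statement to a purely local one over a base of a restricted toric tower, where the decompletion theory of \S\ref{subsec:phigamma}, \S\ref{subsec:Frobenius splittings} becomes available, and then invoke Lemma~\ref{L:stable submodules} to identify $\widehat{\calO}_X$-submodules of $\nu_{\proet}^*\calF$ with descent data. First, I would note that the two functors in question are adjoint: for a coherent subsheaf $\calG \subseteq \calF$ we have $\nu_{\proet*}(\image(\nu_{\proet}^*\calG \to \nu_{\proet}^*\calF)) = \calG$ by the preceding lemma, so the content is to show that $\nu_{\proet*}$ carries an arbitrary $\widehat{\calO}_X$-submodule $\calM$ of $\nu_{\proet}^*\calF$ to a coherent subsheaf $\calG$ of $\calF$ with $\image(\nu_{\proet}^*\calG \to \nu_{\proet}^*\calF) = \calM$. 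Since the assertion is local on $X$ and $X$ is seminormal hence reduced (so locally an affinoid $\Spa(A,A^+)$ with $A$ a classical reduced affinoid algebra), I may freely replace $K$ by a perfectoid — even algebraically closed — overfield using the \v{C}ech argument as in the proof of Theorem~\ref{T:sections of completed structure sheaf}, the point being that $\calF$ pulled back remains locally finite free, seminormality is preserved by separable extensions (Lemma~\ref{L:seminormal field extension}), and the relevant exact sequences descend by \cite[Lemma~2.2.10(b)]{part1}.

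Next, using Remark~\ref{R:cover smooth by toric} together with Temkin's desingularization theorem (Remark~\ref{R:resolution}), I would further localize so that there is a proper birational $f\colon X'\to X$ with $X'$ smooth, covered on the pro-\'etale site by perfectoid subdomains $Y'$ that are base spaces of restricted toric towers $\psi$; by seminormality, $\calO_X \cong f_*\calO_{X'}$ (Remark~\ref{R:inverse image ideal sheaf}), and similarly $\calF \cong f_*f^*\calF$. The computation of $\nu_{\proet*}$ of a submodule of $\nu_{\proet}^*\calF$ then reduces, via this resolution and the already-proven isomorphism $\calF \cong \nu_{\proet*}\nu_{\proet}^*\calF$ of Corollary~\ref{C:sections of completed vector bundle}, to: given a finite projective $A$-module $F$ (the sections of $\calF$ over the relevant affinoid) and a $\widehat{\calO}$-submodule $\calM$ of $\nu_{\proet}^*\calF$, its value $M$ on a cofinal perfectoid subdomain is a submodule of $F \otimes_A \tilde A_{\psi'}$ for some finite \'etale tower $\psi'$ factoring through $\psi$; the sheaf condition on $\calM$ forces the two base extensions of $M$ to $\tilde A_{\psi'^1}$ to have the same image. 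Lemma~\ref{L:stable submodules} then says $M = J \otimes_A \tilde A_{\psi'}$ for a submodule $J \subseteq F$. Since $A$ is noetherian, $J$ is coherent, $\calG = \tilde J$ is a coherent subsheaf of $\calF$ with $\nu_{\proet}^*\calG$ having image $\calM$, and $\nu_{\proet*}\calM = \calG$ by the adjunction already recorded. Full faithfulness — i.e. that distinct coherent subsheaves give distinct $\widehat{\calO}_X$-submodules, and conversely — follows from the roundtrip identities $\nu_{\proet*}(\image(\nu_{\proet}^*\calG\to\nu_{\proet}^*\calF)) = \calG$ and $\image(\nu_{\proet}^*(\nu_{\proet*}\calM)\to\nu_{\proet}^*\calF) = \calM$, the latter being the new content just established.

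I expect the main obstacle to be the bookkeeping needed to pass from the local statement over a toric base to the global statement over a seminormal $X$, specifically the interaction between the resolution $f\colon X'\to X$ and the pro-\'etale topology: as Remark~\ref{R:not closed immersion} warns, pullback of perfectoid subdomains along $f$ is subtle, and one must be careful that the sheaf condition on $\calM$ over $X_{\proet}$ does transfer to the descent datum compatibility hypothesis of Lemma~\ref{L:stable submodules} on a cofinal system of toric towers. The clean way to organize this is to first prove the statement when $X$ itself is (an affinoid open in) the base of a restricted toric tower over a perfectoid field — where Lemma~\ref{L:stable submodules} applies directly and the sheaf condition is exactly its hypothesis — and then glue: a coherent subsheaf of $\calF$ is determined by its restrictions to an admissible affinoid cover, and likewise an $\widehat{\calO}_X$-submodule of $\nu_{\proet}^*\calF$ glues from its restrictions to the toric pieces produced by Remark~\ref{R:cover smooth by toric}, using that $\widehat{\calO}_X$ and $\calO_X$ are sheaves and that intersections of toric pieces are again handled by rational localization (which preserves the restricted-toric structure, Proposition~\ref{P:weakly decompleting tower persistence}). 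The resolution $f$ is needed only to reduce the seminormal but possibly singular case to the smooth case, and there the functoriality of $\calO_X \cong f_*\calO_{X'}$ together with coherence of $J$ over the noetherian ring $A$ makes the descent of $J$ from $X'$ back to $X$ automatic.
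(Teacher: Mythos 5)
Your treatment of the smooth case is essentially the paper's argument: localize to the base of a restricted toric tower and apply Lemma~\ref{L:stable submodules} to the descent-datum condition forced by the sheaf property (the paper additionally invokes \cite[Proposition~9.2.6]{part1} to reduce to submodules generated by their sections on a single perfectoid subdomain which is a tower of faithfully finite \'etale covers --- a reduction you assume implicitly when you speak of ``its value $M$ on a cofinal perfectoid subdomain,'' and which deserves justification). The genuine gap is in your final paragraph, where you assert that once the coherent subsheaf has been produced on the resolution $X'$, ``the descent of $J$ from $X'$ back to $X$ is automatic.'' It is not, and this is exactly the hard part of the lemma. The obstruction is recorded in Remark~\ref{R:inverse image ideal sheaf}: for $f\colon X' \to X$ a resolution of a seminormal $X$, pushing forward an inverse-image coherent subsheaf along $f$ recovers the original subsheaf only up to integral closure, i.e.\ only up to something with the same support. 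Concretely, if $\calM \subseteq \nu_{\proet}^*\calF$ on $X$ and you identify the induced submodule on $X'_{\proet}$ with $\nu_{\proet}^*\calG'$ for a coherent $\calG' \subseteq f^*\calF$, then $f_*\calG'$ (or any coherent subsheaf of $\calF$ you manufacture from it) has pullback image which contains $\calM$ but may differ from it along the exceptional locus of $f$; nothing in your argument rules this out, and there is no single-step fix.

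The paper closes this gap by a different device: it first reduces to $\calF = \calO_X$ by induction on the rank of a free $\calF$, and then, for an ideal $\calJ \subseteq \widehat{\calO}_X$, runs an iterative construction producing a decreasing chain $\calO_X = \calI_0 \supseteq \calI_1 \supseteq \cdots$ of coherent ideal subsheaves whose pullbacks all contain $\calJ$: at each stage one covers $\calI_j$ by a finite free $\calF_j$, forms the inverse image of $\calJ$ in $\nu_{\proet}^*\calF_j$, applies the smooth case on $X'$ to that submodule, pushes the resulting coherent subsheaf forward into $\calF_j$ by adjunction, and maps back into $\calO_X$ to get $\calI_{j+1}$. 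The key point, which substitutes for your ``automatic descent,'' is that the support of the discrepancy $\calJ_j/\calJ$ coincides with the support of $\calI_j/\calI_{j+1}$, so if the process has not yet terminated the ideal strictly decreases on a closed subspace, and noetherian induction (on supports in the noetherian ring $A$) forces the discrepancy to become empty after finitely many steps. Your proposal as written would need to be repaired by incorporating some such iteration (or an equivalent argument controlling the exceptional-locus discrepancy); the base-change to an algebraically closed field and the gluing remarks in your last paragraph are harmless but do not touch this issue.
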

\begin{proof}
Note that full faithfulness of the functor in question follows from 
Lemma~\ref{L:subsheaf from pro}. To check essential surjectivity,
we may work locally. Suppose first that $X$ is smooth. Using Remark~\ref{R:cover smooth by toric}, we may assume that $X = \Spa(A,A^+)$ is the base of a restricted toric tower $\psi$. 
Using \cite[Proposition~9.2.6]{part1}, we may further reduce to considering only $\widehat{\calO}_X$-submodules which are generated by their sections on a perfectoid subdomain which is a tower of faithfully finite \'etale covers over $X$. We may then deduce the claim from Lemma~\ref{L:stable submodules}.

To treat the case of general $X$, we may assume that $\calF$ is free; by induction on $\rank(\calF)$, we may further reduce to the case $\calF = \calO_X$.
Set notation as in Remark~\ref{R:inverse image ideal sheaf} and let $\calJ$ be an ideal of $\widehat{\calO}_X$.
We construct a decreasing sequence $\calO_X = \calI_0 \supseteq \calI_1 \supseteq \cdots$ of ideal subsheaves of $\calO_X$ whose inverse images in $\widehat{\calO}_X$ contain $\calJ$, as follows. Given $\calI_j$, construct a morphism $\calF_j \to \calI_j$ of $\calO_X$-modules with $\calF_j$ finite free;
form the inverse image $\calJ_j$ of $\calJ$ in $\nu_{\proet}^* \calF_j$; apply the previous paragraph to identify $f_{\proet}^* \calJ_j$ with the inverse image of a coherent subsheaf of $f^* \calF_j$; push this subsheaf forward into $\calF_j$ via adjunction; then map to $\calO_X$ to obtain $\calJ_{j+1}$. This construction has the property that if $\calJ \neq \calJ_j$, then $\calI_{j+1} \neq \calI_j$; more precisely, the support of $\calJ_j/\calJ$ coincides with the support of $\calI_j/\calI_{j+1}$ (i.e., for any perfectoid subdomain $Y$ of $X$,
the support of $(\calJ_j/\calJ)(Y)$ on $Y$ is the inverse image under $Y \to X$ of the support of $\calI_j/\calI_{j+1}$ on $X$). By noetherian induction, it follows that this support eventually becomes empty, yielding the claim.
\end{proof}

\begin{prop} \label{P:coherent pullback}
For $X$ seminormal, $\nu_{\proet}^*$ defines an exact fully faithful functor from
coherent sheaves on $X$ to pseudocoherent $\widehat{\calO}_X$-modules.
\end{prop}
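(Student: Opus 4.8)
The plan is to establish three things: that $\nu_{\proet}^*$ lands in pseudocoherent $\widehat{\calO}_X$-modules, that it is exact, and that it is fully faithful. The first point is essentially the key content. Since pseudocoherence is local and $\nu_{\proet}^*$ commutes with restriction, I may work over an affinoid subspace $X = \Spa(A,A^+)$ with $A$ a reduced (in fact seminormal) classical affinoid algebra over $K$; by further shrinking and using Lemma~\ref{L:perfectoid cover}, I may assume $X_{\proet}$ is covered by a single perfectoid subdomain. A coherent sheaf on $X$ corresponds to a finitely generated $A$-module $M$, which (since $A$ is noetherian) is pseudocoherent over $A$. The goal is then to show that $M \otimes_A \widehat{\calO}_X$ is a pseudocoherent $\widehat{\calO}_X$-module. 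I would choose a resolution $\cdots \to F_1 \to F_0 \to M \to 0$ by finite free $A$-modules; applying $\nu_{\proet}^*$ gives a complex of finite free $\widehat{\calO}_X$-modules, so it suffices to show this remains exact after pullback. Exactness of the pullback functor on coherent sheaves — equivalently, the statement that $A \to \widehat{\calO}_X(Y)$ is pseudoflat (indeed flat, since $A$ is coherent) for perfectoid subdomains $Y$ — is exactly Theorem~\ref{T:weak flatness perfectoid}(b) together with the fact that $A$ is noetherian hence coherent, so flat and pseudoflat coincide (Definition~\ref{D:pseudoflat}). This simultaneously handles the exactness claim in the proposition.

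So the real work is: given that $\nu_{\proet}^*$ is exact on coherent sheaves, conclude that $\nu_{\proet}^* M$ is pseudocoherent over $\widehat{\calO}_X$. But this is now immediate: a finite free resolution of $M$ pulls back to a resolution of $\nu_{\proet}^* M$ by finite free $\widehat{\calO}_X$-modules, which is precisely the definition of pseudocoherence (applied sheaf-locally — one checks this on perfectoid subdomains using Corollary~\ref{C:refined Kiehl perfectoid}, which identifies pseudocoherent $\widehat{\calO}$-modules on $Y_{\proet}$ with pseudocoherent $\widehat{\calO}(Y)$-modules, and the base extension $M \otimes_A \widehat{\calO}(Y)$ is pseudocoherent since $A$ is noetherian, using Remark~\ref{R:need flatness}). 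I should also note that the resulting pseudocoherent $\widehat{\calO}_X$-module does not depend on the chosen presentation, which follows from functoriality of $\nu_{\proet}^*$.

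For full faithfulness, I would argue as follows. Faithfulness follows from the fact that $\calO_X \to \nu_{\proet *} \widehat{\calO}_X$ is an isomorphism (Theorem~\ref{T:sections of completed structure sheaf}, using seminormality of $X$), together with exactness: if $\phi\colon \calG_1 \to \calG_2$ is a morphism of coherent sheaves with $\nu_{\proet}^* \phi = 0$, then $\image(\phi)$ pulls back to zero, and since $\nu_{\proet}^*$ is exact and $\nu_{\proet}^* \image(\phi)$ is a subsheaf of $\nu_{\proet}^* \calG_2$ (exactness preserves injections), $\image(\phi) = 0$ by Lemma~\ref{L:provector bundle submodule} applied with $\calF$ a local free resolution term, or more directly because $\nu_{\proet *} \nu_{\proet}^* \image(\phi) = 0$ would force $\image(\phi) = 0$ via adjunction and the fact that $\calO_X \to \nu_{\proet*}\widehat{\calO}_X$ is an isomorphism. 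For fullness, given coherent $\calG_1, \calG_2$ and a morphism $\Phi\colon \nu_{\proet}^* \calG_1 \to \nu_{\proet}^* \calG_2$, I would present $\calG_1$ locally as a cokernel $\calF_1 \to \calF_0 \to \calG_1 \to 0$ with $\calF_i$ finite free; then $\Hom(\nu_{\proet}^* \calG_1, \nu_{\proet}^* \calG_2)$ is the equalizer of the two maps $\Hom(\nu_{\proet}^* \calF_0, \nu_{\proet}^* \calG_2) \rightrightarrows \Hom(\nu_{\proet}^* \calF_1, \nu_{\proet}^* \calG_2)$, and for finite free $\calF$ one has $\Hom(\nu_{\proet}^* \calF, \nu_{\proet}^* \calG_2) = \nu_{\proet}^* \calG_2(X)^{\oplus r}$, which by Corollary~\ref{C:sections of completed vector bundle} (or its analogue for pseudocoherent sheaves obtained from Corollary~\ref{C:compare perfectoid cohomology} and Theorem~\ref{T:sections of completed structure sheaf}) equals $\calG_2(X)^{\oplus r} = \Hom(\calF, \calG_2)$; taking equalizers recovers $\Hom(\calG_1, \calG_2)$.

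The main obstacle I anticipate is not any single deep input — all the heavy machinery (pseudoflatness of perfectoid subdomains, acyclicity, the Kiehl-type equivalence, Ax--Sen--Tate) is already in place — but rather the bookkeeping of sheaf-locality: pseudocoherence over $\widehat{\calO}_X$ is defined sheaf-theoretically, and one must carefully check that the locally-defined free resolutions obtained by pulling back a global (or affinoid-local) finite free resolution of $M$ glue correctly and that the definition is independent of choices. One must also be slightly careful that "finite free resolution of a coherent sheaf on a rigid space" exists locally (it does, since classical affinoid algebras are noetherian and $X$ is quasi-compact quasi-separated), and that the statement "$\nu_{\proet}^*$ exact" is interpreted as exactness of pullback on the abelian category of coherent $\calO_X$-modules, which reduces via Corollary~\ref{C:refined Kiehl perfectoid} and Theorem~\ref{T:weak flatness perfectoid}(b) to flatness of $A \to \widehat{\calO}(Y)$ for perfectoid subdomains $Y$ together with the already-noted descent/gluing for the pro-\'etale topology. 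Once exactness is secured, everything else is formal.
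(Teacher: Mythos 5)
The central step of your argument---exactness of $\nu_{\proet}^*$ on coherent sheaves---is not justified, and this is exactly where the real content of the proposition lies. You invoke Theorem~\ref{T:weak flatness perfectoid}(b) to get (pseudo)flatness of $A \to \widehat{\calO}(Y)$ for a perfectoid subdomain $Y$ of $X_{\proet}$, but that theorem requires the base $(A,A^+)$ to be Fontaine perfectoid; a classical seminormal affinoid over $K$ is not, and the remark immediately following that theorem states explicitly that for a merely stably uniform base with $Y$ constructed via Lemma~\ref{L:perfectoid cover}, pseudoflatness of $A \to \widehat{\calO}(Y)$ is not known. (The splitting behind Theorem~\ref{T:weak flatness perfectoid} comes from Lemma~\ref{L:perfectoid tower splitting}, whose almost-optimal-exactness input is special to perfectoid base rings; flatness of $A \to \varinjlim_n A_n$ along a finite \'etale tower does not survive the completion, cf.\ Remark~\ref{R:no pseudoflat completion}.) In fact, within this paper the flatness of $A \to \tilde{A}_\psi$ for a finite \'etale perfectoid tower over a seminormal affinoid is Lemma~\ref{L:remains torsion-free}, whose proof cites Proposition~\ref{P:coherent pullback}; so taking that flatness as an input here is circular relative to the paper's development.

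The paper's proof therefore avoids flatness altogether: given an injection $\calE \to \calF$ with $\calF$ locally free, it chooses a surjection $\calF' \to \calE$ of coherent sheaves, applies Lemma~\ref{L:provector bundle submodule} to identify $\ker(\nu_{\proet}^*\calF' \to \nu_{\proet}^*\calF)$ with the image of $\nu_{\proet}^*\calG$ for a coherent subsheaf $\calG \subseteq \calF'$, and pushes forward using Corollary~\ref{C:sections of completed vector bundle} and Lemma~\ref{L:provector bundle submodule} to see that $\calG = \ker(\calF' \to \calE)$, whence $\nu_{\proet}^*\calE \to \nu_{\proet}^*\calF$ is injective; exactness, the identity $\calE \cong \nu_{\proet *}\nu_{\proet}^*\calE$ (hence full faithfulness), and pseudocoherence (by pulling back resolutions) all follow from this. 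The nontrivial ingredient is Lemma~\ref{L:provector bundle submodule} itself, which rests on the $\Gamma$-equivariant toric-tower argument (Lemma~\ref{L:stable submodules}), resolution of singularities (Remark~\ref{R:resolution}), and the Ax--Sen--Tate statement (Theorem~\ref{T:sections of completed structure sheaf}); nothing in your argument replaces it. Your full-faithfulness step has the same circularity: computing $\nu_{\proet *}\nu_{\proet}^*\calG_2$ for a coherent, not locally free, $\calG_2$ requires exactly the descent statement being proved---Corollary~\ref{C:compare perfectoid cohomology} again applies only over a perfectoid base.
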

\begin{proof}
Let $\calE \to \calF$ be an injective morphism of coherent sheaves on $X$
with $\calF$ locally free.
Choose a surjective morphism $\calF' \to \calE$ of coherent sheaves on $X$.
By Lemma~\ref{L:provector bundle submodule},
$\ker(\nu_{\proet}^* \calF' \to \nu_{\proet}^* \calF)$
is the image of $\nu_{\proet}^* \calG$ for some coherent subsheaf
$\calG$ of $\calF'$.
From the exact sequence
\[
0\to\image(\nu_{\proet}^* \calG \to \nu_{\proet}^* \calF')\to\nu_{\proet}^* \calF' \to \nu_{\proet}^* \calF
\]
we may push forward and apply Corollary~\ref{C:sections of completed
vector bundle} and Lemma \ref{L:provector bundle submodule}
to obtain an exact sequence
\[
0 \to \calG \to \calF' \to \calF
\]
which pulls back to the previous one; that is, we have $\calG = \ker(\calF' \to \calF)
= \ker(\calF' \to \calE)$. By writing
\[
\nu_{\proet}^* \calE = \nu_{\proet}^* (\calF'/\calG) = \coker(\nu_{\proet}^* \calG \to \nu_{\proet}^* \calF'),
\]
we see that $\nu_{\proet}^* \calE \to \nu_{\proet}^* \calF$ is injective.
This proves that $\nu_{\proet}^*$ is exact on coherent sheaves,
and that $\calE \cong \nu_{\proet *} \nu_{\proet}^* \calE$; the latter implies that
$\nu_{\proet}^*$ is fully faithful. Since $\nu_{\proet}^*$ is exact, it pulls back projective resolutions.

For any perfectoid subdomain of $Y$, 
we may apply Lemma~\ref{L:provector bundle submodule} to deduce that
the completion of $(\nu_{\proet}^* \calE)(Y)$ in $(\nu_{\proet}^* \calF)(Y)$ is finitely generated;
by Corollary~\ref{C:closure finitely generated}, we deduce that $(\nu_{\proet}^* \calE)(Y)$ is itself complete for the subspace topology. In conjunction with the previous paragraph,
this implies that $\nu_{\proet}^* (\calF/\calE)$ is a pseudocoherent $\widehat{\calO}_X$-module,
and completes the proof.
\end{proof}

\begin{cor} \label{C:pseudocoherent reduction}
For any coherent sheaf $\calF$ on $X$, 
$\calF \otimes_{\calO_X} \bullet$ is an endofunctor on the category of pseudocoherent $\widehat{\calO}_X$-modules.
\end{cor}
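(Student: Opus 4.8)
The goal is to show that if $\calF$ is a coherent sheaf on $X$, then $\calG \mapsto \calF \otimes_{\calO_X} \calG$ sends a pseudocoherent $\widehat{\calO}_X$-module $\calG$ to another pseudocoherent $\widehat{\calO}_X$-module. Since pseudocoherence is a local condition (via Corollary~\ref{C:refined Kiehl perfectoid}, or more directly via the quasi-Stein/perfectoid-subdomain glueing already established), it suffices to work on a perfectoid subdomain and even at the level of modules over a Fontaine perfectoid ring. The plan is to reduce to the case where $\calF = \nu_{\proet}^* \calF_0$ is pulled back from a coherent sheaf on the base, using Proposition~\ref{P:coherent pullback}, and then to observe that tensoring by such a sheaf is controlled by pseudocoherence of $\calF_0$ as a module over a \emph{noetherian} ring.

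First I would reduce to the affinoid case: choose an affinoid subspace $U = \Spa(A,A^+)$ of $X$ over which $\calF$ is the sheaf associated to a finitely generated (hence, since $A$ is noetherian, pseudocoherent) $A$-module $F_0$; by rigid GAGA and excellence of affinoid algebras $F_0$ admits a resolution $\cdots \to A^{n_1} \to A^{n_0} \to F_0 \to 0$ by finite free $A$-modules. Next I would work over a perfectoid subdomain $Y = \Spa(B,B^+)$ of $U_{\proet}$; here $\widehat{\calO}_X(Y) = B$ and $\nu_{\proet}^* \calF$ corresponds to the $B$-module $F_0 \otimes_A B$. The key point is that $A \to B$ is pseudoflat (Theorem~\ref{T:weak flatness perfectoid}(b)), so the base extension $F_0 \otimes_A^{\mathbb L} B$ has no higher Tor (for the relevant low-degree Tor that pseudoflatness controls) — more precisely, since $A$ is noetherian and $F_0$ is finitely presented, and $B$ is pseudoflat, the resolution $A^{\bullet} \to F_0$ base-extends to a resolution $B^{\bullet} \to F_0 \otimes_A B$ by finite free $B$-modules, so $F_0 \otimes_A B$ is a pseudocoherent $B$-module. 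Then given a pseudocoherent $\widehat{\calO}_X$-module $\calG$ with $\calG(Y) = N$ pseudocoherent over $B$, the tensor product $(F_0 \otimes_A B) \otimes_B N$ is pseudocoherent over $B$ by Remark~\ref{R:pseudocoherent tensor product} (tensor product of two $m$-pseudocoherent modules is $m$-pseudocoherent, via the totalization of the double complex of free resolutions). Finally I would check that these local pseudocoherent modules glue, i.e., that the tensor product presheaf is the sheaf associated to a pseudocoherent module on each perfectoid subdomain, which follows from Corollary~\ref{C:refined Kiehl perfectoid} together with the fact that $\calF \otimes_{\calO_X} \bullet$ commutes with the restriction maps because both factors do.

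The main obstacle is the initial reduction showing that $\nu_{\proet}^* \calF$ is, locally on $X_{\proet}$, a pseudocoherent $\widehat{\calO}$-module \emph{with an explicit finite-free resolution pulled back from the noetherian base} — this is exactly the content that Proposition~\ref{P:coherent pullback} supplies when $X$ is seminormal, but the statement of the corollary makes no seminormality hypothesis. To handle the general (non-seminormal) case I would replace $X$ by its seminormalization $X'$: by Remark~\ref{R:effect of seminormalization} the pro-\'etale sites and completed structure sheaves of $X$ and $X'$ are canonically identified, so pseudocoherence of $\widehat{\calO}$-modules is insensitive to seminormalization; moreover any coherent $\calO_X$-module $\calF$ pulls back to a coherent $\calO_{X'}$-module, and $\nu_{\proet}^* \calF$ (computed on $X_{\proet} \cong X'_{\proet}$) agrees with the pullback of that $\calO_{X'}$-module. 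Thus one may assume $X$ seminormal from the outset and invoke Proposition~\ref{P:coherent pullback} to get that $\nu_{\proet}^* \calF$ is pseudocoherent over $\widehat{\calO}_X$; then tensoring two pseudocoherent $\widehat{\calO}_X$-modules stays pseudocoherent by the two-out-of-three/totalization argument of Remark~\ref{R:pseudocoherent tensor product} applied locally and glued by Corollary~\ref{C:refined Kiehl perfectoid}. I expect the write-up to be short once this seminormalization reduction is made explicit; the only mild subtlety is confirming that $\calF \otimes_{\calO_X} \nu_{\proet}^* \calG$ really is $\nu_{\proet}^*\calF \otimes_{\widehat{\calO}_X} \calG$ locally, which is immediate from right-exactness of tensor product and the projection/base-change formula for the flat-enough maps $\calO_X(U) \to \widehat{\calO}_X(Y)$ furnished by Theorem~\ref{T:weak flatness perfectoid}.
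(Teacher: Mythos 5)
Your proposal is correct and follows essentially the same route as the paper: reduce to the seminormal case using the identification of pro-\'etale sites and completed structure sheaves under seminormalization (the paper phrases this via finiteness of the seminormalization morphism), then combine Proposition~\ref{P:coherent pullback} with the stability of pseudocoherence under tensor products. The local detour through pseudoflatness in your first paragraph is unnecessary once Proposition~\ref{P:coherent pullback} is invoked (and note that Theorem~\ref{T:weak flatness perfectoid}(b) assumes a Fontaine perfectoid base, so it does not literally apply to $\calO_X(U)$), but this does not affect the argument, which otherwise matches the paper's short proof.
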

\begin{proof}
Since the seminormalization morphism is finite, this reduces immediately to the case
where $X$ is seminormal. In this case, the claim follows from Proposition~\ref{P:coherent pullback} on account of the stability of pseudocoherence under tensor products (Remark~\ref{R:need flatness}).
\end{proof}

\begin{cor} \label{C:provector bundle injective}
Suppose that $X$ is seminormal.
For any locally finite free $\widehat{\calO}_X$-module $\calF$,
the map
$\nu_{\proet}^* \nu_{\proet *} \calF \to \calF$ is injective.
\end{cor}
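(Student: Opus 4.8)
The plan is to identify $\calF$ with the pro-\'etale pullback of an $\calO_X$-vector bundle and then deduce the claim formally from the triangle identities of the adjunction $(\nu_{\proet}^*, \nu_{\proet *})$ together with Corollary~\ref{C:sections of completed vector bundle}. No localization on $X$ is needed, since each step below is compatible with restriction to open subspaces of $X$; but if desired one may first reduce to the case where $X$ is affinoid.

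First I would invoke the pro-\'etale analogue of Kiehl's glueing theorem for vector bundles: since any rigid analytic space over $K$ is stably uniform (hence sheafy), \cite[Theorem~8.2.22, Theorem~9.2.15]{part1} shows that the pullback functor defines an equivalence between the category of $\calO_X$-vector bundles on $X$ and the category of $\widehat{\calO}_X$-vector bundles on $X_{\proet}$. Consequently there exist an $\calO_X$-vector bundle $\calE$ and an isomorphism $\calF \cong \nu_{\proet}^*\calE$.

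Next, since $X$ is seminormal, I would apply Corollary~\ref{C:sections of completed vector bundle} to $\calE$: the unit map $\eta_\calE\colon \calE \to \nu_{\proet *}\nu_{\proet}^*\calE = \nu_{\proet *}\calF$ is an isomorphism. One of the triangle identities for the adjunction $(\nu_{\proet}^*, \nu_{\proet *})$ states that the composite of $\nu_{\proet}^*(\eta_\calE)\colon \nu_{\proet}^*\calE \to \nu_{\proet}^*\nu_{\proet *}\nu_{\proet}^*\calE$ with the counit $\nu_{\proet}^*\nu_{\proet *}\nu_{\proet}^*\calE \to \nu_{\proet}^*\calE$ is the identity on $\nu_{\proet}^*\calE$. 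As $\eta_\calE$ is an isomorphism, so is $\nu_{\proet}^*(\eta_\calE)$; hence the counit $\nu_{\proet}^*\nu_{\proet *}\calF \to \calF$ (which, under the identification $\calF \cong \nu_{\proet}^*\calE$, is exactly $\epsilon_{\nu_{\proet}^*\calE}$) is an isomorphism, and in particular injective.

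Thus the content of the statement is concentrated entirely in the descent of $\widehat{\calO}_X$-vector bundles to $\calO_X$-vector bundles, i.e.\ in the hypothesis that $X$ is a rigid analytic space (so that \cite[Theorem~9.2.15]{part1} is available); the remainder is a formal consequence of Corollary~\ref{C:sections of completed vector bundle}. Should one prefer an argument confined to the present paper, the statement being local on $X$ one could instead use Remark~\ref{R:resolution} to choose a proper birational $f\colon X' \to X$ with $X'$ smooth that pulls back to a v-covering of every perfectoid subdomain of $X_{\proet}$, cover $X'$ by bases of restricted toric towers (Remark~\ref{R:cover smooth by toric}), and on each such base apply Lemma~\ref{L:stable submodules} to see that $\nu_{\proet *}\calF$ there pulls back to $f^*_{\proet}\calF$, so that $\calF$ descends to an $\calO_{X'}$-vector bundle and the counit over $X'$ is an isomorphism. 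The main obstacle in that route is transporting the conclusion from $X'$ back to the seminormal $X$: one must test injectivity over a perfectoid subdomain $Y$ after the v-covering $X'\times_X Y\to Y$, using that $\widehat{\calO}$ is a v-sheaf (Theorem~\ref{T:faithful acyclic}), and compare $\nu_{\proet *}\calF$ with $f_*\nu'_{\proet *}f^*_{\proet}\calF$ using the acyclicity of $\widehat{\calO}$ on perfectoid subdomains (Proposition~\ref{P:perfectoid acyclic}).
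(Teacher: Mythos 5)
Your first step is where the argument breaks: it is not true for a general rigid analytic space $X$ that pullback is an equivalence between $\calO_X$-vector bundles and locally finite free $\widehat{\calO}_X$-modules on $X_{\proet}$. The glueing results you cite (\cite[Theorems~8.2.22, 9.2.15]{part1}, and Theorem~\ref{T:vector bundle faithful descent} here) give such an equivalence for the completed structure sheaf only on (Fontaine) perfectoid spaces; on a non-perfectoid rigid space the functor $\nu_{\proet}^*$ into $\widehat{\calO}_X$-bundles is far from essentially surjective. A concrete counterexample: take $X=\Spa(K,\gotho_K)$ with $K$ a $p$-adic field and $\calF=\widehat{\calO}_X(1)$, a rank-one $\widehat{\calO}_X$-bundle (trivialized on the pro-\'etale cover given by the completed cyclotomic tower). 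Then $\nu_{\proet *}\calF=\CC_K(1)^{G_K}=0$ by Tate, so $\calF$ descends to no $\calO_X$-bundle, and the counit $\nu_{\proet}^*\nu_{\proet *}\calF\to\calF$ is injective but very much not an isomorphism. Since your adjunction argument would prove the counit is an isomorphism for every $\widehat{\calO}_X$-bundle, it cannot be repaired; it proves a false strengthening of the corollary, whose whole content is that only injectivity survives. Your fallback route through resolution and restricted toric towers has the same flaw: Lemma~\ref{L:stable submodules} descends $\Gamma$-stable \emph{submodules} of $F\otimes_A\tilde{A}_{\psi'}$ for $F$ a finite projective $A$-module; it does not descend arbitrary $\Gamma$-equivariant vector bundles over $\tilde{A}_{\psi'}$ (the Tate-twist example lives over a toric tower just as well), so $\calF$ need not descend to $\calO_{X'}$ either.

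The intended argument is much softer and only uses what is actually available. Put $\calE=\nu_{\proet *}\calF$ and let $\calK=\ker(\nu_{\proet}^*\calE\to\calF)$. By Lemma~\ref{L:provector bundle submodule} (the seminormality hypothesis enters exactly here, via Theorem~\ref{T:sections of completed structure sheaf} and Corollary~\ref{C:sections of completed vector bundle}), the $\widehat{\calO}_X$-submodule $\calK$ of $\nu_{\proet}^*\calE$ is the pullback of a coherent subsheaf $\calG\subseteq\calE$. But by adjunction a section of $\calE$ over an open $U\subseteq X$ is a section of $\calF$ over $\nu_{\proet}^{-1}(U)$, so any nonzero section of $\calG$ has nonzero image in $\calF$; hence $\calG=0$ and therefore $\calK=0$. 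So the correct proof analyzes the kernel of the counit directly rather than attempting to trivialize the descent problem, which is impossible in this generality.
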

\begin{proof}
Put $\calE = \nu_{\proet *} \calF$. By 
Lemma~\ref{L:provector bundle submodule},
the kernel of $\nu_{\proet}^* \calE \to \calF$
is itself the pullback of a coherent subsheaf of $\calE$;
however, every nonzero section of that subsheaf
has nonzero image in $\calF$. Hence the kernel must vanish.
\end{proof}

As an aside, we observe that the corresponding situation for $\overline{\calO} = \tilde{\bA}/(p)$ is somewhat simpler.
\begin{lemma} \label{L:stable submodules type A}
Assume that 
$X = \Spa(A,A^+)$ is connected
and is the base of a restricted toric tower $\psi$.
Let $\psi'$ be a finite \'etale tower factoring through $\psi$.
Let $I$ be an ideal of $\tilde{R}_{\psi'}$
such that $\varphi(I) = I$ and
 the two base extensions $I \otimes_{\tilde{R}_{\psi'}, \iota_{0,0}} \tilde{R}_{\psi^{\prime 1}}$,
$I \otimes_{\tilde{R}_{\psi'}, \iota_{0,1}} \tilde{R}_{\psi^{\prime 1}}$
have the same image in $\tilde{R}_{\psi^{\prime 1}}$.
Then $I$ is either the zero ideal or the unit ideal.
\end{lemma}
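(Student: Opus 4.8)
\textbf{Proof plan for Lemma~\ref{L:stable submodules type A}.}
The plan is to mimic the structure of the proof of Lemma~\ref{L:stable submodules}, but to exploit the fact that $\tilde{R}_{\psi'}$, being the tilt of a connected affinoid perfectoid space, has a very restricted ideal structure once we impose $\varphi$-stability and the descent datum. First I would reduce to the smallest nontrivial situation: since $X$ is connected and the tower is restricted toric, the space $\Spa(\tilde{R}_{\psi'},\tilde{R}_{\psi'}^+)$ is connected (by \cite[Proposition~2.6.4]{part1} and the persistence results of Proposition~\ref{P:perfectoid tower persistence}), so $\tilde{R}_{\psi'}$ has no nontrivial idempotents. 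The main point is then to show that a nonzero $\varphi$-stable ideal $I$ satisfying the descent condition must contain a unit.

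The key steps, in order, are: (1) Observe that $\varphi(I) = I$ means $I$ is determined by its reduction pattern across the tower; more precisely, for each $\beta \in \calM(\tilde{R}_{\psi'})$, the set of $\bv \in I$ with $\beta(\bv) > 0$ is either empty or not, and $\varphi$-invariance forces this dichotomy to be $\varphi$-equivariant on $\calM(\tilde{R}_{\psi'})$. (2) Use the descent condition exactly as in the proof of Lemma~\ref{L:stable submodules}: if $I$ has a nonzero element, then $I$ has nonzero image in $\tilde{R}_{\psi'} \widehat{\otimes}_A \calH(\beta)$ for some $\beta \in \calM(A)$, and the equality of the two base extensions over $\tilde{R}_{\psi^{\prime 1}}$ propagates this to \emph{all} $\beta' \in \calM(\tilde{R}_{\psi'})$ lying over $\beta$. (3) Combine $\varphi$-invariance with the toric structure: the Frobenius orbit of any point $\beta \in \calM(A)$ under the tower maps, together with the descent-datum propagation, should exhaust (a dense subset of) the base $\calM(A)$, using that $\Gamma = \ZZ_p^d$ acts on the toric tower with the property that translating a point and applying $\varphi$ generates enough points. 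Since $X$ is connected and the tower is restricted toric, every point of $\calM(A)$ is hit. (4) Conclude via \cite[Corollary~2.3.5]{part1} (the analogue for characteristic-$p$ Banach rings): if for every $\delta \in \calM(\tilde{R}_{\psi'})$ there exists $\bv \in I$ with $\delta(\bv) > 0$, then $I$ is the unit ideal.

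I expect the main obstacle to be step~(3): making precise the claim that $\varphi$-invariance plus the descent datum forces the "nonvanishing locus" of $I$ to be all of $\calM(\tilde{R}_{\psi'})$. In the module case of Lemma~\ref{L:stable submodules} one did not need $\varphi$-invariance because the splitting of $A \to \tilde{A}_{\psi'}$ did the work directly; here, on the characteristic-$p$ side, one should again use that $R_\psi \to \tilde{R}_\psi$ and $R_\psi \to \tilde{R}_{\psi'}$ admit $A$-linear (indeed $R_\psi$-linear) splittings (Remark~\ref{R:toric splitting} for the toric case, plus Lemma~\ref{L:perfectoid tower splitting}), so that we may form the $R_\psi$-submodule $J$ of $R_\psi$ generated by $\pi(\bv)$ over all splittings $\pi$ and all $\bv \in I$. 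The descent condition guarantees $I = J \otimes_{R_\psi} \tilde{R}_{\psi'}$ as in Lemma~\ref{L:stable submodules}, and then $\varphi$-invariance of $I$ forces $J$ (a nonzero ideal of the one-dimensional affinoid-type ring $R_\psi$, when $d=1$, or more generally of the $F$-(finite projective) ring $R_\psi$) to be $\varphi$-stable; but the only $\varphi$-stable ideals of $R_\psi$ generated this way, given that $R_\psi$ has no $\varphi$-fixed nonunit nonzero element with infinite orbit structure matching the toric action, are $0$ and the unit ideal. The delicate point is handling the case where $J$ is supported on a proper closed subscheme: here one uses that the support would have to be $\Gamma$-stable and $\varphi$-stable, but the $\overline{t}_i$-coordinate functions have the property that their zero loci are permuted with infinite orbits under the combined action (cf.\ Remark~\ref{R:infinite orbit}), forcing the support to be empty or everything. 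I would write this out carefully, reducing first to $d=1$ by an induction on the number of toric coordinates and invoking Remark~\ref{R:cover smooth by toric}-style arguments, and falling back on \cite[Corollary~2.3.5]{part1} for the final "unit ideal" conclusion.
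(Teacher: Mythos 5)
There is a genuine gap, and it sits exactly where you flag the ``delicate point.'' First, the descent step you propose is not available in the form you use it. The descent datum in the statement is taken over the characteristic-$0$ base $A$: the ring $\tilde{R}_{\psi^{\prime 1}}$ is the tilt of the completed tensor product of the tower over $A$, and it is \emph{not} $\tilde{R}_{\psi'} \widehat{\otimes}_{R_\psi} \tilde{R}_{\psi'}$; likewise an expression such as $\tilde{R}_{\psi'} \widehat{\otimes}_A \calH(\beta)$ is meaningless, since $\tilde{R}_{\psi'}$ is not an $A$-algebra. Consequently the splitting argument of Lemma~\ref{L:stable submodules} cannot be replayed with $R_\psi$ in the role of $A$, and your claim that the descent condition gives $I = J \otimes_{R_\psi} \tilde{R}_{\psi'}$ for an ideal $J$ of $R_\psi$ is unsupported. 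The actual proof uses $\varphi(I)=I$ for a different purpose: since $\varphi$ is bijective on $\tilde{R}_{\psi'}$, the (uniformized) quotient by $I$ is again perfect uniform, so by Theorem~\ref{T:Fontaine perfectoid compatibility}(iii) it untilts to a quotient of $\tilde{A}_{\psi'}$ by a $\Gamma$-stable ideal, to which Lemma~\ref{L:stable submodules} applies on the characteristic-$0$ side, producing an ideal $J$ of $A$ that is neither zero nor the unit ideal.

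Second, your mechanism for ruling out a nonzero proper ideal cannot work as stated. Frobenius acts on $\Spa(\tilde{R}_{\psi'},\tilde{R}_{\psi'}^+)$ (and on $\calM(\tilde{R}_{\psi'})$) as the identity, because a valuation $v$ and $v\circ\varphi = v^p$ are equivalent; so ``Frobenius orbits of points'' are singletons and $\varphi$-stability imposes no constraint whatsoever on the support of $I$. Moreover $\varphi$-stability of an ideal of $R_\psi$ is cheap (e.g.\ $(\overline{t}-1)$ satisfies $\varphi(I)\subseteq I$), and a $\Gamma$-orbit of zero loci being infinite does not by itself force the ideal to vanish; the obstruction is metric, not point-set. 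This is why the paper, after descending to $J \subseteq A$, uses rigid geometry on the characteristic-$0$ side (pulling back along a closed immersion, seminormality, smoothness of curves, localization) to reduce to $A = K\{T^{\pm}\}$, $J = (T-1)$, $\psi = \psi'$ the standard toric tower, and then derives the contradiction from a norm computation: the quotient $K\{T^{\pm p^{-\infty}}\}/(T-1)$ would have to be uniform, yet $\sum_{i=0}^{p^n-1} T^{ip^{-n}}$ has quotient norm $1$ and spectral seminorm $p^{-n}$. Nothing in your outline supplies an argument of this analytic kind (compare the later Lemma~\ref{L:no gamma kernel}, where the analogous difficulty in type $\bC$ requires a substantial Weierstrass-preparation argument), so steps (3)--(4) and the final paragraph of your plan do not close.
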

\begin{proof}
By \cite[Corollary~2.3.5]{part1}, it suffices to derive a contradiction under the assumptions that $I \neq 0$ and the support of $I$ in $\calM(\tilde{R}_{\psi'})$ is nonempty. 
The quotient map $\tilde{R}_{\psi'} \to \tilde{R}_{\psi'}/I$ corresponds via Theorem~\ref{T:Fontaine perfectoid compatibility}(iii) to quotienting by a
$\Gamma$-stable ideal of
$\tilde{A}_{\psi'}$ in the sense of Lemma~\ref{L:stable submodules}, which thus must be the base extension of an ideal $J$ of $A$ which is neither the zero ideal nor the unit ideal.

By pulling back along some closed immersion, we may reduce to the case where $A$ is one-dimensional, irreducible, and seminormal, hence smooth. By working locally and taking norms, we may further reduce to the case where $A = K \langle T^{\pm} \rangle$ and $J = (T-1)$.
Note that the initial claim of the lemma does not depend on the choice of the restricted toric tower $\psi$ as long as $\psi'$ factors through $\psi$; we may thus enlarge $\psi'$
so that it factors through the standard toric tower $\psi$ over $A$. We may also assume that $K$ contains a full set of $p$-power roots of unity.

Since
$\tilde{A}_\psi/J \tilde{A}_\psi$ is a subring of the perfectoid ring
$\tilde{A}_{\psi'}/J \tilde{A}_{\psi'}$, it is itself uniform
and hence by Theorem~\ref{T:Fontaine perfectoid compatibility}(iii) corresponds 
to a quotient of $\tilde{R}_\psi$, necessarily by the ideal $I \cap \tilde{R}_\psi$.
We may thus assume at this point that $\psi = \psi'$;
this would mean that the quotient of the ring $K \langle T^{\pm p^{-\infty}} \rangle$
by the ideal $(T-1)$ is uniform.
This contradicts Example~\ref{exa:perfectoid quotient}.
\end{proof}

\begin{remark}
A corollary of Lemma~\ref{L:stable submodules type A} is that if $X$ is connected and $J$
is a $\Gamma$-stable ideal of $\tilde{A}_{\psi'}$ 
which is neither the zero ideal nor the unit ideal, then the quotient $\tilde{A}_{\psi'}/J$
cannot be uniform.
\end{remark}

\begin{cor}\label{C:type A mod p no ideals}
Suppose that $X$ is connected. Then the only $\varphi^{\pm 1}$-invariant ideal subsheaves of $\tilde{\bA}/(p)$
are $(0)$ and $(1)$.
\end{cor}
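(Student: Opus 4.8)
The plan is to reduce the statement to the local toric setting where Lemma~\ref{L:stable submodules type A} applies, exactly as in the proof of Lemma~\ref{L:provector bundle submodule}. Since $X$ is connected, it suffices to work with a $\varphi^{\pm 1}$-invariant ideal subsheaf $\calI$ of $\tilde{\bA}/(p) = \overline{\calO}$ and show that either $\calI = (0)$ or $\calI = (1)$ globally; the key point is that the hypotheses force $\calI$ to be "locally trivial" in a sense that then propagates over the connected base.

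First I would note that the question is local on $X$, so by Lemma~\ref{L:perfectoid cover} we may assume there is a covering of $X_{\proet}$ by a single perfectoid subdomain $Y$, and by Remark~\ref{R:cover smooth by toric} (together with Remark~\ref{R:resolution} to reduce to the smooth case via a proper birational cover, which is a $v$-covering after pullback to perfectoids) we may assume $X = \Spa(A,A^+)$ is the base of a restricted toric tower $\psi$ and that $Y$ is presented as a tower $\psi'$ of faithfully finite \'etale covers factoring through $\psi$. Then $\calI$ is determined by its sections on $Y$, which form a $\varphi^{\pm 1}$-invariant ideal $I$ of $\tilde{R}_{\psi'}$ (recall $\overline{\calO}(Y) = \tilde{\bA}_{\psi'}/(p) = \tilde{R}_{\psi'}$) satisfying the descent condition that the two pullbacks to $\tilde{R}_{\psi^{\prime 1}}$ agree as submodules — this is precisely the hypothesis of Lemma~\ref{L:stable submodules type A}. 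That lemma then yields that $I$ is either the zero ideal or the unit ideal on each connected component of $\Spec(\tilde{R}_{\psi'})$, hence (by \cite[Proposition~2.6.4]{part1}, matching connected components of $\Spec$ with those of $\Spa$, and the connectedness of $X$ propagating suitably) $\calI$ is locally either $(0)$ or $(1)$.

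The remaining task is to glue: since being $(0)$ or $(1)$ is both an open and a closed condition on the underlying space and $X$ is connected, the two cases cannot mix, so $\calI$ is globally $(0)$ or globally $(1)$. The main obstacle I anticipate is bookkeeping around connectedness: a restricted toric tower over a connected $X$ need not have $\tilde{R}_{\psi'}$ connected (finite \'etale covers can disconnect), so one must argue that the descent condition in Lemma~\ref{L:stable submodules type A} still forces the zero-or-unit dichotomy compatibly across the components lying over a fixed point of $X$, and then that the locally constant function "$\calI$ is zero / $\calI$ is unit" on $|X|$ is genuinely constant. This is handled exactly as in the last paragraph of the proof of Lemma~\ref{L:provector bundle submodule} via noetherian induction on the support, using that the support of $\calI$ (pulled back from $X$) is both open and closed; I do not expect any genuinely new difficulty beyond carefully tracking the perfectoid correspondence through Theorem~\ref{T:Fontaine perfectoid compatibility}(iii) as invoked inside Lemma~\ref{L:stable submodules type A}.
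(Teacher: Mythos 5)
There is a genuine gap, and it sits exactly at the point you dispose of most quickly. After reducing to a single covering perfectoid subdomain $Y$ you assert that ``$\calI$ is determined by its sections on $Y$,'' and at the end you assert that being $(0)$ or $(1)$ is ``both an open and a closed condition'' so that connectedness finishes the argument. Neither claim is justified, and neither is formal: a subsheaf of $\tilde{\bA}/(p)$ on $X_{\proet}$ is \emph{not} determined by its value on one covering object, and Lemma~\ref{L:stable submodules type A} only controls the sections of $\calI$ over a full finite \'etale perfectoid tower over (a connected piece of) the base. Knowing that $\calI(Y)$ is the zero ideal for a covering object $Y$ does not localize to smaller objects $W$ (one only gets a map $\calI(W)\to\calI(W\times_X Y)$, and the latter group is not controlled by $\calI(Y)$), so the dichotomy you want to glue is not visibly an open condition; conversely, knowing $\calI$ is the unit ideal on small objects does not constrain its sections over a cover. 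To see that this is not mere pedantry, consider the subpresheaf sending a perfectoid subdomain $W$ to the ideal of sections of $\overline{\calO}(W)$ vanishing on the fibre of $W$ over a fixed classical point of $X$: it is an ideal subsheaf, it is $\varphi^{\pm 1}$-invariant (kernels of points are $\varphi^{\pm1}$-stable), it equals the unit ideal on objects missing the point, and its sections over a covering tower are cut down drastically. Whatever one believes about such examples, they show that the passage from ``sections over each tower are $(0)$ or $(1)$'' to ``the subsheaf is $(0)$ or $(1)$'' is precisely the content that your proposal leaves unproved.

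By contrast, the paper intends (and uses) the corollary in the standing context of Lemma~\ref{L:stable submodules type A}: $X$ is the connected base of a restricted toric tower $\psi$, and the corollary is read through the sections of the subsheaf over the towers $\psi'$, which is exactly how it enters the proof of Corollary~\ref{C:pseudocoherent mod p}. In that reading the deduction is immediate: for any finite \'etale perfectoid tower $\psi'$ factoring through $\psi$, the sections $I=\calI(\Spa(\tilde{A}_{\psi'},\tilde{A}_{\psi'}^+))\subseteq\tilde{R}_{\psi'}$ form a $\varphi$-stable ideal whose two base extensions to $\tilde{R}_{\psi'^1}$ have the same image (this is what $\varphi^{\pm1}$-invariance and the subsheaf structure provide), so the lemma applies directly and $I$ is $(0)$ or $(1)$. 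In particular none of your heavier machinery --- Remark~\ref{R:resolution}, Remark~\ref{R:cover smooth by toric}, or the argument of Lemma~\ref{L:provector bundle submodule} --- is needed, and if you do insist on treating a general rigid space you would additionally have to justify the descent of the conclusion along the proper birational $v$-cover (injectivity and surjectivity as in the proof of Lemma~\ref{L:Gamma-stable ideal C}) and first pass to the seminormalization; but the essential missing step is the globalization discussed above, not these reductions.
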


\begin{cor} \label{C:pseudocoherent mod p}
Let $M$ be an $\tilde{R}_\psi$-module contained in a finitely generated module.
Suppose that $M$ formally admits the structure of a $(\varphi, \Gamma)$-module:
that is, there exist a $\tilde{R}_{\psi^1}$-linear isomorphism $M \otimes_{i_{0,0}} \tilde{R}_{\psi^1} \to M \otimes_{i_{0,1}} \tilde{R}_{\psi^1}$
satisfying the cocycle condition and a compatible isomorphism $\varphi^*(M) \cong M$.
Then $M$ is a finite projective $\tilde{R}_\psi$-module.
\end{cor}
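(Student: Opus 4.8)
The plan is to put the situation into a convenient normal form and then prove in turn that $M$ is finitely generated, finitely presented, and finally finite projective.

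First I would reduce to the case where $X$ is connected and $\psi$ is Galois with group $\Gamma\cong\ZZ_p^d$ acting as in Remark~\ref{R:standard group action}, so that in particular each $\gamma\in\Gamma$ acts on $\tilde R_\psi$ by an honest ring automorphism (and $\tilde R_\psi$, being perfect, is reduced with $\varphi$ acting bijectively). This reduction is carried out by adjoining to $K$ a compatible system of $p$-power roots of unity and passing to a Galois refinement $\psi'$ of $\psi$: the morphism $\tilde R_\psi\to\tilde R_{\psi'}$ splits in the category of Banach modules over $\tilde R_\psi$ by Lemma~\ref{L:perfectoid tower splitting} and Remark~\ref{R:toric splitting}, hence is universally injective, so finite generation and finite presentation descend along it, as does the $(\varphi,\Gamma)$-structure (replaced by a $(\varphi,\Gamma')$-structure upstairs). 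I would also use freely that $\psi$ may be replaced by any truncation $\psi_{(n)}$ without changing $\tilde R_\psi$, which allows $\Gamma$ to be shrunk when convenient.

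Next, to prove that $M$ is finitely generated: since $M$ injects into a finitely generated $\tilde R_\psi$-module and $\tilde R_\psi$ is reduced, Corollary~\ref{C:splitting ideal} produces a radical ideal $I\subseteq\tilde R_\psi$ --- the set of $g$ for which $M_g$ is a direct summand of a finitely generated $(\tilde R_\psi)_g$-module --- which is not contained in any minimal prime. Applying $\varphi^*$ to such a splitting and using $\varphi^*M\cong M$ (together with the fact that $\varphi^*$ of a finitely generated module is finitely generated) shows $\varphi(I)=I$; applying $\gamma^*$ for $\gamma\in\Gamma$ and using the $\Gamma$-action on $M$ shows likewise $\gamma(I)=I$. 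Thus $I$ is a $\varphi^{\pm1}$-invariant, $\Gamma$-stable ideal, so by Lemma~\ref{L:stable submodules type A} (applied with $\psi'=\psi$) it is either $(0)$ or the unit ideal; since $\tilde R_\psi\neq 0$ forces $I\neq(0)$, it is the unit ideal, and $1\in I$ means $M$ is a direct summand of a finitely generated module, hence finitely generated.

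It then remains to upgrade this to finite projectivity. By the $(\varphi,\Gamma)$-equivariant analogue of Lemma~\ref{L:lift action1} --- obtained after shrinking $\Gamma$ via Remark~\ref{R:small connection Galois} and imitating \cite[Lemma~1.5.2]{part1}, exactly as in the proof of Theorem~\ref{T:pseudocoherent resolution} --- I would choose a finite free $(\varphi,\Gamma)$-module $F$ over $\tilde R_\psi$ with a $(\varphi,\Gamma)$-equivariant surjection $F\to M$. Since $\varphi$ is flat on the perfect ring $\tilde R_\psi$, the kernel $M'$ is again a $(\varphi,\Gamma)$-module, and it injects into the finite free module $F$; the previous paragraph applied to $M'$ shows $M'$ is finitely generated, so $M$ is finitely presented. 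Finally \cite[Proposition~3.2.13]{part1} (a finitely presented module over a perfect $\FF_p$-algebra carrying a semilinear Frobenius isomorphism is finite projective) shows $M$ is finite projective. The main obstacle is the finite generation step, and within it the reduction to a Galois tower: for a non-Galois $\psi$ the two pullbacks of the splitting ideal $I$ to $\tilde R_{\psi^1}$ are only visibly contained in a common ideal rather than equal, so one genuinely needs $\Gamma$ to act by automorphisms before Lemma~\ref{L:stable submodules type A} can be invoked; checking that finite generation, finite presentation, and the $(\varphi,\Gamma)$-structure all descend along the completed \'etale morphism $\tilde R_\psi\to\tilde R_{\psi'}$ is the technical heart of the argument, after which the remaining steps are essentially formal.
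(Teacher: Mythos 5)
Your core strategy is the same as the paper's: form the splitting ideal $I \subseteq \tilde{R}_\psi$ of Corollary~\ref{C:splitting ideal}, observe that it is intrinsic to $M$ and hence invariant under $\varphi^{\pm 1}$ and the $\Gamma$-structure, conclude via the triviality of such ideals that $I=(1)$, so $M$ is a direct summand of a finitely generated module, and then get projectivity from the Frobenius structure via \cite[Proposition~3.2.13]{part1}. Where you diverge is the preliminary reduction to a Galois tower and the finite-presentation detour at the end, and both are where the trouble lies. The paper does not reduce to the Galois case at all: Lemma~\ref{L:stable submodules type A} and Corollary~\ref{C:type A mod p no ideals} are already formulated in the \v{C}ech style (equality of the two images in $\tilde{R}_{\psi^1}$), precisely so that the intrinsic-ideal argument can be applied directly to the hypotheses of the corollary as stated. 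Your claim that ``one genuinely needs $\Gamma$ to act by automorphisms'' before the lemma can be invoked is therefore not how the paper proceeds; and even in the Galois case, $\gamma(I)=I$ for all $\gamma$ only gives equality of the two \v{C}ech images up to closure (a continuous $I$-valued function on $\Gamma$ need not be a finite sum $\sum_j f_j x_j$ with $x_j \in I$), so your reduction does not actually close the subtlety you set out to address.

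More seriously, the reduction step itself has a gap. The morphism $\tilde{R}_\psi \to \tilde{R}_{\psi'}$ is split in Banach $\tilde{R}_\psi$-modules (Lemma~\ref{L:perfectoid tower splitting}, Remark~\ref{R:toric splitting}), hence pure, and purity does let finite generation and finite presentation descend as you say; but it is \emph{not} flat, so the hypothesis that $M$ injects into a finitely generated module is not known to survive the base change: $M \otimes \tilde{R}_{\psi'} \to N \otimes \tilde{R}_{\psi'}$ may fail to be injective. That hypothesis is exactly what Corollary~\ref{C:splitting ideal}(b) needs upstairs to guarantee $I$ avoids the minimal primes, so your argument for finite generation of the base-changed module does not get off the ground as written. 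Finally, the endgame is longer than necessary: once $1 \in I$ one knows $M$ is finitely generated, and \cite[Proposition~3.2.13]{part1} applies directly to finitely generated modules with a semilinear Frobenius isomorphism over the perfect ring $\tilde{R}_\psi$, which is how the paper concludes in one line; the equivariant free cover and kernel analysis (which would anyway need an argument, since Lemma~\ref{L:lift action1} is not stated for $\tilde{R}_\psi$ with its $\Gamma$-structure) can be dispensed with.
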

\begin{proof}
We may assume that $X$ is connected.
Let $I$ be the set of $f \in \tilde{R}_\psi$ for which
exists an injective homomorphism $M \to N$ of $\tilde{R}_\psi$-modules such that $N$ is finitely generated and $M_f \to N_f$ splits. By Corollary~\ref{C:splitting ideal}, this is an ideal not contained in any minimal prime ideal. Since $I$ is defined intrinsically in terms of $M$, it is itself stable under $\varphi^{\pm 1}$ and $\Gamma$; by Corollary~\ref{C:type A mod p no ideals}, $I$ must be the unit ideal.
This proves the claim.
\end{proof}

\subsection{Fitting ideals and finite generation}
\label{subsec:Fitting}

Using the structure of ideal sheaves of $\widehat{\calO}_X$, we analyze finite generation properties of more general $\widehat{\calO}_X$-modules. The general principle is that 
while $\widehat{\calO}_X$-modules do not all arise by pullback from $X$, their module-theoretic properties behave as if this were the case.

\begin{defn} \label{D:pseudocoherent support}
Suppose that $X$ is seminormal, and let $\calF$ be a locally finite $\widehat{\calO}_X$-module. We may then apply the Fitting ideal construction to obtain submodules of $\widehat{\calO}_X$. By Lemma~\ref{L:provector bundle submodule}, these submodules descend to ideals of $\calO$, which we denote $\Fitt_n(\calF)$.
In particular, we may define the \emph{support} of $\calF$ as the support of $\Fitt_0(\calF)$; it is a closed subspace of $X$.

Recall that for a finitely generated module over a ring, the sequence of Fitting ideals
eventually stabilizes at the unit ideal. Consequently, if $X$ is quasicompact,
the sequence $\{\Fitt_n(\calF)\}_n$ also stabilizes at the unit ideal.
\end{defn}

\begin{hypothesis} \label{H:Fitting}
For the remainder of \S\ref{subsec:Fitting}, suppose that $X = \Spa(A,A^+)$ is seminormal and affinoid, and let $\psi$ be a finite \'etale perfectoid tower over $A$;
in addition, let $J$ be an ideal of $A$ and put $\overline{A} = A/J$, $\overline{B} = \tilde{A}_\psi/J \tilde{A}_\psi$. 
We will consider $\tilde{A}_\psi$-modules with $\Gamma$-structure in the sense of Definition~\ref{D:Gamma-module}; when these modules are killed by $J$, we will also refer to them as \emph{$\overline{B}$-modules with $\Gamma$-structure}.
\end{hypothesis}

\begin{lemma} \label{L:remains torsion-free}
The ring homomorphism $A \to \tilde{A}_\psi$ is faithfully flat,
as then is $\overline{A} \to \overline{B}$.
In particular, if $f \in A$ is not a zero-divisor in $\overline{A}$,
then $f$ is not a zero-divisor in $\overline{B}$ either.
\end{lemma}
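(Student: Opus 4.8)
The statement to be proved is Lemma~\ref{L:remains torsion-free}: the ring map $A \to \tilde{A}_\psi$ is faithfully flat, hence so is $\overline{A} \to \overline{B}$, and consequently non-zero-divisors in $\overline{A}$ remain non-zero-divisors in $\overline{B}$. The overall approach is to reduce the faithful flatness assertion to the structural results about perfectoid towers already established in the excerpt, then deduce the two corollaries by standard commutative algebra.

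\emph{First step: faithful flatness of $A \to \tilde{A}_\psi$.} Here $X = \Spa(A,A^+)$ is a seminormal affinoid space over $K$, so $A$ is a classical affinoid algebra over $K$ and in particular noetherian (and excellent, by the references cited in Remark~\ref{R:resolution}). Since $\psi$ is a finite \'etale perfectoid tower, $\tilde{A}_\psi$ is the completed direct limit of the finite \'etale $A$-algebras $A_{\psi,n}$. Each $A_{\psi,n}$ is a finite \'etale, hence flat, $A$-algebra; moreover each is faithfully finite \'etale (surjectivity of the induced maps on adic spectra is part of the definition of a tower), so each $A \to A_{\psi,n}$ is faithfully flat. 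The transition maps $A_{\psi,n} \to A_{\psi,n+1}$ are likewise finite \'etale and faithfully flat, so the directed system $\{A_{\psi,n}\}$ has flat transition maps, whence the bare direct limit $A_\psi = \varinjlim_n A_{\psi,n}$ is flat over $A$, and in fact faithfully flat since flatness is preserved under filtered colimits and $\mathrm{Spec}(A_\psi) \to \mathrm{Spec}(A)$ remains surjective. The remaining issue is passing from the bare direct limit $A_\psi$ to its completion $\tilde{A}_\psi$; this is where I would use the splitting results from the perfectoid theory. Specifically, since $\psi$ is a perfectoid tower, $\tilde{A}_\psi$ is Fontaine perfectoid, and Lemma~\ref{L:perfectoid tower splitting} (applied, as in Remark~\ref{R:reduce to countable}, after reducing to a countable cofinal subtower) exhibits $\tilde{A}_\psi$ as $A \oplus \widehat{\bigoplus}_m (A_{\psi,m}/A_{\psi,m-1})$ as a Banach module over $A$; in particular $A \to \tilde{A}_\psi$ splits in the category of Banach $A$-modules, so it is universally injective, and combined with the known flatness of each finite level this yields faithful flatness of $A \to \tilde{A}_\psi$. (Alternatively, one may invoke Corollary~\ref{C:topologically flat} to get topological flatness of $A \to \tilde{A}_\psi$ together with the splitting; since $A$ is noetherian, topological flatness coincides with flatness by Definition~\ref{D:topologically flat}, and universal injectivity from the splitting upgrades this to faithful flatness via Lemma~\ref{L:flat plus faithful}.)

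\emph{Second step: descent to the quotient.} Faithful flatness is stable under base change, so from $A \to \tilde{A}_\psi$ faithfully flat we obtain that $\overline{A} = A/J \to \tilde{A}_\psi \otimes_A (A/J) = \tilde{A}_\psi/J\tilde{A}_\psi = \overline{B}$ is faithfully flat.

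\emph{Third step: non-zero-divisors.} Suppose $f \in A$ is not a zero-divisor in $\overline{A}$, i.e.\ multiplication by $f$ on $\overline{A}$ is injective. Since $\overline{A} \to \overline{B}$ is flat, tensoring the exact sequence $0 \to \overline{A} \xrightarrow{\times f} \overline{A}$ with $\overline{B}$ preserves exactness, so multiplication by $f$ on $\overline{B}$ is injective; that is, $f$ is not a zero-divisor in $\overline{B}$. This completes the proof.

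\emph{Expected main obstacle.} The only non-formal point is the passage from the bare colimit $A_\psi$ to its completion $\tilde{A}_\psi$ — a priori completion need not preserve flatness over a non-complete ring, and the whole thrust of Remark~\ref{R:no pseudoflat completion} is a cautionary tale about exactly this. The resolution is that $A$ is noetherian and, more importantly, that $A \to \tilde{A}_\psi$ is \emph{split} as a Banach $A$-module map by the explicit perfectoid-tower decomposition of Lemma~\ref{L:perfectoid tower splitting}; this splitting is what rescues faithful flatness. I expect no difficulty beyond correctly invoking that splitting (and the countability reduction of Remark~\ref{R:reduce to countable} needed to apply it).
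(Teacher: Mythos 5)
There is a genuine gap at the decisive step, namely the passage from the uncompleted direct limit $A_\psi$ to its completion $\tilde{A}_\psi$. You rescue this step by invoking Lemma~\ref{L:perfectoid tower splitting} (and, in the parenthetical alternative, Corollary~\ref{C:topologically flat}), but both of those results require the \emph{base} ring of the tower of faithfully finite \'etale morphisms to be Fontaine perfectoid: their proofs go through Remark~\ref{R:complex almost optimal}, i.e.\ through the perfectoid correspondence applied at every finite level. In the setting of Hypothesis~\ref{H:Fitting}, $A$ is a classical (seminormal) affinoid algebra over $K$ and the finite levels $A_{\psi,n}$ are classical affinoids; only the completed limit $\tilde{A}_\psi$ is perfectoid, so neither result applies. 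In fact the existence of a splitting of $A \to \tilde{A}_\psi$ in Banach $A$-modules over a non-perfectoid base is exactly the assertion the paper declines to prove (see the remark following Corollary~\ref{C:sections of completed vector bundle}, deferring it to the forthcoming work with Hansen, and tying it to seminormality). A further, smaller, problem: even if you had the splitting, split injectivity only gives universal injectivity; flatness of $\tilde{A}_\psi = A \oplus M$ still requires flatness of the completed infinite direct sum $M$, which again is only established (as pro-projectivity/topological flatness) over a perfectoid base. A telling symptom is that your argument never uses the seminormality of $X$, although it is part of Hypothesis~\ref{H:Fitting}; dropping it, $A \to \tilde{A}_\psi$ factors through the (non-flat) seminormalization by Remark~\ref{R:effect of seminormalization}, so an argument insensitive to seminormality cannot be complete.

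The paper's proof is different and much shorter: it verifies flatness by the ideal criterion, i.e.\ that $0 \to I \otimes_A \tilde{A}_\psi \to \tilde{A}_\psi \to \tilde{A}_\psi/I\tilde{A}_\psi \to 0$ is exact for every ideal $I$ of $A$, and this is precisely the exactness of the pro-\'etale pullback on coherent sheaves established in Proposition~\ref{P:coherent pullback} (combined with Theorem~\ref{T:refined Kiehl} to pass between pseudocoherent modules and sheaves on the perfectoid level). This is where the seminormality hypothesis enters, via the Ax--Sen--Tate-type results of \S\ref{subsec:ax-sen-tate}. Your second and third steps (base change of faithful flatness to $\overline{A} \to \overline{B}$, and preservation of non-zero-divisors) are fine and agree with the paper, but the first step needs to be replaced by an argument along the lines above.
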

\begin{proof}
To check (a), it is enough to check that for every ideal $I$ of $A$, the sequence
\[
0 \to I \otimes_A \tilde{A}_\psi \to \tilde{A}_\psi \to \tilde{A}_\psi/I\tilde{A}_\psi \to 0
\]
is exact. This follows by applying
Theorem~\ref{T:refined Kiehl} (to equate pseudocoherent modules and sheaves) and 
Proposition~\ref{P:coherent pullback}.
From (a), we immediately deduce (b).
\end{proof}

\begin{lemma} \label{L:localize to projective}
Let $M$ be a finitely generated $\Gamma$-module over $\overline{B}$.
Then 
there exists $f \in A$ which does not map to a zero-divisor in $\overline{A}$ or $\overline{B}$ such that $M_f$ is a finite projective module over $\overline{B}_f$;
moreover, $f$ may be chosen so as to depend only on the Fitting ideals of $M$.
\end{lemma}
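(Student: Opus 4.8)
\textbf{Proof plan for Lemma~\ref{L:localize to projective}.}
The overall strategy is to reduce the statement to a Fitting-ideal computation over the noetherian base $\overline{A}$ and then transport the conclusion to $\overline{B}$ using the faithful flatness established in Lemma~\ref{L:remains torsion-free}. First I would recall that a finitely generated module over any ring is finite projective of rank $n$ on the complement of the vanishing locus of $\Fitt_n$ once $\Fitt_{n-1}$ has been inverted; more precisely, if we localize at an element $f$ such that $\Fitt_{n-1}(M)_f = 0$ and $\Fitt_n(M)_f$ is the unit ideal, then $M_f$ is finite projective of rank $n$ over $\overline{B}_f$. So the task is to produce a single $f \in A$ achieving this simultaneously and to control the vanishing/unit conditions purely in terms of the Fitting ideals. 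By Definition~\ref{D:pseudocoherent support} (applied to the finitely generated module $M$ over $\overline{B}$, viewed as a locally finite $\widehat{\calO}$-module after descent, or directly via the compatibility of Fitting ideals with the faithfully flat base change $\overline{A} \to \overline{B}$) the ideals $\Fitt_n(M)$ come from ideals of $\overline{A}$, and hence from ideals of $A$; moreover the formation of Fitting ideals commutes with the base extension along $A \to \tilde{A}_\psi$, so $\Fitt_n(M)\tilde{A}_\psi = \Fitt_n(M \text{ over } \overline{B})$.

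Next I would choose the generic rank. Let $\mathfrak{p}_1,\dots,\mathfrak{p}_k$ be the minimal primes of $\overline{A}$ (finitely many since $A$, and hence $\overline{A}$, is noetherian). For each $i$ the localization $M_{\mathfrak{p}_i'}$ at the corresponding minimal prime $\mathfrak{p}_i'$ of $\overline{B}$ above $\mathfrak{p}_i$ has some dimension over the residue field; but since the problem allows (and indeed expects, by the analogy with Remark~\ref{R:uniformly pseudocoherent} and the proof of Lemma~\ref{L:Robba base extension vector bundle}) that the rank be constant along the fibres of $\calM(\tilde{A}_\psi) \to \calM(A)$, and the $\Gamma$-action forces constancy of this rank along $\Gamma$-orbits, I would argue that on each connected component the generic rank $n$ of $M$ over $\overline{B}$ equals the generic rank of the corresponding Fitting-ideal data over $\overline{A}$. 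Set $I = \Fitt_{n-1}(M)$ and $I' = \Fitt_n(M)$, ideals of $A$. On the locus where $\overline{A}$ is reduced and $I = 0$, $I'$ is not contained in any minimal prime, so by prime avoidance (using that $\overline{A}$ is noetherian) I can pick $f \in A$ whose image in $\overline{A}$ lies in $I'$ but in no minimal prime of $\overline{A}$ and in no minimal prime of $\overline{B}$ — the latter being automatic by Lemma~\ref{L:remains torsion-free} since the minimal primes of $\overline{B}$ lie over those of $\overline{A}$. Then $f$ is a non-zero-divisor in both $\overline{A}$ and $\overline{B}$, $\Fitt_{n-1}(M)_f = I_f = 0$ (after possibly raising $f$ to a power to kill the finitely many generators of the nilradical contribution), and $\Fitt_n(M)_f$ is the unit ideal; hence $M_f$ is finite projective over $\overline{B}_f$. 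By construction $f$ depends only on the ideals $I, I'$, i.e.\ only on the Fitting ideals of $M$.

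The step I expect to be the main obstacle is the rank-constancy input: verifying that the generic rank of $M$ over $\overline{B}$ genuinely matches the Fitting-ideal picture over $\overline{A}$, rather than jumping on some $\Gamma$-orbit. Here I would lean on the fact that $M$ carries a $\Gamma$-module structure (and, in the applications, a compatible $\varphi$-structure), so the rank function $\delta \mapsto \dim_{\calH(\delta)} M \otimes \calH(\delta)$ on $\calM(\tilde{A}_\psi)$ is $\Gamma$-invariant and, via the identification of connected components of $\Spa(\tilde{A}_\psi, \tilde{A}_\psi^+)$ with those of $\Spec(\tilde{A}_\psi)$ (\cite[Proposition~2.6.4]{part1}) and of $\Spec(A)$, descends to a function on $\calM(A)$ that is constant on each component where $\overline{A}$ is irreducible. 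Combined with the fact that $\Fitt_i$ commutes with the faithfully flat base change $A \to \tilde{A}_\psi$, this pins the generic rank down and makes the prime-avoidance choice of $f$ go through. Once that is in place, the rest is the standard Fitting-ideal characterization of local freeness plus a short bookkeeping argument that $f$ can be extracted from the Fitting ideals alone (which is needed downstream so that the same $f$ works for a whole system of modules with the same Fitting data).
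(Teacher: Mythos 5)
There is a genuine gap at exactly the step you flag as the main obstacle: you assume a single generic rank $n$ and then need $\Fitt_{n-1}(M)$ to vanish after inverting an $f$ avoiding the minimal primes. Neither point can be arranged in general. The $\Gamma$-action gives no rank constancy across $\Spec(\overline{A})$: $\Gamma$ acts trivially on $A$, hence on $\overline{A}$, so $\Gamma$-orbits in $\calM(\overline{B})$ sit over single points of $\calM(\overline{A})$ and impose no constraint between different irreducible components; for instance $M = \overline{B}/\gothp_1\overline{B}$ (with $\gothp_1$ a minimal prime of $\overline{A}$) is a perfectly good finitely generated $\Gamma$-module of generic rank $1$ on one component and $0$ on the others. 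When the generic ranks differ, there is no $n$ with $\Fitt_{n-1}(M)$ contained in every minimal prime, so $\Fitt_{n-1}(M)_f \neq 0$ for any non-zero-divisor $f$ (and the parenthetical ``after possibly raising $f$ to a power'' does nothing, since localization at $f$ and at $f^m$ coincide). Even granting equal ranks, your argument only covers the case where $\Fitt_{n-1}(M)$ lies in the nilradical of $\overline{A}$, i.e.\ essentially the reduced, equidimensional-rank case.

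The paper's proof resolves precisely this by working one minimal prime at a time and never asking for a global vanishing of a lower Fitting ideal in $\overline{A}_f$: for each minimal prime $\gothp_i$ of $A$ containing $J$, let $m_i$ be minimal with $\Fitt_{m_i}(M) \not\subseteq \gothp_i$ and pick $f_i \in \Fitt_{m_i}(M)\setminus\gothp_i$; since $\Fitt_{m_i-1}(M) \subseteq \gothp_i$, its image in $A/\gothp_i$ is zero (no localization needed), and because Fitting ideals commute with base change, $M \otimes_A (A/\gothp_i)_{f_i}$ is finite projective of rank $m_i$ over $(\tilde{A}_\psi/\gothp_i\tilde{A}_\psi)_{f_i}$ --- the ranks $m_i$ are allowed to differ. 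One then glues by choosing $s_{i,j}\in\gothp_j\setminus\gothp_i$ and setting $f = \sum_i f_i \prod_{j\neq i} s_{i,j}$, which avoids every minimal prime (so is a non-zero-divisor in $\overline{A}$, hence in $\overline{B}$ by Lemma~\ref{L:remains torsion-free}) and forces $\overline{B}_f \cong \bigoplus_i (\tilde{A}_\psi/\gothp_i\tilde{A}_\psi)_f$, on which $M_f$ is projective summand by summand. Your inputs (identification of $\Fitt_n(M)$ with ideals of $A$ via Definition~\ref{D:pseudocoherent support}, compatibility of Fitting ideals with the flat extension $A\to\tilde{A}_\psi$, prime avoidance, and the non-zero-divisor transfer) are the right ones, and the ``depends only on the Fitting ideals'' clause comes out of either construction; what is missing is the component-by-component quotient-and-reassemble step, which cannot be replaced by a rank-constancy argument.
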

\begin{proof}
Let $\gothp_1,\dots,\gothp_k$ be the minimal primes of $A$ containing $J$
(there are only finitely many of these because $A$ is noetherian).
As in Definition~\ref{D:pseudocoherent support}, we may identify the Fitting ideals
$\Fitt_n(M)$ with ideals of $A$.

For $i=1,\dots,k$, we claim that there exists $f_i \in A \setminus \gothp_i$ 
such that $M \otimes_A (A/\gothp_i)_{f_i}$ is a finite projective module over $(\tilde{A}_\psi/\gothp_i \tilde{A}_\psi)_{f_i}$. To check this, let $m_i$ be the smallest nonnegative integer for which $\Fitt_{m_i}(M) \not\subseteq \gothp_i$. Then for any $f_i \in \Fitt_{m_i}(M) \setminus \gothp_i$,
$M \otimes_A (A/\gothp_i)_{f_i}$ is a finite projective module of rank $m_i$
over $(\tilde{A}_\psi/\gothp_i \tilde{A}_\psi)_{f_i}$.

For $i,j = 1,\dots,k$ distinct, choose $s_{i,j} \in \gothp_j \setminus \gothp_i$. Then
\[
f = \sum_{i=1}^k f_i \prod_{j \neq i} s_{i,j}
\]
does not map to a zero-divisor in $\overline{A}$ (by construction)
or $\overline{B}$ (by Lemma~\ref{L:remains torsion-free}), and
$M_f$ is a finite projective module over $\overline{B}_f = \bigoplus_{i=1}^k (\tilde{A}_\psi/\gothp_i \tilde{A}_\psi)_f$.
This proves the claim.
\end{proof}

\begin{prop} \label{P:stable finite}
Every finitely generated $\tilde{A}_\psi$-module with $\Gamma$-structure is \'etale-stably pseudocoherent
(and hence a pseudocoherent $\Gamma$-module).
\end{prop}
\begin{proof}
Let $M$ be a finitely generated $\tilde{A}_\psi$-module with $\Gamma$-structure.
We prove that $M$ is pseudocoherent under the additional assumption that $M$ is annihilated by a fixed ideal $J$ of $A$, working by noetherian induction on $J$; the case $J=0$ will give the desired result. 
Using Lemma~\ref{L:pseudocoherent 2 of 3}, we may further reduce to the case where $\overline{A}$ is reduced and connected.
By Corollary~\ref{C:pseudocoherent reduction}, $\overline{B}$ is a pseudocoherent $\tilde{A}_\psi$-module, so an $\tilde{A}_\psi$-module annihilated by $J$ is pseudocoherent if and only if it is pseudocoherent as a $\overline{B}$-module;
similarly, an $\tilde{A}_\psi$-module annihilated by $J$ and $f$ is pseudocoherent if and only if it is  pseudocoherent as a $\overline{B}/f\overline{B}$-module.

By Lemma~\ref{L:localize to projective}, we may choose $f \in A$ which is not a zero-divisor in $\overline{A}$ or $\overline{B}$ such that $M_f$ is a finite projective module over $\overline{B}_f$.
Let $T$ be the $f$-power torsion submodule of $M$. By
the induction hypothesis, $(M/T)/f(M/T)$ is pseudocoherent; since
$M_f \cong (M/T)_f$ and $(M/T)[f] = 0$, we may apply Lemma~\ref{L:pseudocoherent glueing}(a) to see that $M/T$ is pseudocoherent. By Lemma~\ref{L:pseudocoherent 2 of 3},
$T$ is finitely generated; it is thus annihilated by $f^m$ for some positive integer $m$.
By repeatedly applying the induction hypothesis and Lemma~\ref{L:pseudocoherent 2 of 3},
we deduce that $f^{i-1} T/f^i T$ is pseudocoherent for $i=1,\dots,m$.
It follows that $M$ is pseudocoherent.

Choose an $A$-linear surjection $F \to M$ with $F$ finite free,
let $N$ be the kernel of $F \to M$,
and let $\widehat{N}$ be the completion of $N$ in $F$.
Then $M/\widehat{N}$ is the completion of $M$, which is again a finitely generated $\tilde{A}_\psi$-module with $\Gamma$-structure and hence a pseudocoherent module. It follows that $\widehat{N}$ is finitely generated, so we may apply Corollary~\ref{C:closure finitely generated} to deduce that $N=\widehat{N}$. It follows that $M$ is strictly pseudocoherent, as desired.

Since any \'etale morphism $(\tilde{A}_\psi, \tilde{A}^+_\psi) \to (C,C^+)$ can be obtained by
base extension from some term in the tower $\psi$, the previous logic also implies that $M$ is \'etale-stably pseudocoherent. This completes the proof.
\end{proof}
\begin{cor} \label{C:stable finite}
Within the  category of $\tilde{A}_\psi$-modules with $\Gamma$-structures, the finitely generated modules form a Serre subcategory.
\end{cor}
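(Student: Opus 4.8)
The plan is to deduce Corollary~\ref{C:stable finite} from Proposition~\ref{P:stable finite} together with the ``two out of three'' property for pseudocoherent modules (Lemma~\ref{L:pseudocoherent 2 of 3}). Recall that a full subcategory $\mathcal{S}$ of an abelian category is a Serre subcategory precisely when it is closed under subobjects, quotients, and extensions. The category in question is the category of $\tilde{A}_\psi$-modules equipped with semilinear $\Gamma$-actions; I would first note that this is an abelian category, with kernels and cokernels computed at the level of underlying modules and the $\Gamma$-action inherited in the evident way (the isomorphism $\iota$ restricts and descends to the relevant sub/quotient because tensoring with $\tilde{A}_{\psi^1}$ along $i_{0,0}$ and $i_{0,1}$ is right exact, and compatibility of $\iota$ with a kernel follows from the flatness statements in the excerpt, or more simply from the fact that we only need closure under the three operations for \emph{finitely generated} objects). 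Then ``finitely generated'' is manifestly closed under quotients and, since $\tilde{A}_\psi$ need not be noetherian, closure under subobjects is exactly where Proposition~\ref{P:stable finite} enters.

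First I would treat closure under quotients: if $M \to N$ is a surjection of finitely generated $\Gamma$-modules, then $N$ is a quotient of a finitely generated module, hence finitely generated, and the $\Gamma$-structure on $N$ is induced from that on $M$, so $N$ lies in the subcategory. Next, closure under extensions: given a short exact sequence $0 \to M_1 \to M \to M_2 \to 0$ of $\tilde{A}_\psi$-modules with semilinear $\Gamma$-action in which $M_1$ and $M_2$ are finitely generated, a finite generating set of $M$ is obtained by lifting a finite generating set of $M_2$ and adjoining a finite generating set of $M_1$, so $M$ is finitely generated; this step uses no deep input. The genuinely nontrivial step is closure under subobjects: given a finitely generated $\Gamma$-module $M$ and a $\Gamma$-stable submodule $M_1$, I must show $M_1$ is finitely generated. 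Here I would invoke Proposition~\ref{P:stable finite} to conclude that $M$ is a pseudocoherent $\tilde{A}_\psi$-module; then $M_2 = M/M_1$, being a finitely generated $\Gamma$-module, is \emph{also} pseudocoherent by Proposition~\ref{P:stable finite}; and finally Lemma~\ref{L:pseudocoherent 2 of 3}(c) (the case $m = 0$, which gives that a submodule of a pseudocoherent module with pseudocoherent quotient is itself finitely generated, indeed pseudocoherent) yields that $M_1$ is finitely generated, hence a finitely generated $\Gamma$-module. This closes the argument.

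The main obstacle is the subobject case, and within it the crucial leverage is precisely that Proposition~\ref{P:stable finite} applies to \emph{both} $M$ and the quotient $M/M_1$: without knowing the quotient is pseudocoherent one cannot run the ``two out of three'' argument, and without Proposition~\ref{P:stable finite} one has no control over finite generation of $M_1$ since $\tilde{A}_\psi$ is not coherent in general (compare Remark~\ref{R:noetherian pseudoflat}). I would emphasize in the writeup that this is the one place where the geometric input behind Proposition~\ref{P:stable finite} — the descent of ideal sheaves of $\widehat{\calO}_X$ (Lemma~\ref{L:provector bundle submodule}), the Fitting-ideal localization (Lemma~\ref{L:localize to projective}), and the Beauville--Laszlo-style glueing (Lemma~\ref{L:pseudocoherent glueing}) — is being used; everything else is formal manipulation with generating sets and Lemma~\ref{L:pseudocoherent 2 of 3}. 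One minor point to address for completeness is that $\tilde{A}_\psi$ here should be understood in the setting of Hypothesis~\ref{H:Fitting} (so that $X = \Spa(A,A^+)$ is seminormal affinoid and $\psi$ is a finite \'etale perfectoid tower), matching the hypotheses under which Proposition~\ref{P:stable finite} was proved.
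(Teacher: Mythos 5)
Your argument is correct and is essentially the paper's own proof: the paper simply cites Proposition~\ref{P:stable finite} and Lemma~\ref{L:pseudocoherent 2 of 3}, and the content you spell out (quotients and extensions being formal, the subobject case handled by applying Proposition~\ref{P:stable finite} to both $M$ and $M/M_1$ and then invoking the two-out-of-three lemma) is exactly the intended argument, just written out in full.
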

\begin{proof}
This follows from Proposition~\ref{P:stable finite} and Lemma~\ref{L:pseudocoherent 2 of 3}.
\end{proof}

\begin{remark}
Recall that by Theorem~\ref{T:refined Kiehl}
and Corollary~\ref{C:refined Kiehl perfectoid}, \'etale-stably pseudocoherent modules over $\tilde{A}_\psi$ are equivalent to pseudocoherent sheaves on $\Spa(\tilde{A}_\psi, \tilde{A}_\psi^+)$
and $\Spa(\tilde{A}_\psi, \tilde{A}_\psi^+)_{\proet}$.
When applying
Proposition~\ref{P:stable finite},
we will use this equivalence frequently and without further comment.
\end{remark}

\begin{prop} \label{P:ascending sequence}
Let $M$ be a pseudocoherent $\Gamma$-module over $\tilde{A}_\psi$. Then
any ascending sequence of $\Gamma$-submodules of $M$ stabilizes.
\end{prop}
\begin{proof}
It is equivalent to verify that any sequence $M = M_0 \to M_1 \to \cdots$ of surjective morphisms of $\Gamma$-modules stabilizes. We again work under the additional assumption
that $M$ is annihilated by a fixed ideal $J$ of $A$, working by noetherian induction on $J$ with the case $J=0$ delivering the final result.

For each $m$, the sequence $\{\Fitt_m(M_i)\}_i$ of ideals of $A$ is nondecreasing in $i$;
since $A$ is noetherian, these sequences all stabilize.
By discarding some initial terms of the sequence, we may ensure that the sequences
are all constant. Choose $f$ as per Lemma~\ref{L:localize to projective}; since the construction depends only on Fitting ideals, it can be made uniformly across the $M_i$.
That is, for each $i$, $(M_i)_f$ is a finite projective module over $\overline{B}_f$.

By the induction hypothesis, the sequence $M_0/fM_0 \to M_1/fM_1 \to \cdots$ also stabilizes. By Corollary~\ref{C:stable finite}, the quotient of $M_i$ by its $f$-power-torsion submodule is pseudocoherent;
we may thus apply Lemma~\ref{L:stable sequence} to deduce that the original sequence stabilizes.
\end{proof}

\begin{prop} \label{P:ext tor stability}
Let $M$ and $N$ be pseudocoherent $\Gamma$-modules over $\tilde{A}_\psi$. Then
for all $i \geq 0$, $\Ext^i_{\tilde{A}_\psi}(M, N)$ and
$\Tor_i^{\tilde{A}_\psi}(M,N)$ are pseudocoherent $\Gamma$-modules over $\tilde{A}_\psi$.
\end{prop}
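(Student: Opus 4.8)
The plan is to reduce the statement to the formal properties of pseudocoherent $\Gamma$-modules established in Proposition~\ref{P:stable finite} and Corollary~\ref{C:stable finite}. Since an $\infty$-pseudocoherent module is in particular finitely generated, Proposition~\ref{P:stable finite} shows that the pseudocoherent $\Gamma$-modules over $\tilde A_\psi$ are exactly the finitely generated $\Gamma$-modules, which by Corollary~\ref{C:stable finite} form a Serre subcategory of the abelian category of $\tilde A_\psi$-modules with semilinear $\Gamma$-action. Hence kernels, images and cokernels of $\Gamma$-equivariant maps between pseudocoherent $\Gamma$-modules are again pseudocoherent $\Gamma$-modules, and more generally the cohomology of any complex of pseudocoherent $\Gamma$-modules with $\Gamma$-equivariant differentials is a pseudocoherent $\Gamma$-module.

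The next step is to pin down the $\Gamma$-module structure on $\Tor$ and $\Ext$ together with its base-change behavior. The map $\tilde A_\psi \to \tilde A_{\psi^1}$ induced by $i_{0,0}$ (and likewise $i_{0,1}$) exhibits $\tilde A_{\psi^1}=\tilde A_\psi\,\widehat\otimes_A\tilde A_\psi$ as the completed colimit of the faithfully finite \'etale $\tilde A_\psi$-algebras $\tilde A_\psi\,\widehat\otimes_A A_{\psi,n}$, hence as a perfectoid subdomain of $\Spa(\tilde A_\psi,\tilde A_\psi^+)_{\proet}$; by Theorem~\ref{T:weak flatness perfectoid}(b) it is therefore pseudoflat, so base extension along $i_{0,\epsilon}$ is exact on pseudocoherent $\tilde A_\psi$-modules. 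Consequently, for a resolution $Q_\bullet\to N$ by finite projective $\tilde A_\psi$-modules, the complex $Q_\bullet\otimes_{i_{0,\epsilon}}\tilde A_{\psi^1}$ is again a resolution; the usual base-change formulas (using that finite projective modules are dualizable, for the case of $\Ext$) then endow $\Tor_i^{\tilde A_\psi}(M,N)$ and $\Ext^i_{\tilde A_\psi}(M,N)$ with natural $\Gamma$-module structures.

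The heart of the argument is to compute these functors with resolutions by finite projective $\Gamma$-modules rather than plain $\tilde A_\psi$-modules. For this I would first show that every pseudocoherent $\Gamma$-module over $\tilde A_\psi$ admits a $\Gamma$-equivariant surjection from a finite projective $\Gamma$-module. Reducing (via Example~\ref{exa:Galois tower2}) to the case where $\psi$ is Galois with group $\Gamma$, one uses the continuity built into Definition~\ref{D:Gamma-module}, in the concrete form of Remark~\ref{R:small connection Galois}, to pass to an open subgroup over which the action is given by a $1$-cocycle valued in automorphisms close to the identity, constructs an equivariant free cover as in \cite[Lemma~1.5.2]{part1} (cf.\ Lemma~\ref{L:lift action1}), and then induces back up to $\Gamma$. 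By the Serre-subcategory property the syzygies are again pseudocoherent $\Gamma$-modules, so iterating yields a resolution $F_\bullet\to M$ by finite projective $\Gamma$-modules. Now $\Tor_i^{\tilde A_\psi}(M,N)=h_i(F_\bullet\otimes_{\tilde A_\psi}N)$ and $\Ext^i_{\tilde A_\psi}(M,N)=h^i(\Hom_{\tilde A_\psi}(F_\bullet,N))$, where each $F_j\otimes_{\tilde A_\psi}N$, respectively $\Hom_{\tilde A_\psi}(F_j,N)\cong F_j^\vee\otimes_{\tilde A_\psi}N$, is a pseudocoherent $\Gamma$-module and the differentials are $\Gamma$-equivariant; by the first paragraph the cohomology is a pseudocoherent $\Gamma$-module, and the base-change compatibility from the second paragraph identifies this structure with the functorial one.

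I expect the main obstacle to be this last construction — producing $\Gamma$-equivariant finite projective covers, equivalently resolutions by finite projective $\Gamma$-modules. The delicate points are the convergence in the approximation argument and the reduction from general finite \'etale perfectoid towers to Galois ones; however, since $\tilde A_\psi$ is $\varpi$-adically complete, the situation is far more tractable than the Fr\'echet setting of Theorem~\ref{T:pseudocoherent resolution}, and the cocycle-and-induction argument of \cite[Lemma~1.5.2]{part1} should adapt with only cosmetic changes.
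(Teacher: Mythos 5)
Your first two steps are sound: the reduction to the Serre-subcategory property of finitely generated $\Gamma$-modules (Proposition~\ref{P:stable finite}, Corollary~\ref{C:stable finite}) and the use of pseudoflatness of $\tilde{A}_\psi \to \tilde{A}_{\psi^1}$ to put a $\Gamma$-structure on $\Tor$ and $\Ext$ are both fine, and both appear in the paper's argument. The proof collapses, however, at its central step: the claimed existence of resolutions of an arbitrary pseudocoherent $\Gamma$-module by finite projective $\Gamma$-modules. This is not a convergence issue that ``adapts with only cosmetic changes.'' The block-matrix device of \cite[Lemma~1.5.2]{part1} (Lemma~\ref{L:lift action1}) produces a semilinear action of a single invertible operator, hence, after the smallness argument, of a procyclic group; this is exactly why Theorem~\ref{T:pseudocoherent resolution} works, since there $\Gamma \subseteq \ZZ_p^\times$ has a procyclic open subgroup of \emph{finite} index, so induction preserves finiteness. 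In the generality of Proposition~\ref{P:ext tor stability}, $\psi$ is an arbitrary finite \'etale perfectoid tower: a $\Gamma$-module is only a descent datum (Definition~\ref{D:Gamma-module}), so your reduction ``via Example~\ref{exa:Galois tower2}'' to a Galois tower is not available; and in the cases where the proposition is actually applied (restricted toric towers, cf.\ Remark~\ref{R:study pseudocoherent locally}) one has $\Gamma \cong \ZZ_p^d$ with $d \geq 2$, which has no procyclic open subgroup of finite index. Inducing from a procyclic closed subgroup yields an infinitely generated module, and lifting $d$ commuting generators simultaneously to an action on a free cover of a non-projective $M$ is an obstructed lifting problem (the matrices of the action on chosen generators are non-unique and need not satisfy the commutation and cocycle relations on the nose). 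The introduction (\S\ref{subsec:introduction pseudocoherent}) states explicitly that projective resolutions by projective $(\varphi,\Gamma)$-modules are not known to exist, and the paper's proof is structured precisely to avoid needing them.

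The paper's actual route is quite different: it argues by noetherian induction on an ideal $J \subseteq A$ annihilating $M$. The only equivariant resolution it uses is the Koszul resolution of $\overline{B} = \tilde{A}_\psi/J\tilde{A}_\psi$ by finite \emph{free} $\Gamma$-modules (available because $J$ may be taken generated by a regular sequence in $A$), which already gives pseudocoherence of $\Ext^i(\overline{B},N)$ and $\Tor_i(\overline{B},N)$. It then invokes Lemma~\ref{L:localize to projective} to find $f \in A$, not a zero-divisor in $\overline{A}$ or $\overline{B}$, such that $\Ext^i(M,N)_f$ and $\Tor_i(M,N)_f$ are projective, reduces via the Beauville--Laszlo-type glueing of Lemma~\ref{L:pseudocoherent glueing} to controlling the kernel and cokernel of multiplication by $f$, and finally handles these by the long exact sequences attached to $0 \to M[f] \to M \to fM \to 0$ and $0 \to fM \to M \to M/fM \to 0$, read in the Serre quotient by pseudocoherent $\Gamma$-modules, using the induction hypothesis applied to $M[f]$ and $M/fM$. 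If you wish to salvage your approach you would have to supply the missing equivariant finite projective covers for higher-rank $\Gamma$ (or for general descent data), which the paper flags as unknown.
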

\begin{proof} 
For brevity, we omit the label $\tilde{A}_\psi$ on the $\Ext$ and $\Tor$ groups.
We again work under the additional assumption that $M$ is annihilated by a fixed ideal $J$ of $A$, working by noetherian induction on $J$. Since our goal is the case $J=0$,
we may assume in addition that $J$ is generated by a regular sequence; consequently,
$\overline{B}$ is fpd as an $\tilde{A}_\psi$-module, resolved by the Koszul complex.
In particular, we have a finite resolution
\[
0 \to F_n \to \cdots \to F_0 \to \overline{B} \to 0
\]
of $\overline{B}$ by finite free $\Gamma$-modules.
We may thus compute the modules $\Ext^i(\overline{B}, N)$ as the homology groups of the complex
\[
0 \to \Hom(F_0, N) \to \cdots \to \Hom(F_n, N) \to 0,
\]
of pseudocoherent $\Gamma$-modules; by Proposition~\ref{P:stable finite} and Corollary~\ref{C:stable finite}, the homology groups
are pseudocoherent. Similarly, the modules $\Tor_i(\overline{B}, N)$ are all pseudocoherent.

Choose $f$ as in Lemma~\ref{L:localize to projective}. By the previous paragraph (plus Proposition~\ref{P:stable finite}), $\Ext^i(M,N)_f$ and $\Tor_i(M, N)_f$ are pseudocoherent $\Gamma$-modules over $(\tilde{A}_\psi/J)_f$; 
by applying Lemma~\ref{L:localize to projective} to these modules, 
we may adjust the choice of $f$ (depending on $i$) to ensure that $\Ext^i(M,N)_f$ and $\Tor_i(M, N)_f$
are projective over $(\tilde{A}_\psi/J)_f$.
Hence by Lemma~\ref{L:pseudocoherent glueing}, it suffices to check that
the kernel and cokernel of multiplication by $f$ on $\Ext^i(M, N),
\Tor_i(M, N)$ 
are pseudocoherent $\Gamma$-modules.
We work out the case of $\Ext$ in detail, the case of $\Tor$ being similar.

By Remark~\ref{R:transfer pseudocoherence to quotient}, 
$M[f]$ and $M/fM$ are pseudocoherent $\Gamma$-modules.
By the induction hypothesis, the modules
\[
\Ext^i(M[f], N),
\Ext^i(M/fM, N)
\]
are pseudocoherent $\Gamma$-modules. Now apply derived functors to the exact sequences
\[
0 \to M[f] \to M \to fM \to 0, \qquad 0 \to fM \to M \to M/fM \to 0
\]
to obtain exact sequences
\begin{gather*}
\Ext^{i-1}(M[f], N) \to \Ext^i(fM, N) \to \Ext^i(M, N) \to \Ext^i(M[f], N),
\\
\Ext^i(M/fM, N) \to \Ext^i(M, N) \to \Ext^i(fM, N) \to \Ext^{i+1}(M/fM, N).
\end{gather*}
By Corollary~\ref{C:stable finite}, pseudocoherent $\Gamma$-modules over $\tilde{A}_\psi$ form a Serre subcategory of $\tilde{A}_\psi$-modules with semilinear $\Gamma$-action; in the quotient category, the morphisms $\Ext^i(fM, N) \to \Ext^i(M, N)$,
$\Ext^i(M, N) \to \Ext^i(fM, N)$ become isomorphisms.
Hence multiplication by $f$ on $\Ext^i(M,N)$ becomes an isomorphism in the quotient category, that is, its kernel and cokernel are pseudocoherent $\Gamma$-modules.
\end{proof}

For some applications, it will be useful to deduce pseudocoherence from an even weaker condition than finite generation.

\begin{prop} \label{P:stable finite sub}
Let $M$ be a $\overline{B}$-module satisfying the following conditions.
\begin{enumerate}
\item[(a)]
There exists a $\Gamma$-structure on $M$.
\item[(b)]
For every $f \in A$ which is not a zero-divisor in $\overline{A}$, $M/fM$ is finitely generated
(and hence \'etale-stably pseudocoherent by condition (a) and Proposition~\ref{P:stable finite}).
\item[(c)]
The module $M$ can be realized as a submodule of some finite $\overline{B}$-module
(not necessarily admitting a $\Gamma$-structure).
\item[(d)]
One of the following conditions holds.
\begin{enumerate}
\item[(i)]
In (c), the finite $\overline{B}$-module can be taken to be complete for the natural topology
and $M$ can be taken to be a closed submodule.
\item[(ii)]
For every $f \in A$ which is not a zero-divisor in $\overline{A}$, $M[f^\infty]$ is finitely generated.
\end{enumerate}
\end{enumerate}
Then $M$ is a pseudocoherent $\Gamma$-module.
\end{prop}
\begin{proof}
By Proposition~\ref{P:stable finite}, we only need to show that $M$ is a pseudocoherent (or even finitely generated) $\overline{B}$-module.
Let $I$ be the set of $f \in \tilde{A}_\psi$ for which
exists a homomorphism $M \to N$ of $\overline{B}$-modules such that $N$ is finitely generated 
(but not necessarily complete for the natural topology) and $M_f \to N_f$ is a split inclusion. By condition (c) and Corollary~\ref{C:splitting ideal}, we see that $I$ is an ideal whose extension to $\overline{B}$ is not contained in any minimal prime ideal. Since $I$ is defined intrinsically in terms of $M$, it inherits a $\Gamma$-structure by condition (a); by Lemma~\ref{L:provector bundle submodule}, $I$ is the extension of an ideal of $A$. We may argue as in the proof of Lemma~\ref{L:localize to projective} to produce $f \in A \cap I$ which is not a zero-divisor in $\overline{A}$ or $\overline{B}$.
By condition (b) and Lemma~\ref{L:splitting to finite}, $M/M[f^\infty]$ is finitely generated,
and hence \'etale-stably pseudocoherent by Proposition~\ref{P:stable finite}.

If condition (d)(ii) holds, we are done. Otherwise, there exists an injective homomorphism $M \to N$ of $\overline{B}$-modules such that $N$ is complete for the natural topology and $M$ is closed in $N$. (It is not guaranteed that $M_f \to N_f$ splits, but this is not needed here.) By the previous paragraph and Theorem~\ref{T:open mapping}, $M[f^\infty]$ is also a closed submodule of $N$; however, it is the union of the closed submodules $M[f^n]$ over all $n$, and an ascending sequence of closed submodules of $N$ cannot have closed union unless the sequence stabilizes (by the Baire category theorem or an elementary argument). Hence there exists some $n$ for which $M[f^\infty] = M[f^n]$. For such $n$, each of $M[f]/M[f^2], \dots, M[f^{n-1}]/M[f^n]$ injects into $M/fM$, and hence is pseudocoherent by condition (a) and Proposition~\ref{P:stable finite}; by Lemma~\ref{L:pseudocoherent 2 of 3}, $M$ is pseudocoherent, which yields the desired result.
\end{proof}

\subsection{Categories of pseudocoherent modules}

Using the preceding calculations, we deduce a number of categorical properties of 
pseudocoherent $\widehat{\calO}_X$-modules.

\begin{defn}
Let $\calC_X$ denote the category of pseudocoherent $\widehat{\calO}_X$-modules.
Note that if $f: X' \to X$ is the seminormalization of $X$, then by Remark~\ref{R:effect of seminormalization}, 
$f_{\proet}^*: \calC_X \to \calC_{X'}$ is an equivalence of categories
and $L_i f_{\proet}^*$ vanishes for all $i>0$.
\end{defn}

\begin{remark} \label{R:study pseudocoherent locally}
When studying local properties of $\calC_X$, we will use the fact that $X$ is covered by affinoid open subspaces which are the base spaces of finite \'etale perfectoid towers, e.g., by pullback from restricted toric towers along closed immersions.
\end{remark}

\begin{theorem} \label{T:pseudocoherent noetherian}
The category $\calC_X$ has the following properties.
\begin{enumerate}
\item[(a)]
It is a full abelian subcategory of the category of $\widehat{\calO}_X$-modules. Moreover, the formation of kernels and cokernels in $\calC_X$ is compatible with the larger category.
\item[(b)]
If $X$ is quasicompact, then the ascending chain condition holds: given any sequence $\calF_0 \to \calF_1 \to \cdots$
of epimorphisms in $\calC_X$, there exists $i_0 \geq 0$ such that for all $i \geq i_0$, the map $\calF_i \to \calF_{i+1}$ is an isomorphism.
\item[(c)]
The functors $\Tor_i$ take $\calC_X \times \calC_X$ into $\calC_X$. (In particular, this is true for $i=0$, so $\calC_X$ admits tensor products.)
\item[(d)]
The functors $\Ext^i$ take $\calC_X \times \calC_X$ into $\calC_X$. (In particular, this is true for $i=0$, so $\calC_X$ admits internal Homs.)
\end{enumerate}
\end{theorem}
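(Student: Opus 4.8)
The plan is to reduce everything to a local, module-theoretic assertion and then quote the finiteness results of \S\ref{subsec:Fitting}. First, using the equivalence $f_{\proet}^*\colon \calC_X \to \calC_{X'}$ for $f\colon X' \to X$ the seminormalization (from the definition preceding the theorem, via Remark~\ref{R:effect of seminormalization}), which preserves epimorphisms and monomorphisms and has vanishing $L_i f_{\proet}^*$ for $i>0$, I may assume throughout that $X$ is seminormal; since the seminormalization morphism is finite, $X$ quasicompact forces $X'$ quasicompact. Next, as in Remark~\ref{R:study pseudocoherent locally}, I would cover $X$ by affinoid open subspaces each of which is both seminormal (Proposition~\ref{P:affinoid seminormal}) and the base $\Spa(A,A^+)$ of a finite \'etale perfectoid tower $\psi$ — obtained by pulling back a restricted toric tower over a polyannulus along a closed immersion, the perfectoidness being preserved by Theorem~\ref{T:Fontaine perfectoid compatibility}. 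On such a piece the single perfectoid subdomain $Y = \Spa(\tilde{A}_\psi, \tilde{A}_\psi^+)$ covers $X_{\proet}$, so by the perfectoid Kiehl glueing theorem (Corollary~\ref{C:refined Kiehl perfectoid}, Corollary~\ref{C:compare perfectoid cohomology}) together with the faithful flatness of $A \to \tilde{A}_\psi$ (Lemma~\ref{L:remains torsion-free}), the category $\calC_X$ is equivalent to the category of pseudocoherent $\Gamma$-modules over $\tilde{A}_\psi$ as in Hypothesis~\ref{H:Fitting}: a pseudocoherent $\widehat{\calO}_X$-module restricts to a pseudocoherent $\tilde{A}_\psi$-module equipped with a descent datum over $\widehat{\calO}(Y\times_X Y)=\tilde{A}_{\psi^1}$, which is precisely a $\Gamma$-module structure (Definition~\ref{D:Gamma-module}), and this is effective by faithfully flat descent. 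Since restriction of abelian sheaves to $Y_{\proet}$ is exact and carries finite free $\widehat{\calO}_X$-resolutions to finite free $\widehat{\calO}_Y$-resolutions, this equivalence is exact and identifies the sheaf functors $\mathcal{T}or_i$ and $\mathcal{E}xt^i$ over $\widehat{\calO}_X$ with the corresponding module functors over $\tilde{A}_\psi$.

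With this dictionary, part (a) is immediate: by Corollary~\ref{C:stable finite} the pseudocoherent $\Gamma$-modules over $\tilde{A}_\psi$ form a Serre subcategory of the category of $\tilde{A}_\psi$-modules with semilinear $\Gamma$-action — one uses Proposition~\ref{P:stable finite} to see that a $\Gamma$-submodule of a pseudocoherent (hence finitely generated) $\Gamma$-module is again finitely generated, hence pseudocoherent — so kernels, cokernels and images of morphisms in $\calC_X$, computed in the ambient category of $\widehat{\calO}_X$-modules, lie in $\calC_X$; exactness and the stated compatibility being local, this propagates to all of $X$. For part (b), over each of the finitely many pieces covering the quasicompact $X$ a sequence of epimorphisms in $\calC_X$ becomes a sequence of epimorphisms of pseudocoherent $\Gamma$-modules over $\tilde{A}_\psi$, which stabilizes by Proposition~\ref{P:ascending sequence}; an epimorphism of sheaves that is an isomorphism on each member of a finite cover is an isomorphism, so the original sequence stabilizes.

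For parts (c) and (d), the same reduction leaves the local statement that $\Tor_i^{\tilde{A}_\psi}(M,N)$ and $\Ext^i_{\tilde{A}_\psi}(M,N)$ are pseudocoherent $\Gamma$-modules over $\tilde{A}_\psi$ whenever $M,N$ are; both the pseudocoherence and the $\Gamma$-equivariance are exactly Proposition~\ref{P:ext tor stability}. The point needing care is that $\mathcal{T}or_i^{\widehat{\calO}_X}(\calF,\calG)$ (resp.\ $\mathcal{E}xt^i$), restricted to $Y$, is the sheafification of the module $\Tor_i^{\tilde{A}_\psi}$ (resp.\ $\Ext^i$): this holds because restriction along $X_{\proet}\to Y_{\proet}$ is exact and sends $\widehat{\calO}_X$ to $\widehat{\calO}_Y=\tilde{A}_\psi$ and finite free to finite free, and because the sheafification functor from pseudocoherent $\tilde{A}_\psi$-modules to pseudocoherent sheaves on $Y_{\proet}$ is exact (Corollary~\ref{C:compare perfectoid cohomology}), so it commutes with taking homology of the finite free resolution tensored with $\calG$ (resp.\ mapped into $\calG$). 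The $i=0$ cases give the tensor product and internal $\Hom$ claims.

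I expect the main obstacle to be the bookkeeping in the first paragraph: establishing cleanly that over a tower-base $\calC_X$ is equivalent, as an exact category together with its derived functors $\mathcal{T}or_i,\mathcal{E}xt^i$, to pseudocoherent $\Gamma$-modules over $\tilde{A}_\psi$. In particular, one must verify effective descent for pseudocoherent $\widehat{\calO}$-modules along the pro-\'etale cover $Y\to X$ — which here cannot come from a Banach-module splitting of $A\to\tilde{A}_\psi$ (such splittings are deferred to future work) but only from the faithful flatness of Lemma~\ref{L:remains torsion-free} — and must reconcile the module-theoretic and sheaf-theoretic Ext and Tor. Once this identification is in place, parts (a)--(d) are formal consequences of Corollary~\ref{C:stable finite} and Propositions~\ref{P:stable finite}, \ref{P:ascending sequence}, and \ref{P:ext tor stability}.
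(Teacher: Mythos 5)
Your proposal is correct and follows essentially the same route as the paper: reduce via seminormalization and a cover by affinoid bases of finite \'etale perfectoid towers to the setting of Hypothesis~\ref{H:Fitting}, then deduce (a) from Proposition~\ref{P:stable finite} (via Corollary~\ref{C:stable finite}), (b) from Proposition~\ref{P:ascending sequence}, and (c),(d) from Proposition~\ref{P:ext tor stability}. The extra bookkeeping you supply---identifying $\calC_X$ locally with pseudocoherent $\Gamma$-modules over $\tilde{A}_\psi$ and matching sheaf-theoretic with module-theoretic $\Tor$ and $\Ext$---is exactly the dictionary the paper invokes implicitly (see the remark following Proposition~\ref{P:stable finite}), so there is no substantive divergence.
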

\begin{proof}
As per Remark~\ref{R:study pseudocoherent locally}, we need only consider $X$ as in 
Hypothesis~\ref{H:Fitting}.
We then deduce (a) from Proposition~\ref{P:stable finite},
(b) from Proposition~\ref{P:ascending sequence},
and (c) and (d) from Proposition~\ref{P:ext tor stability}.
\end{proof}

\begin{theorem} \label{T:make projective}
For any $\calF \in \calC_X$, there exists a Zariski open dense subspace $U$ of $X$ such that $\calF|_U$ is projective.
\end{theorem}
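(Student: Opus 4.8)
The plan is to argue locally on $X$ and combine three ingredients: the structure theory of pseudocoherent $\Gamma$-modules over the imperfect toric period rings (Hypothesis~\ref{H:Fitting} through Proposition~\ref{P:ext tor stability}), the fact that a pseudocoherent $\widehat{\calO}_X$-module descends to data over a toric tower, and the standard fact that a pseudocoherent module over a reduced noetherian ring becomes projective after inverting a single nonzerodivisor. First I would reduce to the case where $X=\Spa(A,A^+)$ is seminormal, affinoid, connected, \emph{and} is the base of a finite \'etale perfectoid tower $\psi$ (e.g.\ pulled back from a restricted toric tower, as in Remark~\ref{R:study pseudocoherent locally} and Remark~\ref{R:cover smooth by toric}); the seminormal reduction is harmless because seminormalization is a finite universal homeomorphism inducing an equivalence $\calC_X \simeq \calC_{X'}$, and $X$ has a cover of the required form. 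It also suffices to produce, for each irreducible component of $X$ (equivalently each minimal prime of the reduced ring $A$), a nonempty Zariski open on which $\calF$ is projective and then glue, so we may assume $A$ is a domain; taking its normalization and working locally we may even assume $A$ is smooth, hence regular.

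Next I would transport $\calF$ through the toric-tower dictionary. By Corollary~\ref{C:refined Kiehl perfectoid} (and Theorem~\ref{T:refined Kiehl}), the pseudocoherent $\widehat{\calO}_X$-module $\calF$ corresponds to a pseudocoherent $A$-module $M$, and by Corollary~\ref{C:sections of completed vector bundle} / Proposition~\ref{P:coherent pullback} the pullback functor $\nu_{\proet}^*$ is exact and fully faithful on coherent sheaves with $\calF=\nu_{\proet}^* M$ precisely when $\calF$ is itself pulled back; but in general $\calF$ is only locally a pseudocoherent module, so instead I would use that $\calF$ \emph{restricted} to the perfectoid subdomain given by $\psi$ is $\tilde M = M_0 \otimes \tilde A_\psi$ for some pseudocoherent $\tilde A_\psi$-module $M_0$ carrying a $\Gamma$-module structure (Definition~\ref{D:Gamma-module}). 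Now apply Lemma~\ref{L:localize to projective}: there exists $f\in A$, not a zero-divisor in $A$ (nor in $\tilde A_\psi$, by Lemma~\ref{L:remains torsion-free}), such that $M_{0,f}$ is a finite projective module over $(\tilde A_\psi)_f$, and $f$ depends only on the Fitting ideals of $M_0$. Invert $f$; the distinguished open $U = \Spa(A_f, A_f^+)$ is Zariski open and dense in $X$ (as $A$ is a domain and $f\neq 0$), and over $U$ the tower $\psi$ restricts to a tower $\psi_f$ over $A_f$ with $\tilde A_{\psi_f} = (\tilde A_\psi)_f$ (base extension along a rational localization, Proposition~\ref{P:perfectoid tower persistence}). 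Thus $\calF|_U$, pulled back to the perfectoid subdomain over $U$, is a vector bundle with a $\Gamma$-action whose underlying module descends.

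To conclude that $\calF|_U$ is itself projective as an $\widehat{\calO}_U$-module, I would invoke descent. Since the restricted toric tower has $R_{\psi_f}$ which is $F$-(finite projective) (Theorem~\ref{T:standard toric decompleting}), the inclusion $A_f \to \tilde A_{\psi_f}$ splits in the category of Banach $A_f$-modules (Lemma~\ref{L:perfectoid tower splitting}, Remark~\ref{R:toric splitting}), so it is an effective descent morphism for finite projective modules by Lemma~\ref{L:complete descent}; the $\Gamma$-module structure is exactly the descent datum, so $\calF|_U$ arises from a finite projective $A_f$-module, and by Corollary~\ref{C:refined Kiehl perfectoid} the associated $\widehat{\calO}_U$-module is projective. (Alternatively, and more robustly if one worries about the Banach structure, one uses Lemma~\ref{L:provector bundle submodule} and Corollary~\ref{C:sections of completed vector bundle}: $\nu_{\proet *}(\calF|_U)$ is then a vector bundle on $U$ whose pullback recovers $\calF|_U$, since locally-free-ness can be checked pointwise via Fitting ideals as in Lemma~\ref{L:Robba base extension vector bundle}.) Gluing the opens obtained from the various irreducible components, and then pulling back along the finite seminormalization map to handle general $X$, yields a Zariski open dense $U\subseteq X$ with $\calF|_U$ projective.

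\textbf{Main obstacle.} The delicate point is the passage from ``the underlying module over $\tilde A_\psi$ becomes projective after inverting $f$, and it carries a $\Gamma$-structure'' to ``the $\widehat{\calO}$-module descends to a genuine vector bundle on $U$.'' One must be careful that inverting $f\in A$ genuinely corresponds to restricting to the Zariski open $U$ in the pro-\'etale topology — i.e.\ that rational localization and the formation of $\tilde A_\psi$ commute, and that the descent datum is preserved — and that the resulting descended object is a \emph{finitely generated} $\calO_U$-module rather than merely something sandwiched in one; here the noetherianity of $A_f$ (hence of $\tilde A_{\psi_f}$-finite-generation via Proposition~\ref{P:stable finite}) and the Banach-module completeness remarks at the end of Remark~\ref{R:stable finite sub} do the work. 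The argument is essentially a localization-plus-faithfully-flat-descent sandwich, with all the pieces already available; I do not anticipate a genuinely hard new input, only bookkeeping to ensure the tower, the localization, and the descent all line up.
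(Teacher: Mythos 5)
Your overall route is the paper's: reduce to $X = \Spa(A,A^+)$ affinoid, seminormal, the base of a finite \'etale perfectoid tower $\psi$ (Remark~\ref{R:study pseudocoherent locally}), represent $\calF$ by a pseudocoherent $\Gamma$-module $M$ over $\tilde{A}_\psi$, and apply Lemma~\ref{L:localize to projective} (with $J=0$) to get $f \in A$, not a zero-divisor, with $M_f$ finite projective; the locus $\{f \neq 0\}$ is then the desired dense Zariski open. Up to that point you agree with the paper, whose proof is exactly ``reduce to Hypothesis~\ref{H:Fitting} and invoke Lemma~\ref{L:localize to projective}.'' (Your intermediate reductions to a domain and to a smooth/regular $A$ are unnecessary --- the lemma handles reduced, reducible $A$ directly via its minimal primes --- and the claim that normalization lets you ``assume $A$ is smooth'' is unjustified; also $\{f\neq 0\}$ is a Zariski open exhausted by rational subspaces, not literally $\Spa(A_f,A_f^+)$, but these are cosmetic.)

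The genuine gap is your final descent step. The theorem asserts only that $\calF|_U$ is projective, i.e.\ a locally finite free $\widehat{\calO}_U$-module; it does \emph{not} assert that $\calF|_U$ is $\nu_{\proet}^*$ of a vector bundle on $U$, and your argument tries to prove this stronger statement, which is false. Finite projective $\Gamma$-modules over $\tilde{A}_\psi$ are ``generalized representations'' and do not in general descend to finite projective $A$-modules (already over a point this would contradict Sen--Tate theory: e.g.\ a Hodge--Tate twist of $\widehat{\calO}$ is a projective object of $\calC_X$ with no nonzero $\nu_{\proet *}$; note Corollary~\ref{C:provector bundle injective} only gives injectivity of $\nu_{\proet}^*\nu_{\proet *}\calF \to \calF$, not an isomorphism). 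The misapplication is in identifying the $\Gamma$-structure with a descent datum for Lemma~\ref{L:complete descent}: that lemma concerns descent data over the Banach completed tensor product $\tilde{A}_{\psi}\,\widehat{\otimes}_A\,\tilde{A}_{\psi}$, whereas a $\Gamma$-module structure in the sense of Definition~\ref{D:Gamma-module} is an isomorphism over $\tilde{A}_{\psi^1}$, the \emph{uniform} completion (identified with continuous cochains in the Galois case); these rings differ, and the splitting hypothesis of Lemma~\ref{L:complete descent} does not convert semilinear $\Gamma$-actions into effective descent data for $A_f \to \tilde{A}_{\psi_f}$. Fortunately no descent is needed: once $M_f$ is finite projective over $(\tilde{A}_\psi)_f$, its base extensions to the completed rings of the perfectoid subdomains lying over each rational subspace $\{|f| \geq |p^n|\} \subseteq U$ are finite projective, so $\calF|_U$ is locally (on $U_{\proet}$) associated to finite projective modules, which is precisely what projectivity in $\calC_U$ means (cf.\ Corollary~\ref{C:refined Kiehl perfectoid}). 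Deleting the descent paragraph and stopping after the localization step yields a correct proof, essentially identical to the paper's.
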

\begin{proof}
Again, we need only consider $X$ as in Hypothesis~\ref{H:Fitting};
the claim then follows from Lemma~\ref{L:localize to projective}.
\end{proof}

\begin{theorem} \label{T:vector bundle pullback}
For $f: X' \to X$ a morphism and $i \geq 0$, the functors
$L_i f_{\proet}^*$ take $\calC_X$ into $\calC_{X'}$.
\end{theorem}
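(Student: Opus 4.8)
The plan is to reduce to a local computation with an explicit finite projective resolution and then run a dévissage, so that all the genuine content is absorbed into Theorem~\ref{T:pseudocoherent noetherian}(a).

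First, since pseudocoherence of a sheaf of $\widehat{\calO}_{X'}$-modules on $X'_{\proet}$ may be checked locally, and since $L_i f^*_{\proet}$ commutes with restriction to objects of $X'_{\proet}$, it suffices to compute $L_i f^*_{\proet}\calF$ after restriction to a perfectoid subdomain $Y'$ of $X'_{\proet}$ which maps into the preimage of a perfectoid subdomain $Y$ of $X_{\proet}$. Such $Y'$ exist and cover $X'_{\proet}$: perfectoid subdomains form a basis of $X_{\proet}$ (Lemma~\ref{L:perfectoid cover}), and any object of $X'_{\proet}$, in particular $f_{\proet}^{-1}(Y)$, is covered by perfectoid subdomains of $X'_{\proet}$, which then lie over $Y$ (note that the map $Y' \to Y$ need not be a closed immersion even when $f$ is one, cf.\ Remark~\ref{R:not closed immersion}, but this is of no consequence here). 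Shrinking $Y$, we may assume $\calF|_Y$ is the sheaf associated to a pseudocoherent $\widehat{\calO}(Y)$-module $M$, so that $M$ admits a resolution $\cdots \to P_1 \to P_0 \to M \to 0$ by finite projective $\widehat{\calO}(Y)$-modules; by the acyclicity statement of Corollary~\ref{C:compare perfectoid cohomology}, the functor $N \mapsto \tilde N$ is exact, so $P_\bullet \to \calF|_Y$ is an exact resolution of $\calF|_Y$ by finite projective $\widehat{\calO}$-modules on $Y_{\proet}$.

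Since each $P_i$ is finite projective, hence flat, over $\widehat{\calO}_Y$, its preimage $f^*_{\proet} P_i$ is finite projective (so flat and pseudocoherent) over $\widehat{\calO}_{Y'}$, and the complex $f^*_{\proet} P_\bullet$ computes $L f^*_{\proet}(\calF|_Y)$. Now put $M_j = \image(P_{j+1} \to P_j)$ for $j \geq 0$ and $M_{-1} = \calF|_Y$; by iterated use of Lemma~\ref{L:pseudocoherent 2 of 3} each $M_j$ is a pseudocoherent $\widehat{\calO}_Y$-module, in particular finitely presented. Applying $L f^*_{\proet}$ to the short exact sequences $0 \to M_j \to P_j \to M_{j-1} \to 0$ and using $L_i f^*_{\proet} P_j = 0$ for $i>0$, the resulting long exact sequences give isomorphisms $L_i f^*_{\proet}(\calF|_Y) \cong L_{i-1} f^*_{\proet} M_0 \cong \cdots \cong L_1 f^*_{\proet} M_{i-2}$ for $i \geq 1$, and $L_1 f^*_{\proet} M_{i-2} = \ker(f^*_{\proet} M_{i-1} \to f^*_{\proet} P_{i-1})$. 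The underived pullback $f^*_{\proet} M_{i-1}$ is finitely presented over $\widehat{\calO}_{Y'}$ (finite presentation is preserved by $f^*_{\proet}$, which is right exact and commutes with finite free modules), hence lies in $\calC_{Y'}$: indeed $\calC_{Y'}$ is a full abelian subcategory of all $\widehat{\calO}_{Y'}$-modules containing the finite free ones, with kernels and cokernels computed in the ambient category, by Theorem~\ref{T:pseudocoherent noetherian}(a). Likewise $f^*_{\proet} P_{i-1} \in \calC_{Y'}$. Therefore $L_i f^*_{\proet}(\calF|_Y)$, being the kernel of a morphism in the abelian category $\calC_{Y'}$, is pseudocoherent for $i \geq 1$; and $L_0 f^*_{\proet}(\calF|_Y) = f^*_{\proet}(\calF|_Y)$ is finitely presented, hence also in $\calC_{Y'}$. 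Gluing over a covering of $X'_{\proet}$ by such $Y'$ yields the claim.

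Thus the argument itself is short; the real work was done in Theorem~\ref{T:pseudocoherent noetherian}(a), i.e.\ in the Fitting-ideal analysis of §\ref{subsec:Fitting} (via toric towers) that makes $\calC_{X'}$ abelian and stable under kernels of maps between finitely presented modules. If one tried to avoid that input, the main obstacle would be precisely to show that $\ker(f^*_{\proet} M_{i-1} \to f^*_{\proet} P_{i-1})$ is finitely generated over the non-coherent rings $\widehat{\calO}(Y')$, which is exactly the content of Proposition~\ref{P:stable finite} and its companions. One minor point worth recording in passing is that $f^*_{\proet}P_\bullet$ genuinely computes $Lf^*_{\proet}$: this is because $f^{-1}_{\proet}$ is exact and $f^{-1}_{\proet}P_\bullet$ is a complex of (finite) projective $f^{-1}_{\proet}\widehat{\calO}_X$-modules, hence flat for $\otimes_{f^{-1}_{\proet}\widehat{\calO}_X}\widehat{\calO}_{X'}$.
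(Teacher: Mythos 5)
There is a genuine gap, and it sits exactly at the step you flag as harmless: the assertion that $f^*_{\proet}M_{i-1}$ ``lies in $\calC_{Y'}$'' and that $\calC_{Y'}$ is abelian ``by Theorem~\ref{T:pseudocoherent noetherian}(a)''. That theorem is stated and proved for $X$ a \emph{rigid analytic space} over $K$; its content is Proposition~\ref{P:stable finite}, which says that a finitely generated module over $\tilde{A}_{\psi'}$ \emph{equipped with the $\Gamma$-module (descent) structure coming from a sheaf on $X'_{\proet}$} is pseudocoherent -- the Fitting-ideal argument works precisely because that structure lets one descend to ideals of the noetherian affinoid base. A perfectoid subdomain $Y'$ is not such a space, and for the bare perfectoid ring $\widehat{\calO}(Y')$ the analogous statements fail: finite presentation does not imply pseudocoherence, and pseudocoherent $\widehat{\calO}(Y')$-modules do not form an abelian category unless $\widehat{\calO}(Y')$ is coherent, which is not known and not expected (the paper makes exactly this caveat after Theorem~\ref{T:perfect generalized phi-modules at a point}). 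Your intermediate objects $f^*_{\proet}M_{i-1}$ have no $\Gamma$-structure to exploit: the resolution $P_\bullet$ was chosen over $\widehat{\calO}(Y)$ with no equivariance (none is available -- this is the very difficulty the paper points out), so the images $M_j$ and their pullbacks live only over the perfectoid subdomains and do not extend to sheaves over open subspaces of $X$ or $X'$. Hence neither ``finitely presented $\Rightarrow$ pseudocoherent'' nor ``kernels of maps of pseudocoherent modules are pseudocoherent'' is licensed for them, and in fact the finite generation of $\ker(f^*_{\proet}M_{i-1} \to f^*_{\proet}P_{i-1})$ over the non-coherent ring $\widehat{\calO}(Y')$ is precisely the open issue, not something Theorem~\ref{T:pseudocoherent noetherian}(a) hands you after localizing.

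The paper's proof is built to avoid this trap. It takes the same non-equivariant resolution $\cdots \to F_1 \to F_0 \to M \to 0$ over $B=\tilde{A}_\psi$ and computes $L_if^*_{\proet}$ as $\Tor_i^B(M,B')$ with $B'=\tilde{A}_{\psi'}$, but the pseudocoherence is extracted only from the \emph{intrinsic} objects: the groups $\Tor_i^B(M,B')$ carry a $\Gamma$-module structure even though the complex does not. One then inducts on $i$: assuming the lower $\Tor$'s are pseudocoherent, an inner induction with Lemma~\ref{L:pseudocoherent 2 of 3} shows that $\ker(F_{j+1}\otimes_B B' \to F_j\otimes_B B')$ is pseudocoherent for $j\le i$; therefore $\Tor_{i+1}^B(M,B')$ is a quotient of a pseudocoherent module, hence finitely generated, and \emph{then} Proposition~\ref{P:stable finite} (applicable because of its $\Gamma$-structure) upgrades this to pseudocoherence. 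If you want to keep your d\'evissage, you must replace the appeal to abelianness of ``$\calC_{Y'}$'' by exactly this bootstrapping: use the already-established pseudocoherence of $\Tor_j$ for $j\le i$, together with the two-out-of-three lemma, to get pseudocoherence of the relevant kernel, and invoke Proposition~\ref{P:stable finite} only for the $\Gamma$-equivariant quotient $\Tor_{i+1}$.
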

\begin{proof}
Again, we need only consider $X$ as in Hypothesis~\ref{H:Fitting}.
We may also assume that $X' = \Spa(A', A^{\prime +})$ is affinoid; let
$\psi'$ be a finite \'etale perfectoid tower factoring through
the pullback of the tower $\psi$ along $f$. To lighten notation, write
$B,B'$ in place of $\tilde{A}_\psi, \tilde{A}_{\psi'}$.

Let $M$ be a pseudocoherent $\Gamma$-module over $B$,
and construct a projective resolution
\[
\cdots \to F_1 \to F_0 \to M  \to 0
\]
by finite projective $B$-modules (without $\Gamma$-action). 
The groups $\Tor_i^{B}(M, B')$ are then computed by the cohomology groups of the complex
\[
\cdots \to F_1 \otimes_B B' \to F_0 \otimes_B B' \to 0
\]
and moreover carry the structure of $\Gamma$-modules (although $\Gamma$ does not act on the complex).
We show by induction on $i$ that $\Tor_i^B(M, B')$ is \'etale-stably pseudocoherent.
For $i=0$, this is immediate from Proposition~\ref{P:stable finite}. Suppose that the claim holds up to some value $i$. Using Lemma~\ref{L:pseudocoherent 2 of 3},
we may check inductively on $j$ that $\ker(F_{j+1} \otimes_B B' \to F_j \otimes_B B')$ is pseudocoherent for $j=0,\dots,i$: namely, for $0<j \leq i$, pseudocoherence of 
$\ker(F_{j} \otimes_B B' \to F_{j-1} \otimes_B B')$
and $\Tor_{j}^B(M, B')$ together imply pseudocoherence of
$\image(F_{j+1} \otimes_B B' \to F_{j} \otimes_B B')$
and hence of $\ker(F_{j+1} \otimes_B B' \to F_{j} \otimes_B B')$.
Since $\Tor_{i+1}^B(M,B')$ is both a $\Gamma$-module and
a quotient of $\ker(F_{i+1} \otimes_B B' \to F_{i} \otimes_B B')$,
we may apply Proposition~\ref{P:stable finite} to see that it is \'etale-stably pseudocoherent.
This completes the induction and proves the desired result.
\end{proof}

\begin{cor} \label{C:closed immersion equivalence surjective}
Let $f: X' \to X$ be a morphism of affinoid spaces.
Then for every $\calF' \in \calC_{X'}$, there exist an object $\calF \in \calC_X$
and a surjective morphism $f_{\proet}^* \calF \to \calF'$ in $\calC_{X'}$.
\end{cor}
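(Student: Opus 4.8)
The plan is to reduce to the case where $X'$ is a seminormal affinoid space and then to show that $\calF'$ is a quotient of a finite free $\widehat{\calO}_{X'}$-module; since a finite free $\widehat{\calO}_{X'}$-module is the $f_{\proet}$-pullback of a finite free $\widehat{\calO}_X$-module, and the latter is visibly an object of $\calC_X$, this gives the assertion.

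First I would make the reduction. Since $X'_{\proet}$ and $\widehat{\calO}_{X'}$ depend only on $X'_{\red}$, and since by Remark~\ref{R:effect of seminormalization} passing to the seminormalization of a reduced space induces an isomorphism of pro-\'etale sites carrying $\widehat{\calO}$ to $\widehat{\calO}$, hence an equivalence of the categories $\calC$, I may replace $X'$ by the seminormalization of $X'_{\red}$; the composite $X'_{\mathrm{sn}} \to X' \to X$ realizes the same pullback functor as $f$, and $X'_{\mathrm{sn}}$ is again affinoid, being finite over $X'$. Thus I assume henceforth that $X'$ is seminormal and affinoid. (No seminormality of $X$ is needed.)

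The key step is to show that for $X'$ seminormal every object of $\calC_{X'}$ is isomorphic to $\nu_{\proet}^*\calG'$ for a coherent $\calO_{X'}$-module $\calG'$. This is where Lemma~\ref{L:provector bundle submodule} does the work: $\calF'$, being pseudocoherent, is in particular finitely generated, so locally on $X'_{\proet}$ it arises as $\widehat{\calO}_{X'}^{N}/\calK$ with $\calK$ an $\widehat{\calO}_{X'}$-submodule of $\widehat{\calO}_{X'}^{N} = \nu_{\proet}^*\calO_{X'}^N$; by Lemma~\ref{L:provector bundle submodule} applied to the finite free sheaf $\calO_{X'}^N$ we have $\calK = \nu_{\proet}^*\calK_0$ for the coherent subsheaf $\calK_0 = \nu_{\proet *}\calK$ of $\calO_{X'}^N$, and exactness of $\nu_{\proet}^*$ on coherent sheaves (Proposition~\ref{P:coherent pullback}) identifies $\calF'$ locally with $\nu_{\proet}^*(\calO_{X'}^N/\calK_0)$. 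Because $\calK_0$ is recovered canonically as $\nu_{\proet *}\calK$, the sheaf $\calG' := \nu_{\proet *}\calF'$ is coherent on $X'$ and the counit $\nu_{\proet}^*\calG' \to \calF'$ is an isomorphism, as one checks by comparing the local descriptions just obtained. Granting this, the proof concludes quickly: since $X'$ is affinoid, $\calG'$ is a quotient of a finite free coherent sheaf, say $\calO_{X'}^N \twoheadrightarrow \calG'$, and $\calO_{X'}^N = f^*\calO_X^N$; applying the right exact functor $\nu_{\proet}^*$ yields a surjection $\widehat{\calO}_{X'}^N \twoheadrightarrow \calF'$, and $\widehat{\calO}_{X'}^N = f_{\proet}^*\widehat{\calO}_X^N$ with $\widehat{\calO}_X^N$ a finite free, hence pseudocoherent, $\widehat{\calO}_X$-module. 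So $\calF := \widehat{\calO}_X^N$ has the required property.

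The main obstacle is the key step, namely descending $\calF'$ to a coherent sheaf on the base $X'$. The two substantive inputs (Lemma~\ref{L:provector bundle submodule} and Proposition~\ref{P:coherent pullback}) are already in hand, so the point requiring care is the passage from the local-on-$X'_{\proet}$ finite presentations built into the definition of pseudocoherence to a single description $\calF' \cong \nu_{\proet}^*\calG'$; here one uses that the candidate kernels glue, being canonically $\nu_{\proet *}$ of the local kernels, so that no incompatible choices intervene beyond the number of local generators, together with the fact that $X'_{\proet}$ is qcqs so that finitely many perfectoid subdomains suffice. Everything else — the seminormalization reduction, the identification $\widehat{\calO}_{X'} = f_{\proet}^*\widehat{\calO}_X$, and the global generation of a coherent sheaf on an affinoid space — is routine.
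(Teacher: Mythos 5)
Your argument breaks down at the key step, namely the claim that for $X'$ seminormal every object of $\calC_{X'}$ is of the form $\nu_{\proet}^* \calG'$ for a coherent $\calO_{X'}$-module $\calG'$. Proposition~\ref{P:coherent pullback} gives only that $\nu_{\proet}^*$ is exact and fully faithful; it is very far from essentially surjective, and the whole point of the theory of pseudocoherent $\widehat{\calO}$-modules is that the category is strictly larger than coherent sheaves on the base. The way you try to force the descent is also where the logic fails: a pseudocoherent $\widehat{\calO}_{X'}$-module only admits presentations $\widehat{\calO}_{X'}^N \twoheadrightarrow \calF'$ \emph{locally on} $X'_{\proet}$, and these local presentations (with varying $N$ and no compatibility) do not glue to a single global surjection from a free module. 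Lemma~\ref{L:provector bundle submodule} descends $\widehat{\calO}_{X'}$-submodules of $\nu_{\proet}^*\calF$ for a single, globally given locally finite free $\calO_{X'}$-module $\calF$; it says nothing about the kernels of unrelated locally defined surjections, and the assertion that ``the candidate kernels glue, being canonically $\nu_{\proet *}$ of the local kernels'' is circular, since there is no global map whose kernel they would be. A concrete counterexample to your key claim (and to your final step): take $X' = X = \Spa(K,\gotho_K)$ with $K$ a $p$-adic field and $\calF' = \widehat{\calO}_{X'}(1)$ the Tate twist. This is a locally free rank-one pseudocoherent $\widehat{\calO}_{X'}$-module, but $\nu_{\proet *}\calF' = \CC_p(1)^{G_K} = 0$ by Tate, so the counit $\nu_{\proet}^*\nu_{\proet *}\calF' \to \calF'$ is zero rather than an isomorphism, $\calF'$ is not a pullback of a coherent sheaf, and it admits no surjection from any finite free $\widehat{\calO}_{X'}$-module (such a surjection would require nonzero global sections). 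So no argument that only produces finite free $\calF$ over $X$ can prove the corollary.

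The paper's proof is designed precisely to get around this. After reducing (via Remark~\ref{R:effect of seminormalization}) to $X$ seminormal, it works with perfectoid towers $\psi$ over $X$ and $\psi'$ over $X'$ and expresses $\calF'$ as a pseudocoherent $\Gamma$-module $M'$ over $\tilde{A}_{\psi'}$. One chooses a finite free module $F$ over $\tilde{A}_\psi$ and a surjection $F \otimes_{\tilde{A}_\psi} \tilde{A}_{\psi'} \to M'$ of plain modules, then prescribes a (generally twisted) $\Gamma$-action on the generators of $F$ lifting the action on their images; this action is only well defined on a suitable quotient $M$ of $F$, which then carries a $\Gamma$-equivariant surjection onto $M'$. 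The crucial finiteness input is Proposition~\ref{P:stable finite}, which guarantees that the finitely generated $\Gamma$-module $M$ is automatically pseudocoherent, and Theorem~\ref{T:vector bundle pullback} controls its base extension. The resulting $\calF \in \calC_X$ is in general neither free nor a pullback of a coherent sheaf --- exactly the flexibility your proposal lacks.
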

\begin{proof}
By Remark~\ref{R:effect of seminormalization}, we may assume that $X$ is seminormal.
Choose a finite \'etale perfectoid tower $\psi$ over $X$
and let $\psi'$ a finite \'etale perfectoid tower factoring through the pullback of $\psi$ along $f$.
Let $M'$ be a pseudocoherent $\Gamma$-module over $\tilde{A}_{\psi'}$.
As in the proof of Lemma~\ref{L:lift to psc},
we may construct a finitely generated $\tilde{A}_{\psi}$-module $M$ with $\Gamma$-structure
and a $\Gamma$-equivariant surjection $M \otimes_{\tilde{A}_\psi} \tilde{A}_{\psi'} \to M'$ of pseudocoherent $\Gamma$-modules over $\tilde{A}_{\psi'}$.
To wit, choose a finite free module $F$ and a surjection $F \otimes_{\tilde{A}_\psi} \tilde{A}_{\psi'} \to M'$ of $\tilde{A}_{\psi'}$-modules.
Then specify an action of $\Gamma$ on each generators; this action becomes well-defined on a certain quotient $M$ of $F$ which then admits a $\Gamma$-equivariant surjection as desired. Finally, apply Proposition~\ref{P:stable finite} to see that $M$ is \'etale-stably pseudocoherent over $\tilde{A}_\psi$ 
and Theorem~\ref{T:vector bundle pullback} to see that 
$M \otimes_{\tilde{A}_\psi} \tilde{A}_{\psi'}$ is \'etale-stably pseudocoherent over $\tilde{A}_{\psi'}$.
\end{proof}

\begin{cor} \label{C:resolution adjunction}
Let $f: X' \to X$ be a proper birational morphism of smooth rigid spaces. Then for any 
$\calF \in \calC_X$, the adjunction morphism
$\calF \to \RR f_{\proet *} \LL f_{\proet}^{*} \calF$ is an isomorphism.
(The composition of left and right derived functors makes sense because $\calF$ is fpd by 
Corollary~\ref{C:pseudocoherent fpd local2}.)
\end{cor}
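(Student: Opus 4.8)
The plan is to reduce, by d\'evissage, to the case $\calF = \widehat\calO_X$, and then to verify the statement for the structure sheaf by evaluating on a basis of perfectoid subdomains of $X_{\proet}$ and invoking the relative Hodge--Tate comparison. First I would note that the assertion is local on $X$, so we may take $X = \Spa(A,A^+)$ affinoid; being smooth, $X$ is seminormal. Since $\calF$ is fpd (Corollary~\ref{C:pseudocoherent fpd local3}), Corollary~\ref{C:refined Kiehl perfectoid} together with Theorem~\ref{T:refined Kiehl} lets us, after shrinking $X$, choose a finite resolution $0 \to \calP^{-n} \to \cdots \to \calP^0 \to \calF \to 0$ by finite locally free $\widehat\calO_X$-modules; then $\LL f_{\proet}^*\calF \cong f_{\proet}^*\calP^\bullet$, and the hypercohomology spectral sequence $R^q f_{\proet *}f_{\proet}^*\calP^p \Rightarrow R^{p+q}f_{\proet *}\LL f_{\proet}^*\calF$, together with compatibility of adjunction, reduces us to showing $\RR f_{\proet *}f_{\proet}^*\calP \cong \calP$ for each finite locally free $\calP$. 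As higher direct images under the proper map $f$ are computed locally on $X$ and $\RR f_{\proet *}$ commutes with finite direct sums, this reduces to $\calP = \widehat\calO_X$; by Remark~\ref{R:resolution} we may moreover take $f$ to be a composition of blow-ups along smooth centres.

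Next I would test the adjunction map $u\colon \widehat\calO_X \to \RR f_{\proet *}\widehat\calO_{X'}$ on a basis of perfectoid subdomains $Y$ of $X_{\proet}$. For such a $Y = \Spa(B,B^+)$ we have $\RR\Gamma(Y,\widehat\calO_X) = B$ by Proposition~\ref{P:perfectoid acyclic}, while $\RR\Gamma(Y, \RR f_{\proet *}\widehat\calO_{X'}) = \RR\Gamma((Y\times_X X')_{\proet}, \widehat\calO)$. Writing $Y = \varprojlim_i Y_i$ with each $Y_i \to X$ finite \'etale over an \'etale $X$-space, each $Y_i$ is smooth over $K$, and each pullback $f_i \colon Y_i\times_X X' \to Y_i$ of $f$ is again a proper birational morphism of smooth rigid $K$-spaces which is a composition of blow-ups along smooth centres; moreover $Y\times_X X' = \varprojlim_i(Y_i\times_X X')$ along finite \'etale transition maps, so (by the \v{C}ech computations underlying Proposition~\ref{P:perfectoid acyclic}) $\RR\Gamma((Y\times_X X')_{\proet},\widehat\calO)$ is the completed colimit of the $\RR\Gamma((Y_i\times_X X')_{\proet},\widehat\calO)$, and likewise $B = \RR\Gamma(Y_{\proet},\widehat\calO)$ is the completed colimit of the $\RR\Gamma((Y_i)_{\proet},\widehat\calO)$. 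It therefore suffices to show, compatibly in $i$, that pullback along $f_i$ gives a quasi-isomorphism $\RR\Gamma((Y_i)_{\proet},\widehat\calO) \to \RR\Gamma((Y_i\times_X X')_{\proet},\widehat\calO)$.

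For this I would pass to a perfectoid base field: a \v{C}ech descent along a perfectoid extension $K \to K'$ containing $\Qp(\mu_{p^\infty})$, exactly as in the proof of Theorem~\ref{T:sections of completed structure sheaf} (using Proposition~\ref{P:ax-sen-tate}), reduces us to the case $K$ perfectoid. Then Remark~\ref{R:HT spectral sequence} supplies, functorially in the smooth space, a canonical finite filtration on $\RR\nu_{\proet *}\widehat\calO$ with graded pieces $\Omega^j(-j)[-j]$; applying it to $Y_i$ and to $Y_i\times_X X'$ and using $\nu_{\proet}\circ f_{i,\proet} = f_i\circ\nu'_{\proet}$, the map above is identified on associated graded with $\RR\Gamma(Y_i,\Omega^j_{Y_i}(-j)) \to \RR\Gamma(Y_i, \RR f_{i*}\Omega^j_{Y_i\times_X X'}(-j))$, which is an isomorphism because $\RR f_{i*}\Omega^j_{Y_i\times_X X'/K} \cong \Omega^j_{Y_i/K}$ for a composition of blow-ups along smooth centres of smooth rigid spaces over a field of characteristic $0$ (the case $j = 0$ being classical; see also Remark~\ref{R:inverse image ideal sheaf}). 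A filtered quasi-isomorphism being a quasi-isomorphism, this finishes the proof.

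The main obstacle is the coherent/Hodge--Tate input of the last step for the auxiliary smooth spaces $Y_i\times_X X'$: one needs the Hodge--Tate comparison in precisely the functorial, base-field-flexible form recorded in Remark~\ref{R:HT spectral sequence}, together with the classical statement $\RR f_*\Omega^j \cong \Omega^j$ for blow-ups along smooth centres in the rigid-analytic setting and its compatibility with \'etale base change on the target; one must also be slightly careful with the ``completed colimit'' comparison for the pro-(finite \'etale) limit $Y\times_X X'$. An alternative would be to argue entirely on $X_{\proet}$ by showing $\RR\nu_{\proet *}u$ is a quasi-isomorphism (via the same Hodge--Tate plus $\RR f_*\Omega^j \cong \Omega^j$ input, now over $X$) and then descending along $\nu_{\proet}$; but this descent runs into the same need to control $\widehat\calO$-cohomology of non-perfectoid smooth spaces, so the point of difficulty is unavoidable.
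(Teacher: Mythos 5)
Your reduction from the perfectoid limit to the finite levels does not work as stated, and this is the crux of the matter. You claim that $\RR\Gamma((Y\times_X X')_{\proet},\widehat{\calO})$ (and likewise $\RR\Gamma(Y_{\proet},\widehat{\calO})$) is the completed colimit of the complexes $\RR\Gamma((Y_i\times_X X')_{\proet},\widehat{\calO})$, citing the \v{C}ech arguments behind Proposition~\ref{P:perfectoid acyclic}; but those arguments only give (almost) acyclicity of $\widehat{\calO}$ on the perfectoid limit, not commutation of completion with higher pro-\'etale cohomology, and the asserted formula is false. Already for $X=X'=\Spa(\Qp,\Zp)$ and $Y=\Spa(\Cp,\OCp)=\varprojlim_i \Spa(L_i,\gotho_{L_i})$ one has $H^1((Y_i)_{\proet},\widehat{\calO})=H^1_{\cont}(G_{L_i},\Cp)\cong L_i\neq 0$ with injective transition maps, while $H^1(Y_{\proet},\widehat{\calO})=0$; so the completed colimit of the level-wise complexes has an extra $\Cp$ in degree $1$. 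Consequently, knowing that $\RR\Gamma((Y_i)_{\proet},\widehat{\calO})\to\RR\Gamma((Y_i\times_X X')_{\proet},\widehat{\calO})$ is a quasi-isomorphism for each $i$ does not, by your argument, yield the statement on $Y$: passing from finite levels to the perfectoid limit is exactly the decompletion problem, and it is here that the ``junk'' cohomology at finite level must be seen to cancel. The paper's proof avoids this by decompleting the \emph{coefficient} rather than the cohomology: using Theorem~\ref{T:ramified perfect equivalence1} (after placing $X$ locally at the base of a nonrelative ramified toric tower adapted to the blow-up center), $\calF$ is realized as $M\otimes_{A_{\psi,n}}\widehat{\calO}_X$ for a coherent module $M$ over the smooth affinoid $A_{\psi,n}$, so the limit-versus-levels bookkeeping is absorbed into the decompletion theorems and the remaining statement is the classical one $\calO_X\cong \RR f_*\calO_{X'}$ for coherent sheaves.

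Two further points. First, you assert ``by Remark~\ref{R:resolution} we may take $f$ to be a composition of blow-ups along smooth centres''; that remark resolves a reduced affinoid and says nothing about factoring a given proper birational $f$ of smooth spaces. The paper instead uses embedded resolution (Remark~\ref{R:resolution2}) together with the d\'evissage ``the claim for $f'$ and $f\circ f'$ implies the claim for $f$'' to reduce to a single blow-up along a smooth center, and it then needs only $\RR f_*\calO_{X'}\cong\calO_X$ (the case $j=0$, via an explicit computation or \cite{chatzistamatiou-rulling}); your route needs $\RR f_{i*}\Omega^j\cong\Omega^j$ for all $j$ and for base changes of a general proper birational $f$, which in the rigid setting is not available without such a factorization. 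Second, the finite locally free resolution of $\calF$ over $\widehat{\calO}_X$ is not delivered by the results you cite: Corollary~\ref{C:pseudocoherent fpd local3} gives fpd-ness over the period sheaves $\tilde{\bA}_\psi$ and $\tilde{\bC}_\psi$, not over $\widehat{\calO}_X$, and Corollary~\ref{C:refined Kiehl perfectoid} concerns perfectoid $X$; in the paper the needed fpd-ness appears only after the decompletion, where $M$ lives over a regular noetherian ring. If you want to keep your Hodge--Tate strategy (which is a genuinely different and attractive route when $K$ is perfectoid), you would need to supply a substitute for the completed-colimit step, e.g.\ an almost-finiteness/primitive-comparison argument controlling $\RR\Gamma((Y\times_X X')_{\proet},\widehat{\calO}^+)$ uniformly in the tower, together with a rigid-analytic input giving $\RR f_*\Omega^j\cong\Omega^j$; as written, these are the missing ingredients.
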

\begin{proof}
We may assume that $K$ is perfectoid.
By Theorem~\ref{T:vector bundle pullback}, we have $L_i f_{\proet}^{*} \calF \in \calC_{X'}$ for all $i$.
If $f': X'' \to X'$ is another such morphism, then proving the claim for 
$f'$ and $f \circ f'$ implies the claim for $f$. Using Temkin's embedded resolution of singularities (see Remark~\ref{R:resolution2}), we may thus reduce to the case where $f$ is the blowup along a smooth subspace $Z$. In this case, locally we may write $X$ as the base of a nonrelative ramified toric tower $\psi$ in such a way that $Z$ is the zero locus of some of the coordinate functions. 
Using Theorem~\ref{T:ramified perfect equivalence1}, we may realize $\calF$ as a module over $A_{\psi,n}$ for some $n$; in this way, we reduce to checking that
for any coherent sheaf $\calF$ on $X$, 
the map $\calF \to \RR f_* \LL f^* \calF$ is an isomorphism. 
Since $\calF$ is fpd, we may reduce to the case
$\calF = \calO_X$, for which $\LL f^* \calO_X = \calO_{X'}$
and an explicit calculation 
(or application of \cite[Theorem~1.1]{chatzistamatiou-rulling})
shows that $\calO_X \cong \RR f_* \calO_{X'}$.
\end{proof}

\begin{remark} \label{R:bad pushforward2}
By analogy with the case of schemes, one cannot expect  $R^i f_{\proet *}$ to take $\calC_{X'}$ into $\calC_X$ if $f: X' \to X$ is a morphism which is not proper.
On the other hand, properness alone is not enough. 

For example, suppose that $f$ is a closed immersion; then 
despite Remark~\ref{R:not closed immersion}, $f_{\proet *} \widehat{\calO}_Z$ is typically locally finitely generated, not locally finitely presented, and so does not belong to $\calC_X$.
On the positive side, by computing \v{C}ech cohomology, we may see that the functor
$R^i f_{\proet *}$ is identically zero for every $i>0$.
See Theorem~\ref{T:closed immersion equivalence} for a related result,
and Remark~\ref{R:bad pushforward1} for more discussion.
\end{remark}

There is also a local form of the ascending chain condition.

\begin{defn} \label{D:local limit}
For $S$ a subset of $X$, let $\calC_{X,S}$ be the direct 2-limit of $\calC_{U}$ over all open subspaces $U$ of $X$ containing $S$. Since the transition functors are exact by Theorem~\ref{T:weak flatness},
we may apply Theorem~\ref{T:pseudocoherent noetherian} to deduce that $\calC_{X,S}$ 
is an abelian category admitting $\Tor$ and $\Ext$ functors.
\end{defn}

\begin{theorem} \label{T:local acc}
Let $S$ be a subset of $X$ such that the ideal sheaves of the localization of $X$ at $S$ satisfy the ascending chain condition.
Then the ascending chain condition holds for $\calC_{X,S}$: given any sequence $\calF_0 \to \calF_1 \to \cdots$
of epimorphisms in $\calC_{X,S}$, there exists $i_0 \geq 0$ such that for all $i \geq i_0$, the map $\calF_i \to \calF_{i+1}$ is an isomorphism.
\end{theorem}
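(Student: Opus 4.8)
The plan is to reduce Theorem~\ref{T:local acc} to a localized version of Proposition~\ref{P:ascending sequence}, carrying over the argument that derives Theorem~\ref{T:pseudocoherent noetherian}(b) from Proposition~\ref{P:ascending sequence} but with the step ``$A$ is noetherian, so Fitting ideal sequences stabilize'' replaced by the hypothesis on ideal sheaves at $S$. First I would reduce to a tractable local situation. Since being an isomorphism is a local condition, a sequence of epimorphisms in $\calC_{X,S}$ stabilizes as soon as its restriction to each member of a finite affinoid cover of a neighborhood of $S$ stabilizes; combined with Remark~\ref{R:effect of seminormalization} (which replaces $X$ by its seminormalization without changing $\calC_{X,S}$ or the localized ideal sheaves) and Remark~\ref{R:study pseudocoherent locally}, this lets us work on an affinoid $X = \Spa(A,A^+)$ which is seminormal and the base of a finite \'etale perfectoid tower $\psi$, containing (a cofinal system of neighborhoods of) $S$. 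Here one must check that the ACC hypothesis on ideal sheaves at $S$ is inherited by the members of the cover; this is a routine, if slightly fiddly, sheaf-theoretic point, and is the only part of the reduction that deserves care.

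Next, given a sequence $\calF_0 \to \calF_1 \to \cdots$ of epimorphisms in $\calC_{X,S} = \varinjlim_{U \ni S}\calC_U$, I would represent $\calF_0$ by an object over a single open $U_0 \supseteq S$, shrink $U_0$ to a rational subdomain of $X$ still containing $S$ (again a seminormal tower base, by Proposition~\ref{P:affinoid seminormal}(a) and Proposition~\ref{P:perfectoid tower persistence}(a)), and pass to $M := \Gamma(U_0,\calF_0)$, a pseudocoherent $\Gamma$-module over $\tilde{\bA}_\psi$ (writing $\psi$ for the induced tower over $U_0$). Replacing the $\calF_i$ by the kernels $\calG_i = \ker(\calF_0 \to \calF_i)$, which exist in $\calC_{X,S}$ by Definition~\ref{D:local limit}, and using that localization at $S$ is exact (since rational localizations of $\tilde{\bA}_\psi$ are pseudoflat by Theorem~\ref{T:weak flatness}, and filtered colimits of pseudoflat modules are pseudoflat), the statement becomes: every ascending chain $M_0 \subseteq M_1 \subseteq \cdots$ of finitely generated $\Gamma$-submodules of $M_S := M \otimes_{\tilde{\bA}_\psi} \tilde{A}_{\psi,S}$ stabilizes, where $\tilde{A}_{\psi,S} := \varinjlim_{U \ni S}\tilde{A}_{\psi_U}$ and $A_S := \varinjlim_{U\ni S}\calO(U)$.

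The core step is then to run the proof of Proposition~\ref{P:ascending sequence} over $\tilde{A}_{\psi,S}$. One passes to the sequence of epimorphisms $M_S/M_0 \to M_S/M_1 \to \cdots$ and argues by noetherian induction on the annihilator ideal $J \subseteq A_S$ of $M_S$ --- legitimate because $A_S$ is noetherian, which is precisely the hypothesis that the ideal sheaves of $X$ at $S$ satisfy ACC (via the colimit form of Lemma~\ref{L:provector bundle submodule}, identifying $\widehat{\calO}$-ideals near $S$ with ideal subsheaves of the localized $\calO$). Each $\Fitt_m(M_S/M_i)$ descends to an ideal of $A_S$ by the same lemma, so these sequences stabilize and may be taken constant; then a single non-zero-divisor $f \in A_S$ furnished by the localized Lemma~\ref{L:localize to projective} makes $(M_S/M_i)_f$ finite projective over $(\tilde{A}_{\psi,S}/J\tilde{A}_{\psi,S})_f$ of constant rank, so the sequence $(M_S/M_i)_f$ is constant; the induction hypothesis handles $(M_S/M_i)/f(M_S/M_i)$; the localized forms of Proposition~\ref{P:stable finite} and Corollary~\ref{C:stable finite}, together with Lemma~\ref{L:splitting to finite}, show that the quotient of $M_S/M_i$ by its $f$-power-torsion is pseudocoherent; and Lemma~\ref{L:stable sequence} then forces the chain to stabilize. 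The localized analogues of Proposition~\ref{P:stable finite}, Corollary~\ref{C:stable finite}, and Lemma~\ref{L:localize to projective} are obtained by observing that a finitely generated $\Gamma$-module over the colimit ring $\tilde{A}_{\psi,S}$ is already defined over some $\tilde{A}_{\psi_U}$, applying the results there, and base-changing along the pseudoflat map $\tilde{A}_{\psi_U} \to \tilde{A}_{\psi,S}$.

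The main obstacle I expect is bookkeeping rather than a new idea: confirming that the whole package used in Proposition~\ref{P:ascending sequence} --- Fitting ideals descending to the base ring, faithful flatness of $A \to \tilde{\bA}_\psi$, pseudocoherence of $\overline{B}$ over $\tilde{\bA}_\psi$, and the Serre-subcategory property of finitely generated $\Gamma$-modules --- all survives the filtered colimit defining the localization at $S$, and that the colimit ring $A_S$ is genuinely noetherian under the stated hypothesis. Beyond that, the only genuinely delicate point is the reduction of the first paragraph when $S$ is not contained in a single affinoid, where one must verify that the ACC hypothesis on ideal sheaves at $S$ passes to an affinoid cover; this is where I would spend the most effort, though it remains an elementary argument about sheaves of ideals.
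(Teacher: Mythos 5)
Your proposal is correct and follows essentially the same route as the paper: reduce to the affinoid case, transfer Fitting ideals and annihilators to the localization at $S$ where the ACC hypothesis applies, and then rerun the argument of Proposition~\ref{P:ascending sequence} (stabilize the Fitting ideals, choose $f$ via Lemma~\ref{L:localize to projective}, induct on the annihilator, and conclude with Lemma~\ref{L:stable sequence}). The paper states this very tersely ("adapt the proof of Proposition~\ref{P:ascending sequence} without change"); your write-up merely makes explicit the filtered-colimit bookkeeping that the paper leaves implicit.
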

\begin{proof}
We may assume from the outset that $X$ is affinoid, and set notation as in Hypothesis~\ref{H:Fitting}.
Since the formation of Fitting ideals commutes with base extension, we may define Fitting ideals $\Fitt_m(\calF_i)$ contained in the local ring $\calO_{X,S}$; similarly, we may define the annihilator of $\calF_i$ as an ideal of $\calO_{X,x}$.
Since these ideal sheaves satisfy the ascending chain condition by hypothesis,
we may adapt the proof of Proposition~\ref{P:ascending sequence} without change in order to conclude.
\end{proof}

Note that the hypothesis of Theorem~\ref{T:local acc} holds when $S$ is a singleton set.
\begin{prop} \label{P:stalk noetherian}
For $x \in X$, the stalk $\calO_{X,x}$ is a noetherian ring.
\end{prop}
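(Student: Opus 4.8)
The statement is local on $X$, so by Remark~\ref{R:study pseudocoherent locally} I would first reduce to the case $X = \Spa(A,A^+)$ with $A$ a classical affinoid algebra over $K$; such $A$ is strongly noetherian Tate, since quotients of Tate algebras over a complete nonarchimedean field are noetherian and this persists after adjoining free variables. By construction $\calO_{X,x} = \varinjlim_U \calO_X(U)$, the filtered colimit running over the rational subdomains $U$ of $X$ containing $x$, where each $\calO_X(U)$ is again strongly noetherian Tate by \cite[Lemma~2.4.13]{part1} and each transition map is flat by Lemma~\ref{L:rational localization is flat}. One cannot conclude noetherianity formally from this: a filtered colimit of noetherian rings along flat maps need not be noetherian (compare Remark~\ref{R:coherent conditions}). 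The content of the proposition is exactly Huber's foundational theorem that the stalks of the structure sheaf of a locally strongly noetherian adic space are noetherian at analytic points \cite{huber, huber2}; since $K$ contains $\Qp$ and is therefore nontrivially valued, every point of $X$ is analytic, and that theorem applies. So the shortest route is simply to perform the reduction above and invoke this result.

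If instead a self-contained argument is wanted, I would argue via the completion at the maximal ideal. Let $\mathfrak{m}_x$ be the maximal ideal of the local ring $\calO_{X,x}$ and let $\mathfrak{q} = \mathfrak{m}_x \cap A$; then the natural map $A \to \calO_{X,x}$ factors through $A_{\mathfrak{q}}$, which is noetherian and excellent by \cite[Remark~2.5.3]{part1}. One checks that $\calO_{X,x}$ is $\mathfrak{m}_x$-adically separated and that its $\mathfrak{m}_x$-adic completion $\widehat{\calO}_{X,x}$ is canonically a completion of a noetherian localization of $A$ (e.g.\ of $A_\mathfrak{q}$, via the same circle of ideas used for completed local rings in \cite[Proposition~7.3.3/5]{bgr}), hence noetherian. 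Next I would show $\calO_{X,x} \to \widehat{\calO}_{X,x}$ is faithfully flat: it is a local homomorphism of local rings, so it suffices to prove flatness, which one deduces by playing off the flat map $A_\mathfrak{q} \to \calO_{X,x}$ (a filtered colimit of flat maps) against the faithfully flat completion map $A_\mathfrak{q} \to \widehat{A_\mathfrak{q}} \cong \widehat{\calO}_{X,x}$, using Lemma~\ref{L:faithful to flat}. Finally, descent of the ascending chain condition along a faithfully flat ring map finishes the proof: an ascending chain of ideals of $\calO_{X,x}$ stabilizes after extension to the noetherian ring $\widehat{\calO}_{X,x}$, and faithful flatness recovers each ideal as the contraction of its extension.

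The hard part will be the step where one shows $\widehat{\calO}_{X,x}$ is noetherian and $\calO_{X,x}\to\widehat{\calO}_{X,x}$ is flat — these are precisely the properties that fail in the perfectoid (non--strongly-noetherian) setting, and establishing them genuinely uses the finer structure of the local rings of locally strongly noetherian adic spaces at analytic points. Everything else — the reduction to the affinoid case, the identification of the completion, and the descent of the ascending chain condition — is routine, so in practice I would likely just cite Huber's analysis \cite{huber, huber2} for the key input rather than reproduce it.
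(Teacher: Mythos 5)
There is a genuine gap, and it sits exactly at the point you yourself flag as "the hard part." Your self-contained route rests on the claim that the $\mathfrak{m}_x$-adic completion of $\calO_{X,x}$ is canonically the completion of $A_{\mathfrak{q}}$ with $\mathfrak{q}=\mathfrak{m}_x\cap A$. That is correct at classical points (this is the \cite[Proposition~7.3.3/5]{bgr} situation), but it fails at non-classical points, which are the whole point of the proposition. Already at the Gauss point $x$ of $\Spa(K\{T\},K^+\{T\})$ one has $\mathfrak{q}=(0)$, $\mathfrak{m}_x=0$, and the stalk is a field strictly containing $\Frac(K\{T\})=A_{\mathfrak{q}}$ (it contains, e.g., functions on an annulus $\{1/2\le|T|\le 1\}$ with infinitely many negative powers of $T$), so the proposed identification of completions is false; at higher-rank points the structure of $\mathfrak{m}_x$ and of the completed stalk is precisely the delicate issue, not something one "checks." The flatness step is also not available by the argument you sketch: Lemma~\ref{L:faithful to flat} lets you conclude $A\to B$ is flat from $A\to B\to C$ with $A\to C$ flat and $B\to C$ \emph{faithfully} flat, so to apply it to $\calO_{X,x}\to\widehat{\calO}_{X,x}$ you would need the faithful flatness you are trying to prove — from $A_{\mathfrak{q}}\to\calO_{X,x}$ flat and $A_{\mathfrak{q}}\to\widehat{A_{\mathfrak{q}}}$ faithfully flat no formal lemma gives flatness of $\calO_{X,x}\to\widehat{\calO}_{X,x}$. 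Finally, your fallback of "just cite Huber" does not close the gap: \cite{huber} and \cite{huber2} give henselianity of the stalks and flatness of rational/étale localizations, but not noetherianity of $\calO_{X,x}$ at arbitrary (in particular higher-rank) points; this is why the paper's own proof does not cite Huber but instead says to emulate Berkovich \cite[Theorem~2.1.4]{berkovich2} for height-$1$ points and cites Conrad's seminar notes \cite[Proposition~15.1.1]{conrad-seminar} for the general case.

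In fairness, your overall skeleton — reduce to a classical affinoid, prove the completed stalk noetherian, prove faithful flatness of the completion map, and descend the ascending chain condition — is the right shape, and it is essentially how Berkovich's rank-$1$ argument goes; your observation that flatness of the transition maps alone does not give noetherianity of the colimit is also correct. But the two inputs carrying all the content (the correct description of $\widehat{\calO}_{X,x}$ beyond classical points, and the faithful flatness of $\calO_{X,x}\to\widehat{\calO}_{X,x}$) are neither proved nor validly cited in your write-up, so as it stands the proposal does not establish the proposition.
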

\begin{proof}
For $x$ of height 1, one may emulate the proof of \cite[Theorem~2.1.4]{berkovich2}.
For general $x$, see \cite[Proposition~15.1.1]{conrad-seminar}.
\end{proof}

\begin{remark}
In general, the condition of Theorem~\ref{T:local acc} should translate into some sort of finiteness condition on the geometric complexity of the set $S$. To wit, the stalk of a Stein compact subset $S$ of a complex-analytic space is noetherian if and only if every analytic subvariety of a neighborhood of $S$ meets $S$ in finitely many topological components \cite{siu}.
\end{remark}

\subsection{An embedded construction}
\label{subsec:additional statements}

We next consider a variant of the preceding construction, in which one works within an ambient seminormal space.

\begin{hypothesis}
Throughout \S\ref{subsec:additional statements}, let $Y,Z,S$ denote arbitrary rigid analytic spaces over $K$. All morphisms among $X,Y,Z,S$ will be in the category of rigid analytic spaces over $K$.
\end{hypothesis}

\begin{defn}
For $j: Z \to X$ a closed immersion,
put $\calJ_Z = \ker(\calO_X \to j_* \calO_Z)$,
and let $\calC_{Z \subset X}$ be the subcategory of $\calC_X$ consisting of sheaves killed by $\calJ_Z$. 
\end{defn}

\begin{theorem} \label{T:closed immersion equivalence}
Let $j: Z \to X$ be a closed immersion of seminormal spaces. Then 
the functor $j_{\proet}^*: \calC_{Z \subset X} \to \calC_Z$ of Theorem~\ref{T:vector bundle pullback} is an exact equivalence.
\end{theorem}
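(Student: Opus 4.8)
The plan is to reduce to the local situation of a finite \'etale perfectoid tower, where the statement becomes a comparison between pseudocoherent $\Gamma$-modules over $\tilde A_\psi$ annihilated by an ideal $J$ and pseudocoherent $\Gamma$-modules over the ``slice'' $\tilde A_\psi/J\tilde A_\psi$. Concretely, by Remark~\ref{R:study pseudocoherent locally} the question is local on $X$, so I would cover $X$ by affinoid open subspaces $U=\Spa(A,A^+)$ which are base spaces of finite \'etale perfectoid towers $\psi$ (e.g.\ pulled back from restricted toric towers via Remark~\ref{R:cover smooth by toric}), and correspondingly cover $Z$ by the pullbacks $Z\cap U$, which are again affinoid and seminormal. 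After restriction of scalars along $\calO_X\to j_*\calO_Z$ becomes the honest surjection $A\to \overline A := A/J$ with $J=\calJ_Z(U)$, so we are in exactly the setup of Hypothesis~\ref{H:Fitting}: $\calC_{Z\subset U}$ is the category of pseudocoherent $\Gamma$-modules over $\tilde A_\psi$ annihilated by $J$, and $\calC_{Z\cap U}$ is the category of pseudocoherent $\Gamma$-modules over $\overline B := \tilde A_\psi/J\tilde A_\psi$; the functor $j_{\proet}^*$ is simply $\bullet\otimes_{\tilde A_\psi}\overline B = \bullet/J(\bullet)$, which on a $J$-annihilated module is the identity of the underlying module.

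\textbf{Key steps.} First I would verify that $j_{\proet}^*$ really does land in $\calC_Z$ and is exact: this is Theorem~\ref{T:vector bundle pullback} together with the observation that on $J$-annihilated modules $L_if_{\proet}^*=0$ for $i>0$ — indeed one can check this using the Koszul resolution after localizing so that $J$ is generated by a regular sequence, exactly as in the proof of Proposition~\ref{P:ext tor stability} — so the restriction of $j_{\proet}^*$ to $\calC_{Z\subset X}$ is exact. Second, full faithfulness: an $\widehat\calO_X$-module killed by $\calJ_Z$ is, via restriction of scalars along the faithfully flat map $\overline A\to\overline B$ of Lemma~\ref{L:remains torsion-free}, recovered from its base extension to $\widehat\calO_Z$ together with the $\Gamma$-action; since Remark~\ref{R:sections of completed structure sheaf} / Corollary~\ref{C:pseudocoherent reduction} give $\overline B$ as a pseudocoherent $\tilde A_\psi$-module and the restriction-of-scalars/base-change adjunction is fully faithful on $J$-annihilated modules (the unit and counit are identity maps on underlying modules), full faithfulness is formal. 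Third, and this is the real content, essential surjectivity: given $\calG\in\calC_Z$, i.e.\ a pseudocoherent $\Gamma$-module $M'$ over $\overline B$, I must produce a pseudocoherent $\Gamma$-module over $\tilde A_\psi$, killed by $J$, whose base extension to $\overline B$ is $M'$. By Corollary~\ref{C:closed immersion equivalence surjective} (applied with $f=j$) there is a pseudocoherent $\Gamma$-module $M_0$ over $\tilde A_\psi$ with a $\Gamma$-equivariant surjection $M_0/JM_0 \twoheadrightarrow M'$; replacing $M_0$ by $M_0/JM_0$ we may assume $M_0$ is killed by $J$. Its kernel $K_0=\ker(M_0\to M')$ is again pseudocoherent over $\overline B$ (Theorem~\ref{T:pseudocoherent noetherian}(a)), hence by Corollary~\ref{C:closed immersion equivalence surjective} again receives a $\Gamma$-equivariant surjection $N_0/JN_0\to K_0$ from a $J$-annihilated pseudocoherent $\Gamma$-module $N_0$ over $\tilde A_\psi$; then $M := \coker(N_0\to M_0)$ is a pseudocoherent $\Gamma$-module over $\tilde A_\psi$ (again by Theorem~\ref{T:pseudocoherent noetherian}(a)), killed by $J$, and because $\bullet/J(\bullet)$ is right exact and is exact on the $J$-annihilated modules $N_0/JN_0, M_0, K_0$ in play, $M/JM\cong M'$ as $\Gamma$-modules. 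This gives the required preimage. Finally, I would note that the local equivalences are manifestly compatible with restriction (all functors in sight commute with rational localization by Theorem~\ref{T:weak flatness perfectoid} and the exactness of base extension along perfectoid subdomains), so they glue to the global equivalence $\calC_{Z\subset X}\simeq\calC_Z$.

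\textbf{Main obstacle.} The delicate point is essential surjectivity, specifically ensuring that the two-step ``surject then correct the kernel'' construction really produces a $\Gamma$-module over $\tilde A_\psi$ and not merely over some finite level $\tilde A_{\psi'}$ — one must be careful that the $\Gamma$-actions constructed via Corollary~\ref{C:closed immersion equivalence surjective} (which internally uses the lifting-of-generators argument of Lemma~\ref{L:lift to psc}) are defined over the same tower and are compatible, so that the cokernel inherits an honest $\Gamma$-module structure whose reduction mod $J$ matches the given isomorphism on $M'$. This requires tracking the $\Gamma$-equivariance through the cokernel and invoking Proposition~\ref{P:stable finite} to know the cokernel is pseudocoherent (not just finitely generated); once that bookkeeping is done, the rest is formal manipulation with the base-change/restriction adjunction on $J$-annihilated modules. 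A secondary but routine point to get right is the vanishing of the higher derived pullbacks $L_if_{\proet}^*$ on $\calC_{Z\subset X}$, which underlies exactness of the equivalence and which I would handle by the local-regular-sequence Koszul argument already used in Proposition~\ref{P:ext tor stability}.
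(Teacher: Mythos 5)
There is a genuine gap, and it sits at the very first reduction. You identify $\calC_Z$ locally with the category of pseudocoherent $\Gamma$-modules over $\overline{B} = \tilde{A}_\psi/J\tilde{A}_\psi$ and conclude that $j_{\proet}^*$ "is the identity of the underlying module" on $J$-annihilated objects. That identification is false, and it erases the entire content of the theorem. By Remark~\ref{R:not closed immersion}, the perfectoid subdomain of $Z_{\proet}$ lying over a perfectoid subdomain of $X_{\proet}$ is not the literal fibre product but its uniformization: if $\psi'$ denotes the pullback of the tower $\psi$ to $Z$, then $\widehat{\calO}_Z$ is computed by $\tilde{A}_{\psi'}$, the perfectoid completion of $\varinjlim_n A_{\psi,n}/JA_{\psi,n}$ for the spectral seminorm, and the natural map $\overline{B} \to \tilde{A}_{\psi'}$ has dense image but is in general neither an isomorphism nor known to be flat (quotients of perfectoid rings by $\Gamma$-stable ideals typically fail to be uniform; compare the discussion around Lemma~\ref{L:stable submodules type A}). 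So locally $j_{\proet}^*$ on $\calC_{Z \subset X}$ is the nontrivial base change $M \mapsto M \otimes_{\overline{B}} \tilde{A}_{\psi'}$, and your "formal" full faithfulness (unit and counit being identities), as well as your Koszul-complex exactness argument, are about the wrong ring map ($\tilde{A}_\psi \to \overline{B}$ rather than $\overline{B} \to \tilde{A}_{\psi'}$). A symptom of the problem is that your argument never uses the hypothesis that $Z$ and $X$ are seminormal, which is essential.

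The actual difficulty is precisely to control $- \otimes_{\overline{B}} \tilde{A}_{\psi'}$ on pseudocoherent $\Gamma$-modules. The paper does this by noetherian induction on the annihilating ideal $J$: given an exact sequence of pseudocoherent $\Gamma$-modules over $\overline{B}$, one uses Lemma~\ref{L:localize to projective} to find $f \in A$, not a zero-divisor in $\overline{A}$ or $\overline{B}$, making the quotient term projective after inverting $f$; on that locus seminormality of $Z$ and $X$ (via the Ax--Sen--Tate statement Theorem~\ref{T:sections of completed structure sheaf} and its consequences) is what lets one identify sections over $X$ with sections over $Z$ and conclude that injectivity survives tensoring with $\tilde{A}_{\psi'}$, while the $f$-power-torsion part is handled by the induction hypothesis on the larger ideal. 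Your essential-surjectivity step via Corollary~\ref{C:closed immersion equivalence surjective} and a two-term presentation is in the right spirit and can be used to finish once exactness and full faithfulness of the genuine base change are in place, but as written it only produces a preimage over $\overline{B}$, which begs the question you need to answer.
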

\begin{proof}
Set notation as in Hypothesis~\ref{H:Fitting}.
We again work under the additional assumption that $M$ is annihilated by a fixed 
ideal $J$ of $A$, working by noetherian induction on $J$. 
Let $\psi'$ be the pullback of the tower $\psi$ to $Z$, which is again a perfectoid tower. 
Let
\[
0 \to M_1 \to M_2 \to M_3 \to 0
\]
be an exact sequence of pseudocoherent $\Gamma$-modules over $\overline{B}$.
By Lemma~\ref{L:localize to projective}, there exists $f \in A$ which does not map to a zero-divisor in $\overline{A}$ or $\overline{B}$ such that $M_{3,f}$ is a finite projective module over $\overline{B}_f$; using the fact that $Z$ and $X$ are seminormal,
we see that $M_{1,f} \to M_{2,f}$ remains injective upon tensoring over 
$\tilde{A}_\psi$ with $\tilde{A}_{\psi'}$.
By the induction hypothesis, $M_1[f^\infty] \to M_2[f^\infty]$
remains injective upon base extension; this proves the claim.
\end{proof}

\begin{remark}
Theorem~\ref{T:closed immersion equivalence} cannot be promoted to the statement that the higher derived functors
$L_i j_{\proet}^*: \calC_{Z \subset X} \to \calC_Z$ vanish for $i>0$; this is a consequence of the fact that for $I$ an ideal of a ring $R$ and $M$ an $(R/I)$-module, we may have
$\Tor_i^R(R/I, M) \neq 0$ for $i>0$. For instance, if $I = (f)$ for $f \in R$ not a zero-divisor, we have $\Tor_1^R(R/I, M) = M$.
\end{remark}

\begin{defn}
By virtue of Theorem~\ref{T:closed immersion equivalence}, 
for closed immersions $Z \to Y \to X$ with $X,Y$ seminormal, the pullback functor
$\calC_{Z \subset X} \to \calC_{Z \subset Y}$ is an exact equivalence.
Consequently, for any $Z$, we obtain a well-defined category
$\calC_{Z \subset X}$ by embedding $Z$ into a seminormal space $X$.
Note that unlike $\calC_Z$, this category depends on $Z$ itself and not just on its seminormalization.

In case $j: Z \to X$ is a closed immersion with $Z$ and $X$ both seminormal,
we may invert the equivalence $j_{\proet}^*: \calC_{Z \subset X} \to \calC_Z$ to obtain
a \emph{virtual pushforward} functor $\calC_Z \to \calC_X$ replacing the literal pushforward (compare Remark~\ref{R:bad pushforward2}).
\end{defn}

\begin{remark}
One application of Theorem~\ref{T:closed immersion equivalence} is to reduce questions
about $\calC_X$ to the case where $X$ is smooth, for which decompleting towers become available. It is possible to use this approach to recover Theorem~\ref{T:pseudocoherent noetherian}, but some effort is required to avoid circular reasoning.
\end{remark}

As a corollary of this discussion, we obtain a statement to the effect that 
for $f: Y \to X$ a morphism of seminormal spaces,
the functors $L_i f_{\proet}^*$ can be computed by pullback along $f$.

\begin{theorem} \label{T:algebraic pullback}
Suppose that $X = \Spa(A,A^+)$ and $Y = \Spa(A', A^{\prime +})$ are affinoid and seminormal,
and fix a morphism $f: Y \to X$. Let
$\psi$ be a finite \'etale perfectoid tower over $X$. Let $\psi'$ be a finite \'etale perfectoid tower over $Y$ which factors through the pullback of $\psi$ to $X$ along $f$.
Let $M$ be a pseudocoherent $\Gamma$-module over $\tilde{A}_\psi$.
Then for $i \geq 0$, the natural morphism
\[
\Tor_i^A(M, A') \otimes_{\tilde{A}_\psi \otimes_A A'} \tilde{A}_{\psi'} \to 
\Tor_i^{\tilde{A}_\psi}(M, \tilde{A}_{\psi'})
\]
is an isomorphism. In particular, if $f$ is flat, then $L_i f_{\proet}^* = 0$ for all $i>0$.
\end{theorem}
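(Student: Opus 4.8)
<br>

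The plan is to reduce the statement to a commutative-algebra fact about the flatness of $\tilde{A}_\psi \otimes_A A' \to \tilde{A}_{\psi'}$ and then apply the pseudocoherence machinery already developed, using the seminormality hypothesis crucially to control pullbacks of submodules.

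First I would observe that, because $\psi'$ factors through the pullback of $\psi$ along $f$, the ring $\tilde{A}_{\psi'}$ is perfectoid and the morphism $\tilde{A}_\psi \otimes_A A' \to \tilde{A}_{\psi'}$ arises (after uniform completion) as a completed direct limit of faithfully finite \'etale morphisms starting from $\tilde{A}_\psi \widehat{\otimes}_A A'$; here I would want to invoke Lemma~\ref{L:perfectoid tower splitting} and Corollary~\ref{C:topologically flat} to conclude that $\tilde{A}_{\psi'}$ is topologically flat, hence pseudoflat, over $\tilde{A}_\psi \widehat{\otimes}_A A'$. The point of this step is that base extension along a pseudoflat morphism is exact on pseudocoherent modules (Remark~\ref{R:pseudoflat reduction}), so it commutes with the formation of $\Tor$; thus it suffices to prove the isomorphism with $\tilde{A}_{\psi'}$ replaced by $\tilde{A}_\psi \widehat{\otimes}_A A'$. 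In other words, the content is that the natural map $\Tor_i^A(M, A') \otimes_{A'}(\tilde{A}_\psi \widehat{\otimes}_A A') \to \Tor_i^{\tilde{A}_\psi}(M, \tilde{A}_\psi \widehat{\otimes}_A A')$ is an isomorphism, i.e.\ that base change from $A$ to $\tilde{A}_\psi$ is compatible with the base change from $A$ to $A'$ in the sense of a spectral-sequence degeneration.

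Second I would set up the comparison using a projective resolution $\cdots \to F_1 \to F_0 \to M \to 0$ of $M$ by finite free $\tilde{A}_\psi$-modules (ignoring the $\Gamma$-action, which does not act on the resolution but does act on the homology). One then has two ways to compute the relevant $\Tor$: via this resolution, and via a resolution of $M$ as an $A$-module. The link between them is the change-of-rings spectral sequence $\Tor_p^{\tilde{A}_\psi}(\Tor_q^A(\tilde{A}_\psi, \cdot), \cdot)$-type statement; the cleanest route is to prove by induction on $i$ that $\Tor_i^A(M, \tilde{A}_\psi) = 0$ for $i>0$ and $M \otimes_A \tilde{A}_\psi \cong M$, which is exactly the content of $A \to \tilde{A}_\psi$ being faithfully flat --- but that is false in general, so instead I would argue at the level of the module $M$ itself rather than the ring. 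Concretely: choose a finite free $A$-module $F'$ and a surjection $F' \otimes_A \tilde{A}_\psi \to M$ (possible by Proposition~\ref{P:stable finite} and the fact that finitely generated $\Gamma$-modules over $\tilde{A}_\psi$ descend to $\tilde{A}_{\psi,n}$ after replacing $\psi$ by a truncation, as in Corollary~\ref{C:closed immersion equivalence surjective}); then the kernel is again a pseudocoherent $\Gamma$-module, and one dévisses. At each stage the key algebraic input is that, since $X$ and $Y$ are seminormal, an injection of pseudocoherent $\widehat{\calO}_X$-modules pulls back to an injection along $f_{\proet}$ --- this is precisely the content of Proposition~\ref{P:coherent pullback} combined with Theorem~\ref{T:vector bundle pullback}, and it is what prevents spurious higher $\Tor$ from the base-field side.

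The main obstacle I anticipate is exactly this dévissage bookkeeping: one must run the induction on homological degree $i$ while simultaneously running a noetherian induction on the ideal $J = \ann_A(M)$ (or, more precisely, tracking the $f$-power torsion as in the proofs of Proposition~\ref{P:ext tor stability} and Theorem~\ref{T:vector bundle pullback}), and one must verify at each step that the comparison morphism is an isomorphism and not merely that both sides are pseudocoherent. I expect the cleanest organization is: (1) prove the flatness reduction to $\tilde{A}_\psi \widehat{\otimes}_A A'$; (2) prove the degree-zero statement $M \otimes_A A' \otimes_{A'} (\tilde{A}_\psi \widehat{\otimes}_A A') \cong M \otimes_{\tilde{A}_\psi} (\tilde{A}_\psi \widehat{\otimes}_A A')$, which is formal; (3) resolve $M$ by finite free $A$-modules after descending to a finite level of the tower, apply $\cdot \otimes_A A'$, and identify the homology with $\Tor_i^A(M,A')$; (4) base-extend this complex along the flat map $\tilde{A}_\psi \widehat{\otimes}_A A' \to \tilde{A}_{\psi'}$, which by flatness preserves homology, giving the desired comparison. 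The final sentence (vanishing of $L_i f_{\proet}^*$ for $f$ flat) is then immediate: if $A \to A'$ is flat then $\Tor_i^A(M, A') = 0$ for $i>0$, and by the comparison just established $\Tor_i^{\tilde{A}_\psi}(M, \tilde{A}_{\psi'}) = 0$ as well, which after sheafifying via Theorem~\ref{T:refined Kiehl} and Corollary~\ref{C:refined Kiehl perfectoid} says exactly that $L_i f_{\proet}^* M = 0$.
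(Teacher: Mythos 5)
There is a genuine gap, and it sits exactly where you place the weight of the argument. Your steps (1) and (4) reduce the theorem to a flat-base-change statement along $\tilde{A}_\psi \otimes_A A' \to \tilde{A}_\psi \widehat{\otimes}_A A' \to \tilde{A}_{\psi'}$, but neither arrow is known to be flat in the sense you need. Topological flatness (Lemma~\ref{L:pro-projective topologically flat}, Corollary~\ref{C:topologically flat}) and pseudoflatness only give vanishing of $\Tor_1$ against strictly finitely presented (resp.\ $2$-pseudocoherent) modules and exactness of base extension on short exact sequences of pseudocoherent modules; to conclude that base extension commutes with the homology of $P_\bullet \otimes_A A'$ in \emph{all} degrees you would need to know beforehand that the cycles, boundaries and homology of that complex are pseudocoherent over $\tilde{A}_\psi \otimes_A A'$ (or its completion), which is essentially the assertion being proved. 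Worse, the first arrow is a completion (and, to reach the total space of the pullback tower, a uniform completion), and the paper explicitly warns (Remark~\ref{R:no pseudoflat completion}) that completions need not be pseudoflat; yet $\Tor_i^A(M,A')$ lives over the \emph{uncompleted} tensor product, so your reduction silently assumes precisely the non-flat base change that the theorem asserts. The genuinely easy part is the other comparison: since $A \to \tilde{A}_\psi$ is faithfully flat (Lemma~\ref{L:remains torsion-free}), a finite projective resolution of $M$ over $\tilde{A}_\psi$ is already a flat resolution over $A$, so $\Tor_i^A(M,A') \cong \Tor_i^{\tilde{A}_\psi}(M, \tilde{A}_\psi \otimes_A A')$ formally; the whole content is the passage from $\tilde{A}_\psi \otimes_A A'$ to $\tilde{A}_{\psi'}$, which your proposal treats as formal. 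Two further problems: a pseudocoherent $\Gamma$-module over $\tilde{A}_\psi$ does not descend to a finite level of an arbitrary finite \'etale perfectoid tower (that descent is a feature of decompleting towers via the imperfect period rings, not of general $\psi$), and $M$ is not finitely generated over $A$, so ``resolve $M$ by finite free $A$-modules'' in your step (3) is not available; and Proposition~\ref{P:coherent pullback} concerns pullback of coherent $\calO_X$-modules to $\widehat{\calO}_X$, not preservation of injections of pseudocoherent $\widehat{\calO}_X$-modules under $f_{\proet}^*$ (the failure of the latter is exactly what $L_1 f_{\proet}^*$ measures), so it cannot play the role you assign it.

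For comparison, the paper sidesteps any direct Tor computation over the completed tensor product. It first notes (via pseudoflatness of maps between perfectoid subdomains) that the claim is independent of the choices of $\psi$ and $\psi'$; it then treats two special cases: when $f$ is a closed immersion the statement follows from the exact equivalence $\calC_{Z \subset X} \simeq \calC_Z$ of Theorem~\ref{T:closed immersion equivalence} (this is where seminormality and the noetherian-induction d\'evissage via Lemma~\ref{L:localize to projective} actually enter), and when $A' = A\{T^{\pm}\}$ one may take $\psi'$ to be a relative toric tower over $\psi$, for which the explicit monomial splitting makes the comparison immediate. The general case follows by composing these, since any morphism of affinoids factors through a closed immersion into a relative polydisc/torus over $X$ followed by the projection. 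If you want to salvage your approach, you would need either to prove genuine (not merely topological) flatness of $\tilde{A}_\psi \otimes_A A' \to \tilde{A}_{\psi'}$, which is not expected, or to reintroduce the factorization and the embedded equivalence as the paper does.
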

\begin{proof}
By Proposition~\ref{P:Robba weak flatness} and Theorem~\ref{T:pseudocoherent noetherian}(c), the claim does not depend on the choice of $\psi$ or $\psi'$.
The case where $f$ is a closed immersion is immediate from Theorem~\ref{T:closed immersion equivalence}. The case where $A' = A\{T^{\pm}\}$ is also straightforward, as there is no loss of generality in taking $\psi'$ to be a relative toric tower with respect to $\psi$, for
which the claim is immediate. The general case then follows by composition.
\end{proof}
 
\subsection{Hodge-Tate and de Rham cohomology}

We mention in passing an application to the Hodge-Tate and de Rham functors, in the spirit of the work of Brinon \cite{brinon} on relative de Rham representations.
This discussion has been amplified by the second author and Xinwen Zhu in 
\cite{liu-zhu}; see Remark~\ref{R:virtual comparison}.

\begin{defn}
We say that the analytic field $K$ is \emph{pro-coherent} if the $K$-vector spaces $H^i(\Spa(K,\gotho_K)_{\proet}, \widehat{\calO})$ are finite-dimensional for all $i \geq 0$ and zero for all sufficiently large $i$. For instance, this holds if $K$ is perfectoid, as then we get $K$ for $i=0$ and $0$ for $i>0$.
It also holds if $K$ is a $p$-adic field, e.g., by 
 Corollary~\ref{C:toric tower coherent}  and Remark~\ref{R:higher vanishing} plus Theorem~\ref{T:cyclotomic decompleting}.
 Note that even if this does not hold, by Proposition~\ref{P:ax-sen-tate} we have
 $H^0(\Spa(K,\gotho_K)_{\proet}, \widehat{\calO}) = K$.
\end{defn}

\begin{theorem} \label{T:torsion coherent cohomology}
Suppose that $K$ is pro-coherent, and choose $\calF \in \calC_X$.
\begin{enumerate}
\item[(a)]
For some $N \geq 0$ depending only on $K$ and $X$, the sheaves $R^i \nu_{\proet *} \calF$ of $\calO_X$-modules are coherent for all $i \geq 0$ and zero for all $i >N$.
(For $i=0$, we may drop the condition on $K$.)
\item[(b)]
For any morphism $f: Y \to X$ of rigid analytic spaces with $X$ and $Y$ seminormal, the natural map
\[
\LL f^* \RR \nu_{\proet *} \calF \to \RR \nu_{\proet *} \LL f_{\proet}^* \calF
\]
is an isomorphism (in the derived category of sheaves on $Y$).
\end{enumerate}
\end{theorem}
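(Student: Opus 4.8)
\textbf{Proof proposal for Theorem~\ref{T:torsion coherent cohomology}.}

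The plan is to reduce both statements to local computations on $X$ using decompleting towers, following the template already established in the proofs of Corollary~\ref{C:toric tower coherent} and Theorem~\ref{T:algebraic pullback}. Since all assertions are local on $X$, I would first cover $X$ by affinoid opens $U = \Spa(A,A^+)$ which are bases of restricted toric towers $\psi$ (via Remark~\ref{R:cover smooth by toric}), at the cost of possibly enlarging $K$ to a perfectoid field; note that $K$ pro-coherent is stable under finite extensions and, more to the point, once $K$ is replaced by a perfectoid field the hypothesis becomes trivially satisfied, while the conclusion descends back to the original $K$ by the usual \v{C}ech argument with $K'' = $ uniform completion of $K' \otimes_K K'$ as in the proof of Theorem~\ref{T:sections of completed structure sheaf}. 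Over such a $U$, by Theorem~\ref{T:standard toric decompleting} the tower $\psi$ is (locally) decompleting and $F$-(finite projective), so $R_\psi$ and the $\breve{\bC}^{[s,r]}_\psi$ are coherent (Remark~\ref{R:strongly noetherian imperfect}), and we may invoke the results of \S\ref{subsec:pseudocoherent type C}.

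For part (a): given $\calF \in \calC_X$, over $U$ it corresponds to a pseudocoherent $\varphi^a$-module over $\tilde{\bC}_{\Spa(A,A^+)}$, and by Corollary~\ref{C:stay pseudocoherent} together with Theorem~\ref{T:add tilde2} it descends to a pseudocoherent $(\varphi,\Gamma)$-module $M$ over $\bC_\psi$ which is in fact a pseudocoherent $\Gamma$-module over $A_\psi$. The cohomology $R^i\nu_{\proet *}\calF$ restricted to $U$ is computed by the complex $\tilde{A}_{\psi^\bullet}$ twisted by $M$; by Theorem~\ref{T:Galois cohomology1c2} this equals $H^i_\Gamma(\tilde M)$, which by the descent identity $H^i_\Gamma(\tilde M) = H^i_\Gamma(M)$ (again Theorem~\ref{T:Galois cohomology1c2}) and faithfully flat descent (realizing $M$ over $A_{\psi,n}$ for large $n$, with $\Gamma$-structure over $A_{\psi^1,n}$) is the cohomology of a complex of finitely generated $A$-modules, hence finitely generated over $A$. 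This is precisely the argument of Corollary~\ref{C:toric tower coherent}, and it also produces the uniform vanishing bound $N$: the length of the relevant complex is controlled by the \v{C}ech length of the Galois cover and the projective dimension bound $d+1$ from Corollary~\ref{C:pseudocoherent fpd local} (plus the pro-coherence bound on $K$-factors when $K$ is not perfectoid). The $i=0$ case needs no hypothesis on $K$ since it is just $\nu_{\proet *}\calF$, which over $U$ is $M^\Gamma = H^0_\Gamma(M)$, a finitely generated $A$-module by the same descent.

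For part (b): by part (a), $\RR\nu_{\proet *}\calF$ has coherent, bounded cohomology, hence is a perfect complex on $X$; likewise $\RR\nu_{\proet *}\LL f_{\proet}^*\calF$ on $Y$ after using Theorem~\ref{T:vector bundle pullback} to know $\LL f_{\proet}^*\calF \in \calC_Y$. The natural base-change map exists by functoriality, and to check it is an isomorphism we may work locally, covering $X$ and $Y$ compatibly by toric-tower bases and factoring $f$ through a closed immersion followed by a projection of relative affine space (as in Remark~\ref{R:cover smooth by toric}), and further factoring the affine projection coordinate-by-coordinate into relative annuli. For the closed immersion case, the key input is Theorem~\ref{T:algebraic pullback} (in the form computing $\Tor$ groups compatibly with passing up the tower) together with Theorem~\ref{T:closed immersion equivalence}; for the relative annulus case $A' = A\{T^{\pm}\}$, one takes $\psi'$ to be the relative toric tower over $\psi$ and the claim is then the explicit statement that $H^i_\Gamma$ commutes with the flat base change $A_{\psi^\bullet} \to A_{\psi^{\prime\bullet}}$, which follows from flatness of $A \to A'$ and the compatibility of group cohomology with flat base change over the finite-level rings $A_{\psi,n}$. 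Composition then yields the general case. The main obstacle I anticipate is bookkeeping in part (b): one must ensure that the derived pullback $\LL f_{\proet}^*$ (which involves nonvanishing $L_i$ in general, unlike for closed immersions into smooth spaces where matters are cleaner) interacts correctly with the $(\varphi,\Gamma)$-module description — concretely, that the Tor-spectral sequence matching $\LL f^*\RR\nu_{\proet *}$ with $\RR\nu_{\proet *}\LL f_{\proet}^*$ degenerates or at least is seen to be an isomorphism after the reductions, which requires carefully tracking the flatness statements of Theorem~\ref{T:algebraic pullback} (and its Remark allowing the setting of Remark~\ref{R:stable finite sub}) through the factorization of $f$. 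Everything else is a routine assembly of the cited local results.
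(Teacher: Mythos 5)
Your part (a) has a gap at the very first step: the theorem imposes no smoothness hypothesis on $X$, but covering $X$ by bases of restricted toric towers invokes Remark~\ref{R:cover smooth by toric}, which only applies to smooth spaces. For singular $X$ you must first pass to a resolution $X' \to X$ (Remark~\ref{R:resolution}) and transfer the statement back down, using Theorem~\ref{T:vector bundle pullback} (so that $\LL f_{\proet}^*\calF$ stays pseudocoherent) and Theorem~\ref{T:pseudocoherent noetherian}; this is exactly how the paper begins. Your reduction to perfectoid $K$ is fine in spirit (the paper phrases it as a Leray spectral sequence rather than a \v{C}ech argument with $K''$, and this is where pro-coherence of $K$ enters), and once $X$ is the base of a restricted toric tower your appeal to Corollary~\ref{C:toric tower coherent} matches the paper.

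In part (b) the step you defer to ``bookkeeping'' is in fact the heart of the proof, and the tools you cite do not supply it. Theorem~\ref{T:algebraic pullback} and Theorem~\ref{T:closed immersion equivalence} identify the derived pullback $\LL f_{\proet}^*\calF$ (its Tor sheaves) in algebraic terms, but they say nothing about commuting $\RR \nu_{\proet *}$ with that pullback, which is the actual assertion; your closed-immersion case is therefore not established, and ``composition yields the general case'' rests on it. The paper's mechanism is different and is what makes the argument close: using the coherence and uniform bound from (a) together with \cite[Lemma~4.1.5]{kpx}, one reduces to the case where $Y$ is a single $K$-rational point mapping to the origin; then one places $X$ in a \emph{ramified} toric tower and uses Theorem~\ref{T:functor to log}(c,d) to realize $\calF$ as an effective pseudocoherent $\Gamma$-module $M$ over the imperfect ring $A_\psi$, whose $\Gamma$-cohomology computes both sides; finally one kills the coordinates $T_1,\dots,T_n$ one at a time, and at each step uses Lemma~\ref{L:lift action1} plus the uniformity of the bound $N$ from (a) to dimension-shift to the case where $M$ is projective over $A_\psi$, where the base-change comparison is tautological because $M$ is then flat over $A$. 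Without a d\'evissage of this kind, the Tor spectral sequence you invoke is not known to degenerate or to compare correctly with the completed tensor products appearing on the pro-\'etale side, so the proposal as written does not prove (b).
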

\begin{proof}
Using the pro-coherent hypothesis via a Leray spectral sequence,
we reduce to the case where $K$ is perfectoid. Note that if we only consider (a) for $i=0$,
this reduction does not in fact require any initial hypothesis on $K$.

Using Remark~\ref{R:resolution}, Theorem~\ref{T:pseudocoherent noetherian},
and Theorem~\ref{T:vector bundle pullback},
we may reduce from $X$ to a resolution; that is, we may assume that $X$ is smooth.
By Remark~\ref{R:cover smooth by toric}, we may further assume that $X = \Spa(A,A^+)$ is the base of a restricted toric tower $\psi$. 
By Theorem~\ref{T:standard toric decompleting}, 
the tower $\psi$ satisfies the conditions of
Remark~\ref{R:decompleting conditions}.
We may thus apply Corollary~\ref{C:toric tower coherent}  and Remark~\ref{R:higher vanishing} to deduce (a). 

To deduce (b),  we use an argument modeled on \cite[Theorem~4.4.3]{kpx}. In light of (a), we may invoke \cite[Lemma~4.1.5]{kpx} to reduce to the case where $Y$ is a single $K$-rational point. 
At this point, we may assume that $K$ is perfectoid and $X$ is affinoid, and then
appeal to Theorem~\ref{T:algebraic pullback} to conclude.
\end{proof}
\begin{cor}
Suppose that $K$ is pro-coherent. Let $X$ be a proper rigid space over $K$
and choose $\calF \in \calC_X$.
Then the $K$-vector spaces
$H^i(X_{\proet}, \calF)$ are finite-dimensional for all $i \geq 0$.
(For $i=0$, we may drop the condition on $K$.)
\end{cor}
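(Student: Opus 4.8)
Before seeing the author's proof, here is how I would approach the final corollary.

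\medskip

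\textbf{The plan.} The statement to prove is: if $K$ is pro-coherent, $X$ is a proper rigid space over $K$, and $\calF \in \calC_X$, then the groups $H^i(X_{\proet}, \calF)$ are finite-dimensional $K$-vector spaces for all $i \geq 0$ (and the pro-coherence hypothesis is unnecessary for $i = 0$). The natural strategy is to combine the Leray spectral sequence for the morphism $\nu_{\proet}\colon X_{\proet} \to X$ with the coherence of the higher direct images established in Theorem~\ref{T:torsion coherent cohomology}(a) and Kiehl's finiteness theorem for coherent cohomology of proper rigid spaces. First I would invoke Theorem~\ref{T:torsion coherent cohomology}(a): for $\calF \in \calC_X$, the sheaves $R^j \nu_{\proet *} \calF$ are coherent $\calO_X$-modules for all $j \geq 0$ and vanish for $j > N$ for some $N$ depending only on $K$ and $X$. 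Then the Leray spectral sequence
\[
E_2^{i,j} = H^i(X, R^j \nu_{\proet *} \calF) \Longrightarrow H^{i+j}(X_{\proet}, \calF)
\]
has $E_2$-page consisting of coherent cohomology groups of a proper rigid space over $K$. By Kiehl's finiteness theorem (proper pushforward of coherent sheaves on rigid spaces has coherent, hence finite-dimensional, cohomology over the base field), each $E_2^{i,j}$ is a finite-dimensional $K$-vector space, and only finitely many are nonzero (finitely many $i$ by finite cohomological dimension of the proper rigid space $X$, finitely many $j$ by the vanishing $R^j\nu_{\proet*}\calF = 0$ for $j > N$). Hence the abutment $H^{i+j}(X_{\proet},\calF)$ is a subquotient of a finite direct sum of finite-dimensional spaces, so finite-dimensional.

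\medskip

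\textbf{The $i = 0$ case.} For the claim that $H^0$ is finite-dimensional without assuming pro-coherence, I would use the low-degree edge map $H^0(X_{\proet}, \calF) \cong H^0(X, \nu_{\proet *}\calF)$, together with the part of Theorem~\ref{T:torsion coherent cohomology}(a) that holds for $i = 0$ without any hypothesis on $K$: namely $\nu_{\proet *}\calF$ is a coherent $\calO_X$-module. Then $H^0(X, \nu_{\proet*}\calF)$ is finite-dimensional over $K$ by Kiehl's finiteness theorem applied to the structure morphism $X \to \Spa(K, \gotho_K)$. One subtlety worth flagging: Theorem~\ref{T:torsion coherent cohomology}(a) is stated for $X$ a rigid analytic space with no properness assumption, so the coherence of $R^j\nu_{\proet*}\calF$ is available in general; properness of $X$ enters only through Kiehl's theorem at the last step.

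\medskip

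\textbf{Anticipated obstacles.} I do not expect a serious obstacle here — the corollary is essentially a formal consequence of Theorem~\ref{T:torsion coherent cohomology}(a) and the classical Kiehl finiteness theorem for proper rigid spaces. The only genuinely delicate point is making sure the spectral sequence degenerates to a finite computation: this requires both finite cohomological dimension of $X$ for coherent sheaves (standard for quasicompact rigid spaces, and $X$ proper is in particular quasicompact) and the uniform vanishing bound $N$ on $R^j\nu_{\proet*}\calF$, which is exactly what the ``zero for all $i > N$'' clause of Theorem~\ref{T:torsion coherent cohomology}(a) supplies. A secondary point is that one should check the pro-coherence of $K$ is genuinely used: it feeds into Theorem~\ref{T:torsion coherent cohomology}(a) via its own Leray spectral sequence reduction to the perfectoid case, so the corollary inherits exactly the same hypothesis. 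No new ideas beyond assembling these inputs should be needed.
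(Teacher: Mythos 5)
Your proposal is correct and is essentially the paper's own argument: combine Theorem~\ref{T:torsion coherent cohomology} with Kiehl's finiteness theorem, the Leray spectral sequence for $\nu_{\proet}$ supplying the finite-dimensional, eventually vanishing $E_2$-page exactly as you describe (including the unconditional $i=0$ case via coherence of $\nu_{\proet *}\calF$). The only point the paper adds, which your write-up glosses over, is the comparison between the analytic and \'etale topologies — coherent sheaves and their cohomology on $X$ and $X_{\et}$ agree — so that Kiehl's theorem (stated for the analytic topology) applies after the Leray factorization; this is bookkeeping rather than a new idea.
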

\begin{proof}
Combine Theorem~\ref{T:torsion coherent cohomology} with Kiehl's finiteness theorem for cohomology of coherent sheaves \cite{kiehl-finiteness}, noting that the categories of coherent sheaves on $X$ and $X_{\et}$ coincide
(e.g., by Theorem~\ref{T:refined Kiehl} plus Theorem~\ref{T:refined Kiehl etale}) and have the same cohomology
(by Theorem~\ref{T:pseudocoherent acyclicity} and Theorem~\ref{T:pseudocoherent etale acyclicity},
or see \cite[Proposition~3.2.5]{dejong-vanderput} which also applies to nonreduced rigid spaces).
\end{proof}

\begin{cor} \label{C:generic base extension}
Suppose that $K$ is pro-coherent.
For any $\calF \in \calC_X$,
there exists a Zariski closed, nowhere Zariski dense subspace $Z$ of $X$ such that on $X \setminus Z$, for $i \geq 0$,
the sheaf $R^i \nu_{\proet *} \calF$ of $\calO_X$-modules is locally finite free 
and its formation commutes with arbitrary base extension.
(For $i=0$, we may drop the condition on $K$.)
\end{cor}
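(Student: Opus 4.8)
\textbf{Proof proposal for Corollary~\ref{C:generic base extension}.}
The plan is to combine the coherence of the higher direct images (Theorem~\ref{T:torsion coherent cohomology}(a)) with a generic flatness argument on the base space $X$. First I would reduce to the case where $X = \Spa(A,A^+)$ is affinoid and reduced (hence, after possibly shrinking, we may work over the seminormalization by Remark~\ref{R:effect of seminormalization}, but reducedness already suffices for what follows). By Theorem~\ref{T:torsion coherent cohomology}(a), the sheaves $R^i \nu_{\proet *}\calF$ are coherent $\calO_X$-modules for all $i \geq 0$ and vanish for $i > N$ with $N$ depending only on $K$ and $X$; in particular only finitely many of them are nonzero. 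Applying the usual generic-flatness statement for coherent sheaves on a reduced affinoid space (or rather on the reduced noetherian scheme $\Spec A$, then analytifying), there is a dense Zariski open $U = X \setminus Z$ on which each of the finitely many nonzero sheaves $R^i \nu_{\proet *}\calF$ is locally finite free; here $Z$ is Zariski closed and nowhere Zariski dense because $X$ is reduced and we may take $Z$ to contain the (nowhere dense) non-flat loci of each $R^i$ together with the non-smooth locus and the singular locus of $X$ itself.

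Next I would arrange, by further shrinking $U$, that $U$ is smooth, so that Theorem~\ref{T:torsion coherent cohomology}(b) is applicable there: for any morphism $f \colon Y \to U$ with $Y$ smooth, the base-change morphism $\LL f^* \RR\nu_{\proet *}\calF \to \RR\nu_{\proet *}\LL f_{\proet}^*\calF$ is an isomorphism. Over $U$, since each cohomology sheaf $R^i\nu_{\proet *}\calF$ is locally finite free, the complex $\RR\nu_{\proet *}\calF$ is formal: it is quasi-isomorphic to the direct sum of its cohomology sheaves placed in their degrees. Hence for any base extension $f\colon Y \to U$ with $Y$ smooth, applying $\LL f^*$ term by term to this formal complex and invoking Theorem~\ref{T:torsion coherent cohomology}(b) gives
\[
R^i\nu_{\proet *}(\LL f_{\proet}^*\calF) \cong f^*\big(R^i\nu_{\proet *}\calF\big)
\]
for all $i$, which is exactly the assertion that the formation of $R^i\nu_{\proet *}\calF$ commutes with arbitrary base extension over $U$. (For general, not necessarily smooth, $Y$ one factors through a resolution using Remark~\ref{R:resolution} together with Theorem~\ref{T:vector bundle pullback} and Corollary~\ref{C:resolution adjunction}, exactly as in the proof of Theorem~\ref{T:torsion coherent cohomology}; alternatively one simply states the base-extension claim for smooth $Y$, which is what Theorem~\ref{T:torsion coherent cohomology}(b) directly supplies.) For the case $i = 0$, the pro-coherence hypothesis on $K$ is not needed because Theorem~\ref{T:torsion coherent cohomology}(a),(b) both hold for $i = 0$ unconditionally (by Proposition~\ref{P:ax-sen-tate} and the $i=0$ clause of those statements).

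The main obstacle I anticipate is the bookkeeping of the exceptional locus $Z$: one must verify that the union of the non-flat loci of the finitely many $R^i\nu_{\proet *}\calF$, the non-smooth locus of $X$, and the non-reduced locus is genuinely \emph{nowhere Zariski dense}, i.e.\ meets each irreducible component of $X$ in a proper Zariski closed subset. This uses that $X$ (or its reduction) is excellent and that generic flatness produces a non-flat locus which is Zariski closed and avoids the generic points of all components; the finiteness of the number of relevant $R^i$ — guaranteed precisely by the uniform vanishing bound $N$ in Theorem~\ref{T:torsion coherent cohomology}(a) — is what makes the union still nowhere dense. Once $Z$ is in hand, the rest is a formal consequence of the formality of $\RR\nu_{\proet *}\calF$ over $U$ and the base-change isomorphism already established. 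No genuinely new input beyond Theorem~\ref{T:torsion coherent cohomology} and standard generic-flatness is required.
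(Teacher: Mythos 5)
Your overall strategy matches the paper's: use Theorem~\ref{T:torsion coherent cohomology}(a) to get coherence and uniform vanishing of the $R^i \nu_{\proet *}\calF$, shrink $X$ so that the complement of $Z$ is smooth and the finitely many nonzero $R^i \nu_{\proet *}\calF$ are locally finite free (your generic-flatness/free-locus argument is essentially the paper's Fitting-ideal argument from Lemma~\ref{L:localize to projective}), and then invoke Theorem~\ref{T:torsion coherent cohomology}(b). Your ``formality'' step is harmless but unnecessary: once the cohomology sheaves of $\RR\nu_{\proet *}\calF$ are locally finite free, the hyper-Tor spectral sequence for $\LL f^*$ already degenerates, which is all you use.

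The one genuine gap is that you never arrange for $\calF$ \emph{itself} to be locally finite free over $\widehat{\calO}_X$ on $X \setminus Z$. Theorem~\ref{T:torsion coherent cohomology}(b) compares $\LL f^* \RR\nu_{\proet *}\calF$ with $\RR\nu_{\proet *}\LL f_{\proet}^*\calF$, so your argument only yields $f^*\bigl(R^i\nu_{\proet *}\calF\bigr) \cong R^i\nu_{\proet *}\bigl(\LL f_{\proet}^*\calF\bigr)$. The assertion that the formation of $R^i\nu_{\proet *}\calF$ commutes with base extension refers to the honest pullback $f_{\proet}^*\calF$, and when $\calF$ is not flat over $\widehat{\calO}_X$ the objects $\LL f_{\proet}^*\calF$ and $f_{\proet}^*\calF$ differ (the higher $L_j f_{\proet}^*\calF$ need not vanish; cf.\ Theorem~\ref{T:vector bundle pullback}). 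The paper closes this by a further application of Theorem~\ref{T:make projective}: after enlarging $Z$, the sheaf $\calF$ is locally finite free over $\widehat{\calO}_X$ on $X\setminus Z$, so $\LL f_{\proet}^*\calF = f_{\proet}^*\calF$ there and the base-change statement takes its intended form. Adding that one step (which also disposes of your worry about non-smooth $Y$, since a locally free $\calF$ makes the reduction via Remark~\ref{R:resolution} routine) completes your argument; as written, it proves a derived-pullback variant rather than the stated corollary.
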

\begin{proof}
By Theorem~\ref{T:torsion coherent cohomology}(a), the sheaves $R^i \nu_{\proet *} \calF$
are coherent for all $i$ and zero for $i$ sufficiently large. 
We may choose $Z$ so that $X \setminus Z$ is smooth.
By Theorem~\ref{T:make projective}, we may 
enlarge $Z$ so that on $X \setminus Z$, the sheaf $\calF$ is locally finite free over $\widehat{\calO}_X$; by a similar argument using Fitting ideals
(as in the proof of Lemma~\ref{L:localize to projective}), we may ensure that the sheaves $R^i \nu_{\proet *} \calF$ are locally finite free.
By Theorem~\ref{T:torsion coherent cohomology}(b), their formation then commutes with arbitrary base extension.
\end{proof}

\begin{defn}
As in \cite[Definition~9.3.11]{part1}, let $\bB^+_{\dR,X}$ (resp.\ $\bB_{\dR,X}$) be the sheaf of filtered rings on $X_{\proet}$ whose value on a perfectoid subdomain $Y$ is the $\ker(\theta)$-adic completion of
$\tilde{\bB}^{1}_X(Y)$
(resp.\ the localization at $\ker(\theta)$ of said completion).
Let $\bB^+_{\HT,X}$ (resp. $\bB_{\HT,X}$) be the associated sheaf of graded rings.

Let $\calO\bB^+_{\dR,X}$ be the sheafification of the presheaf
sending $Y = \Spa(A,A^+) = \varprojlim \Spa(A_i, A_i^+)$ to the ring obtained as follows:
let $k$ be the residue field of $K$, let $R$ be the tilt of $A$,
take the $p$-adic completion of $A_i^+ \otimes_{W(k)} W(R^+)$,
invert $p$, complete again with respect to the kernel of the map to $A$ induced by the canonical map
$A_i \to A$ and the map $\theta: W(R^+) \to A^+$, and finally take the direct limit over $i$.
(This definition follows that given in the erratum to \cite{scholze2}, which in turn follows \cite{brinon}; the definition in the text of \cite{scholze2} is missing the $p$-adic completion.)
Put $\calO \bB_{\dR,X} = \calO\bB^+_{\dR,X} \otimes_{\bB^+_{\dR,X}} \bB_{\dR,X}$.
Let $\calO \bB_{\HT,X}$ be the sheaf of graded rings associated to $\calO \bB_{\dR,X}$;
the degree 0 component of $\calO \bB_{\HT,X}$ coincides with the ring $S_\infty$ of \cite{hyodo}.

Suppose now that $K$ is a $p$-adic field.
For $\calF$ a $\varphi^a$-module over $\tilde{\bC}^\infty_X$, put
\begin{align*}
D_{\dR}(\calF) &= \nu_{\proet *} (\calF \otimes_{\tilde{\bC}^\infty_X} \calO\bB_{\dR,X}) \\
D_{\HT}(\calF) &= \nu_{\proet *} (\calF \otimes_{\tilde{\bC}^\infty_X} \calO\bB_{\HT,X}).
\end{align*}
We say $\calF$ is \emph{Hodge-Tate} (resp.\ \emph{de Rham}) if the induced map
\[
\nu_{\proet}^* D_{\HT}(\calF) \otimes_{\calO_X} \calO\bB_{\HT,X}\to \calF \otimes_{\tilde{\bC}^\infty_X} \calO\bB_{\HT,X}
\]
\[
\mbox{(resp.\ 
$\nu_{\proet}^* D_{\dR}(\calF) \otimes_{\calO_X} \calO\bB_{\dR,X}\to \calF \otimes_{\tilde{\bC}^\infty_X} \calO\bB_{\dR,X}$)}
\]
is an isomorphism; note that by Corollary~\ref{C:provector bundle injective}, this map is necessarily injective in the Hodge-Tate case, hence also in the de Rham case
(compare \cite[Proposition~8.2.8]{brinon}).
We define the \emph{Hodge-Tate weights} of $\calF$ as the ranks of the graded components of $D_{\HT}(\calF)$.
\end{defn}

\begin{theorem} \label{T:generic HT DR}
Let $K$ be a $p$-adic field.
Let $\calF$ be a $\varphi^a$-module over $\tilde{\bC}_X$. 
Suppose that there exists a multisubset $\mu$ of $\ZZ$
such that for some Zariski dense subset $T$ of $X$ consisting of classical rigid points,
for each $z \in T$ the restriction of $\calF$ to $\tilde{\bC}_x$ is Hodge-Tate (resp.\ de Rham) with Hodge-Tate weights $\mu$. Then the same holds for all $z$ outside of some Zariski closed, nowhere Zariski dense subset of $X$.
\end{theorem}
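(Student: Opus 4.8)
The plan is to prove the statement by the usual "generic behaviour" mechanism: realize $D_{\dR}(\calF)$ and $D_{\HT}(\calF)$, together with their filtrations, resp.\ gradings, as coherent $\calO_X$-modules whose formation commutes with base extension away from a Zariski-closed nowhere-dense subset, and then observe that a fibrewise dimension which is locally constant on that subset and equals $\rank(\calF)$ on the Zariski-dense set $T$ must equal $\rank(\calF)$ everywhere, forcing the de Rham (resp.\ Hodge-Tate) property via the pointwise criterion and pinning down the Hodge-Tate weights via the (constant) graded ranks.

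First I would carry out the geometric reductions. The excluded locus in the conclusion is intrinsic to $\calF$ and the properties "closed" and "nowhere dense" are local, so by passing to an admissible affinoid cover and then (since the excluded locus is intrinsic) gluing over a finite cover by rational subdomains, we may assume $X=\Spa(A,A^+)$ is reduced, irreducible and affinoid; here $T\cap U$ remains Zariski dense in any Zariski-open dense $U$ because the intersection of a Zariski-dense set with a Zariski-open dense set in an irreducible space is Zariski dense. Next, applying Temkin's resolution of singularities via Remark~\ref{R:resolution} and noting that a proper birational morphism is an isomorphism over a Zariski-open dense subvariety (disjoint from which we may freely enlarge the excluded locus), and that the de Rham/Hodge-Tate property of $\calF$ at a point and its Hodge-Tate weights depend only on $\calF_x$, we reduce to $X$ smooth and affinoid; finally, by Remark~\ref{R:cover smooth by toric} we may assume $X$ is the base of a restricted toric tower $\psi$, so by Theorem~\ref{T:standard toric decompleting} the $\varphi^a$-module $\calF$ descends to a pseudocoherent $(\varphi^a,\Gamma)$-module over $\bC_\psi$, which by Corollary~\ref{C:pseudocoherent fpd local} is in fact fpd.

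The technical heart is the coherence and generic-base-change input for $D_{\dR}$ and $D_{\HT}$. Over the toric chart $\psi$, the sheaf $\calO\bB^+_{\dR,X}$ is a formal power series ring over $\bB^+_{\dR,X}$ in $d$ "small" variables, and the associated Poincar\'e lemma resolves it by a finite Koszul complex of $\bB^+_{\dR,X}$-modules; passing to associated gradeds does the same for $\calO\bB_{\HT,X}$ over $\bB_{\HT,X}=\widehat{\calO}_X[t^{\pm}]$. Combining this with Theorem~\ref{T:perfect generalized phi-modules preadic}, Theorem~\ref{T:functor to log}, and the identification of $(\varphi^a,\Gamma)$-hypercohomology with pro-\'etale cohomology, each filtered (resp.\ graded) piece of $D_{\dR}(\calF)$ (resp.\ $D_{\HT}(\calF)$) is exhibited as a direct summand of a $\nu_{\proet*}$-pushforward of a pseudocoherent $\widehat{\calO}_X$-module. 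Since $K$ is a $p$-adic field it is pro-coherent, so Theorem~\ref{T:torsion coherent cohomology}(a) makes these coherent $\calO_X$-modules, Theorem~\ref{T:torsion coherent cohomology}(b) together with Corollary~\ref{C:generic base extension} makes them locally finite free with formation commuting with base change after deleting a Zariski-closed nowhere-dense $Z_1$, and Corollary~\ref{C:provector bundle injective} provides the injectivity of the comparison map needed to read the fibrewise statements. Enlarging $Z_1$ so that on a connected $U=X\setminus Z_1$ all these graded/filtered pieces are locally free, the functions $\dim_{\kappa(z)}D_{\dR}(\calF_z)$, $\dim_{\kappa(z)}\gr^j D_{\dR}(\calF_z)$, $\dim_{\kappa(z)}\gr^j D_{\HT}(\calF_z)$ are constant on $U$.

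To conclude: $T\cap U$ is Zariski dense in $U$, and at its points $\calF_z$ is de Rham (resp.\ Hodge-Tate) with Hodge-Tate weights $\mu$, so the constants above equal $\rank(\calF)$, resp.\ the multiplicities of $\mu$; hence these values hold at every $z\in U$. The pointwise criterion — that the comparison map at $z$ is an isomorphism exactly when $\dim_{\kappa(z)}D_{\dR}(\calF_z)=\rank(\calF_z)$ (resp.\ $\dim_{\kappa(z)}D_{\HT}(\calF_z)=\rank(\calF_z)$), a fact one checks after pulling $\calF_z$ back to a point over an algebraically closed perfectoid field (compare \cite[Proposition~8.2.8]{brinon}) — then shows $\calF_z$ is de Rham (resp.\ Hodge-Tate) for all $z\in U$, with Hodge-Tate weights read off the constant graded ranks and therefore equal to $\mu$; in the de Rham case one uses in addition that $\gr D_{\dR}(\calF_z)\cong D_{\HT}(\calF_z)$ once $\calF_z$ is de Rham. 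Taking $Z=X\setminus U$ and reassembling over the resolution, the irreducible components, and the original cover yields the theorem. The step I expect to be the main obstacle is the third paragraph: transporting Theorem~\ref{T:torsion coherent cohomology} and Corollary~\ref{C:generic base extension} across the Poincar\'e lemma for $\calO\bB_{\dR,X}$, keeping control of the (a priori unbounded) grading and of the completed power-series variables so that the graded/filtered pieces of $D_{\dR}(\calF)$ and $D_{\HT}(\calF)$ really are coherent and generically flat over $\calO_X$.
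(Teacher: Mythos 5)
Your overall mechanism---generic local freeness, compatibility with base change, and Zariski density of $T$---is the same one the paper uses, but your execution has a genuine gap at exactly the step you flag, and it is not a mere technicality. The coherence and generic base-change statements you want for $D_{\HT}(\calF)$ and $D_{\dR}(\calF)$ cannot be obtained by citing Theorem~\ref{T:torsion coherent cohomology} and Corollary~\ref{C:generic base extension}, because $\calF \otimes_{\tilde{\bC}^\infty_X} \calO\bB_{\HT,X}$ and the filtered pieces of $\calF \otimes_{\tilde{\bC}^\infty_X} \calO\bB_{\dR,X}$ are not pseudocoherent $\widehat{\calO}_X$-modules: already the degree-zero component of $\calO\bB_{\HT,X}$ is Hyodo's ring $S_\infty$, locally a polynomial algebra in $\dim X$ variables over $\widehat{\calO}_X$, so each graded piece is an infinite direct sum of twists of $\calF \otimes \widehat{\calO}_X$. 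Pushing forward termwise yields an infinite direct sum of coherent sheaves, and coherence of the total requires a Sen-operator/Tate-twist vanishing computation (as in Hyodo and Brinon) that your proposal does not supply; the Poincar\'e lemma does not help here, since it resolves $\bB^+_{\dR,X}$ by $\calO\bB^+_{\dR,X} \otimes \Omega^\bullet$ and so goes in the wrong direction for exhibiting $\nu_{\proet *}(\calF \otimes \calO\bB_{\dR,X})$ and its filtration steps as summands of pushforwards of pseudocoherent modules. As written, your third paragraph is the missing proof rather than an implementation detail.

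The paper's proof avoids the geometric period sheaves entirely. It reduces the de Rham case at once to the Hodge--Tate case, and in the Hodge--Tate case applies Corollary~\ref{C:generic base extension} only for $i=0$ to $\calF \otimes_{\tilde{\bC}^\infty_X} \bB_{\HT,X}$, whose graded pieces genuinely are pseudocoherent $\widehat{\calO}_X$-modules. At a classical point the relative and classical notions coincide ($\calO\bB$ and $\bB$ agree over a finite extension of $K$), so the hypothesis on $T$ says the fibers of $\nu_{\proet *}(\calF \otimes_{\tilde{\bC}^\infty_X} \bB_{\HT,X})$ have full rank on a Zariski dense set; generic local freeness and base change, together with injectivity of the comparison map (Corollary~\ref{C:provector bundle injective}), then force
\[
\nu_{\proet}^* \nu_{\proet *}(\calF \otimes_{\tilde{\bC}^\infty_X} \bB_{\HT,X}) \otimes_{\calO_X} \bB_{\HT,X} \to \calF \otimes_{\tilde{\bC}^\infty_X} \bB_{\HT,X}
\]
to be an isomorphism off a Zariski closed nowhere dense subset, and tensoring up to $\calO\bB_{\HT,X}$ (a polynomial algebra over $\bB_{\HT,X}$) gives the claim. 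If you want to keep your route, you must first prove the Hyodo-type finiteness for the $\calO\bB$-periods. Two smaller slips: Remark~\ref{R:cover smooth by toric} and Corollary~\ref{C:pseudocoherent fpd local} concern towers over a perfectoid base, while here $K$ is a $p$-adic field (the relevant statements are Theorem~\ref{T:andreatta-brinon} and Corollary~\ref{C:pseudocoherent fpd local2}); and since $\calF$ is by Convention~\ref{conv:drop projective} a projective $\varphi^a$-module, the pseudocoherent/fpd reductions in your second paragraph are unnecessary.
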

\begin{proof}
The de Rham case reduces at once to the Hodge-Tate case.
In the Hodge-Tate case, Corollary~\ref{C:generic base extension} applied in the case $i=0$ (so no condition on $K$ is required) and shows that (after removing a suitable Zariski closed subset from $X$)
\[
\nu_{\proet}^*  \nu_{\proet *} (\calF \otimes_{\tilde{\bC}^\infty_X} \bB_{\HT,X}) \otimes_{\calO_X} \bB_{\HT,X}\to \calF \otimes_{\tilde{\bC}^\infty_X} \bB_{\HT,X}
\]
is an isomorphism. This then implies the same with $\bB_{\HT,X}$ replaced by $\calO \bB_{\HT,X}$, proving the claim.
\end{proof}

\begin{remark} \label{R:virtual comparison}
In the de Rham case, \cite{liu-zhu} includes a much stronger form of Theorem~\ref{T:generic HT DR} for \'etale $\Qp$-local systems, which asserts that the de Rham property at \emph{one} classical point implies the de Rham property at \emph{every} classical point in the same connected component. 

This stronger statement is applied in \cite{liu-zhu} to construct \emph{virtual comparison isomorphisms} in certain settings where one has an \'etale local system which is expected to arise from a family of motives, but this is only known in isolated cases. Such cases occur commonly in the theory of Shimura varieties.

Turning back to the Hodge-Tate setting, we note that Shimizu \cite{shimizu} has proved local constancy of generalized Hodge-Tate weights for a $\QQ_p$-local system; in particular, if these are integers at one point of $X$, they are integers everywhere on the same connected component. However, the Hodge-Tate property amounts to this integrality plus semisimplicity of the Sen operator, and it is unknown whether the latter condition spreads out in a similar fashion.
\end{remark}

\subsection{Fitting ideals in type \texorpdfstring{$\tilde{\bC}$}{C}}
\label{subsec:Fitting C}

We now extend the previous analysis to $\Gamma$-modules over rings of type $\tilde{\bC}$.

\begin{hypothesis} \label{H:Fitting C}
Throughout \S\ref{subsec:Fitting C},
assume that $K$ is perfectoid
and that $X = \Spa(A,A^+)$ is affinoid, 
let $\psi$ be a finite \'etale perfectoid tower over $X$,
and fix $r,s$ with $0 < s \leq r$.
\end{hypothesis}

We begin with a crucial argument to the effect that for $\Gamma$-modules in type $\tilde{\bC}$, the failure of projectivity can in some sense be isolated along horizontal divisors.

\begin{lemma} \label{L:no other torsion}
Suppose that $\psi$ is a restricted toric tower,
and let $M$ be a finitely generated $\bC^{[s,r]}_\psi$-module with $\Gamma$-structure. 
Then the module $M \otimes_{\tilde{\bC}^{[s,r]}_K} \Frac(\tilde{\bC}^{[s,r]}_K)$ over
$\bC^{[s,r]}_\psi \otimes_{\tilde{\bC}^{[s,r]}_K} \Frac(\tilde{\bC}^{[s,r]}_K)$ is finite projective.
\end{lemma}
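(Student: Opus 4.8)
The statement is a local-to-global reduction: we want to show that after inverting the horizontal divisor (i.e. base extending along $\tilde{\bC}^{[s,r]}_K \to \Frac(\tilde{\bC}^{[s,r]}_K)$), a finitely generated $\Gamma$-module over $\bC^{[s,r]}_\psi$ becomes finite projective. The key structural input is that $\tilde{\bC}^{[s,r]}_K$ is a principal ideal domain (this is Theorem~\ref{T:curve noetherian}(a), applied with $R = L$ the tilt of $K$, since $\bC^{[s,r]}_K = \tilde{\bC}^{[s,r]}_L$ up to the completed-perfection subtlety, and in any case $\tilde{\bC}^{[s,r]}_K$ is a PID by that theorem). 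Thus $\Frac(\tilde{\bC}^{[s,r]}_K)$ is a field, and the ring $\bC^{[s,r]}_\psi \otimes_{\tilde{\bC}^{[s,r]}_K} \Frac(\tilde{\bC}^{[s,r]}_K)$ is obtained from the strongly noetherian ring $\bC^{[s,r]}_\psi$ (Remark~\ref{R:strongly noetherian imperfect}, noting that $\bC^{[s,r]}_\psi$ is really strongly noetherian when $\psi$ is a restricted toric tower over a perfectoid field) by inverting a multiplicative set, hence is itself noetherian and in fact (by the regularity statements in Theorem~\ref{T:curve noetherian}(d) together with localization) regular.

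First I would reduce to a statement purely about the underlying module together with a ``Fitting ideal'' argument: since $\bC^{[s,r]}_\psi$ is noetherian, $M$ is finitely presented, and a finitely presented module over the noetherian regular ring $\bC^{[s,r]}_\psi \otimes_{\tilde{\bC}^{[s,r]}_K} \Frac(\tilde{\bC}^{[s,r]}_K)$ is finite projective if and only if it is locally free of locally constant rank, equivalently if its Fitting ideals $\Fitt_n$ are, locally, either $0$ or the unit ideal. So I would consider the Fitting ideals $\Fitt_n(M) \subseteq \bC^{[s,r]}_\psi$; by Remark~\ref{R:Fitting pseudocoherent} these are compatible with base extension, so it suffices to show that after inverting $\tilde{\bC}^{[s,r]}_K \setminus \{0\}$, each nonzero $\Fitt_n(M)$ becomes the unit ideal. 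The point is that the ideals $\Fitt_n(M)$, being intrinsically defined from the $\Gamma$-module $M$, are $\Gamma$-stable ideal subsheaves; I would then argue that a $\Gamma$-stable ideal of $\bC^{[s,r]}_\psi$ which is not contained in the ``horizontal'' part (the extension of an ideal of $\tilde{\bC}^{[s,r]}_K$) must be the whole ring after the localization. This is the type-$\bC$ analogue of Corollary~\ref{C:type A mod p no ideals} / Lemma~\ref{L:stable submodules type A}; the mechanism is that any $\Gamma$-invariant ideal descends (via the equivalence of $\Gamma$-modules over $\bC^{[s,r]}_\psi$ with those over $\tilde{\bC}^{[s,r]}_\psi$ and then with vector bundles / coherent sheaves on the base, cf. Theorem~\ref{T:ramified perfect equivalence1} and Theorem~\ref{T:closed immersion equivalence} together with Lemma~\ref{L:provector bundle submodule}) to an ideal sheaf pulled back from $X = \Spa(A,A^+)$, and the only ideals of $A$ that are everywhere nonzero on the relevant locus are those whose vanishing locus is a proper Zariski closed subset — which becomes the unit ideal after passing to the generic fiber over the Fargues–Fontaine direction, i.e. after inverting $\tilde{\bC}^{[s,r]}_K \setminus \{0\}$.

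Concretely, the steps are: (1) reduce to showing each $\Fitt_n(M)$ becomes $0$ or the unit ideal after localization; (2) show $\Fitt_n(M)$ is $\Gamma$-stable; (3) use the equivalence of categories to identify the coherent ideal sheaf generated by $\Fitt_n(M)$ with the pullback of a coherent ideal sheaf $\calI_n$ on $X$ (here one uses that $\bC^{[s,r]}_\psi$ is, roughly, a relative Fargues–Fontaine construction over $X$ whose horizontal sections recover $X$ — this is where Proposition~\ref{P:Ax} and the identification of $\Gamma$-stable ideals come in, mirroring Lemma~\ref{L:stable submodules type A}); (4) observe that $\Fitt_n(M) \ne 0$ forces $\calI_n \ne 0$, hence $\calI_n$ is supported on a proper Zariski closed subset of $X$, hence $\calI_n \cdot \bC^{[s,r]}_\psi$ meets $\tilde{\bC}^{[s,r]}_K$ nontrivially (any nonzero ideal of the relative construction that is horizontal-translation-invariant contains a nonzero element of the base of the Fargues–Fontaine fibration); (5) conclude that after inverting $\tilde{\bC}^{[s,r]}_K\setminus\{0\}$, $\Fitt_n(M)$ becomes the unit ideal.

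\textbf{Main obstacle.} The delicate point is step (3)–(4): making precise the sense in which a $\Gamma$-stable ideal of $\bC^{[s,r]}_\psi$ is ``pulled back from $X$ along the horizontal direction,'' and in particular that a nonzero such ideal must contain a nonzero element of $\tilde{\bC}^{[s,r]}_K$ once one knows it is not entirely horizontal. Over type $\bA$ mod $p$ this is exactly Lemma~\ref{L:stable submodules type A}, whose proof uses the perfectoid correspondence to reduce to a statement about uniformity of a quotient ring; over type $\bC^{[s,r]}$ the analogous argument requires first descending from $\bC^{[s,r]}_\psi$ to $\tilde{\bC}^{[s,r]}_\psi$ (free via the $\Gamma$-equivariant splitting, Remark~\ref{R:toric splitting}), then using that $\tilde{\bC}^{[s,r]}_\psi$ is a relative Robba ring over the perfectoid base $\tilde{A}_\psi$ — so that a $\Gamma$-stable coherent subsheaf corresponds, by Theorem~\ref{T:perfect Robba Kiehl} and the descent statements of \S\ref{subsec:ax-sen-tate}, to a subsheaf on $X$ — and finally invoking that over the one-dimensional base $\tilde{\bC}^{[s,r]}_K$ (a PID) every coherent ideal becomes trivial at the generic point unless it is zero. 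I expect the bookkeeping around which localization is being inverted (the PID $\tilde{\bC}^{[s,r]}_K$ versus the larger ring $\bC^{[s,r]}_\psi$) and verifying that the Fitting-ideal argument is genuinely compatible with restriction of scalars along a non-quotient map (cf. the warning in Remark~\ref{R:Fitting pseudocoherent}) to be the part requiring the most care; everything else should follow formally from the results already assembled.
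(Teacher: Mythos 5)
Your proposal has a genuine gap, and it is essentially a circularity. The load-bearing step is your (4): that any nonzero $\Gamma$-stable ideal of $\bC^{[s,r]}_\psi$ (equivalently of $\tilde{\bC}^{[s,r]}_\psi$) must contain a nonzero element of $\tilde{\bC}^{[s,r]}_K$, so that nonzero Fitting ideals become trivial after inverting $\tilde{\bC}^{[s,r]}_K \setminus \{0\}$. But that assertion is precisely Lemma~\ref{L:Gamma-stable ideal C} (and its consequence Corollary~\ref{C:invert projective}), which in the paper is \emph{deduced from} Lemma~\ref{L:no other torsion}: one covers $\tilde{\bC}^{[s,r]}_\psi/I$ by a pseudocoherent $\Gamma$-module and applies the present lemma to produce the element $t$. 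You give no independent argument for it, and the mechanism you invoke in step (3) fails in type $\bC$: it is not true that a $\Gamma$-stable ideal of $\bC^{[s,r]}_\psi$ is the extension of an ideal sheaf on $X$. Since $\Gamma$ acts trivially on $\tilde{\bC}^{[s,r]}_K$, every ideal generated by a nonzero nonunit of $\tilde{\bC}^{[s,r]}_K$ (e.g.\ $(t_\theta)$, whose quotient is $\tilde{A}_\psi$) is $\Gamma$-stable but is not pulled back from $X$; this "horizontal" direction is exactly the subtlety, and it is why the type-$\bA$ statements you cite (Lemma~\ref{L:stable submodules}, Lemma~\ref{L:stable submodules type A}, Lemma~\ref{L:provector bundle submodule}) do not transfer — the first lives over $\tilde{A}_\psi$ where no such direction exists, and the second uses $\varphi^{\pm 1}$-stability in addition to $\Gamma$-stability. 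Group-theoretic $\Gamma$-stability alone cannot rule out torsion supported on divisors of $\tilde{\bC}^{[s,r]}_K$ other than the $\varphi^n(t_\theta)$; indeed Remark~\ref{R:pseudocoherent factor} shows such torsion genuinely occurs for $\Gamma$-modules, and the content of the lemma is that it is always annihilated by an element of $\tilde{\bC}^{[s,r]}_K$, not that it is absent.

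The paper's proof uses an input your argument never touches: the infinitesimal (Lie algebra) action. After assuming $K$ contains the $p$-power roots of unity, one sets $R_F = \bC^{[s,r]}_\psi \otimes_{\tilde{\bC}^{[s,r]}_K} \Frac(\tilde{\bC}^{[s,r]}_K)$ and $M_F = M \otimes_{\tilde{\bC}^{[s,r]}_K} \Frac(\tilde{\bC}^{[s,r]}_K)$; because the action of $\Gamma \cong \Zp^d$ on a restricted toric tower is analytic, its Lie algebra acts on $M_F$ and equips it with an $F$-linear connection relative to the derivations $\partial/\partial T_1,\dots,\partial/\partial T_d$ on $R_F$. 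Since $R_F$ is regular (Theorem~\ref{T:curve noetherian}), it is a nondegenerate differential ring in the sense of \cite[Definition~3.1.2]{kedlaya-goodformal2}, and by \cite[Lemma~3.3.3]{kedlaya-goodformal2} a finitely generated module with connection over such a ring is finite projective. In other words, the differential structure is what kills the would-be vertical torsion after inverting $\tilde{\bC}^{[s,r]}_K \setminus \{0\}$; if you want to salvage your Fitting-ideal framing, you would still need this connection argument (or an equivalent) to prove your step (4), at which point the Fitting-ideal layer is superfluous.
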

\begin{proof}
We may assume that $K$ contains all $p$-power roots of unity.
Put $R = \bC^{[s,r]}_\psi$, $R_0 = \tilde{\bC}^{[s,r]}_K$, 
$F = \Frac(R_0)$, $R_F = R \otimes_{R_0} F$,
and $M_F = M \otimes_{R_0} F$.
View $R_F$ as a differential ring equipped with the derivations
$\frac{\partial}{\partial T_1},\dots,\frac{\partial}{\partial T_n}$.
Since $R_F$ is a regular ring by
Theorem~\ref{T:curve noetherian}, it is a \emph{nondegenerate differential ring}
in the sense of \cite[Definition~3.1.2]{kedlaya-goodformal2}. By \cite[Lemma~3.3.3]{kedlaya-goodformal2}, any finitely generated $R_F$-module admitting an $F$-linear connection is finite projective. Since $M_F$ carries such a connection defined by the action of the Lie algebra of $\Gamma$, it follows that $M_F$ is a finite projective $R_F$-module.
\end{proof}

\begin{lemma} \label{L:Gamma-stable ideal C}
Any nonzero $\Gamma$-stable ideal of $\tilde{\bC}^{[s,r]}_\psi$ contains
some nonzero element of $\tilde{\bC}^{[s,r]}_K$.
\end{lemma}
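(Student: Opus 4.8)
The statement to prove is: any nonzero $\Gamma$-stable ideal $I$ of $\tilde{\bC}^{[s,r]}_\psi$ contains a nonzero element of $\tilde{\bC}^{[s,r]}_K$. The plan is to reduce first to the case of a restricted toric tower and then to exploit the structure of $\tilde{\bC}^{[s,r]}_\psi$ as a ``relative curve'' over $\tilde{\bC}^{[s,r]}_K$, combining the regularity assertions of Theorem~\ref{T:curve noetherian} with the infinitesimal action of $\Gamma$ (i.e. the action of the Lie algebra) in the spirit of Lemma~\ref{L:no other torsion} and Lemma~\ref{L:stable submodules type A}.

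First I would set up the reduction. We may enlarge $K$ to a perfectoid field containing all $p$-power roots of unity and may assume $X$ connected. By the definition of a finite \'etale perfectoid tower and Remark~\ref{R:cover smooth by toric}, after passing to a suitable open subspace and a suitable finite \'etale cover, any finite \'etale perfectoid tower over $X$ factors through a restricted toric tower; since the statement concerns an ideal that is $\Gamma$-stable and we only need to exhibit \emph{one} nonzero element of $\tilde{\bC}^{[s,r]}_K$ inside $I$, shrinking and covering is harmless (a nonzero element of $I$ restricts to a nonzero element of $I$ on a connected component meeting the support). So I would assume $\psi$ is a restricted toric tower over $X = \Spa(A,A^+)$, with $\Gamma = \ZZ_p^d$ acting as in Remark~\ref{R:standard group action}, and use the $\Gamma$-equivariant splitting $\tilde{\bC}^{[s,r]}_\psi = \bigoplus_{\mathbf{i} \in \ZZ[p^{-1}]^d} \tilde{\bC}^{[s,r]}_K \cdot [\overline{t}_1]^{i_1}\cdots[\overline{t}_d]^{i_d}$ from Remark~\ref{R:toric splitting}, completed suitably.

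Next, pick a nonzero $x \in I$ and expand it in this grading; some homogeneous component $x_{\mathbf{i}} = c [\overline{t}]^{\mathbf{i}}$ with $c \in \tilde{\bC}^{[s,r]}_K$ nonzero appears. The idea is to apply the differential operators $\nabla_j = \log(\gamma_j)$ (the Lie algebra action of the $j$-th factor of $\Gamma$) or rather the group elements $\gamma_j^{p^N}-1$ to annihilate or isolate gradings: on the component $[\overline{t}]^{\mathbf{i}}$, each $\gamma_j$ acts by a scalar $\zeta^{i_j}$ (a root of unity depending on the level), so suitable $\ZZ_p$-linear combinations of the operators $\gamma_j^{p^N}-1$, together with the continuity of the $\Gamma$-action, let one project $x$ onto a \emph{single} graded piece while staying inside $I$ (since $I$ is $\Gamma$-stable and closed — being an ideal in a Banach ring, or at worst one works with its closure and notes a nonzero closed $\Gamma$-stable ideal still does the job via Theorem~\ref{T:open mapping}). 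This is the same mechanism as in the proof of Lemma~\ref{L:stable submodules type A} and in the decompleting-tower computations of \S\ref{subsec:standard toric towers}. Once $I$ contains a nonzero element of the form $c[\overline{t}]^{\mathbf{i}}$ with $c \in \tilde{\bC}^{[s,r]}_K$, then since $[\overline{t}_j]$ is a unit in $\tilde{\bC}^{[s,r]}_\psi$ (it is a Teichm\"uller lift of a unit of $R_\psi$ in the restricted toric setting, where the $\overline{t}_j$ are invertible), multiplying by $[\overline{t}]^{-\mathbf{i}}$ shows $c \in I$, which is the desired conclusion. I would also note the harmless subtlety that one should take $\mathbf{i} \in \ZZ^d$ in the final step, which is arranged by replacing $x$ by $\varphi^{-m}(x)$ for large $m$ if needed, or by observing that $[\overline{t}_j]^{1/p^n}$ is still a unit.

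The main obstacle I anticipate is the passage from the group action to a clean ``graded projection'' operator: the scalars $\zeta^{i_j}$ by which $\gamma_j$ acts on $[\overline{t}]^{\mathbf{i}}$ are $p$-power roots of unity that vary with the level $n$ of the tower, so one cannot simply diagonalize over $\tilde{\bC}^{[s,r]}_K$ in one step; one must argue by successive approximation, using that $\gamma_j^{p^N}-1$ contracts norms on $\tilde{\bC}^{[s,r]}_K$ (analyticity of the $\Gamma$-action, as in Lemma~\ref{L:exponential convergence gamma}) while acting by an explicit nonzero scalar on the target graded piece, so that a geometric-series argument produces a genuine $\tilde{\bC}^{[s,r]}_K$-linear projector whose image lies in $I$. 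A secondary technical point is ensuring that $I$, or its closure, is stable under these limiting operations; this follows from the open mapping theorem (Theorem~\ref{T:open mapping}) applied to the finitely generated ideal $I$ together with completeness of $\tilde{\bC}^{[s,r]}_\psi$. Modulo these points, which are entirely parallel to arguments already carried out for type $\tilde{\bA}$ in Lemma~\ref{L:stable submodules type A} and Corollary~\ref{C:type A mod p no ideals}, the proof is a direct transcription.
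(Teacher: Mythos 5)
Your core mechanism has a genuine gap. The $\Gamma$-equivariant decomposition of $\tilde{\bC}^{[s,r]}_\psi$ as a completed direct sum of pieces $\tilde{\bC}^{[s,r]}_K \cdot [\overline{t}_1]^{i_1}\cdots[\overline{t}_d]^{i_d}$ exists only for the \emph{standard} toric tower (via Lemma~\ref{L:perfect polynomial splitting}); a restricted toric tower involves rational localizations and finite \'etale extensions, after which no such grading over $\tilde{\bC}^{[s,r]}_K$ survives, and Remark~\ref{R:toric splitting} only splits $\bC_\psi \to \tilde{\bC}_\psi$, which is a different statement. Even on the standard tower, $\gamma_j$ acts on the piece indexed by $\mathbf{i}$ not by a root of unity but by a unit of the form $[\epsilon^{c}]$ with $c \in \ZZ[p^{-1}]$, so $\gamma_j^{p^N}-1$ acts by $[\epsilon^{c p^N}]-1$; these scalars are in general \emph{not} units of $\tilde{\bC}^{[s,r]}_K$ (they vanish at points of the form $\ker(\theta\circ\varphi^{-n})$ lying in the annulus), and where they are units their inverses have unbounded norm as $c$ varies, precisely on the components whose denominators are comparable to $p^N$. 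So the proposed successive-approximation projector is exactly the hard point, and no convergence argument is supplied; this is not parallel to Lemma~\ref{L:stable submodules type A}, whose proof is a descent-plus-uniformity argument, not an eigenspace projection. Moreover, even a limiting projector would only put an element of $\tilde{\bC}^{[s,r]}_K$ in the \emph{closure} of $I$: a $\Gamma$-stable ideal is not assumed finitely generated or closed (finite generation is what these lemmas are ultimately used to prove), so the appeal to Theorem~\ref{T:open mapping} does not apply and the closure statement is strictly weaker than the lemma. There is also a gap in the reduction: $X$ is an arbitrary affinoid rigid space, possibly singular, and Remark~\ref{R:cover smooth by toric} produces restricted toric towers only locally on smooth spaces; in addition, shrinking to a member of a covering only puts a nonzero element of $\tilde{\bC}^{[s,r]}_K$ into the \emph{extension} of $I$, and descending that to membership in $I$ itself is not addressed.

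For comparison, the paper argues quite differently. In the restricted toric case it does not project onto graded pieces at all: it uses Lemma~\ref{L:lift to psc} to descend (a cyclic module surjecting onto) $\tilde{\bC}^{[s,r]}_\psi/I$ to a pseudocoherent $\Gamma$-module over the imperfect ring $\breve{\bC}^{[s,r]}_\psi$, where coherence and finiteness are available, and then applies Lemma~\ref{L:no other torsion} -- i.e.\ the connection coming from the Lie algebra of $\Gamma$ together with regularity of $\bC^{[s,r]}_\psi$ after inverting $\tilde{\bC}^{[s,r]}_K$ -- to conclude that the module becomes projective, necessarily of rank zero by connectedness, after inverting some nonzero $t \in \tilde{\bC}^{[s,r]}_K$; hence $t \in I$. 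The general (possibly singular) case is then handled by pulling back along a resolution of singularities $f\colon Y \to X$ (Remark~\ref{R:resolution}) and descending the annihilator $t$ using surjectivity of $f$ via Lemma~\ref{L:Robba v-covering}. Any repair of your approach would need substitutes for both of these ingredients.
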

\begin{proof}
We may assume that $X$ is connected.
Suppose first that $\psi$ is a restricted toric tower, so in particular $X$ is smooth.
Let $I$ be a nonzero $\Gamma$-stable ideal of $\tilde{\bC}^{[s,r]}_\psi$. By Lemma~\ref{L:lift to psc},
there exist a pseudocoherent module $M$ with $\Gamma$-structure
over $\breve{\bC}^{[s,r]}_\psi$,
which we can take to be freely generated by one element,
 and a surjection
$M \otimes_{\breve{\bC}^{[s,r]}_\psi} \tilde{\bC}^{[s,r]}_\psi \to \tilde{\bC}^{[s,r]}_\psi/I$ of $\tilde{\bC}^{[s,r]}_\psi$-modules with $\Gamma$-structure.
By Lemma~\ref{L:no other torsion}, 
there exists some nonzero $t \in \tilde{\bC}^{[s,r]}_K$ such that
$M[t^{-1}]$ is a finite projective module over $\breve{\bC}^{[s,r]}_\psi[t^{-1}]$,
necessarily of rank $0$ because $X$ is connected.
It follows that $t \in I$.

To treat the general case, set notation as in Remark~\ref{R:resolution}.
Let $I$ be a $\Gamma$-stable ideal of $\tilde{\bC}^{[s,r]}_\psi$. 
Let $\calF$ be the sheaf of $\tilde{\bC}^{[s,r]}_X$-modules
corresponding to the module $\tilde{\bC}^{[s,r]}_\psi/I$. By the previous paragraph,
there exists some nonzero $t \in \tilde{\bC}^{[s,r]}_K$ annihilating
$f_{\proet}^* \calF$
(calculating the latter as usual using a finite \'etale perfectoid tower factoring through the pullback of $\psi$). Since $f$ is surjective, the fact that $\calF/t\calF$ pulls back to the zero sheaf
via $f_{\proet}$ implies that it is itself zero (by virtue of \cite[Lemma~2.3.12]{part1}
and Lemma~\ref{L:Robba v-covering}),
whence $t \in I$.
\end{proof}

\begin{cor} \label{C:invert projective}
Suppose that $X$ is affinoid. Let $\psi$ be a finite \'etale perfectoid tower over $X$,
and let $M$ be a finitely generated $\tilde{\bC}^{[s,r]}_\psi$-module with $\Gamma$-structure.
Then there exists some nonzero $t \in \tilde{\bC}^{[s,r]}_K$ such that
$M[t^{-1}]$ is a finite projective module over $\tilde{\bC}^{[s,r]}_\psi[t^{-1}]$.
\end{cor}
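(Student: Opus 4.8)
The plan is to reduce to the ideal-theoretic statement of Lemma~\ref{L:Gamma-stable ideal C} by applying the Fitting-ideal technology already developed for type $\bA$ (in \S\ref{subsec:Fitting}) to the present type $\bC$ setting. First I would use Corollary~\ref{C:splitting ideal}: let $I$ be the set of $g \in \tilde{\bC}^{[s,r]}_\psi$ for which there is a homomorphism $M \to N$ of $\tilde{\bC}^{[s,r]}_\psi$-modules with $N$ finitely generated and $M_g \to N_g$ a split inclusion (one may always take $N=M$ here, since $M$ is itself finitely generated). By Lemma~\ref{L:splitting ideal}(a) this is a radical ideal, and because it is defined purely intrinsically in terms of the $\Gamma$-module $M$ — the $\Gamma$-action on $\tilde{\bC}^{[s,r]}_\psi$ permutes the data $(N, M\to N)$ — the ideal $I$ is $\Gamma$-stable.

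The next step is to show $I$ is nonzero. For this I would invoke Lemma~\ref{L:no other torsion}, but that lemma is stated for restricted toric towers, so a small amount of work is needed to make it available here: one covers $X$ by affinoid opens which are bases of restricted toric towers (or uses a resolution $f\colon X' \to X$ as in Remark~\ref{R:resolution} together with Lemma~\ref{L:Robba v-covering} to descend, exactly as in the proof of Lemma~\ref{L:Gamma-stable ideal C}), reducing to the case $\psi$ a restricted toric tower over a smooth (and without loss of generality connected) affinoid. There, Lemma~\ref{L:no other torsion} says $M \otimes_{\tilde{\bC}^{[s,r]}_K} \Frac(\tilde{\bC}^{[s,r]}_K)$ is finite projective over $\bC^{[s,r]}_\psi \otimes_{\tilde{\bC}^{[s,r]}_K} \Frac(\tilde{\bC}^{[s,r]}_K)$; spreading out a splitting of a surjection onto $M$ from a finite free module over this localization, one finds a nonzero $t_0 \in \tilde{\bC}^{[s,r]}_K$ with $M[t_0^{-1}]$ finite projective over $\tilde{\bC}^{[s,r]}_\psi[t_0^{-1}]$, hence $t_0 \in I$ and $I \neq 0$.

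Finally, apply Lemma~\ref{L:Gamma-stable ideal C}: since $I$ is a nonzero $\Gamma$-stable ideal of $\tilde{\bC}^{[s,r]}_\psi$, it contains some nonzero element $t \in \tilde{\bC}^{[s,r]}_K$. By the definition of $I$, the localization $M[t^{-1}]$ is a direct summand of a finitely generated $\tilde{\bC}^{[s,r]}_\psi[t^{-1}]$-module, hence finitely generated and projective. (One should double-check that ``finitely generated and a direct summand of a finitely generated module'' yields ``finite projective'' in this setting — it does, since a direct summand of a finite free module is finite projective and $M[t^{-1}]$ being finitely generated lets us realize it as such a summand after enlarging $N$ to a finite free module surjecting onto it.) This gives the claim.

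\textbf{Main obstacle.} The delicate point is the globalization in the middle step: Lemma~\ref{L:no other torsion} and the connectedness/rank-zero reasoning of Lemma~\ref{L:Gamma-stable ideal C} are phrased for restricted toric towers over smooth bases, so one must carefully carry out the reduction (cover by toric charts, or resolve and descend via Lemma~\ref{L:Robba v-covering}) while keeping track of the fact that $M$ need not be pseudocoherent \emph{a priori} — only finitely generated. One expects this is handled exactly as in the proof of Lemma~\ref{L:Gamma-stable ideal C}, using Lemma~\ref{L:lift to psc} to dominate $M$ by a module induced from $\breve{\bC}^{[s,r]}_\psi$, but verifying that the nonzero element $t$ produced locally can be taken in $\tilde{\bC}^{[s,r]}_K$ (rather than in some local ring) requires the connectedness bookkeeping and is where care is needed.
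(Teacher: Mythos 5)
Your overall strategy (produce a $\Gamma$-stable nonzero ideal intrinsically attached to $M$ and feed it into Lemma~\ref{L:Gamma-stable ideal C}) is the right one, but the ideal you chose does not do the job, and this is a genuine gap. The splitting ideal of Corollary~\ref{C:splitting ideal} only detects finite generation, never projectivity: membership of $g$ means $M_g$ is a split submodule of some \emph{finitely generated} module $N_g$, and a direct summand of a finitely generated module is merely finitely generated. Indeed, as you yourself observe, since $M$ is already finitely generated you may take $N=M$ with the identity map, so your ideal $I$ is the unit ideal on the nose; running your argument with $t=1$ would "prove" that $M$ itself is projective, which is false in general. Your attempted repair --- replacing $N$ by a finite free module surjecting onto it --- does not work either, because the splitting of $M_g\to N_g$ does not lift along a surjection $F\to N$; there is no reason for $M_g$ to be a direct summand of $F_g$. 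In the paper this splitting-ideal device is used precisely where finite generation is the issue (Remark~\ref{R:stable finite sub}, Corollary~\ref{C:pseudocoherent mod p}, Corollary~\ref{C:twisted projective torsion}), and in each case projectivity is then obtained separately from Fitting ideals.

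The paper's proof is exactly that Fitting-ideal argument, and it is short: assume $X$ connected, let $\Fitt_m(M)$ be the first nonzero Fitting ideal of $M$ (some Fitting ideal is nonzero since $\Fitt_n(M)$ is the unit ideal for $n$ at least the number of generators). Formation of Fitting ideals commutes with base extension, and the $\Gamma$-module structure identifies the two base extensions of $M$ to $\tilde{\bC}^{[s,r]}_{\psi^1}$, so $\Fitt_m(M)$ is a nonzero $\Gamma$-stable ideal; by Lemma~\ref{L:Gamma-stable ideal C} it contains a nonzero $t\in\tilde{\bC}^{[s,r]}_K$. After inverting $t$, one has $\Fitt_{m-1}(M[t^{-1}])=0$ and $\Fitt_m(M[t^{-1}])$ equal to the unit ideal, so $M[t^{-1}]$ is finite projective of rank $m$ by the usual Fitting-ideal criterion for finitely generated modules. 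Note also that your middle step (re-proving nonvanishing via Lemma~\ref{L:no other torsion} and redoing the toric/resolution globalization) is superfluous: Lemma~\ref{L:Gamma-stable ideal C} is already stated for an arbitrary affinoid $X$, having absorbed that reduction once and for all, so the corollary should simply cite it.
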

\begin{proof}
We may assume that $X$ is also connected. By Lemma~\ref{L:Gamma-stable ideal C},
the first nonzero Fitting ideal of $M$ contains a nonzero element $t$ of $\tilde{\bC}^{[s,r]}_K$, which has the desired effect.
\end{proof}

\begin{remark} \label{R:t-torsion-free}
Using Newton polygons, one verifies easily that for $t \in \tilde{\bC}^{[s,r]}_K$ nonzero,
$t$ is a zero-divisor in $\tilde{\bC}_X$
and that $\tilde{\bC}^{[s,r]}_X$ is $t$-adically separated.
\end{remark}

\begin{remark} \label{R:infinite orbit2}
Suppose that $K$ is an extension of an algebraic Galois extension $K_0$ with Galois group $G$
and that $X = X_0 \times_{K_0} K$ where $X_0$ is an affinoid space over $K_0$.
Let $\calF$ be a sheaf of $\tilde{\bC}^{[s,r]}_{X_0}$-modules represented by a finitely generated
module $M$ over $\tilde{\bC}^{[s,r]}_\psi$. 
By Lemma~\ref{L:Gamma-stable ideal C}, the first nonzero Fitting ideal of $M$ intersects
$\tilde{\bC}^{[s,r]}_K$ in a nonzero $G$-stable ideal.

Suppose now that $K_0$ is a $p$-adic field and $K$ is the completion of the $p$-cyclotomic extension.
By Remark~\ref{R:infinite orbit}, the only nonzero $G$-stable ideals are the powers of the ideals $\ker(\theta \circ \varphi^n)$ for those $n \in \ZZ$ for which $p^n \in [s,r]$;
consequently, the value of $t$ in Corollary~\ref{C:invert projective} may be taken to be a product of generators of such ideals.
\end{remark}

\subsection{Deformations of the base field}
\label{subsec:deformations}

In light of the preceding discussion, we discuss the multiplicative structure of $\tilde{\bC}^{[s,r]}_K$,
and the effect of quotienting by a prime ideal of this ring on a ring of the form $\tilde{\bC}^{[s,r]}_\psi$.
Note that rather different results appear if one works in a more ``homotopic'' manner; see Remark~\ref{R:effect of deformation}.

Throughout \S\ref{subsec:deformations}, continue to retain Hypothesis~\ref{H:Fitting C}.

\begin{lemma} \label{L:field factorization}
If $K$ is algebraically closed, then every nonzero element of $\tilde{\bC}^{[s,r]}_K$ 
factors as a product $t_1\cdots t_n$ in which for each $i$,
$K_i = \tilde{\bC}^{[s,r]}_K/(t_i)$ is a perfectoid field such that the images of $K$ and $K_i$ under the perfectoid correspondence are canonically identified (i.e., $K$ and $K_i$ have the same tilt).
\end{lemma}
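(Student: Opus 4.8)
The plan is to analyze the structure of $\tilde{\calR}^{[s,r]}_L = \tilde{\bC}^{[s,r]}_K$ when $L$ (equivalently $K$) is algebraically closed. First I would recall from Theorem~\ref{T:curve noetherian}(a)--(b) that $\tilde{\calR}^{[s,r]}_L$ is strongly noetherian and, more importantly, is a finite direct sum of principal ideal domains; since $K$ is algebraically closed the field $L$ is connected and one expects the ring to be a single PID (or at worst one can localize at each idempotent). So any nonzero $t \in \tilde{\calR}^{[s,r]}_L$ factors as a product of prime elements $t = t_1 \cdots t_n$ (up to a unit, which can be absorbed into one factor), and the task reduces to understanding a single quotient $\tilde{\calR}^{[s,r]}_L/(t_i)$ by a height-one prime.

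The key geometric input is the description of $\Spa(\tilde{\calR}^{[s,r]}_L, \tilde{\calR}^{[s,r],+}_L)$, which via Proposition~\ref{P:Robba perfectoid} is (after the harmless base extension from $E$ to $E_\infty$, or directly, using the splitting of Definition~\ref{D:Robba fractional}) a rational subdomain of a perfectoid space, namely $\Spa$ of $L\{(p^{-1/s}/\overline{\varpi}, \overline{\varpi}/p^{-1/r})^{p^{-\infty}}\}$. The closed points of this annulus over an algebraically closed $L$ correspond to maximal ideals of $\tilde{\calR}^{[s,r]}_L$; a height-one prime $(t_i)$ with residue ring a \emph{field} must be a maximal ideal of this PID, hence cuts out a single such point. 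Each such point $x$ has a residue field $\calH(x)$ which is an analytic field, and one checks it is perfectoid: the residue ring $\tilde{\calR}^{[s,r]}_L/(t_i)$ is the quotient of a Fontaine perfectoid ring by a closed prime ideal whose quotient is uniform, so by Theorem~\ref{T:Fontaine perfectoid compatibility}(iii) (case of surjective morphisms with uniform target) it is again Fontaine perfectoid. Thus $K_i := \tilde{\calR}^{[s,r]}_L/(t_i)$ is a perfectoid field.

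Finally, to see that $K$ and $K_i$ have the same tilt: the point $x$ of the perfectoid annulus $\Spa(B,B^+)$ with $B = L\{(p^{-1/s}/\overline{\varpi},\overline{\varpi}/p^{-1/r})^{p^{-\infty}}\}$ corresponds, via the perfectoid correspondence applied to $\Spa(B,B^+)$ and Theorem~\ref{T:Fontaine perfectoid homeomorphism}, to a point $x^\flat$ of $\Spa(B^\flat, B^{\flat+})$ whose residue field $\calH(x^\flat)$ is the tilt of $\calH(x) = K_i$. But $B^\flat$ is itself a perfect affinoid algebra over $L$ containing $L$ (it is a perfectoid annulus ring over $L$ in characteristic $p$), and since $L$ is already algebraically closed (hence perfectoid of char $p$ with tilt $L$ itself under the identity correspondence), the residue field at $x^\flat$ is an analytic extension field of $L$; one identifies the tilt $K^\flat = L$ with a subfield of $K_i^\flat$. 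The claim that $K$ and $K_i$ have \emph{the same} tilt amounts to showing $K_i^\flat = L$ as well; this should follow because $x^\flat$ is a closed point of a one-dimensional affinoid over the algebraically closed field $L$, so its residue field is $L$ by the Nullstellensatz for Tate algebras over an analytic field (\cite[Proposition~7.1.1/3]{bgr}), as invoked in Theorem~\ref{T:curve noetherian}(d).

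The main obstacle I anticipate is bookkeeping the base-field identification carefully: one must make sure the ``canonical'' identification of tilts coming from the perfectoid correspondence on $\tilde{\calR}^{[s,r]}_L/(t_i)$ is compatible with the canonical identification of $K^\flat$ with $L$, and that no spurious finite extension of $L$ sneaks in at the residue field of the point $x^\flat$. This is where the algebraically closed hypothesis on $K$ is genuinely used: it guarantees both that $\tilde{\calR}^{[s,r]}_L$ is a PID (so the prime factorization exists) and that the residue fields at closed points of the characteristic-$p$ annulus are just $L$, forcing $K_i^\flat = L = K^\flat$. A secondary (minor) technical point is handling the weighted-norm normalization $\lambda(\alpha^r)^{1/r}$ versus $\lambda(\alpha^r)$ (Remark~\ref{R:Robba norms}) so that the identification with the perfectoid annulus is isometric; this only affects constants and not the scheme-theoretic structure, so it should not cause real trouble.
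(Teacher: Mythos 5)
Your first step coincides with the paper's: the factorization into irreducibles is exactly what the paper extracts from Theorem~\ref{T:curve noetherian}(b). But for the rest the paper does \emph{not} argue softly; it cites the classification of irreducible elements from \cite[Theorem~6.8]{kedlaya-witt} (or \cite{fargues-fontaine}), and your attempt to rederive that classification has two genuine gaps. First, $\tilde{\bC}^{[s,r]}_K$ is not Fontaine perfectoid -- only $\tilde{\bC}^{[s,r]}_K \widehat{\otimes}_E E_\infty$ is (Proposition~\ref{P:Robba perfectoid}) -- so Theorem~\ref{T:Fontaine perfectoid compatibility}(iii) cannot be applied to the quotient $K_i$ as you state. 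If you base-change to $E_\infty$ first, the quotient you control is $K_i \widehat{\otimes}_E E_\infty$, which is not a field (for $K_i$ algebraically closed it is a perfectoid ring whose adic spectrum is a profinite set of points each with residue field $K_i$), and there is no tool in the paper to descend perfectoidness along $E \to E_\infty$ (Theorem~\ref{T:Fontaine finite etale descent} requires a finite \'etale morphism). Worse, case (iii) of Theorem~\ref{T:Fontaine perfectoid compatibility} presupposes that the target is \emph{uniform}, and uniformity of $\tilde{\bC}^{[s,r]}_K/(t_i)$ for the quotient norm is precisely the nontrivial content: a quotient of a uniform (even stably uniform) Banach ring by a closed maximal ideal need not be uniform, and proving that the quotient norm here is a multiplicative valuation is exactly what the division argument for primitive elements (in the spirit of Lemma~\ref{L:primitive division}, carried out in \cite{kedlaya-witt}) delivers. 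Knowing abstractly that the quotient is a field and complete does not even give you that it equals $\calH(\alpha)$ for the corresponding point $\alpha$ of $\calM(\tilde{\bC}^{[s,r]}_K)$; density plus completeness of the source does not yield surjectivity.

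Second, the tilt identification is circular as written. The point of $\Spa$ of $L\{(p^{-1/s}/\overline{\varpi},\overline{\varpi}/p^{-1/r})^{p^{-\infty}}\}$ attached to your maximal ideal is not known to be a classical ($L$-rational) point; asserting that its residue field is $L$ is literally equivalent to the assertion $K_i^\flat = L$ that you are trying to prove. The Nullstellensatz \cite[Proposition~7.1.1/3]{bgr} applies to topologically finite type affinoid algebras and says nothing about this perfectoid (non-tft) annulus; and closed points of the adic or Berkovich annulus over an algebraically closed field include type 2, 3, 4 points whose residue fields are strictly larger than $L$, so ``closed point of a one-dimensional affinoid $\Rightarrow$ residue field $L$'' fails in the relevant generality. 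The workable route is the one the paper cites: show that each irreducible element generates the same ideal as a Fontaine primitive element of degree 1 and read off the quotient from the theta map, which by construction is an untilt of $L$; that is the content of \cite[Theorem~6.8]{kedlaya-witt}, and the paper quotes it rather than reproving it.
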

\begin{proof}
The factorization into irreducibles is a consequence of 
$\tilde{\bC}^{[s,r]}_K$ being a principal ideal domain (Theorem~\ref{T:curve noetherian}(b)). The structure of irreducible elements follows from 
\cite[Theorem~6.8]{kedlaya-witt} (or see \cite{fargues-fontaine}).
\end{proof}

\begin{remark} \label{R:match norm}
Beware that in Lemma~\ref{L:field factorization}, matching the tilts of $K$ and $K_i$ as normed fields may require replacing the norm on $K_i$ with a power thereof. This has no notable effect on the following arguments.
\end{remark}

\begin{remark} \label{R:effect of deformation}
In Lemma~\ref{L:field factorization}, one possible irreducible factor
is a generator $t_\theta$ of the kernel of the theta map $\theta: \tilde{\bC}^{[s,r]}_K \to K$, which can occur whenever $1 \in [s,r]$. In classical $p$-adic Hodge theory, $t_\theta$ is enough to account for all possible torsion of pseudocoherent $(\varphi, \Gamma)$-modules (see Remark~\ref{R:infinite orbit} and Remark~\ref{R:infinite orbit2}).
A similar statement holds if one works exclusively with rigid spaces over a $p$-adic field; in this case, the structure of pseudocoherent $(\varphi, \Gamma)$-modules can be directly controlled in terms of pseudocoherent $\widehat{\calO}$-modules. 

By contrast,
for general $K$, other irreducible factors may occur.
For smooth spaces, there is a weak sense in which these behave like
$t_\theta$ for other choices of the field $K$; for instance, Vezzani \cite{vezzani} uses this 
type of argument to obtain some consequences for motives of rigid analytic spaces. However,
we will see below that for our purposes, other factors contribute essentially nothing to the module theory of
pseudocoherent $(\varphi, \Gamma)$-modules, so the situation is ultimately similar to that observed over a $p$-adic base field.
\end{remark}

\begin{defn}
For $0 < s \leq r$,
an element $x = \sum_{i \in \ZZ[p^{-1}]} x_i T^i \in K\{ (s/T)^{p^{-\infty}}, (T/r)^{p^{-\infty}}\}$ is
\emph{prepared} if there exists some $e \in \ZZ[p^{-1}] \cap [0, +\infty)$ such that:
\begin{gather*}
\left| x_0 \right| = \left| x_e \right| = 1;  \\
\left| x_i \right| \leq 1 \qquad ( i \in (0,e)); \\
x_i = 0 \qquad (i \notin [0,e]).
\end{gather*}
We refer to $e$ as the \emph{width} of $x$. Note that the product of prepared elements of widths $e_1,\dots,e_m$ is prepared of width $e_1 + \cdots + e_m$.
\end{defn}

\begin{lemma} \label{L:perfect Weierstrass prep}
For $0 < s \leq r$,
every nonzero element of $K\{(s/T)^{p^{-\infty}}, (T/r)^{p^{-\infty}}\}$ factors as a unit times a prepared element.
\end{lemma}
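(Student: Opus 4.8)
The statement is the analogue of Weierstrass preparation for the perfectoid annulus ring $K\{(s/T)^{p^{-\infty}}, (T/r)^{p^{-\infty}}\}$, and the plan is to reduce it to the corresponding statement for the non-perfect ring $K\{s/T, T/r\}$, where it is a standard fact (the ring of analytic functions on a closed annulus over an algebraically closed or at least a maximally complete field — actually for any analytic field, by taking the Newton polygon of a Laurent series and factoring off the ``lower boundary'' of the polygon). First I would normalize: given a nonzero $x$, after multiplying by a suitable power $T^{-j}$ with $j \in \ZZ[p^{-1}]$ and by a suitable scalar in $K^\times$, I may assume that $|x|_\rho = 1$ for $\rho$ in the relevant range and that the Newton polygon of $x$ has its first vertex at $(0, 0)$; since $x$ is a null sequence (it lies in $K\{(s/T)^{p^{-\infty}}, (T/r)^{p^{-\infty}}\}$, so the coefficients $x_i$ tend to $0$ away from finitely many indices), the polygon has finitely many vertices with slopes in the relevant interval, and there is a well-defined ``last'' vertex $(e, 0)$ at height $0$, with $e \in \ZZ[p^{-1}]_{\geq 0}$. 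The prepared factor will be the ``polynomial part'' cut out by the segment $[0, e]$ of the polygon, and the unit will be the remaining factor, which has all its slopes outside $[s, r]$ hence is invertible in the annulus ring.

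The subtlety, and what I expect to be the main obstacle, is that we are working with \emph{fractional} exponents: the exponent monoid is $\ZZ[p^{-1}]_{\geq 0}$ rather than $\ZZ_{\geq 0}$, so the ``polynomial part'' is not a polynomial but an element of $K\{(T/r)^{p^{-\infty}}\}$ of bounded width, and the classical Weierstrass division/preparation argument (Euclidean division by a distinguished polynomial) does not literally apply. The clean way around this is to pass through a finite level: for each fixed denominator $p^n$, the subring generated by $T^{p^{-n}}$ is just an ordinary annulus ring in the variable $U = T^{p^{-n}}$, namely $K\{(s^{p^{-n}}/U)^{p^{-\infty}\cdot p^n}, \ldots\}$ — more precisely $K\{(s/T)^{p^{-\infty}},(T/r)^{p^{-\infty}}\}$ is the completed perfection (completed increasing union under $U \mapsto U^p$) of $K\{s^{p^{-n}}/U, U/r^{p^{-n}}\}$ over all $n$. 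I would choose $n$ large enough that the vertex $(e,0)$ and all lower vertices have exponents in $p^{-n}\ZZ$ (possible since there are finitely many vertices in the relevant range and each has an exponent in $\ZZ[p^{-1}]$), factor $x$ as a unit times a distinguished polynomial in $U$ at level $n$ using the classical theory over the analytic field $K$ (e.g. \cite[Proposition~8.3.2]{kedlaya-course} or the Newton-polygon factorization of Laurent series, which works over any complete nonarchimedean field once $K$ is algebraically closed — and here $K$ being algebraically closed guarantees the polynomial splits, though we need not split it: ``prepared'' only requires the boundary normalization, not a splitting), and then reinterpret the factorization inside the perfectoid ring. The unit factor at level $n$ remains a unit at the perfectoid level because invertibility can be checked on Berkovich spectra, which are unchanged under the perfection (this is the content of the perfectoid correspondence, Theorem~\ref{T:Fontaine perfectoid homeomorphism}, applied to the perfectoid ring $K\{(s/T)^{p^{-\infty}},(T/r)^{p^{-\infty}}\}$, or more elementarily the fact that $K\{s^{p^{-n}}/U, U/r^{p^{-n}}\} \to K\{(s/T)^{p^{-\infty}},(T/r)^{p^{-\infty}}\}$ splits as a Banach module and sends the spectral norm to the spectral norm).

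Finally I would check the width bookkeeping: the distinguished polynomial produced at level $n$, viewed back in $T$, has width equal to its $U$-degree times $p^{-n}$, which is exactly $e$; its leading and trailing coefficients have norm $1$ by the normalization of the Newton polygon; and its intermediate coefficients have norm $\leq 1$ because the segment $[0,e]$ lies on the lower boundary of the polygon and the polygon lies above the $x$-axis on $[0,e]$. This gives the ``prepared'' conditions on the nose. One should double-check that the factorization is genuinely into an element of the ring (not just a formal Laurent series) — but the prepared factor has finite width and bounded coefficients so it visibly lies in $K\{(T/r)^{p^{-\infty}}\} \subseteq K\{(s/T)^{p^{-\infty}},(T/r)^{p^{-\infty}}\}$, and the unit factor lies in the ring because its reciprocal does (all its polygon slopes avoid $[\log_\rho s$-type bounds$]$, wait — precisely: the complementary factor has Newton polygon slopes all $> $ those appearing on $[0,e]$, equivalently its reciprocal converges on the whole annulus). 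I would present this last paragraph as the routine verification it is. Overall the only real content is the reduction to a finite level, which is where the fractional-exponent obstacle is resolved.
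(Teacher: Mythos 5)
There is a genuine gap at the central step of your reduction. You choose $n$ so that the finitely many Newton polygon vertices of $x$ in the relevant range have exponents in $p^{-n}\ZZ$, and then propose to ``factor $x$ as a unit times a distinguished polynomial in $U = T^{p^{-n}}$ at level $n$ using the classical theory.'' But controlling the polygon vertices does not put $x$ in the level-$n$ subring $K\{s^{p^{-n}}/U, U/r^{p^{-n}}\}$: a general element of $K\{(s/T)^{p^{-\infty}}, (T/r)^{p^{-\infty}}\}$ has infinitely many nonzero coefficients whose exponents have unbounded denominators (e.g.\ $1 + T + \sum_k a_k T^{\theta_k}$ with $\theta_k \in (0,1)\cap\ZZ[p^{-1}]$ of unbounded denominator and $a_k \to 0$), so the classical preparation theorem simply does not apply to $x$ at any finite level. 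The ring is the \emph{completion} of the union of the finite levels, and density is exactly what your argument does not address: to conclude you would need either to approximate $x$ by a finite-level element and then correct the resulting approximate factorization by an iteration/contraction argument in the big ring, or to prove a Weierstrass division (or a uniqueness statement for the prepared factor strong enough to pass to the limit) with fractional exponents. Neither is supplied, and supplying the first amounts to redoing the successive-approximation proof of preparation directly in the perfectoid ring. A smaller point: your appeal to the perfectoid correspondence (tilting) to see that the unit factor stays a unit is misplaced — a unit of an isometrically embedded subring is trivially a unit of the big ring — and the parenthetical reliance on $K$ being algebraically closed is both unnecessary (preparation needs no splitting) and unavailable, since the running hypothesis only makes $K$ perfectoid.

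For comparison, the paper's proof is exactly the route your reduction tries to avoid: it observes that the successive-approximation proof of Weierstrass preparation for annulus rings in \cite[Proposition~8.3.2]{kedlaya-course} is a norm/convergence argument that never uses integrality of the exponents, so it ``adapts without incident'' to $K\{(s/T)^{p^{-\infty}}, (T/r)^{p^{-\infty}}\}$. In other words, the fractional-exponent issue you identify as the main obstacle is not an obstacle for the direct argument; it only becomes one when you insist on quoting the integral-exponent statement at a finite level, which is where your proof breaks down.
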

\begin{proof}
The proof of the Weierstrass preparation theorem for Tate algebras given in \cite[Proposition~8.3.2]{kedlaya-course} adapts without incident.
\end{proof}

\begin{lemma} \label{L:separate zeroes}
Let $x \in K\{T^{\pm p^{-\infty}}\}$ be prepared. Choose $\mu \in \gothm_K$ nonzero and
$\lambda_1,\dots,\lambda_m \in \gotho_K$ with distinct images in $\kappa_K$. 
\begin{enumerate}
\item[(a)]
There exists a unique factorization of $x$ into a product $x_1 \cdots x_m y$ with the property that for each $\alpha \in \calM(K\{T^{\pm p^{-\infty}}\})$,
\begin{itemize}
\item
if $\alpha(T - 1 + \lambda_i \mu) < \alpha(\mu)$ for some $i \in \{1,\dots,m\}$,
then $\alpha(x_j) \neq 0$ for $j \in \{1,\dots,m\} \setminus \{i\}$ and
$\alpha(y) \neq 0$;
\item
otherwise, $\alpha(x_j) \neq 0$ for $j \in \{1,\dots,m\}$. 
\end{itemize}
\item[(b)]
Suppose that $p^{1/(p-1)} < \left| \mu \right| < 1$.
For any $\nu \in \gothm_K$ and any nonnegative integer $k$, if
$x$ is congruent modulo $\nu$ to an element of $\gotho_K[T^{ p^{-k}}]$, then so are $x_1,\dots,x_m,y$.
\end{enumerate}
\end{lemma}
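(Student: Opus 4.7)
The plan is to first establish part (a) by reducing the factorization problem to the classical (finite-level) Tate algebra setting, and then to deduce part (b) by tracking the approximation modulo $\nu$.

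\textbf{Step 1: Geometric setup.} First I would observe that the open disks $D_i = \{\alpha : \alpha(T - 1 + \lambda_i \mu) < \alpha(\mu)\}$ on $\calM(K\{T^{\pm p^{-\infty}}\})$ are pairwise disjoint: for $i \neq j$ we have $|\lambda_i - \lambda_j| = 1$ since $\lambda_i, \lambda_j$ have distinct images in $\kappa_K$, so the mutual distance between centers of $D_i$ and $D_j$ is exactly $|\mu|$, matching the (open) disk radius. This disjointness is the geometric content behind the desired factorization: each $x_i$ should be the part of $x$ whose zeros lie in $D_i$, while $y$ carries the zeros lying outside all the $D_i$.

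\textbf{Step 2: Factorization at finite level.} Next I would treat the case $x \in \gotho_K[T^{p^{-k}}]$ for some $k \geq 0$. Substituting $S = T^{p^{-k}}$, I view $x$ as a Laurent polynomial in $S$. Because $x$ is prepared, it admits a Weierstrass factorization through Newton-polygon analysis; the condition defining $D_i$ pulls back to a condition on $S$, and I may group the linear factors of $x$ over $\gotho_{\CC_K}$ according to which disk $D_i$ (or its complement) contains their root. This produces a unique factorization $x = x_1 \cdots x_m y$ into prepared factors in $\gotho_K[S^{\pm}] = \gotho_K[T^{\pm p^{-k}}]$, with $x_i$ collecting the roots in $D_i$ and $y$ collecting the rest. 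Uniqueness is clear at this level.

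\textbf{Step 3: Lifting to the perfectoid setting via approximation.} For general prepared $x$, I would pick a sequence $\tilde{x}^{(k)} \in \gotho_K[T^{p^{-k}}]$ approximating $x$ in the supremum norm (with the same width $e$), apply Step 2 to each, and show that the resulting factorizations form a Cauchy sequence with limit $x = x_1 \cdots x_m y$. The convergence is a Hensel-type argument exploiting the disjointness of the $D_i$: on the complement of $\bigcup_i D_i$ each factor except $y$ is bounded away from $0$, and analogous statements hold on each $D_i$, so the implicit function theorem (in the form of a Banach contraction on the product of factor spaces) lifts any small perturbation of $x$ uniquely to a small perturbation of the factorization. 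Uniqueness at the limit then follows from uniqueness at each finite level combined with the continuity of the lift.

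\textbf{Step 4: Part (b).} Under the hypothesis $p^{1/(p-1)} < |\mu| < 1$ (read in the paper's normalization as $|p|^{1/(p-1)} < |\mu|$, so that $|\mu^{p-1}| > |p|$), suppose $x \equiv x' \pmod{\nu}$ with $x' \in \gotho_K[T^{p^{-k}}]$. I would apply Step 2 to $x'$, obtaining a factorization $x' = x'_1 \cdots x'_m y'$ in $\gotho_K[T^{p^{-k}}]$. Since $x$ and $x'$ agree mod $\nu$, the Hensel lift of Step 3 (applied with $x'$ as starting point) produces the factorization of $x$ whose factors differ from $x'_1, \ldots, x'_m, y'$ by elements in $\nu \cdot \gotho_K\{T^{\pm p^{-\infty}}\}$. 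The constraint on $|\mu|$ enters precisely in the quantitative bound for the Hensel lift: it ensures the contraction constant is small enough that no additional loss of precision occurs, so congruences modulo $\nu$ propagate through the factorization.

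\textbf{Main obstacle.} The chief technical difficulty I expect is the Hensel estimate in Step 3, namely showing that small perturbations of $x$ perturb the factorization by comparable amounts. The obvious contraction-mapping approach requires a precise lower bound on $x_j$ restricted to $D_i$ for $j \neq i$ and on $y$ restricted to each $D_i$, and an upper bound on the inverses of certain ``resultant-type'' quantities that measure the separation between factors. Establishing these bounds uniformly in the perfectoid setting, with enough slack to prove (b), will require careful use of the disjointness estimate $|\lambda_i - \lambda_j| = |\mu|$ together with the constraint $|\mu^{p-1}| > |p|$; the bookkeeping to maintain preparedness of each factor throughout the lift is the delicate point.
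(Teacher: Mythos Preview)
Your approach is correct and matches the paper's, which simply says that both parts follow from a suitably strong form of Hensel's lemma (citing \cite[Theorem~2.2.2]{kedlaya-course}); your Steps~1--4 are a detailed unpacking of exactly that application, with the disjointness of the disks supplying the separation condition and the contraction-mapping lift playing the role of the Hensel refinement. Your identification of the $|\mu|$ constraint as the quantitative input needed for the Hensel estimate in part~(b) is also on target, as confirmed by the paper's subsequent remark that the example $x = T-1$ shows the bound is sharp.
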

\begin{proof}
Both parts may be deduced using a suitably strong version of Hensel's lemma, such as
\cite[Theorem~2.2.2]{kedlaya-course}.
\end{proof}
\begin{remark}
In Lemma~\ref{L:separate zeroes}, the lower bound on the norm of $\mu$ is best possible in light of the example $x = T-1$.
\end{remark}

\begin{lemma} \label{L:no gamma kernel}
Let $L$ be the tilt of $K$ and choose $\overline{q} \in 1 + \gothm_L$
such that $q = \theta([\overline{q}])$ is not a root of unity.
Equip $K\{T^{\pm p^{-\infty}}\}$ with the action of $\Gamma = \ZZ_p$ given by
\[
\gamma\left( \sum_i x_i T^i \right) = \sum_i x_i \theta([\overline{q}^{\gamma i}]) T^i.
\]
Let $\alpha \in \calM(K\{T^{\pm p^{-\infty}}\})$ be the seminorm given by quotienting by the ideal
$(T-1, T^{1/p}-1, \dots)$.
Then 
\[
\bigcap_{\gamma \in \Gamma} \ker(\gamma^*(\alpha)) = 0.
\]
\end{lemma}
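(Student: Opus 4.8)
The plan is to show that any element $x$ lying in the intersection $\bigcap_{\gamma\in\Gamma}\ker(\gamma^*(\alpha))$ must vanish, by using the separation-of-zeroes machinery of Lemma~\ref{L:separate zeroes} together with the fact that the $\Gamma$-orbit of the point $\alpha$ under the given action is infinite. First I would reduce to the case where $x$ is a nonzero prepared element: by Lemma~\ref{L:perfect Weierstrass prep} any nonzero $x\in K\{(s/T)^{p^{-\infty}},(T/r)^{p^{-\infty}}\}$ (shrinking to a closed annulus containing $T=1$, e.g.\ $[s,r]$ with $s\le 1\le r$) factors as a unit times a prepared element, and units have zero seminorm at no point, so $\ker(\alpha)$ is a prime of the principal ideal domain $\tilde{\bC}^{[s,r]}_K$ and we may work with the prepared factor. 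The condition $x\in\ker(\gamma^*(\alpha))$ for all $\gamma$ says precisely that every point $\gamma^*(\alpha)$ of the $\Gamma$-orbit of $\alpha$ lies in the zero locus of $x$.

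Next I would make the $\Gamma$-action explicit on the relevant points. The point $\alpha$ corresponds to quotienting by $(T-1,T^{1/p}-1,\dots)$, i.e.\ to ``$T=1$''; under the action $\gamma(T^i)=\theta([\overline q^{\gamma i}])T^i$, the translate $\gamma^*(\alpha)$ is the point ``$T=\theta([\overline q^{-\gamma}])$'' (or $\theta([\overline q^\gamma])$ up to sign of exponent convention). Since $q=\theta([\overline q])$ is not a root of unity and $\overline q\in 1+\gothm_L$, the elements $\theta([\overline q^\gamma])-1$ for $\gamma\in\ZZ_p$ form an infinite set of elements of $\gothm_K$ tending to $0$; in particular we may extract, for each nonnegative integer $n$, finitely many $\gamma$'s whose translates $\gamma^*(\alpha)$ are ``$\mu$-close to $T=1$'' for a parameter $\mu\in\gothm_K$ with $p^{1/(p-1)}<|\mu|<1$, with the corresponding $\lambda_i$'s having distinct images in $\kappa_K$. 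Applying Lemma~\ref{L:separate zeroes}(a) to the prepared element $x$ relative to such data $\mu,\lambda_1,\dots,\lambda_m$ forces $x$ to be divisible by a product $x_1\cdots x_m$ whose width grows with $m$; but since $m$ can be made arbitrarily large (the orbit being infinite and accumulating at $T=1$), the width of $x$ would exceed any bound, contradicting that $x$ is prepared of fixed finite width unless $x$ is a unit — i.e.\ unless $x=0$ in the original ring after removing the unit factor. (Equivalently: a nonzero element of the principal ideal domain $\tilde{\bC}^{[s,r]}_K$ has only finitely many distinct prime divisors, hence vanishes at only finitely many of the points $\gamma^*(\alpha)$, so it cannot lie in all of their kernels.)

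The main obstacle I anticipate is twofold. First, one must verify carefully that the $\Gamma$-orbit $\{\gamma^*(\alpha):\gamma\in\Gamma\}$ really is infinite and accumulates at the point $\alpha$ in the sense needed to feed Lemma~\ref{L:separate zeroes} — this is where the hypothesis that $q$ is not a root of unity enters, via the injectivity of $\gamma\mapsto\theta([\overline q^\gamma])\bmod\gothm_K^{N}$ for suitable $N$, combined with the behavior of the $\gothm_K$-adic valuation of $\overline q^{\gamma}-1$ as $\gamma$ ranges over $\ZZ_p$ (a computation analogous to the one with $\overline\pi$ in the proof of Lemma~\ref{L:cyclotomic decompleting}, keeping in mind Remark~\ref{R:match norm} about possibly replacing the norm on $K$ by a power). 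Second, one must handle the bookkeeping between the two parameter conventions — the annulus $[s,r]$ versus the full ring $K\{(s/T)^{p^{-\infty}},(T/r)^{p^{-\infty}}\}$ — and ensure that the factorizations produced by Lemma~\ref{L:perfect Weierstrass prep} and Lemma~\ref{L:separate zeroes} can be carried out uniformly; this is routine but requires attention. I expect the cleanest writeup to bypass the explicit width argument entirely and simply invoke that $\tilde{\bC}^{[s,r]}_K$ is a principal ideal domain by Theorem~\ref{T:curve noetherian}(b), so a nonzero element has finitely many prime divisors and hence vanishes at only finitely many of the infinitely many distinct points $\gamma^*(\alpha)$ — reducing the whole proof to verifying the infinitude and distinctness of the orbit.
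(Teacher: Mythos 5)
There is a genuine gap, and it sits at the heart of your argument. The ring in the lemma is the perfectoid Laurent algebra $K\{T^{\pm p^{-\infty}}\}$, not $\tilde{\bC}^{[s,r]}_K$; Theorem~\ref{T:curve noetherian}(b) does not apply to it, and it is neither noetherian nor a principal ideal domain. Worse, the principle your ``clean'' shortcut relies on — that a nonzero element vanishes at only finitely many of the points $\gamma^*(\alpha)$ — is simply false in this ring. For instance, when $K$ contains the $p$-power roots of unity, the single element $T-1$ vanishes at \emph{every} point lying over ``$T=1$'' at level $0$, i.e.\ at an infinite profinite set of points of $\Spa(K\{T^{\pm p^{-\infty}}\})$ (one for each compatible system of $p$-power roots of $1$); the whole content of the lemma is precisely that the particular infinite pro-$p$ orbit $\{\gamma^*(\alpha)\}$ is \emph{not} contained in the zero locus of a nonzero element, which is a genuinely delicate statement. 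Your width-growth variant fails for the same underlying reason: Lemma~\ref{L:separate zeroes}(a) splits off factors but gives no lower bound on their widths, and prepared non-units can have arbitrarily small width while having many zeroes (e.g.\ $T^{(p^n-1)/p^n}-c$ has width $(p^n-1)/p^n<1$ and $p^n-1$ distinct zeroes), so separating $m$ orbit points does not force the width of $x$ to grow with $m$, and no contradiction with the fixed width $e$ results.

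The paper's proof supplies exactly the missing mechanism, and it is essential rather than a cleaner bypass. Starting from a prepared $x$ of width $\leq e$ vanishing on the orbit, one factors at the $p$ residue discs around $T=q^0,\dots,q^{p-1}$ (Lemma~\ref{L:separate zeroes}(a)), uses pigeonhole to pick a factor $x_{l,i}$ of width $\leq e/p$, and replaces $x_l$ by $(\varphi\circ\gamma_i^{-1})(x_{l,i})$: the translation restores vanishing on the whole orbit and the Frobenius $T\mapsto T^p$ restores width $\leq e$ while pushing the fractional exponents toward integral ones. Lemma~\ref{L:separate zeroes}(b) then shows that, modulo any fixed $\nu\in\gothm_K$, after finitely many iterations $x_l$ is congruent to an honest polynomial $P\in\gotho_K[T]$ of degree at most $m=\lfloor e\rfloor$; writing $P$ in the basis $T(T-1)\cdots(T-q^{i-1})$ and evaluating at $q^0,\dots,q^m$, the choice $|\nu|<\left|(1-q)\cdots(1-q^m)\right|$ — this is where the hypothesis that $q$ is not a root of unity is actually used, not merely to make the orbit infinite — forces all coefficients of $P$ to be small, contradicting $|x_l|=1$ from preparedness. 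None of this iteration, nor the final interpolation estimate, is present in your proposal, and the orbit-infinitude verification you focus on is the easy part of the lemma rather than its core difficulty.
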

\begin{proof}
By replacing $q$ with a suitable $p$-power root, we may reduce to the case where $\left| 1-q \right| > p^{1/(p-1)}$.
Suppose to the contrary that $x$ is a nonzero element of the intersection;
by Lemma~\ref{L:perfect Weierstrass prep}, we may assume in addition that $x$ is prepared.
Define the Frobenius lift $\varphi$ on $K\{T^{\pm p^{-\infty}}\}$ by
\[
\varphi\left( \sum_i x_i T^i \right) = \sum_i x_i T^{pi}.
\]
Define a sequence $x = x_0, x_1, \dots$ of elements of the intersection 
of width at most $e$
as follows.
Using Lemma~\ref{L:separate zeroes}, 
factor $x_l$ as a product $x_{l,0} \cdots x_{l,p-1} y_l$
of prepared elements with the property that for $\beta \in \calM(K\{T^{\pm p^{-\infty}}\})$,
\begin{itemize}
\item
if $\beta(T-q^i) < \left| 1-q \right|$ for some (necesssarily unique) $i \in \{0,\dots,p-1\}$, then $\beta(x_{l,j}) > 0$ for $j \in \{0,\dots,p-1\} \setminus \{i\}$
and $\beta(y_l) > 0$;
\item
otherwise, $\beta(x_{l,i}) > 0$ for $i \in \{0,\dots,p-1\}$.
\end{itemize}
In particular, $\alpha(x_{l,0}) = 0$; more generally, for $\gamma \in \Gamma$ corresponding to an element of $\ZZ_p$ congruent to $i$ modulo $p$, we have $\alpha(\gamma^{-1}(x_{l,i})) = 0$.

Since the widths of $x_{l,0},\dots,x_{l,p-1},y$ add up to at most $e$, we can find $i \in \{0,\dots,p-1\}$ such that $x_{l,i}$ has width at most $e/p$. Choose some such $i$, let $\gamma_i \in \Gamma_p$ be the element corresponding to $i \in \ZZ_p$, and put
\[
x_{l+1} = (\varphi \circ \gamma_i^{-1})(x_{l,i});
\]
then again $\alpha(\gamma(x_{l+1})) = 0$ for all $\gamma \in \Gamma$.

By Lemma~\ref{L:separate zeroes}(b), if $x_l$ is congruent modulo some $\nu \in \gothm_K$
to an element of $\gotho_K[T^{p^{-k}}]$ for some positive integer $k$, then $x_{l+1}$ is congruent modulo $\nu$ to an element of $\gotho_K[T^{\pm p^{1-k}}]$. In particular, for any fixed nonzero $\nu \in \gothm_K$, for $l$ sufficiently large, $x_l$ is congruent to an element of $K[T^{\pm}]$. In particular, the width of $l$ is bounded by $m = \lfloor e \rfloor$.

Since $q$ is not a root of unity, we can choose $\nu$ to be nonzero but with norm less than $\left| (1-q)\cdots(1 - q^m) \right|$.
By the previous paragraph, for each sufficiently large $l$, there exists a polynomial $P(T) \in \gotho_K[T]$ of degree at most $m$ such that $\left| P - x_l \right| \leq \mu$. 
Write 
\[
P(T) = \sum_{i=0}^m P_i (T-1)\cdots (T-q^{i-1}) \qquad (P_i \in \gotho_K).
\]
Since $x_l$ belongs to the intersection of the kernels, we have $\left| P(q^i) \right| \leq \nu$ for all $i \in \ZZ_p$, so in particular for $i=0,\dots,m$. However, this implies that
\[
\left| P_i \right| \leq \frac{\nu}{\left| (1-q)\cdots (1-q^i) \right|} \qquad (i=0,\dots,m),
\]
but this contradicts the fact that $x_l$ is prepared and so $\left| x_l \right| = 1$.
This contradiction completes the proof.
\end{proof}

\begin{prop} \label{P:twisted stable submodules}
Suppose that $X$ is connected and
$t \in \tilde{\bC}^{[s,r]}_K$ is irreducible and coprime to $\varphi^m(t_\theta)$ for all $m \in \ZZ$.
Let $I$ be a nonzero ideal of $\tilde{\bC}^{[s,r]}_{\psi}/(t)$
whose two base extensions to $\tilde{\bC}^{[s,r]}_{\psi^{1}}/(t)$ are the same ideal. Then $I$ is the unit ideal.
\end{prop}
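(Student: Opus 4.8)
The plan is to descend everything, via base change and resolution of singularities, to a one-dimensional toric situation governed by Lemma~\ref{L:no gamma kernel}, and then quote that lemma. Several harmless reductions come first. Since $\tilde{\bC}^{[s,r]}_K$ is a principal ideal domain (Theorem~\ref{T:curve noetherian}(b)) and the formation of $\tilde{\bC}^{[s,r]}_\psi$, of $(t)$, and of $I$ commutes with the faithfully flat base change from $K$ to a completed algebraic closure $\overline K$ (which also preserves coprimality of $t$ to the orbit $\{\varphi^m(t_\theta)\}$, since the relevant zero loci do not collapse), I may assume $K$ algebraically closed; then $L=K^{\flat}$, and every untilt of $L$, is also algebraically closed. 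Replacing $I$ by its closure (still satisfying the descent condition, and still proper unless $I$ already contains a unit, the unit group being open) I may assume $I$ closed, and it suffices to derive a contradiction from the assumption that $I$ is both nonzero and proper. Finally, using the principal-ideal-domain structure of $\tilde{\bC}^{[s,r]}_K$ and the Chinese Remainder Theorem, I reduce to $t$ irreducible.

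With $t$ irreducible, $K':=\tilde{\bC}^{[s,r]}_K/(t)$ is a perfectoid field with the same tilt $L$ as $K$ (Lemma~\ref{L:field factorization}), and I would check that $A'_\psi:=\tilde{\bC}^{[s,r]}_\psi/(t)$ is exactly the perfectoid $K'$-algebra with tilt $\tilde R_\psi$ attached to the Fontaine-primitive ideal $(t)$ by Theorem~\ref{T:Fontaine perfectoid correspondence}, carrying the descent datum inherited from $\psi$ (which fixes $K'$, as $t$ and $K$ are fixed by the descent of $\psi$). The crucial point: writing $\bar\mu\in\tilde R_\psi$ for the tower element governing the $\Gamma$-action on the toric coordinates, the element $q:=\theta_{K'}([\bar\mu])$ by which $\Gamma$ scales those coordinates in $A'_\psi$ is \emph{not} a root of unity, because the zero locus of $[\bar\mu^{p^k}]-1$ is contained in $\bigcup_m V(\varphi^m(t_\theta))$ for every $k$ (the content behind the infinite-product factorization of such elements), so $q^{p^k}=1$ would force $t$ into one of the ideals $\varphi^m(t_\theta)$, contradicting the hypothesis. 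This is the precise place where the coprimality assumption is consumed.

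Now suppose $I\neq 0$ is proper, and let $Z\subseteq\calM(A'_\psi)$ be its support: a nonempty, proper, closed, $\Gamma$-stable subset not containing the Gauss point (no nonzero element of a perfectoid toric algebra vanishes at a Gauss point), hence containing a classical point $\alpha$ since $K'$ is algebraically closed. Using Temkin's resolution of singularities (Remark~\ref{R:resolution}), the surjectivity of the induced map on Robba-ring spectra (Lemma~\ref{L:Robba v-covering}), which lets one check the unit-ideal property after pullback, and Remark~\ref{R:cover smooth by toric}, I reduce to the case where $X$ is the base of a standard toric tower; then, by restricting to a $\Gamma_i$-invariant coordinate line through the chosen classical support point (arranging by a genericity choice that $I$ does not become zero) and translating that point to $T=1$, I land in the setting of Lemma~\ref{L:no gamma kernel}: $A'_\psi\cong K'\{T^{\pm p^{-\infty}}\}$ with $\Gamma=\ZZ_p$ acting by $\gamma(\sum_i x_iT^i)=\sum_i x_iq^{\gamma i}T^i$, and $I$ contained in $\ker(\alpha)$ for $\alpha$ the seminorm given by quotienting by $(T-1,T^{1/p}-1,\dots)$. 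By $\Gamma$-stability, $I\subseteq\bigcap_{\gamma\in\Gamma}\ker(\gamma^*\alpha)$, which is $0$ by Lemma~\ref{L:no gamma kernel}; hence $I=0$, a contradiction, so $I$ is the unit ideal.

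The main obstacle is the last reduction: converting a nonzero $\Gamma$-stable ideal of the ``deformed'' perfectoid toric algebra $A'_\psi$ into exactly the one-variable configuration of Lemma~\ref{L:no gamma kernel}. This requires the resolution step with descent of the conclusion, and especially the passage from $d$ torus coordinates to one along a coordinate line chosen to meet the support of $I$ nontrivially — the step where one must ensure the restricted ideal stays nonzero — all while carrying along the non-torsion property of $q$ established above; the identification of $A'_\psi$ as an untilt with its descent datum, though routine given Theorem~\ref{T:Fontaine perfectoid correspondence}, also deserves care.
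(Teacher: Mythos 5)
Your overall strategy---use the coprimality hypothesis to put yourself in a situation where $\Gamma$ scales a perfectoid toric coordinate by an element $q$ that is not a root of unity, and then invoke Lemma~\ref{L:no gamma kernel}---is the same as the paper's, and your preliminary reductions ($K$ algebraically closed, $t$ irreducible, $K'=\tilde{\bC}^{[s,r]}_K/(t)$ a perfectoid field via Lemma~\ref{L:field factorization}, non-torsion of $q$) agree with what the paper does. The genuine gap is exactly at the step you yourself flag: the passage from $d$ toric coordinates to one. Restricting $I$ to a $\Gamma_i$-invariant coordinate line through a point of its zero locus gives no control whatsoever: the restriction can perfectly well be zero (any generator of $I$ vanishing on a coordinate hypersurface through that point kills every line contained in it), and no ``genericity choice'' is available---you have only the $d$ coordinate directions to choose from, and nothing in the hypotheses prevents $I$ from dying on all of them. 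Moreover the existence of a ``classical point'' in the zero locus of a nonzero ideal of the non-noetherian ring $\tilde{\bC}^{[s,r]}_\psi/(t)$ is neither proved nor needed. The paper's proof is structured precisely so that no claim of the form ``the restriction of $I$ stays nonzero'' is ever required: it argues by induction on $\dim X$, fibers $X$ smoothly over an $(n-1)$-dimensional $X'$ admitting a section so that $\psi$ becomes a relative toric tower of relative dimension one, applies the dimension-one case to the fiber through (the projection of) a point $\alpha\in\calM$ with $I\subseteq\ker(\alpha)$ to conclude that the image of $I$ in that fiber \emph{vanishes}, and then plays this off against the dimension-$(n-1)$ case applied along the section to reach the contradiction.

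There is a second, smaller but real, omission in your one-dimensional step. Remark~\ref{R:cover smooth by toric} only makes a smooth curve locally \'etale over an annulus; to get into the range of Lemma~\ref{L:no gamma kernel} the paper must normalize, use Belyi maps in characteristic $p$ to make the curve finite \'etale over a single annulus $K\{s/T,T/r\}$, reduce $\psi$ to the restricted toric tower itself via the splitting of Remark~\ref{R:perfectoid tower splitting Robba}, and then---because $\tilde{\bC}^{[s,r]}_\psi/(t)$ is still only finite \'etale over $K'\{(s/T)^{p^{-\infty}},(T/r)^{p^{-\infty}}\}$---take norms down to the perfectoid annulus to turn the nonzero ideal $I$ into nonzero elements of that ring lying in the intersection of kernels, which is what Lemma~\ref{L:no gamma kernel} actually rules out. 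Your proposal assumes one can arrange $X$ to be literally the base of a standard toric tower, which is not available, and therefore skips the norm argument; likewise, translating the relevant point to $T=1$ requires enlarging $K$ (hence $K'$) so that the coordinate value lies in the field, as the paper does, rather than appealing to classical points.
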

\begin{proof}
By  \cite[Corollary~2.3.5]{part1},
it suffices to derive a contradiction under the assumption that $I$ is nonzero and there exists $\alpha \in \calM(\tilde{\bC}^{[s,r]}_{\psi}/(t))$ with $I \subseteq \ker(\alpha)$.
We proceed by induction on $n = \dim(X)$, the case $n=0$ being immediate from
Lemma~\ref{L:field factorization}. 

To treat the case $n=1$, we first reduce to the case where $K$ is algebraically closed,
so that by Lemma~\ref{L:field factorization},
$K' = \tilde{\bC}^{[s,r]}_K/(t)$ is a perfectoid field.
We then replace $X$ by its normalization (which is smooth),
then work locally to reduce to the case where $X$ is 
finite \'etale over $K\{s/T, T/r\}$ for some $0 < s \leq r$.
(Note that \emph{a priori} one must work with a rational localization of $K\{s/T, T/r\}$, but
using Belyi maps in characteristic $p$ one can further reduce to this setting.)
In particular, $X$ is the base of a restricted toric tower,
which we may as well take $\psi$ to factor through. 
Using Remark~\ref{R:perfectoid tower splitting Robba}, we may further reduce to the case where
$\psi$ itself is the restricted toric tower.
In this case, by Theorem~\ref{T:Fontaine perfectoid compatibility},
$\tilde{\bC}^{[s,r]}_{\psi}/(t)$ is finite \'etale over 
$K'\{(s/T)^{p^{-\infty}}, (T/r)^{p^{-\infty}}\}$.
By replacing $K$ with a suitable base extension, we may enlarge $K'$ to include an element $z$ such that $\alpha(T-z) = 0$;
by rescaling $T$, we may reduce to the case $z = 1$.
Since $t$ is coprime to $\varphi^m(t_\theta)$ for all $m \in \ZZ$, the action of a topological generator of $\Gamma \cong \ZZ_p$ on $\tilde{\bC}^{[s,r]}_\psi/(t)$ must take the form $T \mapsto [\overline{q}] T$ for some $\overline{q} \in 1 + \gothm_{L}$ such that $\theta([\overline{q}])$ is not a root of unity.
However, we now obtain the desired contradiction by taking norms to
$K'\{(s/T)^{p^{-\infty}}, (T/r)^{p^{-\infty}}\}$ and applying Lemma~\ref{L:no gamma kernel}.
 
To treat the general case, we first use Remark~\ref{R:resolution} to reduce to the case where
$X$ is smooth. In this case, there exists a smooth morphism $X \to X'$ with $X'$ smooth affinoid of dimension
$n-1$; we may further assume that this map admits a section.
As in the previous paragraph, we may further reduce to the case where $\psi$
is a relative toric tower over a restricted toric tower $\psi'$ with base $X'$.
Project $\alpha$ to $\alpha' \in \calM(\tilde{\bC}^{[s,r]}_{\psi'}/(t))$;
via Theorem~\ref{T:Fontaine perfectoid correspondence}, this point corresponds to some
$\beta \in \calM(\tilde{A}_{\psi'})$.
Let $\psi_\beta$ be the pullback of $\psi$ along $\calH(\beta) \to X'$; by
applying the dimension 1 case, we deduce that the image of $I$ in
$\tilde{\bC}^{[s,r]}_{\psi_\beta}$ must vanish.
We then apply the dimension $n-1$ case to the chosen section of the map $X \to X'$
to obtain the desired contradiction.
\end{proof}

\begin{cor} \label{C:twisted projective torsion}
Suppose that $t \in \tilde{\bC}^{[s,r]}_K$ is irreducible and coprime to $\varphi^m(t_\theta)$ for all $m \in \ZZ$.
Let $M$ be a $\tilde{\bC}^{[s,r]}_\psi/(t)$-module contained in some finitely generated module.
If $M$ admits a $\Gamma$-structure, then $M$ is finite projective.
\end{cor}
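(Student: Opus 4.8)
The plan is to adapt, almost verbatim, the proof of Corollary~\ref{C:pseudocoherent mod p}, with Proposition~\ref{P:twisted stable submodules} playing the role that Corollary~\ref{C:type A mod p no ideals} plays there. Write $R = \tilde{\bC}^{[s,r]}_\psi/(t)$; the given $\Gamma$-module structure on $M$ is an isomorphism $\iota$ between the two base extensions of $M$ to $\tilde{\bC}^{[s,r]}_{\psi^1}/(t)$ satisfying the cocycle condition. Since finite projectivity may be checked on the (finitely many) connected components of $X$, over which $R$ decomposes accordingly, I would first reduce to the case $X$ connected, so that Proposition~\ref{P:twisted stable submodules} applies to $R$. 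As a preliminary I would note that $R$ is reduced: its nilradical is an ideal defined intrinsically from $R$, hence a $\Gamma$-submodule (its two base extensions to $\tilde{\bC}^{[s,r]}_{\psi^1}/(t)$ being canonically identified), so by Proposition~\ref{P:twisted stable submodules} it is $(0)$ or $(1)$, and it cannot be $(1)$ because $R \neq 0$.

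Next I would form the splitting ideal $I \subseteq R$ attached to the given embedding of $M$ into a finitely generated $R$-module, i.e.\ the set of $g \in R$ admitting a homomorphism $M \to N$ with $N$ finitely generated and $M_g \to N_g$ a split inclusion. By Corollary~\ref{C:splitting ideal}, using that $R$ is reduced and that $M$ injects into a finitely generated module, $I$ is a radical ideal not contained in any minimal prime of $R$, and in particular $I \neq 0$. Because $I$ is built functorially from the $\Gamma$-module $M$, its two base extensions to $\tilde{\bC}^{[s,r]}_{\psi^1}/(t)$ coincide, so Proposition~\ref{P:twisted stable submodules} forces $I = R$; hence $M$ is a direct summand of a finitely generated $R$-module, so $M$ itself is finitely generated. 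To promote this to finite presentation I would run the same argument on a syzygy: choose a finite free $R$-module $F$ together with a $\Gamma$-compatible surjection $F \to M$ (constructed as in Lemma~\ref{L:lift to psc}, after passing to a sufficiently deep level $\psi_{(n)}$ of the tower), and put $N' = \ker(F \to M)$; applying the splitting-ideal argument to the $\Gamma$-module $N'$ shows $N'$ is finitely generated, whence $M$ is finitely presented. Finally, the Fitting ideals $\Fitt_n(M)$ are again intrinsic $\Gamma$-submodules of $R$, so each is $(0)$ or $(1)$ by Proposition~\ref{P:twisted stable submodules}; taking $e$ minimal with $\Fitt_e(M) = R$ we get $\Fitt_{e-1}(M) = 0$, which forces $M_{\gothp}$ to be free of rank $e$ at every prime $\gothp$ of $R$. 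Since $M$ is finitely presented and $X$ is connected, $M$ is locally free of constant rank, hence finite projective.

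The step I expect to be the main obstacle is the production of the syzygy $N'$ as a genuine $\Gamma$-module: the naive lift of the action to $F$ fails the cocycle condition, so one must descend a finite cover built on a deeper level of the tower, which is exactly the mechanism underlying Lemma~\ref{L:lift to psc}. A secondary point — used implicitly throughout the paper but perhaps worth recording once here — is the verification that the splitting ideal and the Fitting ideals really do define $\Gamma$-submodules of $R$ even though $\psi$ need not be Galois and the face maps $\tilde{\bC}^{[s,r]}_\psi \to \tilde{\bC}^{[s,r]}_{\psi^1}$ are only known to be pseudoflat; this reduces to the naturality of these constructions under isomorphisms together with the compatibility of $\iota$ with the two face maps. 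Everything else is a routine transcription of the arguments already used for the perfect period rings.
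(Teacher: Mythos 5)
Your core argument is the paper's argument: reduce to $X$ connected, form the splitting ideal $I$ of the embedding $M \hookrightarrow N$, invoke Corollary~\ref{C:splitting ideal} to see $I$ is not contained in any minimal prime, observe that $I$ is intrinsic to the $\Gamma$-module $M$ and hence its two base extensions to $\tilde{\bC}^{[s,r]}_{\psi^1}/(t)$ agree, apply Proposition~\ref{P:twisted stable submodules} to get $I=(1)$ and hence finite generation, and then apply the same proposition to the Fitting ideals. The paper stops there: once $M$ is finitely generated and $\Fitt_{e-1}(M)=0$, $\Fitt_e(M)=(1)$ for some $e$, the module is finite locally free of rank $e$ --- this Fitting-ideal criterion holds for finitely generated modules with no finite presentation hypothesis --- so $M$ is finite projective. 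Your side remark about reducedness (needed for Corollary~\ref{C:splitting ideal}(b)) is a reasonable thing to flag, since the paper does not verify it; but note that your fix rests on the same unproved assertion you flag at the end, namely that the two base extensions of an ``intrinsic'' ideal such as the nilradical coincide, so it does not add rigor beyond the paper's own standard.

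The genuine problem is your detour through finite presentation. You propose to build a $\Gamma$-equivariant surjection $F \to M$ from a finite free module ``as in Lemma~\ref{L:lift to psc}, after passing to $\psi_{(n)}$,'' and then run the splitting-ideal argument on the syzygy. Lemma~\ref{L:lift to psc} lives under the hypotheses of \S\ref{subsec:pseudocoherent type C} (a decompleting tower with $R_\psi$ $F$-(finite projective) and $\breve{\bC}^{[s,r]}_\psi$ coherent), and its output is a module over the imperfect ring $\breve{\bC}^{[s,r]}_{\psi_{(n)}}$; under Hypothesis~\ref{H:Fitting C} the tower $\psi$ is merely an arbitrary finite \'etale perfectoid tower over an affinoid with $K$ perfectoid, no imperfect period rings are available, and passing to $\psi_{(n)}$ changes the $\Gamma$-module category in a way you would still have to descend back from. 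So that step, as written, is not justified by the cited lemma and your final sentence (``Since $M$ is finitely presented\dots'') rests on it. Fortunately the step is unnecessary: delete it, and conclude directly from finite generation plus the Fitting-ideal dichotomy, exactly as the paper does.
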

\begin{proof}
We may assume $X$ is connected.
Let $I$ be the set of $f \in \tilde{\bC}^{[s,r]}_\psi/(t)$ for which there exists an injective homomorphism $M \to N$ of $\tilde{\bC}^{[s,r]}_\psi/(t)$-modules such that $N$ is finitely generated and $M_f \to N_f$ splits. By Corollary~\ref{C:splitting ideal},
$I$ is an ideal of $\tilde{\bC}^{[s,r]}_\psi/(t)$ not contained in any minimal prime ideal.
Since $I$ is defined intrinsically in terms of $M$, $I$ is a $\Gamma$-submodule of $\tilde{\bC}^{[s,r]}_\psi/(t)$. By Proposition~\ref{P:twisted stable submodules}, $I$ must equal the unit ideal, proving that $M$ is finitely generated.
By Proposition~\ref{P:twisted stable submodules} again,
each Fitting ideal of $M$ is either the zero ideal or the unit ideal; this proves the claim.
\end{proof}

\subsection{Applications to \texorpdfstring{$\Gamma$}{Gamma}-modules}
\label{subsec:applications Gamma}

We continue to derive properties of $\Gamma$-modules over rings of the form $\tilde{\bC}^{[s,r]}_\psi$,
in parallel with the corresponding results over $\tilde{A}_\psi$.
Throughout \S\ref{subsec:applications Gamma}, continue to retain Hypothesis~\ref{H:Fitting C}.

\begin{remark} \label{R:pseudocoherent factor}
Before continuing, let us summarize the results so far.
Let $M$ be a finitely generated $\tilde{\bC}^{[s,r]}_\psi$-module with $\Gamma$-structure.
By Corollary~\ref{C:invert projective}, there exists some nonzero $t \in \tilde{\bC}^{[s,r]}_K$
such that $M_t$ is a finite projective module over $\tilde{\bC}^{[s,r]}_\psi[t^{-1}]$.
Factor $t = t_1 t_2$ where $t_1$ is a product of powers of $\varphi^n(t_\theta)$
for $n \in \ZZ$ (for $t_\theta$ as in Remark~\ref{R:effect of deformation}) and $t_2$
is coprime to $\varphi^n(t_\theta)$ for all $n \in \ZZ$. Then on one hand, 
by Corollary~\ref{C:twisted projective torsion},
the failure of projectivity of $M$ at the support of $t_2$ is limited to $t_2$-power torsion;
on the other hand, the failure of projectivity at the support of $t_1$ can be quite complicated,
but can be understood completely in terms of our results on the category $\calC_X$.
\end{remark}

\begin{prop} \label{P:stable finite2}
Every finitely generated $\tilde{\bC}^{[s,r]}_\psi$-module with $\Gamma$-structure is \'etale-stably pseudocoherent, and hence a pseudocoherent $\Gamma$-module.
\end{prop}
\begin{proof}
Let $M$ be a finitely generated $\tilde{\bC}^{[s,r]}_\psi$-module with $\Gamma$-structure and set
notation as in Remark~\ref{R:pseudocoherent factor}.
Let $T$ be the $t$-power torsion submodule of $M$,
so that $M_t = (M/T)_t$ and $(M/T)[t] = 0$; 
by Lemma~\ref{L:pseudocoherent 2 of 3}, to prove that $M$ is pseudocoherent,
it will suffice to prove that $T$ and $M/T$ are pseudocoherent.
We first observe that 
\[
(M/T)/t(M/T) = (M/T)/t_1(M/T) \oplus (M/T)/t_2(M/T)
\]
is pseudocoherent: namely, Proposition~\ref{P:stable finite} implies the pseudocoherence
of $(M/T)/t_1(M/T)$ while Corollary~\ref{C:twisted projective torsion} does likewise for 
$(M/T)/t_2(M/T)$.
By Lemma~\ref{L:pseudocoherent glueing}(a)
(and Remark~\ref{R:t-torsion-free}), it follows that $M/T$ is pseudocoherent. 
Lemma~\ref{L:pseudocoherent 2 of 3} then
implies that $T$ is finitely generated; in particular, $T$ is annihilated by
$t^m$ for some positive integer $m$. 
Using Lemma~\ref{L:pseudocoherent 2 of 3} and the previous argument, we deduce by induction on $i$ that
$t^{i-1}T/t^i T$ is pseudocoherent for $i=1,\dots,m$. Using Lemma~\ref{L:pseudocoherent 2 of 3} once more,
we conclude that $T$ is pseudocoherent, as then is $M$.
As in the proof of Proposition~\ref{P:stable finite}, we may use Corollary~\ref{C:closure finitely generated} to deduce that $M$ is strictly pseudocoherent, then repeat the argument after a base extension to deduce that $M$ is \'etale-stably pseudocoherent.
\end{proof}

\begin{prop} \label{P:ascending sequence2}
Let $M$ be a pseudocoherent $\Gamma$-module over $\tilde{\bC}^{[s,r]}_\psi$. Then
any ascending sequence of $\Gamma$-submodules of $M$ stabilizes.
\end{prop}
\begin{proof}
It is equivalent to verify that any sequence $M = M_0 \to M_1 \to \cdots$ of surjective morphisms of $\Gamma$-modules stabilizes. 
We may assume that $X$ is connected.
By Corollary~\ref{C:invert projective}, for each $i$,
there exists some nonzero $t \in \tilde{\bC}^{[s,r]}_K$ (\emph{a priori} depending on $i$)
such that $(M_i)_t$ is a finite projective module over $\tilde{\bC}^{[s,r]}_\psi[t^{-1}]$,
necessarily of some constant rank $m_0$ because $X$ is connected.
Since the $m_i$ form a nonincreasing sequence of nonnegative integers, they must stabilize;
by discarding some initial terms of the sequence, we may assume that the $m_i$ are all equal to a single value $m$.

Now set notation as in Remark~\ref{R:pseudocoherent factor}, taking $t_1$ to be 
divisible by $\varphi^n(t_\theta)$ for each $n$ for which $p^{-n} \in [s,r]$.
For any nonzero $t' \in \tilde{\bC}^{[s,r]}_K$ coprime to $t$,
by Corollary~\ref{C:twisted projective torsion},
each module $M_i/t'M_i$ is finite projective over $\tilde{\bC}^{[s,r]}_K/(t')$ of some rank.
The latter rank is necessarily greater than or equal to the generic rank of $M_i$,
with equality for $i=0$ because $M$ is $t'$-torsion-free; since $M/t'M \to M_i/t'M_i$ is surjective
and the generic ranks of $M_i$ are all equal, it follows that the rank of
$M_i/t'M_i$ over $\tilde{\bC}^{[s,r]}_K/(t')$ must always be equal to the generic rank of $M_i$.
In particular, if we apply Corollary~\ref{C:invert projective} to choose $t'$ so that
$(M_i)_{tt'}$ is finite projective over $\tilde{\bC}^{[s,r]}_\psi[(tt')^{-1}]$, we may then deduce that also $(M_i)_t$ is finite projective over $\tilde{\bC}^{[s,r]}_\psi[t^{-1}]$. That is, the choice of $t$
in Remark~\ref{R:pseudocoherent factor} can be made uniformly over the $M_i$.

The sequence $(M_0)_t \to (M_1)_t \to \cdots$ now consists of 
finite projective modules over $\tilde{\bC}^{[s,r]}_\psi[t^{-1}]$ of rank $m$, so 
it consists entirely of isomorphisms.
By Proposition~\ref{P:ascending sequence2}, the sequence $M_0/t_1M_0 \to M_1/t_1M_1 \to \cdots$ stabilizes;
by Corollary~\ref{C:twisted projective torsion}, the sequence $M_0/t_2M_0 \to M_1/t_2 M_1 \to \cdots$ stabilizes.
Putting these two statements together, we see that the sequence $M_0/tM_0 \to M_1/t M_1 \to \cdots$ stabilizes.
By Proposition~\ref{P:stable finite2}, each $M_i$ is pseduocoherent,
as is the quotient of $M_i$ by its $t$-power-torsion submodule;
we may thus apply Lemma~\ref{L:stable sequence} to deduce that the original sequence stabilizes.
\end{proof}

\begin{prop} \label{P:ext tor stability2}
Let $M$ and $N$ be pseudocoherent $\Gamma$-modules over $\tilde{\bC}^{[s,r]}_\psi$. Then
for all $i \geq 0$, $\Ext^i(M, N)$ and
$\Tor_i(M,N)$ (with the ring label $\tilde{\bC}^{[s,r]}_\psi$ suppressed for convenience)
are pseudocoherent $\Gamma$-modules over $\tilde{\bC}^{[s,r]}_\psi$.
\end{prop}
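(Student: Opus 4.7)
The plan is to follow the proof of Proposition~\ref{P:ext tor stability} with appropriate substitutions: $\tilde{A}_\psi$ is replaced by $\tilde{\bC}^{[s,r]}_\psi$, and the role of the noetherian ``base ring'' $A$ is played by $\tilde{\bC}^{[s,r]}_K$, which is again noetherian (indeed, a principal ideal domain by Theorem~\ref{T:curve noetherian}(a), and a subring fixed by $\Gamma$). I would set up noetherian induction on the annihilator ideal $(t) \subseteq \tilde{\bC}^{[s,r]}_K$ of $M$, starting from the trivial case $(t) = \tilde{\bC}^{[s,r]}_K$ (where $M = 0$) and working down, with the final goal case being $(t) = (0)$.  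Since every ideal of $\tilde{\bC}^{[s,r]}_K$ is principal, each step of the induction is parametrized by a single $t \in \tilde{\bC}^{[s,r]}_K$ (allowing $t = 0$ in the final case).

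First I would handle the special case $M = \tilde{\bC}^{[s,r]}_\psi/(t)$ for $t \in \tilde{\bC}^{[s,r]}_K$ nonzero; this plays the role that $\overline{B} = \tilde{A}_\psi/J$ plays in the proof of Proposition~\ref{P:ext tor stability}.  By Remark~\ref{R:t-torsion-free}, $t$ is not a zero-divisor in $\tilde{\bC}^{[s,r]}_\psi$, and since $t$ is $\Gamma$-fixed, the length-one free resolution $0 \to \tilde{\bC}^{[s,r]}_\psi \stackrel{\times t}{\to} \tilde{\bC}^{[s,r]}_\psi \to M \to 0$ is $\Gamma$-equivariant (here playing the role of the Koszul complex).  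The groups $\Ext^i(M,N)$ and $\Tor_i(M,N)$ are then the cohomology of two-term complexes of pseudocoherent $\Gamma$-modules; by Proposition~\ref{P:stable finite2} together with the evident Serre subcategory property (which follows from Proposition~\ref{P:stable finite2} and Lemma~\ref{L:pseudocoherent 2 of 3}, in exactly the way Corollary~\ref{C:stable finite} follows from Proposition~\ref{P:stable finite}), these cohomology groups are pseudocoherent $\Gamma$-modules.

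Next I would treat general $M$ annihilated by $(t)$ (with $t = 0$ allowed).  Applying Corollary~\ref{C:invert projective} yields a nonzero $t' \in \tilde{\bC}^{[s,r]}_K$ such that $M_{t'}$ is finite projective over $\tilde{\bC}^{[s,r]}_\psi[(t')^{-1}]$.  After computing $\Ext^i(M,N)$ and $\Tor_i(M,N)$ as cohomology of a free (not necessarily $\Gamma$-equivariant) resolution of $M$, we see that each is finitely generated over $\tilde{\bC}^{[s,r]}_\psi$, and thus is a pseudocoherent $\Gamma$-module by Proposition~\ref{P:stable finite2}; applying Corollary~\ref{C:invert projective} in turn to each of these groups, I would enlarge $t'$ so that $\Ext^i(M,N)_{t'}$ and $\Tor_i(M,N)_{t'}$ are also finite projective for each relevant $i$.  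Now by Lemma~\ref{L:pseudocoherent glueing}, pseudocoherence of $\Ext^i(M,N)$ and $\Tor_i(M,N)$ will follow if the kernel and cokernel of multiplication by $t'$ on each are pseudocoherent.  These are controlled via the derived functors of the exact sequences $0 \to M[t'] \to M \to t'M \to 0$ and $0 \to t'M \to M \to M/t'M \to 0$; the modules $M[t']$ and $M/t'M$ are annihilated by $(t, t')$, which strictly contains $(t)$ in $\tilde{\bC}^{[s,r]}_K$, so the induction hypothesis gives pseudocoherence of the corresponding $\Ext$ and $\Tor$ groups.

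The main obstacle, as in the proof of Proposition~\ref{P:ext tor stability}, is the final formal manipulation with these long exact sequences: one must verify that in the Serre quotient of the category of $\Gamma$-modules by pseudocoherent $\Gamma$-modules, multiplication by $t'$ induces an isomorphism on both $\Ext^i(M,N)$ and $\Tor_i(M,N)$, whence its kernel and cokernel are pseudocoherent.  The point requiring the most care is the bookkeeping: one cannot apply Corollary~\ref{C:invert projective} to the $\Ext$ and $\Tor$ groups until one knows they are finitely generated, and the inductive reduction via $M[t']$ and $M/t'M$ must be arranged by simultaneous induction on $(t)$ and the cohomological degree $i$.  Once this bookkeeping is correctly organized, the remaining manipulations reproduce those in the proof of Proposition~\ref{P:ext tor stability} \emph{verbatim}.
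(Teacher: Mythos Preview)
Your plan has a gap at the induction step. In Proposition~\ref{P:ext tor stability} the induction works because Lemma~\ref{L:localize to projective} produces $f \in A$ which is a nonzerodivisor in $\overline{A} = A/J$ and makes $M_f$ projective over $\overline{B}_f$, the localized \emph{quotient} ring. Your substitute, Corollary~\ref{C:invert projective}, only gives $t'$ with $M_{t'}$ projective over $\tilde{\bC}^{[s,r]}_\psi[(t')^{-1}]$, not over $(\tilde{\bC}^{[s,r]}_\psi/(t))[(t')^{-1}]$. When $M$ has exact annihilator $(t)$ with $t$ nonzero and squarefree, $M_{t'}$ projective over the big ring forces $M_{t'}=0$ (since $t$ remains a nonzerodivisor there), hence $t \mid t'$, hence $(t,t')=(t)$: the passage to $M[t']$ and $M/t'M$ makes no progress, and your only declared base case $M = \tilde{\bC}^{[s,r]}_\psi/(t)$ does not cover a general such $M$. (Separately, the line ``we see that each is finitely generated'' from a free resolution is unjustified over a ring not known to be coherent, and were it valid the rest of the argument would be superfluous.)

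The paper fills this gap not by a finer induction but by the structural dichotomy of Remark~\ref{R:pseudocoherent factor}. One factors $t = t_1 t_2$ with $t_1$ a product of powers of the $\varphi^n(t_\theta)$ and $t_2$ coprime to all of them. For modules killed by $t_2$, Corollary~\ref{C:twisted projective torsion} forces any $\Gamma$-module contained in a finitely generated module over $\tilde{\bC}^{[s,r]}_\psi/(t_2)$ to be finite projective, so the relevant $\Ext$ and $\Tor$ groups are automatically pseudocoherent. For modules killed by $t_1$, one is (after d\'evissage on powers of each $\varphi^n(t_\theta)$) working over $\tilde{A}_\psi$ via $\theta$ and its Frobenius twists, and Proposition~\ref{P:ext tor stability} applies directly. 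With these two inputs replacing your missing base case, the long-exact-sequence and Serre-quotient manipulation you describe in your final paragraph goes through exactly as written.
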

\begin{proof}
By Corollary~\ref{C:invert projective},
there exists some nonzero $t \in \tilde{\bC}^{[s,r]}_K$
such that $M_t, N_t$ are finite projective modules over $\tilde{\bC}^{[s,r]}_\psi[t^{-1}]$,
Note that $\Ext^i(M, N)_t$ and $\Tor_i(M,N)_t$ are projective over $\tilde{\bC}^{[s,r]}_\psi[t^{-1}]$
(and in fact zero for $i>0$).
By Remark~\ref{R:t-torsion-free} and Lemma~\ref{L:pseudocoherent glueing}, it suffices to check that
the kernel and cokernel of multiplication by $t$ on $\Ext^i(M, N),
\Tor_i(M, N)$ are pseudocoherent
(and hence \'etale-stably pseudocoherent by Proposition~\ref{P:stable finite2}). We work out the case of $\Ext$ in detail, the case of $\Tor$ being similar.

By Remark~\ref{R:transfer pseudocoherence to quotient}, 
$M[t]$ and $M/tM$ are pseudocoherent.
Factor $t$ as in Remark~\ref{R:pseudocoherent factor}.
By Proposition~\ref{P:ext tor stability} (to treat the $t_1$-torsion component)
and Corollary~\ref{C:twisted projective torsion} (to treat the $t_2$-torsion component),
the modules
\[
\Ext^i(M[t], N),
\Ext^i(M/tM, N),
\]
are pseudocoherent. 
(More precisely, this is true when the Ext groups are computed in the category of
$\tilde{\bC}^{[s,r]}_\psi/(t)$-modules with the target being $N/tN$; however, since $t$ is not a zero-divisor in $\tilde{\bC}^{[s,r]}_\psi$, this immediately implies the statement as made.)
Now apply derived functors to the exact sequences
\[
0 \to M[t] \to M \to tM \to 0, \qquad 0 \to tM \to M \to M/tM \to 0
\]
to obtain exact sequences
\begin{gather*}
\Ext^{i-1}(M[t], N) \to \Ext^i(tM, N) \to \Ext^i(M, N) \to \Ext^i(M[t], N),
\\
\Ext^i(M/tM, N) \to \Ext^i(M, N) \to \Ext^i(tM, N) \to \Ext^{i+1}(M/tM, N).
\end{gather*}
By Proposition~\ref{P:stable finite2} (and Lemma~\ref{L:pseudocoherent 2 of 3}),
pseudocoherent $\Gamma$-modules over $\tilde{\bC}^{[s,r]}_\psi/(t)$ form a Serre subcategory of $\tilde{\bC}^{[s,r]}_\psi/(t)$-modules with semilinear $\Gamma$-action; in the quotient category, the morphisms $\Ext^i(tM, N) \to \Ext^i(M, N)$,
$\Ext^i(M, N) \to \Ext^i(tM, N)$ become isomorphisms.
Hence multiplication by $t$ on $\Ext^i(M,N)$ becomes an isomorphism in the quotient category, that is, its kernel and cokernel are pseudocoherent.
\end{proof}

\begin{prop} \label{P:stable finite sub2}
Let $M$ be a $\tilde{\bC}^{[s,r]}_\psi$-module satisfying the following conditions.
\begin{enumerate}
\item[(a)]
There exists a $\Gamma$-structure on $M$.
\item[(b)]
For every $t \in \tilde{\bC}^{[s,r]}_K$ which is not a unit, $M/tM$ is finitely generated
(and hence \'etale-stably pseudocoherent by condition (a) and Proposition~\ref{P:stable finite2}).
\item[(c)]
The module $M$ can be realized as a submodule of some finite $\tilde{\bC}^{[s,r]}_\psi$-module
(not necessarily admitting a $\Gamma$-structure).
\item[(d)]
One of the following conditions holds.
\begin{enumerate}
\item[(i)]
In (c), the finite $\tilde{\bC}^{[s,r]}_\psi$-module can be taken to be complete for the natural topology
and $M$ can be taken to be a closed submodule.
\item[(ii)]
For every $t \in \tilde{\bC}^{[s,r]}_K$ which is not a unit, $M[t^\infty]$ is finitely generated.
\end{enumerate}

\end{enumerate}
Then $M$ is a pseudocoherent $\Gamma$-module.
\end{prop}
\begin{proof}
Let $I$ be the set of $f \in \tilde{\bC}^{[s,r]}_\psi$ for which
exists a homomorphism $M \to N$ of $\tilde{\bC}^{[s,r]}_\psi$-modules such that $N$ is finitely generated 
(but not necessarily complete for the natural topology) and $M_f \to N_f$ is a split inclusion. By condition (c) and Corollary~\ref{C:splitting ideal}, we see that $I$ is an ideal not contained in any minimal prime ideal. Since $I$ is defined intrinsically in terms of $M$, it inherits a $\Gamma$-structure by condition (a); by Lemma~\ref{L:Gamma-stable ideal C}, $I$ contains some nonzero $t \in \tilde{\bC}^{[s,r]}_K$.
By condition (b) and Lemma~\ref{L:splitting to finite}, $M/M[t^\infty]$ is finitely generated,
and hence \'etale-stably pseudocoherent by Proposition~\ref{P:stable finite}.
From here, we may continue as in the proof of Proposition~\ref{P:stable finite sub} to conclude.
\end{proof}

\subsection{Applications to \texorpdfstring{$(\varphi,\Gamma)$}{(phi, Gamma)}-modules}
\label{subsec:final applications}

To conclude, we derive some properties of categories of $\Gamma$-modules and $(\varphi, \Gamma)$-modules in type $\tilde{\bC}$.

\begin{defn}
For $0 < s \leq r$, let $\calC^{[s,r]}_X$ denote the category of pseudocoherent
$\tilde{\bC}^{[s,r]}_X$-modules on $X_{\proet}$. As in Definition~\ref{D:canonical section}, we identify $\calC_X$ with a full subcategory of $\calC^{[s,r]}_X$ whenever $1 \in [s,r]$.
For $S \subseteq X$, let $\calC^{[s,r]}_{X,S}$ be the direct 2-limit of $\calC^{[s,r]}_{U}$ over all open subspaces $U$ of $X$ containing $x$; by Proposition~\ref{P:Robba weak flatness}, the transition functors are exact.

We say that an object of $\calC^{[s,r]}_X$ or $\calC^{[s,r]}_{X,S}$ is \emph{$\theta$-local} if locally on $X$ (or equivalently, on every quasicompact open subset of $X$), it is annihilated by some finite product $\prod_i \varphi^{n_i}(t_\theta)$ for some $n_i \in \ZZ$.
\end{defn}

\begin{theorem} \label{T:abelian category gamma}
The category $\calC^{[s,r]}_X$ has the following properties.
\begin{enumerate}
\item[(a)]
It is a full abelian subcategory of the category of sheaves of $\tilde{\bC}^{[s,r]}_X$-modules on $X_{\proet}$. Moreover, the formation of kernels and cokernels in $\calC^{[s,r]}_X$ is compatible with the larger category.
\item[(b)]
If $X$ is quasicompact, then the ascending chain condition holds: given any sequence
$\calF_0 \to \calF_1 \to \cdots$ of epimorphisms in $\calC^{[s,r]}_X$, there exists $i_0 \geq 0$ such that for all $i \geq i_0$, the map $\calF_i \to \calF_{i+1}$ is an isomorphism.
\item[(c)]
The functors $\Tor_i$ take $\calC^{[s,r]}_X \times \calC^{[s,r]}_X$ into $\calC^{[s,r]}_X$. In particular, this is true for $i=0$, so $\calC^{[s,r]}_X$ admits tensor products. Moreover, for $i=1$ (resp.\ $i>1$), any sheaf in the essential image of $\Tor_i$ is 
$\tilde{\bC}^{[s,r]}_K$-torsion (resp.\ $\theta$-local).
\item[(d)]
The functors $\Ext^i$ take $\calC^{[s,r]}_X \times \calC^{[s,r]}_X$ into $\calC^{[s,r]}_X$. In particular, this is true for $i=0$, so $\calC^{[s,r]}_X$ admits internal Homs. Moreover, for $i=1$ (resp.\ $i>1$), any sheaf in the essential image of $\Ext^i$ is 
$\tilde{\bC}^{[s,r]}_K$-torsion (resp.\ $\theta$-local).
\end{enumerate}
\end{theorem}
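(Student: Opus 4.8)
\textbf{Proof proposal for Theorem~\ref{T:abelian category gamma}.}
The plan is to reduce each assertion to a local statement about a ring of the form $\tilde{\bC}^{[s,r]}_\psi$ for $\psi$ a finite \'etale perfectoid tower over an affinoid $X = \Spa(A,A^+)$ with $K$ perfectoid; this is legitimate because all four assertions are local on $X$, and we are free to enlarge $K$ to a perfectoid (even algebraically closed) overfield by Remark~\ref{R:study pseudocoherent locally} and the invariance of $X_{\proet}$ under such base extension. Having done this reduction, we invoke the results of \S\ref{subsec:Fitting C}, \S\ref{subsec:deformations}, and \S\ref{subsec:applications Gamma}: specifically, Proposition~\ref{P:stable finite2} gives that finitely generated $\Gamma$-modules over $\tilde{\bC}^{[s,r]}_\psi$ are pseudocoherent, Proposition~\ref{P:ascending sequence2} gives the ascending chain condition, and Proposition~\ref{P:ext tor stability2} gives the stability of $\Tor$ and $\Ext$. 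For (a), pseudocoherent modules over $\tilde{\bC}^{[s,r]}_\psi$ form an abelian subcategory because any kernel or cokernel of a morphism of pseudocoherent $\Gamma$-modules is finitely generated (kernels are finitely generated by Lemma~\ref{L:pseudocoherent 2 of 3} applied to the image, which is finitely generated as a quotient; cokernels are manifestly finitely generated) and $\Gamma$-equivariant, hence pseudocoherent by Proposition~\ref{P:stable finite2}; compatibility of kernels and cokernels with the larger category of all $\tilde{\bC}^{[s,r]}_X$-modules follows as in Theorem~\ref{T:pseudocoherent noetherian}(a).

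For (b), I would apply Proposition~\ref{P:ascending sequence2} directly, after using quasicompactness of $X$ to cover it by finitely many affinoids each the base of a finite \'etale perfectoid tower, together with the observation that a sequence of epimorphisms in $\calC^{[s,r]}_X$ stabilizes if and only if it does so after restriction to each member of a finite cover. For (c) and (d), the stability statements are Proposition~\ref{P:ext tor stability2}; what remains is to pin down the extra torsion-theoretic conclusions. Here I would use the factorization $t = t_1 t_2$ of Remark~\ref{R:pseudocoherent factor}: given pseudocoherent $\Gamma$-modules $M$ and $N$, by Corollary~\ref{C:invert projective} there is a nonzero $t \in \tilde{\bC}^{[s,r]}_K$ with $M_t$, $N_t$ finite projective over $\tilde{\bC}^{[s,r]}_\psi[t^{-1}]$, so $\Ext^i(M,N)$ and $\Tor_i(M,N)$ are killed by a power of $t$ for $i > 0$; thus they are $\tilde{\bC}^{[s,r]}_K$-torsion, which already proves the $i=1$ half of both (c) and (d). For $i > 1$, the point is to show the relevant modules are $\theta$-local, i.e.\ killed by a product of powers of $\varphi^n(t_\theta)$; I would argue by localizing away from $t_\theta$ (and its $\varphi$-translates that meet $[s,r]$) and invoking Corollary~\ref{C:twisted projective torsion}, which forces the restriction of $M$ and $N$ to be projective over the relevant quotient, whence all higher $\Tor$ and $\Ext$ vanish there; this localizes the higher-degree homological defects into the support of $t_\theta$ and its translates.

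The main obstacle I anticipate is the bookkeeping in step (c)/(d) needed to promote ``killed by a power of $t$'' to the sharper dichotomy between $\tilde{\bC}^{[s,r]}_K$-torsion in degree $1$ and $\theta$-local in higher degree. The degree-$1$ case is essentially free from Lemma~\ref{L:pseudocoherent glueing} combined with the projectivity of $M_t$, $N_t$, since one short exact sequence $0 \to M[t] \to M \to tM \to 0$ suffices to see that the $\Tor_1$ and $\Ext^1$ obstructions live in a single ``layer'' of $t$-torsion. For degree $\geq 2$, however, one must iterate over the successive torsion quotients $t^{i-1}T/t^iT$ and control the contributions of $t_1$ (governed by the category $\calC_X$ via Proposition~\ref{P:ext tor stability}, where $\widehat{\calO}_X$-modules can have arbitrarily deep resolutions) versus $t_2$ (which by Corollary~\ref{C:twisted projective torsion} contributes nothing beyond degree $1$). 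Isolating precisely which higher-degree phenomena persist after inverting $t_2$ — and verifying that these are exactly the $\theta$-local ones, uniformly on quasicompact opens — is where the argument requires genuine care; the rest is a routine assembly of the already-established propositions of \S\ref{subsec:Fitting C}--\S\ref{subsec:applications Gamma}.
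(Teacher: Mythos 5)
Your proposal is correct and follows essentially the same route as the paper: reduce to an affinoid base carrying a finite \'etale perfectoid tower (with $K$ enlarged to be perfectoid), then deduce (a) from Proposition~\ref{P:stable finite2}, (b) from Proposition~\ref{P:ascending sequence2}, and (c)--(d) from Proposition~\ref{P:ext tor stability2}, using Corollary~\ref{C:invert projective} and Corollary~\ref{C:twisted projective torsion} for the torsion and $\theta$-locality refinements. This is precisely the paper's argument, which consists of the same localization step and citations.
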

\begin{proof}
The claim is local on $X$, so we may assume that $X = \Spa(A,A^+)$ is affinoid.
We may then deduce (a) from Proposition~\ref{P:stable finite2},
(b) from Proposition~\ref{P:ascending sequence2},
and (c) and (d) from Proposition~\ref{P:ext tor stability2}
and (for the $\theta$-locality assertions)
Corollary~\ref{C:twisted projective torsion}.
\end{proof}

\begin{theorem} \label{T:vector bundle pullback gamma}
For $f: X' \to X$ a morphism and $i \geq 0$, the functors
$L_i f_{\proet}^*$ take $\calC^{[s,r]}_X$ into $\calC^{[s,r]}_{X'}$.
Moreover, for $i>0$, any sheaf in the essential image of $L_i f_{\proet}^*$ is $\theta$-local.
\end{theorem}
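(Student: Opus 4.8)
The plan is to follow the proof of Theorem~\ref{T:vector bundle pullback}, substituting the type-$\tilde{\bC}^{[s,r]}$ results of \S\ref{subsec:Fitting C}--\S\ref{subsec:applications Gamma} for the type-$\tilde{\bA}$ inputs used there, and then to extract the $\theta$-locality of the higher $\Tor$ from the base-field localization theory of \S\ref{subsec:deformations}. First I would reduce to the affinoid case: the assertion is local on $X'$ and on $X$, so I may take $X = \Spa(A,A^+)$, $X' = \Spa(A',A^{\prime +})$ affinoid, and (using the freedom noted at the start of \S\ref{sec:applications} to enlarge $K$) assume $K$ is perfectoid. Choose a finite \'etale perfectoid tower $\psi$ over $X$ and one $\psi'$ over $X'$ factoring through the pullback of $\psi$ along $f$, and set $B = \tilde{\bC}^{[s,r]}_\psi$, $B' = \tilde{\bC}^{[s,r]}_{\psi'}$; then $\calC^{[s,r]}_X$ (resp.\ $\calC^{[s,r]}_{X'}$) is the category of pseudocoherent $\Gamma$-modules over $B$ (resp.\ $B'$), and $L_i f_{\proet}^*$ is the functor $M \mapsto \Tor_i^B(M,B')$ with its induced $\Gamma$-module structure.

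For the first assertion I would pick, for a pseudocoherent $\Gamma$-module $M$ over $B$, a projective resolution $\cdots \to F_1 \to F_0 \to M \to 0$ by finite projective $B$-modules (forgetting $\Gamma$), so that $\Tor_i^B(M,B')$ is the cohomology of $\cdots \to F_1\otimes_B B' \to F_0\otimes_B B' \to 0$, each term inheriting a $\Gamma$-structure (although $\Gamma$ does not act on the complex). Then, exactly as in Theorem~\ref{T:vector bundle pullback}: $\Tor_0^B(M,B') = M\otimes_B B'$ is a finitely generated $\Gamma$-module over $B'$, hence pseudocoherent by Proposition~\ref{P:stable finite2}; and induction on $i$, tracking the kernels $\ker(F_{j+1}\otimes_B B' \to F_j\otimes_B B')$ with Lemma~\ref{L:pseudocoherent 2 of 3}, shows that $\Tor_i^B(M,B')$ is a $\Gamma$-equivariant quotient of a pseudocoherent module, hence finitely generated, hence pseudocoherent by Proposition~\ref{P:stable finite2}. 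This places $L_i f_{\proet}^*$ in $\calC^{[s,r]}_{X'}$.

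For the $\theta$-locality with $i>0$: by Corollary~\ref{C:invert projective} there is a nonzero $t\in\tilde{\bC}^{[s,r]}_K$ with $M[t^{-1}]$ finite projective over $B[t^{-1}]$; factor $t = t_1 t_2$ with $t_1$ a product of powers of the $\varphi^n(t_\theta)$ and $t_2$ coprime to all of them, as in Remark~\ref{R:pseudocoherent factor}. Since $N := \Tor_i^B(M,B')$ is finitely generated over $B'$ and $t_1$ is a product of $\theta$-divisors, it suffices to prove $N[t_1^{-1}] = 0$, i.e.\ $\Tor_i^{B[t_1^{-1}]}(M[t_1^{-1}],B'[t_1^{-1}]) = 0$ for $i>0$. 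Over $B[t_1^{-1}]$ the element $t_1$ becomes a unit modulo $t_2$, so $B[t_1^{-1}]/(t_2) \cong \tilde{\bC}^{[s,r]}_\psi/(t_2)$; hence Corollary~\ref{C:twisted projective torsion} applies to the $t_2$-power-torsion submodule $T$ of $M[t_1^{-1}]$ (its graded pieces $t_2^jT/t_2^{j+1}T$ are finite projective over $\tilde{\bC}^{[s,r]}_\psi/(t_2)$), while the quotient $M[t_1^{-1}]/T$ is $t_2$-torsion-free, finite projective modulo $t_2$ (again Corollary~\ref{C:twisted projective torsion}), and finite projective after inverting $t_2$ (as $M[t_1^{-1}][t_2^{-1}] = M[t^{-1}]$), hence finite projective over $B[t_1^{-1}]$ by a Fitting-ideal argument as in the proof of Lemma~\ref{L:Robba base extension vector bundle}. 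The resulting splitting $M[t_1^{-1}] \cong T \oplus (\text{finite projective})$ reduces the claim to $\Tor_i^{B[t_1^{-1}]}(T,B'[t_1^{-1}]) = 0$, which follows by d\'evissage along the filtration of $T$ from the vanishing of $\Tor_{\geq 1}^{B[t_1^{-1}]}(\tilde{\bC}^{[s,r]}_\psi/(t_2),B'[t_1^{-1}])$, itself immediate from the two-term Koszul resolution of $\tilde{\bC}^{[s,r]}_\psi/(t_2)$ (using that $t_2$ is not a zero-divisor, Remark~\ref{R:t-torsion-free}). Thus $N[t_1^{-1}] = 0$, so $N$ is killed by a power of $t_1$, i.e.\ $\theta$-local.

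The main obstacle I expect is the $\Gamma$-equivariance bookkeeping: when $\psi$ is merely finite \'etale rather than Galois, a $\Gamma$-module is a descent datum rather than a genuine group action, so "the $t_2$-power-torsion submodule of $M[t_1^{-1}]$ is a $\Gamma$-submodule" has to be argued through the $\Gamma$-stability of the ideal of $\tilde{\bC}^{[s,r]}_K$ generated by $t_2$ and its translates, and one must check that $t$ and $t_2$ in Corollary~\ref{C:invert projective} and Remark~\ref{R:pseudocoherent factor} may be chosen compatibly with the descent datum (cf.\ Remark~\ref{R:infinite orbit2} in the Galois case). Once this is set up, the splitting and the d\'evissage are routine, and unwinding the equivalences and the localization $\tilde{\bC}^{[s,r]}_X$ back to the statement about $\calC^{[s,r]}_X$ is formal, since both pseudocoherence and $\theta$-locality are local conditions on $X'$.
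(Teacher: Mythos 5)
Your proposal is correct and follows essentially the same route as the paper: the first assertion is proved, exactly as you do, by rerunning the proof of Theorem~\ref{T:vector bundle pullback} with Proposition~\ref{P:stable finite2} in place of Proposition~\ref{P:stable finite}, and the $\theta$-locality is deduced from Corollary~\ref{C:invert projective} and Corollary~\ref{C:twisted projective torsion} together with the observation that the target ring has no prime-to-$t_\theta$ torsion, which is precisely the content of your d\'evissage. The paper compresses this second step into one line, so your version is just a fleshed-out form of the same argument; the only point to make explicit is that your claim that the $t_2$-torsion-free quotient is finite projective rests on the $t_2$-adic separatedness of Remark~\ref{R:t-torsion-free} (or can be bypassed entirely, since $\Tor_{>0}$ against the $t_2$-torsion-free base extension already vanishes by the same Koszul computation).
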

\begin{proof}
Using Proposition~\ref{P:stable finite2} (as used in the proof of
Theorem~\ref{T:abelian category gamma})
in place of Proposition~\ref{P:stable finite},
the proof of Theorem~\ref{T:vector bundle pullback} carries over to prove the first statement.
The second statement follows from the first statement plus Corollary~\ref{C:invert projective}
and Corollary~\ref{C:twisted projective torsion};
note that in contrast with Theorem~\ref{T:abelian category gamma}(c), $L_1 f_{\proet}^{*}$ cannot have any prime-to-$t_\theta$-torsion because the base extension is between rings with no such torsion.
\end{proof}

\begin{theorem} \label{T:local acc gamma}
Let $S$ be a subset of $X$ such that the ideal sheaves of the localization of $X$ at $S$ satisfy the ascending chain condition. (By Proposition~\ref{P:stalk noetherian}, this holds if $S$ is a singleton set.)
Then the ascending chain condition holds for $\calC^{[s,r]}_{X,S}$: given any sequence $\calF_0 \to \calF_1 \to \cdots$
of epimorphisms in $\calC^{[s,r]}_{X,S}$, there exists $i_0 \geq 0$ such that for all $i \geq i_0$, the map $\calF_i \to \calF_{i+1}$ is an isomorphism.
\end{theorem}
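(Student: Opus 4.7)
The plan is to imitate the proof of Proposition~\ref{P:ascending sequence2}, replacing each global stabilization step by its localized counterpart. After shrinking around $S$, we may assume $X = \Spa(A,A^+)$ is affinoid, fix a finite \'etale perfectoid tower $\psi$ over $X$, and represent the given sequence by a sequence $M_0 \to M_1 \to \cdots$ of surjections of finitely generated $\Gamma$-modules over $\tilde{\bC}^{[s,r]}_{\psi}$, with the understanding that we are allowed to shrink $X$ further whenever convenient. As in the proof of Proposition~\ref{P:ascending sequence2}, we first invoke Corollary~\ref{C:invert projective} together with the observation that surjections of finite projective modules of the same rank over a connected ring are isomorphisms, so that (after discarding initial terms) there exists a nonzero $t \in \tilde{\bC}^{[s,r]}_K$ for which $(M_i)_t$ is finite projective over $\tilde{\bC}^{[s,r]}_{\psi}[t^{-1}]$ of a fixed rank $m$, and the induced sequence $(M_0)_t \to (M_1)_t \to \cdots$ consists of isomorphisms.

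Factor $t = t_1 t_2$, where $t_1$ is a product of powers of $\varphi^n(t_\theta)$ ranging over those $n \in \ZZ$ with $p^{-n} \in [s,r]$, and $t_2$ is coprime to every $\varphi^n(t_\theta)$. The sequence $M_0/t_2 M_0 \to M_1/t_2 M_1 \to \cdots$ stabilizes: by Corollary~\ref{C:twisted projective torsion} each $M_i/t_2 M_i$ is finite projective over $\tilde{\bC}^{[s,r]}_{\psi}/(t_2)$, and a comparison with the generic rank of $M_i$ (as in the proof of Proposition~\ref{P:ascending sequence2}) forces the rank of $M_i/t_2 M_i$ to be constant, so these surjections are again isomorphisms. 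The genuinely local input is required for the sequence $M_0/t_1 M_0 \to M_1/t_1 M_1 \to \cdots$; by the Chinese remainder theorem and a filtration on each factor, every $M_i/t_1 M_i$ admits a finite filtration whose graded pieces are pseudocoherent sheaves of modules over $\tilde{\bC}^{[s,r]}_X/(\varphi^n(t_\theta))$ for appropriate $n$. Via the isomorphism $\varphi^n$ and the identification $\tilde{\bC}^{[1,1]}_X/(t_\theta) \cong \widehat{\calO}_X$, each such graded piece belongs to $\calC_X$, and the surjection sequence on graded pieces lies in $\calC_{X,S}$. The hypothesis that ideal sheaves of the localization of $X$ at $S$ satisfy the ascending chain condition now feeds into Theorem~\ref{T:local acc}, which guarantees that each graded piece sequence stabilizes; since there are only finitely many graded pieces, a straightforward d\'evissage shows that $M_0/t_1 M_0 \to M_1/t_1 M_1 \to \cdots$ stabilizes as well.

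Combining the two reductions modulo $t_1$ and $t_2$ gives that $M_0/tM_0 \to M_1/tM_1 \to \cdots$ stabilizes, while the previous step already gave stabilization after inverting $t$. Proposition~\ref{P:stable finite2} ensures that each $M_i$ is pseudocoherent and that its quotient by its $t$-power-torsion submodule is pseudocoherent, so Remark~\ref{R:t-torsion-free} and Lemma~\ref{L:stable sequence} assemble these two pieces of information into stabilization of the original sequence, as required. The main obstacle in this argument is the d\'evissage step for $M_i/t_1 M_i$: one must verify that the filtration by powers of the $\varphi^n(t_\theta)$ both respects the $\Gamma$-structure and gives graded pieces whose behavior under pullback genuinely lies in $\calC_{X,S}$ rather than in some enlargement, and that the bookkeeping of finitely many Frobenius-shifted copies of $\widehat{\calO}_X$-modules does not interfere with the application of Theorem~\ref{T:local acc}.
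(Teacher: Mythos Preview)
Your proof is correct and follows essentially the same approach as the paper's. The paper's own proof is a single sentence: ``By following the proof of Proposition~\ref{P:ascending sequence2}, we reduce to the case where all of the objects are killed by $t_\theta$, which is Theorem~\ref{T:local acc}.'' You have simply unpacked this, spelling out the factorization $t = t_1 t_2$, the handling of $t_2$-torsion via Corollary~\ref{C:twisted projective torsion}, the d\'evissage through the finitely many $\varphi^n(t_\theta)$ dividing $t_1$, and the final assembly via Lemma~\ref{L:stable sequence}. The concern you flag at the end is not a real obstacle: each $\varphi^n(t_\theta)$ lies in $\tilde{\bC}^{[s,r]}_K$, which is fixed by the $\Gamma$-action, so the filtration is automatically $\Gamma$-stable; and the identification of $\tilde{\bC}^{[s,r]}_\psi/(\varphi^n(t_\theta))$ with $\tilde{A}_\psi$ via $\theta \circ \varphi^{-n}$ carries $\Gamma$-modules to $\Gamma$-modules, placing the graded pieces squarely in $\calC_{X,S}$.
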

\begin{proof}
By following the proof of Proposition~\ref{P:ascending sequence2}, we reduce to 
the case where all of the objects are killed by $t_\theta$, which is Theorem~\ref{T:local acc}.
\end{proof}

\begin{defn}
For $a$ a positive integer, let $\CPhi^a_X$ denote the category of pseudocoherent $\varphi^a$-modules over $\tilde{\bC}_X$. As in Definition~\ref{D:canonical section}, we identify $\calC_X$ with a full subcategory of $\CPhi_X^a$.
For $S \subseteq X$, let $\CPhi^a_{X,S}$ be the direct 2-limit of $\CPhi^a_{U}$ over all open subspaces $U$ of $X$ containing $S$; by Proposition~\ref{P:Robba weak flatness} and Theorem~\ref{T:perfect generalized phi-modules}, the transition functors are exact.

Let $I_\theta$ be the $\varphi$-stable ideal of $\tilde{\bC}_X$ corresponding to the product of the ideals
$(\varphi^n(t_\theta))$ on $\tilde{\bC}^{[s,r]}_X$ over the finitely many values $n \in \ZZ$
for which the ideals are nontrivial.
We say that an object of $\CPhi^a_X$ or $\CPhi^a_{X,S}$ is \emph{$\theta$-local} if locally on $X$ (or equivalently, on every quasicompact open subset of $X$), it is annihilated by some power of $I_\theta$.
(Note that if $a>1$, then $I_\theta$ factors as a product of $a$ different $\varphi^a$-stable ideals.)
\end{defn}

\begin{theorem} \label{T:abelian category phi}
The category $\CPhi^a_X$ has the following properties.
\begin{enumerate}
\item[(a)]
It is a full abelian subcategory of the category of sheaves of $\tilde{\bC}_X$-modules on $X_{\proet}$. Moreover, the formation of kernels and cokernels in $\CPhi^a_X$ is compatible with the larger category.
\item[(b)]
If $X$ is quasicompact, then the ascending chain condition holds: given any sequence
$\calF_0 \to \calF_1 \to \cdots$ of epimorphisms in $\CPhi^a_X$, there exists $i_0 \geq 0$ such that for all $i \geq i_0$, the map $\calF_i \to \calF_{i+1}$ is an isomorphism.
\item[(c)]
The functors $\Tor_i$ take $\CPhi^a_X \times \CPhi^a_X$ into $\CPhi^a_X$. In particular, this is true for $i=0$, so $\CPhi^a_X$ admits tensor products. Moreover, 
for $i=1$ (resp.\ $i>1$), any sheaf in the essential image of $\Tor_i$ is 
locally-on-$X$ $\tilde{\bC}_K$-torsion (resp.\ $\theta$-local).
\item[(d)]
The functors $\Ext^i$ take $\CPhi^a_X \times \CPhi^a_X$ into $\CPhi^a_X$. In particular, this is true for $i=0$, so $\CPhi^a_X$ admits internal Homs.  Moreover, 
for $i=1$ (resp.\ $i>1$), any sheaf in the essential image of $\Ext^i$ is 
locally-on-$X$ $\tilde{\bC}_K$-torsion (resp.\ $\theta$-local).
\end{enumerate}
\end{theorem}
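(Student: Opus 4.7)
The strategy is to bootstrap the analogous assertions for the $\Gamma$-module category $\calC^{[s,r]}_X$, as supplied by Theorem~\ref{T:abelian category gamma}, along the equivalence of Theorem~\ref{T:perfect generalized phi-modules preadic} (applied with $q$ replaced by $q^a$, which entails no essential change): for any closed interval $[s,r] \subset (0, +\infty)$ with $0 < s \leq r/q^a$, $\CPhi^a_X$ is equivalent to the category of pseudocoherent $\varphi^a$-modules over $\tilde{\bC}^{[s,r]}_X$. Forgetting the $\varphi^a$-structure gives an exact, faithful functor to $\calC^{[s,r]}_X$; conversely, pseudoflatness of the comparison maps (Proposition~\ref{P:weak flatness perfect Robba}) ensures that the $\varphi^a$-pullback $\varphi^{a*}M = M \otimes_{\tilde{\bC}^{[s,r]}_X, \varphi^a} \tilde{\bC}^{[s/q^a, r/q^a]}_X$ is exact on $\calC^{[s,r]}_X$ and commutes with formation of kernels, cokernels, tensor products, and internal Homs. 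Consequently, any of these operations performed on the underlying $\tilde{\bC}^{[s,r]}_X$-modules automatically carries a $\varphi^a$-structure induced from the structures on the inputs.

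For (a), a morphism in $\CPhi^a_X$ has a kernel and cokernel in the ambient category of $\tilde{\bC}_X$-module sheaves, whose underlying $\tilde{\bC}^{[s,r]}_X$-module is the kernel/cokernel in $\calC^{[s,r]}_X$; Theorem~\ref{T:abelian category gamma}(a) delivers the required pseudocoherence, and the paragraph above endows them with natural $\varphi^a$-structures. For (b), if $X$ is quasicompact, the sequence of epimorphisms restricts to a sequence of epimorphisms in $\calC^{[s,r]}_X$, which stabilizes by Theorem~\ref{T:abelian category gamma}(b); a terminal isomorphism of underlying modules that fits in a commuting square with the two $\varphi^a$-structures is automatically an isomorphism of $\varphi^a$-modules.

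For (c) and (d), the plan is to use Corollary~\ref{C:lift action}, which locally on $X$ produces surjections onto any object of $\CPhi^a_X$ from projective $\varphi^a$-modules; iterating, one builds locally a resolution of any $\calF \in \CPhi^a_X$ by projective $\varphi^a$-modules. Since the underlying module of a projective $\varphi^a$-module is finite projective, this is also a projective resolution in $\calC^{[s,r]}_X$, so computing $\Tor_i(\calF, \calG)$ and $\Ext^i(\calF, \calG)$ in $\CPhi^a_X$ agrees (as $\tilde{\bC}^{[s,r]}_X$-modules) with the corresponding computation in $\calC^{[s,r]}_X$. Pseudocoherence of the underlying modules then follows from Theorem~\ref{T:abelian category gamma}(c),(d), and the pullback-commutation observation of the first paragraph promotes these to objects of $\CPhi^a_X$. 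The refined assertions for $i \geq 1$ are inherited from the corresponding assertions of Theorem~\ref{T:abelian category gamma}(c),(d): the excision of $\varphi^n(t_\theta)$-torsion that produces projectivity in Corollary~\ref{C:twisted projective torsion} is $\varphi$-equivariant (the ideal generated by all $\varphi^n(t_\theta)$ is precisely $I_\theta$), and $\tilde{\bC}_K = \bigcup \tilde{\bC}^{[s,r]}_K$ so that the $\tilde{\bC}^{[s,r]}_K$-torsion in the $i=1$ assertion promotes to local-on-$X$ $\tilde{\bC}_K$-torsion.

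The main obstacle is the need to match up the two kinds of resolutions. The local-on-$X$ supply of projective $\varphi^a$-modules from Corollary~\ref{C:lift action} is what makes $\Tor$ and $\Ext$ definable inside $\CPhi^a_X$, but glueing from local projective $\varphi^a$-module resolutions to a global $\varphi^a$-equivariant definition requires a little care; the cleanest route is to observe that $\Tor$ and $\Ext$ are local constructions (they commute with passage to rational subspaces by Proposition~\ref{P:weak flatness perfect Robba}), so the local constructions patch automatically. A secondary technical point, already subsumed into Theorem~\ref{T:abelian category gamma} via Propositions~\ref{P:stable finite2}--\ref{P:ext tor stability2}, is that $\theta$-locality of the higher derived functors is controlled by Remark~\ref{R:pseudocoherent factor}: one decomposes $t = t_1 t_2$ where $t_1$ is a product of $\varphi^n(t_\theta)$ and $t_2$ is coprime to every $\varphi^n(t_\theta)$, uses Corollary~\ref{C:twisted projective torsion} on the $t_2$-side to get projectivity there, and confines all failure of projectivity (and hence all higher $\Tor$ and $\Ext$) to the $t_1$-locus, which is precisely the support of $I_\theta$.
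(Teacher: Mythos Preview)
Your proposal is correct and follows essentially the same route as the paper: the paper's proof is the one-line reduction ``Using Theorem~\ref{T:perfect generalized phi-modules}, this reduces at once to Theorem~\ref{T:abelian category gamma},'' and your argument is a fleshed-out version of exactly this reduction. Your additional discussion of how the $\varphi^a$-structure is carried along (via exactness of $\varphi^{a*}$ from Proposition~\ref{P:weak flatness perfect Robba}) and your care about matching resolutions are helpful elaborations, but they do not represent a different strategy.
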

\begin{proof}
Using Theorem~\ref{T:perfect generalized phi-modules},
this reduces at once to Theorem~\ref{T:abelian category gamma}.
\end{proof}

\begin{theorem} \label{T:vector bundle pullback phi}
For $f: X' \to X$ a morphism and $i \geq 0$, the functors
$L_i f_{\proet}^*$ take $\CPhi^a_X$ into $\CPhi^a_{X'}$.
Moreover, for $i>0$, any sheaf in the essential image of $L_i f_{\proet}^*$ is $\theta$-local.
\end{theorem}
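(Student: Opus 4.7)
The plan is to reduce to Theorem~\ref{T:vector bundle pullback gamma} by means of the equivalence in Theorem~\ref{T:perfect generalized phi-modules preadic}, which identifies $\CPhi^a_X$ with the category of pseudocoherent $\varphi^a$-modules over $\tilde{\bC}^{[s,r]}_X$ for any fixed $r,s$ with $0 < s \leq r/q$. Under this identification, the pullback functor $f_{\proet}^*$ for $\varphi^a$-modules is intertwined with the pullback functor for $\tilde{\bC}^{[s,r]}$-modules, since both are computed by pulling back the underlying module of sections and transporting the $\varphi^a$-structure via the canonical comparison isomorphism between $\varphi$-pullback and base extension. In particular, the derived functors $L_i f_{\proet}^*$ computed in either category agree (the $\varphi^a$-action on a projective resolution in $\CPhi^a_X$ pulls back to a $\varphi^a$-action on the resolution obtained by base change in $\tilde{\bC}^{[s,r]}_X$-modules, because the underlying base extensions match).

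First, I would fix $r,s$ as above and choose, locally on $X$, a projective resolution $\cdots \to P_1 \to P_0 \to M \to 0$ of a given object $M$ of $\CPhi^a_X$ by projective $\varphi^a$-modules; locally such resolutions exist by Corollary~\ref{C:lift action} after passing to a perfectoid cover (and in general one may work with an \'etale cover by bases of perfectoid subdomains). Pulling back this resolution via $f_{\proet}^*$ yields a complex of $\varphi^a$-modules over $\tilde{\bC}^{[s,r]}_{X'}$ whose cohomology sheaves are, on the one hand, the $L_i f_{\proet}^* M$ computed in the $\varphi^a$-module category, and on the other, coincide with the $L_i f_{\proet}^* M$ computed at the level of pseudocoherent $\tilde{\bC}^{[s,r]}$-modules. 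Applying Theorem~\ref{T:vector bundle pullback gamma} to the latter shows that each of these cohomology sheaves lies in $\calC^{[s,r]}_{X'}$; by the equivalence in Theorem~\ref{T:perfect generalized phi-modules preadic}, they therefore lift canonically to objects of $\CPhi^a_{X'}$, yielding the first assertion.

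For the second assertion, I would invoke the corresponding sharper statement of Theorem~\ref{T:vector bundle pullback gamma}: for $i>0$, each $L_i f_{\proet}^* M$, viewed as a sheaf of $\tilde{\bC}^{[s,r]}$-modules, is $\theta$-local. But a $\varphi^a$-module on $\tilde{\bC}_X$ whose restriction to $\tilde{\bC}^{[s,r]}_X$ is annihilated by a power of $(t_\theta)$ is by definition annihilated, after spreading the kernel using the $\varphi^a$-action, by a power of the ideal $I_\theta$ (the product over the finitely many $n \in \ZZ$ with $\varphi^n(t_\theta)$ meeting $[s,r]$). Hence $L_i f_{\proet}^* M$ is $\theta$-local in the sense used for $\CPhi^a_{X'}$.

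The main obstacle is the bookkeeping step that matches the derived pullback in the $\varphi^a$-module category with the derived pullback in the $\tilde{\bC}^{[s,r]}$-module category; this requires the existence of projective resolutions of pseudocoherent $\varphi^a$-modules by projective $\varphi^a$-modules (at least locally, or after passing to a perfectoid cover), which is why we must invoke Corollary~\ref{C:lift action} and descend through \'etale (or pro-\'etale) coverings using the sheaf property already built into $\CPhi^a_X$. Once this is in place, the rest is formal: Theorem~\ref{T:vector bundle pullback gamma} does the heavy lifting, and the compatibility of $\theta$-locality on the two sides of the equivalence is immediate from the definition of $I_\theta$ as the $\varphi$-saturation of the ideal $(t_\theta)$ on $\tilde{\bC}^{[s,r]}_X$.
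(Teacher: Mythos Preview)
Your approach is correct and is essentially the same as the paper's, which also reduces at once to Theorem~\ref{T:vector bundle pullback gamma} via the equivalence of Theorem~\ref{T:perfect generalized phi-modules} (or its preadic variant Theorem~\ref{T:perfect generalized phi-modules preadic}). The only comment is that your ``main obstacle'' is not actually an obstacle: you do not need $\varphi^a$-equivariant projective resolutions to identify the two derived pullbacks. The functor $L_i f_{\proet}^*$ is computed at the level of sheaves of $\tilde{\bC}^{[s,r]}$-modules using ordinary projective resolutions, and the $\varphi^a$-structure on $L_i f_{\proet}^* M$ arises simply from functoriality (since $\varphi^*$ is an exact autoequivalence, the isomorphism $\varphi^* M \cong M$ induces $\varphi^*(L_i f_{\proet}^* M) \cong L_i f_{\proet}^*(\varphi^* M) \cong L_i f_{\proet}^* M$). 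Equivalently, the equivalence in Theorem~\ref{T:perfect generalized phi-modules preadic} is stated as an equivalence of \emph{exact} tensor categories, so the derived functors on the two sides are automatically identified. This removes the need for Corollary~\ref{C:lift action} and the descent argument you describe.
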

\begin{proof}
Using Theorem~\ref{T:perfect generalized phi-modules},
this reduces at once to Theorem~\ref{T:vector bundle pullback gamma}.
\end{proof}

\begin{theorem} \label{T:local acc phi}
Let $S$ be a subset of $X$ such that the ideal sheaves of the localization of $X$ at $S$ satisfy the ascending chain condition.
 (By Proposition~\ref{P:stalk noetherian}, this holds if $S$ is a singleton set.)
Then the ascending chain condition holds for $\CPhi^a_{X,S}$: given any sequence $\calF_0 \to \calF_1 \to \cdots$
of epimorphisms in $\CPhi^a_{X,S}$, there exists $i_0 \geq 0$ such that for all $i \geq i_0$, the map $\calF_i \to \calF_{i+1}$ is an isomorphism.
\end{theorem}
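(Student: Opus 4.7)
The plan is to mimic the strategy used for Theorem~\ref{T:abelian category phi} and Theorem~\ref{T:vector bundle pullback phi}, namely to reduce to the corresponding ACC statement for pseudocoherent $\Gamma$-modules over $\tilde{\bC}^{[s,r]}_X$ (Theorem~\ref{T:local acc gamma}) via the compact-interval presentation of pseudocoherent $\varphi^a$-modules furnished by Theorem~\ref{T:perfect generalized phi-modules preadic}. Since all of the heavy lifting (pseudocoherence, Fitting ideal analysis, torsion vs.\ $\theta$-local decomposition) has already been packaged into earlier results, the argument should be short and essentially formal.

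First, I would fix real numbers $r,s$ with $0 < s \leq r/p^{ah}$ and invoke
Theorem~\ref{T:perfect generalized phi-modules preadic} to obtain an exact tensor equivalence between the category of pseudocoherent $\varphi^a$-modules over $\tilde{\bC}_X$ and the category of pseudocoherent $\varphi^a$-modules over $\tilde{\bC}^{[s,r]}_X$. This equivalence is defined locally on $X$ via the pseudoflat restriction morphism $\tilde{\bC}^r_X \to \tilde{\bC}^{[s,r]}_X$ and is manifestly functorial with respect to open immersions $U \hookrightarrow X$; therefore, passing to the filtered colimit over open neighborhoods of $S$, it induces an exact equivalence between $\CPhi^a_{X,S}$ and the category $\CPhi^{a,[s,r]}_{X,S}$ of pseudocoherent $\varphi^a$-modules over $\tilde{\bC}^{[s,r]}_X$ localized at $S$.

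Next, I would observe that the forgetful functor from $\CPhi^{a,[s,r]}_{X,S}$ to $\calC^{[s,r]}_{X,S}$ (drop the $\varphi^a$-structure) is faithful, and that under it a sequence of epimorphisms in $\CPhi^{a,[s,r]}_{X,S}$ maps to a sequence of epimorphisms in $\calC^{[s,r]}_{X,S}$. By Theorem~\ref{T:local acc gamma}, the latter sequence stabilizes after some index $i_0$; that is, the underlying module maps $\calF_i \to \calF_{i+1}$ become isomorphisms for $i \geq i_0$. Since these maps are already morphisms of $\varphi^a$-modules and are bijective on the underlying modules, they are in fact isomorphisms in $\CPhi^{a,[s,r]}_{X,S}$, proving ACC there and hence (via the equivalence) in $\CPhi^a_{X,S}$.

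The principal subtlety — and the only step that requires care — is checking that the equivalence of Theorem~\ref{T:perfect generalized phi-modules preadic} is genuinely compatible with the ind-construction defining $\CPhi^a_{X,S}$; the key point is that the functor is realized by an exact base extension along a pseudoflat morphism of sheaves of rings (Proposition~\ref{P:weak flatness perfect Robba}), and both restriction along open immersions and the formation of sections commute with this base extension. Beyond this bookkeeping, no new input is required: the noetherian-like behavior that powers ACC has been captured entirely by Proposition~\ref{P:ascending sequence2} and its consequence Theorem~\ref{T:local acc gamma}, so the phi-module case does not require any fresh geometric or representation-theoretic argument.
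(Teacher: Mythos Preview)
Your proposal is correct and matches the paper's approach exactly: the paper's proof is the single sentence ``Using Theorem~\ref{T:perfect generalized phi-modules}, this reduces at once to Theorem~\ref{T:local acc gamma},'' which is precisely your reduction via the compact-interval equivalence followed by forgetting the $\varphi^a$-structure. The only cosmetic difference is that you cite the preadic form Theorem~\ref{T:perfect generalized phi-modules preadic} and spell out the compatibility with the direct $2$-limit over neighborhoods of $S$, which the paper leaves implicit.
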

\begin{proof}
Using Theorem~\ref{T:perfect generalized phi-modules},
this reduces at once to Theorem~\ref{T:local acc gamma}.
\end{proof}

\begin{remark}
In the case where $X$ is smooth, one can derive parts (a) of Theorem~\ref{T:abelian category gamma}
and Theorem~\ref{T:abelian category phi} using descent from $\tilde{\bC}^{[s,r]}_\psi$ to $\breve{\bC}^{[s,r]}_\psi$
in the case where $\psi$ is a restricted toric tower (Lemma~\ref{L:Gamma equivalences2}, Theorem~\ref{T:add tilde2}, and Theorem~\ref{T:standard toric decompleting}) and the fact that $\breve{\bC}^{[s,r]}_\psi$ is a coherent ring (Remark~\ref{R:strongly noetherian imperfect}). However, it is less straightforward to obtain
Theorem~\ref{T:abelian category gamma}(b) using this method.
\end{remark}

\begin{remark}
Using the ascending chain condition (Theorem~\ref{T:abelian category phi}(b)),
one may formally imitate the proof of the Artin-Rees lemma for noetherian rings and its corollaries \cite[Chapter~10]{atiyah-macdonald}, yielding the following statements (among others).
\begin{itemize}
\item
For $X$ quasicompact, the category of graded sheaves on $X_{\proet}$ in which each summand belongs to $\calC_X^{[s,r]}$ (resp.\ $\CPhi^a_X$) satisfies the ascending chain condition. (Imitate the standard proof of the Hilbert basis theorem.)
\item
For $X$ affinoid and $\calI$ an ideal subsheaf of $\tilde{\bC}^{[s,r]}_X$ (resp.\ a $\varphi^a$-stable ideal subsheaf of $\tilde{\bC}_X$), completion with respect to $\calI$ defines an exact functor on $\calC_X^{[s,r]}$ (resp.\ $\CPhi_X^a$) which coincides with base extension to the $\calI$-adic  completion of the base ring.
\item
Krull's theorem: for $X$ affinoid, $\calI$ an ideal subsheaf of $\tilde{\bC}^{[s,r]}_X$ (resp.\ a $\varphi^a$-stable ideal subsheaf of $\tilde{\bC}_X$), and $\calF$ an object of $\calC_X^{[s,r]}$ (resp.\ $\CPhi_X^a$), the kernel of the map from $\calF$ to its $\calI$-adic completion consists of the union of the subsheaves annihilated by sections of $1+\calI$.
\end{itemize}
We leave details to the interested reader.
\end{remark}

\begin{theorem} \label{T:limit to Zariski closed}
Let $S$ be a Zariski-closed subset of $X$. Then the functor $\CPhi_{X,S}^a \to \CPhi_S^a$ induces an equivalence on the subcategories of prime-to-$I_\theta$-torsion subobjects.
\end{theorem}
\begin{proof}
By Theorem~\ref{T:perfect Robba Kiehl}
 and Corollary~\ref{C:twisted projective torsion}, the objects in both subcategories are sheaves for the v-topology, so we may check the claim after resolving singularities as in Remark~\ref{R:resolution2}; we may thus assume that $X$ is smooth and $S$ is a normal crossings divisor in $X$. By considering the components of $S$ individually, we may further reduce to case where $S$ is a smooth divisor. By working locally, we may 
assume that $X$ is the base of a restricted toric tower $\psi$ and that $S$ is the zero locus of $T_n-1$,
so in particular there exists a projection $X \to S$ of which the inclusion $S \to X$ is a section.
Let $\psi'$ be the induced tower over $S$.

We may assume that $K$ is perfectoid with tilt $L$
and consider objects over $\tilde{\bC}^{[s,r]}_\psi$ killed by some irreducible $t \in \tilde{\bC}^{[s,r]}_K$ coprime to $\varphi^m(t_\theta)$ for all $m \in \ZZ$.
Let $\tilde{M}$ be such an object; by Corollary~\ref{C:twisted projective torsion}, it is a finite projective
$\tilde{\bC}^{[s,r]}_\psi$-module. 
By Theorem~\ref{T:add tilde2} and Theorem~\ref{T:standard toric decompleting}, $\tilde{M}$ descends to
a pseudocoherent $(\varphi, \Gamma)$-module over $\bC^{[s,r]}_\psi$.
As in the proof of Proposition~\ref{P:twisted stable submodules}, the ring $\bC^{[s,r]}_\psi/(t)$
is isomorphic to a localization of $\bC^{[s,r]}_{\psi'}\{T_n^{\pm}\}$, and the action of the $n$-th copy of $\ZZ_p$ in $\Gamma$ on this ring is via a substitution of the form $T_n \mapsto [\overline{q}]T_n$ for some
$\overline{q} \in 1 + \gothm_L$; the fact that $t$ is coprime to $\varphi^m(t_\theta)$ for all $m \in \ZZ$ ensures that $[\overline{q}]$ is not a root of unity. Consequently, the Lie algebra of $\ZZ_p$ then acts via a connection with respect to the nonzero derivation $\log([\overline{q}]) \frac{\partial}{\partial T_n}$, so the $p$-adic Cauchy theorem \cite[Proposition~9.3.3]{kedlaya-course} implies that one can trivialize the action over some neighborhood of the zero locus of $T_n-1$. This yields the claim.
\end{proof}

\begin{remark}
There is also an analogue of Theorem~\ref{T:limit to Zariski closed} for pseudocoherent $(\varphi^a, \Gamma)$-modules over $\tilde{\bA}_X$ killed by $p^n$ for some positive integer $n$. Namely, by 
Theorem~\ref{T:pseudocoherent type A} this equates to an analogous statement about torsion \'etale local systems, which follows from the henselian property of adic local rings \cite[Theorem~1.2.8, Lemma~2.4.17]{part1}.
\end{remark}

\begin{remark} \label{R:bad pushforward1}
We will prove in a subsequent paper that for $f: X' \to X$ smooth and proper, 
the functors $R^i f_{\proet *}$ take $\calC_{X'}$ into $\calC_X$,
$\calC^{[s,r]}_{X'}$ into $\calC^{[s,r]}_X$,
and $\CPhi^a_{X'}$ into $\CPhi^a_{X}$.
In light of Remark~\ref{R:bad pushforward2}, the hypothesis of smoothness cannot be entirely removed, but it can probably be somewhat relaxed.
One natural guess would be algebraic flatness of the induced maps of
analytic local rings; as discussed in \cite{ducros}, this property is
stable under base change for a proper morphism of good analytic spaces
(i.e., one obtained by glueing Gelfand spectra using their topology,
e.g., an affinoid space, an analytification of a scheme, or the generic
fiber of an affine or proper formal scheme), though not more generally.
\end{remark}

\begin{remark}
In cases where a finite generation property is not appropriate, one might try to form ``quasicoherent $\varphi^a$-modules'' by taking direct limits of objects in $\calC\Phi^a_X$. We do not know if the resulting category serves any useful purpose, e.g., by making it possible to take direct images in greater generality than in the pseudocoherent case.
\end{remark}
 
 \appendix
\section{Appendix: Errata for \texorpdfstring{\cite{part1}}{part 1}}

We record some errata for \cite{part1}. Thanks to David Hansen, Florian Herzig, Zonglin Jiang, Lucas Mann, Peter Schneider, and Kazuma Shimomoto for contributions included here.

\begin{itemize}
\item
Lemma 1.5.2: change ``the bottom left block'' to ``the top left block''.

\item
Definition 2.4.11: in two places, change ``the generic point'' to ``the unique closed point''.

\item
Remark 2.5.3: change ``[38, Th\'eor\`eme~2.6]'' to ``[38, Th\'eor\`eme 2.13]''.

\item
Definition 2.8.1: the stated equivalence is false; conditions (a)--(c) are equivalent and should be taken as the definition of a uniform Banach ring. Condition (d) follows from (a)--(c), and implies that the spectral norm defines the same norm topology as the original norm, but does not imply (a)--(c): for any nonarchimedean field $K$ with norm $\left| \bullet \right|$, the function 
\[
x \mapsto \begin{cases} \left| x \right| & \mbox{if $\left| x \right| \leq 1$} \\
\left| x \right|(1 + \log \left| x \right|) & \mbox{if $\left| x \right| > 1$}
\end{cases}
\]
is a submultiplicative norm defining the same topology as its spectral norm $\left| \bullet \right|$, but not equivalent to the latter. That said, if $A$ is a Banach ring containing a topologically nilpotent unit $z$
for which $\left| z \right| \left| z^{-1} \right| = 1$, then one may check that (d) does imply (a)--(c).

\item
Lemma 3.2.6: 
In the first paragraph of the proof, the ring $S$ given by the formula is not quite the one needed: it represents the functor taking $T$ to $n$-tuples of invariant elements of $M \otimes_R T$. This
functor has a closed-open subfunctor taking $T$ to invariant bases of $M \otimes_R T$, and the object representing this is the one that is needed. (With respect to a reference invariant basis, the determinant of any given invariant $n$-tuple belongs to $\FF_p$, so its nonvanishing is a closed-open condition.) Similar considerations apply in the last paragraph.

\item
Proposition 3.2.7: change $M(T) = W_n(R_n)^G$ to $M(T) = (W_n(R_n)
\otimes_{\ZZ/p^n \ZZ} (\ZZ/p^n \ZZ)^d)^G$ (where the $G$-action is the
diagonal one).

\item
Lemma 3.5.4: To apply Artin's lemma, one must also check that the action of $G$ on $E$ is faithful. To wit, any nontrivial $g \in G$ moves some $\overline{x} \in M$, and for $n$ sufficiently large we may write
\[
\beta\left(\theta([\overline{x}^{p^{-n}}]) - g(\theta([\overline{x}^{p^{-n}}]))
- \theta([(\overline{x}-g(\overline{x}))^{p^{-n}}]) \right) \leq p^{-1} \alpha\left( \overline{x} \right)^{p^{-n}}
\]
to see that $g$ also moves $\theta([\overline{x}^{p^{-n}}])$.

\item 
Starting in section 3.6, the definition of ``primitive of degree 1" for
an element $z$ in $W(R^+)$ should also include the condition that
$\overline{z} \in R^\times$.

\item
Definition 3.6.4: while Lemma 3.4.5 does imply that $R(A)$ is a perfect uniform Banach $\FF_p$-algebra,
an additional argument is required to show that $R^+(A^+)$ is a ring of integral elements in $R(A)$,
and thus to conclude that $(R(A), R^+(A^+))$ is an adic Banach $\FF_p$-algebra. 
\begin{itemize}
\item
To see that $R^+(A^+)$ is open in $R(A)$, note that if $\overline{x} \in R^+(A^+)$ has norm less than $p^{-1}$, then $\theta([\overline{x}])$ has norm less than $p^{-1}$ and hence is divisible by $p$ in $A^+$; by Lemma 3.6.3, we can write $\theta([\overline{x}]) = p \theta(y)$ for some $y \in W(R^+(A^+))$, at which point Lemma 3.6.3 implies that $[\overline{x}]-py$ is divisible by $z$ in $W(R^+(A^+))$ and so $\overline{x}$ is divisible by $\overline{z}$ in $R^+(A^+)$.
\item
To see that $R^+(A^+)$ is integrally closed in $R(A)$, in light of the previous point we need only check that $R^+(A^+)/\mathfrak{m}_{R(A)}$ is integrally closed in $\mathfrak{o}_{R(A)}/\mathfrak{m}_{R(A)}$.
However, these rings are respectively isomorphic to $A^+/\mathfrak{m}_{A}$ and $\mathfrak{o}_{A}/\mathfrak{m}_A$, so the fact that $A^+$ is integrally closed in $\mathfrak{o}_A$ implies the claim.
\end{itemize}

\item
Proposition 3.6.9: in the statement of (a), change ``and surjective if $\psi$ is'' to ``and surjective if $\overline{\psi}$ is''. Added the next one. Six lines from the bottom of the proof, change $p^{-1} \overline{\beta}(\overline{y})$ to $p^{-1/2} \overline{\beta}(\overline{y})$.

\item
Proposition 3.6.11: this proof is incomplete. For a correction, see Theorem~\ref{T:tensor product} herein.

\item
Theorem 3.6.14: in the unique displayed equation in the proof, replace $U \cap \calM(A)$ with $\calM(A)$.

\item
Theorem 3.6.15: change ``proprety'' to ``property''.

\item
Theorem 3.6.17: four lines from the bottom of the proof, change $\gamma(\overline{x} - \overline{y})$ to $\overline{\beta}(\overline{x} - \overline{y})$.

\item
Proposition 3.6.19: this proof is incomplete, as it does not show that $B$ is uniform.
For a correction, see Theorem~\ref{T:Fontaine perfectoid compatibility}(v) herein.

\item
Proposition 3.6.22: the proof
cites ``[122, Tag 07S5]''. This does imply the statement we
gave, but the exact statement is itself available for citation: it is
\emph{loc.\ cit.}, Tag 03BM.

\item
Remark 3.6.27: the reference to ``[114, Conjecture 2.16]'' points to the
wrong paper. The correct paper is the one herein labeled \cite{scholze-cdm}.
\item
Corollary 5.2.12: in the proof, the second displayed equation should read
\[
\limsup_{r \to +\infty} \lambda(\alpha^r)(x)^{1/r}
\leq
\sup_{r \in [s,qs]} \{ \limsup_{m \to \infty} p^{-mn/(rq^m)} \lambda(\alpha^r)(x)^{1/(rq^m)} \}\leq 1.
\]

\item
Proposition 6.2.4: the matrix $1+X$ should be $[\overline{\pi}^s] + X$.

\item
Lemma 5.5.5: in the last three lines of the proof, change three occurrences of $\overline{y}_0$ to $\overline{x}_0$ and two occurrences of $\overline{\alpha}$
to $\alpha$.

\item
Remark 7.4.12: the equality $H^1_{\varphi^a}(M(-n)) = 0$ should read 
$H^1_{\varphi^a}(M(n)) = 0$.

\item
Definition 8.1.6: 
Delete the second sentence (``Equivalently, ... open subset.'')

\item
Remark 8.1.7: The last sentence should read ``Any quasicompact morphism of schemes is spectral.''

\item
Definition 8.2.11: change ``spaceLet'' to ``space. Let''.

\item
Proposition 8.2.20: change ``$Z_{ij}$ is finite \'etale and $Y_{ij}$ is an open immersion'' to
``$Z_{ij} \to Y$ is finite \'etale and $Y_{ij} \to Z_{ij}$ is an open immersion''.

\item
Theorem 8.6.4: delete ``applying Theorem 6.2.9 and'' (there is no such theorem).

\item
Proposition 9.2.6: change ``subodmains'' to ``subdomains''.

\end{itemize}

\end{document}